\theoremstyle{plain} 
\newtheorem {theorem}{Theorem}[section]
\newtheorem {lemma}[theorem]{Lemma}
\newtheorem {proposition}[theorem]{Proposition}
\newtheorem {corollary}[theorem]{Corollary}
\newtheorem {definition}[theorem]{Definition}
\newtheorem {fact}[theorem]{Fact}
\newtheorem {assumption}[theorem]{Assumption}
\theoremstyle{remark}
\newtheorem {remark}[theorem]{Remark}
\newtheorem {example}[theorem]{Example}
\DeclareFontFamily{U}{mathx}{\hyphenchar\font45}
\DeclareFontShape{U}{mathx}{m}{n}{
      <5> <6> <7> <8> <9> <10>
      <10.95> <12> <14.4> <17.28> <20.74> <24.88>
      mathx10
      }{}
\DeclareSymbolFont{mathx}{U}{mathx}{m}{n}
\DeclareMathAccent{\widecheck}{0}{mathx}{"71}
\def\zz {{\mathbb{Z}}}
\def\rr {{\mathbb{R}}}
\def\R {\rr}
\def\bR{\overline{\R}}
\def\cc {{\mathbb{C}}}
\def\qq {{\mathbb{Q}}}
\def\pp{\mathrm{p}}
\def\q {\mathfrak{q}}
\def\del {\partial}
\def\L{\mathscr{L}}
\def\C{\mathcal{C}}
\def\Q {\mathbb{Q}}
\def\qhat {\hat{\mathfrak{q}}}
\def\L {\mathcal{L}}
\def\G {\mathcal{G}}
\def\Go {\G^{\circ}}
\def\B {\mathcal{B}}
\def\T {\mathcal{T}}
\def\F {\mathcal{F}}
\def\J {\mathcal{J}}
\def\Jo{\J^{\circ}}
\def\K {\mathcal{K}}
\def\P {\mathcal{P}}
\def\V {\mathcal{V}}
\def\O{\mathcal{O}}
\def\N {\mathcal{N}}
\def\calE{\mathcal{E}}
\def\tL{\tilde{L}}
\def\fb {\operatorname{fb}}
\def\ext{\operatorname{ext}}
\def\D {\mathcal{D}}
\def\Dg {\mathcal{D}^{\tilde g}}
\def\dgs {D^{\tilde g, \sigma}}
\def\gts {\tilde g^{\sigma}}
\def\Dgs {\mathcal{D}^{\tilde g, {\sigma}}}
\def\U{\mathcal{U}}
\def\Spin{\mathbb{S}}
\def\Slice{\mathcal{S}}
\def\Crit {\mathfrak{C}}
\def\is{\mathscr{S}}
\def\vol{\operatorname{vol}}
\def\q {\mathfrak{q}}
\def\Lq{\L_{\q}}
\def\psis {\boldsymbol {\psi}}
\def\sect {\zeta}
\def\prel{\operatorname{prel}}
\def\qhatsigma{{\hat{\mathfrak{q}}^\sigma}}
\def\qhatzerosigma {\hat{\mathfrak{q}}^{\sigma,0}}
\def\qhatonesigma {\hat{\mathfrak{q}}^{\sigma,1}}
\def\qtilde {{\tilde{\mathfrak{q}}}}
\def\Ue{\mathscr{U}}
\def\Ne{\mathscr{N}}
\def\dd{\mathbf{d}}
\def\Zs {\mathscr{Z}}
\def\Ell{\mathbb{L}}
\def\E{\mathcal{E}}
\def\Rhat{\widehat{R}}
\DeclareMathOperator{\grad}{\operatorname{grad}}
\DeclareMathOperator{\red}{\operatorname{red}}
\def\Qhat {\widehat{Q}}
\def\coker {\operatorname{coker}}
\def\spc{\operatorname{Spin}^c}
\def\End{\operatorname{End}}
\def\Inv{\operatorname{Inv}}
\def\index {\operatorname{ind}}
\def\sslash{/ \!/}
\def\x{x}
\def\y{y}
\def\t{\mathfrak{t}}
\def\gCoul{\operatorname{gC}}
\def\lCoul{\operatorname{lC}}
\def\elCoul{\operatorname{elC}}
\def\agCoul{\operatorname{agC}}
\def\Ke{\mathcal{K}^{\operatorname{e}}}
\def\Kesigma{\mathcal{K}^{\operatorname{e}, \sigma}}
\def\Re{\operatorname{Re}}
\def\SWF{\operatorname{SWF}}
\def\Hess{\operatorname{Hess}}
\def\Hom{\operatorname{Hom}}
\def\pin{\operatorname{Pin}(2)}
\def\gr{\operatorname{gr}}
\def\ind{\operatorname{ind}}
\def\cmto{\widecheck{\mathit{CM}}}
\def\hmto{\widecheck{\mathit{HM}}}
\def\hmtilde{\widetilde{\mathit{HM}}}
\def\HFhat{\widehat{\mathit{HF}}}
\def\HFminus{\mathit{HF}^-}
\def\HFplus{\mathit{HF}^+}
\def\HFinfty{\mathit{HF}^{\infty}}
\def\tH{\widetilde{H}}
\def\im {\operatorname{im}}
\def\inte{\operatorname{int}}
\def\swf{\SWF}
\def\spinc{\mathfrak{s}}
\def\llambda{\lambda^{\bullet}}
\def\s{\spinc}
\def\tC{\widetilde{\C}}
\def\tB{\widetilde{\B}}
\def\tW{\widetilde{W}}
\def\lambdanenergy{\Lambda_{\q^{\lambda_n}}}
\def\pml{p^\lambda}
\def\pmlprel{p^{\lambda}_{\prel}}
\def\pmln{p^{\lambda_n}}
\def\tpgml{\tilde p^{\lambda}_{\tilde g}}
\def\pgmlx{p^{\lambda}_{\tilde g(x)}}
\def\vml{W^\lambda}
\def\vmln{W^{\lambda_n}}
\def\crit{\mathfrak{c}}
\def\qml{\q^\lambda}
\def\qmln{\q^{\lambda_n}}
\def\Btaug{\mathcal{B}^{\gCoul, \tau}}
\def\tBtaug{\widetilde{\mathcal{B}}^{\gCoul, \tau}}
\def\Fsigma {\mathcal{F}^\sigma}
\def\Fgc{\F^{\gCoul}}
\def\Fgctau{\F^{\gCoul, \tau}}
\def\Fqtau{\Fq^\tau}
\def\Fqml{\mathcal{F}_{\qml}}
\def\etaqml{\eta_{\q}^{\lambda}}
\def\Fq{\mathcal{F}_{\q}}
\def\Rcal{\mathcal{R}}
\newcommand\gammas{\boldsymbol\gamma}
\def\cgammas{\breve{\gammas}}
\def\cgamma{\breve{\gamma}}
\def\Mbreve {\breve{M}}
\def\Orb{\mathfrak{O}}
\def\sp{\operatorname{sp}}
\def\X{\mathcal{X}}
\def\Xq{\mathcal{X}_{\q}}
\def\Xgc{\X^{\gCoul}}
\def\Xqgc{\Xq^{\gCoul}}
\def\Xqogcsigma{\X_{\q_0}^{\gCoul,\sigma}}
\def\Xqgcsigma{\Xq^{\gCoul,\sigma}}
\def\Xqsigma{\mathcal{X}^\sigma_{\q}}
\def\Xqagcsigma{\X_{\q}^{\agCoul, \sigma}}
\def\Xqmlgc{\mathcal{X}^{\gCoul}_{\q^{\lambda}}} 
\def\Xqmlgctau{\mathcal{X}^{\gCoul, \tau}_{\qml}}
\def\Xqmlgcsigma{\mathcal{X}^{\gCoul, \sigma}_{\qml}}
\def\Xqomlgcsigma{\mathcal{X}^{\gCoul, \sigma}_{\q_0^{\lambda}}}
\def\Xqmlngc{\mathcal{X}^{\gCoul}_{\q^{\lambda_n}}}
\def\Xqmlngcsigma{\mathcal{X}_{\q^{\lambda_n}}^{\gCoul, \sigma}}
\def\Xqmlagcsigma{\X_{\qml}^{\agCoul, \sigma}}
\def\Xqmlnagcsigma{\X_{\q^{\lambda_n}}^{\agCoul, \sigma}}
\def\Fqgctau{\F_{\q}^{\gCoul, \tau}}
\def\Fqagctau{\F_{\q}^{\gCoul, \tau}}
\def\Fqmlgctau{\Fqml^{\gCoul, \tau}}
\def\etaq{\eta_{\q}}
\def\sC{\mathcal{C}}
\def\SF{\operatorname{SF}}
\def\Hx{\mathcal{H}^\sigma_x}
\def\Span{\operatorname{Span}}
\def\What{\widehat{W}}
\title[The equivalence of two Seiberg-Witten Floer homologies]{The equivalence of two Seiberg-Witten Floer homologies}
\begin{document}
\frontmatter

\author[Tye Lidman]{Tye Lidman}
\address {Department of Mathematics, North Carolina State University\\  Raleigh, NC 27607, USA}
\email {tlid@math.ncsu.edu}
\urladdr{http://www4.ncsu.edu/~tlidman/}

\author[Ciprian Manolescu]{Ciprian Manolescu}
\address {Department of Mathematics, UCLA, 520 Portola Plaza\\ Los Angeles, CA 90095, USA}
\email {cm@math.ucla.edu}
\urladdr{http://www.math.ucla.edu/~cm/}

\begin{abstract}
We show that monopole Floer homology (as defined by Kronheimer and Mrowka) is isomorphic to the $S^1$-equivariant homology of the Seiberg-Witten Floer spectrum constructed by the second author. 
\end {abstract}

\begin{altabstract}
Dans ce volume nous montrons que l'homologie de Floer des monopoles (telle que d\'efinie par Kronheimer et Mrowka) est isomorphe \`a l'homologie $S^1$-\'equivariante du spectre de Seiberg-Witten Floer construit par le second auteur.
\end{altabstract}

\subjclass{57R58}
\keywords{Floer homology, Seiberg-Witten equations, monopoles, 3-manifolds, Morse homology, Conley index, Morse-Smale, Coulomb gauge}

\thanks{The first author was partially supported by NSF grants DMS-1128155 and DMS-1148490. The second author was partially supported by NSF grants DMS-1104406 and DMS-1402914.}

\maketitle

\tableofcontents

\mainmatter
\chapter {Introduction}

\section{Background}
The Seiberg-Witten (monopole) equations \cite{SW1, SW2} are an important tool for understanding the topology of smooth four-dimensional manifolds. A signed count of the solutions of these equations on a closed four-manifold yields the Seiberg-Witten invariant \cite{Witten}. On a four-manifold with boundary, instead of a numerical invariant one can define an element in a group associated to the boundary, called the Seiberg-Witten Floer homology. There are several different constructions of Seiberg-Witten Floer homology in the literature, \cite{MarcolliWang, Spectrum,KMbook, FroyshovSW}. The goal of this monograph is to prove that, for rational homology spheres, the definitions given by Kronheimer-Mrowka in \cite{KMbook} and by the second author in \cite{Spectrum} are equivalent.

The construction in \cite{KMbook} applies to an arbitrary three-manifold $Y$, equipped with a $\spc$ structure $\s$. Given this data, Kronheimer and Mrowka define an infinite dimensional analog of the Morse complex, with the underlying space being the blow-up of the configuration space of $\spc$ connections and spinors (modulo gauge). The role of gradient flow lines is played by solutions to generic perturbations of the Seiberg-Witten equations on $\R \times Y$. Their invariant, monopole Floer homology, is the homology of the resulting complex. This complex (and hence also its homology) comes with a $\zz[U]$-module structure. The applications of monopole Floer homology include the surgery characterization of the unknot \cite{KMOS} and Taubes' proof of the Weinstein conjecture in three dimensions \cite{TaubesWeinstein}.

In fact, there are three different versions of monopole Floer homology defined in \cite{KMbook}; they are denoted $\hmto, \widehat{\mathit{HM}},$ and $\overline{\mathit{HM}}$. Yet another version, $\hmtilde$, was constructed by Bloom in \cite{Bloom}: To define $\hmtilde$, one considers the cone of the $U$ map on the complex that defines $\hmtilde$, and then takes homology.

Compared with \cite{KMbook}, the construction in \cite{Spectrum} was originally done only for rational homology spheres; on the other hand, it yields something more than a homology group. By using finite dimensional approximation of the Seiberg-Witten equations, combined with Conley index theory, one obtains an invariant in the form of an equivariant suspension spectrum. Specifically, given a rational homology sphere $Y$ equipped with a $\spc$ structure $\s$, one can associate to it an $S^1$-equivariant spectrum $\SWF(Y, \s)$. (See also \cite{PFP, KhandhawitThesis, Sasahira} for extensions of this construction to the case $b_1 > 0.$)

The $S^1$-equivariant homology of $\SWF(Y, \s)$ can be viewed as a definition of Seiberg-Witten Floer homology. The advantage of having a Floer spectrum is that one can also apply other (equivariant) generalized homology functors to it. For example, by adding the conjugation symmetry, one can define a $\pin$-equivariant Seiberg-Witten Floer homology; this was instrumental in the disproof of the triangulation conjecture by the second author \cite{Triangulation}. For other applications of the Floer spectrum, see \cite{GluingBF, kg, LinKO, Covers}.
 
 \section{Results} \label{sec:results} The bulk of this monograph is devoted to proving:

\begin{theorem}\label{thm:Main}
Let $Y$ be a rational homology sphere with a $\spc$ structure $\spinc$. There is an isomorphism of  absolutely-graded $\mathbb{Z}[U]$-modules:
$$\hmto_*(Y,\spinc)  \cong \widetilde{H}^{S^1}_*(\SWF(Y,\spinc)),$$
where $\hmto$ is the ``to'' version of monopole Floer homology defined in \cite{KMbook}, and $\tH^{S^1}_*$ denotes reduced equivariant (Borel) homology.\footnote{In this book, we grade Borel homology so that we simply have $\tH^{S^1}_*(X) = \tH_*(X \wedge_{S^1} ES^1_+)$. This differs from the grading conventions in \cite{GreenleesMay} or \cite{PFP} by one.}
\end{theorem}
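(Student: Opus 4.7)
The plan. The two sides of Theorem~\ref{thm:Main} are computed by very different machines, so the strategy is to slip a common intermediate object between them: for a cutoff $\lambda \gg 0$, a finite-dimensional $S^1$-equivariant flow on a subspace $\vml$ of the global Coulomb slice, blown up along its reducible fixed-point sphere. I would prove that the $S^1$-equivariant Morse complex of this blown-up flow computes both $\hmto_*(Y,\spinc)$ and $\tH^{S^1}_*(\SWF(Y,\spinc))$, and then take $\lambda \to \infty$ using continuation/Conley-index arguments on the spectrum side and an exhaustion argument on the Kronheimer-Mrowka side.

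On the Kronheimer-Mrowka side I would first rewrite the construction of \cite{KMbook} in global Coulomb gauge $\Xqgc$. Because $Y$ is a rational homology sphere, the $\spc$ configuration space modulo gauge is identified with a Hilbert space on which the residual $S^1$ acts by scalar multiplication on the spinor factor; the Kronheimer-Mrowka blow-up $\Xqsigma$ becomes the spherical blow-up of this Hilbert space at the reducible locus, and a tame perturbation $\q$ on $\Xqgc$ pulls back to the $\sigma$-model perturbation used in \cite{KMbook}. I would then introduce cutoff perturbations $\qml$ so that, for $\lambda$ large enough, every critical point and every bounded trajectory of the blown-up projected flow $\Fqmlsigma$ is supported in the blow-up of $\vml$, and the ``to'' chain complex of \cite{KMbook} is canonically identified with the $S^1$-equivariant Morse complex of this finite-dimensional blown-up flow (with the boundary-stable/boundary-unstable decomposition corresponding to the $S^1$-fixed stratum of the blow-up).

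On the spectrum side, $\SWF(Y,\spinc)$ is by construction the $S^1$-equivariant Conley index of an isolating block for the projected Seiberg-Witten flow on $\vml$, formally desuspended by the sum $\vmlk$ of its negative-eigenvalue pieces. Equivariant Morse-Bott theory applied to this Conley index — after blowing up the reducible $S^1$-fixed sphere so that the action becomes free off a corner stratum — produces an $S^1$-equivariant CW decomposition whose cellular chain complex, smashed with $ES^1_+$, computes $\tH^{S^1}_*$ and agrees on the nose with the blown-up Morse complex above. The ``to'' truncation of Kronheimer-Mrowka corresponds precisely to the quotient by the unstable part of the reducible stratum, which on the spectrum side is the standard device used to realize reduced Borel homology of $\SWF$ as a mapping cone.

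The hard part will be the coherent control of perturbations and of absolute gradings. I need to check that the abstract tame perturbations of \cite{KMbook} can be chosen compatibly with the finite-dimensional cutoff $\qml$ and with the spherical blow-up while preserving Morse-Smale transversality of the blown-up, $S^1$-equivariant flow, and that the compactness of moduli spaces in \cite{KMbook} matches the Conley-index boundedness in \cite{Spectrum} once $\lambda$ is large. I must also reconcile the absolute $\Q$-grading coming from spectral flow in \cite{KMbook} with the one coming from formal desuspension by $\vmlk$ in \cite{Spectrum}; I would handle this by pinning down the shift in a single base case (e.g.\ $S^3$ with its unique $\spc$ structure, where both sides are computed independently) and transporting the identification to all $(Y,\spinc)$ via the chain-level equivalence established in the preceding steps.
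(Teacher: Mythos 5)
Your overall strategy matches the paper's: translate Kronheimer--Mrowka's construction into global Coulomb gauge, take finite-dimensional approximations $\Xqmlgc = l + \pml c_\q$, build an $S^1$-equivariant Morse complex on the blow-up of $\vml$, identify it with both $\cmto$ and the Morse complex computing $\tH^{S^1}_*$ of the Conley index, and let $\lambda\to\infty$. But the plan as written has two real gaps.

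First, the approximate flow $l + \pml c_\q$ on $\vml$ is \emph{not} the gradient of $\L_\q|_{\vml}$ for either the $L^2$ metric or the $\tilde g$ metric, because $\pml$ is the $L^2$-orthogonal projection rather than the $\tilde g$-orthogonal one. At its stationary points the linearization has no reason to have real spectrum, so you cannot simply invoke ``equivariant Morse--Bott theory'' or the Kronheimer--Mrowka boundary Morse machinery, which is set up for (quasi-)gradient flows. The paper has to develop a notion of \emph{Morse quasi-gradient} --- a vector field that strictly decreases some auxiliary function $F_\lambda$ and whose stationary points are hyperbolic --- and then prove that $l+\pml c_\q$ satisfies it by perturbing $\L_\q$ near the approximate stationary orbits. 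This is not a routine verification; it occupies an entire chapter and feeds directly into the definition of the Morse differential and its identification with the Conley index homology. Your proposal omits this step entirely, and without it the intermediate chain complex is not well-defined. (A related issue: the reducible stationary points $(a,0,\phi)$ of $\Xqmlgcsigma$ need not have $\phi\in\vml$ even when $(a,0)\in\vml\cap B(2R)$, since eigenvectors of $D_{\qml,a}$ can escape the cutoff subspace; the statement that ``every critical point \ldots is supported in the blow-up of $\vml$'' is false without a grading-window truncation and a bound on the $\lambda$-spinorial energy.)

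Second, the grading-matching strategy does not close. The chain-level equivalence you establish is for a fixed $(Y,\spinc)$ and only identifies \emph{relative} gradings; the resulting constant shift between $\gr^{\Q}$ and the desuspension grading can a priori depend on $(Y,\spinc)$. Computing the shift for $S^3$ pins it down only for $S^3$, and ``transporting via the chain-level equivalence'' gives you nothing for another $Y$ --- you would need cobordism maps on both sides and a proof that they intertwine, which is a substantially harder piece of machinery not produced by the preceding steps (and would effectively presuppose a stronger version of the theorem). The paper instead computes the shift directly for each $(Y,\spinc)$: both $n(Y,\spinc,g)$ and $\gr^{\Q}$ are expressed via a 4-manifold bounding $Y$, and the proof reduces the comparison to a spectral-flow identity relating the APS index, the Morse index of the reducible of lowest positive eigenvalue in $(\vml)^\sigma/S^1$, and $\dim W^{(-\lambda,0)}$. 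That calculation, not a base-case-plus-transport argument, is what fixes the absolute grading.
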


From Theorem~\ref{thm:Main} we deduce that Bloom's homology $\hmtilde$ can be identified with the ordinary (non-equivariant) homology of the Floer spectrum $\SWF$:

\begin{corollary}\label{cor:MainHat}
Let $Y$ be a rational homology sphere equipped with a $\spc$ structure $\spinc$. Then, $\hmtilde_*(Y,\spinc) \cong \widetilde{H}_*(\SWF(Y,\spinc))$ as absolutely graded abelian groups.
\end{corollary}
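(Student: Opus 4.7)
The strategy is to derive the corollary from Theorem~\ref{thm:Main} by comparing two long exact sequences that relate non-equivariant and equivariant invariants on each side. On the monopole side, Bloom defines $\hmtilde$ as the homology of the mapping cone of the chain-level $U$ map on $\cmto$; this produces a long exact sequence involving $\hmto_*(Y,\s)$, $\hmtilde_*(Y,\s)$, and the $U$ action on $\hmto_*$. On the spectrum side, for any pointed $S^1$-space (or spectrum) $X$, the Borel fibration $X \to X \times_{S^1} ES^1 \to BS^1$ yields a Gysin-type long exact sequence
\[ \cdots \to \tH^{S^1}_*(X) \xrightarrow{U} \tH^{S^1}_{*-2}(X) \to \tH_{*-1}(X) \to \tH^{S^1}_{*-1}(X) \xrightarrow{U} \tH^{S^1}_{*-3}(X) \to \cdots. \]
Applied to $X = \SWF(Y, \s)$, this is structurally parallel to Bloom's long exact sequence.

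Theorem~\ref{thm:Main} supplies a $\zz[U]$-equivariant, absolutely-graded isomorphism $\hmto_*(Y,\s) \cong \tH^{S^1}_*(\SWF(Y,\s))$, so the two long exact sequences map into each other with isomorphisms on every $\hmto/\tH^{S^1}$ term, compatibly with $U$. A five-lemma argument would then yield the desired isomorphism $\hmtilde_*(Y, \s) \cong \tH_*(\SWF(Y, \s))$ as absolutely graded abelian groups.

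The main obstacle is ensuring that the isomorphism of Theorem~\ref{thm:Main} is compatible not only with the $U$ action, but also with the connecting homomorphisms of the two long exact sequences, so that the five-lemma actually applies. The cleanest resolution is to upgrade Theorem~\ref{thm:Main} to a chain-level quasi-isomorphism of $\zz[U]$-module complexes between $\cmto$ and a Conley-index or Morse model for the Borel chains of $\SWF$; then the mapping cones of $U$ on the two complexes are themselves quasi-isomorphic, Bloom's cone computing $\hmtilde$ by definition and the analogous cone computing $\tH_*(\SWF)$ via the Gysin mechanism, so the corollary follows without any need for the five lemma. Failing a chain-level statement, one must inspect the geometric construction underlying Theorem~\ref{thm:Main} and verify the naturality of the connecting maps by hand.
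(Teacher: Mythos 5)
You have correctly identified the two key ingredients (the mapping-cone definition of $\hmtilde$ and a Gysin sequence relating $\tH_*$ and $\tH^{S^1}_*$) and, importantly, you have correctly flagged the obstruction: Theorem~\ref{thm:Main} is an isomorphism of $\zz[U]$-modules, not of long exact sequences, and the five lemma requires as input a morphism between the two sequences, including on the $\hmtilde$/$\tH_*$ terms, which you do not a priori have. Your suggested remedy---upgrading Theorem~\ref{thm:Main} to a chain-level quasi-isomorphism compatible with the $U$ cones---would indeed close the gap, but it is not established in the paper and is more than is needed.

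The paper's actual proof evades the naturality problem by a different, more elementary route: it first works over a field $F$. Over a field, the homology of the mapping cone of a chain map is determined by the induced map on homology (thought of as a complex with zero differential), because complexes of $F$-vector spaces split. Hence $\hmtilde(Y,\s;F)$ is the homology of $\mathrm{Cone}(U)$ on $\hmto_*(Y,\s;F)$ with trivial differential, and the Gysin lemma identifies $\tH_*(\SWF;F)$ with the homology of $\mathrm{Cone}(U)$ on $\tH^{S^1}_*(\SWF;F)$. Since Theorem~\ref{thm:Main} matches the two $F[U]$-modules, the two cones have the same homology, with no need to compare connecting maps. Running this for $F=\qq$ and $F=\zz/p^k\zz$ and invoking the universal coefficient theorem then recovers the integral statement. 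So your diagnosis of the difficulty is right, but as written your proposal does not resolve it; the paper resolves it by field coefficients plus UCT rather than by a chain-level refinement or a by-hand verification of naturality.
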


From the absolute grading on monopole Floer homology one can extract a $\mathbb{Q}$-valued  invariant, called the Fr{\o}yshov invariant; see \cite{FroyshovSW} or \cite[Section 39.1]{KMbook}. A similar numerical invariant, called $\delta$, was defined in \cite[Section 3.7]{Triangulation} using the Floer spectrum $\SWF$.  (The definition there was only given for Spin structures, but it extends to the Spin$^c$ setting.)  An immediate consequence of Theorem~\ref{thm:Main} is
\begin{corollary}
\label{cor:delta}
Let $Y$ be a rational homology sphere equipped with a Spin$^c$ structure $\spinc$.  Then, $\delta(Y,\spinc) =  - h(Y,\spinc)$, where $h$ is the Fr{\o}yshov invariant as defined in \cite[Section 39.1]{KMbook}.  
\end{corollary}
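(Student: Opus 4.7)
The plan is to show that both $\delta(Y,\spinc)$ and $h(Y,\spinc)$ are numerical invariants extracted from exactly the same piece of algebraic data, namely the absolute grading of the bottom of the infinite $U$-tower in the (now common) $\zz[U]$-module furnished by Theorem~\ref{thm:Main}; the sign discrepancy between the two definitions then accounts for the minus sign in the statement.

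First I would recall the two definitions. As in \cite[Section 39.1]{KMbook}, the Fr{\o}yshov invariant $h(Y,\spinc)$ is extracted from the graded $\zz[U]$-module $\hmto_*(Y,\spinc)$: concretely, there is a canonical infinite $U$-tower (the image of $\hm_*$ in $\hmto_*$ under the natural map), and $h$ is, up to the standard sign/normalization convention of Kronheimer--Mrowka, determined by the absolute grading of its bottom element. On the spectrum side, $\delta(Y,\spinc)$ is defined in \cite[Section~3.7]{Triangulation} from the bottom of the infinite $U$-tower inside $\tH^{S^1}_*(\SWF(Y,\spinc))$; although written there for Spin structures, the definition only uses the $S^1$-equivariant structure on $\SWF$, which is available for any $\spc$ structure on $Y$, so the extension is immediate.

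Now invoke Theorem~\ref{thm:Main}. The isomorphism $\hmto_*(Y,\spinc)\cong \tH^{S^1}_*(\SWF(Y,\spinc))$ is one of absolutely graded $\zz[U]$-modules, so it sends the infinite $U$-tower on one side to the infinite $U$-tower on the other, preserving absolute gradings. Consequently the rational number read off from the bottom of the $U$-tower is the same on both sides, and the only remaining question is how that number is packaged into $h$ versus $\delta$.

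The main (and indeed only) obstacle is therefore a bookkeeping check of conventions: one must verify that the normalization of $h$ in \cite[Section~39.1]{KMbook} and the normalization of $\delta$ in \cite[Section~3.7]{Triangulation} differ precisely by a sign. This amounts to comparing the absolute grading conventions in the two settings (including the footnote in Theorem~\ref{thm:Main} about the shift between our conventions for $\tH^{S^1}_*$ and those of \cite{GreenleesMay, PFP}) and noting that the Fr{\o}yshov invariant is defined so that positive $h$ corresponds to the tower sitting in higher degrees, whereas $\delta$ is defined with the opposite sign. Once this single bookkeeping point is dispatched, the corollary follows directly from Theorem~\ref{thm:Main}.
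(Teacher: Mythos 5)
Your proposal is correct and follows essentially the same route as the paper: both $\delta$ and $-h$ are read off as (half) the absolute grading at the bottom of the infinite $U$-tail with $\qq$-coefficients, and Theorem~\ref{thm:Main} transports this data along a graded $\zz[U]$-module isomorphism. The paper's proof is a one-liner making exactly this point; your version simply spells out the bookkeeping step (including the remark that $\delta$ extends from Spin to $\spc$, which the paper notes in the introduction rather than here).
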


Furthermore, we can combine Theorem~\ref{thm:Main} with the equivalence between monopole Floer homology and Heegaard Floer homology, established in work of Kutluhan-Lee-Taubes  \cite{KLT1, KLT2, KLT3, KLT4, KLT5} and of Colin-Ghiggini-Honda \cite{CGH1, CGH2, CGH3} and Taubes \cite{Taubes12345}. In this way we obtain a relationship between the spectrum $\SWF(Y, \spinc)$ and Heegaard Floer theory. Precisely, we confirm a conjecture from \cite{PFP}, that the different flavors of Heegaard Floer homology are different equivariant homology theories applied to $\SWF(Y, \spinc)$:
\begin{corollary}
\label{cor:hfswf}
If $Y$ is a rational homology sphere and $\spinc$ is a $\spc$ structure on $Y$, we have the following  isomorphisms of relatively graded $\zz[U]$-modules:
\begin{align*}
\HFplus_*(Y, \spinc) & \cong \tH_*^{S^1}(\swf(Y, \spinc)), \hskip1cm \HFhat_*(Y, \spinc)  \cong \tH_*(\swf(Y, \spinc)), \\
\HFminus_*(Y, \spinc) & \cong c\tH_{*}(\swf(Y, \spinc)), \hskip1.07cm \HFinfty_*(Y, \spinc) \cong t\tH_*(\swf(Y, \spinc)),
\end{align*}
where $c\tH^*$ and $t\tH^*$ denote co-Borel and Tate homology, respectively, as defined in \cite{GreenleesMay}.
\end{corollary}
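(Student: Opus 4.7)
The plan is to deduce the four isomorphisms from Theorem~\ref{thm:Main}, Corollary~\ref{cor:MainHat}, and the identifications of the Heegaard Floer flavors with those of monopole Floer coming from Kutluhan--Lee--Taubes \cite{KLT1, KLT2, KLT3, KLT4, KLT5}, Colin--Ghiggini--Honda--Taubes \cite{CGH1, CGH2, CGH3, Taubes12345}, and Bloom \cite{Bloom}. These give $\HFplus_* \cong \hmto_*$, $\HFminus_* \cong \widehat{\mathit{HM}}_*$, $\HFinfty_* \cong \overline{\mathit{HM}}_*$, and $\HFhat_* \cong \hmtilde_*$. Since the corollary only asserts relatively graded statements, subtleties about absolute grading conventions between the various theories can largely be sidestepped.

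The first two isomorphisms follow immediately by composition. One has $\HFplus_* \cong \hmto_* \cong \tH^{S^1}_*(\SWF(Y,\spinc))$, where the second isomorphism is Theorem~\ref{thm:Main}, and $\HFhat_* \cong \hmtilde_* \cong \tH_*(\SWF(Y,\spinc))$, where the second isomorphism is Corollary~\ref{cor:MainHat}. Both chains respect the $\zz[U]$-module structure, since each of the individual isomorphisms is known to do so.

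For the third and fourth isomorphisms, my plan is to compare compatible long exact sequences. The Heegaard Floer exact triangle
$$\cdots \to \HFminus_* \to \HFinfty_* \to \HFplus_* \to \HFminus_{*-1} \to \cdots$$
corresponds, via KLT/CGH/Taubes, to the Kronheimer--Mrowka long exact sequence among $\widehat{\mathit{HM}}$, $\overline{\mathit{HM}}$, and $\hmto$. On the spectrum side, smashing $\SWF(Y,\spinc)$ with the Greenlees--May cofiber sequence $ES^1_+ \to S^0 \to \widetilde{E}S^1$ yields the Tate long exact sequence
$$\cdots \to c\tH_*(\SWF) \to t\tH_*(\SWF) \to \tH^{S^1}_*(\SWF) \to c\tH_{*-1}(\SWF) \to \cdots.$$
Since Theorem~\ref{thm:Main} identifies the rightmost terms, the five-lemma will deliver isomorphisms $\HFminus_* \cong c\tH_*(\SWF)$ and $\HFinfty_* \cong t\tH_*(\SWF)$ --- provided one can check that the connecting maps are intertwined by the isomorphism already in hand.

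The main obstacle is exactly this naturality check: the map $\HFinfty_* \to \HFplus_*$ (equivalently, $\overline{\mathit{HM}}_* \to \hmto_*$) must be shown to correspond, under Theorem~\ref{thm:Main}, to the norm map $t\tH_*(\SWF) \to \tH^{S^1}_*(\SWF)$. Heuristically both maps are $U$-periodizations, where the $U$-action on the equivariant homology of $\SWF$ is induced by the $S^1$-action on the spectrum. Geometrically, the reducible part of the Kronheimer--Mrowka complex should correspond to the $S^1$-fixed subspectrum of $\SWF(Y,\spinc)$, which is precisely what governs the Tate construction; extracting this correspondence from the proof of Theorem~\ref{thm:Main} --- and thereby verifying that the Kronheimer--Mrowka surgery-type long exact sequence is realized as the Tate sequence of $\SWF(Y,\spinc)$ --- is the main work required to complete the corollary.
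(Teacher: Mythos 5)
Your argument for the first two isomorphisms matches the paper's. For the last two you propose a five-lemma comparison of the Kronheimer--Mrowka exact triangle with the Greenlees--May Tate cofiber sequence, which is a different route from the one the paper takes, and it has two gaps. You correctly flag the naturality problem: nothing established so far relates the maps of the monopole Floer exact triangle to the Greenlees--May norm and cofiber maps, and verifying that compatibility would require tracing through the geometric proof of Theorem~\ref{thm:Main}. But there is a second, more basic problem you do not address: even granting naturality, the five-lemma cannot close the argument from the data at hand. Each of the two long exact sequences cycles through only three distinct graded groups, and you propose to feed in just one identified term ($\hmto \cong \tH^{S^1}$). The five-lemma needs four of five vertical arrows to be isomorphisms, which for a three-periodic sequence means two of the three groups; one is not enough.

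The paper sidesteps both difficulties by duality plus $U$-localization. For $\HFminus_* \cong c\tH_*$ it combines three dualities: $\HFminus(-Y,\s)$ is dual to the cohomological $\HFplus(Y,\s)$, co-Borel homology is (up to a degree reversal) the Borel cohomology of the Spanier--Whitehead dual spectrum, and $\SWF(-Y,\s)$ is the dual of $\SWF(Y,\s)$. Together with the already-proved $\HFplus$ isomorphism and the universal coefficient theorem, these give the co-Borel case with no naturality check at all. For $\HFinfty_* \cong t\tH_*$ the paper simply inverts $U$: $\HFinfty$ is the $U$-localization of $\HFminus$, Tate homology is the $U$-localization of co-Borel homology, and so the third isomorphism of $\zz[U]$-modules immediately yields the fourth.
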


The results above can be applied in two directions. On the one hand, there are numerous computational techniques available for Heegaard Floer homology, whereas the class of manifolds $Y$ for which one can compute $\SWF(Y, \s)$ is rather small. By making use of the isomorphisms in Corollary~\ref{cor:hfswf}, one can at least understand the (equivariant or non-equivariant) homologies of $\SWF(Y, \s)$. In particular:
\begin{itemize}
\item If $\HFhat_*(Y, \spinc) \cong \zz$, this suffices to determine the non-equivariant stable homotopy type of $\SWF(Y, \s)$: by the Hurewicz and Whitehead theorems, it has to be that of a (de-)suspension of the sphere spectrum. (However, it is unclear whether the equivariant stable homotopy type of $\SWF(Y, \s)$ is determined by this information.) 
\item In the case when $\s$ is a Spin structure, the Seiberg-Witten equations on $(Y, \s)$ have a $\operatorname{Pin}(2)$ symmetry, and one can define the $\operatorname{Pin}(2)$-equivariant homology of $\SWF(Y, \s)$. (This homology was used in \cite{Triangulation} to disprove the triangulation conjecture in high dimensions.) When $Y$ is Seifert fibered, the Heegaard Floer homology was calculated in \cite{Plumbed}, so Corollary~\ref{cor:hfswf} can tell the $S^1$-equivariant homology of $\SWF(Y, \s)$. Together with knowledge of the monopoles (generators of the monopole Floer complex) from \cite{MOY}, this suffices to determine the $\operatorname{Pin}(2)$-equivariant homology of $\SWF(Y, \s)$; see \cite{Stoffregen} for details.
\end{itemize}

In the reverse direction, there are certain results that are easier to prove with Floer spectra, and one can use Theorem~\ref{thm:Main} and Corollary~\ref{cor:hfswf} to translate them into the settings of monopole Floer or Heegaard Floer homology. This is the case with the Smith inequality for the Floer homology of coverings, which is the subject of the sequel to this book \cite{Covers}. By combining the Smith inequality with the knot surgery formula in Heegaard Floer homology \cite{RatSurg}, we obtain restrictions on surgeries on a knot being regular covers over other surgeries on that knot, and over surgeries on other knots; see \cite{Covers}.

\section{Outline and organization of the book} Theorem~\ref{thm:Main} asserts that the monopole Floer homology of Kronheimer-Mrowka is isomorphic to the equivariant homology of the Conley index used for $\SWF(Y, \s)$. Let us describe the basic strategy for the proof. The monopole Floer complex is  defined from the {\em perturbed} Seiberg-Witten equations in infinite dimensions, whereas the Conley index is a space associated to an {\em approximation} of the Seiberg-Witten equations in finite dimensions. On the Conley index side, we can also use an approximation of the perturbed Seiberg-Witten equations; by the continuation properties of the Conley index, this yields a space homotopy equivalent to the one from the unperturbed case. Thus, we start by fixing a suitable perturbation, so that we have regularity in infinite dimensions, and hence we can define the monopole Floer complex. We then show that for a large enough approximation, we also have regularity in finite dimensions, and hence the homology of the Conley index is given by a Morse complex. Further, by applying the inverse function theorem (for stationary points and trajectories), we conclude that the Morse complex in finite dimensions is isomorphic to the original monopole Floer complex, and this will complete the proof.

There are several technical difficulties that must be overcome to implement this strategy. To start with, the version of Morse homology that we need to use in finite dimensions is different from the standard one in three ways: we are on a non-compact manifold (a vector space), we have to define $S^1$-equivariant rather than ordinary homology, and the approximate Seiberg-Witten flow is not a gradient flow. Non-compactness is taken care of using the notion of Conley index; this goes back to the work of Floer \cite{FloerMorse}. Equivariance was dealt with by Kronheimer and Mrowka in \cite{KMbook}, using the real blow-up construction. One intriguing aspect is that the flow is not a gradient. This leads us to introduce the weaker notion of {\em quasi-gradient}, which is a particular case of the Morse-Smale flows that appear in the study of stability for dynamical systems. We will show that one can define Morse homology from quasi-gradients. All this is done in Chapter~\ref{sec:finite}; there, we recall the various notions from Morse theory and Conley index theory, the relation between the two theories, and then proceed to define (in several steps) equivariant Morse homology for quasi-gradients on non-compact manifolds.

In Chapter~\ref{sec:spectrum} we outline the construction of the Seiberg-Witten Floer spectrum from \cite{Spectrum}. The main idea is to approximate the Seiberg-Witten equations (in global Coulomb gauge) by a flow in finite dimensions, and then to take the Conley index associated to that flow.

In Chapter~\ref{sec:HM} we sketch the construction of monopole Floer homology by Kronheimer and Mrowka, following their book \cite{KMbook}. In particular, we describe the class of admissible perturbations of the Seiberg-Witten equations that can be used to define the Floer complexes.

In Chapter~\ref{sec:coulombgauge} we explain how the Kronheimer-Mrowka construction can be rephrased in terms of configurations in global Coulomb gauge. This is the first step in bringing it closer to the construction of the Floer spectrum. The section is rather lengthy, because there are several aspects of the theory that have to be translated into the new setting: gauge conditions (in particular, for four-dimensional configurations we introduce the concept of {\em pseudo-temporal gauge}), admissibility for perturbations, Hessians, regularity for stationary points, spaces of paths, regularity for trajectories, gradings, orientations, and the $\zz[U]$-module action.

Another step in relating the two theories is taken in Chapter~\ref{sec:finiteapproximations}. There, we show that the Floer spectrum can be defined from finite dimensional approximation of the {\em perturbed} Seiberg-Witten equations. We use an admissible perturbation of the kind considered in \cite{KMbook}.

In Chapter~\ref{sec:criticalpoints} we use the inverse function theorem to identify the stationary points in infinite dimensions (i.e., the generators of the monopole Floer complex) with the stationary points of the gradient flow in the finite dimensional approximations. (This identification is limited to a certain grading range.) Further, we show that if the stationary points are non-degenerate in infinite dimensions, then the corresponding stationary points in sufficiently large approximations are non-degenerate as well. Note that the approximations are described by eigenvalue cut-offs, and these range over the interval $(0, \infty]$, with $\infty$ giving the original equations (in infinite dimensions). When applying the inverse function theorem, one thing to be careful about is how to give a manifold-with-boundary structure to the interval $(0, \infty]$. We do so by identifying $(0, \infty]$ with $[0,1)$, via a homeomorphism that depends on the growth rate of the eigenvalues.

In Chapter~\ref{sec:quasigradient} we show that the approximate Seiberg-Witten flow in finite dimensions is a quasi-gradient. The main difficulty there is to perturb the Chern-Simons-Dirac functional in such a way so that it decreases along flow lines. 

In Chapter~\ref{sec:gradings}, we study the gradings of the stationary points in the finite dimensional approximations and show that the correspondence with stationary points in infinite dimensions established in Chapter~\ref{sec:criticalpoints} preserves gradings.  

In Chapter~\ref{sec:MorseSmale}, we show how to arrange so that the Morse-Smale condition is satisfied for the approximate flow.

Chapter~\ref{sec:appendix} contains the construction of some diffeomorphisms $\Xi_{\lambda}$ of the configuration space. These diffeomorphisms take a stationary point of the approximate Seiberg-Witten flow to the corresponding stationary point of the original flow. They allow us to identify the corresponding path spaces, so that we may directly relate infinite dimensional trajectories with approximate trajectories in Chapters~\ref{sec:trajectories1} and \ref{sec:trajectories2}.  

In Chapter~\ref{sec:trajectories1} we prove various convergence results for trajectories of the approximate Seiberg-Witten flow, in the limit as the eigenvalue cut-off goes to infinity.

In Chapter~\ref{sec:trajectories2} we use the inverse function theorem to identify the flow trajectories  in infinite dimensions with those in the approximations. Furthermore, we show that these identifications preserve orientations.

Finally, in Chapter~\ref{sec:equivalence}, we bring these results together to prove Theorem~\ref{thm:Main}, about the equivalence between monopole Floer homology and the equivariant homology of $\SWF(Y, \s)$. We also prove the three corollaries stated in Chapter~\ref{sec:results}.

\section{Conventions.} Throughout the book, when we talk about the flow associated to a vector field $v$, we mean the reverse flow generated by $v$, i.e, the flow whose trajectories satisfy:
$$\frac{d}{dt}\gamma(t) + v(\gamma(t)) =0.$$

Also, we depart from the terminology in \cite{KMbook}, where critical points of the CSD functional are also referred to as critical points of the Seiberg-Witten vector field.  Here, we sometimes need to think of vector fields as maps to the tangent space, and the critical points of a map are the zeros of its derivative. To prevent confusion, we will use the term {\em stationary point} to refer to the zero of a vector field.

\section{Acknowledgements.} We thank Peter Kronheimer, Robert Lipshitz, Max Lipyanskiy, Tim Perutz, and Matthew Stoffregen for helpful conversations.

\chapter{Morse homology in finite dimensions}
\label{sec:finite} 
In this chapter we describe several versions of Morse homology (all in finite dimensions), building up to the construction that will be needed in this monograph.

\section{The standard construction} \label{sec:standard}
Let $X$ be a closed, oriented Riemannian manifold equipped with a Morse function $f$. The Morse-Smale condition requires that for any two critical points $\x$ and $\y$, the unstable manifold $W^u_{\x}$ and the stable manifold $W^s_{\y}$ intersect transversely. Their intersection is then a smooth manifold $M(\x,\y)$, the moduli space of parameterized flows between $\x$ and $\y$, which has dimension $\index(\x) - \index(\y)$. 

With this information, we can build the {\em Morse complex} \cite{WittenMorse, BottMorse, FloerMorse}. The chain groups in degree $i$, denoted $C_i$, are freely generated over $\zz$ by the critical points of $f$ of index $i$. We define the differential $\partial$ as follows.  If $\x$ and $\y$ are critical points  of index $i$ and $i-1$ respectively, then $\breve{M}(\x,\y) := M(\x,\y)/\mathbb{R}$ is a compact, oriented $0$-manifold. (See Section~\ref{sec:or1} below for the construction of orientations.) We let $n(\x,\y)$ denote the number of unparameterized flow lines of $\nabla f$ from $\x$ to $\y$, counted with sign. The Morse differential is
\begin{equation}
\label{eq:delx}
 \del \x = \sum_{\{\y \mid \index(\y)=\index(\x)-1\} } n(\x,\y) \cdot \y.
 \end{equation}
 It turns out that $\partial^2 = 0$ and one can take the homology of this complex. The Morse homology is isomorphic to the singular homology, $H_*(X)$.  
 
For future reference, we mention a (well-known) alternative way of expressing the Morse-Smale condition. Let $f: M \to \R$ be Morse, and $x$, $y$ be two critical points. Define $\P(x, y)$ to be the space of smooth paths $\gamma: \R \to M$ with $\lim_{t \to -\infty} \gamma(t)=x$ and $\lim_{t \to +\infty} \gamma(t)=y$. The space $\P(x, y)$ has a well-defined $L^2_k$ completion $\P_k(x, y)$, which is a Banach manifold. (See, for example, \cite[Section 2.1]{Schwarz} for the case $k=1$.) For $0 \leq j \leq k$, we let $T_{j, \gamma}\P(x, y)$ be the $L^2_j$ completion of the tangent space to $\P_k(x, y)$ at $\gamma$. (If we fix $j$, these spaces are naturally identified for all $k \geq j$, so we do not include $k$ in the notation.) The moduli space $M(x, y)$ is the zero set of the section
$$ F: \P_k(x, y) \to T_{k-1}\P(x,y), \ \ \gamma \mapsto \frac{d\gamma}{dt} + (\nabla f)(\gamma).$$

Given a path $\gamma \in \P_k(x, y)$, we get the linearized operator
\begin{equation}
\label{eq:lgamma}
L_{\gamma}=(dF)_\gamma : T_{j, \gamma}\P(x, y) \to T_{j-1, \gamma}\P(x, y),
\end{equation}
for all $j$ with $1 \leq j \leq k$.

The definition of $L_{\gamma}$ involves taking derivatives of vector fields, i.e., covariant derivatives. To do this, we need to choose a connection on $TM$ in a neighborhood $U$ of the image of $\gamma$. This can be the Levi-Civita connection coming from the Riemannian metric or, as in \cite{AudinDamian}, the trivial connection induced from a trivialization of $TM$ over $U$. In either case, if we denote by $D$ the covariant derivative, and $w \in T_{j, \gamma}\P(x, y)$ is a vector field along $\gamma$, we have
\begin{equation}
\label{eq:Lgamma}
 L_{\gamma}(w) = \frac{Dw}{dt} + D(\nabla f)(w).
 \end{equation}
Note that if $D$ is the Levi-Civita connection, then the second term $D(\nabla f)(w)$ is the Hessian of $f$ applied to $w$ at $\gamma(t)$.

One can check that $L_{\gamma}$ is a Fredholm operator. When $\gamma \in M(x, y)$, the index of $L_{\gamma}$ is the expected dimension of the moduli space $M(x, y)$.

\begin{lemma}
\label{lem:Connections}
Suppose $k \geq 2$ and $1 \leq j \leq k$. Let $D$ and $D'$ be two connections on $TM$ in an open set $U$ containing the image of $\gamma \in \P_k(x, y)$.  Write $L_{\gamma}$ and $L'_{\gamma}$ for the operators \eqref{eq:lgamma} that correspond to $D$, resp. $D'$. Then:
\begin{enumerate}[(i)]
\item
The difference $L_{\gamma} - L'_{\gamma}$ is compact, so the operators $L_{\gamma}$ and $L'_{\gamma}$ have the same Fredholm index;
\item If $\gamma \in M(x, y)$, then $L_{\gamma} = L'_{\gamma}$.
\end{enumerate}
\end{lemma}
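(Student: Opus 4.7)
The plan hinges on recognizing $L_\gamma - L'_\gamma$ as a zeroth-order operator. Writing $A(X, Y) = D_X Y - D'_X Y$, additivity and the Leibniz rule cancel in the difference, so $A$ is $C^\infty$-bilinear in both arguments, hence a $(1, 2)$-tensor on $U$. Applying \eqref{eq:Lgamma} term by term, with $Dw/dt = D_{\dot\gamma} w$ and $D(\nabla f)(w) = D_w (\nabla f)$, gives
\begin{equation*}
(L_\gamma - L'_\gamma)(w) \;=\; A(\dot\gamma, w) \;+\; A(w, \nabla f),
\end{equation*}
a bundle endomorphism of $\gamma^* TM$ acting as pointwise multiplication, with coefficient at time $t$ determined by $\dot\gamma(t)$ and $(\nabla f)(\gamma(t))$.

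For part (i), the plan is to show that this multiplication operator is compact as a map $L^2_j \to L^2_{j-1}$. Both coefficient fields decay at infinity: $(\nabla f)(\gamma(t)) \to 0$ as $|t| \to \infty$ because $\gamma(t) \to x, y$ and $\nabla f$ vanishes there; and $\dot\gamma(t) \to 0$ because, writing $\gamma$ as a fixed reference path with compactly supported derivative plus an $L^2_k$ correction, the Sobolev embedding $L^2_{k-1}(\R) \hookrightarrow C_0(\R)$ (valid for $k \geq 2$) forces pointwise decay of $\dot\gamma$ at $\pm\infty$. Hence the matrix coefficients of $L_\gamma - L'_\gamma$ lie in $C_0(\R)$, and a standard cutoff argument concludes: on a large compact interval use Rellich compactness of $L^2_j \hookrightarrow L^2_{j-1}$; on the complementary tails use the smallness of the multiplier, together with boundedness of the inclusion. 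This is the main technical step, though a routine Rellich-plus-cutoff estimate. Compact perturbations preserve Fredholm index, giving the index equality.

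For part (ii), when $\gamma \in M(x, y)$ the flow equation $\dot\gamma = -(\nabla f)(\gamma)$ reduces the difference to
\begin{equation*}
(L_\gamma - L'_\gamma)(w) \;=\; -A(\nabla f, w) + A(w, \nabla f).
\end{equation*}
Both connections under consideration are torsion-free — the Levi-Civita connection by construction, and the trivial connection from a coordinate chart because the coordinate frame has vanishing Lie brackets — so the difference tensor is symmetric: $A(X, Y) - A(Y, X) = T^D(X, Y) - T^{D'}(X, Y) = 0$. The two terms cancel, giving $L_\gamma = L'_\gamma$. Conceptually, this is just the instance of the general principle that the linearization of a vanishing section of a vector bundle is canonically defined, independent of the connection used to compute it.
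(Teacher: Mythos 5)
Your proof is correct and follows the same route as the paper's: identify $L_\gamma - L'_\gamma$ as a zeroth-order multiplication operator whose coefficients decay at $\pm\infty$, then deduce compactness by cutoff plus Rellich. You are more careful about the tensor bookkeeping, though, and this is worth flagging. The paper records the difference as $L_\gamma - L'_\gamma = A\bigl(\tfrac{d\gamma}{dt} + (\nabla f)(\gamma)\bigr)$, with $A = D - D'$ viewed as an $\End(TM)$-valued one-form, so that part (ii) drops out immediately from $\tfrac{d\gamma}{dt} + \nabla f = 0$. But the Hessian term in \eqref{eq:Lgamma} contributes $A(w, \nabla f)$, not $A(\nabla f, w)$, so the true difference is $A(\dot\gamma, w) + A(w, \nabla f)$, exactly as you compute. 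The two agree, and part (ii) holds as stated, only when the difference tensor $A$ is symmetric --- equivalently, when $D$ and $D'$ have the same torsion. That hypothesis is implicit in the paper and holds in the two cases it invokes (Levi-Civita and a coordinate-chart trivialization, both torsion-free), and your proof makes it explicit; this is the right way to read the lemma. The remaining analysis --- decay of $\dot\gamma$ via $L^2_{k-1}(\R)\hookrightarrow C_0(\R)$ and the truncation argument --- matches the paper, modulo your compressing the Sobolev multiplication $L^2_{j-1}\times L^2_{k-1}\to L^2_{j-1}$ that makes the cutoff estimates work at Sobolev levels $j>1$.
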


\begin{proof}
Let $A=D - D' \in \Omega^1(U; \End(TM))$. Then
\begin{equation}
\label{eq:lgdiff}
L_{\gamma} - L'_{\gamma} =  A\bigl (\frac{d\gamma}{dt} + (\nabla f)(\gamma) \bigr).
\end{equation}
Since $k \geq 2$ and $j \leq k$, we have a Sobolev multiplication $L^2_{j-1} \times L^2_{k-1} \to L^2_{j-1}$. From here we see that the difference $L_{\gamma} - L'_{\gamma}$ takes $T_{j-1, \gamma} \P(x, y)$ to itself. 

Consider a sequence of smooth bump functions $\beta_n: \R \to [0,1]$ such that $\beta_n$ is supported on $[-n-1, n+1]$ and identically $1$ on $[-n, n]$. The truncations
$$ \beta_n \cdot (L_{\gamma} - L'_{\gamma}) : T_{j, \gamma}\P(x, y) \to T_{j, \gamma}\P(x, y)$$
converge to $L_{\gamma} - L'_{\gamma}$ in operator norm. Furthermore, when precomposing these truncations with the inclusion of $T_{j, \gamma}\P(x, y)$ into $T_{j-1, \gamma}\P(x, y)$, we obtain compact operators. (Indeed, we can apply Rellich's lemma, because we restrict attention to a fixed compact interval.) Since the limit of compact operators is compact, we conclude that $L_{\gamma} - L'_{\gamma}$, as an operator from $T_{j, \gamma}\P(x, y)$ to $T_{j-1, \gamma}\P(x, y)$, is compact.

Part (ii) follows immediately from \eqref{eq:lgdiff}.
\end{proof}

\begin{lemma} \label{lem:MSlu}
Fix $j \geq 1$. Then, the function $f$ is Morse-Smale if and only if for any two critical points $x$ and $y$ of $f$, and for any gradient trajectory $\gamma \in M(x, y)$, the operator $L_{\gamma}$ from \eqref{eq:lgamma} is surjective.
\end{lemma}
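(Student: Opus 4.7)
The plan is to convert surjectivity of the Fredholm operator $L_\gamma$ into a pointwise transversality statement for the stable and unstable manifolds at $\gamma(0)$, and then to note that as $\gamma$ ranges over $M(x,y)$ the point $\gamma(0)$ sweeps out all of $W^u_x \cap W^s_y$. The two structural inputs I would need are the kernel identification $\ker L_\gamma = T_{\gamma(0)} W^u_x \cap T_{\gamma(0)} W^s_y$ and the index formula $\ind L_\gamma = \index(x) - \index(y)$.

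To set up the kernel identification I would first invoke Lemma~\ref{lem:Connections}(ii) to replace the Levi-Civita connection by a trivial connection in a tubular neighbourhood of the image of $\gamma$. In such a trivialisation the equation $L_\gamma w = 0$ becomes a linear ODE $\dot w + H(t) w = 0$ in $\R^n$, with $H(t) \to \Hess f(x)$ as $t \to -\infty$ and $H(t) \to \Hess f(y)$ as $t \to +\infty$. Because $\gamma$ is a flow line between nondegenerate critical points its convergence is exponential, so the stable/unstable manifold theorem for hyperbolic rest points gives that solutions decaying at $-\infty$ are exactly the infinitesimal variations of $\gamma$ inside $W^u_x$, and solutions decaying at $+\infty$ are exactly the infinitesimal variations inside $W^s_y$; the $L^2_j$-solutions on $\R$ are those that decay at both ends, yielding the identification. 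The index formula would follow from a spectral-flow computation with endpoints the two asymptotic Hessians, or more elementarily from deforming $L_\gamma$ to a constant-coefficient model operator on the cylinder whose index is computed directly.

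Granting these, the conclusion is pure linear algebra. Writing $n = \dim M$, $\dim T_{\gamma(0)} W^u_x = \index(x)$ and $\dim T_{\gamma(0)} W^s_y = n - \index(y)$, and using $\dim(V \cap W) + \dim(V+W) = \dim V + \dim W$, I would compute
$$\dim \coker L_\gamma \;=\; \dim \ker L_\gamma - \ind L_\gamma \;=\; n - \dim\bigl( T_{\gamma(0)} W^u_x + T_{\gamma(0)} W^s_y \bigr).$$
Hence $L_\gamma$ is surjective if and only if $W^u_x$ and $W^s_y$ meet transversely at $\gamma(0)$. Since every point of $W^u_x \cap W^s_y$ arises as $\gamma(0)$ for some $\gamma \in M(x,y)$, universal surjectivity is equivalent to the Morse-Smale condition.

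The main technical obstacle is the kernel identification: one must be careful about which decay class on each half-line corresponds to which tangent space, and in particular must use exponential (not merely topological) convergence of $\gamma$ at $\pm \infty$ in order to apply the stable manifold theorem with enough regularity. Everything else is either standard Fredholm theory or a dimension count.
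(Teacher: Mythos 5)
Your argument is correct, and it reaches the same destination as the paper but by the dual route. The paper (following Audin--Damian) proves surjectivity of $L_\gamma$ by showing injectivity of the formal adjoint $L^*_\gamma$: one identifies $\ker L^*_\gamma$ with $\bigl(T_{\gamma(t)} W^u_x + T_{\gamma(t)} W^s_y\bigr)^{\perp}$, so the cokernel vanishes precisely when transversality holds. You instead identify $\ker L_\gamma$ with $T_{\gamma(0)} W^u_x \cap T_{\gamma(0)} W^s_y$, compute $\ind L_\gamma = \index(x) - \index(y)$ by spectral flow, and then extract $\dim\coker L_\gamma$ by Grassmann bookkeeping. The two approaches are linear-algebra mirror images of one another; each requires the same ODE input (linearising $\gamma$ in a trivialisation via Lemma~\ref{lem:Connections}(ii), and using exponential decay at $\pm\infty$ to pin down which solutions contribute). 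One advantage the paper's phrasing buys is that the extension to general $j \geq 1$ becomes a one-line bootstrapping remark: elements of $\ker L^*_\gamma$ are smooth, so $\ker L^*_\gamma$ is independent of $j$. In your version that independence is also true but is not stated: you would want to note explicitly that $L^2_j$-kernel elements of $L_\gamma$ are smooth solutions of a linear ODE with exponential decay, hence lie in every $L^2_j$, so the kernel (and hence the cokernel dimension via the index) does not depend on $j$.
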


\begin{proof}
See \cite[Theorem 10.1.5]{AudinDamian} for the case $j=1$. The main idea is that surjectivity of $L_{\gamma}$ is equivalent to injectivity of the formal adjoint $L^*_{\gamma}$ and, for any $t\in \R$, one can identify $\ker(L^*_{\gamma})$ with the orthogonal complement $(T_{\gamma(t)} W^u_x + T_{\gamma(t)}W^s_y)^{\perp}.$ Thus, $W^u_x$ and $W^s_y$ are transverse at $\gamma(t)$ if and only if $L^*_{\gamma}$ is injective. 

The same proof works for any $j \geq 1$. In fact, the kernel of $L^*_{\gamma} : T_{j, \gamma}\P(x, y) \to T_{j-1, \gamma}\P(x, y)$ consists of smooth configurations (as can be seen by a standard bootstrapping argument), and hence the kernel is  independent of $j$. 
\end{proof}

\section{Orientations}
\label{sec:or1}
We now discuss the construction of orientations in Morse homology. The traditional way is to choose orientations for $W^u_x$ for all critical points $x$. These induce orientations on $W^s_x$, and hence on the moduli spaces $M(x, y)$. When $\index(x)-\index(y)=1$, a trajectory $u \in M(x, y)$ is counted in the differential $\del$ with a sign $\pm 1$ depending on whether the orientation of $M(x, y)$ at $u$ coincides with the canonical orientation of $\R$.

An alternative way of constructing orientations, closer to Floer theory, is described in \cite[Chapter 3]{Schwarz}. Let us sketch this construction here. First, note that to every Fredholm operator $L$ we can associate a one-dimensional vector space, the determinant line
$$ \det(L) =  \Lambda^{\max} (\ker L) \otimes \Lambda^{\max}(\coker L)^*.$$
From the proof of Lemma~\ref{lem:MSlu} we see that the tangent bundle to $M(x, y)$ can be identified with the kernel of the operator $L_{\gamma}$ from \eqref{eq:Lgamma}, whereas the cokernel of $L_{\gamma}$ is trivial. Thus, orienting $M(x, y)$ is equivalent to orienting the determinant line bundle $\det(L) \to M(x, y)$, with fibers $\det(L_{\gamma})$ over $\gamma \in M(x, y)$.

Following \cite[Definition 2.1]{Schwarz}, let us equip $\bR = \R \cup \{\pm \infty \}$ with the structure of  a smooth manifold with boundary by requiring that the map $\bR \to [-1, 1], t \mapsto t/\sqrt{1+t^2}$ is a diffeomorphism. For every $x, y \in X$ (not necessarily critical points), we consider the space of smooth curves
$$ C^{\infty}_{x, y} = \{ \gamma \in C^{\infty}(\bR, X) \mid \gamma(-\infty)=x, \ \gamma(+\infty) = y\}.$$
For $\gamma \in C^{\infty}_{x, y}$, we let $\Sigma_{\gamma^*TX}$ be the space of Fredholm operators of the form
\begin{equation}
\label{eq:Kop}
 K = \frac{D}{dt} + A(t) : T_{1,\gamma} \P(x, y) \to T_{0,\gamma} \P(x, y),
 \end{equation}
where $A \in C^0(\rr; \End(\gamma^*TX))$ is a path of endomorphisms such that $K^{\pm} := A(\pm \infty)$ is non-degenerate and conjugated self-adjoint\footnote{In \cite{Schwarz}, for a real vector space $V$, an operator $A \in \End(V)$ is called conjugated self-adjoint if it is self-adjoint with respect to some scalar product. This condition is equivalent to $A$ being diagonalizable over $\R$.}. In particular, when $x$ and $y$ are critical points, the operator $L_{\gamma}$ from \eqref{eq:Lgamma} is of this form.

Consider the set of pairs $(\gamma, K)$, where $\gamma \in C^{\infty}(\bR, X)$ and $K \in \Sigma_{\gamma^*TX}$. Following \cite[Definition 3.7]{Schwarz}, two such pairs $(\gamma, K)$ and $(\zeta, L)$ are said to be equivalent if we have the asymptotical identities
$$ \gamma(\pm \infty) = \zeta(\pm \infty) \ \ \text{and}  \ \ K^{\pm} = L^{\pm}.$$
Let $\calE$ be the set of such equivalence classes. Thus, an element of $\calE$ is determined by two points $x, y \in X$, together with two non-degenerate and conjugated self-adjoint operators $K^- \in \End(T_xX)$ and $K^+ \in \End(T_yY)$.

It is proved in \cite[Section 3.2.1]{Schwarz} that, if $(\gamma, K)$ is equivalent to $(\zeta, L)$, there is a natural identification of the determinant lines $\det(K)$ and $\det(L)$. Hence, we can write $\det([\gamma, K])$ for a class $[\gamma, K] \in \calE$. As in \cite[Definition 3.15]{Schwarz}, we define a {\em coherent orientation} to be a map $\sigma$ that associates an orientation of $\det([\gamma, K])$ to any $[\gamma, K] \in \calE$, in a way compatible with concatenation of paths and operators; that is,
\begin{equation}
\label{eq:gluingOr}
 \sigma[\gamma, K] \# \sigma[\zeta, L] = \sigma[\gamma \# \zeta, K \# L],
 \end{equation}
whenever $\gamma(+\infty) = \zeta(-\infty)$ and $K^+ = L^-$. We denote by $\#$ the natural concatenation operator.

Proposition 3.16 in \cite{Schwarz} shows that coherent orientations exist. Concretely, a coherent orientation can be specified by choosing a basepoint $x_0 \in X$, some non-degenerate and conjugated self-adjoint operator $A \in \End(T_{x_0}X)$, choosing the trivial orientation for the class $[x_0, K_0]$, where $x_0$ is the constant path at $x_0$ and $K_0 = \frac{d}{dt} + A$, and finally choosing arbitrary orientations $\sigma[\gamma, K]$ for all classes $[\gamma, K] \in \calE$ with $[\gamma, K] \neq [x_0, K_0]$ and $\gamma(-\infty) = x_0$. Once this set of data is fixed, the orientations for the other classes in $\calE$ are determined by the gluing condition \eqref{eq:gluingOr}.

In particular, a coherent orientation gives orientations of the determinant line bundles $\det(L) \to M(x, y)$, and hence of the moduli spaces $M(x, y)$ themselves. These orientations can be used to define the Morse differential as in \eqref{eq:delx}. Furthermore, it is proved in \cite[Appendix B]{Schwarz} that the coherent orientation can be chosen so that the orientations on $M(x, y)$ coincide with those coming from orienting the unstable manifolds. Therefore, the resulting Morse complex is the same as in the classical definition.

We now introduce a third way of defining orientations for the Morse complex. This is a variant of Schwarz's construction, but closer to what Kronheimer and Mrowka do for monopole Floer homology in \cite[Section 20]{KMbook}. Rather than considering all Fredholm operators of the form $\frac{D}{dt} + A(t)$, we focus on the operators $L_{\gamma}$ as in \eqref{eq:Lgamma}, but with $x, y \in X$ and $\gamma \in \P(x, y)$ arbitrary. However, note that $L_{\gamma}$ is only Fredholm when the Hessian $\Hess(f)=D(\nabla f)$ is non-degenerate at its endpoints $x, y$. This is not true in general, so let us instead consider compact intervals $I=[t_1, t_2] \subset \R$, spaces of paths 
$$\P^I(x, y) = \{\gamma \in C^{\infty}(I, X) \mid \gamma(t_1)=x,\ \gamma(t_2)=y \},$$
and their $L^2_k$ completions $\P_k^I(x, y)$. At each $x \in X$, the Hessian $\Hess(f)_x$ is self-adjoint, and gives a decomposition of $T_x X$ into the span of the nonpositive and positive eigenspaces:
$$ T_x X = H^-_x \oplus H^+_x.$$
In other words, we are using the spectral decomposition of the non-degenerate self-adjoint operator $\Hess(f)_x - \epsilon$, for $\epsilon > 0$ small. 

Let $\Pi^-_x$ and $\Pi^+_x$ be the orthogonal projections onto $H^-_x$ and $H^+_x$, respectively.
For any $\gamma \in \P^I_k(x, y)$, we define a Fredholm operator
\begin{align}
\label{eq:tLgamma}
 \tL_{\gamma} &: T_{1,\gamma}\P^I(x, y) \to T_{0,\gamma} \P^I(x, y) \oplus H^+_x \oplus H^-_y \\
w &\mapsto \Bigl ( \frac{Dw}{dt} + \Hess(f)(w), -\Pi_x^+(w(t_1)), \Pi_y^+(w(t_2)) \Bigr).
\notag 
\end{align}

One can turn $\tL_{\gamma}$ into an operator of the form \eqref{eq:Kop} as follows. Pick a smooth map $f: I \to I$ such that $f' \geq 0$ and 
$$ f(t_1+ s) = t_1, \ \ f(t_2 - s) = t_2$$
for all $s \in [0, \delta]$, for some small $\delta > 0$. Then, consider the extended path
$$ \gamma^{\ext}: \bR \to X, \ \ \gamma^{\ext}(t) = \begin{cases} 
x & \text{for} \ t < t_1,\\
\gamma(f(t)) & \text{for} \ t \in I,\\
y & \text{for} \ t > t_2.
\end{cases} $$

Define a Fredholm operator $\tL^{\ext}_{\gamma} \in \Sigma_{(\gamma^{\ext})^*TX}$ by
$$ \tL^{\ext}_{\gamma}= \frac{D}{dt} + \Hess(f)_{\gamma^{\ext}(t)} - \epsilon, $$
for $\epsilon > 0$ small. Standard deformation and concatenation arguments show that the operators $\tL_{\gamma}$ and $\tL^{\ext}_{\gamma}$ have the same index, and that orienting $\det(\tL_{\gamma})$ is equivalent to orienting $\det(\tL^{\ext}_{\gamma})$.

Let $\Lambda_{\gamma}(x, y)$ be the two-element set consisting of the orientations of $\det(\tL_{\gamma})$ or, equivalently, of $\det(\tL^{\ext}_{\gamma})$. For fixed $x, y$ but different $\gamma \in \P_k(x, y)$, the pairs $(\gamma^{\ext}, \tL^{\ext}_{\gamma}) \in \calE$ are equivalent. Hence, by the results of \cite[Section 3.2.1]{Schwarz}, we have a canonical identification between the different sets $\Lambda_{\gamma}(x, y)$. Consequently, we can drop $\gamma$ from the notation and write $\Lambda(x, y)$ for any $\Lambda_{\gamma}(x, y)$.

By considering concatenation of paths, we obtain a natural composition map
\begin{equation}
\label{eq:composeorientations}
\Lambda(x, y) \times \Lambda(y, z) \to \Lambda(x, z).
\end{equation}

\begin{definition}
\label{def:sco}
A {\em specialized coherent orientation} $o$ consists of choices of elements $o_{x, y} \in \Lambda(x, y)$, one for each $x, y \in X$, such that 
$$o_{x, y} \cdot o_{y, z} = o_{x, z}$$
for all $x, y, z \in X$, where the multiplication is with respect to \eqref{eq:composeorientations}.
\end{definition} 

A specialized coherent orientation can be constructed as follows. We choose a critical point $x_0$ of $f$, and consider the operator $\tL_{x_0}$ for the constant path at $x_0$. There is a canonical identification $\det(\tL_{x_0}) \cong \R$, and hence from $o_{x_0, x_0} \cong \{\pm 1\}$ we can choose the element $+1$. Then, we choose arbitrary elements of $o_{x_0, x}$ for all other $x \in X$, and we obtain elements in any $o_{x, y}$ using the concatenation rule.

Using the identification between $\det(\tL_{\gamma})$ and $\det(\tL^{\ext}_{\gamma})$, we see that a coherent orientation gives rise to a specialized coherent orientation. Conversely, any specialized coherent orientation can be extended to a coherent orientation. To check this last statement, consider the construction of a specialized coherent orientation from the choices of $x_0$ and elements in $o_{x_0, x}$, as above. This is a subset of the data needed to give a coherent orientation. Indeed, letting $K_0 = \frac{d}{dt} + \Hess(f)_{x_0}$, we see that from the elements of $o_{x_0, x}$ we get orientations $\sigma[\gamma, K]$ for paths $\gamma$ starting at $x_0$ and ending at $x$, and for $K$ with $K^+=H^+_x$. Let us now also choose orientations $\sigma[\gamma, K]$ for  $\gamma \in \P_k(x_0,x)$ but $K^+ \neq H^+_x$. This gives the necessary data to construct a coherent orientation.

Finally, note that a specialized coherent orientation gives rise to orientations on the moduli spaces $M(x, y)$ in the following way. Given $\gamma \in M(x, y)$, pick $I=[t_1, t_2] \subset \R$ large enough so that the operators $\Hess(f)_{\gamma(t)}$ are non-degenerate for all $t \in (-\infty, t_1] \cup [t_2, \infty)$. Consider the restriction of $\gamma$ to the interval $I$, and the corresponding operator $\tL_{\gamma|_I}$. The orientation on $\det(\tL_{\gamma|_I})$ gives an orientation on $\det(L_{\gamma})$, by a concatenation process similar to the one in \cite[Section 20.4]{KMbook}. In turn, this orients the tangent space $T_{\gamma}M(x, y)$. 

We conclude that a specialized coherent orientation $o$ can be used to give signs in the Morse complex. When $o$ is induced by a coherent orientation $\sigma$, the signs coming from $o$ are the same as those coming from $\sigma$. Thus, the three ways of producing orientations, which we have described in this section, all produce the same Morse complex.

\section{Quasi-gradient vector fields}
\label{sec:quasi}
The purpose of this subsection is to extend Morse-Smale theory to a class of vector fields that are not gradients. Note that among all smooth vector fields on a manifold, gradient vector fields are rather special---even when allowing both the metric and the function to vary. For example, the differential of a gradient vector field at a critical point has only real eigenvalues (because it is a symmetric operator with respect to the given metric). This is also true for the gradient-like (also called pseudo-gradient) vector fields that are sometimes used in Morse theory, e.g., in \cite{MilnorHCob, AudinDamian}. We want to relax this condition, and allow for the stationary points of our vector field to only be hyperbolic, in the following sense:

\begin{definition}
Let $v$ be a smooth vector field on a manifold $X$. A stationary point $x$ of $v$ is called {\em hyperbolic} if (the complexification of) the derivative $(dv)_x: T_xX \to T_xX$ has no purely imaginary eigenvalues.
\end{definition}

We now introduce the class of vector fields we want to work with.

\begin{definition}
\label{def:qg}
Let $X$ be a smooth manifold. A smooth vector field $v$ on $X$ is called {\em Morse quasi-gradient} if the following conditions are satisfied: 
\begin{enumerate}[(a)]
\item All stationary points of $v$ are hyperbolic.
\item There exists a smooth function $f: X \to \R$ such that $df(v) \geq 0$ at all $x \in X$, with equality holding if and only if $x$ is a stationary point of $v$.
 \end{enumerate}
\end{definition}

\begin{example}
The gradient vector field of a Morse function $f$ has hyperbolic stationary points, since the eigenvalues of $(dv)_x$ are real and non-zero; further, the Morse function $f$ can be used in condition (b).
\end{example}

For future reference, it is worth pointing out the following.

\begin{lemma}
\label{lem:qgCritical}
Let $v$ be a Morse quasi-gradient vector field on a smooth manifold $X$, and let $f$ be a function as in part (b) of Definition~\ref{def:qg}. Then, any stationary point of $v$ is a critical point of $f$.
\end{lemma}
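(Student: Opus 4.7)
The plan is to consider the auxiliary function $g : X \to \R$ defined by $g(x) = df_x(v(x))$. By hypothesis (b) in Definition~\ref{def:qg}, $g \geq 0$ everywhere with equality exactly on the stationary set of $v$. In particular, if $x_0$ is a stationary point of $v$, then $g(x_0) = 0$, so $x_0$ is a global minimum of $g$ and consequently $dg_{x_0} = 0$. The whole argument reduces to computing what this vanishing tells us.

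Next I would expand $dg_{x_0}$ in local coordinates $(x^1,\dots,x^n)$ around $x_0$. Writing $g(x) = \sum_i (\partial_i f)(x)\, v^i(x)$ and applying the product rule, the ``Hessian-type'' terms $\sum_i (\partial_j\partial_i f)(x_0)\, v^i(x_0)$ drop out because $v(x_0) = 0$. What survives is simply
\begin{equation*}
 (dg)_{x_0}(w) = df_{x_0}\bigl((dv)_{x_0}(w)\bigr), \qquad w \in T_{x_0}X.
\end{equation*}
Thus $dg_{x_0} = 0$ is equivalent to the operator identity $df_{x_0} \circ (dv)_{x_0} = 0$.

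To finish, I would invoke hyperbolicity of $x_0$: by Definition~\ref{def:qg}(a), $(dv)_{x_0}$ has no purely imaginary eigenvalues, and since $0$ lies on the imaginary axis this in particular rules out a zero eigenvalue. Hence $(dv)_{x_0}$ is an isomorphism of $T_{x_0}X$, and composing the identity above with its inverse yields $df_{x_0} = 0$, so $x_0$ is a critical point of $f$. There is no serious obstacle in this proof; the only subtle point is the observation that ``no purely imaginary eigenvalues'' is used precisely to exclude the zero eigenvalue, which is exactly what promotes $(dv)_{x_0}$ to an invertible linear map.
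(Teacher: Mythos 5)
Your argument is correct and is essentially the paper's own proof: both observe that a stationary point is a global minimum of $df(v)$, differentiate to get $df_{x_0}\circ (dv)_{x_0}=0$ after the Hessian term drops out, and conclude via invertibility of $(dv)_{x_0}$ coming from hyperbolicity. The only cosmetic difference is that you compute in local coordinates where the paper writes the same identity invariantly.
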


\begin{proof}
Let $x$ be a stationary point of $v$. Since $x$ is a minimum of the function $df(v)$, we have $d(df(v))_x = 0$. For $y \in T_xX,$ we get 
$$ 0= d(df(v))_x (y) = (d^2f)_x(v_x, y) + (df)_x((dv)_x(y)).$$
The first term in the last expression is zero, because $v_x = 0$. Further, since $x$ is hyperbolic, we have that $(dv)_x$ is an automorphism of $T_xX$, hence $(dv)_x(y)=z$ can take any value in $T_x X$. We conclude that $(df)_x(z)=0$ for all $z \in T_x X $, so $x$ is a critical point of $f$.
\end{proof}

For the rest of this subsection we will assume that $X$ is closed and oriented. Let $v$ be a Morse quasi-gradient vector field on $X$. We seek to establish several properties that $v$ has in common with gradients of Morse functions.
 
Let $\Crit$ be the set of stationary points of $v$. If $x \in \Crit$, hyperbolicity implies that we have a decomposition
$$ T_xX = T_x^sX \oplus T_x^uX,$$
where $T_x^sX$ and $T_x^uX$ are invariant subspaces for $L=(dv)_x$, the eigenvalues of $L|_{T_x^sX}$ have negative real part, and the eigenvalues of $L|_{T_x^uX}$ have positive real part. (See \cite[Proposition 2.8]{PalisMelo} for a proof.) We define the {\em index} of $x$ to be the dimension of $T_x^uX$.

The local structure of flows near hyperbolic singularities is well-understood; a good reference is \cite[Chapter 2]{PalisMelo}. Theorem 4.10 in \cite{PalisMelo} says that around every $x \in \Crit$ there is a local coordinate chart $\N_x$ such that the restriction of $v$ to $\N_x$ is conjugate to a linear flow (under a homeomorphism). This implies that $x$ is the only stationary point inside $\N_x$. Since $X$ is compact, we deduce that the set $\Crit$ is finite.

We can arrange so that the neighborhoods $\N_x$ are disjoint for different $x$. By part (b) of Definition~\ref{def:qg}, there is an $\epsilon > 0$ such that 
\begin{equation}
\label{eq:eeps}
df(v)(p) \geq \epsilon, \ \ \forall \ p \in X - \bigcup_{x \in \Crit} \N_x.
\end{equation}

Next, consider the flow on $X$ associated to $v$. Let $\gamma: \R \to X$ be a flow line, i.e., $d\gamma/dt + v(\gamma(t))=0.$ We have:
\begin{equation}
\label{eq:fgamma}
 \frac{d}{dt}f(\gamma(t))= -df_{\gamma(t)} \circ v(\gamma(t)) \leq 0,
 \end{equation}
with equality only if $\gamma(t)$ is stationary. Therefore, $f$ decreases (strictly) along non-stationary flow lines.

\begin{lemma}
\label{lem:ao}
Let $v$ be a Morse quasi-gradient vector field on a closed, oriented, smooth manifold $X$. Let $\gamma: \R \to X$ be a flow line of $v$. Then, $\lim_{t\to -\infty} \gamma(t)$ and $\lim_{t\to +\infty} \gamma(t)$ exist, and they are both stationary points of $v$. 
\end{lemma}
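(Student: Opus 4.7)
The plan is to use $f$ as a strict Lyapunov function together with the classical fact that $\omega$-limit sets of continuous flows on compact spaces are nonempty, closed, forward-invariant, and connected. I treat $t \to +\infty$; the $t \to -\infty$ case follows by applying the same argument to the vector field $-v$ with Lyapunov function $-f$, which still satisfies Definition~\ref{def:qg} (hyperbolicity is preserved under $v \mapsto -v$, and $d(-f)(-v) = df(v)$).

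By \eqref{eq:fgamma}, $t \mapsto f(\gamma(t))$ is weakly decreasing, and since $f$ is continuous on the compact $X$ it is bounded below. Hence $f(\gamma(t))$ converges to some $L \in \R$ as $t \to +\infty$. Define the $\omega$-limit set
$$\omega(\gamma) = \{p \in X : \gamma(t_n) \to p \text{ for some sequence } t_n \to +\infty\}.$$
Compactness of $X$ makes $\omega(\gamma)$ nonempty and closed, and continuity of $f$ gives $f \equiv L$ on $\omega(\gamma)$.

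Next I would show $\omega(\gamma) \subset \Crit$. Let $\phi_\tau$ denote the time-$\tau$ map of the flow in our sign convention; it is globally defined since $X$ is compact. Continuous dependence on initial conditions makes $\omega(\gamma)$ forward-invariant: if $\gamma(t_n) \to p$, then $\gamma(t_n + \tau) = \phi_\tau(\gamma(t_n)) \to \phi_\tau(p)$, so $\phi_\tau(p) \in \omega(\gamma)$. Hence $f(\phi_\tau(p)) = L$ for all $\tau \geq 0$, and differentiating at $\tau = 0$ yields $df_p(v_p) = 0$. By Definition~\ref{def:qg}(b), this forces $v_p = 0$, i.e., $p \in \Crit$. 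This is the only step that uses the structure of a Morse quasi-gradient in an essential way.

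Finally, $\omega(\gamma)$ is connected by the standard argument: if it decomposed as $A \sqcup B$ with $A, B$ nonempty and closed, pick disjoint open neighborhoods $U \supset A$ and $V \supset B$; compactness forces $\gamma(t) \in U \cup V$ for all large $t$ (otherwise one extracts a subsequential limit in $\omega(\gamma) \setminus (U \cup V) = \emptyset$, a contradiction), and connectedness of $\gamma([T, \infty))$ then confines it to a single component, contradicting that the other component meets $\omega(\gamma)$. Since $\Crit$ is finite (as established in the discussion preceding \eqref{eq:eeps}), $\omega(\gamma)$ is a connected subset of a finite set, hence a single stationary point $p$, and $\gamma(t) \to p$ as $t \to +\infty$. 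The only mild obstacle is the inclusion $\omega(\gamma) \subset \Crit$, where one must exploit the strict positivity in Definition~\ref{def:qg}(b) rather than merely having a weakly decreasing Lyapunov function; everything else is standard compact-dynamics content.
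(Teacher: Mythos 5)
Your proof is correct and self-contained. It takes a genuinely different (and slightly cleaner) route from the paper. The paper's proof works directly with the neighborhoods $\N_x$ and the uniform lower bound $df(v) \geq \epsilon$ on $X - \bigcup_x \N_x$ from \eqref{eq:eeps}: it first argues that $\frac{d}{dt}f(\gamma(t)) \to 0$ as $t \to \pm\infty$, then invokes \eqref{eq:eeps} to conclude that the tail of the trajectory is trapped in a single chart $\N_x$, and finally identifies the unique accumulation point with $x$. Your proof instead packages the same Lyapunov input into the standard LaSalle-type argument: $\omega(\gamma)$ is nonempty, closed, invariant, and $f$ is constant on it, so by strict positivity of $df(v)$ off $\Crit$ it sits inside $\Crit$; connectedness of $\omega$-limit sets plus finiteness of $\Crit$ then forces it to be a singleton. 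This has the minor advantage of sidestepping the claim that $\frac{d}{dt}f(\gamma(t)) \to 0$ pointwise (which, while true here because compactness bounds the second derivative of $f\circ\gamma$, does require that extra observation for a bounded monotone function), replacing it with the cleaner invariance argument. On the other hand, the paper's quantitative estimate \eqref{eq:eeps} is reused elsewhere in the text, so their approach is not wasted effort. Both arguments implicitly rely on the same preparatory facts (critical points are hyperbolic, hence isolated, hence finite). One wording nit: what you establish is full invariance of $\omega(\gamma)$, not merely forward-invariance, though forward-invariance is all you use; and for the connectedness argument one should choose $U,V$ with disjoint \emph{closures} (e.g.\ metric $\epsilon$-neighborhoods) to make the final contradiction immediate.
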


\begin{proof}
This is similar to the Morse gradient case; cf. \cite[Proposition 3.19]{BanyagaHurtubise}. Let $\N_x$ be the neighborhoods of $x \in \Crit$ constructed above, and $\epsilon > 0$ be such that \eqref{eq:eeps} holds.

Compactness of $X$ implies that $\gamma(t)$ is defined for all $t \in \R$, and $f \circ \gamma : \R \to \R$ is bounded. (Here, $f$ is the function in part (b) of Definition~\ref{def:qg}.) From Equation~\eqref{eq:fgamma} we see that
$$ \lim_{t \to \pm \infty} -df(v)(\gamma(t))= \lim_{t \to \pm \infty}  \frac{d}{dt}f(\gamma(t))=0.$$
In view of \eqref{eq:eeps}, this shows that for $t \ll 0$, the flow line $\gamma(t)$ is contained in the chart $\N_x$ for some $x$. Further, if $t_n \to -\infty$, then at any accumulation point of the sequence $\{\gamma(t_n)\} \subseteq X$ we must have $df(v)=0$. Hence, the only accumulation point (which exists by the compactness of $X$) must be the stationary point $x$. We deduce that $\lim_{t\to -\infty} \gamma(t)=x$. A similar argument applies to $\gamma(t)$ as $t\to +\infty$.
\end{proof}

Hyperbolicity of stationary points implies the existence of stable and unstable manifolds $W^s_x, W^u_x$, cf. the Stable Manifold Theorem \cite[Theorem 6.2]{PalisMelo}. These are the images of injective immersions
$$ E^s : T_x^sX \to W^s_x \subseteq X,$$
$$ E^u : T_x^sX \to W^u_x \subseteq X.$$
If we did not have condition (b) in Definition~\ref{def:qg}, the maps $E^s$ and $E^u$ may not be embeddings, and $W^s_x, W^u_x$ may not be manifolds. (See \cite[p.74]{PalisMelo} for an example.) However, for Morse quasi-gradients, $E^s$ and $E^u$ are homeomorphisms onto their images, and hence embeddings. The proof of this fact can be taken verbatim from the Morse gradient case; cf. Lemma 4.10 in \cite{BanyagaHurtubise}. Indeed, the main input in that proof is the existence of limits of flow lines at $\pm \infty$; this was established for quasi-gradients in Lemma~\ref{lem:ao}.

Thus, for every $x \in \Crit$, we have stable and unstable submanifolds $W^s_x, W^u_x \subseteq M$, of dimensions $\dim(X)-\ind(x)$ and $\ind(x)$, respectively. We can then formulate the Morse-Smale condition as in the gradient case: 

\begin{definition}
\label{def:Mqv}
A Morse quasi-gradient vector field $v$ is called {\em Morse-Smale} if for all stationary points $x$ and $y$, the unstable manifold $W^u_{\x}$ and the stable manifold $W^s_{\y}$ intersect transversely. 
\end{definition}

Morse-Smale quasi-gradients are a particular example of the {\em Morse-Smale vector fields}, which play an important role in the theory of dynamical systems. We recall their definition below. See \cite[Chapter 5]{PalisMelo} for an introduction to the subject. 

\begin{definition}[\cite{PalisMelo}, p.118-119]
\label{def:MS}
Let $X$ be a closed, smooth manifold. A smooth vector field $v$ on $X$ is called {\em Morse-Smale} if the following conditions are satisfied:
\begin{enumerate}[(a)]
\item $v$ has a finite number of critical elements (stationary points and closed orbits), all of which are hyperbolic.\footnote{See \cite[p.95]{PalisMelo} for the definition of hyperbolicity for closed orbits. Stable and unstable manifolds for closed orbits are defined in the obvious way; see \cite[p.98]{PalisMelo}.}
\item If $\sigma_1$ and $\sigma_2$ are critical elements of $v$, then the unstable manifold of $\sigma_1$ is transverse to the stable manifold of $\sigma_2$;
\item The set of nonwandering points for $v$ is equal to the union of the critical elements of $v$.
\end{enumerate}
Here, $x \in X$ is called a {\em wandering point} for $v$ if there exists a neighborhood $U$ of $x$ and a number $t_0 > 0$ such that $\Phi_t(U) \cap U = \emptyset$ for $|t| > t_0$, where $\{\Phi_t\}_{t \in \R}$ is the flow generated by $v$. Otherwise, we say that $x$ is {\em nonwandering}.
\end{definition}

Morse-Smale vector fields satisfy structural stability, that is, a small perturbation of a Morse-Smale dynamical system is conjugate to the original system via a homeomorphism. 

\begin{lemma}
\label{lem:isMS}
A Morse-Smale quasi-gradient vector field, in the sense of Definition~\ref{def:Mqv}, is a Morse-Smale vector field in the sense of Definition~\ref{def:MS} and, furthermore, it has no closed orbits.
\end{lemma}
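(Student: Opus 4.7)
The plan is to handle the two assertions separately: first rule out closed orbits, then verify conditions~(a), (b), (c) of Definition~\ref{def:MS}.

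For the absence of closed orbits, I will use $f$ as a Lyapunov function. If $\gamma$ were a closed orbit of period $T > 0$, then $\gamma$ is non-stationary and satisfies $f(\gamma(0)) = f(\gamma(T))$; but equation~\eqref{eq:fgamma} gives strict monotonicity along non-stationary flow lines, so $f(\gamma(T)) < f(\gamma(0))$, a contradiction.

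For Definition~\ref{def:MS}~(a), now that closed orbits are excluded the critical elements coincide with the stationary points, all of which are hyperbolic by Definition~\ref{def:qg}~(a). Their finiteness was already recorded in the discussion preceding Lemma~\ref{lem:ao}, via the local conjugation to a linear flow near each hyperbolic singularity together with compactness of $X$. Condition~(b) is literally the hypothesis of Definition~\ref{def:Mqv}.

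The real content lies in condition~(c). Stationary points are trivially nonwandering since $\Phi_t$ fixes them, so what must be shown is that every non-stationary $x$ is wandering. The strategy is to trap short pieces of orbits between two level sets of $f$ and invoke monotonicity to prevent return. Concretely, using that $f$ takes only finitely many stationary values, I will choose $\epsilon > 0$ so that $[f(x) - \epsilon, f(x) + \epsilon]$ contains no stationary value, and set $U = f^{-1}\bigl((f(x) - \epsilon, f(x) + \epsilon)\bigr)$. Then $\overline{U}$ is a compact set disjoint from $\Crit$, so by continuity $df(v) \geq \delta$ on $\overline{U}$ for some $\delta > 0$. By~\eqref{eq:fgamma} any $y \in U$ satisfies $f(\Phi_t(y)) \leq f(y) - \delta t$ as long as the forward orbit remains in $U$, so it must exit through the lower boundary by time $t_0 := 2\epsilon/\delta$; the global monotonicity in~\eqref{eq:fgamma} then prevents $f(\Phi_t(y))$ from ever climbing back above $f(x) - \epsilon$. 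Thus $\Phi_t(U) \cap U = \emptyset$ for $t > t_0$, and the symmetric argument (where $f$ increases moving backward in time) handles $t < -t_0$.

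I expect condition~(c) to be the main obstacle; the novelty is the use of the Lyapunov function $f$ from Definition~\ref{def:qg}~(b) to forbid recurrence, since in the quasi-gradient setting we do not have direct control over the flow beyond this monotonicity. Once that step is in place, the remaining assertions reduce to bookkeeping with the definitions.
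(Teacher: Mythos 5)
Your overall strategy is the same as the paper's, which (very tersely) deduces all three claims from the monotone decrease of $f$ along non-stationary flow lines; your treatments of the non-existence of closed orbits and of conditions (a) and (b) are fine.

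The argument you give for condition (c), however, has a gap. You choose $\epsilon>0$ so that $[f(x)-\epsilon, f(x)+\epsilon]$ contains no stationary value of $f$, and such an $\epsilon$ exists only when $f(x)$ itself is not a stationary value. There is no reason for a nonstationary $x$ to have $f(x)\notin f(\Crit)$: for example, a flow line emanating from a saddle $c$ and ending at a lower stationary point $c'$ sweeps through every value in $(f(c'),f(c))$, and if a third stationary point $c''$ has $f(c'')$ in that interval, the point where the orbit crosses the level $f(c'')$ is a nonstationary $x$ with $f(x)=f(c'')$. In that case the band $U = f^{-1}\bigl((f(x)-\epsilon,f(x)+\epsilon)\bigr)$ always contains a stationary point, so the uniform lower bound $df(v)\geq\delta$ on $\overline{U}$ fails; worse, that stationary point is fixed by $\Phi_t$, so $\Phi_t(U)\cap U\neq\emptyset$ for every $t$ and $U$ cannot witness wandering no matter how small $\epsilon$ is taken.

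A cleaner version of the same idea avoids the band entirely. Since $x$ is non-stationary, \eqref{eq:fgamma} gives $f(\Phi_1(x)) < f(x)$; put $3\epsilon = f(x) - f(\Phi_1(x)) > 0$. By continuity of $f$ and of the time-one map one can choose a neighborhood $U$ of $x$ on which $f > f(x)-\epsilon$ and $f\circ\Phi_1 < f(x)-2\epsilon$. Then for $y\in U$ and $t\geq 1$, monotonicity of $f$ along orbits gives $f(\Phi_t(y)) \leq f(\Phi_1(y)) < f(x)-2\epsilon < f(y')$ for every $y'\in U$, so $\Phi_t(y)\notin U$; the time-reversed argument handles $t\leq -1$, and $x$ is wandering with $t_0 = 1$. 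This repair keeps your Lyapunov-function strategy intact and removes the dependence on $f(x)$ being a regular value of $f|_{\Crit}$.
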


\begin{proof}
Closed orbits do not exist because $f$ decreases along flow lines. Parts (a) and (b) of Definition~\ref{def:MS} are immediate consequences of Definition~\ref{def:Mqv}. Part (c), the fact that nonstationary points are wandering, follows since $f$ decreases along flow lines.
\end{proof} 
 
Suppose $v$ is a Morse-Smale quasi-gradient field. For every $x, y \in \Crit$, the intersection 
$$M(x,y):= W^u_x \cap W^s_y$$ is transverse, and hence a manifold of dimension $\index(x) - \index(y)$. We can construct a Morse complex $(C_*, \del)$ as in the gradient case. The groups $C_*$ are generated by the stationary points, the grading is determined by the index, and the differential $\del$ is given by the formula \eqref{eq:delx}.

\begin{proposition} 
\label{prop:MSquasi}
Let $v$ be a Morse-Smale quasi-gradient vector field on a closed, oriented, smooth manifold $X$. Let $(C_*, \del)$ be the corresponding Morse complex. Then $\del^2=0$, and the homology $H_*(C)$ is isomorphic to $H_*(M)$.
\end{proposition}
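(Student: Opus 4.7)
The plan is to adapt the standard proof of Morse homology to the quasi-gradient setting, using the fact that each place where the gradient property is classically invoked can be replaced by a combination of hyperbolicity of stationary points and strict descent of $f$ along nonstationary flow lines. All three main ingredients---compactness up to broken trajectories, gluing, and identification with singular homology---survive this replacement.

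First I would prove compactness. Given a sequence $[\gamma_n] \in \breve{M}(x,y)$, the inequalities $f(x) \geq f(\gamma_n(t)) \geq f(y)$ combined with the lower bound $df(v) \geq \epsilon$ outside $\bigcup_{x \in \Crit} \N_x$ from \eqref{eq:eeps} control how long each $\gamma_n$ can stay away from stationary points. Combined with the Hartman-Grobman normal form near each hyperbolic stationary point from \cite[Chapter 2]{PalisMelo}, this allows one to extract, after reparametrization, a subsequence converging on compact subsets to a broken trajectory $\gamma_1 \# \cdots \# \gamma_k$ with $\gamma_i \in M(x_{i-1}, x_i)$, $x = x_0, x_1, \ldots, x_k = y$ in $\Crit$, and $f(x_{i-1}) > f(x_i)$. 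The bookkeeping is identical to the Morse gradient case treated in, e.g., \cite[Chapter 3]{Schwarz}.

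To show $\partial^2 = 0$ I would apply the gluing theorem at hyperbolic singularities. For $x,y$ with $\ind(x) - \ind(y) = 2$, the ends of the one-dimensional manifold $\breve{M}(x,y)$ correspond to broken trajectories through intermediate stationary points $z$ with $\ind(z) = \ind(x)-1$. Gluing is a local implicit-function-theorem construction near such a $z$: the exponential contraction/expansion in the local linearized model, together with the Morse-Smale transversality---equivalently, surjectivity of the linearized operator $L_\gamma$ as in Lemma~\ref{lem:MSlu}, whose proof extends to quasi-gradients since it uses only hyperbolicity and transversality---produces a one-parameter family of unbroken trajectories in $M(x,y)$ resolving each broken pair. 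Signs are handled by the coherent orientations of Section~\ref{sec:or1}, which were set up in the generality of operators of the form \eqref{eq:Kop} and therefore accommodate the non-self-adjoint linearizations that can appear here.

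Finally, to identify $H_*(C)$ with $H_*(X)$, I would invoke the Thom-Smale-type theorem that the unstable manifolds $\{W^u_x\}_{x \in \Crit}$ of a Morse-Smale vector field on a closed manifold form a CW decomposition of $X$ with cell $W^u_x$ of dimension $\ind(x)$, whose cellular boundary matches $\partial$ up to sign; the closure condition $\overline{W^u_x} \setminus W^u_x \subset \bigcup_{\ind(y) < \ind(x)} W^u_y$ needed for this CW structure follows directly from the compactness statement above. The hypotheses usually added to the Morse-Smale condition to make this work---no closed orbits and no homoclinic connections---are both automatic here from strict descent of $f$. The main obstacle throughout is that $(dv)_x$ need not be symmetric, so the Hessian-based local coordinates often used in the Morse gradient literature are not directly available; this is overcome by appealing to the local flow theory at hyperbolic singularities from \cite{PalisMelo}, which supplies local stable/unstable foliations with exponential convergence rates strong enough to substitute for the Hessian at every step.
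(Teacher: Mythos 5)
Your proposal follows the classical ``compactness $+$ gluing $+$ CW decomposition'' route, whereas the paper's proof is shorter and goes entirely through Conley index theory: it constructs a Morse decomposition into attractor--repeller pairs, observes that the quotients in the resulting filtration are the Conley indices of the stationary points (spheres of dimension $\ind(x)$), and then invokes Franzosa's connection matrix theory together with McCord's Theorem~3.1 to identify the connecting homomorphisms with signed counts of flow lines. The two routes diverge most sharply on the final step, and this is where your argument has a real gap.

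The claim that the unstable manifolds of a Morse--Smale flow on a closed manifold form a CW decomposition whose cellular chain complex reproduces $(C_*,\partial)$ is not a consequence of the closure condition $\overline{W^u_x}\setminus W^u_x \subset \bigcup_{\ind(y)<\ind(x)} W^u_y$ plus compactness, as you assert. That condition gives a filtration $X_0\subset X_1\subset\cdots$ but does \emph{not} by itself supply characteristic maps $\overline{D^{\ind(x)}}\to X$ restricting to homeomorphisms on the open cells, nor does it identify the attaching maps' degrees with $n(x,y)$. Even in the gradient case this Thom--Smale statement is notoriously delicate; it was only carefully established (under additional hypotheses) by Laudenbach and by Qin around 2010, and I am not aware of a reference that covers general Morse--Smale (non-gradient) flows. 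So ``I would invoke the Thom--Smale-type theorem'' black-boxes what is in fact the hardest part of your proof. The filtration you have \emph{is} usable, but what it directly gives you is a sequence of attractor--repeller pairs and index-pair quotients, together with connecting maps in the long exact sequences --- at which point you are doing exactly the paper's Conley-index argument, and you still need McCord's local-structure theorem (or an equivalent) to show the connecting maps count flow lines with sign. By contrast, the Conley approach never needs the cells to fit together into an actual CW complex.

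Two smaller points. For the gluing step your appeal to \cite{PalisMelo} is a bit thin: the $\lambda$-lemma there controls the flow near a single hyperbolic point, but showing that the ends of a $1$-dimensional $\breve M(x,y)$ are exactly once-broken trajectories (with both ends appearing) is really the content of McCord's Theorem~3.1, which is what the paper cites; you should cite something of that strength rather than inferring it from the local normal form. And on orientations, Section~\ref{sec:or1} as written assumes the endpoint operators $K^\pm$ are conjugated self-adjoint, i.e.\ diagonalizable over $\R$; hyperbolic endpoints are strictly more general, and the paper flags this exactly as the one modification needed (last paragraph of Section~\ref{sec:quasi}). You should acknowledge that modification rather than asserting the section already covers the case at hand.
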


\begin{proof}
In the gradient case, one way to prove that Morse homology recovers singular homology is by using the Conley index; cf. \cite{FloerMorse, SalamonMorse, Franzosa, McCord} or \cite[Chapter 7]{BanyagaHurtubise}. (See Section~\ref{subsec:ConleyMorse} below for the definition of the Conley index.) The basic idea is to find a Morse decomposition of the flow into attractor-repeller pairs. This produces a filtration of $X$ by index pairs, such that the quotients are the Conley indices of the stationary points $x \in \Crit$. These quotients are homotopy equivalent to spheres of dimensions $\index(x)$, and the connecting homomorphisms between their homology groups are given by the signed count of flow lines.

This proof can be adapted to the setting of Morse-Smale quasi-gradients. Indeed, one can construct Morse decompositions and connection matrices for general flows, as in the work of Franzosa \cite{Franzosa}. Further, Theorem 3.1 in \cite{McCord} describes the neighborhood of a flow line between two hyperbolic points in a Morse-Smale (not necessarily gradient) flow. This shows that the connection  matrices are given by counts of flow lines, i.e. that each gradient flow line contributes plus or minus $1$ to the boundary operator. Orientations can be fixed as in \cite{FloerMorse, SalamonMorse}.
\end{proof}

The rest of the discussion in Section~\ref{sec:standard} extends to quasi-gradients as well. The Morse-Smale condition can be phrased in terms of the surjectivity of the operator
$$  L_{\gamma}(w) = \frac{Dw}{dt} + D(v)(w), $$
as in Lemma~\ref{lem:MSlu}. Furthermore, the discussion from Section~\ref{sec:or1} carries over to this setting, so the moduli spaces $M(x,y)$ can be oriented using coherent orientations or specialized coherent orientations. The only caveat is that in the definition of coherent orientations, when we define the sets $\Sigma_{\gamma^*TX}$, we should allow the operators $K^{\pm}$ to be hyperbolic, rather than conjugated self-adjoint.

\section{Morse homology for isolated invariant sets}  
\label{subsec:ConleyMorse}
Suppose $X$ is a (possibly non-compact) smooth, oriented manifold equipped with a Morse quasi-gradient vector field $v$. In general, there is no way of defining a Morse complex. Even if we assume the Morse-Smale condition on the unstable and stable manifolds, the resulting differential may not square to zero, because flow lines can escape to infinity.

Nevertheless, we have a Morse complex in a certain setting, described below. First, we recall the definitions of an isolated invariant set, index pair, and Conley index, following \cite{ConleyBook}. Fix a one-parameter subgroup $\{\Phi_t \}$ of diffeomorphisms of a manifold $X$.  Given a compact subset $A \subseteq X$, define the compact subset 
\[
\Inv (A, \Phi) = \{x \in A \mid \Phi_t (x) \in A \text{ for all } t \in \mathbb{R} \}.
\]

\begin{definition}
\label{def:iis}
A compact set $\is \subseteq X$ is called an {\em isolated invariant set} if there exists a compact $A \subseteq X$ with $\is = \Inv(A, \Phi) \subseteq \inte(A)$. Such a set $A$ is called an {\em isolating neighborhood} for $\is$.
\end{definition}

\begin{definition}
\label{def:indexpair}
An {\em index pair} $(N,L)$ for $\is$ consists of compact sets $L \subseteq N \subseteq X$ such that \\
a) $\Inv (N - L, \Phi) = \is \subset \inte(N - L)$, \\
b) for all $x \in N$, if there exists $t >0$ such that $\Phi_t(x)$ is not in $N$, there exists $0 \leq \tau < t$ with $\Phi_\tau(x) \in L$ $(L$ is an {\em exit set} for $N)$, \\
c) given $x \in L$, $t>0$, if $\Phi_s(x) \subseteq N$ for all $0 \leq s \leq t$, then $\Phi_s(x)$ is in $L$ for $0 \leq s \leq t$ $(L$ is {\em positively invariant in $N$)}. 
\end{definition}

Conley \cite{ConleyBook} showed that any isolated invariant set $\is$ admits an index pair. The {\em Conley index} for an isolated invariant set $\is$, denoted $I(\Phi, \is)$, is defined to be the pointed space $(N/L,[L])$.  Its pointed homotopy type is an invariant of the triple $(X,\Phi_t,\is)$. In fact, the Conley index is invariant under continuous deformations of the flow, as long as $\is$ remains isolated in a suitable sense.

Going back to the case of a Morse-Smale quasi-gradient flow on $X$, suppose $\is \subseteq X$ is an isolated invariant set. Let $\Crit[\is]$ be the set of stationary points of $v$ that lie in $\is$. The set $\Crit[\is]$ is finite, because $\is$ is compact. For any $\x, \y \in \Crit[\is]$, if a point of $\is$ is on a  flow line $\gamma \in \breve{M}(x,y)$, then all the points on that flow line must be in $\is$. Further, the subsets 
$$\breve{M}[\is](\x, \y) = \{\gamma \in \breve{M}(\x,\y) \mid \gamma \subset \is\}$$
are both open and closed in $\breve{M}(\x,\y)$. We have
$$ \is = \bigcup_{\substack{\x, \y \in \Crit[\is] \\  \gamma \in \breve{M}[\is](\x, \y)}} \gamma.$$

We can define a Morse complex $C[\is]$ to be freely generated by the points in $\Crit[\is]$, with the differential given by counting only the flow lines in $\breve{M}[\is](\x, \y)$. We still have $\del^2=0$, and we obtain a Morse homology $H_*(C[\is])$.

The following theorem was proved by Floer in \cite{FloerMorse} in the setting of Morse-Smale gradient flows. The extension to the quasi-gradient case goes along the same lines as the proof of Proposition~\ref{prop:MSquasi}. (In fact, the results of Franzosa and McCord used in that proof were originally phrased for more general flows.)

\begin{theorem}[cf. \cite{FloerMorse}]
Given an isolated invariant set $\is$ in a Morse-Smale quasi-gradient flow $\phi$ on a manifold $X$, the Morse homology $H_*(C[\is])$ is isomorphic to the reduced homology of the Conley index of $\is$.
\end{theorem}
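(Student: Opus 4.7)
The plan is to mirror Floer's argument in the gradient case, replacing the function $f$ from Definition~\ref{def:qg}(b) for the Lyapunov role that a Morse function plays in the standard proof. I would begin by using $f|_{\is}$ (which is strictly decreasing along non-stationary flow lines by \eqref{eq:fgamma}) to define a total order on the finite set $\Crit[\is]$, extending the partial order generated by the existence of flow lines in $\breve{M}[\is](\x,\y)$. This order gives a Morse decomposition of $\is$ in the sense of Conley, with the Morse sets being the individual stationary points $\{x\}$ for $x \in \Crit[\is]$; the ordering condition is immediate from the fact that any flow line from $x$ to $y$ strictly decreases $f$.

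Next I would invoke Franzosa's theory from \cite{Franzosa} to produce an index filtration $\emptyset = N_0 \subseteq N_1 \subseteq \cdots \subseteq N_k = N$ of an index pair $(N, L_0)$ for $\is$, indexed compatibly with the ordering on $\Crit[\is]$, such that each successive pair $(N_i, N_{i-1})$ is an index pair for the Morse set $\{x_i\}$. Since $x_i$ is hyperbolic of index $\ind(x_i)$, standard arguments (e.g.\ linearization via \cite[Theorem 4.10]{PalisMelo} together with the stable/unstable manifold decomposition) identify the Conley index $N_i/N_{i-1}$ with the pointed homotopy type of $S^{\ind(x_i)}$. Consequently, the filtration yields a spectral sequence whose $E^1$ page is freely generated by the stationary points of $\is$, graded by index, agreeing with the Morse complex $C[\is]$ as a graded group.

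The heart of the matter is to identify the $d^1$ differential on this $E^1$ page with the Morse differential defined by counting signed flow lines in $\breve{M}[\is](\x,\y)$. For this I would apply Theorem~3.1 of \cite{McCord}, which describes a local model for the Conley index of the flow restricted to an isolating neighborhood of a single trajectory between two hyperbolic stationary points of Morse-Smale type: the resulting connecting map $S^{\ind(x)} \to S^{\ind(y)}$ (for $\ind(x) = \ind(y)+1$) has degree $\pm 1$ per trajectory, with the sign determined by the chosen orientation data. Summing over all trajectories in $\breve{M}[\is](\x,\y)$ (a finite set by Morse-Smale and compactness of $\is$) gives exactly $n(\x, \y)$, once we use the specialized coherent orientations from the end of Section~\ref{sec:or1} (valid for quasi-gradients with the proviso noted there about allowing hyperbolic rather than conjugated self-adjoint asymptotic operators).

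The main obstacle I would expect is purely bookkeeping on the orientation side: verifying that the signs produced by McCord's degree formula agree with those given by the specialized coherent orientation on $M(x,y)$, and that this agreement is unaffected by replacing the gradient with a quasi-gradient. Because the McCord local model depends only on the linearization at the hyperbolic stationary points and on a transverse intersection of stable and unstable manifolds, and because Lemma~\ref{lem:MSlu} and the determinant-line formalism of Section~\ref{sec:or1} were already extended to quasi-gradients at the end of Section~\ref{sec:quasi}, this comparison goes through exactly as in \cite{FloerMorse, SalamonMorse}. Once the $d^1$ differential is identified with $\partial$, the spectral sequence collapses at $E^2$ (there are no further pages because $d^1$ already accounts for the filtration differences in a chain complex of free abelian groups), so $H_*(C[\is]) \cong \widetilde{H}_*(N/L_0) = \widetilde{H}_*(I(\Phi, \is))$, as desired.
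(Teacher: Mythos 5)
Your overall plan (Morse decomposition with singleton Morse sets, Franzosa's index filtration, McCord's local model for hyperbolic trajectories, orientations via the specialized coherent orientations from Section~\ref{sec:or1}) is the same approach the paper takes, and the ingredients are right. But there is a genuine gap in the spectral-sequence step, in two places.

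First, if you filter by a total order on $\Crit[\is]$ with exactly one stationary point per level, the $d^1$ differential of the resulting spectral sequence is \emph{not} the Morse differential. The map $d^1\colon E^1_{p,q} \to E^1_{p-1,q}$ only records flow lines between stationary points that are \emph{adjacent in the filtration}, whereas $\partial x$ sums over all $y$ with $\ind(y)=\ind(x)-1$, possibly with other stationary points (of unrelated index) sitting between $x$ and $y$ in the $f$-ordering. Concretely, if $f(x_1)<f(x_2)<f(x_3)$ with $\ind(x_1)=0$, $\ind(x_2)=2$, $\ind(x_3)=1$, then flows from $x_3$ to $x_1$ contribute to $\partial$ but live on $d^2$, and the filtration-adjacent map $d^1\colon E^1_{3,*}\to E^1_{2,*}$ vanishes for degree reasons since $\tH_{\ind(x_3)-1}(S^{\ind(x_2)})=0$. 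Second, for the same reason, the claim that the sequence collapses at $E^2$ is unjustified; in the example above $d^2$ is precisely where the relevant flow count lives, so $E^2\neq E^\infty$ in general.

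The cleanest fix is to coarsen the filtration: instead of ordering by $f$-value, filter by Morse index (which is an admissible ordering for a Morse-Smale quasi-gradient, since any non-constant trajectory strictly decreases index by the dimension count for $\breve M(x,y)$). Then $H_n(N_i,N_{i-1})$ is concentrated in degree $n=i$, you get a genuine chain complex $C_n = H_n(N_n,N_{n-1})$ with no higher pages, and the single connecting homomorphism is exactly the Morse differential by McCord (after noting that, because index drops are exactly $1$, the union of such trajectory spaces is itself an isolated invariant set, so McCord's theorem applies directly). Alternatively, keep the singleton Morse decomposition but work with the \emph{connection matrix} $\Delta$ of Franzosa (which packages all $d^r$'s together, not just $d^1$), identify its entries with flow counts by re-coarsening pairs of critical points of index difference $1$ to be adjacent in an admissible ordering (allowed since no broken trajectories exist between them), and use that $H_*(\bigoplus_p \tH_*(I(x_p)),\Delta) \cong \tH_*(I(\is))$. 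Either repair closes the gap; as written, the identification of $d^1$ with $\partial$ and the $E^2$ collapse are both false.
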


Thus, if $(N, L)$ is an index pair for $\is$ and $L \subseteq N$ is a neighborhood deformation retract (which can easily be arranged), then the Morse homology is isomorphic to the relative homology $H_*(N, L)$.

\section{Morse homology for manifolds with boundary}
\label{subsec:morseboundary}
Let $X$ be a smooth, oriented, compact manifold with boundary.  We proceed to describe the construction of  a Morse complex, $(\check{C}(X),\check{\partial})$, whose homology is isomorphic to $H_*(X;\zz)$ (we will ignore the analogous constructions for $H_*(\del M; \zz)$ or $H_*(X,\partial X; \zz)$).  For gradient flows, this construction appeared in print in \cite[Section 2.4]{KMbook}, although it may have been known before. See also \cite{Laudenbach}, \cite{Ranicki} for related work. Our exposition follows \cite{KMbook} closely, except we use the more general quasi-gradient flows.

Fix a metric $g$ and a Morse quasi-gradient vector field $v$ on $X$ such that $g$ and $v$ are respectively the restrictions of a metric and a vector field on the double of $X$, which are invariant under the obvious involution.  Among the stationary points of $v$, there are those which occur in the interior of $X$; their set is denoted $\Crit^o$.  The others are stationary points on $\del X$. If $x \in \del X$ is such a point, note that the normal vector to the boundary $N_x$ is taken to its opposite under the involution, and the same holds for $dv(N_x)$. Hence, $N_x$ is an eigenvector of $dv$, so it must live in either $T_x^sX$ or $T_x^uX$. In the first case we say that $x$ is {\em boundary-stable}, and in the second that it is {\em boundary-unstable}. The set of boundary-stable stationary points is denoted $\Crit^s$, and the set of boundary-unstable stationary points is denoted $\Crit^u$.    

We require that $v$ satisfy the usual Morse-Smale condition, except in the so-called  {\em boundary-obstructed} case, when $\x$ is boundary-stable and $\y$ is boundary-unstable.  In the boundary-obstructed case, $W^u_\x$ and $W^s_\y$ are subsets of $\partial X$, so transversality in $X$ is impossible.  Therefore, we instead require that they intersect transversely in $\partial X$.  When this happens, the dimension of $M(\x,\y)$ is in fact $\index(\x) - \index(\y) + 1$.  

For any $\x,\y \in \Crit=\Crit^o \cup \Crit^s \cup \Crit^u$, we let $M^\partial(\x,\y) = M(\x,\y) \cap \partial X$ and $\breve{M}^\partial(\x,\y) = M^\partial(\x,\y)/\mathbb{R}$.    One can induce the same orientations on the moduli spaces as in the usual closed setting, except in the boundary-obstructed case, where we must also make use of an outward normal vector.  

We let $C^o$, $C^s$, and $C^u$ be the free Abelian groups generated by $\Crit^o$, $\Crit^s$, and $\Crit^u$ respectively.  The chain groups we work with will be $\check{C}(X) = C^o \oplus C^s$.  Although these chain groups do not incorporate the boundary-unstable stationary points, the differential does.  The way that the flows are counted varies from the usual construction.  We define two sets of maps 
\[
\partial^\theta_\varpi ,\bar{\partial}^\theta_\varpi: C^\theta \to C^\varpi
\]
for various pairs of $\theta, \varpi \in \{o,u,s\}$.  
If $(\theta, \varpi) \in \{(o,o), (o,s), (u,o),(u,s)\}$ and $\x \in \Crit^\theta$, define  
\[
\partial^\theta_\varpi \x  = \sum_{\y \in \Crit^\varpi } n(\x,\y) \y,
\]  
where the summation is again over those $\y$ with $\dim \breve{M}(\x,\y)=0$. 

If $\theta, \varpi \in \{s,u\}$ and $\x \in \Crit^\theta$, we instead define $\bar{\partial}^\theta_\varpi$ to count flows on $\partial X$: 
\[
\bar{\partial}^\theta_\varpi \x = \sum_{\y \in \Crit^\varpi} \bar{n}(\x,\y) \y,
\]
where $\bar{n}(\x,\y)$ is the signed count of points (unparameterized flows) in zero-dimensional moduli spaces $\breve{M}^\partial(\x,\y)$.  

On $\check{C}$, we define $\check{\partial}$ by 
\begin{equation}
\label{eq:delcheck}
\check{\partial} = \begin{bmatrix} \partial^o_o & - \partial^u_o \bar{\partial}^s_u \\ \partial^o_s & \bar{\partial}^s_s - \partial^u_s \bar{\partial}^s_u \end{bmatrix}.
\end{equation}

\begin {theorem}[Theorem 2.4.5 in \cite{KMbook}]
We still have $\check{\partial}^2 = 0$.  Furthermore, $H_*(\check{C},\check{\partial}) \cong H_*(X)$.  
\end{theorem}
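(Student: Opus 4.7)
The plan is to follow the proof of Theorem~2.4.5 in \cite{KMbook}, verifying that each step carries over to the quasi-gradient setting. Proposition~\ref{prop:MSquasi}, together with the standard compactness, transversality, and gluing results for flows with hyperbolic stationary points that underlie it, ensures that the local analytic framework for Morse-Smale quasi-gradients is identical to the gradient case. The two statements $\check\partial^2 = 0$ and $H_*(\check C, \check\partial) \cong H_*(X)$ can therefore be attacked separately, by analyzing moduli spaces of broken trajectories and by comparing with the cellular topology of $X$.

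To prove $\check\partial^2 = 0$, I would analyze the boundary of the compactified one-dimensional moduli spaces $\breve M(\x, \y)$ and $\breve M^\partial(\x, \y)$ for every pair $\x \in \Crit^\theta$, $\y \in \Crit^\varpi$ of appropriate index difference. The ends are broken trajectories through one or more intermediate stationary points. The contributions split into two kinds: generic two-segment broken trajectories, which account for the diagonal compositions such as $\partial^o_o \circ \partial^o_o$ and $\bar\partial^s_s \circ \bar\partial^s_s$, and boundary-obstructed broken trajectories passing through a boundary-stable point $z_1$ followed by a boundary-unstable point $z_2$ via a flow in $\partial X$; these three-segment breaks produce the composites $\partial^u_\bullet \bar\partial^s_u$ appearing in \eqref{eq:delcheck}. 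A case-by-case check, using the orientation conventions of Section~\ref{sec:or1} together with the outward-normal correction for boundary-obstructed moduli spaces, shows that all ends cancel, yielding $\check\partial^2 = 0$.

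For the homology identification, I would realize $\check C$ as the cellular chain complex of a CW decomposition of $X$ induced by the flow. Working with a filtration by sublevel sets of the function $f$ from Definition~\ref{def:qg}(b) and attaching handles at each critical value, one obtains cells indexed by the descending manifolds of $\x \in \Crit^o \cup \Crit^s$; boundary-unstable critical points do not contribute cells, because their descending manifolds lie in $\partial X$, but they modify the attaching maps of adjacent boundary-stable cells, which is precisely why the composites $\partial^u_\bullet \bar\partial^s_u$ appear in $\check\partial$. A cleaner alternative is to pass to the double $DX$, apply Proposition~\ref{prop:MSquasi} to the symmetric extension of $v$ to conclude that the Morse complex there computes $H_*(DX)$, and then extract $H_*(X)$ from the decomposition induced by the doubling involution. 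The main technical obstacle in either route is the boundary-obstructed gluing analysis: one must verify that gluing along the outward normal to $\partial X$ at a boundary-obstructed broken pair produces a single, coherently oriented end of the moduli space, as worked out in \cite{KMbook}. Hyperbolicity from Definition~\ref{def:qg}(a) provides the required spectral decomposition, and the analytic content is identical to the gradient case, so the argument transfers with only notational changes.
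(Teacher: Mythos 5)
Your proposal correctly mirrors the structure of Kronheimer--Mrowka's proof of Theorem~2.4.5: $\check\partial^2 = 0$ comes from analyzing ends of compactified one-dimensional moduli spaces, including the three-segment boundary-obstructed breaks that produce the composite terms $\partial^u_\bullet \bar\partial^s_u$ in \eqref{eq:delcheck}, and $H_*(\check C, \check\partial) \cong H_*(X)$ comes from a filtration by sublevel sets of $f$. Since the paper does not reprove this theorem but cites \cite{KMbook} and relies on the observation (made precise in Section~\ref{sec:quasi} and Proposition~\ref{prop:MSquasi}) that hyperbolicity of stationary points gives the same local stable/unstable manifold theory as the gradient case, your outline is a faithful rendering of the argument the citation invokes.

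There is one factual slip worth correcting. You wrote that boundary-unstable critical points do not contribute cells ``because their descending manifolds lie in $\partial X$,'' but it is the boundary-\emph{stable} points whose descending manifolds lie in $\partial X$: if $x$ is boundary-stable then $N_x \in T^s_x X$, hence $T^u_x X \subset T_x \partial X$ and flow-invariance of $\partial X$ forces $W^u_x \subset \partial X$; if $x$ is boundary-unstable then $N_x \in T^u_x X$ and $W^u_x$ escapes into the interior along the inward normal. The correct reason a boundary-unstable point contributes no cell to a CW model of $X$ is that crossing its critical level attaches a half-disk (the inward-pointing half of the descending disk), which deformation retracts onto the sublevel set below, so the homotopy type is unchanged. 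Finally, the proposed ``cleaner alternative'' via the double $DX$ is not actually a shortcut: applying Proposition~\ref{prop:MSquasi} to $DX$ gives a Morse complex with two copies of $\Crit^o$ and one copy each of $\Crit^s$ and $\Crit^u$, computing $H_*(DX)$, and recovering $H_*(X)$ from this via the $\zz/2$-symmetry (or via $H_*(DX) \cong H_*(X) \oplus H_*(X,\partial X)$) takes about as much work as the direct filtration argument.
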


\begin{remark}\label{rmk:noMred}
In this book, since $\bar{\partial}^u_s$ does not appear in $\check{\partial}$, we will not need the notation $M^\partial(x,y)$, since either $M^\partial(x,y)$ agrees with $M(x,y)$ or is empty, except when $x$ is boundary-unstable and $y$ is boundary-stable.  In \cite{KMbook}, the term $\bar{\partial}^u_s$ is needed for the proof that $\check{\partial}^2 = 0$.  We keep the notation and discussion for consistency with \cite{KMbook}.
\end{remark}

\section{Morse homology for manifolds with circle actions}
\label{subsec:CircleMorse}
A circle action is called {\em semifree} if it is free on the complement of the fixed points. For manifolds with semifree $S^1$-actions, Kronheimer and Mrowka showed that the methods in the previous subsection can be applied to obtain Morse theoretic approximations to the $S^1$-equivariant homology. We sketch their arguments here, following \cite[Sections 2.5--2.6]{KMbook}, but phrasing everything in terms of quasi-gradients rather than gradients.

Suppose that a closed Riemannian manifold $X$ has a smooth, semifree $S^1$-action by isometries, and let $Q$ denote the fixed point set. Let $N(Q)$ be the normal bundle of $Q$, and $N^1(Q)$ the unit normal bundle. Observe that the $S^1$-action gives $N(Q)$ the structure of a complex vector bundle. In order to make $X/S^1$ into a manifold, we resolve the singularity at $Q$ by considering the  blow-up 
\[
X^\sigma  = (X - Q) \cup \bigl( N^1(Q) \times [0,\epsilon) \bigr),
\]
where we have identified $N^1(Q) \times (0,\epsilon)$ with $N(Q) - Q \subset X$.  Note that $X^\sigma/S^1$ is a smooth manifold with boundary $N^1(Q)/S^1$.

A smooth, $S^1$-invariant vector field $\tilde v$ on $X$ induces a vector field $v$ on $(X - Q)/S^1$. In turn, this extends naturally to a smooth vector field $v^\sigma$ on  $X^\sigma/S^1$; cf. \cite[Lemma 2.5.2]{KMbook}. Note that $\tilde v$ must be tangent to $Q$, and hence $v^{\sigma}$ is tangent to the boundary.

Precisely, the dynamics of $v^{\sigma}$ on the boundary $N^1(Q)/S^1 = \del(X^\sigma/S^1)$ can be described as follows. A point on the boundary can be written as $(q,[\phi])$ with $q \in Q$ and $\phi \in N^1_q(Q)$. As discussed, $N_q(Q)$ has a complex structure; we let $\langle \phi \rangle^{\perp}$ be the complex orthogonal complement to $\phi$ in $N_q(Q)$. Decompose the tangent space to $X^{\sigma}/S^1$ at $q$ as
\begin{equation}
\label{eq:decomposes}
 T_qQ \oplus \langle \phi \rangle^{\perp} \oplus \R,
 \end{equation}
where $\R$ is the direction normal to the boundary. 

The covariant derivative $(\nabla \tilde v)_q : T_qX \to T_qX$ is $S^1$-equivariant, and hence takes the normal direction $N_qQ \subset T_qX$ to $N_qQ$. Let 
\begin{equation}
\label{eq:lq}
L_q :=  (\nabla \tilde v)|_{N_qQ}.
\end{equation}

With respect to the decomposition \eqref{eq:decomposes}, let us write
$$
 v^{\sigma}(q, [\phi])=( \tilde v(q), \Ell_q \phi, 0).
$$
Here, when $\tilde v(q)=0$, the second term equals
$$ \Ell_q \phi = L_q\phi -  \Re \langle \phi, L_q\phi\rangle\phi.$$

Thus, the stationary points of $v^{\sigma}$ on the boundary are the pairs $(q, [\phi])$, where $q$ is a zero of $\tilde v|_Q$ and $\phi$ is an eigenvector of $L_q$. 

Furthermore, the flow associated to $v^{\sigma}$ on the boundary is given by the equations:
\begin{align}
\label{eq:lq0}
\frac{dq}{dt} + \tilde v(q(t)) &= 0,\\
q^*(\nabla) + \bigl( L_{q(t)} -  \Re \langle \phi(t), L_{q(t)}\phi(t)\rangle\phi(t) \bigr) dt &=0, \label{eq:lqx}
\end{align}
where $|\phi(t)|=1$ for all $t$.

\begin{definition}
\label{def:eqgv}
A smooth, $S^1$-invariant vector field $\tilde v$ on $X$ is called a {\em Morse equivariant quasi-gradient} if the following conditions are satisfied: 
\begin{enumerate}[(a)]
\item All stationary points of $v$ on $(X - Q)/S^1$ are hyperbolic.
\item All stationary points of $\tilde v|_Q$ are hyperbolic.
\item At each stationary point $q$ of $\tilde v|_Q$, the operator $L_q: N_qQ \to N_qQ$ is self-adjoint, and admits a basis of eigenvectors $\phi_1(q), \phi_2(q), \dots, \phi_n(q)$ with corresponding eigenvalues $\lambda_1(q), \lambda_2(q), \dots, \lambda_n(q)$ such that
$$ \lambda_1(q) <  \lambda_2(q) < \dots <  \lambda_n(q)$$
and $\lambda_i(q) \neq 0$ for any $i$.
\item There exists a smooth, $S^1$-equivariant function $\tilde f: X \to \R$ such that $d\tilde f(\tilde v) \geq 0$ at all $x \in X$, with equality holding if and only if $\tilde v(x)=0$.
 \end{enumerate}
\end{definition}

\begin{lemma}
\label{lem:MEquiv}
Parts (a), (b) and (c) in Definition~\ref{def:eqgv}, taken together, are equivalent to asking for all the stationary points of $v^{\sigma}$ on $X^{\sigma}/S^1$ to be hyperbolic, and for those on the boundary to give rise to operators $L_q$ that are self-adjoint. 

Furthermore, if $\tilde v$ is as in Definition~\ref{def:eqgv}, and $(q, [\phi_i(q)])$ is a stationary point of $v^{\sigma}$ on the boundary, then its index is given by
\[
\index(q, [\phi_i(q)])  = 
\begin{cases}
\index_Q(q) + 2i-2 & \text{ if }  \lambda_i(q) > 0,\\
\index_Q(q) + 2i-1 & \text{ if }  \lambda_i(q) < 0.
\end{cases}
\]
\end{lemma}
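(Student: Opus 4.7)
The plan is to analyze the linearization of $v^{\sigma}$ at each stationary point on $X^{\sigma}/S^1$ and, using the boundary decomposition \eqref{eq:decomposes}, reduce both assertions to block-wise statements. The stationary points of $v^{\sigma}$ on the interior $(X-Q)/S^1$ correspond to $S^1$-orbits of non-fixed zeros of $\tilde v$; hyperbolicity for these is literally condition (a). The boundary stationary points, obtained by setting the right-hand sides of \eqref{eq:lq0}--\eqref{eq:lqx} to zero, are exactly those $(q,[\phi])$ for which $\tilde v(q)=0$ and $L_q\phi = \mu\phi$ with $\mu = \Re\langle\phi, L_q\phi\rangle \in \R$. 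Under the self-adjointness hypothesis on $L_q$ these are precisely the pairs $(q,[\phi_i(q)])$ in the statement.

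Next I would compute the linearization of $v^{\sigma}$ at a boundary stationary point $(q,[\phi_i])$ with respect to the splitting $T_qQ \oplus \langle\phi_i\rangle^{\perp} \oplus \R$ from \eqref{eq:decomposes}. Two structural observations make this linearization block lower-triangular. First, because $\tilde v$ is $S^1$-invariant and $Q$ is the fixed locus, the only $S^1$-equivariant linear map from the weight-one representation $N_qQ$ to the trivial representation $T_qQ$ is zero, so the $T_qQ$-component of $v^{\sigma}$ depends on $(q,\phi,r)$ only through $q$, to first order at $r=0$. Second, the $r$-component of $v^{\sigma}$ vanishes identically at $r=0$, hence its derivatives in $T_qQ$ and in $\langle\phi_i\rangle^{\perp}$ also vanish at the stationary point. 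The diagonal blocks are therefore: $(d\tilde v|_Q)_q$ on $T_qQ$; the operator $\eta\mapsto (L_q-\lambda_i)\eta$ on $\langle\phi_i\rangle^{\perp}$, obtained by linearizing \eqref{eq:lqx} at $\phi_i$ and simplifying using self-adjointness of $L_q$ together with $\langle\phi_i,\eta\rangle = 0$; and the scalar $\lambda_i$ on $\R$, read off from the expansion $\tilde v(\exp_q(r\phi_i)) = r\lambda_i\phi_i + O(r^2)$ in a tubular neighborhood of $Q$. Hyperbolicity of $v^{\sigma}$ at $(q,[\phi_i])$ is then equivalent to hyperbolicity of each of these blocks, which, with $L_q$ self-adjoint, unpacks precisely as $q$ being a hyperbolic zero of $\tilde v|_Q$ (condition (b)) together with $L_q$ having distinct nonzero real eigenvalues (condition (c)). Combined with condition (a) for interior points, this gives the first equivalence.

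For the index formula, the block-triangular structure implies that $\index(q,[\phi_i])$ is the sum of the unstable dimensions of the three diagonal blocks. The $T_qQ$ block contributes $\index_Q(q)$ by definition. On $\langle\phi_i\rangle^{\perp}$, the operator $L_q-\lambda_i$ has real eigenvalues $\lambda_j-\lambda_i$ for $j\neq i$, each with a two-real-dimensional eigenspace since $\phi_j$ spans a complex line; the unstable ones are those with $\lambda_j<\lambda_i$, namely $j<i$, contributing $2(i-1)$. The normal $\R$ block contributes $1$ when $\lambda_i<0$ and $0$ when $\lambda_i>0$, producing the stated case split. The main obstacle is carrying out the linearization of \eqref{eq:lqx} on $\langle\phi_i\rangle^{\perp}$ cleanly and pinning down the radial eigenvalue $\lambda_i$ via the blow-up coordinates; once these two local computations are in hand, verifying the block-triangular form and summing the contributions is routine bookkeeping consistent with the Morse-index conventions fixed in Section~\ref{sec:quasi}.
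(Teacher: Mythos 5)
Your proof is correct and is essentially a worked-out version of what the paper does: the paper simply cites \cite[Proof of Lemma 2.5.5]{KMbook} for the gradient case and asserts that the argument carries over to quasi-gradients. You supply the linearization computation that the paper defers, and in doing so you actually verify that the extension to quasi-gradients goes through — which is useful, since the diagonal block on $\langle\phi_i\rangle^\perp$ becoming $L_q-\lambda_i$ under the self-adjointness hypothesis is exactly the point that makes the quasi-gradient case no harder.

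Two small points worth cleaning up. First, the matrix you describe is not literally block lower-triangular with respect to the ordering $(T_qQ, \langle\phi_i\rangle^\perp, \R)$: your two observations kill the $(q,\eta)$, $(q,s)$, $(s,q)$, $(s,\eta)$ entries, but say nothing about $(\eta, s)$, so the shape is
$\left(\begin{smallmatrix} A & 0 & 0 \\ * & B & * \\ 0 & 0 & C \end{smallmatrix}\right)$.
That is still enough: expanding the characteristic polynomial along the bottom block-row shows it factors as $\det(A-\mu)\det(B-\mu)\det(C-\mu)$, so the spectrum and the algebraic multiplicities are the union of those of the three diagonal blocks, which is all you use. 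You should say this rather than ``lower-triangular.'' Second, you are implicitly using the convention that $T^u_xX$ is the span of eigenvectors of $(dv)_x$ with \emph{negative} real part, which is the correct one for the reverse flow $\dot\gamma = -v(\gamma)$ fixed in the paper's conventions section and is the only one that reproduces the stated index formula (it also matches \eqref{eq:GradingRed}); note that the sign as written in the prose of Section~\ref{sec:quasi} appears to say the opposite. You are using the right sign; just be aware that the paper's wording there is not the source of truth for the convention.
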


\begin{proof}
Away from the boundary, a stationary point of $v^{\sigma}$ is just a stationary point of $v$, and the two notions of hyperbolicity (and index) correspond. With regard to stationary points on the boundary and their indices, see \cite[Proof of Lemma 2.5.5]{KMbook} for the case of gradient fields. The arguments there extend to the quasi-gradient setting without difficulty.
\end{proof}

\begin{remark}
In part (c) of Definition~\ref{def:eqgv}, the condition that $L_q$ is self-adjoint (and has real eigenvalues) is not strictly necessary. We could have only asked for $L_q$ to have a complex basis of eigenvectors $\phi_1(q), \phi_2(q), \dots, \phi_n(q)$ with corresponding eigenvalues $\lambda_1(q), \lambda_2(q), \dots, \lambda_n(q)$ such that
$$ \Re \lambda_1(q) < \Re \lambda_2(q) < \dots < \Re \lambda_n(q)$$
and $ \Re \lambda_i(q) \neq 0$ for any $i$. With this weaker requirement, the results of this section would still hold, but the proof of Lemma~\ref{lem:ao3} below would be more complicated. In the case of interest to us in this book, the $L_q$ are self-adjoint, so we decided to include this condition in the definition.
\end{remark}

\begin{lemma}
\label{lem:qgCritical2}
Let $\tilde v$ be a Morse equivariant quasi-gradient vector field, and let $f$ be a function as in part (d) of Definition~\ref{def:eqgv}. Then, any stationary point of $\tilde v$ is a critical point of $f$.
\end{lemma}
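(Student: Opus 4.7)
The plan is to adapt the strategy of Lemma~\ref{lem:qgCritical}, which rests on the identity $0 = d(d\tilde f(\tilde v))_x(y) = (d^2\tilde f)_x(\tilde v_x, y) + (d\tilde f)_x((d\tilde v)_x y)$ at a stationary point $x$ (a minimum of $d\tilde f(\tilde v)$), combined with $\tilde v_x = 0$ and surjectivity of $(d\tilde v)_x$. The obstacle to repeating this argument verbatim on $X$ is that at a stationary point $x \in X - Q$, $S^1$-equivariance forces $\tilde v$ to vanish along the whole $S^1$-orbit through $x$; the orbit tangent direction then lies in $\ker(d\tilde v)_x$, so $(d\tilde v)_x$ is not an automorphism of $T_xX$. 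I would therefore split into two cases according to whether $x \in Q$ or $x \in X - Q$.

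For the case $x \in Q$, I would first check that $(d\tilde v)_x$ is nonetheless an automorphism of $T_xX$. Since $S^1$ fixes $x$, the decomposition $T_xX = T_xQ \oplus N_xQ$ is $(d\tilde v)_x$-invariant. On $T_xQ$ the operator equals $(d(\tilde v|_Q))_x$, which is hyperbolic by part (b) of Definition~\ref{def:eqgv}; on $N_xQ$ it equals $L_x$, which by part (c) has nonzero real eigenvalues. Hence zero is not an eigenvalue, and the proof of Lemma~\ref{lem:qgCritical} applies verbatim with $(\tilde v, \tilde f)$ in place of $(v, f)$, yielding $(d\tilde f)_x = 0$.

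For the case $x \in X - Q$, the $S^1$-action is free in a neighborhood of $x$, so $\tilde v$ descends to a vector field $v$ on $(X - Q)/S^1$ and $\tilde f$ descends to a function $\bar f$. The pair $(v, \bar f)$ satisfies Definition~\ref{def:qg}: hyperbolicity of stationary points is part (a) of Definition~\ref{def:eqgv}, and $d\bar f(v) \geq 0$ with equality exactly at stationary points is inherited from $d\tilde f(\tilde v) \geq 0$ by $S^1$-invariance. Lemma~\ref{lem:qgCritical} then yields $(d\bar f)_{[x]} = 0$, so $(d\tilde f)_x$ annihilates the horizontal subspace orthogonal to the $S^1$-orbit through $x$. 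Since $\tilde f$ is $S^1$-invariant, $(d\tilde f)_x$ also annihilates the orbit tangent direction, giving $(d\tilde f)_x = 0$.

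The main obstacle is really just the recognition step for the non-fixed case: once one passes to the quotient $(X - Q)/S^1$, where the $S^1$-induced degeneracy of $(d\tilde v)_x$ disappears, Lemma~\ref{lem:qgCritical} does all the work. Everything else is routine verification that the descended data still satisfies the unequivariant hypotheses.
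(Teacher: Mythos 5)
Your proof is correct and fills in what the paper leaves implicit: the authors simply say the argument is ``similar to the proof of Lemma~\ref{lem:qgCritical},'' and your two cases spell out exactly the needed adaptation --- at $q \in Q$ you verify directly that $(d\tilde v)_q$ is invertible from parts (b) and (c) of Definition~\ref{def:eqgv}, while at $x \in X - Q$ you pass to the free quotient, where Lemma~\ref{lem:qgCritical} applies and $S^1$-invariance of $\tilde f$ handles the orbit direction. This is the same underlying computation the paper intends, so nothing further is needed.
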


\begin{proof}
This is similar to the proof of Lemma~\ref{lem:qgCritical}.
\end{proof}

Given $x \in X$, we write $[x]$ for its $S^1$-equivalence class, i.e. its projection to the singular space $X/S^1$.

\begin{lemma}
\label{lem:ao2}
Let $\tilde v$ be a Morse equivariant quasi-gradient vector field on $X$, as in Definition~\ref{def:eqgv}. Let $\gamma: \R \to X$ be a flow line of $\tilde v$. Then, $\lim_{t\to -\infty} [\gamma(t)]$ and $\lim_{t\to +\infty} [\gamma(t)]$ exist in $X/S^1$, and they are both projections of stationary points of $\tilde v$. 
\end{lemma}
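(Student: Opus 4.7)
The plan is to mimic the proof of Lemma~\ref{lem:ao}, working in the compact (but singular) quotient $X/S^1$. The $S^1$-invariant function $\tilde f$ descends to a continuous function $\bar f:X/S^1\to\R$, and I will use it as a Lyapunov function to trap $[\gamma(t)]$ in a shrinking region. The main subtlety is handling $\omega$-limit points lying on the fixed-point set $Q$, where $X/S^1$ is singular and the quotient vector field is not defined.

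First, if $\gamma(t_0)\in Q$ for some $t_0$, then $\gamma\subset Q$ for all $t$ (because $\tilde v$ is $S^1$-invariant, hence tangent to $Q$), and the restriction $\tilde v|_Q$ is a Morse quasi-gradient on the closed manifold $Q$ by parts (b) and (d) of Definition~\ref{def:eqgv}; so Lemma~\ref{lem:ao} applies directly. Assume therefore $\gamma\subset X\setminus Q$. Condition (d) gives
\[
\frac{d}{dt}\bar f([\gamma(t)])=-d\tilde f(\tilde v)(\gamma(t))\leq 0,
\]
with equality iff $\gamma(t)$ is stationary. Without loss of generality $\bar f\circ[\gamma]$ is strictly decreasing, so by compactness of $X/S^1$ the $\omega$-limit set $\omega([\gamma])$ is non-empty, is contained in the single level set $\bar f^{-1}(\ell_+)$ where $\ell_+=\lim_{t\to+\infty}\bar f([\gamma(t)])$, and is connected (standard fact for trajectories on compact metric spaces).

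I would next show $\omega([\gamma])\subseteq \Crit(\tilde v)/S^1$. Pick $[x_\infty]\in\omega([\gamma])$ and $t_n\to+\infty$ with $[\gamma(t_n)]\to[x_\infty]$. Choose $g_n\in S^1$ so that $g_n\cdot\gamma(t_n)\to x_\infty$ in $X$; by $S^1$-invariance of $\tilde v$, each translate $\tilde\gamma_n(s):=g_n\cdot\gamma(t_n+s)$ is again a $\tilde v$-flow line, and continuous dependence on initial conditions gives $\tilde\gamma_n(s)\to\Phi_s(x_\infty)$ uniformly on every compact $s$-interval, where $\{\Phi_s\}$ is the flow of $\tilde v$. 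Passing to the limit in $\bar f([\tilde\gamma_n(s)])=\bar f([\gamma(t_n+s)])\to\ell_+$ and using $S^1$-invariance of $\tilde f$ gives $\tilde f(\Phi_s(x_\infty))=\ell_+=\tilde f(x_\infty)$ for all $s\geq 0$; differentiating at $s=0$ yields $d\tilde f(\tilde v)(x_\infty)=0$, so $\tilde v(x_\infty)=0$ by Definition~\ref{def:eqgv}(d).

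To conclude, I would note that $\Crit(\tilde v)/S^1\subseteq X/S^1$ is a finite set: its intersection with $Q$ is finite by hyperbolicity of $\tilde v|_Q$ on the closed manifold $Q$, and its intersection with $(X-Q)/S^1$ corresponds to interior stationary points of $v^\sigma$ on the compact manifold-with-boundary $X^\sigma/S^1$, which are isolated (and thus finite) by Lemma~\ref{lem:MEquiv}; a nearby accumulation in the interior would also accumulate in $X^\sigma/S^1$, contradicting isolation of its limit point. A connected subset of a finite discrete set is a single point, so $\omega([\gamma])=\{[x_\infty]\}$ with $[x_\infty]\in\Crit(\tilde v)/S^1$. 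The $\alpha$-limit case is identical after reversing the flow. The main obstacle is the third paragraph above---identifying $\omega$-limit points of $[\gamma]$ with projections of stationary points---because the quotient map breaks down at $Q$ and $\bar f$ is not a strict Lyapunov function for any flow on $X/S^1$; this is circumvented by lifting back to $X$ using the $S^1$-action and then exploiting $S^1$-invariance of $\tilde f$ together with continuous dependence of the ODE.
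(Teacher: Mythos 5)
Your proof is correct and carries out the same Lyapunov-function strategy the paper invokes by citing Lemma~\ref{lem:ao}. The lift to $X$ via the $S^1$-action together with continuous dependence on initial conditions (to show $\omega$-limit points are projections of stationary points), and the use of $\omega$-limit connectedness plus finiteness of stationary point classes (to pin down a unique limit), are the natural adaptations of that argument to the quotient that the paper's ``similar to'' is indicating.
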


\begin{proof}
This is similar to the proof of Lemma~\ref{lem:ao}.
\end{proof}

Suppose $\tilde v$ is a Morse equivariant quasi-gradient vector field on $X$, with $\tilde f$ as in part (d) of Definition~\ref{def:eqgv}. Observe that $\tilde{f}$ induces a function $f:X/S^1 \to \mathbb{R}$ which is smooth except on $Q/S^1$. Using $f$ we see that $v^{\sigma}$ is a Morse quasi-gradient vector field on the interior of $X^{\sigma}/S^1$. 

If we let $f^{\sigma} : X^{\sigma}/S^1 \to \R$ be the composition of the blow-down map with $f$, then $f^{\sigma}$ is smooth, and we have
\begin{equation}
\label{eq:dfsigma}
df^{\sigma}(v^{\sigma}) \geq 0.
\end{equation}
However, $\tilde f$ is constant on the fibers $N_q^1(Q)/S^1$. Hence, equality happens in \eqref{eq:dfsigma} not only at the stationary points of $v^{\sigma}$, but also at all $(q, [\phi]) \in \del(X^{\sigma}/S^1)$ such that $\tilde v(q)=0$ (and $\phi$ does not have to be an eigenvector). Therefore, $v^{\sigma}$ is not naturally a Morse quasi-gradient on all of $X^\sigma/S^1$; or, at least, we cannot use $f^{\sigma}$ to argue that it is.

Nevertheless, we will still be able to do Morse homology using the flow of $v^{\sigma}$. To start with,  observe that there are no closed orbits in this flow: In view of \eqref{eq:dfsigma}, the only such orbits would have to be contained in a fiber $N_q^1(Q)/S^1$, where $\tilde v(q)=0$. From \eqref{eq:lqx} we see that the flow on the projective space $N_q^1(Q)/S^1$ is the projection of a linear flow on $N_qQ$, which is in fact a gradient flow. This shows that there are no closed orbits in that fiber.

Furthermore, we have the analogue of Lemmas~\ref{lem:ao} and \ref{lem:ao2}:
\begin{lemma}
\label{lem:ao3}
Let $\tilde v$ be a Morse equivariant quasi-gradient vector field on $X$, as in Definition~\ref{def:eqgv}. Let $\gamma: \R \to X^{\sigma}/S^1$ be a flow line of $v^{\sigma}$. Then, $\lim_{t\to -\infty} \gamma(t)$ and $\lim_{t\to +\infty} \gamma(t)$ exist in $X^{\sigma}/S^1$, and they are both stationary points of $v^{\sigma}$. 
\end{lemma}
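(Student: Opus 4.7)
The plan is to analyze the $\omega$-limit set $\Omega := \bigcap_{T \ge 0} \overline{\{\gamma(t) : t \ge T\}}$ of $\gamma$ in $X^\sigma/S^1$ (the case $t\to -\infty$ is symmetric) and show that it reduces to a single stationary point of $v^\sigma$. By compactness of $X^\sigma/S^1$, the set $\Omega$ is nonempty, compact, connected, and invariant under the flow of $v^\sigma$.

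Next I would use $f^\sigma$ as a Lyapunov function: by \eqref{eq:dfsigma} the composition $f^\sigma \circ \gamma$ is monotone non-increasing and, being bounded below, converges. Hence $\Omega$ is contained in the zero set $Z = \{df^\sigma(v^\sigma) = 0\}$. As discussed in the paragraph preceding the lemma, $Z$ is the disjoint union of the interior stationary points of $v^\sigma$ (isolated, by hyperbolicity) together with the boundary fibers $F_q := N_q^1(Q)/S^1$ over stationary points $q$ of $\tilde v|_Q$; the latter are finite in number since Lemma~\ref{lem:ao} applies to the Morse quasi-gradient $\tilde v|_Q$ on the closed manifold $Q$. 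These are precisely the connected components of $Z$, so by connectedness $\Omega$ lies inside a single component. If that component is an interior stationary point the lemma is proved.

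The remaining case is $\Omega \subseteq F_{q_0}$ for some $q_0 \in \Crit(\tilde v|_Q)$. Since the boundary of $X^\sigma/S^1$ is flow-invariant, $\gamma$ lies either wholly on the boundary or wholly in the interior. In the boundary case, write $\gamma(t)=(q(t),[\phi(t)])$; then $q(t)$ is a flow line of $\tilde v|_Q$, so Lemma~\ref{lem:ao} forces $q(t)\to q_0$, and the $\phi$-equation \eqref{eq:lqx} becomes an asymptotically autonomous perturbation of the gradient flow on $F_{q_0}\cong \mathbb{CP}^{n-1}$ for the Rayleigh quotient of $L_{q_0}$. By Definition~\ref{def:eqgv}(c) this limit flow is Morse with exactly $n$ hyperbolic critical points, namely the eigenvector lines $[\phi_i(q_0)]$, and a standard asymptotic-stability argument yields convergence of $[\phi(t)]$ to one of them. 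In the interior case, $\gamma$ lifts to a flow line $\tilde\gamma$ of $\tilde v$ on $X-Q$; Lemma~\ref{lem:ao2} gives convergence of $[\tilde\gamma(t)]$ in $X/S^1$, and the limit class must be $[q_0]$. Since $q_0$ is $S^1$-fixed, this forces $\tilde\gamma(t)\to q_0$ in $X$, and then hyperbolicity of $q_0$ together with the spectral decomposition $T_{q_0}X = T_{q_0}Q \oplus N_{q_0}Q$ (with $L_{q_0}$ having distinct nonzero eigenvalues) implies, via the Stable Manifold Theorem, that the normalized normal component of $\tilde\gamma(t)-q_0$ converges to a specific eigenvector of $L_{q_0}$, giving the desired limit point in $F_{q_0}$.

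The main obstacle I anticipate is this last step in each subcase: the connectedness and invariance of $\Omega$ alone do not rule out $\Omega$ being a nontrivial $v^\sigma$-invariant subset of the whole fiber $F_{q_0}$, so one really has to extract a genuine limit from $q(t)\to q_0$ (respectively $\tilde\gamma(t)\to q_0$). This amounts to an asymptotic-stability statement for hyperbolic equilibria of vanishing non-autonomous perturbations—classical, but it must be quoted with correct hypotheses, and the self-adjointness of $L_{q_0}$ in Definition~\ref{def:eqgv}(c) is crucial, since it makes the fiber flow a genuine Morse gradient rather than a more exotic hyperbolic flow.
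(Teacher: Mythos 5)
Your proposal takes a genuinely different route from the paper, and while the overall skeleton is sound, there are two places that need more than a citation, and one of them is an actual gap as written.

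The paper's proof does not split into boundary vs.\ interior cases. Instead it observes, as you do, that all accumulation points of the half-trajectory $\gamma_+$ lie in a single boundary fiber $F = N^1_{q_0}(Q)/S^1$, and that if there were two distinct accumulation points one could replace them by two distinct \emph{stationary} points $x,y\in F$ (by invariance of the $\omega$-limit set and the fact that $v^\sigma|_F$ is a gradient flow). The uniform argument then comes from the functional $\Lambda(q,s,[\phi]) = \langle \phi, L_q\phi\rangle$ defined on a full neighborhood $V$ of the boundary --- not merely on the fiber --- together with the estimate $d\Lambda(v^\sigma)>0$ on $\Lambda^{-1}(\lambda)\cap U$ for an intermediate regular value $\lambda$ and a possibly smaller neighborhood $U$. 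Because $\gamma_+$ eventually lies in $U$ and must cross $\Lambda^{-1}(\lambda)$ infinitely often in both directions if it is to accumulate at both $x$ and $y$, one gets a contradiction. This single Lyapunov estimate handles boundary and interior trajectories simultaneously, with no appeal to asymptotically autonomous systems, Hartman linearization, or eigendirection asymptotics.

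Your interior case, as written, has a gap. You write that hyperbolicity plus the spectral decomposition $T_{q_0}X = T_{q_0}Q\oplus N_{q_0}Q$ ``implies, via the Stable Manifold Theorem, that the normalized normal component of $\tilde\gamma(t)-q_0$ converges to a specific eigenvector of $L_{q_0}$.'' The Stable Manifold Theorem only produces a local stable manifold tangent to the stable subspace; it does not yield directional convergence of the (normalized) normal projection. What you actually need is an asymptotic-eigendirection result --- that a trajectory of a nonlinear flow approaching a hyperbolic equilibrium with simple real eigenvalues does so tangentially to a specific eigenvector, and that this persists for the $N$-block of the linearization even when the $T_{q_0}Q$ and $N_{q_0}Q$ eigenvalues interleave. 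That is a Levinson/Hartman-type theorem (or a careful argument via the strong stable foliation), not the Stable Manifold Theorem, and the $S^1$-equivariance is what makes the linearization block-diagonal so that the analysis can be restricted to the normal component. The conclusion is correct, but the cited tool does not prove it.

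Your boundary case is in better shape conceptually: the $\phi$-equation \eqref{eq:lqx} really is an asymptotically autonomous perturbation of the Rayleigh-quotient gradient flow on $F_{q_0}$, and the self-adjointness of $L_{q_0}$ (Definition~\ref{def:eqgv}(c)) is exactly what makes the limiting flow a genuine Morse gradient with distinct critical values and no heteroclinic cycles. But ``a standard asymptotic-stability argument'' is doing essentially all the work here; to be precise one would invoke a Markus/Thieme-type theorem on $\omega$-limit sets of asymptotically autonomous systems (together with the no-cycle condition), or simply reproduce the intermediate-level-set Lyapunov argument --- which is what the paper does with $\Lambda$ restricted to the boundary. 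You flagged this as the main obstacle, which is good judgment, but the remedy you would reach for in the interior case needs to be different from, and more delicate than, what you named.

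In short: your approach is structurally different (case split plus general dynamical-systems theorems) from the paper's (single Lyapunov function $\Lambda$ on a neighborhood of the fiber), and it can be made to work, but only after replacing the Stable Manifold Theorem citation by a genuine asymptotic-eigendirection result and spelling out the Markus/Thieme step. The paper's $\Lambda$-argument is both shorter and more self-contained precisely because it sidesteps these ingredients.
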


\begin{proof}
By Lemma~\ref{lem:ao2}, we already know that the projection of $\gamma$ to the blow-down $X/S^1$ limits to two stationary points. The projection is one-to-one away from the boundary, so it suffices to study the case of a trajectory that (in the blow-down) limits to a stationary point $q_0$ in the fixed point set $Q$. The limit can be at either $-\infty$ and $+\infty$. Without loss of generality, we consider the case of $+\infty$, and focus on the half-trajectory
$$\gamma_+: [0, \infty) \to X^{\sigma}/S^1, \ \ \gamma_+(t) = \gamma(t).$$

The idea is that, in the blow-up, the flow of $v^{\sigma}$ near the boundary $N^1(Q)/S^1$ is approximated by the flow on the boundary, which is given by \eqref{eq:lq0}-\eqref{eq:lqx}, and whose behavior we understand. 

A suitably small neighborhood $V \subset X^{\sigma}/S^1$ of the boundary $\del (X^{\sigma}/S^1)=N^1(Q)/S^1$ can be identified with the normal bundle to the boundary. Hence, a point $v \in V$ can be written as a triple $(q, s, [\phi])$, with $q \in Q$, $s \geq 0$, and $\phi \in N^1_q(Q)$, normalized so that $|\phi|=1$. (Here, $s$ represents the distance to the boundary.) With $L_q$ as in \eqref{eq:lq}, we introduce the functional
$$ \Lambda: V \to \R, \ \ \Lambda(q, s, [\phi]) = \langle \phi, L_q \phi \rangle.$$
The stationary points of $v^{\sigma}$ on the boundary correspond to $s=0$ and $d\Lambda =0$.

Consider the fiber $F = N^1_{q_0}(Q)/S^1$ over $q_0$. The restriction of $v^{\sigma}$ to $F$ is the gradient of $\frac{1}{2} \Lambda|_F$. By part (c) of Definition~\ref{def:eqgv}, $v^{\sigma}|_F$ has $n$ stationary points, corresponding to the eigenvalues $\lambda_i = \lambda_i(q_0)$ of $L_q$, such that
$$ \lambda_1 < \lambda_2 < \dots  < \lambda_n.$$

From our assumption, we know that given any neighborhood $U$ of $F$, there exists $t_0$ such that $\gamma(t) \in U$ for $t > t_0$. In particular, all the accumulation points of the half-trajectory $\gamma_+$ must be contained in $F$. We seek to show that $\gamma_+$ has a unique accumulation point, and that point is a stationary point of $v^{\sigma}$. 

Note that if $x \in F$ is an accumulation point of $\gamma_+$, then all the points on the flow trajectory $\zeta$ through $x$ are also accumulation points. Indeed, if $\{\Phi_t\}$ denotes the flow of $v^{\sigma}$, and $\gamma(t_n) \to x = \zeta(0)$, then $\gamma(t_n + t) = \Phi_t(\gamma(t_n)) \to \Phi_t(x) = \zeta(t)$.  The trajectory $\zeta$ is contained in $F$, where the flow is a gradient flow, and therefore $\zeta$ limits to two stationary points $x=(q_0, 0, [\phi_i])$ and $y=(q_0, 0, [\phi_j])$ at $\mp \infty$. Moreover, $x$ and $y$ have to be accumulation points for $\gamma_+$ as well, and we have $x=y$ if and only if $\zeta$ is stationary.

Therefore, we are left to show that $\gamma_+$ cannot have two different stationary points $x, y \in F$ as accumulation points. Suppose that were the case, and let $\lambda_i > \lambda_j$ be the eigenvalues corresponding to $x$ and $y$. These are also the values of $\Lambda$ at $x$ and $y$.  Pick an intermediate value $\lambda \in (\lambda_j, \lambda_i)$, such that $\lambda \neq \lambda_k$ for any $k$. Since $v^{\sigma}$ restricts to be the gradient of $\frac{1}{2}\Lambda$ on $F$, we have
$$ d\Lambda(v^{\sigma}) > 0 \text{  on } \Lambda^{-1}(\lambda) \cap F.$$
Since $\Lambda^{-1}(\lambda) \cap F$ is compact, we can find a neighborhood $U\subset V$ of $F$ such that 
\begin{equation}
\label{eq:dLa}
 d\Lambda(v^{\sigma}) > 0 \text{  on } \Lambda^{-1}(\lambda) \cap U.
 \end{equation}
Because $x$ and $y$ are accumulation points of $\gamma_+$, it must be that the half-trajectory $\gamma_+$ intersects the intermediate level set $\Lambda^{-1}(\lambda)$ infinitely many times. Furthermore, we know that after a certain time $t_0$, the trajectory $\gamma_+$ is contained in $U$. However, this contradicts \eqref{eq:dLa}, which says that $\Lambda$ decreases every time the trajectory goes through $\Lambda^{-1}(\lambda) \cap U$. The conclusion follows.
\end{proof}

Since the stationary points of $v^{\sigma}$ are hyperbolic, they admit stable and unstable manifolds; cf. the discussion in Section~\ref{sec:quasi}. Further, we can separate the stationary points on the boundary into stable and unstable, and then define the notion of boundary-obstructed trajectories, as in Section~\ref{subsec:morseboundary}.

\begin{definition}
\label{def:eMSqg}
A Morse equivariant quasi-gradient vector field $\tilde v$ on $X$ is called {\em Morse-Smale} if the induced vector field $v^{\sigma}$ on $X^{\sigma}/S^1$ satisfies the Morse-Smale condition for boundary-unobstructed trajectories; and the Morse-Smale condition inside $\del(X^{\sigma}/S^1)$ for the boundary-obstructed trajectories.  
\end{definition}

If we have a Morse-Smale equivariant quasi-gradient vector field, the constructions in Section~\ref{subsec:morseboundary} carry through using $v^{\sigma}$ (even though $v^\sigma$ is not a quasi-gradient vector field in a natural way).  In particular, Lemma~\ref{lem:ao3} implies that $v^{\sigma}$ is a Morse-Smale vector field as in Definition~\ref{def:MS}; compare Lemma~\ref{lem:isMS}. Thus, we obtain a Morse complex $$(\check{C}(X^\sigma/S^1),\check{\partial})$$ which computes the homology of $X^\sigma/S^1$.

The space $X^\sigma/S^1$ can be viewed as an approximation to the homotopy quotient $X\sslash S^1 := X \times_{S^1} ES^1$. Precisely, let $n$ be the connectivity of the pair $(X, X-Q)$, that is, the largest $j$ such that $(X, X-Q)$ is $j$-connected. Note that if the real codimension of $Q$ in $X$ is $2c$, then $n \geq 2c-1$. Since $X^{\sigma}$ is $S^1$-equivariantly homotopy equivalent to $X - Q$, we have an isomorphism in homology
\begin{equation}
\label{eq:approx1}
 H_j(X^\sigma/S^1) \cong H_j(X \sslash S^1)
 \end{equation}
for all $j \leq n-1$. The homology $H_*(X \sslash S^1) \cong H_*^{S^1}(X)$ is the $S^1$-equivariant Borel homology of $X$.

\section{The $U$-action in Morse homology}
\label{subsec:UMorse}
We keep the same setting as in the previous subsection. The equivariant homology 
$H_*^{S^1}(X)$ admits a natural $\zz[U]$-module structure, given by cap products with the elements of $H^*_{S^1}(pt) \cong \zz[U]$. (The action of $U$ decreases degree by two.) Our goal here is to explain how this module structure can be approximated in terms of Morse theory. The discussion is modeled on the infinite-dimensional case presented in \cite[Section 4.11]{KMOS}.

The circle action on $X^{\sigma}$ produces a natural complex line bundle $E^{\sigma}$ on $X^{\sigma}/S^1$. Let $\sect$ be a generic, smooth section of $E^{\sigma}$, such that $\sect$ is transverse to the $0$-section and further, $\sect$ restricted to the boundary, $N^1(Q)/S^1$ is transverse to restriction of the $0$-section to the boundary.  Note that $\sect$ inherits a canonical orientation from $X^\sigma/S^1$.  Let $\Zs$ denote the zero set of $\sect$, which is a manifold with boundary $\del \Zs = \Zs \cap (N^1(Q)/S^1)$.  From the orientation of $\sect$ and $X^\sigma/S^1$, we obtain an orientation on $\Zs$.  

Notice that a trajectory $\gamma$ of $v^{\sigma}$ is determined by its value at time $t=0$. Thus, we can define cut-down moduli spaces by intersecting $M(\x, \y)$ with $\Zs$ at time $t=0$: 
$$ M(\x, \y) \cap \Zs := \{\gamma \in M(\x, \y) \mid \gamma(0) \in \Zs \}.$$

Given $(\theta, \varpi) \in \{(o,o), (o,s), (u,o),(u,s)\}$, we define maps of the form $m^\theta_\varpi : \check{C}^\theta_*(X^{\sigma}/S^1) \to \check{C}^\varpi_{*-2}(X^{\sigma}/S^1)$ by counting flows that intersect $\Zs$:
\[
m^\theta_\varpi(\x) = \sum_{\y \in \crit^\varpi} \# (M(\x,\y) \cap \Zs) \cdot \y.
\]
Note that the conditions on the gradings of $x$ and $y$ guarantee that the cut-down moduli spaces being counted are 0-dimensional.  

For $\theta, \varpi \in \{u, s\},$ we also define analogous maps that count intersections in the boundary of $X$:
\[
\bar{m}^\theta_\varpi \x = \sum_{\y \in \crit^\varpi}\# (M(\x,\y) \cap \del \Zs) \cdot \y,  
\]
where we only consider $\y$ where the relevant cut-down moduli space is $0$-dimensional.  Further, recall the shift in gradings by one in the boundary-obstructed case.  
We define a chain map $\check{m} : \check{C}_*(X^{\sigma}/S^1) \to \check{C}_{*-2}(X^{\sigma}/S^1)$ by 
\begin{equation}\label{eqn:morsecap}
\check{m} = \begin{bmatrix}  m^o_o & - m^u_o \bar{\partial}^s_u - \partial^u_o \bar{m}^s_u \\  m^o_s & \bar{m}^s_s - m^u_s \bar{\partial}^s_u - \partial^u_s \bar{m}^s_u  \end{bmatrix}.
\end{equation}

The map induced by $\check{m}$ on the homology $H_*(X^{\sigma}/S^1)$ is exactly the cap product with $c_1(E^{\sigma})$. This recovers the $\zz[U]$-module structure on $H_*(X^{\sigma}/S^1) = H_*^{S^1}(X^{\sigma})$. Note that the isomorphism \eqref{eq:approx1} discussed in the previous subsection,
\begin{equation}
\label{eq:approx2}
 H_{\leq n-1}(X^{\sigma}/S^1) \cong H_{\leq n-1}(X \sslash S^1),
 \end{equation}
is actually an isomorphism of $\zz[U]$-modules. Thus, Morse theory tells us the action of $\zz[U]$ on $H_*^{S^1}(X)$ in degrees up to $n-1$.

\section{Combined generalizations}
\label{sec:combinedMorse}
The versions of Morse homology described in Sections~\ref{subsec:ConleyMorse} and ~\ref{subsec:morseboundary} can be combined as follows. Let $X$ be a (possibly non-compact) manifold with boundary. Fix a metric $g$ and a Morse quasi-gradient vector field $v$ on $X$ as in Section~\ref{subsec:morseboundary}, and let $\is \subseteq X$ be an isolated invariant set of the resulting flow. Although our definitions in Section~\ref{subsec:ConleyMorse} were for flows on manifolds without boundary, Conley index theory easily extends to the boundary case, provided that the gradient vector field is tangent to the boundary; compare \cite[Section 3.1.2]{HellThesis}.  The only caveat is that in Definitions~\ref{def:iis} and ~\ref{def:indexpair}, the interior of a subset $A \subseteq X$ should be defined as in point-set topology, without regard to the structure of $X$ as a manifold-with-boundary. With this in mind, the Conley index of $\is$ is defined as before, to be the quotient $N/L$ of an index pair $(N,L)$ for $\is$. Alternatively, we could consider the double $D(X)$ of $X$ with its $\zz/2$-action, and appeal to the equivariant Conley index theory developed in \cite{FloerConley, Pruszko}. This guarantees the existence of a $\zz/2$-equivariant Conley index of $D(\is)$ on $D(X)$, well-defined up to $\zz/2$-equivariant homotopy equivalence. In particular, its quotient by $\zz/2$, which we take to be the Conley index of $\is$ on $X$, is well-defined up to homotopy equivalence. This is equivalent to \cite[Definition 3.1.32]{HellThesis}.

We now impose a Morse-Smale condition for the trajectories in $\is$ as in Section~\ref{subsec:morseboundary}, where in the boundary-obstructed case we only require transversality inside $\del X$. We then construct a complex $\check{C}[\is]$ using the same formula \eqref{eq:delcheck}, but involving only the stationary points and the flow trajectories in $\is$. The homology of $\check{C}[\is]$ will be the reduced homology of the Conley index associated to $\is$. 

Starting from this, we can also combine the construction in Section~\ref{subsec:ConleyMorse}  with those in Sections~\ref{subsec:CircleMorse}-\ref{subsec:UMorse}. Suppose that $X$ is a (possibly non-compact) Riemannian manifold $X$ with a smooth, semifree $S^1$-action by isometries. (In our applications, $X$ will be a vector space of the form $\rr^m \oplus \cc^n$, with the linear $S^1$-action.) Let $\tilde v$ be a smooth vector field on $X$, and $\is \subseteq X$ be an $S^1$-invariant, isolated invariant set in the flow of $\tilde v$. Let $Q, X^{\sigma}, v^{\sigma}, L_q$ be constructed as in Section~\ref{subsec:CircleMorse}.  Given a closed $S^1$-invariant subset $M \subset X$, we will denote by $M^\sigma$ the closure of the preimage of $M - Q$ in $X^\sigma$.  We seek to form a complex $\check{C}(X^\sigma/S^1)[\is]$ using only trajectories in $\is^\sigma/S^1$.  In order to do this, we require that we can find an $S^1$-invariant isolating neighborhood $A$ of $\is$ such that the restriction of $\tilde v$ to $A$ is a Morse-Smale equivariant quasi-gradient vector field in the sense of Definition~\ref{def:eMSqg}.

Under these assumptions, we obtain the desired Morse complex $\check{C}(X^\sigma/S^1)[\is]$ using trajectories in $A^\sigma/S^1$. Observe that $\is^{\sigma}/S^1$ is an isolated invariant set for the flow of $v^{\sigma}$ on $X^{\sigma}/S^1$; let $I(\is^\sigma/S^1)$ be its Conley index. We have:
\begin{equation}
\label{eq:ConleyMorse}
 H_*(\check{C}(X^\sigma/S^1)[\is]) \cong \tilde{H}_*(I(\is^\sigma/S^1)).
 \end{equation}

On the other hand, $\is$ is an isolated invariant set itself, and is fixed by the $S^1$-action. We may choose an index pair which is $S^1$-invariant \cite{FloerConley, Pruszko}, and we thus get an $S^1$-equivariant Conley index $I_{S^1}(\is)$. Let $(I_{S^1}(\is))^{S^1}$ be its fixed point set. The Morse homology in \eqref{eq:ConleyMorse} approximates the reduced equivariant homology of $I_{S^1}(\is)$, in the sense that:
\begin{equation}
\label{eq:EquivConleyMorse}
 H_{\leq n-1}(\check{C}(X^\sigma/S^1)[\is]) \cong \tilde{H}_{\leq n-1}^{S^1}(I_{S^1}(\is)),
\end {equation}
where $n$ is the connectivity of the pair $\bigl(I_{S^1}(\is), \bigl(I_{S^1}(\is) - (I_{S^1}(\is))^{S^1}\bigl ) \cup *\bigr)$, with $*$ denoting the basepoint in $I_{S^1}(\is)$; compare \eqref{eq:approx2}. Moreover, after choosing a suitable section $\sect$ of $E^{\sigma}$ as in Section~\ref{subsec:UMorse}, the isomorphism in \eqref{eq:EquivConleyMorse} becomes one of $\zz[U]$-modules.

Finally, recall that in Sections~\ref{sec:standard} and \ref{sec:or1} we described alternative ways of stating the Morse-Smale condition and of constructing orientations. These descriptions apply equally well to the more general settings discussed here.

\chapter{The Seiberg-Witten Floer spectrum}\label{sec:spectrum}
We review here the construction of the Seiberg-Witten Floer spectrum $\SWF(Y, \s)$, following \cite{Spectrum}.  

\section{The configuration space and the gauge group action}
\label{sec:configure}
We will be studying the Seiberg-Witten equations on a tuple $(Y,g,\spinc,\Spin)$, where $Y$ is a rational homology three-sphere, $g$ is a metric on $Y$, $\spinc$ is a $\spc$ structure on $Y$, and $\Spin$ is a spinor bundle for $\spinc$.   We choose a flat $\spc$ connection $A_0$ on $\Spin$ which gives an affine identification of $ \Omega^1(Y; i \R)$ with $\spc$ connections on $\Spin$.    

We will be doing analysis on the configuration space 
\[
\C(Y) = \Omega^1(Y; i\R) \oplus \Gamma(\Spin).
\]
Of course, $\C(Y)$ also depends on $\spinc$, but we omit it from the notation.  The gauge group $\G = \G(Y):=C^\infty(Y,S^1)$ acts on $\C(Y)$ by $u \cdot (a,\phi) = (a - u^{-1}du,u \cdot \phi)$. Since $b_1(Y)=0$, each $u\in \G$ can be written as $e^{f}$ for some $f: Y \to i\R$. We define the {\em normalized gauge group} $\Go$ to consist of those $u=e^{f} \in \G$ such that $\int_Y f = 0$.

For any integer $k$, following \cite{KMbook}, we let $H_{k}$ denote the completion of a subspace $H \subseteq \C(Y)$ with respect to the $L^2_k$ Sobolev norm. In particular, the completion of $W$ is denoted $W_k$. The Sobolev norm is defined in the standard way using the $L^2$ norms of iterated gradients $\nabla^j$, as in \cite[Section 5.1]{KMbook}.  For $j \leq k$, define $\T_{j}$ as the $L^2_j$ completion of $T\sC_k(Y)$. (In particular, $\T_k$ is $T\sC_k(Y)$.)   

\section{Coulomb slices}
\label{sec:coulombs}
We have a {\em global Coulomb slice}\footnote{The global Coulomb slice was denoted $V$ in \cite{Spectrum}. We switched to $W$ in order to avoid confusion with the spaces denoted $\V(Z)$ in \cite{KMbook}, which will also appear in this book.}: 
$$W = \ker d^* \oplus \Gamma(\Spin) \subset \C(Y),$$
where $d^*$ is meant to act on imaginary $1$-forms.   Given $(a,\phi) \in \C(Y)$, there is a unique element of $W$ which is obtained from $(a,\phi)$ by a normalized gauge transformation; this element is called the {\em global Coulomb projection} of $(a, \phi)$. Explicitly, the global Coulomb projection of $(a, \phi)$ is
\begin{equation}
\label{eq:gCoulomb}
 \Pi^{\gCoul}(a, \phi) = (a -  df, e^{f} \phi),
 \end{equation} 
where $f: Y \to i\rr$  is such that $d^*(a-df) =0$ and $\int_Y f= 0$; that is, $f = Gd^*a$, where $G$ is the Green's operator of $\Delta = d^*d$. For future reference, let us denote by $$\pi: \Omega^1(Y;i\R) \to \ker d^*$$ the $L^2$-orthogonal projection, given by $\pi(a) = a - df = a- dGd^*a$.

The derivative of $\Pi^{\gCoul}$ is called the {\em infinitesimal global Coulomb projection}, given by
$$( \Pi^{\gCoul}_*)_{(a, \phi)}(b, \psi) = \bigl (b - dGd^*b, e^{Gd^*a}(\psi + (Gd^*b) \phi) \bigr).$$
 In particular, if $(a, \phi)$ happens to be already in $W$, we have
\begin{equation}
\label{eq:icp}
( \Pi^{\gCoul}_*)_{(a, \phi)}(b, \psi)= (b- d\xi,  \psi +  \xi \phi) = (\pi(b), \psi + \xi \phi)\in T_{(a, \phi)} W,
\end{equation}
where $\xi=Gd^* b$.

Analogous to the definition of $\T_{j}$, let $\T^{\gCoul}_{j}$ be the $L^2_j$ completion of the tangent bundle to $W_k$, namely the trivial vector bundle with fiber $W_{j}$ over $W_{k}$. We keep the notation $\T^{\gCoul}_j$ (rather than just $W_k \times W_j$) to emphasize the bundle structure; this will be convenient when we discuss bundle decompositions.

Let us mention the following lemma, which will be of use to us later: 
  
\begin{lemma}
\label{lem:igc}
Let $k \geq 2$. View the infinitesimal global Coulomb projection as a section of the bundle $\Hom(T\C(Y), TW)$ over $W$, i.e., a map from $W$ to $\Hom(\C(Y), W)$, given by
$$ (a,\phi) \mapsto \bigl( (b, \psi) \mapsto (\Pi^{\gCoul}_*)_{(a,\phi)}(b,\psi) \bigr).$$ 
Then, this map extends to smooth maps between the Sobolev completions 
$$ W_{k} \to  \Hom(\C_j(Y), W_{j})$$
for all $-k \leq j \leq k$.
\end{lemma}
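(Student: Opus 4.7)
The plan is to decompose the infinitesimal global Coulomb projection into two pieces whose regularity can be analyzed separately. For $(a,\phi) \in W$ the formula \eqref{eq:icp} gives
$$(\Pi^{\gCoul}_*)_{(a,\phi)}(b,\psi) = \bigl(\pi(b),\, \psi + (Gd^*b)\phi\bigr).$$
Noting that this expression is \emph{independent of} $a$ and \emph{linear in} $\phi$, the map $(a,\phi) \mapsto (\Pi^{\gCoul}_*)_{(a,\phi)}$ from $W$ into the space of linear operators $\C(Y) \to W$ is affine in $(a,\phi)$. Hence smoothness of the map between the relevant Sobolev completions is equivalent to its boundedness.

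First I would treat the constant piece $T_0 : (b,\psi) \mapsto (\pi(b),\psi)$. Since $\pi = \id - dGd^*$ is a standard elliptic projection onto $\ker d^*$ (orthogonal with respect to $L^2$), it extends to a bounded operator on $L^2_j$ for all integers $j$, by elliptic regularity for $j \geq 0$ and by duality for $j < 0$. Therefore $T_0 \in \Hom(\C_j(Y), W_j)$ uniformly, so the constant map $(a,\phi) \mapsto T_0$ from $W_k$ to $\Hom(\C_j(Y), W_j)$ is manifestly smooth.

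Next I would handle the $\phi$-dependent piece $T_\phi : (b,\psi) \mapsto (0, (Gd^*b)\phi)$. Because $\phi \mapsto T_\phi$ is linear, it suffices to show that the bilinear map
$$ B : W_k \times \C_j(Y) \to W_j, \qquad B\bigl((a,\phi),(b,\psi)\bigr) = \bigl(0,\,(Gd^*b)\phi\bigr)$$
is bounded. The operator $Gd^*$ gains one derivative: $d^*$ loses one derivative and $G$, the Green's operator for $\Delta$ on mean-zero functions, gains two, so $Gd^* : \C_j(Y) \to L^2_{j+1}(Y;i\R)$ is bounded for every $j$. What remains is to verify that pointwise multiplication
$$ L^2_{j+1}(Y;i\R) \times L^2_k(Y;\Spin) \to L^2_j(Y;\Spin)$$
is continuous in the range $-k \leq j \leq k$, on the closed $3$-manifold $Y$.

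The main obstacle, and what I expect to be the technical heart of the argument, is this Sobolev multiplication estimate across the full range of $j$, including negative values. For $0 \leq j \leq k$ it follows from the standard Sobolev multiplication theorem in dimension $3$: the hypothesis $k \geq 2 > 3/2$ implies $L^2_k$ is a Banach algebra and acts continuously on $L^2_j$ for $|j|\leq k$, and $L^2_{j+1}\hookrightarrow L^2_j$ combined with this action gives the required bound. For $-k \leq j < 0$, one obtains the estimate by duality: for $\phi \in L^2_k$ fixed, multiplication by $\phi$ (or its complex conjugate) is bounded $L^2_{-j} \to L^2_{-j-1}$ by the positive case (since $0 < -j \leq k$ and $-j-1 \geq -k$), and taking the adjoint yields a bounded map $L^2_{j+1} \to L^2_j$ with norm controlled by $\|\phi\|_{L^2_k}$. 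Once these bounds are collected, the boundedness of $B$ follows, the map $(a,\phi)\mapsto T_\phi$ is smooth into $\Hom(\C_j(Y),W_j)$, and summing the two pieces concludes the proof.
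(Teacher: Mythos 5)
Your proof is correct and follows essentially the same approach as the paper's: both rely on the explicit formula $(\Pi^{\gCoul}_*)_{(a,\phi)}(b,\psi) = (\pi(b), \psi + (Gd^*b)\phi)$ from \eqref{eq:icp}, the fact that $Gd^*$ gains one Sobolev degree, and the Sobolev multiplication $L^2_k \times L^2_j \to L^2_j$. You merely spell out details the paper's one-line proof leaves implicit — the affine-hence-smooth-iff-bounded reduction and the duality argument for $-k \leq j < 0$ — so the argument is the same.
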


\begin{proof}
Recall the formula~\eqref{eq:icp}:
$$ (\Pi^{\gCoul}_*)_{(a, \phi)} (b, \psi) = (b - dGd^*b, \psi + (Gd^*b)\phi).$$
Observe that $d$ and $d^*$ decrease Sobolev coefficients by one, the Green operator $G$ increases them by $2$.  Because Sobolev multiplication $L^2_{k} \times L^2_j \to L^2_j$ induces a smooth map $L^2_{k} \to \Hom(L^2_j, L^2_j)$, the desired map is smooth.
\end{proof}

Infinitesimally, we can consider a different slice to the gauge action, the one perpendicular to the orbits in the $L^2$ metric. This is the {\em local Coulomb slice} at $(a, \phi) \in \C(Y)$, denoted $\K_{(a, \phi)}$, and consisting of those tangent vectors $(b, \psi) \in T_{(a, \phi)} \C(Y)$ such that
\begin{equation}
\label{eq:localCoulomb}
- d^* b + i \Re \langle i\phi, \psi \rangle = 0.
\end{equation}
Away from the reducibles, we see that $\T_{k}$ splits into a direct sum of two bundles:
$$ \T_k = \J_{k} \oplus \K_{k},$$ where $\J_{k}$ consists of the vectors tangent to the $\G_{k+1}$ orbits, and $\K_{k}$ is the completion of the local Coulomb slice.  Note that the local Coulomb slice does not form a bundle over the entire configuration space, since the local Coulomb slice is ``bigger'' at reducibles.

Given any $(b, \psi) \in T_{(a, \phi)} \C(Y)$, we define the (infinitesimal) {\em local Coulomb projection} of $(b, \psi)$ to be 
$$\Pi^{\lCoul}_{(a, \phi)} (b, \psi) := (b -  d\zeta, \psi + \zeta \phi),$$ where $\zeta: Y \to i\R$ is, for $\phi \neq 0$, the unique function  such that 
\begin{equation}
\label{eq:zetaf}
 -d^*(b-d\zeta) + i\Re\langle i\phi , \psi +  \zeta \phi \rangle = 0.
\end{equation}
The existence and uniqueness of such a $\zeta$ follow from \cite[Proposition 9.3.4]{KMbook}.  When $\phi = 0$, we again ask for \eqref{eq:zetaf} to be satisfied, but to guarantee uniqueness we also impose the condition $\int_Y \zeta=0$.

Note that both types of Coulomb slices are also mentioned by Kronheimer and Mrowka; see \cite[Sections 9.3 and 9.6]{KMbook}.  

For our purposes, we will also need the {\em enlarged local Coulomb slice}, which consists of vectors that are only required to be perpendicular to the orbits of the normalized gauge group action.
We denote this by $\Ke_{(a, \phi)}$. A vector $(b, \psi)\in T_{(a, \phi)} \C(Y)$ is in $\Ke_{(a, \phi)}$ if and only if $-d^*b + i \Re \langle i\phi, \psi \rangle$ is a constant function. Equivalently, we can write this condition as
\begin{equation}
\label{eq:elocalCoulomb}
- d^* b + i \Re \langle i\phi, \psi \rangle^{\circ} = 0.
\end{equation}
Here, and later in the book, given a smooth function $f: Y \to \cc$, we denote by $\mu_Y(f) $ the average value of $f$ over the $3$-manifold $Y$:
\begin{equation}
\label{eq:muy}
\mu_Y(f)= \frac{1}{\vol(Y)}\int_Y f %\; d\! \vol_Y.
\end{equation}
and set
\begin{equation}
\label{eq:intzero}
f^{\circ} = f - \mu_Y(f),
\end{equation}
so that $\int_Y f^{\circ} = 0$.

We remark that the reason why we did not add the superscript $\circ$ to $d^*b$ in \eqref{eq:elocalCoulomb} is because $\int_Y d^*b = 0$,  so $(d^*b)^{\circ} = d^*b$.  Note also that unlike the local Coulomb slice, the enlarged local Coulomb slices do produce a bundle over the configuration space.

Given any $(b, \psi) \in T_{(a, \phi)} \C(Y)$, we define the {\em enlarged local Coulomb projection} of $(b, \psi)$ to be 
\begin{equation}
\label{eq:Pielc}
\Pi^{\elCoul}_{(a, \phi)} (b, \psi) := (b -  d\zeta, \psi + \zeta \phi),
\end{equation}
 where $\zeta: Y \to i\R$ is such that $\int_Y \zeta=0$ and 
\begin{equation}
\label{eq:zetafirst}
 -d^*(b-d\zeta) + i\Re\langle i\phi , \psi +  \zeta \phi \rangle^{\circ} = 0.
\end{equation}

The fact that $\Pi^{\elCoul}$ is well-defined is established by the following lemma. 
\begin{lemma}
\label{lem:elcUniqueness}
(a) Fix $k, j \in \zz$ with $k \geq 2$ and $-k < j \leq k$. Then, for any $x=(a, \phi) \in \C_k(Y)$ and $(b, \psi) \in \T_{j,x}$, there is a unique $\zeta \in L^2_{j+1}(Y; i\R)$ satisfying $\int_Y \zeta =0$ and \eqref{eq:zetafirst}. Further, if $d^*b=0$, then $\zeta \in L^2_{j+2}(Y; i\R)$.

(b) If $d^*b=0$ and the $L^2_k$ norm of $\phi$ is bounded above by a constant $R$, $k \geq 3$ and $|j| \leq k-1$, then there exists a constant $C(R) > 0$ such that
$$ \| \zeta\|_{L^2_{j+2}} \leq C(R) \cdot \|\psi \|_{L^2_j}.$$
\end{lemma}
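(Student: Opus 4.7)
The plan is to rewrite \eqref{eq:zetafirst} as an inhomogeneous elliptic equation $T_\phi \zeta = f$ on the space of mean-zero imaginary functions, show that $T_\phi$ is invertible by self-adjoint Fredholm theory, and then bootstrap regularity and derive the quantitative bound via elliptic estimates together with a Rellich compactness argument.

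First I would expand \eqref{eq:zetafirst} using that $\zeta$ takes values in $i\R$ and the pointwise identity $i\Re\langle i\phi,\zeta\phi\rangle = \zeta|\phi|^2$. Setting $\Delta = d^*d$, the equation becomes
$$T_\phi \zeta := \Delta\zeta + \bigl(\zeta|\phi|^2\bigr)^{\circ} \;=\; d^*b - i\Re\langle i\phi,\psi\rangle^{\circ} \;=:\; f,$$
where both sides are $i\R$-valued functions of mean zero. The constraint $\int_Y \zeta = 0$ is what distinguishes a unique solution on this mean-zero subspace.

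The next step is to verify that $T_\phi$ is an isomorphism between the appropriate Sobolev completions of mean-zero functions. Multiplication by $|\phi|^2\in L^2_k$ is compact from $L^2_{j+1}$ to $L^2_{j-1}$ (by Sobolev multiplication for $k\geq 2$, $-k<j\leq k$, in dimension three, combined with Rellich), so $T_\phi$ is a compact perturbation of $\Delta$ and hence Fredholm of index $0$, and is formally self-adjoint on mean-zero functions. For injectivity, pair $T_\phi\zeta = 0$ with $\zeta$; the mean-zero condition kills the constant $\mu_Y(\zeta|\phi|^2)$ term, giving
$$0 \;=\; \langle T_\phi\zeta,\zeta\rangle_{L^2} \;=\; \|d\zeta\|_{L^2}^2 + \bigl\||\phi|\,\zeta\bigr\|_{L^2}^2,$$
which forces $d\zeta = 0$ and then $\zeta\equiv 0$ (the case $\phi\equiv 0$ is handled identically by the mean-zero condition). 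Fredholm alternative produces a unique $\zeta\in L^2_{j+1}$ solving $T_\phi\zeta = f$, which by the Sobolev multiplication $L^2_k\cdot L^2_j \hookrightarrow L^2_j$ has $f\in L^2_{j-1}$, matching the regularity claim. When $d^*b=0$, the right-hand side upgrades to $L^2_j$; rearranging $\Delta\zeta = f - (\zeta|\phi|^2)^{\circ}$ with $\zeta\in L^2_{j+1}$ and $|\phi|^2\in L^2_k$ (algebra property for $k\geq 2$), the right-hand side sits in $L^2_j$, and elliptic regularity for $\Delta$ on mean-zero functions gives $\zeta\in L^2_{j+2}$, finishing (a).

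For (b), my plan is to argue by contradiction using Rellich compactness. If the bound failed, one could find sequences $(\phi_n,\psi_n,\zeta_n)$ with $\|\phi_n\|_{L^2_k}\leq R$, $\|\psi_n\|_{L^2_j} = 1$, $\zeta_n = T_{\phi_n}^{-1}(f_n)$, and $\|\zeta_n\|_{L^2_{j+2}}\to\infty$. Normalizing $\tilde\zeta_n = \zeta_n/\|\zeta_n\|_{L^2_{j+2}}$, the hypothesis $k\geq 3$ and Rellich let me pass to a subsequence with $\phi_n\to\phi_\infty$ strongly in $L^2_{k-1}$ and $\tilde\zeta_n\rightharpoonup\tilde\zeta_\infty$ weakly in $L^2_{j+2}$, strongly in $L^2_{j+1}$. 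Passing to the limit in $T_{\phi_n}\tilde\zeta_n\to 0$ (in $L^2_{j-1}$) gives $T_{\phi_\infty}\tilde\zeta_\infty = 0$, so $\tilde\zeta_\infty = 0$ by the injectivity proved above; then the elliptic estimate for $\Delta$ applied to $\Delta\tilde\zeta_n = -(\tilde\zeta_n|\phi_n|^2)^{\circ} + o(1)$ upgrades strong convergence from $L^2_{j+1}$ to $L^2_{j+2}$, contradicting $\|\tilde\zeta_n\|_{L^2_{j+2}}=1$.

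The main technical obstacle is keeping track of the Sobolev multiplication estimates uniformly in $j$ across the allowed range; the hypotheses $k\geq 2$, $-k<j\leq k$ in (a) and the stronger $k\geq 3$, $|j|\leq k-1$ in (b) are precisely what is needed for the multiplication, compactness, and the Rellich step to work. Everything else is standard elliptic theory.
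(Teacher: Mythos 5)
Your proof of part~(a) is essentially the same as the paper's. The paper packages the constraint $\int_Y \zeta = 0$ into an operator $E_\phi(\zeta) = \bigl(\Delta\zeta + (|\phi|^2\zeta)^\circ\bigr) + \int_Y\zeta$ acting on all of $L^2_{j+1}(Y;i\R)$, whereas you restrict to the mean-zero subspace and work with $T_\phi$ directly; the Fredholm-index-zero argument, the injectivity via pairing with $\zeta$, and the bootstrap when $d^*b = 0$ are all identical.

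For part~(b), your approach genuinely differs from the paper's. The paper observes that $\phi \mapsto E_\phi$ is continuous from $L^2_{k-1}(Y;\Spin)$ into $\Hom(L^2_{j+2}, L^2_j)$ for $|j|\leq k-1$, landing in invertible operators; since the $L^2_k$-ball of radius $R$ is precompact in $L^2_{k-1}$, the family $\{E_\phi^{-1}\}$ is precompact, hence uniformly bounded in operator norm, and the bound follows in one step. Your argument is a normalization-plus-contradiction scheme: assume the bound fails, normalize $\tilde\zeta_n = \zeta_n/\|\zeta_n\|_{L^2_{j+2}}$, extract weak/strong subsequential limits via Rellich, derive $T_{\phi_\infty}\tilde\zeta_\infty = 0$ and hence $\tilde\zeta_\infty = 0$ by part~(a)'s injectivity, and then use the elliptic estimate $\Delta\tilde\zeta_n = T_{\phi_n}\tilde\zeta_n - (\tilde\zeta_n|\phi_n|^2)^\circ \to 0$ in $L^2_j$ to upgrade to $\tilde\zeta_n \to 0$ in $L^2_{j+2}$, contradicting the normalization. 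Both arguments are correct and use the same underlying compactness (Rellich embedding $L^2_k \hookrightarrow L^2_{k-1}$) and the same Sobolev multiplication hypotheses $k\geq 3$, $|j|\leq k-1$. The paper's version is shorter once one sees it, since it avoids the normalization bookkeeping; yours is a more standard-looking PDE compactness argument and spells out concretely where the contradiction comes from, which some readers may find more transparent.
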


\begin{proof}
Consider the direct sum decomposition
\begin{equation}
\label{eq:deco}
 C^{\infty}(Y; i\R) = (\im d^*) \oplus \R
 \end{equation}
into functions that integrate to zero and constant functions. This induces a similar decomposition on the $L^2_j$ Sobolev completions.

Let $\Delta=d^*d$ denote the (geometer's) Laplacian on imaginary-valued functions. Consider the linear operator between Sobolev completions
 $$E_{\phi}: L^2_{j+1}(Y; i\R) \to L^2_{j-1}(Y; i\R) $$
given by
$$ E_{\phi}(\zeta) = \bigl( \Delta \zeta + (|\phi|^2 \zeta)^{\circ} \bigr) + \int_Y \zeta.$$
With respect to the decomposition \eqref{eq:deco} for $L^2_{j-1}$, note that the expression $\Delta \zeta + (|\phi|^2 \zeta)^{\circ}$ lands in the first summand, and $\int_Y \zeta$ in the second summand.

Equation~\eqref{eq:zetafirst} together with the condition $\int_Y \zeta = 0$ can be written as
$$ E_{\phi}(\zeta)=\bigl( d^*b - i\Re\langle i\phi , \psi \rangle + i\mu_Y( \Re \langle i\phi, \psi \rangle) \bigr) + 0.$$

Observe that the right hand side lives in $L^2_{j-1}(Y; i\rr)$ in general, and in $L^2_j(Y: i\rr)$ when $d^*b=0$.

We need to show that $E_{\phi}$ is invertible. Observe that $E_{\phi}$ is a compact deformation of the operator $E_0 = \Delta + \int_Y$. The latter is invertible, and in particular Fredholm of index zero. Hence, $E_{\phi}$ is also Fredholm of index zero. To show that it is invertible, it suffices to show that it has no kernel. Indeed, suppose $\zeta \in \ker(E_{\phi})$. Then, $\int_Y \zeta=0$ and
\begin{align}
0 &=\int_Y \langle \Delta \zeta + |\phi|^2 \zeta - \mu_Y(|\phi|^2 \zeta) , \zeta \rangle \\
&=  \int_Y  |d\zeta|^2 + \int_Y  |\phi|^2 |\zeta|^2 - 0.
\end{align}
This implies $d\zeta=0$, and since $\int_Y \zeta=0$, we get $\zeta=0$. We conclude that $E_{\phi}$ is injective and hence invertible. This proves part (a).

For part (b), note that if $k \geq 3$, the map
$$ L^2_{k-1}(Y; \Spin) \to \Hom( L^2_{j+2}(Y; i\R), L^2_{j}(Y; i\R) ), \ \ \ \phi \mapsto E_{\phi}$$
is continuous (and lands in invertible operators) for $|j| \leq k-1$. Hence, the map $\phi \mapsto E_{\phi}^{-1}$ is also continuous. Since the ball of radius $R$ in $L^2_k$ is precompact in $L^2_{k-1}$, the resulting collection of $E_{\phi}^{-1}$ is also precompact, and hence bounded, in $\Hom( L^2_{j+2}(Y; i\R), L^2_{j}(Y; i\R) )$.  This gives the desired bounds.  
\end{proof}

Observe that, for $(a, \phi) \in W$, we can view the (restrictions of the) projections $(\Pi^{\gCoul}_*)_{(a, \phi)}$ and $\Pi^{\elCoul}_{(a, \phi)}$ as inverse maps relating the enlarged local Coulomb slice to the global Coulomb slice:
\begin{equation}
\label{eq:backandforth}
\xymatrixcolsep{5pc}\xymatrix{\Ke_{(a, \phi)} \ar@/^/[r]^{(\Pi^{\gCoul}_*)_{(a,\phi)}}
 & \T^{\gCoul}_{(a, \phi)}. \ar@/^/[l]^{\Pi^{\elCoul}_{(a, \phi)}}}
\end{equation}
To see that they are inverse to each other, it suffices to note that both are given by adding a uniquely determined vector that is tangent to the $\Go$-orbit.

Here is the analogue of Lemma~\ref{lem:igc} for $\Pi^{\elCoul}$ instead of $(\Pi^{\gCoul}_*)$. 

\begin{lemma}
\label{lem:elc}
Let $k \geq 2$. View the enlarged local Coulomb projection as a section of the bundle $\Hom(TW, T\C(Y))$ over $W$, i.e., a map from $W$ to $\Hom(W, \C(Y))$, given by
$$ (a,\phi) \mapsto \bigl( (b, \psi) \mapsto \Pi^{\elCoul}_{(a,\phi)}(b,\psi) \bigr).$$ 
Then, this map extends to smooth maps between the Sobolev completions 
$$ W_{k} \to  \Hom(W_j, \C_j(Y))$$
for all $-k \leq j \leq k$.
\end{lemma}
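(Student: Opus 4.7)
The plan is to adapt the strategy used in Lemma~\ref{lem:igc}. Unlike the global Coulomb projection, $\Pi^{\elCoul}$ has no closed formula: the function $\zeta$ in \eqref{eq:Pielc} is defined only implicitly by the elliptic equation \eqref{eq:zetafirst}. The strategy is therefore to realize $\zeta$ as the output of an operator inverse whose dependence on $\phi$ is smooth, and then recover smoothness of $\Pi^{\elCoul}$ itself via Sobolev multiplication. As a preliminary reduction, I would observe that neither \eqref{eq:zetafirst} nor \eqref{eq:Pielc} involves $a$, and since $(b,\psi) \in T_{(a,\phi)}W$ enforces $d^*b=0$, equation \eqref{eq:zetafirst} simplifies to
$$\Delta\zeta + (|\phi|^2 \zeta)^{\circ} = -i\Re\langle i\phi,\psi\rangle^{\circ},$$
placing us in the improved-regularity case of Lemma~\ref{lem:elcUniqueness}(a), so that $\zeta \in L^2_{j+2}$.

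Next, borrowing the notation from the proof of Lemma~\ref{lem:elcUniqueness}, I would view $E_\phi(\zeta) = \Delta\zeta + (|\phi|^2\zeta)^{\circ}$ as a map between the zero-average subspaces of $L^2_{j+2}(Y;i\R)$ and $L^2_j(Y;i\R)$. The same Fredholm-index-zero argument as in Lemma~\ref{lem:elcUniqueness}(a), combined with elliptic regularity for $\Delta$, shows this $E_\phi$ is an isomorphism. By Sobolev multiplication in dimension three, $\phi \mapsto |\phi|^2$ is smooth from $L^2_k$ into the bounded multipliers of $L^2_m$ for $|m| \leq k$ (this is where $k \geq 2$ is used), so $\phi \mapsto E_\phi$ is a smooth map $W_k \to \Hom(L^2_{j+2}, L^2_j)$ landing in invertible operators. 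Since inversion is a smooth operation on the open set of invertible elements of a Banach algebra, $\phi \mapsto E_\phi^{-1}$ is smooth into $\Hom(L^2_j, L^2_{j+2})$.

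The bilinear pairing $(\phi,\psi) \mapsto -i\Re\langle i\phi,\psi\rangle^{\circ}$ extends, again by Sobolev multiplication, to a smooth map $W_k \times W_j \to L^2_j$. Composing these ingredients, $\zeta = E_\phi^{-1}\bigl(-i\Re\langle i\phi,\psi\rangle^{\circ}\bigr)$ depends smoothly on $(\phi,(b,\psi)) \in W_k \times W_j$ with values in $L^2_{j+2}$. Finally, $d: L^2_{j+2} \to L^2_{j+1}$ is continuous linear and the multiplication $(\zeta,\phi) \mapsto \zeta\phi$ is smooth $L^2_{j+2}\times L^2_k \to L^2_j$, so assembling $\Pi^{\elCoul}_{(a,\phi)}(b,\psi) = (b - d\zeta,\, \psi + \zeta\phi)$ yields the required smooth map $W_k \to \Hom(W_j, \C_j(Y))$.

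The main obstacle I anticipate is bookkeeping the Sobolev multiplication estimates uniformly over the full range $-k \leq j \leq k$. For negative $j$ the space $L^2_j$ is a distributional completion, so the bilinear estimates must be deduced from the multiplier action of $L^2_k$ on $L^2_m$ for $|m| \leq k$, together with duality; once this is in place, every step is formally parallel to the corresponding step in the proof of Lemma~\ref{lem:igc}, and no analytic work beyond the invertibility already established in Lemma~\ref{lem:elcUniqueness} is needed.
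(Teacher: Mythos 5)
Your proof is correct, but it takes a genuinely different route from the paper's. The paper's proof is a two-line reduction: well-definedness is quoted from Lemma~\ref{lem:elcUniqueness}, and smoothness is deduced from Lemma~\ref{lem:igc} via the observation (recorded in \eqref{eq:backandforth}) that $\Pi^{\elCoul}_x$ is the inverse of the isomorphism $(\Pi^{\gCoul}_*)_x \colon \Ke_{j,x} \to W_j$, together with the smoothness of inversion. That argument is shorter but subtler than it looks, since $\Ke_{j,x}$ is a varying subspace of $\C_j(Y)$ rather than a fixed Banach space, so one has to be a little careful in converting ``smooth inverse of a smooth bundle map'' into a statement about $\Hom(W_j,\C_j(Y))$. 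Your approach instead works directly with the defining elliptic equation \eqref{eq:zetafirst}: you restrict the operator $E_\phi$ of Lemma~\ref{lem:elcUniqueness} to the zero-average subspaces of $L^2_{j+2}$ and $L^2_j$, observe it is an isomorphism by the same Fredholm-plus-injectivity argument, note that $\phi \mapsto E_\phi$ is a smooth map into invertible operators (via the multiplier action of $L^2_k$, which is where $k\geq 2$ enters), and then invert in the Banach algebra. This reproves from scratch what the paper imports from Lemma~\ref{lem:igc}, at the cost of slightly more bookkeeping of Sobolev multiplication exponents (in particular checking that $L^2_k \times L^2_{j+2}\to L^2_j$ is available for the full range $|j|\le k$), but with the benefit of being self-contained and making the role of the elliptic estimate explicit. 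Both are valid; yours is closer in spirit to how Lemma~\ref{lem:elcUniqueness} itself is proved, while the paper's leans on the already-established smoothness of the global Coulomb projection.
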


\begin{proof} The fact that the extension to Sobolev completions is well-defined was established in Lemma~\ref{lem:elcUniqueness}. Smoothness can be deduced from Lemma~\ref{lem:igc}, using the fact that $\Pi^{\elCoul}$ (restricted to $TW$) is the inverse to $\Pi^{\gCoul}_*$.
\end{proof}

\section{The Seiberg-Witten equations} \label{sec:SWe}
Let $(Y,g,\spinc,\Spin)$ be as above. We let $\rho:TY \to \text{End}(\Spin)$ be the Clifford multiplication.  
Further, for $a \in \Omega^1(Y; i\R)$, we will use $D_a : \Gamma(\Spin) \to \Gamma(\Spin)$ to denote the Dirac operator corresponding to the connection $A_0 + a$, and $D$ for the case of $a = 0$.  

Consider the Chern-Simons-Dirac (CSD) functional, $\L$, on $\C(Y)$:
\[
\L(a,\phi) = \frac{1}{2} \Bigl(\int_Y \langle \phi, D_a \phi \rangle  - \int_Y a \wedge da \Bigr).  
\]
We let $\X$  denote the $L^2$-gradient of CSD:
$$ \X(a, \phi) =  (*da + \tau(\phi,\phi), D_a \phi),$$
where $\tau(\phi,\phi)=\rho^{-1}(\phi \phi^*)_0$ is a quadratic function coming from the Clifford multiplication. 

It is not difficult to check that the CSD functional is gauge-invariant (since $b_1(Y) = 0$). 
The critical points of $\L$ are the solutions to the {\em Seiberg-Witten equations}, 
$$\X(a, \phi)=0.$$ If the spinor $\phi$ is identically 0, then the solution $(a,\phi)$ is said to be {\em reducible}.  

 By measuring the length of the enlarged local Coulomb projections of tangent vectors to $W$, we obtain a Riemannian metric $\tilde{g}$ on $W$. Explicitly, for any tangent vector $(b, \psi)$ to $(a, \phi) \in W$, we set\footnote{The $L^2$ inner product is Hermitian, and thus the real part gives a real inner product.  This is what we need to define a Riemannian metric.}  
\begin{equation}
\label{eq:gtilde}
\langle (b, \psi), (b', \psi') \rangle_{\tilde{g}} = \Re \; \langle \Pi^{\elCoul}_{(a, \phi)} (b, \psi), \Pi^{\elCoul}_{(a, \phi)} (b', \psi') \rangle_{L^2}.
\end{equation}

For future reference, let us mention that, since $\Pi^{\elCoul}_{(a, \phi)}$ is an $L^2$-orthogonal projection, we can also write
\begin{equation}
\label{eq:gtilde2}
\langle (b, \psi), (b', \psi') \rangle_{\tilde{g}} = \Re \; \langle  (b, \psi), \Pi^{\elCoul}_{(a, \phi)} (b', \psi') \rangle_{L^2}.
\end{equation}

The metric $\tilde g$ has the property that the trajectories of the gradient flow of $\L$ restricted to $W$ are precisely the global Coulomb projections of the original gradient flow trajectories in $\C(Y)$.  Therefore, in the global Coulomb slice with the metric $\tilde{g}$, the (downward) gradient flow trajectories are given by
\begin{equation}
\label{eq:gradL}
\frac{d}{dt} \gamma(t) = - (\Pi^{\gCoul}_*)_{\gamma(t)} \X (\gamma(t)),
\end{equation}
where $\gamma(t)=(a(t), \phi(t)).$ Note that $\X = \grad \L$ is perpendicular to the level sets of $\L$ with respect to the $L^2$ metric, which contain the gauge orbits, so $\X$ is automatically contained in the local Coulomb slices. We can split the right hand side of \eqref{eq:gradL} into a linear part $l$ and a nonlinear part $c$, and re-write the flow equation as
\[
\frac{d}{d t} \gamma(t) = - (l + c)(\gamma(t)), 
\]
where
\begin{eqnarray}
l(a,\phi) &=& (*da, D \phi) \label{eq:lmap} \\
c(a,\phi) &=& (\pi \circ \tau(\phi,\phi),\rho(a)\phi +  \xi(\phi)\phi) \label{eq:cmap},
\end{eqnarray}
with $\xi(\phi): Y \to i\rr$ being characterized by $d\xi(\phi) = (1-\pi)\circ \tau(\phi, \phi)$ and $\int_Y \xi(\phi) =0.$ Recall also that $\pi$ denotes the orthogonal projection to $\ker d^*$.

The $\tilde g$-gradient of the restriction $\L|_W$ extends to a map
$$\X^{\gCoul} = l + c: W_{k} \to W_{k-1},$$
such that $l$ is a linear Fredholm operator, and $c$ is quadratic.\footnote{We chose our conventions to be in agreement with \cite{KMbook}.  In \cite{Spectrum}, the map $l+c$ went from $W_{k+1}$ to $W_k$.} The linear operator $l$ is self-adjoint with respect to the $L^2$ inner product, but not necessarily $\tilde{g}$.  Further, the map $c$ is continuous as a map from $W_k$ to $W_k$, and thus compact as a map from $W_k$ to $W_{k-1}$.  The corresponding flow lines are called {\em Seiberg-Witten trajectories} (in Coulomb gauge). Such a trajectory $\gamma=(a(t), \phi(t)): \rr \to W$ is said to be {\em of finite type} if $\L(\gamma(t))$ and $\|\phi(t)\|_{C^0}$ are bounded in $t$.

\section{Finite-dimensional approximation} \label{sec:fdax}
For $\lambda > 1$, let us denote by $\vml$ the finite-dimensional subspace of $W$ spanned by the eigenvectors of $l$ with eigenvalues in the interval $(-\lambda,\lambda)$.\footnote{In \cite{Spectrum}, the role of $\vml$ was played by $W^{\mu}_{\lambda}$, the subspace spanned by eigenvectors with eigenvalues between $\lambda$ and $\mu$.  In this book we restrict to $\mu = - \lambda$; this produces the same spectrum.  The reason for the change is that in Section~\ref{sec:StabilityPoints} we will need to turn the parameter space for $\lambda$ into a manifold with boundary, and it is easier to do so with only one degree of freedom. Also, notice that in \cite[Section 4]{Spectrum}, the approximation $W^{\mu}_{\lambda}$ was initially defined as the span of eigenvectors with eigenvalues in $(\lambda, \mu]$, but later changed so that it is the image of the smoothed projection $p^{\mu}_{\lambda}$. This means using the open interval $(\lambda, \mu)$ instead of $(\lambda, \mu]$, and in fact it is easier to define the approximation as such from the beginning. In our setting we use $(-\lambda, \lambda)$.}  The $L^2$ orthogonal projection from $W$ to $\vml$ will be denoted $\tilde{p}^\lambda$. We modify this to make it smooth in $\lambda$, using the following preliminary definition:
\begin{equation}
\label{eq:plprel}
\pmlprel = \int^1_0 \beta(\theta) \tilde{p}^{\lambda - \theta}_{-\lambda + \theta} d \theta, 
\end{equation}
where $\beta$ is a smooth, non-negative function that is non-zero exactly on $(0,1)$, and such that $\int_\rr \beta(\theta) d\theta =1$.  

Our $\pmlprel$ was the one used in \cite{Spectrum}, where it was denoted $\pml$. In this book, it will be convenient to arrange for the smoothed projection to be the actual projection $\tilde{p}^\lambda$ at an infinite sequence of $\lambda$'s. Let us fix such a sequence:
$$ \llambda_1 < \llambda_2 <  \dots$$
such that $\llambda_i \to \infty$ and none of the $\llambda_i$ are eigenvalues of $l$. Fix also disjoint intervals $[\llambda_i - \epsilon_i, \llambda_i + \epsilon_i]$ that do not contain eigenvalues of $l$. Choose smooth bump functions $\beta_i: (0,\infty) \to [0,1]$ supported in $[\llambda_i - \epsilon_i, \llambda_i + \epsilon_i]$ and with $\beta_i(\llambda_i)=1.$ Then set
 \begin{equation}
\label{eq:pl}
\pml = \sum_i \beta_i(\lambda) \tilde{p}^\lambda + \bigl(1-\sum_i \beta_i(\lambda) \bigr) \pmlprel. 
\end{equation}

We now have that $\pml$ is smooth in $\lambda$, and $\pml=\tilde{p}^{\lambda}$ for $\lambda \in \{\llambda_1, \llambda_2, \dots \}.$ Moreover, for any $\lambda$, the image of $\pml$ is the subspace $\vml$.

As an aside, let us remark that in \cite{Spectrum} there is a different definition of the $L^2_k$ Sobolev norm on $W$, using $l$ instead of the covariant derivative $\nabla$. This produces a norm equivalent to the usual one. With the definition in \cite{Spectrum}, the $L^2$ projection $\tilde{p}^\lambda$ would have been the orthogonal projection to $\vml$ with respect to the $L^2_k$ metric for any $k$, and we would have had $\| \pml (x) \|_{L^2_k} \leq \|x \|_{L^2_k}.$ Given our choice of Sobolev norms,  the same inequality holds up to a constant: 
$$\| \pml (x) \|_{L^2_k} \leq \Theta_k \|x \|_{L^2_k},$$
where $\Theta_k$ depends only on $k$ and the Riemannian manifold $Y$.  We choose $\Theta_0 = 1$.  

On $\vml$, we consider the flow equation
\begin{equation}
\label{eq:approxgrad}
\frac{d}{d t} \gamma(t)=-(l + \pml c)(\gamma(t)). 
\end{equation}

We refer to solutions of \eqref{eq:approxgrad} as  {\em approximate Seiberg-Witten trajectories}. 

\begin{remark} \label{rem:notgradient}
If we consider the restriction of the CSD functional to $\vml$, its gradient with respect to $\tilde g$ is
$$ \tilde{p}^{\lambda}_{\tilde g} (l + c) \tilde{p}^{\lambda}_{\tilde g}= l + \tilde{p}^{\lambda}_{\tilde g} c \tilde{p}^{\lambda}_{\tilde g},$$
where $\tilde{p}^{\lambda}_{\tilde g}$ denotes the $\tilde g$-orthogonal projection onto $W^{\lambda}$.
It would be rather cumbersome to work with these projections, so we replaced $\tilde{p}^{\lambda}_{\tilde g}$ with the $L^2$ orthogonal projection $\tilde{p}^{\lambda}$. When $\lambda$ is one of the cut-offs $\llambda_i$, on $\vml$ we have
$$ \tilde{p}^{\lambda} (l + c) \tilde{p}^{\lambda} = l + \tilde{p}^{\lambda} c = l +\pml c.$$ 
However, even in this case (when $\lambda=\llambda_i$), we expect that $ l +\pml c$ is neither the $L^2$ nor the $\tilde g$ gradient of a function. We will show in Chapter~\ref{sec:quasigradient} that $l +\pml c$ is a quasi-gradient, so it can still be used to do Morse theory.
\end{remark}

Fix a natural number $k \geq 5$.  There exists a constant $R > 0$, such that all Seiberg-Witten trajectories $\gamma: \rr \to W$ of finite type are contained in $B(R)$, the ball of radius $R$ in $W_k$.  The following is a corresponding  compactness result for approximate Seiberg-Witten trajectories:

\begin{proposition}[Proposition 3 in \cite{Spectrum}] \label{prop:proposition3}
For any $\lambda$ sufficiently large (compared to $R$), if $\gamma: \mathbb{R} \to \vml$ is a trajectory of $(l + \pml c)$, and $\gamma(t)$ is in $\overline{B(2R)}$ for all $t$, then in fact $\gamma(t)$ is contained in $B(R)$.
\end{proposition}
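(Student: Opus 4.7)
I would prove this by a compactness-plus-contradiction argument, extracting a genuine Seiberg-Witten trajectory as the limit of a hypothetical sequence of approximate trajectories that violate the conclusion.

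Suppose the proposition fails: then there exist $\lambda_n \to \infty$ and trajectories $\gamma_n : \R \to W^{\lambda_n}$ with $\gamma_n(t) \in \overline{B(2R)}$ for all $t$, but such that $\|\gamma_n(t_n)\|_{L^2_k} \geq R$ for some $t_n \in \R$. Translating in time, assume $t_n = 0$. The goal is to extract a subsequential limit $\gamma : \R \to W_k$ that is a finite-type Seiberg-Witten trajectory lying in $\overline{B(2R)}$, with $\|\gamma(0)\|_{L^2_k} \geq R$. Since all finite-type Seiberg-Witten trajectories lie in $B(R)$, this contradicts the conclusion.

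The key compactness step is to upgrade the uniform $L^2_k$ bound on $\gamma_n$ to uniform control on its derivative and, eventually, convergence in $C^0_{\text{loc}}(\R; W_{k-1})$. Since $l: W_k \to W_{k-1}$ is bounded, $c: W_k \to W_k$ is continuous (hence maps the bounded set $\overline{B(2R)}$ to a bounded set in $W_k$), and $\|\pml\|_{L^2_k \to L^2_k} \leq \Theta_k$, the flow equation $\frac{d}{dt}\gamma_n = -(l + \pml c)(\gamma_n)$ yields a uniform $L^2_{k-1}$ bound on $d\gamma_n/dt$. Combined with the uniform $L^2_k$ bound on $\gamma_n$ and the compact embedding $L^2_k \hookrightarrow L^2_{k-1}$, Arzel\`a-Ascoli (applied on each compact time interval, followed by a diagonal extraction) produces a limit $\gamma \in C^0_{\text{loc}}(\R; W_{k-1})$ with $\gamma(t) \in \overline{B(2R)} \subset W_k$ for all $t$ (using weak compactness in $W_k$ fiberwise).

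Next, I would verify that $\gamma$ solves the unperturbed flow equation $\frac{d\gamma}{dt} = -(l+c)\gamma$. The crucial observation is that $\pml \to \mathrm{id}$ strongly on $W_{k-1}$ as $\lambda \to \infty$, and $c$ maps bounded sets of $W_k$ to bounded sets of $W_k$ (hence precompact sets of $W_{k-1}$); so $\pml c(\gamma_n) \to c(\gamma)$ in $W_{k-1}$ along the subsequence. Passing to the limit in the integral form of the ODE,
\begin{equation*}
\gamma_n(t) = \gamma_n(0) - \int_0^t (l + \pml c)(\gamma_n(s))\,ds,
\end{equation*}
gives the desired identity for $\gamma$. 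Elliptic bootstrapping (using that $l+c$ smooths in the temporal direction, cf.\ the standard Seiberg-Witten regularity arguments) then upgrades $\gamma$ to a smooth trajectory valued in $W_k$.

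Finally, I would check $\gamma$ is of finite type. The $C^0$ bound on the spinor component follows from the uniform $L^2_k$ bound via Sobolev embedding (since $k \geq 5 > 3/2$). The bound on $\L(\gamma(t))$ follows because $\L$ is continuous on $W_k$ and $\gamma(t) \in \overline{B(2R)}$. Hence $\gamma$ is a finite-type trajectory, so $\gamma(t) \in B(R)$ for all $t$; but by the lower-semicontinuity of the norm under weak limits, $\|\gamma(0)\|_{L^2_k} \geq R$, contradicting $\gamma(0) \in B(R)$.

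The main obstacle will be the compactness step: carefully justifying that one can pass to the limit through the projection $\pml$, given that convergence $\gamma_n \to \gamma$ happens only in the weaker $W_{k-1}$ topology, and that $c$ is only quadratic (not linear). The trick is that $c$ is compact from $W_k$ to $W_{k-1}$, so bounded $L^2_k$ sequences have subsequential $L^2_{k-1}$-convergent images under $c$, and combining this with strong convergence $\pml \to \mathrm{id}$ on $W_{k-1}$ makes $\pml c(\gamma_n) \to c(\gamma)$ work.
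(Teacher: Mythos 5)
Your overall strategy — translate in time so $\gamma_n(0)$ leaves $B(R)$, extract a subsequential limit via compactness, show the limit is a finite-type Seiberg--Witten trajectory, and invoke the fact that finite-type trajectories live in $B(R)$ — is the same strategy as the original proof of Proposition~3 in the reference, which the paper defers to. Your derivative bound, Arzel\`a--Ascoli extraction, and passage to the limit through $\pml c$ (using compactness of $c:W_k\to W_{k-1}$ and strong operator convergence $\pml\to 1$) are all sound.

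However, the final step has a genuine gap. You write that ``by the lower-semicontinuity of the norm under weak limits, $\|\gamma(0)\|_{L^2_k}\geq R$,'' but lower semicontinuity runs in the \emph{opposite} direction: if $\gamma_n(0)\rightharpoonup\gamma(0)$ weakly in $W_k$, then $\|\gamma(0)\|_{L^2_k}\leq\liminf_n\|\gamma_n(0)\|_{L^2_k}$. From $\|\gamma_n(0)\|_{L^2_k}\geq R$ you can only deduce $\|\gamma(0)\|_{L^2_k}\leq 2R$, never the lower bound you need. (Think of an orthonormal sequence scaled to norm $R$: it converges weakly to $0$.) Your $C^0_{\mathrm{loc}}$ convergence is only strong in $W_{k-1}$, which gives no information about the $L^2_k$ norm either, since $B(R)$ is defined by the $L^2_k$ norm. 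So the contradiction does not close.

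The fix is to carry out the elliptic bootstrapping on the \emph{sequence} $\gamma_n$, not merely on the limit $\gamma$. Because $(d/dt + l)\gamma_n = -\pml c(\gamma_n)$ with the right-hand side uniformly bounded in $L^2_k(I\times Y)$ for compact $I$, interior elliptic estimates for the four-dimensional operator $d/dt + l$ give uniform $L^2_{k+1}(I'\times Y)$ bounds on $\gamma_n$ for $I'\Subset I$; this is precisely what Steps~1 and~3 of the original proof establish (this is why those steps require functional boundedness and continuity of $\hat c$, as the paper's remark notes). The uniform $L^2_{k+1}(\{0\}\times Y)$ bound on $\gamma_n(0)$ then turns the Rellich embedding $L^2_{k+1}\hookrightarrow L^2_k$ into \emph{strong} $L^2_k$ convergence $\gamma_n(0)\to\gamma(0)$, which gives $\|\gamma(0)\|_{L^2_k}\geq R$ and closes the contradiction.
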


This was proved in \cite{Spectrum} with $\pmlprel$ instead of $\pml$, but the same arguments work in our setting.

\section{The Conley index and the Seiberg-Witten Floer spectrum}\label{sec:conleyswf}
Recall the definition of the Conley index from Section~\ref{subsec:ConleyMorse}, and of the $S^1$-equivariant refinement $I_{S^1}$ mentioned at the end of Section~\ref{sec:combinedMorse}. 

With this in mind, we are ready to define the Seiberg-Witten Floer spectrum.  We fix $k$, $R$, and sufficiently large $\lambda$ such that Proposition~\ref{prop:proposition3} applies.  We consider the vector field $u^\lambda(l + \pml c)$ on $\vml$, where $u^\lambda$ is a smooth, $S^1$-invariant, cut-off function on $W^\lambda$ that vanishes outside of $B(3R)$.  This generates the flow $\phi^\lambda$ that we will work with.  Denote by $S^\lambda$ the union of all trajectories of $\phi^\lambda$ inside $B(R)$. Recall from Proposition~\ref{prop:proposition3} that these are the same as the trajectories that stay in $\overline{B(2R)}$. This implies that $S^\lambda$ is an isolated invariant set. 

Since everything is $S^1$-invariant, we can construct the equivariant Conley index $I^\lambda = I_{S^1}(\phi^\lambda,S^\lambda)$.  We must de-suspend appropriately to make the stable homotopy type independent of $\lambda$:
\[
\SWF(Y,\spinc,g) =  \Sigma^{-W^{(-\lambda,0)}} I^\lambda,
\]
where $W^{(-\lambda, 0)}$ denotes the direct sum of the eigenspaces of $l$ with eigenvalues in the interval $(-\lambda, 0)$. As we vary the metric $g$, the spectrum $\SWF(Y, \spinc, g)$ varies by suspending (or de-suspending) with copies of the vector space $\cc$. In \cite{Spectrum}, this indeterminacy is fixed by introducing a quantity $n(Y,\spinc, g) \in \qq$ (a linear combination of eta invariants), and setting  
$$\SWF(Y, \spinc) = \Sigma^{-n(Y, \spinc, g) \cc} \SWF(Y, \spinc, g),$$
where the de-suspension by rational numbers is defined formally.  For the definition of $n(Y,\spinc,g)$, see \eqref{eq:ng} below.  We have:

\begin{theorem}[Theorem 1 in \cite{Spectrum}]
The $S^1$-equivariant stable homotopy type of $\SWF(Y,\spinc)$ is an invariant of the pair $(Y, \spinc)$.  
\end{theorem}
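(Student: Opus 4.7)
The plan is to show invariance under each of the choices made in the construction: the Sobolev index $k$, the radius $R$, the $S^1$-invariant cut-off $u^{\lambda}$, the smoothed projection $\pml$, the eigenvalue cut-off $\lambda$, and the metric $g$. Most of these follow from standard Conley-index continuation, while the genuine input lies in the $\lambda$-dependence (which forces the de-suspension by $W^{(-\lambda,0)}$) and the $g$-dependence (which forces the de-suspension by $n(Y,\spinc,g)\cc$).

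First I would dispose of the easy choices. Changing the cut-off function $u^{\lambda}$ leaves the flow unchanged on any isolating neighborhood of $S^{\lambda}$, and Proposition~\ref{prop:proposition3} guarantees that $B(R)$ remains an isolating neighborhood; hence the equivariant Conley index is unchanged. The same argument shows independence of $R$ (any larger radius still isolates $S^{\lambda}$) and of $k$ (the critical set and trajectories of finite type are smooth, so the $L^2_k$-completions are compatible under inclusion). Smoothing the projection via \eqref{eq:plprel}--\eqref{eq:pl} and varying the smoothing parameters produces a homotopy of flows through which $S^{\lambda}$ remains isolated, so equivariant continuation of the Conley index applies.

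Next I would address $\lambda$-independence, which is the conceptually crucial step. Pick $\lambda < \lambda'$, both outside the spectrum of $l$ and both large enough for Proposition~\ref{prop:proposition3}. Decompose
\[
W^{\lambda'} = W^{\lambda} \oplus A \oplus \Rcal, \qquad A = W^{[\lambda, \lambda')}, \quad \Rcal = W^{(-\lambda', -\lambda]}.
\]
On $A$ and $\Rcal$ the linear part $l$ dominates because $\|lx\|\geq\lambda\|x\|$ there, while $\pml c$ is quadratic and uniformly bounded on $B(3R)$; hence for $\lambda$ sufficiently large, the origin is the only invariant subset of a small ball in $A\oplus\Rcal$ and the full invariant set satisfies $S^{\lambda'} = S^{\lambda}\times\{0\}\times\{0\}$, with $S^{\lambda}\times\{0\}\subset W^{\lambda}\oplus A$ an attractor and $\{0\}\times\Rcal$ transverse to it as a repeller. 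The product/attractor-repeller properties of the Conley index (applied $S^1$-equivariantly) give
\[
I^{\lambda'} \simeq_{S^1} I^{\lambda}\wedge S^{0}\wedge \Rcal^{+} \simeq_{S^1} I^{\lambda}\wedge \Rcal^{+},
\]
since the Conley index of a linear attractor at $0$ is $S^{0}$ and of a linear repeller is the one-point compactification of its domain. Since $W^{(-\lambda',0)} = W^{(-\lambda,0)}\oplus \Rcal$, the de-suspensions cancel:
\[
\Sigma^{-W^{(-\lambda',0)}}I^{\lambda'} \;=\; \Sigma^{-W^{(-\lambda,0)}}\Sigma^{-\Rcal}\bigl(I^{\lambda}\wedge \Rcal^{+}\bigr) \;\simeq_{S^{1}}\; \Sigma^{-W^{(-\lambda,0)}}I^{\lambda}.
\]
This is the main technical step, and the main obstacle is ensuring that the attractor-repeller decomposition actually holds---i.e., that the nonlinear term $\pml c$ really is dominated by $l$ on the new eigenspaces for all $\lambda$ large enough, uniformly in the region where the cut-off $u^{\lambda}$ is nontrivial.

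Finally, independence of the metric $g$ requires a homotopy argument along a path $g_s$. Away from parameters where an eigenvalue of $l_s$ hits $\pm\lambda$ or $0$, the flow varies continuously and $S^{\lambda}$ stays isolated, so equivariant Conley-index continuation gives an $S^{1}$-equivariant homotopy equivalence. When an eigenvalue crosses $\pm\lambda$, one first applies the $\lambda$-independence just established to replace $\lambda$ by a larger value that avoids the crossing. When an eigenvalue crosses $0$, a (complex, by the $S^{1}$-symmetry of the Dirac spectrum) subspace transfers between $W^{(-\lambda,0)}$ and the positive-eigenvalue part, changing $\Sigma^{-W^{(-\lambda,0)}}I^{\lambda}$ by $\Sigma^{\pm\cc}$. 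The rational quantity $n(Y,\spinc,g)$ in \eqref{eq:ng} is defined precisely as the combination of eta invariants whose jumps under variation of $g$ compensate this $\Sigma^{\pm\cc}$, so $\Sigma^{-n(Y,\spinc,g)\cc}\SWF(Y,\spinc,g)$ is invariant along the homotopy. Since the space of metrics is connected, the $S^{1}$-equivariant stable homotopy type depends only on $(Y,\spinc)$.
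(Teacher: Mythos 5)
Your proposal is correct and follows the same route as the reference the paper cites (\cite{Spectrum}, Theorem 1): dispose of the soft choices by Conley-index continuation, handle $\lambda$-independence by deforming the restricted flow on the new eigenspaces to the linear flow and applying the multiplicativity of the Conley index under products, and handle metric dependence by relating the spectral flow of $l$ across $0$ to the jumps in the eta-invariant quantity $n(Y,\spinc,g)$.

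One small terminological wrinkle: the step $I^{\lambda'} \simeq_{S^1} I^{\lambda}\wedge \Rcal^{+}$ comes from the \emph{product formula} for the Conley index of a product flow (after deforming $l + p^{\lambda'}c$ to the product of $l + p^{\lambda}c$ on $W^{\lambda}$ with the linear flows on $A$ and $\Rcal$), not from an attractor--repeller pair in the Conley-theoretic sense. Attractor--repeller decompositions yield a cofiber sequence of indices, not a smash product decomposition, so invoking that mechanism would give the wrong kind of conclusion. You actually write ``product/attractor-repeller properties,'' so you do cite the right tool, but the surrounding language about $S^{\lambda}\times\{0\}$ being ``an attractor'' with $\Rcal$ ``transverse to it as a repeller'' suggests the wrong mechanism; what you need is the deformation to a product and then multiplicativity, with the Conley indices on $A$ and $\Rcal$ computed from the signs of the eigenvalues of $l$ there (attracting linear flow on $A$ gives $S^0$, repelling on $\Rcal$ gives $\Rcal^+$). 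The ``main obstacle'' you flag---verifying that $l$ dominates $p^{\lambda}c$ on the new eigenspaces uniformly on the region where $u^{\lambda}$ is nontrivial---is exactly the content of the compactness estimate behind Proposition~\ref{prop:proposition3}.
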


\begin{remark}
The construction of $\SWF(Y, \spinc)$ in \cite{Spectrum} used the smoothed projections $\pmlprel$. By interpolating linearly between $\pmlprel$ and $\pml$, and using the homotopy invariance properties of the Conley index, we see that the definitions from $\pml$ and $\pmlprel$ yield the same $\SWF(Y, \spinc)$.
\end{remark}

\chapter{Monopole Floer homology}
\label{sec:HM}

In this chapter we review the definition of monopole Floer homology given by Kronheimer and Mrowka in their book \cite{KMbook}.  Recall that we are considering the case of a rational homology sphere $Y$ and Spin$^c$ structure $\spinc$, which is necessarily torsion.  In this case, all of the reducible solutions to the Seiberg-Witten equations are gauge equivalent.  While we worked in the Coulomb gauge for the Floer spectrum, for now, we will return to the entire configuration space $$\sC(Y) =  \Omega^1(Y;i \R) \oplus \Gamma(\mathbb{S})$$ (where we have made this identification via a fixed flat Spin$^c$ connection).

Here are the main ideas in the construction of monopole Floer homology: One would like to proceed by analogy with Morse homology; that is, to build a chain complex whose generators are given by gauge-equivalence classes of critical points of the CSD functional and whose differential counts gradient trajectories between them.  However, there are several technical issues that need to be addressed. One issue is that the gauge group does not act freely near the reducible solutions to the Seiberg-Witten equations, and these points cause a serious problem.  This problem was solved by Kronheimer and Mrowka by blowing up the singular set, in a way similar to the construction of $S^1$-equivariant Morse homology in Section~\ref{subsec:CircleMorse}. The second problem is that even after performing a blow-up, transversality may still not be satisfied for the moduli spaces.  For this reason, special perturbations to the CSD functional must be introduced.

\section{Seiberg-Witten equations on the blow-up} \label{sec:SWblowup}
We will often work on cylinders of the form $Z  = I \times Y$, where $I$ is an interval (possibly $\mathbb{R}$). The Spin$^c$ structure on $Y$ induces a Spin$^c$ structure on $Z$ with unitary rank 4 bundle $\Spin^+ \oplus \Spin^-$ on $Z$; here, $\Spin^{\pm}$ are the $\mp 1$ eigenspaces of $\rho(d\vol_Z)$, where $\rho$ is the induced Clifford multiplication for differential forms on $Z$.  Both $\Spin^+$ and $\Spin^-$ can be identified with the pull-backs of $\Spin$ under the projection $\pp: Z \to Y$. Again identifying 1-forms with Spin$^c$ connections, we have 
\[
\sC(Z) = \{(a,\phi) \mid  a \in \Omega^1(Z; i\R), \phi \in \Gamma(\Spin^+)\}. 
\] 
This is acted on by the gauge group $\G(Z) = C^{\infty}(Z, S^1).$ An element of $\sC(Z)$ in temporal gauge (i.e., such that the $dt$ component of $a$ is zero) corresponds to a path $\gamma(t) = (a(t),\phi(t))$ in $\sC(Y)$, and we will often not distinguish between these.  Note that every element of $\sC(Z)$ is gauge-equivalent to a configuration in temporal gauge.  

We write down the four-dimensional Seiberg-Witten equations on $Z$ as $\F(a,\phi) = 0$, where 
\[
\F(a,\phi) = (d^+a - \rho^{-1}((\phi\phi^*)_0),D^+_a \phi) \in \Omega^2_+(Z;i\R) \oplus \Gamma(\Spin^-),  
\]
and $D^+_a : \Gamma(\Spin^+) \to \Gamma(\Spin^-)$ is the Dirac operator and $d^+ a$ is the self-dual part of $da$.  If $\F(\gamma) = 0$, then $\gamma$ represents a {\em Seiberg-Witten trajectory}.  If $\gamma$ is the constant path, then this is giving a solution to the Seiberg-Witten equations on $Y$.  In general, $\F(\gamma) = 0$ is equivalent to $\gamma$ being a downward gradient trajectory of $\L$.  We will often go back and forth between this notion of trajectories of $\grad \L = \X$ on $Y$ and solutions to the Seiberg-Witten equations on $Z$.  We will also think of $\F$ as a section of $\sC(Z)$ into $\V(Z)$, the trivial $\Gamma(Z; i \Lambda^2_+ T^*Z  \oplus \Spin^-)$ bundle over $\sC(Z)$.

As discussed above, in order to deal with the reducible solution, we must blow-up our configuration spaces.  We first consider  
\[
\sC^\sigma(Y) = \{(a,s,\phi) \mid s \geq 0, \| \phi \|_{L^2} = 1 \} \subset  \Omega^1(Y; i\R) \times \mathbb{R}_{\geq 0 } \times \Gamma(\Spin).  
\]
We can define $\sC^\sigma(Z)$ similarly, as the space of triples $(a, s, \phi) \in \Omega^1(Z; i\R) \times \mathbb{R}_{\geq 0 } \times \Gamma(\Spin^+)$ with $\| \phi \|_{L^2(Z)} = 1$.  The {\em blown-up Seiberg-Witten equations} on $\sC^\sigma(Z)$ are given by $\Fsigma(a, s, \phi) = 0$, where $\Fsigma$ is a section of a bundle $\V^\sigma(Z)$ over $\C^\sigma(Z)$, defined to be the pullback of $\V(Z)$ under the blow-down from $\C^\sigma(Z)$ to $\C(Z)$; see \cite[p.115]{KMbook}. Explicitly, we have
\[
\Fsigma(a,s,\phi) = (d^+a - s^2 \rho^{-1}((\phi\phi^*)_0),D^+_a \phi).
\]  

However, there is a variant of the four-dimensional blow-up that is more directly related to paths in $\C^\sigma(Y)$. This is the so-called {\em $\tau$ model} defined in \cite[Section 6.3]{KMbook}. Precisely, we let
$$ \C^\tau(Z) \subset \Omega^1(Z; i\R) \times C^\infty(I) \times C^\infty(Z; \Spin^+)$$
be the space of triples $(a, s, \phi)$ with $s(t) \geq 0$ and $\|\phi(t)\|_{L^2(Y)}=1$ for all $t \in I$. After moving it into temporal gauge, an element of $\C^\tau(Z)$ determines a path $\gamma^\sigma(t) = (a(t),s(t),\phi(t))$ in $\C^\sigma(Y)$.  

In temporal gauge on $\sC^\tau(Z)$, the blown-up Seiberg-Witten equations can be written as 
\begin{align*}
& \frac{d}{dt} a(t) = - *da(t) - s(t)^2 \tau(\phi(t), \phi(t)) \\
& \frac{d}{dt} s(t) = - \Lambda(a(t),s(t),\phi(t)) s(t) \\
& \frac{d}{dt} \phi(t) = - D_{a(t)} \phi(t) + \Lambda(a(t),s(t),\phi(t)) \phi(t),
\end{align*}
where 
\begin{equation}
\label{eq:LambdaNoq}
\Lambda(a,s,\phi) =  \langle \phi, D_a \phi \rangle_{L^2}.
\end{equation}
  Furthermore, the right hand sides of these equations induce a vector field, denoted $\X^\sigma$, on $\sC^\sigma(Y)$ whose flow  trajectories are precisely the solutions to the three equations above.  The zeros of $\X^\sigma$ on $\sC^\sigma(Y)$ are easily rephrased in terms of the zeros of $\X$ on $\sC(Y)$:  

\begin{proposition}[Proposition 6.2.3 in \cite{KMbook}]\label{prop:swblownup} If $s > 0$, then $(a,s,\phi) \in \sC^\sigma(Y)$ is a zero of $\X^\sigma$ in $\sC^\sigma(Y)$ if and only if $(a,s\phi)$ is a zero of $\X$ in $\sC(Y)$.  If $s = 0$, then $(a,s,\phi)$ is a zero if and only if $(a,0)$ is a zero of $\X$ and $\phi$ is an eigenvector of $D_a$.  
\end{proposition}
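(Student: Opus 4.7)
The plan is to prove both directions of the proposition by unwinding the explicit form of $\X^\sigma$ from the three displayed flow equations and comparing componentwise with $\X(a,\psi) = (*da + \tau(\psi,\psi), D_a\psi)$.

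First I would record that, reading off the displayed ODEs, $\X^\sigma(a,s,\phi)$ is the triple
\[
\bigl( *da + s^2\tau(\phi,\phi),\ \Lambda(a,s,\phi)\, s,\ D_a\phi - \Lambda(a,s,\phi)\,\phi \bigr),
\]
with $\Lambda = \langle \phi, D_a\phi\rangle_{L^2}$. So $\X^\sigma(a,s,\phi)=0$ is equivalent to the three equations $*da + s^2\tau(\phi,\phi)=0$, $\Lambda s = 0$, and $D_a\phi = \Lambda\phi$. The key algebraic fact I will use throughout is that $\tau$ is quadratic, so $\tau(s\phi,s\phi) = s^2\tau(\phi,\phi)$, and hence
\[
\X(a,s\phi) = \bigl(*da + s^2\tau(\phi,\phi),\ s\, D_a\phi\bigr).
\]

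For the case $s>0$: the second equation forces $\Lambda = 0$, so the third equation becomes $D_a\phi = 0$; substituting into the identity above shows $\X(a,s\phi) = (0,0)$, so $(a,s\phi)$ is a zero of $\X$. Conversely, if $(a,s\phi)$ is a zero of $\X$ and $s>0$, then the second component $s D_a\phi = 0$ gives $D_a\phi = 0$, which forces $\Lambda = \langle \phi, 0\rangle = 0$, and the first component matches immediately; all three defining equations for $\X^\sigma = 0$ are then satisfied.

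For the case $s=0$: the first equation reduces to $*da = 0$, the second is automatic, and the third reads $D_a\phi = \Lambda\phi$, i.e., $\phi$ is an eigenvector of $D_a$ with eigenvalue $\Lambda$. On the other hand, $\X(a,0) = (*da, 0)$, which vanishes iff $*da = 0$. So $(a,0,\phi)$ being a zero of $\X^\sigma$ is equivalent to $(a,0)$ being a zero of $\X$ together with $\phi$ being an eigenvector of $D_a$. The converse uses the normalization $\|\phi\|_{L^2}=1$: if $D_a\phi = \mu\phi$, then $\Lambda = \langle\phi,\mu\phi\rangle_{L^2} = \mu$ automatically, so $D_a\phi = \Lambda\phi$ as required.

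There is no real obstacle here; the argument is essentially a computation plus the observations that $\tau$ is homogeneous of degree two in $\phi$ and that the $L^2$-normalization of $\phi$ pins down the eigenvalue to be $\Lambda$. The one point that needs to be stated carefully is that in the $s=0$ case the value of $\Lambda$ at the zero is not prescribed in advance but is forced by the normalization to equal the eigenvalue.
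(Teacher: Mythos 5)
Your proof is correct, and it is the same direct computation that underlies Proposition~6.2.3 in Kronheimer--Mrowka; the paper cites that result without reproducing an argument. The two observations you flag --- that $\tau$ is homogeneous of degree two in the spinor, and that the normalization $\|\phi\|_{L^2}=1$ forces the eigenvalue to equal $\Lambda$ (with self-adjointness of $D_a$ making $\Lambda$ real) --- are exactly the points on which the verification turns.
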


\begin{remark}
\label{rem:zeroA}
Since $b_1(Y)=0$, we have that $(a,0)$ is a zero of $\X$ if and only if $a=0$.
\end{remark}

Throughout this section, we fix an integer $k \geq 2$.  We may also blow up the Sobolev completions to obtain spaces $\sC^\sigma_{k}(Y)$ such that $\X^\sigma$ extends.  Similarly, we can complete the gauge group $\G$ to $\G_{k}$.  This leads to the various blown-up configuration spaces mod gauge:
\[
\B(Y) = \sC(Y)/\G, \; \; \B^\sigma(Y) = \sC^\sigma(Y)/\G, \; \; \B_{k}(Y) = \sC_{k}(Y)/\G_{k+1}, \; \; \B^\sigma_{k}(Y) = \sC^\sigma_{k}(Y)/\G_{k+1}. 
\]   

Recall that for $j \leq k$, $\T_j$ denotes the $L^2_j$ completion of $T \C_k(Y)$.  We have the analogous construction in the blow-up, $\T^\sigma_j$, which decomposes as
\begin{equation}
\label{eq:BlowUpDecompose}
 \T^{\sigma}_j = \J^{\sigma}_{j} \oplus \K^{\sigma}_{j},
\end{equation}
where $\J^{\sigma}_k$ consists of the tangents to the gauge orbits.  More explicitly, the $L^2_j$ completion of the tangent space at $x = (a,s,\phi) \in \C^\sigma_k(Y)$ is 
$$
\T^\sigma_{j,x} = \{(b,r,\psi) \mid \Re \langle \phi, \psi \rangle_{L^2} = 0 \} \subset L^2_j(Y;iT^*Y) \oplus \rr \oplus L^2_j(Y; \Spin).
$$
At $\phi = 0$, we also want to consider $\T^{\red}_{j}$, the completion of the tangent bundle to $L^2_k(\Omega^1(Y; i\R))$.  We have an analogous splitting of $\T^{\red}_{k,a}$ into $\J^{\red}_{k,a} = L^2_k(\im d)$ and $\K^{\red}_{k,a} = L^2_k(\ker d^*)$.   
  
In four dimensions, we can similarly divide the blown up configuration space $\C^\sigma(Z)$ by gauge to obtain a space $\B^{\sigma}(Z)$. In the $\tau$ model, we define the quotient configuration space $\B^\tau(Z) = \C^{\tau}(Z)/\G(Z)$. Also, starting from the bundle $\V(Z)$ over $\C(Z)$, we obtain a bundle $\V^{\tau}(Z)$ over $\C^{\tau}(Z)$. Explicitly, the fiber of $\V^\tau(Z)$ over $(a, s, \phi)$ is
\begin{align*}
\V^{\tau}(Z)_{(a, s, \phi)} &= \{(b, r, \psi) \mid \Re \langle \phi(t), \psi(t) \rangle_{L^2(Y)}=0, \ \forall t \} \\
& \subset C^\infty(Z; i\Lambda^2_+ T^*Z) \oplus C^\infty(\rr) \oplus C^{\infty}(Z; \Spin^-).
\end{align*}

In the $\tau$ model, the blown-up Seiberg-Witten equations on $Z$ can be written as the zeros of a section $\F^\tau$ of the bundle $\V^\tau(Z)$; see \cite[Equations (6.11)]{KMbook} for the exact formula. In temporal gauge, we simply have
$$ \F^\tau = \frac{d}{dt} + \X^\sigma.$$

When $Z$ is a compact cylinder, that is, $Z = [t_1, t_2] \times Y$, we define the Sobolev completions $\sC_{k}(Z)$, $\B_k(Z)$, $\V_{k}(Z)$, and similarly with $\sigma$ or $\tau$ superscripts. We also have completed tangent bundles $\T_j(Z)$, $\T^{\sigma}_j(Z)$ and $\T^{\tau}_j(Z)$. We should note that $\sC^\sigma_{k}(Y)$ and $\C^\sigma_k(Z)$ are Hilbert manifolds with boundary, so the tangent bundles  make sense.  
The $\tau$ model $\C^\tau_k(Z)$ is not a Hilbert manifold, even with boundary. Nevertheless, it is a closed subset of the Hilbert manifold $\tC^\tau_k(Z)$, which is the $L^2_k$ completion of
\begin{equation}
\label{eq:tildeC}
\tC^\tau(Z) = \{(a, s, \phi) \mid \|\phi(t)\|_{L^2(Y)}=1, \ \forall  t\}  \subset \Omega^1(Z; i\R) \times C^\infty([t_1,t_2]) \times C^\infty(Z; \Spin^+);
 \end{equation}
see \cite[Section 9.2]{KMbook}. By the completed tangent bundle to $\C^\tau_k(Z)$ we simply mean the restriction of the completed tangent bundle to $\tC^\tau_k(Z)$, i.e. the $L^2_j$ completion of the bundle $\T^\tau(Z)$ with fibers  
\begin{align*}
\T^{\tau}(Z)_{(a, s, \phi)} &= \{(b, r, \psi) \mid \Re \langle \phi(t), \psi(t) \rangle_{L^2(Y)}=0, \ \forall t \} \\
& \subset C^\infty(Z; i T^*Z) \oplus C^\infty([t_1,t_2]) \oplus C^{\infty}(Z; \Spin^-).
\end{align*}
The bundle $\V^\tau(Z)$ extends naturally over $\tC^\tau(Z)$ as well.  When discussing $\V^\tau(Z)$, this will refer to the extension and will explicitly mention the restriction to $\C^\tau(Z)$ explicitly when it is used.  Finally, we will need to quotient the extended spaces $\tC^\tau_k(Z)$ by gauge, which we write as $\tB_k^\tau$. 
When $Z$ is non-compact, for example $Z=\rr \times Y$, it is often more useful to consider the local Sobolev completions $L^2_{k, loc}$; see \cite[Section 13.1]{KMbook} for more details.

As mentioned in Chapter~\ref{sec:spectrum}, gauge transformations preserve the property of being a Seiberg-Witten trajectory; this extends through blow-ups and Sobolev completions.  Monopole Floer homology will be defined as the Morse homology (for manifolds with boundary) of a perturbation of the Seiberg-Witten equations on $\B^\sigma_{k}(Y)$.

\section{Perturbed Seiberg-Witten equations}
\label{sec:perturbedSW}

Consider a function $f: \sC(Y) \to \mathbb{R}$ which is gauge-invariant.  A {\em perturbation} is a section $\q: \sC(Y) \to \T_{0}$. We will call $\q$ the {\em formal gradient of $f$}, or $\grad f$ (even though it's not actually a gradient), if for all $\gamma \in C^\infty([0,1],\sC(Y))$ 
\[
\int^1_0 \Bigl \langle \frac{d}{dt} \gamma(t),\q(\gamma(t)) \Bigr  \rangle_{L^2} dt = f \circ \gamma(1) - f \circ \gamma(0).  
\]
We will use the notation 
$$\X_\q:=\X + \q = (\grad \L) + \q.$$  
We will also write $\L_\q$ for $\L + f$, so $\Xq = \grad \L_\q$.  Note that $\q$ is described by its components $\q^0$ to $\Omega^1(Y; i\R)$ and $\q^1$ to $\Gamma(\mathbb{S})$. More generally, from now on we will use the superscripts $0$ and $1$ to describe the $1$-form and spinorial parts of vectors in $\C(Y)$.

The perturbation $\q$ also induces a section $\qhat: \sC(Z) \to \V_{0}(Z)$.  When $Z=[t_1, t_2] \times Y$ is a compact cylinder, we will require that this actually extends to a smooth section $\sC_{k}(Z) \to \V_{k}(Z)$.      

The flow trajectories $(a(t),\phi(t))$ of $\X_\q$ are solutions to the equations
\begin{align*}
& \frac{d}{dt} a = - *da - \tau(\phi, \phi) - \q^0(a,\phi) \\
& \frac{d}{dt} \phi = - D_a \phi - \q^1(a,\phi).  
\end{align*}
Recast on $Z$, these are the {\em perturbed Seiberg-Witten equations}, $\F_{\q} = \F + \qhat = 0$.

We now move to the blow-up.  The vector field $\X_q$ induces a vector field $\Xqsigma$ on $\sC^\sigma(Y)$. Precisely, on $\sC^\sigma(Y)$, we define 
\[
\q^\sigma(a,s,\phi) = (\q^0(a,s\phi), \Re \langle \qtilde^1(a,s,\phi),\phi \rangle_{L^2} \cdot s, \qtilde^1(a,s,\phi)^\perp),  
\]    
where $\qtilde^1(a,s,\phi) = \int^1_0 \D_{(a,st\phi)} \q^1(0,\phi) dt$ and $\perp$ means projection onto the real orthogonal complement of $\phi$.  We can obtain $\qhatsigma$ similarly, with components $(\qhatzerosigma,\qhatonesigma)$.  This leads to the flow equations for $\Xqsigma$:
\begin{align*}
&  \frac{d}{dt} a = - *da - s^2 \tau(\phi, \phi) - \q^0(a,s\phi), \\
& \frac{d}{dt} s = - \Lambda_\q(a,s,\phi)s, \\
& \frac{d}{dt} \phi = - D_a \phi - \qtilde^1(a,s,\phi) + \Lambda_\q(a,s,\phi)\phi,
\end{align*}
where 
\begin{equation}
\label{eq:Lambdaq}
\Lambda_\q(a,s,\phi) = \text{Re}\langle \phi,D_a \phi + \qtilde^1(a,s,\phi) \rangle_{L^2}.
\end{equation}
 These are the solutions to the {\em perturbed Seiberg-Witten equations on the blow-up} in temporal gauge, or $\Fsigma_{\q} = 0$ for short.  We call $\Lambda_\q(a,s,\phi)$ the {\em spinorial energy} of $(a,s,\phi)$.  Note that for an irreducible stationary point $(a,s,\phi)$, we must have $\Lambda_\q(a,s,\phi) = 0$.  

We can now state the perturbed analogue of Proposition~\ref{prop:swblownup}.  First, we define the operator
\[
D_{\q,a}: L^2_k(Y; \mathbb{S}) \to L^2_{k-1}(Y; \mathbb{S}), \ \ \ 
D_{\q, a} (\phi) =D_a \phi + \D_{(a,0)} \q^1(0,\phi).  
\] 

\begin{proposition}[Proposition 10.3.1 of \cite{KMbook}]
The element $(a,s,\phi)$ in $\sC^\sigma_{k}(Y)$ is a zero of $\Xqsigma$ if and only if: 
\begin{enumerate}[(a)]
\item $s \neq 0$ and $(a,s\phi)$ is a stationary point of $\X_\q$, or 
\item $s = 0$ and $(a,0)$ is a stationary point of $\X_\q$ and $\phi$ is an eigenvector of $D_{\q,a}$.  
\end{enumerate}
\end{proposition}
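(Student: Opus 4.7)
The plan is to write out the zero-set equation $\Xqsigma(a,s,\phi) = 0$ in components, using the explicit formulas for $\X^\sigma$ and $\q^\sigma$ given just before the statement. Since $\|\phi\|_{L^2} = 1$ permits the decomposition $\qtilde^1 = \Re\langle \qtilde^1, \phi\rangle_{L^2}\phi + (\qtilde^1)^\perp$, and since $\langle \phi, D_a\phi\rangle_{L^2}$ is real so it combines with $\Re\langle\qtilde^1,\phi\rangle_{L^2}$ to form $\Lambda_\q(a,s,\phi)$, the equation $\Xqsigma(a,s,\phi) = 0$ unfolds into the three conditions
\begin{enumerate}[(i)]
\item $*da + s^2 \tau(\phi,\phi) + \q^0(a,s\phi) = 0$,
\item $\Lambda_\q(a,s,\phi)\, s = 0$,
\item $D_a \phi + \qtilde^1(a,s,\phi) - \Lambda_\q(a,s,\phi)\,\phi = 0$.
\end{enumerate}

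The key preliminary step is to establish the identity $\q^1(a, s\phi) = s\,\qtilde^1(a, s, \phi)$. This rests on the fact that $\q^1(a, 0) = 0$, which in turn follows from gauge invariance of $f$ under the constant subgroup $S^1 \subset \G$: since $f(a, e^{i\theta}\phi) = f(a, \phi)$ for all $\theta$, taking the derivative in $\phi$ at $\phi = 0$ and averaging over $\theta$ forces the spinor component of $df$ to vanish at reducibles, i.e.\ $\q^1(a, 0) = 0$. Then the fundamental theorem of calculus applied to the path $r \mapsto \q^1(a, r\phi)$, followed by the substitution $r = st$ in the integral defining $\qtilde^1$, yields the identity.

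For $s > 0$, condition (ii) forces $\Lambda_\q = 0$, and upon setting $\psi = s\phi$, condition (i) becomes the $\Omega^1$-part of $\X_\q(a,\psi) = 0$, while multiplying condition (iii) by $s$ and invoking the identity above gives the spinor part. Conversely, given a stationary point $(a, \psi)$ of $\X_\q$ with $\psi \neq 0$, normalizing by $s = \|\psi\|_{L^2}$ and $\phi = \psi/s$ reverses the construction, and condition (ii) emerges by taking the real $L^2$-pairing of the spinor equation with $\phi$. For $s = 0$, condition (ii) is vacuous, condition (i) is exactly the $\Omega^1$-part of $\X_\q(a, 0) = 0$ (the spinor part being automatic from $\q^1(a, 0) = 0$), and condition (iii), using $\qtilde^1(a, 0, \phi) = \D_{(a, 0)} \q^1(0, \phi)$ directly from the defining integral, rewrites as $D_{\q,a}\phi = \Lambda_\q(a, 0, \phi)\,\phi$, which is precisely the eigenvector condition. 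The main subtlety is the gauge-invariance argument yielding $\q^1(a, 0) = 0$, since this is what lets one toggle freely between $\q^1(a, s\phi)$ and $s\,\qtilde^1(a, s, \phi)$; once that identity is in hand, the rest is bookkeeping of the normalization $\|\phi\|_{L^2} = 1$ and substitution.
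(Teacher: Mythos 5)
Your proof is correct and follows the same direct-computation strategy as Proposition 10.3.1 of Kronheimer--Mrowka, which the paper cites without reproving: unpack the three components of $\Xqsigma$, establish $\q^1(a,s\phi) = s\,\qtilde^1(a,s,\phi)$ via $\q^1(a,0)=0$, and translate between $(a,s,\phi)$ and $(a,s\phi)$. For the fact that $\q^1(a,0)=0$, the $S^1$-equivariance $\q^1(a,e^{i\theta}\phi)=e^{i\theta}\q^1(a,\phi)$ gives it immediately at $\phi=0$, which is slightly cleaner than your differentiate-and-average argument and is the form used elsewhere in the paper.
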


\begin{remark}
A stationary point of the form $(a,0)$ is called {\em reducible}. Recall from Remark~\ref{rem:zeroA} that the unperturbed Seiberg-Witten vector field $\X$ has a unique reducible stationary point mod gauge, namely $a=0$. We claim that, when $\q$ is small, the perturbed field $\Xq$ also has a unique reducible stationary point. Indeed, notice that the linear map $*d: L^2_k(Y;iT^*Y) \to L^2_{k-1}(Y; iT^*Y)$ induces an invertible map from $(\ker d^*)_k$ to $(\ker d^*)_{k-1}$.  Note also that $(\ker d^*)_j$ is the $L^2$ orthogonal complement in $L^2_j(Y; iT^*Y)$ of the tangents to the gauge orbits of connections.   Therefore, by the inverse function theorem, for a small perturbation $\q$, there is a unique $a \in L^2_k(Y; iT^*Y)$ mod gauge, in a neighborhood of $0$, satisfying $*da + \q^0(a,0) = 0$.  (Here we are using that $\q$ is $L^2$ orthogonal to the tangents of the gauge-orbits, since it is the formal gradient of a gauge-equivariant function.)  By elliptic bootstrapping, $a$ is smooth.  Further, by gauge-equivariance, we have $\q^1(a,0) = 0$ and we can conclude that $(a,0)$ provides the unique reducible solution to $\Xq$ near zero. Using the compactness properties of the perturbed Seiberg-Witten equations, it also follows that (for sufficiently small $\q$) there are no reducible solutions outside the fixed neighborhood of $0$.

 We will not, however, use the uniqueness of reducible solutions for small perturbations in this book.    
\end{remark}

The (perturbed, blown-up) Seiberg-Witten map $\Fsigma_\q$ can be viewed as a section of the bundle $\V^{\sigma}(Z)$ over $\C^\sigma(Z)$. In the $\tau$ model, there is a similar section $\F^{\tau}_\q$ of $\V^\tau(Z)$ over $\tC^\tau(Z)$. In temporal gauge, we can simply write
$$ \F^{\tau}_{\q} = \frac{d}{dt} + \Xqsigma.$$

\section{Tame perturbations}
\label{sec:tame}

We are interested in studying the moduli spaces of flows of $\Xqsigma$ connecting stationary points.  In order to obtain the desired compactification results for the moduli spaces, we require some conditions on $\q$.  

Let $Z = [t_1, t_2] \times Y$ be a compact cylinder. For every one-form $a \in \Omega^1(Z; i\R)$, we can consider the Sobolev norm $L^2_{k, a}$ defined using as covariant derivative the connection corresponding to $a$, namely $\nabla_{A_0 + a}$. The $L^2_{k, a}$ norm is equivalent to the usual Sobolev norm, and gives rise to the same Sobolev completion. However, when we want to state global bounds it becomes important to specify the precise Sobolev norm. We will take the usual norm on $\C_{k}(Y), \C_{k}(Z)$. However, on the bundles $\V_{k}(Z)$ and $T\C_{k}(Z)$, in the fiber over $(a, \phi)$ we will take the $L^2_{k, a}$ norm; this turns them into gauge-invariant normed vector bundles.

\begin{definition}[Definition 10.5.1 in \cite{KMbook}]
\label{def:tame}
Fix an integer $k \geq 2$. Suppose that a section $\q : \sC(Y) \to \T_{0}$ is the formal gradient of a continuous, $\G$-invariant function.  We say $\q$ is a {\em $k$-tame perturbation} if, for any compact cylinder $Z = [t_1, t_2] \times Y$, the following hold:  
\begin{enumerate}[(i)]
\item $\qhat$ defines an element of $C^{\infty}(\C_{k}(Z), \V_{k}(Z))$;
\item $\qhat$ also defines an element of $C^0(\C_{j}(Z), \V_{j}(Z))$, for all integers $j \in [1,k]$;
\item  The derivative $\D \qhat \in C^\infty(\sC_{k}(Z),\Hom(\T_{k}(Z),\V_{k}(Z)))$ extends to a smooth map into $\Hom(\T_{j}(Z),\V_{j}(Z))$, for all integers $j \in [-k,k]$;
\item There exists a constant $m$ such that 
\[
\| \q (a,\phi) \|_{L^2} \leq m (\| \phi \|_{L^2} + 1),
\]
for all $(a,\phi) \in \sC_{k}(Y)$;
\item For any $a_0 \in i \Omega^1(Z)$, there is a function $h:\mathbb{R} \to \mathbb{R}$ such that 
\[
\| \qhat(a,\phi) \|_{L^2_{1,a}} \leq  h(\| (a, \phi)\|_{L^2_{1,a_0}}), 
\]
for all $(a,\phi) \in \sC_{k}(Z)$;
\item $\q$ extends to a $C^1$ section of $\sC_{1}(Y)$ into $\T_{0}$.
\end{enumerate}

We say that $\q$ is tame if it is $k$-tame for all $k \geq 2$.
\end{definition}

\begin{remark}
\label{rem:qhatq}
Conditions (i), (ii), (iii) and (v) are phrased in terms of the four-dimensional perturbation $\hat{\q}$, but they readily imply the analogous statements for $\q$. For example, from part (i) in Definition~\ref{def:tame}, we know that for a tame perturbation $\q$, $\hat{\q}$ gives a smooth map from $\C_{k}(Z)$ to $\V_{k}(Z)$, for all $k \geq 1$. For $x \in \C_{k}(Y)$, we can consider the element of $\C_{k}(Z)$ that is constantly $x$ in every slice $\{t\} \times Y$. Its image under $\hat{\q}$ is constantly $\q(x)$ in every slice, so we conclude that $\q(x)$ is in $\V_{k}(Y)$. 
\end{remark}

For a tame perturbation $\q$, the set of solutions to $\X_\q = 0$, topologized as a subspace of $\B_{k}(Y)$, is compact for tame $\q$.  In this case, any trajectory of $\Xqsigma$ between two stationary points in $\sC^\sigma_{k}(Y)$  is gauge-equivalent to a smooth trajectory, which thus lives in $\sC^\sigma(Y)$; no Sobolev completion is necessary.  

Kronheimer and Mrowka construct explicitly a collection of tame perturbations which are the gradient of cylinder functions.  We will not define these here.  However, we will always need to work with a space of perturbations that does contain them.    

\begin{definition}
A separable Banach space $\P$ is a {\em large Banach space of tame perturbations} if there exists a map $\P \to C^0(\sC(Y),\T_{0})$ which takes $x$ to $\q_x$ such that: 
\begin{enumerate}[(i)]
\item the image of this map contains a countably infinite family of perturbations which are gradients of cylinder functions, 
\item $\q_x$ is tame for all $x \in \P$, 
\item for $k \geq 2$, the map $\P \times \sC_{k}(I \times Y) \to \V_{k}(I \times Y)$ is smooth for any compact interval $I$ in $\mathbb{R}$, 
\item the map $\P \times \sC_{1}(Y) \to \T_{1}(Y)$ is continuous, and 
\item there exist a constant $m$ and a map $\mu:\mathbb{R} \to \mathbb{R}$ such that $\P \times \sC_{1}(Y) \to \T_{1}(Y)$ satisfies 
\begin{align*}
\|\q_x(a,\phi)\|_{L^2} & \leq m \| x \| ( \|\phi\|_{L^2} + 1) \\
\|\q_x(a,\phi)\|_{L^2_{1,a_0}} & \leq \| x \| \mu(\|(a,\phi)\|_{L^2_{1,a_0}}).  
\end{align*}
\end{enumerate}
\end{definition}

\section{Very tame perturbations}
\label{sec:verytame}
We now make a digression. For the purposes of doing finite dimensional approximation (as we shall do in this book), we need slightly stronger assumptions on the perturbation $\q$ than the ones in Definition~\ref{def:tame}. We give the relevant definitions here. These concepts are new material; they did not appear in \cite{KMbook}.

Let us recall that a linear operator between Banach spaces, $f: E \to F$, is continuous if and only if it is bounded, i.e., there is a constant $K$ such that $\|f(x) \| \leq K \|x\|$ for all $x$.  This condition is also equivalent to requiring $f$ to take bounded sets to bounded sets. For nonlinear operators, we introduce the following terminology.

\begin{definition}
\label{def:fb}
Let $E$ and $F$ be Banach spaces. A map $f: E \to F$ is called {\em functionally bounded} if $f(B) \subset F$ is bounded whenever $B \subset E$ is bounded. In other words, there exists a function $h: \R \to \R$ such that
$$ \| f(x) \| \leq h(\|x\|),$$
for all $x \in E$.
\end{definition}

We use the term {\em functionally bounded}, rather than {\em bounded}, to prevent confusion with 
the usual notion of bounded functions in analysis, which requires that $\| f(x) \| \leq K$ for some $K$.

For non-linear operators, continuity does not imply functional boundedness; see \cite{StackExchange}. We will need both of these conditions, so let us denote by
$$ C^0_{\fb}(E, F)$$
the space of continuous and functionally bounded operators from $E$ to $F$. Further, for $m \geq 1$ (and also for $m=\infty$), we let
$$ C^m_{\fb}(E, F) = \{ f \in C^m(E, F) \mid \D^\ell f \in C^0_{\fb}(E, \Hom(E^{\times \ell}, F)), \ \ell=0, \dots, m \}.$$

We can similarly define the space of $C^k_{\fb}$ sections of a bundle. 

With this in mind, we present the following strengthening of Definition~\ref{def:tame}.

\begin{definition}
\label{def:verytame}
Fix $k \geq 2$. A $k$-tame perturbation $q$ is called {\em very $k$-tame} if the following additional conditions are satisfied:
\begin{enumerate}[(i)]
\item $\qhat$ defines an element of $C^{\infty}_{\fb}(\C_{k}(Z), \V_{k}(Z))$;
\item $\qhat$ also defines an element of $C^0_{\fb}(\C_{j}(Z), \V_{j}(Z))$, for all integers $j \in [1,k]$;
\item  $\D \qhat$ defines an element of $C^\infty_{\fb}(\sC_{k}(Z),\Hom(T\sC_{j}(Z),\V_{j}(Z)))$, for all integers $j \in [-k,k]$.
\end{enumerate}
We say that $\q$ is {\em very tame} if it is very $k$-tame for all $k \geq 2$.
\end{definition}

Luckily, the kind of perturbations considered by Kronheimer and Mrowka in \cite{KMbook} satisfy these additional conditions.

\begin{lemma}
\label{lem:verytame}
Let $\q$ be the gradient of a cylinder function, as in \cite[Section 11]{KMbook}. Then, $\q$ is very tame.
\end{lemma}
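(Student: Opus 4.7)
The plan is to unpack the explicit form of a cylinder function from \cite[Section 11]{KMbook} and check, piece by piece, that each of the three supplementary conditions (i)--(iii) in Definition~\ref{def:verytame} is satisfied. Recall that a cylinder function is of the shape $f(a,\phi) = g(\ell_1(a,\phi), \dots, \ell_n(a,\phi))$, where $g \in C^\infty_c(\R^n)$ and each $\ell_i$ is a smooth, gauge-invariant functional assembled from holonomies along fixed loops and from $L^2$-pairings of $\phi$ with fixed smooth test spinors; its formal gradient $\q$ is a sum of terms of the form $(\partial_i g)(\ell_1,\dots,\ell_n)\,\nabla\ell_i$, in which the ``coefficient fields'' $\nabla \ell_i$ are again built from pointwise multiplications with fixed smooth data together with holonomy factors.

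The first step will be to assemble some elementary closure properties of the class $C^m_\fb$: continuous linear maps between Banach spaces are automatically in $C^\infty_\fb$; $C^m_\fb$ is closed under sums and, where the underlying pointwise multiplication is continuous, under products; and composing a map in $C^m_\fb(E,\R^n)$ with a smooth compactly supported $g:\R^n\to \R^N$ stays in $C^m_\fb$, because $g$ and all its derivatives are globally bounded on $\R^n$. These are routine.

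The second step will be to verify that each building block entering into $\hat\q$ is functionally bounded on the relevant Sobolev spaces. For the holonomy-type functionals one uses Sobolev embedding ($L^2_k \hookrightarrow C^0$ for $k\geq 2$) together with the standard ODE estimate for parallel transport to see that bounded sets of connections in $L^2_j(Z)$ map to bounded sets of unitary holonomies along fixed loops, smoothly. For the spinorial factors, the relevant operations are $L^2$-pairings with fixed smooth spinors and pointwise multiplications with fixed smooth sections, both of which are continuous linear, hence in $C^\infty_\fb$. The four-dimensional version $\hat\q$ on a compact cylinder $Z=[t_1,t_2]\times Y$ is obtained by applying these constructions slicewise in the $t$-variable, and the resulting estimates are uniform in $t$ over the compact interval, so functional boundedness is preserved when passing from $\q$ on $Y$ to $\hat\q$ on $Z$. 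Combining with the composition lemma in step~1, conditions (i) and (ii) of Definition~\ref{def:verytame} follow.

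The one place that will require some care is condition (iii), where $j$ ranges over $[-k,k]$, including negative indices. The point is that $\D\hat\q$ factors through a finite-dimensional space: differentiating the composition with $g$ produces a finite sum of terms, each of which is the product of a bounded scalar $(\partial^\alpha g)(\ell_1,\dots,\ell_n)$ with a continuous multilinear form in the tangent argument (given again by pairing with fixed smooth test data and by derivatives of the holonomy, which are continuous linear in the tangent direction). Such maps extend by duality and Sobolev multiplication to all $L^2_j$ completions with $|j|\leq k$ and remain functionally bounded there. I expect this bookkeeping---tracking which building blocks contribute to $\D\hat\q$ and checking that each one extends to the negative-regularity Sobolev completions---to be the main obstacle; once it is done, the conclusion follows from the closure properties of $C^m_\fb$ established in step~1.
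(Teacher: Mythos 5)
Your proposal takes a genuinely different route from the paper's.  The paper's proof is a one-sentence citation: Proposition 11.4.1 and Lemma 11.4.4 of \cite{KMbook} already establish, in the course of proving that cylinder-function gradients are tame (Theorem 11.1.2 there), estimates of exactly the functionally-bounded form that Definition~\ref{def:verytame} requires, including the extension of $\D\qhat$ to $\Hom(\T_j(Z), \V_j(Z))$ for the full range $j \in [-k,k]$.  So for the paper the content of the lemma is a matter of inspection, not new analysis.  You instead re-derive those estimates from scratch by unpacking the cylinder-function building blocks and appealing to closure properties of $C^m_\fb$.  That is a legitimate strategy, and your outline of the abstract closure properties of $C^m_\fb$ and of the role of $g$ being compactly supported is correct; however, it ends up rebuilding a substantial chunk of \cite[Section 11.4]{KMbook}, where the citation saves all that work.

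Two points where your re-derivation, as written, falls short.  First, you describe the one-form--dependent part of a cylinder function as ``holonomies along fixed loops,'' but in \cite[Section 11.1]{KMbook} the holonomy is mollified --- it is averaged against a smooth, compactly supported weight over a tubular neighborhood of an embedded solid torus.  This smoothing is not cosmetic: it is precisely what allows the $\partial_a$-derivatives of $\ell_i$ to extend to distributional connections in $L^2_j$ with $j$ negative, because restriction to a bare loop is not defined on $L^2_j$ for small or negative $j$.  Your step for condition (iii) invokes ``derivatives of the holonomy, which are continuous linear in the tangent direction'' and ``extend by duality,'' but without the tube-averaging the duality argument does not produce a pairing against a smooth test density, and the extension to $j < 0$ would not go through.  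Second, and relatedly, you acknowledge that ``the bookkeeping\dots is the main obstacle; once it is done, the conclusion follows,'' which leaves the actual verification of (iii) for negative $j$ incomplete --- this is exactly the part that Proposition 11.4.1 and Lemma 11.4.4 handle.  If you want to keep your elementary route, you should state the mollified form of the holonomy functional explicitly and carry out the duality argument for one representative $\ell_i$; otherwise, the shorter path is to cite the relevant estimates from \cite{KMbook} as the paper does.
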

\begin{proof}
This is an immediate consequence of Proposition 11.4.1 and Lemma 11.4.4 in \cite{KMbook}, along the lines of the proof of Theorem 11.1.2 on p.190 of \cite{KMbook}.
\end{proof}

We will need to ensure that both the perturbations we use are very tame and the space of perturbations is sufficiently large to guarantee transversality.  This motivates the following definition.
\begin{definition}\label{def:large-verytame}
A {\em large Banach space of very tame perturbations} is a large Banach space of tame perturbations where each perturbation is additionally very tame.  
\end{definition}

Using Lemma~\ref{lem:verytame}, the proof of the existence of such a space follows as in the proof of Theorem 11.6.1 in \cite{KMbook}.

\section{Admissible perturbations} \label{sec:AdmPer}
For Morse homology, we need non-degeneracy of the Hessian at the critical points and the Morse-Smale condition.  For monopole Floer homology, we must establish the analogues of these: non-degeneracy of stationary points and regularity of the moduli spaces.  We first focus on non-degeneracy.  

Fix a tame perturbation $\q$ and Sobolev number $k$.  Note that $\Xqsigma$ takes $\C^\sigma_k(Y)$ to $\T^\sigma_{k-1}$.  Note also that the stationary points of $\Xqsigma$ cannot be isolated like in Morse theory.  This is because the gauge group preserves stationary points.  Therefore, this will be a Morse-Bott condition, in the sense that the stationary points will be non-degenerate in the directions transverse to the gauge orbits.    

\begin{definition}
A stationary point $\x$ of $\Xqsigma$ is {\em non-degenerate} in $\sC^\sigma_{k}(Y)$ if $\Xqsigma$ is transverse to the sub-bundle $\J^\sigma_{k-1}$ at $\x$.  
\end{definition}

Much like the existence of stationary points, the non-degeneracy of the stationary points of $\Xqsigma$ can be described in terms of $\X_\q$ on $\sC_{k}(Y)$.

\begin{proposition}[Proposition 12.2.5 in \cite{KMbook}]\label{prop:nondegeneracycharacterized}
A stationary point $(a,s,\phi)$ is a {\em non-degenerate} zero of $\Xqsigma$ on $\sC^\sigma_{k}(Y)$ for $s>0$ if $\X_\q$ is transverse to $\J_{k-1}$ at $(a,s\phi)$. If $s = 0$, and $\phi$ is an eigenvector of $D_{\q,a}$ with eigenvalue $\lambda$, then $(a, 0, \phi)$ is non-degenerate if and only if the following three conditions are satisfied: 
\begin{enumerate}
\item $\lambda \neq 0$, 
\item $\lambda$ is a simple eigenvalue, 
\item $\X_\q$ is transverse to $\J^{\red}_{k-1}$ at $a$.  
\end{enumerate}
\end{proposition}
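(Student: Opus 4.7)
The plan is to compute the linearization of $\Xqsigma$ at the stationary point and analyze the resulting operator modulo the gauge tangent bundle $\J^\sigma_{k-1}$, handling the irreducible case $s>0$ and the reducible case $s=0$ separately.

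For the irreducible case, the blow-down $\pi \colon \sC^\sigma(Y) \to \sC(Y)$, $(a, s, \phi) \mapsto (a, s\phi)$, restricts to a diffeomorphism on $\{s > 0\}$; its differential at $(a, s, \phi)$ sends $(b, r, \psi) \in \T^\sigma$ (satisfying $\Re\langle\phi, \psi\rangle_{L^2} = 0$) to $(b, r\phi + s\psi) \in \T$, which is an isomorphism since $s > 0$ and $\|\phi\|_{L^2} = 1$. By the construction of $\Xqsigma$ (see Section~\ref{sec:perturbedSW}), the map $\pi$ intertwines $\Xqsigma$ with $\X_\q$ on this locus and carries $\J^\sigma$ isomorphically onto $\J$; hence transversality of $\Xqsigma$ to $\J^\sigma_{k-1}$ at $(a, s, \phi)$ is equivalent to transversality of $\X_\q$ to $\J_{k-1}$ at $(a, s\phi)$.

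For the reducible case, the starting observation is that $S^1$-gauge equivariance of $\q$ (a consequence of gauge invariance of the underlying function) forces $\q^1(a, 0) = 0$ and causes the mixed $(a, \phi)$ partial derivatives of $\q$ at $(a, 0)$ to vanish. Combining this with $\Lambda_\q(a, 0, \phi) = \lambda$ and the explicit formula for $\qtilde^1$, the linearization of $\Xqsigma$ at $(a, 0, \phi)$ in direction $(b, r, \psi)$ becomes block-triangular: the $1$-form component reduces to $*db + \D_a \q^0|_{(a, 0)}(b)$, depending only on $b$; the $s$-component equals $\lambda r$, depending only on $r$; and the $\psi$-component takes the form $(D_{\q, a} - \lambda)\psi$ plus terms in $b$ and $r$. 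Passing to the quotient by $\J^\sigma_{k-1}$ decouples the three blocks. The $a$-block surjects onto the normal slice of the reducible gauge orbit precisely when $\X_\q$ is transverse to $\J^{\red}_{k-1}$ at $a$ (condition~3); the scalar $s$-block $r \mapsto \lambda r$ is surjective precisely when $\lambda \neq 0$ (condition~1); and the $\psi$-block becomes $D_{\q, a} - \lambda$ on the $L^2$-orthogonal complement of $\cc\phi$ in $\Gamma(\Spin)_{k-1}$, which by self-adjointness of $D_{\q, a}$ is surjective iff $\lambda$ is a simple eigenvalue (condition~2).

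The main obstacle is the bookkeeping in quotienting by $\J^\sigma_{k-1}$ at $(a, 0, \phi)$: an infinitesimal gauge generator $\xi \in L^2_k(Y; i\R)$ contributes the vector $(-d\xi, 0, \xi\phi)$, which has nontrivial components in both the $a$- and $\psi$-slots, so the decoupling of the three blocks is not immediate. I plan to fix a slice combining the Hodge decomposition $\Omega^1 = \ker d^* \oplus \im d$ on the $a$-side with the residual $S^1$ condition $\langle\phi, \psi\rangle_{L^2} = 0$ on the $\psi$-side; once gauge is killed in this canonical way, the three blocks separate as claimed and the remaining verifications reduce to standard linear algebra together with elliptic theory for the self-adjoint Dirac-type operator $D_{\q, a}$.
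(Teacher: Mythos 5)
The paper cites this result from \cite{KMbook} without reproducing a proof, so there is no argument in the paper itself to compare against; your proposal is a correct reconstruction along the same lines as Kronheimer--Mrowka's original argument. Both the irreducible case (via the blow-down diffeomorphism on $\{s>0\}$, which intertwines $\Xqsigma$ with $\X_\q$ and carries $\J^\sigma$ to $\J$) and the reducible linearization with the three-block analysis are handled correctly. Two points you flag as ``bookkeeping'' are worth making explicit when writing it up. First, the one-form block $b \mapsto *db + \D_{(a,0)}\q^0(b,0)$ already lands in $\ker d^*$, because $\X_\q$ is a formal gradient and hence $L^2$-orthogonal to the gauge orbits; consequently the projection from $\T^\sigma_{k-1}$ onto $\K^\sigma_{k-1}$ along $\J^\sigma_{k-1}$ is achieved by subtracting a \emph{constant} gauge parameter, which touches only the $i\phi$-component of the spinor. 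This is exactly what keeps the linearization lower block-triangular (with only a $b\to\psi$ cross-term) rather than having genuine cross-terms everywhere, and is what justifies the decoupling of invertibility criteria. Second, you use that $D_{\q,a}$ is both $\cc$-linear (from $S^1$-equivariance of $\q$, by averaging over the circle) and self-adjoint (as the spinorial Hessian of $\L_\q$ at a critical point); both facts are needed so that its $\lambda$-eigenspace is a complex subspace and $(D_{\q,a}-\lambda)|_{\phi^{\perp_\cc}}$ is an index-zero Fredholm operator preserving $\phi^{\perp_\cc}$, whose invertibility is exactly simplicity of $\lambda$.
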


It turns out that one can always find many perturbations such that the stationary points are non-degenerate.  

\begin{theorem}[Theorem 12.1.2 in \cite{KMbook}]
For any large Banach space of tame perturbations $\P$, there is a residual subset of $\P$ such that for each perturbation $\q$, all of the stationary points of $\Xqsigma$ are non-degenerate.  
\end{theorem}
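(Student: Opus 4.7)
The plan is to combine two standard transversality arguments, guided by the characterization of non-degeneracy in Proposition~\ref{prop:nondegeneracycharacterized}: first the Sard-Smale theorem for the irreducible stationary points and the reducible ``connection part'', then a separate perturbation argument for the Dirac spectrum at the reducible.

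For the irreducible part, I would form the \emph{universal moduli space} of pairs $(x,\q)$ with $x \in \B_k(Y)$ an irreducible zero of $\X_\q$. Using part (iii) in the definition of large Banach space of tame perturbations, the section $(x, \q) \mapsto \X_\q(x)$ is smooth as a map into the relevant bundle. The first step is to check that its linearization is surjective at every zero. The $\X$-direction supplies a Fredholm operator (the extended Hessian, after dividing by gauge), while the $\q$-direction must fill in the finite-dimensional cokernel; this is where one uses the fact that $\P$ contains a countably infinite family of cylinder-function gradients, giving enough freedom to hit any $L^2$ cokernel element. Once surjectivity is in hand, the universal moduli space is a smooth Banach manifold, and Sard-Smale applied to the projection to $\P$ yields a residual set $\P_1 \subset \P$ over which the irreducible zeros are cut out transversely, i.e. non-degenerate in the sense above. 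A completely analogous argument on $\ker d^* \subset \Omega^1(Y;i\R)$ (working with $\J^{\red}_{k-1}$ in place of $\J^\sigma_{k-1}$) gives a residual set $\P_2 \subset \P$ for which the reducible connection part of every reducible zero is non-degenerate.

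For the Dirac spectrum condition at reducibles, I would fix the (now uniquely determined, mod gauge) reducible $(a_\q, 0)$ depending smoothly on $\q \in \P_2$ and consider the family of self-adjoint operators $D_{\q, a_\q}$ on $L^2_k(Y;\Spin)$. The goal is to show that the set of $\q \in \P_2$ such that $D_{\q, a_\q}$ has $0$ as an eigenvalue, or has any repeated eigenvalue, is meager. For fixed $\q_0 \in \P_2$ and any eigenvalue $\lambda_0$ of $D_{\q_0, a_{\q_0}}$, analytic perturbation theory for self-adjoint operators shows that, if we move $\q$ in a direction $\delta\q$ for which the first-order change $\D\q^1$ of the spinor part is a nontrivial zeroth-order multiplication-type operator, the eigenvalues split and move off $0$. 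Again using that $\P$ contains enough cylinder-function gradients, one produces directions $\delta\q$ for which the derivative of each eigenvalue is nonzero and, for repeated eigenvalues, the splitting is nonzero; hence avoiding $\{0\}$ and the repeated-eigenvalue locus are each countable intersections of open dense conditions (parametrized by bounded spectral windows), producing a residual set $\P_3 \subset \P_2$.

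Intersecting the three residual sets $\P_1 \cap \P_2 \cap \P_3$, which is still residual in $\P$, gives the desired conclusion by Proposition~\ref{prop:nondegeneracycharacterized}. The step I expect to be the most delicate is the surjectivity of the linearization in the universal moduli space: one must verify that the cylinder-function gradients sitting inside $\P$ are dense enough, in the weak sense required by Sard-Smale, to kill any $L^2_{k-1}$ cokernel element supported at a specific configuration. This is the content of Proposition~11.6.2 and the construction of ``sufficient'' cylinder functions in \cite{KMbook}, and it is the reason the condition of being a ``large'' Banach space was built to include an explicit countable dense family. The Dirac-spectrum step is standard once one notes that zeroth-order multiplication perturbations already act on $L^2(Y;\Spin)$ with a rich enough image to separate and shift eigenvalues.
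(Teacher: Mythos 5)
Note first that this is Theorem~12.1.2 of \cite{KMbook}; the paper cites it without reproving it, so there is no in-text proof here to compare against. Assessed on its own terms, your strategy is sound and correctly locates the key technical input, namely the richness of the cylinder-function gradients inside $\P$ needed to fill the cokernels of the parametric linearizations.

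Two points. The assertion that the reducible $(a_\q,0)$ is ``uniquely determined, mod gauge'' for $\q \in \P_2$ is not correct: as the paper itself notes in the remark in Section~\ref{sec:perturbedSW}, uniqueness of the reducible is established only for small $\q$, and is deliberately not used. After the $\P_2$ step what you actually have is that the reducibles are isolated and finite in number (by compactness of the solution set), and the spectral argument must be run at each; this is not a fatal gap, but the claim as written needs fixing. Also, the organization into three residual sets (irreducible zeros, reducible connection part, Dirac spectrum) differs from the Kronheimer--Mrowka proof, which runs a single Sard--Smale argument for the universal zero set of $\Xqsigma$ inside $\C^\sigma_k(Y)\times\P$; the three conditions in Proposition~\ref{prop:nondegeneracycharacterized} are precisely what transversality of the blown-up section encodes. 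Your version is transparent about which condition each step targets, but your third step inherits a coupling you do not address: when $\q$ moves, so does $a_\q$, so the first-order variation of an eigenvalue of $D_{\q,a_\q}$ is $\langle \phi_0, \dot D \phi_0\rangle_{L^2}$ with $\dot D$ receiving contributions both from $\delta\q^1$ and from $\delta a_\q$. The clean way around this (and the device used later in the paper's own proof of Proposition~\ref{prop:ND2}) is to restrict to perturbations vanishing on the reducible locus, which fix $a_\q$ and perturb $D_{\q,a}$ only through the spinorial linearization; with that amendment your argument closes.
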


Given such a perturbation $\q$, the gauge-equivalence classes of solutions to the perturbed Seiberg-Witten equations on $(Y,\spinc)$ are isolated (this comes from being transverse to the gauge orbits).  Since these equivalence classes form a compact subset of $\sC_{k}(Y)/\G_{k+1}$ because of tameness, there are at most finitely many irreducible solutions in $\B^\sigma_{k}(Y)$.  

For the rest of the subsection, we will assume that $\q$ has been chosen so as to be tame and that the stationary points are non-degenerate.  In analogy with the finite-dimensional case we have 
\begin{definition}
Let $[\x] = [(a,s,\phi)]$ be a gauge-equivalence class of zeros of $\Xqsigma$ in $\B^\sigma_{k}(Y)$.  If $s \neq 0$, then we say that $[\x]$ is {\em irreducible}.  If $[\x]$ is {\em reducible}, then $[\x]$ is {\em boundary-stable} (respectively boundary-unstable) if $\Lambda_\q(\x) >0$ (respectively $<0$).           
\end{definition}

Define $\Crit^o$, $\Crit^s$, and $\Crit^u$ to be the sets of gauge-equivalence classes of irreducible, boundary-stable, and boundary-unstable stationary points of $\Xqsigma$ on $\B^\sigma_{k}(Y)$. Set $\Crit = \Crit^o \cup \Crit^s \cup \Crit^u.$

Next, we discuss regularity for trajectories. Let $Z = \rr \times Y$. For $\x, \y \in \C^\sigma(Y)$, pick a smooth path $\gamma_0: \rr \to \C^\sigma(Y)$ such that $\gamma_0(t)=\x$ for $t \ll 0$ and $\gamma_0(t)=\y$ for $t \gg 0$. Then, define
\begin{equation}
\label{eq:Cktau}
\sC^{\tau}_k(\x, \y)= \{\gamma \in \C_{k, loc}^{\tau}(Z) \mid \gamma - \gamma_0 \in L^2_k(Z; iT^*Z) \times L^2_k(\rr; \rr) \oplus L^2_k(Z; \Spin^+)\},  
\end{equation}
Note that here we are imposing $L^2_k$ and not $L^2_{k,loc}$ conditions on $\gamma - \gamma_0$. Thus, $\sC^{\tau}_k(\x, \y)$ is equipped with a natural metric, $d(\gamma,\gamma') = \| \gamma - \gamma' \|_{L^2_k}$.  We have an analogously defined space $\tC^\tau_k(\x,\y)$ where we remove the condition $s(t) \geq 0$, as for the definition of $\tC^\tau_k(Z)$.  

We let $\B^{\tau}_k(\x, \y)$ (respectively $\tB^\tau_k(\x,\y)$) be the quotient of $\sC^{\tau}_k(x,y)$ (respectively $\tC^\tau_k(\x,\y)$) by the action of gauge transformations $u:Z\to S^1$ such that $1-u \in L^2_{k+1}(Z, \cc)$. The space $\B^{\tau}_k(\x, \y)$ depends only on the classes $[\x]$ and $[\y]$, up to canonical diffeomorphism. Consequently, we can use the notation $\B^{\tau}_k([\x], [\y])$.  Similarly for $\tB^\tau_k([\x],[\y])$.  

We are now interested in studying gauge-equivalence classes of trajectories between two stationary points.  Define $\tau_t : \rr \times Y \to \rr \times Y$ to be translation by $t$ and let $\gamma_x$ (respectively $\gamma_y$) denote the elements of $\C^\tau_{k,loc}(\rr \times Y)$ which are $x$ (respectively $y$) in each slice.  
\begin{definition}\label{def:sw-moduli-space}
For $[\x]$ and $[\y]$ in $\Crit$, we define the {\em moduli space of trajectories from $[\x]$ to $[\y]$} as 
\[
M([\x],[\y]) = \{[\gamma] \in \B^\tau_{k,loc}(\rr \times Y) \mid \F^\tau_\q(\gamma) = 0, \lim_{t \to -\infty} [\tau^*_t \gamma] = [\gamma_\x], \lim_{t \to +\infty} [\tau^*_t \gamma] = [\gamma_\y] \},
\]
where the limits are taken in $\B^\tau_{k,loc}(\rr \times Y)$.  
If $[\x]$ is boundary-stable and $[\y]$ is boundary-unstable, then we say we are in the {\em boundary-obstructed} case.  Finally, we will decorate a moduli space by $\breve{M}$ if we want the result after quotienting by the usual $\mathbb{R}$-action.  %We write $\tM([\x],[\y])$ for the analogous definition with $\tB^\tau_{k,loc}$ in place of $\B^\tau_{k,loc}$.  
\end{definition}

It is proved in \cite[Theorem 13.3.5]{KMbook} that every class in $M([\x], [\y])$ has a gauge representative in $\B^{\tau}_k([x], [y])$. The proof uses exponential decay estimates for Seiberg-Witten trajectories.

\begin{remark}
As mentioned above, each (blown-up, perturbed) Seiberg-Witten trajectory on $\sC^\sigma_{k}(Y)$ is gauge-equivalent to a smooth trajectory on $\sC^\sigma(Y)$, for $k \geq 2$.  In particular, the moduli spaces are homeomorphic for any $k \geq 2$.   
\end{remark}

In order to show that the moduli spaces are smooth manifolds, we must show that they are (locally) the preimage of a regular value of a Fredholm map. Just as in the case of compact cylinders, one can define a bundle $\V_{k-1}^\tau(Z)$ over $\tC_{k}^\tau(\x,\y)$, such that $\F_\q^\tau$ provides a section.  The zero set of this section, restricted to $\C_k^\tau(\x,\y)$, thus describes the Seiberg-Witten trajectories asymptotic to $\x$ and $\y$.  The advantage of working with $\tC_{k}^\tau(\x,\y)$ is that we can differentiate $\F_\q^\tau$ in this setting.  

Recall from \eqref{eq:muy} that $\mu_Y(f)$ denotes the average value of a function $f$ over the three-manifold $Y$. For $\gamma = (a,s,\phi) \in \tC^\tau_{k}(\x,\y)$  and $1 \leq j \leq k$, define the linear operator\footnote{In Equation~\eqref{eq:Qgamma}, $\D^{\tau}$ is a covariant derivative on the space of paths, which was simply denoted $\D$ in \cite{KMbook}. See Section~\ref{sec:linearized} below for more details.}
\begin{equation}
\label{eq:Qgamma}
Q_{\gamma} = \D^{\tau}_\gamma \F^\tau_\q \oplus \dd^{\tau,\dagger}_\gamma : \T^\tau_{j,\gamma}(Z) \to \V^\tau_{j-1,\gamma}(Z) \oplus L^2_{j-1}(Z;i\mathbb{R}),  
\end{equation}
where 
\begin{equation}
\label{eq:dtaudagger}
\dd^{\tau,\dagger}_\gamma(b,r,\psi) = -d^*b + is^2 \text{Re}\langle i\phi,\psi \rangle + i |\phi|^2 \text{Re} \ \mu_Y \langle i\phi,\psi \rangle
\end{equation}
is a variant of the formal adjoint to the infinitesimal (four-dimensional) gauge action. The condition $\dd^{\tau, \dagger}_\gamma = 0$ describes a four-dimensional local Coulomb slice $\K^\tau_{j, \gamma} \subset \T^\tau_{j, \gamma}$. When $j = k$, the slice $\K^\tau_{k,\gamma}$ can be viewed as the tangent space to $\tB^{\tau}_{k}([x], [y])$ at $[\gamma]$.  In general, the bundle $\K^\tau_j$ defines a gauge-equivariant bundle over $\tC^\tau_k(x,y)$ which descends to a bundle over $\tB^\tau_k([x],[y])$.  If $Q_\gamma$ is surjective (or, equivalently, the restriction $\D^\tau_\gamma \F^\tau_\q|_{\K^\tau_{j, \gamma}}$ surjects onto $\V^\tau_{j-1,\gamma}$) for all $[\gamma]$ in $M([\x],[\y])$, then $M([\x],[\y])$ is a smooth manifold.    

\begin{definition}
\label{def:regM}
The moduli space $M([\x],[\y])$ is {\em regular} if $Q_\gamma$ is surjective for all $[\gamma] \in M([\x],[\y])$, unless we are in the boundary-obstructed case.  If $M([\x],[\y])$ is boundary-obstructed, regularity means that the cokernel of $Q_\gamma$ is dimension one for each $[\gamma] \in M([\x],[\y])$.      
\end{definition}

Theorem 15.1.1 in \cite{KMbook} guarantees the existence of a tame perturbation $\q$ such that $M([\x],[\y])$ are regular for all $[\x]$ and $[\y]$.  In particular, this implies that the moduli spaces $M([\x],[\y])$ are all smooth manifolds, even in the boundary-obstructed case, by \cite[Proposition 14.5.7]{KMbook}.  In particular, we have that $\ind Q_\gamma = \dim M([\x],[\y])$, except in the boundary-obstructed case, where $\ind Q_\gamma = \dim M([\x],[\y]) -1$.  

If $[\x]$ and $[\y]$ are reducible and $M([\x],[\y])$ is regular, then define $M^{\red}([\x],[\y])$ to be the subset of $M([\x],[\y])$ consisting of reducible trajectories.  Note that $M^{\red}([\x],[\y])$ is either empty (this is the case when one of $[\x]$ or $[\y]$ is irreducible) or all of $M([\x],[\y])$, except when $[\x]$ is boundary-unstable and $[\y]$ is boundary-stable, in which case it is $\partial M([\x],[\y])$.   

\begin{definition}
\label{def:admi}
An {\em admissible} perturbation is a tame perturbation such that all the stationary points are non-degenerate and all the moduli spaces are regular. 
\end{definition}

\begin{theorem}[Theorem 15.1.1 in \cite{KMbook}]\label{thm:admissibleperturbationsexist} For any large Banach space of tame perturbations $\mathcal{P}$, there exists an admissible perturbation $\q \in \mathcal{P}$.  
\end{theorem}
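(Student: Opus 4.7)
The plan is to prove this by a standard Sard--Smale argument, combining the non-degeneracy result already cited (Theorem 12.1.2 of \cite{KMbook}) with a parametric transversality statement for trajectories. Since a large Banach space $\mathcal{P}$ is separable, and the intersection of countably many residual subsets of a Baire space is residual (hence non-empty), it suffices to exhibit a residual subset of $\mathcal{P}$ on which each of the two defining conditions of admissibility holds.

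First I would invoke Theorem 12.1.2 to obtain a residual subset $\mathcal{P}_1 \subseteq \mathcal{P}$ such that for every $\q \in \mathcal{P}_1$, all stationary points of $\Xqsigma$ in $\B^\sigma_k(Y)$ are non-degenerate. For $\q \in \mathcal{P}_1$ there are only finitely many stationary points mod gauge, and their eigenvalue data (including spinorial energy and, for reducibles, the eigenvalues of $D_{\q,a}$) vary continuously with $\q$. Hence one can cover $\mathcal{P}_1$ by countably many open pieces on which the set $\Crit = \Crit^o \cup \Crit^s \cup \Crit^u$ has a fixed, finite labelling by pairs $([\x],[\y])$. It suffices then to prove a parametric regularity statement for moduli spaces indexed by each fixed pair.

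Next, for each such pair $([\x],[\y])$, consider the universal moduli space
\[
\mathcal{M}([\x],[\y]) = \{(\q,[\gamma]) \mid \q \in \mathcal{P}_1,\ [\gamma]\in M_\q([\x],[\y])\} \subset \mathcal{P}_1 \times \tB^\tau_k([\x],[\y]),
\]
cut out by the $\q$-dependent section $\F^\tau_\q$ of $\V^\tau_{k-1}$ (together with the slice condition $\dd^{\tau,\dagger}=0$). I would show that the full linearization of $(\q,\gamma) \mapsto Q_\gamma^\q$ with respect to both arguments is surjective along $\mathcal{M}([\x],[\y])$, except possibly with a one-dimensional cokernel in the boundary-obstructed case. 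In the non-boundary-obstructed case the extra surjectivity in the $\q$-direction follows from the fact that $\mathcal{P}$ contains enough cylinder-function perturbations: by the unique continuation arguments of \cite{KMbook} (cf.\ Chapter 15 there), an element of the cokernel which is $L^2$-orthogonal to all cylinder-function directions must vanish on an open set, then everywhere. In the boundary-obstructed case, the reducible locus contributes a forced one-dimensional cokernel, and one argues that this is the only obstruction. Either way, $\mathcal{M}([\x],[\y])$ is a Banach submanifold and the projection to $\mathcal{P}_1$ is a smooth Fredholm map. Applying the Sard--Smale theorem gives a residual subset $\mathcal{P}_{[\x],[\y]} \subseteq \mathcal{P}_1$ of regular values.

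Finally, set
\[
\mathcal{P}_{\mathrm{adm}} = \bigcap_{([\x],[\y])} \mathcal{P}_{[\x],[\y]},
\]
a countable intersection of residual sets, hence residual and in particular non-empty. Any $\q \in \mathcal{P}_{\mathrm{adm}}$ is admissible in the sense of Definition~\ref{def:admi}. The main obstacle I expect is the surjectivity of the full linearization: one must show that the cylinder-function perturbations contained in $\mathcal{P}$ are rich enough to kill every potential cokernel element (outside the forced one-dimensional obstruction in the boundary-obstructed case). This is where the compactness/unique continuation machinery of \cite[Chapter 15]{KMbook} enters, and it is the part of the argument whose details are most delicate, since one is working modulo gauge on a non-compact cylinder and must rule out cokernel elements supported on the reducible locus unless they arise from the genuine boundary obstruction.
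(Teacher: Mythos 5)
The paper does not prove this theorem; it is cited verbatim from \cite{KMbook}. The closest in-house argument is the paper's own Proposition~\ref{prop:MorseSmales}, which establishes the analogous Morse--Smale condition for the approximate flow and is explicitly modelled on KM's Chapter 15. Your sketch correctly identifies the two pillars of the Sard--Smale argument (residual non-degeneracy via Theorem 12.1.2, then parametric regularity of each moduli space, concluded by a countable intersection of residual sets), and correctly locates the delicate point in the surjectivity of the linearization along the $\q$-direction via unique continuation for cylinder functions.

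However, two aspects of your setup would need repair. First, the assertion that for $\q \in \mathcal{P}_1$ "there are only finitely many stationary points mod gauge" is false: only $\Crit^o$ is finite; $\Crit^s \cup \Crit^u$ is countably infinite, indexed by the spectrum of $D_{\q,a}$. Second, and more substantively, the device of covering $\mathcal{P}_1$ by open pieces on which $\Crit$ carries a fixed labelling is not well posed as written: as $\q$ moves, the eigenvalues of $D_{\q,a}$ drift (and can cross zero, flipping the boundary-stable/unstable type), and the space $\tB^\tau_k([\x],[\y])$ in which your universal moduli space is supposed to live depends on the $\q$-dependent endpoints, so $\mathcal{M}([\x],[\y]) \subset \mathcal{P}_1 \times \tB^\tau_k([\x],[\y])$ is not a subset of a fixed Banach manifold. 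The standard fix, used both by Kronheimer--Mrowka and by this paper in the proof of Proposition~\ref{prop:MorseSmales}, is to first fix a $\q_0$ with non-degenerate stationary points, choose a gauge-invariant neighbourhood $U$ of the stationary orbits, and restrict to $\P_U = \{\q \in \P \mid \q|_U = \q_0|_U\}$; this literally freezes $\Crit$ and the path spaces $\tB^\tau_k([\x],[\y])$ on a neighbourhood $\nu(\q_0)\subset\P_U$, after which your Sard--Smale argument goes through verbatim (including the separate treatment of the boundary-obstructed case, where transversality is imposed inside the reducible locus and the one-dimensional cokernel of $Q_\gamma$ in the ambient space is forced by the index computation). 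With that modification the outline is sound; the remaining hard work, which you rightly defer, is Proposition 15.1.3 of \cite{KMbook} on the richness of cylinder-function perturbations.
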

As discussed after Definition~\ref{def:large-verytame}, there exist large Banach spaces of very tame perturbations.  Therefore, there exist perturbations which are both very tame and admissible.

\section{Orientations}
\label{sec:or2}
From now on we fix an admissible perturbation $\q$. 

\begin{theorem}[Corollary 20.4.1 of \cite{KMbook}]  
The moduli spaces $M([\x],[\y])$ are orientable manifolds.
\end{theorem}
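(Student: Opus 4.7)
The plan is to adapt the construction of specialized coherent orientations from Section~\ref{sec:or1} to the blown-up, gauge-quotient setting of Seiberg-Witten theory, following \cite[Sections 20.2--20.4]{KMbook}. The key is that once we have oriented determinant lines of a suitable family of Fredholm operators in a way compatible with concatenation, orientations on the moduli spaces $M([\x],[\y])$ drop out automatically.

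First I would set up the determinant line bundle. For $[\gamma] \in M([\x],[\y])$, regularity (Definition~\ref{def:regM}) identifies $T_{[\gamma]} M([\x],[\y])$ with $\ker Q_\gamma$ in the unobstructed case, while in the boundary-obstructed case $\coker Q_\gamma$ is one-dimensional and canonically oriented by the outward normal to $\partial X$ as in Section~\ref{subsec:morseboundary}. In either case, orienting $M([\x],[\y])$ at $[\gamma]$ is equivalent to orienting $\det(Q_\gamma) = \Lambda^{\max}(\ker Q_\gamma) \otimes \Lambda^{\max}(\coker Q_\gamma)^*$. As $[\gamma]$ varies these form a smooth real line bundle $\det(Q) \to M([\x],[\y])$, so orientability reduces to trivializing this line bundle.

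Next I would mimic Section~\ref{sec:or1} to define the asymptotic orientation sets. For a compact interval $I=[t_1,t_2]$, points $\x,\y \in \C^\sigma(Y)$, and a path $\gamma \in \P^{\tau,I}_k(\x,\y)$, form the analogue $\tilde L_\gamma$ of \eqref{eq:tLgamma} using the extended Hessian of $\L_\q$ coupled to $\dd^{\tau,\dagger}$, plus spectral projections onto the non-negative/non-positive eigenspaces of this Hessian at the endpoints to ensure Fredholmness even at reducibles. Two such pairs $(\gamma,\tilde L_\gamma)$ and $(\gamma',\tilde L_{\gamma'})$ with the same endpoints are equivalent in the sense of \cite[Section 3.2.1]{Schwarz}, hence their determinant lines are canonically identified, giving a well-defined two-element set $\Lambda(\x,\y)$ depending only on $\x,\y$. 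Gauge-equivariance of the whole construction (the operators are gauge-equivariant and the gauge action on $\tC^\tau_k$ is free after fixing the base $\x$) lets this descend to $\Lambda([\x],[\y])$, with a concatenation product as in \eqref{eq:composeorientations}.

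Now I would construct a specialized coherent orientation. Since the perturbation is admissible, $\Crit$ is finite, so I can choose a basepoint $[\x_0]\in\Crit$, take the canonical element $+1 \in \Lambda([\x_0],[\x_0])$, choose arbitrary $o_{[\x_0],[\x]} \in \Lambda([\x_0],[\x])$ for every other $[\x]\in\Crit$, and propagate via the composition rule $o_{[\x],[\y]} = o_{[\x_0],[\x]}^{-1}\cdot o_{[\x_0],[\y]}$. This is consistent by associativity of concatenation. To transfer these asymptotic choices to actual moduli space orientations, given $[\gamma]\in M([\x],[\y])$ restrict $\gamma$ to a compact interval $[t_1,t_2]$ large enough that the Hessian is non-degenerate outside, and run the deformation/concatenation argument from the end of Section~\ref{sec:or1} to convert an orientation on $\det(\tilde L_{\gamma|_I})$ into one on $\det(Q_\gamma)$. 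In the boundary-obstructed case, tensor with the canonical outward-normal orientation of the one-dimensional cokernel. The resulting orientation on $M([\x],[\y])$ varies continuously in $[\gamma]$, producing a global orientation and proving orientability.

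The main obstacle I expect is the bookkeeping for the boundary-obstructed case, where $Q_\gamma$ is not surjective and the moduli space dimension differs from $\ind Q_\gamma$ by one; one must check that the outward-normal correction is globally consistent and is compatible with the composition rule on $\Lambda([\x],[\y])$. A secondary difficulty is verifying that the construction descends through the blow-up, i.e.\ that the gauge action preserves the determinant lines (using that $\tilde L_\gamma$ contains $\dd^{\tau,\dagger}_\gamma$ as its gauge-slicing summand) and that the spectral decomposition of the Hessian at reducibles behaves continuously as $s\to 0$. Both of these are handled in \cite[Section 20]{KMbook} and the arguments adapt without change.
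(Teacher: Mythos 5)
Your proposal follows essentially the same route as the paper's sketch, which itself follows \cite[Section 20]{KMbook}: Fredholm operators $P_\gamma$ on compact cylinders with spectral boundary conditions at the endpoints, canonical identifications making $\Lambda([\x],[\y])$ a well-defined two-element set (this is \cite[Proposition 20.3.4]{KMbook}, and by itself already gives orientability of $M([\x],[\y])$), a concatenation structure, and a coherent choice built by fixing a basepoint. One factual slip: $\Crit$ is \emph{not} finite --- only $\Crit^o$ is, while there are countably many reducibles corresponding to the eigenvectors of $D_{\q,a}$ --- which is why the paper, following \cite[p. 385-390]{KMbook}, first invokes the canonical $\zz/2$-identification of $\Lambda([\x],[\y])$ when both $[\x],[\y]$ are reducible, leaving arbitrary choices $o_{[\x_0],[\x]}$ to be made only for the finitely many irreducibles; your ``arbitrary choices at every $[\x]\in\Crit$'' still proves orientability, but the paper's extra canonicity is what makes the $\zz[U]$-module structure and later sign comparisons behave well.
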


Let us sketch the construction of orientations on $M([x], [y])$, following \cite[Section 20]{KMbook}. This is similar to the discussion of specialized coherent orientations in Morse theory (see Section~\ref{sec:or1}). 

To orient $M([x], [y])$, we need to orient the determinant lines $\det(Q_{\gamma})$, where $Q_{\gamma}$ is as in \eqref{eq:Qgamma}. For arbitrary $x, y \in \C_k^{\sigma}(Y)$ (not necessarily stationary points), we consider instead a compact interval $I=[t_1, t_2]$, and a configuration $\gamma \in \C_k^{\tau}(I \times Y)$ whose restrictions to $\{t_1\} \times Y$ and $\{t_2\} \times Y$ are gauge equivalent to $x$, resp. $y$. To any such $\gamma$ we can associate an operator 
$Q_{\gamma}$ by the same formula as for \eqref{eq:Qgamma}. To make it Fredholm, we need to add suitable boundary conditions. At the boundary component $\{t_1\} \times Y$, consider the subspaces
$$ H_1^{\pm} = \K^{\pm}_{1/2, x} \oplus L^2_{1/2}(Y; i\R) \subset \T^{\sigma}_{1/2, x}(Y) \oplus L^2_{1/2}(Y; i\R).$$
Here, we have decomposed the local Coulomb slice $\K^{\sigma}_{1/2, x} \subset \T^{\sigma}_{1/2, x}(Y) $ as
$$\K^{\sigma}_{1/2, x} =  \K^{-}_{1/2, x} \oplus \K^{+}_{1/2, x},$$
using the spectral subspaces (the direct sum of nonpositive, resp. positive eigenspaces) of the Hessian
$$ \Hess^{\sigma}_{\q, x} := \Pi_{\K^{\sigma}_{1/2, x}} \circ \D_{\gamma}\Xq^{\sigma}|_{\K^{\sigma}_{1/2, x}}.$$
(See Section~\ref{sec:HessBlowUp} below for more details about the Hessian.)

We consider similar subspaces $\K_2^{\pm}$ at the other boundary component $\{t_2\} \times Y$. 
Then, we define a Fredholm operator
\begin{equation}
\label{eq:Pgamma}
 P_{\gamma} = \bigl( Q_{\gamma}, -\Pi_1^+, \Pi_2^- \bigr) : \T^{\tau}_{1, \gamma}(I \times Y) \to \V^{\tau}_{0, \gamma} \oplus L^2(I \times Y; i\R) \oplus H_1^+ \oplus H_2^-,
 \end{equation}
where $\Pi_i^{\pm}$ denotes the composition of restriction to $\{t_i\} \times Y$ with the projection to $H_i^{\pm}$.

Let $\Lambda(\gamma)$ be the set of orientations of $\det(P_{\gamma})$. It is proved in \cite[Proposition 20.3.4]{KMbook} that there are canonical identifications between the different $\Lambda(\gamma)$ when we fix the (gauge equivalence classes of the) endpoints $x$ and $y$ of $\gamma$. Thus, we can write $\Lambda([x], [y])$ for $\Lambda(\gamma)$. Further, there are natural composition maps
$$ \Lambda([x], [y]) \times \Lambda([y], [z]) \to \Lambda([x], [z]).$$

Departing slightly from the terminology and conventions in \cite{KMbook}, we define an {\em orientation data set} $o$ for the admissible perturbation $\q$ to consists of elements $o_{[x], [y]} \in \Lambda([x], [y])$, one for each pair $([x], [y])$, such that we have the relations
$$ o_{[x], [y]} \cdot o_{[y], [z]} = o_{[x], [z]}.$$

An orientation data set can be constructed as follows.  In \cite[p. 385-390]{KMbook} it is shown that when $[x]$ and $[y]$ are reducible, the set $\Lambda([x], [y])$ can be canonically identified with $\zz/2 = \{\pm 1\}$, in a way compatible with concatenation; we then choose $o_{[x], [y]}$ to be the element $+1$ in this case. Then, we fix a reducible $[x_0]$ and pick arbitrary elements $o_{[x_0], [x]}$ for all irreducibles $[x]$. This uniquely determines the data set $o$, using the concatenation property.

Next, let $[x]$ and $[y]$ be stationary points, and consider a trajectory $[\gamma] \in M([x], [y])$. Let $\gamma_0$ be its restriction to a large compact interval $I=[t_1, t_2] \subset \R$. It is proved in \cite[Section 20.4]{KMbook} that an orientation for $\det(P_{[\gamma_0]})$ determines one for $\det(Q_{\gamma})$. Therefore, an orientation data set for $\q$ produces orientations for all determinant lines $\det(Q_{\gamma})$, and hence for the moduli spaces $M([x], [y])$. From here we also get orientations on the quotients $\breve M([x], [y])$.

We can orient the moduli spaces $M^{\red}([\x],[\y])$ and $\breve{M}^{\red}([\x],[\y])$ in a similar manner.

\begin{remark}
In \cite[Section 20]{KMbook}, the discussion was more general, allowing the perturbation $\q$ to vary for $t \in [t_1, t_2]$. For the purposes of this book, it suffices to work with a single perturbation $\q$; we adjusted our discussion accordingly, for simplicity. 

Furthermore, the notion of orientation data set did not appear in \cite{KMbook}. Whereas we trivialize the sets $\Lambda([x], [y])$ and then let the monopole Floer complex be generated by stationary points $[x]$ (cf. Section~\ref{sec:mfh} below), Kronheimer and Mrowka define the complex more canonically, as generated by orientations of $\Lambda([x_0], [x])$ (where $[x_0]$ is a fixed reducible basepoint), with two opposite orientations being set to be the negative of each other. The two definitions are readily seen to be equivalent.
\end{remark}

\section{Monopole Floer homology} \label{sec:mfh}
We will now define the monopole Floer chain complex $\cmto(Y,\spinc,\q)$ analogous to the construction of Morse homology for manifolds with boundary in Section~\ref{subsec:morseboundary}.  

The set $\Crit^o$ of gauge-equivalence classes of irreducible stationary points of $\Xqsigma$ is finite. There are also countably many reducible stationary points, corresponding to the eigenvectors of $D_{\q, a}$. Indeed, the operator $D_{\q, a}$ is ASAFOE in the sense of \cite[Definition 12.2.1]{KMbook}, so its spectrum is discrete by \cite[Lemma 12.2.4]{KMbook}.

In \cite[Chapter V]{KMbook}, Kronheimer and Mrowka give an analysis of the compactifications of the moduli spaces via broken flow lines.  Rather than state the general result, we simply point out that given an admissible perturbation, if $\breve{M}([\x],[\y])$ or $\breve{M}^{\red}([\x],[\y])$ is  0-dimensional, then this moduli space is compact.       

Let the groups $C^\theta$ be freely generated over $\mathbb{Z}$ by $\Crit^\theta$ for $\theta \in \{o,s,u\}$.  The monopole Floer chain groups are given by   
\[
\cmto(Y,\spinc,\q) = C^o \oplus C^s.
\]   
Here, the relative $\mathbb{Z}$-grading on $\cmto(Y,\spinc,\q)$ (since $\spinc$ is torsion) is given by the expected dimension of $M([\x],[\y])$ (the index of $Q_\gamma$), since in this case, we cannot have that $M([\x],[\y])$ is boundary-obstructed.  Since $Y$ is a rational homology sphere, this number is well-defined (there is only one homotopy class of paths from $[\x]$ to $[\y]$).  Recall that we need a shift of grading by 1 in the boundary-obstructed case.

For $\theta, \varpi \in \{o, s, u\}$, define $\partial^\theta_\varpi$ and $\bar{\partial}^\theta_\varpi$ by 
\begin{align}
\label{eqn:boundaryirred} & \partial^\theta_\varpi([\x]) = \sum_{[\y] \in \Crit^\varpi} \# \breve{M}([\x],[\y]) [\y], \\
\label{eqn:boundaryred} & \bar{\partial}^\theta_\varpi([\x]) = \sum_{[\y] \in \Crit^\varpi} \# \breve{M}^{red}([\x],[\y]) [\y], 
\end{align}
for $[\x] \in \Crit^\theta$, where we only sum over $[\y]$ such that the relevant moduli spaces are 0-dimensional.  Here, $\#$ means the signed count of points in this oriented, compact $0$-dimensional manifold.  Also, we only allow $\theta, \varpi$ such that these counts make sense: for $\partial$, we want $\theta \in \{o,u\}$ and $\varpi \in \{ o,s\}$ while for $\bar{\partial}$, we ask that $\theta, \varpi \neq o$.  The boundary operator on $\cmto(Y,\spinc,\q)$ is given by 
\begin{equation}\label{eqn:cmboundary}
\check{\partial} = \begin{bmatrix} \partial^o_o & - \partial^u_o \bar{\partial}^s_u \\ \partial^o_s & \bar{\partial}^s_s - \partial^u_s \bar{\partial}^s_u \end{bmatrix}.
\end{equation}
The admissibility of $\q$ guarantees that $\check{\partial}$ squares to zero and we can take homology, $\hmto(Y,\spinc,\q)$.  As in Remark~\ref{rmk:noMred}, the moduli spaces $M^{\red}([x],[y])$ are either empty or equal to $M([x],[y])$, except in the case that $[x]$ is boundary-unstable and $[y]$ is boundary-stable; in this exceptional case, the counts of $\breve{M}^{\red}([x],[y])$  do not arise in $\check{\partial}$.

Similar to the Morse homology for circle actions, monopole Floer homology can be given the structure of a $\mathbb{Z}[U]$-module.  This is defined in \cite{KMbook} in terms of evaluations of suitable \v{C}ech cochains on the $2$-dimensional moduli spaces of parameterized trajectories. For our purposes, it is more convenient to use an alternate, equivalent definition of the $U$ map, which is taken from \cite[Section 4.11]{KMOS}. The latter definition involves counting points in zero-dimensional spaces, and this will make it easier to prove a stability result for trajectories in Section~\ref{subsec:U-maps}.

Although the $U$ map on $\hmto$ comes from a more general cobordism construction, we will restrict our attention to the specific case we need. Let $p \in \rr \times Y$ be a basepoint, and $B_p$ a standard ball neighborhood of $p$ in $\rr \times Y$. Let $\B^{\sigma}_{k}(B_p)$ be the the blown-up configuration space of $L^2_k$ connections and spinors on $B_p$, modulo the gauge group $$\G_{k+1}(B_p) = \{u: B_p \to S^1 \mid u \in L^2_{k+1}\}.$$ Note that $\B^{\sigma}_k(B_p)$ is a Hilbert manifold with boundary, and is a free quotient by $\G_{k+1}(B_p)$. There is a natural complex line bundle $E_p^{\sigma}$ over $\B^{\sigma}_k(B_p)$, induced from the map $\G_{k+1}(B_p) \to S^1, \ u \mapsto u(p)$.

For any $[\x] \in \Crit^{\theta}, [\y] \in \Crit^{\varpi}$, because of unique continuation, there is a well-defined restriction map 
$$ r_p :  M([\x],[\y]) \to \B^\sigma_k(B_p),$$
which is an embedding.
Pick a smooth section $\sect$ of $E_p^{\sigma}$ such that $\sect$ is transverse to the zero section, and the zero set $\Zs$ of $\sect$ intersects all the moduli spaces $M([\x], [\y])$ and $M^{\red}([\x], [\y])$ transversely. By analogy with the finite-dimensional case from Section~\ref{subsec:UMorse}, for $(\theta, \varpi) \in \{(o,o), (o,s), (u,o),(u,s)\}$, we define $m^\theta_\varpi: \mathfrak{C}^\theta \to \mathfrak{C}^\varpi$ by 
\begin{equation}\label{eqn:mU}
m^\theta_\varpi([\x]) = \sum_{[\y] \in \mathfrak{C}^\varpi} \# (M([\x],[\y]) \cap \Zs) \cdot [\y], \text{ for } [\x] \in \mathfrak{C}^\theta,   
\end{equation}
Similarly, for the reducibles, we set 
\begin{equation}\label{eqn:mUred}
\bar{m}^\theta_\varpi([\x]) = \sum_{[\y] \in \mathfrak{C}^\varpi} \# (M^{\red}([\x],[\y]) \cap \del \Zs) \cdot [\y], \text{ for } [\x] \in \mathfrak{C}^\theta.
\end{equation}
Note that we are using parameterized trajectories.  We only consider terms in the sums where the dimension of the moduli spaces being considered is two.   

Finally, we can define $\check{m}: \cmto \to \cmto$ by 
\begin{equation}\label{eqn:floercap}
\check{m} = \begin{bmatrix}  m^o_o & - m^u_o \bar{\partial}^s_u - \partial^u_o \bar{m}^s_u \\  m^o_s & \bar{m}^s_s - m^u_s \bar{\partial}^s_u - \partial^u_s \bar{m}^s_u  \end{bmatrix}.
\end{equation}

This induces a chain map.  Therefore, we can define the $U$-action on $\hmto$ by the map induced by $\check{m}$.  By construction, this map lowers the relative grading by 2.  

\begin{theorem}[Kronheimer-Mrowka, Theorem 23.1.5 of \cite{KMbook}]
The relatively-graded $\mathbb{Z}[U]$-module $\hmto(Y,\spinc,\q)$ is an invariant of the pair $(Y,\spinc)$.   
\end{theorem}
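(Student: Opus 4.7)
The plan is to establish invariance by the standard continuation argument, carried out carefully in the blown-up, boundary-stratified setting that governs $\cmto$. Any two admissible quadruples $(g_0, \s_0, \q_0, o_0)$ and $(g_1, \s_1, \q_1, o_1)$ (including the auxiliary flat connection $A_0$, the orientation data set, and the section $\sect$ used for $\check m$) will be joined by a generic smooth path $(g_t, \s_t, \q_t, o_t)_{t \in [0,1]}$. Since the space of metrics is contractible and any two Spin$^c$ structures are related by a unique isomorphism once we fix a class in $H^2(Y;\zz)$, it suffices to treat fixed $(Y, \s)$ and vary only $g$ and $\q$; and since the space of very tame perturbations is affine, we may take $\q_t = (1-t)\q_0 + t\q_1$ modulo a small generic perturbation making the parameterized setup regular.

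To produce a chain map $\hmto(\q_0) \to \hmto(\q_1)$, I would introduce the $t$-dependent Seiberg--Witten map $\F^{\tau}_{\q_t}$ on the cylinder $\R \times Y$, where $\q_t$ interpolates and is independent of $t$ outside a compact interval. One then forms parameterized moduli spaces $M_t([\x_0],[\y_1])$ between stationary points of $\Xqsigma$ for $\q_0$ and for $\q_1$, with the same stratification into irreducible/boundary-stable/boundary-unstable endpoints. Counting zero-dimensional (oriented) components produces the eight matrix entries $n^\theta_\varpi$ and $\bar n^\theta_\varpi$ which one assembles into a $2\times 2$ map $\check m_{01}:\cmto(\q_0)\to\cmto(\q_1)$ by the same formula as \eqref{eqn:cmboundary} but with differential entries replaced by continuation entries. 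The proof that $\check m_{01}$ is a chain map proceeds by analyzing the ends of one-dimensional parameterized moduli spaces, where the broken trajectories are exactly (differential$\,\circ\,$continuation) and (continuation$\,\circ\,$differential), again with the boundary-obstructed gluing corrections producing the off-diagonal compositions involving $\partial^u_o \bar\partial^s_u$, etc. This is the analogue of Section~\ref{subsec:morseboundary} in the Kronheimer--Mrowka setting and follows the blueprint of \cite[Chapters 23--25]{KMbook}.

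Next, to show $\check m_{01}$ is a chain homotopy equivalence, one considers a two-parameter family that concatenates the path from $\q_0$ to $\q_1$ with the reverse path, and shows the resulting composition is chain homotopic to the identity. One also needs to show independence from the choice of interpolation: two paths between the same endpoints give chain homotopic continuation maps, which is proved by a one-parameter family of paths and counting ends of the resulting moduli spaces of dimension one over a $1$-simplex of interpolations. Compatibility with $U$ is obtained by picking a basepoint $p$ and a section $\sect$ on the four-manifold $[0,1] \times Y$ extending the choices on the two ends; counting zero-dimensional intersections of parameterized moduli spaces with the cut-down by $\sect$ gives a chain map that intertwines $\check m_{01}$ with the $U$-maps $\check m$ on either side, up to chain homotopy. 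The invariance under the choices of orientation data and of basepoint/section follows by the same formalism.

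The main technical obstacle is parameterized regularity and compactness in the presence of the reducible stratum. One must ensure that, generically along the path $(g_t,\q_t)$, the reducible locus stays non-degenerate except possibly at isolated $t$ where a simple eigenvalue of $D_{\q_t,a(t)}$ crosses zero; at such crossings a reducible critical point passes between $\Crit^s$ and $\Crit^u$, and the associated wall-crossing must be shown to cancel in the continuation formula. This is the origin of the off-diagonal terms involving $\bar\partial^s_u$ and is precisely where the boundary-obstructed gluing of \cite[Chapter 26]{KMbook} is required; verifying the signs using the orientation data sets of Section~\ref{sec:or2} is the delicate step. Once this is handled and combined with the exponential-decay-based compactifications of Section~\ref{sec:AdmPer}, all broken configurations are accounted for, $\check{\partial}^2=0$ and the analogous identities for $\check m_{01}$ and its homotopies go through, and one concludes that $\hmto(Y,\s,\q)$ with its $\zz[U]$-action is independent of all auxiliary choices.
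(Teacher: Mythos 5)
The paper does not prove this theorem---it is stated and cited as Theorem~23.1.5 of \cite{KMbook}, and the proof resides in Chapters~23--26 of that book rather than in the present monograph. So there is no in-paper argument for you to be measured against; what you have sketched is the standard continuation-map strategy, and in its broad outline it matches what Kronheimer--Mrowka actually do.

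That said, a few of your shortcuts would not survive contact with the details. The set of admissible perturbations is a residual subset of a large Banach space of (very) tame perturbations, not an affine subspace, so ``take $\q_t = (1-t)\q_0 + t\q_1$ plus a small generic correction'' is only the first-order idea; one needs to work inside a parameterized Banach space of perturbations on the cylinder and prove a parameterized transversality theorem, since for a generic one-parameter family the reducible stationary points can become degenerate at isolated $t$ and the set of stationary points can change. More significantly, Kronheimer--Mrowka do not prove invariance by a naive homotopy-of-homotopies argument alone: they construct the continuation maps as a special case of the \emph{cobordism maps} on $\hmto$ (associated to the cylinder $[0,1]\times Y$ equipped with interpolating data), and the chain-homotopy identities $\check m_{01}\circ\check m_{10}\simeq\operatorname{id}$, independence of path, and compatibility with the $U$-action are all subsumed in the composition law for cobordism maps, whose proof requires the full boundary-obstructed gluing theory of \cite[Chapter~26]{KMbook} together with careful orientation conventions. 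Your outline correctly identifies the boundary-obstructed strata and the off-diagonal terms involving $\bar\partial^s_u$ as the delicate point, but it understates how much of the structural work (codimension-one faces of compactified parameterized moduli spaces, the $\delta$-structure on these spaces, and the compatibility of orientation data sets under concatenation) is needed to make ``all broken configurations are accounted for'' into a theorem rather than a hope. If you intend to reproduce this proof rather than cite it, you would need to import essentially all of \cite[Chapters~24--26]{KMbook}.
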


Therefore, we just use the notation $\hmto(Y,\spinc)$.  From here we can easily define Bloom's variant of monopole Floer homology, $\hmtilde$.  

\begin{definition}[\cite{Bloom}]
The {\em tilde-flavor of monopole Floer homology}, $\hmtilde(Y,\spinc)$, is the homology of the mapping cone of the map $\check{m}$ on $\cmto(Y,\spinc,\q)$.  
\end{definition}

\section{Gradings}\label{sec:modifications}
Recall that for two stationary points $\x$ and $\y$, the relative homological grading between $[\x]$ and $[\y]$ is defined by
\[
\gr([\x], [\y]) = \ind Q_{\gamma},
\]
where $\gamma \in \C^{\tau}_k(\x, \y)$ is any path, and $Q_{\gamma}$ is the operator from \eqref{eq:Qgamma}. (See \cite[Definition 14.4.4]{KMbook}.) 

Suppose that $[\x]$ and $[\y]$ are reducibles with the same blow-down projection $(a, 0)$, and correspond to eigenvalues $\mu$ and $\nu$ of $D_{\q, a}$ with $\mu \geq \nu$. Then, an alternate formula for the relative grading is given in \cite[Corollary 14.6.2]{KMbook}:
\begin{equation}
\label{eq:GradingRed}
 \gr([\x], [\y]) = \begin{cases} 
2i(\mu, \nu) & \text{if $\mu$ and $\nu$ have the same sign,} \\
2i(\mu, \nu)-1 &\text{otherwise.}
 \end{cases}
 \end{equation}
where $i(\mu, \nu)$ denotes the number of eigenvalues in the interval $(\nu, \mu].$ 

There is also an absolute rational grading $\gr^{\Q}$ defined in \cite[Section 28.3]{KMbook}. Given a cobordism $W$ from the round sphere $S^3$ to $Y$, we have
$$ \gr^{\Q}([x]) := - \gr_z([x_0], W, [x]) + \frac{c_1(\t)^2 - \sigma(W)}{4} - \iota(W),$$
where: $[x_0]$ is the reducible on $S^3$ with the lowest positive eigenvalue; $\t$ is a Spin$^c$ structure on $W$ that restricts to $\s$ on $Y$; $\gr_z([x_0], W, [x])$ is the relative grading between $x_0$ and $x$ for the monopole map associated to $(W, \t)$, as in \cite[Section 25]{KMbook}; the subscript $z$ refers to a connected component of the configuration space of $W$, and in our case $z$ is uniquely determined by the Spin$^c$ structure $\t$; and $\iota(W) = (\chi(W) + \sigma(W))/2=b^+(W) - b_1(W)$. 

For future reference, let us rephrase this definition in terms of a four-manifold $X$ with boundary $Y$. We can then take $W$ to be the complement of a four-ball in $X$. Using additivity of $\gr$ and a standard calculation on $B^4$ we can replace $W$ with $X$ and write
\begin{equation}
\label{eq:grQx}
 \gr^{\Q}([x]) = -\gr_z(X, [x]) + \frac{c_1(\t)^2 - \sigma(X)}{4} - b^+(X) + b_1(X) -1,
 \end{equation}
where $\gr_z(X,[x])$ is the expected dimension of the moduli space $M_z(X^*; [x])$ defined in \cite[Section 24]{KMbook}. Here, $X^*$ refers to $X$ after attaching to it a cylindrical end of the form $[0, \infty) \times Y$.

\chapter{Reduction to the Coulomb gauge}\label{sec:coulombgauge}

In this chapter, we recast the constructions from Chapter~\ref{sec:HM} entirely in the global Coulomb slice
\[
W =  \ker d^* \oplus \Gamma(\Spin) \subset \sC(Y).
\]
 This is needed in order to make contact with the construction of the Seiberg-Witten Floer spectrum from Chapter~\ref{sec:spectrum}, for which we used finite dimensional  approximation on $W$.  Throughout this section $k$ will denote a fixed integer at least 2.  

\section{Bundle decompositions and projections}
\label{sec:decompositions}
Recall that in Section~\ref{sec:coulombs} we introduced the global Coulomb slice $W$, the global Coulomb projection $\Pi^{\gCoul}$, the infinitesimal global Coulomb projection $\Pi^{\gCoul}_*$, the local Coulomb slice $\K$, the (infinitesimal) local Coulomb projection $\Pi^{\lCoul}$, the enlarged local Coluomb slice $\Ke$, the enlarged local Coulomb projection $\Pi^{\elCoul}$, and the metric $\tilde g$ on $W$. Further, in Section~\ref{sec:SWblowup} we introduced the bundle decomposition of $\T_k$ into the tangents to the gauge orbits $\J_k$ and the local Coulomb slice $\K_k$; we also mentioned a similar decomposition in the blow-up, $\T_k^{\sigma} = \J^{\sigma}_{k} \oplus \K^{\sigma}_{k}.$

In this section we explore these constructions further. In particular, we extend the gauge projections to the blow-up, and describe a few bundle decompositions that are related to global Coulomb gauge. 

For $j \leq k$, recall that $\T^{\gCoul}_{j}$ is the trivial vector bundle with fiber $W_{j}$ over $W_{k}$. Unlike what we did for $\T_{j}$ in Section~\ref{sec:tame}, in the case of $\T^{\gCoul}_j$ there is no need to define the Sobolev norms using the varying covariant derivatives $\nabla_{A_0+a}$. The Sobolev norm on $W_j$ defined by $\nabla = \nabla_{A_0}$ is invariant under the residual gauge action by $S^1$, and this norm is what we shall use on each tangent space.  Thus, $\T^{\gCoul}_j$ is exactly the trivial normed bundle $W_k \times W_j$. Again, we keep the notation $\T^{\gCoul}_j$ (rather than just $W_k \times W_j$) to emphasize the bundle structure.  Note that these two Sobolev norms are equivalent in the following strong sense.  For $x = (a,\phi) \in W_k$ with $\| a \|_{L^2_k} < R$, there exists a constant $C(R)$ such that 
$$
\frac{1}{C(R)} \| v \|_{L^2_j} \leq \| v \|_{L^2_{j,a}} \leq C(R) \| v \|_{L^2_j}
$$
for all $v \in \T_{j,x}$ and $j \leq k$.  Therefore, these Sobolev norms will also be equivalent when working over paths of three-dimensional configurations, and so even in the four-dimensional setting, we are content to use $\nabla$. 

For $x = (a, \phi) \in W_k$, we let $\J^{\gCoul}_x \subset \T^{\gCoul}_k$ be the (real) span of $(0, i\phi)$. This is the tangent space to the $S^1$-orbit at $x$. Since $\J^{\gCoul}_x$ is canonically isomorphic to $\mathbb{R}$, there is no need for a subscript $j$ for Sobolev regularity.

The intersection of a gauge orbit with global Coulomb gauge is depicted schematically in Figure~\ref{fig:orbits}.

\begin{figure}
\begin{center}
\input{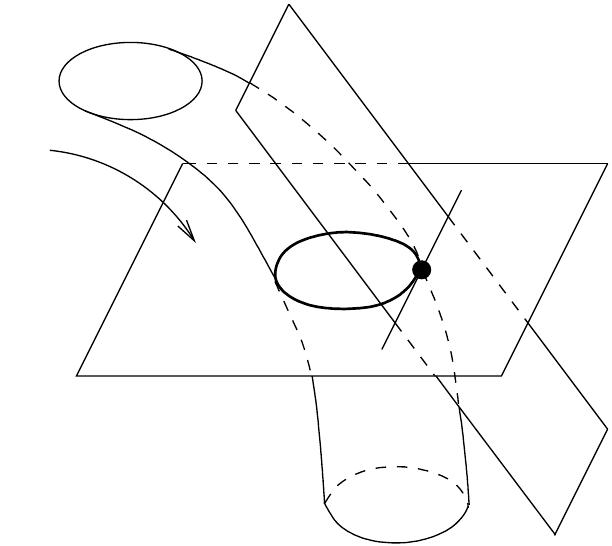_t}
\end{center}
\caption {We show the gauge orbit $\G x$ intersecting $W$ in a circle. Also shown are the tangent spaces to the orbits, and the global Coulomb projection $\Pi^{\gCoul}$.} 
\label{fig:orbits}
\end{figure}

\begin{lemma}\label{lem:gaugeprojections}
Fix $j \geq 0$.  For each $x =(a,\phi) \in W_k$ with $\phi \neq 0$, we have $\J_{j,x} \cap \T^{\gCoul}_{j,x} = (\Pi^{\gCoul}_*)_x(\J_{j,x}) = \J_x^{\gCoul}$.
\end{lemma}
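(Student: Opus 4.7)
The plan is to verify the three equalities by direct computation, using the explicit form of infinitesimal gauge vectors and formula \eqref{eq:icp}. Recall that the infinitesimal gauge action at $x=(a,\phi)$ sends $f \in L^2_{j+1}(Y; i\R)$ to $(-df, f\phi)$, so
$$\J_{j,x} = \{(-df, f\phi) \mid f \in L^2_{j+1}(Y; i\R)\},$$
while $\T^{\gCoul}_{j,x} \subset T_x\C_j(Y)$ is cut out by the single equation $d^* b = 0$ on the form component. The key computational input is the identity $G\Delta f = f - \mu_Y(f)$, where $G$ is the Green's operator for the (nonnegative) Laplacian $\Delta=d^*d$ on functions and $\mu_Y(f)$ is the average defined in \eqref{eq:muy}; this holds because $G$ inverts $\Delta$ on the orthogonal complement of the constants.

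First I would show $\J_{j,x} \cap \T^{\gCoul}_{j,x} = \J_x^{\gCoul}$. An element $(-df, f\phi)$ lies in $\T^{\gCoul}_{j,x}$ iff $d^*(-df) = -\Delta f = 0$; since $Y$ is connected, $f$ is constant, say $f = i\alpha$ with $\alpha \in \R$. The corresponding vector is $(0, i\alpha \phi) = \alpha \cdot (0, i\phi) \in \J_x^{\gCoul}$. Conversely, every element $\alpha \cdot (0,i\phi)$ arises from the constant $f = i\alpha$, which clearly also lies in $\T^{\gCoul}_{j,x}$.

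Next I would compute $(\Pi^{\gCoul}_*)_x(\J_{j,x})$. For $(-df, f\phi) \in \J_{j,x}$, formula \eqref{eq:icp} gives
$$(\Pi^{\gCoul}_*)_x(-df, f\phi) = \bigl( \pi(-df),\, f\phi + Gd^*(-df) \phi \bigr).$$
Since $\pi = \id - dGd^*$, and using $G\Delta f = f - \mu_Y(f)$, the form component becomes $-df + d(f - \mu_Y(f)) = 0$, and the spinor component becomes $f\phi - (f - \mu_Y(f))\phi = \mu_Y(f) \phi$. Thus
$$(\Pi^{\gCoul}_*)_x(-df, f\phi) = \bigl(0,\, \mu_Y(f)\phi\bigr) \in \J_x^{\gCoul},$$
so the image is contained in $\J_x^{\gCoul}$. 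Equality follows since taking $f = i\alpha$ constant produces all of $\J_x^{\gCoul}$ already, as observed above. Combining the two displayed identities finishes the proof. I expect no serious obstacle here — the only subtle point is keeping track of $\mu_Y$ when applying $G$ to $\Delta f$, since $G$ lands in the zero-average subspace.
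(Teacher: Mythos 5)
Your proof is correct and follows the same approach as the paper: write infinitesimal gauge vectors as $(-df, f\phi)$, use harmonicity on the connected $Y$ for the intersection, and apply formula \eqref{eq:icp} for the image. You make the key identity $Gd^*df = f - \mu_Y(f)$ explicit to carry out the substitution $\xi = Gd^*b$, whereas the paper simply observes that $\zeta + \xi$ must be constant, but these are the same computation.
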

\begin{proof}
Let $x = (a,\phi) \in W_{k-1}$.  We recall from \cite[Page 140]{KMbook} that 
\[
\J_{j,x} = \{(-d\zeta, \zeta \phi) \mid \zeta \in L^2_{j+1}(Y;i\mathbb{R})\}.   
\]
First, we compute $\J_{j,x} \cap \T^{\gCoul}_{j,x}$.  Suppose $(b,\psi) \in \J_{j,x} \cap \T^{\gCoul}_{j,x}$.  If we write $(b,\psi) = (-d\zeta, \zeta\phi)$, we have $d^*d\zeta = 0$.  Since $Y$ is a rational homology sphere, this implies that $\zeta$ is constant.  Therefore, 
\[
\J_{j,x} \cap \T^{\gCoul}_{j,x} = \{(0,it\phi) \mid t \in \mathbb{R} \} = \J_x^{\gCoul}.  
\]
Now, we study $(\Pi^{\gCoul}_*)_x(\J_{j,x})$.  
Let $(-d\zeta,\zeta\phi) \in \J_{j,x}$. Recall that 
\[
(\Pi^{\gCoul}_*)_x(-d\zeta,\zeta\phi) = (-d\zeta - d \xi, \zeta \phi +  \xi \phi),
\]
where $\xi:Y \to i\mathbb{R}$ satisfies $d^*(-d\zeta - d\xi) = 0$ and $\int_Y \xi = 0$.  Again, since $Y$ is a rational homology sphere, this implies that $\zeta + \xi$ is a constant $it$ for some $t \in \mathbb{R}$.  Therefore, we obtain
\[
(\Pi^{\gCoul}_*)_x(-d\zeta,\zeta\phi) = (0,it\phi).   
\]  
This implies that $
(\Pi^{\gCoul}_*)_x(\J_{j,x}) = \J_x^{\gCoul}$, as desired.
\end{proof}

Analogous to the splitting of $\mathcal{T}_{j}$ as $\J_j \oplus \K_j$, there exists a decomposition 
\begin{equation}
\label{eq:JKcoulomb}
\mathcal{T}^{\gCoul}_{j} =\J^{\gCoul} \oplus \K^{\agCoul}_{j}.
\end{equation}
Here, $\K^{\agCoul}_{j}$ is defined to be the orthogonal complement of $\J^{\gCoul}$ with respect to the $\tilde{g}$-metric on $W$.  For each $x = (a,\phi) \in W_k$ and $j \leq k$, this space can be written explicitly as 
\begin{align*}
\K^{\agCoul}_{j,x} &= \{(b,\psi) \in \T_{j,x}^{\gCoul}  \mid  \langle (0, i\phi), (b, \psi) \rangle_{\tilde g} = 0 \} \\
&= \{(b,\psi) \in \T_{j, x}  \mid d^*b = 0,  \langle (0, i\phi), (b, \psi) \rangle_{\tilde g}= 0\}.
\end{align*}
We call $\K^{\agCoul}_{j,x}$ the {\em anticircular global Coulomb slice} at a point $x=(a,\phi) \in W_k$.  Observe that, if $d^*b=0$, then the vector $(b, 0)$ is $L^2$-perpendicular to all $(-d\zeta, \zeta \phi) \in \J_{j,x}$, and hence lies in the local Coulomb slice $\K_{j,x} \subset \Ke_{j,x}$. Using the formula \eqref{eq:gtilde2} for the $\tilde g$-inner product, we get that
\begin{equation}
\label{eq:gtildezero}
  \langle (0, i\phi), (b, 0) \rangle_{\tilde g} =   \Re \langle (0, i\phi), \Pi^{\elCoul}(b, 0) \rangle_{L^2} = \Re \langle (0, i\phi), (b, 0) \rangle_{L^2} = 0. 
  \end{equation}

We deduce that the condition $\langle (0, i\phi), (b, \psi) \rangle_{\tilde g}= 0$ is equivalent to
$$ \langle (0, i\phi), (0, \psi) \rangle_{\tilde g}= 0.$$
For simplicity, we will write $ \langle  \psi_1, \psi_2 \rangle_{\tilde g}$ and $\|\psi\|^2_{\tilde g}$ for expressions of the form $ \langle (0, \psi_1), (0, \psi_2) \rangle_{\tilde g}$ and $\|(0, \psi) \|_{\tilde g}^2$. With this in mind, we can write the anticircular global Coulomb slice as
 \begin{align*}
\K^{\agCoul}_{j,x} &= \{(b,\psi) \in \T_{j,x}^{\gCoul}  \mid  \langle i\phi, \psi \rangle_{\tilde g} = 0 \} \\
&= \{(b,\psi) \in \T_{j, x}  \mid d^*b = 0,  \langle  i\phi, \psi \rangle_{\tilde g}= 0\}.
\end{align*}

We define 
$$\Pi^{\agCoul} = \Pi_{\K^{\agCoul}_j} \circ \Pi^{\gCoul}_* :\T_j \to \K_j^{\agCoul}$$ to be the composition of infinitesimal global Coulomb projection $\Pi^{\gCoul}_*: \T_j \to \T^{\gCoul}_j$ with  $\tilde g$ orthogonal projection onto $\K^{\agCoul}_j$. 
See Figure~\ref{fig:coulomb}.

\begin{figure}
\begin{center}
\input{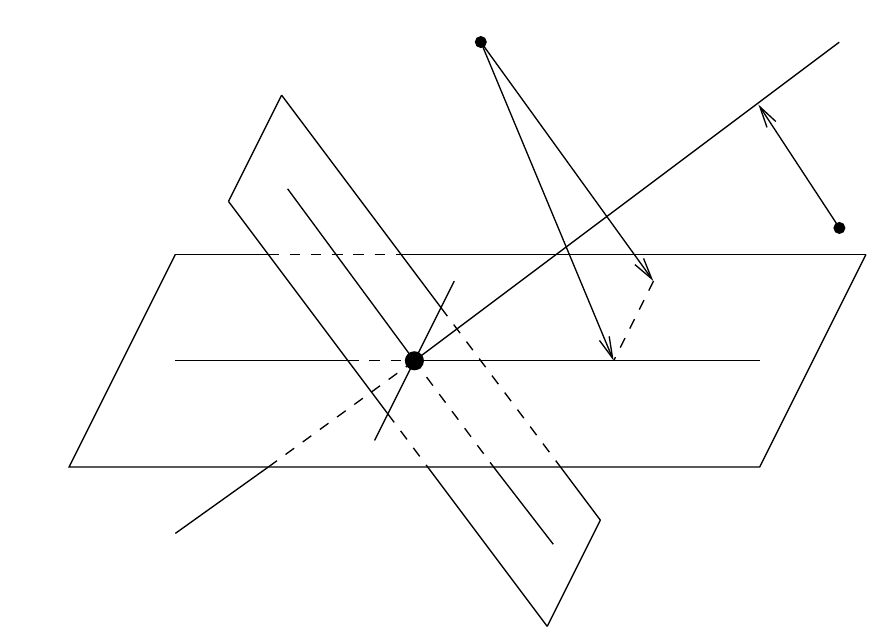_t}
\end{center}
\caption {Different Coulomb slices and projections inside the tangent space $\T_{j, x}$. We drop the subscripts $j$ and $x$ from notation for convenience.} 
\label{fig:coulomb}
\end{figure}

\begin{remark}
The anticircular global Coulomb slice $\K^{\agCoul}_{j, x}$ contains, but does not equal, the intersection $\K_{j,x} \cap \T_j^{\gCoul}$. Indeed, the latter consists of vectors $(b,\psi)$ satisfying $d^*b = 0$ and $\Re \langle i \phi, \psi \rangle = 0$ (pointwise).  These conditions imply 
$$\langle (0, i\phi), (b,\psi) \rangle_{\tilde{g}} = \Re \langle (0,i\phi), \Pi^{\elCoul}_x (b,\psi) \rangle_{L^2} = \Re \langle (0, i\phi), (b,\psi) \rangle_{L^2} = 0,$$ and thus $(b,\psi) \in \K^{\agCoul}_{j, x}$.  Here we are using that $\Pi^{\elCoul}$ is an $L^2$ projection, which can easily be verified.
\end{remark}

\begin{remark}
The subspaces $\J^{\gCoul}$ and $\K_j^{\agCoul}$ do not form Hilbert bundles over the whole of $W_k$, because $\J^{\gCoul}$ is smaller (and $\K_j^{\agCoul}$ larger) at reducibles, compared to irreducibles.
\end{remark}

Recall that on $W$ we have a natural metric $\tilde  g$, defined by measuring the $L^2$ norm of the enlarged local Coulomb projections; cf. Equation \eqref{eq:gtilde}. At this point it is helpful to extend the $\tilde g$-inner product on $\T_j^{\gCoul}$ to the bigger bundle $\T_j$ (restricted to $W_k$). Consider the bundle decomposition over $W_k$,
\begin{equation}
\label{eq:decomposetilde1}
\T_j=\Jo_j \oplus \T^{\gCoul}_j, 
\end{equation}
where $\Jo_j$ is the tangent to the orbit of the normalized gauge group $\Go$; that is, $\Jo_j \subset \J_j$ consists of vectors $(-d\zeta, \zeta \phi)$ with $\int_Y \zeta=0$. Given $x \in W_k$ and vectors $v, w \in \T_{j,x}$, decompose them according to \eqref{eq:decomposetilde1} as
\begin{equation}
\label{eq:sums}
 v = v^{\circ} + v^{\gCoul}, \ \ \  w = w^{\circ} + w^{\gCoul},
 \end{equation}
where $v^{\gCoul} = (\Pi^{\gCoul}_*)_x(v)$ and $w^{\gCoul} = (\Pi^{\gCoul}_*)_x(w)$. Then, set
\begin{equation}
\label{eq:gtildefull}
\langle v, w\rangle_{\tilde g} = \Re \langle v^{\circ}, w^{\circ} \rangle_{L^2} + \langle v^{{\gCoul}},  w^{{\gCoul}} \rangle_{\tilde g},
\end{equation}
where in the last inner product we use the formula \eqref{eq:gtilde}. With this definition, the direct sum decompositions \eqref{eq:decomposetilde1} and
\begin{equation}
\label{eq:decomposetilde2}
\T_j=\J_j \oplus \K^{\agCoul}_j
\end{equation}
are orthogonal for $\tilde g$. Moreover, we can think of $\Pi^{\gCoul}_*$ and $\Pi^{\agCoul}$ as the $\tilde g$-orthogonal projections from $\T_j$ to $\T^{\gCoul}_j$ and $\K^{\agCoul}_j$, respectively.

Next, we consider the blow-up $\C^{\sigma}_k(Y)$ from Section~\ref{sec:SWblowup}. This has tangent bundle $\T^\sigma$ with Sobolev completions $\T^{\sigma}_j$. The tangents to the gauge orbits at $x=(a, s, \phi)$ form the subspace $ \J^{\sigma}_{x, j}$, consisting of vectors of the form $(-d\zeta, 0, \zeta \phi)$. We also have a local Coulomb slice in the blow-up, given by the following formula from \cite[Definition 9.3.6]{KMbook}:
\[
\K^\sigma_{j,x} = \{(b,t,\psi) \in \mathcal{T}^\sigma_{j,x} \mid  -d^*b + i s^2 \Re \langle i \phi, \psi \rangle = 0, \Re \langle i \phi, \psi \rangle_{L^2} = 0 \}.   
\]
The local Coulomb projection on the blow-up, $\Pi^{\lCoul,\sigma}_{(a,s,\phi)}: \T_{j,x}^\sigma\to \K_{j,x}^\sigma$, is defined to be
\begin{equation}
\label{eq:piLCsigma}
\Pi^{\lCoul,\sigma}_{(a,s,\phi)}(b,r,\psi) = (b-d\zeta,r, \psi + \zeta \phi),
\end{equation}
where $\zeta$ is such that $-d^*(b-d\zeta) + is^2 \Re \langle i \phi, \psi +  \zeta \phi \rangle = 0$ and $\Re \langle i \phi, \psi+\zeta \phi \rangle_{L^2} = 0.$

On the blow-up $\C^{\sigma}(Y)$ there is a natural $L^2$ metric, obtained from the inclusion $\T^{\sigma}_j \subset L^2_j(Y; iT^*Y) \oplus \R \oplus L^2_j(Y; \Spin)$. Note that the direct sum decomposition
$$ \T_j^{\sigma} = \J_j^\sigma \oplus \K_j^\sigma$$
is not orthogonal with respect to this $L^2$ metric. Rather, on the irreducible locus, the decomposition is  orthogonal with respect to the pull-back of the $L^2$ metric on $\C(Y)$. On the other hand, this pull-back does not produce a non-degenerate metric on the whole $\C^{\sigma}(Y)$.

Now consider the residual gauge action of $S^1$ on $W$. It is convenient to blow up $W$ at its fixed locus, just as we did with the configuration space $\C(Y)$. The blow-up of $W$ is the space
$$ W^{\sigma}=\{(a, s, \phi) \mid d^*a =0, s \geq 0, \| \phi\|_{L^2}=1\} \subset \C^{\sigma}(Y).$$
There is a natural identification of $W^\sigma/S^1$ with $\B^\sigma(Y)$, defined in Section~\ref{sec:SWblowup}. We have Sobolev completions $W^{\sigma}_k$ and tangent bundles $\T^{\gCoul, \sigma}_j$ for $j \leq k$.

The global Coulomb projection $\Pi^{\gCoul}: \C(Y) \to W$ induces a global Coulomb projection between the blow-ups:
\begin{equation}
\label{eq:piCS}
 \Pi^{\gCoul, \sigma}: \C^{\sigma}(Y) \to W^{\sigma}, \ \ (a, s, \phi) \mapsto (a-df, s, e^{f}\phi),
 \end{equation}
where $f = Gd^*a$.  Furthermore, if $x = (a,s,\phi) \in W^{\sigma}$, then the differential of $\Pi^{\gCoul,\sigma}$ is given by 
\begin{equation}
\label{eq:piCoulSigma}
(\Pi^{\gCoul, \sigma}_*)_x : \T_x^\sigma \to \T_x^{\gCoul,\sigma}, \ \ ((a,s, \phi),(b,r,\psi)) \mapsto (\pi(b),r,\psi + (Gd^*b)\phi). 
\end{equation}

At a point $x=(a, s, \phi) \in W^{\sigma}$, we let $\J_x^{\gCoul, \sigma} = \R\langle (0, 0, i\phi) \rangle$ be the tangent to the residual $S^1$ gauge orbit. Lemma~\ref{lem:gaugeprojections} can be easily adapted to show that:
\begin{equation}\label{eq:PigCJ}
\J^{\sigma}_{j,x} \cap \T^{\gCoul, \sigma}_{j,x} = (\Pi^{\gCoul, \sigma}_*)_x(\J^{\sigma}_{j,x}) = \J_x^{\gCoul, \sigma}.
\end{equation}

Next, we define the anticircular global Coulomb slice in the blow-up.  For $x = (a,s,\phi) \in W^\sigma_k$, let 
\begin{align}
\label{eq:Kagcsigma}
\K^{\agCoul,\sigma}_{j,x} &= \{(b,r,\psi)  \in \mathcal{T}_{j,x}^{\gCoul,\sigma} \mid  \langle i\phi, \psi \rangle_{\tilde{g}} = 0 \} \\  
&= \{(b,r,\psi) \in \mathcal{T}_{j,x}^\sigma \mid d^*b = 0, \langle i\phi, \psi \rangle_{\tilde{g}} = 0\}, \notag
\end{align}
Note that the condition $(b,r,\psi) \in \mathcal{T}_{j,x}^\sigma$ already implies that $\Re \langle \phi, \psi \rangle_{L^2} = 0$. Furthermore, here and later, by $\langle \psi, i\phi \rangle_{\tilde{g}}$ we implicitly mean that the inner product is taken in the blow-down projection; that is, we consider 
$\langle i\phi, \psi \rangle_{\tilde{g}(a,s\phi)}.$ It is useful to compare $\langle i\phi, \psi \rangle_{\tilde{g}}$ with the result of taking the $\tilde{g}$-inner product of $(0,i\phi)$ with the image of $(b,r,\psi)$ in the blow-down: $(b,r\phi + s\psi)$.  This yields
\begin{equation}\label{eq:tgs-blowdown-inner}
\langle (b,r\phi + s\psi), (0, i\phi) \rangle_{\tilde{g}} = s \langle (0,\psi), (0,i\phi) \rangle_{\tilde{g}} + r \langle (0,\phi), (0, i\phi) \rangle_{\tilde{g}} = s \langle \psi, i\phi \rangle_{\tilde{g}}, 
\end{equation}
since $(0,\phi) \in \K_{(a,s\phi)}$ and $\Re \langle \phi, i \phi \rangle_{L^2} = 0$.  Note that if we tried to define $\K^{\agCoul,\sigma}_{(a,s,\phi)}$ using the $\tilde{g}$-inner product of $(0,i\phi)$ with $(b,r\phi + s\psi)$, we would not obtain a bundle, since this would be larger at reducibles.  On the other hand, for irreducibles, we have the following.  

\begin{lemma}
At every irreducible $x=(a,s,\phi) \in W^{\sigma}_k$, the infinitesimal blow-down projection 
$$ (b, r, \psi) \mapsto (b, r\phi + s\psi)$$
induces a linear isomorphism from $\K^{\agCoul,\sigma}_{j, (a, s, \phi)}$ to $\K^{\agCoul}_{j,(a, s\phi)}$. 
\end{lemma}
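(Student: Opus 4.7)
The plan is to verify directly that the map $\Phi(b,r,\psi) = (b, r\phi + s\psi)$ is well-defined, injective, and surjective between the two specified slices; bijectivity then gives the linear isomorphism (continuity is automatic since $\Phi$ is linear in $(b,r,\psi)$ with multiplication by fixed $\phi$, $s$).

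For well-definedness, the condition $d^*b=0$ passes through trivially. The nontrivial condition to check is that $\langle i\phi, r\phi + s\psi\rangle_{\tilde g}=0$. But this is precisely the content of equation \eqref{eq:tgs-blowdown-inner}, which was already established just above the lemma: it shows
\[
\langle (b,r\phi+s\psi),(0,i\phi)\rangle_{\tilde g} = s\langle \psi, i\phi\rangle_{\tilde g},
\]
and by the defining condition of $\K^{\agCoul,\sigma}_{j,(a,s,\phi)}$ in \eqref{eq:Kagcsigma}, the right-hand side vanishes. So $\Phi$ lands in $\K^{\agCoul}_{j,(a,s\phi)}$.

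For injectivity, suppose $\Phi(b,r,\psi)=(0,0)$. Then $b=0$ and $r\phi+s\psi=0$; since $s>0$, this gives $\psi = -(r/s)\phi$. The tangency condition $(b,r,\psi)\in \T^\sigma_x$ requires $\Re\langle \phi,\psi\rangle_{L^2}=0$, which forces $-(r/s)\|\phi\|^2_{L^2}=0$, hence $r=0$ (recall $\|\phi\|_{L^2}=1$), and thus $\psi=0$.

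For surjectivity, given $(b,\psi')\in \K^{\agCoul}_{j,(a,s\phi)}$, define
\[
r := \Re\langle \phi,\psi'\rangle_{L^2}, \qquad \psi := \tfrac{1}{s}(\psi' - r\phi).
\]
Then $r\phi+s\psi=\psi'$ by construction, $d^*b=0$ is unchanged, and a direct computation shows $\Re\langle\phi,\psi\rangle_{L^2}=0$, so $(b,r,\psi)\in \T^{\gCoul,\sigma}_{j,x}$. It remains to check $\langle i\phi,\psi\rangle_{\tilde g}=0$. Applying \eqref{eq:tgs-blowdown-inner} in reverse to the triple $(b,r,\psi)$ we constructed gives
\[
s\langle i\phi,\psi\rangle_{\tilde g} = \langle (b,\psi'),(0,i\phi)\rangle_{\tilde g} = 0,
\]
where the last equality uses $(b,\psi')\in\K^{\agCoul}_{j,(a,s\phi)}$. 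Since $s>0$, this gives $\langle i\phi,\psi\rangle_{\tilde g}=0$, so $(b,r,\psi)\in \K^{\agCoul,\sigma}_{j,x}$ and $\Phi(b,r,\psi)=(b,\psi')$.

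Since the only substantive input is \eqref{eq:tgs-blowdown-inner}, which has already been justified in the text using the fact that $(0,\phi)\in\K_{(a,s\phi)}$ and $\Re\langle\phi,i\phi\rangle_{L^2}=0$, there is no serious obstacle; the argument is essentially bookkeeping once that identity is in hand. The only point deserving care is ensuring that the scalar $r$ recovered in the surjectivity step is precisely the one forced by the tangency constraint, which is why the case $s>0$ (irreducibility) is essential.
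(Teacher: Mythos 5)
Your proof is correct and takes essentially the same approach as the paper's one-line argument, which says ``this follows from \eqref{eq:tgs-blowdown-inner}, since at an irreducible we have $s\neq 0$''; you have simply unwound the injectivity and surjectivity bookkeeping that the paper leaves implicit.
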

\begin{proof}
This follows from \eqref{eq:tgs-blowdown-inner}, since at an irreducible we have $s \neq 0$.  
\begin{comment}
This boils down to showing that, assuming $d^*b=0$, the condition $\langle i\phi, \psi \rangle_{\tilde{g}} = 0$ is equivalent to
$$\langle (0, is\phi), (b,r\phi + s\psi) \rangle_{\tilde{g}} = 0.$$
Indeed, from \eqref{eq:gtildezero} we get that $\langle (0, is\phi), (b,0) \rangle_{\tilde{g}} = s\langle (0, i\phi), (b,0) \rangle_{\tilde{g}} = 0$. Similarly, we have $(0, r\phi) \in \Ke_{j, (a,s\phi)}$ and $\Re \langle (0, i\phi), (0, r\phi) \rangle_{L^2}=0$,  so we also get $\langle (0, is\phi), (0, r\phi) \rangle_{\tilde g}=0$. Since we are at an irreducible $(s \neq 0)$, the conclusion follows.
\end{comment}
\end{proof}

We have direct sum decompositions
\begin{equation}\label{eq:TsigmaJsigma}
\T_{j,x}^{\sigma} = \J_{j,x}^{\sigma} \oplus \K_{j,x}^{\agCoul, \sigma}
\end{equation}
and
\begin{equation}
 \T_{j,x}^{\gCoul, \sigma} = \J_x^{\gCoul, \sigma} \oplus \K_{j,x}^{\agCoul,\sigma}.
 \end{equation}

We define the anticircular global Coulomb projection on the blow-up, 
$$\Pi^{\agCoul,\sigma}_x: \T_{j,x}^{\sigma} \to \K_{j,x}^{\agCoul, \sigma},$$ 
to be the projection with kernel $\J_{j,x}^{\sigma}$. It can be viewed as the composition of the map $(\Pi^{\gCoul, \sigma}_*)_x$ from \eqref{eq:piCoulSigma} with the projection with kernel $\J_x^{\gCoul, \sigma}$. This last projection, which is the restriction of $\Pi^{\agCoul,\sigma}_x$ to $ \T_{j,x}^{\gCoul, \sigma}$, can be written explicitly as
\begin{equation}
\label{eq:antiproj}
(b,r, \psi) \mapsto (b,r,\psi) - \frac{\langle  i\phi, \psi \rangle_{\tilde{g}}}{\| i\phi\|_{\tilde g}^2} \cdot (0,0,i\phi).
\end{equation}

%We consider the anticircular global Coulomb projection on the blow-up, 
%$$\Pi^{\agCoul,\sigma}_x: \T_{j,x}^{\sigma} \to \K_{j,x}^{\agCoul, \sigma},$$ given by the composition of the map $(\Pi^{\gCoul, \sigma}_*)_x$ from \eqref{eq:piCoulSigma}  with $\tilde{g}^\sigma$-orthogonal projection onto $\K_{j,x}^{\agCoul,\sigma}$. Observe that all the vectors in $\J_{x}^{\gCoul, \sigma}$ have trivial $1$-form component; because of this, we have that $\J_x^{\gCoul, \sigma}$ is $\tilde{g}^{\sigma}$-orthogonal to $\K_{j,x}^{\agCoul,\sigma}$. In view of \eqref{eq:PigCJ}, this implies that the kernel of $\Pi^{\agCoul, \sigma}$ is $\J_{j,x}^{\sigma}$. Thus, we obtain direct sum decompositions
%\begin{equation}\label{eq:TsigmaJsigma}
%\T_{j,x}^{\sigma} = \J_{j,x}^{\sigma} \oplus \K_{j,x}^{\agCoul, \sigma}
%\end{equation}
%and
%\begin{equation}
% \T_{j,x}^{\gCoul, \sigma} = \J_x^{\gCoul, \sigma} \oplus \K_{j,x}^{\agCoul,\sigma}.
% \end{equation}

%\begin{remark}
%Whereas $\J^\sigma$ and $\K^\sigma$ are not orthogonal with respect to the $L^2$ metric on $\C^\sigma(Y)$, the situation is better in global Coulomb gauge: As mentioned above, we have that $\J^{\gCoul,\sigma}$ is the $\tilde g, \sigma$-orthogonal complement of $\K^{\agCoul,\sigma}$. This implies that our definition of $\Pi^{\agCoul,\sigma}$ (involving a $\tilde{g}^\sigma$-projection) exactly produces the map induced by $\Pi^{\agCoul}$ on the blow-up. \end{remark}  

From \eqref{eq:TsigmaJsigma} we see that the anticircular global Coulomb slice is a true ``infinitesimal slice'' to the whole gauge group in $\C^\sigma(Y)$; i.e., a complement to the tangent space to the gauge orbits. Another such complement is the local Coulomb slice in the blow-up, $\K^{\sigma}_j$. The two slices are related as follows:

\begin{lemma}
\label{lem:bijection}
Let $x = (a, s, \phi) \in W^{\sigma}_k$. Then:

$(a)$ The local Coulomb projection $\Pi^{\lCoul, \sigma}_x$ induces a linear isomorphism between the slices $ \K_{j,x}^{\agCoul, \sigma}$ and $\K^{\sigma}_{j,x}$. Its inverse is the anticircular global Coulomb projection $\Pi^{\agCoul,\sigma}|_{\K^{\sigma}_{j,x}}$.  

$(b)$ If $s=0$, then $\K^{\agCoul,\sigma}_{j,x} = \K^\sigma_{j,x}$, and $\Pi^{\agCoul, \sigma}_x|_{\K^{\sigma}_{j,x}} = (\Pi^{\gCoul,\sigma}_*)_x|_{\K^{\sigma}_{j, x}} : \K^\sigma_{j,x} \to \K^{\agCoul,\sigma}_{j,x}$ is the identity.

$(c)$ For any $x$, we have that $\Pi^{\agCoul, \sigma}_x|_{\K^{\sigma}_{j,x}} = (\Pi^{\gCoul,\sigma}_*)_x|_{\K^{\sigma}_{j, x}}$.  
\end{lemma}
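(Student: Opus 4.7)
The plan is to handle the three parts in sequence; (a) and (b) are essentially formal, while (c) reduces to one explicit PDE computation.

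For part (a), I would first observe that $\K^{\agCoul,\sigma}_{j,x}$ and $\K^{\sigma}_{j,x}$ are both linear complements to $\J^{\sigma}_{j,x}$ inside $\T^{\sigma}_{j,x}$---the former by~\eqref{eq:TsigmaJsigma}, the latter via the blow-up splitting~\eqref{eq:BlowUpDecompose}. The formula~\eqref{eq:piLCsigma} shows that $\Pi^{\lCoul,\sigma}_x$ modifies its input by the element $(-d\zeta,0,\zeta\phi)\in\J^{\sigma}_{j,x}$ so as to land in $\K^{\sigma}_{j,x}$, and is therefore the linear projection onto $\K^{\sigma}_{j,x}$ with kernel $\J^{\sigma}_{j,x}$; by definition $\Pi^{\agCoul,\sigma}_x$ is the projection with the same kernel onto $\K^{\agCoul,\sigma}_{j,x}$. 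Since two projections sharing a common kernel restrict to mutually inverse linear isomorphisms between their respective images, part (a) follows.

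For part (b), at $s=0$ the defining condition of $\K^{\sigma}_{j,x}$ collapses to $\{(b,r,\psi)\colon d^*b=0,\ \Re\langle i\phi,\psi\rangle_{L^2}=0\}$. Lemma~\ref{lem:elcUniqueness} applied at the reducible blow-down $(a,0)$ forces $\zeta=0$ in the enlarged Coulomb projection, so $\Pi^{\elCoul}_{(a,0)}(0,\psi)=(0,\psi)$, and hence by~\eqref{eq:gtilde2} the inner product $\langle\cdot,\cdot\rangle_{\tilde g(a,0)}$ reduces to $\Re\langle\cdot,\cdot\rangle_{L^2}$ on spinors; this gives $\K^{\agCoul,\sigma}_{j,x}=\K^{\sigma}_{j,x}$. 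For the projection equality, $d^*b=0$ makes $Gd^*b=0$, so~\eqref{eq:piCoulSigma} yields $(\Pi^{\gCoul,\sigma}_*)_x|_{\K^{\sigma}_{j,x}}=\id$, and $\Pi^{\agCoul,\sigma}_x$ acts as the identity on its image $\K^{\agCoul,\sigma}_{j,x}=\K^{\sigma}_{j,x}$ by construction.

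For part (c), I would write $(\Pi^{\gCoul,\sigma}_*)_x(b,r,\psi)=(b-d\zeta_{0},r,\psi+\zeta_{0}\phi)$ with $\zeta_{0}=Gd^*b$. Since the paragraph around~\eqref{eq:antiproj} identifies $\Pi^{\agCoul,\sigma}_x$ as $(\Pi^{\gCoul,\sigma}_*)_x$ postcomposed with the $\tilde g$-orthogonal projection $\T^{\gCoul,\sigma}_{j,x}\to\K^{\agCoul,\sigma}_{j,x}$ whose kernel is $\J^{\gCoul,\sigma}_{x}$, it suffices to verify that $(\Pi^{\gCoul,\sigma}_*)_x(b,r,\psi)$ already lies in $\K^{\agCoul,\sigma}_{j,x}$. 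The one-form condition $d^*(b-d\zeta_{0})=0$ is immediate from $\Delta G=\id$ on mean-zero functions.

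The hard part is verifying the spinor orthogonality $\langle i\phi,\psi+\zeta_{0}\phi\rangle_{\tilde g(a,s\phi)}=0$; this is the main obstacle and requires a direct PDE computation. The $\K^{\sigma}$-conditions give $d^*b=is^{2}h$ with $h=\Re\langle i\phi,\psi\rangle$ and $\int_Y h=0$, so $\zeta_{0}=is^{2}Gh$. Unwinding~\eqref{eq:gtilde2} and the defining equation of the enlarged Coulomb adjustment $\xi=iv_\xi$ for $\psi+\zeta_{0}\phi$, and using $\Delta(Gh)=h$, straightforward algebra yields
\[
(\Delta+s^{2}|\phi|^{2})(v_\xi+sGh)=M
\]
for some real constant $M$. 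Integrating and invoking the mean-zero constraints $\int_Y v_\xi=0$ and $\int_Y Gh=0$, together with $\int_Y(\Delta+s^{2}|\phi|^{2})^{-1}(1)>0$ (maximum principle), forces $M=0$ and hence $v_\xi=-sGh$. Substituting back produces the cancellation $s^{2}\!\int_Y (Gh)|\phi|^{2}+s\!\int_Y v_\xi|\phi|^{2}=0$, completing the verification.
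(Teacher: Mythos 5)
Your proof is correct. Parts (a) and (b) follow essentially the paper's own reasoning (for (b) you invoke Lemma~\ref{lem:elcUniqueness} and the $\tilde g$-metric where the paper directly compares the formulas~\eqref{eq:piLCsigma} and~\eqref{eq:piCoulSigma}, but this is a cosmetic difference). For part (c), however, you take a genuinely different and noticeably longer route. The paper's argument passes to the blow-down (legitimate once $s\neq 0$, the case $s=0$ being covered by (b)), and then exploits the fact from~\eqref{eq:backandforth} that $\Pi^{\elCoul}_z$ and $(\Pi^{\gCoul}_*)_z$ are mutually inverse: if $v\in\K_{j,z}$ and $w=(\Pi^{\gCoul}_*)_z(v)$, then $\Pi^{\elCoul}_z(w)=v$, so by~\eqref{eq:gtilde2} one has $\langle(0,i\Phi),w\rangle_{\tilde g}=\Re\langle(0,i\Phi),\Pi^{\elCoul}_z(w)\rangle_{L^2}=\Re\langle(0,i\Phi),v\rangle_{L^2}=0$ in a single line, with no PDE analysis. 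Your approach instead solves the enlarged-Coulomb PDE $(\Delta+s^2|\phi|^2)(v_\xi+sGh)=M$ explicitly and uses the maximum principle to force $M=0$ and hence $v_\xi=-sGh$, yielding the cancellation by direct substitution. I verified the computation and it does close up (the key cancellation is $\int_Y s^2(Gh)|\phi|^2 + \int_Y s\,v_\xi|\phi|^2 = 0$ once $v_\xi=-sGh$, and the remaining term $\Re\langle i\phi,\psi\rangle_{L^2}=\int_Y h=0$ by the $\K^\sigma$ condition). So both proofs are correct, but the paper's leverages the already-established inverse relationship to sidestep the PDE entirely; your version is more computational but self-contained. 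One small point worth flagging: your maximum-principle step needs $s\neq 0$ (the operator $\Delta+s^2|\phi|^2$ degenerates to $\Delta$ when $s=0$), so strictly speaking you should either note that $s=0$ is already handled by (b), or observe that at $s=0$ the constant $M$ vanishes by integration directly.
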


\begin{proof}
$(a)$ As previously noted, both slices are complements to $\J^{\sigma}_{j,x}$. Further, both local Coulomb projection and anticircular global Coulomb projection are given by adding the suitable elements in $\J^{\sigma}_{j,x}$ that move to the other slice. This implies that the two maps are inverse to each other.  

$(b)$ If $s=0$, we see from \eqref{eq:piLCsigma} and \eqref{eq:piCoulSigma} that $\Pi^{\lCoul,\sigma}_x = (\Pi^{\gCoul, \sigma}_*)_x$. The conclusion follows since $(\Pi^{\gCoul,\sigma}_*)_x$ is both idempotent and invertible.

$(c)$ The case $s=0$ was studied in part (b). Therefore, it now suffices to consider the case when $s \neq 0$, so that $x$ is irreducible. Then, the local slices are isomorphic to the corresponding ones in the blow-down, and the projections commute with the infinitesimal blow-down map, so we can simply work in the blow-down. 

Let $\Phi = s \phi$ and $z = (a, \Phi)$. We need to check that if we have a vector $v \in \K_{j,z}$, then its projection $w = (\Pi^{\gCoul}_*)_z(v)$ lands in the anticircular global Coulomb slice. In other words, we know that $\Re \langle (0, i \Phi), v \rangle_{L^2}=0$, and we want to check that $\langle (0, i\Phi), w \rangle_{\tilde g} = 0$. Recall from \eqref{eq:backandforth} that $\Pi^{\elCoul}_z$ and $(\Pi^{\gCoul}_*)_z$ are inverse to each other; hence, $\Pi^{\elCoul}_z(w)=v$. Using \eqref{eq:gtilde2}, we get
$$\langle (0, i\Phi), w \rangle_{\tilde g} = \Re \langle (0, i\Phi), \Pi^{\elCoul}_z(w) \rangle_{L^2} = \Re \langle (0, i \Phi), v \rangle_{L^2}=0,$$
as desired.
\end{proof}

Recall that in Section~\ref{sec:coulombs} we defined an enlarged local Coulomb slice $\Ke$, complementary to the orbit of the normalized gauge group $\Go$. There is a similar enlarged local Coulomb slice in the blow-up,
 \[
\Kesigma_{j,x} = \{(b,t,\psi) \in \mathcal{T}^\sigma_{j,x} \mid  -d^*b + i s^2 \Re \langle i \phi, \psi \rangle^{\circ} =0 \}.   
\]
Note that the condition $-d^*b + i s^2 \Re \langle i \phi, \psi \rangle^{\circ} =0$ simply means that $-d^*b + i s^2 \Re \langle i \phi, \psi \rangle$ is a constant function.

We define the enlarged local Coulomb projection on the blow-up, $\Pi^{\elCoul,\sigma}_{(a,s,\phi)}: \T_{j,x}^\sigma\to \Kesigma_{j,x}$, by
\begin{equation}
\label{eq:pieLCsigma}
\Pi^{\elCoul,\sigma}_{(a,s,\phi)}(b,r,\psi) = (b-d\zeta,r, \psi + \zeta \phi),
\end{equation}
where $\zeta$ is such that $\int_Y \zeta =0$ and $-d^*(b-d\zeta) + is^2 \Re \langle i \phi, \psi +  \zeta \phi \rangle^\circ =0$. 

Let 
\begin{equation}
\label{eq:Jcircsigma}
\J^{\circ,\sigma}_{j,x} = \{(-d\xi, 0, \xi\phi) \mid \int_Y \xi =0\} \subset \J^{\sigma}_{j,x}
\end{equation}
be the tangent to the orbit of $\Go$ in the blow-up. We have direct sum decompositions
\begin{equation}
\label{eq:xavi}
 \T^{\sigma}_{j,x} = \J^{\circ,\sigma}_{j,x} \oplus \T^{\gCoul, \sigma}_{j,x}
  \end{equation}
and
\begin{equation}
\label{eq:xavi2}
 \T^{\sigma}_{j,x} = \J^{\circ,\sigma}_{j,x} \oplus \Kesigma_{j,x}.
 \end{equation}
Thus, $\T^{\gCoul, \sigma}_{j,x}$ and $\Kesigma_{j,x}$ are both infinitesimal slices for the action of $\Go$.

Here is the analogue of Lemma~\ref{lem:bijection}; see also \eqref{eq:backandforth} for the corresponding result in the blow-down.

\begin{lemma}
\label{lem:bijection2}
Let $x = (a, s, \phi) \in W^{\sigma}_k$. Then:

$(a)$ The enlarged local Coulomb projection $\Pi^{\elCoul, \sigma}_x$ induces a linear isomorphism between the slices $ \T_{j,x}^{\gCoul, \sigma}$ and $\Kesigma_{j,x}$. Its inverse is the infinitesimal global Coulomb projection $\Pi^{\gCoul,\sigma}_*|_{\Kesigma_{j,x}}$.

$(b)$ If $s=0$, then $\T^{\gCoul,\sigma}_{j,x} = \Kesigma_{j,x}$, and $(\Pi^{\gCoul,\sigma}_*)_x|_{\Kesigma_{j, x}} : \Kesigma_{j,x} \to \T^{\gCoul,\sigma}_{j,x}$ is the identity.
\end{lemma}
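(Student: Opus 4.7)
The plan is to mirror the proof of Lemma~\ref{lem:bijection}, with the role of the full gauge group $\G$ (and the slice pair $\T^{\gCoul,\sigma}_{j,x}/\K^\sigma_{j,x}$) replaced by the normalized gauge group $\Go$ (and the slice pair $\T^{\gCoul,\sigma}_{j,x}/\Kesigma_{j,x}$). The key observation is that the direct sum decompositions \eqref{eq:xavi} and \eqref{eq:xavi2} already exhibit $\T^{\gCoul,\sigma}_{j,x}$ and $\Kesigma_{j,x}$ as complementary subspaces to $\J^{\circ,\sigma}_{j,x}$ inside $\T^{\sigma}_{j,x}$. Once both $(\Pi^{\gCoul,\sigma}_*)_x$ and $\Pi^{\elCoul,\sigma}_x$ are identified as the linear projections on $\T^\sigma_{j,x}$ with kernel $\J^{\circ,\sigma}_{j,x}$ landing in these respective subspaces, it is purely formal that their restrictions to each other's image are mutually inverse isomorphisms.

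For $(\Pi^{\gCoul,\sigma}_*)_x$, the formula \eqref{eq:piCoulSigma} expresses the map as adding the vector $(-d\xi, 0, \xi\phi)$ with $\xi = Gd^*b$; since $G$ is the Green operator of $\Delta$ acting on mean-zero functions, we have $\int_Y \xi = 0$, so the correction lies in $\J^{\circ,\sigma}_{j,x}$, and the image $(\pi(b), r, \psi + \xi\phi)$ sits in $\T^{\gCoul,\sigma}_{j,x}$. For $\Pi^{\elCoul,\sigma}_x$, the formula \eqref{eq:pieLCsigma} adds a vector $(-d\zeta, 0, \zeta\phi)$ with $\int_Y \zeta = 0$, and Lemma~\ref{lem:elcUniqueness} guarantees existence and uniqueness of such a $\zeta$ (the argument there carries over verbatim to the blow-up, since the equation defining $\zeta$ is the same after using $\|\phi\|_{L^2}=1$). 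Thus $\Pi^{\elCoul,\sigma}_x$ is a well-defined projection with kernel $\J^{\circ,\sigma}_{j,x}$ and image $\Kesigma_{j,x}$, which together with the analogous statement for $(\Pi^{\gCoul,\sigma}_*)_x$ completes part (a).

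For part (b), substituting $s = 0$ into the defining condition $-d^*b + is^2\Re\langle i\phi,\psi\rangle^\circ = 0$ of $\Kesigma_{j,x}$ leaves only $d^*b = 0$, which is exactly the condition cutting out $\T^{\gCoul,\sigma}_{j,x}$ inside $\T^\sigma_{j,x}$; hence the two slices coincide. On this common subspace we have $\xi = Gd^*b = 0$, and inspection of \eqref{eq:piCoulSigma} shows that $(\Pi^{\gCoul,\sigma}_*)_x$ restricts to the identity. The whole argument is essentially bookkeeping, and no serious obstacle is anticipated; the only point requiring care is the appeal to Lemma~\ref{lem:elcUniqueness} to certify that $\Pi^{\elCoul,\sigma}_x$ is defined on all of $\T^\sigma_{j,x}$ (and in particular on the subspace $\T^{\gCoul,\sigma}_{j,x}$).
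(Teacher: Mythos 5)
Your proof is correct, and the approach is essentially the one the paper has in mind: the paper states this lemma without proof as "the analogue of Lemma~\ref{lem:bijection}," whose own proof reads "both slices are complements to $\J^{\sigma}_{j,x}$... both projections are given by adding the suitable elements in $\J^{\sigma}_{j,x}$... This implies that the two maps are inverse to each other" — exactly the argument you give, with $\J^{\sigma}$ replaced by $\J^{\circ,\sigma}$. Your added care over the blow-up extension of Lemma~\ref{lem:elcUniqueness} (replacing $|\phi|^2$ by $s^2|\phi|^2$ in the operator $E_\phi$) is a reasonable point to flag, and does indeed go through unchanged.
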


Finally, at $x \in W^{\sigma}_k$, let us introduce the {\em shear map}
$$ S_x: \T_{j,x}^{\sigma} \to \T_{j,x}^{\sigma}$$
given by
\begin{equation}
\label{eq:shear}
\J^{\circ, \sigma}_{j,x}  \oplus \Kesigma_{j,x} \to \J^{\circ, \sigma}_{j,x} \oplus \T^{\gCoul, \sigma}_{j,x}, \ \ \ \
v \oplus w \mapsto v  \oplus (\Pi^{\gCoul, \sigma}_*)_x(w). 
\end{equation}
Its inverse is
$$
S_x^{-1}:  \J^{\circ, \sigma}_{j,x} \oplus \T^{\gCoul, \sigma}_{j,x} \to \J^{\circ, \sigma}_{j,x} \oplus \Kesigma_{j,x}, \ \ \ \ 
v \oplus w \mapsto v \oplus \Pi^{\elCoul,\sigma}_x(w).
$$

We can write, in a more compressed form,
\begin{align*}
S_x(v) &= v - \Pi^{\elCoul,\sigma}_x(v) + (\Pi^{\gCoul, \sigma}_*)_x(v), \\
S_x^{-1}(v) &=  v + \Pi^{\elCoul,\sigma}_x(v) - (\Pi^{\gCoul, \sigma}_*)_x(v).
\end{align*}

Putting together all the maps $S_x$, we obtain an automorphism $S$ of the bundle $\T_j^{\sigma}$ over $W_k^{\sigma}$. %The metric $\tilde g$ can be thought of as the push-forward of the $L^2$ metric under $S$.

\begin{remark}
\label{rem:shear}
With regard to the infinitesimal slices to the whole gauge action, observe that $S_x$ maps $\K^{\sigma}_{j,x}$ to $\K^{\agCoul, \sigma}_{j,x}$; compare with Lemma~\ref{lem:bijection}. Also, $S_x$ preserves (but does not act by the identity on) the infinitesimal orbit space $\J^{\sigma}_{j,x}$.
\end{remark}

\section{Choices of gauge on cylinders}
\label{sec:cylinders}
Let $Z = I \times Y$ be a cylinder. Recall that $\C(Z)$ consists of pairs $(a, \phi)$ with $a \in \Omega^1(Z; i\R)$ and $\phi \in \Gamma(\Spin^+)$. We write such a pair as a path 
$$(a(t) + \alpha(t)dt, \phi(t)),  \ \ t \in I,$$ where $a(t) \in \Omega^1(Y; i\R)$, $\alpha(t) \in C^{\infty}(Y; i\R)$, and $\phi(t) \in \Gamma(\Spin)$. 

Recall that if $\alpha(t)=0$ we say that $(a, \phi)$ is in temporal gauge and that any configuration can be put into temporal gauge using the action of $\G(Z)$.

We seek an analogue of $\C(Z)$ adapted to global Coulomb gauge. The first guess is to consider $W(Z)$,  the subspace of $\C(Z)$ consisting of configurations in temporal gauge, and that are also in (three-dimensional) Coulomb gauge at each slice $\{t\} \times Y$; that is,
$$ W(Z) = \{(a, \phi) \in \Gamma(Z, \pp^*(iT^*Y \oplus \Spin)) \mid a(t) \in  \ker d^*,\ \forall t\},$$
where $\pp: Z \to Y$ denotes the projection. Note that an arbitrary configuration $(a, \phi) \in \C(Z)$ cannot always be moved into $W(Z)$ by a four-dimensional gauge transformation; instead, we can move it into temporal gauge, and then Coulomb project in each three-dimensional slice. Further, on $W(Z)$ we could consider an action by the group of slicewise constant gauge transformations
$$ \G^{\gCoul}(Z) = C^{\infty}(I; S^1),$$
with $u \in   \G^{\gCoul}(Z) $ acting by $(a(t), \phi(t)) \to (a(t), u(t)\phi(t))$.

As we saw in Section~\ref{sec:SWe}, the global Coulomb projections of Seiberg-Witten trajectories are the solutions of 
\begin{equation}
\label{eq:swc}
 \Bigl( \frac{d}{dt} + \Xgc \Bigr) (a(t), \phi(t)) = 0,
\end{equation}
where $(a, \phi) \in W(Z)$.  

Note that the equations \eqref{eq:swc} are invariant under the action of constant $u \in S^1$, but not under all of $\G^{\gCoul}(Z)$. This is similar to what happens in $\C(Z)$: the Seiberg-Witten equations are invariant under the whole group $\G(Z)$, but once we move to temporal gauge and write the equations as a gradient flow, we are only left with the action of $\G(Y)$, constant in $t$. 

In view of this, a better analogue of $\C(Z)$ is defined using the following notion:

\begin{definition}
A configuration $(a(t) + \alpha(t)dt, \phi(t)) \in \Omega^1(Z; i\R) \oplus \Gamma(\Spin^+)$ is said to be in {\em pseudo-temporal gauge} if for each $t$, the component $\alpha(t)$ is constant as a function on $Y$. 
\end{definition}

We let $\C^{\gCoul}(Z)$ consist of pairs $(a(t) + \alpha(t)dt, \phi(t))$ in pseudo-temporal gauge, and such that $(a(t), \phi(t))$ is in slicewise Coulomb gauge, i.e.:
$$d(\alpha(t)) =0, \ \ d^*(a(t)) = 0, \ \ \forall t \in I.$$

The elements of $\G^{\gCoul}(Z)$ act on $\C^{\gCoul}(Z)$ as usual gauge transformations:
$$ u: \bigl( a(t) + \alpha(t)dt, \phi(t)\bigr ) \mapsto \bigl(a(t) + (\alpha(t)-u(t)^{-1}\frac{du(t)}{dt})dt, u(t)\phi(t) \bigr).$$

Consider the process of moving an arbitrary configuration in $\C(Z)$ into pseudo-temporal gauge by an element of $\G(Z)$, and then applying slicewise global Coulomb projection to land in $\C^{\gCoul}(Z)$. Under this process, the Seiberg-Witten equations turn into:
\begin{equation}
\label{eq:swC}
 \Bigl( \frac{d}{dt} + \Xgc \Bigr) (a(t), \phi(t)) + (0, \alpha(t)\phi(t)) = 0,
\end{equation}
for $(a(t) + \alpha(t)dt, \phi(t)) \in \C^{\gCoul}(Z)$. These equations are invariant under the action of $\G^{\gCoul}(Z)$.

Of course, every solution of \eqref{eq:swC} can be transformed into a solution of \eqref{eq:swc} by moving into temporal gauge. Most of the time we will work with solutions of \eqref{eq:swc}, i.e., trajectories of $\Xgc$. However, when considering infinitesimal deformations of such trajectories (as we shall do in Section~\ref{sec:linearized}, for example), it is important to allow the more general pseudo-temporal gauge in order to obtain a good Fredholm problem.

By analogy with the section $\F : \C(Z) \to \V(Z)$ from Section~\ref{sec:SWblowup}, we write $\Fgc(a, \phi)$ for the left hand side of Equation~\eqref{eq:swC}. We view $\Fgc$ as a section
$$\Fgc: \C^{\gCoul}(Z) \to \V^{\gCoul}(Z),$$
where $\V^{\gCoul}(Z)$ is the trivial $W(Z)$ bundle over $\C^{\gCoul}(Z)$.

It is worth comparing $\V^{\gCoul}(Z)$ to the bundle $\V(Z)$ from Section~\ref{sec:SWblowup}, whose fibers were $\Gamma(Z; i\Lambda^2_+ T^*Z \oplus \Spin^-)$. In our setting, we identify self-dual imaginary two-forms on $Z$ with sections $a = (a(t))$ of $\pp^*(iT^*Y)$, via sending a section $a$ to $*a + dt \wedge a$, where $*$ here denotes the Hodge star operator on $Y$. We then impose a Coulomb gauge condition $d^*a(t) = 0$ for all $t$. Further, the bundle $\Spin^-$ can be identified with $\pp^* \Spin$. Thus, the fibers become $W(Z)$, as in our definition of $\V^{\gCoul}(Z)$.

If $Z$ is a compact cylinder, then starting from the space $W(Z)$ we can consider Sobolev completions $W_{k}(Z)$ and blow-ups $W_k^{\sigma}(Z)$ and $W_k^{\tau}(Z)$.  The space $W_k^\tau(Z)$ is a subset of the Banach manifold $\widetilde{W}^\tau_k(Z)$, the latter of which is obtained by removing the condition $s(t) \geq 0$; compare with \eqref{eq:tildeC}. Similarly, we can define $\C^{\gCoul}_k(Z)$, its $\sigma$ and $\tau$ blow-ups, and a Banach manifold $\widetilde{\C}^{\gCoul, \tau}_k(Z)$. 

The tangent space to $W_k(Z)$ can be completed to $T_jW(Z)$ (for $j \leq k$), which is a trivial bundle with fiber $W_j(Z)$. The tangent bundle to $\C^{\gCoul}_k(Z)$ is denoted $\T^{\gCoul}_{k}(Z)$, and has completions $\T^{\gCoul}_{j}(Z)$. There are blown-up analogues $\T^{\gCoul, \sigma}_j(Z)$ and $\T^{\gCoul, \tau}_j(Z)$, the latter of which we think of as a bundle over $\tC^{\gCoul,\tau}_k(Z)$. Similar constructions can be done for the bundle $\V(Z)$. 

If $Z$ is an infinite cylinder, we will also consider $L^2_{j, loc}$ completions. These are denoted $W_{k, loc}(Z), \C^{\gCoul}_{k, loc}(Z)$, and so on.

\section{Controlled Coulomb perturbations} \label{sec:ccp}
In Section~\ref{sec:perturbedSW} we discussed how one can perturb the Seiberg-Witten equations by a formal gradient $\q$. In Coulomb gauge, a perturbation is an $S^1$-equivariant vector field 
$$ \eta : W \to \T^{\gCoul}_0.$$
We write $\eta^0$ and $\eta^1$ for the connection and spinor components of $\eta$.

We are interested in perturbations $\eta$ that are $\tilde{g}$-formal gradients of functions $f: W \to \mathbb{R}$. Given such an $\eta$, by applying it slicewise in $t$ on a cylinder $Z = I \times Y$, we 
obtain a section
\begin{equation}
\label{eq:hateta}
\hat \eta: \C^{\gCoul}(Z)  \to \V^{\gCoul}(Z), \ \ \ 
\hat \eta(a(t) + \alpha(t)dt, \phi(t)) = \eta(a(t), \phi(t)).
\end{equation}
We require that $\hat \eta$ preserves Sobolev regularity. We will also need properties of $\eta$ analogous to some of the very tameness properties of $\q$. (Compare Definitions~\ref{def:tame} and \ref{def:verytame}.)

\begin{definition}\label{def:controlled}
Suppose that $\eta=(\eta^0, \eta^1)$ is the $\tilde{g}$-formal gradient of an $S^1$-equivariant function $f: W \to \mathbb{R}$.  Let $\hat{\eta}$ be the induced four-dimensional perturbation. We say $\eta$ is a {\em controlled Coulomb perturbation} if for all integers $k \geq 2$, and all compact cylinders $Z = [t_1, t_2] \times Y$,
\begin{enumerate}[(i)]
\item $\hat \eta$ defines an element of $C^{\infty}_{\fb}(\C^{\gCoul}_{k}(Z), \V^{\gCoul}_{k}(Z))$;
%\item $(\hat\eta^0,0)$ also defines an element of $C^0_{\fb}(\C^{\gCoul}_{j}(Z), \V^{\gCoul}_{j}(Z))$, for all integers $j \in [1,k]$;
\item  The first derivative $\D \hat\eta \in C^\infty_{\fb}(\C^{\gCoul}_{k}(Z),\Hom(\T^{\gCoul}_{k}(Z),\V^{\gCoul}_{k}(Z)))$ extends to a $C^\infty_{\fb}$ map into $\Hom(\T^{\gCoul}_{j}(Z),\V^{\gCoul}_{j}(Z))$, for all integers $j \in [-k,k]$;
%\item There exists a constant $m$ such that \[ \| \eta^0 (a,\phi) \|_{L^2} \leq m (\| \phi \|_{L^2} + 1),\] for all $(a,\phi) \in W_{k}$.
\end{enumerate}
\end{definition}

\begin{remark}
In \cite{KMbook}, Properties (ii), (iv), (v), and (vi) in Definition~\ref{def:tame} are used to get bounds on the stationary points and trajectories of $\Xq$.  When we do finite-dimensional approximation, we have a priori bounds since we restrict to the ball $B(2R)$.  Therefore, there is no need to include analogues of these properties in Definition~\ref{def:controlled} above.  %Also, notice that in parts (ii) and (iv) of Definition~\ref{def:controlled}, we only require control of the connection component of the perturbation.  
\end{remark}

We recall that the infinitesimal global Coulomb projection of the Seiberg-Witten vector field $\X=\grad \L$ is given by $\X^{\gCoul}=l + c$, where $l$ and $c$ are defined by \eqref{eq:lmap} and \eqref{eq:cmap} respectively.  Suppose that $\q$ is an abstract perturbation given by the formal gradient of a function $f$.  Let
\begin{equation}
\label{eq:etaq}
\etaq(a, \phi) = (\Pi^{\gCoul}_*)_{(a,\phi)} \q(a, \phi)
\end{equation}
be the infinitesimal global Coulomb projection of $\q$.  The infinitesimal global Coulomb projection of $\Xq = \X + \q$ is then  $$
\Xqgc=\X^{\gCoul} + \etaq,$$ which is the gradient of $(\L + f)|_W$ with respect to the metric $\tilde{g}$ introduced in \eqref{eq:gtilde}.

\begin{lemma}\label{lem:tamecoulomb}
Let $\q$ be a very tame perturbation.  Then, $\etaq$ is a controlled Coulomb perturbation.  
\end{lemma}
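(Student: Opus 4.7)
The plan is to verify the three requirements of Definition~\ref{def:controlled}: that $\etaq$ is the $\tilde g$-formal gradient of an $S^1$-equivariant function on $W$, and that the induced four-dimensional section $\hat{\etaq}$ satisfies conditions (i) and (ii).

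For the gradient property, let $f_0: \C(Y) \to \R$ be the $\G$-invariant function with $L^2$-formal gradient $\q$. Its restriction $f_0|_W$ is $S^1$-invariant, since the residual gauge action on $W$ is by $S^1 \subset \G$. The same argument that identifies $\X^{\gCoul} = (\Pi^{\gCoul}_*)\X$ with the $\tilde g$-gradient of $\L|_W$ in Section~\ref{sec:SWe} shows that $\etaq = (\Pi^{\gCoul}_*)\q$ is the $\tilde g$-formal gradient of $f_0|_W$. Concretely, differentiating $f_0|_W$ along a smooth path $\gamma \subset W$ produces the integral $\int \langle \q(\gamma), \gamma'\rangle_{L^2}\,dt$, which equals $\int \langle \etaq(\gamma), \gamma'\rangle_{\tilde g}\,dt$ because Equation~\eqref{eq:gtildefull} identifies $\Pi^{\gCoul}_*$ with the $\tilde g$-orthogonal projection $\T \to \T^{\gCoul}$, and $\q$ is $L^2$-orthogonal to $\J$ by the $\G$-invariance of $f_0$.

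For conditions (i) and (ii), fix $k \geq 2$ and a compact cylinder $Z = [t_1,t_2] \times Y$. Since $\hat{\etaq}(a+\alpha dt,\phi)(t) = (\Pi^{\gCoul}_*)_{(a(t),\phi(t))} \q(a(t),\phi(t))$ is independent of $\alpha$, I factor $\hat{\etaq}$ as a composition
$$ \C^{\gCoul}_k(Z) \xrightarrow{\;\rho\;} W_k(Z) \hookrightarrow \C_k(Z) \xrightarrow{\;\hat q\;} \V_k(Z) \xrightarrow{\;P\;} \V^{\gCoul}_k(Z), $$
where $\rho$ drops $\alpha$ (the result being a temporal-gauge configuration whose slices lie in $W$), and $P$ is the bundle morphism over $W_k(Z)$ applying the slicewise infinitesimal global Coulomb projection $(\Pi^{\gCoul}_*)_{(a(t),\phi(t))}$. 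The forgetful map $\rho$ and the inclusion $W_k(Z) \hookrightarrow \C_k(Z)$ are affine and trivially $C^\infty_{\fb}$; the map $\hat q$ is $C^\infty_{\fb}$ by part (i) of Definition~\ref{def:verytame}; and the slicewise formula~\eqref{eq:icp} displays $P$ as linear in the input with coefficients polynomial of degree one in $\phi(t)$, so Lemma~\ref{lem:igc} applied fiberwise over $t$ and combined with standard Sobolev multiplication on $Z$ yields that $P$ is $C^\infty_{\fb}$. The composition is therefore $C^\infty_{\fb}$, giving (i); condition (ii) follows by the chain rule, invoking part (iii) of Definition~\ref{def:verytame} to control $\D \hat q$ across the range $j \in [-k,k]$, together with the smoothness of $P$ as a bundle morphism.

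The main obstacle I foresee is the bookkeeping around the bundle morphism $P$: one must check that the slicewise Coulomb projection, whose coefficients depend polynomially on the base spinor $\phi(t)$, extends to a smooth and functionally bounded operator on all of the relevant cylinder Sobolev completions with the correct dependence on the base in $W_k(Z)$. This is dealt with by applying Lemma~\ref{lem:igc} fiberwise and invoking Sobolev multiplication on the compact cylinder $[t_1,t_2] \times Y$.
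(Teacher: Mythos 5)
Your overall structure matches the paper's proof: first observe that $\eta_{\q}$ is the $\tilde g$-formal gradient of $f|_W$, then bootstrap the smoothness and functional boundedness of $\hat{\q}$ through the slicewise Coulomb projection. The gradient observation is correct as you state it. However, your appeal to Lemma~\ref{lem:igc} is misdirected: that lemma gives only \emph{smoothness} of $\Pi^{\gCoul}_*$ between Sobolev completions, not functional boundedness. The latter — along with its interaction with the derivatives $\D^m$ across the full range of Sobolev indices $j\in[-k,k]$ and on cylinders $I\times Y$ — is precisely the content of Lemma~\ref{lem:igc-fb}, which the paper cites and which you neither invoke nor reprove.

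Your informal "polynomial coefficients $+$ Sobolev multiplication on $Z$" sketch for the bundle map $P$ captures the right idea (from formula~\eqref{eq:icp}, $(\Pi^{\gCoul}_*)_x$ is linear in the input with coefficients affine-linear in $\phi$), and this does essentially force $C^\infty_{\fb}$ once continuity is established. But for condition (ii) you need to show $\D(\Pi^{\gCoul}_*\circ \hat{\q})$ is $C^\infty_{\fb}$ as a map into $\Hom(\T^{\gCoul}_j(Z), \V^{\gCoul}_j(Z))$ for \emph{negative} $j$ as well, where Sobolev multiplication $L^2_k\times L^2_j\to L^2_j$ is not the naive statement. The explicit formula~\eqref{eq:igc-fb-d} in the proof of Lemma~\ref{lem:igc-fb}(b) for $\D_{(a,\phi)}(\Pi^{\gCoul}_*\circ X)$ shows that cross-terms between $\D X$ (lives in $L^2_j$) and $\phi$ (lives in $L^2_k$) appear, and the paper verifies boundedness by an induction on derivative order that tracks exactly which factor carries which regularity. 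Your "chain rule plus smoothness of $P$" phrase hides this bookkeeping. The gap is modest — the approach is sound and the conclusion is correct — but a rigorous version of your argument either cites Lemma~\ref{lem:igc-fb}(b)–(c) or carries out its inductive Sobolev-index analysis; you do neither.
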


Before proving the claim, we will need to prove some additional properties of the infinitesimal global Coulomb projection.  
\begin{lemma}\label{lem:igc-fb}
Fix $j \in [-k,k]$.   Then, 
\begin{enumerate}[(a)]
\item \label{igc-fb:a} the map $\Pi^{\gCoul}_* : \T_j \to \T^{\gCoul}_j$ is functionally bounded as a map from $\C_k(Y)$ to $\Hom(\C_j(Y), W_j)$;
\item \label{igc-fb:b} if $X  \in C^n_{\fb}(W_k, \C_k(Y))$ satisfies that $\D X: W_k \to \Hom(W_k, \C_k(Y))$ extends to an element of $C^{n-1}_{\fb}(W_k, \Hom(W_j, \C_j(Y)))$, then $\D (\Pi^{\gCoul}_* \circ X) : W_k \to \Hom(W_j, W_j)$ is also in $C^{n-1}_{\fb}$; 
\item \label{igc-fb:c}the analogues of \eqref{igc-fb:a} and \eqref{igc-fb:b} hold for $I \times Y$ as well for  $I \subseteq \mathbb{R}$ a closed interval (with $\Pi^{\gCoul}_*$ applied slicewise). 
\end{enumerate}
\end{lemma}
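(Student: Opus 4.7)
The plan is to argue directly from the explicit formula. Differentiating $\Pi^{\gCoul}(a,\phi) = (a - dGd^*a, e^{Gd^*a}\phi)$ at a general point $(a,\phi) \in \C_k(Y)$ yields
\[
(\Pi^{\gCoul}_*)_{(a,\phi)}(b,\psi) = \bigl(b - dGd^*b,\ e^{Gd^*a}(\psi + (Gd^*b)\phi)\bigr),
\]
which reduces to Equation~\eqref{eq:icp} when $d^*a=0$. The lemma is read off from this formula plus the chain and product rules.

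For part (a), the first component is the Leray projection $\pi = 1 - dGd^*$, a bounded linear operator on $L^2_j(Y)$ for every $j$ with operator norm independent of $(a,\phi)$. In the second component, $Gd^* : L^2_j \to L^2_{j+1}$ is bounded; multiplication by the smooth function $e^{Gd^*a}$ is bounded on $L^2_j$ for $|j|\leq k$ with operator norm depending continuously on $\|a\|_{L^2_k}$, because $Gd^*a \in L^2_{k+1}$ and the Nemitski operator $x\mapsto e^x$ sends bounded sets of $L^2_{k+1}(Y)$ into bounded sets of $L^2_k(Y)$; and the bilinear term $(Gd^*b)\phi$ is controlled by the Sobolev multiplication $L^2_j(Y)\cdot L^2_k(Y)\to L^2_j(Y)$ (valid on a compact $3$-manifold for $k\geq 2$, $|j|\leq k$), giving $\|(Gd^*b)\phi\|_{L^2_j} \leq C\|b\|_{L^2_j}\|\phi\|_{L^2_k}$. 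Combining these bounds produces the functional estimate $\|(\Pi^{\gCoul}_*)_{(a,\phi)}\|_{\Hom(\C_j(Y),W_j)} \leq h(\|(a,\phi)\|_{L^2_k})$ that is required for (a).

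For part (b), when $w = (a,\phi) \in W_k$ we have $d^*a = 0$, so the formula simplifies to
\[
(\Pi^{\gCoul}_*)_w(Y^0, Y^1) = \bigl(\pi(Y^0),\ Y^1 + (Gd^*Y^0)\phi\bigr),
\]
which is linear in $Y$ and in $\phi$ separately. Applying the product rule to $(\Pi^{\gCoul}_* \circ X)(w) = (\Pi^{\gCoul}_*)_w(X(w))$ gives
\[
\D(\Pi^{\gCoul}_* \circ X)(w)(v) = (\Pi^{\gCoul}_*)_w\bigl(\D X(w)(v)\bigr) + \bigl(0,\ (Gd^*X^0(w))\, v^1\bigr),
\]
for $v=(v^0,v^1) \in W_j$. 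The first summand is the composition of $(\Pi^{\gCoul}_*)_w \in \Hom(\C_j(Y), W_j)$ (functionally bounded in $w$ by part (a)) with the $C^{n-1}_{\fb}$-valued map $w \mapsto \D X(w) \in \Hom(W_j,\C_j(Y))$; the second summand is a bilinear pairing of $X^0(w) \in L^2_k$ with $v^1 \in L^2_j$, controlled by the same multiplication estimate used in (a). Since the class $C^m_{\fb}$ is preserved under composition and multilinear operations with Banach-space valued maps, both summands define $C^{n-1}_{\fb}$ maps into $\Hom(W_j, W_j)$, and iterating the product rule handles all higher derivatives up to order $n-1$.

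For part (c), the slicewise application of $\Pi^{\gCoul}_*$ on $Z = I\times Y$ has the same algebraic form, with $d, d^*, G$ acting only in the $Y$-direction. Every estimate used in (a) and (b) transfers verbatim once one verifies that the slicewise Leray projection and Green operator remain bounded on $L^2_j(Z)$ with norms independent of the base point, and that pointwise multiplication $L^2_k(Z)\cdot L^2_j(Z)\to L^2_j(Z)$ holds for $|j|\leq k$ on the compact $4$-manifold $Z$ (for $k \geq 2$). The main obstacle throughout is ensuring the elementary operator bounds are uniform in the Sobolev regularity $j \in [-k,k]$; negative exponents require a duality argument for the multiplication estimates. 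Once these uniform bounds are in hand, everything else is a formal application of the chain rule together with the permanence of $C^m_{\fb}$ under composition.
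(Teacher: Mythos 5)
Your proposal is correct and follows essentially the same route as the paper: differentiate the explicit formula for $\Pi^{\gCoul}_*$, then bound each term using the boundedness of $\pi$ and $Gd^*$, functional boundedness of the exponential, and the Sobolev multiplication $L^2_k \times L^2_j \to L^2_j$. The only stylistic difference is that you package the first-derivative computation as a chain-rule decomposition into $(\Pi^{\gCoul}_*)_w(\D X(w)(v)) + (0, (Gd^*X^0(w))v^1)$ and then invoke stability of $C^m_{\fb}$ under composition, whereas the paper writes out the derivative termwise (and also displays the second derivative explicitly before invoking induction); both yield the same estimates.
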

\begin{proof}
\eqref{igc-fb:a} Let $(a,\phi) \in \C_k(Y)$.  By definition, 
$$
(\Pi^{\gCoul}_*)_{(a,\phi)}(b,\psi) = (\pi(b), e^{Gd^* a}((Gd^* b)\phi + \psi)).  
$$
We are interested in showing that this expression has $L^2_j$ norm bounded in terms of the $L^2_k$ norm of $(a,\phi)$ and the $L^2_j$ norm of $(b,\psi)$.  This is clear for the first term $\pi(b)$, because $\pi$ is a linear, continuous operator taking $L^2_j$ forms to $L^2_j$ forms. For the second term, the continuity and bilinearity of the Sobolev multiplication $L^2_j \times L^2_k \to L^2_j$ gives bounds 
$$
\|(Gd^* b)\phi\|_{L^2_j} \leq C \| Gd^*b \|_{L^2_j} \| \phi \|_{L^2_k}.
$$   
We then use the linearity and continuity of $Gd^*$ to bound $ \| Gd^*b \|_{L^2_j}$ in terms of $\|b \|_{L^2_j}$. The functional boundedness of the exponential map gives a bound on $e^{Gd^* a}$. Using the Sobolev multiplication again, we get the desired $L^2_j$ bounds on $e^{Gd^* a}((Gd^* b)\phi + \psi))$.

\eqref{igc-fb:b} By Lemma~\ref{lem:igc}, we have that $\Pi^{\gCoul}_*$ is smooth and therefore we are only interested in functional boundedness. Precisely, to show that $\D (\Pi^{\gCoul}_* \circ X)$ is in $C^{n-1}_{\fb}$, we need to check that  $\D^m (\Pi^{\gCoul}_* \circ X)$ is functionally bounded for all $m=1, \dots, n$.

%By the previous part, we have that $\Pi^{\gCoul}_* \circ X$ is functionally bounded if $X$ is functionally bounded.  

We first consider the case $m=1$.  Let $(a, \phi) \in W_k$ and $(b,\psi) \in W_j$. It is straightforward to compute    
\begin{align}\label{eq:igc-fb-d}
\D_{(a,\phi)} (\Pi^{\gCoul}_* \circ X) (b,\psi) =  \Big(\pi(\D_{(a,\phi)} X^0(b,\psi)), \D_{(a,\phi)} X^1(b,\psi) + (G&d^* \D_{(a,\phi)} X^0(b,\psi)) \phi \\
&+ (Gd^* X^0(a,\phi))\psi \Big).\notag
\end{align}
Since $X$ and $\D X$ are functionally bounded (the latter as a map from $W_k$ to $\Hom(W_j, \C_j(Y))$), we get bounds on $X(a,\phi)$ and $\D_{(a,\phi)} X^0(b,\psi)$ in terms of the $L^2_k$ norm of $(a,\phi)$ and the $L^2_j$ norm of $(b,\psi)$.  To obtain the desired $L^2_j$ bounds on \eqref{eq:igc-fb-d}, as above, we apply the continuity and bilinearity of the Sobolev multiplication $L^2_k \times L^2_j \to L^2_j$.     
  
Finally, we give the argument for the second derivative (as this will illustrate the appropriate Sobolev norms) and allow the reader to complete the proof by induction.  We consider the derivative of $\D (\Pi^{\gCoul}_* \circ X)$, which we think of as a map from $W_k$ to $\Hom(W_k \times W_j,W_j)$.  Again, let $(a,\phi) \in W_k$, $(b,\psi) \in W_j$.  We also denote an $L^2_k$ tangent vector to $(a,\phi)$ by $(\alpha, \zeta)$.  Direct computation shows 
\begin{align*}
\D^2_{(a,\phi)} (\Pi^{\gCoul}_* \circ X)((\alpha, \zeta), (b,\psi)) = \Big(\pi(\D^2_{(a,\phi)} X^0)(\alpha,\zeta), (b,\psi))&,  \\
(Gd^*(\D^2_{(a,\phi)} X^0)((\alpha,\zeta), (b,\psi))) \phi &+ Gd^*(\D_{(a,\phi)} X^0(b,\psi)) \zeta \\ &+ (\D^2_{(a,\phi)} X^1)(\alpha, \zeta),(b,\psi)\Big). 
\end{align*}
Again, the functional boundedness of $X$ and $\D X$ together with Sobolev multiplication give the desired result.   

\eqref{igc-fb:c} Similar arguments apply to establish the four-dimensional analogues.  
\end{proof}

\begin{proof}[Proof of Lemma~\ref{lem:tamecoulomb}] First, note that if $\q$ is the $L^2$-formal gradient of $f: \C(Y) \to \rr$, then $\etaq$ is the $\tilde{g}$-formal gradient of the restriction of $f$ to $W$.

Since $\eta_\q (a,\phi) = (\Pi^{\gCoul}_*)_{(a,\phi)} \q(a,\phi)$, the result follows by combining Lemmas~\ref{lem:igc} and \ref{lem:igc-fb} with the corresponding Properties (i) and (iii) of $\q$ in Definition~\ref{def:verytame}, since $\q$ is a very tame perturbation.  \begin{comment}
For Property (ii), we observe that $\etaq^0(a,\phi) = \pi(\q^0(a,\phi))$, so the continuity follows from Property (ii) of $\q$ in Definition~\ref{def:verytame}, since $\pi : \Omega^1(Y; i\R) \to  \ker d^*$ extends continuously to all $L^2_j$-completions.  Finally, for Property (iv), we observe that there exists $m$ such that
\[
\| \etaq^0 (a,\phi) \|_{L^2} = \| \pi(\q^0(a,\phi)) \|_{L^2} \leq \| \q(a,\phi) \|_{L^2} \leq m(\| \phi \|_{L^2} + 1), 
\]  
where the last inequality follows from Property (iv) of Definition~\ref{def:tame} for $\q$.  
\end{comment}
\end{proof}

\section{Trajectories in global Coulomb gauge} \label{sec:trajGC}
Let $\q$ be a very tame perturbation and $\etaq$ the induced controlled perturbation in Coulomb gauge.

Let us write $\Xqgc = ((\Xqgc)^0, (\Xqgc)^1)$ where, as usual, the superscript $0$ denotes the form part and the superscript $1$ denotes the spinorial part. The vector field $\Xqgc$ on $W$ induces a vector field $\Xqgcsigma$ on $W^{\sigma}$, given by
\begin{equation}\label{eq:Xqgcsigmaformula}
\Xqgcsigma (a, s, \phi) = ((\Xqgc)^0(a, s\phi), \Lambda_\q(a, s, \phi)s,  (\widetilde\Xqgc)^1(a, s, \phi) -  \Lambda_\q(a, s, \phi) \phi),
\end{equation}
where
\begin{equation}
\label{eq:widetildeX}
(\widetilde\Xqgc)^1(a, s, \phi) = \int_0^1 \D_{(a, sr\phi)} (\Xqgc)^1(0, \phi) dr,
\end{equation}
and
\begin{equation}
\label{eq:Lambdaqas}
 \Lambda_\q(a, s, \phi) = \Re\langle \phi, (\widetilde \Xqgc)^1(a, s, \phi) \rangle_{L^2}.
 \end{equation}
Since $\Xqgc = \Pi^{\gCoul}_* \circ \Xq$, we have that $\Xqgcsigma = \Pi^{\gCoul,\sigma}_* \circ \Xqsigma$.  
\begin{lemma}\label{lem:continuityxqsigma}
The vector field $\Xqgcsigma : W^\sigma_k \to \mathcal{T}^{\gCoul,\sigma}_{k-1}$ is smooth.  
\end{lemma}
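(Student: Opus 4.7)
The plan is to decompose $\Xqgc = l + c + \etaq$ on $W_k$, where $l = (*d, D)$ is bounded linear, $c$ is the smooth quadratic map from \eqref{eq:cmap}, and $\etaq$ is smooth as a map $W_k \to W_k$. The smoothness of $\etaq$ is inherited from that of $\hat{\eta}_\q$: by Lemma~\ref{lem:tamecoulomb}, $\etaq$ is a controlled Coulomb perturbation, and Definition~\ref{def:controlled}(i) gives smoothness of $\hat{\eta}_\q$ on $\C^{\gCoul}_k(Z)$ for $Z = [0,1]\times Y$; precomposing with the bounded linear embedding of $W_k$ as the closed subspace of constant-in-$t$ configurations, and then postcomposing with the bounded linear identification of constant-in-$t$ elements of $\V^{\gCoul}_k(Z)$ with $W_k$, yields the desired smoothness. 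Composing with the bounded inclusion $W_k \hookrightarrow W_{k-1}$ we obtain a smooth map $\Xqgc : W_k \to W_{k-1}$.

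With this in hand, each of the three components of $\Xqgcsigma$ in \eqref{eq:Xqgcsigmaformula} is shown smooth in turn. The form part $(\Xqgc)^0(a, s\phi)$ is the composition of the polynomial blow-down $\beta : W^\sigma_k \to W_k$, $(a,s,\phi) \mapsto (a, s\phi)$, with the smooth map $(\Xqgc)^0$, hence smooth. The main step is the spinor integral
\[
(\widetilde{\Xqgc})^1(a, s, \phi) = \int_0^1 \D_{(a, sr\phi)} (\Xqgc)^1(0, \phi) \, dr.
\]
Here the point is that $(\Xqgc)^1(a, 0) = 0$ for all $a$: $D \cdot 0 = 0$, the quadratic $c^1$ vanishes at $\phi = 0$, and $\etaq^1(a, 0) = 0$ by the $S^1$-equivariance of $\eta_\q$, so Taylor's theorem in the spinor direction justifies the integral representation. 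Smoothness of $(\widetilde{\Xqgc})^1$ as a map $W^\sigma_k \to W_{k-1}$ follows because $\D(\Xqgc)^1 : W_k \to \Hom(W_k, W_{k-1})$ is smooth, the parametrization $(a, s, \phi, r) \mapsto ((a, sr\phi), (0, \phi))$ is polynomial into $W_k \times W_k$, and integration over the compact interval $r \in [0,1]$ preserves smoothness. The scalar $\Lambda_\q$ from \eqref{eq:Lambdaqas} is then a continuous $L^2$ pairing of smooth arguments, and the multiplications $\Lambda_\q \cdot s$ and $\Lambda_\q \cdot \phi$ (a scalar times an $L^2_k$ spinor, certainly in $L^2_{k-1}$) are smooth, yielding the remaining components.

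The tangent-bundle constraint $\Re \langle \phi, \psi \rangle_{L^2} = 0$ on the spinor part of the target is automatic: a direct computation gives $\Re \langle \phi, (\widetilde{\Xqgc})^1 - \Lambda_\q \phi \rangle_{L^2} = \Lambda_\q - \Lambda_\q \|\phi\|^2_{L^2} = 0$, using $\|\phi\|_{L^2} = 1$. The only mildly subtle point is that $W^\sigma_k$ is a Hilbert manifold with boundary at $\{s = 0\}$; but the integral formula above defines a smooth extension to a neighborhood of $W^\sigma_k$ inside $W_k \times \R \times \Gamma(\Spin)_k$, so smoothness up to the boundary is automatic. No step is a serious obstacle; the content of the lemma is mostly the observation that blowing up converts the vanishing $(\Xqgc)^1(a, 0) = 0$ into a smooth integral, to which the smoothness of the unblown-up data directly applies.
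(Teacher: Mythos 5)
Your proof is correct, but it takes a more explicit and self-contained route than the paper's, which is essentially two citations. The paper writes $\Xqgc = \Pi^{\gCoul}_* \circ (\X + \q)$, invokes Lemma~\ref{lem:igc} for smoothness of $\Pi^{\gCoul}_*$, observes that $\X$ and $\q$ are smooth from $\C_k(Y)$ to $\T_{k-1}$, and then disposes of the blow-up step with a ``compare \cite[Lemma 10.2.1]{KMbook}''. You instead decompose $\Xqgc = l + c + \etaq$ (so the Coulomb projection is already absorbed into the definition of $\etaq$ as a controlled perturbation, via Lemma~\ref{lem:tamecoulomb} and Definition~\ref{def:controlled}) and then carry out the blow-up step by hand from the formula \eqref{eq:Xqgcsigmaformula}: the integral representation of $(\widetilde{\Xqgc})^1$, the smoothness of $\D(\Xqgc)^1$, the $L^2$-pairing for $\Lambda_\q$, and the constraint $\Re\langle\phi,\cdot\rangle_{L^2}=0$. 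This is exactly the content of \cite[Lemma 10.2.1]{KMbook} transplanted into Coulomb gauge, so your proof is the ``unfolded'' version of what the paper defers to a citation. Both routes through the un-blown-up step are equivalent; the trade-off is that yours is longer but needs no appeal to \cite{KMbook}. One small caveat on framing: the integral expression for $(\widetilde{\Xqgc})^1$ is already the \emph{definition} \eqref{eq:widetildeX}, so it is smooth as written without any appeal to Taylor's theorem; the vanishing $(\Xqgc)^1(a,0)=0$ that you establish via $S^1$-equivariance is what makes the blow-up formula consistent with the blow-down (i.e., $s \cdot (\widetilde{\Xqgc})^1(a,s,\phi) = (\Xqgc)^1(a,s\phi)$ for $s>0$), rather than being required for the smoothness claim itself. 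This does not affect the validity of your argument.
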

\begin{proof}
We write $\Xqgc = \Pi^{\gCoul}_* \circ (\X + \q)$.  Lemma~\ref{lem:igc} shows that $\Pi^{\gCoul}_*: \T_{k-1} \to \T^{\gCoul}_{k-1}$ is smooth.  Since $\X = \grad \L$ and $\q$ are smooth as maps from $\C_k(Y)$ to $\T_{k-1}$, we get that $\X^{\gCoul}_\q : W_k \to \T^{\gCoul}_{k-1}$ is smooth.  Hence, the induced vector field on the blow-up, $\Xqgcsigma$, is also smooth (compare \cite[Lemma 10.2.1]{KMbook}).      
\end{proof}

We are interested in the dynamics of the vector field $\Xqgcsigma$ on $W^{\sigma}$.

Every stationary point of  $\Xqsigma$ on $\C^\sigma(Y)$ can be moved into $W^\sigma$ by the global Coulomb projection.  Conversely, every stationary point $x$ of $\Xqgcsigma$ on $W^\sigma$ is also a stationary point of $\Xqsigma$, since $\Xqsigma$ lands in $\K_x^{\sigma}$ and infinitesimal global Coulomb projection induces an isomorphism from $\K_x^{\sigma}$ to $\K_x^{\agCoul,\sigma}$.  Thus, $\Pi^{\gCoul, \sigma}$ induces a bijection
\begin{equation}
\label{eq:EquivStat1}
\{ \text{stationary points of } \Xqsigma \bigr \} / \G \ \xrightarrow{\mathmakebox[2em]{\cong}} \ \{ \text{stationary points of } \Xqgcsigma \} / S^1.
\end{equation}

Note that the condition of being irreducible, boundary stable, or boundary unstable is preserved by this bijection.  By contrast, a trajectory\footnote{From now on, by trajectory we will always mean a trajectory of finite type as in Section~\ref{sec:SWe}.} of  $\Xqgcsigma$ on $W^{\sigma}$ is {\em not} a trajectory of $\Xqsigma$; still, we have:
 
\begin{proposition}
\label{prop:CPtraj}
The trajectories of $\Xqgcsigma$ on $W^{\sigma}$ are precisely the global Coulomb projections of the trajectories of $\Xqsigma$ on the blow-up $\C^\sigma(Y)$.  In fact, global Coulomb projection induces a bijection
\begin{equation}
\label{eq:EquivTraj1}
\{ \text{trajectories of } \Xqsigma \bigr \} / \G \ \xrightarrow{\mathmakebox[2em]{\cong}}\ \{ \text{trajectories of } \Xqgcsigma \} / S^1,
\end{equation}
where $\G$ acts on trajectories $x(t)$ by three-dimensional gauge transformations, constant in $t$.
\end{proposition}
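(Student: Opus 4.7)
The plan is to establish the bijection in two stages: first verify that the slicewise Coulomb projection carries individual trajectories of $\Xqsigma$ to trajectories of $\Xqgcsigma$, then show compatibility with the two quotient gauge actions.

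For the forward direction, suppose $d\gamma/dt + \Xqsigma(\gamma) = 0$ and set $\tilde\gamma = \Pi^{\gCoul,\sigma}\circ\gamma$. Writing $\Pi^{\gCoul,\sigma}(x) = u(x)\cdot x$ with $u(x) = e^{Gd^*a}$, I would first verify the pointwise identity
$$D\Pi^{\gCoul,\sigma}\bigl|_\gamma(v) \;=\; \Pi^{\gCoul,\sigma}_*\bigl|_{\tilde\gamma}\bigl(u(\gamma)_*v\bigr),$$
where $u(\gamma)_*$ is the pushforward by the gauge transformation; this follows by differentiating $\Pi^{\gCoul,\sigma}(x) = u(x)\cdot x$ and comparing the result with \eqref{eq:piCoulSigma}. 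Combining this identity with the $\G$-equivariance $u(\gamma)_*\Xqsigma(\gamma) = \Xqsigma(\tilde\gamma)$ and the chain rule yields $d\tilde\gamma/dt = -\Pi^{\gCoul,\sigma}_*|_{\tilde\gamma}\Xqsigma(\tilde\gamma) = -\Xqgcsigma(\tilde\gamma)$, so $\tilde\gamma$ is a trajectory of $\Xqgcsigma$.

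For the reverse direction, given a trajectory $\tilde\gamma$ of $\Xqgcsigma$ in $W^\sigma$, I would look for a lift $\gamma(t) = e^{g(t)}\cdot\tilde\gamma(t)$ for a smooth path $g\colon \rr \to C^\infty(Y;i\R)$. Writing $\gamma = (\tilde a - dg,\tilde s, e^g\tilde\phi)$, expanding $d\gamma/dt + \Xqsigma(\gamma) = 0$ componentwise, and applying \eqref{eq:Xqgcsigmaformula} together with $\pi = \mathrm{Id} - dGd^*$, the $1$-form component forces the ODE $\partial_t g = Gd^*(\Xqsigma)^0(\tilde\gamma)$, whose right-hand side has zero average on $Y$ at each $t$. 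Taking $g(0) = 0$ yields a canonical $g$ with $\mu_Y(g(t)) = 0$ for all $t$; the spinor component of the trajectory equation then reduces to $(\partial_t g - Gd^*(\Xqsigma)^0(\tilde\gamma))\tilde\phi = 0$, which is automatic, and the vanishing of $\mu_Y(g(t))$ ensures $\Pi^{\gCoul,\sigma}(\gamma) = \tilde\gamma$ on the nose rather than merely up to a time-varying $S^1$ factor.

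To finish the bijection on quotients, a brief calculation using $Gd^*(dw) = w - \mu_Y(w)$ shows that if $v = e^w \in \G(Y)$, then $\Pi^{\gCoul,\sigma}(v\cdot\gamma) = e^{\mu_Y(w)}\cdot \Pi^{\gCoul,\sigma}(\gamma)$, so $\G$-equivalent trajectories of $\Xqsigma$ project to $S^1$-equivalent trajectories of $\Xqgcsigma$; conversely, the choice of initial condition $g(0)$ in the lifting construction only shifts the lift by a constant gauge transformation, so the lift is well-defined modulo $\G$, and two $S^1$-equivalent trajectories in $W^\sigma$ lift to $\G$-equivalent trajectories in $\C^\sigma(Y)$. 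The main obstacle will be verifying the forward-direction identity relating $D\Pi^{\gCoul,\sigma}|_\gamma(v)$ and $\Pi^{\gCoul,\sigma}_*|_{\tilde\gamma}(u(\gamma)_*v)$: this requires carefully separating the point-dependent gauge factor $u(x)$ from its derivative when differentiating $\Pi^{\gCoul,\sigma}(x) = u(x)\cdot x$, with the additional care that in the blow-up the spinor is rescaled by $u(x)$; once that identity is in hand, the remainder is ODE analysis together with bookkeeping around the average $\mu_Y$.
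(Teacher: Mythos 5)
Your proof is correct, and the reverse direction takes a genuinely different route from the paper's. The paper lifts a trajectory $\gamma$ of $\Xqgcsigma$ by constructing a vector field on the Banach manifold $\mathcal{O}(\gamma)_k$ of all gauge translates of points on $\gamma$, arguing via the local Coulomb slice structure and smoothness of $\gamma$ that this vector field stays in $L^2_k$, and then integrating; the uniqueness of the lift modulo constant gauge transformations is then invoked via temporal gauge considerations. You instead write the lift explicitly as $e^{g(t)}\cdot\tilde\gamma(t)$ and reduce the trajectory equation for $\gamma$ to the scalar ODE $\partial_t g = Gd^*(\Xqsigma)^0(\tilde\gamma)$, with the remaining components checked to be automatic. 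Your approach is more concrete and makes the regularity of the lift transparent: since every trajectory of $\Xqgcsigma$ is smooth (the paper's Remark after the proposition), the ODE has smooth right-hand side, so $g$ and hence $\gamma^\sharp$ are smooth with no further argument. The paper's orbit-manifold construction, which needs a separate step to see that the restricted vector field is actually $L^2_k$, is comparatively indirect for this statement, though it is phrased in a form that would survive in lower-regularity settings where an explicit formula might be harder to manipulate.

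One small wording imprecision: the $1$-form component of the trajectory equation only forces $\partial_t g - Gd^*(\Xqsigma)^0(\tilde\gamma)$ to be constant on $Y$ at each time $t$ (not zero); it is the spinor component, together with $\tilde\phi\neq 0$, that kills this constant and pins down the ODE. Since you in fact verify the spinor component separately, the argument is sound; you should just not attribute the full ODE to the $1$-form component alone.
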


\begin{remark}
Because $\q$ is a very tame perturbation, every $L^2_k$ trajectory of $\Xqsigma$ is gauge-equivalent to a smooth one.  Furthermore, every trajectory of $\Xqgcsigma$ is smooth.      
\end{remark}

\begin{proof}[Proof of Proposition~\ref{prop:CPtraj}]
If $\gamma$ is a smooth trajectory of $\Xqsigma$, its global Coulomb projection $\gamma^{\flat}$ is a trajectory of $\Pi^{\gCoul, \sigma}_* \circ  \Xqsigma$, which is exactly $\Xqgcsigma$.

Conversely, given a trajectory $\gamma$ of $\Xqgcsigma$, we can lift it to a smooth trajectory $\gamma^{\sharp}$ of $\Xqsigma$ as follows. Consider the submanifold $\mathcal{O}(\gamma)_k \subset \C_k^{\sigma}(Y)$ consisting of all points that are on the gauge orbits of points on $\gamma$. Thus, any $x \in \mathcal{O}(\gamma)_k$ can be written as $x=u \cdot \gamma(t_0)$ where $u$ is an $L^2_{k+1}$ gauge transformation on $Y$ and $t_0 \in \rr$. The vector $v_x = \Xqsigma(x)$ is in the local Coulomb slice $\K^\sigma_{x, k-1}$. Consider the push-forward $(u^{-1})_* v_x$, which is in the local Coulomb slice to $u^{-1} \cdot x = \Pi^{\gCoul}(x)$. Now, $u^{-1} \cdot x$ is part of the trajectory $\gamma$, and hence is smooth. Therefore, $(u^{-1})_* v_x= (\Xqsigma)_{u^{-1} \cdot x}$ is also smooth. Pushing it back by the $L^2_{k+1}$ gauge transformation $u$ yields the vector $v$, which we now see that it must be in $L^2_k$. Thus, the vectors $v_x$ form a true vector field on the Banach manifold $\mathcal{O}(\gamma)_k$. By integrating this vector field starting at a smooth point $x \in \mathcal{O}(\gamma)_k$, we obtain a lift $\gamma^\sharp$ of $\gamma$. We can do this for any $k$, and obtain the same lift; hence, $\gamma^\sharp$ is smooth.

In the above construction, note that the lift $ \gamma^\sharp$ is unique up to transformation by an element in $\G$. Indeed, in four-dimensions, after moving to temporal gauge, there are only gauge transformations which are constant in $t$. (Compare \cite[Proposition 7.2.1]{KMbook}.)
\end{proof}

Using the formula~\eqref{eq:piCS} for $(\Pi^{\gCoul,\sigma}_*)_{(a, s, \phi)}$, we can describe the trajectories of $\Xq^{\gCoul, \sigma}$ more explicitly, as the paths $(a(t),s(t),\phi(t))$ in slicewise Coulomb gauge that satisfy
\begin{align}
\nonumber &  \frac{d}{dt} a = - *da - s^2\pi ( \tau(\phi, \phi)) - \pi(\q^0(a,s\phi)), \\
\label{eqn:xqgcsigma} & \frac{d}{dt} s = - \Lambda_\q(a,s,\phi)s, \\
\nonumber & \frac{d}{dt} \phi = - D_a \phi - \tilde{q}^1(a,s,\phi) + \Lambda_q(a,s,\phi)\phi - s^2 Gd^*(\tau(\phi,\phi))\phi - Gd^*(\q^0(a,s\phi))\phi.
\end{align}
Alternatively, we can write the trajectory equations as 
\begin{align}
\nonumber   \frac{d}{dt} a &= - *da - s^2\pi ( \tau(\phi, \phi)) - \pi(\q^0(a,s\phi)), \\
\label{eq:xqgcsigmaalternate}  \frac{d}{dt} s &= - \Lambda_\q(a,s,\phi)s, \\
\nonumber  \frac{d}{dt} \phi &= - D_a \phi -  s^2 \xi(\phi)\phi - \widetilde{\eta}_{\q}^1(a,s,\phi) + \Lambda_\q(a,s,\phi)\phi \\
\nonumber & = -D \phi - \tilde{c}^1(a,s,\phi) -  \widetilde{\eta}_{\q}^1(a,s,\phi) + \Lambda_\q(a,s,\phi)\phi,
\end{align}
where $\xi(\phi)$ is as in \eqref{eq:cmap}, and we use $\widetilde{\eta}_{\q}^1(a,s,\phi)$ to denote $ \int_0^1   \D_{(a, sr\phi)} {\eta}_{\q}^1(0,\phi) dr$.  

By making use of $\G^{\gCoul}(Z)$-equivariance as in \eqref{eq:hateta}, the equations defining the flow  of $\Xq^{\gCoul, \sigma}$ define a section
\begin{equation}\label{eq:Fqgctau}
 \F^{\gCoul, \tau}_{\q}: \C^{\gCoul, \tau}(Z) \to \V^{\gCoul, \tau}(Z),
\end{equation}
so that in temporal gauge we have
$$ \F^{\gCoul, \tau}_{\q} = \frac{d}{dt} + \Xqgcsigma.$$
It follows that the space of trajectories of $\Xqgcsigma$ modulo the action of $S^1$ is the zero set of $\F^{\gCoul,\tau}_{\q}$ modulo the action of $\G^{\gCoul}(Z)$.  As in Section~\ref{sec:AdmPer}, it will be useful to consider the extension of $\F^{\gCoul,\tau}_{\q}$ to $\tC^{\gCoul,\tau}(Z)$, so that we may differentiate this map.

\section{Hessians}
In Section 12.4 in \cite{KMbook}, Kronheimer and Mrowka study the derivative of the vector field $\Xq^{\sigma}$ on $\C^{\sigma}_k(Y)$. At each $x \in \C^{\sigma}_{k}(Y)$, this is an operator between the local Coulomb slices:
$$ \Hess^{\sigma}_{\q, x} : \K^{\sigma}_{k,x} \to \K^{\sigma}_{k-1, x},$$ 
called the {\em Hessian} (in the blow-up). The Hessian is needed in Floer theory for several reasons.  First, the non-degeneracy of a stationary point $x$ is expressed in terms of the surjectivity of $\Hess^{\sigma}_{\q, x}$. Second, we need to study Hessians at all points (not necessarily stationary) in order to construct the operator
\begin{equation}
\label{eq:dHess}
 \frac{d}{dt} + \Hess^{\sigma}_{q},
 \end{equation}
which can be applied to vector fields along a path $\gamma$ in $\C^{\sigma}(Y)$. If $\gamma$ is a flow trajectory for $\Xq^{\sigma}$, then the surjectivity of $d/dt + \Hess^{\sigma}_{q}$ indicates that the moduli space of trajectories is regular at $\gamma$. Also, for arbitrary $\gamma$ (not necessarily a flow trajectory), we need the operator~\eqref{eq:dHess} to describe the relative grading of stationary points, and to construct orientations on the moduli spaces of trajectories.

We will do the same analysis in global Coulomb gauge. We will first define a $\tilde{g}$-Hessian before the blow-up, at irreducible points $(a, \phi)$ (that is, those with $\phi \neq 0$). We will then construct a $\tilde{g}$-Hessian on the blow-up, well-defined at all points.

\subsection{The Hessian at irreducibles} \label{sec:Hess}
We start by recalling the original Hessian at irreducibles, as defined in Section 12.3 of \cite{KMbook}: 
\begin{equation}
\label{eq:HessIrr}
\Hess_{\q,x} = \Pi^{\lCoul}_x \circ \D_{x} \Xq : \K_{k,x} \to \K_{k-1,x}.
\end{equation}
This formula applies to any $x=(a, \phi) \in \C_k(Y)$ with $\phi \neq 0$. 

\begin{remark}
When $x$ is a stationary point, we can actually drop the projection $\Pi^{\lCoul}_x$ from the formula for $\Hess_{\q,x}$. This is because $\Xq$ is a formal gradient, so it is orthogonal to the gauge orbits; therefore, it can be viewed as a section of $\K_{k-1}$, and its derivative at a zero point must automatically land in the local Coulomb slice.  
\end{remark}

The $\tilde{g}$-Hessian is defined as: 
\begin{equation}
\label{eq:HessGCoul}
\Hess^{\tilde{g}}_{\q,x} = \Pi^{\agCoul}_x \circ \Dg_x \Xqgc : \K^{\agCoul}_{k,x} \to \K^{\agCoul}_{k-1,x}. 
\end{equation}
Here, $\Dg$ denotes the connection on $\T^{\gCoul}$ induced by the $\tilde g$ metric on $W_k$. To put this in context, recall that the derivative of a vector field is well-defined at a stationary point $x$, but when $x$ is not stationary it depends on the choice of a connection in the tangent bundle. In \eqref{eq:HessIrr}, we simply viewed the vector field $\Xq$ as a map into affine space, and took its derivative as such. Doing the same thing in global Coulomb gauge would mean using the connection $\D$ induced by the ordinary $L^2$ metric. However, $\Xqgc$ is the $\tilde{g}$-gradient  of $(\L + f)|_{W}$, and taking its ordinary derivative $\D_x \Xqgc$ would yield an operator that is not symmetric. It is more natural to use the connection $\Dg$, which is explicitly given by the formula:
\begin{equation}
\label{eq:Dg}
\Dg (X) =   \Pi^{\gCoul}_* \circ \D ( \Pi^{\elCoul}(X)) \circ \Pi^{\elCoul}.
\end{equation}
Here, $X$ is a vector field on $W_k$, and $\Pi^{\elCoul}(X)$ is a section of $\T_k$ over $W_k$. We implicitly extend $\Pi^{\elCoul}(X)$ to a $\Go_{k+1}$-invariant vector field on the whole configuration space $\C_k(Y)$; this allows us to take its derivative in the direction $\Pi^{\elCoul}(Z)$, for some $Z$.

Let us check that the formula \eqref{eq:Dg} produces a connection compatible with $\tilde g$. If $X, Y$ and $Z$ are vector fields on $W_k$, we have:
\begin{align*}
Z \langle X, Y \rangle_{\tilde g} &=  Z \Re \langle\Pi^{\elCoul}(X), \Pi^{\elCoul}(Y) \rangle_{L^2}   \\
&= \Pi^{\elCoul}(Z) \Re \langle\Pi^{\elCoul}(X), \Pi^{\elCoul}(Y) \rangle_{L^2}\\
&= \Re \langle (\D ( \Pi^{\elCoul}(X))) (\Pi^{\elCoul}(Z)),  \Pi^{\elCoul}(Y) \rangle_{L^2} + \langle   \Pi^{\elCoul}(X), (\D ( \Pi^{\elCoul}(Y))) (\Pi^{\elCoul}(Z))\rangle_{L^2} \\
&= \Re \langle (\Dg X) (Z),  \Pi^{\elCoul}(Y) \rangle_{L^2} + \langle   \Pi^{\elCoul}(X), (\Dg Y) (Z)\rangle_{L^2} \\
&=  \Re \langle  \Pi^{\elCoul}( (\Dg X) (Z)),  \Pi^{\elCoul}(Y) \rangle_{L^2} + \langle \Pi^{\elCoul}(X),  \Pi^{\elCoul}((\Dg Y) (Z))\rangle_{L^2} \\
&= \Re \langle (\Dg X)(Z), Y \rangle_{\tilde g} + \langle  X, (\Dg Y) (Z)\rangle_{\tilde g}.
\end{align*}
Here, we have implicitly extended $\Pi^{\elCoul}(X)$ and $\Pi^{\elCoul}(Y)$ to $\Go_{k+1}$-invariant vector fields on the configuration space. The second equality above is true because the function $$\Re \langle\Pi^{\elCoul}(X), \Pi^{\elCoul}(Y) \rangle_{L^2} $$ is $\Go_{k+1}$-invariant, and therefore its derivative in the direction $Z-\Pi^{\elCoul}(Z) \in \J^{\circ}_{k}$ vanishes. For some of the other equalities, we used repeatedly the fact that the $L^2$ inner product with something in the enlarged local Coulomb slice is unchanged by applying either $\Pi^{\gCoul}_*$ or $ \Pi^{\elCoul}$ to the other factor. This last fact is true because both of these operations consist in adding a vector in $\J^{\circ}_{k}$, and such vectors are $L^2$ perpendicular to the enlarged local Coulomb slice. 

One can also check that the connection $\Dg$ is torsion-free:
\begin{align*}
(\Dg X)(Y) - (\Dg Y)(X) &= \Pi^{\gCoul}_* \circ \D ( \Pi^{\elCoul}(X)) (\Pi^{\elCoul}(Y))- \Pi^{\gCoul}_* \circ \D ( \Pi^{\elCoul}(Y)) (\Pi^{\elCoul}(X))\\
&= \Pi^{\gCoul}_* [\Pi^{\elCoul}(X), \Pi^{\elCoul}(Y)] \\
&=[X, Y].
\end{align*}
To see the last equality, it suffices to check how the respective vector fields act on $\Go_{k+1}$-invariant functions on the configuration space. On such functions $f$, the action of a vector field does not change if we apply $\Pi^{\elCoul}$ or $\Pi^{\gCoul}_*$ to that vector field (because this means changing it by a vector field in a direction tangent to $\Go_{k+1}$). The equality follows easily from this observation.

\begin{remark}
It follows from the above discussion that $\Dg$ is the exact analogue of the Levi-Civita connection on the infinite-dimensional manifold $W$ with respect to the $\tilde{g}$-metric.  Therefore, when we compute the $\tilde{g}$-Hessian of a real-valued function on $W$, we will obtain a symmetric operator.  
\end{remark}

From Section~\ref{sec:coulombs} we know that the projection $\Pi^{\gCoul}_*$ maps $\Ke$ isomorphically onto $\T^{\gCoul}$, with inverse $\Pi^{\elCoul}$. Since $\Xqgc = \Pi^{\gCoul}_* \Xq$, we also have $\Xq = \Pi^{\elCoul} \Xqgc$. From here, in view of \eqref{eq:Dg}, we can obtain a simpler formula for the Hessian $\Hess^{\tilde{g}}_{\q,x}$:
\begin{equation}
\label{eq:altHess}
\Hess^{\tilde{g}}_{\q,x} = \Pi^{\agCoul}_x \circ \D_x \Xq \circ \Pi^{\elCoul}_x : \K^{\agCoul}_{k,x} \to \K^{\agCoul}_{k-1,x}. 
\end{equation}

\begin{remark}
The vector field $\Xqgc$ points $\tilde{g}$-orthogonally to the $S^1$-orbits on $W_{k-1}$. Hence, when $x$ is a stationary point of $\Xqgc$, the image of $\D^{\tilde g}\Xqgc$ must be contained in $\K_{k-1}^{\agCoul}$. Further, in this case the derivative of $\Xqgc$ is independent of which connection we use.  Therefore, when $x$ is stationary, we can write  
\begin{equation}
\label{eq:statHess}
 \Hess^{\tilde{g}}_{\q,x} =  \D_x \Xqgc = \Dg_x \Xqgc  : \K^{\agCoul}_{k,x} \to \K^{\agCoul}_{k-1,x}.
 \end{equation}
\end{remark}

It is shown in \cite[Proposition 12.3.1]{KMbook} that $\Hess_{\q,x}$ is a Fredholm operator of index zero.  Similarly, using the $\tilde{g}$-Hessian we have:

\begin{lemma}\label{lem:hessianfredholm}
For any $x=(a, \phi) \in W_k$ with $\phi \neq 0$, the operator $\Hess^{\tilde{g}}_{\q,x}: \K^{\agCoul}_{k,x} \to \K^{\agCoul}_{k-1,x}$ is Fredholm of index zero.  Therefore, it is surjective if and only if it is injective.  
\end{lemma}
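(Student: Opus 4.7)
The plan is to deduce Fredholmness and index zero for $\Hess^{\tilde g}_{\q,x}$ from the analogous statement for the original Hessian $\Hess_{\q,x}\colon \K_{k,x}\to\K_{k-1,x}$, which is \cite[Proposition 12.3.1]{KMbook}. The idea is to transport $\Hess^{\tilde g}_{\q,x}$ across bounded isomorphisms onto the original local Coulomb slice, and then show that the transported operator differs from $\Hess_{\q,x}$ by a compact operator; both Fredholmness and index are preserved by the conjugation and by compact perturbation.

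First, I would observe that $\K_{k,x}$ and $\K^{\agCoul}_{k,x}$ are both topological complements to $\J_{k,x}$ inside $\T_{k,x}$ (the second by the decomposition \eqref{eq:decomposetilde2}). Consequently the projections restrict to mutually inverse bounded isomorphisms $\Pi^{\agCoul}_x|_{\K_{k,x}}\colon \K_{k,x}\to\K^{\agCoul}_{k,x}$ and $\Pi^{\lCoul}_x|_{\K^{\agCoul}_{k,x}}\colon \K^{\agCoul}_{k,x}\to\K_{k,x}$, and similarly at the $L^2_{k-1}$ level. It thus suffices to prove that the conjugate operator
$$T \;:=\; \Pi^{\lCoul}_x\circ\Hess^{\tilde g}_{\q,x}\circ\Pi^{\agCoul}_x|_{\K_{k,x}}\,\colon\,\K_{k,x}\to\K_{k-1,x}$$
is Fredholm of index zero. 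Expanding via formula \eqref{eq:altHess} and using the elementary identity $\Pi^{\lCoul}_x\circ\Pi^{\agCoul}_x=\Pi^{\lCoul}_x$ (both sides kill $\J_{k,x}$ and agree on $\K^{\agCoul}_{k,x}$), one finds
$$T-\Hess_{\q,x} \;=\; \Pi^{\lCoul}_x\circ\D_x\Xq\circ R,\qquad R:=\Pi^{\elCoul}_x\circ\Pi^{\agCoul}_x|_{\K_{k,x}}-\iota,$$
where $\iota\colon\K_{k,x}\hookrightarrow\T_{k,x}$ is the inclusion. For any $v\in\K_{k,x}$, the splittings $v-\Pi^{\agCoul}_x(v)\in\J_{k,x}$ and $\Pi^{\agCoul}_x(v)-\Pi^{\elCoul}_x\Pi^{\agCoul}_x(v)\in\J^{\circ}_{k,x}\subseteq\J_{k,x}$ show that $R$ is a bounded linear map from $\K_{k,x}$ into the subspace $\J_{k,x}$.

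The real content of the argument, and the main analytic obstacle, is to prove that the restriction $\Pi^{\lCoul}_x\circ\D_x\Xq|_{\J_{k,x}}\colon\J_{k,x}\to\K_{k-1,x}$ is compact. For this I would exploit the $\G$-equivariance of $\Xq$ (which holds because $\q$ is the formal gradient of a gauge-invariant function): differentiating the equivariance relation $\Xq(u\cdot x)=u_*\Xq(x)$ at $u=\mathrm{id}$ in the direction $w=(-d\zeta,\zeta\phi)\in\J_{k,x}$ yields the explicit formula $\D_x\Xq(w)=(0,\zeta\cdot\Xq(x)^1)$, where $\Xq(x)^1\in L^2_{k-1}(Y;\Spin)$ is a fixed element. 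The map $\zeta\mapsto\zeta\cdot\Xq(x)^1$ then factors through the compact Rellich inclusion $L^2_{k+1}(Y;i\R)\hookrightarrow L^2_k(Y;i\R)$ followed by the bounded Sobolev multiplication $L^2_k\cdot L^2_{k-1}\to L^2_{k-1}$ (valid for $k\geq 2$), so it is compact; composing with the bounded projection $\Pi^{\lCoul}_x$ preserves compactness. Hence $T=\Hess_{\q,x}+(\text{compact})$, and invariance of the Fredholm index under compact perturbation, combined with \cite[Proposition 12.3.1]{KMbook}, yields that $T$ — and therefore $\Hess^{\tilde g}_{\q,x}$ — is Fredholm of index zero.
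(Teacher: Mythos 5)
Your proof is correct, but the compactness step is unnecessary. Having identified the mutually inverse bounded isomorphisms $\Pi^{\agCoul}_x|_{\K_{k,x}}$ and $\Pi^{\lCoul}_x|_{\K^{\agCoul}_{k,x}}$ (Lemma~\ref{lem:bijection}), the paper simply observes that $\Hess^{\tilde g}_{\q,x}$ is \emph{exactly} the conjugate of $\Hess_{\q,x}$ across them, so Fredholmness and the index transfer directly with no perturbation argument. In fact your error term $R = \Pi^{\elCoul}_x\circ\Pi^{\agCoul}_x|_{\K_{k,x}} - \iota$ vanishes identically: $\Pi^{\agCoul}_x$ agrees with $\Pi^{\gCoul}_*$ on $\K_{k,x}$ (this is the blow-down case established in the proof of Lemma~\ref{lem:bijection}(c)), and $\Pi^{\elCoul}_x\circ\Pi^{\gCoul}_*$ is the identity on $\Ke_{k,x}\supseteq\K_{k,x}$ by \eqref{eq:backandforth}, so $T=\Hess_{\q,x}$ on the nose. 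Your compactness argument for $\Pi^{\lCoul}_x\circ\D_x\Xq|_{\J_{k,x}}$ --- differentiating gauge-equivariance to obtain $\D_x\Xq(-d\zeta,\zeta\phi) = (0,\zeta\,\Xq(x)^1)$ and then factoring through Rellich --- is valid in itself and would indeed be the right tool if $R$ were genuinely nonzero, but here it shows only that the zero operator is compact. The ``main analytic obstacle'' you flag therefore does not arise, and the paper's two-line conjugation argument is all that is needed.
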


\begin{proof} We see from \eqref{eq:altHess} that $\Hess^{\tilde{g}}_{\q,x}$ is the conjugate of $\Hess_{\q, x}$ by the isomorphism $\Pi^{\agCoul}_x:  \K_{k,x} \to \K^{\agCoul}_{k,x}$, with inverse $\Pi^{\elCoul}_x$ (see Lemma~\ref{lem:bijection}). Since $\Hess_{\q, x}$ is Fredholm of index $0$, so is $\Hess^{\tilde{g}}_{\q, x}$.
 \end{proof}

At this point it is helpful to recall the proof of \cite[Proposition 12.3.1]{KMbook}, which says that $\Hess_{\q,x}$ is a Fredholm operator of index zero. That proof will serve as a model for some of our later arguments. 

We start by recalling \cite[Definition 12.2.1]{KMbook}:

\begin{definition}
An operator $L$ is called {\em $k$-almost self-adjoint first-order elliptic} ($k$-ASAFOE) if it is of the form
$$ L=L_0+h$$
where
\begin{itemize}
\item $L_0$ is a first-order, self-adjoint, elliptic differential operator (with smooth coefficients) acting on sections of a vector bundle $E \to Y$;\\
\item $h: C^\infty(Y; E) \to L^2(Y; E)$ is a linear operator on sections of $E$ that extends to a bounded map on $L^2_j(Y; E)$ for all $j$ with $|j| \leq k$.
\end{itemize}
\end{definition}

For $|j| \leq k$, note that a $k$-ASAFOE operator $L_0+h$ is Fredholm of index zero when viewed as a map $L^2_j(Y;E) \to L^2_{j-1}(Y; E)$. Indeed, this statement is true for $L_0$ (by ellipticity and self-adjointness), and adding $h$ is just a compact perturbation.

Note that $\Hess_{\q, x}$ is not $k$-ASAFOE, because it does not act on {\em all} sections of a vector bundle. The remedy in \cite{KMbook} was to introduce an extended Hessian
$$ \widehat{\Hess}_{\q, x} : \T_{k, x} \oplus L^2_k(Y; i\R) \to \T_{k-1, x} \oplus L^2_{k-1}(Y; i\R)$$
by the formula
 \begin{equation}
 \label{eq:ExtHess}
  \widehat{\Hess}_{\q, x} = \begin{pmatrix}
\D_x \Xq & \dd_x \\
\dd_x^* & 0 
\end{pmatrix}.
\end{equation}
Here, $\dd_x$ encodes the infinitesimal gauge action at $x=(a, \phi)$:
$$\dd_x (\xi) = (-d\xi, \xi \phi)$$
and its adjoint 
$$\dd_x^*(b, \psi) = -d^* b + i \Re\langle i\phi, \psi\rangle$$
can be used to define the local Coulomb slice $\K_{k, x}$ by the condition $\dd_x^* =0$.

The extended Hessian is a self-adjoint $k$-ASAFOE operator acting on sections of the bundle $iT^*Y \oplus \Spin \oplus i\R$. With respect to the decomposition $\T_{j, x}\oplus L^2_j(Y; i\R)  = \J_{j, x} \oplus \K_{j, x} \oplus L^2_j(Y; i\R)$, with $j=k$ for the domain and $j=k-1$ for the target, we can write\footnote{In \cite[Equation (12.7)]{KMbook}, the term $h_1$ was inadvertently missing.}
$$
  \widehat{\Hess}_{\q, x} = \begin{pmatrix}
h_1 & h_2 & \dd_x \\
h_3 &  \Hess_{\q, x} & 0  \\
\dd_x^* & 0 & 0
\end{pmatrix}. 
$$
Here, the $h_i$ terms preserve Sobolev regularity and hence are compact as maps $L^2_k \to L^2_{k-1}$. (As an aside, note also that the $h_i$ terms are zero at stationary points.) Thus, dropping $h_1, h_2$ and $h_3$ from the expression above yields a Fredholm operator of index $0$. In turn, since the $h_i$ are compact, this shows that $\Hess_{\q, x}$ must be Fredholm of index $0$.

\subsection{The Hessian on the blow-up} \label{sec:HessBlowUp}
We now define the analogous Hessian on the blow-up.  
In Section 12.4 of \cite{KMbook}, the Hessian on the blow-up is defined by:
\[
\Hess^{\sigma}_{\q,x} = \Pi^{\lCoul,\sigma}_x \circ \D^\sigma_x \Xqsigma : \K^{\sigma}_{k,x} \to \K^{\sigma}_{k-1,x}.
\]
Note that since $\C^\sigma_k(Y)$ is only a submanifold of a vector space, the derivative of the section $\Xqsigma$ need not land in its tangent bundle $\T^\sigma_{k-1}$. What we actually mean\footnote{In \cite{KMbook}, the notation $\D$ is used instead of $\D^\sigma$.  We use $\D^\sigma$ to distinguish this derivative from the $L^2$ derivative $\D$ in the ambient vector space $L^2(Y;iT^*Y) \oplus \R \oplus L^2(Y;\Spin)$, which we will also work with.} by  $\D^\sigma_x \Xqsigma$ is the composition of the derivative in the larger vector space with the $L^2$ orthogonal projection onto $\T^\sigma_{k-1}$. To obtain the Hessian, we further project to the corresponding local Coulomb slice by $\Pi^{\lCoul,\sigma}_x$.

To show that $\Hess^{\sigma}_{\q,x} $ is Fredholm of index zero, Kronheimer and Mrowka introduced the extended Hessian in the blow-up:
\begin{equation}
\label{eq:ExtHessBlowUp}
  \widehat{\Hess}^{\sigma}_{\q, x} = \begin{pmatrix}
\D^\sigma_x \Xqsigma & \dd^{\sigma}_x \\
\dd_x^{\sigma, \dagger} & 0 
\end{pmatrix}.
\end{equation}
Here,
\begin{align}
\label{eq:dsigma}
 \dd^{\sigma}_x (\xi) &= (-d\xi, 0,\xi \phi), \\
\label{eq:dsigmadagger}
 \dd^{\sigma, \dagger}_x (b, r, \psi) &= -d^*b + is^2\Re\langle i\phi, \psi \rangle + i |\phi|^2 \Re \mu_Y(\langle i\phi, \psi\rangle),
 \end{align}
so that $ \K_j^{\sigma} = \ker \dd^{\sigma, \dagger}_x$,  $\J_j^{\sigma} = \im \dd^{\sigma}_x$ are the local Coulomb slice and the tangent to the gauge orbit, as in \eqref{eq:BlowUpDecompose}. Note that on the blow-up, $\dd_x^{\sigma, \dagger}$ is not quite the adjoint to $\dd_x^{\sigma}$, just as the decomposition $\T^{\sigma}_j = \J^{\sigma}_j \oplus \K^{\sigma}_j$ is not $L^2$ orthogonal. Therefore, unlike  the extended Hessian $\widehat{\Hess}_{\q, x}$, the operator $\widehat{\Hess}^{\sigma}_{\q, x}$ is not symmetric. Moreover, $\widehat{\Hess}^{\sigma}_{\q,x}$ does not a priori act on sections of a vector bundle, but rather on the subspace of $ \T^{\sigma}_{j, x} \oplus L^2_j(Y; i\R)$, where we have the condition $\Re \langle \phi, \psi\rangle_{L^2} = 0$. Nevertheless, we can combine the $r$ and $\psi$ components of $(b, r, \psi) \in \T^{\sigma}_{j, x}$ into
$$ \psis = \psi + r \phi,$$
and then we can think of $\widehat{\Hess}^{\sigma}_{\q, x}$ as acting on sections of $iT^*Y \oplus \Spin \oplus i\R$. It then becomes a $k$-ASAFOE operator (hence Fredholm of index zero), and starting from here one can show that $\Hess^{\sigma}_{\q,x} $ is Fredholm of index zero---much as in the proof of Lemma~\ref{lem:hessianfredholm}. Further, when $x$ is a non-degenerate stationary point, $\Hess^{\sigma}_{\q,x} $ is invertible and has real spectrum (even though it is not self-adjoint). See \cite[Section 12.4]{KMbook} for details.

Let us now move to global Coulomb gauge. Fix $x = (a, s, \phi) \in W^{\sigma}$. In this setting, we define a $\tilde{g}$-Hessian in the blow-up by:
\begin{equation}
\label{eq:HessianBlowupCG}
\Hess^{\tilde{g},\sigma}_{\q,x} = \Pi^{\agCoul,\sigma}_x \circ   \Dgs_x \Xqgcsigma : \K^{\agCoul,\sigma}_{k,x} \to \K^{\agCoul,\sigma}_{k-1,x},  
\end{equation}
where
\begin{equation}
\label{eq:DGS}
 \Dgs X := \Pi^{\gCoul, \sigma}_* \circ  \D^\sigma(\Pi^{\elCoul, \sigma}(X)) \circ \Pi^{\elCoul, \sigma}.
 \end{equation}
is the connection on $\T^{\gCoul, \sigma}$ defined by analogy with \eqref{eq:Dg}.

Since $\Xqgcsigma = \Pi^{\gCoul, \sigma}_* \circ \Xqsigma$, in view of Lemma~\ref{lem:bijection2}(a) we  have $\Xqsigma = \Pi^{\elCoul, \sigma} \circ \Xqgcsigma$. Therefore, we can re-write the blown-up $\tilde{g}$-Hessian as
\begin{equation}
\label{eq:Hess2}
\Hess^{\tilde{g},\sigma}_{\q,x} =  \Pi^{\agCoul,\sigma}_x \circ  \D^\sigma_x \Xqsigma \circ \Pi^{\elCoul, \sigma}_x.
\end{equation}

\begin{remark}
\label{rem:HessEqual}
When $x$ is on the blow-up locus (i.e., $s=0$), we know from Lemma~\ref{lem:bijection} (b) that the anticircular global Coulomb slice coincides with the local Coulomb slice and $\Pi^{\lCoul, \sigma}_x = \Pi^{\agCoul, \sigma}_x$.  We see from \eqref{eq:Hess2} that $\Hess^{\tilde{g},\sigma}_{\q,x}$ agrees with the ordinary blow-up Hessian $\Hess^{\sigma}_{\q,x}$. 
\end{remark}

\begin{lemma}
\label{lem:HessB}
$(a)$ For any $x \in W^{\sigma}_k$, the operator $\Hess^{\tilde{g},\sigma}_{\q,x}$ is Fredholm of index zero.

$(b)$ When $x$ is a non-degenerate stationary point of $\Xqgcsigma$, the operator  $\Hess^{\tilde{g},\sigma}_{\q,x} $ is invertible and has real spectrum.
\end{lemma}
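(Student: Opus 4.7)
The strategy is to reduce both parts to the analogous statements for the ordinary blown-up Hessian $\Hess^\sigma_{\q,x}$, which Kronheimer-Mrowka establish in \cite[Section 12.4]{KMbook} (Fredholm of index zero always, and invertible with real spectrum at non-degenerate stationary points). The bridge between the two Hessians is the bounded isomorphism $T := \Pi^{\lCoul,\sigma}_x|_{\K^{\agCoul,\sigma}_{j,x}} : \K^{\agCoul,\sigma}_{j,x} \to \K^\sigma_{j,x}$ from Lemma~\ref{lem:bijection}(a), whose inverse is $T^{-1} = \Pi^{\agCoul,\sigma}_x|_{\K^\sigma_{j,x}}$.

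Starting from \eqref{eq:Hess2} and using that $\Pi^{\agCoul,\sigma}$ is the projection onto $\K^{\agCoul,\sigma}$ along $\J^\sigma$, together with $\Pi^{\agCoul,\sigma}\circ\Pi^{\lCoul,\sigma}=\Pi^{\agCoul,\sigma}$ on $\T^\sigma$, a short calculation yields
\[
\Hess^{\tilde{g},\sigma}_{\q,x} - T^{-1}\circ \Hess^\sigma_{\q,x}\circ T = \Pi^{\agCoul,\sigma}_x \circ \D^\sigma_x\Xqsigma \circ \bigl(\Pi^{\elCoul,\sigma}_x - \Pi^{\lCoul,\sigma}_x\bigr)
\]
as operators on $\K^{\agCoul,\sigma}_{k,x}$. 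Both $\Pi^{\elCoul,\sigma}$ and $\Pi^{\lCoul,\sigma}$ act by subtracting an element of $\J^\sigma$ determined by solving an order-two elliptic equation for a scalar $\zeta$; for $v\in \K^{\agCoul,\sigma}_{k,x}$ we have $d^*v^0=0$, so by Lemma~\ref{lem:elcUniqueness} and its local-Coulomb analogue, both correcting $\zeta$'s lie in $L^2_{k+2}(Y; i\R)$. Hence $\Pi^{\elCoul,\sigma}_x-\Pi^{\lCoul,\sigma}_x$ maps $\K^{\agCoul,\sigma}_{k,x}$ boundedly into $\J^\sigma_{k+1,x}$. Since $\D^\sigma_x\Xqsigma$ is a first-order elliptic operator perturbed by the derivative of $\q^\sigma$ (which preserves Sobolev regularity by very tameness), and $\Pi^{\agCoul,\sigma}_x$ is bounded, the right-hand side above defines a bounded map $\K^{\agCoul,\sigma}_{k,x}\to \K^{\agCoul,\sigma}_{k,x}$, hence is compact as a map into $\K^{\agCoul,\sigma}_{k-1,x}$ by Rellich's theorem. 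Since $T^{-1}\Hess^\sigma_{\q,x}T$ is Fredholm of index zero as a conjugation of an index-zero Fredholm operator, $\Hess^{\tilde{g},\sigma}_{\q,x}$ is a compact perturbation of it, and therefore also Fredholm of index zero, proving part (a).

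For part (b), at a non-degenerate stationary point $x$ the compact error term vanishes identically. Indeed, gauge equivariance of $\Xqsigma$ together with $\Xqsigma(x)=0$ forces $\Xqsigma$ to vanish along the entire gauge orbit through $x$, so $\D^\sigma_x\Xqsigma$ annihilates $\J^\sigma_{k,x}$; meanwhile $(\Pi^{\elCoul,\sigma}-\Pi^{\lCoul,\sigma})(v)$ always lies in $\J^\sigma$. Consequently the identity
\[
\Hess^{\tilde{g},\sigma}_{\q,x} = T^{-1}\circ \Hess^\sigma_{\q,x}\circ T
\]
holds exactly, exhibiting $\Hess^{\tilde{g},\sigma}_{\q,x}$ as similar via the bounded isomorphism $T$ to $\Hess^\sigma_{\q,x}$. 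Invertibility and reality of the spectrum are similarity invariants, so part (b) follows from the corresponding assertions for $\Hess^\sigma_{\q,x}$ in \cite[Section 12.4]{KMbook}. The main technical point to verify carefully is the elliptic regularity gain for $\Pi^{\elCoul,\sigma}-\Pi^{\lCoul,\sigma}$ on $\K^{\agCoul,\sigma}$; once the two $\zeta$-corrections are shown to live in $L^2_{k+2}$, the remainder of the argument is formal.
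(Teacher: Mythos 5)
Your argument takes a genuinely different route from the paper's. Where the paper splits into cases---for reducible $x$ the two Hessians agree outright (Remark~\ref{rem:HessEqual}), while for irreducible $x$ it passes to the blow-down, invokes Lemma~\ref{lem:hessianfredholm} for (a) and the formal self-adjointness of $\D^{\tilde g}_x\Xqgc$ for (b)---you treat all $x$ uniformly by relating $\Hess^{\tilde g,\sigma}_{\q,x}$ directly to $T^{-1}\Hess^\sigma_{\q,x}T$, exhibiting a compact error. Both (a) parts are sound reductions to known Fredholm-index-zero statements, and your observation for (b), that $\D^\sigma_x\Xqsigma$ kills $\J^\sigma$ at a stationary point so the error term vanishes exactly, is correct and gives an efficient proof via similarity invariance of the spectrum. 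The paper's version of (b) is slightly more self-contained in that for irreducibles it argues from self-adjointness of $\D^{\tilde g}_x\Xqgc$ rather than citing Kronheimer--Mrowka's real-spectrum result; either is fine.

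There is, however, a genuine flaw in the way you justify compactness of the error in (a). You assert that $\Pi^{\agCoul,\sigma}_x\circ\D^\sigma_x\Xqsigma\circ(\Pi^{\elCoul,\sigma}_x-\Pi^{\lCoul,\sigma}_x)$ is a \emph{bounded} map $\K^{\agCoul,\sigma}_{k,x}\to\K^{\agCoul,\sigma}_{k,x}$, and then appeal to the compact inclusion $L^2_k\hookrightarrow L^2_{k-1}$. This fails because of the term $\zeta\,D\phi$ arising from $D(\zeta\phi)=\rho(d\zeta)\phi+\zeta\,D\phi$ inside $\D^\sigma_x\Xqsigma$. Even with the elliptic gain $\zeta\in L^2_{k+2}$, multiplying by $D\phi\in L^2_{k-1}$ (since $\phi$ is only $L^2_k$) lands the product in $L^2_{k-1}$, not $L^2_k$, so the map is \emph{not} bounded into $L^2_k$. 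The conclusion survives, but the Rellich step must be moved earlier: the error factors through the bounded scalar map $v\mapsto\zeta_1(v)-\zeta_2(v)\in L^2_{k+2}(Y;i\R)$, and one precomposes the compact embedding $L^2_{k+2}\hookrightarrow L^2_{k+1}$ there, after which the remaining operations (forming $(-d\zeta,0,\zeta\phi)$, applying $\D^\sigma_x\Xqsigma$, projecting) are bounded from $L^2_{k+1}$ into $L^2_{k-1}$. Also note that the $L^2_{k+2}$ gain for the $\Pi^{\lCoul,\sigma}$-correction is not actually established in the paper (Lemma~\ref{lem:elcUniqueness} treats only the enlarged projection), so the "local-Coulomb analogue" you invoke would need to be written out, though the argument is indeed parallel.
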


\begin{proof}
When $x$ is reducible, we have  $\Hess^{\tilde{g},\sigma}_{\q,x} = \Hess^{\sigma}_{\q,x}$, and the corresponding results for $\Hess^{\sigma}_{\q,x}$ were established in \cite[Section 12.4]{KMbook}.  
%In particular, Lemma 12.4.3 in \cite{KMbook} says that the extended Hessian $\widehat{\Hess}^{\sigma}_{\q,x}$ has real spectrum. Since at a stationary point $\Hess^{\sigma}_{\q,x}$ is a direct summand of $\widehat{\Hess}^{\sigma}_{\q,x}$, it must have real spectrum as well.
 
 When $x$ is irreducible, $\Hess^{\tilde{g},\sigma}_{\q,x} $ is conjugate to $\Hess^{\tilde{g}}_{\q,x}$  via the blow-down map. Thus, part (a) is a consequence of Lemma~\ref{lem:hessianfredholm}. For part (b), since $x$ is stationary, by \eqref{eq:altHess}, we have that $\Hess^{\tilde{g}}_{\q,x} = \D^{\tilde g}_{x} \Xqgc$, and the latter operator is invertible and self-adjoint (being a formal Hessian). The conclusion follows.
\end{proof}

Let us also mention:

\begin{lemma}\label{lem:hessiancontinuity}
The map $\Hess^{\tilde{g},\sigma}_{\q} : \K_k^{\agCoul,\sigma} \to \K_{k-1}^{\agCoul,\sigma}$ is a continuous bundle map.  
\end{lemma}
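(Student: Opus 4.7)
The plan is to exploit the alternative formula
$$\Hess^{\tilde{g},\sigma}_{\q,x} = \Pi^{\agCoul,\sigma}_x \circ  \D^\sigma_x \Xqsigma \circ \Pi^{\elCoul, \sigma}_x$$
from \eqref{eq:Hess2} and verify, factor by factor, that the constituent maps depend continuously on $x \in W^{\sigma}_k$ as bundle maps between the relevant Sobolev completions of $\T^\sigma_k$ and $\T^\sigma_{k-1}$. Since continuity of a composition follows from continuity of each factor (together with the equivalence of Sobolev norms on bounded subsets of $W^\sigma_k$), this reduces the lemma to three separate continuity claims.

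First I would handle $\Pi^{\elCoul, \sigma}_x$: this is the blow-up analogue of the map studied in Lemma~\ref{lem:elc}. The key point is that Lemma~\ref{lem:elcUniqueness}, applied slicewise (with $\phi$ replaced by $s\phi$ on the blow-down), produces the function $\zeta = \zeta(x,b,r,\psi)$ appearing in \eqref{eq:pieLCsigma} as the solution of an invertible operator equation $E_{s\phi}(\zeta) = \cdots$ depending smoothly on $(a,s,\phi) \in W^\sigma_k$. Hence $\Pi^{\elCoul, \sigma}$ extends to a smooth map $W^\sigma_k \to \Hom(\T^{\gCoul, \sigma}_j, \T^\sigma_j)$ for $|j| \leq k$, which in particular is continuous. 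Next, for $\D^\sigma_x \Xqsigma$, smoothness of $\Xqsigma = \Pi^{\elCoul,\sigma} \circ \Xqgcsigma$ follows from Lemma~\ref{lem:continuityxqsigma} combined with the smoothness of $\Pi^{\elCoul,\sigma}$ just established, so its derivative (taken inside the ambient vector space and then $L^2$-projected onto $\T^\sigma_{k-1}$) varies continuously in $x$ as an element of $\Hom(\T^\sigma_k, \T^\sigma_{k-1})$.

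For the third factor $\Pi^{\agCoul,\sigma}_x$, I would write it as the composition of $(\Pi^{\gCoul,\sigma}_*)_x$ with the projection \eqref{eq:antiproj} along $\J^{\gCoul,\sigma}_x$. The map $(\Pi^{\gCoul,\sigma}_*)_x$ is smooth in $x$ by the blow-up extension of Lemma~\ref{lem:igc}, with formula \eqref{eq:piCoulSigma}. The second piece involves the scalar $\|i\phi\|_{\tilde g}^{-2}\langle i\phi, \psi\rangle_{\tilde g}$, and here continuity rests on two facts: the metric $\tilde g(a,s\phi)$ depends smoothly on $(a, s, \phi)$ (via the continuity of $\Pi^{\elCoul}$ on the blow-down), and the normalization $\|\phi\|_{L^2} = 1$ built into the blow-up $\C^\sigma(Y)$ guarantees that $\|i\phi\|_{\tilde g}^2$ is bounded below by a positive constant on $L^2_k$-bounded subsets. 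Combining these, the formula \eqref{eq:antiproj} depends continuously on $x$, and composing with $(\Pi^{\gCoul,\sigma}_*)_x$ gives the claim.

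The main obstacle is verifying that all of this behaves well uniformly across the stratum $s=0$, where the anticircular and local Coulomb slices coincide (Remark~\ref{rem:HessEqual}) but the projection formulas differ from those used in the interior. Concretely, one must check that the solution $\zeta$ furnished by Lemma~\ref{lem:elcUniqueness} depends continuously on $(a, s, \phi)$ through $s = 0$, which in turn reduces to continuity of $\phi \mapsto E_{s\phi}^{-1}$; this is the content of Lemma~\ref{lem:elcUniqueness}(b), extended to allow $s\phi$ to degenerate. Once this uniform control is in hand, assembling the three continuous factors yields the continuity of $\Hess^{\tilde g,\sigma}_{\q}$ as a bundle map $\K^{\agCoul,\sigma}_k \to \K^{\agCoul,\sigma}_{k-1}$, and the fact that the image lands in $\K^{\agCoul,\sigma}_{k-1}$ is automatic from the presence of $\Pi^{\agCoul,\sigma}_x$ as the outermost factor.
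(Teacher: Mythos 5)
Your proposal is correct and follows essentially the same route as the paper: both arguments decompose $\Hess^{\tilde g,\sigma}_{\q}$ via \eqref{eq:Hess2}, split $\Pi^{\agCoul,\sigma}$ into $\Pi^{\gCoul,\sigma}_*$ followed by the projection \eqref{eq:antiproj}, and invoke (the blow-up analogues of) Lemmas~\ref{lem:igc} and \ref{lem:elc} for continuity of the Coulomb projections. The extra attention you pay to the behavior at the reducible stratum $s=0$ is a good reflex and is handled correctly by your appeal to the invertibility of $E_{s\phi}$ at $s=0$ and the normalization $\|\phi\|_{L^2}=1$, though the paper takes this as implicit.
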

\begin{proof}
This follows from the continuity of the factors in \eqref{eq:Hess2}. In particular, recall that $\Pi^{\agCoul, \sigma}$ is the composition of $\Pi^{\gCoul,\sigma}_*$ with the projection \eqref{eq:antiproj} onto $\K^{\agCoul, \sigma}$. The continuity of $\Pi^{\gCoul,\sigma}_*$ can be proved by an argument similar to that in Lemma~\ref{lem:igc}. As for the projection \eqref{eq:antiproj}, it can be seen from that formula that it is a continuous operation. Finally, $ \Pi^{\elCoul, \sigma}$ is continuous by the same arguments as in Lemma~\ref{lem:elc}.
\end{proof}

\subsection{The split extended Hessian on the blow-up}
\label{sec:ExtSpilt}
For the purposes of this book, it is helpful to work with a different extended Hessian than the one in \eqref{eq:ExtHessBlowUp}. Precisely, we replace \eqref{eq:dsigmadagger} with an operator
$$ \dd^{\sp, \sigma, \dagger}_x : \T^{\sigma}_{j,x} \to L^2_j(Y; i\rr),$$
defined as follows. We decompose the domain $\T^{\sigma}_{j,x}= \K^\sigma_{j,x} \oplus \J^{\sigma}_{j,x} $ into three summands $\K^\sigma_{j,x} \oplus \J^{\gCoul, \sigma}_{j,x} \oplus \J^{\circ, \sigma}_{j,x}$, and also decompose the codomain $L^2_{j-1}(Y; i\R)$ into $(\im d^*)_{j-1}  \oplus i\R$ (that is, into functions that integrate to zero and constant functions). With respect to these decompositions, we let 
\begin{equation}
\label{eq:decomposing}
 \dd^{\sp, \sigma, \dagger}_x = \begin{pmatrix} 0 & 0 & -d^* \\
 0 & i \Re \langle i\phi, \cdot \rangle_{L^2}  & 0 \end{pmatrix},
\end{equation}
where $-d^*$ acts on the first component of $(-d\xi, 0, \xi\phi) \in  \J^{\circ, \sigma}_{j,x}$, and $i \Re \langle i\phi, \cdot \rangle_{L^2}$ acts on the last component of $ (0, 0, it\phi) \in\J^{\gCoul, \sigma}_{j,x}$, $t\in \rr$. Note that $\| \phi \|_{L^2} = 1$, so $i \Re \langle i\phi, it\phi \rangle_{L^2}$  simply equals $it$.

One can also write a more compressed formula for $ \dd^{\sp, \sigma, \dagger}_x$. Recall from 
\eqref{eq:piLCsigma} that $\Pi^{\lCoul,\sigma}_{(a,s,\phi)}(b,r,\psi) = (b-d\zeta,r, \psi + \zeta \phi),$ 
for some $\zeta = \zeta(x, b,r,\psi): Y \to i\rr$. Then:
\begin{align*}
 \dd^{\sp, \sigma, \dagger}_x (b,r,\psi) &=  d^*d \zeta+  \mu_Y(\zeta).
\end{align*}

Note that the kernel of $ \dd^{\sp, \sigma, \dagger}_x$ is the local Coulomb slice $\K^{\sigma}_{j,x}$, just as for $\dd^{\sigma, \dagger}_x$.
 
  We now define the {\em split extended Hessian in the blow-up} to be
  \begin{equation}
\label{eq:SplitExtHessBlowUp}
  \widehat{\Hess}^{\sp, \sigma}_{\q, x} = \begin{pmatrix}
\D^\sigma_x \Xqsigma & \dd^{\sigma}_x \\
\dd_x^{\sp, \sigma, \dagger} & 0 
\end{pmatrix}: \T^{\sigma}_{j,x} \oplus L^2_j(Y; i\rr) \to \T^{\sigma}_{j-1,x} \oplus L^2_{j-1}(Y; i\rr).
\end{equation}

 At any $x$, by combining $\psi$ and $r$ into $\psis=\psi + r\phi$ as in the case of $ \widehat{\Hess}^{\sigma}_{\q, x}$, we see that $ \widehat{\Hess}^{\sp, \sigma}_{\q, x}$ is a $k$-ASAFOE operator. We also have:

\begin{lemma}
\label{lem:eHsplit}
If $x$ is a non-degenerate stationary point of $\Xqgcsigma$, then $\widehat{\Hess}^{\sp, \sigma}_{\q, x}$ is invertible and has real spectrum.
\end{lemma}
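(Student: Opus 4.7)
The plan is to exploit a block triangular decomposition of $\widehat{\Hess}^{\sp, \sigma}_{\q, x}$ relative to the splitting $\T^\sigma_{j,x} \oplus L^2_j(Y; i\R) = \K^\sigma_{j,x} \oplus \bigl(\J^\sigma_{j,x} \oplus L^2_j(Y;i\R)\bigr)$, so that its spectrum decomposes into contributions from the standard blow-up Hessian $\Hess^\sigma_{\q,x}$ and a concrete antidiagonal $2\times 2$ block built from $\dd^\sigma_x$ and $\dd^{\sp,\sigma,\dagger}_x|_{\J^\sigma_{j,x}}$. Since $\widehat{\Hess}^{\sp,\sigma}_{\q,x}$ is $k$-ASAFOE (as observed just before the lemma), it is Fredholm of index zero with discrete spectrum, so invertibility reduces to triviality of the kernel and ``real spectrum'' to checking that each eigenvalue is real.

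The first step is to verify that, at the stationary point $x$, the operator $\D^\sigma_x \Xqsigma$ annihilates $\J^\sigma_{j,x}$. For every infinitesimal gauge $\xi$, gauge-equivariance of $\Xqsigma$ makes the Lie bracket of $\Xqsigma$ with the induced vector field $y \mapsto \dd^\sigma_y \xi$ vanish identically; evaluating at $x$ and using $\Xqsigma(x)=0$ gives $\D_x \Xqsigma(\dd^\sigma_x \xi) = 0$. Since $\Xqsigma$ is tangent to $\C^\sigma(Y)$ and vanishes at $x$, the ambient derivative $\D_x \Xqsigma$ already takes $\T^\sigma_{j,x}$ into itself, so $\D^\sigma_x \Xqsigma$ agrees with $\D_x \Xqsigma$ on $\T^\sigma_{j,x}$ and the vanishing on $\J^\sigma_{j,x}$ persists. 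Combined with the inclusion $\dd^\sigma_x(L^2_j) \subseteq \J^\sigma_{j-1,x}$ and the identity $\K^\sigma_{j,x} = \ker \dd^{\sp,\sigma,\dagger}_x$, this places $\widehat{\Hess}^{\sp,\sigma}_{\q,x}$ into the block lower triangular form
$$\widehat{\Hess}^{\sp,\sigma}_{\q,x} = \begin{pmatrix} \Hess^\sigma_{\q,x} & 0 \\ * & M \end{pmatrix}, \qquad M = \begin{pmatrix} 0 & \dd^\sigma_x \\ \dd^{\sp,\sigma,\dagger}_x|_{\J^\sigma_{j,x}} & 0 \end{pmatrix},$$
acting on $\K^\sigma_{j,x} \oplus (\J^\sigma_{j,x} \oplus L^2_j)$.

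By the results of Section~\ref{sec:HessBlowUp}, the block $\Hess^\sigma_{\q,x}$ is invertible and has real spectrum at a non-degenerate stationary point. For the antidiagonal block $M$, I would compute directly from the definitions that
$$\dd^{\sp,\sigma,\dagger}_x \circ \dd^\sigma_x = \Delta + P_c \quad \text{on } L^2_j(Y;i\R),$$
where $\Delta = d^*d$ and $P_c$ denotes $L^2$-orthogonal projection onto constants. The splitting $L^2_j = i\R \oplus (\im d^*)_j$ matches the $\J^{\gCoul,\sigma}_{j,x} \oplus \J^{\circ,\sigma}_{j,x}$ decomposition under $\dd^\sigma_x$, and applying the matrix formula \eqref{eq:decomposing} on each piece yields the identity above. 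Since $\Delta + P_c$ is positive self-adjoint, any eigenvalue $\lambda$ of $M$ satisfies $\lambda^2 \in \sigma(\Delta + P_c)$, a positive real number, so $\lambda$ itself is real and nonzero; in particular $M$ is invertible. The block lower triangular structure then gives $\sigma(\widehat{\Hess}^{\sp,\sigma}_{\q,x}) = \sigma(\Hess^\sigma_{\q,x}) \cup \sigma(M) \subset \R$, and invertibility of both diagonal blocks forces invertibility of the whole.

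The main obstacle is the explicit identification $\dd^{\sp,\sigma,\dagger}_x \circ \dd^\sigma_x = \Delta + P_c$: this requires carefully unpacking $\dd^{\sp,\sigma,\dagger}_x$ via the local Coulomb projection $\Pi^{\lCoul,\sigma}_x$ (or the matrix formula \eqref{eq:decomposing}), handling the reducible case $s=0$ and the irreducible case $s>0$ uniformly, and tracking signs together with the normalization $\|\phi\|_{L^2}=1$ on the constant component. Once this identification is in hand, the remainder is a clean block-matrix unwinding of the stationary-point structure.
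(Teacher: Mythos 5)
Your proof is essentially the same as the paper's. The paper also decomposes $\widehat{\Hess}^{\sp,\sigma}_{\q,x}$ into the diagonal block $\Hess^\sigma_{\q,x}$ (acting on $\K^\sigma$) and the antidiagonal block $M = \left(\begin{smallmatrix} 0 & \dd^\sigma_x \\ \dd^{\sp,\sigma,\dagger}_x & 0\end{smallmatrix}\right)$, then reduces the analysis of $M$ to the computation $\dd^{\sp,\sigma,\dagger}_x\dd^\sigma_x(\xi) = \Delta\xi + \mu_Y(\xi)$, which is diagonal (and positive self-adjoint) with respect to $(\im d^*)_{j-1}\oplus i\R$ — this is exactly your $\Delta + P_c$. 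The only minor difference is that you allow a lower-triangular $*$ term, whereas the paper's block form is actually strictly block diagonal at a stationary point (because $\D^\sigma_x\Xqsigma$ vanishes on $\J^\sigma_{j,x}$ and preserves $\K^\sigma_{j,x}$, cf.\ the analogous statement in the proof of Lemma~\ref{lem:eH}); your more conservative lower-triangular claim still suffices for the spectral conclusion, so this costs nothing.
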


\begin{proof}
%When $x$ is reducible, we have $ \widehat{\Hess}^{\tilde{g}, \sigma}_{\q, x} =   \widehat{\Hess}^{\sigma}_{\q, x}$, and we can use \cite[Lemma 12.4.3]{KMbook}. 
The argument is similar to that in \cite[proof of Lemma 12.4.3]{KMbook}.  The operator $ \widehat{\Hess}^{\sp, \sigma}_{\q, x}$ has a block form where one block is ${\Hess}^{\sigma}_{\q, x}$ and the other is 
\begin{equation}\label{eq:block-hess-dsigma}
\begin{pmatrix}
0 &\dd^{\sigma}_x \\
\dd^{\sp, \sigma, {\dagger}}_x & 0
\end{pmatrix}.
\end{equation}
It is established in \cite[proof of Lemma 12.4.3]{KMbook} that the operator ${\Hess}^{\sigma}_{\q, x}$  is invertible and has real spectrum.  To justify that \eqref{eq:block-hess-dsigma} is invertible and has real spectrum, it suffices to prove that the operator $\dd^{\sp, \sigma, \dagger}_x \dd^{\sigma}_x$ is self-adjoint and strictly positive. 

To see this last fact, we compute
$$\dd^{\sp, \sigma, \dagger}_x \dd^{\sigma}_x (\xi) = \dd^{\sp, \sigma, \dagger}_x (-d\xi, 0, \xi \phi) = \Delta \xi + \mu_Y(\xi).$$

With respect to the orthogonal decomposition $L^2_{j-1}(Y; i\R)=(\im d^*)_{j-1}  \oplus i\R$, we have
$$ \dd^{\sp, \sigma, \dagger}_x \dd^{\sigma}_x = \begin{pmatrix}
\Delta & 0 \\
0 & 1
\end{pmatrix}.$$
Both diagonal entries, $\Delta$ and $1$, are self-adjoint and positive on the respective summands. This completes the proof. 
\end{proof}

\begin{remark}
We could have defined a split extended Hessian in the blow-down as well (at irreducibles), but we had no need for that construction.
\end{remark}

\subsection{The $\tilde g$--extended Hessian on the blow-up} \label{sec:ExtHessBlowUp}
We now construct another extended Hessian in the blow-up, using the $\tilde{g}$ metric. The definition is somewhat similar to that of the split extended Hessian in Section~\ref{sec:ExtSpilt} above. Precisely, we set
 \begin{equation}
 \label{eq:ExtHessBlowUpGC}
   \widehat{\Hess}^{\tilde{g}, \sigma}_{\q, x} = \begin{pmatrix}
S_x \circ \D^\sigma_x \Xq^{\sigma} \circ S_x^{-1} & \dd^{\sigma}_x \\[.5em]
\dd^{\sigma, \tilde{\dagger}}_x & 0 
\end{pmatrix}.
\end{equation}
Here, $S_x$ is the shear map from \eqref{eq:shear}, and to define $\dd^{\sigma,\tilde{\dagger}}_x: \T^{ \sigma}_{j,x} \to L^2_{j-1}(Y; i\R)$, we use the decompositions
$$\T^{\sigma}_{j,x}=\K^{\agCoul, \sigma}_{j,x} \oplus \J^{\gCoul, \sigma}_{j,x} \oplus \J^{\circ, \sigma}_{j,x}, \ \ \ \ \ L^2_{j-1}(Y; i\R) = (\im d^*)_{j-1}  \oplus i\R,$$ 
and set
\begin{equation}
\label{eq:decomposingtilde}
 \dd^{\sigma, \tilde{\dagger}}_x = \begin{pmatrix} 0 & 0 & -d^* \\
 0 & i\langle i\phi, \cdot \rangle_{\tilde g}  & 0 \end{pmatrix}.
\end{equation}

Alternatively, we have the formula 
\begin{equation}
\label{eq:dcsdagger}
\dd^{\sigma, \tilde{\dagger}}_x(b, r, \psi) = -d^* b +  i \langle i\phi, \psi + (Gd^*b) \phi \rangle_{\tilde{g}}.
\end{equation}
  
Observe the kernel of $\dd^{\sigma, \tilde{\dagger}}_x$ is the anticircular global Coulomb slice $\K^{\agCoul, \sigma}_{j,x}$, given by the conditions $d^*b=0$ and $\langle i\phi, \psi\rangle_{\tilde g}=0$.
  
At any $x$, by combining $\psi$ and $r$ into $\psis=\psi + r\phi$ as before, we would like to claim that $ \widehat{\Hess}^{\tilde{g}, \sigma}_{\q, x}$ is a $k$-ASAFOE operator as a first step to proving that $\widehat{\Hess}^\sigma_{\q,x}$ is invertible and has real spectrum at non-degenerate stationary points.  It turns out that if $x$ is not a stationary point, then $\widehat{\Hess}^{\tilde{g}, \sigma}_{\q, x}$ is only $(k-1)$-ASAFOE, so we will have to be careful.  To establish these properties, we will use a different operator which agrees with $ \widehat{\Hess}^{\tilde{g}, \sigma}_{\q, x}$ at stationary points.  This operator will also be useful in Section~\ref{sec:linearized} and is described in the following lemma.

\begin{lemma}\label{lem:fakehessian}
Let $x \in W^\sigma_k$ and $1 \leq j \leq k$.  Consider the operator $\Hx: \T^\sigma_{j,x} \oplus L^2_j(Y;i\R) \to  \T^\sigma_{j-1,x} \oplus L^2_{j-1}(Y;i\R) $ given in block form by 
$$
\Hx = \begin{pmatrix} (\D^\sigma_x \Xqgcsigma) \circ \Pi^{\gCoul,\sigma}_* & \dd^\sigma_x \\ \dd^{\sigma,\tilde{\dagger}}_x & 0  \end{pmatrix}.
$$ 
\begin{enumerate}[(a)]
\item\label{fakehessian-asafoe} Under the identification of $\T^\sigma_{j,x} \oplus L^2_j(Y;i\R)$ with $L^2_j(Y;iT^*Y \oplus \Spin \oplus \R)$ and likewise for $j-1$, the operator $\Hx$  is $(k-1)$-ASAFOE with linear part $$L_0 = \begin{pmatrix} *d & 0 & -d \\ 0 & D & 0 \\ -d^* & 0 & 0 \end{pmatrix};$$  
\item \label{fakehessian-compact-asafoe} when $j=k$, the operator $\Hx$ differs from $L_0$ by a compact operator from $L^2_k$ to $L^2_{k-1}$;
\item \label{fakehessian-k-asafoestationary} if $x$ is a stationary point of $\Xqgcsigma$, then $\Hx$ is $k$-ASAFOE; 
\item \label{fakehessian-realstationary} if $x$ is a stationary point of $\Xqgcsigma$, then $\Hx = \widehat{\Hess}^{\tilde{g}, \sigma}_{\q, x}$.
\end{enumerate} 
\end{lemma}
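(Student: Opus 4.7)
The plan is to decompose $\Hx = L_0 + h$ by explicit computation of the upper-left block $(\D^\sigma_x \Xqgcsigma) \circ \Pi^{\gCoul,\sigma}_*$ using the formulas \eqref{eq:xqgcsigmaalternate} for $\Xqgcsigma$ and \eqref{eq:piCoulSigma} for $\Pi^{\gCoul,\sigma}_*$. Writing the projected vector as $(\pi(b), r, \psi + (Gd^*b)\phi)$, the only first-order differential contributions in the top-left block are $*d\pi(b) = *db$ (using $d^2=0$) and $D\psi$. The off-diagonal entries $-d$ and $-d^*$ of $L_0$ come from $\dd^\sigma_x$ via \eqref{eq:dsigma} and from the form part of $\dd^{\sigma,\tilde\dagger}_x$ via \eqref{eq:dcsdagger}. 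After combining $r$ and $\psi$ into $\psis = \psi + r\phi$ to identify $\T^\sigma_{j,x} \oplus L^2_j(Y;i\R)$ with sections of $iT^*Y \oplus \Spin \oplus i\R$, all remaining contributions assemble into $h$: the controlled Coulomb perturbation $\D\etaq$ (regularity-preserving by Definition~\ref{def:controlled}(ii)); the Clifford multiplications $\rho(a)\psi$, $\rho(b)\phi$, $\rho(d(Gd^*b))\phi$; the scalar operators from $\Lambda_\q$, from $\widetilde{\eta}_\q^{\,1}$, and from the $r$-component of $\Xqgcsigma$; and zeroth-order pseudodifferential terms of the form $Gd^*(\eta_\q^0)\phi$ and $(Gd^*b)D\phi$. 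Since $k \geq 2$ makes $L^2_k(Y)$ a Sobolev algebra, every piece of $h$ except $(Gd^*b)D\phi$ is bounded $L^2_j \to L^2_j$ for $|j| \leq k$, while $(Gd^*b)D\phi$, because $D\phi$ is only in $L^2_{k-1}$, is bounded $L^2_j \to L^2_j$ only for $|j| \leq k-1$. This establishes part (a). For part (b), the term $(Gd^*b)D\phi$ factors as $L^2_k \to L^2_{k+1} \to L^2_{k-1}$ via $Gd^*$ and multiplication by $D\phi \in L^2_{k-1}$, and precomposing with the compact Rellich inclusion $L^2_{k+1} \hookrightarrow L^2_k$ makes the composition compact $L^2_k \to L^2_{k-1}$; the other pieces of $h$ are bounded $L^2_k \to L^2_k$ and become compact after composing with $L^2_k \hookrightarrow L^2_{k-1}$.

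\textbf{Part (c).} The sole obstruction to upgrading (a) to $k$-ASAFOE is the term $(Gd^*b) D\phi$, which is problematic precisely because $D\phi$ is generically only in $L^2_{k-1}$. At a stationary point of $\Xqgcsigma$, however, the spinor component of the equation $\Xqgcsigma(x)=0$ (combined with $D_a = D + \rho(a)$) expresses $D\phi$ as a sum of terms each belonging to $L^2_k$: the Clifford multiplications by $\phi$ lie in $L^2_k$ by the Sobolev algebra property, the controlled perturbation $\widetilde{\eta}_\q^{\,1}$ lies in $L^2_k$ by Definition~\ref{def:controlled}(i), and the pieces involving $Gd^*$ gain one derivative so that $Gd^*(\cdots)\phi$ again lands in $L^2_k$. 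Hence $D\phi \in L^2_k$ at a stationary point, the troublesome term $(Gd^*b)D\phi$ is now bounded $L^2_j \to L^2_j$ for $|j| \leq k$, and $\Hx$ becomes $k$-ASAFOE. This mirrors the mechanism in \cite[Proposition~12.3.1]{KMbook} whereby the bad lower-order terms of the extended Hessian disappear at stationary points.

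\textbf{Part (d).} Since the entries $\dd^\sigma_x$ and $\dd^{\sigma,\tilde\dagger}_x$ appear identically in both operators, it remains only to compare the upper-left blocks. By Lemma~\ref{lem:bijection2}(a), on $W^\sigma$ one has $\Xqsigma = \Pi^{\elCoul,\sigma}\circ\Xqgcsigma$; differentiating at a stationary point $x$, where $\Xqgcsigma(x)=0$ annihilates the $\D\Pi^{\elCoul,\sigma}$-term, yields $\D^\sigma_x\Xqsigma|_{\T^{\gCoul,\sigma}} = \Pi^{\elCoul,\sigma}_x\circ\D^\sigma_x\Xqgcsigma$. Gauge equivariance of $\Xqsigma$ together with stationarity of $x$ forces $\D^\sigma_x\Xqsigma(w^\circ)=0$ for $w^\circ\in\J^{\circ,\sigma}_{j,x}$. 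For $v = v^\circ+v^{\gCoul}$ in the decomposition \eqref{eq:xavi}, and using that $\Pi^{\elCoul,\sigma}(v^{\gCoul})-v^{\gCoul}\in\J^{\circ,\sigma}$ (visible from \eqref{eq:pieLCsigma}), one then computes
\begin{align*}
S_x \circ \D^\sigma_x \Xqsigma \circ S_x^{-1}(v)
&= S_x\bigl(\D^\sigma_x \Xqsigma\bigl(\Pi^{\elCoul,\sigma}(v^{\gCoul})\bigr)\bigr) \\
&= S_x\bigl(\Pi^{\elCoul,\sigma}_x \circ \D^\sigma_x \Xqgcsigma(v^{\gCoul})\bigr) \\
&= \Pi^{\gCoul,\sigma}_* \circ \Pi^{\elCoul,\sigma}_x \circ \D^\sigma_x \Xqgcsigma(v^{\gCoul}) \\
&= \D^\sigma_x \Xqgcsigma \circ \Pi^{\gCoul,\sigma}_*(v),
\end{align*}
where the third equality uses $S_x = \Pi^{\gCoul,\sigma}_*$ on $\Kesigma$, and the last uses $\Pi^{\gCoul,\sigma}_*\circ \Pi^{\elCoul,\sigma}=\mathrm{id}$ on $\T^{\gCoul,\sigma}$ (Lemma~\ref{lem:bijection2}(a)). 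The main obstacle throughout the proof is the bookkeeping in part (a): cataloguing every term produced by $\D^\sigma_x \Xqgcsigma$ after precomposition with $\Pi^{\gCoul,\sigma}_*$, verifying the Sobolev multiplication bounds (leaning on Lemmas~\ref{lem:igc} and \ref{lem:tamecoulomb}), and pinpointing $(Gd^*b)D\phi$ as the unique term responsible for losing one derivative off the stationary locus.
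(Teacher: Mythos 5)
Your proposal is correct, and in parts (a)/(b) and (d) you take a genuinely different route from the paper's proof; a brief comparison is in order.

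For parts (a)--(c), the paper does not expand $\Hx$ term by term. Instead it splits the domain into the three summands $L^2_j(Y;i\R)$, $\T^{\gCoul,\sigma}_{j,x}$, and $\J^{\circ,\sigma}_{j,x}$, and on the $\T^{\gCoul,\sigma}$ piece it invokes the known fact from \cite{KMbook} that $\widehat{\Hess}^{\sigma}_{\q,x}$ is $k$-ASAFOE with the same linear part $L_0$; this reduces the work to controlling the difference $\D^\sigma_x\Xqsigma - \D^\sigma_x\Xqgcsigma$, for which the paper writes out the explicit formula \eqref{eq:Dsigma-XqvsXqgc}. You instead expand $\Hx - L_0$ by brute force using \eqref{eq:piCoulSigma} and \eqref{eq:xqgcsigmaalternate}. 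Your bookkeeping correctly isolates $(Gd^*b)\,D\phi$ as the single term that fails to be bounded $L^2_j\to L^2_j$ past $j=k-1$; this is exactly the term that appears in the paper's analysis as $\xi\,D\phi$ on the summand $\J^{\circ,\sigma}_{j,x}$ (for $v=(-d\xi,0,\xi\phi)$ one has $Gd^*b=-\xi$), so the two identifications agree. The tradeoff is that the paper's route economizes by reusing Kronheimer--Mrowka's result for $\widehat{\Hess}^\sigma_{\q,x}$, whereas yours is self-contained but requires a complete census of terms. One small slip in your term list: you write $Gd^*(\eta_\q^0)\phi$, but $\eta_\q^0 = \pi(\q^0)\in\ker d^*$, so $Gd^*(\eta_\q^0)\equiv 0$; the nonzero bounded contribution coming from the Coulomb projection is $Gd^*(\q^0)\phi$. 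This does not affect the conclusion. Your proof of (c), deriving $D\phi\in L^2_k$ directly from the stationary-point equation rather than quoting $\phi\in L^2_{k+1}$, is a minor variant of the paper's.

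For part (d), the paper works case by case over $\J^{\circ,\sigma}$ and $\T^{\gCoul,\sigma}$ and, for the latter, establishes $\Pi^{\gCoul,\sigma}_*\D^\sigma_x\Xqsigma(v)=\D^\sigma_x\Xqgcsigma(v)$ by inspecting the explicit difference formula \eqref{eq:Dsigma-XqvsXqgc} and noting the extra terms land in $\J^\circ_{j,x}$ (hence are killed by $\Pi^{\gCoul,\sigma}_*$), using stationarity to kill the remaining $Gd^*\q^0(a,s\phi)$ term. Your argument is cleaner: you differentiate the identity $\Xqsigma = \Pi^{\elCoul,\sigma}\circ\Xqgcsigma$ (the composition identity recorded around \eqref{eq:Hess2}) at a stationary point, where the product-rule term carrying $\D\Pi^{\elCoul,\sigma}$ vanishes because $\Xqgcsigma(x)=0$, and then the rest is pure bookkeeping with the shear map, avoiding \eqref{eq:Dsigma-XqvsXqgc} entirely. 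Both routes are valid; yours is the more conceptual and is essentially a reverse-engineering of the construction of $\Hess^{\tilde g,\sigma}_{\q,x}$ in \eqref{eq:Hess2}.
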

\begin{proof}
\eqref{fakehessian-asafoe} For notation, we write $L^\sigma:\T^\sigma_{j,x} \oplus L^2_j(Y;i\R) \to \T^\sigma_{j-1,x} \oplus L^2_{j-1}(Y;i\R)$ for the operator induced by $L_0$.  Our goal is to show that $\Hx$ differs from $L^\sigma$ by bounded operators from $L^2_{j}$ to $L^2_{j}$ for $1 \leq j \leq k-1$.  (Technically, we must show that these are induced by operators from $C^\infty$ to $L^2$, but this will be clear from the explicit description.)  We break up the analysis of $\Hx$ into how it acts on $L^2_j(Y;i\R)$, $\T^{\gCoul,\sigma}_{j,x}$ and $\J^{\circ,\sigma}_{j,x}$.  In fact, we will show the difference is bounded from $L^2_j$ to $L^2_j$ even when $j = k$, except for one term.    

First, consider $\beta \in L^2_j(Y;i\R)$.  We have that $\Hx(\beta) = ( - d\beta , 0, \beta \phi)$.  Then, we have $\Hx(\beta) - L^\sigma(\beta) = \beta \phi$, which is bounded as a linear map from $L^2_j$ to $L^2_{j}$ by Sobolev multiplication whenever $1 \leq j \leq k$ since $\phi \in L^2_k$.  

Next, let $v  = (-d\xi, 0, \xi \phi) \in \J^{\circ,\sigma}_{j,x}$.  Then, since $v$ is in the kernel of the infinitesimal global Coulomb projection  $\Pi^{\gCoul,\sigma}_*$, we have 
$$\Hx(v) = (0, \dd^{\sigma,\tilde{\dagger}}_x(v)) = (0, d^* d \xi) \in \T^\sigma_{j-1,x} \oplus L^2_{j-1}(Y;i\R).$$  In the second component, $\Hx(v)$ and $L^\sigma(v)$ 
agree. Thus, it remains to show that the component of $L^\sigma(v)$ landing in $\T^{\sigma}_{j-1,x}$ is bounded from $L^2_{j}$ to $L^2_{j}$ for $1 \leq j \leq k-1$.  We have 
$$L^\sigma(v) - \Hx(v)  = (0, \Re \langle D(\xi \phi), \phi \rangle_{L^2}, D(\xi \phi) - \Re \langle D(\xi \phi), \phi \rangle_{L^2}\phi) \in T^\sigma_{j-1,x}.$$
This differs from $v \mapsto (0,0,D(\xi\phi))$ by a bounded operator from $L^2_j$ to $L^2_k$, so we focus on $D(\xi\phi) = \rho(d \xi) \phi + \xi D\phi$.  Since $v \in L^2_j$, we see that $d\xi \in L^2_j$ and thus $\rho(d\xi) \phi$ is bounded in $L^2_j$ by Sobolev multiplication.  Therefore, $v \mapsto (0,0,\rho(d\xi) \phi)$ is bounded from $L^2_j$ to $L^2_{j}$ (even if $j = k$).  For the term $\xi D\phi$, we note that $D\phi \in L^2_{k-1}$, so the map $(-d\xi, 0, \xi \phi) \mapsto \xi D\phi$ is bounded as a linear map from $L^2_{j}$ to $L^2_{j}$ as long as $j \leq k-1$.  This establishes the desired form for $v \in\J^{\circ,\sigma}_{j,x}$.              

It thus remains to compare $\Hx$ and $L^\sigma$ on $\T^{\gCoul,\sigma}_{j,x}$.  Let $v = (b,r,\psi) \in \T^{\gCoul,\sigma}_{j,x}$.  First, note that the component of $\Hx(v)$ landing in $L^2_{j-1}(Y;i\R)$ is given by $i \langle i \phi, \psi \rangle_{\tilde{g}}$ which is bounded as a map from $\T^{\gCoul,\sigma}_{j,x}$ to $L^2_j(Y;i\R)$ for $j \leq k$; this is compatible with the fact that $L^\sigma(b,r,\psi)$ has no component landing in $L^2_{j-1}(Y;i\R)$ since $d^*b = 0$.  Thus, it remains to focus on the component of $\Hx(v)$ contained in $\T^\sigma_{j,x}$.  Since $\widehat{\Hess}^\sigma_{\q,x}$ is $k$-ASAFOE with linear term also given by $L_0$, it suffices to show that $\D^\sigma_x \Xqsigma$ differs from $\D^\sigma_x \Xqgcsigma$ by bounded operators from $L^2_j$ to $L^2_j$.  Direct computation shows 
$$
\Xqsigma (a,s,\phi)- \Xqgcsigma(a,s,\phi) = \left( dGd^* \q^0(a,s\phi), 0, -(Gd^*\q^0(a,s\phi)) \phi \right). 
$$ 
Recall that to compute $\D^\sigma$, we compute the $L^2$ derivative as a map into the affine space $L^2(Y;iT^*Y) \oplus \R \oplus L^2(Y;\Spin)$ and then apply $L^2$ projection to $\T^\sigma_{0,x}$. We first compute the affine derivative  
\begin{align*}
\left( \D_{(a,s,\phi)} \Xqsigma- \D_{(a,s,\phi)} \Xqgcsigma \right)(b,r,\psi) = \big(&dGd^*\D_{(a,s\phi)} \q^0(b, r\phi + s\psi) ,0, \\
& - Gd^*\D_{(a,s\phi)} \q^0(b, r\phi + s\psi) \phi - Gd^*\q^0(a,s\phi) \psi \big).
\end{align*}
Projecting, we obtain
\begin{align}\label{eq:Dsigma-XqvsXqgc}
\left(\D^\sigma_{(a,s,\phi)} \Xqsigma - \D^\sigma_{(a,s,\phi)} \Xqgcsigma \right)(b,r,\psi)= \big(&dGd^*\D_{(a,s\phi0} \q^0(b, r\phi + s\psi) ,0, \\ \notag &- Gd^*\D_{(a,s\phi0} \q^0(b, r\phi + s\psi) \phi - Gd^*\q^0(a,s\phi) \psi  + \\ \notag & \hspace{1.5in} \Re \langle Gd^*\q^0(a,s\phi) \psi , \phi \rangle_{L^2} \phi\big).
\end{align}
Here we are using that $Gd^*\D_{(a,s\phi)} \q^0(b, r\phi + s\psi) \phi$ is real $L^2$ orthogonal to $\phi$ since $\q^0$ is purely imaginary.  This operator is seen to be bounded from $L^2_j$ to $L^2_j$ for $j \leq k$ since $\q$ is tame and because $G$ raises Sobolev regularity by 2.  \\

\noindent \eqref{fakehessian-compact-asafoe} From the above argument, we saw that $\Hx$ differs from $L^\sigma$ by a bounded map from $L^2_j$ to $L^2_j$ for all $1 \leq j \leq k$, except possibly for a single term: $(-d\xi, 0, \xi\phi) \mapsto \xi D\phi$ where $(-d\xi, 0, \xi\phi) \in \J^{\circ,\sigma}_{k,x}$.  By postcomposing with the compact inclusion of $L^2_j$ into $L^2_{j-1}$, we have the desired result, except for this exceptional term.  In this case, since $D\phi \in L^2_{k-1}$, we have that this operator is bounded as a linear map from $L^2_{k-1}$ to $L^2_{k-1}$.  Therefore, we precompose with the compact inclusion from $L^2_{k}$ to $L^2_{k-1}$ to obtain the desired compactness.  \\

\noindent \eqref{fakehessian-k-asafoestationary} As in the above case, we previously showed that $\Hx$ differs from $L^\sigma$ by a bounded map from $L^2_j$ to $L^2_j$ for all $1 \leq j \leq k$, except for the term involving $D\phi$.  If $x$ is a stationary point of $\Xqgcsigma$, then we know that $\phi$ is actually in $L^2_{k+1}$ and thus $D\phi$ is contained in $L^2_k$, and the argument proceeds with $j = k$.  \\

\noindent \eqref{fakehessian-realstationary} Let $x$ be a stationary point of $\Xqgcsigma$ (and hence also $\Xqsigma$).  By \eqref{eq:ExtHessBlowUpGC}, it suffices to show that $S_x \circ \D^\sigma_x \Xqsigma \circ S^{-1}_x$ agrees with $\D^\sigma_x \Xqgcsigma \circ \Pi^{\gCoul,\sigma}_*$ on $\T^\sigma_{j,x}$.  First, suppose $v \in \J^{\circ,\sigma}_{j,x}$.  Then, we have that $S^{-1}_x(v) = v$.  It then follows that $S_x \circ \D^\sigma_x \Xqsigma \circ S^{-1}_x(v) = 0$, since $v$ is tangent to the gauge orbit at a stationary point.  This agrees with with the fact that the kernel of $\Pi^{\gCoul,\sigma}_*$ is $\J^{\circ,\sigma}$.  

Next, consider the case $v \in \T^{\gCoul,\sigma}_{j,x}$.  Since $S^{-1}_x(v) - v$ is contained in $\J^{\circ,\sigma}_{j,x}$, we have that $\D^\sigma_x \Xqsigma \circ S^{-1}_x (v) = \D^\sigma_x \Xqsigma(v) $.  Further, since $x$ is a stationary point, we have that $\D^\sigma_x \Xqsigma(v) \in \K^\sigma_{j,x}$, and therefore $S_x(\D^\sigma_x \Xqsigma(v)) = \Pi^{\gCoul,\sigma}_* \D^\sigma_x \Xqsigma(v)$.  Thus, it suffices to prove that 
\begin{equation}\label{eq:almost-Dsigma-XqvsXqgc}
\Pi^{\gCoul,\sigma}_* \D^\sigma_x \Xqsigma(v) = \D^\sigma_x \Xqgcsigma (v). 
\end{equation}
Since $\D^\sigma_x \Xqgcsigma(v) \in \T^{\gCoul,\sigma}_{j,x}$, we have that \eqref{eq:almost-Dsigma-XqvsXqgc} is equivalent to
\begin{equation}\label{eq:Pigc-Dsigma-XqvsXqgc}
\Pi^{\gCoul,\sigma}_* \D^\sigma_x \Xqsigma(v) = \Pi^{\gCoul,\sigma}_* \D^\sigma_x \Xqgcsigma (v). 
\end{equation}
Thus, we can establish \eqref{eq:Pigc-Dsigma-XqvsXqgc} if applying infinitesimal global Coulomb projection to \eqref{eq:Dsigma-XqvsXqgc} vanishes.  We split \eqref{eq:Dsigma-XqvsXqgc} into two parts.  The first term is
$$
(0, 0, Gd^*\q^0(a,s\phi) \psi  + \Re \langle Gd^*\q^0(a,s\phi) \psi , \phi \rangle_{L^2} \phi).
$$ 
Since $x$ is a stationary point of $\Xqsigma$, we have that $ -\q^0(a,s\phi) = *da$.  Therefore, $Gd^*\q^0(a,s\phi) = 0$ and the above expression vanishes.  Thus, it suffices to show that infinitesimal global Coulomb projection vanishes on the remaining term 
$$
(dGd^*\D_{(a,s\phi0} \q^0(b, r\phi + s\psi) ,0, - Gd^*\D_{(a,s\phi0} \q^0(b, r\phi + s\psi) \phi). 
$$  
This term is contained in $\J^\circ_{j,x}$, which is precisely the kernel of the infinitesimal global Coulomb projection.  This completes the proof.
\end{proof}

With the above lemma, we can now show that $\widehat{\Hess}^{\tilde{g},\sigma}_{\q,x}$ shares two important properties with $\widehat{\Hess}^{\sigma}_{\q,x}$ and $\widehat{\Hess}^{\sp, \sigma}_{\q,x}$.  
\begin{lemma}
\label{lem:eH}
If $x$ is a non-degenerate stationary point of $\Xqgcsigma$, then $\widehat{\Hess}^{\tilde{g}, \sigma}_{\q, x}$ is invertible and has real spectrum.
\end{lemma}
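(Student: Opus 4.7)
The plan is to mimic the structure of the proof of Lemma~\ref{lem:eHsplit}, using a suitable block decomposition of $\widehat{\Hess}^{\tilde g,\sigma}_{\q,x}$. First I would invoke Lemma~\ref{lem:fakehessian}(d) to replace $\widehat{\Hess}^{\tilde g,\sigma}_{\q,x}$ with the operator $\Hx$ at the stationary point $x$, and Lemma~\ref{lem:fakehessian}(c) to note that $\Hx$ is then $k$-ASAFOE, hence Fredholm of index zero between the appropriate Sobolev completions. It therefore suffices to exhibit invariant subspaces that make $\Hx$ block-triangular and verify that each diagonal block is invertible with real spectrum.

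Next I would decompose the source $\T^\sigma_{j,x} \oplus L^2_j(Y;i\R)$ as $U \oplus V$, where $U = \K^{\agCoul,\sigma}_{j,x}$ and $V = \J^{\gCoul,\sigma}_{j,x} \oplus \J^{\circ,\sigma}_{j,x} \oplus L^2_j(Y;i\R)$, and check that $V$ is $\Hx$-invariant. Three ingredients enter: (a) $\dd^\sigma_x$ lands in $\J^\sigma = \J^{\gCoul,\sigma} \oplus \J^{\circ,\sigma} \subset V$; (b) $\Pi^{\gCoul,\sigma}_*$ kills $\J^{\circ,\sigma}$; and (c) at a stationary point, $\D^\sigma_x \Xqgcsigma = \Pi^{\gCoul,\sigma}_* \circ \D^\sigma_x \Xqsigma$ (since $\Xqsigma(x) = 0$), and this annihilates $\J^\sigma$ by gauge equivariance of $\Xqsigma$. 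The operator induced on the quotient $U \cong (U \oplus V)/V$ is then the restriction of $\Pi^{\agCoul,\sigma}_x \circ \D^\sigma_x \Xqgcsigma$ to $\K^{\agCoul,\sigma}_{j,x}$, i.e.\ $\Hess^{\tilde g,\sigma}_{\q,x}$, which is invertible and has real spectrum by Lemma~\ref{lem:HessB}(b).

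Third, I would show that $R := \Hx|_V$ splits further as a direct sum of two off-diagonal ``swap'' blocks of the form $\bigl(\begin{smallmatrix} 0 & A \\ B & 0 \end{smallmatrix}\bigr)$: one on the finite-dimensional piece $\J^{\gCoul,\sigma}_{j,x} \oplus i\R$ and one on $\J^{\circ,\sigma}_{j,x} \oplus (\im d^*)_j$. A direct computation with the formula \eqref{eq:dcsdagger} for $\dd^{\sigma,\tilde\dagger}_x$, together with the identity $G\Delta = \id$ on zero-average functions, shows in each case that the composite $BA$ is self-adjoint and strictly positive: it equals multiplication by $\|i\phi\|_{\tilde g}^2 > 0$ in the first case, and the Laplacian $\Delta$ on zero-average functions in the second. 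The abstract principle used in the proof of Lemma~\ref{lem:eHsplit} then shows that each swap block is invertible and has real spectrum. Combining the three diagonal pieces, $\Hx = \widehat{\Hess}^{\tilde g,\sigma}_{\q,x}$ is invertible and has real spectrum.

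The main obstacle will be the invariance of $V$: one must be careful to use stationarity and gauge equivariance of $\Xqsigma$ precisely, and to track the Sobolev regularities (in particular, identifying $\J^{\circ,\sigma}_{j,x}$ with zero-average functions at one higher regularity, so that $\dd^{\sigma,\tilde\dagger}_x\, \dd^\sigma_x = \Delta$ makes sense as an operator $L^2_{j+1} \to L^2_{j-1}$). Apart from this, the substantive mathematical content is concentrated in Lemma~\ref{lem:HessB}(b) and in the now-familiar abstract fact that a block operator $\bigl(\begin{smallmatrix} 0 & A \\ B & 0 \end{smallmatrix}\bigr)$ with $BA$ self-adjoint and strictly positive is invertible with real spectrum.
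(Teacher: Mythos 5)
Your proposal is correct and follows essentially the same approach as the paper's proof: both reduce to a block decomposition of the extended Hessian in which one block is $\Hess^{\tilde{g},\sigma}_{\q,x}$ acting on $\K^{\agCoul,\sigma}_{j,x}$ (handled by Lemma~\ref{lem:HessB}(b)) and the complementary piece is an off-diagonal ``swap'' operator whose invertibility with real spectrum is reduced to the self-adjointness and strict positivity of $\dd^{\sigma,\tilde{\dagger}}_x \dd^{\sigma}_x$. The only differences are presentational: you obtain the block structure by first passing to $\Hx$ via Lemma~\ref{lem:fakehessian}(d) and then checking the invariance of the summands directly, whereas the paper conjugates by the shear $S_x$ and invokes Remark~\ref{rem:shear}; and you split the swap piece into the $\J^{\gCoul,\sigma}\oplus i\R$ and $\J^{\circ,\sigma}\oplus(\im d^*)$ sub-blocks (with $BA$ equal to $\|i\phi\|_{\tilde g}^2$ and $\Delta$, respectively), which is exactly the block-diagonal form of $\dd^{\sigma,\tilde{\dagger}}_x \dd^{\sigma}_x$ that the paper writes down in one step.
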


\begin{proof}
At stationary points, with respect to the decomposition $\T^{\sigma}_{j,x}=\K^{\sigma}_{j,x} \oplus \J^{\sigma}_{j,x}$, the derivative $\D_x \Xq^{\sigma}$ takes the form
$$ \begin{pmatrix}
\Hess^{\sigma}_{\q,x} & 0 \\ 0 & 0
\end{pmatrix}.$$

Recall from Remark~\ref{rem:shear} that $S_x$ maps $\K^{\sigma}_{j,x}$ to $\K^{\agCoul, \sigma}_{j,x}$ and preserves $\J^{\sigma}_{j,x}$. Therefore, after conjugating by the shear $S_x$, we have
$$ 
S_x \circ \D^\sigma_x \Xq^{\sigma} \circ S_x^{-1} = 
\begin{pmatrix}
\Hess^{\tilde g, \sigma}_{\q,x} & 0 \\ 0 & 0
\end{pmatrix},$$
with respect to the decomposition $\K^{\agCoul, \sigma}_{j,x} \oplus \J^{\sigma}_{j,x}$.

We deduce that the operator $ \widehat{\Hess}^{\tilde{g}, \sigma}_{\q, x}$ has a block form where one block is ${\Hess}^{\tilde{g}, \sigma}_{\q, x}$ and the other is 
\begin{equation}\label{eq:block-hess-dsigma-tilde}
\begin{pmatrix}
0 &\dd^{\sigma}_x \\
\dd^{\sigma, \tilde{\dagger}}_x & 0
\end{pmatrix}.
\end{equation}
By Lemma~\ref{lem:HessB}, ${\Hess}^{\tilde{g}, \sigma}_{\q, x}$ is invertible and has real spectrum.  
%We first establish that ${\Hess}^{\tilde{g}, \sigma}_{\q, x}$ is invertible has real spectrum.  In the irreducible case, the operator ${\Hess}^{\tilde{g}, \sigma}_{\q, x}$ is conjugate to ${\Hess}^{\tilde{g}}_{\q, x}$, which is symmetric for $\tilde g$; cf. Remark~\ref{rem:SymEH}. Thus, ${\Hess}^{\tilde{g}, \sigma}_{\q, x}$ is invertible and has real spectrum.  In the reducible case, we have ${\Hess}^{\tilde{g}, \sigma}_{\q, x} = {\Hess}^{\sigma}_{\q, x}$, and the result follows from 
It remains to check that \eqref{eq:block-hess-dsigma-tilde} is invertible and has real spectrum. As in the proof of Lemma~\ref{lem:eHsplit}, we do this by checking that $\dd^{\sigma, \tilde{\dagger}}_x \dd^{\sigma}_x$ is self-adjoint and strictly positive. 

Indeed, we have 
$$ \dd^{\sigma, \tilde{\dagger}}_x \dd^{\sigma}_x (\xi) = \Delta \xi + \mu_Y(\xi) \|i \phi \|_{\tilde g}^2.$$
In block form with respect to the decomposition $L^2_{j-1}(Y; i\R)=(\im d^*)_{j-1}  \oplus i\R$, we have
$$ \dd^{\sp, \sigma, \dagger}_x \dd^{\sigma}_x = \begin{pmatrix}
\Delta & 0 \\
0 &  \|i \phi \|_{\tilde g}^2
\end{pmatrix}.$$
Both diagonal entries are self-adjoint and positive. (Note that $\phi$ is nonzero, because it was normalized to have unit $L^2$ norm.)
 \end{proof}

\subsection{Interpolations} \label{sec:interpol}
In the proof of Proposition~\ref{prop:FredholmCoulomb} below, we will need to interpolate between the extended Hessians $\widehat{\Hess}^{\sigma}_{\q, x}$ and $\widehat{\Hess}^{\tilde{g}, \sigma}_{\q, x}$, by going through $k$-ASAFOE operators that are still invertible and have real spectrum.

We can do this in two steps. First, we interpolate linearly between $\widehat{\Hess}^{\sigma}_{\q, x}$ and the split extended Hessian $\widehat{\Hess}^{\sp,\sigma}_{\q, x}$.

\begin{lemma}
\label{lem:eHrho}
If $x$ is a non-degenerate stationary point of $\Xqgcsigma$, then for any $\rho \in [0,1]$, we have that
$$(1-\rho) \cdot \widehat{\Hess}^{\sigma}_{\q, x} + \rho \cdot \widehat{\Hess}^{\sp, \sigma}_{\q, x}$$ 
is invertible and has real spectrum.
\end{lemma}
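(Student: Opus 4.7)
The plan is to reduce to a $2\times 2$ block problem via a convex combination argument. First, observe that $\widehat{\Hess}^{\sigma}_{\q,x}$ and $\widehat{\Hess}^{\sp,\sigma}_{\q,x}$ share their upper-left entry $\D^\sigma_x\Xq^\sigma$ and their upper-right entry $\dd^\sigma_x$, and differ only in the lower-left entry. Setting $\dd^{\sigma,\dagger,\rho}_x := (1-\rho)\,\dd^{\sigma,\dagger}_x + \rho\,\dd^{\sp,\sigma,\dagger}_x$, we have
$$(1-\rho)\,\widehat{\Hess}^{\sigma}_{\q,x} + \rho\,\widehat{\Hess}^{\sp,\sigma}_{\q,x} = \begin{pmatrix} \D^\sigma_x\Xq^\sigma & \dd^\sigma_x \\ \dd^{\sigma,\dagger,\rho}_x & 0 \end{pmatrix}.$$
Since $x$ is stationary, the map $\D^\sigma_x\Xq^\sigma$ is block-diagonal with respect to $\T^\sigma_{j,x}=\K^\sigma_{j,x}\oplus\J^\sigma_{j,x}$, equal to $\Hess^\sigma_{\q,x}$ on $\K^\sigma_{j,x}$ and to zero on $\J^\sigma_{j,x}$, and the rows/columns corresponding to $\dd^\sigma_x$ and $\dd^{\sigma,\dagger,\rho}_x$ involve only $\J^\sigma_{j,x}\oplus L^2_j(Y;i\R)$. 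Hence the interpolated operator splits as the direct sum of $\Hess^\sigma_{\q,x}$ (invertible with real spectrum by \cite[Lemma~12.4.3]{KMbook}) and the $2\times 2$ block $\bigl(\begin{smallmatrix} 0 & \dd^\sigma_x \\ \dd^{\sigma,\dagger,\rho}_x & 0\end{smallmatrix}\bigr)$.

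Following the strategy in the proofs of Lemmas~\ref{lem:eHsplit} and \ref{lem:eH}, it is enough to show that $\dd^{\sigma,\dagger,\rho}_x \circ \dd^\sigma_x$ is self-adjoint and strictly positive on $L^2_j(Y;i\R)$, since then the $2\times 2$ block is invertible with real spectrum by the same eigenvalue argument used there. Linearity in $\rho$ gives
$$\dd^{\sigma,\dagger,\rho}_x \dd^\sigma_x = (1-\rho)\,\dd^{\sigma,\dagger}_x \dd^\sigma_x + \rho\,\dd^{\sp,\sigma,\dagger}_x \dd^\sigma_x.$$
The second summand was shown in the proof of Lemma~\ref{lem:eHsplit} to equal $\Delta + \mu_Y(\,\cdot\,)$ on $\im d^* \oplus i\R$, which is self-adjoint and strictly positive. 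For the first summand, a direct computation using $\dd^\sigma_x\xi = (-d\xi, 0, \xi\phi)$ and the formula \eqref{eq:dsigmadagger} gives
$$\dd^{\sigma,\dagger}_x \dd^\sigma_x(\xi) = \Delta \xi + s^2 |\phi|^2 \xi + |\phi|^2\,\mu_Y(|\phi|^2 \xi),$$
each term of which is self-adjoint. Pairing with $\xi$ produces $\|d\xi\|_{L^2}^2 + s^2\!\int |\phi|^2|\xi|^2 + \vol(Y)\,|\mu_Y(|\phi|^2\xi)|^2 \geq 0$, and if $\xi \neq 0$ then at least one term is strictly positive: if $\xi$ is non-constant, the Laplacian term contributes; if $\xi$ is a nonzero constant, then $\mu_Y(|\phi|^2\xi) \neq 0$ since $\|\phi\|_{L^2}=1$, so the last term contributes. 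Thus both endpoints are self-adjoint and strictly positive, and a convex combination of such operators retains these properties.

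The main technical point is the self-adjointness of $\dd^{\sigma,\dagger}_x\dd^\sigma_x$, which is not automatic because $\dd^{\sigma,\dagger}_x$ is not the genuine $L^2$ adjoint of $\dd^\sigma_x$ (the gauge-slice decomposition on the blow-up is not $L^2$-orthogonal); it must be verified by the explicit computation above, using crucially that the $|\phi|^2\mu_Y(|\phi|^2\,\cdot\,)$ term has a symmetric rank-one form in $\xi$ and its test partner. Once this is in place, the convex-combination packaging is immediate, and the Fredholm $k$-ASAFOE framework already used in Lemmas~\ref{lem:eHsplit} and \ref{lem:eH} carries over verbatim to convert the invertibility of $\dd^{\sigma,\dagger,\rho}_x\dd^\sigma_x$ into invertibility and reality of spectrum for the $2\times 2$ block, completing the proof.
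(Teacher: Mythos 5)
Your proof is correct and follows essentially the same route as the paper: reduce via the block decomposition at a stationary point to showing $\dd^{\sigma,\dagger,\rho}_x \dd^\sigma_x$ is self-adjoint and strictly positive, and conclude by noting a convex combination of self-adjoint strictly positive operators is again such. The only difference is that you verify $\dd^{\sigma,\dagger}_x \dd^\sigma_x = \Delta + s^2|\phi|^2 + |\phi|^2\mu_Y(|\phi|^2\,\cdot\,)$ and its positivity directly, where the paper simply invokes \cite[proof of Lemma~12.4.3]{KMbook}.
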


\begin{proof}
We use a block decomposition as in the proofs of Lemma 12.4.3 in \cite{KMbook} and of Lemma~\ref{lem:eHsplit} above. It suffices to check that
$$(1-\rho)\cdot \dd^{\sigma, \dagger}_x \dd^{\sigma}_x + \rho \cdot \dd^{\sp, \sigma, \dagger}_x \dd^{\sigma}_x$$
is self-adjoint and strictly positive. This is true because both terms are self-adjoint and strictly positive.
\end{proof}

For the second step, we interpolate between $\widehat{\Hess}^{\sp, \sigma}_{\q, x}$ and $\widehat{\Hess}^{\tilde{g}, \sigma}_{\q, x}$. We do this by considering the family of metrics on $\T_{j,x}$ given by
$$ g_{\rho}= (1-\rho) \cdot g_{L^2} + \rho \cdot \tilde g, \ \ \rho \in [0,1],$$
where $g_{L^2}$ denotes the $L^2$ metric. We consider the $g_{\rho}$-orthogonal complements to $\J_{j,x}$ and $\J^{\circ}_{j,x}$, which we denote by $\K^{\rho}_{j,x}$ and $\K^{\rho, \operatorname{e}}_{j,x}$, respectively. After blowing-up, we obtain a complement $\K^{\rho, \sigma}_{j,x}$ to $\J^{\sigma}_{j,x}$ and a complement $\K^{\rho, \operatorname{e}, \sigma}_{j,x}$ to $\J^{\circ, \sigma}_{j,x}$. We construct the shear map $S^{\rho}_x$ that takes $\K^{\operatorname{e},\sigma}_{j,x}$ into $\K^{\rho, \operatorname{e}, \sigma}_{j,x}$ and is the identity on $\J^{\circ, \sigma}_{j,x}$. Further, we let
\begin{equation}
\label{eq:decomposingrho}
 \dd^{\rho, \sigma, \dagger}_x = \begin{pmatrix} 0 & 0 & -d^* \\
 0 & i\langle i\phi, \cdot \rangle_{g_{\rho}}  & 0 \end{pmatrix} : \K^{\rho, \sigma}_{j,x} \oplus \J^{\gCoul, \sigma}_{j,x} \oplus \J^{\circ, \sigma}_{j,x} \to (\im d^*)_{j-1}  \oplus i\R,
\end{equation}
so that $ \dd^{0, \sigma, \dagger}_x= \dd^{\sp, \sigma, \dagger}_x$ and $\dd^{1, \sigma, \dagger}_x
=  \dd^{\sigma, \tilde{\dagger}}_x$. Finally, define
\begin{equation}
 \label{eq:ExtHessBlowUpRho}
   \widehat{\Hess}^{\rho, \sigma}_{\q, x} = \begin{pmatrix}
S^{\rho}_x \circ \D^\sigma_x \Xq^{\sigma} \circ (S^{\rho}_x)^{-1} & \dd^{\sigma}_x \\[.5em]
\dd^{\rho, \sigma, {\dagger}}_x & 0 
\end{pmatrix}.
\end{equation}

The same arguments as in Lemmas~\ref{lem:eH} and ~\ref{lem:eHrho} give the following:
\begin{lemma}
\label{lem:eHrho2}
If $x$ is a non-degenerate stationary point of $\Xqgcsigma$, and $\rho \in [0,1]$, then $\widehat{\Hess}^{\rho, \sigma}_{\q, x}$ is invertible and has real spectrum.
\end{lemma}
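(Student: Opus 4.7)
The plan is to adapt the arguments of Lemmas~\ref{lem:eH} and \ref{lem:eHrho} in parallel: the first gave us the needed facts for the extremal case $\rho=1$, the second interpolated through positive-definite metrics at the level of $\dd^{\sigma,\dagger}_x \dd^\sigma_x$. The only novelty is that for $\rho \in (0,1)$ both the complement $\K^{\rho,\sigma}_{j,x}$ (on which the top-left block will live) and the gauge pairing in the bottom-left block depend simultaneously on $\rho$, so we must verify that nothing degenerates along the way.

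First I would decompose the domain and target as $\T^\sigma_{j,x} \oplus L^2_j(Y;i\R) = (\K^{\rho,\sigma}_{j,x} \oplus \J^\sigma_{j,x}) \oplus L^2_j(Y;i\R)$ and similarly for $j-1$. Since $x$ is a stationary point, $\D^\sigma_x \Xq^\sigma$ annihilates $\J^\sigma_{j,x}$ and maps $\K^\sigma_{j,x}$ into $\K^\sigma_{j-1,x}$ (via $\Hess^\sigma_{\q,x}$), which is invertible with real spectrum by \cite[Section 12.4]{KMbook}. The shear map $S^\rho_x$ is, by construction, the identity on $\J^{\circ,\sigma}_{j,x}$ and carries $\K^{\operatorname{e},\sigma}_{j,x}$ onto $\K^{\rho,\operatorname{e},\sigma}_{j,x}$; hence, as in the proof of Lemma~\ref{lem:eH}, after conjugation $S^\rho_x \circ \D^\sigma_x \Xq^\sigma \circ (S^\rho_x)^{-1}$ is block-diagonal, with its restriction to $\K^{\rho,\sigma}_{j,x}$ conjugate to $\Hess^\sigma_{\q,x}$ and thus invertible with real spectrum. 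This reduces the question to the off-diagonal block
\begin{equation*}
\begin{pmatrix} 0 & \dd^\sigma_x \\ \dd^{\rho,\sigma,\dagger}_x & 0 \end{pmatrix},
\end{equation*}
restricted to $\J^\sigma_{j,x} \oplus L^2_j(Y;i\R)$, and it suffices to show that the composition $\dd^{\rho,\sigma,\dagger}_x \dd^\sigma_x$ on $L^2_j(Y;i\R)$ is self-adjoint and strictly positive.

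Next I would compute this composition directly from \eqref{eq:dsigma} and \eqref{eq:decomposingrho}. Writing a function $\xi$ as $\xi^\circ + \mu_Y(\xi)$ and using that $(-d\xi^\circ,0,\xi^\circ \phi) \in \J^{\circ,\sigma}_{j,x}$ while $(0,0,\mu_Y(\xi)\phi) \in \J^{\gCoul,\sigma}_{j,x}$, the same calculation as in Lemmas~\ref{lem:eHsplit} and \ref{lem:eH} yields, in block form with respect to $L^2_{j-1}(Y;i\R) = (\im d^*)_{j-1} \oplus i\R$,
\begin{equation*}
\dd^{\rho,\sigma,\dagger}_x \dd^\sigma_x = \begin{pmatrix} \Delta & 0 \\ 0 & \|i\phi\|^2_{g_\rho} \end{pmatrix}.
\end{equation*}
Both diagonal entries are manifestly self-adjoint, and $\Delta$ is strictly positive on $(\im d^*)_{j-1}$, so the remaining ingredient is the strict positivity of $\|i\phi\|^2_{g_\rho}$ for every $\rho \in [0,1]$.

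The main (and essentially only) thing to verify is this last positivity. Since $g_\rho = (1-\rho) g_{L^2} + \rho \tilde g$ is a convex combination of inner products on $\T_{j,x}$, it is itself an inner product, and $\|i\phi\|^2_{g_\rho} = (1-\rho)\|i\phi\|^2_{L^2} + \rho\|i\phi\|^2_{\tilde g}$. Because $\phi$ has unit $L^2$ norm, $\|i\phi\|^2_{L^2} = 1 > 0$; and because $\tilde g$ is a genuine Riemannian metric on $W$, the contribution $\|i\phi\|^2_{\tilde g}$ is strictly positive. Hence $\|i\phi\|^2_{g_\rho} > 0$ uniformly in $\rho \in [0,1]$, which together with the block-diagonal structure above and the invertibility of $\Delta$ gives that $\dd^{\rho,\sigma,\dagger}_x \dd^\sigma_x$ is self-adjoint and strictly positive. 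Combining the two blocks, $\widehat{\Hess}^{\rho,\sigma}_{\q,x}$ is invertible with real spectrum, as required.
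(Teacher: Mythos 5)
Your proposal is correct and follows exactly the route the paper has in mind: the paper's own proof is simply a pointer to ``the same arguments as in Lemmas~\ref{lem:eH} and \ref{lem:eHrho},'' and you have spelled those arguments out — block-diagonalize via $S^\rho_x$, reduce to positivity of $\dd^{\rho,\sigma,\dagger}_x \dd^\sigma_x$, and verify that $\|i\phi\|^2_{g_\rho}$ is a convex combination of two strictly positive quantities.
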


\section{Non-degeneracy of stationary points in Coulomb gauge} \label{sec:nondegCoulomb}
Recall that an irreducible stationary point $x = (a,\phi)$ of $\Xq$ is non-degenerate if $\Xq$ is transverse to the gauge orbit at $x$ or, equivalently, transverse to the subbundle $\J_{k-1}$ at $x$.  We would like to rephrase this condition both in terms of Coulomb gauge and in terms of Hessians. 
  
\begin{lemma}\label{lem:nondegeneracycoulomb}
Let $x \in W_k$ be an irreducible stationary point of $\Xqgc$.  The following are equivalent: 
\begin{enumerate}[(i)]
\item  $\x$ is non-degenerate (i.e. $\Xq$ is transverse to $\J_{k-1}$ at $x$), 
\item  $\Hess_{\q,x} : \K_{k,x} \to \K_{k-1,x}$ is surjective, 
\item $\Xqgc$ is transverse to $\J^{\gCoul}_{x}$, 
\item $\Hess^{\tilde{g}}_{\q,x} : \K^{\agCoul}_{k,x} \to \K^{\agCoul}_{k-1,x}$ is surjective. 
\end{enumerate}
\end{lemma}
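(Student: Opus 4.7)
The plan is to establish the equivalences in a small cycle: first (i) $\Leftrightarrow$ (ii), then (iii) $\Leftrightarrow$ (iv) by the same argument in Coulomb gauge, and finally (ii) $\Leftrightarrow$ (iv) by conjugation of Hessians via the Coulomb projections. All three equivalences rely on the same underlying observation: at a stationary point, the linearization of a gauge-invariant formal gradient vector field factors through the appropriate slice.

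For (i) $\Leftrightarrow$ (ii): Since $\Xq = \grad \L_\q$ is the $L^2$-formal gradient of the gauge-invariant functional $\L_\q$, it is $L^2$-perpendicular to the gauge orbits and so defines a section of $\K \to \C(Y)$. Gauge equivariance of $\Xq$ implies that $\D_x \Xq$ vanishes on $\J_{k, x}$ at the stationary point $x$, so $\D_x \Xq$ descends to a map $\K_{k, x} \to \K_{k-1, x}$, which is precisely $\Hess_{\q, x}$ (the projection $\Pi^{\lCoul}_x$ becomes redundant at a stationary point, as noted in the remark after \eqref{eq:HessIrr}). By the splitting $\T_{k-1} = \J_{k-1} \oplus \K_{k-1}$, the transversality of $\Xq$ to $\J_{k-1}$ at $x$ is equivalent to the surjectivity of the composition $\Pi^{\lCoul}_x \circ \D_x \Xq : \T_{k,x} \to \K_{k-1,x}$; since this factors through $\K_{k,x}$, this is equivalent to the surjectivity of $\Hess_{\q,x}$.

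For (iii) $\Leftrightarrow$ (iv): The same reasoning applies in the global Coulomb slice, using the $\tilde g$-metric and the residual $S^1$-action in place of the full gauge group. Indeed, $\Xqgc$ is the $\tilde g$-formal gradient of $(\L_\q)|_W$, hence $\tilde g$-perpendicular to the $S^1$-orbits, and so takes values in $\K^{\agCoul}$. $S^1$-equivariance gives $\D_x\Xqgc|_{\J^{\gCoul}_x} = 0$ at the stationary point $x$, so the linearization descends to $\K^{\agCoul}_{k,x} \to \K^{\agCoul}_{k-1,x}$, which by \eqref{eq:statHess} is exactly $\Hess^{\tilde g}_{\q,x}$. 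Transversality of $\Xqgc$ to $\J^{\gCoul}_x$ is equivalent to surjectivity of the composition $\Pi^{\agCoul}_x \circ \Dg_x \Xqgc : \T^{\gCoul}_{k,x} \to \K^{\agCoul}_{k-1,x}$, and again this factors through $\K^{\agCoul}_{k,x}$ to give precisely $\Hess^{\tilde g}_{\q,x}$.

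For (ii) $\Leftrightarrow$ (iv): This is essentially the content of the proof of Lemma~\ref{lem:hessianfredholm}. From the alternative formula \eqref{eq:altHess},
\[
\Hess^{\tilde g}_{\q,x} = \Pi^{\agCoul}_x \circ \D_x \Xq \circ \Pi^{\elCoul}_x,
\]
and the fact that $\Pi^{\elCoul}_x : \K^{\agCoul}_{j,x} \to \K_{j,x}$ and $\Pi^{\agCoul}_x : \K_{j,x} \to \K^{\agCoul}_{j,x}$ are mutually inverse isomorphisms (a special case of Lemma~\ref{lem:bijection} in the blow-down), we see that $\Hess^{\tilde g}_{\q,x}$ is conjugate to $\Hess_{\q,x}$. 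Thus one is surjective iff the other is.

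There is no real obstacle here — the only thing to be careful about is that the factorizations through the quotients by $\J$ (resp.\ $\J^{\gCoul}$) are clean only at stationary points, where gauge equivariance of the vector field forces the relevant directional derivatives to vanish. Once that is in place, each equivalence is essentially immediate.
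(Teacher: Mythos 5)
Your proof is correct and follows essentially the same route as the paper: the equivalences (i) $\Leftrightarrow$ (ii) and (iii) $\Leftrightarrow$ (iv) both come from the block-diagonal form of the linearization at a stationary point with respect to the $\J \oplus \K$ (resp.\ $\J^{\gCoul} \oplus \K^{\agCoul}$) splitting, and (ii) $\Leftrightarrow$ (iv) comes from the conjugation identity \eqref{eq:altHess} together with the mutually inverse isomorphisms $\Pi^{\agCoul}_x$ and $\Pi^{\elCoul}_x$ between $\K_{k,x}$ and $\K^{\agCoul}_{k,x}$, exactly as in Lemma~\ref{lem:hessianfredholm}. The paper phrases (i) $\Leftrightarrow$ (ii) as an appeal to \cite[Lemma 12.4.1]{KMbook}, but the argument you give is the one used there.
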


\begin{proof}
The equivalence (i) $\iff$ (ii) is proved in \cite[Lemma 12.4.1]{KMbook}. It is a consequence of the fact that at a critical point, with respect to the decompositions $\T_j = \J_j \oplus \K_j$ (with $j=k$ for the domain and $j=k-1$ for the image), the derivative $\D_x \Xq$ has the block form
$$\begin{pmatrix}
0 & 0 \\
0 & \Hess_{\q, x}
\end{pmatrix}.$$

For the equivalence (iii) $\iff$ (iv), we apply a similar reasoning: $\Dg_x \Xqgc$ vanishes on $\J^{\gCoul}_{x}$, the tangents to the $S^1$-orbits in $W_k$; therefore,  $\Dg_x \Xqgc$ has a block form with respect to the decomposition $\T^{\gCoul}_j = \J^{\gCoul} \oplus \K^{\gCoul}_j$ from \eqref{eq:JKcoulomb}; and the only nonzero entry in this block form is $\Hess^{\tilde{g}}_{\q,x}$. (Since we are at a stationary point, note that in fact $\Dg_x \Xqgc = \D_x \Xqgc.$)

Finally, for the equivalence (ii) $\iff$ (iv), recall from \eqref{eq:altHess} and the proof of Lemma~\ref{lem:hessianfredholm} that $\Hess^{\tilde{g}}_{\q,x}$ is the conjugate of $\Hess_{\q, x}$ by the isomorphism $\Pi^{\agCoul}_x$, with inverse $\Pi^{\elCoul}_x$. Hence, one Hessian is surjective if and only if the other one is. 
 \end{proof}

In the blow-up, we can also rephrase non-degeneracy of stationary points in terms of the Hessian. Rather than stating the exact analogue of Lemma~\ref{lem:nondegeneracycoulomb}, let us emphasize the following result:
\begin{lemma}\label{lem:nondegeneracycoulombblowup}
Let $x$ be a stationary point of $\Xqgcsigma$.  Then, $x$ is non-degenerate $\iff\Hess^{\tilde{g},\sigma}_{\q,x}$ is injective $\iff  \Hess^{\tilde{g},\sigma}_{\q,x}$ is bijective.  
\end{lemma}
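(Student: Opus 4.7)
The plan is to prove the two equivalences separately. The equivalence ``injective $\iff$ bijective'' is immediate from Lemma~\ref{lem:HessB}(a), which asserts that $\Hess^{\tilde{g},\sigma}_{\q,x}$ is Fredholm of index zero: for a Fredholm operator of index zero, injectivity, surjectivity, and bijectivity are equivalent. So the real content is to show that non-degeneracy of $x$ is equivalent to surjectivity of $\Hess^{\tilde{g},\sigma}_{\q,x}$.

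For this, I would first observe that at a stationary point, the derivative $\D^\sigma_x \Xqgcsigma : \T^{\gCoul, \sigma}_{k,x} \to \T^{\gCoul, \sigma}_{k-1,x}$ is independent of the choice of connection, and it kills the tangent $\J^{\gCoul, \sigma}_x$ to the residual $S^1$-orbit (by $S^1$-equivariance of $\Xqgcsigma$). Hence, with respect to the direct sum decomposition $\T^{\gCoul, \sigma}_{j,x} = \J^{\gCoul, \sigma}_x \oplus \K^{\agCoul, \sigma}_{j,x}$, the operator $\D^\sigma_x \Xqgcsigma$ has block form
\[
\begin{pmatrix} 0 & 0 \\ 0 & \Hess^{\tilde{g},\sigma}_{\q,x} \end{pmatrix},
\]
where the lower-right entry is $\Hess^{\tilde{g},\sigma}_{\q,x}$ by definition \eqref{eq:HessianBlowupCG} (and because $\Dgs_x = \D^\sigma_x$ at a stationary point, argued as for \eqref{eq:statHess}).

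Next, I need to interpret non-degeneracy in Coulomb gauge. The stationary point $x$ corresponds, via the bijection \eqref{eq:EquivStat1}, to a gauge equivalence class of stationary points of $\Xqsigma$ on $\C^\sigma$, and non-degeneracy means $\Xqsigma$ is transverse to $\J^\sigma_{k-1}$ at a representative. Using that $\Pi^{\gCoul,\sigma}_*$ intertwines $\Xqsigma$ with $\Xqgcsigma$ and that it sends the orbit tangent $\J^\sigma_{k,x}$ onto $\J^{\gCoul,\sigma}_x$ (by \eqref{eq:PigCJ}), one sees that non-degeneracy of $x$ is equivalent to $\Xqgcsigma$ being transverse to the subbundle $\J^{\gCoul,\sigma}$ at $x$. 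By the block form above, this is in turn equivalent to surjectivity of $\Hess^{\tilde{g},\sigma}_{\q,x}$, completing the proof.

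The main technical point to be careful about is the reducible case, where $\J^{\gCoul,\sigma}_x$ degenerates (and $\K^{\agCoul, \sigma}$ enlarges to coincide with $\K^\sigma$, as in Remark~\ref{rem:HessEqual}). In that case, non-degeneracy in the sense of \cite{KMbook} reduces to the three conditions of Proposition~\ref{prop:nondegeneracycharacterized}, and the equivalence with injectivity of $\Hess^{\tilde{g},\sigma}_{\q,x} = \Hess^{\sigma}_{\q,x}$ then follows from the analysis already carried out in \cite[Sections 12.3--12.4]{KMbook} together with the transversality interpretation; I would verify the block-form argument carries through uniformly by working with $\T^{\gCoul,\sigma}_{j,x}=\J^{\gCoul,\sigma}_x \oplus \K^{\agCoul,\sigma}_{j,x}$ (noting that at reducibles $\J^{\gCoul,\sigma}_x = 0$ sits inside the full gauge orbit $\J^\sigma_x$, but the relevant transversality to the residual gauge action still reduces to surjectivity of the Hessian).
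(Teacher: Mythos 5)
Your route is genuinely different from the paper's, and it's worth noting what changes. The paper splits the proof into cases: for irreducibles, it conjugates $\Hess^{\tilde g,\sigma}_{\q,x}$ to the blown-down Hessian $\Hess^{\tilde g}_{\q,(a,s\phi)}$ via the blow-down map and then invokes Lemmas~\ref{lem:hessianfredholm} and~\ref{lem:nondegeneracycoulomb}; for reducibles, it uses $\Hess^{\tilde g,\sigma}_{\q,x}=\Hess^\sigma_{\q,x}$ (Remark~\ref{rem:HessEqual}) and defers to \cite[Section 12.4]{KMbook}. You instead argue directly in the blow-up via the block form of $\D^\sigma_x\Xqgcsigma$ with respect to $\T^{\gCoul,\sigma}_{j,x}=\J^{\gCoul,\sigma}_x\oplus\K^{\agCoul,\sigma}_{j,x}$ and a transversality interpretation. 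This is essentially the strategy of Lemma~\ref{lem:nondegeneracycoulomb} lifted to $W^\sigma$, and it has the appeal of being a single uniform argument. Both work. The cost of your version is that the step ``non-degeneracy of $x$ $\iff$ $\Xqgcsigma$ transverse to $\J^{\gCoul,\sigma}$ at $x$'' is not just a citation; it needs the facts that $\D_x\Xqsigma$ kills all of $\J^\sigma_{j,x}$ (stationary orbit), that $(\Pi^{\gCoul,\sigma}_*)_x$ is a projection onto $\T^{\gCoul,\sigma}_{j,x}$ with kernel $\J^{\circ,\sigma}_{j,x}$, and the identity \eqref{eq:PigCJ}. You gesture at this but don't spell it out; once written out it does go through, in both directions and at all stationary points.

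There is, however, a concrete factual error in your discussion of the reducible case. You claim that ``at reducibles $\J^{\gCoul,\sigma}_x$ degenerates'' and later that ``$\J^{\gCoul,\sigma}_x = 0$.'' This is false in the blow-up: for any $x=(a,s,\phi)\in W^\sigma_k$, including $s=0$, we have $\|\phi\|_{L^2}=1$, so $\J^{\gCoul,\sigma}_x=\R\langle(0,0,i\phi)\rangle$ is one-dimensional. The whole point of the blow-up is that the residual $S^1$-action becomes free on the boundary, and the decomposition $\T^{\gCoul,\sigma}_{j,x}=\J^{\gCoul,\sigma}_x\oplus\K^{\agCoul,\sigma}_{j,x}$ is a genuine bundle decomposition over all of $W^\sigma_k$. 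You are conflating this with the blow-down $W_k$, where $\J^{\gCoul}_{(a,0)}$ does vanish. What is true at reducibles is the second part of your parenthetical -- that $\K^{\agCoul,\sigma}_{j,x}=\K^\sigma_{j,x}$ (Lemma~\ref{lem:bijection}(b), Remark~\ref{rem:HessEqual}) -- and this, together with the block form (which in fact holds uniformly precisely because $\J^{\gCoul,\sigma}_x$ is always one-dimensional), is enough. So the confusion doesn't sink the proof, but as written it obscures why your argument is uniform rather than needing a separate reducible case: you don't actually need to ``be careful'' about $\J^{\gCoul,\sigma}_x$ at reducibles, since nothing degenerates.
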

\begin{proof}
If $x$ is irreducible, $\Hess^{\tilde{g},\sigma}_{\q,x}$ is conjugate to $\Hess^{\tilde{g}}_{\q,x}$ via the blow-down (which identifies $\K^{\agCoul,\sigma}$ with $\K^{\agCoul}$) and we may apply Lemmas~\ref{lem:hessianfredholm} and ~\ref{lem:nondegeneracycoulomb}.  

If $x$ is a reducible stationary point in the blow-up, we have $\Hess^{\tilde{g},\sigma}_{\q,x} = \Hess^\sigma_{\q,x}$; see Remark~\ref{rem:HessEqual}. The claim then follows from the analogous statement for $\Hess^\sigma_{\q,x}$ in \cite[Section 12.4]{KMbook}. 
\end{proof}

So far we have only worked with $W^\sigma$.  The above constructions can also be phrased in terms of the quotient $W^\sigma/S^1$.  Given $x \in W^{\sigma}$, we will write $[x]$ for its class in $W^\sigma/S^1$. Note that the bundles $\K^{\agCoul,\sigma}_j$ are $S^1$-invariant and therefore the space $\K^{\agCoul,\sigma}_{j,x}$ is canonically identified with the tangent space at $[x]$ in the $L^2_j$ completion of the tangent bundle to $W^\sigma/S^1$.  The vector field $\Xqgcsigma$ on $W_k^\sigma$ is $S^1$-invariant and takes values in $\K^{\agCoul,\sigma}_{k-1}$. Thus, it descends to a vector field, denoted $\Xqagcsigma$, on $W_k^\sigma/S^1$.

\begin{lemma}
\label{lem:idCoulomb}
We have the following identifications, given by composing global Coulomb projection with projection to the quotient by $S^1$:
\begin{equation}
\label{eq:EquivStat2}
\{ \text{stationary points of } \Xqsigma \bigr \} / \G_{k+1} \ \xrightarrow{\mathmakebox[2em]{\cong}} \ \{ \text{stationary points of } \Xqagcsigma \}
\end{equation}
and
\begin{equation}
\label{eq:EquivTraj2}
\{ \text{trajectories of } \Xqsigma \bigr \} / \G_{k+1} \ \xrightarrow{\mathmakebox[2em]{\cong}} \ \{ \text{trajectories of } \Xqagcsigma \}.
\end{equation}
\end{lemma}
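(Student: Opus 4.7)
The plan is to assemble the lemma from three ingredients already established: the bijection $\Pi^{\gCoul, \sigma}$ of Proposition~\ref{prop:CPtraj} and equation~\eqref{eq:EquivStat1}, a ``free quotient'' step that passes from $S^1$-orbits in $W^\sigma_k$ to points in $W^\sigma_k/S^1$, and an upgrade of gauge equivalence from smooth to $L^2_{k+1}$.

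First, I would verify that the $S^1$-action on $W^\sigma_k$ is free, hence that $W^\sigma_k/S^1$ is a (Hilbert) manifold. This follows because the action is $u \cdot (a,s,\phi) = (a,s,u\phi)$ and the normalization $\|\phi\|_{L^2} = 1$ forces $\phi \neq 0$ at every point of $W^\sigma_k$. Next, I would note that $\Xqgcsigma$ is $S^1$-equivariant and, by Lemma~\ref{lem:bijection}(a) and the formula~\eqref{eq:Xqgcsigmaformula}, takes values in the $S^1$-invariant subbundle $\K^{\agCoul,\sigma}_{k-1}$, which is canonically identified fiberwise with the tangent bundle of $W^\sigma_k/S^1$. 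Thus $\Xqagcsigma$ is exactly the pushforward of $\Xqgcsigma$ along the quotient map, and since the action is free, there are tautological bijections between $S^1$-orbits of stationary points (respectively trajectories) of $\Xqgcsigma$ and stationary points (respectively trajectories) of $\Xqagcsigma$.

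Second, I would upgrade~\eqref{eq:EquivStat1} and~\eqref{eq:EquivTraj1}, which are phrased modulo the smooth gauge group $\G$, to the $L^2_{k+1}$ completion $\G_{k+1}$. Since $\q$ is very tame (cf. Lemma~\ref{lem:verytame} and the discussion in Section~\ref{sec:tame}), every $L^2_k$ stationary point of $\Xqsigma$ has a smooth $\G_{k+1}$-representative, and by the discussion after Definition~\ref{def:sw-moduli-space} together with Proposition~\ref{prop:CPtraj}, the same is true for trajectories; furthermore, two smooth configurations (or trajectories) that are $\G_{k+1}$-related are in fact $\G$-related, by elliptic regularity applied to the equation $u^{-1}du = a_1 - a_2$. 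Hence the bijections~\eqref{eq:EquivStat1} and~\eqref{eq:EquivTraj1} descend to bijections at the level of $\G_{k+1}$ on one side and $S^1$ on the other.

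Composing these two identifications yields the desired bijections~\eqref{eq:EquivStat2} and~\eqref{eq:EquivTraj2}. This is essentially a bookkeeping argument, so I do not expect a genuine obstacle; the slightly delicate point is the Sobolev regularity upgrade in the second step, which is handled by the very tameness of $\q$ and elliptic regularity for the gauge-fixing equation.
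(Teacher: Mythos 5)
Your proposal is correct and takes essentially the same route as the paper: the paper's proof is the one-line observation that the lemma is immediate from \eqref{eq:EquivStat1} and \eqref{eq:EquivTraj1}, and you are simply unpacking the two implicit steps (the identification of $S^1$-orbits of stationary points and trajectories of $\Xqgcsigma$ with those of $\Xqagcsigma$, which is essentially definitional once one knows $\Xqgcsigma$ is $S^1$-equivariant and valued in $\K^{\agCoul,\sigma}_{k-1}$; and the $\G$ versus $\G_{k+1}$ upgrade, which the paper records earlier in the remarks that $L^2_k$ stationary points and trajectories of $\Xqsigma$ are gauge-equivalent to smooth ones).
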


\begin{proof}
This is immediate from \eqref{eq:EquivStat1} and \eqref{eq:EquivTraj1}.
\end{proof}

Let $x$ be a stationary point of $\Xqgcsigma$, so that $[x]$ is a stationary point of $\Xqagcsigma$. Since $\Xqagcsigma$ is a section of $\K^{\agCoul,\sigma}_{k-1}$, under the identification of $\K^{\agCoul,\sigma}_{k-1,x}$ with the $L^2_{k-1}$-completion of $T_{[x]} (W^\sigma_k/S^1)$, the derivative $\D^\sigma_{[x]}\Xqagcsigma := \D^\sigma_x \Xqgcsigma = \D^{\tilde{g},\sigma}_x \Xqgcsigma$ takes values in $\K^{\agCoul,\sigma}_{k-1, x}$. In view of Equation~\eqref{eq:Hess2}, we have
$$ \Hess^{\tilde{g},\sigma}_{\q,x} = \D^\sigma_{[x]}\Xqagcsigma.$$

The following is then a direct consequence of Lemma~\ref{lem:nondegeneracycoulombblowup}:

\begin{lemma}
\label{lem:rephraseStat}
In terms of the identification \eqref{eq:EquivStat2}, non-degeneracy of a stationary point $x$ of $\Xqsigma$ is  equivalent to the injectivity (or bijectivity) of $\D^\sigma\Xqagcsigma$ at the corresponding point $[\Pi^{\gCoul, \sigma}(x)] \in W^\sigma/S^1$.
\end{lemma}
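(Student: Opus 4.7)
The plan is to deduce Lemma~\ref{lem:rephraseStat} directly from Lemma~\ref{lem:nondegeneracycoulombblowup} combined with the identification $\Hess^{\tilde{g},\sigma}_{\q,x}=\D^{\sigma}_{[x]}\Xqagcsigma$ recorded in the paragraph immediately preceding the lemma statement. The only genuine content to supply is the reduction from a general stationary point $x\in \C^{\sigma}_k(Y)$ of $\Xqsigma$ to its Coulomb-gauge representative in $W^{\sigma}_k$.

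To begin, I would observe that non-degeneracy of a stationary point of $\Xqsigma$ depends only on its $\G_{k+1}$-orbit: the defining condition is transversality of $\Xqsigma$ to the subbundle $\J^{\sigma}_{k-1}$, and $\G_{k+1}$ acts by bundle automorphisms preserving both $\Xqsigma$ (which is gauge-equivariant) and $\J^{\sigma}_{k-1}$ (the tangents to the gauge orbits). Setting $x' := \Pi^{\gCoul,\sigma}(x)\in W^{\sigma}_k$, this reduces the assertion to the equivalence at $x'$. By Proposition~\ref{prop:CPtraj} together with Lemma~\ref{lem:idCoulomb}, $x'$ is a stationary point of $\Xqgcsigma$, and its class $[x']\in W^{\sigma}/S^{1}$ is exactly the image of $[x]$ under \eqref{eq:EquivStat2}.

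Once this reduction is in place, the lemma becomes essentially a direct application of Lemma~\ref{lem:nondegeneracycoulombblowup}. That lemma tells us that $x'$ is non-degenerate iff $\Hess^{\tilde{g},\sigma}_{\q,x'}$ is injective iff it is bijective. In the reducible case, Remark~\ref{rem:HessEqual} identifies $\Hess^{\tilde{g},\sigma}_{\q,x'}$ with $\Hess^{\sigma}_{\q,x'}$, so ``non-degenerate for $\Xqgcsigma$'' is literally the same condition as ``non-degenerate for $\Xqsigma$''. In the irreducible case, the same equivalence is provided after descending to the blow-down by the chain (i)$\Leftrightarrow$(ii)$\Leftrightarrow$(iv) in Lemma~\ref{lem:nondegeneracycoulomb}, since the blow-down identifies $\Hess^{\tilde{g},\sigma}_{\q,x'}$ with $\Hess^{\tilde{g}}_{\q,(a,s\phi)}$. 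Substituting the identification $\Hess^{\tilde{g},\sigma}_{\q,x'}=\D^{\sigma}_{[x']}\Xqagcsigma$ converts this to the statement of the lemma.

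I do not anticipate any obstacle of substance; the lemma is a packaging of results already in hand. The only point requiring mild care is interpreting ``non-degenerate'' in Lemma~\ref{lem:nondegeneracycoulombblowup}: a priori there are two possible notions, namely transversality of $\Xqsigma$ to $\J^{\sigma}_{k-1}$ in $\C^{\sigma}_k(Y)$ and transversality of $\Xqgcsigma$ to $\J^{\gCoul,\sigma}_{x'}$ in $W^{\sigma}_k$. Remark~\ref{rem:HessEqual} (reducible case) and Lemma~\ref{lem:nondegeneracycoulomb} (irreducible case) ensure that these two notions agree at $x'$, and this is precisely what makes the packaging work.
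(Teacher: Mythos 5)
Your proposal is correct and follows essentially the same route as the paper, whose proof simply cites Lemma~\ref{lem:nondegeneracycoulombblowup} together with the identification $\Hess^{\tilde g,\sigma}_{\q,x}=\D^\sigma_{[x]}\Xqagcsigma$ recorded just above. Your final paragraph, reconciling the two a priori notions of non-degeneracy via Remark~\ref{rem:HessEqual} and Lemma~\ref{lem:nondegeneracycoulomb}, is correct but redundant: that reconciliation is already the content of the proof of Lemma~\ref{lem:nondegeneracycoulombblowup}, so you are repeating work rather than filling a gap.
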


Just as in Lemma~\ref{lem:rephraseStat} we rephrased the non-degeneracy of stationary points, our next goal will be to rephrase the regularity condition on the moduli spaces of trajectories of $\Xqsigma$ in terms of global Coulomb gauge; that is, re-write it as a condition on the moduli spaces of trajectories of $\Xqagcsigma$.  This will be accomplished in Section~\ref{sec:NDTcoulomb}. Before that,  as preliminary steps, we will:
\begin{itemize}
\item Embed the moduli space of trajectories of $\Xqagcsigma$ into a larger space of paths in Coulomb gauge (in Section~\ref{sec:path});
\item Describe the tangent bundle to this path space (in Section~\ref{sec:4Dcoulomb}); 
\item Linearize the equations that define the moduli space in Coulomb gauge (in Section~\ref{sec:linearized}). 
\end{itemize}

\section{Path spaces}
\label{sec:path}
Fix points $x,y \in W^{\sigma}$, and a smooth path $\gamma_0$ in $W^{\sigma}$ from $x$ to $y$, such that $\gamma_0(t)$ agrees with $x$ near $-\infty$ and agrees with $y$ near $+\infty$. Let $Z = \rr \times Y$. By analogy with the definition of $\C_k^\tau(x, y)$ in \eqref{eq:Cktau}, we define the space of four-dimensional configurations
\[
\C_k^{\gCoul, \tau}(x, y) = \{\gamma \in \C^{\gCoul, \tau}_{k, loc}(\rr \times Y) \mid \gamma - \gamma_0 \in L^2_k(Z; iT^*Z) \times L^2_k(\rr; \rr) \oplus L^2_k(Z; \Spin^+)\}.  
\]
We can write $\gamma$ as a path 
$$\gamma(t) = (a(t)+\alpha(t)dt, s(t), \phi(t)).$$ 
Recall from Section~\ref{sec:cylinders} that the condition $\gamma \in \C_{k, loc}^{\gCoul,\tau}(\rr \times Y)$ means that $\gamma$ is in pseudo-temporal gauge ($\alpha(t)$ is constant on each slice), and that the one-form component $a(t)$ is in the kernel of $d^*$, for all $t$. Moreover, since we are in the $\tau$ model, we must have $\|\phi(t)\|_{L^2(Y)}=1$ and $s(t) \geq 0$ for all $t$.  

The space $\C_k^{\gCoul, \tau}(x, y)$ embeds in a Hilbert manifold $\widetilde{\C}_k^{\gCoul,\tau}(x,y)$, defined as above but using $\widetilde{\C}^{\gCoul,\tau}_{k,loc}(\rr \times Y)$; that is, dropping the condition $s(t) \geq 0$.   

Further, in the spirit of Section~\ref{sec:cylinders}, we let $W_k^{\tau}(x, y)$ (respectively $\tW_k^\tau(x,y)$) denote the subset of $\C_k^{\gCoul,\tau}(x, y)$ (respectively $\tC_k^{\gCoul,\tau}(x,y)$) consisting of configurations with $\alpha(t)=0$, i.e., in temporal gauge. 
 
The gauge group $$\G_{k+1}^{\gCoul}(Z) := \{u: \R \to S^1 \mid 1-u \in L^2_{k+1}(\R; \cc) \}$$ 
acts on $\C_k^{\gCoul,\tau}(x, y)$ and $\widetilde{\C}_k^{\gCoul,\tau}(x,y)$.
 
Let $\Btaug_k(x, y)$ denote the quotient of $\C_k^{\gCoul,\tau}(x, y)$ by $\G_{k+1}^{\gCoul}(Z)$. Note that $\Btaug_k(x, y)$ only depends on the classes $[x]$ and $[y]$ in $W^\sigma/S^1$, up to canonical diffeomorphism. Therefore, we will use the notation $\Btaug_k([x], [y])$. The quotient of $\widetilde{\C}_k^{\gCoul,\tau}(x,y)$ by the same gauge action is denoted $\tBtaug_k([x],[y])$. One can check that $\Btaug_k([x], [y])$ and $\tBtaug_k([x],[y])$ are Hausdorff in the quotient topology; compare \cite[Proposition 13.3.4]{KMbook}.

\begin{remark}\label{rmk:no-temporal}
It is important to note that given an element $\gamma \in \C^{\gCoul,\tau}_k(x,y)$, it cannot necessarily be moved to be in temporal gauge by an element of $\G_{k+1}^{\gCoul}(Z)$.  However, we can act by a four-dimensional gauge transformation in $\G_{k+1,loc}^{\gCoul}(Z)$ to move $\gamma$ to Coulomb gauge, but the result will land in $\C^{\gCoul,\tau}_k(x, u y)$, for some $u \in S^1$.  Since $\B^{\gCoul,\tau}_k(x,y)$ is canonically identified with $\B^{\gCoul,\tau}_k(x, uy)$, we can still think of $[\gamma]$ as having a representative in temporal gauge.    
\end{remark}

These constructions are similar to those in Section~\ref{sec:AdmPer}, where we had a space $\B^{\tau}_k([x], [y])$ of configurations in $\C_k^\tau(x, y)$ modulo four-dimensional gauge transformations. One can also consider the corresponding Hilbert  manifolds $\widetilde \B^{\tau}_k([x], [y])$ and $\widetilde \C_k^\tau(x, y)$. 

We would like to relate $\widetilde \B^\tau_k([x],[y])$ to $\tBtaug_k([x],[y])$. We first define a map 
$$  \Pi^{\gCoul, \tau}: \widetilde \C^{\tau}_k(x, y) \to \widetilde \C_k^{\gCoul,\tau}(\Pi^{\gCoul, \sigma}(x), \Pi^{\gCoul, \sigma}(y))$$
by the formula
\begin{equation}
\label{eq:zebra}
 (a(t)+\alpha(t)dt, s(t), \phi(t)) \mapsto \Pi^{\gCoul, \sigma}(a(t), s(t), \phi(t)) + (\mu_Y(\alpha(t))dt, 0, 0).
 \end{equation}

\begin{lemma}
There is a well-defined, continuous map
\begin{equation}
\label{eq:Pigctau}
 \Pi^{[\gCoul], \tau}: \widetilde \B^{\tau}_k([x], [y]) \to \tBtaug_k([x], [y]), \ \ [\gamma] \to [\Pi^{\gCoul, \tau} (\gamma)].
 \end{equation}
 This sends $\B^{\tau}_k([x], [y])$ to $\Btaug_k([x], [y])$.
\end{lemma}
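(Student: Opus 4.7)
The plan is to verify in four steps the properties claimed in the lemma: (a) the formula \eqref{eq:zebra} lands in the target configuration space $\widetilde{\C}^{\gCoul,\tau}_k(\Pi^{\gCoul,\sigma}(x), \Pi^{\gCoul,\sigma}(y))$; (b) the resulting assignment descends to the quotients by the appropriate gauge groups; (c) the induced map is continuous; and (d) the map restricts to send the $s\geq 0$ spaces to each other.

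First, I would verify that $\Pi^{\gCoul,\tau}(\gamma)$ is in the target space. Writing $\gamma = (a(t) + \alpha(t)dt, s(t), \phi(t))$, the formula \eqref{eq:zebra} produces a configuration whose spatial one-form component at time $t$ is $a(t) - df(t)$ with $f(t) = Gd^*a(t)$; by construction $d^*(a(t) - df(t)) = 0$, placing us in slicewise Coulomb gauge. The temporal component $\mu_Y(\alpha(t))$ is a constant function on $Y$ for each $t$, giving pseudo-temporal gauge. Sobolev regularity of the difference $\Pi^{\gCoul,\tau}(\gamma) - \Pi^{\gCoul,\tau}(\gamma_0)$ in $L^2_k$ follows from Lemma~\ref{lem:igc} applied slicewise (together with its blow-up extension), combined with the fact that $\mu_Y: L^2_k(Y; i\R) \to i\R$ is a bounded linear functional, so slicewise averaging maps $L^2_k(Z;i\R)$ into $L^2_k(\R; \R)$. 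The correct asymptotics at $\pm\infty$ are automatic since $\gamma_0$ agrees with $x,y$ at the ends and $x,y$ already lie in $W^\sigma$, so $\Pi^{\gCoul,\sigma}$ fixes them.

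Next, the heart of the argument is gauge equivariance. Given $u \in \G_{k+1}(Z)$, the condition $1-u \in L^2_{k+1}(Z;\cc)$ forces $u \to 1$ at each end, so (up to a fixed discrete choice) $u$ admits a logarithm $g: Z \to i\R$ with $g \in L^2_{k+1,\mathrm{loc}}$ and appropriate decay. The four-dimensional gauge action sends $(a + \alpha dt, s, \phi)$ to $(a - d_Y g + (\alpha - \partial_t g)dt, s, e^g\phi)$. Using the identity $Gd^*d_Y g(t) = g(t) - \mu_Y(g(t))$ and the multiplicative property of the exponential, a direct computation yields
\[
\Pi^{\gCoul,\tau}(u\cdot \gamma) \;=\; v \cdot \Pi^{\gCoul,\tau}(\gamma),
\]
where $v(t) = e^{\mu_Y(g(t))}$ acts only as a slicewise phase rotation and as a shift $\mu_Y(\alpha) \mapsto \mu_Y(\alpha) - \partial_t \mu_Y(g(t))$ in the temporal component. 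Verifying $v \in \G^{\gCoul}_{k+1}(Z)$, i.e., $1-v \in L^2_{k+1}(\R;\cc)$, reduces via the Taylor expansion of the exponential and the boundedness of $\mu_Y$ to a Fubini-type estimate on Sobolev norms of $g$ along $\R$. This shows that the formula descends to $\Pi^{[\gCoul],\tau}: \widetilde{\B}^\tau_k([x],[y]) \to \widetilde{\B}^{\gCoul,\tau}_k([x],[y])$.

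Continuity of the descended map follows from the smoothness of $\Pi^{\gCoul,\sigma}: \C^\sigma_k(Y) \to W^\sigma_k$ applied slicewise (Lemma~\ref{lem:igc} and its blow-up variant) and the boundedness of $\mu_Y$, together with the fact that both source and target carry the quotient topology induced from their Hilbert manifold covers, where the pre-quotient map is continuous. Finally, the condition $s(t) \geq 0$ is preserved verbatim by the formula, yielding the restricted map $\B^\tau_k([x],[y]) \to \B^{\gCoul,\tau}_k([x],[y])$. The main obstacle will be the bookkeeping in the gauge equivariance step---in particular, pinning down a logarithm $g$ of a general $u \in \G_{k+1}(Z)$ with sufficient regularity to make sense of $\mu_Y(g(t))$ and verifying that the resulting $v(t)$ genuinely lies in $\G^{\gCoul}_{k+1}(Z)$; by contrast, the Sobolev regularity of the Coulomb-projected configuration, the asymptotic behavior, and the continuity of $\Pi^{[\gCoul],\tau}$ are routine given the three-dimensional results already in hand.
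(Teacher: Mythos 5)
Your proposal is correct and follows essentially the same route as the paper: the key computation is $\Pi^{\gCoul,\sigma}(e^{g(t)}\cdot x(t)) = e^{\mu_Y(g(t))}\,\Pi^{\gCoul,\sigma}(x(t))$ via the identity $Gd^*dg = g - \mu_Y(g)$, together with Sobolev multiplication on the infinite cylinder and boundedness of slicewise averaging, and the observation that $s(t)\geq 0$ is preserved. You are slightly more careful than the paper on two points it passes over lightly---the existence of a global logarithm $g$ for $u\in\G_{k+1}(Z)$ (which follows since $b_1(Y)=0$ and $1-u\in L^2_{k+1}$ forces the winding class to vanish) and the explicit verification that $v(t)=e^{\mu_Y(g(t))}$ lies in $\G^{\gCoul}_{k+1}(Z)$---but the overall structure and the central calculation coincide.
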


\begin{proof}
Let us first check that $[\Pi^{\gCoul, \tau}(\gamma)]$ does not depend on the choice of representative $\gamma$ for the class $[\gamma]$. Indeed, suppose we change $\gamma$ by a four-dimensional gauge transformation of the form $u: Z \to S^1$. If we ignore the $\alpha(t)dt$ component and write $x(t)=(a(t), s(t), \phi(t))$, we find that $u$ acts on each $x(t)$ as the three-dimensional gauge transformation $u(t) := u|_{\{t\} \times Y}$. Write $u(t) = e^{f(t)}$ with $f(t): Y \to i\R$. Using \eqref{eq:piCS} and the fact that $Gd^*df = f - \mu_Y(f)$, we see that 
$$\Pi^{\gCoul, \sigma} (e^{f(t)} \cdot x(t)) = e^{\mu_Y(f(t))} \Pi^{\gCoul, \sigma}(x(t)).$$
From here we get $$ \Pi^{[\gCoul], \tau}(e^{f} \cdot \gamma)  = e^{\mu_Y(f)} \cdot \Pi^{[\gCoul], \tau}(\gamma),$$
where the gauge transformation $e^f$ on the left is a four-dimensional gauge transformation, and $e^{\mu_Y(f)}$ is the slicewise constant gauge transformation obtained by taking the average of $f$ in each slice. Thus, the class $[\Pi^{\gCoul, \tau}(\gamma)]$ is unchanged.

The fact that slicewise application of $\Pi^{\gCoul, \sigma}$ preserves the four-dimensional $L^2_k$ condition can be seen from the formula \eqref{eq:piCS}, together with the Sobolev multiplication rule on infinite cylinders \cite[Theorem 13.2.2]{KMbook}. Further, averaging $\alpha(t)$ slicewise preserves the $L^2_k$ condition by the Cauchy-Schwarz inequality. Continuity of the map $\Pi^{[\gCoul], \tau}$ follows from similar arguments. 

Finally, since $s(t)$ (and hence the condition $s(t) \geq 0$) is preserved by global Coulomb projection, we have that $\B^{\tau}_k([x], [y])$ is  mapped to $\Btaug_k([x], [y])$.
\end{proof}

Observe that the map \eqref{eq:Pigctau} is surjective but not injective. For example, suppose we have two configurations $\gamma_1$ and $\gamma_2$ that are in temporal gauge and that differ in each slice by a three-dimensional gauge transformation $u: Y \to S^1$ (non-constant in $t$). Then $\Pi^{[\gCoul], \tau}([\gamma_1]) = \Pi^{[\gCoul], \tau}([\gamma_2])$, but $\gamma_1$ and $\gamma_2$ are typically not gauge equivalent as four-dimensional configurations.  

Now suppose that $[x]$ and $[y]$ are stationary points of $\Xqagcsigma$. Define $M^{\agCoul}([x],[y])$ to be the moduli space of trajectories of $\Xqagcsigma$, considered as a subspace of $\Btaug_k([x],[y])$.  We can also define $M^{\agCoul,\red}([x],[y])$ similarly.    

\begin{proposition}\label{prop:gccorrespondence}
Every trajectory of $\Xqagcsigma$ on $W^\sigma/S^1$ (connecting two stationary points $x$ and $y$ as in Definition~\ref{def:sw-moduli-space}) is actually in $\Btaug_k([x], [y])$. Further, the map $\Pi^{[\gCoul], \tau}$ produces a homeomorphism between the moduli space $M([x], [y])$ (consisting of gauge equivalence classes of trajectories of $\Xqsigma$) and $M^{\agCoul}([x],[y])$.
 \end{proposition}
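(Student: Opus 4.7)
My strategy is to deduce this proposition from Proposition~\ref{prop:CPtraj}, upgrading its bijection between orbits of finite-type trajectories to a homeomorphism between the Sobolev moduli spaces. For the first claim, let $\bar\gamma$ be a trajectory of $\Xqagcsigma$ in $W^\sigma/S^1$ with limits $[x]$ and $[y]$. Choose an arbitrary lift to a trajectory $\gamma^{\gCoul}$ of $\Xqgcsigma$ on $W^\sigma$. By Proposition~\ref{prop:CPtraj}, $\gamma^{\gCoul}$ arises as the slicewise global Coulomb projection of some trajectory $\gamma$ of $\Xqsigma$ on $\C^\sigma(Y)$. Since $\q$ is very tame, the exponential decay and compactness results of Kronheimer--Mrowka (see \cite[Theorem 13.3.5]{KMbook}) produce a four-dimensional gauge representative of $\gamma$ converging in $L^2_k$ to its endpoints. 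I would then verify that applying $\Pi^{\gCoul,\tau}$ slicewise preserves this decay, using Lemma~\ref{lem:igc} together with Sobolev multiplication on the infinite cylinder; the resulting configuration lies in $\C^{\gCoul,\tau}_k(\Pi^{\gCoul,\sigma}(x),\Pi^{\gCoul,\sigma}(y))$ and descends to a class in $\Btaug_k([x],[y])$.

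For the homeomorphism, I would first confirm that $\Pi^{[\gCoul],\tau}$ maps $M([x],[y])$ into $M^{\agCoul}([x],[y])$, which is immediate from Proposition~\ref{prop:CPtraj}. Bijectivity then reduces to the identification of both moduli spaces with orbits of finite-type trajectories under a residual gauge action. On the $M([x],[y])$ side, as explained in the proof of Proposition~\ref{prop:CPtraj}, any four-dimensional gauge transformation preserving temporal gauge is constant in $t$, so $M([x],[y])$ corresponds to trajectories of $\Xqsigma$ modulo the three-dimensional group $\G$. On the $M^{\agCoul}([x],[y])$ side, every element of $\Btaug_k([x],[y])$ admits a temporal-gauge representative after replacing $y$ by $uy$ for some $u\in S^1$ (cf.~Remark~\ref{rmk:no-temporal}), and the residual $\G_{k+1}^{\gCoul}(Z)$-action on temporal-gauge trajectories is by $S^1$-constants. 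Hence both sides are in bijection via Proposition~\ref{prop:CPtraj}, and the induced map is exactly $\Pi^{[\gCoul],\tau}$.

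Continuity of $\Pi^{[\gCoul],\tau}$ has already been established. For continuity of the inverse, my plan is to construct explicit continuous lifts by applying the enlarged local Coulomb projection $\Pi^{\elCoul,\sigma}$ slicewise, which by \eqref{eq:backandforth} and Lemma~\ref{lem:bijection2} provides a right inverse to the global Coulomb projection on the anticircular slice; the regularity estimates of Lemmas~\ref{lem:elcUniqueness} and~\ref{lem:elc} transfer $L^2_k$-convergence in $\C^{\gCoul,\tau}_k$ to $L^2_k$-convergence of the lifted configurations. Combining this with uniform exponential decay and a continuous choice of gauge representative near each end would yield convergence in $\C^\tau_k$ modulo four-dimensional gauge.

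The main obstacle I anticipate is this last step: controlling the lifts uniformly in $L^2_k$ over the entire infinite cylinder. Locally, the enlarged local Coulomb projection gives a smooth right inverse, but globally one must match asymptotic gauge choices at both ends without losing Sobolev regularity. I would handle this by combining Kronheimer--Mrowka's exponential-decay bounds on Seiberg--Witten trajectories with a partition-of-unity argument that patches the local lifts into a global $L^2_k$ representative depending continuously on the trajectory in Coulomb gauge.
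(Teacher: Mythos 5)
Your treatment of the first claim (membership in $\Btaug_k([x],[y])$) and of bijectivity follows the same route as the paper: pass through Proposition~\ref{prop:CPtraj} and Lemma~\ref{lem:idCoulomb} to identify trajectory classes as sets, quote \cite[Theorem 13.3.5]{KMbook} to get an $L^2_k$ representative upstairs, and use the fact (already established by the paper in the lemma preceding this proposition) that $\Pi^{[\gCoul],\tau}$ preserves the global $L^2_k$ condition. That part is fine and matches the paper.

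The error is in your proposed construction of the continuous inverse. You write that you would ``construct explicit continuous lifts by applying the enlarged local Coulomb projection $\Pi^{\elCoul,\sigma}$ slicewise,'' appealing to~\eqref{eq:backandforth} and Lemma~\ref{lem:bijection2}. But $\Pi^{\elCoul,\sigma}$ is an \emph{infinitesimal} operation: it is a bundle map $\T^\sigma_{j,x} \to \Kesigma_{j,x}$ acting on tangent vectors, not a map of configurations. The identity $\Pi^{\elCoul,\sigma} = (\Pi^{\gCoul,\sigma}_*)^{-1}$ holds between linearized projections on a tangent space; there is no configuration-level map $W^\sigma \to \C^\sigma(Y)$ which it induces, and applying it slicewise to a path in $W^\sigma$ does not produce a path in $\C^\sigma(Y)$. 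Consequently Lemmas~\ref{lem:elcUniqueness} and~\ref{lem:elc} give you Sobolev control on tangent vectors, not on lifted configurations, and the argument as written does not go through.

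The correct inverse at the configuration level is the lift constructed in the proof of Proposition~\ref{prop:CPtraj}: one integrates the vector field $\Xqsigma$ along the gauge-orbit Banach manifold $\O(\gamma)_k$ starting from a point on $\gamma^{\gCoul}$, which produces a trajectory of $\Xqsigma$ whose global Coulomb projection is $\gamma^{\gCoul}$, unique up to a constant three-dimensional gauge transformation. Because the lift is determined (up to $\G$) by a single initial value, the ``matching of asymptotic gauge choices at both ends'' you flag as a worry does not actually arise. What remains is to see that this integration depends continuously on $\gamma^{\gCoul}$ in the relevant $L^2_k$ topologies; the paper leaves this implicit (as, in the downstream applications, the moduli spaces in question are finite sets, for which a continuous bijection is automatically a homeomorphism), but if you wish to argue it directly you should do so via the ODE-dependence-on-data for the flow of $\Xqsigma$ on $\O(\gamma)_k$, not via $\Pi^{\elCoul,\sigma}$.
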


\begin{proof}
The equivalence between trajectories of $\Xqsigma$ and $\Xqagcsigma$ (without the $L^2_k$ conditions) was already established in Lemma~\ref{lem:idCoulomb}.    

Furthermore, as noted in Section~\ref{sec:AdmPer}, it is proved in \cite[Theorem 13.3.5]{KMbook} that every trajectory of $\Xqsigma$ connecting $x$ and $y$ is gauge equivalent to a trajectory in $\C^{\tau}_k(x, y)$. Thus, if we have a trajectory $\gamma$ of $\Xqagcsigma$ connecting two stationary points, we can lift it to one of $\Xqsigma$, and apply a gauge transformation to obtain a trajectory in $\C^{\tau}_k(x, y)$. Since the map $\Pi^{[\gCoul], \tau}$ preserves the $L^2_k$ condition, we deduce that $\gamma$ is in $\Btaug_k([x], [y])$.  
\end{proof}

\begin{remark}\label{rmk:MagcFgc}
It also follows from the proof of Proposition~\ref{prop:gccorrespondence} that $M^{\agCoul}([x],[y])$ is identified with the space of trajectories of $\Xqgcsigma$ from $x$ to $y$ modulo the $S^1$ action, and equivalently, the zero set of $\F^{\gCoul,\tau}_\q$, restricted to $\C^{\gCoul,\tau}_k(x,y)$ modulo the action of $\G^{\gCoul}_{k+1}(Z)$.  
\end{remark}

\section{Four-dimensional Coulomb slices}
\label{sec:4Dcoulomb}
Fix $x, y \in W^{\sigma}$. We aim to prove that $\widetilde{\B}^{\gCoul, \tau}_k([x],[y])$ is a Hilbert manifold, and to identify its tangent space. The discussion here will be modelled on the corresponding one for the space $\widetilde{\B}^\tau_k([x],[y])$, following \cite[Section 14.3]{KMbook}.

Let us first review the analysis for $\widetilde{\B}^\tau_k([x],[y])$. This space is the quotient of $\widetilde{\C}^{\tau}_k(x, y)$ by the gauge action. The $L^2_j$ completion of the tangent space to $ \widetilde{\C}^{\tau}_k(x, y)$ at $\gamma=(a, s, \phi)$ is
\begin{align}
\label{eq:fir}
\T^{\tau}_{j, \gamma} &= \{(b, r, \psi) \mid \Re \langle \phi(t), \psi(t) \rangle_{L^2(Y)}=0, \ \forall t \} \\
& \subset L^2_j(Z; i T^*Z) \oplus L^2_j(\rr; \rr) \oplus L^2_j(Z; \Spin^+). \notag
\end{align}

The derivative of the gauge group action on $\widetilde{\C}^{\tau}_k(x, y)$ is given by
$$
\dd^{\tau}_{\gamma}: L^2_{j+1}(Z; i\R) \to \T^{\tau}_{j, \gamma}, \ \ \ 
\dd^\tau_{\gamma}(\xi) = (-d\xi, 0, \xi \phi).$$

Kronheimer and Mrowka define a local slice for the gauge action, $\Slice^{\tau}_{k, \gamma} \subset  \widetilde{\C}^{\tau}_k(x, y)$, by the equation:
\begin{equation}
\label{eq:sliceC}
 -d^*b + isr \Re \langle i\phi, \psi \rangle + i |\phi|^2 \Re \mu_Y ( \langle i\phi, \psi \rangle ) = 0.
 \end{equation}

By linearizing this equation, they obtain the four-dimensional local Coulomb slice which was previously mentioned in Section~\ref{sec:AdmPer}:
$$\K^\tau_{k, \gamma} = \ker \dd^{\tau, \dagger}_{\gamma} \subset \T^\tau_{k, \gamma},$$
where 
\begin{equation}
\label{eq:dtdagger}
 \dd^{\tau,\dagger}_\gamma(b,r,\psi) = -d^*b + is^2 \text{Re}\langle i\phi,\psi \rangle + i |\phi|^2 \Re\ \mu_Y (\langle i\phi,\psi \rangle).
 \end{equation}
There are also $L^2_j$ completions $\K^\tau_{j, \gamma}$ for $j \leq k$. Let $\J^{\tau}_{j, \gamma}$ denote the image of $\dd^{\tau}_{\gamma}$. In \cite[Proposition 14.3.2]{KMbook}, Kronheimer and Mrowka prove that $\dd^\tau_{\gamma}$ is injective with closed range, $ \dd^{\tau,\dagger}_\gamma$ is surjective, and there is a bundle decomposition
$$ \T^\tau_j = \J^{\tau}_j \oplus \K^{\tau}_j.$$
In turn, this implies that the slices $\Slice^{\tau}_{k, \gamma}$ are Hilbert submanifolds of $\widetilde{\C}^{\tau}_k(x, y)$. Since they provide local models for the quotient $\widetilde{\B}^\tau_k([x],[y])$, they deduce that this quotient is a Hilbert manifold. Furthermore, its tangent space at $[\gamma]$ can be identified with $\K^\tau_{k, \gamma}$.

We now turn to a similar discussion in global Coulomb gauge. 

The $L^2_j$ completion of the tangent space to $ \widetilde{\C}^{\gCoul, \tau}_k(x, y)$ at $\gamma=(a, s, \phi)$ is
\begin{align*}
\T^{\gCoul, \tau}_{j, \gamma} &= \{(b, r, \psi) \mid d\beta(t)=0, \ d^*(b(t))=0, \ \Re \langle \phi(t), \psi(t) \rangle_{L^2(Y)}=0, \ \forall t \} \\
& \subset L^2_j(Z; i \pp^*(T^*Z)) \oplus L^2_j(\rr; \rr) \oplus L^2_j(Z; \Spin^+),
\end{align*}
where we write $b$ as $b(t) + \beta(t)dt$. 

The derivative of the action of $\G_{k+1}^{\gCoul}(Z)$ on $\widetilde{\C}^{\gCoul, \tau}_k(x, y)$ is 
$$
\dd^{\gCoul, \tau}_{\gamma}: L^2_{j+1}(\R; i\R) \to \T^{\tau}_{j, \gamma}, \ \ \ 
\dd^{\gCoul, \tau}_{\gamma}(\xi) = (-\frac{d\xi}{dt} dt, 0, \xi \phi).$$

A suitable local slice for this action is $\Slice^{\gCoul, \tau}_{k, \gamma} \subset \widetilde{\C}^{\gCoul, \tau}_k(x, y)$, defined by:
\begin{equation}
\label{eq:Remu}
  \frac{d\beta}{dt} +  i \langle  i \phi, \psi \rangle_{\tilde{g}} = 0.
\end{equation}
This is already linear, so the same equation defines the corresponding linearized local slice
$$\K^{\gCoul,\tau}_{k, \gamma} = \ker \dd^{\gCoul, \tau, \tilde{\dagger}}_{\gamma}  \subset \T^{\gCoul, \tau}_{k, \gamma},$$
where
\begin{equation}
\label{eq:dgct}
 \dd^{\gCoul, \tau, \tilde{\dagger}}_\gamma(b,r,\psi) =    \frac{d\beta}{dt} + i \langle  i \phi, \psi \rangle_{\tilde{g}}.
 \end{equation}
We have $L^2_j$ completions $\K^{\gCoul, \tau}_{j, \gamma}$ for $j \leq k$, and we denote the image of $\dd^{\gCoul, \tau}_{\gamma}$ by $\J^{\gCoul, \tau}_{j, \gamma}$.

\begin{lemma}
\label{lem:DecomposeGC}
The operator $\dd^{\gCoul, \tau}_{\gamma}$ is injective with closed range, $ \dd^{\gCoul, \tau, \tilde{\dagger}}_\gamma$ is surjective, and there is a bundle decomposition
$$ \T^{\gCoul, \tau}_j = \J^{\gCoul, \tau}_j \oplus \K^{\gCoul, \tau}_j.$$
Further, 
\[
\J^{\gCoul,\tau}_j = \J^{\gCoul,\tau}_0 \cap \T^\tau_j.  
\]
\end{lemma}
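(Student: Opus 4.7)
The plan is to imitate the strategy used by Kronheimer--Mrowka in \cite[Proposition 14.3.2]{KMbook}, but taking advantage of the fact that the Coulomb gauge group $\G^{\gCoul}_{k+1}(Z)$ depends only on $t$, so the operator $\dd^{\gCoul,\tau}_\gamma$ is essentially one-dimensional in nature. First I would compute the composition $\dd^{\gCoul,\tau,\tilde\dagger}_\gamma \circ \dd^{\gCoul,\tau}_\gamma$ on a scalar $\xi: \R \to i\R$. Writing $\dd^{\gCoul,\tau}_\gamma(\xi) = (-\frac{d\xi}{dt}dt,0,\xi\phi)$ and using that $\xi(t)\in i\R$ so $\xi\phi = (\xi/i)\cdot i\phi$, one obtains
\begin{equation*}
\dd^{\gCoul,\tau,\tilde\dagger}_\gamma\bigl(\dd^{\gCoul,\tau}_\gamma(\xi)\bigr) \ =\ -\frac{d^2\xi}{dt^2} \ +\ V(t)\,\xi, \qquad V(t) := \|i\phi(t)\|_{\tilde g(a(t),s(t)\phi(t))}^2.
\end{equation*}
Since $\|\phi(t)\|_{L^2(Y)}=1$ and the $\tilde g$-metric restricted to the $S^1$-direction is uniformly comparable to the $L^2$-metric along the (relatively compact) image of $\gamma$ (at reducible endpoints $s=0$ one directly checks $\Pi^{\elCoul}_{(a,0)}(0,i\phi)=(0,i\phi)$, so $V=1$), the potential $V$ is smooth in $t$, strictly positive, and bounded below by some $c>0$.

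Next I would use this computation. The operator $-\partial_t^2 + V$ is self-adjoint and coercive on $L^2_1(\R;i\R)$, so Lax--Milgram gives invertibility $L^2_1(\R;i\R)\to L^2_{-1}(\R;i\R)$, and standard elliptic regularity (propagated using the Sobolev regularity of $\phi$) upgrades this to an isomorphism $L^2_{j+1}(\R;i\R)\to L^2_{j-1}(\R;i\R)$ for $1\le j\le k$. From this invertibility the injectivity of $\dd^{\gCoul,\tau}_\gamma$ is immediate (if $\dd^{\gCoul,\tau}_\gamma(\xi)=0$ then the composition kills $\xi$), and closed range follows from the a priori estimate $\|\xi\|_{L^2_{j+1}} \lesssim \|\dd^{\gCoul,\tau,\tilde\dagger}_\gamma \dd^{\gCoul,\tau}_\gamma \xi\|_{L^2_{j-1}} \lesssim \|\dd^{\gCoul,\tau}_\gamma\xi\|_{L^2_j}$. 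The same invertibility yields surjectivity of $\dd^{\gCoul,\tau,\tilde\dagger}_\gamma: \T^{\gCoul,\tau}_j \to L^2_{j-1}(\R;i\R)$, since given $f\in L^2_{j-1}(\R;i\R)$ we may set $\xi := (\dd^{\gCoul,\tau,\tilde\dagger}_\gamma \dd^{\gCoul,\tau}_\gamma)^{-1}f$, and then $\dd^{\gCoul,\tau}_\gamma(\xi)\in \T^{\gCoul,\tau}_j$ maps to $f$.

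For the bundle decomposition $\T^{\gCoul,\tau}_j = \J^{\gCoul,\tau}_j \oplus \K^{\gCoul,\tau}_j$, given $v\in\T^{\gCoul,\tau}_j$ define $\xi(v) := (\dd^{\gCoul,\tau,\tilde\dagger}_\gamma \dd^{\gCoul,\tau}_\gamma)^{-1}\bigl(\dd^{\gCoul,\tau,\tilde\dagger}_\gamma v\bigr)$ and write $v = \dd^{\gCoul,\tau}_\gamma(\xi(v)) + (v - \dd^{\gCoul,\tau}_\gamma(\xi(v)))$. The second summand lies in $\ker\dd^{\gCoul,\tau,\tilde\dagger}_\gamma = \K^{\gCoul,\tau}_{j,\gamma}$ by construction, and the sum is direct since $\dd^{\gCoul,\tau}_\gamma$ is injective. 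The resulting projections depend smoothly on $\gamma$, giving a bundle decomposition.

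Finally, for the identity $\J^{\gCoul,\tau}_j = \J^{\gCoul,\tau}_0 \cap \T^\tau_j$, one inclusion is trivial. Conversely, suppose $v = \dd^{\gCoul,\tau}_\gamma(\xi) \in \T^\tau_j$ for some $\xi\in L^2_1(\R;i\R)$. The $1$-form component of $v$ is $-\frac{d\xi}{dt}\,dt$, a slicewise constant in $y$; its membership in $L^2_j(Z;iT^*Z)$ forces all $t$-derivatives of $d\xi/dt$ up to order $j$ to lie in $L^2(\R)$, so $\xi\in L^2_{j+1}(\R;i\R)$, and hence $v\in\J^{\gCoul,\tau}_j$.

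The step I expect to take the most care is the elliptic regularity argument for $-\partial_t^2+V$ when $j>1$: one must verify that $V$ defines a bounded multiplier on the relevant $L^2_m(\R)$ spaces for the range of $m$ needed, which in turn requires a slicewise Sobolev estimate tying the regularity of $\|i\phi(t)\|_{\tilde g}^2$ in $t$ to the $L^2_k(Z)$-regularity of $\phi$ (combined with the functional boundedness of $\Pi^{\elCoul}$ from Lemma~\ref{lem:elc}). The other steps are essentially algebraic once this analytic input is in place.
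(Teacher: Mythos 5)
The paper does not actually supply a proof here: it defers to \cite[Proposition 14.3.2]{KMbook}, saying the argument is similar. Your proposal carries out exactly that KM strategy, and it is correct; the one place where you add genuine content is the observation that in Coulomb gauge the composite $\dd^{\gCoul,\tau,\tilde\dagger}_\gamma \dd^{\gCoul,\tau}_\gamma$ collapses to a scalar Schr\"odinger operator $-\partial_t^2 + V(t)$ on $\R$ (rather than a Laplace-type operator on $\R\times Y$ as in KM), because $\G^{\gCoul}_{k+1}(Z)$ depends only on $t$. Your computation of $V(t)=\|i\phi(t)\|^2_{\tilde g}$ is correct, including the reducible-endpoint check that $\Pi^{\elCoul}_{(a,0)}(0,i\phi)=(0,i\phi)$ and hence $V=1$ there, and the lower bound $V\geq c>0$ follows from injectivity of $\Pi^{\elCoul}$ together with precompactness of the image of $\gamma$ in a lower Sobolev topology. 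The Lax--Milgram/bootstrapping step, the direct-sum argument via invertibility of the composite, and the one-form-component argument for $\J^{\gCoul,\tau}_j = \J^{\gCoul,\tau}_0\cap\T^\tau_j$ are all sound. You correctly flag the only delicate point as the multiplier bounds for $V$ (which converges to nonzero constants at $\pm\infty$, so one should split $V$ into a smooth bounded interpolation plus a decaying Sobolev remainder before applying Sobolev multiplication on $\R$); with that caveat in hand, this is a complete and valid execution of the argument the paper has in mind.
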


The proof of Lemma~\ref{lem:DecomposeGC} is similar to \cite[Proposition 14.3.2]{KMbook}, so we omit it.

We deduce that $\tBtaug_k([x],[y])$ is a Hilbert manifold, locally modelled on the slices $\Slice^{\gCoul, \tau}_{k, \gamma}$. The tangent space to $\tBtaug_k([x],[y])$ at $\gamma$ can be identified with $\K^{\gCoul,\tau}_{k, \gamma}$.

Let us now take a quick look at the map $\Pi^{[\gCoul], \tau}$ from \eqref{eq:Pigctau}. With respect to the smooth structures we just defined, this map is continuously differentiable. Its derivative 
\begin{equation}
\label{eq:Pigctau2}
 (\Pi^{[\gCoul], \tau}_* )_{[\gamma]}: \K^{\tau}_{j, [\gamma]} \longrightarrow \K_{j, \Pi^{[\gCoul], \tau}([\gamma])}^{\gCoul, \tau}
 \end{equation}
is given by the formula
$$(b(t)+\beta(t)dt, r(t), \psi(t)) \mapsto \Pi_{\K^{\gCoul, \tau}_j} \Bigl( (\Pi^{\gCoul, \sigma}_*)_{(a(t), s(t), \phi(t))}(b(t), r(t), \psi(t)) + (\mu_Y(\beta(t))dt, 0, 0) \Bigr),$$
where $\Pi_{\K^{\gCoul, \tau}_j}$ denotes the projection onto $\K^{\gCoul, \tau}_j$ with kernel $ \J^{\gCoul, \tau}_j$.

\section{The linearized equations}
\label{sec:linearized} 
Let $x, y$ be non-degenerate stationary points of $\Xqsigma$.  Recall from Section~\ref{sec:HM} that the moduli space $M(x, y)$ of perturbed Seiberg-Witten trajectories can be described as the zero set of the section
$$ \F^{\tau}_{\q} : \C^{\tau}_k(x, y) \to \V^{\tau}_{k-1}(Z),$$
modulo gauge. In temporal gauge, we have $ \F^{\tau}_{\q} = \frac{d}{dt} + \Xqsigma$.

Recall that $\V^{\tau}_{k-1}(Z)$ is a bundle over the Hilbert manifold $\widetilde{\C}^{\tau}_k(x, y)$, and we have extended the section $\F^{\tau}_{\q}$. In order to understand the local structure of $M(x, y)$, one needs to study the derivative $\D^{\tau}_{\gamma} \F^{\tau}_{\q}$ at paths $\gamma \in M(x, y)$. Further, to be able to define gradings and orientations later, one needs to understand $\D^\tau_{\gamma}  \F^{\tau}_{\q}$ when $\gamma$ is not a trajectory in $M(x, y)$.  Much like using the notation $\D^\sigma$ to clarify derivatives in the blow-up, we write $\D^\tau$ to mean that the derivatives are taken with respect to four-dimensional configurations (as opposed to three-dimensional derivatives slicewise).  The relevant properties of $\D^\tau_{\gamma}  \F^{\tau}_{\q}$ are analyzed by Kronheimer and Mrowka in \cite[Section 14.4]{KMbook}. We will sketch their results, and then do a similar analysis in global Coulomb gauge, with an eye towards the local structure of the moduli spaces $M^{\agCoul}([x], [y])$.

Fix $\gamma \in \widetilde{\C}^{\tau}_k(x, y)$, and assume that $\gamma$ is in temporal gauge. Recall the definition of the tangent space $\T^{\tau}_{j, \gamma}$ from \eqref{eq:fir}. Following  \cite[Section 14.4]{KMbook}, we write an element of $\T^{\tau}_{j, \gamma}$ as $(V, \beta)$, where $V(t) = (b(t), r(t), \psi(t))$ is a path in (a completion of) $\T^{\sigma}(Y)$, and $\beta= (\beta(t))$ is the path in $L^2(Y; i\R)$ that gives the $dt$ component of the connection. Vectors in  $\T^{\sigma}(Y)$ can be differentiated along paths using the covariant derivative
$$ \frac{D^\sigma}{dt} V = \Bigl( \frac{db}{dt}, \frac{dr}{dt}, \Pi^{\perp}_{\phi(t)} \frac{d\psi}{dt} \Bigr),$$
where $\Pi^{\perp}_{\phi(t)}$ denotes the $L^2$ projection to the orthogonal complement of $\phi(t)$. 
We can then write the derivative $\D^\tau_{\gamma} \F^{\tau}_{\q}$ explicitly:
$$ \D^\tau_{\gamma} \F^{\tau}_{\q}(V, \beta) = \frac{D^\sigma}{dt} V  + \D^\sigma\X^{\sigma}_{\q}(V) + \dd^{\sigma}_{\gamma(t)} \beta,$$
where $\dd^{\sigma}$ is as in \eqref{eq:dsigma}.

Let us also recall from Section~\ref{sec:4Dcoulomb} that the $L^2_j$ completion of the tangent space to $\widetilde{\B}^{\tau}_k([x], [y])$ at $\gamma$ is the space $\K^{\tau}_{j,\gamma} = \ker \mathbf{d}^{\tau, \dagger}_\gamma$. With respect to the decomposition $(V, \beta)$,  the map $\dd^{\tau, \dagger}_{\gamma}: \T^{\tau}_{j, \gamma} \to L^2_{j-1}(Z; i\R)$ is given by
$$ \dd^{\tau, \dagger}_{\gamma}(V, \beta) = \frac{d\beta}{dt} + \dd^{\sigma, \dagger}_{\gamma(t)}(V),$$
with $\dd^{\sigma, \dagger}$ as in \eqref{eq:dsigmadagger}.

The local structure of the moduli space $M([x], [y])$ is governed by the operator:
$$ (\D^\tau_{\gamma} \F^{\tau}_{\q})|_{\K^{\tau}_{j,\gamma}} : \K^{\tau}_{j,\gamma} \to 
\V^{\tau}_{j-1,\gamma}(Z).$$

Kronheimer and Mrowka establish the following:

\begin{proposition}[Proposition 14.4.3 in \cite{KMbook}]
\label{prop:Dfredholm} For $1 \leq j \leq k$, the operator  $(\D^\tau_{\gamma} \F^{\tau}_{\q})|_{\K^{\tau}_{j,\gamma}}$ is Fredholm and the index is independent of $j$.
\end{proposition}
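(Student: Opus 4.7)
The plan is to deduce the Fredholm property from that of the augmented operator
\[
Q_\gamma \;=\; \D^\tau_\gamma \F^\tau_\q \,\oplus\, \dd^{\tau,\dagger}_\gamma : \T^\tau_{j,\gamma} \longrightarrow \V^\tau_{j-1,\gamma}(Z) \oplus L^2_{j-1}(Z;i\R),
\]
which is already the natural four-dimensional object considered in Section~\ref{sec:AdmPer}. Since $\dd^{\tau,\dagger}_\gamma$ is surjective with kernel $\K^\tau_{j,\gamma}$ (this being the analogue of \cite[Proposition 14.3.2]{KMbook} used in the construction of $\widetilde{\B}^\tau_k([x],[y])$), the operator $Q_\gamma$ is Fredholm of index $d$ if and only if the restriction $(\D^\tau_\gamma \F^\tau_\q)|_{\K^\tau_{j,\gamma}}$ is Fredholm of the same index. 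So it suffices to establish the two claims for $Q_\gamma$.

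Next I would recognize $Q_\gamma$ as a $t$-dependent operator of the shape $d/dt + A(t)$. Combining $r$ and $\psi$ into $\psis = \psi + r\phi$ as is done to analyze $\widehat{\Hess}^\sigma_{\q,x}$ in Section~\ref{sec:HessBlowUp}, the constraint defining $\T^\tau_{j,\gamma}$ disappears and $Q_\gamma$ is identified with an operator on sections of $iT^*Y \oplus \Spin \oplus i\R$ over $Z = \R\times Y$. In this presentation,
\[
Q_\gamma \;=\; \frac{d}{dt} + A(t),
\]
where $A(t)$ differs from the extended Hessian $\widehat{\Hess}^\sigma_{\q,\gamma(t)}$ only by the projection $\Pi^\perp_{\phi(t)}$ appearing in the covariant derivative $D^\sigma/dt$ and by terms of order zero coming from differentiating the normalization $\|\phi(t)\|_{L^2(Y)}=1$; all such corrections preserve Sobolev regularity and hence are compact as maps $L^2_j \to L^2_{j-1}$ on each slice.

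The crucial input is then that $A(t)$ is a continuous path of $k$-ASAFOE operators (each being a compact perturbation of the model Dirac-type symbol $L_0$ appearing in Lemma~\ref{lem:fakehessian}), whose asymptotic values $A(\pm\infty) = \widehat{\Hess}^\sigma_{\q,x}, \widehat{\Hess}^\sigma_{\q,y}$ are invertible by Lemma~\ref{lem:nondegeneracycoulombblowup} and the corresponding statement for the extended Hessian from \cite[Section 12.4]{KMbook}. Standard cylindrical-end Fredholm theory (as in \cite[Section 14.5]{KMbook}, and analogous to the Morse-theoretic discussion of $\frac{D}{dt} + \Hess(f)$ in Section~\ref{sec:or1}) then yields that $d/dt + A(t)$ is Fredholm on the full Sobolev spaces.

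For the independence of the index on $j$, the observation is that $\ker Q_\gamma$ and $\coker Q_\gamma$ can both be shown to consist of smooth configurations, by a standard bootstrapping argument that uses the ASAFOE structure of $A(t)$ slice by slice together with the elliptic estimate for $d/dt + A(t)$ on short cylinders; the cokernel is identified with the kernel of a formal adjoint of the same shape. Hence both spaces are independent of $j \in [1,k]$, and so is the index. The main obstacle in carrying this out is purely bookkeeping: one must verify carefully that the compact corrections arising from the change of variables $\psis = \psi + r\phi$ and from $\Pi^\perp_{\phi(t)}$ truly preserve the ASAFOE structure and asymptote to the invertible operators $\widehat{\Hess}^\sigma_{\q,x}, \widehat{\Hess}^\sigma_{\q,y}$; this is parallel to, but slightly more delicate than, the corresponding analysis already carried out for $\widehat{\Hess}^\sigma_{\q,x}$ in \cite[Section 12.4]{KMbook}.
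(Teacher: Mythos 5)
Your proposal follows the same route as the paper: pass to the augmented operator $Q_\gamma$, use the $\psis = \psi + r\phi$ change of variables to remove the $\tau$-model constraint, recognize $Q_\gamma$ as $d/dt + A(t)$ with asymptotics equal to the invertible, real-spectrum extended Hessians at the endpoints, apply cylindrical-end Fredholm theory, and recover the statement for the restriction via surjectivity of $\dd^{\tau,\dagger}_\gamma$. The bootstrapping argument you sketch for $j$-independence is equivalent in substance to the elliptic-estimate extension the paper cites, so the two proofs agree in all essentials.
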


\begin{proof}[Sketch of proof] The main tool is Proposition 14.2.1 in \cite{KMbook}, which gives a Fredholmness criterion for differential operators on infinite cylinders. Specifically, it deals with operators of the form
\begin{equation}
\label{eq:Q}
Q=  \frac{d}{dt} + L_0 + h_t : L^2_1(Z; \pp^*E) \to L^2(Z; \pp^*E),
\end{equation}
where $E \to Y$ is a vector bundle, $L_0$ is a first order, self-adjoint elliptic operator acting on sections of $E$, and $h_t$ is a time-dependent bounded operator on $L^2(Y; E)$, varying continuously in the operator norm topology, and assumed to be constant $h_{\pm}$ near the ends of the cylinder $Z$. The proposition says that if $L_0 + h_{\pm}$ are hyperbolic (i.e., their spectrum is disjoint from the imaginary axis), then $Q$ is Fredholm, with index given by the spectral flow of the family $\{L_0 + h_t\}$.   

Proposition 14.2.1 in \cite{KMbook} cannot be applied directly to the operator $( \D^\tau_{\gamma} \F^{\tau}_{\q})|_{\K^{\tau}_{j,\gamma}}$, because its domain is the local Coulomb slice, rather than the space of all sections of a vector bundle. The remedy is to enlarge the operator, much as in the proof of Lemma~\ref{lem:hessianfredholm}. We take the direct sum
\begin{equation}
\label{eq:Qg}
 Q_{\gamma} = \D^\tau_{\gamma} \F^{\tau}_{\q} \oplus  \dd^{\tau, \dagger}_{\gamma} : \T^{\tau}_{j,\gamma} \to \V^{\tau}_{j-1,\gamma} \oplus L^2_{j-1}(Z; i\R),
 \end{equation}
previously considered in \eqref{eq:Qgamma}. We can write
\begin{equation}
\label{eq:Qg2}
 Q_{\gamma} = \frac{D^\sigma}{dt} + \begin{pmatrix} \D^\sigma_{\gamma(t)} \X_{\q}^{\sigma} & \dd^{\sigma}_{\gamma(t)} \\ \dd^{\sigma, \dagger}_{\gamma(t)} & 0 \end{pmatrix}.
 \end{equation}

Further, to deal with the condition $\Re \langle \phi(t), \psi(t)\rangle_{L^2(Y)} = 0$ that defines $\T^{\tau}_{j,\gamma}$, we proceed as in Section~\ref{sec:HessBlowUp}: We combine the $r$ and $\psi$ components into $$ \psis(t) = \psi(t) + r(t) \phi(t).$$

Now $Q_{\gamma}$ is of the desired form \eqref{eq:Q}, with $E$ being the bundle $iT^*Y \oplus \Spin \oplus i\R$ and $L_0$ having the block form
\begin{equation}
\label{eq:L0}
 L_0 = \begin{pmatrix}
*d & 0 & -d \\
0 & D & 0\\
-d^* & 0 & 0
\end{pmatrix}.
\end{equation}

In fact, $L_0 + h_t$ is a slight variant of the extended Hessian from \eqref{eq:ExtHessBlowUp}. On the two ends of the cylinder $Z$ we see exactly the extended Hessians at the non-degenerate stationary points $x$ and $y$. Recall that at a stationary point (say $x=(a, s, \phi)$), the extended Hessian is the operator 
\[
 \widehat{\Hess}^{\sigma}_{\q, x} =\begin{pmatrix}
 0 & 0 & \dd^{\sigma}_x \\
0 & \Hess^{\sigma}_{\q, x} & 0 \\
  \dd^{\sigma, \dagger}_x & 0 & 0
\end{pmatrix}.
\]
This is written in block form with respect to the decomposition $L^2_j(Y;E)= \J_j^{\sigma} \oplus \K_j^{\sigma} \oplus L^2_j(Y; i\R)$; compare Sections~\ref{sec:Hess} and ~\ref{sec:HessBlowUp}.

The extended Hessians at $x$ and $y$ are invertible by the non-degeneracy assumption; moreover, if we had not used the blow-up, they would be self-adjoint and therefore have real spectrum. Given the use of $\psis$ imposed from the blow-up construction, the limiting operators are no longer self-adjoint. Nevertheless, they have real spectrum by \cite[Lemma 12.4.3]{KMbook}. It follows that they are hyperbolic, and the hypotheses of \cite[Proposition 14.2.1]{KMbook} apply. We obtain that $Q_{\gamma}$ is  Fredholm as an operator from $L^2_1$ to $L^2$.  Further, if $L_0 + h_{\pm}$ are $k$-ASAFOE and $h_t$ is compact as a map from $L^2_j$ to $L^2_{j-1}$ for paths constant near the endpoints, then using elliptic estimates, the result is extended to show that $Q_{\gamma}$ as a map 
from $L^2_j$ to $L^2_{j-1}$ is also Fredholm with the same index.   As discussed, strictly speaking, we chose the path $\gamma$ to be constant near the ends.  That case implies the general case by the argument at the end of \cite[proof of Theorem 14.4.2]{KMbook}. Finally, Fredholmness of $Q_{\gamma}$ implies that $( \D^\tau_{[\gamma]} \F^{\tau}_{\q})|_{\K^{\tau}_{j,\gamma}}$ is also Fredholm, of the same index; this follows from the decomposition \eqref{eq:Qg}, together with the surjectivity of $ \mathbf{d}^{\tau, \dagger}_{\gamma}$ (cf. Proposition 14.3.2 in \cite{KMbook}).
\end{proof}

We now move to Coulomb gauge. Consider the moduli space $M^{\agCoul}([x], 
[y]) \subset {\widetilde \B}_k^{\gCoul, \tau}([x], [y])$, as defined before 
Proposition~\ref{prop:gccorrespondence}. Recall from Remark~\ref{rmk:MagcFgc} 
that we can describe $M^{\agCoul}([x], [y])$ as the zero set of 
$$\F^{\gCoul, \tau}_{\q}: \tC_k^{\gCoul, \tau}(x, y) \to \V^{\gCoul, \tau}_{k-1}(Z),$$
restricted to $\C_k^{\gCoul,\tau}(x,y)$ and modulo the action of $\G_{k+1}^{\gCoul}(Z)$. 

Fix $\gamma \in \C^{\gCoul, \tau}(x, y)$ in temporal gauge (that is, in $W^{\tau}_k(x, y)$, cf. Section~\ref{sec:path}). To understand the local structure of $M^{\agCoul}([x], [y])$, we need to study the derivative of the section $\F^{\gCoul, \tau}_{\q}$. There are different ways of doing this, depending on what covariant derivative we choose. The most natural choice is to use a covariant derivative that involves the $\tilde{g}$-metric.  Specifically, for $1 \leq j \leq k$, we write 
$$\D^{\tilde{g},\tau}_{\gamma} \Fqgctau : \T^{\gCoul,\tau}_{j,\gamma} \to \V^{\gCoul,\tau}_{j-1,\gamma},$$ 
\begin{equation}
\label{eq:tangerine}
 \D^{\tilde{g},\tau}_{\gamma} \F^{\gCoul, \tau}_{\q}(V, \beta) =  \frac{\dgs}{dt} V  + (\Dgs_{\gamma(t)} \X^{\gCoul, \sigma}_{\q})(V) + \dd^{\gCoul, \sigma}_{\gamma(t)} \beta,
 \end{equation}
where 
\begin{equation}
\label{eq:dde}
\dd^{\gCoul, \sigma}_x : i\R \to \T^{\gCoul, \sigma}_{j, x}, \ \ \dd^{\gCoul, \sigma}_x (\xi) = (0, 0, \xi \phi)
\end{equation}
is the restriction of the operator $\dd^{\sigma}_x :  L^2_j(Y;i\R) \to \T^{\sigma}_{j, x}$ from \eqref{eq:dsigma} to constant functions. Also, $ \frac{\dgs}{dt}$ represents the covariant derivative (along a path) for the connection $\Dgs$ from \eqref{eq:DGS}, that is,
\begin{equation}
\label{eq:dgs}
\frac{\dgs}{dt} V  = \Pi^{\gCoul, \sigma}_* \circ \D^\sigma_{\Pi^{\elCoul, \sigma}(\tfrac{d\gamma}{dt})} (\Pi^{\elCoul, \sigma}(V)).
\end{equation}

We could also use the covariant derivative coming from the $L^2$ metric. Define
$$\D^\tau_{\gamma} \Fqgctau : \T^{\gCoul,\tau}_{j,\gamma} \to \V^{\gCoul,\tau}_{j-1,\gamma},$$ 
by
\begin{equation}
\label{eq:tangerine2}
 \D^\tau_{\gamma} \F^{\gCoul, \tau}_{\q}(V, \beta) =  \frac{D^\sigma}{dt} V  + (\D^\sigma_{\gamma(t)} \X^{\gCoul, \sigma}_{\q})(V) + \dd^{\gCoul, \sigma}_{\gamma(t)} \beta.
 \end{equation}

While the operator $ \D^{\tilde{g},\tau}_{\gamma} \F^{\gCoul, \tau}_{\q}$ seems more natural, it will be easier to work with $\D^\tau_{\gamma} \Fqgctau$ later on, so we will focus on the latter. When $\gamma$ is a flow trajectory, then the two operators coincide.

As discussed in Section~\ref{sec:4Dcoulomb}, the tangent space to $\widetilde{\B}^{\gCoul, \tau}([x], [y])$ has completions $\K^{\gCoul, \tau}_{j, \gamma} = \ker \dd_{\gamma}^{\gCoul, \tau, \tilde{\dagger}}$. From \eqref{eq:dgct} we can write
$$ \dd_{\gamma}^{\gCoul, \tau, \tilde{\dagger}} = \frac{d\beta}{dt} + \dd^{\gCoul, \sigma, \tilde{\dagger}}_{\gamma(t)}(V),$$
where, at $x=(a, s, \phi) \in W^{\sigma}$,
\begin{equation}
\label{eq:ddedagger}
 \dd^{\gCoul, \sigma, \tilde{\dagger}}_x: \T^{\gCoul, \sigma}_{j, x} \to i\R,  \ \ \ \ 
\dd^{\gCoul, \sigma, \tilde{\dagger}}_x(b, r, \psi) = i \langle  i \phi, \psi \rangle_{\tilde{g}}.
 \end{equation}

Note that $ \dd^{\gCoul, \sigma, \tilde{\dagger}}$ appears as part of the operator  $\dd^{\sigma, \tilde{\dagger}}: \T^{\sigma}_{j, x} \to L^2_{j-1}(Y; i\R)$ from \eqref{eq:decomposingtilde}. Precisely, we decompose the domain of $\dd^{\sigma, \tilde{\dagger}}$ as follows:
$$\T^{\sigma}_{j, x}=\K^{\agCoul, \sigma}_{j, x} \oplus \J^{\gCoul, \sigma}_{j,x} \oplus \J^{\circ, \sigma}_{j,x}=\T^{\gCoul, \sigma}_{j, x} \oplus \J^{\circ, \sigma}_{j,x}.$$
Then, with respect to the last decomposition (where we combine the first two summands into one), we have
\begin{equation}
\label{eq:Decomposed}
 \dd^{\sigma, \tilde{\dagger}} = \begin{pmatrix} 0 & -d^* \\
\dd^{\gCoul, \sigma, \tilde{\dagger}} & 0 \end{pmatrix} : \T^{\gCoul, \sigma}_{j, x} \oplus \J^{\circ, \sigma}_{j,x} \to (\im d^*)_{j-1}  \oplus i\R.
\end{equation}

Returning to our operator $\D^\tau_{\gamma} \Fqgctau$, we consider its restriction to $\K^{\gCoul, \tau}_{j, \gamma}$. The surjectivity of this restriction would imply that $M^{\agCoul}([x], [y])$ is a smooth manifold near $[\gamma]$, and the dimension of its kernel would give the dimension of $M^{\agCoul}([x], [y])$. As a preliminary step towards this discussion, we establish the following:

\begin{proposition}
\label{prop:FredholmCoulomb}
Let $x, y \in W^\sigma_k$ be non-degenerate stationary points of $\Xqsigma$ (and hence of $\Xqagcsigma$). Pick a path $\gamma \in W^{\tau}_k(x, y)$. Then, for $j \leq k$, the operator
%$$(\Dgs_{\gamma} \Fqgctau)|_{\K^{\gCoul,\tau}_{j,\gamma}} : \K^{\gCoul,\tau}_{j,\gamma} \to \V^{\gCoul,\tau}_{j-1,\gamma}$$
%and
$$(\D^\tau_{\gamma} \Fqgctau)|_{\K^{\gCoul,\tau}_{j,\gamma}} : \K^{\gCoul,\tau}_{j,\gamma} \to \V^{\gCoul,\tau}_{j-1,\gamma}$$
is Fredholm. Moreover, the Fredholm index is the same as that of the operator
\[
( \D^\tau_{\gamma} \F^{\tau}_{\q})|_{\K^{\tau}_{j,\gamma}}  : \K^{\tau}_{j,\gamma} \to \V^\tau_{j-1,\gamma}. 
\] 
\end{proposition}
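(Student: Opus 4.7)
The plan is to mirror the enlargement strategy from the proof of Proposition~\ref{prop:Dfredholm}, now working in Coulomb gauge and leveraging the $\tilde g$--extended Hessian machinery built in Sections~\ref{sec:HessBlowUp}--\ref{sec:interpol}. The three conceptual steps are: (i) enlarge the operator so that it acts between completions of sections of a vector bundle over $Z$; (ii) identify it as a perturbation $\tfrac{d}{dt}+L_0+h_t$ of the ASAFOE type from \cite[Proposition 14.2.1]{KMbook} with hyperbolic endpoint limits, giving Fredholmness; (iii) compute its index by deforming the endpoint extended Hessians through hyperbolic operators.

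First, by Lemma~\ref{lem:DecomposeGC} the operator $\dd^{\gCoul,\tau,\tilde{\dagger}}_\gamma$ is surjective with $\T^{\gCoul,\tau}_j=\J^{\gCoul,\tau}_j\oplus\K^{\gCoul,\tau}_j$, so $(\D^\tau_\gamma\Fqgctau)|_{\K^{\gCoul,\tau}_{j,\gamma}}$ is Fredholm if and only if
\[
Q^{\gCoul}_\gamma := \D^\tau_\gamma\Fqgctau \oplus \dd^{\gCoul,\tau,\tilde{\dagger}}_\gamma \colon \T^{\gCoul,\tau}_{j,\gamma} \longrightarrow \V^{\gCoul,\tau}_{j-1,\gamma}\oplus L^2_{j-1}(\R;i\R)
\]
is, with the same index; analogously (by \cite[Proposition 14.3.2]{KMbook}) the Fredholm index of $(\D^\tau_\gamma\F^\tau_\q)|_{\K^\tau_{j,\gamma}}$ equals that of the KM operator $Q_\gamma$ from \eqref{eq:Qg}, so matters reduce to comparing $Q^{\gCoul}_\gamma$ and $Q_\gamma$. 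For this I would further enlarge to the operator $R_\gamma:=\tfrac{d}{dt}+\mathcal{H}^\sigma_{\gamma(t)}$ on $\T^\tau_{j,\gamma}$, where $\mathcal{H}^\sigma_x$ is the operator from Lemma~\ref{lem:fakehessian}; after the substitution $\psis=\psi+r\phi$ used in the proof of Proposition~\ref{prop:Dfredholm}, $R_\gamma$ becomes an operator between $L^2$ completions of sections of $\pp^*(iT^*Y\oplus\Spin\oplus i\R)$ over $Z$. Lemma~\ref{lem:fakehessian}(a)--(b) put $R_\gamma$ in the form $\tfrac{d}{dt}+L_0+h_t$ with $L_0$ as in \eqref{eq:L0} and with $h_t$ compact from $L^2_k$ to $L^2_{k-1}$, while Lemma~\ref{lem:fakehessian}(d) combined with Lemma~\ref{lem:eH} shows that the endpoint limits $\mathcal{H}^\sigma_{x}=\widehat{\Hess}^{\tilde g,\sigma}_{\q,x}$ and $\mathcal{H}^\sigma_{y}=\widehat{\Hess}^{\tilde g,\sigma}_{\q,y}$ are invertible with real spectrum, hence hyperbolic. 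Thus \cite[Proposition 14.2.1]{KMbook} yields Fredholmness of $R_\gamma$ at $j=k$, and the bootstrapping argument at the end of the proof of Proposition~\ref{prop:Dfredholm} extends this to all $1\le j\le k$ with $j$-independent index.

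Next, I would match indices by two applications of the homotopy invariance of spectral flow. On one hand, Lemmas~\ref{lem:eHrho} and \ref{lem:eHrho2} furnish a continuous path $\widehat{\Hess}^{\rho,\sigma}_{\q,x/y}$ ($\rho\in[0,1]$) of invertible endpoint operators with real spectrum connecting $\widehat{\Hess}^\sigma_{\q,x/y}$ to $\widehat{\Hess}^{\tilde g,\sigma}_{\q,x/y}$; combined with a compatible interior interpolation of the coefficient operators, \cite[Proposition 14.2.1]{KMbook} then gives $\operatorname{ind}(R_\gamma)=\operatorname{ind}(Q_\gamma)$. On the other hand, using the slicewise analogue of \eqref{eq:xavi} I would decompose $\T^\tau_{j,\gamma}=\T^{\gCoul,\tau}_{j,\gamma}\oplus\J^{\circ,\tau}_{j,\gamma}$ (and correspondingly split the codomain scalar factor as $\im d^*\oplus i\R$ slicewise via \eqref{eq:Decomposed}); the definition of $\mathcal{H}^\sigma_x$ via $\D^\sigma\Xqgcsigma\circ\Pi^{\gCoul,\sigma}_*$ together with the block form \eqref{eq:decomposingtilde} then give $R_\gamma$ a block structure whose top-left block is $Q^{\gCoul}_\gamma$ and whose complementary ``gauge'' block has endpoint limit essentially $\tfrac{d}{dt}+\bigl(\begin{smallmatrix}\Delta&0\\0&\|i\phi\|_{\tilde g}^2\end{smallmatrix}\bigr)$, hyperbolic and of index zero by the same positivity argument used in the proof of Lemma~\ref{lem:eH}. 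Combining, $\operatorname{ind}(Q^{\gCoul}_\gamma)=\operatorname{ind}(R_\gamma)=\operatorname{ind}(Q_\gamma)$, as required.

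I expect the main technical obstacle to be this last block-form analysis. The slice splittings of $\T^\sigma$ are not $L^2$-orthogonal (only $\tilde g$-orthogonal) and $\mathcal{H}^\sigma_x$ is not self-adjoint, so the off-diagonal terms in the decomposition of $R_\gamma$ are nontrivial and have to be shown to be either compact or otherwise to preserve the Fredholm index. My plan for dealing with this is yet another one-parameter deformation, scaling the off-diagonals linearly from $0$ to their actual value through $(k-1)$-ASAFOE operators whose endpoint limits remain hyperbolic by the same positivity argument as in Lemma~\ref{lem:eHrho2}.
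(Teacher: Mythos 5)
Your outline follows the same route as the paper's proof: enlarge $Q_\gamma^{\gCoul}$ to an operator on all of $\T^\tau_{j,\gamma}$ to which \cite[Proposition 14.2.1]{KMbook} applies, invoke Lemma~\ref{lem:fakehessian} for the endpoint hyperbolicity, match indices with $Q_\gamma$ via the interpolations of Section~\ref{sec:interpol}, and recover $\operatorname{ind}(Q_\gamma^{\gCoul})$ by a block analysis. The organizational difference is in the enlargement step. The paper defines $\Qhat^{\gCoul}_\gamma$ to be the block-diagonal operator $\bigl(\begin{smallmatrix}Q^{\gCoul}_\gamma&0\\0&\Rhat\end{smallmatrix}\bigr)$ with respect to \eqref{eq:Ttau}, and \emph{then} verifies it equals $\tfrac{D^\sigma}{dt}+\bigl(\begin{smallmatrix}M&0\\0&0\end{smallmatrix}\bigr)+\mathcal{H}^\sigma_{\gamma(t)}$ for an explicit correction $M$; the index comparison with $Q^{\gCoul}_\gamma$ is then immediate from the bijectivity of $\Rhat$. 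You instead start from the formula (essentially $\tfrac{D^\sigma}{dt}+\mathcal{H}^\sigma_{\gamma(t)}$, once your $\tfrac{d}{dt}$ is corrected to $\tfrac{D^\sigma}{dt}$, which is needed for the operator to preserve $\T^\tau$) and then decompose. This can be made to work, because your $R_\gamma$ and $\Qhat^{\gCoul}_\gamma$ differ exactly by the compact term $M$; but your diagnosis of where the off-diagonals come from is incorrect, and with it the form of your proposed fix.

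Concretely, non-$L^2$-orthogonality of the slices and non-self-adjointness of $\Hx$ are not the source of off-diagonal terms. Using \eqref{eq:Decomposed}, \eqref{eq:dde}, \eqref{eq:ddedagger}, and the fact that $\Xqgcsigma$ takes values in $\T^{\gCoul,\sigma}$, one checks that $\Hx$ is \emph{block-diagonal} with respect to $(\T^{\gCoul,\sigma}_{j,x}\oplus i\R)\oplus(\J^{\circ,\sigma}_{j,x}\oplus(\im d^*)_j)$ for \emph{every} $x$, not just stationary $x$. The only off-diagonal in $R_\gamma$ arises because $\tfrac{D^\sigma}{dt}$ does not preserve the time-dependent subspace $\J^{\circ,\tau}_{j,\gamma}$; that discrepancy is precisely the paper's $M$ term, which is zeroth-order and vanishes wherever $\gamma$ is constant, hence (for paths constant near $\pm\infty$) is compactly supported and compact. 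So the block structure is lower-triangular with a compact off-diagonal, the endpoint limits are unchanged by your deformation, and no positivity argument at the endpoints is needed --- the deformation is automatically through Fredholm operators. Separately, the endpoint limit of the gauge block is the off-diagonal operator $\tfrac{d}{dt}+\bigl(\begin{smallmatrix}0&\dd^\sigma\\-d^*&0\end{smallmatrix}\bigr)$ conjugate to $R$ from \eqref{eq:RR}, not the diagonal form you wrote; the paper shows it is bijective by a direct spectral computation. Lastly, the decomposition you want is $\T^\tau_{j,\gamma}=\T^{\gCoul,\tau}_{j,\gamma}\oplus(\J^{\circ,\tau}_{j,\gamma}\oplus\im d^*_{(1)})$, the paper's \eqref{eq:Ttau}, which differs from what you wrote by the $\im d^*_{(1)}$ summand carrying the non-constant part of the $dt$-component.
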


\begin{proof}
%In view of Lemma~\ref{lem:TwoD}, it suffices to study $(\Dgs_{\gamma} \Fqgctau)|_{\K^{\gCoul,\tau}_{j,\gamma}}$, because $(\D_{\gamma} \Fqgctau)|_{\K^{\gCoul,\tau}_{j,\gamma}} $ differs from it by a compact operator.

To establish the Fredholm property, the arguments  are similar to those in the proof of Proposition~\ref{prop:Dfredholm}, with some modifications. 

We extend our operator $\D^\tau_{\gamma} \Fqgctau$ so that it acts on sections of a vector bundle.  First, define
\begin{equation}
\label{eq:Qggc}
 Q_{\gamma}^{\gCoul}  = \D^\tau_{\gamma} \Fqgctau \oplus \dd^{\gCoul, \tau, \tilde{\dagger}}_{\gamma} : \T^{\gCoul, \tau}_{j, \gamma} \to \V^{\gCoul,\tau}_{j-1,\gamma} \oplus L^2_{j-1}(\R; i\R).
\end{equation}

The codomain of $Q_{\gamma}^{\gCoul, \tilde g, \sigma} $ can be identified with $ \T^{\gCoul, \tau}_{j-1, \gamma}.$
However, even after writing $ \psis(t) = \psi(t) + r(t) \phi(t)$ as before, $\T^{\gCoul, \tau}_{j, \gamma}$ is still not the space of all sections of a vector bundle. Indeed, for $(b(t) + \beta(t)dt, \psis(t)) \in \T^{\gCoul, \tau}_{j, \gamma}$, we still have the conditions $d(\beta(t))=0$ and $d^* (b(t))=0$.

Thus, we need to extend the operator once more.  Consider the linear operator
\begin{equation}
\label{eq:RR}
 R = \frac{d}{dt} + \begin{pmatrix} 0 & -d \\ -d^* & 0 \end{pmatrix}: ( \im d_{(0)} \oplus \im d^*_{(1)} ) \to ( \im d_{(0)} \oplus \im d^*_{(1)}) 
 \end{equation}
where the subscript $(p)$ with $p \in \{0,1\}$ denotes the imaginary $p$-forms on which the respective operator acts, on each slice $\{t\} \times Y$. Precisely, $\im d^*_{(1)}$ is the subset of $L^2_{j}(Z; i\R)$ consisting of functions that integrate to zero slicewise, and $\im d_{(0)} = \ker d_{(1)}$, since $Y$ is a rational homology sphere. 

Next, decompose $\T^{\tau}_{j, \gamma}$ as 
\begin{equation}
\label{eq:Ttau}
\T^{\tau}_{j, \gamma} = \T^{\gCoul, \tau}_{j, \gamma} \oplus ( \J^{\circ, \tau}_{j, \gamma} \oplus \im d^*_{(1)}),
\end{equation}
where $\J^{\circ, \tau}_{j, \gamma}$ consists of time-dependent elements of the spaces $\J^{\circ, \sigma}_{j, \gamma(t)}$ from \eqref{eq:Jcircsigma}. Note that there is a natural identification
$$ \Psi:  \im d_{(0)} \to \J^{\circ, \tau}_{j, \gamma}$$
given at each time $t$ by the formula
$$ \Psi(-d\xi, 0, 0) =(-d\xi, 0, \xi \cdot \phi(t)),$$
where $\phi(t)$ is the spinor component of $\gamma(t)$. If we conjugate the operator $R$ by $\Psi$ on the $\im d_{(0)}$ summand, we obtain an operator
\begin{equation}
\label{eq:RRtilde}
 \Rhat = \Psi \circ \frac{d}{dt}\circ  \Psi^{-1} + \begin{pmatrix} 0 & \dd^{\sigma} \\ -d^* & 0 \end{pmatrix}: (\J^{\circ, \tau}_{j, \gamma} \oplus \im d^*_{(1)} ) \to (\J^{\circ, \tau}_{j, \gamma} \oplus \im d^*_{(1)} ).
 \end{equation}
%where $E_1$ is as in \eqref{eq:E1}.  
Observe that $\Psi \circ \frac{d}{dt}\circ  \Psi^{-1} ( -d\xi, 0, \xi \phi) = (-\frac{d\xi}{dt}, 0, \frac{d\xi}{dt} \phi)$.  

%Define also
%$$ \Ehat_2 : (\J^{\circ, \tau}_{j, \gamma} \oplus \im d^*_{(1)} ) \to  \V^{\gCoul,\tau}_{j-1,\gamma} \oplus L^2_{j-1}(\R; i\R)=\T^{\gCoul, \tau}_{j-1, \gamma}$$
%by
%$$ \Ehat_2 = \begin{pmatrix}  0 & 0 \\ E_2 & 0 \end{pmatrix},$$
%where $E_2$ is the operator from \eqref{eq:E2}, applied at each time $t$.

Define the new extension of $Q_{\gamma}^{\gCoul} $ to be
\begin{equation}
\label{eq:newextension}
\Qhat_{\gamma}^{\gCoul}  =\begin{pmatrix} Q_{\gamma}^{\gCoul}  & 0 \\ 0 & \Rhat  \end{pmatrix}: \T^{\tau}_{j, \gamma} \to  \T^{\tau}_{j-1, \gamma},
 \end{equation}
with respect to the decomposition \eqref{eq:Ttau}. 

Instead of \eqref{eq:Ttau} we could consider the decomposition $\T^\tau_{j, \gamma} = \V^\tau_{j, \gamma} \oplus L^2_j(Z; i\mathbb{R})$, where we recall that $\V^\tau$ is the space of four-dimensional configurations with trivial $dt$ component that are slicewise in $\T^{\sigma}$. With respect to this new decomposition, we can write
\begin{equation}
\label{eq:Qhgc}
\Qhat_{\gamma}^{\gCoul}  =  \frac{D^\sigma}{dt} +\begin{pmatrix} M & 0 \\ 0 & 0 \end{pmatrix} + \begin{pmatrix} H & \dd^{\sigma}_{\gamma(t)} \\ \dd^{\sigma, \tilde{\dagger}}_{\gamma(t)} & 0 \end{pmatrix}.
 \end{equation}
Here, for a fixed time $t$, the operator $M$ acts by zero on $ \T^{\gCoul, \sigma}_{j, \gamma(t)} \subset \T^\sigma_{j, \gamma(t)} $ and equals the difference $\Psi \circ \frac{d}{dt}\circ  \Psi^{-1} -  \frac{D^\sigma}{dt}$ when applied to elements of  $\J^{\circ, \sigma}_{j, \gamma(t)} \subset \T^\sigma_{j, \gamma(t)}$; that is,
$$ M(-d\xi, 0, \xi \phi) = \Pi^{\perp}_{\phi} (0, 0, \xi \tfrac{d\phi}{dt}).$$ 
Furthermore, with respect to the decomposition $\T^\sigma_{j, \gamma(t)} = \T^{\gCoul, \sigma}_{j, \gamma(t)} \oplus \J^{\circ, \sigma}_{j, \gamma(t)}$, the operator $H$ from \eqref{eq:Qhgc} is given by
\begin{equation}\label{eq:Qgamma-Tgc-Jcirc}
 H = \begin{pmatrix} \D^\sigma_{\gamma(t)} \X^{\gCoul, \sigma}_{\q} & 0 \\ 0 & 0 \end{pmatrix}.
 \end{equation}
  Note that the third term in \eqref{eq:Qhgc} is exactly $\mathcal{H}^\sigma_{\gamma(t)}$ from Lemma~\ref{lem:fakehessian}.  %Recall that $\Dgs_{\gamma(t)} \X^{\gCoul, \sigma}_{\q}=\Pi^{\gCoul, \sigma}_* \circ  \D_{\gamma(t)} \X^{\sigma}_{\q} \circ \Pi^{\elCoul, \sigma}$, so $H$ only differs from the conjugate $S_{\gamma(t)} \circ \D_{\gamma(t)} \X^{\sigma}_{\q} \circ S_{\gamma(t)}^{-1}$ by some compact terms, which vanish at stationary points. (Compare the beginning of the proof of Lemma~\ref{lem:eH}.)

\begin{comment}
Let us also consider the difference
$$ \Omega := \frac{\dgs}{dt} - \frac{D}{dt}.$$
This can be viewed as a family of endomorphisms $\Omega(t) \in \End(\T^{\sigma}_{j, \gamma(t)}).$ Since the covariant derivative along a path is the covariant derivative in the direction of $d\gamma/dt$, and $d\gamma/dt \to 0$ as $t \to \pm \infty$, we see that that $\Omega(t)$ converges to zero as $t \to \pm \infty$. Further, $\Omega$ is a compact operator. (Compare Lemmas~\ref{lem:Connections} and \ref{lem:TwoD}.)
\end{comment}

Using Lemma~\ref{lem:fakehessian} and the arguments in the proof of Proposition~\ref{prop:Dfredholm}, we obtain that $\Qhat_{\gamma}^{\gCoul} $ is of the form \eqref{eq:Q}, with the bundle $E = iT^*Y \oplus \Spin \oplus i\R$ just as for $Q_{\gamma}$. Furthermore, the differential part $L_0$ in $\Qhat_{\gamma}^{\gCoul} $ is the same $L_0$ that appeared in \eqref{eq:L0} for $Q_{\gamma}$. We write
\begin{equation}
\label{eq:tQg}
\Qhat_{\gamma}^{\gCoul}  = \frac{d}{dt} + L_0 + \hat{h}_t^{\gCoul}.
 \end{equation}
It's important to note that $L_0 + \hat{h}^{\gCoul}_t$ is not exactly the operator $\Hx$ arising from Lemma~\ref{lem:fakehessian}, as $\hat{h}^{\gCoul}_t$ also has terms coming from the time derivative of $\phi$. In the limit, as $t \to \pm \infty$ (i.e., as we approach the stationary points $x$ and $y$), we do have from \eqref{eq:Qgamma-Tgc-Jcirc} and Lemma~\ref{lem:fakehessian} that 
\begin{equation}\label{eq:4D-endpoint-Hess}
 L_0 +  \hat{h}_{\pm \infty}^{\gCoul} %= \mathcal{H}^\sigma_{\pm} 
 =  \widehat{\Hess}^{\tilde{g}, \sigma}_{\q, \pm}, 
 \end{equation}
where $ \widehat{\Hess}^{\tilde{g}, \sigma}_{\q}$ is the extended Hessian from   \eqref{eq:ExtHessBlowUpGC}. The extended Hessian is hyperbolic at non-degenerate stationary points; indeed, it has real spectrum by Lemma~\ref{lem:eH}. Using the properties of $\Hx$ established in Lemma~\ref{lem:fakehessian}, we can apply Proposition 14.2.1 in \cite{KMbook} together with the arguments in \cite[p.256]{KMbook} (mentioned in the proof of Proposition~\ref{prop:Dfredholm}) and deduce that $\Qhat_{\gamma}^{\gCoul} $ is Fredholm.  As before, first we establish Fredholmness as a map from $L^2_1$ to $L^2$ for paths $\gamma$ which are constant near the endpoints, and then we extend this to operators from $L^2_j$ to $L^2_{j-1}$ for $1 \leq j \leq k$ and for all paths.  Again the index is independent of $j$.   

We claim that $\Qhat_{\gamma}^{\gCoul} $ has the same Fredholm index as the operator $Q_{\gamma}$ from \eqref{eq:Qg}. Indeed, we can relate them by a continuous family of Fredholm operators, in two steps, along the lines of Section~\ref{sec:interpol}. First, we interpolate linearly from $Q_{\gamma}$ to the operator
$$Q_{\gamma}^{\sp} := \frac{D^\sigma}{dt} + \begin{pmatrix} \D^\sigma_{\gamma(t)} \X_{\q}^{\sigma} & \dd^{\sigma}_{\gamma(t)} \\ \dd^{\sp,\sigma, \dagger}_{\gamma(t)} & 0 \end{pmatrix},$$
which differs from $Q_{\gamma}$ by having $ \dd^{\sp,\sigma, \dagger}_{\gamma(t)}$ instead of  $\dd^{\sigma, \dagger}_{\gamma(t)}$. Second, we use the family of metrics $g_{\rho}$ from Section~\ref{sec:interpol} to define operators $ \Qhat_{\gamma}^{\rho} $ similar to $\Qhat_{\gamma}^{\gCoul}$, such that at $\rho=0$ we have
$$\Qhat_{\gamma}^{0} = Q_{\gamma}^{\sp} $$
and at $\rho=1$ we have
$$ \Qhat_{\gamma}^{1} =\Qhat_{\gamma}^{\gCoul}.$$

To see that the operators considered during these two interpolations are Fredholm and have the same index, observe that they are all of the form \eqref{eq:Q}, with the same differential part $L_0$. Further, at the endpoints they limit to the interpolations between extended Hessians that appeared in Lemmas~\ref{lem:eHrho} and \ref{lem:eHrho2}, and which were shown there to be invertible with real spectrum. 

We have now shown that $Q_{\gamma}$ and  $\Qhat_{\gamma}^{\gCoul}$ have the same index. To go back from $\Qhat_{\gamma}^{\gCoul} $ to $Q_{\gamma}^{\gCoul}$, note that the operator $\Rhat$ defined in \eqref{eq:RRtilde} is bijective. Indeed, it is conjugate to the operator $R$ from \eqref{eq:RR}, so it suffices to check that $R$ is bijective. We have  $R= \frac{d}{dt} + A,$ where 
$$A = \begin{pmatrix} 0 & -d \\ -d^* & 0 \end{pmatrix}$$ 
is a self-adjoint operator acting slicewise. Observe that $A$ is invertible because $b_1(Y)=0$, and we can find a complete orthonormal system of eigenvectors $\{a_n\}$, with $A(a_n) = \lambda_n a_n$, $\lambda_n \neq 0$. An element in the kernel of $R$ would be of the form 
$$\sum_n c_n e^{-\lambda_n t} a_n, \ \ c_n \in \R.$$
However, any such nonzero element increases exponentially at one of the ends of the cylinder $Z$, and hence is not in $L^2_j$. Thus, $\ker (R) = 0$, and by self-adjointness, $\coker(R)=0$.
  
Since $\Rhat$ is bijective, from \eqref{eq:newextension} we deduce that $Q_{\gamma}^{\gCoul} $ is Fredholm, of the same index as $\Qhat_{\gamma}^{\gCoul} $ (and hence as $Q_{\gamma}$). In the proof of Proposition~\ref{prop:Dfredholm} we saw that $Q_{\gamma}$ has the same index as  $(\D^\tau_{\gamma} \F^{\tau}_{\q})|_{\K^{\tau}_{j,\gamma}}$. In a similar fashion, since $\dd^{\gCoul, \tau, \tilde{\dagger}}_{\gamma}$ is surjective by Lemma~\ref{lem:DecomposeGC}, we see that $(\D^\tau_{\gamma} \Fqgctau)|_{\K^{\gCoul,\tau}_{j,\gamma}}$ is Fredholm and of the same index as $Q_{\gamma}^{\gCoul, \tilde g, \sigma} $ by the same arguments as in \cite[Proposition 14.4.3]{KMbook}. The conclusion follows.
\end{proof}

\begin{comment}
For future reference, let us define\footnote{In \eqref{eq:QggcL2}, it would have been more natural to use an operator $\dd^{\gCoul, \tau, {\dagger}}_{\gamma}$ defined with respect to the $L^2$ metric instead of $\tilde{g}$. However, this makes no difference for our purposes, so it is easier to use the operator $\dd^{\gCoul, \tau, \tilde{\dagger}}_{\gamma}$, which has already been defined.}
\begin{equation}
\label{eq:QggcL2}
 Q_{\gamma}^{\gCoul} = \D^\tau_{\gamma} \Fqgctau \oplus \dd^{\gCoul, \tau, \tilde{\dagger}}_{\gamma} : \T^{\gCoul, \tau}_{j, \gamma} \to \V^{\gCoul,\tau}_{j-1,\gamma} \oplus L^2_{j-1}(\R; i\R)
\end{equation}
and
\begin{equation}
\label{eq:newextensionL2}
 \Qhat_{\gamma}^{\gCoul}  =\begin{pmatrix} Q_{\gamma}^{\gCoul} & 0 \\ 0 & \Rhat  \end{pmatrix}: \T^{\tau}_{j, \gamma} \to  \T^{\tau}_{j-1, \gamma}.
 \end{equation}
Alternatively, in the proof of Proposition~\ref{prop:FredholmCoulomb} we could have used $\D^\tau_{\gamma} \Fqgctau$ instead of $\D^{\tilde{g},\tau}_{\gamma} \Fqgctau$ by using $Q_{\gamma}^{\gCoul}$ and $\Qhat_{\gamma}^{\gCoul}$ in place of the $\tilde{g}$ counterparts. 
\end{comment}

We summarize the results obtained from the proof of Proposition~\ref{prop:FredholmCoulomb}.
\begin{lemma}
\label{lem:allsurjective}
Under the hypotheses of Proposition~\ref{prop:FredholmCoulomb},
\begin{enumerate}[(a)]
\item the operators
%(\Dgs_{\gamma} \Fqgctau)|_{\K^{\gCoul,\tau}_{j,\gamma}}, \ Q_{\gamma}^{\gCoul, \tilde g, \sigma} , \ \Qhat_{\gamma}^{\gCoul, \tilde g, \sigma} , \  
$$ (\D^\tau_{\gamma} \Fqgctau)|_{\K^{\gCoul,\tau}_{j,\gamma}}, \ Q_{\gamma}^{\gCoul}, \ \Qhat_{\gamma}^{\gCoul} $$
are all Fredholm of the same index;
\begin{comment}
\item
one of the operators
$$ (\Dgs_{\gamma} \Fqgctau)|_{\K^{\gCoul,\tau}_{j,\gamma}}, \ Q_{\gamma}^{\gCoul, \gts}, \ \Qhat_{\gamma}^{\gCoul, \tilde g, \sigma} $$
is surjective if and only if any of the two others is surjective;
\end{comment}
\item
 one of the operators
$$ (\D^\tau_{\gamma} \Fqgctau)|_{\K^{\gCoul,\tau}_{j,\gamma}}, \ Q_{\gamma}^{\gCoul}, \ \Qhat_{\gamma}^{\gCoul} $$
  is surjective if and only if any of the two others is surjective. 
  \end{enumerate}
\end{lemma}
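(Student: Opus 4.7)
The plan is to read off both parts directly from the structural observations made during the proof of Proposition~\ref{prop:FredholmCoulomb}, using the bijectivity of $\Rhat$ and the surjectivity of $\dd^{\gCoul,\tau,\tilde{\dagger}}_{\gamma}$.

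First I would dispense with Fredholmness and the index. The three operators were all shown to be Fredholm during the proof of Proposition~\ref{prop:FredholmCoulomb}: $\Qhat_{\gamma}^{\gCoul}$ by applying \cite[Proposition 14.2.1]{KMbook} to the form \eqref{eq:tQg}, $Q_{\gamma}^{\gCoul}$ as a consequence of the block decomposition \eqref{eq:newextension} together with the bijectivity of $\Rhat$, and $(\D^\tau_{\gamma}\Fqgctau)|_{\K^{\gCoul,\tau}_{j,\gamma}}$ via the defining direct-sum formula \eqref{eq:Qggc} and the surjectivity of $\dd^{\gCoul,\tau,\tilde{\dagger}}_{\gamma}$ (Lemma~\ref{lem:DecomposeGC}). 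To compare indices, I would use the elementary observation that if $T\colon X\to Y$ is Fredholm and $S\colon X'\to Y'$ is a bijection between Banach spaces, then the block operator $\mathrm{diag}(T,S)$ on $X\oplus X'\to Y\oplus Y'$ is Fredholm of the same index as $T$; applied with $S=\Rhat$ this gives
\[
\ind(\Qhat_{\gamma}^{\gCoul}) \;=\; \ind(Q_{\gamma}^{\gCoul}).
\]
For the remaining comparison, I would use the fact that $\K^{\gCoul,\tau}_{j,\gamma}=\ker\dd^{\gCoul,\tau,\tilde\dagger}_{\gamma}$ and that $\dd^{\gCoul,\tau,\tilde\dagger}_{\gamma}$ is surjective, so
\[
\T^{\gCoul,\tau}_{j,\gamma} \;=\; \K^{\gCoul,\tau}_{j,\gamma} \oplus C,
\]
for some closed complement $C$ on which $\dd^{\gCoul,\tau,\tilde\dagger}_{\gamma}$ restricts to an isomorphism onto $L^2_{j-1}(\R;i\R)$. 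With respect to this splitting of the domain and the splitting of the codomain $\V^{\gCoul,\tau}_{j-1,\gamma}\oplus L^2_{j-1}(\R;i\R)$, $Q_{\gamma}^{\gCoul}$ is upper triangular with diagonal $\bigl((\D^\tau_{\gamma}\Fqgctau)|_{\K^{\gCoul,\tau}_{j,\gamma}},\, \dd^{\gCoul,\tau,\tilde\dagger}_{\gamma}|_{C}\bigr)$, and the second factor is an isomorphism. Hence $Q_{\gamma}^{\gCoul}$ and $(\D^\tau_{\gamma}\Fqgctau)|_{\K^{\gCoul,\tau}_{j,\gamma}}$ are simultaneously Fredholm with the same index, proving part (a).

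Next I would prove part (b) using exactly the same decompositions. In the upper-triangular form of $Q_{\gamma}^{\gCoul}$ just described, the cokernel is naturally identified with the cokernel of $(\D^\tau_{\gamma}\Fqgctau)|_{\K^{\gCoul,\tau}_{j,\gamma}}$, because the diagonal block $\dd^{\gCoul,\tau,\tilde\dagger}_{\gamma}|_{C}$ is an isomorphism. Consequently $Q_{\gamma}^{\gCoul}$ is surjective if and only if $(\D^\tau_{\gamma}\Fqgctau)|_{\K^{\gCoul,\tau}_{j,\gamma}}$ is surjective. For the remaining equivalence, the block-diagonal form \eqref{eq:newextension} of $\Qhat_{\gamma}^{\gCoul}$ gives
\[
\coker(\Qhat_{\gamma}^{\gCoul}) \;\cong\; \coker(Q_{\gamma}^{\gCoul}) \oplus \coker(\Rhat),
\]
and $\coker(\Rhat)=0$ since $\Rhat$ was shown to be bijective in the proof of Proposition~\ref{prop:FredholmCoulomb}. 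Thus $\Qhat_{\gamma}^{\gCoul}$ is surjective if and only if $Q_{\gamma}^{\gCoul}$ is, which chains with the previous equivalence to finish the proof.

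I do not anticipate a serious obstacle here, since every nontrivial ingredient (Fredholmness of $\Qhat_{\gamma}^{\gCoul}$, bijectivity of $\Rhat$, surjectivity of $\dd^{\gCoul,\tau,\tilde\dagger}_{\gamma}$, and the block decompositions) is already extracted from the proof of Proposition~\ref{prop:FredholmCoulomb}; the only care needed is bookkeeping with the triangular/block splittings to ensure that the equality of indices and the equivalence of surjectivities are genuinely simultaneous and not just numerical.
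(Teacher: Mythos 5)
Your proposal is correct and takes the same route as the paper: the equivalence between $Q_{\gamma}^{\gCoul}$ and $(\D^\tau_{\gamma}\Fqgctau)|_{\K^{\gCoul,\tau}_{j,\gamma}}$ via the surjectivity of $\dd^{\gCoul,\tau,\tilde\dagger}_{\gamma}$ from Lemma~\ref{lem:DecomposeGC}, and the equivalence between $Q_{\gamma}^{\gCoul}$ and $\Qhat_{\gamma}^{\gCoul}$ via the block form \eqref{eq:newextension} and the bijectivity of $\Rhat$. The paper leaves this terse, as the ingredients were all extracted in the proof of Proposition~\ref{prop:FredholmCoulomb}; your triangular-block and cokernel bookkeeping just makes the same argument explicit.
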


\begin{proof}
To establish the relation between $ (\D^\tau_{\gamma} \Fqgctau)|_{\K^{\gCoul,\tau}_{j,\gamma}}$ and $Q_{\gamma}^{\gCoul, \tilde g, \sigma} $ we use Lemma~\ref{lem:DecomposeGC}, which gives the surjectivity of $\dd^{\gCoul, \tau, \tilde{\dagger}}_{\gamma}$. To establish the relation between $Q_{\gamma}^{\gCoul} $ and $\Qhat_{\gamma}^{\gCoul} $, we use the block form \eqref{eq:newextension} and the bijectivity of $\Rhat$. 
\end{proof}

\begin{comment}
\begin{remark}
In general, the surjectivity of the operators in part (b) of Lemma~\ref{lem:allsurjective} is unrelated to the surjectivity of the operators in part (c). However, when $\gamma$ is a trajectory of $\Xqgcsigma$,  then $\D_{\gamma} \Fqgctau = \Dgs_{\gamma} \Fqgctau$ (cf. Lemma~\ref{lem:TwoD}), so in that case the surjectivity of any of the six operators in part (a) is equivalent to the surjectivity of any other.
\end{remark}
\end{comment}

\section{Non-degeneracy of trajectories in Coulomb gauge}
\label{sec:NDTcoulomb}
Let us now suppose that $\gamma$ is a trajectory of $\Xqsigma$ on $\C_k^\sigma(Y)$ between two non-degenerate stationary points $x, y \in \C^\sigma(Y)$.  Recall from Definition~\ref{def:regM} that the moduli space $M(x, y)$ is regular at $\gamma$ if $Q_\gamma$ is surjective. By \cite[Propositions 14.3.2 and 14.4.3]{KMbook}, this is equivalent to the operator 
\[
 (\D^\tau_{\gamma} \F^{\tau}_{\q})|_{\K^{\tau}_{k,\gamma}}  :\K^{\tau}_{k,\gamma} \to \V^\tau_{k-1,\gamma} 
\] 
being surjective. (Compare the proof of Proposition~\ref{prop:Dfredholm}.) We rephrase this condition in Coulomb gauge:

\begin{proposition}\label{prop:Qgammasurjective}
Consider a path ${\gamma} \in \C^{\tau}_k(x, y)$ in temporal gauge. Write $x^\flat = \Pi^{\gCoul, \sigma}(x)$, $y^{\flat} =\Pi^{\gCoul, \sigma}(y)$ and $\gamma^\flat= \Pi^{\gCoul, \tau} (\gamma)$. Then:

$(a)$ The operators $(\D^\tau_{\gamma} \F^{\tau}_{\q})|_{\K^{\tau}_{k,\gamma}} : \K^{\tau}_{k, \gamma} \to \V^\tau_{k-1,\gamma}$ and $(\D^\tau_{\gamma^{\flat}} \Fqgctau)|_{\K^{\gCoul,\tau}_{k,\gamma^{\flat}}} : \K^{\gCoul,\tau}_{k,\gamma^{\flat}} \to \V^{\gCoul,\tau}_{k-1,\gamma^{\flat}}$ have the same Fredholm index.

$(b)$ Suppose that ${\gamma}$ is a trajectory of $\Xqsigma$, so that $[\gamma^{\flat}] \in \Btaug_k([x^\flat], [y^\flat])$ is a trajectory of $\Xqagcsigma$. If $(\D^\tau_{\gamma} \F^{\tau}_{\q})|_{\K^{\tau}_{k,\gamma}}$ is surjective, then so is $(\D^\tau_{\gamma^{\flat}} \Fqgctau)|_{\K^{\gCoul,\tau}_{k,\gamma^{\flat}}}$. 
\end{proposition}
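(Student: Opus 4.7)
The plan is to establish (a) by a direct application of Proposition~\ref{prop:FredholmCoulomb} combined with the gauge invariance of the Kronheimer--Mrowka index, and then to derive (b) from (a) together with an identification of kernels.

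\textbf{Part (a).} Since $\gamma$ is in temporal gauge, its $dt$-component vanishes, so by~\eqref{eq:zebra} its image $\gamma^\flat$ is also in temporal gauge; by construction $\gamma^\flat$ is in slicewise Coulomb gauge as well, so $\gamma^\flat\in W^\tau_k(x^\flat,y^\flat)$. I would then apply Proposition~\ref{prop:FredholmCoulomb} directly to $\gamma^\flat$, obtaining
$$
\ind\bigl((\D^\tau_{\gamma^\flat}\Fqgctau)|_{\K^{\gCoul,\tau}_{k,\gamma^\flat}}\bigr)=\ind\bigl((\D^\tau_{\gamma^\flat}\F^\tau_\q)|_{\K^\tau_{k,\gamma^\flat}}\bigr).
$$
From the spectral-flow description in the proof of Proposition~\ref{prop:Dfredholm}, the right-hand side depends only on the three-dimensional gauge equivalence classes of the endpoints of the path; since $x$ is gauge equivalent to $x^\flat$ and $y$ to $y^\flat$, it coincides with $\ind((\D^\tau_\gamma\F^\tau_\q)|_{\K^\tau_{k,\gamma}})$, completing (a).

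\textbf{Part (b).} By Proposition~\ref{prop:gccorrespondence}, $\gamma^\flat$ is a trajectory and $\Pi^{[\gCoul],\tau}$ restricts to a homeomorphism $M([x],[y])\cong M^{\agCoul}([x^\flat],[y^\flat])$. The strategy will be to show that the derivative of $\Pi^{[\gCoul],\tau}$ induces a linear isomorphism
$$
\ker\bigl((\D^\tau_\gamma\F^\tau_\q)|_{\K^\tau_{k,\gamma}}\bigr)\ \xrightarrow{\ \cong\ }\ \ker\bigl((\D^\tau_{\gamma^\flat}\Fqgctau)|_{\K^{\gCoul,\tau}_{k,\gamma^\flat}}\bigr).
$$
Granting this, the two kernels have equal dimension, so by the Fredholm index equality from (a) the cokernels do as well; vanishing of the Kronheimer--Mrowka cokernel by hypothesis then forces the Coulomb cokernel to vanish, which is the desired surjectivity.

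The kernel map will be defined as follows. Given $v\in\K^\tau_{k,\gamma}$ annihilated by the Kronheimer--Mrowka operator, take $(\Pi^{\gCoul,\tau}_*)_\gamma(v)\in \T^{\gCoul,\tau}_{k,\gamma^\flat}$ and project it along $\J^{\gCoul,\tau}_{k,\gamma^\flat}$ onto $\K^{\gCoul,\tau}_{k,\gamma^\flat}$ using the formula~\eqref{eq:Pigctau2}. That the image lies in the kernel of $(\D^\tau_{\gamma^\flat}\Fqgctau)|_{\K^{\gCoul,\tau}_{k,\gamma^\flat}}$ will follow from linearizing, at the zero $\gamma$ of $\F^\tau_\q$, the naturality relation $\F^{\gCoul,\tau}_\q\circ\Pi^{\gCoul,\tau}\sim\Pi^{\gCoul,\tau}_*\circ\F^\tau_\q$ that holds on trajectories. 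Injectivity relies on the transversality of $\K^\tau_{k,\gamma}$ to the four-dimensional gauge orbits, together with the injectivity of $(\Pi^{\gCoul,\tau}_*)_\gamma$ modulo the normalized-gauge orbit directions. Surjectivity will be obtained by the inverse construction: for $v^\flat$ in the Coulomb kernel, slicewise enlarged local Coulomb projection via $\Pi^{\elCoul,\sigma}$ (cf.\ Lemma~\ref{lem:bijection2}) lifts $v^\flat$ to a variation of $\gamma$, which one then corrects by an infinitesimal four-dimensional gauge transformation to land in $\K^\tau_{k,\gamma}$.

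The hard part will be verifying the infinitesimal naturality between the two linearized Seiberg--Witten operators at the trajectory $\gamma$: in particular tracking how the $\mu_Y$-correction of the $dt$-component in~\eqref{eq:zebra} and the time-dependent three-dimensional Coulomb projection interact with $\tfrac{d}{dt}$ along $\gamma$. Much of this reduces to an extension of the identification carried out at stationary points in Lemma~\ref{lem:fakehessian}(d), where precisely the analogous cross terms are shown to cancel because one is at a zero of the vector field; away from stationary points these terms persist but should still assemble into an operator that intertwines with the Coulomb one, thanks to $\gamma$ being a flow line.
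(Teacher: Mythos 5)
Part (a) of your proposal is essentially the paper's argument in slightly different words: you apply Proposition~\ref{prop:FredholmCoulomb} at $\gamma^{\flat}$ and then argue the index of $Q_{\gamma}$ and $Q_{\gamma^{\flat}}$ agree because the spectral flow depends only on the gauge classes of the endpoints (the paper phrases this as a continuous interpolation through Fredholm operators); both are fine.

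Part (b) has a genuine gap, and it also misses a much simpler route that you already have the ingredients for. The ``naturality relation'' you quote is precisely the commutativity of the square
\[
\Fqagctau\circ\Pi^{[\gCoul],\tau}=\Pi^{\gCoul,\tau}_*\circ\Fqtau,
\]
which upon differentiating at the trajectory $\gamma$ yields a commutative square of linear maps from $\K^{\tau}_{k,\gamma}$ and $\K^{\gCoul,\tau}_{k,\gamma^{\flat}}$ into $\V^{\tau}_{k-1,\gamma}$ and $\V^{\gCoul,\tau}_{k-1,\gamma^{\flat}}$. The right vertical map $\Pi^{\gCoul,\tau}_*$ is surjective (because $\Pi^{\gCoul,\sigma}_*$ is), and the top map is surjective by hypothesis; a two-line diagram chase then gives surjectivity of the bottom map, with no kernel analysis at all. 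Your proposal instead tries to produce an isomorphism of kernels and then appeal to the index equality from (a). This cannot close as written. Injectivity of the kernel map is fine ($\K^{\tau}_{k,\gamma}$ is transverse to the gauge directions, which is exactly $\ker\Pi^{[\gCoul],\tau}_*$). But for surjectivity, the lift $v'$ you build via $\Pi^{\elCoul,\sigma}$ and a gauge correction lands in $\K^{\tau}_{k,\gamma}$, and commutativity only tells you that $(\D^{\tau}_{\gamma}\Fqtau)(v')$ is annihilated by $\Pi^{\gCoul,\tau}_*$ --- i.e.\ it lies in $\J^{\circ,\tau}_{k-1,\gamma}$ slicewise --- not that it is zero. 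You would need an additional argument (e.g.\ that $\D^{\tau}_{\gamma}\Fqtau$ restricted to $\K^{\tau}_{k,\gamma}$ has image transverse to $\J^{\circ,\tau}_{k-1}$ in the right way) to force it to vanish. Worse, the statement you are trying to establish --- surjectivity of the kernel map --- combined with the index equality is \emph{equivalent} to the vanishing of the Coulomb cokernel, which is the conclusion you want; so the kernel route is not just harder, it is essentially circular without new input. You should replace the kernel-isomorphism argument by the direct diagram chase using surjectivity of $\Pi^{\gCoul,\tau}_*$.
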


\begin{proof}
$(a)$ We can interpolate between the paths $\gamma$ and $\gamma^{\flat}$ by a continuous family of paths $\{\gamma_s\}_{s \in [0,1]}$, with the endpoints of $\gamma_s$ varying on the gauge orbits of $x$ and $y$. Thus, the operators $Q_{\gamma}$ and $Q_{\gamma^{\flat}}$ are part of a family of Fredholm operators $Q_{\gamma_s}, \ s\in [0,1]$. All Fredholm operators in a continuous family must have the same index. Combining this with Proposition~\ref{prop:FredholmCoulomb}, we obtain $$ \ind((\D^\tau_{\gamma^\flat} \Fqgctau)|_{\K^{\gCoul,\tau}_{k,\gamma^{\flat}}}) = \ind ((\D^\tau_{\gamma^{\flat}} \Fqtau)|_{\K^{\tau}_{k,\gamma^\flat}}) = \ind ((\D^\tau_{\gamma} \F^{\tau}_{\q})|_{\K^{\tau}_{k,\gamma}}),$$
as desired. 

$(b)$ Recall that in Section~\ref{sec:path} we defined the map 
$$  \Pi^{\gCoul, \tau}: \widetilde \C^{\tau}_k(x, y) \to \widetilde \C_k^{\gCoul,\tau}(\x^\flat, \y^\flat)$$
and that after dividing by the gauge groups, this induces a map between the respective quotients:
$$ \Pi^{[\gCoul], \tau}: \widetilde \B^{\tau}_k([x], [y]) \to \tBtaug_k([x], [y]).$$
Note that when we work with gauge equivalence classes in $\C^\sigma(Y)$, we have $[x]=[x^\flat]$ and $[y]=[y^\flat].$ 

The derivative of $ \Pi^{[\gCoul], \tau}$, the map
$$ (\Pi^{[\gCoul], \tau}_* )_{[\gamma]}: \K^{\tau}_{k, \gamma} \longrightarrow \K_{k, \gamma^\flat}^{\gCoul, \tau},$$
was mentioned in \eqref{eq:Pigctau2}.

For $j \leq k$, observe that inside the completed tangent bundle $\T^{\tau}_j$ we have the subbundle $\V^{\tau}_j$ consisting of paths $(b(t), r(t), \psi(t))$ in temporal gauge. The derivative $ (\Pi^{\gCoul, \tau}_*)_{\gamma}$ maps $\V^{\tau}_j$ to $\V^{\gCoul, \tau}_j$ by applying infinitesimal global Coulomb projection slicewise:
$$(\Pi^{\gCoul, \tau}_*)_{\gamma} (b, r, \psi) (t) = (\Pi^{\gCoul, \sigma}_*)_{\gamma(t)} (b(t), r(t), \psi(t)).$$

Consider the diagram
\begin{equation}
\label{eq:diagram}
\xymatrixcolsep{4pc}
\xymatrix{
\widetilde \B^\tau_k([x],[y]) \ar[r]^{\Fqtau} \ar[d]_{\Pi^{[\gCoul],\tau}}  & \V^\tau_{k-1} \ar[d]^{\Pi^{\gCoul,\tau}_*} \\
\widetilde \B^{\gCoul,\tau}_k([x^\flat],[y^\flat]) \ar[r]^{\ \ \ \Fqagctau} & \V_{k-1}^{\gCoul,\tau}.
}
\end{equation}
We claim that this diagram commutes.  This can be seen as follows.  Choose a representative $\gamma$ of $[\gamma] \in \widetilde \B^\tau_k([x],[y])$ in temporal gauge.

Since the elements of $\V^\tau_j$ are in temporal gauge, we can write $\Fqtau = \frac{d}{dt} + \Xqsigma$ and $\Fqgctau = \frac{d}{dt} + \Xqgcsigma$. 

Let us also write $\gamma^\flat(t)=\Pi^{\gCoul,\sigma}(\gamma(t))$ as $g(t)\gamma(t)$, where $g(t)$ is a path of gauge transformations on $Y$.  Since the (perturbed) CSD functional is gauge-invariant, we have that $\Xqsigma$ is gauge-equivariant and thus:
\begin{equation}\label{eq:g(t)}
\Xqsigma (\gamma^{\flat}(t)) = \Xqsigma(g(t)\gamma(t)) = g(t)_* \Xqsigma(\gamma(t)).
\end{equation}
Moreover, direct computation shows that
\begin{equation}\label{eq:g_*}
(\Pi^{\gCoul,\sigma}_*)_{\gamma^{\flat}(t)} \circ g(t)_* = (\Pi^{\gCoul,\sigma}_*)_{\gamma(t)}.
\end{equation}
Since $\Xqgcsigma = \Pi^{\gCoul,\sigma}_* \circ \Xqsigma$, applying $\Pi^{\gCoul,\sigma}$ to \eqref{eq:g(t)} and using \eqref{eq:g_*} shows that
\begin{equation}\label{eq:equivariance}
\Xqgcsigma(\gamma^{\flat}(t)) = (\Pi^{\gCoul,\sigma}_*)_{\gamma^{\flat}(t)} \Xqsigma (\gamma^{\flat}(t)) = (\Pi^{\gCoul,\sigma}_*)_{\gamma(t)}  \Xqsigma(\gamma(t)). 
\end{equation}

By the chain rule, we have 
$$\frac{d\gamma^{\flat}(t)}{dt} = \frac{d}{dt}(\Pi^{\gCoul,\sigma} \circ \gamma(t)) = (\Pi^{\gCoul,\sigma}_*)_{\gamma(t)} \frac{d\gamma(t)}{dt},$$
or, for short,
\begin{equation}\label{eq:dtequivariance}
\frac{d}{dt} (\gamma^{\flat}) = \Pi^{\gCoul,\tau}_*  \Bigl(\frac{d}{dt} \gamma \Bigr).
\end{equation}

Using \eqref{eq:equivariance} and \eqref{eq:dtequivariance} we compute 
\begin{align*}
\Fqagctau  \circ \Pi^{[\gCoul],\tau}(\gamma) &= \frac{d}{dt} (\gamma^{\flat}) + \Xqgcsigma (\gamma^{\flat}) \\
&= \Pi^{\gCoul,\tau}_*  \Bigl(\frac{d}{dt} \gamma \Bigr) + \Pi^{\gCoul,\tau}_*  (\Xqsigma \gamma) \\
&= \Pi^{\gCoul,\tau}_* (\Fqtau (\gamma)).
\end{align*}
Thus, the diagram \eqref{eq:diagram} commutes.

Taking derivatives in \eqref{eq:diagram} and using the fact that $\gamma$ and $\gamma^\flat$ are trajectories of the respective vector fields, we obtain the commutative diagram
\begin{equation}\label{eq:commutingderivatives}
\xymatrixcolsep{5pc}
\xymatrix{
\K^\tau_{k,\gamma} \ar[r]^{\D^\tau_{\gamma}\Fqtau} \ar[d]_{\Pi^{\gCoul,[\tau]}_* }  & \V^\tau_{k-1,\gamma} \ar[d]^{\Pi^{\gCoul,\tau}_*} \\
\K^{\gCoul,\tau}_{k,\gamma^{\flat}} \ar[r]^{\D^\tau_{\gamma^{\flat}} \Fqagctau} & \V_{k-1,\gamma^{\flat}}^{\gCoul,\tau}.
}
\end{equation}

Since the infinitesimal global Coulomb projection $\Pi^{\gCoul, \sigma}_*$ is surjective, we see that the right vertical arrow $\Pi^{\gCoul,\tau}_*$ in \eqref{eq:commutingderivatives} is also surjective.  Using the commutativity of \eqref{eq:commutingderivatives}, we get that if $(\D^\tau_{\gamma} \F^{\tau}_{\q})|_{\K^{\tau}_{k,\gamma}}$ is surjective, then so is $(\D^\tau_{\gamma^{\flat}} \Fqgctau)|_{\K^{\gCoul,\tau}_{k,\gamma^{\flat}}}$.  
 \end{proof}

Recall that Proposition~\ref{prop:gccorrespondence} guaranteed that the moduli spaces $M([x],[y])$ and $M^{\agCoul}([x],[y])$ are homeomorphic.  Proposition~\ref{prop:Qgammasurjective} says more: If $M([x],[y])$ is cut out smoothly and transversely by $\Fqtau$, then also $M^{\agCoul}([x],[y])$ is cut out smoothly and transversely by $\Fqagctau$.

So far we have only discussed the usual moduli spaces $M([x], [y])$. Similar arguments apply to the moduli spaces $M^{\red}([x], [y])$ between reducibles. The end result is that we can identify them with the corresponding moduli spaces $M^{\agCoul,\red}([x], [y])$ in Coulomb gauge, and that regularity of the former implies regularity of the latter.

\section{Gradings}
\label{sec:GradingsCoulomb}
From now on we will assume that we have chosen an admissible perturbation $\q$, so that all the stationary points are non-degenerate, and the moduli spaces $M([x], [y])$ and $M^{\red}([x], [y])$ are regular.  

Let $x, y$ be stationary points in $W^\sigma$.  In view of Lemma~\ref{lem:allsurjective}, Proposition~\ref{prop:Qgammasurjective} and the definition of relative gradings in Section~\ref{sec:modifications}, we have
\begin{align}\label{eq:gradingscoulomb}
\gr(x, y) & =  \ind \ (\D^\tau_{\gamma} \Fqtau)|_{\K^{\tau}_{k,\gamma}}  = \ind Q_{\gamma} \\
%&= \ind \ (\Dgs_{\gamma^{\flat}} \Fqagctau)|_{\K^{\gCoul,\tau}_{k,\gamma^{\flat}}} = \ind Q^{\gCoul, \gts}_{\gamma^\flat} \notag \\
&= \ind \ (\D^\tau_{\gamma^{\flat}} \Fqagctau)|_{\K^{\gCoul,\tau}_{k,\gamma^{\flat}}} = \ind Q^{\gCoul}_{\gamma^\flat}, \notag
\end{align}
where $\gamma$ is any path from $x$ to $y$ (not necessarily a trajectory of $\Xqsigma$). 

Thus, the relative gradings can be calculated directly in Coulomb gauge. 

Recall from \eqref{eq:GradingRed} that the relative gradings of reducible stationary points of $\Xq$ with the same connection component can be computed in terms of the spectrum of the operator $D_{\q,a}$; here, an eigenvector $\phi$ of $D_{\q,a}$ has eigenvalue given by the spinorial energy of $(a,0,\phi)$.  It is thus natural to ask how this relates to the analogous setup in Coulomb gauge.  The following tells us that spinorial energy and ``Coulomb gauge spinorial energy'' are equal.   

\begin{lemma} \label{lem:coulomb-gauge-spinorial-same}
Let $x = (a,s,\phi) \in W_k^\sigma$.  Then, 
\begin{equation}\label{eq:spinorials-agree}
\Re \langle \widetilde \Xqgc^1(a,s,\phi), \phi \rangle_{L^2} = \Re \langle \widetilde \Xq^1(a,s,\phi), \phi \rangle_{L^2}.
\end{equation}
\end{lemma}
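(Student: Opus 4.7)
The plan is to compute the difference $\widetilde{\Xqgc}^1(a,s,\phi) - \widetilde{\Xq}^1(a,s,\phi)$ explicitly and observe that it is a purely imaginary scalar function times $\phi$, from which the equality \eqref{eq:spinorials-agree} is automatic. The main observation, which makes the whole thing work, is that $\Xq^0$ takes values in imaginary one-forms, so $Gd^*\Xq^0$ takes values in imaginary scalar functions.

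First I would recall that $\Xqgc = \Pi^{\gCoul}_* \circ \Xq$, and use the explicit formula \eqref{eq:icp} for the infinitesimal global Coulomb projection at a point $(a,\psi) \in W$, namely
\[
(\Pi^{\gCoul}_*)_{(a,\psi)}(b,\chi) = \bigl(\pi(b),\; \chi + (Gd^*b)\psi\bigr),
\]
to obtain the pointwise identity
\[
(\Xqgc)^1(a,\psi) = \Xq^1(a,\psi) + \bigl(Gd^*\Xq^0(a,\psi)\bigr)\,\psi \qquad\text{for every } (a,\psi) \in W.
\]

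Next, for $s > 0$, I would apply the fundamental theorem of calculus to the functions $r \mapsto \Xq^1(a, sr\phi)$ and $r \mapsto (\Xqgc)^1(a, sr\phi)$ to rewrite the integrals appearing in \eqref{eq:widetildeX} as
\[
s\,\widetilde{\Xq}^1(a,s,\phi) = \Xq^1(a,s\phi) - \Xq^1(a,0), \qquad s\,\widetilde{\Xqgc}^1(a,s,\phi) = (\Xqgc)^1(a,s\phi) - (\Xqgc)^1(a,0).
\]
Subtracting these and plugging in the formula for $(\Xqgc)^1$ above (and noting that the correction term vanishes when $\psi = 0$) gives
\[
s\,\bigl(\widetilde{\Xqgc}^1(a,s,\phi) - \widetilde{\Xq}^1(a,s,\phi)\bigr) = s\,\bigl(Gd^*\Xq^0(a,s\phi)\bigr)\,\phi.
\]
Dividing by $s$ handles the case $s > 0$; the case $s = 0$ follows either by continuity of both sides in $s$ or by a direct Leibniz-rule computation of $(\D_{(a,0)}(\Xqgc)^1)(0,\phi)$, which produces the same expression evaluated at $s\phi = 0$. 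In either case one concludes
\[
\widetilde{\Xqgc}^1(a,s,\phi) - \widetilde{\Xq}^1(a,s,\phi) = \bigl(Gd^*\Xq^0(a,s\phi)\bigr)\,\phi.
\]

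Finally, since $\Xq^0$ takes values in $i\Omega^1(Y;\R)$, the function $Gd^*\Xq^0(a,s\phi)$ is of the form $ih$ for some real-valued $h$ on $Y$. Then
\[
\Re\,\langle ih\cdot\phi,\phi\rangle_{L^2} = \Re\Bigl(i\int_Y h\,|\phi|^2\Bigr) = 0,
\]
which yields \eqref{eq:spinorials-agree}. There is no real obstacle here — the argument is a direct computation — but the one point that requires a moment of care is the $s=0$ case, where one has to either invoke continuity or redo the derivative calculation at the reducible locus by hand.
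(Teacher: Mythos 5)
Your proof is correct and takes essentially the same approach as the paper: both reduce to $s>0$ (handling $s=0$ by continuity), both rely on the identity $(\Xqgc)^1(a,\psi) = \Xq^1(a,\psi) + (Gd^*\Xq^0(a,\psi))\psi$, and both conclude because the correction term is a purely imaginary multiple of $\phi$. The only organizational difference is that you isolate the difference $\widetilde{\Xqgc}^1 - \widetilde{\Xq}^1$ explicitly, whereas the paper rewrites one side of \eqref{eq:spinorials-agree} into the other; these are the same calculation.
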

\begin{proof}
By continuity, it suffices to establish \eqref{eq:spinorials-agree} for irreducibles.  We have 
\begin{align*}
\Re \langle \widetilde \Xqgc^1(a,s,\phi), \phi \rangle_{L^2} &= \frac{1}{s} \Re \langle (\Xqgc)^1(a,s\phi), \phi \rangle_{L^2} \\
&= \frac{1}{s} \Re \langle \Xq^1(a,s\phi) + Gd^*(\Xq^0(a,s\phi))\phi, \phi \rangle_{L^2} \\
&= \frac{1}{s} \Re \langle \Xq^1(a,s\phi), \phi \rangle_{L^2} \\
&= \Re \langle \widetilde \Xq^1(a,s,\phi), \phi \rangle_{L^2},
\end{align*}
where in the penultimate equality, we use that $\Re \langle Gd^*(a,s\phi) \phi, \phi \rangle_{L^2} = 0$ since $Gd^*(a,s\phi)$ is purely imaginary.  
\end{proof}

In light of Lemma~\ref{lem:coulomb-gauge-spinorial-same}, we do not define a separate spinorial energy in Coulomb gauge.   

Consider $x=(a, 0, \phi)$ a reducible stationary point of $\Xqgcsigma$. Then $\phi$ is an eigenvector of  the operator $D^{\gCoul}_{\q, a}$, defined by
$$ D^{\gCoul}_{\q, a} (\psi) := \D_{(a, 0)}  (\Xqgc)^1(0, \psi) = \widetilde \Xqgc^1(a, 0, \psi).$$
Let $\mu$ be the corresponding eigenvalue. Since $x$ is necessarily a stationary point of $\Xqsigma$ as well,  $\phi$ is an eigenvector of $D_{\q,a}$ and $(a,0)$ is a stationary point of $\Xq$.  In particular, $\D_{(a,0)} \Xq(0,\phi)$ is in Coulomb gauge, and thus
$$
\mu \phi = D_{\q,a}(\phi) = D^{\gCoul}_{\q,a} (\phi).
$$    
It follows that 
\begin{equation}\label{eq:LambdaRed}
\Lambda_\q(x) = \langle \widetilde \Xqgc^1(0,\phi), \phi \rangle_{L^2}= \langle \mu\phi , \phi \rangle_{L^2} = \mu,
\end{equation}
which agrees with Lemma~\ref{lem:coulomb-gauge-spinorial-same}.  Note that here we do not need to take real parts, since the relevant operators are self-adjoint at stationary points.

\begin{lemma}
\label{lem:grsp}
Fix a reducible stationary point $(a,0)$ of $\Xqgc$.  For each $N \in \mathbb{N}$, there exist $\omega_1 , \omega_2 > 0$ such that the (finitely many) reducible stationary points of $\Xqagcsigma$ which agree with $(a,0)$ in the blow-down and have grading in the interval $[-N,N]$ are precisely the reducible stationary points with spinorial energy in the interval $[-\omega_1,\omega_2]$.   
\end{lemma}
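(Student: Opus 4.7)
The plan is to convert both sides of the equivalence into statements about the spectrum of the operator $D_{\q,a}$, and then exploit the fact that this spectrum is discrete and unbounded in both directions. By the analogue of Proposition~\ref{prop:swblownup} in Coulomb gauge, reducible stationary points of $\Xqagcsigma$ whose blow-down is $(a,0)$ correspond (modulo the residual $S^1$ action on $\phi$) to unit eigenvectors of $D^{\gCoul}_{\q,a}=D_{\q,a}$; and by \eqref{eq:LambdaRed}, the spinorial energy $\Lambda_{\q}$ of $(a,0,\phi)$ equals the associated eigenvalue $\mu$. Since $\q$ is admissible, Proposition~\ref{prop:nondegeneracycharacterized} ensures that every such $\mu$ is simple and nonzero; since $D_{\q,a}$ is ASAFOE, its spectrum $\{\mu_n\}$ is real, discrete, and satisfies $|\mu_n|\to\infty$ in both directions.

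Next I would fix a reference reducible $x_0=(a,0,\phi_0)$ with eigenvalue $\mu_0$, set $g_0=\gr^{\Q}(x_0)$, and, for any reducible $x=(a,0,\phi)$ with eigenvalue $\mu$, identify $\gr^{\Q}(x)-g_0$ with the relative grading $\gr(x,x_0)$. By \eqref{eq:gradingscoulomb}, this relative grading is the same whether computed from $\Fqtau$ or $\Fqagctau$, so the formula \eqref{eq:GradingRed} of \cite[Corollary 14.6.2]{KMbook} applies and gives
\begin{equation*}
|\gr(x,x_0)| \;\geq\; 2\,i\bigl(\max(\mu,\mu_0),\min(\mu,\mu_0)\bigr)-1,
\end{equation*}
where $i(\cdot,\cdot)$ counts eigenvalues (with multiplicity, i.e.\ simply) in the indicated half-open interval.

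Now I would run the equivalence both ways. Given $N$, the condition $|\gr^{\Q}(x)|\leq N$ forces $|\gr(x,x_0)|\leq N+|g_0|$, and by the displayed inequality this in turn forces the number of eigenvalues strictly between $\mu_0$ and $\mu$ to be at most $\tfrac{1}{2}(N+|g_0|+1)$. Since $|\mu_n|\to\infty$, only finitely many eigenvalues can satisfy such a bound, so I may choose $\omega_1,\omega_2>0$ strictly larger than the absolute values of all these eigenvalues, and strictly smaller than the absolute value of the next eigenvalue on each side of $0$. In the opposite direction, for the finitely many eigenvalues lying in $[-\omega_1,\omega_2]$, the same formula \eqref{eq:GradingRed} gives an explicit upper bound on $|\gr(x,x_0)|$, hence on $|\gr^{\Q}(x)|$; by shrinking $\omega_1,\omega_2$ to the largest pair still separating the relevant finite set from the rest of the spectrum, I can arrange that this upper bound is exactly $N$.

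The only subtle step is making the choice of $\omega_1,\omega_2$ match the two directions simultaneously and ensuring that the strict inequalities place $-\omega_1$ and $\omega_2$ in spectral gaps of $D_{\q,a}$, so that the condition $\mu\in[-\omega_1,\omega_2]$ cuts out exactly the desired finite set of eigenvalues. This is where discreteness and simplicity of the spectrum together with the bijective correspondence between eigenvalues and stationary points are used in an essential way; the rest of the argument is a bookkeeping exercise with \eqref{eq:GradingRed} and \eqref{eq:LambdaRed}.
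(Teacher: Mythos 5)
Your proposal follows the same route as the paper's one-line proof, which simply cites \eqref{eq:GradingRed} and \eqref{eq:LambdaRed}. You correctly identify the essential ingredients: reducibles over $(a,0)$ correspond to eigenvectors of $D_{\q,a}$; spinorial energy equals the eigenvalue by \eqref{eq:LambdaRed}; and \eqref{eq:GradingRed} converts grading differences into eigenvalue counts. The cleanest way to finish---which your last paragraph gestures at without quite nailing---is to observe that \eqref{eq:GradingRed} makes $\gr^{\Q}$ a strictly increasing function of the eigenvalue $\mu$ (jumping by $2$ between consecutive eigenvalues of the same sign, by $1$ when crossing $0$), so that $\{\mu : \gr^{\Q}(x_\mu) \in [-N,N]\}$ is a consecutive run of eigenvalues, finite by the discreteness and unboundedness of the spectrum; one then places $\omega_1,\omega_2$ in the spectral gaps immediately outside this run. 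Your prescription for $\omega_1,\omega_2$ (``strictly larger than \dots and strictly smaller than the absolute value of the next eigenvalue on each side of $0$'') is ambiguous as written, and the reverse direction (``arrange that this upper bound is exactly $N$'') is not needed---once $[-\omega_1,\omega_2]$ cuts out exactly that run, the grading bound holds by definition. Finally, a point the paper also leaves implicit: for such $\omega_1,\omega_2>0$ to exist, the run must be empty or must reach one of the two eigenvalues of $D_{\q,a}$ adjacent to $0$; this holds once $N$ is as large as required in Section~\ref{sec:strategy}, though not literally for every $N\in\mathbb{N}$.
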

\begin{proof}
This follows from \eqref{eq:GradingRed} and \eqref{eq:LambdaRed}.
\end{proof}

\section{The cut-down moduli spaces in Coulomb gauge}
\label{sec:UCoulomb}
Recall that in Section~\ref{sec:mfh}, the $U$-map on monopole Floer homology was defined by intersecting the moduli spaces $M([x], [y])$ and $M^{\red}([x], [y])$ with the zero set $\Zs$ of a transverse section $\sect$ of a  complex line bundle $E^{\sigma}$ over $\B^{\sigma}_k(B_p)$. The bundle $E^{\sigma}$ was associated to the map $\G_{k+1}(B_p) \to S^1, \ u \mapsto u(p)$, where $p=(t, q)$ is a point in $\rr \times Y$. Without loss of generality, let us assume that $t=0$.

We will need to modify this definition so that we can relate it to Coulomb gauge. First, whereas in Section~\ref{sec:mfh} we followed \cite{KMOS} and used the restriction of configurations to a standard ball $B_p$ around $p$, we could just as well restrict to any closed neighborhood $N_p$ of $p$ that is a manifold with boundary. It is convenient to take $N_p = [-1, 1] \times Y$. Consider the restriction $r: \B_k(N_p) \to \B_k(B_p)$. On the blow-up, this induces a map
$$ r^{\sigma} : \Ne \to \B^{\sigma}_k(B_p),$$
$$ r^{\sigma}(a, s, \phi) = \bigl (a, s \cdot \|\phi\|_{L^2(B_p)}, {\phi}/{ \|\phi\|_{L^2(B_p)}} \bigr),$$
well-defined on the open subset $\Ne \subset \B^{\sigma}_k(N_p)$ consisting of configurations $(a, s, \phi)$ such that $\phi$ does not vanish identically on $B_p$. Note that, because of the unique continuation principle, the moduli spaces $M([x], [y])$ and $M^{\red}([x], [y])$ are contained in $\Ne$. (Here, we identify trajectories with their restrictions to $N_p$, again using unique continuation.)

Let us pull back $\sect$ under $r^\sigma$ and obtain a section $(r^\sigma)^*\sect$ of the bundle 
$$(E^{\sigma})' := (r^\sigma)^*E^{\sigma}$$ over $\Ne$. Intersecting the moduli spaces with the zero set of $(r^\sigma)^*\sect$ is the same as intersecting them with $\Zs$. Furthermore, we can consider any other transverse section of $(E^{\sigma})'$, and let $\Zs'$ be its zero set. If we define the $U$-maps using intersections with $\Zs'$, standard continuation arguments in Floer theory show that they are chain homotopic to the original ones. 

Note that $(E^{\sigma})'$ is associated to the map $\G_{k+1}(N_p) \to S^1, \ u \mapsto u(0,q)$. For our next modification, let $(E^{\sigma})''$ be the complex line bundle over $\Ne \subset \B^{\sigma}_k(N_p)$ associated to the map 
\begin{equation}
\label{eq:mueq}
\G_{k+1}(N_p) \to S^1, \ e^{f} \mapsto e^{\mu_Y(f(0, \cdot))},
\end{equation}
where $f : N_p \to i\rr$ and $\mu_Y(f(0, \cdot))$ is the average value of $f$ on the slice $\{0\} \times Y$.

We can construct a family of bundles interpolating between $(E^{\sigma})'$ and $(E^{\sigma})''$, by considering the maps
$$ \G_{k+1}(N_p) \to S^1, \ e^{f} \mapsto e^{\lambda \mu_Y(f(0, \cdot)) + (1-\lambda)f(0,q)}.$$
Thus, instead of considering sections of $(E^{\sigma})'$, we could define the $U$-maps using sections of $(E^{\sigma})''$, and the results will be chain homotopic to the originals.

The third modification consists in moving from the $\sigma$ model to the $\tau$ model. As explained in \cite[Section 6.3]{KMbook}, the two models are equivalent in the following sense. Consider the open subset $\Ue \subset \B^{\sigma}_k(N_p)$ consisting of configurations $(a, s, \phi)$ with $\phi|_{\{t\} \times Y} \not \equiv  0$ for all $t \in [0,1]$. As noted in \cite[p.463]{KMbook}, there is a natural map 
$$\varrho: \Ue \to \B^{\tau}_k(N_p).$$
By unique continuation, the moduli spaces $M([x], [y])$ and $M^{\red}([x], [y])$ restricted to $N_p$ yield configurations in $\Ue$, which map homeomorphically onto their image under $\varrho$. 

There is a complex line bundle $E^{\tau}$ over $\B^{\tau}_k(N_p)$ associated to the same map \eqref{eq:mueq} as before. When we pull it back under $\varrho$ and then further restrict to $\Ue \cap \Ne$, we obtain the restriction of the bundle $(E^{\sigma})''$ to $\Ue \cap \Ne$. Thus, we can equivalently define the $U$-maps using the $\tau$ model and intersecting with the zero set $\Zs^{\tau}$ of a transverse section of $E^{\tau}$.

We are now ready to make the connection with configurations in Coulomb gauge. This is done via the global Coulomb projection $\Pi^{[\gCoul],\tau}$. In \eqref{eq:Pigctau} the map $\Pi^{[\gCoul],\tau}$ was defined for configurations on $\rr \times Y$ with fixed asymptotics, but the same formula \eqref{eq:zebra} can be applied to a configuration on $N_p = [-1,1] \times Y$. By a slight abuse of notation, we still write $\Pi^{[\gCoul],\tau}$ for the resulting map $\B^{\tau}_k(N_p) \to \B^{\gCoul, \tau}_k(N_p)$. Here, $\B^{\gCoul, \tau}_k(N_p)$ stands for the quotient of the space $\C^{\gCoul, \tau}_k(N_p)$ by the gauge group $\G^{\gCoul}_{k+1}(N_p)$; cf. Section~\ref{sec:cylinders}.

Note that there is a natural map
\begin{equation}
\label{eq:BW}
\B^{\gCoul, \tau}_k(N_p) \to W^\sigma_{k-1/2}/S^1, \ \ [\gamma] \to [\gamma(0)],
\end{equation}
where the representative $\gamma$ is chosen to be in temporal gauge.

Consider the complex line bundle $E^{\agCoul, \sigma}$ over $W^\sigma_{k-1/2}/S^1$, associated to the $S^1$-bundle $W^\sigma_{k-1/2}$. Pick a section $\sect^{\agCoul}$ transverse to the zero section, and such that the zero set $\Zs^{\agCoul}$ of $\sect^{\agCoul}$ intersects all the moduli spaces $M^{\agCoul}([x], [y])$ and $M^{\agCoul, \red}([x], [y])$ transversely. We obtain cut-down moduli spaces in Coulomb gauge:
$$M^{\agCoul}([x], [y]) \cap \Zs^{\agCoul} \ \ \text{and} \ \ M^{\agCoul, \red}([x], [y]) \cap \Zs^{\agCoul}.$$

Here, we identified $M^{\agCoul}([x], [y])$ and $M^{\agCoul, \red}([x], [y])$ with their images in $W^\sigma_k/S^1 \subset W^\sigma_{k-1/2}/S^1$ at time $t=0$, for simplicity.  (See \cite[Proposition 7.2.1]{KMbook} for the model unique continuation result for this case.)  Alternatively, we could identify them with their images in $\B^{\gCoul, \tau}_k(N_p)$ under restriction (using unique continuation). Consider the line bundle $E^{\agCoul, \tau}$ over $\B^{\gCoul, \tau}_k(N_p)$, pulled back from $E^{\agCoul, \sigma}$ under the map \eqref{eq:BW}. From $\sect^{\agCoul}$  we obtain a section of $E^{\agCoul, \tau}$, and intersections of its zero set with the moduli spaces correspond to intersections of $\Zs^{\agCoul}$ with those moduli spaces.

Observe that the pull-back of $E^{\agCoul, \tau}$ under $\Pi^{[\gCoul],\tau}$ is exactly the bundle $E^{\tau}$ over $\B^{\tau}_k(N_p)$. Thus, we can pull back the section of $E^{\agCoul, \tau}$ and obtain a section of $E^{\tau}$, which we can then take to be the one defining the $U$-maps in the $\tau$ model. We have a commutative diagram:
\[
\xymatrixcolsep{4pc}
\xymatrix{
\ \ M([x],[y]) \ \ \ar[r] \ar[d]_{\Pi^{[\gCoul],\tau}}  & \B^\tau_k(N_p) \ \ar[d]^{\Pi^{[\gCoul],\tau}} \\
M^{\agCoul}([x],[y]) \ar[r] & \B^{\gCoul, \tau}_k(N_p),
}
\]
where the horizontal maps are given by restriction, and are one-to-one (by unique continuation). Since we have established that $\Pi^{[\gCoul],\tau} : M([x],[y]) \to M^{\agCoul}([x],[y])$ is a homeomorphism, we obtain an identification of the cut-down moduli spaces:
\begin{equation}
\label{eq:M1Z}
 M([x], [y]) \cap \Zs^{\tau} \ \cong \ M^{\agCoul}([x], [y]) \cap \Zs^{\agCoul}
 \end{equation}
and
\begin{equation}
\label{eq:M2Z}
 M^{\red}([x], [y]) \cap \Zs^{\tau} \ \cong \ M^{\agCoul, \red}([x], [y]) \cap \Zs^{\agCoul}.
\end{equation}

\section{Orientations}
\label{sec:OrientCoulomb}
In Section~\ref{sec:or2} we explained how the moduli spaces $M([x], [y])$ can be oriented using an orientation data set. That discussion can be adapted to global Coulomb gauge. To orient the spaces $M^{\agCoul}([x], [y])$, we need to trivialize the determinant lines $\det(Q_{\gamma}^{\gCoul} )$. For arbitrary $x, y \in W^{\sigma}$, consider compact intervals $I=[t_1, t_2]$, paths $\gamma \in W^{\tau}(I \times Y)$ restricting to $x$ and $y$ on the two boundary components, and Fredholm operators of the form
\begin{equation}
\label{eq:Pgammagc}
 P_{\gamma}^{\gCoul}  = \bigl( Q_{\gamma}^{\gCoul} , -\Pi_1^{\gCoul,+}, \Pi_2^{\gCoul,-} \bigr) : \T^{\gCoul,\tau}_{1, \gamma}(I \times Y) \to \V^{\gCoul,\tau}_{0, \gamma} \oplus L^2(I; i\R) \oplus H_1^{\gCoul, +} \oplus H_2^{\gCoul, -}.
 \end{equation}
Here, $Q_{\gamma}^{\gCoul} $ has the same expression as in \eqref{eq:Qggc}, and $\Pi_1^{\gCoul,+}$ is the composition of restriction to the boundary with spectral projection onto 
$$ H_1^{\gCoul, +} = \K^{\agCoul, +}_{1/2, x} \oplus i\R \subset  \T^{\gCoul, \sigma}_{1/2, x}(Y) \oplus i\R,$$
with $\K^{\agCoul, +}_{1/2, x} \subset \K^{\agCoul, \sigma}_{1/2, x}$ being the direct sum of all positive eigenspaces of the Hessian $\Hess^{\tilde{g}, \sigma}_{\q, x} - \epsilon$ for small, positive $\epsilon$. The space $H_2^{\gCoul, -}$ and the projection $\Pi_2^{\gCoul,-}$ are defined similarly, using the nonpositive eigenspaces of $\Hess^{\tilde{g}, \sigma}_{\q, y} - \epsilon$.  Note that since $\Hess^{\tilde{g},\sigma}_{\q,x}$ is invertible with real spectrum and is a compact perturbation of a self-adjoint and invertible operator, we see that $\K^{\agCoul,+}_{1/2,x} \oplus \K^{\agCoul,-}_{1/2,x} = \K^{\agCoul,\sigma}_{1/2,x}$ (cf. \cite[p.313]{KMbook}).    

We define an {\em orientation data set in Coulomb gauge} $o^{\gCoul}$ to consist of orientations $o^{\gCoul}_{[x], [y]}$ for $\det(P_{\gamma}^{\gCoul} )$ (for any $\gamma$), satisfying the compatibility condition
$$ o^{\gCoul}_{[x], [y]} \cdot o^{\gCoul}_{[y], [z]} = o^{\gCoul}_{[x], [z]}.$$
An orientation data set in Coulomb gauge produces orientations on the moduli spaces $M^{\agCoul}([x], [y])$ and $M^{\agCoul, \red}([x], [y]).$

\begin{proposition}
\label{prop:orientationsgc}
An orientation data set $o$ (as in Section~\ref{sec:or2}) naturally induces an orientation data set $o^{\gCoul}$ in Coulomb gauge, such that the homeomorphisms constructed in Proposition~\ref{prop:gccorrespondence},
$$ M([x], [y]) \xrightarrow{\cong} M^{\agCoul}([x], [y]),$$
are orientation-preserving, and so are the homeomorphisms
$$ M^{\red}([x], [y]) \xrightarrow{\cong} M^{\agCoul, \red}([x], [y]).$$
\end{proposition}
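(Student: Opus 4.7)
The plan is to construct the induced orientation data set $o^{\gCoul}$ by deforming the operator $P_\gamma$ continuously to $P_\gamma^{\gCoul}$ through a family of Fredholm operators, thereby obtaining a canonical identification of determinant lines, and then to check compatibility with concatenation and with the identification of moduli space tangent spaces. For any $x, y \in W^{\sigma}$, compact interval $I = [t_1, t_2]$, and path $\gamma \in W^{\tau}(I \times Y)$, I would interpolate $P_\gamma$ and $P_\gamma^{\gCoul}$ through a two-parameter family built from the data of Section~\ref{sec:interpol}: first linearly interpolate $\dd^{\sigma, \dagger}$ to the split form $\dd^{\sp, \sigma, \dagger}$ while keeping $\Hess^{\sigma}$, then deform the metric from $L^2$ to $\tilde g$ using $g_\rho$, simultaneously replacing $\D^\sigma_x \Xq^\sigma$ with $S_x^\rho \circ \D^\sigma_x \Xq^\sigma \circ (S_x^\rho)^{-1}$, adjusting the gauge-fixing term to $\dd^{\rho, \sigma, \dagger}$, and accordingly deforming the spectral boundary subspaces $H_1^+, H_2^-$ to $H_1^{\gCoul, +}, H_2^{\gCoul, -}$ via the spectral projections of the interpolated extended Hessians $\widehat{\Hess}^{\rho, \sigma}_{\q}$. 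By Lemmas~\ref{lem:eHrho} and \ref{lem:eHrho2}, the extended Hessians at the endpoints remain invertible and hyperbolic throughout the deformation, so the family stays Fredholm (by Proposition 14.2.1 of \cite{KMbook}, as in the proofs of Propositions~\ref{prop:Dfredholm} and \ref{prop:FredholmCoulomb}), yielding a continuous family of Fredholm operators and a canonical trivialization of the determinant line bundle over the deformation parameter. This gives a natural identification $\Lambda([x], [y]) \cong \Lambda^{\gCoul}([x], [y])$.

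Second, I would check that this identification is compatible with concatenation, so that sending $o_{[x], [y]}$ to the corresponding element $o^{\gCoul}_{[x], [y]}$ yields a well-defined orientation data set in Coulomb gauge. The point is that the interpolating family can be chosen to be local in $t$ near the boundary (since the shear $S_x^\rho$, the metric $g_\rho$, and the boundary projections are all pointwise constructions depending only on the endpoint configurations). Consequently, when we concatenate two paths and their associated operators, the deformation of the concatenation agrees with the concatenation of the deformations, and the canonical isomorphism of determinant lines is multiplicative under the composition map on $\Lambda$ and on $\Lambda^{\gCoul}$. The construction for reducible endpoints proceeds identically.

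The remaining step, and the main obstacle, is to verify that if $\gamma \in M([x], [y])$ is a regular trajectory with image $\gamma^{\flat} = \Pi^{\gCoul, \tau}(\gamma) \in M^{\agCoul}([x^\flat], [y^\flat])$, then the differential $(\Pi^{[\gCoul], \tau}_*)_{[\gamma]} : T_\gamma M([x], [y]) \to T_{\gamma^\flat} M^{\agCoul}([x^\flat], [y^\flat])$ is orientation-preserving with respect to the orientations induced by $o$ and $o^{\gCoul}$. For this I would first replace $\gamma$ by $\gamma^\flat$ (which represent the same class in $\widetilde \B^\tau_k$, so $o_{[x], [y]} = o_{[x^\flat], [y^\flat]}$) and then compare $Q_{\gamma^\flat}$ to $Q_{\gamma^\flat}^{\gCoul}$. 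By regularity the cokernels vanish, so the determinant lines equal $\Lambda^{\max} (\ker Q_{\gamma^\flat})$ and $\Lambda^{\max}(\ker Q_{\gamma^\flat}^{\gCoul})$; the commutative diagram \eqref{eq:commutingderivatives} together with Lemma~\ref{lem:allsurjective} then identifies these kernels via $(\Pi^{\gCoul, \tau}_*)_\gamma$. The task is to verify that this algebraic identification matches the identification of determinant lines given by the deformation in the first step. This I would do by extending the interpolation to a one-parameter family of Fredholm operators from $Q_{\gamma^\flat}$ to $Q_{\gamma^\flat}^{\gCoul}$ (on intertwined domains, built from $\K^\tau$ and $\K^{\gCoul, \tau}$ by the same $g_\rho$ interpolation) such that at each parameter the operator is surjective with kernel evolving continuously; since both the canonical deformation-induced isomorphism and the geometric isomorphism via $\Pi^{\gCoul, \tau}_*$ arise from continuous deformations of kernels, they agree. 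The same argument handles the reducible trajectory spaces $M^{\red}([x], [y])$, completing the proof.
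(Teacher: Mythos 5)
Your approach is essentially the paper's: use the interpolation machinery of Section~\ref{sec:interpol} and Proposition~\ref{prop:FredholmCoulomb}, applied to operators on compact cylinders with spectral boundary conditions, to identify determinant lines, then check concatenation-compatibility. However, there is a genuine gap in your first step. The interpolating families you describe (passing from $\dd^{\sigma,\dagger}$ to $\dd^{\sp,\sigma,\dagger}$, then deforming through $g_\rho$ and $\widehat{\Hess}^{\rho,\sigma}_{\q}$) all act on the \emph{fixed} domain $\T^{\tau}_{1,\gamma}(I \times Y)$ augmented by boundary conditions; they do not and cannot directly connect $P_\gamma$ to $P_\gamma^{\gCoul}$, because $P_\gamma^{\gCoul}$ acts on the strictly smaller domain $\T^{\gCoul,\tau}_{1,\gamma}(I \times Y)$. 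The deformation you sketch actually terminates at an operator $\widehat{P}_\gamma^{\gCoul}$, defined by analogy with $\Qhat_\gamma^{\gCoul}$ in \eqref{eq:newextension}, which has block-diagonal form $\begin{pmatrix} P_\gamma^{\gCoul} & 0 \\ 0 & \widehat{J} \end{pmatrix}$ with $\widehat{J}$ the compact-cylinder, boundary-conditioned analogue of $\Rhat$ from \eqref{eq:RRtilde}. To extract an orientation of $\det(P_\gamma^{\gCoul})$ from the trivialization of $\det(\widehat{P}_\gamma^{\gCoul})$, you still need to (a) use the block decomposition to write $\det(\widehat{P}_\gamma^{\gCoul}) \cong \det(P_\gamma^{\gCoul}) \otimes \det(\widehat{J})$, and (b) verify that $\widehat{J}$ is bijective so that $\det(\widehat{J})$ is canonically oriented. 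Without this reduction the deformation does not produce an element of $\Lambda^{\gCoul}([x],[y])$.

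Once the reduction through $\widehat{J}$ is inserted, your remaining steps are consistent with the paper. Your concatenation argument is fine. Your third paragraph on orientation-preservation of the homeomorphisms, which you identify as the ``main obstacle,'' is more elaborate than what the paper says (the paper asserts this is immediate from the construction); the naturality you appeal to --- that the deformation of determinant lines is pointwise in $t$ and compatible with the concatenation identification $\det(\tL_{\gamma|_I}) \to \det(L_\gamma)$ --- is precisely what makes it immediate, so this is not really an extra obstacle, and spelling it out as you do is harmless.
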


\begin{proof}
Fix an orientation data set $o$. This trivializes the determinant lines $\det(P_{\gamma})$, for the operators $P_{\gamma}$ from \eqref{eq:Pgamma}. Let us focus on trajectories $\gamma \in W^{\tau}(I \times Y)$. We seek to trivialize the corresponding operators $P_{\gamma}^{\gCoul}$. To go between $P_{\gamma}$ and $P_{\gamma}^{\gCoul}$, we follow the steps in the proof of Proposition~\ref{prop:FredholmCoulomb}, but considering operators defined on compact cylinders, and with spectral projections added at the boundary. Specifically, we can deform $P_{\gamma}$ into an operator of the form
$$\widehat{P}_{\gamma}^{\gCoul}  =\begin{pmatrix} P_{\gamma}^{\gCoul}  & 0 \\ 0 & \widehat{J}  \end{pmatrix},$$
where $\widehat{J}$ is the analogue of $\Rhat$ from \eqref{eq:RRtilde}, with spectral projections added at the boundary. One can check that $\widehat{J}$ is bijective. Hence, a trivialization of $\det(P_{\gamma})$ gives one of $\det(\widehat{P}_{\gamma}^{\gCoul} )$ and then one of $\det(P_{\gamma}^{\gCoul})$. The resulting trivializations are compatible with concatenation, and hence combine into an orientation data set in Coulomb gauge. 

The fact that the homeomorphisms are orientation-preserving is immediate from the construction.
\end{proof}

Observe that Proposition~\ref{prop:orientationsgc} also implies that the identifications \eqref{eq:M1Z} and \eqref{eq:M2Z}, between the cut-down moduli spaces, are orientation-preserving.

\section{Monopole Floer homology in Coulomb gauge}\label{sec:CoulombSummary}

The work done in this chapter allows us to rephrase the definition of monopole Floer homology in terms of configurations in Coulomb gauge.

We fix an admissible perturbation $\q$. The generators of $\cmto$ can be taken to be some of the  stationary points $[x]$ of the vector field $\Xq^{\agCoul, \sigma}$ on $W^\sigma/S^1$; precisely, those that are either in the interior of $W^\sigma/S^1$, or on the boundary and stable. Indeed, by \eqref{eq:EquivStat2}, these are in one-to-one correspondence with the generators $\Crit^o \cup \Crit^s$ considered in Section~\ref{sec:mfh}. Further, since the original generators are non-degenerate, so are the ones in Coulomb gauge, in the sense that $\ker (\D^\sigma_x\Xq^{\agCoul, \sigma}) = 0$ at each stationary $x$; see Lemma~\ref{lem:rephraseStat}.

To define the differential on $\cmto$, we can use the moduli spaces $M^{\agCoul}([x], [y])$ and $M^{\agCoul, \red}([x], [y])$, consisting of trajectories of $\Xq^{\agCoul, \sigma}$. By \eqref{eq:EquivTraj2} and Proposition~\ref{prop:gccorrespondence}, these are in one-to-one correspondence with the moduli spaces of monopoles considered in Section~\ref{sec:mfh}. Moreover, by Proposition~\ref{prop:Qgammasurjective}, since the original moduli spaces are regular, so are the ones in Coulomb gauge. Specifically, this means that the operators $(\D^\tau_{\gamma} \Fqgctau)|_{\K^{\gCoul,\tau}_{k,\gamma}}$ (or, equivalently, $Q^{\gCoul}_{\gamma}$) are surjective for all $[\gamma] \in M^{\agCoul}([x], [y])$, except in the boundary-obstructed case, where the cokernel has dimension 1. %A similar condition applies to $M^{\agCoul, \red}([x], [y])$.

As shown in \eqref{eq:gradingscoulomb}, we can define the relative gradings between generators $[x]$ and $[y]$ in Coulomb gauge by the index of the operator $Q^{\gCoul}_{\gamma}$, where $\gamma$ is a path from $x$ to $y$ in $W^\sigma$ and that these are the same relative gradings as in $\cmto$.  

Moreover, we can orient the moduli spaces $M^{\agCoul}([x], [y])$ and $M^{\agCoul, \red}([x], [y])$ using an orientation data set in Coulomb gauge, as in Section~\ref{sec:OrientCoulomb}; see Proposition~\ref{prop:orientationsgc}.

With this in mind, we define the differential $\check\del$ on $\cmto$ by the same formulas as \eqref{eqn:boundaryirred}, \eqref{eqn:boundaryred}, and \eqref{eqn:cmboundary}, but using the moduli spaces $M^{\agCoul}([x], [y])$, $M^{\agCoul, \red}([x], [y])$, instead of $M([x], [y]), M^{\red}([x], [y])$. 

Finally, the $\zz[U]$-module structure on $\cmto$ can also be described in Coulomb gauge. We apply the equivalences \eqref{eq:M1Z} and \eqref{eq:M2Z} established in Section~\ref{sec:UCoulomb}. Thus, the $U$-map is given by the formulas similar to \eqref{eqn:mU}, \eqref{eqn:mUred}, and \eqref{eqn:floercap}. For the new formulas, we instead intersect the moduli spaces $M^{\agCoul}([x], [y])$, $M^{\agCoul, \red}([x], [y])$ with the zero set $\Zs^{\agCoul}$ of a generic section of the  complex line bundle $E^{\agCoul,\sigma}$ over $W^{\sigma}/S^1$.

\chapter[Finite-dimensional approximations]
{Finite-dimensional approximations with tame perturbations} \label{sec:finiteapproximations}

\section{Very compactness}\label{sec:verycompact}

Since monopole Floer homology is defined using a perturbation $\q$, we must define an analogue of the Floer spectrum using this perturbation as well.  In order to do this, we must recall a more general setting in which the spectrum can be defined.  

\begin{definition}
\label{def:vc}
Let $Y$ be a closed oriented Riemannian three-manifold with $b_1(Y)=0$, with a spinor bundle $\Spin$. Let $W = \ker d^* \oplus \Gamma(\Spin) \subset \C(Y)$ be the global Coulomb slice from Section~\ref{sec:coulombs}.

A smooth map $\eta:W \to W$ is called {\em very compact} if for all integers $k \geq 5$ and compact cylinders $Z = I \times Y$, the following two conditions are satisfied: 

\noindent $(a)$ The map $\eta$ induces a continuous, functionally bounded map 
\[
\hat{\eta}: W_{k-1}(Z) \to W_{k-1}(Z), \ \ \text{and}
\] 
$(b)$ We can extend the differentials 
\[
 W \times \prod^m_{i = 1} W \to W, \ (x; v_1, \dots, v_m) \to (\D^m\eta)_x(v_1, \dots, v_m)
\]
to continuous maps
\[
W_{k}(Z) \times W_{k-1-i_1}(Z) \times  \ldots \times W_{k-1-i_m}(Z) \to W_{k-1-\sum i_s}(Z).    
\]
\end{definition}

By the argument mentioned in Remark~\ref{rem:qhatq} (that is, working with configurations which are constant in the $\R$ direction), we obtain the analogous regularity statements on $Y$ as well.  We call such a map $\eta$ very compact because condition (a), applied with $k$ instead of $k-1$, guarantees that the induced map $\eta : W_k \to W_{k}$ is functionally bounded, and therefore $\eta : W_k \to W_{k-1}$ is compact (in the sense that it takes bounded sets to precompact sets).  
 
One example of a very compact map is the map $c: W \to W$ from \eqref{eq:cmap}.

\begin{proposition}\label{prop:proposition3perturbed}
Let $\eta: W \to W$ be a very compact map.  Fix $k \geq 5$.  Suppose that there exists a closed, bounded subset $N$ of $W_{k}$ such that the finite type trajectories of $l + \eta$ on $W_{k}$ that are contained in $N$ are actually contained in $W \cap U$ for an open set $U \subset N$.  Then: \\
$(i)$ For $\lambda \gg 0$, trajectories of $l+\pml \eta$ contained in $N$ must be contained in $U$.  \\
$(ii)$ We can define the Floer spectrum $\Sigma^{-W^{(-\lambda,0)}}I^\lambda$ as in \eqref{sec:spectrum}.  This is independent of $\lambda$ up to stable equivalence. \\
$(iii)$ Furthermore, if $\eta$ is $S^1$-equivariant, then the Floer spectrum can be constructed equivariantly, and is an invariant up to $S^1$-equivariant stable equivalence.
\end{proposition}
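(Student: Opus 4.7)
The plan is to adapt the arguments used to establish Proposition~\ref{prop:proposition3} in \cite{Spectrum}, replacing the nonlinearity $c$ by the very compact perturbation $l+\eta$ and exploiting the functional boundedness built into Definition~\ref{def:vc}. Once part $(i)$ is in hand, parts $(ii)$ and $(iii)$ follow by essentially the same Conley-index machinery as in Section~\ref{sec:conleyswf}, so the bulk of the work is in $(i)$.

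For part $(i)$, I would argue by contradiction: suppose that for some sequence $\lambda_n \to \infty$ there exist trajectories $\gamma_n: \rr \to W^{\lambda_n}_k$ of $l+\pmln \eta$ contained in $N$ but not in $U$. Pick $t_n \in \rr$ with $\gamma_n(t_n) \in N \setminus U$, and translate so that $t_n=0$. Since $N$ is bounded in $W_k$ and $\eta$ is functionally bounded as a map $W_k \to W_k$, we get uniform $L^2_k$ bounds on $\gamma_n(t)$ and on $\pmln\eta(\gamma_n(t))$. The Floer equation $\tfrac{d}{dt}\gamma_n = -(l+\pmln\eta)(\gamma_n)$ then gives a uniform $L^2_{k-1}$ bound on $\tfrac{d}{dt}\gamma_n$, hence uniform $L^2_{k-1,loc}$ bounds on $\gamma_n$ as four-dimensional configurations on $\rr \times Y$. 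A diagonal/bootstrapping argument using the very compactness conditions (b) in Definition~\ref{def:vc}, together with the fact that $\pmln \to \id$ strongly as $\lambda_n \to \infty$ and that $\| \pmln x\|_{L^2_k} \leq \Theta_k \|x\|_{L^2_k}$, upgrades the regularity in the usual way and extracts a subsequence converging in $L^2_{k-1,loc}$ to a limit $\gamma_\infty$. The limit is a finite-type trajectory of $l+\eta$ contained in $N$, so by hypothesis $\gamma_\infty(0) \in U$; but $U$ is open in $N$ and $\gamma_n(0)\notin U$, contradicting convergence.

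For part $(ii)$, once $(i)$ is established, we fix an open set $V$ with $\overline V \subset U$ containing the trajectories in $W$, and then for all sufficiently large $\lambda$ we apply it to a smaller neighborhood so that the trajectories of $l+\pml\eta$ in $\overline V$ form an isolated invariant set $S^\lambda_\eta$ in $W^\lambda$ with isolating neighborhood $V\cap W^\lambda$. Choose an $S^1$-invariant cutoff $u^\lambda$ supported near $V$, form the flow of $u^\lambda(l+\pml\eta)$, and take the Conley index $I^\lambda_\eta$. Desuspending by $W^{(-\lambda,0)}$ produces the spectrum. Independence of $\lambda$ up to stable equivalence follows from the continuation principle of the Conley index applied to the one-parameter family obtained by linearly interpolating $\pml$ between two cutoffs and simultaneously suspending/desuspending by the intermediate eigenspaces, just as in \cite[Section 6]{Spectrum}; the compactness input needed for continuation is precisely the conclusion of $(i)$ applied uniformly over the one-parameter family, which follows from the same contradiction argument.

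For part $(iii)$, if $\eta$ is $S^1$-equivariant then so is $\pml\eta$ (since $\pml$ is built out of the spectral projections of the $S^1$-equivariant operator $l$), the cutoff $u^\lambda$ can be chosen $S^1$-invariant, and the isolated invariant set $S^\lambda_\eta$ is $S^1$-invariant. The equivariant Conley index of \cite{Pruszko,FloerConley} applies and yields an $S^1$-equivariant stable equivalence class. I expect the main obstacle throughout to be the contradiction/compactness argument in $(i)$: one must carefully combine the uniform $L^2_k$ control coming from the assumption $\gamma_n\subset N$ with the weak convergence $\pml \to \id$ and the multilinear continuity in Definition~\ref{def:vc}(b) to pass to the limit in the nonlinear term $\pml\eta(\gamma_n)$, since the mere compactness of $\eta:W_k\to W_{k-1}$ would not by itself suffice to identify the limit flow.
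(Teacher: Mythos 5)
Your sketch follows the same route the paper indicates: the paper itself does not give a proof but states that $(i)$ corresponds to \cite[Proposition~3]{Spectrum} and $(ii)$, $(iii)$ to \cite[Theorem~1]{Spectrum}, with Definition~\ref{def:vc} abstracting exactly the properties of $c$ that those arguments use. You correctly identify the contradiction/compactness structure for $(i)$, the Conley-index continuation for $(ii)$, and you correctly flag the key subtlety that mere compactness of $\eta\colon W_k \to W_{k-1}$ is insufficient (this is precisely the point of the remark following the proposition, correcting \cite{GluingBF}).

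One imprecision worth tightening: you state that the subsequence converges in $L^2_{k-1,loc}$ to a trajectory $\gamma_\infty$ of $l+\eta$ contained in $N$, and then conclude a contradiction because $U$ is open in $N$ and $\gamma_n(0)\notin U$. But $N$ is a subset of $W_k$ and $U$ is open in the $L^2_k$ topology of $N$, so $L^2_{k-1}$ convergence of $\gamma_n(0)$ to $\gamma_\infty(0)$ does not by itself contradict $\gamma_n(0)\notin U$; nor does it show $\gamma_\infty(t)\in N$. You need convergence in a stronger topology at each time slice (at least $L^2_k(Y)$, uniformly on compacts). This is exactly the content of Lemma~\ref{lem:convergencenoblowup}, where the convergence is claimed in $C^\infty$; obtaining it is where the full strength of Definition~\ref{def:vc}(a) (continuity and functional boundedness of $\hat\eta$ on $W_{k-1}(Z)$) and (b) (the multilinear continuity of the differentials) enter, in order to bootstrap past $L^2_{k-1,loc}$. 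Your final sentence acknowledges this is the hard step, but the displayed convergence claim should be upgraded before drawing the contradiction.
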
 

We omit the proof of Proposition~\ref{prop:proposition3perturbed}, as it is analogous to that of \cite[Theorem 1]{Spectrum}.  In particular, part (i) corresponds to \cite[Proposition 3]{Spectrum}.  The results in \cite{Spectrum} were for the special case where $\eta = c$, $N = \overline{B(2R)}$, and $U = B(R)$, as in Chapter~\ref{sec:spectrum}.  The properties of $c$ used in those proofs were exactly those listed in Definition~\ref{def:vc}.  

\begin{remark}
Very compactness was defined in \cite[Definition 4]{GluingBF}.  There, condition (a) only required that $\eta:W_k \to W_{k-1}$ be a compact map.  Proposition 5 in \cite{GluingBF} claimed that Proposition~\ref{prop:proposition3perturbed} is true under this weaker hypothesis.  However, this is in fact not sufficient.  In \cite{Spectrum}, Step 1 in the proof of Proposition 3 requires functional boundedness of $\eta$ to bound the derivatives of trajectories, and Step 3 requires continuity of $\hat{\eta}$ in order to do elliptic bootstrapping on $I \times Y$.     
\end{remark}

For future reference, we state a key lemma that is needed for the proof of Proposition~\ref{prop:proposition3perturbed}(i).  Its analogue is contained in the proof of Proposition 3 in \cite{Spectrum}.

\begin{lemma}\label{lem:convergencenoblowup}
Let $I \subseteq \R$ be a closed interval (possibly $\R$).   Under the hypotheses of Proposition~\ref{prop:proposition3perturbed}, suppose we have a sequence of eigenvalues $\lambda_n \to \infty$ and a sequence of trajectories $\gamma_n: I \to W$ of $l + \pmln \eta$, such that $\gamma_n(t) \in N$ for all $t \in I$.  Then there exists a subsequence of $\gamma_n$ for which the restrictions to any compact subinterval\footnote{For closed intervals $I, I' \subseteq \R$, we write $I' \Subset I$ if $I'$ is compact and contained in the interior of $I$.} $I' \Subset I$ converge in the $C^{\infty}$ topology of $W(I' \times Y)$ to a trajectory of $l + \eta$.  
\end{lemma}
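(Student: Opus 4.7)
The plan is to mimic the standard compactness argument for parabolic flows on infinite cylinders, as carried out in \cite[Proof of Proposition 3]{Spectrum}. First I would obtain uniform a priori $L^2_k$ bounds on $\gamma_n$ along the cylinder, then bootstrap these into four-dimensional Sobolev bounds on any compact subcylinder $I'\times Y$, and finally extract a convergent subsequence via Rellich and identify the limit as a trajectory of $l+\eta$.

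For the a priori bounds, the hypothesis $\gamma_n(t)\in N$ gives a uniform $L^2_k$ bound on $\gamma_n(t)$ for every $t$. Applying $\hat{\eta}$ slicewise and using Definition~\ref{def:vc}(a) with $k$ replaced by $k+1$ (or more precisely the three-dimensional consequence mentioned in the paragraph after Definition~\ref{def:vc}) bounds $\eta(\gamma_n(t))$ uniformly in $W_k$, and since $\|\pmln\|_{L^2_k\to L^2_k}\le \Theta_k$ independently of $n$, we also bound $\pmln\eta(\gamma_n(t))$ uniformly in $W_k$. The flow equation $\frac{d}{dt}\gamma_n=-(l+\pmln\eta)(\gamma_n)$ then gives a uniform bound on $\frac{d}{dt}\gamma_n$ in $W_{k-1}$. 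Together these imply uniform bounds on $\gamma_n$ in $W_k(I'\times Y)$ for any $I'\Subset I$.

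Next comes the bootstrapping step on a compact subcylinder $Z'=I'\times Y$. The operator $\frac{d}{dt}+l$ is, after interpreting $l$ as the relevant three-dimensional elliptic operator, a first-order elliptic operator on $Z'$. Rewriting the equation as
\begin{equation*}
\Bigl(\frac{d}{dt}+l\Bigr)\gamma_n = -\pmln\eta(\gamma_n)
\end{equation*}
and using Definition~\ref{def:vc}(a) to see that the right-hand side is uniformly bounded in $W_{k-1}(Z')$, interior elliptic regularity on slightly smaller subcylinders gives a uniform $W_{k}(Z')$ bound on $\gamma_n$. Iterating: once $\gamma_n$ is uniformly bounded in $W_{k+m}$ on a subcylinder, the continuity of $\hat{\eta}$ from $W_{k+m-1}(Z)$ to itself and the bounds $\|\pmln\|_{L^2_{k+m-1}}\le \Theta_{k+m-1}$ give uniform bounds on $\pmln\eta(\gamma_n)$ in $W_{k+m-1}$, and elliptic regularity on a slightly smaller subcylinder upgrades the bound on $\gamma_n$ to $W_{k+m+1}$. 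Exhausting $I$ by nested subintervals and applying a diagonal argument, we obtain uniform $C^\infty_{\mathrm{loc}}$ bounds.

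By Rellich's lemma and another diagonal extraction, a subsequence of $\gamma_n$ converges in $W_{k+m}(I''\times Y)$ for every $m\ge 0$ and every $I''\Subset I$, hence in $C^\infty_{\mathrm{loc}}$, to some $\gamma:I\to W$ with $\gamma(t)\in N$. To identify $\gamma$ as a trajectory of $l+\eta$, it remains to show $\pmln\eta(\gamma_n)\to \eta(\gamma)$ in, say, $W_{k-2}(I''\times Y)$ for each $I''$. Continuity of $\hat{\eta}:W_{k-1}(Z)\to W_{k-1}(Z)$ gives $\eta(\gamma_n)\to \eta(\gamma)$ in $W_{k-1}(I''\times Y)$. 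Writing $\pmln\eta(\gamma_n)-\eta(\gamma)=\pmln(\eta(\gamma_n)-\eta(\gamma))+(\pmln-\mathrm{id})\eta(\gamma)$, the first term is controlled by $\Theta_{k-1}\|\eta(\gamma_n)-\eta(\gamma)\|_{L^2_{k-1}}\to 0$, while the second tends to zero in $L^2$ (hence in $W_{k-2}$ after losing a derivative) because $\pml\to \mathrm{id}$ strongly on $L^2$ as $\lambda\to\infty$. Passing to the limit in the flow equation then yields $\frac{d}{dt}\gamma=-(l+\eta)(\gamma)$.

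The main obstacle will be the bootstrapping step, since the smoothed projection $\pml$ is not literally an $L^2_j$-orthogonal projection for every $j$, only bounded by $\Theta_j$, and the operator on the cylinder must be treated carefully when commuting $\pmln$ past $\frac{d}{dt}+l$; however, since $l$ is self-adjoint and commutes with spectral projections $\tilde p^\lambda$, the formula \eqref{eq:pl} shows that $[l,\pml]=0$, and then the usual parabolic regularity arguments on cylinders apply with constants independent of $n$.
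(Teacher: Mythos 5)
Your overall strategy matches the one the paper refers to (Proposition 3 of \cite{Spectrum}): pointwise $L^2_k$ bounds from $\gamma_n(t)\in N$, a priori control of $\partial_t\gamma_n$ from the flow equation and very compactness, interior elliptic regularity for the first-order operator $\tfrac{d}{dt}+l$ on compact subcylinders, diagonal extraction, and identification of the limit using the strong convergence $\pmln\to 1$. However, there is a Sobolev index slip in your bootstrap that, as written, prevents any gain in regularity. Starting from $\gamma_n$ bounded in $W_{k+m}$, you invoke $\hat\eta\colon W_{k+m-1}(Z)\to W_{k+m-1}(Z)$ to bound $\pmln\eta(\gamma_n)$ in $W_{k+m-1}$, and then assert that elliptic regularity upgrades $\gamma_n$ to $W_{k+m+1}$. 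But $\tfrac{d}{dt}+l$ is first-order elliptic: from a source in $W_{k+m-1}$ it returns $W_{k+m}$, not $W_{k+m+1}$, so the iteration is stuck at $W_k$ and never delivers the $C^\infty$ bounds you need. The fix is to notice that Definition~\ref{def:vc}(a) is available at every Sobolev index $\ge 4$, so you should use functional boundedness of $\hat\eta\colon W_{k+m}(Z)\to W_{k+m}(Z)$ to put $\pmln\eta(\gamma_n)$ in $W_{k+m}$, after which elliptic regularity does yield $W_{k+m+1}$. (Relatedly, to get a \emph{uniform} bound on $\pmln\eta(\gamma_n)$ you need functional boundedness, not continuity; continuity alone gives nothing until a convergent subsequence is in hand.)

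Two smaller points. In identifying the limit, the phrase that $(\pmln-\mathrm{id})\eta(\gamma)$ ``tends to zero in $L^2$ (hence in $W_{k-2}$ after losing a derivative)'' is not a valid implication on its own; what saves it is that this quantity is uniformly bounded in $W_{k-1}$ (via $\|\pmln\|\le\Theta_{k-1}$) and tends to zero in $L^2$, so it tends to zero in $W_{k-2}$ by interpolation — or more simply, $\pmln\to 1$ strongly on $W_{k-1}$ by Fact~\ref{fact:pml}(b), applied slicewise with dominated convergence, so the term already tends to zero in $W_{k-1}$. Finally, your closing concern about commuting $\pmln$ past $\tfrac{d}{dt}+l$ is unfounded: $\pmln$ appears only in the source term, so the interior estimates for $\tfrac{d}{dt}+l$ are applied with constants manifestly independent of $n$, and neither the commutation $[l,\pml]=0$ nor any parabolic theory is required.
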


The following will allow us to define a Floer spectrum which incorporates the perturbations used in defining monopole Floer homology. 

\begin{proposition}\label{prop:verycompact}
Let $\q$ be a very tame perturbation.  Then the map $\etaq = \Pi^{\gCoul}_* \q:W \to W$ from \eqref{eq:etaq} is very compact.  
\end{proposition}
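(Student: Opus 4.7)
The plan is to verify the two conditions of Definition~\ref{def:vc} for the composition $\etaq = \Pi^{\gCoul}_* \circ \q$, leveraging the very tameness of $\q$ (Definition~\ref{def:verytame}) together with the Sobolev regularity of $\Pi^{\gCoul}_*$ established in Lemmas~\ref{lem:igc} and \ref{lem:igc-fb}. A substantial portion of the required control is already encapsulated in Lemma~\ref{lem:tamecoulomb}, which shows $\etaq$ is a controlled Coulomb perturbation; what remains is to translate those estimates into the mixed-Sobolev format required by condition (b) of very compactness.

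First I would handle condition (a). Fix $k \geq 5$ and a compact cylinder $Z = I \times Y$. Because $\etaq$ is $S^1$-equivariant and its induced section $\hat\etaq$ acts slicewise as $\Pi^{\gCoul}_* \circ \hat\q$, very tameness condition (ii) of $\q$ applied with $j = k-1 \geq 4$ gives $\hat\q \in C^0_{\fb}(\C_{k-1}(Z), \V_{k-1}(Z))$, while the four-dimensional analogue of Lemma~\ref{lem:igc} (respectively Lemma~\ref{lem:igc-fb}(a, c)) supplies the continuity (respectively functional boundedness) of the slicewise $\Pi^{\gCoul}_*$ from $\C_{k-1}(Z)$ into $W_{k-1}(Z)$. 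Composing yields $\hat\etaq \in C^0_{\fb}(W_{k-1}(Z), W_{k-1}(Z))$, as required.

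For condition (b), the $m$-th Fr\'echet derivative of $\etaq$ is computed by the Fa\`a di Bruno formula as a finite sum of multilinear terms of the shape
$$
(\D^p \Pi^{\gCoul}_*)_{\q(x)}\bigl((\D^{q_1}\q)_x(v_{I_1}), \dots, (\D^{q_p}\q)_x(v_{I_p})\bigr),
$$
where $(I_1, \dots, I_p)$ partitions $\{1, \dots, m\}$ and $q_1 + \dots + q_p = m$. Each factor $(\D^{q_s}\q)_x$ is controlled in the mixed Sobolev norms $L^2_{k-1-i_{j_1}} \times \cdots \to L^2_{k-1-\sum i_{j_s}}$ by iterating condition (iii) of Definition~\ref{def:verytame}, combined with the four-dimensional Sobolev multiplication $L^2_{k-1-a}(Z) \times L^2_{k-1-b}(Z) \to L^2_{k-1-a-b}(Z)$, which is valid since $k \geq 5$ gives $k-1 > 2 = \dim(Z)/2$. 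Each $\Pi^{\gCoul}_*$ derivative is then handled by the iterated/multilinear version of Lemma~\ref{lem:igc-fb}(b, c), where the two-derivative gain from the Green's operator $G$ ensures no further regularity loss.

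The principal technical obstacle is the careful accounting of Sobolev indices across the chain-rule expansion: for each Fa\`a di Bruno summand one must check that the intermediate quantities stay within the allowable range $[-k, k]$ in which both very tameness of $\q$ and the smoothness statements of Lemma~\ref{lem:igc-fb} are valid. Since $\Pi^{\gCoul}_*$ is built from $\pi$, $Gd^*$ and Sobolev-multiplication by $\phi$, each of which either preserves or improves Sobolev regularity (modulo the $C^k$ bound on the basepoint guaranteed by functional boundedness), the only true derivative loss in each term comes from differentiating $\q$, and this is precisely what condition (iii) of very tameness was designed to bound. Assembling these pieces yields the desired multilinear extension and completes the verification.
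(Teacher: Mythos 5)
Your plan for condition (a) is essentially the paper's: apply condition (ii) of Definition~\ref{def:verytame} with $j = k-1$, then compose with the slicewise $\Pi^{\gCoul}_*$, whose relevant continuity and functional boundedness are in Lemmas~\ref{lem:igc} and \ref{lem:igc-fb}. (The paper phrases this even more compactly by quoting Lemma~\ref{lem:tamecoulomb}(i) directly with $k-1$ in place of $k$.)

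For condition (b) your expansion is structurally wrong, and this is a genuine gap. You write the Fa\`a di Bruno expansion for the \emph{composition} $\Pi^{\gCoul}_*\circ \q$, with terms of the form $(\D^p\Pi^{\gCoul}_*)_{\q(x)}\bigl(\dots\bigr)$ -- i.e.\ with the base point of $\Pi^{\gCoul}_*$ being $\q(x)$. But $\etaq$ is \emph{not} a composition: by \eqref{eq:etaq},
$$\etaq(x) = (\Pi^{\gCoul}_*)_x\,\q(x),$$
so $\Pi^{\gCoul}_*$ is evaluated at the \emph{same} base point $x$ at which $\q$ is evaluated. The correct chain rule here is the Leibniz rule for the bilinear pairing of the operator-valued section $x\mapsto (\Pi^{\gCoul}_*)_x$ with the vector field $x\mapsto \q(x)$, whose $m$-th derivative is a sum over subsets $S\subseteq\{1,\dots,m\}$ of terms $(\D^{|S|}_x \Pi^{\gCoul}_*)(w_S)\bigl((\D^{m-|S|}_x\q)(w_{S^c})\bigr)$. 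This is exactly the shape that Lemma~\ref{lem:igc-fb}(b) computes in the case $m=1$; applying Fa\`a di Bruno at the wrong base point produces terms that don't exist and omits the cross terms that do.

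There is a second, softer gap: your claim that the mixed Sobolev bounds on $\D^{q_s}\q$ follow "by iterating condition (iii) of Definition~\ref{def:verytame}" is not justified. Condition (iii) controls $\D\hat\q$ as a $C^\infty_{\fb}$ map with the tangent argument at one flexible regularity $L^2_j$ and all further basepoint derivatives taken at $L^2_k$; iterating does not directly give control with several tangent arguments at distinct lower regularities $L^2_{k-1-i_s}$. The paper instead invokes \cite[Proposition 11.4.1(ii)]{KMbook}, which gives exactly the needed statement ($m-1$ arguments plus the basepoint at $L^2_p$, one at $L^2_j$, target at $L^2_j$, for any $0\le j\le p$ with $p\ge 2$), and then recovers the desired mixed regularities by choosing $j = k-1-i_m$ and $p = k-1-\max\{i_s : s\neq m\}$ (using $k\ge 5$ to keep $p\ge 2$) and precomposing with Sobolev inclusions. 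You should adopt that cited proposition rather than trying to derive it from Definition~\ref{def:verytame} alone, and replace the Fa\`a di Bruno expansion with the Leibniz one, controlling the $\Pi^{\gCoul}_*$ derivatives via Lemma~\ref{lem:igc} (and, for the higher-order terms, via the explicit formula for $\Pi^{\gCoul}_*$ together with the regularity gain of the Green's operator, as in Lemma~\ref{lem:igc-fb}).
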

\begin{proof}
Recall from Lemma~\ref{lem:tamecoulomb} that $\etaq$ is a controlled perturbation, in the sense of  Definition~\ref{def:controlled}.

Thus, to prove Property (a) in the definition of very compactness, we simply apply part (i) of Definition~\ref{def:controlled}, with $k-1$ instead of $k$. This says that $\etaq$ extends to a continuous, functionally bounded map from $W_{k-1}(Z)$ to $W_{k-1}(Z)$.

For Property (b), without loss of generality, assume that $i_m$ is the largest of the integers $i_s$.  Let us study the regularity properties of the derivatives of $\q$.  We apply \cite[Proposition 11.4.1(ii)]{KMbook}, which states that for $p \geq 2$, the map  
\[
\D^m \hat{\q} : (x,v_1,\ldots,v_m) \mapsto \D^m_x \hat{\q}(v_1,\ldots,v_m),
\]
extends to a continuous map from $\C_p(Z) \times \ldots \times \C_p(Z) \times \C_j(Z)$ to $\C_j(Z)$, for $0 \leq j \leq p$.  For clarity, this product is written to be compatible with the formula above, so the copy of $\C_j(Z)$ in the domain consists of tangent vectors and the left-most $\C_p(Z)$ consists of the point where we are computing the higher derivative.  We let $j = k - 1 - i_m$ and 
$$p = \begin{cases}
k-1 & \text{ if } m=1,\\
k-1-\max \{i_s \mid s \neq m \} & \text{ if } m \geq 2.
\end{cases}
$$   Note that since $k - 1 \geq 4$, $k - 1 - \sum i_s \geq 0$, and $i_m = \max i_s$, we have that $k - 1 - i_s \geq 2$ for any $s \neq m$, so $p \geq 2$. Thus, 
\[
\D^m \hat{\q}: \C_p(Z) \times \ldots \times \C_p(Z) \times \C_{k-1-i_m}(Z) \to \C_{k-1-i_m}(Z)
\] 
is continuous. By pre-composing with the continuous inclusions of $\C_{k}(Z)$ and $\C_{k-1-i_s}(Z)$ into $\C_{p}(Z)$ for $s \neq m$ and post-composing with the continuous inclusion of $\C_{k-1-i_m}(Z)$ into $\C_{k-1-\sum i_s}(Z)$, we get that $\D^m \hat{\q}$ extends to a map
\[
\C_{k}(Z) \times \C_{k-1-i_1}(Z) \times \ldots  \times \C_{k-1-i_m}(Z) \to \C_{k-1- \sum i_s}(Z).
\]
The requirement (b) in the definition of very compactness now follows from Lemma~\ref{lem:igc}.
\end{proof} 

We want to apply Proposition~\ref{prop:proposition3perturbed} to $l$ and $$c_{\q} := c+\eta_{\q}.$$ Since both of the terms $c$ and $\etaq$ are very compact, so is $c_{\q}$. However, observe that very compactness is only one of the requirements needed to do finite dimensional approximation. The others are smoothness and boundedness for flow trajectories, as in the statement of Proposition~\ref{prop:proposition3perturbed}.  (The containment in $W \cap U \subset W$ in the statement guarantees smoothness.) In our setting, we know from \cite[Proposition 13.1.2 (i)]{KMbook} that for a tame perturbation, the perturbed Seiberg-Witten trajectories, and thus their global Coulomb projections, are smooth. With regard to boundedness, in Proposition~\ref{prop:proposition3perturbed}, we want to take $U$ to be the open ball of some radius $R \gg 0$ in $W_{k}$, and $N$ to be the closed ball of radius $2R$. This is exactly what was done for the unperturbed Seiberg-Witten trajectories in \cite{Spectrum}. It follows from Proposition 1 of that paper that Seiberg-Witten trajectories inside $\overline{B(2R)}$ live inside $B(R)$, provided $R$ was chosen large enough. The same proof works for perturbed Seiberg-Witten trajectories, since they satisfy similar compactness properties, as detailed in \cite[Section 10.7]{KMbook}.

Therefore, we are now able to define an analogue of the Floer spectrum with the finite-dimensional approximations of $l + c_\q$ instead of $l + c$.  We will denote this new spectrum by $\SWF_\q(Y,\spinc)$.

\begin{proposition}\label{prop:perturbedspectrum}
Let $\q$ be a very tame perturbation.  Then $\SWF_\q(Y,\spinc)$ and $\SWF(Y,\spinc)$ are $S^1$-equivariantly stably homotopy equivalent.  
\end{proposition}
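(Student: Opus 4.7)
The plan is to use the continuation property of the $S^1$-equivariant Conley index applied to the one-parameter family of very compact maps $\eta_s = s \cdot \eta_{\q}$, $s \in [0,1]$, interpolating between $0$ and $\eta_{\q}$. For each $s$, the vector field $l + c + \eta_s$ on $W$ is the global Coulomb projection of the $s\q$--perturbed Seiberg--Witten vector field, and the family depends continuously on $s$. Each $\eta_s$ is very compact by Proposition~\ref{prop:verycompact} applied to $s\q$ (which is still very tame), and the whole family $\{\eta_s\}_{s \in [0,1]}$ is, in an obvious sense, jointly very compact and continuous in $s$.

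First I would establish a uniform a priori bound: there exists $R > 0$ such that for every $s \in [0,1]$, every finite-type trajectory of $l + c + \eta_s$ lies in $B(R) \subset W_k$. This is a parameterized version of \cite[Proposition 1]{Spectrum}; it follows from the uniform compactness of finite-type Seiberg--Witten trajectories for a compact family of tame perturbations, which is established along the lines of \cite[Chapter 10]{KMbook} (and only uses the bounds packaged into tameness, which are uniform on the compact family $\{s\q : s \in [0,1]\}$).

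Next, I would prove a parametric version of Proposition~\ref{prop:proposition3perturbed}(i): there exists $\lambda_0$ such that for all $\lambda > \lambda_0$ and all $s \in [0,1]$, every trajectory of $l + p^\lambda(c + \eta_s)$ contained in $\overline{B(2R)}$ actually lies in $B(R)$. This is proved by contradiction: if it failed, one would obtain sequences $\lambda_n \to \infty$ and $s_n \in [0,1]$ (passing to a subsequence, $s_n \to s_\infty$) together with bad trajectories $\gamma_n$. Lemma~\ref{lem:convergencenoblowup}, applied with the continuously varying perturbations $\eta_{s_n}$ in place of a fixed one (the proof carries over verbatim since the joint continuity and functional boundedness required in Definition~\ref{def:vc} hold uniformly for $s_n$ in the compact set $[0,1]$), produces a subsequential $C^\infty_{\operatorname{loc}}$ limit which is a finite-type trajectory of $l + c + \eta_{s_\infty}$ meeting $\overline{B(2R)} \smallsetminus B(R)$, contradicting the uniform bound of the previous step.

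With these uniform bounds in place, for any fixed $\lambda > \lambda_0$ the isolated invariant sets $S^\lambda_s = \Inv(\overline{B(2R)}, \phi^\lambda_s)$ of the cut-off flows $\phi^\lambda_s$ generated by $u^\lambda(l + p^\lambda(c + \eta_s))$ form a continuous family of $S^1$-invariant isolated invariant sets, all with the common isolating neighborhood $\overline{B(2R)}$. The continuation property of the equivariant Conley index (cf. the discussion preceding the definition of $\SWF$ in Section~\ref{sec:conleyswf}, and \cite{FloerConley, Pruszko}) then yields an $S^1$-equivariant pointed homotopy equivalence $I(\phi^\lambda_0, S^\lambda_0) \simeq I(\phi^\lambda_1, S^\lambda_1)$. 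Desuspending both sides by $W^{(-\lambda, 0)}$ and (formally) by $n(Y, \spinc, g)\cc$ produces the desired $S^1$-equivariant stable homotopy equivalence $\SWF(Y, \spinc) \simeq \SWF_{\q}(Y, \spinc)$. The main obstacle is the uniform bound across the family of perturbations; once this is in hand, the parametric Proposition~\ref{prop:proposition3perturbed} and the continuation invariance of the equivariant Conley index finish the argument with no further difficulty.
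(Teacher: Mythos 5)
Your proposal takes essentially the same route as the paper's proof, which simply cites the linear interpolation $s\mapsto s\q$ (equivalently $s\eta_\q$, since $\Pi^{\gCoul}_*$ is linear), the fact that the hypotheses of Proposition~\ref{prop:proposition3perturbed} hold along the family, and homotopy invariance of the Conley index, referring to the metric-independence argument in \cite[Section 7]{Spectrum} for details. You have correctly filled in what the paper leaves implicit—the uniform a priori bound over the compact parameter family and the parametric version of the $\lambda\gg 0$ estimate—so this is a sound and faithful expansion of the intended argument rather than a different one.
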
   

\begin{proof}
This is similar to the proof that $\SWF(Y, \spinc)$ is independent of the Riemannian metric on $Y$; see \cite[Section 7]{Spectrum}. The key ingredients are the fact that we can interpolate linearly between $\q$ and $0$ such that the hypotheses of Proposition~\ref{prop:proposition3perturbed} are satisfied, and the fact that the homotopy type of the Conley index is invariant under perturbations.
\end{proof}

\begin{remark}
It is worth comparing very compactness with the condition of being controlled, as in Definition~\ref{def:controlled}. If $\q$ is very tame, both of these conditions are satisfied by $\etaq$; see Lemma~\ref{lem:tamecoulomb} and Proposition~\ref{prop:verycompact}. There is some overlap between the two conditions: for example, part (a) in the definition of very compactness is implied by part (i) in the definition of a controlled perturbation. However, neither condition is stronger than the other: part (b) in the definition of very compactness has no analog in the controlled condition; and part (ii) in the controlled condition has no analogue in very compactness due to the constraints on functional boundedness. 

When working with the perturbation $\etaq$, we will need to use both very compactness and the controlled condition. Very compactness is needed to do finite dimensional approximation, and the controlled compactness is necessary to study the properties of stationary points and trajectories in these approximations. 
\end{remark}

From now on, we always assume that our perturbation $\q$ is both very tame (Definition~\ref{def:verytame}) and admissible (Definition~\ref{def:admi}).  For the existence of such $\q$, see Sections~\ref{sec:verytame} and \ref{sec:AdmPer}.

\section[Strategy for the proof]{Strategy for the proof of Theorem~\ref{thm:Main}} \label{sec:strategy}
In order to relate monopole Floer homology to the spectrum, it suffices to instead work with $\SWF_\q(Y,\spinc)$ by Proposition~\ref{prop:perturbedspectrum}.  This latter invariant is clearly closer to monopole Floer homology due to the presence of the perturbation.  However, we still need to relate the vector field $l + c_\q$ used for monopole Floer homology to $l + \pml c_\q$ on $\vml$ .  We will consider a vector field on $W_k$ defined by taking finite-dimensional approximations of the non-linear part of $\Xq$:
\begin{equation}
\label{eq:Xqml}
\Xqmlgc := l + \pml c_\q = l + c + \etaqml,  
\end{equation}
where $\etaqml := \pml c_\q - c.$ We will see that $\etaqml$ is very compact. %However, $\etaqml$ is {\em not} a controlled Coulomb perturbation; it does not satisfy Property (iv) in Definition~\ref{def:controlled}.  
Further, $\Xqmlgc$ induces a vector field $\Xqmlgcsigma$ on the blow-up $W^\sigma$ and thus a vector field $\Xqmlagcsigma$ on the quotient $W^\sigma/S^1$.  

At this point we give an outline of how the proof of Theorem~\ref{thm:Main} is going to go.  

For $N > 0$, let $\Crit_{[-N, N]}$ be the set of stationary points of $\Xqagcsigma$ of grading in $[-N, N]$.  Their $S^1$-orbits form sets of stationary points of $\Xqgcsigma$ on $W^{\sigma}$. Let $\Orb_{[-N, N]}$ denote the union of these orbits. We fix $N$ sufficiently large so that the projection of $\Orb_{[-N, N]}$ to the blow-down $W$ contains all the stationary points of $\Xqgc$; in particular, $\Crit_{[-N, N]}$ should contain all the irreducibles. Further, we assume that no reducible stationary point that is boundary-stable has grading less than $-N$. This ensures that the truncated chain complex $\cmto_{\leq N}(Y, \spinc, \q)$ is generated by $\Crit_{[-N, N]}$. Note that the homology of $\cmto_{\leq N}(Y, \spinc, \q)$ agrees with $\hmto(Y, \spinc, \q)$ in degrees $\leq N-1$.

Let 
\begin{equation}
\label{eq:Nclosed}
\N = \{ x\in W^{\sigma}_k \mid d_{L^2_k}(x, \Orb_{[-N, N]}) \leq 2\delta \},
\end{equation}
where $d_{L^2_k}$ denotes $L^2_k$ distance, and $\delta > 0$ is chosen sufficiently small such that the only stationary points of $\Xqgcsigma$ that are contained in $\N$ are those in $\Orb_{[-N, N]}$.  Similarly, let 
\begin{equation}
\label{eq:Uopen}
\U = \{ x\in W^{\sigma}_k \mid d_{L^2_k}(x, \Orb_{[-N, N]}) < \delta \} \subset \N.
\end{equation}
Thus, $\N/S^1$ and $\U/S^1$ are closed, resp. open, neighborhoods of $\Crit_{[-N,N]}$ in $W^\sigma_k/S^1$.  

Granted this, we will construct a chain complex $\check{C}^{\lambda}$ determined by $\Xqmlagcsigma$, and which will be identified with $\cmto_{\leq N}(Y,\spinc,\q)$.  The chain groups of $\check{C}^{\lambda}$ will be generated by the stationary points of $\Xqmlagcsigma$ that live in $\N/S^1$ (and hence in $\U/S^1$); in particular, this includes all irreducibles $[(a, s, \phi)]$ such that $(a, s\phi) \in \overline{B(2R)}.$  We will see that these stationary points will necessarily be contained in the finite dimensional approximation $(\vml)^\sigma/S^1$.  The differential on $\check{C}^{\lambda}$ will be defined analogously to monopole Floer homology (only counting trajectories that are contained entirely in $(\vml)^\sigma/S^1$).  That this will actually be a chain complex will come from a Morse-Smale stability condition---since we have non-degeneracy of the stationary points and regularity of the moduli spaces for $\Xqagcsigma$, we will show that this holds for $\Xqmlagcsigma$ as well, provided that $\lambda$ is sufficiently large.  Using the inverse function theorem, we will find a correspondence between the stationary points and isolated trajectories of $\Xqmlagcsigma$, on the one hand, and those of $\Xqagcsigma$ on the other.  This will give an explicit identification between $\check{C}^\lambda$ and $\cmto_{\leq N}(Y,\spinc,\q)$.  Here we are using the work of Chapter~\ref{sec:coulombgauge}, where we rephrased $\cmto$ in Coulomb gauge.  By the setup, we will be able to relate the respective orientations of the moduli spaces, gradings, and $U$-actions as well.  

However, $\check{C}^\lambda$ can also be identified with a truncation of the Morse complex (for manifolds with $S^1$ actions) for $B(2R) \cap \vml$ as in Section~\ref{sec:combinedMorse}.  It follows from Equation~\eqref{eq:EquivConleyMorse} that the homology of this Morse complex is isomorphic to $\tH^{S^1}_{\leq M}(\SWF_{\q}(Y,\spinc))$, for some $M > 0$. We can assume that $M > N$, and we get that 
$$\hmto_{\leq N-1}(Y,\spinc,\q) \cong \tH^{S^1}_{\leq N-1}(\SWF_{\q}(Y,\spinc)).$$  
By letting $N$ tend to infinity and applying Proposition~\ref{prop:perturbedspectrum}, we obtain the desired isomorphism in Theorem~\ref{thm:Main}.  

With the above strategy in mind, it will suffice to do most of our analysis (i.e. compactness and non-degeneracy of stationary points and trajectories) in $W^\sigma$ rather than in the quotient $W^\sigma/S^1$.  We are able to do so due to the compactness of the residual gauge group $S^1$.    

\section{Finite-dimensional approximations of perturbations}            
In order to carry out the strategy mentioned above,  we need to understand the properties of the perturbation 
\[
\etaqml= \pml c_\q  - c = (\pml c  - c) + \pml \eta_{\q} , 
\]
and in particular of the term $\pml \eta_q $.  For notation, given a sequence $\lambda_n \to \infty$, we write $\pi_n$ for $p^{\lambda_n}$.  Recall that $\Theta_j$ is a constant such that $\| \pml x \|_{L^2_j} \leq \Theta_j \| x \|_{L^2_j}$ for all $x \in W_j$ and we take $\Theta_0 = 1$.

\begin{fact}\label{fact:pml}

$(a)$ For all $\lambda$, the smoothed projection $\pml$ extends to a continuous, linear map $\pml: W_{j} \to W_{j}$ such that $\| \pml \| \leq \Theta_j$ for all $j$.  (Here, $\| \pml\|$ denotes the norm of $\pml$ as an operator on $W_j$.)  

$(b)$ If $\lambda_n \to \infty$, then $\pi_n = p^{\lambda_n} \to 1$ in the strong operator topology on $W_{j}$ for all $j$.  
\end{fact}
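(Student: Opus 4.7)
The plan is, for part (a), to reduce to the trivial orthogonal-projection bound by passing to the equivalent Sobolev norm built from $l$; and for part (b), to combine the uniform bound of (a) with pointwise convergence on the dense subspace of $W_j$ consisting of finite linear combinations of eigenvectors of $l$.

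For (a), I use that $l = (*d, D)$ is a first-order, self-adjoint, elliptic operator on $W$. Standard elliptic estimates then give a constant $C_j$ with $C_j^{-1} \|\cdot\|_{L^2_j} \leq \|(1+l^2)^{j/2}\cdot\|_{L^2} \leq C_j \|\cdot\|_{L^2_j}$ on $W$. Any spectral projection $P_I$ of $l$ onto eigenspaces with eigenvalues in an interval $I$ commutes with the functional calculus of $l$ (in particular with $(1+l^2)^{j/2}$) and is $L^2$-orthogonal; hence it has $L^2_j$-operator norm at most $C_j^2$. Each factor $\tilde p^{\lambda-\theta}_{-\lambda+\theta}$ in the definition of $\pmlprel$ is such a spectral projection (the supports of $\beta$ and of the $\beta_i$, together with the choice of the intervals $[\llambda_i - \epsilon_i, \llambda_i + \epsilon_i]$ free of eigenvalues, ensure this), and by construction $\pml$ is a non-negative linear combination of such projections with coefficients summing to one. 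The triangle inequality gives $\|\pml\|_{L^2_j \to L^2_j} \leq C_j^2$, and we set $\Theta_j := C_j^2$ (with $\Theta_0 = 1$ since the case $j=0$ is literal orthogonal projection in $L^2$). The extension to $W_j$ is automatic: the image $W^\lambda$ is finite-dimensional and consists of smooth sections, so lies in every $W_j$.

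For (b), fix an $L^2$-orthonormal basis $\{e_m\}_{m\geq 1}$ of $W$ consisting of eigenvectors of $l$ with eigenvalues $\mu_m$, and let $\mathcal{D} \subset W$ be their algebraic span. The equivalence of norms above gives $\|x\|_{L^2_j}^2 \asymp \sum_m (1+\mu_m^2)^j |\langle x, e_m\rangle|^2$, so $\mathcal{D}$ is dense in $W_j$ for every $j$. For $v \in \mathcal{D}$ supported on eigenvalues of absolute value $\leq E$, and any $\lambda > E + 1$, the projection $\tilde p^{\lambda-\theta}_{-\lambda+\theta}$ fixes $v$ for every $\theta \in (0,1)$ (since $(-\lambda+\theta, \lambda-\theta) \supseteq (-E, E)$), and likewise $\tilde p^\lambda v = v$; hence $\pml v = v$. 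In particular $\pi_n v = v$ for all $n$ with $\lambda_n > E + 1$. A three-epsilon argument now completes the proof: given $x \in W_j$ and $\varepsilon > 0$, choose $v \in \mathcal{D}$ with $\|x - v\|_{L^2_j} < \varepsilon/(1 + \Theta_j)$; then for all sufficiently large $n$,
\[
\|\pi_n x - x\|_{L^2_j} \leq \|\pi_n(x - v)\|_{L^2_j} + \|\pi_n v - v\|_{L^2_j} + \|v - x\|_{L^2_j} < \varepsilon.
\]
The only nontrivial ingredient is the equivalence of the $\nabla$-Sobolev and the $l$-Sobolev norms on $W$, which is a standard consequence of ellipticity; everything else is formal from the definition of $\pml$ as a convex combination of spectral projections.
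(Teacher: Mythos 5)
Your proof is correct. The paper states this as an unproved \emph{Fact}; the only indication of an argument is the remark at the end of Section~\ref{sec:fdax}, which points to the $l$-based Sobolev norm (in which the $\tilde p^\lambda$ are literal orthogonal projections) being equivalent to the standard one, and that is exactly the key step you use for (a), with $\Theta_j$ the square of the equivalence constant and the convex-combination structure of $\pml$ doing the rest. Your argument for (b) — uniform bound from (a), the observation that $\pml$ fixes every finite span of eigenvectors once $\lambda$ exceeds the relevant eigenvalues by $1$ (needed because the smoothing in $\pmlprel$ shrinks the window by up to $\theta < 1$), density, and a three-epsilon estimate — is the standard strong-convergence argument the paper leaves implicit.
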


\begin{lemma}\label{lem:finitecoulombtame}
If $\q$ is a very tame perturbation, then $\pml \etaq $ is a controlled Coulomb perturbation for all $ \lambda$.  
\end{lemma}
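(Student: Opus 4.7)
The plan is to verify the two structural conditions of Definition~\ref{def:controlled} for $\pml \etaq$ by reducing them to the corresponding conditions for $\etaq$, which already hold by Lemma~\ref{lem:tamecoulomb}, and then observing that composition with the bounded linear operator $\pml$ preserves both smoothness and functional boundedness at every Sobolev level.

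First I would describe the four-dimensional lift of $\pml \etaq$: on a compact cylinder $Z = [t_1,t_2] \times Y$, set
\[
\widehat{\pml \etaq}(a(t) + \alpha(t)dt, \phi(t)) = \pml \etaq(a(t), \phi(t)),
\]
i.e., apply $\pml$ slicewise. By Fact~\ref{fact:pml}(a), $\pml : W_j \to W_j$ is bounded with operator norm at most $\Theta_j$ for every integer $j$. Writing the $L^2_j(Z)$-norms as iterated $L^2$-norms in the $Y$ and $\R$ directions (a Fubini-type argument using the product structure of $Z$, plus the fact that $\pml$ acts trivially on the $\R$ factor), the slicewise action of $\pml$ defines a bounded linear operator $P^\lambda$ on $\V^{\gCoul}_j(Z)$ and on $\T^{\gCoul}_j(Z)$ with the same bound $\Theta_j$, for every $j \in [-k,k]$.

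Next I would invoke Lemma~\ref{lem:tamecoulomb} to know $\hat\etaq \in C^{\infty}_{\fb}(\C^{\gCoul}_k(Z),\V^{\gCoul}_k(Z))$ and that $\D\hat\etaq$ extends to a $C^{\infty}_{\fb}$ map into $\Hom(\T^{\gCoul}_j(Z),\V^{\gCoul}_j(Z))$ for $-k \leq j \leq k$. Since $P^\lambda$ is a fixed bounded linear operator, it is itself $C^\infty_{\fb}$ with all higher derivatives zero, so the composition $P^\lambda \circ \hat\etaq$ is $C^\infty_{\fb}$ by the chain rule; the functional boundedness at each Sobolev level follows from the operator bound $\|P^\lambda\|\leq \Theta_j$ combined with the functional boundedness of $\hat\etaq$. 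This gives condition (i). For condition (ii), linearity of $P^\lambda$ yields $\D^n(P^\lambda \hat\etaq) = P^\lambda \circ \D^n\hat\etaq$, so the same operator bound transfers the required $C^\infty_{\fb}$-extension from $\D\hat\etaq$ to $\D(P^\lambda \hat\etaq)$ at all Sobolev levels in $[-k,k]$.

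The remaining — and most delicate — point will be to confirm that $\pml \etaq$ is itself the $\tilde g$-formal gradient of an $S^1$-equivariant function, as required by the preamble of Definition~\ref{def:controlled}. Both $\etaq$ and $\pml$ are $S^1$-equivariant (the latter because it is a smoothing of the $L^2$-orthogonal projections $\tilde p^{\lambda-\theta}$ onto $S^1$-invariant eigenspaces of $l$), so $\pml \etaq$ is $S^1$-equivariant. For the gradient property I would construct a primitive $f^\lambda : W \to \R$ by integrating the $1$-form $v \mapsto \langle \pml \etaq(x), v \rangle_{\tilde g(x)}$ along straight-line paths from a basepoint, and use that $\pml$ is $L^2$-self-adjoint and commutes with the differential part $l$ to verify closedness of this form; this is the step I expect to require the most care, but the conclusion should follow from the structural compatibility between the eigenspace decomposition of $l$ and the decomposition \eqref{eq:JKcoulomb} underlying $\tilde g$.
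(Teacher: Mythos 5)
Your first two paragraphs unpack exactly the paper's proof, which is a single line: invoke Lemma~\ref{lem:tamecoulomb} (to know $\etaq$ is controlled) and Fact~\ref{fact:pml}(a) (to know $\pml: W_j \to W_j$ is bounded with norm at most $\Theta_j$ for every relevant $j$). Since $\pml$ is a fixed bounded linear operator acting slicewise, postcomposition preserves the $C^\infty_{\fb}$ regularity required by conditions (i)--(ii) of Definition~\ref{def:controlled}. That reasoning is correct and is what the authors intend.

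Your third paragraph, however, tries to verify something the paper neither claims nor needs, and the argument you sketch for it cannot succeed: $\pml\etaq$ is generically \emph{not} the $\tilde g$-formal gradient of any function. Writing $\etaq = \grad_{\tilde g}(f|_W)$, the $1$-form $v \mapsto \langle \pml\etaq(x), v\rangle_{\tilde g(x)}$ equals $\D_x f$ paired with the image of $v$ under the $\tilde g(x)$-adjoint of $\pml$; because $\pml$ is the $L^2$-orthogonal projection rather than the $\tilde g$-orthogonal one, and because $\tilde g$ varies with the basepoint $x$, this adjoint is an $x$-dependent operator different from $\pml$, so the form has no reason to be closed and integration along straight-line paths will be path-dependent. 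The commutativity of $\pml$ with $l$ is an $L^2$-structural fact that gives no leverage on the $\tilde g$-structure or on the decomposition \eqref{eq:JKcoulomb}, which do not interact with the eigenspaces of $l$ away from the reducible locus. This is exactly the point of Remark~\ref{rem:notgradient}: $l + \pml c$ is neither the $L^2$- nor the $\tilde g$-gradient of any function, and the quasi-gradient machinery of Chapter~\ref{sec:quasigradient} exists precisely because of this. The consistent reading of Lemma~\ref{lem:finitecoulombtame} is that the gradient clause in Definition~\ref{def:controlled} is ambient context (and is verified for $\etaq$ in the proof of Lemma~\ref{lem:tamecoulomb}), while the operative content of ``controlled'' is the regularity conditions (i)--(ii); those are all that the subsequent arguments invoke and all that the paper's one-line proof --- and your first two paragraphs --- establish. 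Drop the third paragraph.
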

\begin{proof}
Lemma~\ref{lem:tamecoulomb} says that $\etaq$ is controlled. The claim now follows from Fact~\ref{fact:pml}(a).  
\end{proof}

\begin{lemma}
\label{lem:finitecoulombvc}
The maps $c_\q$, $\pml \etaq $, $\pml c_\q $ and $\etaqml$ are all very compact.  
\end{lemma}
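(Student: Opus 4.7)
The plan is to reduce the lemma to two closure properties: (1) sums of very compact maps are very compact, and (2) post-composition with the smoothed projection $\pml$ preserves very compactness. Granted these, the lemma follows immediately: $c$ is very compact (noted after Definition~\ref{def:vc}) and $\etaq$ is very compact by Proposition~\ref{prop:verycompact}, so $c_\q = c + \etaq$ is very compact by (1); then $\pml \etaq$ and $\pml c_\q$ are very compact by (2); and finally $\etaqml = \pml c_\q - c$ is very compact by (1) applied to the difference. Smoothness is immediate throughout since $\pml$ is linear and the inputs are smooth.

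Property (1) is essentially free. Both conditions in Definition~\ref{def:vc} are stable under addition: the sum of two continuous, functionally bounded maps $W_{k-1}(Z) \to W_{k-1}(Z)$ is again continuous and functionally bounded (add the bounding functions), and the iterated derivatives satisfy $\D^m(\eta_1+\eta_2) = \D^m\eta_1 + \D^m\eta_2$, so the continuous extensions combine.

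The heart of the argument is (2), and rests on a single observation: by Fact~\ref{fact:pml}(a), $\pml$ is a bounded linear endomorphism of $W_j$ for every $j$, with operator norm at most $\Theta_j$. Since $\pml$ is time-independent, its slicewise action on a cylinder $Z = I \times Y$ commutes with $\partial_t$ and, for every mixed derivative $\partial_t^a \nabla_Y^b \varphi$ with $a+b \le j$, we obtain
\[
\|\partial_t^a \nabla_Y^b(\pml \varphi)(t)\|_{L^2(Y)} \leq \Theta_b \,\|\partial_t^a \nabla_Y^b\varphi(t)\|_{L^2(Y)}.
\]
Integrating in $t$ and summing shows that slicewise $\pml$ is a bounded operator on $W_j(Z)$, with norm at most $\max_{b \le j} \Theta_b$. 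Applying this to $\hat\eta$: if $\eta$ is very compact, then $\widehat{\pml \eta} = \pml \circ \hat\eta$ (slicewise) satisfies property (a) because composing continuity and the functional bound $\|\hat\eta(x)\|_{L^2_{k-1}(Z)} \leq h(\|x\|_{L^2_{k-1}(Z)})$ with a bounded linear operator preserves continuity and replaces $h$ by $\Theta_{k-1}h$. For property (b), linearity of $\pml$ gives
\[
\D^m(\pml \eta)_x(v_1,\dots,v_m) \;=\; \pml\bigl((\D^m \eta)_x(v_1,\dots,v_m)\bigr),
\]
so the required continuous extension of $(x; v_1,\dots,v_m) \mapsto \D^m(\pml\eta)_x(v_1,\dots,v_m)$ on the product of Sobolev completions into $W_{k-1-\sum i_s}(Z)$ is obtained by post-composing the extension of $\D^m \eta$ with the bounded slicewise $\pml$ on $W_{k-1-\sum i_s}(Z)$.

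The only point that requires any care is the verification that the slicewise $\pml$ really acts boundedly on the full four-dimensional Sobolev completion $W_j(Z)$, rather than just slicewise; I expect this to be the main (and essentially the only) technical step, and the Fubini-type splitting of $L^2_j(Z)$ into mixed time-space derivatives described above handles it cleanly, using only the time-independence of $\pml$ and the uniform three-dimensional bounds $\|\pml\| \leq \Theta_j$ from Fact~\ref{fact:pml}(a).
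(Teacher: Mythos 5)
Your argument is correct and is exactly the paper's approach (the paper states the result follows from the very compactness of $c$ and $\etaq$ together with Fact~\ref{fact:pml}(a), and you have supplied the routine details the paper leaves implicit). One small imprecision: the displayed inequality $\|\partial_t^a \nabla_Y^b(\pml\varphi)(t)\|_{L^2(Y)} \leq \Theta_b\|\partial_t^a\nabla_Y^b\varphi(t)\|_{L^2(Y)}$ tacitly assumes $\pml$ commutes with $\nabla_Y^b$, which it does not; the correct (and sufficient) statement, which Fact~\ref{fact:pml}(a) actually gives, is $\|\pml(\partial_t^a\varphi(t))\|_{L^2_b(Y)} \leq \Theta_b\|\partial_t^a\varphi(t)\|_{L^2_b(Y)}$, after which you integrate in $t$ and sum over $a+b\le j$ exactly as you intend.
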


\begin{proof}
This follows from the very compactness of $c$, together with the very compactness of $\etaq$ (cf. Proposition~\ref{prop:verycompact}), and Fact~\ref{fact:pml}(a).
\end{proof}

We also need the following result about $c_{\q}$, which is not subsumed in very compactness.
\begin{lemma}
\label{lem:Dcq}
Let $Z = I \times Y$ for compact $I$.  For $k \geq 3$ and $0 \leq j \leq k$, the map
$$ \D c_{\q}: W_k(Z) \to \Hom(W_j(Z), W_j(Z))$$
is in $C^\infty_{\fb}$.  Therefore, extending by four-dimensional gauge, we have that 
$$ \D c_{\q}: \C^{\gCoul}_k(Z) \to \Hom(\T^{\gCoul}_j(Z), \V^{\gCoul}_j(Z))$$
is in $C^\infty_{\fb}$.  
\end{lemma}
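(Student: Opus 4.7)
The plan is to split $c_{\q} = c + \eta_{\q}$ and handle the two summands separately. For $\eta_{\q}$, by hypothesis $\q$ is very tame, so Lemma~\ref{lem:tamecoulomb} tells us $\eta_{\q}$ is a controlled Coulomb perturbation. Condition (ii) of Definition~\ref{def:controlled} then yields directly that $\D\widehat{\eta_{\q}}$ is in $C^\infty_{\fb}(\C^{\gCoul}_{k}(Z), \Hom(\T^{\gCoul}_{j}(Z),\V^{\gCoul}_{j}(Z)))$ for every $0 \leq j \leq k$. Restricting to configurations that are constant in $t$, as in Remark~\ref{rem:qhatq}, gives the corresponding three-dimensional statement for $\D\eta_{\q}: W_k \to \Hom(W_j, W_j)$.

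For $c$, I would use the explicit formula \eqref{eq:cmap}: $c(a,\phi) = (\pi\tau(\phi,\phi),\, \rho(a)\phi + \xi(\phi)\phi)$, where $\xi(\phi)$ is the zero-mean representative of $Gd^*\tau(\phi,\phi)$. Thus $c$ is a polynomial of total degree at most three in $(a,\phi)$, with the smoothing operators $\pi$ and $G$ inserted. Consequently $\D^m c \equiv 0$ for $m \geq 4$, and for $1 \leq m \leq 3$ the map $\D^m c$ is an explicit multilinear expression with at most $3-m$ factors from the basepoint $(a,\phi)\in W_k$ and $m$ tangent factors in $W_j$. To verify the $C^\infty_{\fb}$ property of $\D c$, it then suffices to estimate each monomial using the Sobolev multiplication $L^2_k(Y) \times L^2_j(Y) \to L^2_j(Y)$, which is continuous and bilinear for $0 \leq j \leq k$ as soon as $k > 3/2$ (our hypothesis $k \geq 3$ is more than enough), together with the facts that $\pi$ is continuous on every Sobolev completion and $G$ actually gains two derivatives. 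Each factor in the expansion of $\D^m c$ can then be absorbed into $L^2_j$ at the cost of a constant depending functionally on $\|(a,\phi)\|_{L^2_k}$, giving the desired functional boundedness, and continuity follows from bilinearity of Sobolev multiplication. The four-dimensional extension $\D c : \C^{\gCoul}_k(Z) \to \Hom(\T^{\gCoul}_j(Z), \V^{\gCoul}_j(Z))$ follows by the same algebraic expansion with Sobolev multiplication $L^2_k(Z) \times L^2_j(Z) \to L^2_j(Z)$ on the four-manifold $Z$, valid for $k > 2$.

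Adding $\D c$ and $\D \eta_{\q}$ produces $\D c_{\q}$ with the required regularity in both settings. There is no real obstacle here; the only point requiring care is bookkeeping the Sobolev indices in each monomial of $\D^m c$ to ensure that every basepoint factor sits in $L^2_k$ and every tangent factor in $L^2_j$, so that the multiplication theorem returns $L^2_j$ without loss of regularity. Once that is laid out, the argument is routine.
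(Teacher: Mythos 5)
Your proposal takes a genuinely different route from the paper for the $c$-term. The paper factors $c = \Pi^{\gCoul}_* \circ X$ with $X(a,\phi) = (\tau(\phi,\phi),\rho(a)\phi)$ a \emph{quadratic} map, checks that $X$ satisfies the hypotheses of Lemma~\ref{lem:igc-fb}, and then cites Lemma~\ref{lem:igc-fb}(b) (already proved for Lemma~\ref{lem:tamecoulomb}) to transfer the $C^\infty_{\fb}$ property across composition with $\Pi^{\gCoul}_*$. You instead work with the blown-out cubic formula for $c$, including the $\xi(\phi)\phi = (Gd^*\tau(\phi,\phi))\phi$ term, and estimate derivative by derivative. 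Both routes are valid; the paper's reuses machinery already built, while yours is self-contained but must carry the extra factor from $\Pi^{\gCoul}_*$ by hand.

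However, there is a slip in your description of the derivatives that, taken literally, would break the estimates. The lemma asks that $\D c_\q: W_k \to \Hom(W_j, W_j)$ be $C^\infty_{\fb}$ as a map \emph{from $W_k$}, so $\D^\ell(\D c_\q)=\D^{\ell+1}c_\q$ is evaluated on $\ell$ tangent vectors in $W_k$ and exactly \emph{one} in $W_j$. You instead write that $\D^m c$ has "$m$ tangent factors in $W_j$," and later that "every tangent factor [sits] in $L^2_j$." For $j$ small this is fatal: with $j=0$ and $m=3$, you would have to make sense of $(Gd^*\tau(\psi_1,\psi_2))\psi_3$ with all $\psi_i\in L^2$, which does not land in $L^2$ on a $3$- or $4$-manifold and cannot be absorbed by $L^2_k\times L^2_j\to L^2_j$. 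With the correct assignment of tangent directions, each monomial of $\D^m c$ carries at most one $L^2_j$ factor (with the remaining factors in $L^2_k$), and then repeated use of $L^2_k\times L^2_j\to L^2_j$ together with the two derivatives gained from $G$ closes the estimate as you intended. So the conclusion and overall strategy are sound, but the bookkeeping you call "routine" is exactly where this must be set up correctly, and the paper's factoring-through-$\Pi^{\gCoul}_*$ route avoids this pitfall by design.
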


\begin{proof}
We write $c_\q = c + \eta_\q$.  Since $\eta_\q$ is a controlled Coulomb perturbation by Lemma~\ref{lem:tamecoulomb}, we have that $\D \eta_{\q} \in C^\infty_{\fb}(W_k(Z), \Hom(W_j(Z), W_j(Z)))$.  Therefore, it suffices to show that $\D c$ is in $C^\infty_{\fb}$ as well.  Write $x = (a,\phi)$.  

By \eqref{eq:cmap}, we see that $c$ is the composition of $\Pi^{\gCoul}_*$ with the vector field 
\begin{equation}\label{eq:quadraticvf}
X: W_k (Z)\to \C_k(Z), \ (a,\phi) \mapsto (\tau(\phi, \phi), \rho(a) \cdot \phi).  
\end{equation}
We can explicitly compute 
\begin{align*}
(\D_{(a,\phi)} X)(b, \psi) &= (\tau(\phi, \psi) + \tau(\psi, \phi), \rho(b) \cdot \phi + \rho(a) \cdot \psi) \\
(\D^2_{(a,\phi)} X)((\alpha, \zeta),(b,\psi)) &= (\tau(\zeta, \psi) + \tau(\psi, \zeta), \rho(b) \cdot \zeta + \rho(\alpha) \cdot \psi) \\
(\D^3_{(a,\phi)} X) &\equiv 0.  
\end{align*}
From this, it is straightforward to apply the Sobolev multiplication $L^2_k \times L^2_j \to L^2_j$ to see that $X$ satisfies the conditions of Lemma~\ref{lem:igc-fb} with $n = \infty$.  Therefore, we see that $\D c$ is in $C^\infty_{\fb}$.
\end{proof}

The vector field $\Xqmlgc = \Xgc + \etaqml$ from \eqref{eq:Xqml} induces a vector field $\Xqmlgcsigma$ on the blow-up. Similar to \eqref{eq:xqgcsigmaalternate}, this is given by the formula 
\begin{align}
\nonumber \Xqmlgcsigma(a,s,\phi) & =  ({\mathcal{X}}_{\qml}^0(a,s\phi), \Lambda_{\qml}(a,s,\phi)s, D\phi + \widetilde{\pml c_\q }^1(a,s,\phi) - \Lambda_{\qml}(a,s,\phi)\phi) \\
\label{eq:Xqmlgcsigmaformula} & = \Bigl (*da + (\pml c_\q )^0(a,s\phi), \Lambda_{\q^\lambda}(a,s,\phi)s,  \\
\nonumber & \qquad \qquad \qquad D\phi + \int^1_0 \D_{(a,st\phi)} (\pml c_\q )^1 (0,\phi)dt - \Lambda_{\q^\lambda}(a,s,\phi)\phi \Bigr),  
\end{align}
where 
\begin{equation}
\label{eq:lambda-spinorial}
\Lambda_{\qml} = \Re \langle \phi, (\widetilde{\Xqmlgc})^1(a,s,\phi) \rangle_{L^2}.
\end{equation}
For $(a,s,\phi) \in W^\sigma$, we call $\Lambda_{\qml}(a,s,\phi)$ the {\em $\lambda$-spinorial energy}.  As in the case of the spinorial energy, the $\lambda$-spinorial energy of an irreducible stationary point of $\Xqmlgcsigma$ is zero.  Analogous to the definition of $\Fqgctau$ in \eqref{eq:Fqgctau}, we can use $\G^{\gCoul}(Z)$-equivariance to extend the equations defining the flow in \eqref{eq:Xqmlgcsigmaformula} to define a section 
\begin{equation}
\label{eq:Fqmlgctau}
 \Fqmlgctau: \tC^{\gCoul, \tau}(Z) \to \V^{\gCoul, \tau}(Z).
\end{equation}

In the rest of this section we will describe several results about the dynamics of $\Xqmlgcsigma$. These will be put to use, for instance, in Chapters~\ref{sec:criticalpoints}, \ref{sec:gradings}, \ref{sec:trajectories1} and \ref{sec:trajectories2}, where we will relate stationary points and trajectories of $\Xqmlgcsigma$ to those of $\Xqgcsigma$.

Note that being a reducible stationary point $(a,0)$ of $\Xqmlgc$ is equivalent to solving   
\[
*da + (\pml \eta_\q )^0(a,0) = 0, \quad (\pml \eta_\q )^1(a,0) = 0, \]
since $\pml c  (a,0) = 0$.  

A reducible stationary point $(a,0,\phi)$ of $\Xqmlgcsigma$ is determined by a reducible stationary point $(a,0)$ of $\Xqmlgc$ and an $L^2$-unit length eigenvector $\phi$ of the linear operator $D_{\qml,a}$, where
\begin{equation}\label{eq:Dqmlaphi}
D_{\qml,a}(\phi) = \D_{(a,0)} (\Xqmlgc)^1(0,\phi) = D \phi + (\pml)^1(\D_{(a,0)} c_\q(0,\phi)).   
\end{equation}
If $(a,0,\phi)$ is a reducible stationary point of $\Xqmlgcsigma$, then the $\lambda$-spinorial energy is simply the eigenvalue of $D_{\qml,a}$ corresponding to $\phi$.  

We now establish some important analytic properties of the perturbed equations.  
\begin{lemma}
\label{lem:fam}
Fix $k \geq 3$.  
\begin{enumerate}[(a)]
\item \label{fam:a} For a bounded subset $K \subset W_k$ and a bounded subset $J \subset W_j$ (with $0 \leq j  \leq k$), the set 
$$\{\pml (\D_{(a,\phi)} c_\q(b,\psi)) \mid (a,\phi) \in K, (b,\psi) \in J, \lambda \geq 0\}$$ is bounded in $L^2_j$.  These uniform bounds also hold if we include $\lambda = \infty$.

\item \label{fam:b} For a bounded subset $K^\sigma \subset W_k^{\sigma} \subset (\ker d^*)_k \oplus \R \oplus L^2_k(Y; \Spin)$ and a bounded subset $J^\sigma \subset \T_j^{\gCoul, \sigma}|_{K^\sigma}$ (with $0 \leq j  \leq k$), the set 
$$\{ \D^\sigma_{x} (\pml c_\q)^{\sigma}(v) \mid x \in K^\sigma,  \ (x,v) \in J^\sigma, \lambda \geq 0\}$$ is bounded in $L^2_j$.
These uniform bounds also hold if we include $\lambda = \infty$.  

\item\label{fam:c} If $x_n \to x$ in $W^\sigma_k$ and $\lambda_n \to \lambda$ (possibly $\infty$), then $\widetilde{p^{\lambda_n} c_\q }^1(x_n) \to \widetilde{p^{\lambda} c_\q }^1(x)$ in $L^2_k(Y;\Spin)$.  Furthermore, $\widetilde{\Xqmlngc}^1(x_n) \to \widetilde{\Xqmlgc}^1(x)$ in $L^2_{k-1}(Y;\Spin)$.  In this case, we also have $\Lambda_{\q^{\lambda_n}}(x_n) \to \Lambda_{\q^{\lambda}}(x)$ and thus $\Xqmlngcsigma(x_n) \to \Xqmlgcsigma(x)$ in $\T^{\gCoul,\sigma}_{k-1}$.  

\item\label{fam:d} Same as \eqref{fam:c}, but for four-dimensional configurations on compact cylinders.
\end{enumerate}
\end{lemma}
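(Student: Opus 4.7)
The plan is to handle the four parts in order, reducing each to a combination of (i) the uniform operator bound $\|\pml\|_{L^2_j \to L^2_j} \leq \Theta_j$ from Fact~\ref{fact:pml}(a), with $\Theta_j$ independent of $\lambda$; (ii) the strong operator convergence $\pml \to 1$ from Fact~\ref{fact:pml}(b); and (iii) the $C^\infty_{\fb}$ regularity of $c_\q$ and $\D c_\q$ provided by Lemma~\ref{lem:Dcq}.

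For \eqref{fam:a}, Lemma~\ref{lem:Dcq} yields $\D c_\q \in C^\infty_{\fb}(W_k, \Hom(W_j, W_j))$, so on the bounded set $K$ the operator norm $\|\D_{(a,\phi)} c_\q\|_{W_j \to W_j}$ is uniformly bounded; applied to the bounded set $J \subset W_j$ and then composed with $\pml$, this yields a uniform $L^2_j$ bound independent of $\lambda$ (the case $\lambda = \infty$ corresponds to omitting $\pml$). For \eqref{fam:b}, I would expand $\D^\sigma_x (\pml c_\q)^\sigma(v)$ using the blow-up formulas analogous to \eqref{eq:Xqmlgcsigmaformula}: each component produces a finite sum of terms of the form $\pml\bigl(\D_{(a,sr\phi)} c_\q(\cdot,\cdot)\bigr)$ or $\pml\bigl(c_\q(\cdot)\bigr)$, evaluated on vectors built from $v=(b,r,\psi)$ and bounded scalars such as $s$ and $r$. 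Part \eqref{fam:a}, combined with the functional boundedness of $c_\q$ itself (which is subsumed in the $C^\infty_{\fb}$ hypothesis together with the controlled/very tame conditions from Chapter~\ref{sec:HM}), then gives the needed uniform $L^2_j$ bound.

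For \eqref{fam:c}, since $\pml$ is linear and independent of the base point, $\widetilde{p^{\lambda_n} c_\q}^1(x_n) = p^{\lambda_n}\widetilde{c_\q}^1(x_n)$, and I split
\[
p^{\lambda_n}\widetilde{c_\q}^1(x_n) - p^{\lambda}\widetilde{c_\q}^1(x) = p^{\lambda_n}\bigl(\widetilde{c_\q}^1(x_n) - \widetilde{c_\q}^1(x)\bigr) + (p^{\lambda_n} - p^{\lambda})\widetilde{c_\q}^1(x).
\]
The first term tends to zero in $L^2_k$ by continuity of $\widetilde{c_\q}^1 : W^\sigma_k \to L^2_k(Y;\Spin)$ (a consequence of Lemma~\ref{lem:Dcq} applied with $j=k$) together with the uniform bound $\Theta_k$; the second tends to zero by Fact~\ref{fact:pml}(b). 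Writing $\widetilde{\Xqmlngc}^1(x_n) = D\phi_n + p^{\lambda_n}\widetilde{c_\q}^1(x_n)$, convergence in $L^2_{k-1}$ follows from $\phi_n \to \phi$ in $L^2_k$. Convergence of $\Lambda_{\q^{\lambda_n}}(x_n) = \Re\langle \phi_n, \widetilde{\Xqmlngc}^1(x_n)\rangle_{L^2}$ is then immediate, and the final statement about $\Xqmlngcsigma(x_n)$ follows by inspecting each component of \eqref{eq:Xqmlgcsigmaformula} individually. Part \eqref{fam:d} is identical, with $Y$ replaced by a compact cylinder $Z = I \times Y$: the uniform operator bound and strong convergence of $\pml$ hold slicewise on $L^2_j(Z)$, and the four-dimensional versions of the continuity statements are precisely what Lemma~\ref{lem:Dcq} provides.

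The main obstacle will be the bookkeeping in \eqref{fam:b}: one must expand $\D^\sigma_x(\pml c_\q)^\sigma(v)$ term by term and verify that each piece is of the shape handled by \eqref{fam:a}, while respecting the Sobolev multiplication constraints that force the hypotheses $k \geq 3$ and $0 \leq j \leq k$. The other parts are essentially routine combinations of continuity with operator convergence.
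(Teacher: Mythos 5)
Your proposal is correct and follows essentially the same strategy as the paper's proof: part (a) reduces to the $C^\infty_{\fb}$ bound on $\D c_\q$ from Lemma~\ref{lem:Dcq} plus the uniform operator bound on $\pml$; part (b) is an explicit (if tedious) expansion of the blow-up formula combined with (a) and Sobolev multiplication; and parts (c) and (d) combine continuity of $\widetilde{c_\q}^1$ with strong operator convergence of $\pml$. The only minor divergence is that in (c) the paper invokes Proposition~\ref{prop:verycompact} where you invoke Lemma~\ref{lem:Dcq} for continuity of $\widetilde{c_\q}^1$, and you make the slicker observation that $\pml$ commutes past the tilde operation since it is base-point independent; both routes are valid.
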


\begin{proof}
\eqref{fam:a} We have $c_\q = c + \eta_\q$.  In the proof of Lemma~\ref{lem:Dcq}, it was shown that $\D c_\q \in C^0_{\fb}(W_k,\Hom(W_j, W_j))$.  %Since $\q$ is very tame, we have that $\D \q \in C^0_{\fb}(W_k, \Hom(W_j, \C_j(Y)))$ as well.  
We thus have $L^2_j$ bounds on the set 
$$ \{\D_{(a,\phi)} c_\q(b,\psi) \mid (a,\phi) \in K, (b,\psi) \in J\}.$$
Fact~\ref{fact:pml}(a) now gives uniform $L^2_j$ bounds on 
$$ \{\pml (\D_{(a,\phi)} c_\q(b,\psi)) \mid (a,\phi) \in K, (b,\psi) \in J, \lambda \geq 0\}.$$  

\noindent \eqref{fam:b} Recall that $\D^\sigma (\pml c_\q)^\sigma$ is computed in two steps.  First, we differentiate $(\pml c_\q)^\sigma$, thought of as a map to $L^2(Y; i \rr) \times \rr \times L^2(Y;\Spin)$, as opposed to a section of $\T^{\gCoul}_{k}$.  Then, we apply $L^2$ orthogonal projection to $\T^{\gCoul,\sigma}_k$.  We also have the analogous construction when extending to $L^2_j$ completions. Since $\Pi_{\T^{\gCoul,\sigma}_{j, (a,s,\phi)}}(b,r,\psi) = (b,r,\psi - \Re \langle \psi, \phi \rangle_{L^2} \phi)$, we see that $\Pi_{\T^{\gCoul,\sigma}_j}$ is $L^2_j$ bounded in terms of the $L^2_k$ norm of $(a,s,\phi)$ and $L^2_j$ norm of $(b,r,\psi)$.  Therefore, we focus on the boundedness of the derivative of $(\pml c_\q)^\sigma$ as a map to the extended space $L^2_k(Y; i \rr) \times \rr \times L^2_k(Y;\Spin)$.  From the definitions we compute   
\begin{align}\label{eq:Dhat-finite-compute}
\D_{(a,s,\phi)} (\pml c_\q)^\sigma (b,r,\psi) &= \Big( \D_{(a,s\phi)} (\pml c_\q)^0(b,r\phi + s\psi), \\
\nonumber \Re \langle \widetilde{\pml c_\q}^1(a,s,\phi), & \phi \rangle_{L^2} r + \Re \langle \widetilde{\pml c_\q}^1(a,s,\phi), \psi \rangle_{L^2} s + \Re \langle \D_{(a,s,\phi)} \widetilde{\pml c_\q}^1(b,r,\psi), \phi \rangle_{L^2} s, \\
\nonumber \Re \langle \D_{(a,s,\phi)} \widetilde{\pml c_\q}^1&(b,r,\psi), \phi \rangle_{L^2} \phi + \Re \langle \widetilde{\pml c_\q}^1(a,s,\phi), \psi \rangle_{L^2} \phi + \Re \langle \widetilde{\pml c_\q}^1(a,s,\phi), \phi \rangle_{L^2} \psi\Big),
\end{align} 
where $\widetilde{\pml c_\q}^1(a,s,\phi) = \int^1_0 \D_{(a,sr\phi)} (\pml c_\q)^1(0,\phi) dr$.   It follows from Lemma~\ref{lem:Dcq} and Fact~\ref{fact:pml}(a) that 
$$\widetilde{\pml c_\q}^1(a,s,\phi), \ \D_{(a,s,\phi)} \widetilde{\pml c_\q}^1(b,r,\psi)$$
are $L^2_j$ bounded in terms of the $L^2_k$ bounds on $(a,s,\phi)$ and the $L^2_j$ bounds on $(b,r,\psi)$.  Sobolev multiplication again provides the desired bounds on \eqref{eq:Dhat-finite-compute}. \\

\noindent \eqref{fam:c} A smooth section of $W_k$  to the $L^2_j$ completion of $TW_k$ with $j \leq k$  induces a smooth vector field on the blow-up with the same regularity.  By Proposition~\ref{prop:verycompact} (which applies for any Sobolev coefficient at least three), we see that if $x_n \to x$ in $W^\sigma_k$, we have ${\widetilde{c_\q}}^1(x_n) \to {\widetilde{c_\q}}^1(x)$ in $L^2_k$. Applying Fact~\ref{fact:pml} gives $\widetilde{p^{\lambda_n} c_\q }(x_n) \to \widetilde{p^{\lambda} c_\q }(x)$ in $L^2_k$ as well.  Since $\Xqmlgc = l + \pml c_\q $ and $l : W_k \to W_{k-1}$ are smooth for $k \geq 2$, we obtain the desired convergence results by similar arguments.  \\

\noindent \eqref{fam:d} The argument is similar to the one for \eqref{fam:c}.
\end{proof}

It turns out that we can give more explicit bounds on $(\pml c_\q )^1$:  
\begin{lemma}\label{lem:linearbounds}
There exists a constant $C_{k, j,M}$, depending only on $0 \leq j \leq k$ and $M$, such that if $x = (a,\phi) \in W_k$ satisfies $\| x \|_{L^2_k} \leq M$ then $\| (\pml c_\q )^1(x) \|_{L^2_j} \leq C_{k, j,M} \| \phi \|_{L^2_j}$ for any $\lambda$.   
\end{lemma}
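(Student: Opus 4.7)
My plan is to exploit the fact that $c_\q^1(a,0)=0$ for any $a$ (this follows from the $S^1$-gauge equivariance of the perturbation, which forces $\q^1(a,0)=0$, combined with the vanishing of $c^1(a,0) = \rho(a)\cdot 0 + \xi(0)\cdot 0$). This vanishing at $\phi=0$ lets me write $c_\q^1(a,\phi)$ as a path integral in the spinor variable, effectively factoring out a $\phi$, and then the bound follows from uniform control on the derivative of $c_\q$.

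More explicitly, for $x = (a,\phi) \in W_k$ with $\|x\|_{L^2_k} \leq M$, I would define the path $x_t = (a, t\phi)$ for $t \in [0,1]$ and write
\begin{equation*}
c_\q^1(a,\phi) = \int_0^1 \frac{d}{dt} c_\q^1(a, t\phi)\, dt = \int_0^1 \bigl( \D_{x_t} c_\q \bigr)^1 (0,\phi)\, dt.
\end{equation*}
Applying $\pml$, which commutes with the integral, and using the uniform bound $\|\pml\|_{\Hom(W_j, W_j)} \leq \Theta_j$ from Fact~\ref{fact:pml}(a), yields
\begin{equation*}
\|(\pml c_\q)^1(a,\phi)\|_{L^2_j} \leq \Theta_j \int_0^1 \bigl\| (\D_{x_t} c_\q)(0,\phi) \bigr\|_{L^2_j} \, dt.
\end{equation*}

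The second step is to control the integrand uniformly in $t$. Since $\|x_t\|_{L^2_k} \leq M$ for every $t \in [0,1]$, Lemma~\ref{lem:Dcq} gives that $\D c_\q$, restricted to the ball of radius $M$ in $W_k$, defines a bounded family of operators in $\Hom(W_j, W_j)$; call the bound $C''=C''(k,j,M)$. Then
\begin{equation*}
\bigl\|(\D_{x_t} c_\q)(0,\phi)\bigr\|_{L^2_j} \leq C'' \,\|(0,\phi)\|_{L^2_j} = C''\|\phi\|_{L^2_j},
\end{equation*}
and combining with the previous inequality produces the desired bound with $C_{k,j,M} = \Theta_j C''$, which is independent of $\lambda$.

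The only nontrivial point is justifying that $c_\q^1(a,0)=0$, but this is immediate from the discussion of $c^1$ in \eqref{eq:cmap} and from the $S^1$-equivariance of $\eta_\q^1$ (inherited from that of $\q^1$), which forces $\eta_\q^1(a,0)=0$. Everything else is a routine application of the fundamental theorem of calculus combined with results already established in Lemmas~\ref{lem:Dcq} and \ref{lem:fam} and Fact~\ref{fact:pml}; I do not anticipate any real obstacle.
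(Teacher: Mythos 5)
Your proof is correct, and it takes a genuinely cleaner route than the paper's. The paper's proof decomposes $c_\q^1 = c^1 + \etaq^1$, bounds $c^1 = \rho(a)\phi + \xi(\phi)\phi$ directly from the explicit quadratic formula via Sobolev multiplication, then further splits $\etaq^1$ into $\q^1$ and $(Gd^*\q^0)\phi$, bounding the latter directly and applying the fundamental-theorem-of-calculus trick (with $\q^1(a,0)=0$) only to the $\q^1$ piece. You instead observe that the vanishing $c_\q^1(a,0)=0$ holds for the full perturbed nonlinearity, apply the path-integral identity to all of $c_\q^1$ at once, and then feed the result through the uniform operator bound already established in Lemma~\ref{lem:Dcq} (noting $\|x_t\|_{L^2_k} \leq \|x\|_{L^2_k} \leq M$ for $t\in[0,1]$). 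This is more modular and makes the mechanism of the linear bound transparent: vanishing at $\phi=0$ plus a derivative that is uniformly bounded on balls. The cost is that your argument leans on Lemma~\ref{lem:Dcq} as a black box — that lemma is itself proved by the same sort of Sobolev-multiplication computations the paper redoes here — so the two proofs have essentially the same analytic content, but yours factors it more cleanly. One small point worth making explicit if you write this up: $\pml$ respects the $\ker d^* \oplus \Gamma(\Spin)$ splitting of $W$ (since the eigenspaces of $l = (*d, D)$ do), so $(\pml c_\q)^1 = (\pml)^1\, c_\q^1$ and the bound $\|(\pml)^1\| \leq \Theta_j$ is legitimate; you are implicitly using this when you commute $\pml$ past the spinor-component projection.
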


Note that $ \| \phi \|_{L^2_j} \leq \| x \|_{L^2_k} \leq M$, so Lemma~\ref{lem:linearbounds} implies that $\| (\pml c_\q )^1(x) \|_{L^2_j}$ is bounded above uniformly by a constant $C'_{k, j, M}$. We already knew this, owing to the very compactness of $\pml c_\q $. The new content of Lemma~\ref{lem:linearbounds} is that it gives a stronger (linear) bound when $ \| \phi \|_{L^2_j}$ is small.

\begin{proof}[Proof of Lemma~\ref{lem:linearbounds}]
Since $\| \pml \| \leq \Theta_j$, it suffices to find a constant $C$ such that $\| c^1_\q(x) \|_{L^2_j} \leq C \| \phi \|_{L^2_j}$.  First, we bound $c^1$.  
Recall that 
\[
c^1 (a,\phi) = \rho(a) \cdot \phi - Gd^* \tau(\phi,\phi) \phi.   
\]
Since $a$ is $L^2_k$-bounded, Sobolev multiplication implies 
\[
\|\rho(a) \cdot \phi\|_{L^2_j} \leq C_1 \| \phi \|_{L^2_j}.  
\]
Because $\phi$ is $L^2_k$-bounded, so is $Gd^* \tau(\phi,\phi)$.  Again, by Sobolev multiplication, we obtain 
\[
\| Gd^* \tau(\phi,\phi) \phi \|_{L^2_j} \leq C_2 \| \phi \|_{L^2_j}.  
\]
Therefore, it remains to find bounds on $\etaq^1(a,\phi)$.  We have 
\[
\etaq^1(a,\phi) = \q^1(a,\phi) - Gd^* \q^0(a,\phi) \phi.  
\]
Since $\q$ is a very tame perturbation, we obtain $L^2_k$ bounds on $Gd^* \q^0(a,\phi)$ and thus linear bounds on the second term.  Therefore, it remains to give linear bounds on $\|\q^1(a,\phi)\|_{L^2_j}$ in terms of $\| \phi \|_{L^2_j}$.  We may write $\q^1(a,\phi) = \int^1_0 \D_{(a,r\phi)} \q^1(0,\phi) dr$, because the $S^1$-equivariance of $\q^1$ implies $\q^1(a,0) = 0$.  Since the $(a,r\phi)$ are $L^2_k$ bounded and $\D_{(a,r\phi)}\q^1$ is linear in $\phi$, the very tameness of $\q$ implies linear $L^2_j$-bounds on $\D_{(a,r\phi)} \q^1(0,\phi)$, depending only on $\|\phi\|_{L^2_j}$.  Integrating gives linear bounds on $\| \q^1(a,\phi)\|_{L^2_j}$ in terms of $\| \phi \|_{L^2_j}$.  This now completes the proof.     
\end{proof}

In order to make further connection with the finite-dimensional setting of the spectrum, we must ensure that the trajectories of $\Xqmlgcsigma$ are actually contained in the finite-dimensional blow-up of $\vml$, assuming control over the $\lambda$-spinorial energy.  We begin with the non-blown-up case.  

\begin{lemma}\label{lem:trajectoriesinvml}
If $\gamma(t)$ is a trajectory of $\Xqmlgc$ contained in $B(2R)$, then $\gamma(t)$ is contained in $\vml$.  In particular, $\gamma(t)$ is smooth for each $t$.  
\end{lemma}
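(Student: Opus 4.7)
The strategy is to split the flow equation along the $L^2$-orthogonal decomposition $W = \vml \oplus (\vml)^{\perp}$ and argue that the component transverse to $\vml$ must vanish, using the exponential behavior of the linear part together with the boundedness assumption. More precisely: $l$ is a first-order self-adjoint elliptic operator on $W$ with discrete real spectrum, $\vml$ is the finite-dimensional span of its eigenvectors with eigenvalues in $(-\lambda,\lambda)$, and the $L^2$-orthogonal decomposition $W = \vml \oplus (\vml)^{\perp}$ is preserved by $l$ and extends to each Sobolev completion $W_k = \vml \oplus (\vml)^{\perp}_k$. By construction $\im \pml \subseteq \vml$, since $\pml$ is a convex combination of spectral projections onto subspaces of $\vml$. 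Consequently $\pml c_{\q}(\gamma(t)) \in \vml$ for every $t$, and projecting the flow equation
\begin{equation*}
\tfrac{d}{dt}\gamma(t) + l\gamma(t) + \pml c_{\q}(\gamma(t)) = 0
\end{equation*}
onto $(\vml)^{\perp}$ yields the purely linear ODE $\tfrac{d}{dt}\gamma_{\perp}(t) = -l\,\gamma_{\perp}(t)$ for the transverse component $\gamma_{\perp}(t) \in (\vml)^{\perp}_k$.

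Next I would expand $\gamma_{\perp}$ in the eigenbasis of $l|_{(\vml)^{\perp}}$, writing $\gamma_{\perp}(t) = \sum_{\mu} c_{\mu}(t) e_{\mu}$ where the sum runs over eigenvalues $\mu$ with $|\mu| \geq \lambda$. The linear ODE decouples into $c_{\mu}(t) = c_{\mu}(0) e^{-\mu t}$. Since $|\mu| \geq \lambda > 0$ on $(\vml)^{\perp}$, any mode with $c_{\mu}(0) \neq 0$ grows exponentially either as $t \to +\infty$ (when $\mu < 0$) or as $t \to -\infty$ (when $\mu > 0$). In particular, $\|\gamma_{\perp}(t)\|_{L^2_k}$ would be unbounded in $t$, contradicting the hypothesis that $\gamma(t) \in B(2R) \subset W_k$ for all $t \in \R$. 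Hence $c_{\mu}(0) = 0$ for every $\mu$ appearing in the expansion, so $\gamma_{\perp}(0) = 0$ and therefore $\gamma_{\perp}(t) \equiv 0$. This gives $\gamma(t) \in \vml$ for all $t$, as desired.

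Finally, smoothness of $\gamma(t)$ is immediate once containment in $\vml$ is established: $\vml$ is a finite-dimensional subspace spanned by eigenvectors of the elliptic self-adjoint operator $l$, which are smooth sections by elliptic regularity. The main conceptual step is recognizing that boundedness on all of $\R$, combined with the spectral gap around zero on $(\vml)^{\perp}$, forces the transverse component to vanish; I do not anticipate any serious technical obstacle, since the decomposition $W_k = \vml \oplus (\vml)^{\perp}_k$ and the invariance of each summand under $l$ follow from standard spectral theory for $l$.
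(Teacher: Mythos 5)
Your proof is correct and takes essentially the same approach as the paper: project the flow equation onto the $L^2$-orthogonal complement of $\vml$, observe that $\pml c_{\q}$ lands in $\vml$ so the transverse component satisfies the linear ODE $\frac{d}{dt}\gamma_\perp = -l\gamma_\perp$, and conclude by exponential growth that $\gamma_\perp \equiv 0$. The only cosmetic difference is that the paper projects eigenvalue-by-eigenvalue (fixing a single $\kappa$ with $|\kappa|\geq\lambda$) rather than onto all of $(\vml)^\perp$ at once and then decomposing.
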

\begin{proof}
Let $\gamma(t) = (a(t),\phi(t))$.  Fix $\kappa$ with $|\kappa| \geq \lambda$ and let $\gamma_\kappa(t)$ be the $L^2$ projection of $\gamma(t)$ to the $\kappa$-eigenspace of $l$.  It suffices to show that $\gamma_\kappa(t) = 0$.  Since $\gamma(t)$ is a trajectory of $\Xqmlgctau$, we have 
\[ -\frac{d}{dt}\gamma(t) = l(\gamma(t)) + (\pml c_\q )(\gamma(t)). \]
Since $|\kappa| \geq \lambda$, we see $-\frac{d}{dt}{\gamma_\kappa}(t) = l(\gamma_\kappa(t)) = \kappa \gamma_\kappa(t)$.  Thus, $\gamma_\kappa(t) = e^{-\kappa t} \gamma_\kappa(0)$.  Suppose that  $\gamma_\kappa(t) \not\equiv 0$.  Therefore, $\gamma_\kappa(t)$, and thus $\gamma(t)$, are unbounded in the $L^2_k$-norm on $Y$ as $|t|$ increases.  This contradicts $\gamma(t) \in B(2R)$ for all $t$.  Therefore, $\gamma_\kappa(t) \equiv 0$ and $\gamma(t)$ is contained in $\vml$.
\end{proof}

We seek an analogous result to Lemma~\ref{lem:trajectoriesinvml}, but in the blow-up. Apart from the $2R$ bound on the projections of trajectories in the blow-down, we will also need to assume a bound $\omega$ on the absolute values of spinorial energies $\Lambda_{\q}$. With $N$, $\N$ and $\U$ as in Section~\ref{sec:strategy}, note that $\Lambda_{\q}$ is bounded on the compact set $\Orb_{[-N, N]}$, and hence it is bounded on its neighborhood $\N$.  The boundedness can be explained as follows.  Since $\q$ is controlled and our choice of $k$ guarantees that $k - 1 \geq 2$, we have that $\Lambda_\q$ is continuous on $W^\sigma_{k-1}$.  Therefore, $\Lambda_\q$ is bounded on $\N$ since it is a compact subset of $W^\sigma_{k-1}$.  We shall choose $\omega > 0$ so that any point $ x \in \N$ has  
\begin{equation}
\label{eq:omega}
 |\Lambda_{\q}(x) | < \omega.
 \end{equation}

\begin{lemma}\label{lem:blowuptrajectoriesinvml}
For sufficiently large $\lambda$, the following is true.  If $\gamma(t) = (a(t),s(t),\phi(t))$ is a trajectory of $\Xqmlgcsigma$ contained in $B(2R)^\sigma$ and the $\lambda$-spinorial energy of $\gamma(t)$ is in $[-\omega,\omega]$ for all $t$, then $(a(t),\phi(t)) \in \vml$ for all $t \in \mathbb{R}$.  In particular, $(a(t),\phi(t))$ is smooth for all $t \in \mathbb{R}$.  
\end{lemma}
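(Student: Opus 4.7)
The plan is to exploit the dichotomy imposed by the ODE $\frac{d}{dt}s = -\Lambda_{\qml}(a,s,\phi)\,s$: either $s(t) > 0$ for all $t$ (the irreducible case), or $s(t) \equiv 0$ (the reducible case). Note that since $l = {*d} \oplus D$ is block diagonal on $W = \ker d^* \oplus \Gamma(\Spin)$, the subspace $\vml$ decomposes as the direct sum of its intersections with $\ker d^*$ and $\Gamma(\Spin)$, so it suffices to show $(a(t),0) \in \vml$ and $(0,\phi(t)) \in \vml$ separately.

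In the irreducible case, set $\tilde\gamma(t) = (a(t), s(t)\phi(t))$. From $c_\q^1(a,0) = 0$ (both $c^1$ and $\etaq^1$ vanish at reducibles) combined with \eqref{eq:widetildeX} applied to $\pml c_\q$, one obtains the identity $(\pml c_\q)^1(a, s\phi) = s\cdot\widetilde{(\pml c_\q)^1}(a,s,\phi)$; a direct computation with the blow-up equations \eqref{eq:Xqmlgcsigmaformula} then shows that $\tilde\gamma$ is a trajectory of $\Xqmlgc$. Since $\gamma(t) \in B(2R)^\sigma$ and $s(t) > 0$ give $\tilde\gamma(t) \in B(2R)$, Lemma~\ref{lem:trajectoriesinvml} yields $\tilde\gamma(t) \in \vml$. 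By the block decomposition of $\vml$, both $(a(t),0)$ and $(0, s(t)\phi(t))$ lie in $\vml$; dividing the latter by $s(t) > 0$ gives $(0,\phi(t)) \in \vml$.

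In the reducible case, the blow-down is $(a(t),0)$, which is itself a trajectory of $\Xqmlgc$ in $B(2R)$ (since $c_\q^1$ vanishes on reducibles), so Lemma~\ref{lem:trajectoriesinvml} gives $(a(t),0) \in \vml$. The spinor equation reduces to
\begin{equation*}
\frac{d}{dt}\phi = -D\phi - \pml\bigl(\D_{(a(t),0)}c_\q^1(0,\phi)\bigr) + \Lambda_{\qml}(a(t),0,\phi(t))\,\phi.
\end{equation*}
Decompose $\phi(t) = \sum_\mu \phi_\mu(t)$ along the eigenspaces of $D$. Because the image of $\pml$ lies in $\vml$, the middle term projects to zero on every eigenspace with $|\mu| \geq \lambda$, so $\phi_\mu$ obeys the scalar linear ODE $\frac{d}{dt}\phi_\mu = \bigl(-\mu + \Lambda_{\qml}(t)\bigr)\phi_\mu$, with solution $\phi_\mu(t) = \phi_\mu(0)\exp\int_0^t\bigl(-\mu + \Lambda_{\qml}(u)\bigr)du$.

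Now choose $\lambda > \omega$, with $\omega$ the bound on $|\Lambda_{\qml}|$ from \eqref{eq:omega}. For $\mu \geq \lambda$ the integrand is bounded above by $-(\lambda - \omega) < 0$, so $|\phi_\mu(t)|$ grows like $e^{(\lambda-\omega)|t|}$ as $t \to -\infty$, contradicting $\|\phi_\mu(t)\|_{L^2} \leq \|\phi(t)\|_{L^2} = 1$ unless $\phi_\mu \equiv 0$. The case $\mu \leq -\lambda$ is symmetric, producing blow-up as $t \to +\infty$. Hence $(0,\phi(t)) \in \vml$. Smoothness then follows at once, since the trajectory lies on the finite-dimensional smooth manifold $(\vml)^\sigma$, where the flow is generated by a smooth vector field. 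The main point, and the sole source of the ``$\lambda$ sufficiently large'' hypothesis, is this spectral growth argument, which requires $\lambda > \omega$ so that the time-dependent coefficient $-\mu + \Lambda_{\qml}(t)$ retains a definite sign whenever $|\mu| \geq \lambda$; the irreducible case is essentially automatic once the blow-down is recognized as a trajectory of $\Xqmlgc$.
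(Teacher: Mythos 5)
Your proof is correct and rests on the same two ingredients as the paper's: Lemma~\ref{lem:trajectoriesinvml} applied to the blow-down trajectory $(a(t), s(t)\phi(t))$, and exponential growth of the high-frequency spinor modes under the linear ODE when $\lambda > \omega$. The dichotomy you impose ($s > 0$ everywhere versus $s \equiv 0$) is a refinement the paper skips: the paper extracts only $a_\kappa \equiv 0$ from Lemma~\ref{lem:trajectoriesinvml} and then runs the spectral-growth argument on $\phi_\kappa$ uniformly for every trajectory, whereas you observe that in the irreducible case dividing the blow-down spinor by $s(t)$ already forces $\phi(t) \in \vml$, so the spinorial-energy hypothesis is genuinely invoked only on the reducible locus. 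Both arguments are valid; yours makes explicit where the hypothesis $|\Lambda_{\qml}| \leq \omega$ is actually used, at the cost of an extra (though easy) case analysis via the $s$-equation.
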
 
\begin{proof}
Fix $\lambda \gg \omega$.  Fix $\kappa$ with $|\kappa| \geq \lambda$ and let $(a_\kappa(t), \phi_\kappa(t))$ be the projection of $(a(t),\phi(t))$ to the $\kappa$-eigenspace of $l$.  Since $(a(t),s(t)\phi(t))$ is contained in $\vml$ by Lemma~\ref{lem:trajectoriesinvml}, we have that $a_\kappa(t) \equiv 0$.  It suffices to show that $\phi_\kappa(t) \equiv 0$.  Let $\nu(t) = \Lambda_{\q^\lambda}(a(t),s(t),\phi(t))$.  Since $\gamma(t)$ is a trajectory of $\Xqmlgcsigma$, we have by \eqref{eq:Xqmlgcsigmaformula} that 
\begin{align*}
-\frac{d}{dt}{\phi}(t) &= D\phi(t) + \widetilde{\pml c_\q }^1(a(t),s(t),\phi(t)) - \nu(t) \phi(t) \\
&= D\phi(t) + \int^1_0 \D_{(a(t),rs(t)\phi(t))} (\pml c_\q )^1(0,\phi(t)) dr - \nu(t) \phi(t).
\end{align*}
Since $\pml c_\q (x) \in W^{\lambda}$ for all $x \in W_k$, we have that the projection of $\widetilde{\pml c_\q }^1(a(t),s(t),\phi(t))$ to the $\kappa$-eigenspace of $D$ is trivial.  Therefore, we have 
\[ -\frac{d}{dt}{\phi_\kappa}(t) = D\phi_\kappa(t) -  \nu(t) \phi_\kappa(t) = (\kappa - \nu(t)) \phi_\kappa(t). \] 
By assumption, $\kappa - \nu(t) \gg 0$ for all $t$.  If $\phi_\kappa(t) \not \equiv 0$, then $\|\phi_\kappa(t)\|_{L^2(Y)}$ is unbounded.  However, $\| \phi_\kappa(t) \|_{L^2(Y)} \leq \| \phi(t) \|_{L^2(Y)} \equiv 1$, which is thus a contradiction.  
\end{proof}

%Since we can only control the trajectories of $\Xqmlgc$ inside of a bounded subset, we will need  $L^2_k$ bounds (independent of $\lambda$) on the reducible stationary points of $\Xqmlgc$, and, more generally, on all points lying on reducible trajectories of $\Xqmlgc$.  

For future reference, we will write $L^2_k(Y)$ (resp. $L^2_k(Z)$) to refer to the $L^2_k$-norm of any relevant object, e.g. a connection, spinor, etc.,  on $Y$ (resp. $Z$).  If $Z = I \times Y$, we note that $L^2_{j+1}(Z)$-convergence implies pointwise $L^2_j(Y)$-convergence, uniform on compact sets of $I$.  We will also write $x$, $a$, $s$, $\phi$ (without $t$) if we want to treat this object as a section of a bundle over $I \times Y$ instead of as a path of sections over $Y$.

\chapter{Stationary points}\label{sec:criticalpoints}
Throughout Chapters~\ref{sec:criticalpoints}--\ref{sec:trajectories2} we fix the following:
\begin{itemize}
\item a very tame, admissible perturbation $\q$; this means that $\q$ is very tame in the sense of Definition~\ref{def:verytame}, the stationary points of $\Xqgcsigma$ are non-degenerate, and the associated moduli spaces of trajectories are regular; we will impose additional conditions on $\q$ in Proposition~\ref{prop:ND2} and in Proposition~\ref{prop:MorseSmales};

\item a Sobolev index $k \geq 5$;

\item a bound $R > 0$ such that all the stationary points and finite type trajectories of $\Xqgc$ are contained in $B(2R) \subset W_k$;

\item a value $N>0$ specifying a grading range $[-N, N]$, a closed neighborhood $\N$ and an open neighborhood $\U \subset \N$ of the set of stationary points of $\Xqgcsigma$ in that grading range, as in Section~\ref{sec:strategy}; we also assume that the projection of $\N$ to the blow-down is contained in $B(2R)$ and that $N$ is chosen large enough to contain each reducible stationary point $(a,0,\phi)$ of $\Xqgcsigma$ where $\phi$ is an eigenvector of $D_{\q,a}$ with smallest positive eigenvalue;

\item a strict bound $\omega$ on the absolute values of the spinorial energies of points in $\N$, as in \eqref{eq:omega}.  
\end{itemize}

In analogy with the finite-dimensional setting, for $M \subset W$, we will use $M^\sigma$ to denote the closure in $W^\sigma$ of the preimage of $W - (\ker d^* \oplus 0)$ under the projection from $W^\sigma$ to $W$; and similarly for Sobolev completions. In particular, we can talk about $B(2R)^{\sigma}$. Note that $\N$ is a subset of $B(2R)^{\sigma}$.

Recall that it is our goal to identify some of the stationary points and trajectories of $\Xqagcsigma$ with those of $\Xqmlagcsigma$, for $\lambda$ sufficiently large.  In this section, we deal with stationary points.  First, in Section~\ref{sec:convergence} we show that the stationary points of $\Xqmlagcsigma$ contained in $\N/S^1$ are close to stationary points of $\Xqagcsigma$ in $\N/S^1$, for $\lambda$ sufficiently large.  Then, in Section~\ref{sec:StabilityPoints} an inverse function theorem argument shows that inside $\N/S^1$, the stationary points of $\Xqmlagcsigma$ are in one-to-one correspondence with those of $\Xqagcsigma$.  In Section~\ref{sec:nondegeneratestationarypoints} we prove that the nearby approximate stationary points are non-degenerate.  (This will be needed later, when we define a Morse complex for $\Xqmlagcsigma$.)  In Section~\ref{sec:stationarypointsoutsideN}, we study stationary points outside of $\N/S^1$.  We rephrase these results with gradings of stationary points in Section~\ref{sec:gradingsstationarypoints}.  

As before, if we have a sequence $\lambda_n \to \infty$, we write $\pi_n$ to denote $p^{\lambda_n}$.

\medskip

\section{Convergence}~\label{sec:convergence}  
We first point out a convergence result for stationary points in $W$ (not in the blow-up).  

\begin{lemma}\label{lem:compactnessnoblowup}
Suppose that $x_n$ is a sequence of stationary points for $\Xqmlngc$ in $B(2R)$ where $\lambda_n \to \infty$.  Then, there is a subsequence that converges in $W_{k}$ to $x$, a stationary point for $\Xqgc$.  Further, if $x_n$ are reducible, so is $x$.
\end{lemma}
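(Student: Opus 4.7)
The plan is to combine the very-compactness of $c_{\q}$ with the Rellich compact embedding and with elliptic regularity for the linear operator $l$, in order to upgrade $W_{k-1}$ convergence of a subsequence to $W_k$ convergence and then pass to the limit in the stationary equation. (One could also deduce this statement as a special case of Lemma~\ref{lem:convergencenoblowup}, applied to the constant trajectories $\gamma_n(t)\equiv x_n$ with perturbation $\eta=c_{\q}$, which is very compact by Lemma~\ref{lem:finitecoulombvc}; but I prefer to give a direct three-dimensional argument.)

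First, since each $x_n$ lies in $B(2R)\subset W_k$, the sequence is bounded in $L^2_k$. By the Rellich compact embedding $W_k\hookrightarrow W_{k-1}$, a subsequence (still denoted $x_n$) converges in $W_{k-1}$ to some $x\in W_{k-1}$. Next, I would show that $\pi_n c_{\q}(x_n)\to c_{\q}(x)$ in $L^2_{k-1}$ by splitting
\[
\pi_n c_{\q}(x_n) - c_{\q}(x) = \pi_n\bigl(c_{\q}(x_n)-c_{\q}(x)\bigr) + (\pi_n-I)\,c_{\q}(x).
\]
The first summand vanishes in the limit because $c_{\q}\colon W_{k-1}\to W_{k-1}$ is continuous (condition (a) of very-compactness in Definition~\ref{def:vc}, applicable since $k\ge 5$) and $\|\pi_n\|\le\Theta_{k-1}$ by Fact~\ref{fact:pml}(a); the second summand vanishes by the strong operator convergence $\pi_n\to I$ on $W_{k-1}$ asserted in Fact~\ref{fact:pml}(b). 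Combined with the stationarity equation $l(x_n)=-\pi_n c_{\q}(x_n)$, this gives $l(x_n)\to -c_{\q}(x)$ in $L^2_{k-1}$.

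The heart of the argument is now to upgrade $W_{k-1}$ convergence to $W_k$ convergence. The operator $l$ is a first-order essentially self-adjoint elliptic operator on $W$ (being $*d$ on $\ker d^*$ together with the Dirac operator $D$), so we have the standard elliptic estimate
\[
\|u\|_{L^2_k}\le C\bigl(\|l(u)\|_{L^2_{k-1}} + \|u\|_{L^2_{k-1}}\bigr), \qquad u\in W_k.
\]
Applying this with $u=x_n-x_m$ shows that $\{x_n\}$ is Cauchy in $W_k$ (since both $\{x_n\}$ and $\{l(x_n)\}$ are Cauchy in $W_{k-1}$), and hence converges in $W_k$ to $x$. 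Passing to the limit in $l(x_n)+\pi_n c_{\q}(x_n)=0$ yields $\Xqgc(x)=l(x)+c_{\q}(x)=0$, so $x$ is a stationary point of $\Xqgc$. Finally, the reducibility statement is immediate: if every $x_n$ has vanishing spinor component, then so does the $W_k$-limit $x$.

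The only nontrivial step is the promotion from $W_{k-1}$ to $W_k$ convergence, which is the main obstacle; it is handled by the elliptic estimate for $l$ above, together with the fact that the nonlinear part $\pi_n c_{\q}$ lands in $W_{k-1}$ in a controlled way thanks to very-compactness.
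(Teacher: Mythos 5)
Your proof is correct, and it takes a genuinely different (more self-contained) route than the paper's. The paper's proof of this lemma is the one-liner you mention in passing: apply Lemma~\ref{lem:convergencenoblowup} to the constant trajectories $\gamma_n(t)\equiv x_n$, using the very compactness of $c_\q$ (Proposition~\ref{prop:verycompact}), and observe that reducibility passes to the limit. Your alternative replaces that invocation of the four-dimensional trajectory compactness lemma with a direct three-dimensional argument: Rellich to get a $W_{k-1}$-convergent subsequence, very compactness plus the uniform operator bound $\|\pi_n\|\le\Theta_{k-1}$ and strong convergence $\pi_n\to I$ to pass to the limit in the nonlinear term, and then the elliptic estimate for the first-order operator $l$ to promote $W_{k-1}$ Cauchyness to $W_k$ Cauchyness. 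The splitting $\pi_n c_\q(x_n)-c_\q(x)=\pi_n(c_\q(x_n)-c_\q(x))+(\pi_n-I)c_\q(x)$ and the Cauchy argument with $u=x_n-x_m$ in the elliptic estimate are both sound.

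What each approach buys: the paper's derivation is economical because the general trajectory lemma is needed anyway for Propositions~\ref{prop:proposition3perturbed} and \ref{prop:convergencenoblowup}, so there is no cost in using it here; but its mechanism is hidden inside a cylinder bootstrapping argument. Your direct argument makes explicit, for the static case, exactly which ingredients are used (Rellich, continuity of $c_\q$ on $W_{k-1}$ from condition (a) of very compactness with $k\ge 5$, the uniform bound and strong convergence of $\pi_n$ from Fact~\ref{fact:pml}, and the standard first-order elliptic estimate for $l$). This is arguably clearer to a reader, at the cost of a bit of repetition with the machinery behind Lemma~\ref{lem:convergencenoblowup}.

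One minor point worth being explicit about if you wanted to polish this: after establishing that $\{x_n\}$ is Cauchy in $W_k$, you should note that its $W_k$-limit must coincide with the $W_{k-1}$-limit $x$ (by uniqueness of limits in $W_{k-1}$ and the continuous inclusion $W_k\hookrightarrow W_{k-1}$), so that $x\in W_k$ and the convergence is in $W_k$ as claimed. This is implicit in what you wrote, but spelling it out would close the loop cleanly.
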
 
\begin{proof}
This follows from applying Lemma~\ref{lem:convergencenoblowup} to constant trajectories, noting that $c_\q$ is a very compact map by Proposition~\ref{prop:verycompact}.  If $x_n = (a_n,0) \in B(2R)$ is a sequence of reducibles, then clearly the limit must be of the form $x = (a,0)$.  
\end{proof}

We now need an analogous compactness result on the blow-up.  It turns out that very compactness of $c_\q$ is not sufficient; we will use some of the other properties of controlled Coulomb perturbations to do this.  

\begin{lemma}\label{lem:compactness}
Fix $\epsilon >0$.  There exists $b \gg 0$ such that for all $\lambda >b$ the following is true.  If $x \in \N \subset W^{\sigma}$ is a zero of $\Xqmlgcsigma$, then there exists $x' \in \N$ such that $\Xqgcsigma(x') = 0$ and $x, x'$ have $L^2_{k}$-distance at most $\epsilon$ in $L^2_k(Y; i T^*Y) \oplus \mathbb{R} \oplus L^2_k(Y;\Spin)$.
\end{lemma}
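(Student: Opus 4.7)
The plan is to argue by contradiction, using an elliptic bootstrapping argument to upgrade weak compactness of $\{x_n\}$ to strong $L^2_k$ convergence, and then invoking Lemma~\ref{lem:fam}(c) to pass to the limit in the stationary point equation.

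Suppose, for contradiction, that the conclusion fails. Then there exist $\epsilon > 0$, a sequence $\lambda_n \to \infty$, and points $x_n = (a_n, s_n, \phi_n) \in \N$ with $\Xqmlngcsigma(x_n) = 0$, and such that $d_{L^2_k}(x_n, x') > \epsilon$ for every stationary point $x'$ of $\Xqgcsigma$ in $\N$. Writing out $\Xqmlngcsigma(x_n) = 0$ using \eqref{eq:Xqmlgcsigmaformula}, we get
\[
*da_n = -(\pi_n c_\q)^0(a_n, s_n\phi_n), \qquad D\phi_n + \widetilde{\pi_n c_\q}^1(x_n) = \Lambda_{\q^{\lambda_n}}(x_n)\,\phi_n,
\]
together with $\Lambda_{\q^{\lambda_n}}(x_n)\, s_n = 0$. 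Since $\N$ is bounded in $W^\sigma_k$, the sequences $\{a_n\}, \{s_n\}, \{\phi_n\}$ are bounded in $L^2_k$, $\R$, and $L^2_k$ respectively, with $\|\phi_n\|_{L^2} = 1$.

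Next I would establish $L^2_{k+1}$ bounds on $a_n$ and $\phi_n$ by bootstrapping. By the very compactness of $c_\q$ (Lemma~\ref{lem:finitecoulombvc}) and the uniform operator-norm bound $\|\pi_n\| \leq \Theta_k$ (Fact~\ref{fact:pml}(a)), the term $(\pi_n c_\q)^0(a_n, s_n\phi_n)$ is bounded in $L^2_k$ uniformly in $n$, so $*da_n$ is bounded in $L^2_k$. Combined with $d^*a_n = 0$ and the ellipticity of $d + d^*$, this yields uniform $L^2_{k+1}$ bounds on $a_n$. For the spinor, the same very compactness argument (together with Lemma~\ref{lem:fam}(a)) gives a uniform $L^2_k$ bound on $\widetilde{\pi_n c_\q}^1(x_n)$, and pairing the spinor equation with $\phi_n$ gives
\[
|\Lambda_{\q^{\lambda_n}}(x_n)| = \bigl|\Re\langle \phi_n, D\phi_n + \widetilde{\pi_n c_\q}^1(x_n)\rangle_{L^2}\bigr| \leq \|\phi_n\|_{L^2_1} + \|\widetilde{\pi_n c_\q}^1(x_n)\|_{L^2},
\]
which is uniformly bounded. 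Hence $D\phi_n$ is uniformly bounded in $L^2_k$, and elliptic regularity for $D$ gives a uniform $L^2_{k+1}$ bound on $\phi_n$.

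Finally, after passing to a subsequence, Rellich compactness promotes the $L^2_{k+1}$ bounds to $L^2_k$ convergence $a_n \to a$ and $\phi_n \to \phi$, while $s_n \to s \geq 0$ along a subsequence. Thus $x_n \to x := (a,s,\phi)$ in $W^\sigma_k$, and $x \in \N$ since $\N$ is closed. Lemma~\ref{lem:fam}(c) (applied with the sequence $\lambda_n \to \infty$) then gives $\Xqmlngcsigma(x_n) \to \Xqgcsigma(x)$ in $\T^{\gCoul,\sigma}_{k-1}$, so $\Xqgcsigma(x) = 0$. This contradicts the choice of $x_n$ and completes the proof.

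The main obstacle is the second step: one must verify that the spinorial energy $\Lambda_{\q^{\lambda_n}}(x_n)$ is bounded uniformly in $n$, despite $\lambda_n \to \infty$, in order to close the bootstrapping loop. Once one observes that this bound follows directly from pairing the spinor equation against $\phi_n$ and using the uniform-in-$\lambda$ estimates built into very compactness and Lemma~\ref{lem:fam}, the rest is standard elliptic regularity and Rellich.
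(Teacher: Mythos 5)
Your proof is correct but takes a genuinely different route from the paper's. The paper also argues by contradiction, but after extracting a subsequence converging weakly it splits into three cases according to the type of $x_n$: all reducible, all irreducible with $s_n$ bounded away from zero, and all irreducible with $s_n \to 0$. The first two cases are reduced to the blow-down via Lemma~\ref{lem:compactnessnoblowup}; the third must be handled separately with Lemma~\ref{lem:linearbounds}, because the blow-down degenerates as $s_n \to 0$, and the paper then closes it with a second contradiction coming from non-degeneracy of the reducible limit. Your argument avoids the case split entirely. The key observation---that $\Lambda_{\q^{\lambda_n}}(x_n)$ is uniformly bounded, obtained directly from Cauchy-Schwarz and the $L^2_k$ bounds built into $\N$, without any reference to whether $s_n \to 0$---lets a single elliptic bootstrap on $*d|_{\ker d^*}$ and $D$ produce uniform $L^2_{k+1}$ bounds, after which Rellich gives strong $L^2_k$ convergence in one stroke. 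Both proofs then use Lemma~\ref{lem:fam}(c) to pass to the limit in the stationary-point equation. Your approach is more uniform and avoids Lemma~\ref{lem:linearbounds}. The one subtle ingredient, which you correctly cite, is that the $L^2_k$ bound on $\widetilde{\pi_n c_\q}^1(x_n)$ (needed so that $D\phi_n$ is uniformly bounded in $L^2_k$) genuinely requires the uniform-in-$\lambda$ derivative bounds of Lemma~\ref{lem:fam}(a) with $j=k$, not just the functional boundedness of $c_\q$ itself.
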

\begin{proof}
Suppose this is not true.  Then, we can find a sequence $\lambda_n \to \infty$ and corresponding zeros $x_n= (a_n,s_n,\phi_n)$ of $\Xqmlngcsigma$ in $\N$, none of which are within $L^2_k$-distance $\epsilon$ of a stationary point of $\Xqgcsigma$ in $\N$.  We will contradict this by finding a subsequence converging to such a stationary point.

Since the $x_n$ are in $\N$, which is $L^2_k$-bounded, we can extract a subsequence that converges to some $x=(a, s, \phi)$ in $L^2_{k-1}$. Further, after passing to another subsequence, we can assume the $x_n$ are either all reducible or all irreducible.  

First, suppose the $x_n$ are reducible, that is, $s_n = 0$.  By Lemma~\ref{lem:compactnessnoblowup}, the convergence $a_n \to a$ is in the stronger $L^2_k$ norm. Moreover, $a \in (\ker d^*)_k$ is such that $(a,0)$ is a stationary point of $\Xqgc$.  Recall that $\phi_n$ are eigenvectors for $D_{\q^{\lambda_n}, a_n}$ with $L^2$-norm equal to $1$. We claim that $\phi$ is an eigenvector of $D_{\q,a} = D + \D_{(a,0)} (c_\q)^1(0,\cdot)$, and that the convergence $\phi_n \to \phi$ is also in $L^2_k$.  

Let $\kappa_n$ be the associated eigenvalues for $\phi_n$, i.e., the $\lambda_n$-spinorial energies of $x_n$. Since $x_n \to x$ in $L^2_{k-1}$, by Lemma~\ref{lem:fam} (c) we have $|\Lambda_{\q}(x)| \leq \omega.$ By applying Lemma~\ref{lem:fam} (c) again, this time to the $\lambda_n$-spinorial energies of $x_n$, we see that $\Lambda_{\qmln}(x_n) \to \Lambda_{\q}(x)$. Therefore, we also have a bound $\omega' \geq \omega$ on $|\Lambda_{\qmln}(x_n)|=|\kappa_n|$. After passing to a subsequence, we can assume that the $\kappa_n$ converge to some value $\kappa$. Then, we have  
\begin{align}
\nonumber \kappa_n \phi_n &= D_{\q^{\lambda_n},a_n}(\phi_n) \\
\label{eqn:approx-ev}				&= D \phi_n + \D_{(a_n,0)} (\pi_n c_\q)^1(0,\phi_n) \\
\nonumber				&= D \phi_n + \pi_n(\D_{(a_n,0)} c_\q(0,\phi_n))^1.  
\end{align}
Now, since $(a_n,0)$ converges to $(a,0)$ in $L^2_k$ and $(0,\phi_n)$ converge to $(0,\phi)$ in $L^2_{k-1}$, the very compactness of $c_\q$ guarantees that $\D_{(a_n,0)} c_\q(0,\phi_n)$ converges to $\D_{(a,0)} c_\q(0,\phi)$ 
in $L^2_{k-1}$.  As $\pi_n \to 1$ in the strong operator topology on $W_{k-1}$, we must also have
\[ 
\pi_n \bigl( \D_{(a_n,0)} c_\q(0,\phi_n) \bigr)^1 \to \bigl( \D_{(a,0)} c_\q (0,\phi)\bigr)^1 \text{ in $L^2_{k-1}$}.
\]
On the other hand, we have 
\[
\kappa_n \phi_n \to \kappa \phi \text{ in $L^2_{k-1}$}.
\]
Finally, we have that $D \phi_n \to D \phi$ in $L^2_{k-2}$, because $\phi_n \to \phi$ in $L^2_{k-1}$.  Since the convergence of $\pi_n \D_{(a_n,0)} (c_\q)^1(0,\phi_n)$ and $\kappa_n \phi_n$ is in $L^2_{k-1}$, we must in fact have that $D \phi_n \to D \phi$ in $L^2_{k-1}$ by \eqref{eqn:approx-ev}.  Thus, $\phi_n$ converges to $\phi$ in $L^2_k$ and $\kappa_n \phi_n = D_{\q^{\lambda_n}, a_n}(\phi_n) \to D_{\q, a}(\phi) = \kappa \phi$ in $L^2_k$.  Thus, $\phi$ is an eigenvalue of $D_{\q, a}$, so $x=(a, 0, \phi)$ is a reducible stationary point of $\Xqgcsigma$. Since $\N$ is closed in the $L^2_k$ norm and the convergence $x_n \to x$ is in $L^2_k$, we get that $x \in \N$, providing the contradiction.    

We now assume that our sequence $(a_n,s_n,\phi_n)$ consists of irreducibles.  If $s_n \geq \delta > 0$ for all $n$, Lemma~\ref{lem:compactnessnoblowup} guarantees that $(a_n,s_n,\phi_n)$ will converge in $W^\sigma_k$ to an irreducible stationary point $(a,s,\phi)$ of $\Xqgcsigma$, since $\Xqmlngcsigma$ is conjugate to $\Xqmlngc$ via the blow-down.   This is again a contradiction.  

The final case is when $(a_n,s_n,\phi_n)$ is a sequence of irreducible stationary points in $B(2R)^\sigma$ with $s_n \to 0$.  By Lemma~\ref{lem:compactnessnoblowup}, in the blow-down we have $(a_n, s_n \phi_n) \to (a, 0)$ in $L^2_k$. Since $x_n \in \N$, we can find an upper bound on $\|\phi_n\|_{L^2_k}$. By  Lemma~\ref{lem:linearbounds}, we have a constant $C$ such that
\[
|s_n| \cdot \| D\phi_n \|_{L^2_k} = \|D (s_n \phi_n) \|_{L^2_k}  = \| (\pi_n c_\q)^1(a_n,s_n\phi_n)\|_{L^2_k}  \leq C |s_n|.
\]
Thus, we obtain $L^2_{k+1}$-bounds on $\phi_n$.  After passing to a subsequence, we get that $(a_n,s_n,\phi_n)$ converges in $L^2_k$ to $(a,0,\phi)$.  By Lemma~\ref{lem:fam} (c), we have that $\Xqmlngcsigma(a_n,s_n,\phi_n) \to \Xqgcsigma(a,0,\phi)$, and thus $(a,0,\phi)$ is a reducible stationary point.  Moreover, since $\Lambda_{\q^{\lambda_n}}(a_n,s_n,\phi_n)=0$, by taking the limit $n \to \infty$ we obtain $ \Lambda_\q(a,0,\phi)=0$, i.e. $\phi$ is in the kernel of $D_{\q, a}$. This is impossible, because the non-degeneracy condition for reducibles requires that $0$ is not in the spectrum of $D_{\q, a}$.
\end{proof}

\begin{remark}
\label{rem:type}
The proof of Lemma~\ref{lem:compactness} shows that if the zero $x$ of $\Xqmlgcsigma$ is reducible, then we can choose the nearby zero $x'$ of $\Xqgcsigma$ to be reducible as well.  Furthermore, since $\Lambda_{\q^\lambda}(x)$ is close to $\Lambda_{\q}(x')$, we see that $x'$ can be chosen to be stable (resp. unstable) when $x$ is stable (resp. unstable).  
\end{remark}

\begin{corollary}
\label{cor:NisU}
For $\lambda \gg 0$, if $x$ is a zero of $\Xqmlgcsigma$ in $\N$, then $x$ is actually in $\U$ and $|\Lambda_{\q^\lambda}(x)| < \omega.$
\end{corollary}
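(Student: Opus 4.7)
The plan is to argue both assertions simultaneously by contradiction, using Lemma~\ref{lem:compactness} together with the defining properties of $\delta$ and $\omega$. Suppose there exist $\lambda_n \to \infty$ and zeros $x_n \in \N$ of $\Xqmlngcsigma$ such that, for each $n$, either $x_n \notin \U$ or $|\Lambda_{\q^{\lambda_n}}(x_n)| \geq \omega$. Fix a null sequence $\epsilon_n \to 0$ with $\epsilon_n < \delta$. For $n$ large enough, Lemma~\ref{lem:compactness} applied with $\epsilon = \epsilon_n$ produces stationary points $x'_n \in \N$ of $\Xqgcsigma$ with $\|x_n - x'_n\|_{L^2_k} \leq \epsilon_n$. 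By the defining property of $\delta$, the only stationary points of $\Xqgcsigma$ contained in $\N$ lie in $\Orb_{[-N,N]}$, so in fact $x'_n \in \Orb_{[-N,N]}$.

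The first alternative is then immediately ruled out, since
$$ d_{L^2_k}(x_n, \Orb_{[-N,N]}) \leq \|x_n - x'_n\|_{L^2_k} \leq \epsilon_n < \delta, $$
which places $x_n$ inside the open $\delta$-neighborhood $\U$ for all sufficiently large $n$.

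For the second alternative, observe that $\Orb_{[-N,N]}$ is compact in $W^\sigma_k$, being a finite union of $S^1$-orbits of the (finitely many) stationary points of $\Xqagcsigma$ with grading in $[-N,N]$. Hence, after passing to a subsequence, $x'_n$ converges in $L^2_k$ to some $x^* \in \Orb_{[-N,N]} \subset \N$, and consequently $x_n \to x^*$ in $L^2_k$ as well. Applying Lemma~\ref{lem:fam}\eqref{fam:c} with $\lambda_n \to \infty$ then gives $\Lambda_{\q^{\lambda_n}}(x_n) \to \Lambda_{\q}(x^*)$. Since $\omega$ was chosen so that $|\Lambda_{\q}(\cdot)| < \omega$ strictly throughout $\N$, we have $|\Lambda_{\q}(x^*)| < \omega$, and therefore $|\Lambda_{\q^{\lambda_n}}(x_n)| < \omega$ for all sufficiently large $n$, contradicting the hypothesis on $x_n$.

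The argument presents no serious obstacle: it is essentially a bookkeeping exercise combining Lemma~\ref{lem:compactness}, the strict inequality built into the definition of $\omega$, and the joint continuity of the $\lambda$-spinorial energy provided by Lemma~\ref{lem:fam}\eqref{fam:c}. The only subtlety worth flagging is the need for $\Orb_{[-N,N]}$ to be compact in $W^\sigma_k$ in order to extract a convergent subsequence of the approximating $x'_n$; this is guaranteed by the finiteness of the set of stationary points in the bounded grading window $[-N,N]$ (cf.\ Chapter~\ref{sec:HM}).
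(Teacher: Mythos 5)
Your argument is correct and follows essentially the same route as the paper's: both parts rest on Lemma~\ref{lem:compactness}, with the $\U$-containment obtained by choosing $\epsilon$ small and the spinorial-energy bound obtained by a contradiction/subsequence argument using the continuity of $\Lambda$ provided by Lemma~\ref{lem:fam}(c). The only cosmetic difference is how the convergent subsequence is produced: the paper extracts it directly (implicitly leaning on the argument inside Lemma~\ref{lem:compactness}), while you route through the nearby stationary points $x'_n$ and the compactness of $\Orb_{[-N,N]}$ in $W^\sigma_k$; this is a clean and perfectly valid way to make the subsequence extraction self-contained without re-opening the proof of Lemma~\ref{lem:compactness}.
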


\begin{proof}
We get that $x \in \U$ by choosing $\epsilon$ sufficiently small in Lemma~\ref{lem:compactness}. If we had a sequence $\lambda_n \to \infty$ and corresponding $x_n \in \N$ with $\Xqmlngcsigma(x_n)=0$ and $|\Lambda_{\qmln}(x_n)| \geq \omega$, then after extracting a subsequence, and making use of Lemma~\ref{lem:compactness}, in the limit we would get a zero of $\Xqgcsigma$ inside $\N$ with $|\Lambda_{\q}|\geq \omega$. This is a contradiction.
\end{proof}

\begin{corollary}
\label{cor:IsFinite}
For $\lambda \gg 0$, all the stationary points of $\Xqmlgcsigma$ in $\N$ live inside the finite-dimensional blow-up $(W^{\lambda})^{\sigma}$.
\end{corollary}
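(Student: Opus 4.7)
The plan is to reduce Corollary~\ref{cor:IsFinite} to Lemma~\ref{lem:blowuptrajectoriesinvml} by viewing each stationary point as the value of a constant trajectory of $\Xqmlgcsigma$. Let $x=(a,s,\phi)\in\N$ satisfy $\Xqmlgcsigma(x)=0$, and let $\gamma(t)\equiv x$ be the associated constant trajectory; it is automatically a (finite-type) trajectory of $\Xqmlgcsigma$.

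First, I would verify the two hypotheses of Lemma~\ref{lem:blowuptrajectoriesinvml}. Since $\N$ was chosen so that its blow-down projection lies in $B(2R)$, the trajectory $\gamma$ is contained in $B(2R)^{\sigma}$. For the second hypothesis, the $\lambda$-spinorial energy $\Lambda_{\qml}(\gamma(t))\equiv\Lambda_{\qml}(x)$ must lie in $[-\omega,\omega]$; but this is exactly what Corollary~\ref{cor:NisU} gives us: for $\lambda$ large, every zero of $\Xqmlgcsigma$ in $\N$ satisfies $|\Lambda_{\qml}(x)|<\omega$. Thus, taking $\lambda$ at least as large as the threshold in Corollary~\ref{cor:NisU} and the threshold in Lemma~\ref{lem:blowuptrajectoriesinvml}, both hypotheses are met, and the lemma yields $(a,\phi)\in\vml$, i.e.\ $x\in(\vml)^{\sigma}=(W^{\lambda})^{\sigma}$.

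No serious obstacle is expected, since the heavy lifting has already been done: Lemma~\ref{lem:blowuptrajectoriesinvml} provides the required containment in $\vml$ once one controls the $\lambda$-spinorial energy, and Corollary~\ref{cor:NisU} provides exactly that control. The only point to be careful about is the order of quantifiers on $\lambda$: one must choose $\lambda$ large enough to simultaneously apply Corollary~\ref{cor:NisU} (forcing $|\Lambda_{\qml}(x)|<\omega$ for all zeros $x\in\N$) and Lemma~\ref{lem:blowuptrajectoriesinvml} (which itself requires $\lambda$ large relative to $\omega$). Since both conditions are uniform in $x\in\N$, this poses no difficulty, and the corollary follows.
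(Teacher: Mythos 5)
Your proposal is correct and matches the paper's argument, which also deduces the corollary from Corollary~\ref{cor:NisU} together with Lemma~\ref{lem:blowuptrajectoriesinvml}. The elaboration of treating the stationary point as a constant trajectory (so that the lemma, stated for trajectories, applies) and the care about the order of quantifiers on $\lambda$ are exactly the right way to fill in the details the paper leaves implicit.
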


\begin{proof}
This follows from Corollary~\ref{cor:NisU} and Lemma~\ref{lem:blowuptrajectoriesinvml}.
\end{proof}

Observe that the results of this subsection also apply to the stationary points of $\Xqmlagcsigma$ and $\Xqagcsigma$, in the quotient $W^\sigma/S^1$.

\section{Stability} \label{sec:StabilityPoints}
In the previous section we showed that, for $\lambda$ large, the zeros of $\Xqmlagcsigma$ in $\N$ are close to the zeros of $\Xqagcsigma$. Our next goal is to show that near each stationary point of $\Xqagcsigma$ there is exactly one stationary point of $\Xqmlagcsigma$, again for $\lambda$ large. The proof will be an application of the implicit function theorem, using the fact that we have chosen the perturbation $\q$ so that the zeros of $\Xqagcsigma$ are nondegenerate. 

Recall that $p^{\lambda}$ is (roughly) the orthogonal projection to $W^{\lambda}$, modified so that it becomes a smooth function of $\lambda$ for $\lambda \in (0, \infty).$ In order to be able to apply the implicit function theorem at $\lambda = \infty$, we need to find a suitable identification of the interval $(0, \infty]$ with $[0,1)$, so that after this identification we get differentiability at zero.

For the present subsection, let us index (with multiplicity) the eigenvalues of $l$  by $(\lambda_n)_{n \geq 0}$ by the condition that $|\lambda_n| \leq |\lambda_{n+1}|$ for all $n$; note that $\lim_{n \to \infty} |\lambda_n |= \infty.$  Recall that we write 
$$\Xqgc = l + c_\q.$$ 

Pick a homeomorphism $f: (0, \infty] \to [0,1) $ with the following properties:
\begin{itemize}
\item The restriction of $f$ to $(0,\infty)$ is a strictly decreasing diffeomorphism onto $(0, 1)$;
\item $\lim_{n \to \infty} |\lambda_n|^2 f(|\lambda_{n+1}|) = \infty.$ 
\end{itemize}

The second property means that $f$ does not decrease too fast near infinity. We can achieve it, for example, by requiring that $f(|\lambda_{n+1}|) = \frac{1}{|\lambda_n|} + \frac{1}{n}$ for large $n$.

\begin{lemma}
\label{lem:homeo}
The map 
$$h: W_{k} \times (-1,1) \to W_{k-1}, \ \ \ h(x, r) = x- p^{f^{-1}(|r|)}(x)$$
is continuously differentiable, with $\D h_{(x, 0)}(0,1) =0$ for all $x$.
\end{lemma}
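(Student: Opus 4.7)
The strategy is to separate the manifestly smooth region $r \neq 0$ from the delicate boundary $r = 0$. For $r \neq 0$ the claim reduces to the smoothness of $\lambda \mapsto p^\lambda$ combined with the smoothness of $f^{-1}$; the real content of the lemma is that the resulting $C^\infty$ map on $W_k \times ((-1,0) \cup (0,1))$ extends to a $C^1$ map across $r = 0$ with vanishing derivative there, owing to the specific decay rate of $f$. Note first that $p^\infty = \operatorname{id}$, so $h(x,0) = 0$ for all $x$, which is consistent with the claimed vanishing of $\D h_{(x,0)}$.

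Away from the boundary, the map $(x,\lambda) \mapsto x - p^\lambda(x)$ is smooth on $W_k \times (0, \infty)$ by the explicit construction of $p^\lambda$ in \eqref{eq:plprel}--\eqref{eq:pl}, and $f^{-1}$ is a diffeomorphism from $(0,1)$ onto $(0,\infty)$. Hence $h$ is smooth on $W_k \times ((-1,0) \cup (0,1))$ by the chain rule, with partial derivatives
\begin{equation*}
\partial_x h_{(x,r)}(v) = v - p^{f^{-1}(|r|)}(v), \qquad \partial_r h_{(x,r)} = - (\partial_\lambda p^\lambda)(x) \cdot (f^{-1})'(|r|) \cdot \operatorname{sgn}(r).
\end{equation*}
The plan is to show that both partials extend continuously to $r = 0$ with value zero, which by a standard mean-value argument yields Fréchet $C^1$ regularity on all of $W_k \times (-1,1)$ and identifies $\D h_{(x,0)} = 0$.

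The essential analytic input is a spectral estimate. Expanding $x = \sum c_n e_n$ in eigenvectors of the first-order elliptic operator $l$ with eigenvalues $\lambda_n$, one checks from \eqref{eq:plprel}--\eqref{eq:pl} that both $x - p^\lambda x$ and $\partial_\lambda p^\lambda(x)$ are supported on modes with $|\lambda_n| \geq \lambda - 1$. Since $\|e_n\|_{L^2_{k-1}}/\|e_n\|_{L^2_k} \leq C/(1+|\lambda_n|)$, applying this termwise gives
\begin{equation*}
\|\operatorname{id} - p^\lambda\|_{W_k \to W_{k-1}} \leq C/\lambda, \qquad \|\partial_\lambda p^\lambda\|_{W_k \to W_{k-1}} \leq C/\lambda,
\end{equation*}
and the sharper pointwise bound $\|x_0 - p^\lambda x_0\|_{L^2_{k-1}} = o(1/\lambda)$ for each fixed $x_0$, since this quantity is controlled by the tail $\epsilon_{x_0}(\lambda)/\lambda$ of a convergent $L^2_k$ Sobolev series with $\epsilon_{x_0}(\lambda) \to 0$.

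The hypothesis on $f$ is then exactly what is needed to convert the above $O(1/\lambda)$ decay into vanishing partial derivatives at $r=0$. By the Weyl-type spacing of eigenvalues and the monotonicity of $f$, the condition $|\lambda_n|^2 f(|\lambda_{n+1}|) \to \infty$ upgrades to $\lambda^2 f(\lambda) \to \infty$ on all sufficiently large $\lambda$, and it simultaneously bounds $1/|f'(\lambda)|$ appropriately. Using this, one verifies that $\partial_x h_{(x,r)} \to 0$ in operator norm from $W_k$ to $W_{k-1}$ as $r \to 0$, and that $\partial_r h_{(x,r)} \to 0$ in $W_{k-1}$; in particular, the pointwise limit defining $\partial_r h_{(x,0)}(1)$ equals $\lim_{\lambda\to\infty} (x - p^\lambda x)/f(\lambda) = 0$ in $W_{k-1}$. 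Continuity of the partials together with their vanishing at $r = 0$ then gives continuous Fréchet differentiability of $h$ on $W_k \times (-1, 1)$, with $\D h_{(x,0)}(0,1) = 0$ as claimed. The main obstacle is the careful matching between the $O(1/\lambda)$ rate for $\operatorname{id} - p^\lambda$ on the operator side and the decay rate of $f$ at $0$; the hypothesis on $f$ is tuned precisely to make this balance work.
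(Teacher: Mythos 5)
Your core analytic input — expanding in eigenvectors of $l$, using $\|e_n\|_{L^2_{k-1}}/\|e_n\|_{L^2_k} \lesssim 1/|\lambda_n|$ to get $\|x - p^\lambda x\|_{L^2_{k-1}} \lesssim \|x\|_{L^2_k}/\lambda$, and comparing this to the decay of $f$ — is exactly the estimate the paper uses. But you route the argument through the partial derivative $\partial_r h_{(x,r)}$ for $r\neq 0$ rather than through the difference quotient $h(x,r)/r$, and that detour introduces a gap.

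Concretely, you write $\partial_r h_{(x,r)} = -(\partial_\lambda p^\lambda)(x)\cdot(f^{-1})'(|r|)\cdot\operatorname{sgn}(r)$ and then assert that the hypothesis on $f$ ``simultaneously bounds $1/|f'(\lambda)|$ appropriately.'' This is not justified by the hypotheses, and it is false in general. The lemma only constrains the \emph{values} $f(|\lambda_{n+1}|)$ via $|\lambda_n|^2 f(|\lambda_{n+1}|)\to\infty$; it says nothing about $f'$. One can build a strictly decreasing diffeomorphism $f$ hitting exactly those prescribed values but with arbitrarily flat plateaus in between, making $(f^{-1})' = 1/f'$ blow up on sets accumulating at $r=0$. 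A similar concern applies to $\partial_\lambda p^\lambda$: the interpolation \eqref{eq:pl} uses bump functions $\beta_i$ supported on intervals of width $\epsilon_i$, and the paper places no lower bound on $\epsilon_i$, so the naive bound on $\|\partial_\lambda p^\lambda\|$ picks up an uncontrolled factor $1/\epsilon_i$. So your claim that $\partial_r h_{(x,r)}\to 0$ as $r\to 0$ does not follow from the stated hypotheses.

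The paper's proof avoids the issue entirely: it never differentiates $p^\lambda$ in $\lambda$ and never uses $f'$. Since $h(x,0)=0$ and $\partial_x h_{(x,0)} = \operatorname{id}-p^\infty = 0$, it reduces the directional-derivative claim to the secant estimate $\lim_{r\to 0} h(x,r)/r = 0$, i.e.\ $\lim_{\lambda\to\infty}\|x - p^\lambda x\|_{L^2_{k-1}}/f(\lambda)=0$, and proves this directly from the tail estimate together with the monotonicity of $f$ and the assumption on $|\lambda_n|^2 f(|\lambda_{n+1}|)$. Everything is evaluated at the sequence $\lambda=|\lambda_n|$ and then interpolated using monotonicity; no derivative of either $p^\lambda$ or $f$ ever appears. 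Your own sentence ``in particular, the pointwise limit defining $\partial_r h_{(x,0)}(1)$ equals $\lim_{\lambda\to\infty}(x-p^\lambda x)/f(\lambda)=0$'' is already the computation the paper does — but you present it as a consequence of the (unestablished) convergence of $\partial_r h_{(x,r)}$, whereas the paper establishes it on its own and takes it as the main step. If you replace the step ``show $\partial_r h_{(x,r)}\to 0$'' by ``show the difference quotient $h(x,r)/r\to 0$ directly,'' using the sharpened tail estimate with $\epsilon_x(\lambda)\to 0$ that you already noted, the argument goes through without any hypothesis on $f'$ or on $\partial_\lambda p^\lambda$.
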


\begin{proof}
Continuous differentiability away from $r = 0$ is standard, taking into account that $p^{\lambda}$ are smoothed projections as in \eqref{eq:pl}. 

Since $h(x,0) = 0$, it suffices to show that
$$ \lim_{r \to 0}\frac{ h(x,r)}{r} =0,$$
or, equivalently
\begin{equation} 
\label{eq:qlim}
\lim_{\lambda \to \infty} \frac{\|x-p^{\lambda}(x)\|_{L^2_{k-1}}}{f(\lambda)} = 0.
\end{equation}

The eigenspaces of $l$ are orthogonal in $L^2$. Pick an $L^2$-orthonormal sequence of eigenvectors $w_n$ for $\lambda_n$.  Let us write
$$ x = \sum_n x_n w_n,$$
for a sequence of real numbers $(x_n)_{n \geq 0}.$ We have $\|x\|^2_{L^2} = \sum_n |x_n|^2.$

Note that 
$$ l^{k-1} (x) = \sum_n \lambda_n^{k-1} x_n w_n.$$
Since the $L^2_{k-1}$ norm is equivalent to the one defined using $l$ as a differential operator instead of $\nabla$, we can write
$$ \| x\|^2_{L^2_{k-1}} \approx \sum_n \sum_{j=0}^{k-1} |\lambda_n|^{2j} |x_n|^2 \approx \sum_n |\lambda_n|^{2k-2} |x_n|^2.$$

In fact, we have $x \in W_{k}$, so 
$$\|x\|_{L^2_k}^2 \approx \sum_n |\lambda_n|^{2k} |x_n|^2 < \infty.$$

We have
$$ \|x- p^{|\lambda_n|}(x) \|_{L^2_{k-1}} \approx \sum_{m > n} |\lambda_m|^{2k-2} |x_m|^2 < \frac{1}{|\lambda_n|^2} \sum_{m > n} |\lambda_m|^{2k} |x_m|^2 \leq C \frac{\|x\|_{L^2_k}^2}{|\lambda_n|^2}$$
for some constant $C$ independent of $x$ and $\lambda_n$.  

Recall that we chose $f$ so that $\lim_{n \to \infty} |\lambda_n|^2 f(|\lambda_{n+1}|) = \infty.$ From here we get:
$$ \lim_{n \to \infty} \frac{\|x- p^{|\lambda_n|}(x)\|_{L^2_{k-1}}}{f(|\lambda_{n+1}|)} = 0.$$
The claim \eqref{eq:qlim} follows: For any $\lambda \gg 0$, we choose $n$ such that $\lambda \in [|\lambda_n|, |\lambda_{n+1}|]$, and then we use the fact that both the numerator and the denominator of the right hand side of \eqref{eq:qlim} are nonincreasing functions of $\lambda$.
\end{proof}

Let $[x_0] \in W^\sigma/S^1$ be a non-degenerate, irreducible zero of $\Xqagcsigma$. By Lemma~\ref{lem:rephraseStat}, non-degeneracy means that the linearization $$\D^\sigma_{[x_0]}(\Xqagcsigma) : \K^{\agCoul,\sigma}_{k,[x_0]} \to \K^{\agCoul,\sigma}_{k-1,[x_0]} $$ is an invertible linear operator.

\begin{proposition} \label{prop:nearby}
Let $[x] \in W_k^\sigma/S^1$ be a non-degenerate stationary point of $\Xqagcsigma$.  Then, for any sufficiently small neighborhood $U_{[x]}$ of $[x]$ in $W_k^\sigma/S^1$, for $\lambda \gg 0$ (depending on $U_{[x]}$) there is a unique $[x_\lambda] \in U_{[x]}$ satisfying $\Xqmlagcsigma([x_\lambda])=0$.  
\end{proposition}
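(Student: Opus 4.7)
The plan is to apply the implicit function theorem to a one-parameter family interpolating smoothly between $\Xqagcsigma$ (at ``$\lambda=\infty$'') and $\Xqmlagcsigma$ (at finite $\lambda$); the invertibility required at the limit is exactly non-degeneracy of $[x]$. Concretely, use the homeomorphism $f\colon(0,\infty]\to[0,1)$ of Lemma~\ref{lem:homeo} to reparametrize the cut-off, setting $r=f(\lambda)$ so that $r=0$ corresponds to $\lambda=\infty$. Fix a lift $x\in W^\sigma_k$ of $[x]$, work on a small $S^1$-invariant $L^2_k$-neighborhood of $x$ whose quotient $V\subset W^\sigma_k/S^1$ is a coordinate chart near $[x]$, and trivialize the bundle $\K^{\agCoul,\sigma}_{k-1}$ (the $L^2_{k-1}$ tangent bundle of $W^\sigma_k/S^1$) over $V$. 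Define
\[
F\colon V\times[0,1)\longrightarrow\K^{\agCoul,\sigma}_{k-1,x},\qquad
F([y],r)=\begin{cases}\Xqagcsigma([y]),&r=0,\\ \Xqmlagcsigma([y]),&r>0,\ \lambda=f^{-1}(r).\end{cases}
\]
Then $F([x],0)=0$, and for $r>0$ a zero of $F(\cdot,r)$ is precisely a stationary point of $\Xqmlagcsigma$ in $V$.

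The first step is joint $C^1$-regularity of $F$. Away from $r=0$ this follows from the smooth dependence of $\Xqmlagcsigma$ on $([y],\lambda)$ (by the same argument as Lemma~\ref{lem:continuityxqsigma}, together with smoothness of $\lambda\mapsto p^\lambda$) and from smoothness of $f^{-1}$ on $(0,1)$. At $r=0$ the only new content is the dependence of $F([y],r)-F([y],0)$ on $r$, which by \eqref{eq:Xqmlgcsigmaformula} is a smooth functional of
\[
([y],r)\longmapsto c_\q(y)-p^{f^{-1}(r)}c_\q(y),
\]
for $y$ a lift of $[y]$. By Proposition~\ref{prop:verycompact} the map $c_\q\colon W^\sigma_k\to W_k$ is smooth, and Lemma~\ref{lem:homeo} then asserts that the displayed expression is $C^1$ jointly in $([y],r)$ with vanishing partial derivative in $r$ at $r=0$, as an operator into $W_{k-1}$. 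Composing with the smooth blow-up formulas and the anticircular projection transfers this regularity to $F$.

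At $([x],0)$ the partial derivative $\partial_{[y]}F$ is the $\tilde g$-Hessian
\[
\Hess^{\tilde g,\sigma}_{\q,x}\colon\K^{\agCoul,\sigma}_{k,x}\longrightarrow\K^{\agCoul,\sigma}_{k-1,x},
\]
which by Lemma~\ref{lem:nondegeneracycoulombblowup} is bijective precisely because $[x]$ is non-degenerate. The implicit function theorem then yields $r_0>0$, a neighborhood $V'\subset V$ of $[x]$, and a unique $C^1$ map $r\mapsto[x_r]\in V'$ with $F([x_r],r)=0$ and $[x_0]=[x]$. Shrinking $U_{[x]}\subset V'$ and taking $\lambda=f^{-1}(r)$ sufficiently large produces the claimed unique $[x_\lambda]\in U_{[x]}$ with $\Xqmlagcsigma([x_\lambda])=0$.

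The main obstacle is verifying the joint $C^1$ regularity of $F$ across $r=0$; this is precisely what motivated the careful choice of $f$ in Lemma~\ref{lem:homeo}. The growth condition $|\lambda_n|^2 f(|\lambda_{n+1}|)\to\infty$ is what forces $\|(1-p^\lambda)w\|_{L^2_{k-1}}/f(\lambda)\to 0$ and so produces a vanishing $r$-derivative at $r=0$, which in turn gives continuity of $\D F$ at $r=0$ rather than merely the existence of one-sided directional derivatives. Once this regularity is in hand, the identification of $\partial_{[y]}F|_{r=0}$ with the Hessian, and its invertibility, are immediate from Chapter~\ref{sec:coulombgauge}.
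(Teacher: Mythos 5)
Your approach is the paper's approach: reparametrize $\lambda$ by $r=f(\lambda)$ using the homeomorphism of Lemma~\ref{lem:homeo}, assemble $\Xqmlagcsigma$ and $\Xqagcsigma$ into a jointly $C^1$ family (with the growth condition on $f$ forcing the $r$-derivative to vanish at $r=0$), identify the linearization at $([x],0)$ with $\Hess^{\tilde g,\sigma}_{\q,x}$, invoke non-degeneracy via Lemma~\ref{lem:nondegeneracycoulombblowup}, and conclude by the implicit/inverse function theorem. The $C^1$-across-$r=0$ discussion is correct and matches the paper's use of Lemmas~\ref{lem:hessiancontinuity} and~\ref{lem:homeo}.

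There is, however, one gap you do not address. When $[x]$ is reducible it lies on the boundary of the Banach manifold-with-boundary $W^\sigma_k/S^1$ (the locus $s=0$), so the ``coordinate chart $V$'' you describe is modeled on a half-space, and the implicit function theorem does not apply directly; a priori the unique solution produced by an unreflected extension of $F$ could have $s<0$, which is not a point of $W^\sigma_k/S^1$ at all. The paper handles this by passing to the double $\mathscr{D}(W^\sigma_k/S^1\times(-1,1))$: since the vector field $S$ is tangent to the boundary, it extends $\iota$-equivariantly to the double, and then the uniqueness clause of the inverse function theorem forces the solution near a boundary point to be $\iota$-fixed, hence to lie on the boundary (i.e.\ to be reducible) and in particular inside $W^\sigma_k/S^1$. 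Your argument as written silently assumes $[x]$ is interior. To repair it, you should either pass to the double as the paper does, or argue separately that the unique zero of the extended half-space model in fact satisfies $s\geq0$ by using $S^1$-equivariance and tangency of the flow to $\{s=0\}$. Once that is in place, the rest of your proof is complete and coincides with the paper's.
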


\begin{proof}
Consider the vector field $S: (W^\sigma_{k}/S^1) \times (-1,1) \to \K^{\agCoul,\sigma}_{k-1} \times \rr$ given by
$$ S([x], r)=\bigl( \X_{\q^{f^{-1}(|r|)}}^{\agCoul,\sigma}([x]), r \bigr).$$

If we choose a representative $x \in W_k^\sigma$ for $[x]$, we can write 
$$ S([x],r) = \bigl(\bigl[ \bigl ( \Pi^{\agCoul,\sigma} \circ (l + p^{f^{-1}(|r|)} c_\q)^\sigma(x) \bigr) \bigr], r \bigr).$$
Using Lemma~\ref{lem:hessiancontinuity} and its adaption to $\Xqmlgcsigma$ together with Lemma~\ref{lem:homeo}, we deduce that $S$ is continuously differentiable.  Furthermore, from Lemma~\ref{lem:homeo}, the derivative of $S$ at $([x],0)$ is the same as the one obtained without the factor $p^{f^{-1}(|r|)}$; that is, it equals $\begin{pmatrix} \D^\sigma_{[x]}(\Xqagcsigma) & 0 \\ 0 & 1  \end{pmatrix}$, which is invertible by hypothesis.

Note that $(W_k^\sigma/S^1) \times (-1,1)$ is a Banach manifold with boundary. Let $\mathscr{D}(W_k^\sigma/S^1 \times (-1,1))$ be its double, equipped with the associated involution $\iota$.  Since $S$ is tangent to the boundary of $(W_k^\sigma/S^1) \times (-1,1)$, $S$ extends to the double in an $\iota$-invariant way.  We now apply the inverse function theorem to $S$ (on the double) at $([x],0)$. We get that there is a unique solution of $S([y], r)= (0, r)$ near $([x],0)$, for small $r > 0$.  Define $\lambda = f(r)$, and write $[x_\lambda] = [y]$.   

Note that if $[x]$ was irreducible (i.e. in the interior of $(W_k^\sigma/S^1) \times (-1,1)$), then the nearby solution, $[x_\lambda]$, is an irreducible stationary point of $\Xqmlagcsigma$.  If $[x]$ was reducible (i.e. on the boundary of $(W_k^\sigma/S^1) \times (-1,1)$), by $\iota$-invariance, the nearby $[x_\lambda]$ is also on the boundary (i.e., it is a reducible stationary point of $\Xqmlagcsigma$). 
\end{proof}

Recall that in Section~\ref{sec:strategy} we denoted by $\Crit_{[-N, N]}$ the set of stationary points of $\Xqagcsigma$ with grading in the interval $[-N, N]$ (or, equivalently, those inside $\N/S^1$). For simplicity, we write $\Crit_{\N}$ for $\Crit_{[-N, N]}$. Moreover, we denote by $\Crit^{\lambda}_{\N}$ the set of stationary points of $\Xqmlagcsigma$ that live in $\N/S^1$.
 
Combining the results of this subsection and the previous one, we have the following:
\begin{corollary}
\label{cor:corresp} 
 For $\lambda \gg 0$, there is a one-to-one correspondence 
 $$\Xi_{\lambda}: \Crit^{\lambda}_{\N} \to \Crit_{\N}.$$ 
This correspondence preserves the type of stationary point (irreducible, stable, unstable).
\end{corollary}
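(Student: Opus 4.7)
The plan is to assemble $\Xi_\lambda$ by combining the existence and uniqueness statement of Proposition~\ref{prop:nearby} with the convergence result in Lemma~\ref{lem:compactness} and Corollary~\ref{cor:NisU}. The first step is to observe that $\Crit_{\N}$ is a finite set: the irreducible stationary points of $\Xqagcsigma$ form a compact zero-dimensional subset of $W^\sigma/S^1$ by admissibility (hence are finite in number), and by \eqref{eq:GradingRed} only finitely many reducibles can have grading in $[-N,N]$. Since each $[x]\in\Crit_{\N}$ is non-degenerate (by Lemma~\ref{lem:rephraseStat} and admissibility of $\q$), such points are isolated in $W^\sigma/S^1$, so we can choose pairwise disjoint open neighborhoods $U_{[x]}\subset\U/S^1$ of each $[x]\in\Crit_{\N}$, contained in $\N/S^1$ and small enough that each $U_{[x]}$ contains no other point of $\Crit_{\N}$.

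Next, apply Proposition~\ref{prop:nearby} to each $[x]\in\Crit_{\N}$: for $\lambda$ sufficiently large (depending on $[x]$ and $U_{[x]}$), there is a unique $[x_\lambda]\in U_{[x]}$ with $\Xqmlagcsigma([x_\lambda])=0$. Since $\Crit_{\N}$ is finite, taking the maximum of these thresholds yields a single $\lambda_0$ such that for all $\lambda\geq\lambda_0$ every $[x]\in\Crit_{\N}$ produces a unique $[x_\lambda]\in U_{[x]}\subset\N/S^1$. This gives an injective map $[x]\mapsto[x_\lambda]$ into $\Crit^\lambda_{\N}$, and we set $\Xi_\lambda([x_\lambda])=[x]$.

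To see that this map is surjective, use Lemma~\ref{lem:compactness} and Corollary~\ref{cor:NisU}: choose $\epsilon>0$ smaller than the minimum of the $L^2_k$-radii of balls around points of $\Crit_{\N}$ that fit inside the $U_{[x]}$. Then, for $\lambda$ large, any stationary point $[y]\in\Crit^\lambda_{\N}$ lies within $L^2_k$-distance $\epsilon$ of some $[x]\in\Crit_{\N}$, hence in $U_{[x]}$, so $[y]=[x_\lambda]$ by the uniqueness in Proposition~\ref{prop:nearby}. This proves bijectivity for $\lambda$ large enough.

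Finally, for preservation of type: the irreducible/reducible dichotomy is preserved because Proposition~\ref{prop:nearby} (via the $\iota$-invariance of the doubled inverse function argument) sends interior points to interior points and boundary to boundary of $W^\sigma_k/S^1$; equivalently, a reducible $[x]$ has reducible nearby solution $[x_\lambda]$, as noted in Remark~\ref{rem:type}. For the stable/unstable dichotomy, the spinorial energy $\Lambda_{\q^\lambda}$ is continuous in the relevant sense (Lemma~\ref{lem:fam}(c)), and $\Lambda_\q([x])\neq 0$ for every reducible $[x]\in\Crit_{\N}$ by non-degeneracy; by shrinking $U_{[x]}$ and enlarging $\lambda_0$ we ensure $\Lambda_{\q^\lambda}([x_\lambda])$ has the same sign as $\Lambda_\q([x])$, so the boundary-stable/boundary-unstable labels agree. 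The main (minor) obstacle in all of this is simply coordinating the various $\lambda$-thresholds uniformly across the finite set $\Crit_{\N}$; once finiteness is secured the rest is bookkeeping on top of Proposition~\ref{prop:nearby} and Lemma~\ref{lem:compactness}.
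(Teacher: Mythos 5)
Your proof is correct and follows essentially the same route as the paper: finiteness of $\Crit_{\N}$, apply Proposition~\ref{prop:nearby} in disjoint neighborhoods to get injectivity, Lemma~\ref{lem:compactness} (and Corollary~\ref{cor:NisU}) for surjectivity, and Remark~\ref{rem:type} together with the $\iota$-invariance argument and $\Lambda_{\q^\lambda}$-continuity for type preservation. The paper's proof is terser but invokes the same chain of results.
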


\begin{proof}
There are only finitely many stationary points in $\Crit_{\N}$. Thus, we can find $\epsilon > 0$ such that the neighborhoods chosen in Proposition~\ref{prop:nearby} around each $[x] \in \Crit_{\N}$ are the balls of $L^2_k$ radius $\epsilon$ around those points. We let $\Xi_{\lambda}^{-1}([x])$ be the unique zeros of $\Xqmlagcsigma$ inside those balls. Since the points in $\Crit_{\N}$ are actually inside the smaller open set $\U/S^1 \subset \N/S^1$, we can assume that $\Xi_{\lambda}^{-1}([x])$ are in $\N/S^1$ as well. By Lemma~\ref{lem:compactness}, we get that any stationary point of $\Xqmlagcsigma$ from $\N/S^1$ must be of the form $\Xi_{\lambda}^{-1}([x])$ for some $[x] \in \Crit_{\N}$. Thus, $\Xi_{\lambda}$ is a one-to-one correspondence.

The claim about preserving type follows from Remark~\ref{rem:type} and the arguments at the end of the proof of Proposition~\ref{prop:nearby}.
\end{proof}

From now on, we will generally use $[x_\infty]$ to denote a stationary point of $\Xqagcsigma$ (corresponding to $\lambda = \infty$).  If we fix $[x_\infty] \in \Crit_{\N}$, observe that $\Xi_{\lambda}^{-1}([x_\infty])$, which is a stationary point of $\Xqmlagcsigma$, is smooth as a function of $\lambda$; this is guaranteed by the implicit function theorem. We will write $[x_\lambda]$ for $ \Xi_{\lambda}^{-1}([x_\infty])$.  

Finally, note that since the reducible stationary points of $\Xqgcsigma$ all correspond to non-zero eigenvalues, the same holds for the corresponding reducible stationary points of $\Xqmlagcsigma$, assuming $\lambda \gg 0$.  

It turns out we also can obtain analogous results for reducible stationary points in the blow-down.  This will be useful when analyzing the reducible stationary points which are in $(B(2R) \cap \vml)^\sigma$, but not in $\N$; see Proposition~\ref{prop:GradingBounds} below.

\begin{lemma}\label{lem:implicitfunctionreducible}
Fix $\epsilon > 0$.  For $\lambda \gg 0$, there is a one-to-one correspondence in $B(2R)$ between reducible stationary points $x_\infty$ of $\Xqgc$ and reducible stationary points $x_\lambda$ of $\Xqmlgc$; further, $x_\lambda$ is $\epsilon$-close to $x$.    
\end{lemma}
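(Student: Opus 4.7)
The plan is to replicate the structure of Proposition~\ref{prop:nearby} and Corollary~\ref{cor:corresp} but in the blow-down, restricted to the reducible locus, where the residual $S^{1}$-action is trivial so no passage to a quotient is needed. As a preliminary simplification I would observe that on a reducible $(a,0)\in W$ the quadratic term $c(a,0)$ vanishes and, by the $S^{1}$-equivariance of $\q$ recorded in Section~\ref{sec:AdmPer}, $\q^{1}(a,0)=0$, so also $\eta_\q^{1}(a,0)=0$. Consequently, using that $\pml$ preserves $\ker d^{*}$ and commutes with $l$, the reducible stationary point equations for $\Xqgc$ and $\Xqmlgc$ collapse respectively to
\begin{equation*}
  *da + \pi\q^{0}(a,0) = 0, \qquad *da + \pml\pi\q^{0}(a,0) = 0,
\end{equation*}
both viewed as equations in $(\ker d^{*})_{k-1}$.

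Next I would set up the inverse function theorem exactly as in Proposition~\ref{prop:nearby}, using the homeomorphism $f:(0,\infty]\to[0,1)$ from Lemma~\ref{lem:homeo} to turn $\lambda$ into a $C^{1}$ parameter. Define
\begin{equation*}
  S:(\ker d^{*})_{k}\times(-1,1)\longrightarrow(\ker d^{*})_{k-1}\times\R,\qquad
  S(a,r)=\bigl(\,\bigl(l+p^{f^{-1}(|r|)}c_\q\bigr)^{0}(a,0),\,r\,\bigr),
\end{equation*}
with $r=0$ corresponding to $\lambda=\infty$. Lemma~\ref{lem:homeo} combined with the controlled Coulomb properties of $\pml\etaq$ from Lemma~\ref{lem:finitecoulombtame} makes $S$ continuously differentiable, and at a reducible stationary point $(a_{\infty},0)$ of $\Xqgc$ its linearization is block-diagonal with identity in the $r$-direction and $*d+\pi\D_{(a_{\infty},0)}\q^{0}(\cdot,0)$ on the $(\ker d^{*})_{k}$ factor.

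The crucial point, and where I expect the only real thought is required, is Step~3: showing this $(1,1)$-block is invertible. Admissibility of $\q$ combined with Proposition~\ref{prop:nondegeneracycharacterized} guarantees that $\Xq$ is transverse to $\J^{\red}_{k-1}$ at $(a_{\infty},0)$. I would unpack this to see that this transversality is precisely the surjectivity (hence, by Fredholm index zero, invertibility) of $*d+\pi\D_{(a_{\infty},0)}\q^{0}(\cdot,0)$ on $(\ker d^{*})_{k}$; the projection $\Pi^{\gCoul}_{*}$ reduces to $\pi$ on the connection component at a reducible since the spinor-mixing term $Gd^{*}b\cdot\phi$ vanishes when $\phi=0$. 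Once this is in hand, I would double $(\ker d^{*})_{k}\times(-1,1)$ along its boundary as in Proposition~\ref{prop:nearby}, extend $S$ by the obvious involution (which preserves $S$ since $S$ is tangent to $\{r=0\}$), and apply the inverse function theorem to obtain, for each $a_{\infty}$, a unique nearby zero $(a_{\lambda},r)$ of $S$ with $\lambda=f^{-1}(r)$ arbitrarily large and $a_{\lambda}$ arbitrarily close to $a_{\infty}$; since the zero lies on the boundary, $a_{\lambda}$ is a reducible stationary point of $\Xqmlgc$.

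Finally, to upgrade this local bijection to the asserted global one-to-one correspondence in $B(2R)$, I would argue by contradiction as in Corollary~\ref{cor:corresp}: reducible stationary points of $\Xqgc$ in $B(2R)$ form a finite set (they are compact and the $S^{1}$-action is trivial on reducibles, so the finitely many gauge classes produce finitely many Coulomb representatives), and a sequence of reducible stationary points of $\X_{\q^{\lambda_{n}}}^{\gCoul}$ with $\lambda_{n}\to\infty$ lying in $B(2R)$ but away from all $a_{\infty}$ would, by Lemma~\ref{lem:compactnessnoblowup}, subconverge in $L^{2}_{k}$ to a reducible stationary point of $\Xqgc$, contradicting the local uniqueness just established. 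This combined with Step~4 yields the claimed bijection for all sufficiently large $\lambda$. The main obstacle is really just the translation of the three-dimensional transversality of $\Xq$ on the reducible locus into invertibility of the linearization of $\Xqgc$ on $(\ker d^{*})_{k}$; everything else is a faithful repetition of the already-worked-out irreducible argument.
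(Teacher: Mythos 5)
Your argument follows the same route as the paper's: non‑degeneracy of the reducible linearization on $(\ker d^*)_k$ (which the paper dismisses with ``it is straightforward to verify''), the inverse function theorem with the $\lambda$‑reparametrization from Lemma~\ref{lem:homeo}, and Lemma~\ref{lem:compactnessnoblowup} to upgrade the local statement to a global bijection. The only small omission is the paper's closing remark that Lemma~\ref{lem:trajectoriesinvml} places $x_\lambda$ inside $\vml$, but since the statement being proved does not assert this, it is not a gap.
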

\begin{proof}
It is straightforward to verify that since the stationary points of $\Xqagcsigma$ are non-degenerate, so are the reducible stationary points of $\Xqgc$ when restricting to the reducible locus in $W_k$.  By the implicit function theorem, as used in the proof of Proposition~\ref{prop:nearby}, we obtain that near each reducible stationary point $x_\infty$ of $\Xqgc$, there is a unique nearby reducible stationary point $x_\lambda$ of $\Xqmlgc$.  (This is in fact easier than Proposition~\ref{prop:nearby}, since there is no need to blow-up or quotient by $S^1$.)  On the other hand, by Lemma~\ref{lem:compactnessnoblowup}, reducible stationary points of $\Xqmlgc$ are necessarily nearby to reducible stationary points of $\Xqgc$.  This establishes the desired correspondence.  Finally, Lemma~\ref{lem:trajectoriesinvml} implies that $x_\lambda$ is also in $\vml$.  
\end{proof}

\section{Hyperbolicity}\label{sec:nondegeneratestationarypoints}
Recall that we chose $\q$ such that any stationary point $x$ of $\Xqgcsigma$ is non-degenerate, that is, the Hessian $\Hess^{\tilde{g},\sigma}_{\q,x}$ is invertible. By Lemma~\ref{lem:HessB} (b), $\Hess^{\tilde{g},\sigma}_{\q,x}$ has real spectrum, and hence is a hyperbolic operator (i.e., the spectrum of its complexification is disjoint from the real axis).

For the approximate vector field $\Xqmlgcsigma$, we define the $\tilde{g}$-Hessian in the blow-up by analogy with \eqref{eq:HessianBlowupCG}:
\begin{equation}
\label{eq:HessianBlowupCGlambda}
\Hess^{\tilde{g},\sigma}_{\qml,x} = \Pi^{\agCoul,\sigma}_x \circ   \Dgs_x \Xqmlgcsigma : \K^{\agCoul,\sigma}_{k,x} \to \K^{\agCoul,\sigma}_{k-1,x}.  
\end{equation}

As before, if $x$ is a stationary point of $\Xqmlgcsigma$, then it does not matter which connection we use to differentiate $\Xqmlgcsigma$ at $x$.  Therefore, we can simply write
$$\Hess^{\tilde{g},\sigma}_{\qml,x} = \Pi^{\agCoul,\sigma}_x \circ   \D^\sigma_x \Xqmlgcsigma.$$

We will say that $x$ is a {\em hyperbolic} stationary point if $\Hess^{\tilde{g},\sigma}_{\qml,x}$ is hyperbolic. (Generally, it will no longer be the case that $\Hess^{\tilde{g},\sigma}_{\qml,x}$ has real spectrum.) 

If $x$ is in $\N$, then from Corollary~\ref{cor:IsFinite} we know that $x$ lives in the finite dimensional blow-up $(\vml)^{\sigma}$. Furthermore, since $\Xqmlgc = l+\pml c_{\q}$ maps $\vml$ to $\vml$, then $x$ being hyperbolic (as above, in infinite dimensions) implies that $[x]$ is also hyperbolic as a stationary point of $\Xqmlagcsigma$ restricted to the finite dimensional approximation $(W^\lambda)^\sigma/S^1$. Proving hyperbolicity for these stationary points will be the first step towards defining Morse homology with $\Xqmlagcsigma$ on $(W^\lambda)^\sigma/S^1$ as in Section~\ref{sec:combinedMorse}.

Let us start by analyzing the Hessian on the blow-up more carefully.  

Writing $x = (a,s,\phi) \in W_k^\sigma$, we have 
\begin{equation}\label{eqn:lsigma}
l^\sigma(a,s,\phi) = (*da, \langle D\phi, \phi \rangle_{L^2} s, D\phi - \langle D\phi, \phi \rangle_{L^2} D \phi).  
\end{equation}
Thus, we obtain for $v = (b,r,\psi) \in \T^{\gCoul,\sigma}_{j,x}$
\begin{align}\label{eqn:lsigma-derivative}
\nonumber \D^\sigma_{x} l^\sigma(v) &= (*db, \langle D\phi, \phi \rangle_{L^2} r + 2\Re \langle D \psi, \phi \rangle_{L^2} s, \Pi^\perp_{\phi}(D \psi - \langle D\phi, \phi \rangle_{L^2} \psi - 2 \Re \langle D\psi, \phi \rangle_{L^2} \phi)) \\
& = (*db, \langle D\phi, \phi \rangle_{L^2} r + 2\Re \langle D \psi, \phi \rangle_{L^2} s, \Pi^\perp_{\phi}(D \psi) - \langle D\phi, \phi \rangle_{L^2} \psi) \\
\nonumber &= (*db, \langle D\phi, \phi \rangle_{L^2} r + 2\Re \langle D \psi, \phi \rangle_{L^2} s, D \psi - \langle D \psi, \phi \rangle_{L^2} \phi - \langle D\phi, \phi \rangle_{L^2} \psi) 
\end{align}
where $\Pi^\perp_{\phi}$ denotes $L^2$ orthogonal projection onto $\{\psi' \in L^2_{j-1}(Y; \Spin) \mid  \Re \langle \phi, \psi' \rangle_{L^2} = 0 \}$.  Here we are using that $\Pi^\perp_{\phi}(\psi) = \psi$ since $(b,r,\psi) \in \T^\sigma_{j,x}$.  Again, note that here we are taking derivatives with respect to the $L^2$ metric.  
\begin{comment}
We have
\begin{align*}\label{eqn:lsigma-proj}
\Pi^\perp_\phi(D \psi - \langle D\phi,\phi \rangle_{L^2} \psi) &=   D \psi - \langle D\phi,\phi \rangle_{L^2} \psi - \Re \langle D\psi - \langle D\phi, \phi \rangle_{L^2} \psi, \phi \rangle_{L^2} \phi \\
& = D \psi - \langle D\phi,\phi \rangle_{L^2} \psi - \Re \langle D\psi, \phi \rangle_{L^2} \phi.  
\end{align*}
For notation, we write $\Pi_\phi^{\top}=1-\Pi_\phi^{\perp}$, so that 
\begin{equation}
\label{eq:veew}
\Pi^{\top}_\phi(D \psi - \langle D \phi, \phi \rangle_{L^2} \psi)=\Re \langle D\psi, \phi \rangle_{L^2} \phi.
\end{equation}
\end{comment}  
Observe that $L^2_k$ bounds on $(a,s,\phi)$ and $L^2_j$ bounds on $(b,r,\psi)$ easily give bounds on the $r$-component of \eqref{eqn:lsigma-derivative}.  

Recall from Lemma~\ref{lem:HessB} that $\Hess^{\tilde{g},\sigma}_{\q,x}$ has Fredholm index 0, regardless of whether $x$ is a stationary point.  Further, we have that $\Hess^{\tilde{g},\sigma}_{\q^\lambda,x}$ is a compact perturbation of $\Hess^{\tilde{g},\sigma}_{\q,x}$ (roughly since both $\pml c_\q$ and $c_\q$ are compact as maps from $W_k$ to $W_{k-1}$), and thus has Fredholm index 0.  Further, since the inclusion $\K^{\agCoul,\sigma}_{k,x} \to \K^{\agCoul,\sigma}_{k-1,x}$ is compact, we see that for any $z \in \mathbb{C}$, after complexifying, 
\begin{equation}\label{eqn:Hesskappa}
\ind(\Hess^{\tilde{g},\sigma}_{\q,x} - z I) = 0.
\end{equation}

\begin{proposition}
\label{prop:ND}
For $\lambda$ sufficiently large, the stationary points of $\Xqmlgcsigma$ inside $\N$ are hyperbolic.  As a consequence, among the stationary points of the restriction of $\Xqmlagcsigma$ to the finite dimensional space $(W^\lambda)^\sigma/S^1$, all those in $\N/S^1$ are hyperbolic (and hence non-degenerate).
\end{proposition}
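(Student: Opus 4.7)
The plan is a perturbation argument. By Corollary~\ref{cor:corresp} every stationary point of $\Xqmlgcsigma$ in $\N$ is of the form $x_\lambda = \Xi_\lambda^{-1}(x_\infty)$ with $x_\infty\in \Crit_\N$, and by Proposition~\ref{prop:nearby} we have $x_\lambda\to x_\infty$ in $L^2_k$ as $\lambda\to\infty$. By non-degeneracy of $x_\infty$ together with Lemma~\ref{lem:HessB}(b), the Hessian $\Hess^{\tilde{g},\sigma}_{\q,x_\infty}$ is invertible with real spectrum, so in particular hyperbolic with a uniform spectral gap around $i\R$. Since $\Crit_\N$ is finite, it will suffice to show, for each fixed $x_\infty$, that $\Hess^{\tilde{g},\sigma}_{\qml,x_\lambda}$ is hyperbolic once $\lambda$ is sufficiently large, via spectral stability.

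The immediate difficulty is that $\Hess^{\tilde{g},\sigma}_{\qml,x_\lambda}$ and $\Hess^{\tilde{g},\sigma}_{\q,x_\infty}$ act on the different slices $\K^{\agCoul,\sigma}_{k,x_\lambda}$ and $\K^{\agCoul,\sigma}_{k,x_\infty}$. I would bypass this by passing to the $\tilde g$-extended Hessians $\widehat{\Hess}^{\tilde{g},\sigma}_{\qml,x_\lambda}$ and $\widehat{\Hess}^{\tilde{g},\sigma}_{\q,x_\infty}$ of \eqref{eq:ExtHessBlowUpGC}, where the approximate version is obtained by replacing $\Xqsigma$ with the natural blown-up lift of $l+\pml c_\q$ in that definition. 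After combining $\psi$ and $r$ into $\psis = \psi + r\phi$, both extended Hessians act on the fixed Sobolev space $L^2_k(Y; iT^*Y \oplus \Spin \oplus i\R)$, independent of the base point and of $\lambda$. By Lemma~\ref{lem:eH}, the limit $\widehat{\Hess}^{\tilde{g},\sigma}_{\q,x_\infty}$ is invertible with real spectrum, and by Lemma~\ref{lem:fakehessian} it is a compact perturbation of the self-adjoint elliptic operator $L_0$ of \eqref{eq:L0}, so its spectrum is discrete and the resolvent norm $\|(\widehat{\Hess}^{\tilde{g},\sigma}_{\q,x_\infty}-i\mu I)^{-1}\|$ is uniformly bounded for $\mu$ ranging over $\R$.

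The main analytic step is to show that
\begin{equation*}
\widehat{\Hess}^{\tilde{g},\sigma}_{\qml,x_\lambda} \longrightarrow \widehat{\Hess}^{\tilde{g},\sigma}_{\q,x_\infty} \quad \text{in the operator norm } L^2_k \to L^2_{k-1}
\end{equation*}
as $\lambda \to \infty$. I would split the difference as a base-point change from $x_\infty$ to $x_\lambda$ plus the replacement of $\D c_\q$ by $\pml\,\D c_\q$ in the nonlinear term. The base-point change is controlled by Lemma~\ref{lem:hessiancontinuity} together with analogous continuity in $x$ of the boundary entries $\dd^\sigma_x$ and $\dd^{\sigma,\tilde\dagger}_x$, using $x_\lambda\to x_\infty$ in $L^2_k$. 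The cut-off change is the principal obstacle: by Fact~\ref{fact:pml}(b), $\pml\to 1$ only in the strong operator topology on $W_{k-1}$, not in norm. The resolution is that $\D c_\q$ is functionally bounded from $W_k$ into $\Hom(W_k,W_{k-1})$ by Lemma~\ref{lem:Dcq}, so on an $L^2_k$-bounded set the family $\{\D_x c_\q\}$ has precompact image in $\Hom(L^2_k,L^2_{k-1})$; on a precompact set strong convergence is uniform, which upgrades Fact~\ref{fact:pml}(b) to the required operator-norm convergence. This is precisely the point at which the very tameness hypothesis on $\q$ is used.

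With norm convergence in hand, the stability of the set of invertible Fredholm operators together with the uniform resolvent bounds along $i\R$ imply that $\widehat{\Hess}^{\tilde{g},\sigma}_{\qml,x_\lambda} - i\mu I$ is invertible for every $\mu\in\R$ once $\lambda$ is sufficiently large, so the spectrum of $\widehat{\Hess}^{\tilde{g},\sigma}_{\qml,x_\lambda}$ is disjoint from $i\R$. The block-triangular structure used in the proof of Lemma~\ref{lem:eH} then transfers this spectral property from the extended Hessian to $\Hess^{\tilde{g},\sigma}_{\qml,x_\lambda}$ itself, giving the first assertion. For the consequence, by Corollary~\ref{cor:IsFinite} the point $x_\lambda$ already lies in $(\vml)^\sigma$, and $\Xqmlgcsigma$ preserves the $L^2$-orthogonal decomposition of $W^\sigma$ into $(\vml)^\sigma$ and its complement. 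The Hessian is correspondingly block-triangular, with the normal block governed by $l$ restricted to eigenspaces of eigenvalue at least $\lambda$ in absolute value; this block is real and invertible, so hyperbolicity of the full Hessian forces hyperbolicity of its finite-dimensional restriction.
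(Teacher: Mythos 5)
Your proposal takes a genuinely different route from the paper. The paper argues by contradiction and compactness directly at the level of the (non-extended) Hessian on the anticircular slice: it supposes there were non-hyperbolic stationary points $x_n$ along a sequence $\lambda_n\to\infty$, extracts purely imaginary eigenvalues $i\kappa_n$ with unit eigenvectors $v_n$, then uses a clever orthogonality trick (the $L^2$-self-adjointness of the leading-order piece of $\D^\sigma l^\sigma$) to bound $\kappa_n$ and upgrade the $v_n$ to $L^2_{k+1}$-bounds, so that a subsequence converges to a purely-imaginary eigenvector at $x_\infty$, contradicting hyperbolicity of $x_\infty$. Your approach instead passes to the extended Hessians on a fixed Sobolev space, proves operator-norm convergence $\widehat{\Hess}^{\tilde g,\sigma}_{\qml,x_\lambda}\to\widehat{\Hess}^{\tilde g,\sigma}_{\q,x_\infty}$, and then invokes resolvent stability along $i\R$. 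The insight of upgrading strong convergence of $\pml$ to uniform operator-norm convergence via functional boundedness plus precompactness of $W_k\hookrightarrow W_{k-1}$ is correct and is in fact exactly the argument the paper later runs in the proof of Lemma~\ref{lem:GrPres} to compare extended Hessians. So the operator-norm-convergence half of your proposal is sound and not circular.

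However, there are two places where you assert more than you establish. First, you cite Lemma~\ref{lem:hessiancontinuity} to control the base-point change, but that lemma only says $\Hess^{\tilde g,\sigma}_{\q}$ is a continuous bundle map --- by the uniform boundedness principle this gives strong-operator-topology continuity in the base point, not operator-norm continuity. To get the norm convergence you need for the base-point change, you have to go into the explicit formula for the difference (as the paper does in the proof of Lemma~\ref{lem:GrPres}, bounding it in terms of $L^2_n$ norms of $x_\lambda - x_\infty$ and differences of derivatives of the perturbation terms). Second, and more seriously, you assert without proof that the resolvent norm $\|(\widehat{\Hess}^{\tilde{g},\sigma}_{\q,x_\infty}-i\mu I)^{-1}\|_{L^2_{k-1}\to L^2_k}$ is uniformly bounded for $\mu\in\R$. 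This is essential: without uniformity in $\mu$, the "sufficiently large $\lambda$" in your stability step would depend on $\mu$ and you could not rule out purely imaginary spectrum escaping to $\pm i\infty$. The bound is true --- for $|\mu|$ large one uses a Neumann series against the self-adjoint leading term $L_0$ (with respect to an $l$-defined Sobolev norm) together with elliptic estimates, and for $|\mu|$ in a compact range one uses continuity of the resolvent on the complement of the (real, discrete) spectrum --- but it is not a freebie and needs to appear in the proof. The paper's concrete argument, working with the actual eigenvectors $v_n$ and exploiting $L^2$-self-adjointness of the leading symbol to show the $\kappa_n$ are bounded, is precisely the bare-hands substitute for this resolvent bound, which is why it sidesteps the issue.
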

\begin{proof}
Recall from Section~\ref{sec:fdax} that we can use an equivalent $L^2_k$ metric on $W$ (and hence $W^\sigma$), where we use the operator $l$ in place of the covariant derivatives.  The statement is independent of which metric we use, so we opt for $l$ in this proof.  

Suppose the claim is false.  Consider a sequence $x_n = (a_n,s_n,\phi_n) \in \N$ of stationary points of $\mathcal{X}^{\gCoul, \sigma}_{\q^{\lambda_n}}$ which are non-hyperbolic, where $\lambda_n \to \infty$.    Note that we have $L^2_k$-bounds on each component of $x_n$.  Moreover, by Lemma~\ref{lem:compactness}, there exists a subsequence of $x_n$ which converges in $W^\sigma_k$ to $x$, a stationary point of $\Xqgcsigma$. By our assumption on $\q$, the stationary point $x$ is hyperbolic.  Since each $x_n$ is non-hyperbolic, we may find a sequence of real numbers $\kappa_n$ such that, after complexifying, $\Hess^{\tilde{g},\sigma}_{\q^{\lambda_n},x_n} - i\kappa_n I$ is not invertible.  Since $\ind(\Hess^{\tilde{g},\sigma}_{\q^{\lambda_n},x_n} - i\kappa_n I) = 0$ by \eqref{eqn:Hesskappa}, these operators have non-trivial kernel, and thus we may find non-zero $v_n \in \K^{\agCoul,\sigma}_{k,x_n} \otimes \mathbb{C}$ such that $\Hess^{\tilde{g},\sigma}_{\q^{\lambda_n},x_n}(v_n) = i \kappa_n v_n$.  For the rest of the proof, we drop the complexified notation for simplicity.  

After rescaling, we can assume that $\|v_n\|_{L^2_k} =1$.  We will show that there exists a subsequence which converges in $\K^{\agCoul,\sigma}_{k}$ to a non-trivial element $v$ of $\K^{\agCoul,\sigma}_{k,x}$ for which $\Hess^{\tilde{g},\sigma}_{\q,x}(v) = i \kappa v$, for some $\kappa \in \mathbb{R}$.  This will contradict the hyperbolicity of $x$.    

We write $v_n = (b_n,r_n,\psi_n)$.  Our first step is to prove that the $\kappa_n$ are bounded, as are the $L^2_{k+1}$-norms of $v_n$. 

 By Lemma~\ref{lem:fam}(b), we get a uniform bound on the $L^2_k$ norms of $\Pi^{\agCoul,\sigma} \circ \D^\sigma_{x_n} (p^{\lambda_n} c_\q)^\sigma(v_n)$.  Further, since $x_n$ is a stationary point of $\Xqmlngcsigma$, we can write   
\begin{align}
\label{eq:hesszero}
i\kappa_n v_n &= \Hess^{\tilde{g},\sigma}_{\q^{\lambda_n},x_n}(v_n) \\
&= \Pi^{\agCoul,\sigma} \circ \D^\sigma_{x_n} \mathcal{X}^{\gCoul,\sigma}_{\q^{\lambda_n}}(v_n) \notag \\
&= \Pi^{\agCoul,\sigma} \circ \D^\sigma_{x_n} l^\sigma(v_n) + \Pi^{\agCoul,\sigma} \circ \D^\sigma_{x_n} (p^{\lambda_n} c_\q)^\sigma(v_n).  \notag
\end{align}
By definition, $\Pi^{\agCoul,\sigma} \circ \D^\sigma_{x_n} l^\sigma(v_n) = \D^\sigma_{x_n} l^\sigma(v_n) - \alpha_n (0,0,i\phi_n)$ for some sequence of real numbers $\alpha_n$.  The $L^2_k$-bounds on $x_n$ and $v_n$ give $L^2_{k-1}$-bounds on $\D^\sigma_{x_n} l^\sigma(v_n)$ by \eqref{eqn:lsigma-derivative}.  By continuity, $\Pi^{\agCoul,\sigma} \circ \D^\sigma_{x_n} l^\sigma(v_n)$ is $L^2_{k-1}$-bounded.  Because $\| \phi_n \|_{L^2_{k-1}} \geq 1$, the sequence $\alpha_n$ is bounded.  Thus, $\alpha_n (0,0,i\phi_n)$ is $L^2_k$-bounded.  

Now note that $\langle D\psi_n, \phi_n \rangle_{L^2} \phi_n$ is $L^2_k$ bounded.  By \eqref{eqn:lsigma-derivative}, we then have that $\D^\sigma_{x_n} l^\sigma(v_n) - (*db_n, 0, D\psi_n - \langle D\phi_n, \phi_n \rangle_{L^2} \psi_n)$ is $L^2_k$ bounded; here we are using the observation that the $L^2_k$ bounds on $x_n$ and $v_n$ guarantee bounds on the $r$-component of $\D_{x_n} l^\sigma(v_n)$.  Combining this with the above discussion, we have that 
\[
\Pi^{\agCoul,\sigma} \circ \D^\sigma_{x_n} l^\sigma(v_n) + \Pi^{\agCoul,\sigma} \circ \D^\sigma_{x_n} (p^{\lambda_n} c_\q)^\sigma(v_n) - (*db_n, 0 , D \psi_n - \langle D \phi_n, \phi_n \rangle_{L^2}  \psi_n)
\]
is $L^2_k$-bounded.  We now see \eqref{eq:hesszero} shows that 
\begin{equation}\label{eqn:brpsi}
i\kappa_n (b_n,r_n,\psi_n) - (*db_n,r_n, D\psi_n - \langle D \phi_n, \phi_n \rangle_{L^2}  \psi_n)
\end{equation}
is $L^2_k$-bounded.  However, note that the operator $(b,r,\psi) \mapsto (*db,r, D \psi - \langle D \phi_n, \phi_n \rangle_{L^2} \psi)$ is $L^2$ self-adjoint. It follows from here that $i \kappa_n v_n$ and $(*db_n,r_n, D \psi_n -  \langle D \phi_n, \phi_n \rangle_{L^2} \psi_n)$ are (real) orthogonal in the complexification, with respect to the $L^2$ inner product. Given that we are working with the $L^2_k$ inner products that are defined using $l$ as the derivative, we see that the orthogonality also holds with respect to $L^2_k$. Thus, the $L^2_k$ bounds on the quantity in \eqref{eqn:brpsi} imply that $\kappa_n v_n$ is $L^2_k$-bounded, and thus the $\kappa_n$ are bounded.  On the other hand, we also obtain $L^2_k$-bounds on $(*db_n, r_n, D\psi_n)$ and thus the $v_n$ are bounded in $L^2_{k+1}$ by the ellipticity of $*d$ and $D$.    

Thus, we can find a subsequence of the $v_n$ that converges in $\K^{\agCoul,\sigma}_{k}$ to an element $v$, which necessarily has $\| v \|_{L^2_k} = 1$.  Because $\K^{\agCoul,\sigma}_{k}$ is closed in the tangent bundle to $W^\sigma_k$, we have $v \in \K^{\agCoul,\sigma}_k$ as well.  After passing to a further subsequence, $\kappa_n \to \kappa$, for some $\kappa \in \mathbb{R}$.  We have
$$ \Hess^{\tilde{g},\sigma}_{\q^{\lambda_n},x_n}(v_n) - \Hess^{\tilde{g},\sigma}_{\q,x_n}(v_n) =  \Pi^{\agCoul,\sigma} \circ \D^\sigma_{x_n} (p^{\lambda_n} c_{\q}  - c_{\q})^\sigma(v_n) \to 0 \text{ in } L^2_{k-1}.$$

By assumption, the sequence $\Hess^{\tilde{g},\sigma}_{\q^{\lambda_n},x_n}(v_n)$ is the sequence $i \kappa_n v_n$. Moreover, $ \Hess^{\tilde{g},\sigma}_{\q,x_n}(v_n)$ converges to $\Hess^{\tilde{g},\sigma}_{\q,x}(v)$ in $L^2_{k-1}$ by the continuity of the Hessian (cf. Lemma~\ref{lem:hessiancontinuity}).  Thus, there exists a non-zero $v \in \K^{\agCoul,\sigma}_k$ such that $\Hess^{\tilde{g}}_{\q,x}(v) = i \kappa v$.  This contradicts the hyperbolicity of $x$.  
\end{proof}

\section{Other stationary points}\label{sec:stationarypointsoutsideN}
Proposition~\ref{prop:ND} was only about the stationary points in $\N$. Recall that $\N$ is a subset of $B(2R)^{\sigma} \subset W^{\sigma}_k$. We do not have any control over the stationary points of $\Xqmlgcsigma$ outside $B(2R)^{\sigma}$, but we can say a bit more about the ones in $B(2R)^{\sigma}$ (and not necessarily in $\N$).

First, recall from Corollary~\ref{cor:IsFinite} that, for $\lambda \gg 0$, all the stationary points of $\Xqmlgcsigma$ in $\N$ are actually inside the finite-dimensional blow-up $(W^{\lambda})^{\sigma}$.

Second, by applying Proposition~\ref{prop:proposition3perturbed} to $N=\overline{B(2R)}$ and $U$ being the blow-down of $\U$, we see that for $\lambda \gg 0$, all the irreducible stationary points of $\Xqmlgcsigma$ in $B(2R)^{\sigma}$ are actually in $\U \subset \N$.

%Also for $\lambda \gg 0$, by Lemma~\ref{lem:boundedconnection}, all the reducible stationary points of $\Xqmlgcsigma$ are in $B(2R)^{\sigma}$ (provided that $R$ was chosen sufficiently large). Some of these reducibles are in $\N$, and hence close to reducible zeros of $\Xqgcsigma$ with grading in $[-N, N]$. 

Some of the reducible solutions to $\Xqmlgcsigma$ are in $\N$, and hence close to reducible zeros of $\Xqgcsigma$ with grading in $[-N, N]$.  However, there will be other reducibles in $B(2R)^\sigma$ which may not be in $\N$.  We now study these other reducibles. A reducible $(a, 0, \phi)$ has to satisfy:
$$-*da = (p^{\lambda}c_{\q})^0(a, 0)$$
and
$$D_{\qml,a}(\phi) = D \phi + (\pml)^1(\D_{(a,0)} c_\q(0,\phi)) = \kappa \phi,$$
for some $\kappa \in \rr$. Note that $(a, 0) \in W^{\lambda}$, but $(a, 0, \phi)$ may or may not be in $(W^{\lambda})^{\sigma}$. 

Observe that $D$, $(\pml)^1$ and $\D_{(a,0)} c_\q(0,\cdot )$ are all $L^2$ self-adjoint maps on spinors. (We are using here that the $\tilde g$ metric agrees with the $L^2$ metric at reducibles.) Nevertheless, the product of $(\pml)^1$ and $\D_{(a,0)} c_\q(0,\cdot )$, and hence the operator $D_{\qml,a}$, may not be self-adjoint. On the other hand, for the real numbers $\llambda_i$ defined in Section~\ref{sec:fdax}, the restriction of $D_{\q^{\llambda_i},a}$ to (the spinorial part of) $W^{\lambda}$ is self-adjoint. This is because for $\lambda = \llambda_i$, the map $\pml$ is the honest $L^2$ projection onto $W^\lambda$; therefore, for $(a, 0, \phi) \in (W^{\lambda})^{\sigma}$ we can write
$$ D + (\pml)^1 \D_{(a,0)} c_\q(0,\cdot ) = D + (\pml)^1 \D_{(a,0)} c_\q(0,\cdot ) (\pml)^1,$$
and the right hand side is self-adjoint. 

We will focus on the reducible stationary points of $\Xqmlgcsigma$ that are in $(B(2R) \cap W^{\lambda})^{\sigma}$ for $\lambda = \llambda_i$.  We then have the following strengthening of Proposition~\ref{prop:ND}:

\begin{proposition}
\label{prop:ND2}
We can choose the admissible perturbation $\q$ such that for any $\lambda \in \{\llambda_1, \llambda_2, \dots\}$ sufficiently large, the restriction of $\Xqmlgcsigma$ to $(B(2R) \cap W^{\lambda})^{\sigma}$ has only hyperbolic (and hence non-degenerate) stationary points.   
\end{proposition}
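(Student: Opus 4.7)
The plan is to separate the analysis into irreducibles and reducibles, reduce hyperbolicity for the reducibles to three conditions on $D_{\qml,a_\lambda}|_{W^\lambda}$, and invoke a Sard--Smale argument to guarantee the last of those conditions by a generic choice of $\q$.

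First, I would apply Proposition~\ref{prop:proposition3perturbed} with $N=\overline{B(2R)}$ and $U$ the blow-down of the interior of $\N$ to conclude that for $\lambda$ sufficiently large every irreducible stationary point of $\Xqmlgcsigma$ in $B(2R)^\sigma$ lies in $\N$, and is therefore hyperbolic by Proposition~\ref{prop:ND}. What remains is the reducibles. By Lemma~\ref{lem:implicitfunctionreducible}, for $\lambda \gg 0$ the reducible zeros $(a_\lambda,0)$ of $\Xqmlgc$ in $B(2R)$ are in bijection with, and $L^2_k$-close to, the finitely many reducible zeros $(a,0)$ of $\Xqgc$, and each $a_\lambda$ lies in $W^\lambda$. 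A reducible stationary point $(a_\lambda,0,\phi)$ of $\Xqmlgcsigma$ in $(B(2R)\cap W^\lambda)^\sigma$ is then specified by a unit eigenvector $\phi$ of $D_{\qml,a_\lambda}|_{W^\lambda}$ with eigenvalue $\kappa$, and at $\lambda=\llambda_i$ this operator is $L^2$-self-adjoint.

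Splitting the extended Hessian along the lines of Section~\ref{sec:ExtHessBlowUp} and invoking the self-adjointness at $\lambda=\llambda_i$, hyperbolicity of this reducible stationary point reduces to:
\begin{itemize}
\item[(i)] the connection-direction linearization $*d + \pi\,\pml\,\D_{(a_\lambda,0)}\eta_\q^0(0,\cdot)$ on $\ker d^*$ is hyperbolic;
\item[(ii)] $\kappa\neq 0$;
\item[(iii)] $\kappa$ is a simple eigenvalue of $D_{\qml,a_\lambda}|_{W^\lambda}$.
\end{itemize}
Condition (i) follows for $\lambda$ large by an openness argument: the approximate operator converges strongly to $*d + \pi\,\D_{(a,0)}\eta_\q^0(0,\cdot)$, which is hyperbolic by Lemma~\ref{lem:eH} at the non-degenerate reducible $(a,0)$. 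For (ii), large-modulus eigenvalues are trivially nonzero, while a sequence of vanishing eigenvalues would produce in the limit, via the compactness argument of Lemma~\ref{lem:compactness}, a zero eigenvalue of $D_{\q,a}$, contradicting admissibility. The same type of compactness argument handles (iii) for $\kappa$ in any prescribed bounded range, since admissibility forces simplicity of the entire spectrum of $D_{\q,a}$.

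The hard part will be (iii) for eigenvalues near the spectral cutoff $\pm\lambda$, where no limit to the infinite-dimensional operator is available. Here I would run a Sard--Smale argument on $\q$: for fixed $\llambda_i$, the stratum of $L^2$-self-adjoint endomorphisms of $W^{\llambda_i}$ with a repeated eigenvalue has real codimension at least two. Using the abundance of cylinder-function perturbations from \cite[Chapter~11]{KMbook} to realize arbitrary symmetric variations of $(\pml)^1\,\D_{(a,0)}c_\q(0,\cdot)\,(\pml)^1$ on $W^{\llambda_i}$, and with $a_\lambda(\q)$ depending smoothly on $\q$ by the implicit function theorem, the map $\q \mapsto D_{\q^{\llambda_i}, a_\lambda(\q)}|_{W^{\llambda_i}}$ submerses onto self-adjoint operators near each $a_\lambda$, so its preimage of the repeated-eigenvalue stratum has residual complement in the Banach space of very tame perturbations. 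Intersecting these residual sets over the countable family $\{\llambda_i\}$ with the residual set of admissible perturbations from Theorem~\ref{thm:admissibleperturbationsexist}, Baire's theorem yields the required $\q$. The main obstacle will be the submersion claim: one must verify that cylinder-function perturbations really do span all symmetric endomorphisms of $W^{\llambda_i}$ after compression, analogous to but finer than the unique-continuation-based transversality constructions of \cite[Chapter~15]{KMbook}.
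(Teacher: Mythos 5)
Your overall strategy matches the paper's: reduce to the countable family $\{\llambda_i\}$, handle the irreducibles via Proposition~\ref{prop:proposition3perturbed} and Proposition~\ref{prop:ND}, and obtain non-degeneracy of the finitely many reducible classes by adapting the Kronheimer--Mrowka genericity argument of \cite[Section 12.6]{KMbook} and intersecting the resulting residual sets with the set of admissible perturbations. However, the step you single out as ``the main obstacle'' --- verifying that cylinder-function perturbations span the symmetric endomorphisms of the relevant finite-dimensional spinor spaces after compression by $\pml$ --- is precisely the content of the paper's proof, and it is not ``finer than'' the constructions in \cite[Chapter 15]{KMbook}; it is a direct transfer of \cite[Lemma 12.6.2]{KMbook}. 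The observation is that at $\lambda = \llambda_i$, the operator $\pml$ is the honest $L^2$-orthogonal projection onto $\vml$, and since at a reducible the $\tilde g$-metric on spinors agrees with the $L^2$-metric, the Hessian of $\delta f|_{\vml}$ is $\pml \circ \Hess(\delta f)|_{\vml}$. For any finite-dimensional subspace $V \subset \vml$ of smooth spinors, $\Pi_V \circ \pml = \Pi_V$, so the compression $\Pi_V \Hess(\delta f|_{\vml}) \Pi_V$ equals $\Pi_V \Hess(\delta f) \Pi_V$, and the latter already sweeps out all $S^1$-equivariant self-adjoint endomorphisms of $V$ by \cite[Lemma 12.6.2]{KMbook}. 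No new spanning argument is required. Moreover, Kronheimer--Mrowka work with perturbations $\delta\q^\perp$ that vanish along the reducible locus, so $a_\lambda$ does not move under these variations; this removes the complication you were anticipating about the smooth dependence $\q \mapsto a_\lambda(\q)$ in a Sard--Smale setup.

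Your split of condition (iii) into a bounded range (handled by compactness against the admissible $D_{\q,a}$) and a range near $\pm\lambda$ (handled by transversality) is sound but unnecessary: once one has the full freedom in the compressed Hessian, the residual-set argument handles all eigenvalues of $D_{\qml,a_\lambda}|_{\vml}$ simultaneously, as in the paper. Two smaller remarks: in condition (i) the relevant non-degeneracy statement is condition~(3) of Proposition~\ref{prop:nondegeneracycharacterized} (transversality of $\Xq$ to $\J^{\red}_{k-1}$), not Lemma~\ref{lem:eH}; and in the openness argument for (i), the convergence of the approximate connection-direction operators to $*d + \D_{(a,0)}(c_\q)^0(\cdot,0)$ is only strong, so one needs a small compactness step (as in the proof of Proposition~\ref{prop:absolute-gradings-agree}) rather than a pure openness-in-operator-norm argument to conclude invertibility for $\lambda \gg 0$.
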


\begin{proof}
As part of the proof of existence of admissible perturbations, Kronheimer and Mrowka showed in \cite[Section 12.6]{KMbook} that for a residual (and hence nonempty) set of tame perturbations $\q$, the reducible stationary points of $\Xq$ are non-degenerate. The key point is in the proof of Lemma 12.6.2 in \cite{KMbook}: There is a large enough space of tame (in fact, very tame) perturbations $\q^{\perp}$ (given by cylinder functions) that vanish at the reducible locus, such that in the tangent space to any $\q^{\perp}$ we can find a $\delta \q^{\perp} = \grad \delta f$, such that the Hessian of $\delta f|_V$ is any chosen $S^1$-equivariant self-adjoint endomorphism of $V = \ker(D_{\q^{\perp}, a})$.  

We can adapt this proof to $\Xqmlgcsigma$, and pick $\q$ such that all the reducibles are non-degenerate for a given $\lambda$. Indeed, we now need to find a $\delta \q^{\perp} = \grad \delta f$ such that the $\tilde{g}$-Hessian of $\delta f|_V$ is any  $S^1$-equivariant self-adjoint endomorphism of $V = \ker(D_{(\q^{\perp})^{\lambda}, a}) \subset \vml$. Since we are at a reducible, on the spinorial part we have that the $\tilde{g}$-Hessian is the same as the usual Hessian. Further, because $\lambda= \llambda_i$, we have that $\pml$ is the $L^2$ orthogonal projection to $\vml$. Hence, we have
$$ \Hess (\delta f|_{\vml}) = \pml \circ \Hess(\delta f)|_{\vml},$$
and we can arrange so that this equals any chosen $S^1$-equivariant self-adjoint endomorphism of the spinors in $\vml$. From here we get the same freedom in choosing $\Hess (\delta f|_V)$, for any subspace $V$ of spinors in $\vml$. The other arguments from \cite[Section 12.6]{KMbook} can then be easily adapted to our setting.

Since $\lambda$ is part of a countable collection $\{\llambda_n\}$, we can find $\q$ such that the reducibles are non-degenerate for all such $\lambda$. Note that non-degeneracy implies hyperbolicity for reducibles, because the relevant operators are self-adjoint. 

Since the irreducible stationary points of $\Xqmlgcsigma$ in $B(2R)^{\sigma}$ are actually in $\U \subset \N$, by applying Proposition~\ref{prop:ND} we can arrange so that these irreducibles are hyperbolic as well.
\end{proof}

Proposition~\ref{prop:ND2} will be useful in showing that the vector field $\Xqmlgcsigma$ in $(B(2R) \cap W^{\lambda})^{\sigma}$ is a Morse-Smale equivariant quasi-gradient. We can then construct a Morse homology group from it, and show that it is the same as Morse homology in $\N \cap (W^{\lambda})^{\sigma}$, in a certain grading range $[-N, N]$. Indeed, we will show that all the other reducible points in $(B(2R) \cap W^{\lambda})^{\sigma}$ cannot be in this grading range; see Proposition~\ref{prop:GradingBounds} below.  Before we can discuss and define gradings on stationary points as in Section~\ref{sec:finite}, we must first establish that $\Xqmlgc$ is indeed a Morse quasi-gradient.   This is the subject of the following section.  We return to discuss gradings on stationary points of $\Xqmlgcsigma$ in Chapter~\ref{sec:gradings} and the Morse-Smale condition in Chapter~\ref{sec:MorseSmale}.

\chapter[The approximate flow as a quasi-gradient]{The approximate flow as a Morse equivariant quasi-gradient}
\label{sec:quasigradient}
Throughout this chapter we assume that the eigenvalue cut-off $\lambda$ is of the form $\llambda_i$ for $i \gg 0$.

Note that $\Xqgc=l+c_{\q}$ is the gradient of the $\L_{\q}$ functional with respect to the $\tilde g$ metric. However, the maps $\pml$ are defined in terms of projections with respect to the usual $L^2$ metric. As discussed in Remark~\ref{rem:notgradient}, the vector field 
$$\Xqmlgc=l+\pml c_{\q}$$ on $\vml$ is neither the $L^2$ nor the $\tilde g$ gradient of the restriction of $\L_{\q}$ to $\vml$. In fact, there is no reason for the derivative of $\Xqmlgc$ at stationary points to have real spectrum (as it would happen for a gradient vector field, with respect to any metric). 

Nevertheless, in this chapter we will be able to prove the following.

\begin{proposition}
\label{prop:AllMS}
We can choose the admissible perturbation $\q$ such that for all $\lambda = \llambda_i$ with $i \gg 0$, the vector field $\Xqmlgc$ on $\vml \cap B(2R)$ is a Morse equivariant quasi-gradient, in the sense of Definition~\ref{def:eqgv}.
\end{proposition}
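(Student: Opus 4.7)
The plan is to verify the four conditions of Definition~\ref{def:eqgv} separately. For conditions (a) and (b), which demand hyperbolicity of stationary points on $(X - Q)/S^1$ and on the fixed locus $Q$ respectively, I would appeal directly to Proposition~\ref{prop:ND2}. That result gives hyperbolicity of $\Hess^{\tilde{g},\sigma}_{\q^\lambda,x}$ at every stationary point of the blow-up field $\Xqmlgcsigma$ on $(B(2R) \cap \vml)^\sigma$. For an irreducible stationary point, hyperbolicity in the blow-up is equivalent to hyperbolicity on $(X-Q)/S^1$, since $\K^{\agCoul,\sigma}_{k,x}$ is identified with the tangent to the orbit space away from $Q$; this yields (a). For a reducible stationary point $(a,0)$, the blow-up Hessian decomposes into a connection block (acting on $T_{(a,0)}Q$) and a spinor block, and hyperbolicity of the former is exactly hyperbolicity of $\tilde v|_Q$; this yields (b).

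Condition (c) concerns the operator $L_q \psi = D\psi + \pml \D_{(a,0)} c_\q^1(0, \psi)$ on the normal directions to $Q$ at a zero $q=(a,0)$. Self-adjointness follows from the choice $\lambda = \llambda_i$: here $\pml$ coincides with the honest $L^2$-orthogonal projection $\tilde p^\lambda$, so for $\psi_1, \psi_2$ in the spinor part of $\vml$ one has $\langle \pml A \psi_1, \psi_2 \rangle = \langle A \psi_1, \psi_2 \rangle = \langle \psi_1, A \psi_2 \rangle = \langle \psi_1, \pml A \psi_2 \rangle$, where $A = \D_{(a,0)} c_\q^1(0,\cdot)$ is $L^2$-self-adjoint on $\Gamma(\Spin)$ as the spinor block of a formal Hessian at a reducible. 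Non-vanishing of the eigenvalues is equivalent to non-degeneracy, hence follows from Proposition~\ref{prop:ND2}; distinctness can be arranged by strengthening the admissibility condition on $\q$, adapting the generic-perturbation argument of \cite[Section 12.6]{KMbook} to produce distinct eigenvalues at each of the finitely many reducible stationary points for every $\llambda_i$ in our countable sequence, which amounts to taking a countable intersection of residual conditions.

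Condition (d) is the substantive content, and the hint from the outline is explicit: one must perturb $\L_\q$ so it decreases along the approximate flow. The candidate $\tilde f = \L_\q|_{\vml}$ does \emph{not} work directly, because $\Xqmlgc$ is not the $\tilde g$-gradient of $\L_\q|_{\vml}$; writing $G_\lambda := l + \tilde p^\lambda_{\tilde g} c_\q$ for the actual $\tilde g$-gradient and $E_\lambda := \Xqmlgc - G_\lambda = (\pml - \tilde p^\lambda_{\tilde g}) c_\q$ for the discrepancy, a direct computation yields
$$ d\L_\q|_{\vml}(\Xqmlgc) \;=\; \langle \Xqmlgc, \Xqgc \rangle_{\tilde g} \;=\; \|\Xqmlgc\|_{\tilde g}^2 + \langle \Xqmlgc, (1-\pml)c_\q \rangle_{\tilde g}, $$
whose cross term has no a priori sign. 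My plan is to set $\tilde f_\lambda = \L_\q|_{\vml} + h_\lambda$ with $h_\lambda$ a small $S^1$-equivariant correction on $\vml \cap B(2R)$ that absorbs this cross term. Two ingredients make this feasible. First, $E_\lambda$ vanishes on the reducible locus (where $\tilde g$ and $L^2$ agree), and its $L^2_k$-norm can be made uniformly small on $B(2R)$ for $\lambda \gg 0$, using the strong operator convergence $\pml \to 1$ of Fact~\ref{fact:pml} together with the functional boundedness of $c_\q$ from Lemma~\ref{lem:finitecoulombvc}. Second, the stationary points of $\Xqmlgc$ are finite in number and hyperbolic by (a) and (b), so $\|\Xqmlgc\|_{\tilde g}$ is uniformly bounded below on the complement of any prescribed open neighborhood of them.

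The hard part will be the global construction of $h_\lambda$ by patching. Near each stationary point $x_\lambda$, I would use the hyperbolic normal form of $\Xqmlgc$ at $x_\lambda$ to produce a local quadratic Lyapunov contribution $h_\lambda^{\mathrm{loc}}$ whose $\tilde g$-gradient cancels the component of $E_\lambda$ obstructing the sign in a small neighborhood; the existence of such a quadratic form is classical for a hyperbolic linear vector field. Away from all stationary points, the uniform lower bound on $\|\Xqmlgc\|_{\tilde g}$ and the uniform smallness of $\|E_\lambda\|_{\tilde g}$ already give $d\L_\q|_{\vml}(\Xqmlgc) > 0$ once $\lambda$ is large. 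A smooth $S^1$-equivariant partition of unity, centered at the stationary points, then glues $h_\lambda^{\mathrm{loc}}$ into a global correction. The main obstacle, and where care is needed, is to control the transition regions: the cutoffs must be chosen so as not to introduce spurious critical points of $\tilde f_\lambda$ (so that Lemma~\ref{lem:qgCritical2} still identifies critical points of $\tilde f_\lambda$ with stationary points of $\Xqmlgc$), and the cross terms produced by the gradients of the cutoffs must not destroy the Lyapunov inequality. Both issues should be resolvable by first fixing cutoffs compactly supported in basins where the hyperbolic model dominates, and only then choosing $\lambda$ sufficiently large so that the residual errors are controlled by $\|\Xqmlgc\|_{\tilde g}$.
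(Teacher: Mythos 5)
Your treatment of conditions (a), (b), (c) of Definition~\ref{def:eqgv} is essentially correct and matches the paper: hyperbolicity from Proposition~\ref{prop:ND2}, and self-adjointness of $L_q$ from the facts that $\tilde g = L^2$ at reducibles and that $\pml = \tilde p^\lambda$ when $\lambda=\llambda_i$, so that $\pml \D_{(a,0)}c_\q^1(0,\cdot)\pml$ is $L^2$-self-adjoint; distinctness of eigenvalues via the genericity argument of \cite[Section 12.6]{KMbook}, as in the proof of Proposition~\ref{prop:ND2}.

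However, your plan for condition (d) has a genuine gap. You propose $\tilde f_\lambda = \L_\q|_{\vml} + h_\lambda$ with $h_\lambda$ a quadratic Lyapunov form centered at each approximate stationary point $x_\lambda$. This cannot work, for a reason more fundamental than the cutoff and patching concerns you flag. By Lemma~\ref{lem:qgCritical2}, any function certifying the quasi-gradient condition for $\Xqmlgc$ must have a critical point at every stationary point of $\Xqmlgc$. But $x_\lambda$ is not a critical point of $\L_\q$: since $\Xqmlgc(x_\lambda)=0$ we have $l(x_\lambda)=-\pml c_\q(x_\lambda)$, so $\Xqgc(x_\lambda)=(1-\pml)c_\q(x_\lambda)\neq 0$, and therefore $(d\L_\q)_{x_\lambda}=\langle (1-\pml)c_\q(x_\lambda),\cdot\rangle_{\tilde g}\neq 0$. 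A quadratic $h_\lambda$ has vanishing differential at its center, so $d\tilde f_\lambda(x_\lambda)=(d\L_\q)_{x_\lambda}\neq 0$, and $x_\lambda$ fails to be critical. Concretely, writing $x=x_\lambda+tv$ and linearizing $\Xqmlgc$ at $x_\lambda$, the dangerous term in $d\tilde f_\lambda(\Xqmlgc)$ is the linear-in-$t$ term $t\,\langle (1-\pml)c_\q(x_\lambda),\D_{x_\lambda}\Xqmlgc(v)\rangle_{\tilde g}$, which can be negative for some $v$; your Lyapunov contribution is $O(t^2)$ and so cannot dominate it for small $t$. The correction therefore must carry a nonzero linear term at $x_\lambda$ precisely cancelling $(d\L_\q)_{x_\lambda}$.

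What the paper does instead (Sections~\ref{sec:Flambda}--\ref{sec:controlnearby}) is to precompose $\L_\q$ with a compactly supported diffeomorphism $T_\lambda$ that translates each approximate stationary orbit $\O^j_\lambda$ onto the genuine orbit $\O^j$, and set $F_\lambda=\L_\q\circ T_\lambda$. At first order near $x_\lambda$ this is exactly the required linear correction (the $x_\lambda$ become critical points of $F_\lambda$), and the remainder of the argument (Proposition~\ref{prop:intermediate} in the intermediate region, Proposition~\ref{prop:cond2} near the stationary points, resting on Lemma~\ref{lem:approximate-eigenvalue-bounds} and Propositions~\ref{prop:EquivalentMetrics} and~\ref{prop:pgpl}) shows the two-sided estimate $\frac14\|\Xqmlgc\|_{\tilde g}^2\le dF_\lambda(\Xqmlgc)\le 4\|\Xqmlgc\|_{\tilde g}^2$ directly, with no Lyapunov form needed. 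If you wished to retain your framework, you would need $h_\lambda$ to have both a linear term $-\,(d\L_\q)_{x_\lambda}$ at $x_\lambda$ and suitable higher-order control; at that point you are effectively reconstructing the first-order effect of the composition $\L_\q\circ T_\lambda$, and the paper's estimates show the quadratic Lyapunov piece is superfluous.
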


As discussed in Section~\ref{sec:finite}, having a Morse-Smale equivariant quasi-gradient suffices in order to construct (equivariant) Morse homology; the first step towards this is establishing the Morse condition (cf. Definition~\ref{def:eqgv}).  The additional Morse-Smale condition on trajectories will be shown in Chapter~\ref{sec:MorseSmale}. 

In view of Lemma~\ref{lem:MEquiv}, to check that $\Xqmlgc$ is a Morse equivariant quasi-gradient we need three things: 
\begin{itemize}
\item that the stationary points of $\Xqmlgcsigma$ are hyperbolic;
\item that the operators $\D_x\Xqmlgc$ at reducible stationary points $x$ are self-adjoint;
\item part (d) of Definition~\ref{def:eqgv}. 
\end{itemize}

Hyperbolicity of the stationary points was already checked in Proposition~\ref{prop:ND2}. Self-adjointness of the operators $\D_x\Xqmlgc = l + \pml \D_x c_{\q}$ at reducibles follows from the fact that the metrics $\tilde g$ and $L^2$ coincide there.

We are left to verify part (d) of Definition~\ref{def:eqgv}. Sections~\ref{sec:controlaway}-\ref{sec:consequences} below are devoted to proving this.  

\begin{proposition}\label{prop:LqQuasi}
For each $\lambda \gg 0$, there exists a smooth function $$F_{\lambda}: \vml \cap B(2R) \to \R$$ such that 
\begin{equation}
\label{eq:dFlambda}
\frac{1}{4} \| \Xqmlgc \|^2 _{\tilde{g}} \leq dF_{\lambda}(\Xqmlgc) \leq 4 \| \Xqmlgc \|^2_{\tilde{g}}.
\end{equation}
In particular, $dF_{\lambda}(\Xqmlgc) \geq 0$, with equality only at the stationary points of $\Xqmlgc$.
\end{proposition}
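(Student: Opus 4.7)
The plan is to take $F_\lambda$ to be essentially the Chern--Simons--Dirac functional $\L_\q$ restricted to $W^\lambda \cap B(2R)$, corrected by a local quadratic Lyapunov term near each stationary point of $\Xqmlgc$. The motivation is that $\Xqgc = \grad_{\tilde g}\L_\q|_W$, so for $x \in W^\lambda$,
\begin{equation*}
d\L_\q(\Xqmlgc) = \langle \Xqgc, \Xqmlgc\rangle_{\tilde g} = \|\Xqmlgc\|^2_{\tilde g} + \langle e, \Xqmlgc\rangle_{\tilde g},
\end{equation*}
where $e := \Xqgc - \Xqmlgc = (1-\pml)c_\q$ lies in the $L^2$-orthogonal complement of $W^\lambda$ inside $W$. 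Thus $\L_\q$ alone will suffice on regions where the cross term $\langle e, \Xqmlgc\rangle_{\tilde g}$ is much smaller than $\|\Xqmlgc\|^2_{\tilde g}$.

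\textbf{Step 1 (Error estimate).} Because $\pml$ is $L^2$-orthogonal onto $W^\lambda$ and $\Xqmlgc \in W^\lambda$, the $L^2$ pairing $\langle e, \Xqmlgc\rangle_{L^2}$ vanishes. Using the formula $\langle v,w\rangle_{\tilde g} = \Re\langle v, \Pi^{\elCoul}_x w\rangle_{L^2}$ from \eqref{eq:gtilde2} and writing $\Pi^{\elCoul}_x\Xqmlgc - \Xqmlgc = (-d\zeta, \zeta\phi) \in \J^{\circ}_x$ with $\|\zeta\|_{L^2_2} \leq C\|\Xqmlgc\|_{L^2}$ (Lemma~\ref{lem:elcUniqueness}(b), using $\Xqmlgc^0 \in \ker d^*$), together with $d^*e^0=0$ and Sobolev multiplication, one obtains
\begin{equation*}
|\langle e, \Xqmlgc\rangle_{\tilde g}| \leq CR\|e\|_{L^2}\|\Xqmlgc\|_{L^2}.
\end{equation*}
Functional boundedness of $c_\q$ as a map $W_k \to W_k$ (Proposition~\ref{prop:verycompact}) and the operator-norm estimate $\|1-\pml\|_{W_k \to W_{k-1}} = O(1/\lambda)$ (proved inside Lemma~\ref{lem:homeo}) give $\|e\|_{L^2_{k-1}} \leq C'/\lambda$ uniformly on $B(2R)$.

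\textbf{Step 2 (Control away from stationary points).} For $\lambda \gg 0$, stationary points of $\Xqmlgc$ in $W^\lambda \cap B(2R)$ are finite in number and, by Corollary~\ref{cor:corresp} and Lemma~\ref{lem:implicitfunctionreducible}, converge to stationary points of $\Xqgc$. Fix $\epsilon > 0$ small. On the set of $x$ at distance at least $\epsilon$ from every stationary point of $\Xqgc$, compactness gives $\|\Xqgc(x)\|_{\tilde g} \geq c_0(\epsilon) > 0$, hence $\|\Xqmlgc\|_{\tilde g} \geq c_0(\epsilon)/2$ for $\lambda$ large. On an $\epsilon$-neighborhood of each $x_\infty \in \Crit(\Xqgc)$, hyperbolicity of the Hessian (Propositions~\ref{prop:ND} and \ref{prop:ND2}) combined with the implicit function theorem produces $\|\Xqmlgc(x)\|_{\tilde g} \geq c_1 \|x - x_\lambda\|_{\tilde g}$ with uniform $c_1 > 0$, where $x_\lambda := \Xi_\lambda^{-1}(x_\infty)$. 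Putting these together, $\|\Xqmlgc(x)\|_{\tilde g} \geq C_0/\lambda$ on the complement of balls of radius $O(1/\lambda)$ around each $x_\lambda$; combined with Step~1, this gives $\tfrac14\|\Xqmlgc\|^2_{\tilde g} \leq d\L_\q(\Xqmlgc) \leq 4\|\Xqmlgc\|^2_{\tilde g}$ there. This quantitative step is likely the content of Section~\ref{sec:controlaway}.

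\textbf{Step 3 (Local modification and patching).} Inside each bad ball $B(x_\lambda, C_1/\lambda)$, $\L_\q$ alone may fail because $e(x_\lambda) = \Xqgc(x_\lambda) = O(\|x_\lambda - x_\infty\|) \neq 0$ while $\Xqmlgc(x_\lambda) = 0$. To compensate, split the hyperbolic linearization $L := \D_{x_\lambda}\Xqmlgc = L_s \oplus L_u$ into its stable and unstable parts, solve the Lyapunov equations $L_s^*P_s + P_sL_s = I$ and $L_u^*P_u + P_uL_u = -I$ for $\tilde g$-symmetric positive $P_s, P_u$, and put
\begin{equation*}
Q_{x_\lambda}(v) := \tfrac12\bigl(\langle P_s v_s, v_s\rangle_{\tilde g} - \langle P_u v_u, v_u\rangle_{\tilde g}\bigr), \qquad v = x - x_\lambda,
\end{equation*}
so that $dQ_{x_\lambda}(Lv) = \|v\|^2_{\tilde g}$, comparable to $\|\Xqmlgc\|^2_{\tilde g}$. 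Uniform bounds on $P_s, P_u$ come from continuity of the Hessian (Lemma~\ref{lem:hessiancontinuity}) and convergence of $L$ to $\D_{x_\infty}\Xqgc$. Choose $S^1$-equivariant cutoffs $\chi_{x_\lambda}$ equal to $1$ on $B(x_\lambda, C_1/\lambda)$ and supported in $B(x_\lambda, 2C_1/\lambda)$, and set
\begin{equation*}
F_\lambda := \L_\q + \sum_{x_\lambda} \chi_{x_\lambda}\bigl(Q_{x_\lambda} - (\L_\q - \L_\q(x_\lambda))\bigr).
\end{equation*}
The main obstacle is this patching (likely Section~\ref{sec:consequences}): to preserve the explicit constants $\tfrac14$ and $4$ one must balance the cutoff scale $C_1/\lambda$ against uniform bounds on $P_s, P_u$ and on the second derivatives of $\Xqmlgc$, so that the cross terms produced by $d\chi_{x_\lambda}$ in the transition annulus are absorbed into $\|\Xqmlgc\|^2_{\tilde g}$.
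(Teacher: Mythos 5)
Your overall plan — use $\L_\q$ away from the stationary points and modify locally near each approximate stationary point — is the same as the paper's (and Steps 1 and 2 parallel the paper's Lemma~\ref{lem:cond1}/Proposition~\ref{prop:intermediate}), but your Step~3 takes a genuinely different route from the paper, and as written it has a gap in the normalization that makes the constants in \eqref{eq:dFlambda} impossible to achieve uniformly in $\lambda$.

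\textbf{The normalization problem in Step 3.} You solve $L_s^*P_s + P_s L_s = I$, $L_u^*P_u + P_u L_u = -I$, giving (up to a factor of $\tfrac12$) $dQ_{x_\lambda}(Lv) = \|v\|^2_{\tilde g}$. But \eqref{eq:dFlambda} requires comparison with $\|\Xqmlgc\|^2_{\tilde g} \approx \|Lv\|^2_{\tilde g}$. On $\vml$, the operator $L = l + \pml\D_{x_\lambda}c_\q$ contains the unbounded piece $l$, whose eigenvalues range up to $\pm\lambda$; consequently $\|Lv\|_{L^2}/\|v\|_{L^2}$ is \emph{not} bounded above uniformly in $\lambda$ (take $v$ a top eigenvector). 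The lower bound from Lemma~\ref{lem:approximate-eigenvalue-bounds} gives $\|Lv\| \gtrsim \|v\|$ so you get $dQ(Lv) = \|v\|^2 \lesssim \|Lv\|^2$, but the reverse inequality $dQ(Lv) \gtrsim \|Lv\|^2$ fails by a factor of order $1/\lambda^2$. So the proposed $F_\lambda$ does not satisfy the lower bound $\tfrac14\|\Xqmlgc\|^2_{\tilde g} \leq dF_\lambda(\Xqmlgc)$ inside the ball. Replacing the right-hand side of the Lyapunov equation by $L^*L$ fixes the pointwise identity $dQ(Lv) = \|Lv\|^2$ but makes $\|P\|$ grow in $\lambda$, which in turn inflates the cutoff cross terms in the transition annulus. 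The difficulty is intrinsic: the only natural $P$ whose associated quadratic form behaves well across all Sobolev scales and whose differential paired with $Lv$ reproduces $\|Lv\|^2$ up to small error is (up to sign on the unstable part) essentially $\tfrac12\Hess_{\tilde g}\L_\q(x_\infty)$, because that Hessian equals $l + \D_{x_\infty}c_\q$, which is $O(1/\lambda)$-close to $L$ itself. This is exactly what the paper exploits: rather than abstractly solving a Lyapunov equation, it defines $T_\lambda$ by locally translating $x_\lambda$ to $x_\infty$ and sets $F_\lambda = \L_\q\circ T_\lambda$, so that $dF_\lambda(\Xqmlgc) \approx \langle (l+\D_{x_\infty}c_\q)v,\ Lv\rangle_{\tilde g} \approx \|Lv\|^2_{\tilde g}$ automatically with the right Sobolev behavior.

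\textbf{The cutoff scale.} A secondary issue is your choice of cutoff radius $O(1/\lambda)$. With that choice $\|d\chi_{x_\lambda}\| = O(\lambda)$, and the factor $(Q_{x_\lambda} - \L_\q + \L_\q(x_\lambda))$ in the cross term must then be $O(\lambda^{-3})$ in the annulus to be absorbed — a delicate balance made worse by the growth of $\|P\|$. The paper instead uses a cutoff at a fixed scale $\epsilon$ (the neighborhoods $\nu_\epsilon(\O^j_\lambda) \subset \nu_{2\epsilon}(\O^j_\lambda)$, chosen via Assumptions~\ref{as:1} and~\ref{as:2}), so that $\|dH^j_\lambda\|$ is $O(1)$ and the translation amount $x_\infty - x_\lambda = O(1/\lambda)$ alone makes the cross terms $O(1/\lambda)$; this is the content of Lemma~\ref{lem:Slambda}. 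If you want to pursue the Lyapunov approach, you should keep the cutoff at a fixed scale $\epsilon$ and verify (as in Propositions~\ref{prop:EquivalentMetrics}, \ref{prop:pgpl}, and Lemma~\ref{lem:approximate-eigenvalue-bounds}) that $Q_{x_\lambda} - \L_\q + \L_\q(x_\lambda)$ is $O(1/\lambda)$ there, which again forces $P \approx \tfrac12\Hess_{\tilde g}\L_\q(x_\infty)$ and essentially rederives the paper's construction.
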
 
 
\begin{remark} 
In the statement of Proposition~\ref{prop:LqQuasi}, the constants $\frac{1}{4}$ and $4$ are quite arbitrary. They could be replaced (at the expense of increasing $\lambda$) by $\frac{1}{C}$ and $C$, for any $C > 1$.
\end{remark}

\section{Control away from the stationary points} \label{sec:controlaway}
Since we know that $\Xqgc$ is the $\tilde g$-gradient of the perturbed CSD functional $\Lq$, the first guess is to take $F_{\lambda}$ to be $\Lq$. Then, the desired condition holds away from neighborhoods of the stationary points:

\begin{lemma}
\label{lem:cond1}
Fix $\epsilon > 0$. Then, for all $\lambda \gg 0$, we have 
\begin{equation}
\label{eq:dlq}
\frac{1}{4} \| \Xqmlgc \|^2_{\tilde{g}} < d\L_{\q} (\Xqmlgc) < 4 \| \Xqmlgc \|^2_{\tilde{g}}
 \end{equation}
at any point in $\vml \cap B(2R)$ which is at $L^2_{k-1}$ distance at least $\epsilon$ from all stationary points of $\Xqmlgc$ in $B(2R)$. 
\end{lemma}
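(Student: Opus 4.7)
The plan is to reduce the desired inequality to a perturbative estimate. Since $\Xqgc$ is the $\tilde g$-gradient of $\L_\q|_W$, for any $v \in \T^{\gCoul}_{0,x}$ we have $d\L_\q(v) = \langle \Xqgc(x), v\rangle_{\tilde g}$. Writing $\Xqgc - \Xqmlgc = (1-\pml)c_\q$ and taking $v = \Xqmlgc(x)$ gives
\[
d\L_\q(\Xqmlgc) \;=\; \|\Xqmlgc\|_{\tilde g}^2 \;+\; \langle (1-\pml)c_\q,\, \Xqmlgc \rangle_{\tilde g}.
\]
Hence \eqref{eq:dlq} will follow once I show that, for $\lambda$ sufficiently large, the cross-term is bounded in absolute value by $\tfrac12\|\Xqmlgc\|_{\tilde g}^2$, uniformly in $x \in \vml \cap B(2R)$ at $L^2_{k-1}$-distance at least $\epsilon$ from the stationary set of $\Xqmlgc$. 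By Cauchy--Schwarz for $\tilde g$, it suffices to establish (i) $\|(1-\pml)c_\q(x)\|_{\tilde g} \to 0$ uniformly for $x \in \overline{B(2R)} \cap W_k$; and (ii) a uniform lower bound $\|\Xqmlgc(x)\|_{\tilde g} \geq c_0 > 0$ at the points in question, valid for all large $\lambda$.

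Item (i) is the easy half. By Proposition~\ref{prop:verycompact}, $c_\q$ is very compact, so $c_\q\colon W_k \to W_{k-1}$ is compact and sends $\overline{B(2R)} \cap W_k$ into a precompact subset of $W_{k-1}$. Fact~\ref{fact:pml}(b) gives $\pml \to 1$ in the strong operator topology on $W_{k-1}$, and strong operator convergence is automatically uniform on precompact sets; thus $\|(1-\pml)c_\q(x)\|_{L^2_{k-1}} \to 0$ uniformly on $\overline{B(2R)} \cap W_k$. On bounded subsets of $W_k$ the $\tilde g$-norm on $\T^{\gCoul}$ is equivalent to the $L^2$-norm, which is in turn controlled by the $L^2_{k-1}$-norm, so (i) follows.

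The hard part is item (ii), which I would attack by contradiction in the spirit of Lemma~\ref{lem:compactness}. If it failed, one could extract $\lambda_n \to \infty$ and $x_n \in W^{\lambda_n} \cap B(2R)$ with $\|\Xqmlngc(x_n)\|_{\tilde g} \to 0$, while the $L^2_{k-1}$-distance from $x_n$ to the stationary set of $\Xqmlngc$ in $B(2R)$ is at least $\epsilon$. The sequence is $W_k$-bounded, hence after subsequence converges weakly in $W_k$ and strongly in $W_{k-1}$ to some $x_\infty \in \overline{B(2R)}$. Using the compactness of $c_\q\colon W_k \to W_{k-1}$ to pass to the limit in $\pmln c_\q(x_n)$ (combined with (i) applied to $c_\q(x_\infty)$) and the continuity of $l\colon W_k\to W_{k-1}$, the identity $\Xqmlngc(x_n) = l(x_n) + \pmln c_\q(x_n)$ forces the weak $W_{k-1}$-limit to be $\Xqgc(x_\infty)$; comparing with the $L^2$-vanishing of $\|\Xqmlngc(x_n)\|_{\tilde g}$ yields $\Xqgc(x_\infty) = 0$. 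Thus $x_\infty$ is a stationary point of $\Xqgc$ in $\overline{B(2R)}$. If $x_\infty$ is irreducible, the standing assumption that $\N$ contains every irreducible stationary point places the corresponding class in $\Crit_{[-N,N]}$, and Corollary~\ref{cor:corresp} (through Proposition~\ref{prop:nearby}) yields an $L^2_k$-nearby sequence of stationary points of $\Xqmlngcsigma$ converging to $[x_\infty]$; lifting to $W^\sigma$ and possibly acting by $S^1$ produces stationary points of $\Xqmlngc$ that $L^2_{k-1}$-converge to $x_\infty$. If instead $x_\infty = (a,0)$ is reducible, Lemma~\ref{lem:implicitfunctionreducible} directly supplies such an approximating sequence. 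Either way, this contradicts the separation hypothesis for large $n$. The main obstacle throughout is precisely ensuring that the limit $x_\infty$ is honestly a stationary point of $\Xqgc$, which is what rests on the interplay between compactness of $c_\q$ between Sobolev levels and the strong operator convergence $\pml \to 1$, and on being able to produce stationary points of the \emph{approximate} vector field itself (not merely of $\Xqgc$) to contradict the distance bound.
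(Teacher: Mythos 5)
Your argument is correct and the essential engine is the same as the paper's, but you organize it through a useful preliminary reduction. The paper argues directly by contradiction with~\eqref{eq:dlq}: extract $\lambda_n \to \infty$ and $x_n$ violating one of the two bounds, pass to an $L^2_{k-1}$-convergent subsequence, observe that $(d\L_\q)_{x_n}(\Xqmlngc) \to (d\L_\q)_x(\Xqgc) = \|\Xqgc(x)\|_{\tilde g}^2$, so the limit forces $x$ to be a stationary point of $\Xqgc$, and then invoke the stability results of Chapter~\ref{sec:criticalpoints} (Proposition~\ref{prop:nearby}, Lemma~\ref{lem:implicitfunctionreducible}) to produce a nearby stationary point of $\Xqmlngc$, contradicting the $\epsilon$-separation. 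You instead factor everything through the identity $d\L_\q(\Xqmlgc) = \|\Xqmlgc\|_{\tilde g}^2 + \langle(1-\pml)c_\q, \Xqmlgc\rangle_{\tilde g}$ and prove two uniform estimates: (i) vanishing of the tail $(1-\pml)c_\q$, and (ii) coercivity $\|\Xqmlgc\|_{\tilde g} \geq c_0$ away from stationary points. Item~(i) is a small but real bonus -- it makes the perturbative structure explicit, handles both inequalities in~\eqref{eq:dlq} at once rather than case by case, and gives the sharper constants $1/2$ and $3/2$; the compactness-plus-strong-operator-convergence argument you give for it is exactly right, and your appeal to Proposition~\ref{prop:EquivalentMetrics} to pass from the $L^2_{k-1}$ estimate to the $\tilde g$-norm (whose base point varies) is the right way to close that gap. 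Item~(ii) is where the real content is, and there your contradiction argument coincides with the paper's: the convergence $\pmln c_\q(x_n) \to c_\q(x_\infty)$ in $W_{k-1}$ uses the same continuity of $c_\q$ on $W_{k-1}$, and the crucial final step of producing nearby stationary points of the \emph{approximate} vector field -- not merely identifying $x_\infty$ as stationary for $\Xqgc$ -- is handled by the same implicit-function-theorem machinery of Proposition~\ref{prop:nearby}, Corollary~\ref{cor:corresp}, and Lemma~\ref{lem:implicitfunctionreducible} that the paper invokes.
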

\begin{proof}
We suppose this is not true.  Then there exists sequences $\lambda_n \to \infty$ and $x_n \in \vmln \cap B(2R)$ such that $x_n$ is $L^2_{k-1}$ distance at least $\epsilon$ from each stationary point of $\Xqmlngc$ and $(d\L_{\q})_{x_n}(\Xqmlngc)$ violates \eqref{eq:dlq}.  Without loss of generality, we assume that the first inequality in \eqref{eq:dlq} is violated.  (The case of the second inequality is similar.)  Since the $x_n$ are $L^2_k$ bounded, we can extract a subsequence which converges in $L^2_{k-1}$ to some element $x$.  We see that 
$$ (d\L_{\q})_{x_n}(\Xqmlngc) \to (d\L_{\q})_{x}(\Xqgc).$$
Since $d\L_{\q}(\Xqgc) = \| \grad \L_{\q} \|_{\tilde{g}}^2 \geq 0$, we see that $\| \Xqgc(x) \|^2_{\tilde{g}} \leq \frac{1}{4}\| \Xqgc(x) \|^2_{\tilde{g}}$.  Therefore, $x$ is a stationary point of $\Xqgc$.  This implies that $x$ is in $B(2R)$.  For $\lambda \gg 0$, by the work of Section~\ref{sec:StabilityPoints}, we have that $x$ has $L^2_k$ distance (and thus $L^2_{k-1}$ distance) at most $\epsilon/2$ from a stationary point $x_\lambda$ of $\Xqmlgc$.  Since the $x_n$ converge to $x$ in $L^2_{k-1}$, they are eventually within $L^2_{k-1}$ distance $\epsilon$ of $x_{\lambda_n}$ for $n \gg 0$.  Since $x_{\lambda_n}$ is a stationary point of $\Xqmlngc$, this is a contradiction.     
\end{proof}
 
However, $\L_\q$ does not satisfy \eqref{eq:dlq} in the neighborhoods of stationary points. If it did, then by Lemma~\ref{lem:qgCritical2}, any stationary point of $\Xqmlgc$, i.e., zero of $l+ \pml c_{\q}$,  would be a critical point of $\L_{\q}|_{\vml}$. We can write the $\tilde g$-gradient of $\L_{\q}|_{\vml}$ as $l + \tpgml c_{\q}$, where $\tpgml$ is the $\tilde g$-orthogonal projection from $W$ to $\vml$. (Compare Remark~\ref{rem:notgradient}.) However, in general, the condition $(l+\pml c_{\q})(x)=0$ does not imply $(l +\tpgml c_{\q})(x)=0.$

\section{The function $F_\lambda$} \label{sec:Flambda}
To construct the desired function $F_{\lambda}$ as in Proposition~\ref{prop:LqQuasi}, we need to alter $\Lq$ near the stationary points of $\Xqmlgc$. 

Let us first introduce some notation. Given a point $x \in W$, its $S^1$-orbit  can be either a point or a circle. In particular, the stationary points of $\Xqgc$ come in finitely many such orbits, which we denote by $\O^{1}, \dots, \O^{m}.$

Throughout the rest of Chapter~\ref{sec:quasigradient}, we will fix some $\epsilon > 0$ sufficiently small such that it satisfies the following.

\begin{assumption}
\label{as:1} $ $ \\ \vspace{-.16in}
\begin{enumerate}[(a)]
\item The $L^2_{k-1}$ distance between any two orbits $\O^{j}, \O^{j'}$ ($j \neq j'$) is at least $7\epsilon$;
\item If an orbit $\O^{j}$ consists of irreducibles, then the $L^2$ norm of a point in $\O^{j}$ is at least $4\epsilon$.
\end{enumerate}
\end{assumption}

In Section~\ref{sec:controlnearby} we will add another assumption on $\epsilon$. However, Assumption~\ref{as:1} above suffices for the results in the current subsection. 

With $\epsilon$ fixed, we will state our results for $\lambda$ being sufficiently large. Of course, how large $\lambda$ is may depend on $\epsilon$.

It follows from Proposition~\ref{prop:nearby} and Lemma~\ref{lem:implicitfunctionreducible} that, for $\lambda \gg 0$, there is a one-to-one correspondence between the orbits of stationary points of $\Xqgc$, on the one hand, and the orbits of stationary points of $\Xqmlgc$ inside $B(2R)$, on the other hand. Further, this correspondence preserves the type of orbits (reducible or irreducible). Let $\O^{1}_{\lambda}, \dots, \O^{m}_{\lambda}$ be the latter set of orbits, with $\O^{j}_{\lambda}$ corresponding to $\O^{j}$. By choosing $\lambda$ sufficiently large, we can arrange so that, for all $j$, the orbits $\O^{j}$ and $\O^{j}_{\lambda}$ are within $L^2_{k-1}$ distance $\epsilon$ of each other. In view of part (a) in Assumption~\ref{as:1}, this ensures that $\O^{j}_{\lambda}$ and $\O^{j'}_{\lambda}$ are at least $L^2_{k-1}$ distance $5\epsilon$ apart, for $j\neq j'$.

Next, consider the neighborhoods $\nu_{2\epsilon}(\O^{j}_{\lambda})$ of $\O^{j}_{\lambda}$, consisting of points at $L^2_{k-1}$ distance at most $2\epsilon$ from these orbits. Because of our choice of $\epsilon$, all these neighborhoods are disjoint from each other. As an aside, note also that these neighborhoods may well go outside of $B(2R)$, since the latter ball is taken in the $L^2_k$ metric. 

Pick a point $x_{\lambda}^j$ on each orbit $\O^{j}_{\lambda}$. We define functions
$$ \omega_\lambda^j : \nu_{2\epsilon}(\O^{j}_{\lambda}) \to S^1$$
as follows. If $x_{\lambda}^j$ is reducible, so that $\O^{j}_{\lambda} = \{ x_{\lambda}^j \}$, we simply set $\omega_\lambda^j=1$. If $x_{\lambda}^j$ is irreducible, so that $\O^{j}_{\lambda}$ is a circle, we ask that $\omega_\lambda^j(x) \cdot x_{\lambda}^j$ be the point on $\O^{j}_{\lambda}$ that is at minimal $L^2$ distance from $x$. To make sure that $\omega_\lambda^j$ is well-defined, we need to check that this point is unique. The closest point is not unique only for points in the $L^2$-orthogonal complement to the plane $\Span(\O^{j}_{\lambda})$. However, if $x \in \nu_{2\epsilon}(\O^{j}_{\lambda})$, then there is some $x' \in \O^{j}_{\lambda}$ within $L^2_{k-1}$ (and hence $L^2$) distance $2 \epsilon$ from $x$. By part (b) in Assumption~\ref{as:1}, together with the fact that $\O^j_{\lambda}$ and $\O^j$ are within $L^2_{k-1}$ (and hence $L^2$) distance $\epsilon$ from each other, we see that the $L^2$ norm of $x'$ is at least $3\epsilon$. This shows that $x'$ cannot be perpendicular to $x$, and the claim about the uniqueness of the $L^2$-closest point to $x$ follows. 

Explicitly, when $x_{\lambda}^j$ is irreducible, we can write
\begin{equation}\label{eq:omegalambda}
\omega_{\lambda}^j(x)= \frac{\Re \langle x, x^{j}_{\lambda}  \rangle_{L^2}  + i\Re \langle x, ix^{j}_{\lambda}  \rangle_{L^2} }{ \bigl( (\Re \langle x, x^{j}_{\lambda}  \rangle_{L^2})^2  +  (\Re \langle x, ix^{j}_{\lambda}  \rangle_{L^2})^2 \bigr)^{1/2}}.
\end{equation}

Note that the original orbit $\O^{j}$ is at $L^2_{k-1}$ distance at most $\epsilon$ from $\O^{j}_{\lambda}$, and hence is contained in $\nu_{2\epsilon}(\O^{j}_{\lambda})$. Let $x_{\infty}^j$ be the point in $\O^{j}$ that is closest in $L^2$ distance to the chosen basepoint $x_{\lambda}^j \in \O^{j}_{\lambda}$. Then, also $x_{\lambda}^j$ is the $L^2$-closest point to $x_{\infty}^j$ in $\O^{j}_{\lambda}$; in other words, we have 
$$ \omega_{\lambda}^j( x_{\infty}^j)=1.$$

Let $h: [0, \infty) \to \R$ be a smooth, non-increasing function such that $h(x)=1$ for $x \leq 1$ and $h(x)=0$ for $x \geq 2$. Set
$$
 H^j_{\lambda} : \vml \to [0,1], \ \ \ 
 H^j_{\lambda}(x) = h(\epsilon^{-1} d_{L^2_{k-1}} (x, \O^{j}_{\lambda}))$$
where $d_{L^2_{k-1}}$ denotes $L^2_{k-1}$ distance. Note that $ H^j_{\lambda}$ is identically $0$ outside of $\nu_{2\epsilon}(\O^{j}_{\lambda})$, and is identically $1$ in the smaller neighborhood $\nu_{\epsilon}(\O^{j}_{\lambda})$.

We now define
$$ T_{\lambda} : \vml \to W$$
by
\begin{equation}\label{eq:Tlambda}
 T_{\lambda}(x) = x + \sum_{j=1}^m H^j_{\lambda}(x) \cdot \omega^j_{\lambda}(x) \cdot (x_{\infty}^j- x_{\lambda}^j)
\end{equation}
and finally set
\begin{equation}\label{eq:Flambda}
 F_{\lambda} : \vml \to \R, \ \ \ F_{\lambda}= \Lq \circ T_{\lambda}.
\end{equation}
The function $F_\lambda$ is the one we will use to prove Proposition~\ref{prop:LqQuasi}.  Before analyzing this function, we give a more qualitative description for the benefit of the reader.
  
Observe that $T_{\lambda}$ and $F_{\lambda}$ are $S^1$-equivariant, by construction.  In the smaller neighborhood $\nu_{\epsilon}(\O^{j}_{\lambda})$, if we restrict to the affine space perpendicular to $\O^{j}_{\lambda}$ at $x^j_{\lambda}$, we have $ \omega^j_{\lambda} \equiv 1$ there, and hence the map $T_{\lambda}$ is given by translation by $x_{\infty}^j- x_{\lambda}^j$. In particular, $T_{\lambda}(x_{\lambda}^j) = x_{\infty}^j.$ More generally, $T_{\lambda}$ takes the orbit $\O^{j}_{\lambda}$ to $\O^j$. In fact, we can view $\nu_{\epsilon}(\O^{j}_{\lambda})$ as a disk bundle over $\O^{j}_{\lambda}$,  with the projection map given by taking the $L^2$ closest point on the orbit. Then, we can say that inside $\nu_{\epsilon}(\O^{j}_{\lambda})$, the map $T_{\lambda}$ consists of fiberwise translations, arranged so that $\O^{j}_{\lambda}$ is taken to $\O^j$. Further, $T_{\lambda}$ is the identity outside $\nu_{2\epsilon}(\O^{j}_{\lambda})$, and in the intermediate region $\nu_{2\epsilon}(\O^{j}_{\lambda}) \setminus \nu_{\epsilon}(\O^{j}_{\lambda})$, it is given by some interpolation between fiberwise translation and the identity. 

The resulting function $F_{\lambda} : \vml \to \R$ agrees with the perturbed CSD functional $\Lq$ outside $\nu_{2\epsilon}(\O^{j}_{\lambda})$, whereas near $\O^{j}_{\lambda}$ we have arranged so that the points of $\O^{j}_{\lambda}$ became critical points of $F_{\lambda}$. Effectively, this was accomplished by translating $\O^{j}_{\lambda}$ to $\O^j$, and using the fact that $\O^j$ consists of stationary points of $\Xqgc$, i.e., critical points of $\Lq$.

\section{Control in the intermediate region}
Since $F_\lambda$ agrees with $\L_\q$ at any point with $L^2_{k-1}$ distance at least $2\epsilon$ from a stationary point of $\Xqmlgc$, Lemma~\ref{lem:cond1} implies that $dF_\lambda(\Xqmlgc) > 0$ in this region.  In the current subsection, we will be able to use the same arguments to show that $dF_\lambda(\Xqmlgc) > 0$ as long as the distance is at least $\epsilon$, thus gaining control in the intermediate region $\nu_{2\epsilon}(\O^{j}_{\lambda}) \setminus \nu_{\epsilon}(\O^{j}_{\lambda})$.  For notation, we will sometimes write $\tilde g(x)$ for the inner product given by $\tilde g$ on $T_xW$.  We begin with a technical lemma.   

\begin{lemma}\label{lem:approximate-quasi-convergence}
Consider a sequence $x_n$ in $B(2R)$ which converges in $L^2_{k-1}$ to some $x \in W_{k-1}$.  Then if $\lambda_n \to \infty$, $(dF_{\lambda_n})_{x_n}(\Xqmlngc) \to (d\L_\q)_x(\Xqgc)$ in $\mathbb{R}$.  
\end{lemma}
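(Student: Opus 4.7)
The plan is to expand $(dF_{\lambda_n})_{x_n}(\Xqmlngc(x_n))$ via the chain rule. Since $F_\lambda = \L_\q \circ T_\lambda$ and the $\tilde g$-gradient of $\L_\q$ (restricted to $W$) is $\Xqgc$, we have
\begin{equation*}
(dF_{\lambda_n})_{x_n}(\Xqmlngc(x_n)) = \bigl\langle \Xqgc(T_{\lambda_n}(x_n)),\ dT_{\lambda_n}|_{x_n}(\Xqmlngc(x_n)) \bigr\rangle_{\tilde g(T_{\lambda_n}(x_n))}.
\end{equation*}
The target $(d\L_\q)_x(\Xqgc(x))$ equals $\langle \Xqgc(x),\Xqgc(x)\rangle_{\tilde g(x)}$, so I will show that each of the two factors converges in $L^2_{k-2}$, that the base point $T_{\lambda_n}(x_n)$ converges in $L^2_{k-1}$, and then invoke continuity of $\tilde g$ (which, through \eqref{eq:gtildefull} and Lemma~\ref{lem:elc}, extends to a jointly continuous real pairing in the $L^2_{k-2}$ topologies of the vectors and the $L^2_{k-1}$ topology of the basepoint; note $k-2\ge 3$).

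First I would control $T_\lambda$ itself. By Proposition~\ref{prop:nearby} and Lemma~\ref{lem:implicitfunctionreducible}, the chosen basepoints satisfy $x^j_{\lambda_n} \to x^j_\infty$ in $L^2_k$, so $(x^j_\infty - x^j_{\lambda_n})\to 0$ in $L^2_k$ for each $j$. The factors $H^j_\lambda(x)$ and $\omega^j_\lambda(x)$ from Section~\ref{sec:Flambda} (as well as their first differentials) are uniformly bounded on $L^2_{k-1}$-bounded sets, by their explicit formulas in terms of $L^2_{k-1}$ distance and formula \eqref{eq:omegalambda}. Since every summand in \eqref{eq:Tlambda} past the identity carries the vanishing factor $(x^j_\infty - x^j_{\lambda_n})$, one obtains both $T_{\lambda_n}(x_n)\to x$ in $L^2_{k-1}$ and $\|dT_{\lambda_n}|_{x_n} - \mathrm{Id}\|_{\mathrm{op}}\to 0$ as operators $L^2_{k-2}\to L^2_{k-2}$.

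Next I would establish convergence of the approximate vector field. Very compactness of $c_\q$ (Proposition~\ref{prop:verycompact}), applied to constant-in-$t$ configurations, makes $c_\q : W_{k-1}\to W_{k-1}$ continuous, so $c_\q(x_n) \to c_\q(x)$ in $L^2_{k-1}$. Combined with Fact~\ref{fact:pml}(b) and the uniform bound $\|\pi_n\|\le \Theta_{k-1}$, this yields $\pi_n c_\q(x_n)\to c_\q(x)$ in $L^2_{k-1}$. Because $l:W_{k-1}\to W_{k-2}$ is bounded, $l(x_n)\to l(x)$ in $L^2_{k-2}$, whence $\Xqmlngc(x_n)\to \Xqgc(x)$ in $L^2_{k-2}$. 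Identical reasoning, applied to the sequence $T_{\lambda_n}(x_n)\to x$ in $L^2_{k-1}$, gives $\Xqgc(T_{\lambda_n}(x_n))\to \Xqgc(x)$ in $L^2_{k-2}$.

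Finally I would combine these. Writing
\begin{equation*}
dT_{\lambda_n}|_{x_n}(\Xqmlngc(x_n)) = \Xqmlngc(x_n) + \bigl(dT_{\lambda_n}|_{x_n} - \mathrm{Id}\bigr)(\Xqmlngc(x_n)),
\end{equation*}
the first summand converges to $\Xqgc(x)$ in $L^2_{k-2}$ and the second vanishes in $L^2_{k-2}$ because $\Xqmlngc(x_n)$ is $L^2_{k-2}$-bounded and the operator norm of $dT_{\lambda_n}|_{x_n}-\mathrm{Id}$ on $L^2_{k-2}$ tends to zero. The pairing then converges by continuity of $\tilde g$. The main obstacle is essentially bookkeeping of Sobolev indices: since $l$ costs one derivative and the hypothesis only gives $L^2_{k-1}$-convergence, the natural space in which all subsequent convergences must be formulated is $L^2_{k-2}$. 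This is compatible with the $\tilde g$-pairing precisely because our standing assumption $k\ge 5$ guarantees $k-2\ge 3\ge 0$; thus the $L^2$-style pairing implicit in \eqref{eq:gtildefull} is continuous on the relevant Sobolev spaces and the argument closes.
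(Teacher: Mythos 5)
Your strategy coincides with the paper's: expand $(dF_{\lambda_n})_{x_n}(\Xqmlngc)$ by the chain rule applied to $F_\lambda = \L_\q \circ T_\lambda$, show $T_{\lambda_n}(x_n)\to x$ and that the pushed-forward vector converges, and then use continuity of the $\tilde g$-pairing. However, there is a genuine Sobolev error in your control of $dT_{\lambda_n}$.

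You claim that $\|dT_{\lambda_n}|_{x_n} - \mathrm{Id}\|_{\mathrm{op}}\to 0$ as operators $L^2_{k-2}\to L^2_{k-2}$. But $dT_{\lambda_n}|_{x_n}-\mathrm{Id}$ is not bounded on $L^2_{k-2}$ --- indeed it is not even defined there: the cutoff $H^j_\lambda(x)=h(\epsilon^{-1}d_{L^2_{k-1}}(x,\O^j_\lambda))$ is built from $L^2_{k-1}$ distance, so the covector $(dH^j_\lambda)_x$ pairs with the tangent vector via the $L^2_{k-1}$ inner product; its bound is $|(dH^j_\lambda)_x(v)|\lesssim \|v\|_{L^2_{k-1}}$, with no control by $\|v\|_{L^2_{k-2}}$. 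You actually acknowledge this when you write that the differentials are ``uniformly bounded on $L^2_{k-1}$-bounded sets,'' but then silently drop the Sobolev index when stating the operator-norm conclusion. The correct statement, which is what the paper uses, is that $(\D_{x_n} T_{\lambda_n})(v_n)\to v$ in $L^2_{k-2}$ whenever $v_n\to v$ in $L^2_{k-2}$ \emph{and} the $v_n$ are $L^2_{k-1}$-bounded. This suffices here because $\Xqmlngc(x_n)$ is in fact $L^2_{k-1}$-bounded (not merely $L^2_{k-2}$-bounded, as you state): $x_n\in B(2R)\subset W_k$, so $l(x_n)$ is $L^2_{k-1}$-bounded, and $\pmln c_\q(x_n)$ is $L^2_{k-1}$-bounded by functional boundedness of $c_\q$ together with Fact~\ref{fact:pml}(a). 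With that adjustment, your argument closes as in the paper.
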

\begin{proof}
We first compute that for any $x'$, 
\begin{align*}
(dF_\lambda)_{x'}(v) &= (d\L_\q)_{T_\lambda(x')}(\D_{x'} T_\lambda)(v) \\
&= \langle \Xqgc (T_\lambda(x')), \D_{x'} T_\lambda (v) \rangle_{\tilde{g}(x')}. 
\end{align*}

We claim it suffices to show that $T_{\lambda_n}(x_n) \to x$ and $(\D_{x_n} T_{\lambda_n})(\Xqmlngc(x_n)) \to \Xqgc(x)$, each in $L^2_{k-2}$.  Indeed, since the $\tilde{g}(x_n)$-metrics converge to $\tilde{g}(x)$, this will imply that $(dF_{\lambda_n})_{x_n}(\Xqmlngc)$ converges to $\langle \Xqgc(x), \Xqgc(x) \rangle_{\tilde{g}(x)}$, which is exactly $(d\L_\q)_x(\Xqgc)$.     

We begin by analyzing the continuity of $T_\lambda$, using \eqref{eq:Tlambda}.  Note that $|H^j_\lambda|$ and $|\omega^j_\lambda|$ are bounded above by 1.  
Since $\lambda_n \to \infty$, then $x^j_\infty - x^j_\lambda \to 0$ in $L^2_{k-1}$ by the discussion after Corollary~\ref{cor:corresp}.  Therefore, we have $T_{\lambda_n}(x_n) \to x$ in $L^2_{k-1}$.      
   
Thus, it remains to analyze $\D_{x} T_{\lambda}$.  We have that 
\[
(\D_x T_{\lambda}) (v) = v + \sum_{j} (dH^j_\lambda)_x(v) \cdot \omega^j_\lambda(x) \cdot (x_\infty - x_\lambda) + \sum_{j} H^j_\lambda(x) \cdot \D_x \omega^j_\lambda(v) \cdot (x_\infty - x_\lambda).
\]
Note that $|(dH^j_\lambda)_x(v)| \leq (C/\epsilon) \| v \|_{L^2_{k-1}}$ for a constant $C$ independent of $x, j,$ and $\lambda$.  (More precisely, $C$ is the $C^0$-norm of $h'$.)  Also, $\omega^j_\lambda$ is a $C^1$-function when restricted to $\nu_{2\epsilon}(\mathcal{O}^j_\lambda)$ whose denominator in \eqref{eq:omegalambda} is bounded below by $3\epsilon^2$, since any point in $\mathcal{O}^j_\lambda$ has $L^2$ norm at least $3\epsilon$.  From this, it is easy to obtain bounds 
\begin{equation}\label{eq:Dxomega-bounds}
|\D_x \omega^j_\lambda(v)| \leq C' \| v \|_{L^2} \leq C' \| v \|_{L^2_{k-1}}
\end{equation}
independent of $x \in B(2R), j,$ and $\lambda$.    Therefore, $(\D_{x_n} T_{\lambda_n})(v_n) \to v$ in $L^2_{k-2}$ for any sequence $v_n$ which converges to $v$ in $L^2_{k-2}$ and is $L^2_{k-1}$ bounded.  Since $\Xqmlngc(x_n) \to \Xqgc(x)$ in $L^2_{k-2}$ and $\Xqmlngc(x_n)$ is $L^2_{k-1}$ bounded (because the $x_n$ are $L^2_k$ bounded), we have that $(\D_{x_n} T_{\lambda_n})(\Xqmlngc(x_n)) \to \Xqmlgc(x)$ in $L^2_{k-2}$.  This suffices to complete the proof.  
\end{proof}

With the above lemma, we now establish the analogue of Lemma~\ref{lem:cond1} for $F_\lambda$.
\begin{proposition}
\label{prop:intermediate}
Fix $\epsilon > 0$ satisfying Assumption~\ref{as:1}.  For $\lambda \gg 0$, we have 
\begin{equation}\label{eq:dFlambda-gradient-inequality}
\frac{1}{4} \| \Xqmlgc \|^2_{\tilde{g}} < dF_\lambda(\Xqmlgc) < 4 \| \Xqmlgc \|^2_{\tilde{g}}
\end{equation}
at any point in $\vml \cap B(2R)$ which is at $L^2_{k-1}$ distance at least $\epsilon$ from any stationary point of $\Xqmlgc$.  
\end{proposition}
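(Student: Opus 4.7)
The plan is to prove Proposition~\ref{prop:intermediate} by a contradiction-and-compactness argument along the same lines as Lemma~\ref{lem:cond1}, with the key analytic input being Lemma~\ref{lem:approximate-quasi-convergence}. Away from the stationary points of the limiting vector field, a failure of the double inequality \eqref{eq:dFlambda-gradient-inequality} will be ruled out by comparing to the honest $\tilde g$-gradient of $\L_\q$; near the stationary points, we will use the stability results of Section~\ref{sec:StabilityPoints} to contradict the hypothesis that $x_n$ stays $\epsilon$-far from every zero of $\Xqmlngc$.

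Suppose the conclusion fails. Then there exist $\lambda_n \to \infty$ and points $x_n \in W^{\lambda_n} \cap B(2R)$ at $L^2_{k-1}$-distance at least $\epsilon$ from every stationary point of $\Xqmlngc$, for which either $dF_{\lambda_n}(\Xqmlngc)(x_n) \leq \tfrac{1}{4}\|\Xqmlngc(x_n)\|^2_{\tilde g}$ or $dF_{\lambda_n}(\Xqmlngc)(x_n) \geq 4\|\Xqmlngc(x_n)\|^2_{\tilde g}$. Since $B(2R)$ is $L^2_k$-bounded, after passing to a subsequence we may assume $x_n \to x$ in $L^2_{k-1}$. By Lemma~\ref{lem:approximate-quasi-convergence},
\[
(dF_{\lambda_n})_{x_n}(\Xqmlngc) \longrightarrow (d\L_\q)_x(\Xqgc) = \|\Xqgc(x)\|^2_{\tilde g}.
\]
On the other hand, using that $c_\q \colon W_{k-1} \to W_{k-1}$ is continuous, that $\pi_n = p^{\lambda_n} \to 1$ strongly on $W_{k-1}$ by Fact~\ref{fact:pml}(b), and that the bilinear form $\tilde g$ depends continuously on the basepoint in $W_{k-1}$ (cf.\ \eqref{eq:gtildefull} and Lemma~\ref{lem:elc}), we also obtain
\[
\|\Xqmlngc(x_n)\|^2_{\tilde g(x_n)} \longrightarrow \|\Xqgc(x)\|^2_{\tilde g(x)}.
\]

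Passing to the limit in whichever inequality is violated forces $\|\Xqgc(x)\|^2_{\tilde g} \leq \tfrac{1}{4}\|\Xqgc(x)\|^2_{\tilde g}$ or $\|\Xqgc(x)\|^2_{\tilde g} \geq 4\|\Xqgc(x)\|^2_{\tilde g}$, both of which imply $\Xqgc(x) = 0$; so $x$ is a stationary point of $\Xqgc$, and in particular $x \in B(2R)$. By Lemma~\ref{lem:implicitfunctionreducible} and Proposition~\ref{prop:nearby} (or, in the quotient, Corollary~\ref{cor:corresp}), for $n$ large there is a stationary point $x_{\lambda_n}$ of $\Xqmlngc$ at $L^2_{k-1}$-distance less than $\epsilon/2$ from $x$. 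Since $x_n \to x$ in $L^2_{k-1}$, eventually $\|x_n - x_{\lambda_n}\|_{L^2_{k-1}} < \epsilon$, contradicting the standing hypothesis that $x_n$ is $L^2_{k-1}$-distance at least $\epsilon$ from every stationary point of $\Xqmlngc$. This completes the proof.

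The main obstacle here is essentially bookkeeping rather than technical: all the hard analytic work has already been packaged into Lemma~\ref{lem:approximate-quasi-convergence} (convergence of $T_{\lambda_n}$ and $\D T_{\lambda_n}$) and into the stability theorems of Section~\ref{sec:StabilityPoints}, so the argument reduces to combining them. The one point requiring mild care is the two-sided $L^2_{k-1}$-continuity of the map $y \mapsto \|\Xqmlgc(y)\|^2_{\tilde g(y)}$ uniformly in $\lambda$, which uses very compactness of $c_\q$ together with Fact~\ref{fact:pml} as indicated above.
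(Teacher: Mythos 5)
Your proof is correct and follows essentially the same contradiction-plus-compactness argument as the paper, with Lemma~\ref{lem:approximate-quasi-convergence} as the key input and Section~\ref{sec:StabilityPoints} supplying the nearby approximate stationary point at the end. The one place you are slightly more explicit than the paper is in verifying that $\|\Xqmlngc(x_n)\|^2_{\tilde g(x_n)} \to \|\Xqgc(x)\|^2_{\tilde g(x)}$, which the paper leaves implicit; this is a harmless and reasonable addition.
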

\begin{proof}
Suppose that the conclusion is not true.  Then, there exists a sequence $\lambda_n \to \infty$ and a sequence $x_n \in \vmln \cap B(2R)$ which are $L^2_{k-1}$ distance at least $\epsilon$ from any stationary point of $\Xqmlngc$ and $(dF_{\lambda_n})_{x_n}(\Xqmlngc)$ violate \eqref{eq:dFlambda-gradient-inequality}.  Then, there exists a subsequence of the $x_n$ which converges in $L^2_{k-1}$ to an element $x \in W_{k-1}$.  By Lemma~\ref{lem:approximate-quasi-convergence}, we see that $(d\L_\q)_x(\Xqgc)$ must be at most $\frac{1}{4} \| \Xqgc(x) \|^2_{\tilde{g}}$ or at least $4 \| \Xqgc(x) \|^2_{\tilde{g}}$.  Since $(d\L_\q)(\Xqgc) = \| \grad \L_\q \|^2$, we get that $x$ is a stationary point of $\Xqgc$, and thus $x \in B(2R) \subset W_k$.  Since the $x_n$ converge to $x$ in $L^2_{k-1}$, they are eventually within  $L^2_{k-1}$ distance $\epsilon/2$ of a stationary point of $\Xqgc$.  For $\lambda \gg 0$, there is a stationary point of $\Xqmlgc$ within $L^2_{k-1}$ distance $\epsilon/2$ of $x$.  This contradicts the fact that the $x_n$ are not within $L^2_{k-1}$ distance $\epsilon$ of a stationary point of $\Xqmlngc$.     
\end{proof}

\section{The $L^2$ and $\tilde g$ metrics}
A more detailed analysis will be needed to prove the inequality \eqref{eq:dFlambda} in neighborhoods of the stationary points. This will be done in Section~\ref{sec:controlnearby}. As a preliminary step, since $\Xqmlgc$ is an $L^2$ approximation to the $\tilde g$-gradient of $\Lq|_{\vml}$, we will prove a few results relating the $L^2$ and $\tilde g$ metrics.

Let us recall the definition of $\tilde g$ from Section~\ref{sec:SWe}. For $x=(a, \phi) \in W$ and $(b, \psi), (b', \psi') \in T_{(a, \phi)} W$, we have
$$ \langle (b, \psi), (b', \psi') \rangle_{\tilde g} = \langle  \Pi^{\elCoul}_{(a, \phi)} (b, \psi),  \Pi^{\elCoul}_{(a, \phi)} (b', \psi')\rangle_{L^2},$$
where $\Pi^{\elCoul}$ is the enlarged local Coulomb projection. Let us also recall the formula for this projection:
\begin{equation}
\label{eq:pilco}
\Pi^{\elCoul}_{(a, \phi)} (b, \psi) := (b -  d\zeta, \psi + \zeta \phi),
\end{equation}
where $\zeta: Y \to i\R$ is determined (for $\phi \neq 0$) by the conditions $\int_Y \zeta=0$ and 
\begin{equation}
\label{eq:zetaphi}
 \Delta\zeta + |\phi|^2 \zeta - \mu_Y(|\phi|^2\zeta)=- i\Re\langle i\phi , \psi \rangle + i\mu_Y(\Re\langle i\phi , \psi \rangle).
 \end{equation}
The last equality is Equation \eqref{eq:zetafirst}, where we used the fact that $d^*b=0$ for $(b, \psi) \in T_xW$. 

\begin{lemma}
\label{lem:Zeta}
There is a constant $K > 0$ such that, for all $x=(a, \phi) \in B(2R) \subset W_k$ and $(b, \psi) \in \T_{0,x}^{\gCoul} \cong W_0$, if $\zeta$ is the function in \eqref{eq:pilco}, then
\begin{equation}
\label{eq:L21}
 \| \zeta \|_{L^2_1} \leq K \cdot \| \psi \|_{L^2_{-1}}.
 \end{equation}
\end{lemma}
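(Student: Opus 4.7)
The approach will be a direct energy/duality estimate on the elliptic equation \eqref{eq:zetaphi}, and in fact the claim is essentially a specialization of Lemma~\ref{lem:elcUniqueness}(b) with $j = -1$. The plan is to first note that all the hypotheses of that lemma apply in the present setting: $(b,\psi) \in \T^{\gCoul}_{0,x}$ gives $d^*b = 0$; the assumption $x \in B(2R)$ gives a bound $\|\phi\|_{L^2_k} \leq 2R$; the standing hypothesis $k \geq 5$ certainly gives $k \geq 3$; and $|j| = 1 \leq k - 1 = 4$. Hence Lemma~\ref{lem:elcUniqueness}(b), applied with $j = -1$ and the constant there taken to be $2R$, yields directly the bound $\|\zeta\|_{L^2_1} \leq C(2R) \|\psi\|_{L^2_{-1}}$, and we set $K := C(2R)$.

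For concreteness, the underlying argument (should one wish to redo it) proceeds by pairing both sides of \eqref{eq:zetaphi} with $\zeta$ in the $L^2$ inner product. Because $\int_Y \zeta = 0$, the two mean-value terms $\mu_Y(|\phi|^2\zeta)$ and $\mu_Y(\Re \langle i\phi, \psi\rangle)$ drop out of the pairing, leaving
\begin{equation*}
\|d\zeta\|_{L^2}^2 + \int_Y |\phi|^2 |\zeta|^2 \,=\, -\langle \zeta, i\,\Re \langle i\phi,\psi\rangle\rangle_{L^2},
\end{equation*}
which, after rewriting the right-hand side, is controlled by $|\langle \zeta\phi, \psi\rangle_{L^2}|$. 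For the left-hand side, the Poincaré inequality on the mean-zero subspace gives $\|\zeta\|_{L^2_1} \leq C \|d\zeta\|_{L^2}$. For the right-hand side, duality between $L^2_1$ and $L^2_{-1}$ yields $|\langle \zeta\phi, \psi\rangle_{L^2}| \leq \|\zeta\phi\|_{L^2_1} \|\psi\|_{L^2_{-1}}$, and Sobolev multiplication $L^2_k \otimes L^2_1 \to L^2_1$ (valid since $k \geq 3$) gives $\|\zeta\phi\|_{L^2_1} \leq C \|\phi\|_{L^2_k} \|\zeta\|_{L^2_1} \leq 2CR \|\zeta\|_{L^2_1}$. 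Combining produces $\|\zeta\|_{L^2_1}^2 \leq K' \|\zeta\|_{L^2_1} \|\psi\|_{L^2_{-1}}$ and hence the desired estimate.

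The main subtlety to watch for is the extension of the pairing $\langle \zeta\phi, \psi\rangle_{L^2}$ to rough spinors $\psi \in L^2_{-1}$; this is justified precisely by the Sobolev multiplication step, which places $\zeta\phi$ in $L^2_1$, so the pairing is the canonical $L^2_1$--$L^2_{-1}$ duality pairing. Existence, uniqueness, and regularity of $\zeta$ (in particular $\zeta \in L^2_1$, giving the left-hand side of \eqref{eq:L21} meaning in the first place) are already supplied by Lemma~\ref{lem:elcUniqueness}(a), so no separate solvability step is needed; the present lemma is purely about getting a \emph{quantitative} estimate with the right Sobolev indices.
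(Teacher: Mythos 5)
Your primary derivation — applying Lemma~\ref{lem:elcUniqueness}(b) with $j = -1$ after checking the hypotheses $d^*b = 0$, $\|\phi\|_{L^2_k} \le 2R$, $k \ge 3$, and $|j| \le k-1$ — is precisely the paper's proof, which invokes the same lemma by considering $E_\phi$ as an operator from $L^2_1$ to $L^2_{-1}$ and noting that $\|\phi\|_{L^2_k}$ is uniformly bounded. The direct energy/duality computation you add "for concreteness" is also correct and gives an explicit constant (pairing \eqref{eq:zetaphi} against $\zeta$, dropping the mean-value terms, then Poincar\'e plus $L^2_1$--$L^2_{-1}$ duality and Sobolev multiplication), whereas the paper obtains the uniform constant indirectly via the precompactness argument inside Lemma~\ref{lem:elcUniqueness}(b); but this is supplementary to your main step, which matches the paper.
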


\begin{proof}
This follows from Lemma~\ref{lem:elcUniqueness} by considering the operator $E_{\phi}$ from $L^2_1$ to $L^2_{-1}$. Note that $K$ can be taken to be a constant independent of $\phi$, because the $L^2_{k}$ norm of $\phi$ is bounded.
\end{proof}
Since $\| \psi \|_{L^2_{-1}} \leq \| \psi \|_{L^2}$, Lemma~\ref{lem:Zeta} also gives $L^2_1$ bounds on $\zeta$ in terms of $L^2$ bounds on $\psi$.  

Our next result is about the equivalence between the $L^2$ and $\tilde g$ metrics.

\begin{proposition}
\label{prop:EquivalentMetrics}
There is a constant $C_0 > 0$ such that, for all $x=(a, \phi) \in B(2R) \subset W_k$ and $(b, \psi) \in \T_{0,x}^{\gCoul} \cong W_0$, we have
\begin{equation}
\label{eq:EquivalentMetrics}
{C_0}^{-1} \cdot \| (b, \psi) \|_{L^2} \leq \| (b, \psi) \|_{\tilde g(x)} \leq C_0 \cdot \| (b, \psi) \|_{L^2}.
\end{equation}
\end{proposition}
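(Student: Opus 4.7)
The plan is to exploit that $\Pi^{\elCoul}_{(a,\phi)}$ is the $L^2$-orthogonal projection from $T_{(a,\phi)}\C(Y)$ onto the enlarged local Coulomb slice $\Ke_{(a,\phi)}$, with kernel equal to the tangent $\J^{\circ}_{(a,\phi)}$ to the $\Go$-orbit. Pairing equation \eqref{eq:zetaphi} with $\zeta$ in the Hermitian $L^2$ inner product (using $\int_Y \zeta = 0$ to kill the average term) yields the identity
$$T := \|d\zeta\|_{L^2}^2 + \|\zeta\phi\|_{L^2}^2 \;=\; -\,\Re\langle \psi,\, \zeta\phi\rangle_{L^2}.$$
Since $d^*b = 0$ forces $\langle b, d\zeta\rangle_{L^2} = 0$, a direct expansion of $\|\Pi^{\elCoul}(b,\psi)\|_{L^2}^2 = \|b-d\zeta\|^2 + \|\psi+\zeta\phi\|^2$ combined with this identity gives the key formula
$$\|(b,\psi)\|_{\tilde g}^2 \;=\; \|(b,\psi)\|_{L^2}^2 \;-\; T.$$
As $T \geq 0$, the upper bound $\|(b,\psi)\|_{\tilde g} \leq \|(b,\psi)\|_{L^2}$ is immediate.

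For the lower bound it suffices to produce $\kappa = \kappa(R) \in (0,1)$ with $T \leq \kappa\,\|\psi\|_{L^2}^2$ uniformly for $x = (a,\phi) \in B(2R)$ and $(b,\psi) \in \T^{\gCoul}_{0,x}$; granted this, $\|(b,\psi)\|_{\tilde g}^2 \geq \|b\|^2 + (1-\kappa)\|\psi\|^2 \geq (1-\kappa)\|(b,\psi)\|_{L^2}^2$, and one takes $C_0 = (1-\kappa)^{-1/2}$. The subtlety to keep in mind is that the naive Cauchy--Schwarz estimate $T \leq \|\zeta\phi\|_{L^2}\|\psi\|_{L^2}$ combined with $\|\zeta\phi\|^2 \leq T$ only produces the non-strict bound $T \leq \|\psi\|^2$; getting a uniform strict gap is the main obstacle and requires exploiting that $\|\zeta\phi\|_{L^2}$ is actually controlled by the smaller quantity $\|d\zeta\|_{L^2}$.

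To produce such a $\kappa$, I would combine the Poincar\'e inequality (valid because $\int_Y \zeta = 0$) with the Sobolev embedding $L^2_k(Y) \hookrightarrow L^\infty(Y)$ in dimension three (valid since $k \geq 2$) to obtain a constant $A = A(R)$ with
$$\|\zeta\phi\|_{L^2} \;\leq\; \|\phi\|_{L^\infty}\,\|\zeta\|_{L^2} \;\leq\; A\,\|d\zeta\|_{L^2},$$
uniformly on $B(2R)$. This implies $\|\zeta\phi\|^2 \leq \tfrac{A^2}{A^2+1}\,T$; feeding this into $T \leq \|\zeta\phi\|\,\|\psi\|$ gives $\sqrt{T} \leq \tfrac{A}{\sqrt{A^2+1}}\,\|\psi\|$, so $T \leq \tfrac{A^2}{A^2+1}\,\|\psi\|^2$. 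Hence $\kappa = A^2/(A^2+1) < 1$ works, and we may take $C_0 = \sqrt{A^2+1}$, completing the proof.
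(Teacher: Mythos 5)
Your argument is correct, and it takes a genuinely different route from the paper's. The paper proves the upper bound by the triangle inequality together with the operator estimate of Lemma~\ref{lem:Zeta} (which rests on the uniform invertibility of the operator $E_{\phi}$ from Lemma~\ref{lem:elcUniqueness}), and proves the lower bound by viewing the lower inequality as an upper bound on $(\Pi^{\gCoul}_*)_x = (\Pi^{\elCoul}_x)^{-1}$ and invoking the functional boundedness of the infinitesimal global Coulomb projection from Lemma~\ref{lem:igc-fb}. You instead exploit that $\Pi^{\elCoul}_x$ is the $L^2$-\emph{orthogonal} projection onto $\Ke_{x}$ (a fact the paper states but does not use in this proof), obtaining the exact identity $\|(b,\psi)\|_{\tilde g(x)}^2 = \|(b,\psi)\|_{L^2}^2 - T$ with $T = \|d\zeta\|_{L^2}^2 + \|\zeta\phi\|_{L^2}^2 = -\Re\langle\psi,\zeta\phi\rangle_{L^2}$ (your pairing computation checks out, and $\langle b, d\zeta\rangle_{L^2} = \langle d^*b,\zeta\rangle_{L^2}=0$). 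This gives the sharp constant $1$ on the upper bound for free, and reduces the lower bound to the elementary Poincar\'e--Sobolev estimate you give. You correctly identify — and circumvent — the pitfall that plain Cauchy--Schwarz together with $\|\zeta\phi\|^2 \leq T$ only yields the useless non-strict bound $T \leq \|\psi\|^2$; the iterated estimate $\|\zeta\phi\|^2 \leq \frac{A^2}{A^2+1}T$ is the right fix. In short: the paper's proof is more modular, reusing the machinery already built up for $\Pi^{\gCoul}_*$ and $\Pi^{\elCoul}$, while yours is self-contained, more elementary, and produces an explicit constant $C_0 = \sqrt{A^2+1}$ in terms of the Poincar\'e and Sobolev constants and $R$.
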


\begin{proof}
 We begin with the second inequality in \eqref{eq:EquivalentMetrics}. 
Then, with $\zeta$ as in \eqref{eq:pilco}, we have
$$ \| (b, \psi) \|_{\tilde g(x)} \leq  \| (b, \psi) \|_{L^2} + \| (d\zeta, \zeta \phi) \|_{L^2}.$$
We claim that the right hand side is bounded by a constant times $\| (b, \psi) \|_{L^2}$. This follows by the $L^2_1$ control on $\zeta$ from Lemma~\ref{lem:Zeta} and the fact that the condition $x \in B(2R)$ gives $L^2_k$ (and hence $C^0$) bounds on $\phi$.

To prove the first inequality, note that if we view $\Pi^{\elCoul}_x$ as an isomorphism from the global Coulomb slice to the extended local Coulomb slice, then its inverse is the infinitesimal global Coulomb projection, $(\Pi^{\gCoul}_*)_x$, from \eqref{eq:icp}. Thus, if we switch notation and now let $(b, \psi)$ be a vector in the extended local Coulomb slice, the first inequality in \eqref{eq:EquivalentMetrics} can be re-written as
$$  \| (\Pi^{\gCoul}_*)_x(b, \psi) \|_{L^2} \leq C_0 \cdot \| (b, \psi) \|_{L^2}.$$
This holds by Lemma~\ref{lem:igc-fb}.
\end{proof}

Our next goal is to compare the $L^2$- to $\tilde g$-orthogonal projections from $W$ to $\vml$. Since $\lambda$ is of the form $\llambda_i$, we have that $\pml$ is the $L^2$-orthogonal projection to $\vml$, and we write $\pgmlx$ for the $\tilde g(x)$-orthogonal projection to $\vml$. (Compare Remark~\ref{rem:notgradient}.) 

\begin{proposition}
\label{prop:pgpl}
There is a constant $C_1 > 0$ such that, for all $x \in B(2R) \subset W_k$, we have
$$ \| \pgmlx - \pml \| \leq \frac{C_1}{\lambda},$$
where $\pgmlx$ and $\pml$ are viewed as operators from the $L^2$ completion $W_0$ of $W$ to itself.
\end{proposition}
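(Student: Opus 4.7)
The key observation is that both $\pgmlx$ and $\pml$ project onto the same subspace $\vml$ and act as the identity there. Thus the difference $\pgmlx - \pml$ vanishes on $\vml$ and its image lies in $\vml$, so it suffices to estimate $\|\pgmlx v\|_{L^2}$ when $v$ lies in the $L^2$-orthogonal complement of $\vml$ (in which case $\pml v = 0$). Given such a $v$, I would set $w = \pgmlx v \in \vml$ and use the defining $\tilde g$-orthogonality of $v-w$ to $\vml$ to write $\|w\|_{\tilde g}^2 = \langle v, w\rangle_{\tilde g}$.

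The goal is to extract a factor $1/\lambda$ from this inner product. Using formula \eqref{eq:gtilde2} and writing $\Pi^{\elCoul}_x w = w + (-d\zeta_w, \zeta_w\phi)$ (with $\zeta_w$ the function from \eqref{eq:zetafirst}), I would decompose
\begin{equation*}
\langle v, w\rangle_{\tilde g} = \Re \langle v, w\rangle_{L^2} + \Re \langle v^0, -d\zeta_w\rangle_{L^2} + \Re \langle v^1, \zeta_w \phi\rangle_{L^2}.
\end{equation*}
The first term vanishes since $v \perp_{L^2} w$, and the crucial cancellation is that the second term also vanishes: integrating by parts gives $\langle v^0, -d\zeta_w\rangle_{L^2} = \langle d^* v^0, \zeta_w\rangle_{L^2} = 0$ because $v^0 \in \ker d^*$. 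Hence $\|w\|_{\tilde g}^2 = \Re \langle v^1, \zeta_w \phi\rangle_{L^2}$, where only the spinor part of $v$ appears.

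Now I would apply the $L^2_{-1}$--$L^2_1$ duality to get $|\langle v^1, \zeta_w\phi\rangle_{L^2}| \leq \|v^1\|_{L^2_{-1}}\|\zeta_w\phi\|_{L^2_1}$, bound $\|\zeta_w\phi\|_{L^2_1} \leq C\|\zeta_w\|_{L^2_1}$ by Sobolev multiplication (using the uniform $L^2_k$ bound on $\phi$, $k\geq 2$), and invoke Lemma~\ref{lem:Zeta} to obtain $\|\zeta_w\|_{L^2_1} \leq K\|w^1\|_{L^2_{-1}} \leq K\|w\|_{L^2}$. The decisive spectral step is this: since $l = (*d, D)$ splits on $W = \ker d^* \oplus \Gamma(\Spin)$, the condition $v \in (\vml)^{L^2\perp}$ forces $v^1$ to be $L^2$-orthogonal to all $D$-eigenvectors with eigenvalue in $(-\lambda, \lambda)$, so $\|v^1\|_{L^2_{-1}} \leq (1/\lambda)\|v^1\|_{L^2} \leq (1/\lambda)\|v\|_{L^2}$. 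Combining everything and using Proposition~\ref{prop:EquivalentMetrics} to pass between $\tilde g$ and $L^2$ norms on $w$ yields $\|w\|_{L^2} \leq (C_1/\lambda)\|v\|_{L^2}$, as desired.

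The main subtlety is recognizing that the $\ker d^*$ condition on $W$ eliminates the form part of the correction term, so that the estimate reduces to the spinor sector where the spectral gap of $D$ on $(\vml)^{L^2\perp}$ supplies the $1/\lambda$ decay; without this cancellation the argument would only give a $\lambda$-independent operator bound, matching Proposition~\ref{prop:EquivalentMetrics}.
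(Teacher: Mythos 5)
Your argument is correct, and it takes a genuinely more economical route than the paper's. Both proofs reduce to $v \in (\vml)_0^\perp$, set $w = \pgmlx v$, and aim to show $|\langle v, w \rangle_{\tilde g}| \leq (K_0/\lambda)\|v\|_{L^2}\|w\|_{L^2}$. The paper then expands $\langle \Pi^{\elCoul}_x(v), \Pi^{\elCoul}_x(w)\rangle_{L^2}$ symmetrically; after using $v \perp_{L^2} w$ and $d^*b = d^*b' = 0$ this leaves four terms (their \eqref{eq:PilCoul-inner-expand}): $\langle d\zeta, d\zeta'\rangle$, $\langle \zeta\phi, \psi'\rangle$, $\langle\zeta\phi,\zeta'\phi\rangle$, and $\langle\psi, \zeta'\phi\rangle$. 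The first three are bounded via the $1/\lambda$ improvement on $\zeta$ from Lemma~\ref{lem:EstimateZeta} (since $v \in (\vml)_0^\perp$), and the last and hardest term is handled by writing $\psi = D(D^{-1}\psi)$ and integrating by parts. You instead invoke the asymmetric formula \eqref{eq:gtilde2}, which directly yields $\langle v, w\rangle_{\tilde g} = \Re\langle v, w\rangle_{L^2} + \Re\langle v^0, -d\zeta_w\rangle_{L^2} + \Re\langle v^1, \zeta_w\phi\rangle_{L^2}$; the first term vanishes by $L^2$-orthogonality and the second by $d^*v^0 = 0$, leaving exactly the paper's hardest term and nothing else. (Indeed, the other three terms in the paper's expansion sum identically to zero, being $\langle(-d\zeta, \zeta\phi), \Pi^{\elCoul}_x(w)\rangle_{L^2}$, the $L^2$ pairing of a $\J^\circ$-vector against an element of the enlarged local Coulomb slice.) Your bound on the remaining term is then the same spectral argument: $v^1$ lives in the part of the $D$-spectrum of absolute value $\geq \lambda$, so $\|v^1\|_{L^2_{-1}} \lesssim \lambda^{-1}\|v^1\|_{L^2}$, which paired against the $L^2_1$ bound on $\zeta_w\phi$ (from Lemma~\ref{lem:Zeta} plus Sobolev multiplication) gives the $1/\lambda$ decay. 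The upshot is that your observation that $d^*v^0 = 0$ kills the form-part correction makes three of the four estimates unnecessary; the price is that you must remember to use the asymmetric form of $\tilde g$, which is the small conceptual step the paper avoids by expanding symmetrically.
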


We first need a refinement of Lemma~\ref{lem:Zeta}. Let us denote by $(\vml)^{\perp}_0$ the $L^2$-orthogonal complement to $\vml$ inside $W_0$. In other words, $(\vml)^{\perp}_0$ is the $L^2$ span of the eigenvectors of $l$ with eigenvalues at least $\lambda$ in absolute value.

\begin{lemma}
\label{lem:EstimateZeta}
There is a constant $C_2 > 0$ such that, for all $x=(a, \phi) \in B(2R) \subset W_k$ and $(b, \psi) \in (\vml)^{\perp}_0$, if $\zeta$ is the function in \eqref{eq:pilco}, then
$$ \| \zeta \|_{L^2_1} \leq \frac{C_2}{\lambda} \|\psi \|_{L^2}.$$
\end{lemma}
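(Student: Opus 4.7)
The plan is to combine the general bound from Lemma~\ref{lem:Zeta} with a spectral estimate that exploits the hypothesis $(b,\psi)\in (\vml)^{\perp}_0$. By Lemma~\ref{lem:Zeta} we already have, uniformly in $x=(a,\phi)\in B(2R)$,
$$ \|\zeta\|_{L^2_1} \leq K \cdot \|\psi\|_{L^2_{-1}}. $$
So it suffices to prove that for $\psi$ in the spinor component of $(\vml)^{\perp}_0$, one has a bound of the form
\begin{equation}
\label{eq:spectralbound}
\|\psi\|_{L^2_{-1}} \leq \frac{C}{\lambda} \|\psi\|_{L^2}.
\end{equation}
Combining \eqref{eq:spectralbound} with Lemma~\ref{lem:Zeta} immediately gives the desired estimate with $C_2=KC$.

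To establish \eqref{eq:spectralbound}, I will use that the operator $l=(*d, D)$ respects the splitting into forms and spinors, so $(\vml)^{\perp}_0$ is the $L^2$-closure of the span of eigenforms of $*d$ together with eigenspinors of $D$ whose eigenvalues have absolute value at least $\lambda$. In particular, $\psi$ can be expanded in an $L^2$-orthonormal basis of $D$-eigenspinors $\psi_n$ with eigenvalues $\mu_n$ satisfying $|\mu_n|\geq \lambda$, say $\psi=\sum_n c_n\psi_n$, with $\|\psi\|_{L^2}^2=\sum_n |c_n|^2$. Since the Sobolev norms on sections of $\Spin$ defined via the Dirac operator $D$ are equivalent to the usual ones defined via $\nabla$ (as noted in Section~\ref{sec:fdax} in the analogous context for $l$), we have
$$ \|\psi\|_{L^2_{-1}}^2 \leq C' \sum_n \frac{|c_n|^2}{1+\mu_n^2} \leq \frac{C'}{\lambda^2} \sum_n |c_n|^2 = \frac{C'}{\lambda^2} \|\psi\|_{L^2}^2, $$
which is exactly \eqref{eq:spectralbound}. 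Note that the constant $C'$ is purely spectral and independent of $x\in B(2R)$, so that the resulting $C_2$ depends only on the Riemannian structure of $Y$ and on $R$.

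The only mildly delicate point in this plan is the equivalence of the $D$-induced Sobolev norm on spinors with the standard one at the level $j=-1$; but this is a standard consequence of the ellipticity and self-adjointness of $D$ together with duality from the corresponding statement at $j=1$, so I do not expect any real obstacle here. The rest of the argument is formal manipulation.
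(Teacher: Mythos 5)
Your proposal is correct and takes essentially the same approach as the paper: reduce to the spectral bound $\|\psi\|_{L^2_{-1}} \leq (C/\lambda)\|\psi\|_{L^2}$ for $\psi$ supported in the $|D|\geq\lambda$ spectral range, then invoke Lemma~\ref{lem:Zeta}. The paper phrases the spectral step slightly more economically by introducing $D^{-1}(\psi)=\sum\kappa^{-1}\psi_\kappa$ and using only boundedness of $D: L^2\to L^2_{-1}$ (so it avoids needing the full equivalence of the $D$-induced and standard $L^2_{-1}$ norms, which is the one place you flag as delicate), but the content is the same.
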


\begin{proof}
Write $\psi = \sum \psi_\kappa$, where $\psi_\kappa$ are eigenvectors of the Dirac operator $D$, with eigenvalues $\kappa$ such that $|\kappa| > \lambda$. Set
\begin{equation}
\label{eq:D-1}
D^{-1}(\psi) := \sum \kappa^{-1} \psi_\kappa.
\end{equation}
Note that $D(D^{-1}(\psi))=\psi$. Since $D$ is continuous from $L^2$ spinors to $L^2_{-1}$ spinors, we obtain
$$ \| \psi \|_{L^2_{-1}}   \leq K' \| D^{-1}\psi \|_{L^2} \leq \frac{K'}{\lambda} \|\psi\|_{L^2},$$
for some constant $K'$.  Together with Lemma~\ref{lem:Zeta}, this gives the desired inequality.
\end{proof}

\begin{proof}[Proof of Proposition~\ref{prop:pgpl}]
We need to check that, for all $v \in W_0$,
$$  \| \pgmlx(v) - \pml(v) \|_{L^2} \leq \frac{C_1}{\lambda} \| v \|_{L^2},$$
where $C_1$ is independent of $x, v,$ and $\lambda$.  

First, if $v \in \vml$, then note that $\pml_{\tilde{g}(x)}(v) = \pml(v)$ and the claim is trivial.  Therefore, we can assume that $v \in (\vml)^\perp_{0}$.  Thus, we would like to bound 
$$
\| \pml_{\tilde{g}(x)} (v) - \pml(v) \|_{L^2} = \| \pml_{\tilde{g}(x)}(v)\|_{L^2}.
$$
We write $x = (a,\phi)$ and $$w = \pml_{\tilde{g}(x)}(v) \in \vml.$$  Recall from Proposition~\ref{prop:EquivalentMetrics} that there exists a constant $C_0$, independent of $x$, such that $\| v \|_{L^2} \leq C_0 \| v \|_{\tilde{g}(x)}$ and similarly for $w$.  In this case, we have that 
$$ \| \pml_{\tilde{g}(x)}(v) \|^2_{L^2} \leq C^2_0 \| \pml_{\tilde{g}(x)}(v) \|_{\tilde{g}(x)}^2 = C^2_0 \langle v, \pml_{\tilde{g}(x)}(v) \rangle_{\tilde{g}(x)}.$$ 
Therefore, to obtain the desired bounds in the proposition, it suffices to prove that there exists a constant $K_0 > 0$, independent of $x \in B(2R)$ and $v \in  (\vml)^\perp_{0}$, such that 
\begin{equation}\label{eq:g-tilde-projection-bounds}
| \langle v, w \rangle_{\tilde{g}(x)} | \leq \frac{K_0}{\lambda} \| v \|_{L^2} \| w \|_{L^2}.
\end{equation}
We now focus on proving this inequality.

For notation, let $v = (b,\psi)$ and $w = (b', \psi')$. We write $\Pi^{\elCoul}_x(v) = (b - d\zeta, \psi + \zeta \phi)$ and $\Pi^{\elCoul}_x(w) = (b' - d \zeta', \psi' + \zeta' \phi)$.  Since $w = \pml_{\tilde{g}(x)}(v)$, we have that $v$ and $w$ are $L^2$ orthogonal.  From this, we obtain 
\begin{align}
\langle v, w \rangle_{\tilde{g}(x)} &= \langle \Pi^{\elCoul}_x(v), \Pi^{\elCoul}_x(w) \rangle_{L^2} \\
&= -\langle d \zeta, b' \rangle_{L^2} - \langle b, d\zeta' \rangle_{L^2} + \langle d\zeta, d\zeta' \rangle_{L^2} + \langle \zeta \phi , \psi' \rangle_{L^2} \notag \\
& \hskip1cm + \langle \psi, \zeta' \phi \rangle_{L^2} + \langle \zeta \phi , \zeta' \phi \rangle_{L^2} \notag  \\
\label{eq:PilCoul-inner-expand} &= \langle d\zeta, d\zeta' \rangle_{L^2} + \langle \zeta \phi , \psi' \rangle_{L^2} + \langle \psi, \zeta' \phi \rangle_{L^2} + \langle \zeta \phi , \zeta' \phi \rangle_{L^2},
\end{align}
where the third equality comes from the fact that $d^* b = d^* b' = 0$.  Let us now collect the relevant bounds on $\zeta$ and $\zeta'$.  By Lemma~\ref{lem:EstimateZeta}, we have that $\| \zeta\|_{L^2}$ and $\| d\zeta \|_{L^2}$ are bounded above by $\frac{C_2}{\lambda} \| \psi \|_{L^2}$ for some constant $C_2 > 0$.  We do not have $L^2$ bounds on $\zeta'$ or $d\zeta'$ in terms of $\frac{1}{\lambda}$, since $w$ is not an element of $(\vml)^\perp_0$.  However, Lemma~\ref{lem:Zeta} still guarantees $\| \zeta' \|_{L^2}$ and $\| d\zeta'\|_{L^2}$ are bounded above by $K \cdot \|\psi' \|_{L^2}$, independent of $\lambda$ and $x \in B(2R)$.  This gives:
\begin{equation}\label{eq:dzeta-dzeta'}
|\langle d\zeta, d\zeta' \rangle_{L^2}| \leq \frac{C_2 \cdot K}{\lambda} \| \psi \|_{L^2} \| \psi' \|_{L^2} \leq \frac{C_2 \cdot K}{\lambda} \| v \|_{L^2} \| w \|_{L^2}.  
\end{equation}
To establish \eqref{eq:g-tilde-projection-bounds}, it remains to bound the other three terms in \eqref{eq:PilCoul-inner-expand}.  

Since $x \in B(2R) \subset W_k$, we have a uniform bound on $\| \phi \|_{C^0}$ independent of $x$, denoted $K_2$.  Therefore, using a similar argument as for \eqref{eq:dzeta-dzeta'}, we obtain bounds
\begin{align*}
|\langle \zeta \phi, \psi' \rangle_{L^2}| &\leq \frac{C_2 \cdot K_2}{\lambda} \| v \|_{L^2} \| w \|_{L^2},\\
|\langle \zeta \phi, \zeta' \phi \rangle_{L^2}| & \leq \frac{C_2 \cdot K \cdot K_2}{\lambda} \| v \|_{L^2} \| w \|_{L^2}.
\end{align*}   

Thus, the proof will be complete if we obtain similar bounds for $\langle \psi, \zeta' \phi  \rangle_{L^2}$.  We must be careful here, since we do not have bounds on $\zeta'$ in terms of $\frac{1}{\lambda}$.  To handle this, we write 
\begin{align*}
\langle \psi, \zeta' \phi \rangle_{L^2} &= \langle D(D^{-1} \psi), \zeta' \phi \rangle_{L^2} \\
&= \langle D^{-1} \psi, D(\zeta' \phi) \rangle_{L^2}, 
\end{align*}
where $D^{-1}$ is defined as in \eqref{eq:D-1}. Since $(b, \psi) \in (\vml)^\perp_0$, we have that $\| D^{-1} \psi \|_{L^2} \leq \frac{1}{\lambda} \| \psi \|_{L^2}$.  Finally, since $\zeta'$ is $L^2_1$-bounded in terms of $\|\psi\|_{L^2}$, we obtain $L^2_1$-bounds on $\zeta' \phi$ (independent of $x$) by Sobolev multiplication.  In other words, there exists a constant $K_3$ such that $\| D( \zeta' \phi) \|_{L^2} \leq K_3 \| \psi' \|_{L^2}$, since $D$ is continuous from $L^2_1$ to $L^2$.  Thus, we conclude that 
$$ |\langle \psi, \zeta' \phi \rangle_{L^2}| \leq \frac{K_3}{\lambda} \| v \|_{L^2} \| w \|_{L^2}, $$
which completes the proof.
\end{proof}

\section{Control near the stationary points}\label{sec:control-near-stationary}
\label{sec:controlnearby}
We are now ready to prove \eqref{eq:dFlambda} in the neighborhoods $\nu_{\epsilon}(\O^j_\lambda)$ of the orbits of stationary points.  

\begin{comment}
Note that the stationary points of $\Xqmlgc$ come in $S^1$-orbits, which can be points (for reducibles) or circles  (for irreducibles). In either case, a point $x \in \vml$ is within $L^2_{k-1}$ distance $\epsilon$ from a stationary orbit $\O$ if and only if it can be written as 
$$ x = x_{\lambda} + y,$$
where $x_{\lambda}=(a_{\lambda}, \phi_{\lambda}) \in \O$ and $y=(b, \psi) \in \vml$ satisfies 
\begin{equation}
\label{eq:conditionsy}
\Re \langle i\phi, \psi\rangle_{L^2}=0, \ \ \|y\|_{L^2_{k-1}} \leq \epsilon.
\end{equation}
Indeed, we can take $x_{\lambda}$ to be the closest point to $x$ in $\O$ in terms of $L^2$ distance, so that $x-x_{\lambda}=y$ is $L^2$-perpendicular to the orbit, i.e., to the vector $i\phi$. To see that $\|y\|_{L^2_{k-1}} \leq \epsilon$, let $x'_{\lambda}$ be the closest point to $x$ in $\O$ in terms of $L^2_{k-1}$ distance, and note that
$$ \| x-x_{\lambda} \|_{L^2} \leq \| x-x'_{\lambda} \|_{L^2} \leq \| x-x'_{\lambda} \|_{L^2_{k-1}} \leq \epsilon.$$
\end{comment}

Consider the stationary point $x^j_{\infty}$ of $\Xqgc$.  Since $x^j_{\infty}$ is non-degenerate, we have that when restricted to the anticircular global Coulomb slice, $l+\D_{x^j_{\infty}} c_{\q}$ is an invertible, self-adjoint operator with respect to the metric $\tilde g$.  We will use this to show that $(l + \pml \D_{x^j_\lambda} c_\q)(y)$ grows at least linearly in $y$ when $y$ is $L^2$ orthogonal to the $S^1$-orbit of $x^j_\lambda = (a^j_\lambda,\phi^j_\lambda)$; this will be key for establishing the inequalities \eqref{eq:dFlambda} near $x^j_\lambda$.  

\begin{lemma}\label{lem:approximate-eigenvalue-bounds}
There exists a constant $\mu_j > 0$ with the following property.  For $\lambda \gg 0$ and any $y \in W_1$ with $\Re \langle y , (0, i\phi^j_\lambda) \rangle_{L^2} = 0$, we have 
\begin{equation}\label{eq:muj}
\| (l + \pml \D_{x^j_\lambda} c_\q)(y) \|_{L^2} \geq \mu_j \| y \|_{L^2_1}.
\end{equation}  
\end{lemma}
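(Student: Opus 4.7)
The plan is to first establish a linear coercivity estimate for the ``limit'' operator $L_\infty := l + \D_{x^j_\infty} c_\q$ at the stationary point $x^j_\infty = (a^j_\infty, \phi^j_\infty)$ of $\Xqgc$, and then to transfer it to $L_\lambda := l + \pml \D_{x^j_\lambda} c_\q$ by an operator-norm perturbation argument, handling the change of orthogonality condition by projection.

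For the first step, $L_\infty$ is a compact perturbation of the self-adjoint, first-order elliptic operator $l$, so it is Fredholm of index zero as a map $W_1 \to W_0$. By the non-degeneracy of $x^j_\infty$ (part of our standing assumption on $\q$) combined with Lemma~\ref{lem:nondegeneracycoulomb}, the kernel of $L_\infty$ is exactly the direction tangent to the $S^1$-orbit, namely $\Span_{\R}((0, i\phi^j_\infty))$. A standard contradiction-plus-Rellich argument then yields a constant $\mu_j' > 0$ such that
\begin{equation}\label{eq:mujinftybound}
\|L_\infty y\|_{L^2} \geq 2\mu_j' \|y\|_{L^2_1} \quad \text{for all } y \in W_1 \text{ with } \Re\langle y, (0, i\phi^j_\infty)\rangle_{L^2} = 0.
\end{equation}
Indeed, if no such $\mu_j'$ existed, we could extract $y_n$ of $L^2_1$-norm one, $L^2$-orthogonal to $(0, i\phi^j_\infty)$, with $\|L_\infty y_n\|_{L^2} \to 0$; Rellich compactness and the elliptic estimate for $l$ then force $y_n$ to converge in $L^2_1$ to a nonzero element of $\ker L_\infty$ that is $L^2$-orthogonal to $(0, i\phi^j_\infty)$, a contradiction.

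The second step is to show $L_\lambda - L_\infty \to 0$ in operator norm $W_1 \to W_0$ as $\lambda \to \infty$. Writing
\[
L_\lambda - L_\infty = \pml\bigl(\D_{x^j_\lambda} c_\q - \D_{x^j_\infty} c_\q\bigr) + (\pml - I)\,\D_{x^j_\infty} c_\q,
\]
the first summand tends to zero by Lemma~\ref{lem:Dcq} (continuity of $\D c_\q : W_k \to \Hom(W_0, W_0)$) combined with $x^j_\lambda \to x^j_\infty$ in $W_k$ (Proposition~\ref{prop:nearby}) and the uniform bound $\|\pml\| \leq \Theta_0 = 1$ on $W_0$ (Fact~\ref{fact:pml}). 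The second summand tends to zero because $\D_{x^j_\infty} c_\q : W_1 \to W_0$ is compact---it factors through the bounded map $W_1 \to W_1$ (Lemma~\ref{lem:Dcq}) and the compact Rellich inclusion $W_1 \hookrightarrow W_0$---while $\pml - I \to 0$ strongly on $W_0$ by Fact~\ref{fact:pml}(b), and composition of a strongly convergent family with a compact operator converges in operator norm.

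Finally, to deal with the mismatch between the orthogonality against $(0, i\phi^j_\lambda)$ appearing in the statement and that against $(0, i\phi^j_\infty)$ required by \eqref{eq:mujinftybound}, write $y = y' + y''$ where $y'$ is the $L^2$-orthogonal projection of $y$ onto $(0, i\phi^j_\infty)^\perp$ and $y'' \in \Span((0, i\phi^j_\infty))$. Since $\Re\langle y, (0, i\phi^j_\lambda)\rangle_{L^2} = 0$, the coefficient of $y''$ is controlled by $\|\phi^j_\lambda - \phi^j_\infty\|_{L^2}\cdot\|y\|_{L^2}$, giving $\|y''\|_{L^2_1} \leq \delta(\lambda)\|y\|_{L^2_1}$ with $\delta(\lambda) \to 0$. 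Applying \eqref{eq:mujinftybound} to $y'$ and combining with the operator-norm estimate above,
\[
\|L_\lambda y\|_{L^2} \geq \|L_\infty y'\|_{L^2} - \|L_\lambda - L_\infty\|\,\|y\|_{L^2_1} - \|L_\infty\|\,\|y''\|_{L^2_1} \geq \bigl(2\mu_j' - o(1)\bigr)\|y\|_{L^2_1},
\]
which exceeds $\mu_j'\|y\|_{L^2_1}$ for $\lambda$ large, proving \eqref{eq:muj} with $\mu_j := \mu_j'$. The main technical hurdle is the operator-norm convergence in the second step, where the merely strong (not norm) convergence of $\pml$ must be upgraded via the compactness of $\D_{x^j_\infty} c_\q$ guaranteed by Lemma~\ref{lem:Dcq}.
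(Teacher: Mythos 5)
Your proof is correct, but it takes a genuinely different (and more modular) route than the paper's. The paper runs a single contradiction argument: it assumes the estimate \eqref{eq:muj} fails for a sequence $\lambda_n \to \infty$, extracts $y_n$ with $\|y_n\|_{L^2_1} = 1$ and $\|L_{\lambda_n} y_n\|_{L^2} \to 0$, passes to an $L^2$-convergent subsequence, uses the \emph{strong} convergence of $p^{\lambda_n}$ applied to the fixed limit vector $\D_{x_\infty} c_\q(y)$ (together with continuity of $\D c_\q$), then upgrades to $L^2_1$-convergence by the elliptic estimate and derives $L_\infty y = 0$ with $y\neq 0$ satisfying $\Re\langle y,(0,i\phi_\infty)\rangle_{L^2}=0$, contradicting non-degeneracy. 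You instead prove the coercivity for $L_\infty$ first, then upgrade the strong convergence of $p^\lambda - I$ to operator-norm convergence of $L_\lambda \to L_\infty$ via the compactness of $\D_{x_\infty} c_\q : W_1 \to W_0$, and finally patch the mismatch in orthogonality conditions by projecting off the $(0,i\phi_\infty)$-direction. The paper's version never needs norm convergence (strong convergence on a single vector suffices), so it is shorter; your version pays for the extra compactness argument but gets the cleaner, separately-usable statement that $L_\lambda \to L_\infty$ in $\mathcal{L}(W_1, W_0)$, and the orthogonality-mismatch estimate $\|y''\|_{L^2_1} \leq \delta(\lambda)\|y\|_{L^2_1}$ is a nice explicit quantification of something the paper handles only in the limit. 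One small gap: you invoke Lemma~\ref{lem:nondegeneracycoulomb} to identify $\ker L_\infty = \Span_\R((0,i\phi^j_\infty))$, but that lemma applies only to irreducibles; in the reducible case $\phi^j_\infty = 0$ and the orthogonality condition is vacuous, and one instead reads non-degeneracy of the reducible directly (as invertibility of $l + \D_{(a,0)} c_\q$, cf.\ Proposition~\ref{prop:nondegeneracycharacterized}). Since $\Span_\R(0) = \{0\}$ your formula is literally correct, but the citation should be adjusted; the paper treats the two cases separately for exactly this reason.
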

Of course, this lemma implies that we have bounds $\| (l + \pml \D_{x^j_\lambda} c_\q)(y) \|_{L^2} \geq \mu_j \| y \|_{L^2}$ as well.  
\begin{proof}
For notational convenience, we omit the index $j$ from the argument.  We suppose that this is not true.  Then, we can find a sequence $y_n \in \vmln$ with $\| y_n\|_{L^2_1} = 1$ and $\Re \langle y_n, (0,i \phi_{\lambda_n}) \rangle_{L^2} = 0$ and a sequence $\lambda_n \to \infty$  such that $\| (l + \pmln \D_{x_{\lambda_n}} c_\q)(y_n) \|_{L^2} \to 0$.  Extract a subsequence for which $y_n$ converges in $L^2$ to some $y \in W_0$.  Since $x_{\lambda_n} \to x_\infty$ in $L^2_k$, the continuity of $\D c_\q$ (Lemma~\ref{lem:Dcq}) implies that 
$$ 
\pmln \D_{x_{\lambda_n}} c_\q(y_n) \to \D_{x_\infty} c_\q (y) \text{ in } L^2.
$$
We see that $l(y_n)$ converges in $L^2$.  Because the $L^2_{-1}$ limit of $l(y_n)$ is $l(y)$, we in fact have that $l(y_n)$ converges to $l(y)$ in $L^2$ and thus $y_n \to y$ in $L^2_1$, and $y \neq 0$.  However, since $(l + \pmln \D_{x_{\lambda_n}} c_\q)(y_n)$ converges to 0, we see that $(l + \D_{x_\infty} c_\q)(y) = 0$.  Bootstrapping further shows that $y$ is actually an element of $W_k$.  

First, suppose that $x_\infty$ is reducible.  In this case, we have contradicted the non-degeneracy of $x_\infty$, as $l + \D_{x_\infty} c_\q :W_k \to W_{k-1}$ is invertible (where we are using non-degeneracy in the blow-down).  Now, suppose that $x_\infty$ is an irreducible stationary point of $\Xqgc$.  In this case, we can only say that $l + \D_{x_\infty} c_\q$ is injective on the $\tilde{g}(x_\infty)$-orthogonal complement of the $S^1$ orbit of $x_\infty$.   Thus, we have that $y$ must have some component tangent to the $S^1$ orbit of $x_\lambda$, i.e., a multiple of $(0,i\phi_\infty)$.  Because $\Re \langle y_n, (0,i\phi_{\lambda_n}) \rangle_{L^2} = 0$ for all $n$, we see that $\Re \langle y, (0,i\phi_\infty) \rangle_{L^2} = 0$ as well.  This is a contradiction.      
\end{proof}

Recall that $\epsilon > 0$ was chosen in Section~\ref{sec:Flambda} to be sufficiently small, depending on two requirements from Assumption~\ref{as:1}. We need an additional requirement. For each $j=1, \dots, m$, let $\mu_j$ be the constant as in Lemma~\ref{lem:approximate-eigenvalue-bounds}.

\begin{assumption}
\label{as:2}
For each $j=1, \dots, m$ and $x \in \nu_{3\epsilon}(\O^j)$, we have
$$ \| \D_x c_{\q} - \D_{x^j_{\infty}} c_{\q} \| \leq \frac{\mu_j}{40 C_0^2}.$$
Here, the operator norm is taken in $\Hom(W_0, W_0)$, and $C_0$ is the constant from Proposition~\ref{prop:EquivalentMetrics}. 
\end{assumption}

Note that the existence of such an $\epsilon$ is guaranteed by the continuity of $\D c_{\q}$, as Lemma~\ref{lem:Dcq}, applied for Sobolev index $k-1$ instead of $k$, implies that 
$$\D c_{\q} : W_{k-1} \to  \Hom(W_0, W_0)$$
is continuous. 

\begin{proposition}
\label{prop:cond2}
Fix $\epsilon > 0$ satisfying Assumptions~\ref{as:1} and \ref{as:2}. Then, for all $\lambda \gg 0$, we have 
\begin{equation}\label{eq:Xqmlgc-gradient-inequality}
\frac{1}{4} \| \Xqmlgc \|^2_{\tilde{g}} \leq dF_\lambda (\Xqmlgc) \leq 4 \| \Xqmlgc \|^2_{\tilde{g}}
\end{equation}
at any point  $x \in \vml \cap B(2R)$ which is at $L^2_{k-1}$ distance at most $\epsilon$ from a stationary point of $\Xqmlgc$.  
\end{proposition}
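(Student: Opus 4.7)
\medskip
\noindent\textbf{Proof plan.} Fix $x \in \vml \cap B(2R)$ at $L^2_{k-1}$-distance at most $\epsilon$ from some orbit $\O^j_\lambda$. By the $S^1$-equivariance of $F_\lambda$, $\Xqmlgc$, and $\tilde g$, I may act by an element of $S^1$ so that the basepoint $x^j_\lambda$ used in the construction of Section~\ref{sec:Flambda} is itself the $L^2$-closest point on $\O^j_\lambda$ to $x$. Writing $x = x^j_\lambda + y$ with $y \in \vml$, I then have $\Re\langle y,(0,i\phi^j_\lambda)\rangle_{L^2}=0$, $\|y\|_{L^2_{k-1}}\leq\epsilon$, $H^j_\lambda(x)=1$, and $\omega^j_\lambda(x)=1$, so that
\[ T_\lambda(x) = x^j_\infty + y, \qquad \D_x T_\lambda(v) = v + (\D_x\omega^j_\lambda)(v)\cdot (x^j_\infty - x^j_\lambda). \]
Since $dF_\lambda|_x(v) = \langle \Xqgc(T_\lambda(x)),\D_x T_\lambda(v)\rangle_{\tilde g(T_\lambda(x))}$, the strategy is to show that the vectors $\Xi := \Xqmlgc(x)$ and $\Upsilon := \Xqgc(T_\lambda(x))$ are close, that $\D_x T_\lambda$ is close to the identity, and that the metrics $\tilde g(x)$ and $\tilde g(T_\lambda(x))$ are close, so that $dF_\lambda(\Xqmlgc)|_x$ sits near $\|\Xi\|^2_{\tilde g(x)}$.

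Taylor expanding around the stationary points $x^j_\lambda$ and $x^j_\infty$, and using that these are zeros of $\Xqmlgc$ and $\Xqgc$ respectively, I obtain
\[ \Xi = A_\lambda(y) + \pml R_\lambda(y), \qquad \Upsilon = B(y) + S(y), \]
where $A_\lambda := l + \pml\,\D_{x^j_\lambda} c_\q$, $B := l + \D_{x^j_\infty} c_\q$, and $R_\lambda(y) := \int_0^1(\D_{x^j_\lambda+ty}c_\q - \D_{x^j_\lambda}c_\q)(y)\,dt$, with $S(y)$ defined analogously around $x^j_\infty$. Lemma~\ref{lem:approximate-eigenvalue-bounds} supplies the crucial linear lower bound $\|A_\lambda(y)\|_{L^2}\geq\mu_j\|y\|_{L^2_1}$, while Assumption~\ref{as:2}, together with Proposition~\ref{prop:nearby} to guarantee that $x^j_\lambda$, $x^j_\lambda+ty$, and $x^j_\infty+ty$ all lie in $\nu_{3\epsilon}(\O^j)$ once $\lambda$ is large, yields $\|R_\lambda(y)\|_{L^2},\|S(y)\|_{L^2}\leq \tfrac{\mu_j}{20C_0^2}\|y\|_{L^2}$.

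The heart of the argument is to estimate
\[ B(y) - A_\lambda(y) = \pml(\D_{x^j_\infty} - \D_{x^j_\lambda})c_\q(y) + (1-\pml)\D_{x^j_\infty} c_\q(y). \]
Assumption~\ref{as:2} handles the first summand. For the second, Lemma~\ref{lem:Dcq} says that $\D_{x^j_\infty} c_\q : W_1 \to W_1$ is bounded, and the standard eigenvalue tail estimate $\|(1-\pml)f\|_{L^2}\leq C\lambda^{-1}\|f\|_{L^2_1}$ then gives $\|(1-\pml)\D_{x^j_\infty} c_\q(y)\|_{L^2}\leq C'\lambda^{-1}\|y\|_{L^2_1}$, which for $\lambda$ large is controlled by $\tfrac{\mu_j}{20C_0^2}\|y\|_{L^2_1}$. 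Assembling all the bounds and using $\|y\|_{L^2}\leq\|y\|_{L^2_1}$ produces $\|\Upsilon - \Xi\|_{L^2}\leq \tfrac{1}{4C_0^2}\|\Xi\|_{L^2}$, which Proposition~\ref{prop:EquivalentMetrics} upgrades to $\|\Upsilon - \Xi\|_{\tilde g(\cdot)}\leq \tfrac{1}{4}\|\Xi\|_{\tilde g(\cdot)}$. The bound \eqref{eq:Dxomega-bounds} combined with the $L^2_k$-convergence $x^j_\lambda\to x^j_\infty$ (supplied by Proposition~\ref{prop:nearby}) makes $\|(\D_x T_\lambda - \id)(\Xi)\|_{\tilde g}$ arbitrarily small relative to $\|\Xi\|_{\tilde g}$, and the continuity of $\Pi^{\elCoul}$ (Lemma~\ref{lem:elc}) makes $\tilde g(x)$ and $\tilde g(T_\lambda(x))$ differ by a factor close to $1$. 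A Cauchy--Schwarz estimate then assembles these ingredients into \eqref{eq:Xqmlgc-gradient-inequality}.

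\medskip
\noindent\textbf{Main obstacle.} The delicate step is the estimate on $(1-\pml)\D_{x^j_\infty} c_\q(y)$. The hypothesis controls $y$ in the $L^2_{k-1}$-norm, but the linear lower bound of Lemma~\ref{lem:approximate-eigenvalue-bounds} is in $L^2_1$; on $\vml$ the ratio of these norms can be of order $\lambda^{k-2}$, so any crude conversion will lose the fight against the $\mu_j$ coefficient. The resolution is to exploit the regularity-preserving property $\D_{x^j_\infty} c_\q : W_1 \to W_1$, a consequence of the very tameness of $\q$ via Lemma~\ref{lem:Dcq}, which expresses the tail directly against $\|y\|_{L^2_1}$ with an overall gain of $\lambda^{-1}$ and so matches the lower bound cleanly.
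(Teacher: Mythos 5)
Your proposal is correct, and it takes a genuinely different route from the paper at the crucial step. The paper aligns both arguments of the $\tilde g$-pairing around the common operator $(l + \pml\D_{x_\lambda}c_\q)(y)$ by inserting the $\tilde g$-orthogonal projection $\pml_{\tilde g(T_\lambda(x))}$ into the first slot (via $\langle(1-\pml_{\tilde g})u, v\rangle_{\tilde g}=0$ for $v\in\vml$); controlling the resulting correction term then hinges on Proposition~\ref{prop:pgpl}, the operator-norm comparison $\|\pml_{\tilde g(x)}-\pml\|\leq C_1/\lambda$, which itself rests on Lemma~\ref{lem:EstimateZeta}. You instead keep the two vectors $\Upsilon=\Xqgc(T_\lambda(x))$ and $\Xi=\Xqmlgc(x)$ unaligned and estimate $\|\Upsilon-\Xi\|_{L^2}$ directly; the decomposition $B(y)-A_\lambda(y)=\pml(\D_{x_\infty}-\D_{x_\lambda})c_\q(y)+(1-\pml)\D_{x_\infty}c_\q(y)$ then surfaces the tail term, which you kill with the elementary spectral estimate $\|(1-\pml)f\|_{L^2}\leq C\lambda^{-1}\|f\|_{L^2_1}$ (valid since $\lambda=\llambda_i$ and $\pml$ is then the honest $L^2$ projection) paired with the $W_1\to W_1$ boundedness of $\D_{x_\infty}c_\q$ from Lemma~\ref{lem:Dcq}. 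Your ``Main obstacle'' paragraph correctly isolates the point that both numerator and denominator must be expressed in $\|y\|_{L^2_1}$ to match the lower bound $\|A_\lambda(y)\|\geq\mu_j\|y\|_{L^2_1}$ of Lemma~\ref{lem:approximate-eigenvalue-bounds}, after which the $\lambda^{-1}$ gain is decisive. The upshot is that your route avoids Proposition~\ref{prop:pgpl} and Lemma~\ref{lem:EstimateZeta} entirely, a genuine streamlining; what the paper's alignment buys in return is a slicker final assembly, since it can invoke a short abstract lemma about inner products of vectors near a common $u$, whereas your Cauchy--Schwarz step has to track the three perturbations ($\Upsilon-\Xi$, $\D_x T_\lambda-\id$, and $\tilde g(x)\leadsto\tilde g(T_\lambda(x))$) simultaneously. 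I verified that the constants close with room to spare, so this is more a matter of exposition than of substance.
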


\begin{proof}
In Section~\ref{sec:Flambda} we observed that, because of Assumption~\ref{as:1} (a) on $\epsilon$, the $S^1$-orbits $\O_{\lambda}^j$ of stationary points $x^j_\lambda$ are at least $L^2_{k-1}$ distance $5\epsilon$ apart from each other. Hence, their neighborhoods $\nu_{2\epsilon}(\O_{\lambda}^j)$ are disjoint.  

Let $x$ be in $ \nu_{\epsilon}(\O_{\lambda}^j)$ for some $j$.  Then, the only contribution to the summation in Equation~\eqref{eq:Tlambda} is from that $j$ and, moreover, we have $H^j_{\lambda}(x)=1$.   For simplicity, we will omit the index $j$ from $x^j_\lambda$, $x^j_\infty$, $\O^j$, $\O_{\lambda}^j$, and $\omega^j_\lambda$ for the rest of this section.  Thus, 
$$T_{\lambda}(x) = x + \omega_{\lambda}(x) \cdot (x_{\infty}- x_{\lambda}).$$

We now expand $dF_\lambda(\Xqmlgc)$:
\begin{align*}
(dF_\lambda)_{x}(\Xqmlgc(x)) &= (d\L_\q)_{T_\lambda(x)} (\D_x T_\lambda)(\Xqmlgc(x)) \\
&= \langle (l + c_\q)(T_\lambda(x)), (\D_x T_\lambda)(l + \pml c_\q)(x) \rangle_{\tilde{g}(T_\lambda(x))} \\
&= \langle (l + c_\q)(T_\lambda(x)), (l + \pml c_\q)(x) + (\D_x \omega_\lambda)(l + \pml c_\q)(x) \cdot (x_\infty - x_\lambda) \rangle_{\tilde{g}(T_\lambda(x))}.  
\end{align*}
By the $S^1$-equivariance of $\Xqmlgc$ and the $S^1$-invariance of $F_\lambda$, it suffices to show that \eqref{eq:Xqmlgc-gradient-inequality} holds when $\omega_\lambda(x) = 1$, that is, when the $L^2$-closest point to $x$ on $\O_{\lambda}$ is exactly $x_\lambda$. (Of course, we automatically have $\omega_\lambda(x) = 1$ if $x_\lambda$ is reducible.)  Further, note that if $x_\lambda$ is reducible, then $\D_x \omega_\lambda = 0$.  

We will analyze  $dF_\lambda (\Xqmlgc)$ by linearizing some of the terms in the inner product above about $x_\lambda$.  We begin by linearizing $(l + \pml c_\q)(x)$.  Let $y = x - x_\lambda$.  Since $\omega_\lambda(x) = 1$, we have that $y$ is orthogonal to the $S^1$ orbit of $x_\lambda$.  Since $x_\lambda$ is a stationary point, we have $(l + \pml c_\q)(x_\lambda) = 0$.  Therefore, the difference between $(l + \pml c_\q)(y)$ and $(l + \pml \D_{x_\lambda}c_\q)(x)$ is given by 
\begin{align}
 (l + \pml c_\q)(x_\lambda + y) - (l + \pml \D_{x_{\lambda}} c_\q)(y) &= l(x_\lambda) + \pml(c_\q(x_\lambda + y)) - \pml (\D_{x_{\lambda}} c_\q(y)) \\
\nonumber &= -\pml \left (c_\q(x_\lambda) + c_\q(x_\lambda + y) - \D_{x_{\lambda}} c_\q(y) \right) \\
\label{eq:Stildelambda} &= \int^1_0 \pml(\D_{x_\lambda + ty} c_\q - \D_{x_{\lambda}} c_\q)(y) dt.  
\end{align}
We denote the term in \eqref{eq:Stildelambda} by $\widetilde{S}_\lambda(y)$.  

Similarly, we would like to linearize $(l + c_\q)(T_\lambda(x))$.  We first recall that $T_\lambda(x_\lambda) = x_\infty$, which is a stationary point of $l + c_\q$.  Second, since $\omega_\lambda(x) = 1$, we have that $x - x_\lambda$ is real $L^2$ orthogonal to $i x_\lambda$, and hence $(\D_{x_\lambda + ty} T_\lambda)(y) = y$ for any $t \in [0,1]$.  Therefore, we can write
\begin{align*}
(l + c_\q)(T_\lambda(x_\lambda + y)) &= (l + c_\q)(T_\lambda(x_\lambda + y))- (l+c_\q)(T_\lambda(x_\lambda)) \\
&= \int^1_0 ( l + \D_{T_\lambda(x_\lambda + ty)} c_\q) \circ (\D_{x_\lambda + ty}T_{\lambda})(y) dt \\
&= \int^1_0 ( l + \D_{T_\lambda(x_\lambda + ty)} c_\q) (y) dt\\
&= \int^1_0 ( l + \D_{x_{\infty} + ty} c_\q) (y) dt.
\end{align*}
We obtain
\begin{equation}
\label{eq:Rtildelambda}
(l + c_\q)(T_\lambda(x_\lambda + y)) - (l + \D_{x_{\infty}} c_\q)(y) = \int^1_0 (\D_{x_\infty + ty} c_\q - \D_{x_\infty} c_\q)(y) dt.
\end{equation}
We let $\widetilde{R}_\lambda(y)$ denote the term on the right hand side of \eqref{eq:Rtildelambda}.

Using these two linearizations and the fact that $\langle (1 - \pml_{\tilde{g}(x')})(u), v \rangle_{\tilde{g}(x')} = 0$ for any $v \in \vml$ and $u, x' \in W_0$, we obtain
\begin{align}\label{eq:dFlambda-linearized-pre}
(dF_\lambda)_{x}(\Xqmlgc(x)) = & \langle (l + \pml_{\tilde{g}(T_\lambda(x))} \D_{x_\infty} c_\q)(y) + \widetilde{R}_\lambda(y), \\
& (l + \pml \D_{x_\lambda} c_\q)(y) + \widetilde{S}_\lambda(y) + (\D_x \omega_\lambda)(l + \pml c_\q)(x) \cdot (x_\infty - x_\lambda)\rangle_{\tilde{g}(T_\lambda(x))}, \notag
\end{align}
where $\widetilde{R}_\lambda(y)$ and $\widetilde{S}_\lambda(y)$ are as defined above.  At this point, it is still too difficult to compare the linear terms to understand this inner product.  Therefore, we will alter our expressions so that the leading terms align.  Write  
\begin{equation}\label{eq:dFlambda-linearized}
(dF_\lambda)_{x}(\Xqmlgc(x)) =  \langle (l + \pml \D_{x_\lambda} c_\q)(y) + R_\lambda(y) , 
(l + \pml \D_{x_\lambda} c_\q)(y) + S_\lambda(y) \rangle_{\tilde{g}(T_\lambda(x))},
\end{equation}
where 
\begin{align}
\label{eq:Rlambda} 
R_\lambda(y) &= \widetilde{R}_\lambda(y) + (\pml_{\tilde{g}(T_\lambda(x))} \D_{x_\infty}c_\q - \pml \D_{x_\lambda} c_\q)(y),  \\
\label{eq:Slambda} 
S_\lambda(y) &= \widetilde{S}_\lambda(y) + (\D_x \omega_\lambda)(l + \pml c_\q)(x) \cdot (x_\infty - x_\lambda).
\end{align}

In Lemmas~\ref{lem:RStildelambda-bounds}, \ref{lem:Rlambda}, and \ref{lem:Slambda} below, we will prove that for our choice of $\epsilon$ as in Assumption~\ref{as:2} and for $\lambda \gg 0$,
\begin{align}
\label{eq:Rtildelambda-bounds}\| \widetilde{R}_\lambda(y) \|_{L^2} &\leq \frac{1}{20 C^2_0} \| (l + \pml \D_{x_\lambda} c_\q)(y) \|_{L^2}  \\
\label{eq:Stildelambda-bounds}\| \widetilde{S}_\lambda(y) \|_{L^2} &\leq \frac{1}{20 C^2_0} \| (l + \pml \D_{x_\lambda} c_\q)(y) \|_{L^2} \\
\label{eq:Rlambda-bounds} 
\| R_\lambda(y) \|_{L^2} &\leq \frac{1}{10C_0^2} \| (l + \pml \D_{x_\lambda} c_\q) (y) \|_{L^2}  \\
\label{eq:Slambda-bounds} 
\| S_\lambda(y) \|_{L^2} &\leq \frac{1}{10  C_0^2} \| (l + \pml \D_{x_\lambda} c_\q) (y) \|_{L^2}  
\end{align}
where $C_0$ is the constant of equivalency between the $\tilde{g}$- and $L^2$-metrics from Proposition~\ref{prop:EquivalentMetrics}.  This will imply the analogous inequalities with $L^2$ replaced by $\tilde{g}(T_\lambda(x))$ and without the $C_0^2$ term.  Let us see why this will complete the proof of the proposition.  
 
Note that in an inner product space with vectors $u, v, w, \tilde{w}$ such that 
\begin{equation}\label{eqn:uvw}
\| v \| \leq \frac{1}{10} \| u\|, \ \| w \| \leq \frac{1}{10} \| u\|, \ \| \tilde{w} \| \leq \frac{1}{20} \| u\|,
\end{equation}
we have the inequalities
\begin{align*}
\frac{19}{20} \| u \| & \leq \| u + \tilde{w} \| \leq \frac{21}{20} \| u \| \\
\left(\frac{9}{10} \right)^2 \| u \|^2 &\leq \langle u + v, u + w \rangle \leq \left(\frac{11}{10}\right)^2 \| u \|^2,
\end{align*}
from which we can deduce 
$$
\frac{1}{2} \| u + \tilde{w} \|^2 \leq \langle u + v, u + w \rangle \leq 2 \| u + \tilde{w} \|^2. 
$$
For our case, we take 
$$ 
u =( l + \pml \D_{x_\lambda} c_\q)(y), \ v = R_\lambda(y), \ w = S_\lambda(y), \ \tilde{w} = \widetilde{S}_\lambda(y),
$$
and $\tilde{g}(T_\lambda(x))$ as the inner product.  Note that $u + \tilde{w} = \Xqmlgc(x)$.  The inequalities \eqref{eq:Stildelambda-bounds}-\eqref{eq:Slambda-bounds} imply that \eqref{eqn:uvw} holds and thus 
$$
\frac{1}{2} \| \Xqmlgc \|^2_{\tilde{g}(T_\lambda(x))} \leq dF_\lambda (\Xqmlgc) \leq 2 \| \Xqmlgc \|^2_{\tilde{g}(T_\lambda(x))}.
$$  
Since $T_\lambda(x) \to x$, the constant of equivalency between the $\tilde{g}(T_\lambda(x))$- and $\tilde{g}(x)$-metrics is at most $\sqrt{2}$ for $\lambda \gg 0$.  From this, we obtain that
$$
\frac{1}{4} \| \Xqmlgc \|^2_{\tilde{g}} \leq dF_\lambda (\Xqmlgc) \leq 4 \| \Xqmlgc \|^2_{\tilde{g}},
$$
completing the proof.
\end{proof}

The rest of the section is now devoted to proving \eqref{eq:Rtildelambda-bounds}- \eqref{eq:Slambda-bounds}. We do this by a series of lemmas.

\begin{lemma}\label{lem:RStildelambda-bounds}
For $\lambda \gg 0$ and any $y \in W_1$ orthogonal to the $S^1$ orbit of $x_\lambda$, we have the following inequalities 
\begin{align*}
\| \widetilde{R}_\lambda(y) \|_{L^2} &\leq \frac{1}{20 C^2_0} \| (l + \pml \D_{x_\lambda} c_\q)(y) \|_{L^2}  \\
\| \widetilde{S}_\lambda(y) \|_{L^2} &\leq \frac{1}{20 C^2_0} \| (l + \pml \D_{x_\lambda} c_\q)(y) \|_{L^2}.
\end{align*}
\end{lemma}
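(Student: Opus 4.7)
The plan is to derive both inequalities from Assumption~\ref{as:2} combined with the coercivity estimate in Lemma~\ref{lem:approximate-eigenvalue-bounds}, after first verifying that the relevant base points all lie in the neighborhood $\nu_{3\epsilon}(\O^j)$ where the derivative bound of Assumption~\ref{as:2} is applicable. Since $x$ is within $L^2_{k-1}$ distance $\epsilon$ of $x_\lambda$ (so $\|y\|_{L^2_{k-1}} \leq \epsilon$), and for $\lambda \gg 0$ the basepoint $x_\lambda$ is within $L^2_{k-1}$ distance $\epsilon$ of $x_\infty$, both segments $x_\infty + ty$ and $x_\lambda + ty$, for $t\in[0,1]$, remain in $\nu_{3\epsilon}(\O^j)$. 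Thus Assumption~\ref{as:2} gives $\|\D_{x_\infty + ty} c_\q - \D_{x_\infty} c_\q\| \leq \mu_j/(40 C_0^2)$ for all $t$, and a triangle-inequality argument gives $\|\D_{x_\lambda + ty} c_\q - \D_{x_\lambda} c_\q\| \leq \mu_j/(20 C_0^2)$.

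Next, I would integrate these pointwise bounds. For $\widetilde R_\lambda$, passing the $L^2$ norm under the integral yields
\[
\|\widetilde R_\lambda(y)\|_{L^2} \leq \int_0^1 \|\D_{x_\infty + ty} c_\q - \D_{x_\infty} c_\q\|\cdot \|y\|_{L^2}\, dt \leq \frac{\mu_j}{40 C_0^2}\|y\|_{L^2}.
\]
For $\widetilde S_\lambda$, I would use additionally that $\|\pml\| \leq \Theta_0 = 1$ on $L^2$, combined with the triangle-inequality bound, to obtain
\[
\|\widetilde S_\lambda(y)\|_{L^2} \leq \int_0^1 \|\pml\|\cdot \|\D_{x_\lambda + ty} c_\q - \D_{x_\lambda} c_\q\|\cdot \|y\|_{L^2}\, dt \leq \frac{\mu_j}{20 C_0^2}\|y\|_{L^2}.
\]

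Finally I would convert $\|y\|_{L^2}$ into the desired right-hand side using Lemma~\ref{lem:approximate-eigenvalue-bounds}: since $y$ is $L^2$-orthogonal to the $S^1$-orbit of $x_\lambda$, we have $\mu_j \|y\|_{L^2} \leq \mu_j\|y\|_{L^2_1} \leq \|(l + \pml \D_{x_\lambda} c_\q)(y)\|_{L^2}$. Substituting into the two displays gives bounds of $1/(40 C_0^2)$ and $1/(20 C_0^2)$ respectively, both at least as strong as the claimed $1/(20 C_0^2)$.

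The steps are all straightforward once the geometric setup is in place; the only mild subtlety (and the main potential pitfall) is bookkeeping the constants so that Assumption~\ref{as:2} is applied with exactly the factor $1/(40 C_0^2)$ needed to absorb the triangle-inequality doubling in the $\widetilde S_\lambda$ estimate, and confirming that every intermediate point really does land in $\nu_{3\epsilon}(\O^j)$ once $\lambda$ is taken large enough relative to $\epsilon$.
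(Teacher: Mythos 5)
Your proof is correct and follows essentially the same route as the paper's: both reduce to the coercivity bound $\mu_j \|y\|_{L^2} \leq \|(l + \pml \D_{x_\lambda} c_\q)(y)\|_{L^2}$ from Lemma~\ref{lem:approximate-eigenvalue-bounds}, then bound $\|\widetilde R_\lambda(y)\|_{L^2}$ and $\|\widetilde S_\lambda(y)\|_{L^2}$ by pushing the $L^2$ norm under the integral and applying Assumption~\ref{as:2}, using $\|\pml\| \leq \Theta_0 = 1$ on $L^2$ for the $\widetilde S_\lambda$ case. The only difference is cosmetic: you make explicit the triangle-inequality step converting the Assumption~\ref{as:2} bound on $\|\D_x c_\q - \D_{x_\infty} c_\q\|$ (constant $\mu_j/(40C_0^2)$) into a bound on $\|\D_{x_\lambda+ty} c_\q - \D_{x_\lambda} c_\q\|$ (constant $\mu_j/(20C_0^2)$), whereas the paper folds that into a single sentence; this is exactly how the factor-of-two discrepancy between the two constants gets absorbed, so your bookkeeping is right.
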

\begin{proof}
We will prove the desired inequality for the case of $\widetilde{S}_\lambda$.  The case of $\widetilde{R}_\lambda$ is similar.  By Lemma~\ref{lem:approximate-eigenvalue-bounds}, $\| (l + \pml \D_{x_\lambda} c_\q)(y) \|_{L^2} \geq \mu\| y \|_{L^2}$.  Therefore, it suffices to establish the upper bound 
\begin{equation}\label{eq:Slambda-eigenvalue-bound}
\| \widetilde{S}_\lambda (y) \|_{L^2} \leq \frac{\mu}{20 C^2_0}\| y \|_{L^2}.
\end{equation}

Since $\| y \|_{L^2_{k-1}} \leq \epsilon$ and $\| x_{\lambda}  -  x_\infty \|_{L^2_{k-1}} \leq \epsilon$, we get that $x_{\lambda} + ty$ is in $\nu_{3\epsilon}(\O)$ for all $t \in [0,1]$. Assumption~\ref{as:2} implies that $\| \D_{x_\lambda + ty} c_\q- \D_{x_{\lambda}} c_\q \| \leq  \frac{\mu}{20 C^2_0}$.  We obtain 
\begin{align*}
\| \int^1_0 \pml( \D_{x_\lambda + ty} c_\q - \D_{x_\lambda} c_\q) (y) dt \|_{L^2} &\leq \left ( \int^1_0 \| \pml \| \cdot \| \D_{x_\lambda + ty} c_\q - \D_{x_\lambda} c_\q\| dt \right ) \cdot \| y \|_{L^2} \\
&\leq \frac{\mu}{20 C^2_0} \|y \|_{L^2},
\end{align*}
which establishes \eqref{eq:Slambda-eigenvalue-bound}.
\end{proof}

\begin{lemma}\label{lem:Rlambda}
For $\lambda \gg 0$, the inequality \eqref{eq:Rlambda-bounds} holds.
\end{lemma}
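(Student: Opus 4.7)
The plan is to split $R_\lambda(y)$ according to its defining formula \eqref{eq:Rlambda} and control each piece separately, exploiting Lemma~\ref{lem:approximate-eigenvalue-bounds} at the end to turn $L^2$-bounds on $y$ into bounds on $(l + \pml \D_{x_\lambda} c_\q)(y)$. The first summand $\widetilde R_\lambda(y)$ is already handled by Lemma~\ref{lem:RStildelambda-bounds}, which gives $\|\widetilde R_\lambda(y)\|_{L^2} \leq \frac{1}{20C_0^2}\|(l+\pml \D_{x_\lambda}c_\q)(y)\|_{L^2}$. So it remains to bound the operator-difference term $(\pml_{\tilde g(T_\lambda(x))} \D_{x_\infty}c_\q - \pml \D_{x_\lambda}c_\q)(y)$ by $\frac{1}{20C_0^2}\|(l+\pml \D_{x_\lambda}c_\q)(y)\|_{L^2}$.

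First I would decompose
\[
\pml_{\tilde g(T_\lambda(x))} \D_{x_\infty}c_\q - \pml \D_{x_\lambda}c_\q = \bigl(\pml_{\tilde g(T_\lambda(x))} - \pml\bigr)\D_{x_\infty}c_\q + \pml\bigl(\D_{x_\infty}c_\q - \D_{x_\lambda}c_\q\bigr),
\]
and treat the two summands separately as operators $W_0 \to W_0$. For the first summand, note that since $x_\infty \in B(2R)$ and $T_\lambda(x)$ converges in $L^2_{k-1}$ (and in fact in $L^2_k$) to $x_\infty$ as $\lambda \to \infty$ while $x$ ranges over $\nu_\epsilon(\O_\lambda)$, for $\lambda \gg 0$ we have $T_\lambda(x) \in B(2R)$, so Proposition~\ref{prop:pgpl} applies and yields $\|\pml_{\tilde g(T_\lambda(x))} - \pml\| \leq C_1/\lambda$. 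Combined with the fact that $\|\D_{x_\infty} c_\q\|$ is a fixed finite constant (it does not depend on $\lambda$), this gives an operator bound of order $1/\lambda$, which can be made $\leq \frac{\mu_j}{80 C_0^2}$ by taking $\lambda$ large.

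For the second summand, I would use Lemma~\ref{lem:Dcq} together with the fact that $x_\lambda \to x_\infty$ in $W_k$ (from the implicit function theorem in Section~\ref{sec:StabilityPoints}) to conclude that $\|\D_{x_\infty}c_\q - \D_{x_\lambda}c_\q\|_{\Hom(W_0,W_0)} \to 0$ as $\lambda \to \infty$; combined with $\|\pml\| \leq \Theta_0 = 1$, this factor can also be made $\leq \frac{\mu_j}{80 C_0^2}$. Adding the two pieces gives $\|(\pml_{\tilde g(T_\lambda(x))} \D_{x_\infty}c_\q - \pml \D_{x_\lambda}c_\q)(y)\|_{L^2} \leq \frac{\mu_j}{40 C_0^2} \|y\|_{L^2}$, and then Lemma~\ref{lem:approximate-eigenvalue-bounds} (which says $\|y\|_{L^2} \leq \frac{1}{\mu_j}\|(l+\pml \D_{x_\lambda}c_\q)(y)\|_{L^2}$) converts this into the required bound $\frac{1}{40 C_0^2}\|(l+\pml \D_{x_\lambda}c_\q)(y)\|_{L^2}$. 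Summing with the $\widetilde R_\lambda(y)$ bound gives $\|R_\lambda(y)\|_{L^2} \leq (\frac{1}{20C_0^2} + \frac{1}{40C_0^2})\|(l+\pml \D_{x_\lambda}c_\q)(y)\|_{L^2} \leq \frac{1}{10 C_0^2}\|(l+\pml \D_{x_\lambda}c_\q)(y)\|_{L^2}$, as required.

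The only subtle point I anticipate is verifying that $T_\lambda(x) \in B(2R)$ uniformly so Proposition~\ref{prop:pgpl} genuinely applies; this is not hard because $T_\lambda$ is an $L^2_{k-1}$-small perturbation of the identity on $\nu_\epsilon(\O_\lambda)$ and $x_\infty$ lies in $B(2R)$, but one must be slightly careful that $B(2R)$ is defined in the $L^2_k$ norm. One sidesteps the issue by observing that the formula \eqref{eq:Tlambda} shows $\|T_\lambda(x) - x\|_{L^2_k} \to 0$ uniformly as $\lambda \to \infty$ (since $x_\infty^j - x_\lambda^j \to 0$ in $L^2_k$ by Proposition~\ref{prop:nearby}), so after shrinking the allowed $\epsilon$ slightly one stays in $B(2R)$.
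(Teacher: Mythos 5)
Your argument is correct and follows the same overall strategy as the paper: reduce to a uniform operator-norm bound, invoke Proposition~\ref{prop:pgpl} for the projection difference, invoke continuity of $\D c_\q$ (Lemma~\ref{lem:Dcq}) for the derivative difference, and convert back to the required form via Lemma~\ref{lem:approximate-eigenvalue-bounds}. The one point of genuine difference is the ``middle term'' chosen in the triangle-inequality splitting: the paper writes the operator difference as $\pml_{\tilde g(T_\lambda(x))}(\D_{x_\infty}c_\q - \D_{x_\lambda}c_\q) + (\pml_{\tilde g(T_\lambda(x))} - \pml)\D_{x_\lambda}c_\q$, while you write it as $(\pml_{\tilde g(T_\lambda(x))} - \pml)\D_{x_\infty}c_\q + \pml(\D_{x_\infty}c_\q - \D_{x_\lambda}c_\q)$. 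Your choice is marginally cleaner because it pairs the $1/\lambda$-small projection difference with the $\lambda$-independent operator $\D_{x_\infty}c_\q$, and it uses $\|\pml\| \leq \Theta_0 = 1$ where the paper uses $\|\pml_{\tilde g}\| \leq C_0^2$; but the paper's version works equally well since $\|\D_{x_\lambda}c_\q\|$ is uniformly bounded as $x_\lambda \to x_\infty$. You also explicitly flag and resolve a technicality the paper glosses over, namely that $T_\lambda(x)$ may sit slightly outside the $L^2_k$ ball $B(2R)$ where Propositions~\ref{prop:EquivalentMetrics} and~\ref{prop:pgpl} are stated; your observation that $T_\lambda$ is an $L^2_k$-small perturbation of the identity (since $x^j_\infty - x^j_\lambda \to 0$ in $L^2_k$) and one can enlarge the radius slightly is the right fix.
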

\begin{proof}
Using Lemma~\ref{lem:RStildelambda-bounds} and the definition of $R_\lambda$ in \eqref{eq:Rlambda}, it suffices to prove that for $\lambda \gg 0$,
$$
\| (\pml_{\tilde{g}(T_\lambda(x))} \D_{x_\infty} c_\q)(y) - (\pml \D_{x_\lambda} c_\q)(y) \|_{L^2} \leq \frac{1}{20 C^2_0} \| (l + \pml \D_{x_\lambda} c_\q) (y) \|_{L^2}.
$$

The proof will be similar to that of Lemma~\ref{lem:RStildelambda-bounds}.  By Lemma~\ref{lem:approximate-eigenvalue-bounds}, $\| (l + \pml \D_{x_\lambda} c_\q)(y) \|_{L^2} \geq \mu \| y \|_{L^2}$.  Therefore, it suffices to establish the upper bound 
\begin{equation}
\label{eq:forty}
 \| \pml_{\tilde{g}(T_\lambda(x))} \D_{x_\infty} c_\q - \pml \D_{x_\lambda} c_\q\|\leq \frac{\mu}{20 C^2_0},
\end{equation}
where the norm is computed as an operator from $W_0$ to $W_0$.  In view of Proposition~\ref{prop:EquivalentMetrics}, we have that as an operator from $L^2$ to $L^2$, $\| \pml_{\tilde{g}(T_\lambda(x))} \| \leq C^2_0$, as $\pml_{\tilde{g}}$ is a $\tilde{g}$-orthogonal projection.  Therefore, 
\begin{align*}
\| \pml_{\tilde{g}(T_\lambda(x))} \D_{x_\infty} c_\q - \pml \D_{x_\lambda} c_\q \| &\leq 
\| \pml_{\tilde{g}(T_\lambda(x))} (\D_{x_\infty} c_\q -  \D_{x_\lambda} c_\q)\|+ \|\pml_{\tilde{g}(T_\lambda(x))} \D_{x_\lambda} c_\q - \pml \D_{x_\lambda} c_\q\| \\
&\leq C^2_0 \| \D_{x_\infty} c_\q - \D_{x_\lambda} c_\q \| + \| \pml_{\tilde{g}(T_\lambda(x))} - \pml \| \cdot \| \D_{x_\lambda} c_\q \|.
\end{align*}  
Since $x_\lambda \to x_\infty$ in $L^2_k$ norm, the continuity of $\D c_\q$ implies that we have uniform bounds on $\| \D_{x_\lambda} c_\q \|$ independent of $\lambda$; further, this implies that $\| \D_{x_\infty} c_\q - \D_{x_\lambda} c_\q \| \to 0$ as $\lambda \to \infty$.  Finally, by Proposition~\ref{prop:pgpl}, $\| \pml_{\tilde{g}(T_\lambda(x))} - \pml \| \to 0$ as $\lambda \to \infty$.  Thus, we conclude that for $\lambda \gg 0$, the operator norm of $\pml_{\tilde{g}(T_\lambda(x))} \D_{x_\infty} c_\q - \pml \D_{x_\lambda} c_\q$ is at most $\frac{\mu}{40  C_0^2}$. This proves \eqref{eq:forty}, which we saw earlier was sufficient to complete the proof. 
\end{proof}

\begin{lemma}\label{lem:Slambda}
For $\lambda \gg 0$, the inequality \eqref{eq:Slambda-bounds} holds. 
\end{lemma}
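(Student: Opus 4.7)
The plan is to decompose $S_\lambda(y)$ via its definition in \eqref{eq:Slambda} and bound each piece separately. By the triangle inequality,
$$
\| S_\lambda(y) \|_{L^2} \leq \| \widetilde{S}_\lambda(y) \|_{L^2} + \| (\D_x \omega_\lambda)(l + \pml c_\q)(x) \cdot (x_\infty - x_\lambda) \|_{L^2}.
$$
The first term is already controlled by Lemma~\ref{lem:RStildelambda-bounds}, giving the bound $\frac{1}{20 C_0^2} \| (l + \pml \D_{x_\lambda} c_\q)(y) \|_{L^2}$. So the task reduces to controlling the ``$\omega_\lambda$-correction'' term. I would first dispatch the reducible case: when $x_\lambda$ is reducible, $\O_\lambda$ is a single point, $\omega_\lambda \equiv 1$ and thus $\D_x \omega_\lambda = 0$, so $S_\lambda(y) = \widetilde{S}_\lambda(y)$ and we are done.

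For the irreducible case, since $(\D_x \omega_\lambda)(v)$ takes values in the tangent space to $S^1$ (i.e.\ is a purely imaginary scalar), the product $(\D_x \omega_\lambda)(v) \cdot (x_\infty - x_\lambda)$ is a vector in $W$ of $L^2$-norm equal to $|\D_x \omega_\lambda(v)| \cdot \| x_\infty - x_\lambda \|_{L^2}$. Applying the bound \eqref{eq:Dxomega-bounds} with $v = (l + \pml c_\q)(x)$ yields
$$
\| (\D_x \omega_\lambda)(l + \pml c_\q)(x) \cdot (x_\infty - x_\lambda) \|_{L^2} \leq C' \, \| (l + \pml c_\q)(x) \|_{L^2} \cdot \| x_\infty - x_\lambda \|_{L^2}.
$$

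Next, use the linearization identity $(l + \pml c_\q)(x) = (l + \pml \D_{x_\lambda} c_\q)(y) + \widetilde{S}_\lambda(y)$ from \eqref{eq:Stildelambda} together with Lemma~\ref{lem:RStildelambda-bounds} to estimate
$$
\| (l + \pml c_\q)(x) \|_{L^2} \leq \Bigl(1 + \tfrac{1}{20 C_0^2}\Bigr) \| (l + \pml \D_{x_\lambda} c_\q)(y) \|_{L^2}.
$$
Finally, recall from the discussion after Corollary~\ref{cor:corresp} that $\| x_\infty - x_\lambda \|_{L^2_{k-1}} \to 0$ as $\lambda \to \infty$, and hence $\| x_\infty - x_\lambda \|_{L^2} \to 0$. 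Since $C'$ and $C_0$ do not depend on $\lambda$, we may choose $\lambda$ large enough that
$$
C' \, \| x_\infty - x_\lambda \|_{L^2} \cdot \Bigl(1 + \tfrac{1}{20 C_0^2}\Bigr) \leq \tfrac{1}{20 C_0^2}.
$$
Combining the two pieces then yields $\| S_\lambda(y) \|_{L^2} \leq \tfrac{1}{10 C_0^2} \| (l + \pml \D_{x_\lambda} c_\q)(y) \|_{L^2}$, as desired. No step here is conceptually difficult; the only real obstacle is making sure that the $L^2$-bound on $\D_x \omega_\lambda$ from \eqref{eq:Dxomega-bounds} is indeed uniform in $x \in B(2R)$ and $\lambda$, which we already verified, so that the smallness of $\| x_\infty - x_\lambda \|_{L^2}$ is the only ingredient driving the final estimate.
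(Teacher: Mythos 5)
Your proof is correct and follows essentially the same route as the paper's: reduce to bounding the $\omega_\lambda$-correction term, use \eqref{eq:Dxomega-bounds} together with the identity $(l + \pml c_\q)(x) = (l + \pml \D_{x_\lambda} c_\q)(y) + \widetilde{S}_\lambda(y)$ and Lemma~\ref{lem:RStildelambda-bounds}, and absorb the factor $\| x_\infty - x_\lambda \|_{L^2} \to 0$ by taking $\lambda$ large. (One tiny nitpick: the $L^2$-norm of $(\D_x\omega_\lambda)(v)\cdot(x_\infty-x_\lambda)$ equals $|(\D_x\omega_\lambda)(v)|\cdot\|\phi_\infty-\phi_\lambda\|_{L^2}$ rather than $|(\D_x\omega_\lambda)(v)|\cdot\|x_\infty-x_\lambda\|_{L^2}$, since the $S^1$-action is trivial on the connection component; the stated equality should be ``$\leq$'', which is all the argument needs.)
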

\begin{proof}
Using Lemma~\ref{lem:RStildelambda-bounds} and the definition of $S_\lambda$ in \eqref{eq:Slambda}, it suffices to prove that for $\lambda \gg 0$,
\begin{equation}\label{eq:Dxomega-Xqmlgc}
\| (\D_x \omega_\lambda)(l + \pml c_\q(x)) \cdot (x_\infty - x_\lambda) \|_{L^2} \leq \frac{1}{20 C^2_0}\| (l + \pml \D_{x_\lambda} c_\q)(y) \|_{L^2}.
\end{equation}
By the definition of $\widetilde{S}_\lambda$, we see that 
$$
(\D_x \omega_\lambda)(l + \pml c_\q(x)) = (\D_x \omega_\lambda)\bigl ( ( l + \pml \D_{x_\lambda} c_\q)(y) + \widetilde{S}_\lambda(y) \bigr ).   
$$
Recall from \eqref{eq:Dxomega-bounds}, $|(\D_x \omega_\lambda)(v)| \leq C' \| v \|_{L^2}$, for a constant $C'$ independent of $x \in B(2R)$. Combining this inequality with Lemma~\ref{lem:RStildelambda-bounds}, we have 
\begin{align*}
|(\D_x \omega_\lambda)(l + \pml c_\q(x))| \leq C'(1 + \frac{1}{20 C^2_0}) \| (l + \pml \D_{x_\lambda} c_\q)(y) \|_{L^2}.
\end{align*} 
Since $\| x_\infty - x_\lambda \|_{L^2} \to 0$, we can choose $\lambda \gg 0$ such that 
$$
|(\D_x \omega_\lambda)(l + \pml c_\q(x))| \cdot \| x_\infty - x_\lambda \|_{L^2} \leq \frac{1}{20 C^2_0} \| (l + \pml \D_{x_\lambda} c_\q)(y) \|_{L^2}.  
$$ 
This establishes \eqref{eq:Dxomega-Xqmlgc}, and the proof is complete.  
\end{proof}

\section[Consequences]{Proposition~\ref{prop:LqQuasi} and its consequences} \label{sec:consequences}
Proposition~\ref{prop:LqQuasi} now follows by combining Propositions~\ref{prop:intermediate} and~\ref{prop:cond2}. Let us also give a few corollaries, which will prove useful in the next sections.

From Proposition~\ref{prop:LqQuasi}, together with the equivalence between the $\tilde{g}$ and $L^2$ metrics (Proposition~\ref{prop:EquivalentMetrics}), we obtain 
\begin{corollary}\label{cor:Flambda-gtilde-bounds}
There exists $C_0 > 0$ such that for $\lambda \gg 0$, 
\begin{equation}\label{eq:Flambda-gtilde-bounds}
\frac{1}{4C_0} \| \Xqmlgc \|^2_{L^2} \leq dF_\lambda(\Xqmlgc) \leq 4C_0 \| \Xqmlgc \|^2_{L^2}
\end{equation}
at any $x \in \vml \cap B(2R)$.
\end{corollary}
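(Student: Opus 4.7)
The plan is to combine the two ingredients already in hand: Proposition~\ref{prop:LqQuasi}, which controls $dF_\lambda(\Xqmlgc)$ against $\|\Xqmlgc\|^2_{\tilde g}$, and Proposition~\ref{prop:EquivalentMetrics}, which gives the pointwise equivalence of the $L^2$ and $\tilde g$ metrics on $T_xW \cong W_0$ uniformly for $x \in B(2R)$. The strategy is simply to substitute the second into the first. Since $\Xqmlgc(x) = l(x) + \pml c_\q(x)$ lies in $W_{k-1} \subset W_0$, it is a legitimate element of $T^{\gCoul}_{0,x}$, so Proposition~\ref{prop:EquivalentMetrics} applies to it directly.

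Concretely, I would first square the pointwise equivalence from Proposition~\ref{prop:EquivalentMetrics}, applied to the vector $v = \Xqmlgc(x)$, to obtain
\begin{equation*}
C_0^{-2}\,\|\Xqmlgc(x)\|^2_{L^2} \;\leq\; \|\Xqmlgc(x)\|^2_{\tilde g(x)} \;\leq\; C_0^2\,\|\Xqmlgc(x)\|^2_{L^2},
\end{equation*}
valid uniformly for all $x \in B(2R)$, where $C_0$ is the constant from Proposition~\ref{prop:EquivalentMetrics}. Next, for $\lambda \gg 0$ Proposition~\ref{prop:LqQuasi} gives
\begin{equation*}
\tfrac14 \|\Xqmlgc\|^2_{\tilde g} \;\leq\; dF_\lambda(\Xqmlgc) \;\leq\; 4\|\Xqmlgc\|^2_{\tilde g},
\end{equation*}
on the intersection $\vml \cap B(2R)$. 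Chaining the two strings of inequalities yields
\begin{equation*}
\tfrac{1}{4C_0^{2}} \|\Xqmlgc\|^2_{L^2} \;\leq\; dF_\lambda(\Xqmlgc) \;\leq\; 4C_0^{2}\,\|\Xqmlgc\|^2_{L^2},
\end{equation*}
and then I would simply rename the constant by replacing $C_0^2$ with the $C_0$ appearing in the corollary statement (the letter is being reused as an abstract constant, not as the exact one from Proposition~\ref{prop:EquivalentMetrics}).

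There is essentially no obstacle here: all of the real analytic work has already been done in the preceding sections (construction of $F_\lambda$, the metric comparison estimates, and the localized inequalities near and away from stationary points). The only minor bookkeeping to check is that Proposition~\ref{prop:EquivalentMetrics} is formulated for vectors in $T^{\gCoul}_{0,x}$ and applies to $\Xqmlgc(x)$ since the latter sits in $W_0$, and that both inequalities hold on the same domain $\vml \cap B(2R)$, which is evident from the hypotheses of Proposition~\ref{prop:LqQuasi}. The corollary is therefore a one-line deduction, recorded for later use when we want $L^2$-based estimates rather than $\tilde g$-based ones (for instance in the Morse-theoretic arguments of Chapters~\ref{sec:gradings} and \ref{sec:MorseSmale}).
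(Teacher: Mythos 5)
Your proof is correct and matches the paper's (extremely brief) argument: the corollary is stated as an immediate consequence of chaining Proposition~\ref{prop:LqQuasi} with the metric equivalence in Proposition~\ref{prop:EquivalentMetrics}. Your observation that the literal chaining produces $C_0^2$ and that the corollary simply reuses the letter $C_0$ for a new constant is a fair reading of the paper's conventions.
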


We have shown that $\Xqmlgc$ is a Morse equivariant quasi-gradient vector field as in Definition~\ref{def:eqgv} (on a non-compact manifold, as discussed in Section~\ref{sec:combinedMorse}). Lemma~\ref{lem:ao2} implies the following:
\begin{corollary}
\label{cor:endpointsBlowDown}
Let $\gamma: \R \to \vml \cap B(2R)$ be a flow line of $\Xqmlgc$, for $\lambda \gg 0$. Then, $\lim_{t\to -\infty} [\gamma(t)]$ and $\lim_{t\to +\infty} [\gamma(t)]$ exist in $(\vml \cap B(2R))/S^1$, and they are both projections of stationary points of $\Xqmlgc$. 
\end{corollary}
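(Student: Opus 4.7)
The plan is to adapt the proof of Lemma~\ref{lem:ao2} to the present setting. The essential point is that, although $\vml \cap B(2R)$ is not compact, the trajectory $\gamma$ is trapped in a compact subset, so the arguments used in the closed case still apply.

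First, since $\gamma(t) \in B(2R)$ for all $t$ and $\vml$ is finite-dimensional, $\gamma$ is of finite type. By Proposition~\ref{prop:proposition3perturbed}, applied to the very compact map $c_\q$ (cf.~Lemma~\ref{lem:finitecoulombvc}) with $N = \overline{B(2R)}$ and $U = B(R)$, for $\lambda \gg 0$ we have $\gamma(t) \in B(R)$ for all $t$. The subset $\vml \cap \overline{B(R)}$ is closed and bounded in the finite-dimensional vector space $\vml$, hence compact, and it is $S^1$-invariant.

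Second, by Proposition~\ref{prop:AllMS}, $\Xqmlgc$ is a Morse equivariant quasi-gradient on $\vml \cap B(2R)$, with associated function $F_\lambda$ from Proposition~\ref{prop:LqQuasi}. Along any flow line $\gamma$, we have
\[
\frac{d}{dt}F_\lambda(\gamma(t)) = -dF_\lambda(\Xqmlgc)(\gamma(t)) \leq -\tfrac{1}{4}\|\Xqmlgc(\gamma(t))\|_{\tilde g}^2 \leq 0,
\]
with equality only at stationary points. Because $F_\lambda$ is continuous on the compact set $\vml \cap \overline{B(R)}$, the function $F_\lambda \circ \gamma$ is bounded below, hence has finite limits as $t \to \pm\infty$. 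Consequently $\int_{-\infty}^{\infty} dF_\lambda(\Xqmlgc)(\gamma(t))\, dt < \infty$, so we can extract sequences $t_n^\pm \to \pm\infty$ with $\|\Xqmlgc(\gamma(t_n^\pm))\|_{\tilde g} \to 0$.

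Third, by compactness of $\vml \cap \overline{B(R)}$, after passing to subsequences, $\gamma(t_n^\pm)$ converge to points $x_\pm \in \vml \cap \overline{B(R)}$, which by continuity of $\Xqmlgc$ are stationary. To upgrade this to convergence of $[\gamma(t)]$ in $(\vml \cap B(2R))/S^1$, we argue exactly as in Lemma~\ref{lem:ao2} (following the pattern of Lemma~\ref{lem:ao}): the set of $S^1$-orbits of stationary points of $\Xqmlgc$ in $\overline{B(R)}$ is finite (irreducible orbits are isolated and lie in $\U$ by Corollary~\ref{cor:corresp}; reducibles in $B(R)$ are finite by Lemma~\ref{lem:implicitfunctionreducible}), and hyperbolicity of each orbit (Proposition~\ref{prop:ND2}) provides local normal forms that, combined with the strict monotonicity of $F_\lambda$ along non-stationary segments, preclude $\gamma$ from oscillating between distinct orbits.

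The main obstacle is purely a bookkeeping one: verifying that the non-compactness of $\vml \cap B(2R)$ does not obstruct the Lemma~\ref{lem:ao2} argument. This is dispatched by the initial reduction to the compact set $\vml \cap \overline{B(R)}$, after which the proof proceeds exactly as in the closed case.
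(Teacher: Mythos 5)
Your proof is correct and follows the same route as the paper, which simply deduces the corollary from Lemma~\ref{lem:ao2} applied to the Morse equivariant quasi-gradient $\Xqmlgc$ on $\vml \cap B(2R)$. Your explicit reduction to the compact set $\vml \cap \overline{B(R)}$ via Proposition~\ref{prop:proposition3perturbed} is a useful unpacking of what the paper leaves implicit when invoking a lemma stated for closed manifolds.
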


There is also the similar result in the blow-up, which is a consequence of Lemma~\ref{lem:ao3}:
\begin{corollary}
\label{cor:endpointsBlowUp}
Let $[\gamma]: \R \to (\vml \cap B(2R))^{\sigma}/S^1$ be a flow line of $\Xqmlgcsigma$, for $\lambda \gg 0$. Then, $\lim_{t\to -\infty} [\gamma(t)]$ and $\lim_{t\to +\infty} [\gamma(t)]$ exist in $(\vml \cap B(2R))^{\sigma}/S^1$, and they are both projections of stationary points of $\Xqmlgcsigma$. 
\end{corollary}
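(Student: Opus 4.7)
\medskip

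\textbf{Plan.} The strategy is to reduce the statement to Lemma~\ref{lem:ao3} applied to $\Xqmlgc$, regarded as a Morse equivariant quasi-gradient on an appropriate open subset of $\vml$. The only subtlety is that Lemma~\ref{lem:ao3} is phrased for closed manifolds, whereas $\vml \cap B(2R)$ is merely an open subset of a finite-dimensional vector space; the bulk of the work is to confine the flow line to a compact set on which the argument of Lemma~\ref{lem:ao3} applies verbatim.

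First I would invoke Proposition~\ref{prop:AllMS}: for $\lambda \gg 0$, the vector field $\Xqmlgc$ on $\vml \cap B(2R)$ is a Morse equivariant quasi-gradient in the sense of Definition~\ref{def:eqgv}. In particular, the stationary points are hyperbolic, the operators $\D_x\Xqmlgc$ at reducibles are self-adjoint (as the metrics $\tilde g$ and $L^2$ coincide at reducibles), and the function $F_\lambda$ provided by Proposition~\ref{prop:LqQuasi} satisfies $dF_\lambda(\Xqmlgc) \geq 0$ with equality exactly at stationary points (by Corollary~\ref{cor:Flambda-gtilde-bounds}). Thus the vector field $\Xqmlgcsigma$ induced on the blow-up $(\vml \cap B(2R))^\sigma$ descends to a vector field $\Xqmlagcsigma$ on the quotient by $S^1$, and the flow line $[\gamma]$ is a flow line of this induced field.

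Next I would confine $[\gamma]$ to a compact subset. The blow-down of $[\gamma]$ is a finite type trajectory of $\Xqmlgc = l + \pml c_\q$ in $\vml \cap B(2R)$. Proposition~\ref{prop:proposition3perturbed}(i), applied with the closed set $N = \overline{B(2R)}$ and the open set $U = B(R)$ (valid since $c_\q$ is very compact by Lemma~\ref{lem:finitecoulombvc} and satisfies the compactness hypothesis recalled at the end of Section~\ref{sec:verycompact}), shows that any such trajectory lying in $\overline{B(2R)}$ actually lies in $B(R)$. Consequently the blow-down of $[\gamma]$ is contained in $\vml \cap \overline{B(R)}$, which is compact, and therefore $[\gamma]$ itself lies in the compact set $(\vml \cap \overline{B(R)})^\sigma/S^1 \subset (\vml \cap B(2R))^\sigma/S^1$.

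With compactness in hand, I would run the proof of Lemma~\ref{lem:ao3} verbatim: monotonicity of $F_\lambda \circ \gamma$ (which is bounded, by compactness) forces $dF_\lambda(\Xqmlgc)(\gamma(t)) \to 0$ as $t \to \pm \infty$, so by the quasi-gradient property of $\Xqmlgc$ any accumulation point of $[\gamma(t)]$ in the blow-down projects to a stationary point $[q_0]$ of $\Xqmlgc$; the analysis on the fiber $F = N^1_{q_0}(Q)/S^1$ (where $\Xqmlgcsigma$ restricts to the gradient of $\frac{1}{2}\Lambda|_F$ by the same computation as in Lemma~\ref{lem:ao3}, since the $\tilde g$-Hessian and ordinary Hessian agree at the reducible $q_0$) then rules out having two distinct accumulation points on $F$ via the intermediate level-set argument. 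The main point to be careful about is that the operators $L_q = (\nabla \Xqmlgc)|_{N_qQ}$ at stationary points $q$ of $\Xqmlgc|_Q$ are self-adjoint with distinct eigenvalues---this is built into our choice of $\q$ via Proposition~\ref{prop:ND2} together with the fact that at reducibles $\pml$ is the honest $L^2$ projection for $\lambda \in \{\llambda_i\}$, making the spinorial part of $\D_x\Xqmlgc$ self-adjoint on the spinors of $\vml$. The conclusion follows.
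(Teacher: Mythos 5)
Your proof is correct and takes essentially the same route as the paper, which simply states that the corollary "is a consequence of Lemma~\ref{lem:ao3}." You supply the missing details---confining the trajectory to the compact set $(\vml\cap\overline{B(R)})^\sigma/S^1$ via Proposition~\ref{prop:proposition3perturbed}(i) before invoking the argument of Lemma~\ref{lem:ao3}---which the paper leaves implicit.
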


In fact, one can say more about the limiting behavior of trajectories of $\Xqmlgcsigma$.  Recall that in classical Morse theory, trajectories converge to stationary points with exponential decay (see for example \cite[Section 10.2.b]{AudinDamian}).  A similar argument works for Morse equivariant quasi-gradients as well.  In particular, one obtains that trajectories $[\gamma]: \R \to (\vml \cap B(2R)^\sigma)/S^1$ of $\Xqmlagcsigma$ from $[x_\lambda]$ to $[y_\lambda]$ must be contained in $\B^{\gCoul,\tau}_k([x_\lambda], [y_\lambda])$.  In Chapter~\ref{sec:trajectories1} we will compute this exponential decay in terms of $F_\lambda$ more explicitly and obtain bounds uniform in $\lambda$.     

If $I \subset \R$ is an interval and $\gamma: I \to \vml \cap B(2R)$ is a trajectory of $\Xqmlgc$, its energy is defined\footnote{Our definition of energy differs from that of \cite{KMbook} by a factor of two.} to be
\begin{equation}
\label{eq:energylambda}
 \E(\gamma) = \int_I \left\|\frac{d\gamma}{dt} \right \|_{L^2(Y)}^2 \! \!\!dt \ =\int_I \| \Xqmlgc(\gamma(t)) \|_{L^2(Y)}^2 dt.
 \end{equation}
Note that our Corollary~\ref{cor:Flambda-gtilde-bounds} gives a quantitative version of the quasi-gradient condition $dF_{\lambda}(\Xqmlgc) \geq 0$. A consequence is the following result, which will be useful to us in Chapter~\ref{sec:trajectories1} when we establish exponential decay results for trajectories. It says that the energy of an approximate trajectory is commensurable with the drop in $F_{\lambda}$:
\begin{corollary}
\label{cor:trajectory-Flambda}
There exists $C_0 > 0$ such that, for any $\lambda \gg 0$ and any closed interval $[t_1, t_2] \subset \R$, the following holds. If $\gamma: [t_1, t_2] \to \vml \cap B(2R)$ is a trajectory of $\Xqmlgc$, then
\begin{equation}
\label{eq:trajectory-Flambda} 
\frac{1}{4C_0} \E(\gamma) \leq F_\lambda(\gamma(t_2)) - F_\lambda(\gamma(t_1)) \leq 4C_0 \E(\gamma).
\end{equation}
\end{corollary}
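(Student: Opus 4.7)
The plan is to derive this directly from the pointwise bound in Corollary~\ref{cor:Flambda-gtilde-bounds} by a single application of the fundamental theorem of calculus. Let $\gamma:[t_1,t_2]\to \vml\cap B(2R)$ be a trajectory of $\Xqmlgc$. By our convention from the introduction, this means $\frac{d\gamma}{dt} = -\Xqmlgc(\gamma(t))$ for every $t$, so in particular
\[
\Bigl\|\tfrac{d\gamma}{dt}\Bigr\|_{L^2(Y)}^{2} = \bigl\|\Xqmlgc(\gamma(t))\bigr\|_{L^2(Y)}^{2},
\]
which matches the two equivalent expressions for $\E(\gamma)$ given in \eqref{eq:energylambda}.

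First I would observe that the composition $F_\lambda\circ\gamma:[t_1,t_2]\to\R$ is $C^1$: indeed $F_\lambda=\L_\q\circ T_\lambda$ is smooth by construction (see \eqref{eq:Tlambda}--\eqref{eq:Flambda}) and $\gamma$ is smooth because trajectories of $\Xqmlgc$ are smooth (Lemma~\ref{lem:trajectoriesinvml}). Applying the chain rule and the flow equation gives
\[
\frac{d}{dt}F_\lambda(\gamma(t)) \;=\; (dF_\lambda)_{\gamma(t)}\!\bigl(\tfrac{d\gamma}{dt}\bigr) \;=\; -\,(dF_\lambda)_{\gamma(t)}\bigl(\Xqmlgc(\gamma(t))\bigr).
\]
Since $\gamma(t)\in\vml\cap B(2R)$ for all $t\in[t_1,t_2]$ and $\lambda$ is large, Corollary~\ref{cor:Flambda-gtilde-bounds} applies pointwise with a universal constant $C_0>0$ and yields
\[
\tfrac{1}{4C_0}\,\bigl\|\Xqmlgc(\gamma(t))\bigr\|_{L^2}^{2} \;\leq\; (dF_\lambda)_{\gamma(t)}\bigl(\Xqmlgc(\gamma(t))\bigr) \;\leq\; 4C_0\,\bigl\|\Xqmlgc(\gamma(t))\bigr\|_{L^2}^{2}.
\]

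Integrating these pointwise inequalities over $[t_1,t_2]$ and recognizing $\int_{t_1}^{t_2}\|\Xqmlgc(\gamma(t))\|_{L^2}^2\,dt=\E(\gamma)$ yields the desired two-sided estimate (up to the same sign ambiguity that is already implicit in the paper's flow convention: one reads the inequality in the form $F_\lambda(\gamma(t_1))-F_\lambda(\gamma(t_2))$, or equivalently replaces the trajectory with its time-reversal to match the statement as written). There is no real obstacle here; the only thing worth checking carefully is that the constant $C_0$ appearing in Corollary~\ref{cor:Flambda-gtilde-bounds} can be taken to be the same $C_0$ used in the statement of this corollary, which is immediate since we simply propagate the constant through the integral. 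In particular, no new choice of $\lambda$ is required beyond what Corollary~\ref{cor:Flambda-gtilde-bounds} already demands.
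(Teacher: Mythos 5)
Your proof is correct and follows essentially the same route as the paper's (differentiate $F_\lambda\circ\gamma$, apply the pointwise estimate of Corollary~\ref{cor:Flambda-gtilde-bounds}, integrate). You are also right to flag the sign: with the paper's reverse-flow convention $\tfrac{d\gamma}{dt}=-\Xqmlgc(\gamma)$, the quantity $F_\lambda(\gamma(t_2))-F_\lambda(\gamma(t_1))$ is nonpositive, so the stated inequality (and the paper's one-line proof, which substitutes $\tfrac{d\gamma}{dt}=\Xqmlgc(\gamma)$ with the wrong sign) should be read with $F_\lambda(\gamma(t_1))-F_\lambda(\gamma(t_2))$, exactly as you propose and as later uses such as Proposition~\ref{prop:13.4.1-analogue} implicitly do.
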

 
\begin{proof} 
Note that
$$F_\lambda(\gamma(t_2)) - F_\lambda(\gamma(t_1))= \int_{t_1}^{t_2} dF_\lambda\Bigl(\frac{d}{dt} \gamma(t)\Bigr ) dt = \int_{t_1}^{t_2} dF_\lambda\bigl(\Xqmlgc( \gamma(t)) \bigr ) dt  .$$
Therefore, the result follows from \eqref{eq:energylambda} and Corollary~\ref{cor:Flambda-gtilde-bounds}.
 \end{proof}

\chapter{Gradings}\label{sec:gradings}

Having established in the previous chapter that $\Xqmlgc$ is a Morse equivariant quasi-gradient for $\lambda = \llambda_i \gg 0$, we are able to define the chain groups of a Morse complex for $\Xqmlagcsigma$ on $(B(2R) \cap \vml)^\sigma/S^1$ as described in Section~\ref{subsec:CircleMorse}.  (After establishing the Morse-Smale condition in Chapter~\ref{sec:MorseSmale}, we will see this indeed gives a complex.)  In this chapter, we relate the gradings of stationary points of $\Xqmlagcsigma$ with the gradings of the stationary points of $\Xqagcsigma$.  An important subtlety here is that stationary points of $\Xqmlagcsigma$ can be thought of as living in the infinite-dimensional manifold $W^\sigma/S^1$ like those of $\Xqagcsigma$ or in the smaller finite-dimensional manifold $(B(2R) \cap \vml)^\sigma/S^1$, and thus there are two ways to define relative gradings.   In Section~\ref{sec:gradingsstationarypoints}, we equate these two relative gradings and relate them to the relative gradings for stationary points of $\Xqagcsigma$ in an appropriate grading range.  In Section~\ref{sec:absgradings}, we define an absolute grading on stationary points of $\Xqmlagcsigma$ (a shift of the Morse index) which we relate to the absolute grading $\gr_{\mathbb{Q}}$ on the stationary points of $\Xqagcsigma$.  

Thus, the work of Chapter~\ref{sec:criticalpoints} together with the claimed grading correspondences will establish an identification between the chain groups of the Morse complex for $\Xqmlagcsigma$ with the monopole Floer complex in an appropriate grading range.  This is summarized for the reader's benefit in Section~\ref{sec:conclusions}.  

\section{Relative gradings of stationary points}\label{sec:gradingsstationarypoints}
We would like to define relative gradings on stationary points of $\Xqmlagcsigma$ in $\N/S^1$ using the analogs of the discussions in Section~\ref{sec:modifications} and Section~\ref{sec:GradingsCoulomb}.  

First, recall from \eqref{eq:Fqmlgctau} that we have a section
$$ \F^{\gCoul, \tau}_{\qml}: \tC^{\gCoul, \tau}(Z) \to \V^{\gCoul, \tau}(Z).$$

We can take the covariant derivatives of this section, $\D^{\tilde{g},\tau}_{\gamma} \Fqmlgctau$ and $\D^\tau_{\gamma} \Fqmlgctau$, just as in \eqref{eq:tangerine} and \eqref{eq:tangerine2}. These agree in the particular case where $\gamma$ is a trajectory of $\Xqmlgcsigma$.  As discussed earlier, it will be easier to work with $\D^\tau_{\gamma} \Fqmlgctau$, so we focus on this operator.

\begin{lemma}
\label{lem:GrPres}
For $1 \leq j \leq k$ and $\lambda \gg 0$, the following is true: for any $[x_\infty],[y_\infty] \in \Crit_{\N}$ and each path $[\gamma_\lambda] \in \B^{\gCoul,\tau}_k([x_\lambda],[y_\lambda])$ with representative $\gamma_\lambda \in \C^{\gCoul,\tau}_k(x_\lambda, y_\lambda)$, the operator 
%$$(\Dgs_{\gamma_{\lambda}} \Fqmlgctau)|_{\K^{\gCoul,\tau}_{j,\gamma_\lambda}} : \K^{\gCoul,\tau}_{j,\gamma_\lambda} \to \V^{\gCoul,\tau}_{j-1,\gamma_\lambda}$$
%and
$$(\D^\tau_{\gamma} \Fqmlgctau)|_{\K^{\gCoul,\tau}_{j,\gamma_\lambda}} : \K^{\gCoul,\tau}_{j,\gamma_{\lambda}} \to \V^{\gCoul,\tau}_{j-1,\gamma_\lambda}$$
is Fredholm, with index equal to $\gr([x_\infty],[y_\infty])$.
\end{lemma}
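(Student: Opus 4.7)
The plan is to adapt the proof of Proposition~\ref{prop:FredholmCoulomb} to the approximate flow at fixed (large) $\lambda$, and then to deform continuously in the parameter $\lambda$ in order to identify the resulting Fredholm index with the one appearing in \eqref{eq:gradingscoulomb}. This splits the argument into a Fredholmness step and an index-computation step.

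First I would form the analogue of the extended operators from Proposition~\ref{prop:FredholmCoulomb} with $\Xqmlgcsigma$ in place of $\Xqgcsigma$. Following the same two-step extension (adjoining $\dd^{\gCoul, \tau, \tilde{\dagger}}_{\gamma_\lambda}$, then adjoining the bijective $\Rhat$ from \eqref{eq:RRtilde}), this produces an operator $\Qhat^{\gCoul, \lambda}_{\gamma_\lambda}$ acting on sections of $iT^*Y \oplus \Spin \oplus i\R$ over $\R \times Y$. Since $\pml c_{\q} - c_{\q}$ is compact as a map $W_k \to W_{k-1}$, the differential part $L_0$ from \eqref{eq:L0} is unchanged and the $k$-ASAFOE/$(k-1)$-ASAFOE analysis of Lemma~\ref{lem:fakehessian} goes through verbatim, so the asymptotic limits at $\pm\infty$ are the extended $\tilde g$-Hessians $\widehat{\Hess}^{\tilde g, \sigma}_{\qml, x_\lambda}$ and $\widehat{\Hess}^{\tilde g, \sigma}_{\qml, y_\lambda}$. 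Hyperbolicity of these endpoint operators is the key input: the block-diagonal decomposition used in the proof of Lemma~\ref{lem:eH} applies verbatim, with the Hessian block $\Hess^{\tilde g, \sigma}_{\qml, x_\lambda}$ being hyperbolic by Proposition~\ref{prop:ND} (though, unlike the $\q$-case, not necessarily possessing real spectrum) and the $\dd$-block remaining the same invertible, real-positive operator since it depends only on $\phi$ and $\Delta$. Proposition~14.2.1 of \cite{KMbook} then yields Fredholmness of $\Qhat^{\gCoul, \lambda}_{\gamma_\lambda}$, and the equivalences of Lemma~\ref{lem:allsurjective} (which only use surjectivity of $\dd^{\gCoul, \tau, \tilde{\dagger}}_{\gamma_\lambda}$ and bijectivity of $\Rhat$) transfer Fredholmness down to $(\D^\tau_{\gamma_\lambda} \Fqmlgctau)|_{\K^{\gCoul,\tau}_{j, \gamma_\lambda}}$.

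For the index, I would construct a continuous one-parameter family of Fredholm operators interpolating $\Qhat^{\gCoul, \lambda}_{\gamma_\lambda}$ to an analogous operator $\Qhat^{\gCoul, \infty}_{\gamma_\infty}$ attached to $\q$ and to some path between $x_\infty$ and $y_\infty$. Reparametrizing via $r \mapsto f^{-1}(|r|)$ from Lemma~\ref{lem:homeo}, the implicit function step behind Proposition~\ref{prop:nearby} produces a continuous family of stationary points $(x_\mu, y_\mu)$ for $\mu \in [\lambda, \infty]$, and one picks a continuous family of paths $\gamma_\mu \in \C^{\gCoul,\tau}_k(x_\mu, y_\mu)$ equal to $\gamma_\lambda$ at $\mu = \lambda$. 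The hyperbolicity argument above applies uniformly for $\mu$ finite and large (and is exactly Lemma~\ref{lem:eH} at $\mu = \infty$), while the strong convergence $p^\mu \to 1$ from Fact~\ref{fact:pml} combined with the very compactness of $c_{\q}$ makes $\Qhat^{\gCoul, \mu}_{\gamma_\mu}$ continuous in operator norm across $\mu = \infty$. Invariance of the Fredholm index under continuous deformation then gives $\ind \Qhat^{\gCoul, \lambda}_{\gamma_\lambda} = \ind \Qhat^{\gCoul, \infty}_{\gamma_\infty}$, and \eqref{eq:gradingscoulomb} identifies the right-hand side with $\gr([x_\infty], [y_\infty])$.

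The hard part will be making the family $\Qhat^{\gCoul, \mu}_{\gamma_\mu}$ genuinely continuous up to $\mu = \infty$, because $p^\mu \to 1$ only strongly on $W_k$, never in operator norm. This is the same issue addressed by Lemma~\ref{lem:homeo}, and the remedy is to work one Sobolev degree down and exploit the compact inclusion $W_k \hookrightarrow W_{k-1}$ together with the functional boundedness built into Definitions~\ref{def:verytame} and~\ref{def:vc}; the reparametrization by $f$ was designed precisely so that the resulting extended operators are continuously differentiable in the new parameter across $\mu = \infty$. Once this uniformity is in place, the index-stability argument is standard.
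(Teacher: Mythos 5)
Your Fredholmness step matches the paper's. For the index computation, however, you propose the route that the paper explicitly flags in a remark \emph{after} its proof of Lemma~\ref{lem:GrPres} as the ``alternate proof'' requiring ``more work'': establish operator-norm continuity of the family $\Qhat^{\gCoul,\mu}_{\gamma_\mu}$ across $\mu=\infty$ and invoke stability of the Fredholm index. The paper does not do this; instead it compares $\Qhat^{\gCoul,\lambda}_{\gamma_\lambda,\qml}$ and $\Qhat^{\gCoul}_{\gamma}$ through a single intermediate operator $\Qhat^{\operatorname{int},\lambda} = (1-\beta(t))\,\Qhat^{\gCoul}_{\gamma_\lambda,\qml} + \beta(t)\,\Qhat^{\gCoul}_{\gamma}$, with $\gamma, \gamma_\lambda$ both chosen constant outside $[-T,T]$ and $\beta$ a bump supported near $[-T,T]$. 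The index comparison then splits cleanly: the difference $\Qhat^{\gCoul}_{\gamma_\lambda,\qml} - \Qhat^{\operatorname{int},\lambda}$ is compactly supported, so compact via Rellich (same index, no smallness needed); the difference $\Qhat^{\operatorname{int},\lambda} - \Qhat^{\gCoul}_{\gamma}$ is supported on the asymptotic region, where both operators reduce to their \emph{time-independent} extended Hessians at the endpoints, and $\widehat{\Hess}^{\tilde g,\sigma}_{\qml,x_\lambda} - \widehat{\Hess}^{\tilde g,\sigma}_{\q,x_\infty} \to 0$ in operator norm because $x_\lambda \to x_\infty$ in $L^2_n$ and the Hessian depends norm-continuously on the point.

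The gap in your proposal is that the remedies you name do not deliver the operator-norm continuity you need. The compact inclusion $W_k \hookrightarrow W_{k-1}$ yields compactness of $(p^\mu-1)$ composed with bounded operators into $W_k$, but compactness is not smallness: a family-of-operators index argument requires norm continuity, and $(p^\mu-1)\,\D_{x}c_\q$ does not go to zero as an operator on a fixed Sobolev space. The $f$-reparametrization from Lemma~\ref{lem:homeo} was engineered to make a specific \emph{pointwise} quantity ($x - p^{f^{-1}(|r|)}(x)$, with $x \in W_k$) differentiable at $r=0$, not to control operator norms of the four-dimensional linearizations. To make your route work one would need the quantitative estimate that $(p^\mu-1): L^2_n(Y) \to L^2_{n-1}(Y)$ has norm $O(1/\mu)$ (from the spectral gap), together with a uniform-in-$t$ norm convergence of the coefficient operators $\hat h^{\gCoul,\mu}_t \to \hat h^{\gCoul}_t$ that the very-tameness bounds would then have to supply; none of this is spelled out in your sketch, and it is precisely the ``more work'' the paper's remark alludes to. The paper's two-step compact/small decomposition sidesteps all of this, and is the more economical argument.
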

Note that here we do not need the assumption that $\lambda = \llambda_i$ for the nondegeneracy of $[x_\lambda], [y_\lambda]$, since we have that $[x_\lambda], [y_\lambda]$ are contained in $\N/S^1$ (cf. Proposition~\ref{prop:ND}).  
\begin{proof}
Recall from Proposition~\ref{prop:FredholmCoulomb} that the relative grading between $[x_\infty]$ and $[y_\infty]$ is given by the index of $(\D^\tau_{\gamma} \Fqagctau)|_{\K^{\gCoul,\tau}_{j,\gamma}}$.  Recall further from Lemma~\ref{lem:allsurjective} that the index of $(\D^\tau_{\gamma} \Fqagctau)|_{\K^{\gCoul,\tau}_{j,\gamma}}$ is the same as that of the operator 
 $$\Qhat_{\gamma}^{\gCoul} : \T^{\tau}_{j, \gamma} \to  \T^{\tau}_{j-1, \gamma}$$
from \eqref{eq:newextension}. We can define a similar operator 
$$\Qhat_{\gamma_\lambda, \qml}^{\gCoul}: \T^{\tau}_{j, \gamma_\lambda} \to  \T^{\tau}_{j-1, \gamma_\lambda},$$ by using the perturbation $\qml$ instead of $\q$, and the path $\gamma_\lambda$ instead of $\gamma$. The same arguments as in the proof of Proposition~\ref{prop:FredholmCoulomb} show that $\Qhat_{\gamma_\lambda, \qml}^{\gCoul}$ is Fredholm, so is $(\D^\tau_{\gamma_\lambda} \Fqmlgctau)|_{\K^{\gCoul,\tau}_{j,\gamma_\lambda}}$, and they have the same index. Thus, it remains to show that the operators $\Qhat_{\gamma_\lambda, \qml}^{\gCoul}$ and $\Qhat_{\gamma}^{\gCoul} $ have the same index.

Standard arguments show that the index of the operator $\Qhat^{\gCoul}_\gamma$ is independent of the choice of $\gamma$, for $\gamma$ a smooth path which is asymptotic to $x_\infty$ and $y_\infty$.  Similarly, it suffices to look at the index of $\Qhat^{\gCoul}_{\gamma_\lambda, \qml}$ for one choice of $\gamma_\lambda$.  We choose $\gamma$ and $\gamma_\lambda$ to be constant outside of the interval $[-T,T]$.  As in \eqref{eq:tQg}, we write 
\begin{align*}
\Qhat^{\gCoul}_\gamma &= \frac{d}{dt} + L_0 + \hat{h}^{\gCoul}_t, \\
\Qhat^{\gCoul}_{\gamma_\lambda, \qml} &= \frac{d}{dt} + L_0 + \hat{h}^{\gCoul,\lambda}_t.
\end{align*}
We will define an intermediate operator $\Qhat^{\operatorname{int}, \lambda}$ interpolating between these two operators.  Choose $\beta(t)$ a smooth bump function which is 0 outside of $[-T-2,T+2]$ and is identically 1 in $[-T-1,T+1]$.  Define 
$$
\Qhat^{\operatorname{int},\lambda} = (1-\beta(t)) \cdot \Qhat^{\gCoul}_{\gamma_\lambda, \qml} + \beta(t) \cdot \Qhat^{\gCoul}_{\gamma}.
$$  
We will compare $\Qhat^{\operatorname{int},\lambda}$ to $\Qhat^{\gCoul}_{\gamma_\lambda, \qml}$ and $\Qhat^{\gCoul}_\gamma$.    

We begin by comparing $\Qhat^{\operatorname{int},\lambda}$ to $\Qhat^{\gCoul}_{\gamma_\lambda, \qml}$.  We will show that they differ by a compact operator.  Notice that 
$$
\Qhat^{\gCoul}_{\gamma_\lambda, \qml}  - \Qhat^{\operatorname{int},\lambda} = \beta(t) (\Qhat^{\gCoul}_{\gamma_\lambda, \qml} - \Qhat^{\gCoul}_\gamma) = \beta(t) (\hat{h}^{\gCoul,\lambda}_t - \hat{h}^{\gCoul}_t).
$$  
For each $t$, $\hat{h}^{\gCoul,\lambda}_t - \hat{h}^{\gCoul}_t$ is continuous as a self-map from $L^2_n(Y; iT^*Y \oplus \Spin \oplus i\R)$ for each $1 \leq n \leq j$; therefore, this is compact as a map from $L^2_n(Y)$ to $L^2_{n-1}(Y)$.  Since $\beta(t)$ is smooth and the terms $\hat{h}^{\gCoul,\lambda}_t$ and $\hat{h}^{\gCoul}_t$ vary smoothly in $t$, we have that the four-dimensional map $\beta(t) (\hat{h}^{\gCoul,\lambda}_t - \hat{h}^{\gCoul}_t)$ from $L^2_n(I \times Y)$ to $L^2_{n-1}(I \times Y)$ is compact whenever $I$ is a compact interval.  While the inclusion of $L^2_n(\R \times Y)$ into $L^2_{n-1}(\R \times Y)$ is not compact since $\R \times Y$ is unbounded, we have that $\beta(t) (\hat{h}^{\gCoul,\lambda}_t - \hat{h}^{\gCoul}_t)$ is compact as an operator from $L^2_n(\R \times Y)$ to $L^2_{n-1}(\R \times Y)$ because $\beta$ is compactly supported.  Therefore, $\Qhat^{\operatorname{int},\lambda}$ and $\Qhat^{\gCoul}_{\gamma_\lambda, \qml}$ differ by a compact operator, and hence have the same index.

We now compare $\Qhat^{\operatorname{int},\lambda}$ to $\Qhat^{\gCoul}_{\gamma}$.  We write 
$$\Qhat^{\operatorname{int},\lambda} - \Qhat^{\gCoul}_{\gamma}  = (1 - \beta(t))(\Qhat^{\gCoul}_{\gamma_\lambda, \qml} - \Qhat^{\gCoul}_{\gamma}).  $$ 
Let us analyze this difference more carefully.  When this operator is non-zero (i.e., when $|t| >T + 1$), we have that $\Qhat^{\operatorname{int},\lambda}  - \Qhat^{\gCoul}_{\gamma}$ is given by $(1-\beta(t)) (\hat{h}^{\gCoul,\lambda}_{-\infty} - \hat{h}^{\gCoul}_{-\infty})$ when $t < -T-1$ and $(1-\beta(t)) (\hat{h}^\lambda_{+\infty} - \hat{h}_{+\infty})$ when $t > T + 1$.  We will argue that $(1-\beta(t)) (\hat{h}^{\gCoul,\lambda}_{-\infty} - \hat{h}^{\gCoul}_{-\infty})$, considered as operators from $L^2_j(\R \times Y)$ to $L^2_j(\R \times Y)$, converge to 0 in operator norm as $\lambda \to \infty$.  The same argument will apply for $\hat{h}^{\gCoul,\lambda}_{+\infty} - \hat{h}^{\gCoul}_{+\infty}$, and this will imply that $\Qhat^{\operatorname{int},\lambda} - \Qhat^{\gCoul}_{\gamma}$ converges to $0$ in operator norm as $\lambda \to \infty$.  In turn, this will imply that for $\lambda \gg 0$, the operators $\Qhat^{\gCoul}_{\gamma}$ and $\Qhat^{\operatorname{int},\lambda}$ must have the same index, completing the proof.  

Because $\gamma$ and $\gamma_\lambda$ limit to stationary points of $\Xqgcsigma$ and $\Xqmlgcsigma$ respectively, \eqref{eq:4D-endpoint-Hess} yields that for $t  < -T - 1$,
\begin{equation}\label{eq:hlambdagCoul}
(1-\beta(t)) (\hat{h}^{\gCoul,\lambda}_{-\infty} - \hat{h}^{\gCoul}_{-\infty}) = (1-\beta(t)) (\widehat{\Hess}^{\tilde{g},\sigma}_{\qml,x_\lambda} - \widehat{\Hess}^{\tilde{g},\sigma}_{\q,x_\infty})
\end{equation}
and similarly for $t > T + 1$.  %Recall from \eqref{eq:ExtHessBlowUpGC} that $\widehat{\Hess}^{\tilde{g},\sigma}_{\q,x_\infty}$ differs only from $\widehat{\Hess}^\sigma_{\q,x_\infty}$ in the gauge-fixing terms ($\dd_x^{\sigma, \tilde{\dagger}}$ versus $\dd_x^{\sigma, \dagger}$); aside from these gauge-fixing terms, both operators are given by $\D_{x_\infty} \Xqsigma$.  A similar statement applies for the approximate equations as well.    

Recall that $\Hess^\sigma_{\q,x_\infty} = \mathcal{H}^\sigma_{x_\infty}$ by Lemma~\ref{lem:fakehessian}\eqref{fakehessian-realstationary}.  Combining the arguments of Lemma~\ref{lem:fakehessian} with the formula for $\Hess^\sigma_{\q,x_\infty}$ in \cite[p.209]{KMbook}, the operator norm (in three-dimensions) of the difference 
$$
\widehat{\Hess}^{\tilde{g},\sigma}_{\qml,x_\lambda} - \widehat{\Hess}^{\tilde{g},\sigma}_{\q,x_\infty}
$$ 
from $L^2_n$ to $L^2_{n-1}$ can be bounded in terms of the $L^2_n$ norms of $x_\lambda - x_\infty$, $\D_{\pi(x_\infty)} (\Pi^{\elCoul} \circ c_\q) - \D_{\pi(x_\lambda)} (\Pi^{\elCoul} \circ \pml c_\q)$, and $\D_{x_\infty} \widetilde{\Pi^{\elCoul} \circ c_\q}^1 - \D_{x_\lambda} \widetilde{ \Pi^{\elCoul} \circ \pml c_\q}^1$, where $\pi(x_\infty), \pi(x_\lambda)$ denote the images in the blow-down.  Note that since the two extended Hessians have the same first order term, $L_0$, the difference necessarily lands in $L^2_j(Y)$; more generally this difference induces a bounded operator from $L^2_n(Y)$ to $L^2_n(Y)$ for any $1 \leq n \leq j$.  For each $1 \leq n \leq j$, the terms mentioned above converge to 0 in $L^2_n(Y)$ as $\lambda \to \infty$: $x_\lambda - x_\infty$ converges to 0 by the implicit function theorem, while the other two terms converge to zero by combining the arguments of Lemma~\ref{lem:fam} together with the continuity of $\Pi^{\elCoul}$ and its derivatives (Lemma~\ref{lem:elc}).  It follows that the norm of 
$$A_\lambda := \hat{h}^{\gCoul,\lambda}_{-\infty} - \hat{h}^{\gCoul}_{-\infty},$$ thought of as an operator from $L^2_n(Y)$ to $L^2_{n-1}(Y)$, converges to zero as $\lambda \to \infty$.  We must extend this to obtain the analogous convergence on $\R \times Y$.   

Precisely, we seek to show that the norm of $(1-\beta(t))A_\lambda$, thought of as an operator from $L^2_j(\R \times Y)$ to $L^2_{j-1}(\R \times Y)$, converges to zero as $\lambda \to \infty$. Since $\beta(t)$ is smooth and compactly supported, it suffices to consider $A_\lambda$ itself.  Using the analysis on the operator norms from $L^2_n(Y)$ to $L^2_{n-1}(Y)$ above, we have that for $\eta \in L^2_j(\R \times Y)$, 
\begin{align*}
\|A_\lambda (\eta(t)) \|^2_{L^2_{j-1}(\R \times Y)} &= \sum^{j-1}_{n = 0} \int_\R \| A_\lambda (\eta^{(n)}(t)) \|^2_{L^2_{j-n-1}(Y)} dt \\
& \leq \sum^j_{n=0} \int_\R C_\lambda \| \eta^{(n)}(t) \|^2_{L^2_{j-n}(Y)} dt \\
& = C_\lambda \| \eta(t) \|^2_{L^2_{j}(\R \times Y)}, 
\end{align*}
where $C_\lambda \to 0$ as $\lambda \to \infty$.  This gives the desired convergence in operator norm and completes the proof.
\end{proof}

\begin{remark}
We expect that, with more work, one could show that $\Qhat_{\gamma_\lambda, \qml}^{\gCoul}$ converges to $\Qhat_{\gamma}^{\gCoul} $ in operator norm; and hence the two operators have the same index. This would give an alternate proof of the above lemma, without using the intermediate operator $\Qhat^{\operatorname{int},\lambda}$. 
\end{remark}

We define a relative grading on the stationary points of $\Xqmlagcsigma$ in $\N/S^1$ by 
\begin{equation}\label{eq:approx-rel-grading}
\gr([x_\lambda], [y_\lambda]) = \ind(\D^\tau_{\gamma} \Fqmlgctau|_{\K^{\gCoul,\tau}_{k,\gamma}}),
\end{equation}
for any path $\gamma$ from $x_\lambda$ to $y_\lambda$. 

Lemma~\ref{lem:GrPres} immediately implies the following

\begin{corollary}
\label{cor:GrPres}
The correspondence $\Xi_{\lambda}: \Crit_{\N}^{\lambda} \to \Crit_{\N}$ from Corollary~\ref{cor:corresp} preserves relative gradings.
\end{corollary}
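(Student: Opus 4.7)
The plan is to unwind the two definitions of relative grading and observe that Lemma~\ref{lem:GrPres} provides exactly the identification we need. Fix $[x_\lambda], [y_\lambda] \in \Crit_{\N}^{\lambda}$, and set $[x_\infty] = \Xi_\lambda([x_\lambda])$, $[y_\infty] = \Xi_\lambda([y_\lambda])$, which are the corresponding stationary points of $\Xqagcsigma$ in $\Crit_{\N}$ under the bijection of Corollary~\ref{cor:corresp}.

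By the definition \eqref{eq:approx-rel-grading} of the relative grading on $\Crit_{\N}^{\lambda}$, I have
\[
\gr([x_\lambda], [y_\lambda]) \;=\; \ind\bigl( (\D^\tau_{\gamma_\lambda} \Fqmlgctau)|_{\K^{\gCoul,\tau}_{k,\gamma_\lambda}} \bigr)
\]
for any path $\gamma_\lambda \in \C^{\gCoul,\tau}_k(x_\lambda, y_\lambda)$ (and the index is independent of the choice of path, by standard deformation arguments). By Lemma~\ref{lem:GrPres}, for $\lambda \gg 0$ this Fredholm index equals $\gr([x_\infty], [y_\infty])$, which is the relative grading in $\Crit_{\N}$ defined through $\Fqagctau$ as in Section~\ref{sec:GradingsCoulomb}. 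Chaining these two equalities gives
\[
\gr([x_\lambda], [y_\lambda]) \;=\; \gr(\Xi_\lambda([x_\lambda]), \Xi_\lambda([y_\lambda])),
\]
which is exactly the assertion that $\Xi_\lambda$ preserves relative gradings.

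There is no real obstacle here, since all the substantive analytic work has already been done in Lemma~\ref{lem:GrPres} (the deformation through $\Qhat^{\operatorname{int},\lambda}$ and the operator-norm convergence of $\hat h^{\gCoul,\lambda}_{\pm\infty}$ to $\hat h^{\gCoul}_{\pm\infty}$). The only care required is to check that the grading \eqref{eq:approx-rel-grading} is indeed independent of the representative path $\gamma_\lambda$, so that it may be computed on the particular path used in Lemma~\ref{lem:GrPres}; but this is a routine fact, as two such paths may be joined by a continuous family of Fredholm operators of the same type, whose index is constant along the deformation.
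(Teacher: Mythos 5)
Your proof is correct and matches the paper's (implicit) argument: the paper simply states that Lemma~\ref{lem:GrPres} immediately implies the corollary, given the definition \eqref{eq:approx-rel-grading} of the relative grading, which is precisely what you spell out. Your remark about path-independence of the index is a reasonable sanity check but is already built into the fact that Lemma~\ref{lem:GrPres} is stated for an arbitrary path $\gamma_\lambda$.
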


Recall from Corollary~\ref{cor:IsFinite} that for $\lambda \gg 0$, the stationary points of $\Xqmlagcsigma$ contained in $\N/S^1$ are contained in $(\vml)^\sigma$, so we are able to treat these stationary points as living in finite dimensions.  We would like to relate the above grading computations to the Morse indices in finite dimensions.   We remind the reader that although $\Xqmlgc$ is a Morse  equivariant quasi-gradient, $\Xqmlagcsigma$ is not a Morse quasi-gradient itself in any obvious way.  However, as discussed in Section~\ref{subsec:CircleMorse}, we can still do the analogous constructions in Morse homology.     

\begin{proposition}
\label{prop:RelationFinite}
Let $[x_\lambda], [y_\lambda] \in \N/S^1$ be stationary points of $\Xqmlagcsigma$.  Then, $\gr([x_\lambda],[y_\lambda])$, where this grading is computed in infinite dimensions, is equal to the difference in gradings of $[x_\lambda],[y_\lambda]$ thought of as stationary points of $\Xqmlagcsigma$ restricted to $(B(2R) \cap \vml)^\sigma/S^1$.   
\end{proposition}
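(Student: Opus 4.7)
The plan is to write both sides as Fredholm indices of operators of the form $\frac{d}{dt}+H_t$ and compare them via the $L^2$-orthogonal splitting $W=\vml\oplus(\vml)^{\perp}$, exploiting the fact that $\pml c_\q$ maps $W$ into the finite-dimensional $\vml$ while $l$ preserves both summands.

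By Lemma~\ref{lem:GrPres}, $\gr([x_\lambda],[y_\lambda])$ equals the Fredholm index of $(\D^\tau_{\gamma_\lambda}\Fqmlgctau)|_{\K^{\gCoul,\tau}_{j,\gamma_\lambda}}$ for any smooth path $\gamma_\lambda\in\C^{\gCoul,\tau}_k(x_\lambda,y_\lambda)$; equivalently, by the adaptation of Lemma~\ref{lem:allsurjective} with $\qml$ in place of $\q$, it equals the Fredholm index of the associated $(k-1)$-ASAFOE operator $\Qhat^{\gCoul}_{\gamma_\lambda,\qml}$ on the full four-dimensional tangent bundle. Since $x_\lambda,y_\lambda\in(\vml)^\sigma$ by Corollary~\ref{cor:IsFinite}, we may choose $\gamma_\lambda$ to lie entirely in $(\vml)^\sigma$. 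As in the proof of Proposition~\ref{prop:FredholmCoulomb}, this Fredholm index is the spectral flow of the family $\widehat{\Hess}^{\tilde g,\sigma}_{\qml,\gamma_\lambda(t)}$ between the endpoints (cf. \eqref{eq:4D-endpoint-Hess}), and is invariant under deformation of this family through hyperbolic operators.

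The splitting $W=\vml\oplus(\vml)^\perp$ lifts to decompositions of the tangent and Coulomb bundles along $\gamma_\lambda$; since $\phi\in\vml$ along the path, the $S^1$-orbit direction $(0,0,i\phi)$ lies in $\vml$, so $\Pi^{\agCoul,\sigma}_x$ preserves the splitting. The identity $\Xqmlgc=l+\pml c_\q$, together with the observation that $\pml c_\q$ lands in $\vml$ and $l$ preserves $(\vml)^\perp$, makes $\Hess^{\tilde g,\sigma}_{\qml,x}$ block upper-triangular with lower-right block $l|_{(\vml)^\perp}$ at each $x\in(\vml)^\sigma$. The lower-right $(\vml)^\perp$-diagonal block of $\Qhat^{\gCoul}_{\gamma_\lambda,\qml}$ is then a compact perturbation of $\frac{d}{dt}+l|_{(\vml)^\perp}$, which is bijective by the argument used for $R$ in \eqref{eq:RR}: $l|_{(\vml)^\perp}$ is self-adjoint with spectrum in $(-\infty,-\lambda]\cup[\lambda,\infty)$, so any $L^2$ kernel element would have to grow exponentially at one of the ends. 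Hence the Fredholm index of $\Qhat^{\gCoul}_{\gamma_\lambda,\qml}$ equals the Fredholm index of its diagonal $\vml$-block.

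By construction, this $\vml$-block is the $(k-1)$-ASAFOE operator associated with the restriction of $\Xqmlgcsigma$ to $(\vml\cap B(2R))^\sigma/S^1$---a flow which is a Morse equivariant quasi-gradient by Proposition~\ref{prop:AllMS}. The standard equality between Fredholm index and difference of Morse indices in the Morse-Smale setting (via the constructions of Section~\ref{subsec:CircleMorse}, together with the boundary-obstructed shifts of Section~\ref{subsec:morseboundary} when needed) then identifies this Fredholm index with the finite-dimensional relative grading of $[x_\lambda]$ and $[y_\lambda]$, completing the proof. The chief technical obstacle lies in carefully verifying the block upper-triangular structure at the level of the Coulomb slices on the blow-up---in particular the compatibility of $\Pi^{\agCoul,\sigma}$, the shear map $S_x$ of \eqref{eq:shear}, and the gauge operators $\dd^\sigma,\dd^{\sigma,\tilde\dagger}$ with the $\vml\oplus(\vml)^\perp$ decomposition---and separately handling the boundary-obstructed case where $\gamma_\lambda$ is contained in the reducible locus throughout.
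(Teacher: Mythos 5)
Your overall plan---split $W=\vml\oplus(\vml)^\perp$, use the fact that $\pml c_\q$ takes values in $\vml$ to get a block-triangular operator, and argue that the $(\vml)^\perp$-diagonal block is invertible by an ODE argument exploiting the eigenvalue gap $|\kappa|\geq\lambda$---matches the paper's strategy (Lemma~\ref{lem:infinite-finite-same-index}). But there is a real gap in the middle step, coming from how you set up the block structure. You assert that because $(0,0,i\phi)\in\vml$, the anticircular projection $\Pi^{\agCoul,\sigma}_x$ ``preserves the splitting,'' and from there that $\Hess^{\tilde g,\sigma}_{\qml,x}$ is block upper-triangular on $\K^{\agCoul,\sigma}$ with lower-right block $l|_{(\vml)^\perp}$. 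This is false as stated: the anticircular slice $\K^{\agCoul,\sigma}_{j,x}$ and the projection $\Pi^{\agCoul,\sigma}_x$ are defined via the $\tilde g$-inner product (see \eqref{eq:Kagcsigma}, \eqref{eq:antiproj}), and the condition $\langle i\phi,\psi\rangle_{\tilde g}=0$ is not implied by $\psi\in(\vml)^\perp$---the $\tilde g$-product involves $\Pi^{\elCoul}$, which adds $\J^\circ$-directions and couples $\phi$ to $(\vml)^\perp$ spinors. So $\K^{\agCoul,\sigma}$ does not decompose as the direct sum of its intersections with $\vml$ and $(\vml)^\perp$, and the ``Hessian block'' framing you use does not quite make sense. (What is true is that $\Pi^{\agCoul,\sigma}$, viewed on all of $\T^{\gCoul,\sigma}$, is block \emph{upper}-triangular, with identity on the $(\vml)^\perp\to(\vml)^\perp$ component and a possibly nonzero $(\vml)^\perp\to\vml$ component; but carrying this through to the anticircular slices takes more care.) The paper sidesteps the issue entirely: it never decomposes the Hessian, but instead decomposes the larger operator $Q_\gamma^{\gCoul}=\D^\tau_\gamma\Fqmlgctau\oplus\dd^{\gCoul,\tau,\tilde\dagger}_\gamma$ on the full 4D tangent bundle $\T^{\gCoul,\tau}_{j,\gamma}=\Rcal_j\oplus\T^{\gCoul,\lambda,\tau}_{j,\gamma}$ (an $L^2$-orthogonal decomposition with $\Rcal$ the $(\vml)^\perp$-valued paths), and arranges the codomain so that the block that must vanish is the one mapping $\T^{\gCoul,\lambda,\tau}\to\Rcal$, whose vanishing is exactly the observation that $\Xqmlgc$ has image in $\vml$. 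The off-diagonal block in the other position (which involves $\dd^{\gCoul,\tau,\tilde\dagger}$ applied to $\Rcal$) is allowed to be nonzero.

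The second step you treat as routine---identifying the index of the $\vml$-block with the finite-dimensional relative Morse index---is actually a substantial part of the paper's argument and should not be waved off as ``the standard equality.'' Because $\Xqmlgc$ is not a gradient, the finite-dimensional Morse index operator $\Pi^{\agCoul,\sigma}\circ(\tfrac{D^\sigma}{dt}+\D^\sigma\Xqmlgcsigma)$ does not appear directly as a diagonal block of $Q_\gamma^{\gCoul,\lambda}$: there are extra compact off-diagonal terms $K_1,K_2,K_3$ in \eqref{eq:Kis} (some of which do not vanish even at stationary points, since $\D\Xqmlgc$ is not symmetric), and there is a discrepancy coming from the time-dependent $S^1$-orbit direction (the difference \eqref{eq:DiffDt}). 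The paper resolves this with a linear interpolation to the block-diagonal operator $Q_\gamma^{\gCoul,\sp,\lambda}$ of \eqref{eq:QQ2}, verifying hyperbolicity at the endpoints throughout the interpolation, and then checking that the residual $2\times 2$ block \eqref{eq:dAt} is invertible. Your proposal reaches the right conclusion but omits all of this, which is precisely where the statement could have gone wrong.
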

\begin{proof}
We begin by studying the relevant operators.  Let 
$$\gamma: \rr \to (B(2R) \cap \vml)^{\sigma}, \ \gamma(t)=(a(t), s(t), \phi(t))$$ be a path between stationary points  $x_\lambda, y_\lambda \in (B(2R) \cap \vml)^\sigma$. Let also $Z = \R \times Y$.

On the one hand, we have the Fredholm operator
\begin{equation}
\label{eq:Qgammagc}
 Q_{\gamma}^{\gCoul} = \D^\tau_{\gamma} \Fqmlgctau \oplus \dd^{\gCoul, \tau, \tilde{\dagger}}_{\gamma} : \T^{\gCoul, \tau}_{j, \gamma}(x_\lambda,y_\lambda) \to \V^{\gCoul,\tau}_{j-1,\gamma}(Z) \oplus L^2_{j-1}(\R; i\R).
 \end{equation}
 This is similar to the operator $Q_{\gamma}^{\gCoul} $ from \eqref{eq:Qggc}, but uses the perturbation $\qml$ instead of $\q$. (For simplicity, we do not include the perturbation in the notation.)
 
We now restrict $Q_{\gamma}^{\gCoul} $ to
\begin{equation}
\label{eq:Tglambda}
 \T^{\gCoul, \lambda, \tau}_{j, \gamma} = \{ (b, r, \psi) \in \T^{\gCoul,\tau}_{j,\gamma}(x_\lambda, y_\lambda) \mid (b(t), \psi(t))  \in \vml \text{ for all } t \}.
 \end{equation}
After defining $\V^{\gCoul,\lambda, \tau}_{j-1,\gamma}(Z)$ similarly, we get an operator
\begin{equation}
\label{eq:Qgg}
Q_{\gamma}^{\gCoul,  \lambda}= \D^\tau_{\gamma} \Fgctau_{\qml} \oplus \dd^{\gCoul, \tau, \tilde{\dagger}}_{\gamma} : \T^{\gCoul, \lambda, \tau}_{j, \gamma}(x_\lambda, y_\lambda) \to \V^{\gCoul,\lambda, \tau}_{j-1,\gamma}(Z) \oplus L^2_{j-1}(\R; i\R).
\end{equation}

Recall that the index of $Q_{\gamma}^{\gCoul} $ computes the relative gradings between stationary points of $\Xqmlgcsigma$ in infinite dimensions.  We will show below in Lemma~\ref{lem:infinite-finite-same-index} that $Q_{\gamma}^{\gCoul} $ and $Q_{\gamma}^{\gCoul,  \lambda}$ have the same index.  Therefore, it remains to show that the index of $Q_{\gamma}^{\gCoul,  \lambda}$ gives the relative gradings in the Morse complex on $(B(2R) \cap \vml)^\sigma/S^1$ in finite dimensions.  

Note that the domain of $Q_{\gamma}^{\gCoul,  \lambda}$ consists of paths $(V, \beta): \R \to T(\vml)^\sigma \times i\R$, where $\beta$ comes from the $dt$ component of the connection.   Similarly, the codomain also consists of paths $(V, \beta)$ of this form.  We can also view $T_{(a,s,\phi)} (\vml)^\sigma$ as the subspace of $\vml \times \mathbb{R}$ consisting of three-dimensional configurations $((b,\psi),r)$ such that $\Re \langle \phi, \psi \rangle_{L^2(Y)} = 0$.

The first observation is that the $L^2_j$ norm of $V$, as a four-dimensional configuration over $Z$, is equivalent to its $L^2_j$ norm as a map from $\R$ to the finite-dimensional space $\vml \times \mathbb{R}$. Indeed, in principle the former norm takes into account more derivatives, corresponding to the directions along $Y$, whereas the latter just uses derivatives in the $t$ direction. However, since we are in $\vml$,
the $j$th derivatives of $V(t)$ in the $Y$ directions are bounded (in $L^2$) by a constant (of the order of $\lambda^j$) times the $L^2(Y)$ norm of $V(t)$ itself. This shows that the two Sobolev norms are equivalent. From now on we will use the latter norm for $V$, i.e. view $V$ as a map from $\R$ to $\vml \times \mathbb{R}$.

By Proposition~\ref{prop:ND}, the stationary points of $\Xqmlagcsigma$ inside $\N/S^1$ are hyperbolic (where $\Xqmlagcsigma$ is considered as a vector field in either infinite or finite dimensions). Thus, the relative Morse index between these stationary points is well-defined. 
We aim to compare $Q_{\gamma}^{\gCoul,  \lambda}$ with the operator 
\begin{equation}
\label{eq:HessFinite}
 \Pi^{\agCoul,\sigma} \circ ( \frac{D^\sigma}{dt} + \D^\sigma \Xqmlgcsigma) : T_{j, \gamma} \P(x_\lambda, y_\lambda) \to T_{j-1, \gamma} \P(x_\lambda, y_\lambda).
 \end{equation}
 
Here, $T_{j, \gamma} \P(x_\lambda, y_\lambda)$ is the subspace of $L^2_j(\R, T(\vml)^\sigma)$ consisting of the paths $V(t)=(b(t), r(t), \psi(t))$ such that $\langle \psi(t), i\phi(t) \rangle_{\tilde{g}} = 0$ for all $t$, as in \eqref{eq:Kagcsigma}. 

Note that the operator \eqref{eq:HessFinite} can be used to define the Morse index in $(B(2R) \cap \vml)^{\sigma}/S^1$. Indeed, it is an operator of the form~\eqref{eq:lgamma}, which uses the connection $ \Pi^{\agCoul,\sigma} \circ \D^\sigma$ on the tangent bundle $T((\vml)^{\sigma}/S^1) \cong \K^{\agCoul, \sigma}.$

Let us write
\begin{equation}
\label{eq:decomposeTlambda}
\T^{\gCoul, \lambda, \tau}_{j, \gamma} = T_{j, \gamma} \P(x_\lambda, y_\lambda) \oplus  L^2_{j}(\R; i\R) \oplus  L^2_{j}(\R; i\R)
\end{equation}
where the first $L^2_{j}(\R; i\R)$ summand corresponds to the tangent to the $S^1$-orbit (that is, real multiples of $\phi(t)$) and the last summand corresponds to $\beta(t)$. The codomain of $Q_{\gamma}^{\gCoul, {\sigma}, \lambda}$ can also be identified with \eqref{eq:decomposeTlambda}, except we use the Sobolev index $j-1$.

With respect to the decomposition $\T^{\gCoul,\lambda,\tau}_{j,\gamma} = \V^{\gCoul,\lambda,\tau}_{j,\gamma} \oplus L^2_j(\R;i\R)$, we  have
\begin{equation}
\renewcommand*{\arraystretch}{1.3}
\label{eq:QQ}
Q_{\gamma}^{\gCoul, \lambda}= \frac{D^\sigma}{dt} +  \begin{pmatrix}
  \D^\sigma \Xqmlgcsigma  &   \dd^{\gCoul, \sigma}_{\gamma}  \\
 \dd^{\gCoul, \sigma,{\tilde \dagger}}_{\gamma} & 0 
\end{pmatrix},
\end{equation}
where the  operators $\dd^{\gCoul, \sigma}_{\gamma} $ and $\dd^{\gCoul, \sigma,\tilde{ \dagger}}_{\gamma}$  are the ones defined in \eqref{eq:dde} and \eqref{eq:ddedagger}. We can further write the entry  $\D^\sigma \Xqmlgcsigma$ with respect to the decomposition of $T_j(\vml)^{\sigma}$ into the anticircular Coulomb slice and the tangent to the $S^1$-direction, as
\begin{equation}
\label{eq:Kis}
\begin{pmatrix}
 \Pi^{\agCoul,\sigma} \circ \D^\sigma \Xqmlgcsigma & K_2 \\
 K_1 & K_3
\end{pmatrix},
\end{equation}
where the $K_i$ terms are compact and $K_2 = K_3 = 0$ at the endpoints $x_\lambda, y_\lambda$.  (Because we use $L^2$ projections to $\vml$ in $\Xqmlgc$ instead of $\tilde{g}$-projections, $\Xqmlgc$ is not a gradient vector field.  Therefore, $\D \Xqmlgc$ is not symmetric, and we cannot conclude that the term $K_1$ vanishes even at stationary points.)  However, the tangent to the $S^1$-direction depends on time, and hence so does this decomposition of $T_j(\vml)^{\sigma}$. Therefore, adding $\frac{D^\sigma}{dt}$ to $\D^\sigma \Xqmlgcsigma$ does not simply result in adding $\frac{D^\sigma}{dt}$ terms on the diagonal of \eqref{eq:Kis}.  

Let us identify the tangent to the $S^1$-direction with $\rr$, such that the generator $(0, 0, i\phi(t))$ corresponds to $1$. We would like to understand the difference
\begin{equation}
\renewcommand*{\arraystretch}{1.3}
\label{eq:DiffDt}
 \frac{D^\sigma}{dt} - \begin{pmatrix} \Pi^{\agCoul, \sigma} \circ \tfrac{D^\sigma}{dt} & 0 \\ 0 & \tfrac{d}{dt} \end{pmatrix}.
 \end{equation}

If we write an element $V \in \V^{\gCoul,\lambda,\tau}_{j,\gamma}$ as a path
$$ V(t)=V^{\agCoul}(t) + f(t) \cdot (0, 0, i\phi(t)),$$
then the expression \eqref{eq:DiffDt} applied to $V$ becomes
\begin{equation}
\label{eq:DiffDt2}
 \Pi^{\gCoul, \circ} \left(\frac{D^\sigma}{dt} V^{\agCoul}(t)\right) + f(t) \cdot \bigl(0, 0, \Pi^\perp_{\phi(t)} i \frac{d\phi(t)}{dt} \bigr),
 \end{equation}
where $\Pi^{\gCoul, \circ}$ is the projection to the span of $(0, 0, i\phi(t))$, with kernel $\K^{\agCoul, \sigma}$. In other words, if $\psi^{\agCoul}(t)$ is the spinorial component of $V^{\agCoul}(t)$, then
$$ \Pi^{\gCoul, \circ} \left(\frac{D^\sigma}{dt} V^{\agCoul}(t) \right) = \frac{\Big \langle \frac{d}{dt} \psi^{\agCoul}(t), i \phi(t) \Big \rangle_{\tilde g} } {\|i \phi(t) \|^2_{\tilde{g}} } \cdot (0, 0, i\phi(t)).$$
By the Leibniz rule, we have
\begin{align*}
 0 &= \frac{d}{dt} \langle \psi^{\agCoul}(t), i\phi(t) \rangle_{\tilde g} \\
 &=  \frac{d}{dt} \Re \langle (0,\psi^{\agCoul}(t)), \Pi^{\elCoul}  (0,i\phi(t)) \rangle_{L^2(Y)} \\
 &=  \Re \langle  (0,\frac{d}{dt}\psi^{\agCoul}(t)), \Pi^{\elCoul}  (0,i\phi(t)) \rangle_{L^2(Y)} +  \Re \langle  (0, \psi^{\agCoul}(t)), \frac{d}{dt}\Pi^{\elCoul}  (0,i\phi(t)) \rangle_{L^2(Y)} \\
 &=  \langle  \frac{d}{dt}\psi^{\agCoul}(t), i\phi(t) \rangle_{\tilde g} +  \Re \langle  (0,\psi^{\agCoul}(t)), \frac{d}{dt}\Pi^{\elCoul}  (0,i\phi(t)) \rangle_{L^2(Y)}.
 \end{align*}
 Therefore, we can write \eqref{eq:DiffDt2} as
 $$ -  \frac{\Re \langle  (0,\psi^{\agCoul}(t)), \frac{d}{dt}\Pi^{\elCoul} (0,i\phi(t)) \rangle_{L^2(Y)}}{ \| i\phi(t) \|_{\tilde{g}}^2} \cdot (0,0,i\phi(t))+  f(t) \cdot \bigl(0, 0,  \Pi^\perp_{\phi(t)} i \frac{d\phi(t)}{dt} \bigr).$$
 Both of these terms involve the time derivatives of $\phi(t)$, and preserve Sobolev regularity $L^2_j \to L^2_j$ as long as $j < k$. We deduce that the difference \eqref{eq:DiffDt} is compact as a map from $L^2_j$ to $L^2_{j-1}$ for $j < k$. 
 
We would like to compare $Q^{\gCoul,\lambda}_\gamma$ to the  operator 
\begin{equation}
\renewcommand*{\arraystretch}{1.3}
\label{eq:QQ2}
Q_{\gamma}^{\gCoul, \sp, \lambda}= \begin{pmatrix}
 \Pi^{\agCoul,\sigma} \circ ( \tfrac{D^\sigma}{dt} + \D^\sigma \Xqmlgcsigma) & 0 & 0 \\
0 &  \tfrac{d}{dt}   &  \dd^{\gCoul, \sigma}_{\gamma}  \\
0 & \dd^{\gCoul, \sigma,{\tilde \dagger}}_{\gamma} & \tfrac{d}{dt} 
\end{pmatrix},
\end{equation}
 written with respect to the decomposition \eqref{eq:decomposeTlambda}.  We will study the linear interpolations $\alpha Q^{\gCoul,\lambda}_\gamma + (1-\alpha) Q^{\gCoul,\sp, \lambda}_\gamma $.  With respect to the same decomposition, this linear interpolation can be computed at the endpoints to have the form
\begin{equation}
\label{eq:QQGC2}
\renewcommand*{\arraystretch}{1.3}
\begin{pmatrix}
 \Pi^{\agCoul,\sigma} \circ \D^\sigma \Xqmlgcsigma & 0 & 0 \\
\alpha K_1  &  0 &  \dd^{\gCoul, \sigma}_{\gamma}  \\
0 & \dd^{\gCoul, \sigma,{\tilde \dagger}}_{\gamma} &  0
\end{pmatrix},
\end{equation}
where $K_1$ is as in \eqref{eq:Kis}.  First, notice that $ \Pi^{\agCoul,\sigma} \circ \D^\sigma \Xqmlgcsigma$ as a map from $\K^{\agCoul,\sigma}_j$ to $\K^{\agCoul,\sigma}_{j-1}$ is hyperbolic at the endpoints, because the endpoints are hyperbolic stationary points.  (It is easy to see that the hyperbolicity for stationary points extends to $j < k$ as well.)  We also have that $\begin{pmatrix}  0 &  \dd^{\gCoul, \sigma}_{\gamma}   \\ \dd^{\gCoul, \sigma,{\tilde \dagger}}_{\gamma}  & 0 \end{pmatrix}$ is hyperbolic by the proof of Lemma~\ref{lem:eH}.  It is straightforward to deduce from here that \eqref{eq:QQGC2} is hyperbolic at the endpoints, for all $\alpha$.  It follows that $\alpha Q^{\gCoul,\lambda}_\gamma + (1-\alpha) Q^{\gCoul,\sp, \lambda}_\gamma$ are Fredholm as maps from $L^2_j$ to $L^2_{j-1}$ for all $\alpha$ and $j < k$.   
 This implies that they are Fredholm for $j=k$ as well. Indeed, the kernel of the map from $L^2_k$ to $L^2_{k-1}$ is necessarily contained in the kernel from $L^2_{k-1}$ to $L^2_{k-2}$ and thus is finite-dimensional.  A similar argument with the adjoint applies for the cokernel as well.  Therefore, $Q_{\gamma}^{\gCoul, \lambda}$ and $Q_{\gamma}^{\gCoul, \sp, \lambda}$ have the same Fredholm index, being related by a continuous family of Fredholm operators.
 
 For fixed $t$, both $\dd^{\gCoul, \sigma}_{\gamma(t)} $ and $\dd^{\gCoul, \sigma,\tilde{ \dagger}}_{\gamma(t)}$ are just given by multiplication by nonzero constants. Thus, we can write the $2 \times 2$ block at the bottom right of \eqref{eq:QQ2} as 
 \begin{equation}
 \label{eq:dAt}
 \frac{d}{dt} + A(t),
 \end{equation}
  where $A(t)$ is invertible and has real spectrum. Thus, $\{A(t)\}$ has zero spectral flow, and the $2 \times 2$ block has index zero. In fact, given the form of $\{A(t)\}$, the operator \eqref{eq:dAt} itself is invertible.

Hence, $Q_{\gamma}^{\gCoul, \sp, \lambda}$ has the same Fredholm index as the top left entry in \eqref{eq:QQ2}, which is the operator \eqref{eq:HessFinite}.  This completes the proof, modulo the claim that $Q_{\gamma}^{\gCoul} $ and $Q_{\gamma}^{\gCoul, \lambda}$ have the same index, which we prove below.
\end{proof}

\begin{lemma}\label{lem:infinite-finite-same-index}
For $\lambda = \llambda_i \gg 0$, the index of $Q_{\gamma}^{\gCoul} $ is equal to that of $Q_{\gamma}^{\gCoul, \lambda}$.  
\end{lemma}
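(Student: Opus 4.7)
My plan is to exhibit a block upper triangular decomposition of $Q_\gamma^{\gCoul}$ whose top-left block is $Q_\gamma^{\gCoul,\lambda}$ and whose bottom-right block has Fredholm index zero; additivity of Fredholm indices will then yield $\ind(Q_\gamma^{\gCoul}) = \ind(Q_\gamma^{\gCoul,\lambda})$. Since the Fredholm index depends only on the asymptotic endpoints, I will first replace $\gamma$ by a smooth path contained entirely in $(\vml)^\sigma$; this is legitimate because $x_\lambda, y_\lambda \in (\vml)^\sigma$ by Corollary~\ref{cor:IsFinite}, and it ensures $\phi(t) \in \vml$ for every $t$.

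Using the $L^2$-orthogonal decomposition $W_j = \vml \oplus (\vml)^\perp_j$, I will split
\[
\T^{\gCoul,\tau}_{j,\gamma} = \T^{\gCoul,\lambda,\tau}_{j,\gamma} \oplus \mathcal{N}_{j,\gamma}, \qquad \V^{\gCoul,\tau}_{j-1,\gamma} \oplus L^2_{j-1}(\R;i\R) = \bigl(\V^{\gCoul,\lambda,\tau}_{j-1,\gamma}\oplus L^2_{j-1}(\R;i\R)\bigr) \oplus \mathcal{N}'_{j-1,\gamma},
\]
where $\mathcal{N}, \mathcal{N}'$ consist of paths with $\beta\equiv 0$, $r\equiv 0$, and $b(t),\psi(t)\in (\vml)^\perp$. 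Because $\phi(t)\in \vml$, the constraints $d^*b=0$, $d\beta=0$, and $\Re\langle\phi,\psi\rangle_{L^2}=0$ all decouple across the splitting. The upper triangularity of $Q_\gamma^{\gCoul}$ then reduces to checking that each ingredient sends the $\vml$ summand to the $\vml$-part of the codomain: $\frac{D^\sigma}{dt}$ because $\vml$ is $t$-independent and $\Pi^\perp_\phi$ preserves $\vml$; $\D^\sigma l^\sigma$ by the explicit formula \eqref{eqn:lsigma-derivative}; $\D^\sigma(\pml c_\q)^\sigma$ because $\pml c_\q$ is everywhere $\vml$-valued as a map into the ambient affine space, so its ambient differential is too; and $\dd^{\gCoul,\sigma}_\gamma$ because it multiplies by $\phi\in\vml$. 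The residual off-diagonal piece (including the $\dd^{\gCoul,\tau,\tilde{\dagger}}$ contribution $i\langle i\phi,\psi^\perp\rangle_{\tilde g}$, which is nonzero but small by Proposition~\ref{prop:pgpl}) sits in the upper right block and is irrelevant for the index count.

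Having reduced to analyzing the bottom-right block $A:\mathcal{N}_{j,\gamma}\to\mathcal{N}'_{j-1,\gamma}$, direct computation using \eqref{eqn:lsigma-derivative} and the observation that $\langle D\psi^\perp,\phi\rangle_{L^2}=0$ (since $\phi\in\vml$ and $D\psi^\perp\in(\vml)^\perp$ are $L^2$-orthogonal) should give
\[
A(b^\perp,0,\psi^\perp) = \Bigl(\tfrac{db^\perp}{dt}+*db^\perp,\ 0,\ \tfrac{d\psi^\perp}{dt}+D\psi^\perp-\mu(t)\psi^\perp\Bigr), \qquad \mu(t):=\langle D\phi(t),\phi(t)\rangle_{L^2}.
\]
This has the shape $\frac{d}{dt}+L_0|_{(\vml)^\perp}+h_t$ fitting the hypotheses of \cite[Proposition 14.2.1]{KMbook}, with $L_0=(*d,D)$ restricted to the invariant subspace $(\vml)^\perp$, and $h_t$ the bounded scalar multiplication by $-\mu(t)$ on the spinor component. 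The spectrum of $L_0|_{(\vml)^\perp}$ lies outside $(-\lambda,\lambda)$, so for $\lambda=\llambda_i$ chosen larger than $\sup_t|\mu(t)|$ (uniformly bounded along the fixed path $\gamma$), both asymptotic operators $L_0|_{(\vml)^\perp}+h_{\pm\infty}$ are hyperbolic and no eigenvalue of the family can cross zero, so the spectral flow vanishes and $\ind A=0$.

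The main obstacle I anticipate is the spectral analysis at infinity: confirming that the zeroth-order perturbation $-\mu(t)\psi^\perp$ produced by the blow-up does not destroy hyperbolicity of the asymptotic operators, and that the family's spectral flow really vanishes. This is exactly what forces the assumption $\lambda=\llambda_i\gg 0$ to dominate the uniform bounds on $\mu(t)$ coming from the fixed compact trajectory $\gamma$.
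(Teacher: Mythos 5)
Your proposal follows essentially the same route as the paper's proof: the same $L^2$-orthogonal splitting of the domain and codomain by $\vml$ versus $(\vml)^\perp$, the same observation that all the ingredients of $Q_\gamma^{\gCoul}$ send the $\vml$-summand into the $\vml$-summand (so the matrix is block triangular with $Q_\gamma^{\gCoul,\lambda}$ as one diagonal block), and the same explicit computation of the complementary block $A = \tfrac{d}{dt} + l + h_t$ with $h_t\psi = -\langle D\phi(t),\phi(t)\rangle_{L^2}\,\psi$ on the spinorial part. Where you diverge is the final step: you aim to deduce $\ind A = 0$ from vanishing spectral flow via \cite[Proposition 14.2.1]{KMbook}, whereas the paper establishes directly that $A$ is \emph{invertible}—injectivity by decomposing along eigenspaces of $l$ with $|\kappa|\geq\lambda$ and showing nonzero solutions blow up exponentially at one end (since $|\kappa - \mu(t)|$ is bounded below), and surjectivity by applying the same argument to the formal adjoint $-\tfrac{d}{dt} + l + h_t$ after extending to all sections. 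One caveat about your version: \cite[Proposition 14.2.1]{KMbook} is stated for operators $\tfrac{d}{dt} + L_0 + h_t$ acting on $L^2_1(Z;\pp^*E)$ for a vector bundle $E$, and $(\vml)^\perp$ is a spectral subspace of the section space, not itself a bundle; so the proposition does not literally apply as cited, and one would need either a more general Fredholmness result for abstract ODE operators with hyperbolic asymptotics, or the paper's direct bijectivity argument (which is cleaner here because the eigenspace decomposition reduces everything to scalar ODEs). Your remark that $\dd^{\gCoul,\tau,\tilde\dagger}_\gamma$ produces a nonzero $i\langle i\phi,\psi^\perp\rangle_{\tilde g}$ contribution in the off-diagonal block is a good catch: it is indeed nonzero in general (the $\tilde g$-inner product of $\phi\in\vml$ and $\psi^\perp\in(\vml)^\perp$ need not vanish even though the $L^2$ one does), but as you say it sits off-diagonal and is harmless for the index count.
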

\begin{proof}
Since $\lambda = \llambda_i$, we have that $\pml$ is a projection defined with respect to the $L^2$ (not $\tilde g$) metric.  %Therefore, it will be easier to work with operators defined using the $L^2$ metric. Recall from Lemma~\ref{lem:allsurjective} (a) that the index of $Q_{\gamma}^{\gCoul, \tilde g, \sigma} $ is the same as the index of the operator $Q_{\gamma}^{\gCoul}$ from \eqref{eq:QggcL2}. Similarly, the index of $Q_{\gamma}^{\gCoul, \tilde g, {\sigma}, \lambda}$ is the same as that of the operator
%$$Q_{\gamma}^{\gCoul, \lambda}:= \D_{\gamma} \Fgctau_{\qml} \oplus \dd^{\gCoul, \tau, \tilde{\dagger}}_{\gamma} : \T^{\gCoul, \lambda, \tau}_{j, \gamma}(x_\lambda, y_\lambda) \to \V^{\gCoul,\lambda, \tau}_{j-1,\gamma}(Z) \oplus L^2_{j-1}(\R; i\R).$$

We begin with some further discussion of the relevant spaces.  Define 
\[
\Rcal = \{ (b(t), 0, \psi(t)) \in \T^{\gCoul,\tau}_{\gamma} \mid (b(t), \psi(t)) \in (\vml)^\perp\}
\]
where $(\vml)^\perp$ is the $L^2$-orthogonal complement of $\vml$ in $W$.  In other words, an  element of $(\vml)^\perp$ can be decomposed as a (possibly infinite) sum of the eigenvectors of $l$ in $W$ with associated eigenvalue outside of the interval $(-\lambda, \lambda)$.  Note that $\Rcal$ does not depend on $\gamma$, as $\gamma(t) \in \vml$, so $(b(t), \psi(t)) \in (\vml)^\perp$ automatically implies that $\psi(t)$ is orthogonal to $\gamma(t)$.  Because of this, we have a canonical identification of $\Rcal$ with the space of smooth paths in $(\vml)^\perp$, which comes from simply ignoring the middle component.  It follows that $\Rcal \oplus \T^{\gCoul,\lambda,\tau}_{\gamma} = \T^{\gCoul,\tau}_\gamma$.  Note that elements of $\Rcal$ have no $dt$ component.  We define $\Rcal_j \subset \T^{\gCoul,\tau}_{j,\gamma}$ as the Sobolev completion of $\Rcal$ with respect to the four-dimensional $L^2_j$-norm, so that
$$   \Rcal_j \oplus \T^{\gCoul,\lambda,\tau}_{j,\gamma} = \T^{\gCoul,\tau}_{j,\gamma}.$$

There is an analogous decomposition
\[
\V^{\gCoul,\tau}_{j-1,\gamma} \oplus L^2_{j-1}(\mathbb{R};i\mathbb{R})= \Rcal_{j-1} \oplus \left(\V^{\gCoul,\lambda,\tau}_{j-1,\gamma} \oplus L^2_{j-1}(\mathbb{R};i\mathbb{R})\right).  
\]
With respect to these splittings, we can decompose $Q_{\gamma}^{\gCoul}$ as 
\begin{equation}
\label{eq:Qblock}
\begin{pmatrix}
\Pi_{\Rcal_{j-1}} \circ \D^\tau_\gamma(\Fqmlgctau)|_{\Rcal_j} & 0 \\
* & Q_{\gamma}^{\gCoul,\lambda}
\end{pmatrix}, 
\end{equation}
where the top-right entry of this matrix is zero because $\Xqmlgc$ has image in $\vml$, and $ \Pi_{\Rcal_{j-1}}$ denotes the $L^2$-orthogonal projection to $\Rcal_{j-1}$. Therefore, to show that $Q_{\gamma}^{\gCoul,\lambda}$ has the same index as $Q_{\gamma}^{\gCoul}$, it suffices to show that $\Pi_{\Rcal_{j-1}} \circ \D^\tau_\gamma(\Fqmlgctau)|_{\Rcal_j}$ is invertible.  We now compute this operator explicitly.  

For $(b(t),0,\psi(t)) \in\Rcal_j$, it follows from \eqref{eq:tangerine} that 
\begin{align*}
& (\D^\tau_\gamma \Fqmlgctau)(b(t),0,\psi(t)) 
= \frac{D^\sigma}{dt}(b(t),0,\psi(t)) + \D^\sigma_{\gamma(t)} \Xqmlgcsigma (b(t),0,\psi(t)) \\
&= (\frac{d}{dt}b(t),0,\Pi^\perp_{\phi(t)}\frac{d}{dt}\psi(t)) + \D^\sigma_{\gamma(t)}l^\sigma(b(t),0,\psi(t)) + \D^\sigma_{\gamma(t)}(\pml c_\q)^\sigma(b(t),0,\psi(t)). 
\end{align*}
First, note that $\frac{d}{dt}\psi(t)$ and $\phi(t)$ are $L^2$-orthogonal.  Next, since $\pml c_\q$ has image in $\vml$, and $\Rcal_{j-1}$ consists of paths of configurations orthogonal to $\vml$, it is straightforward to verify that $\Pi_{\Rcal_{j-1}} \circ \D^\sigma_{\gamma(t)}(\pml c_\q)^\sigma(b(t),0,\psi(t)) = 0$. Therefore, we have 
 $$\Pi_{\Rcal_{j-1}} \circ \D^\tau_\gamma(\Fqmlgctau)|_{\Rcal_j}=\frac{d}{dt} + \D^\sigma_{\gamma(t)}l^\sigma.$$  
Using \eqref{eqn:lsigma-derivative} and the fact that $\langle \psi(t), D\phi(t) \rangle_{L^2(Y)} = 0$, one can compute directly that
$$(\D^\sigma_{\gamma(t)}l^\sigma)(b(t),0,\psi(t)) = (*db(t), 0, D\psi(t) -  \langle \phi(t), D\phi(t) \rangle_{L^2(Y)} \psi(t)).  $$

For notational convenience, let us simply ignore the middle component of $(b(t),0,\psi(t))$ in $\Rcal_j$.  Define $$h_t: \Rcal_j \to \Rcal_j, \ \ h_t(b(t), \psi(t)) := (0, - \langle \phi(t), D\phi(t) \rangle_{L^2(Y)} \psi(t)).$$  Showing that $\Pi_{\Rcal_{j-1}} \circ \D_\gamma(\Fqmlgctau)|_{\Rcal_j}$ is invertible is equivalent to showing the invertibility of 
\[
\frac{d}{dt} + l + h_t: \Rcal_j \to \Rcal_{j-1}.   
\]

First, we prove that $\frac{d}{dt} + l + h_t$ is injective.  Since $\gamma(t) \in (\vml)^{\sigma}$ for all $t$ and $\|\phi(t)\|_{L^2(Y)}=1$ for all $t$, it follows that there exists $\epsilon > 0$ such that $|\langle \phi(t), D\phi(t) \rangle_{L^2(Y)}| \leq \lambda - \epsilon$, independent of $t$.  Suppose that $(b(t),\psi(t))$ is in the kernel of $\frac{d}{dt} + l + h_t$ and write $(b(t),\psi(t)) = \sum_{\kappa \geq \lambda} (b_\kappa(t),\psi_\kappa(t))$, where we are summing according to the eigenspace decomposition of $l$.  Note that $(b_\kappa(t), \psi_\kappa(t))$ is in the kernel of $\frac{d}{dt} + l + h_t$ for each $\kappa$.  However, it is straightforward to verify as in the proofs of Lemmas~\ref{lem:trajectoriesinvml} and ~\ref{lem:blowuptrajectoriesinvml}, that since $\kappa \geq \lambda$ we must have that $(b_\kappa(t), \psi_\kappa(t))$ must be unbounded either as $t \to \infty$ or $t \to -\infty$.  This contradicts $(b_\kappa(t), \psi_\kappa(t))$ being an $L^2_j$-path.  

It remains to see that $\frac{d}{dt} + l + h_t$ is surjective.  Note that $\frac{d}{dt} + l + h_t$ naturally extends to an operator on sections from $\R \times Y$ to $p^*(iT^*Y \oplus \mathbb{S})$.  The formal adjoint of this operator is $-\frac{d}{dt} + l + h_t$, which is injective by the same argument as above.  Therefore, the extension of $\frac{d}{dt} + l + h_t$ is surjective.  Since the formal adjoint preserves the condition of paths being in $(\vml)^\perp$, we see that $\frac{d}{dt} + l + h_t$, as defined on $\Rcal_j$, must be surjective.   
\end{proof} 

So far we have only discussed the relative grading between stationary points of $\Xqmlagcsigma$ that live in $\N/S^1$. Let us end with a discussion about the reducible stationary points that are in $(B(2R) \cap \vml)^\sigma/S^1$, but not necessarily in $\N/S^1$. 

For the rest of the subsection we fix $\lambda = \llambda_i$ sufficiently large, and a reducible stationary point $(a, 0)$ of $\Xqmlgc$ in $B(2R)$. Consider the reducible stationary points of $\Xqmlagcsigma$ inside $(B(2R) \cap \vml)^\sigma/S^1$ that are of the form $[(a,0,\phi)]$. We write $\kappa(\phi)$ for the associated eigenvalues. By Proposition~\ref{prop:ND2}, any such $[(a,0,\phi)]$ is hyperbolic, when  thought of as a stationary point on the finite-dimensional manifold $(B(2R) \cap \vml)^\sigma/S^1$. Since $\Xqmlgc$ is a Morse equivariant quasi-gradient, we can compute the relative gradings (in finite dimensions) between these points.  
\begin{lemma}
\label{lem:GradingsReduciblesNotInN}
Let $[(a,0,\phi)]$ and $[(a,0,\phi')]$ be stationary points of $\Xqmlagcsigma$ as above. Assume that $\kappa(\phi) > \kappa(\phi')$.  Then, the relative grading between these points, as computed from $\Xqmlagcsigma$ restricted to the finite-dimensional manifold $(B(2R) \cap \vml)^\sigma/S^1$, is given by 
\begin{equation}\label{eq:approxreduciblegradings}
 \gr([(a,0,\phi)], [(a,0,\phi')]) = \begin{cases}
2i(\kappa(\phi), \kappa(\phi')) & \text{if $\kappa(\phi)$ and $\kappa(\phi')$ have the same sign,} \\
2i(\kappa(\phi),\kappa(\phi'))-1 &\text{otherwise.}
 \end{cases}
\end{equation}
\end{lemma}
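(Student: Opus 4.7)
The plan is to apply Lemma~\ref{lem:MEquiv} directly to the finite-dimensional Morse equivariant quasi-gradient $\Xqmlgc$ on $\vml \cap B(2R)$, and then extract the grading formula by a case analysis on the signs of the eigenvalues. The virtue of this approach is that Lemma~\ref{lem:MEquiv} already packages the index computation for boundary stationary points of an equivariant quasi-gradient in exactly the form we need, so the bulk of the work is checking that its hypotheses apply and identifying the operator $L_q$ of \eqref{eq:lq} with $D_{\qml, a}$ on the spinor part of $\vml$.

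First I would verify the hypotheses of Definition~\ref{def:eqgv} on $\vml \cap B(2R)$. Parts (a) and (d) are supplied by Proposition~\ref{prop:AllMS}, while part (b), hyperbolicity on the fixed locus $Q = (\ker d^*)_\lambda \times \{0\}$, follows from Proposition~\ref{prop:ND2} applied to the reducible stationary points. For part (c), the assumption $\lambda = \llambda_i$ ensures that $\pml$ is the honest $L^2$-orthogonal projection onto $\vml$; hence the operator
\[
L_q = D + (\pml)^1 \circ \D_{(a,0)} c_\q(0, \cdot) = D_{\qml, a}\big|_{(\mathrm{spinors}) \cap \vml}
\]
is $L^2$-self-adjoint, as discussed in Section~\ref{sec:stationarypointsoutsideN}, and its eigenvalues are simple and nonzero by the non-degeneracy supplied by Proposition~\ref{prop:ND2}. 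A short unraveling of \eqref{eq:Xqmlgcsigmaformula} and \eqref{eq:Dqmlaphi} confirms that this $L_q$ coincides with the covariant derivative of $\Xqmlgc$ in the normal direction at $(a,0)$, matching the setup of Lemma~\ref{lem:MEquiv}.

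With these checks in place, Lemma~\ref{lem:MEquiv} gives, for the $i$-th eigenvector $\phi_i$ of $L_q$ with eigenvalue $\kappa_i$ in ascending order,
\[
\ind([(a, 0, \phi_i)]) = \ind_Q(a) + 2i -
\begin{cases} 2 & \text{if } \kappa_i > 0, \\ 1 & \text{if } \kappa_i < 0, \end{cases}
\]
where $\ind_Q(a)$ is the Morse index of $(a, 0)$ as a stationary point of $\Xqmlgc|_Q$. Writing $\phi = \phi_m$ and $\phi' = \phi_{m'}$ with $\kappa_m > \kappa_{m'}$, so that $i(\kappa(\phi), \kappa(\phi')) = m - m'$, I would then take the difference of Morse indices (which, by the construction of the Morse complex in Sections~\ref{subsec:morseboundary} and \ref{subsec:CircleMorse}, is precisely the relative grading). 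In the two same-sign cases the $\ind_Q(a)$ terms and the constant offsets cancel and we obtain $2(m - m') = 2 i(\kappa(\phi), \kappa(\phi'))$; in the opposite-sign case $\kappa_m > 0 > \kappa_{m'}$ the offsets differ by $1$, giving $2(m - m') - 1 = 2 i(\kappa(\phi), \kappa(\phi')) - 1$. This reproduces \eqref{eq:approxreduciblegradings} in all cases.

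I do not expect a serious obstacle; the argument is essentially bookkeeping once the identification of $L_q$ with $D_{\qml, a}$ is in place. The only points warranting care are that self-adjointness of $L_q$ genuinely requires the choice $\lambda = \llambda_i$ (without it $\pml$ is only a smoothed projection and $L_q$ need not be self-adjoint; compare Remark~\ref{rem:notgradient}), and that the relevant grading is the finite-dimensional Morse-theoretic one on $(B(2R) \cap \vml)^\sigma/S^1$, since these reducibles need not lie in $\N/S^1$ and the infinite-dimensional grading of Lemma~\ref{lem:GrPres} is not directly available outside $\N/S^1$.
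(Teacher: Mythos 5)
Your proof is correct and takes the same route as the paper, which disposes of the lemma with the single line ``This follows from Lemma~\ref{lem:MEquiv}.'' You have simply supplied the details implicit in that citation: the hypotheses of Definition~\ref{def:eqgv} are already packaged by Propositions~\ref{prop:AllMS} and~\ref{prop:ND2}, the identification of $L_q$ with $D_{\qml,a}$ on the spinor part of $\vml$ is precisely the one the paper is relying on, and the sign case analysis recovers \eqref{eq:approxreduciblegradings}.
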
   
\begin{proof}
This follows from Lemma~\ref{lem:MEquiv}.
\end{proof}

\begin{lemma}
\label{lem:kappas}
Suppose $[(a,0,\phi)]$ is a stationary point of  $\Xqmlagcsigma$ that is contained in $(B(2R) \cap \vml)^\sigma/S^1$ but not in $\N/S^1$. 

(a) If $\kappa(\phi) > 0$, then for all stationary points of $\Xqmlagcsigma$ of the form $[(a,0,\phi')]$ that are contained in $\N/S^1$, we have $\gr([(a,0,\phi)], [(a,0,\phi')])\geq 2.$
 
(b)  If $\kappa(\phi) < 0$, then for all stationary points of $\Xqmlagcsigma$ of the form $[(a,0,\phi')]$ that are contained in $\N/S^1$, we have $\gr([(a,0,\phi)], [(a,0,\phi')])\leq -2.$

\end{lemma}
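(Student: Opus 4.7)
My plan is to reduce both statements to the grading formula \eqref{eq:approxreduciblegradings} of Lemma~\ref{lem:GradingsReduciblesNotInN}, after carefully locating $\kappa(\phi)$ relative to the spectrum of $D_{\qml,a}|_{\vml}$. By Lemma~\ref{lem:implicitfunctionreducible}, the reducible $(a,0)$ of $\Xqmlgc$ corresponds to a unique nearby reducible $(a_\infty,0)$ of $\Xqgc$ in $B(2R)$. Applying Lemma~\ref{lem:grsp} at $(a_\infty,0)$, I fix $\omega_1,\omega_2 > 0$ so that the reducible stationary points of $\Xqagcsigma$ with blow-down $(a_\infty,0)$ and grading in $[-N,N]$ are exactly those whose spinorial energies lie in $[-\omega_1,\omega_2]$; I may (and do) arrange that $\omega_1$ and $\omega_2$ are not themselves eigenvalues of $D_{\q,a_\infty}$. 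The hypothesis that $\N$ contains the reducible with the smallest positive eigenvalue of $D_{\q,a_\infty}$ forces $\omega_2$ to be strictly larger than this smallest positive eigenvalue.

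Next I would establish the relevant spectral stability. For $\lambda \gg 0$, Corollary~\ref{cor:corresp} and Corollary~\ref{cor:NisU} give a bijection between the reducible stationary points of $\Xqmlagcsigma$ in $\N/S^1$ with blow-down $[(a,0)]$ and the reducible stationary points of $\Xqagcsigma$ in $\N/S^1$ with blow-down $[(a_\infty,0)]$, under which the $\lambda$-spinorial energy converges to the spinorial energy (Lemma~\ref{lem:fam}(c)). Combined with the previous paragraph, this means that for $\lambda\gg 0$ the eigenvalues of $D_{\qml,a}|_{\vml}$ lying in $[-\omega_1,\omega_2]$ are in bijection with the eigenvalues of $D_{\q,a_\infty}$ in $[-\omega_1,\omega_2]$ and are close to them; in particular, these are exactly the $\kappa(\phi')$ arising from $[(a,0,\phi')]\in\N/S^1$, and the smallest positive such eigenvalue $\mu_0^\lambda$ is close to the smallest positive eigenvalue of $D_{\q,a_\infty}$, so $0 < \mu_0^\lambda < \omega_2$. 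Any other eigenvalue of $D_{\qml,a}|_{\vml}$ is bounded away from $[-\omega_1,\omega_2]$.

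With this in place, assume we are in case (a), so $\kappa(\phi) > 0$ and $[(a,0,\phi)]\notin \N/S^1$. Then $\kappa(\phi) > \omega_2$ (for $\lambda$ large), so in particular $\kappa(\phi) > \mu_0^\lambda$ and $\kappa(\phi) > \kappa(\phi')$ for every $[(a,0,\phi')]\in\N/S^1$. I may therefore apply \eqref{eq:approxreduciblegradings} of Lemma~\ref{lem:GradingsReduciblesNotInN}. If $\kappa(\phi') > 0$, the same-sign case gives $\gr = 2\, i(\kappa(\phi),\kappa(\phi'))$, and $\kappa(\phi)$ itself lies in $(\kappa(\phi'),\kappa(\phi)]$, so $i\geq 1$ and $\gr\geq 2$. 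If $\kappa(\phi') < 0$, the opposite-sign case gives $\gr = 2\, i(\kappa(\phi),\kappa(\phi'))-1$; the interval $(\kappa(\phi'),\kappa(\phi)]$ now contains both the (distinct) eigenvalues $\mu_0^\lambda$ and $\kappa(\phi)$, so $i\geq 2$ and $\gr\geq 3$. In either subcase $\gr([(a,0,\phi)],[(a,0,\phi')])\geq 2$. Case (b) follows by an entirely symmetric argument, using the largest negative eigenvalue in $[-\omega_1,\omega_2]$ (whose inclusion in $\N$ follows by applying Lemma~\ref{lem:grsp} with the roles reversed, or by the fact that smaller $|\text{grading}|$ corresponds to eigenvalues closer to $0$), and antisymmetry of the relative grading.

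The main technical point is the second paragraph: I need to know that for $\lambda\gg 0$ the spectrum of $D_{\qml,a}|_{\vml}$ approximates that of $D_{\q,a_\infty}$ in the window $[-\omega_1,\omega_2]$ in a one-to-one, eigenvalue-preserving manner, with no spurious eigenvalues inside and with correct bookkeeping for the smallest positive (and largest negative) eigenvalue. Everything else is then a bookkeeping exercise in the discrete formula \eqref{eq:approxreduciblegradings}.
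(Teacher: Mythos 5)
Your plan is to show directly that, for $\lambda\gg 0$, the eigenvalues of $D_{\qml,a}|_{\vml}$ lying in $[-\omega_1,\omega_2]$ are \emph{exactly} those coming from stationary points in $\N/S^1$, and then read off the conclusion from \eqref{eq:approxreduciblegradings}.  The second part of this (the bookkeeping once you know the eigenvalue order) is fine, but the first part has a genuine gap, and you correctly flag it at the end.  The bijection $\Xi_\lambda\colon\Crit^\lambda_\N\to\Crit_\N$ from Corollary~\ref{cor:corresp}, together with Corollary~\ref{cor:NisU} and Lemma~\ref{lem:fam}(c), tells you only about approximate stationary points that are \emph{already known} to lie in $\N/S^1$.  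It does not rule out an eigenvalue $\kappa$ of $D_{\qml,a}|_{\vml}$ in the window $[-\omega_1,\omega_2]$ whose associated eigenvector $\phi$ is not $L^2_k$-close to $\Orb_{[-N,N]}$, so that $[(a,0,\phi)]\notin\N/S^1$ even though $\kappa(\phi)$ is small.  To close this you would need to rerun the compactness argument of Lemma~\ref{lem:compactness} without the a priori assumption that the sequence lies in $\N$ — using the $L^2$-unit normalization of the eigenvectors and the eigenvalue bound to bootstrap $L^2_{k}$ bounds on $\phi_n$, extract a convergent subsequence, and derive a contradiction with Lemma~\ref{lem:grsp}.  This is doable but is a nontrivial additional step, not a combination of the results you cite.

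The paper avoids this spectral-stability step entirely.  It instead invokes the grading-preservation chain (Corollary~\ref{cor:GrPres} $\to$ Proposition~\ref{prop:RelationFinite} $\to$ Lemma~\ref{lem:GradingsReduciblesNotInN}) for \emph{pairs} of stationary points in $\Crit_\N$: since the relative grading between $[x]$ and $[y]$ on the $\q$ side equals that between $[x_\lambda]$ and $[y_\lambda]$ on the $\qml$ side, and since both are computed by \eqref{eq:GradingRed} (resp.\ \eqref{eq:approxreduciblegradings}) as a signed count of eigenvalues lying between the two spinorial energies, the counts must agree.  Because $\Xi_\lambda$ already matches the $\N$-eigenvalues bijectively and in order, there is simply no room for an extra eigenvalue of $D_{\qml,a}|_{\vml}$ strictly between any two $\N$-eigenvalues.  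That is exactly the ordering statement you want, obtained without any direct spectral convergence argument.  So either patch your second paragraph with the compactness argument sketched above, or replace it with the grading-agreement argument, which is shorter and uses only results you already have in hand.
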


\begin{proof}
Consider a pair of reducible stationary points $[x]$ and $[y]$ of $\Xqagcsigma$ in $\N/S^1$ with the same connection component.  It follows from Corollary~\ref{cor:GrPres}, Proposition~\ref{prop:RelationFinite}, and Lemma~\ref{lem:GradingsReduciblesNotInN}, that the relative grading between $[x]$ and $[y]$ is the same as the relative grading between $[x_\lambda]$ and $[y_\lambda]$, considered as stationary points of $\Xqmlagcsigma$ on $(B(2R) \cap \vml)^\sigma/S^1$.  Further, the spinorial energies of $[x]$ and $[y]$ are  necessarily close to the $\lambda$-spinorial energies of $[x_\lambda]$ and $[y_\lambda]$ respectively.  Recall that $[x_\lambda]$ and $[y_\lambda]$ are necessarily contained in $\N/S^1$.   Equation~\eqref{eq:GradingRed} and Lemma~\ref{lem:GradingsReduciblesNotInN} give that in each case, the relative gradings are computed in terms of the orderings by eigenvalues, which correspond to ($\lambda$-)spinorial energy.  In particular, this implies that if $[(a,0,\phi)]$ is a reducible stationary point of $\Xqmlagcsigma$ not in $\N/S^1$, then its associated eigenvalue cannot sit between those of $[x_\lambda]$ and $[y_\lambda]$ for any pair $[x_\lambda]$ and $[y_\lambda]$.  The result now follows.      
\end{proof}

The following is an immediate consequence of the proof of Lemma~\ref{lem:kappas}.   

\begin{corollary}\label{cor:lowest-reducible-eigenvalue}
Let $[x]$ denote the reducible stationary point of $\Xqagcsigma$ which has lowest eigenvalue among all reducible stationary points with the same connection component.  Then, $[x_\lambda]$ is the reducible stationary point of $\Xqmlagcsigma$ with the lowest positive eigenvalue among those in $(B(2R) \cap \vml)^\sigma/S^1$ with the same connection component.
\end{corollary}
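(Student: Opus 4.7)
The statement should be read with $[x]$ denoting the reducible stationary point of $\Xqagcsigma$ with the smallest positive eigenvalue in its connection component (the Dirac spectrum is unbounded below, so ``lowest eigenvalue'' is understood as ``lowest positive eigenvalue''); note that $[x] \in \Crit_{\N}$ by our choice of $N$ made at the start of Chapter~\ref{sec:criticalpoints}. The plan is to combine Corollary~\ref{cor:corresp} (for reducibles lying in $\N/S^1$) with Lemma~\ref{lem:kappas}(a) (for reducibles lying outside $\N/S^1$), so the argument is essentially a repackaging of the proof of Lemma~\ref{lem:kappas}.

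First I would fix $\lambda \gg 0$ and set $[x_\lambda] := \Xi_\lambda^{-1}([x])$, which by Corollary~\ref{cor:corresp} is a reducible stationary point of $\Xqmlagcsigma$ in $\N/S^1$ whose $\lambda$-spinorial energy $\kappa([x_\lambda])$ is arbitrarily close to the positive eigenvalue $\kappa([x])$, hence positive. Now let $[(a_\lambda,0,\phi)]$ be any other reducible stationary point of $\Xqmlagcsigma$ in $(B(2R)\cap\vml)^\sigma/S^1$ with the same blow-down connection as $[x_\lambda]$ and with $\kappa(\phi)>0$; the goal is to show $\kappa(\phi)>\kappa([x_\lambda])$.

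If $[(a_\lambda,0,\phi)] \in \N/S^1$, then $\Xi_\lambda([(a_\lambda,0,\phi)])$ is a reducible in $\Crit_\N$ with positive spinorial energy close to $\kappa(\phi)$; since this spinorial energy is an eigenvalue of $D_{\q,a}$ at least as large as the smallest positive eigenvalue $\kappa([x])$, and since distinct stationary points have distinct eigenvalues by non-degeneracy, the proximity of $\kappa([x_\lambda])$ to $\kappa([x])$ and of the new energy to $\kappa(\phi)$ forces $\kappa(\phi)>\kappa([x_\lambda])$ for $\lambda$ large enough. If instead $[(a_\lambda,0,\phi)] \notin \N/S^1$, Lemma~\ref{lem:kappas}(a) applied with $[(a,0,\phi')]=[x_\lambda]$ gives $\gr([(a_\lambda,0,\phi)],[x_\lambda]) \geq 2$. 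Since both eigenvalues are positive and come from the same connection component, formula \eqref{eq:approxreduciblegradings} yields a relative grading of $2\,i(\kappa_{\max},\kappa_{\min})$ between these two stationary points, which is non-positive whenever $\kappa(\phi)\leq\kappa([x_\lambda])$; the inequality $\geq 2$ therefore forces $\kappa(\phi)>\kappa([x_\lambda])$ in this case too.

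The steps are each short; the only place requiring care is ensuring that the ``proximity of eigenvalues'' assertion used in the first case is uniform in the (finitely many) reducibles of $\Crit_\N$ and holds for all sufficiently large $\lambda$, which follows directly from the implicit function theorem application in Proposition~\ref{prop:nearby} together with the continuity of $\lambda$-spinorial energies established in Lemma~\ref{lem:fam}(c). No genuine new obstacle arises beyond what has already been addressed in the proof of Lemma~\ref{lem:kappas}.
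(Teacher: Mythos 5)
Your proposal is correct, and it is essentially the same argument as the paper's, which simply cites "the proof of Lemma~\ref{lem:kappas}" — your two cases (inside and outside $\N/S^1$) unpack precisely the two ingredients already appearing in that proof, namely eigenvalue proximity (equivalently, preservation of relative gradings via Corollary~\ref{cor:GrPres} and Proposition~\ref{prop:RelationFinite}) for reducibles corresponding under $\Xi_\lambda$, and Lemma~\ref{lem:kappas}(a) together with the grading formula~\eqref{eq:approxreduciblegradings} for reducibles outside $\N/S^1$. Your reading of "lowest eigenvalue" as "lowest positive eigenvalue" is the intended one, as confirmed by the way the corollary is invoked in the proof of Proposition~\ref{prop:absolute-gradings-agree}.
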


\section{Absolute gradings}\label{sec:absgradings}
Recall that Theorem~\ref{thm:Main} asserts an isomorphism of $\widetilde{H}^{S^1}_*(\SWF(Y,\spinc))$ with $\hmto(Y,\spinc)$ which respects the absolute gradings.  As our current strategy for the proof of this isomorphism is to identify each of these modules in a certain grading range with the Morse homology of $\Xqmlagcsigma$ on $(B(2R) \cap \vml)^\sigma/S^1$, we need to define an absolute grading on the stationary points of $\Xqmlagcsigma$ which lines up with the gradings coming from $\SWF$ and from $\hmto$.  

In Chapter~\ref{sec:quasigradient}, we showed that $\Xqmlgc$ is a Morse equivariant quasi-gradient.  From this, \eqref{eq:EquivConleyMorse} implies that the Morse complex for $\Xqmlagcsigma$ computes the reduced $S^1$-equivariant homology of the Conley index $I^\lambda$ from Chapter~\ref{sec:spectrum} in a certain grading range.  
Since $\SWF(Y,\spinc) = \Sigma^{-n(Y,\spinc,g) \mathbb{C}} \Sigma^{-W^{(-\lambda, 0)}} I^\lambda$, to have a complex whose homology agrees with that of the reduced $S^1$-equivariant homology of the spectrum, we must shift the gradings accordingly.  Therefore, for $[x_\lambda]$ a stationary point of $\Xqmlagcsigma$, define 
\begin{equation}\label{eq:gr-lambda}
\gr_{\lambda}^{\swf}([x_\lambda]) := \ind\bigl([x_{\lambda}] \text{ in } (\vml)^{\sigma}/S^1 \bigr) - \dim W^{(-\lambda, 0)} - 2n(Y, \s, g),
\end{equation}
where $n(Y, \spinc, g)$ is the quantity mentioned at the end of Chapter~\ref{sec:spectrum}.  Therefore, the Morse complex for $\Xqmlagcsigma$ with absolute grading given instead by $\gr_{\lambda}^{\swf}$ computes $\widetilde{H}^{S^1}_*(\SWF(Y,\spinc))$ in the appropriate grading range by the discussion in Section~\ref{sec:combinedMorse}.

Thus, to connect the gradings on the Floer spectrum with monopole Floer homology, we will need to relate the absolute grading $\gr_{\lambda}^{\swf}$ on $\Crit^\lambda_\N$ with the absolute grading $\gr^{\Q}$ from \eqref{eq:grQx} defined on $\Crit_\N$.  This is the subject of the following proposition.     

\begin{proposition}\label{prop:absolute-gradings-agree}
For any $\lambda = \llambda_i \gg 0$ and $[x] \in \Crit$, we have
\begin{equation}\label{eq:finite-infinite-absolute-gradings}
\gr^{\swf}_{\lambda}([x_\lambda])= \gr^{\Q}([x]).
\end{equation}
\end{proposition}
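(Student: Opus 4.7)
The strategy is to express both sides as spectral flows from the unperturbed linear operator $l$ to the Hessian at $[x]$, with the resulting additive constants absorbed by the normalization $n(Y,\s,g)$. On the left-hand side, by Proposition~\ref{prop:ND} the point $[x_\lambda]$ is hyperbolic for $\Xqmlagcsigma$ restricted to $(\vml)^\sigma/S^1$, so its Morse index equals the number of eigenvalues with negative real part of the finite-dimensional Hessian. At an irreducible this Hessian is (up to the anticircular projection) the restriction to $\vml$ of $l + \pml \D_{[x_\lambda]} c_\q$, which is a compact perturbation of $l|_{\vml}$. Using the decomposition
$$ \dim W^{(-\lambda,\lambda)} \;=\; \dim W^{(-\lambda,0)} + \dim W^{(0,\lambda)}, $$
valid because $0 \notin \mathrm{spec}(l)$ for generic data, I would show that
$$ \ind\bigl([x_\lambda]\ \text{in}\ (\vml)^\sigma/S^1\bigr) - \dim W^{(-\lambda,0)} $$
is the spectral flow along the affine path from $l|_{\vml}$ to $l + \pml \D_{[x]} c_\q|_{\vml}$. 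By the convergence results of Chapter~\ref{sec:criticalpoints} (in particular Lemma~\ref{lem:hessiancontinuity} and Proposition~\ref{prop:ND}), this quantity stabilizes as $\lambda \to \infty$ to a well-defined relative Morse index $\SF(l, \Hess^{\tilde g, \sigma}_{\q, [x]})$ computed on the full configuration space. The reducible case is similar, using Lemma~\ref{lem:GradingsReduciblesNotInN} and Corollary~\ref{cor:lowest-reducible-eigenvalue} to reduce to the irreducible computation.

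For the right-hand side, I would compute $\gr^{\Q}([x])$ from \eqref{eq:grQx} by choosing a $\spc$ four-manifold $(X,\t)$ filling $(Y,\s)$ and applying the Atiyah--Patodi--Singer index theorem to the linearization of the Seiberg--Witten equations on $X^*$ with cylindrical end, as in \cite[Section 24]{KMbook}. This decomposes $\gr_z(X,[x])$ as the sum of a topological contribution built from $c_1(\t)^2$, $\sigma(X)$, $b^+(X)$, $b_1(X)$; a spectral flow term on the cylindrical end from a reducible baseline on $S^3$ to $[x]$ on $Y$; and eta-invariant corrections supported on $Y$. The constant $n(Y,\s,g)$ of \cite{Spectrum} is defined precisely as the linear combination of the Dirac and odd signature eta invariants needed to normalize this boundary contribution. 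Substituting the APS decomposition into \eqref{eq:grQx} and cancelling the topological contribution against $\tfrac{c_1(\t)^2-\sigma(X)}{4}-b^+(X)+b_1(X)-1$, I expect
$$ \gr^{\Q}([x]) \;=\; \SF(l, \Hess^{\tilde g, \sigma}_{\q, [x]}) \;-\; 2 n(Y,\s,g), $$
which matches the formula for $\gr^{\swf}_\lambda([x_\lambda])$ derived in the previous paragraph.

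The hard part will be identifying the two spectral flows, which arise from very different sources: the finite-dimensional computation is intrinsically three-dimensional and uses the eigenvalue cut-off $\vml$ as a regulator, while the APS contribution is a genuinely four-dimensional boundary correction along $[0,\infty) \times Y$. Reconciling them requires recognizing that the $\vml$-regularized spectral flow is a limit of honest spectral flows of three-dimensional self-adjoint operators on $Y$, and that the reducible baseline on $S^3$ used to anchor $\gr^{\Q}$ is, after a standard APS computation on $B^4$, equivalent to taking the operator $l$ as a reference on $Y$. One must also carefully track the orientation conventions from Section~\ref{sec:OrientCoulomb}, the $-1$ shift in \eqref{eq:grQx} arising from the blow-up in the monopole Floer construction, and the possible one-dimensional corrections coming from the enlarged/anticircular slices at reducibles. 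Once these identifications are made, the equality follows from the very definition of $n(Y,\s,g)$ in \cite{Spectrum}.
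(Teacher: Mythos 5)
Your high-level strategy---writing both sides as spectral flows from $l$ to the Hessian at $[x]$, with the additive constants absorbed by $n(Y,\s,g)$---matches the paper's, and the reduction via relative gradings to a single anchor point is correct. However, there is a genuine gap in the central technical step, which you yourself flag as ``the hard part'' but do not carry out. After interpreting $\ind([x_\lambda]) - \dim W^{(-\lambda,0)}$ as the spectral flow from $l$ to $l + \pml A_\lambda$ on $\vml$, you assert that this quantity ``stabilizes as $\lambda\to\infty$'' to the infinite-dimensional spectral flow, citing the convergence results of Chapter~\ref{sec:criticalpoints}. But convergence of Hessians does not by itself control spectral flow: one must rule out eigenvalue crossings when deforming $l + \pml A_\lambda\pml$ to $l + A_\infty$. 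This is exactly the content of the paper's argument for \eqref{eq:grind2}: both endpoints are injective and $L^2$-self-adjoint (at a reducible $\tilde g = g_{L^2}$, so self-adjointness is automatic), and the paper proves by a weak-compactness argument that the convex interpolation $l + t\,\pml A_\lambda\pml + (1-t)A_\infty$ remains injective for all $t\in[0,1]$ and $\lambda\gg 0$, hence produces no spectral flow. Without this invertibility argument (or a substitute), the claimed ``stabilization'' is an assertion, not a proof.

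A structural point also matters. You anchor the computation at an irreducible and describe the reducible case as ``similar,'' but the paper anchors at the reducible corresponding to the \emph{lowest positive eigenvalue} of $D_{\q,a}$, and this is essential rather than a matter of taste: that reducible always exists (irreducibles need not), the APS boundary condition for the perturbed Dirac operator $D^+_{\q,a}-\lambda_0$ there coincides with that for $D^+$ (so $\ind_\cc(D^+)$ from \eqref{eq:ng} enters without further corrections), and the $L^2$-self-adjointness needed for the interpolation above is automatic only at reducibles---at an irreducible the operator $l + \pml\,\D_x c_\q\,\pml$ need not be self-adjoint in either metric. Finally, your proposed expansion of the APS index into topological plus eta-invariant terms is an unnecessary detour; the paper substitutes $\ind_\cc(D^+)$ through \eqref{eq:ng} directly and decomposes $\gr_z(X,[x])$ only into index and spectral-flow pieces following \cite[Lemma~28.3.2]{KMbook}.
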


In order to prove \eqref{eq:finite-infinite-absolute-gradings}, let us now recall the precise definition of $n(Y, \spinc, g)$ from \cite[Equation (6)]{Spectrum}:
\begin{equation}
\label{eq:ng}
 n(Y, \spinc, g) = \ind_{\cc} (D^+) - \frac{c_1(\t)^2 - \sigma(X)}{8}.
 \end{equation}
Here: $X$ is a simply connected, oriented, compact Riemannian four-manifold with boundary $Y$ (such that the metric is a product near the boundary);  $\t$ is a $\spc$ structure on $X$ such that $\t|_Y = \s$;  $D^+$ is the Dirac operator on $(X, \t)$ with Atiyah-Patodi-Singer boundary conditions, and associated to a connection extending $A_0$ on $Y$; and $\ind_{\cc}$ denotes the index of a complex operator, which is twice the real index $\ind_\rr$. The Atiyah-Patodi-Singer boundary conditions mean that the domain of $D^+$ consists of spinors whose restrictions to $Y$ project trivially to the negative eigenspaces of the three-dimensional Dirac operator $D$.

\begin{proof}[Proof of Proposition~\ref{prop:absolute-gradings-agree}]
Let $[x] \in \Crit^s$ be a reducible generator corresponding to the lowest positive eigenvalue of the operator $D_{\q, a}$, where $[x] = [(a,0,\phi)]$. Corollary~\ref{cor:GrPres} and Proposition~\ref{prop:RelationFinite} imply that 
\begin{equation}\label{eq:rel-gr-lambda}
\gr_{\lambda}^{\swf}([x_\lambda]) - \gr_{\lambda}^{\swf}([y_\lambda]) = \gr([x_\infty], [y_\infty]),
\end{equation}
so it suffices to prove the relation for $[x]$. 

Pick $(X, \t)$ as in the definition of $n(Y, \spinc, g)$ above. Then, recall from Section~\ref{sec:modifications} that 
$$ \gr^{\Q}([x]) = -\gr_z(X, [x]) + \frac{c_1(\t)^2 - \sigma(X)}{4} - b^+(X) -1.$$

We seek to show that 
\begin{equation}
\label{eq:grind}
 \gr_z(X, [x]) = \ind_\rr (D^+) - b^+(X) -1 - \ind \bigl ( [x_{\lambda}] \text{ in } (\vml)^{\sigma}/S^1 \bigr ) + \dim W^{(-\lambda, 0)}.
 \end{equation}
 
The quantity $\gr_z(X, [x])$ is the virtual dimension of the Seiberg-Witten moduli space on $X$ (with an added cylindrical end) with asymptotics given by $x$. Following the proof of Lemma 28.3.2 in \cite{KMbook}, we can compute $\gr_z(X, [x])$ by using a reducible configuration on $X$. It then becomes the index of an operator with two parts: one is a perturbed signature operator, and the other is a perturbed Dirac operator. The former would have index $-b^+(X)-1$ if the perturbation $\q$ were zero, but in general it differs from this by the index of a signature operator on the cylinder $[0,1]\times Y$, with boundary data $(0,0)$ and $(\q,a)$. This index can be computed as the spectral flow of the family 
$$ \begin{pmatrix}0 & -d^* \\ -d & *d +  2t\D_{(ta,0)}\q^0 \end{pmatrix} : \Omega^0(Y; i\rr) \oplus \Omega^1(Y; i\rr) \to  \Omega^0(Y; i\rr) \oplus \Omega^1(Y; i\rr) ,\ \ t \in [0,1].$$
 By a compact perturbation that keeps the endpoints fixed, we can change this family of operators into 
\begin{equation}
\label{eq:familynew}
 \begin{pmatrix}0 & -d^* \\ -d & *d +  2t\D_{(a,0)}\q^0 \end{pmatrix}  ,\ \ t \in [0,1].
 \end{equation}
Since $(a,0)$ is a stationary point, we have that 
$$\D_{(a,0)}\q^0 = \begin{pmatrix}0 & 0 \\ 0 & \D_{(a,0)}\etaq^0 \end{pmatrix}$$
with respect to the decomposition of imaginary one-forms into $\ker d \oplus \ker d^*$. Hence, \eqref{eq:familynew} decomposes into a $3 \times 3$ block form, where one block is constantly $\left( \begin{smallmatrix}0 & -d^* \\ -d & 0 \end{smallmatrix} \right)$  and the other is
$$ *d+ 2t \D_{(a,0)}\eta_\q^0 : \ker d^*  \to \ker d^*,\ \ t \in [0,1].$$
Since $\left( \begin{smallmatrix}0 & -d^* \\ -d & 0 \end{smallmatrix} \right)$ has no spectral flow, we have reduced the computation to the spectral flow of this last family.  Let us denote the spectral flow by $\SF(\q)^0.$

The second contribution to $\gr_z(X, [x])$ comes from the index of a perturbed Dirac operator $D^+_{\q,a} - \lambda_0$, where $D^+_{\q,a}$ is an APS operator but with $D_{\q,a}$ on the boundary, unlike $D^+=D^+_{0,0}$. Here $\lambda_0$ is the eigenvalue corresponding to $x$, and the domain of $D^+_{\q,a} - \lambda_0$ consists of spinors whose restrictions to $Y$ project trivially to the eigenspaces of $D_{\q,a}$ for eigenvalues $< \lambda_0$. Since $\lambda_0$ is the lowest positive eigenvalue, the domain is the same as the one we considered for $D^+$ in \eqref{eq:ng}. The two operators $D^+_{\q,a}$ and $D^+_{\q,a} - \lambda_0$ differ by a constant (hence compact) term, and hence have the same index. Note that
$$ \ind_\rr (D^+_{\q,a})- \ind_\rr (D^+) = \SF(\q)^1,$$
where $\SF(\q)^1$ is the spectral flow of the perturbed Dirac operators on $Y$ as we move from $(0,0)$ to $(\q, a)$.  Note that at a reducible stationary point $(a,0)$, we have that $D_a \psi + \D_{(a,0)} \q^1(0,\psi) = D \psi + \D_{(a,0)} (c_\q)^1(0,\psi)$, or in short, $D_{\q,a}^{\gCoul} = D_{\q,a}$.  In particular, we can compute this spectral flow in Coulomb gauge.     

Therefore, we have
$$ \gr_z(X, [x])= \ind_\rr (D^+) - b^+(X) -1 + \SF(\q)^0 + \SF(\q)^1.$$

To obtain \eqref{eq:grind}, it remains to show that
\begin{equation}
\label{eq:grind2}
\dim W^{(-\lambda, 0)} - \ind \bigl([x_\lambda] \text{ in } (\vml)^{\sigma}/S^1 \bigr)  = \SF(\q)^0 + \SF(\q)^1.
 \end{equation}

We now analyze the terms on the left-hand side of \eqref{eq:grind2}.  The first term, $ \dim W^{(-\lambda, 0)}$, is the number of negative eigenvalues of $l$ (counted with multiplicity) between $-\lambda$ and $0$. The second term requires a more careful analysis.  Write $[x_{\lambda}] = (a_\lambda,0,\phi_\lambda)$.  By Corollary~\ref{cor:lowest-reducible-eigenvalue}, $[x_{\lambda}]$ has lowest positive eigenvalue among stationary points in $(B(2R) \cap \vml)^\sigma/S^1$ of $\Xqmlagcsigma$ with connection component $a_\lambda$ (and not just among those in $\N/S^1$).  By Lemma~\ref{lem:MEquiv}, $\ind \bigl([x_{\lambda}] \text{ in } (\vml)^{\sigma}/S^1 \bigr )$ is the sum of two parts.  The first is the number of negative eigenvalues of the linearization of $l + \pml c_{\q}$ restricted to the connection summand of $\vml$, i.e. $*d + \D_{(a_\lambda,0)} (\pml c_\q)^1(\cdot,0)$.  
The second part is the number of negative eigenvalues of the linearization of $l+p^{\lambda} c_{\q}$, restricted to the spinorial summand of $\vml$, at $(a_\lambda, 0)$, i.e. $D + \D_{(a_\lambda,0)} (\pml c_{\q})^1(0, \cdot)$. 

Putting it all together, we find that the left-hand side of \eqref{eq:grind2} is the spectral flow from $l$ to $l + \pml A_\lambda$, where
$$ 
A_\lambda(b,\psi) = (\D_{(a_\lambda,0)} c_\q^0(b,0), \D_{(a_\lambda,0)}c_\q^1(0,\psi)),
$$
when considered as operators on $\vml$.  However, since $\lambda$ is of the form $\llambda_i$, this is the same as the spectral flow from $l$ to $l + \pml A_\lambda \pml$, considered as operators from $W_k$ to $W_{k-1}$.  Thus, to establish \eqref{eq:grind2}, it remains to show that there is no spectral flow from $l + \pml A_\lambda \pml $ to $l + A_\infty$, as operators from $W_k$ to $W_{k-1}$.  For the rest of the discussion, we will only consider operators from $W_k$ to $W_{k-1}$. Further, all  these operators will have index zero (being compact perturbations of $l$), and hence for them injectivity or surjectivity is equivalent to invertibility.  Due to the block form of these operators, this fact remains true if we restrict the operators to either their connection or spinorial components.  

Since $[x_\infty]$ is a non-degenerate stationary point of $\Xqagcsigma$, we have that $l + A_\infty$ is injective.  (This follows from Proposition~\ref{prop:nondegeneracycharacterized} and that this operator is index 0.)  By Proposition~\ref{prop:ND2}, we have that $[x_\lambda]$ is a non-degenerate reducible stationary point of $\Xqmlagcsigma$, and from this it follows that $l + \pml A_\lambda \pml$ is injective.    Since $A_\infty$ and $\pml A_\lambda \pml$ are $L^2$ self-adjoint, it suffices to show that for $\lambda \gg 0$ and $t \in [0,1]$,  $l + h_t^{\lambda}$ is injective, where $h^\lambda_t$ is the compact operator 
\[
h^\lambda_t = t \pml  A_\lambda \pml + (1-t) A_\infty.  
\]
Note that for any sequence $v_n$ which converges to $v$ weakly in $W_k$, any sequence $t_n \in [0,1]$, and $\lambda_n \to \infty$, we have 
\[
h^{\lambda_n}_{t_n}(v_n) \to A_\infty(v) \text{ in } W_{k-1}.
\] 
Now suppose that $l + h_{t_n}^{\lambda_n}$ is not injective for some sequences $t_n \in \mathbb{R}$ and  $\lambda_n \to \infty$. Let $v_n \in W_k$ with $\| v_n \|_{L^2_k} = 1$ be such that $l + h^{\lambda_n}_{t_n}(v_n) = 0$.  The $v_n$ converge weakly in $W_k$ to some $v$ and as discussed, $h^{\lambda_n}_{t_n}(v_n)$ converges in $W_{k-1}$ to $A_\infty(v)$.  Thus, we see that $l(v_n)$ converges in $W_{k-1}$ to $l(v)$.  In particular, $v_n$ converges in $W_k$ to $v$, which consequently has $\| v \|_{L^2_k} = 1$, and $(l + A_\infty)(v) = 0$.  This contradicts the injectivity of $l + A_\infty$. The relation \eqref{eq:grind2} follows.
\begin{comment}
By Proposition~\ref{prop:ND2}, we have that $l + \pml \D_{(a_\lambda,0)} c_\q$ is injective.  Therefore, we have that $l + \pml \D_{(a_\lambda,0)} c_\q \pml$ is injective as well. Similarly, because the stationary points of $(l + c_\q)^\sigma$ are non-degenerate we have that $l + \D_{(a,0)} c_\q$ is injective. 

Therefore, it suffices to show that for $\lambda \gg 0$ and $t \in [0,1]$,  $l + h_t^{\lambda}$ is injective, where $h^\lambda_t$ is the compact operator 
\[
h^\lambda_t = t \pml  \D_{(a_\lambda,0)} c_\q\pml + (1-t) \D_{(a,0)} c_\q.  
\]
Note that for any sequence $v_n$ which converges to $v$ weakly in $W_k$, any sequence $t_n \in [0,1]$, and $\lambda_n \to \infty$, we have 
\[
h^{\lambda_n}_{t_n}(v_n) \to \D_{(a,0)} c_\q(v) \text{ in } W_{k-1}.
\] 
Now suppose that $l + h_{t_n}^{\lambda_n}$ is not injective for some sequences $t_n \in \mathbb{R}$ and  $\lambda_n \to \infty$. Let $v_n \in W_k$ with $\| v_n \|_{L^2_k} = 1$ be such that $l + h^{\lambda_n}_{t_n}(v_n) = 0$.  The $v_n$ converge weakly in $W_k$ to some $v$ and as discussed, $h^{\lambda_n}_{t_n}(v_n)$ converges in $W_{k-1}$ to $\D_{(a,0)} c_\q(v)$.  Thus, we see that $l(v_n)$ converges in $W_{k-1}$ to $l(v)$.  In particular, $v_n$ converges in $W_k$ to $v$, which consequently has $\| v \|_{L^2_k} = 1$, and $(l + \D_{(a,0)} c_\q)(v) = 0$.  This contradicts the injectivity of $l + \D_{(a,0)} c_\q$. The relation \eqref{eq:grind2} follows.
\end{comment}
\end{proof}

%Having established Proposition~\ref{prop:absolute-gradings-agree}, we now lift the relative grading on the stationary points of $\Xqmlagcsigma$ on $(B(2R) \cap \vml)^\sigma/S^1$ to an absolute grading.  Of course, to define the lift we only need to give the absolute grading of one element.  Fix an element $[x] \in \Crit_\N$.  For $\lambda = \llambda_i \gg 0$, we define the absolute grading of $[x_\lambda]$ to be exactly $\gr^{\mathbb{Q}}([x])$.  By Corollary~\ref{cor:GrPres} and Proposition~\ref{prop:absolute-gradings-agree}, this does not depend on the choice of $[x] \in \Crit_\N$.  
Having established Proposition~\ref{prop:absolute-gradings-agree}, we can rephrase Lemma~\ref{lem:kappas} as the following. 

\begin{proposition}\label{prop:GradingBounds}
Any reducible stationary point of $\Xqmlagcsigma$ in $(B(2R) \cap \vml)^\sigma/S^1$ which is in the grading range $[-N,N]$ is contained in $\N/S^1$, assuming $\lambda = \llambda_i$ for some $i \gg 0$.  
\end{proposition}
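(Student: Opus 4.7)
The plan is: given a reducible stationary point $[(a_\lambda, 0, \phi)]$ of $\Xqmlagcsigma$ in $(B(2R)\cap\vml)^\sigma/S^1$ whose absolute grading $\gr^{\swf}_\lambda$ lies in $[-N, N]$, I will identify it with $\Xi_\lambda^{-1}$ of a stationary point of $\Xqagcsigma$ that must itself lie in $\Crit_\N$. Non-degeneracy via Proposition~\ref{prop:ND2} makes the eigenvalue $\kappa := \Lambda_{\qml}(a_\lambda, 0, \phi)$ nonzero; I assume $\kappa > 0$, the case $\kappa < 0$ being symmetric. Lemma~\ref{lem:implicitfunctionreducible} writes $a_\lambda$ as the implicit-function image of a reducible stationary point $a_\infty$ of $\Xqgc$, and Corollary~\ref{cor:lowest-reducible-eigenvalue} produces $[x_\lambda] = [(a_\lambda, 0, \phi_\lambda)]$ of lowest positive eigenvalue $\kappa_{\lambda, 1}$, which lies in $\N/S^1$ by the standing hypothesis on $N$ and corresponds via $\Xi_\lambda$ to the stationary point $[x_\infty]$ of $\Xqagcsigma$ with lowest positive eigenvalue $\nu_1$ of $D_{\q, a_\infty}$.

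Since $\lambda = \llambda_i$, the operator $D_{\qml, a_\lambda}|_{\vml}$ is self-adjoint; Proposition~\ref{prop:ND2} makes its positive eigenvalues simple, so I list them as $0 < \kappa_{\lambda, 1} < \kappa_{\lambda, 2} < \cdots$ and let $j$ be the index with $\kappa = \kappa_{\lambda, j}$. Combining Lemma~\ref{lem:GradingsReduciblesNotInN} with Proposition~\ref{prop:absolute-gradings-agree} yields
\[
\gr^{\swf}_\lambda([(a_\lambda, 0, \phi)]) \;=\; \gr^\Q([x_\infty]) \,+\, 2(j-1),
\]
so the grading hypothesis bounds $j$ by a constant $J = J(N, [x_\infty])$ independent of $\lambda$. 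Granting that, for $\lambda$ sufficiently large in terms of $J$, the eigenvalue $\kappa_{\lambda, j}$ is close to the $j$-th positive eigenvalue $\nu_j$ of $D_{\q, a_\infty}$, the uniqueness in Proposition~\ref{prop:nearby} applied at $[(a_\infty, 0, \phi_{\infty, j})]$ will identify $[(a_\lambda, 0, \phi)] = \Xi_\lambda^{-1}([(a_\infty, 0, \phi_{\infty, j})])$. One more invocation of Proposition~\ref{prop:absolute-gradings-agree} then forces $\gr^\Q([(a_\infty, 0, \phi_{\infty, j})]) \in [-N, N]$, placing this point in $\Crit_\N$, whence $[(a_\lambda, 0, \phi)] \in \Crit^\lambda_\N \subset \N/S^1$.

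The main obstacle is the spectral convergence input: for $\lambda \gg 0$, the $J$ smallest positive eigenvalues of $D_{\qml, a_\lambda}|_{\vml}$ must be precisely those close to $\nu_1, \dots, \nu_J$. The existence half is immediate from Proposition~\ref{prop:nearby} applied to $[(a_\infty, 0, \phi_{\infty, j'})]$ for $j' = 1, \dots, J$, which yields $J$ distinct eigenvalues of $D_{\qml, a_\lambda}|_{\vml}$ in any prescribed neighborhood of $\nu_1, \dots, \nu_J$. The ``no extras'' half is where I expect to do most of the work: I will argue by contradiction, assuming a sequence $\lambda_n \to \infty$ and intruder eigenvalues $\kappa_n^* \in (0, M]$ (for $M := (\nu_J + \nu_{J+1})/2$) uniformly bounded away from each of $\nu_1, \dots, \nu_J$. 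Their unit $L^2$ eigenvectors $\phi_n$ assemble into reducible stationary points $(a_{\lambda_n}, 0, \phi_n)$ of $\Xqmlngcsigma$, and I will adapt the reducible case of the proof of Lemma~\ref{lem:compactness}: the hypothesis $x_n \in \N$ used there only to derive the \emph{a priori} bound $|\Lambda_{\q^{\lambda_n}}(x_n)| \leq \omega$ is now built in as $|\kappa_n^*| \leq M$, after which the same bootstrap---convergence $a_{\lambda_n} \to a_\infty$ in $L^2_k$ from Lemma~\ref{lem:compactnessnoblowup}, continuity of $\D_{(a_{\lambda_n}, 0)} c_\q(0, \phi_n)$ from the very compactness of $c_\q$, and convergence $\pi_n \to 1$ in the strong operator topology from Fact~\ref{fact:pml}---promotes the weak limit of $\phi_n$ to a unit $L^2_k$ eigenvector $\phi$ of $D_{\q, a_\infty}$ with eigenvalue in $[0, M]$. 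Since the only positive eigenvalues of $D_{\q, a_\infty}$ in $[0, M]$ are $\nu_1, \dots, \nu_J$, this limit eigenvalue has to equal one of them, contradicting the bounded-away hypothesis and completing the plan.
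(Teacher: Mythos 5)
Your proof is correct, but it takes a substantially different route from the paper, which derives the proposition in a single stroke from Lemma~\ref{lem:kappas} together with Proposition~\ref{prop:absolute-gradings-agree}. The paper's Lemma~\ref{lem:kappas} obtains the required spectral conclusion---that the eigenvalues of $D_{\qml, a_\lambda}$ associated to reducibles in $\N/S^1$ admit no intruders between them---purely by index counting: the equality of relative gradings between corresponding reducibles in infinite and finite dimensions (Corollary~\ref{cor:GrPres} with Proposition~\ref{prop:RelationFinite}), combined with the fact that both relative gradings are computed by counting eigenvalues (Equation~\eqref{eq:GradingRed} and Lemma~\ref{lem:GradingsReduciblesNotInN}), forces the counts to match, so ``no extras'' becomes a bookkeeping consequence of index equality with no new analysis. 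You instead re-prove spectral stability by a fresh limiting argument modeled on the reducible case of Lemma~\ref{lem:compactness}, replacing the $\N$ hypothesis there by the \emph{a priori} eigenvalue bound $|\kappa_n^*| \leq M$ as the input to the elliptic bootstrap, and then funnel the result through the uniqueness half of Proposition~\ref{prop:nearby}. This does work and is analytically more hands-on; but it is the heavier route, trading an index-counting argument the paper has already set up for a new compactness argument, and it is not truly independent of the paper's---you also cite Corollary~\ref{cor:lowest-reducible-eigenvalue}, which the paper extracts precisely as a by-product of the proof of Lemma~\ref{lem:kappas} that you are circumventing. One point worth making explicit in your sketch: the contradiction set-up only excludes intruders uniformly bounded away from $\nu_1, \dots, \nu_J$; to rule out two distinct eigenvalues of $D_{\qmln, a_{\lambda_n}}$ both converging to the same $\nu_j$, you should invoke the uniqueness in Proposition~\ref{prop:nearby} directly, together with the simplicity guaranteed by Proposition~\ref{prop:ND2}.
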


\section{Conclusions}\label{sec:conclusions}    
Recall, from the discussion at the beginning of Section~\ref{sec:stationarypointsoutsideN}, that approximate irreducible stationary points are necessarily in $\N/S^1$.  Therefore, using the fact that $\Xi_{\lambda}$ is grading preserving and the fact that irreducible stationary points have vanishing ($\lambda$-)spinorial energy, it follows that there exists $N > 0$ such that all irreducible stationary points of $\Xqmlagcsigma$ have grading in $[-N,N]$ for all $\lambda = \llambda_i \gg 0$.  Here, recall that we grade the stationary points of $\Xqmlagcsigma$ using $\gr^{\SWF}_\lambda$, defined in \eqref{eq:gr-lambda}.  We can now summarize the results of this section in the following.  

\begin{proposition}\label{prop:stationarycorrespondence}
Let $\q$ be a very tame, admissible perturbation, and fix $N > 0$ such that all irreducible stationary points of $\Xqmlagcsigma$ have grading in $[-N,N]$ for all $\lambda = \llambda_i \gg 0$.  For all $\lambda = \llambda_i \gg 0$, there is a one-to-one correspondence $\Xi_{\lambda}$ between:
\begin{itemize}
\item the stationary points of $\Xqmlagcsigma$ in $(B(2R) \cap \vml)^\sigma/S^1$ with grading in $[-N, N]$, including all irreducibles, and 
\item
the stationary points of $\Xqagcsigma$ with grading in $[-N, N]$, including all irreducibles. 
\end{itemize}

This correspondence preserves the grading, as well as the type of stationary point (irreducible, stable, unstable). Furthermore, all the stationary points of $\Xqmlagcsigma$ in $(B(2R) \cap \vml)^\sigma/S^1$ with grading in $[-N, N]$ are hyperbolic.
\end{proposition}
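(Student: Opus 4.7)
The plan is to assemble this proposition from the results established earlier in the chapter, treating it essentially as a bookkeeping exercise that collates Corollary~\ref{cor:corresp}, Corollary~\ref{cor:GrPres}, Proposition~\ref{prop:RelationFinite}, Proposition~\ref{prop:absolute-gradings-agree}, and Proposition~\ref{prop:GradingBounds}. First, I would fix $\lambda = \llambda_i \gg 0$ so that all the cited results simultaneously apply, and invoke Corollary~\ref{cor:corresp} to produce the bijection $\Xi_\lambda : \Crit^\lambda_\N \to \Crit_\N$ between stationary points of $\Xqmlagcsigma$ in $\N/S^1$ and those of $\Xqagcsigma$ in $\N/S^1$, already known to preserve irreducible/boundary-stable/boundary-unstable type. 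Hyperbolicity of every element of $\Crit^\lambda_\N$ is then immediate from Proposition~\ref{prop:ND}; Proposition~\ref{prop:ND2} is not actually needed here since we only care about points in $\N/S^1$.

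Next I would upgrade the domain of the correspondence from stationary points \emph{in} $\N/S^1$ to all stationary points of $\Xqmlagcsigma$ in $(B(2R) \cap \vml)^\sigma/S^1$ whose grading lies in $[-N,N]$. For irreducible approximate stationary points this is automatic: by the choice of $N$ in the hypothesis, every irreducible approximate stationary point has grading in $[-N,N]$, and by the remark recalled at the start of Section~\ref{sec:conclusions} (which applies Proposition~\ref{prop:proposition3perturbed} to $N = \overline{B(2R)}$ and $U$ the blow-down of $\U$), every irreducible in $B(2R)^\sigma$ actually lies in $\U \subset \N$. For reducible approximate stationary points, the key input is Proposition~\ref{prop:GradingBounds}, which says exactly that any reducible stationary point in $(B(2R) \cap \vml)^\sigma/S^1$ with grading in $[-N,N]$ already lies in $\N/S^1$. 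Combined, this shows that the set of approximate stationary points in $(B(2R) \cap \vml)^\sigma/S^1$ with grading in $[-N,N]$ is exactly $\Crit^\lambda_\N$, intersected with the grading range.

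Symmetrically, on the monopole Floer side, Corollary~\ref{cor:GrPres} (together with Proposition~\ref{prop:absolute-gradings-agree}) implies that $\Xi_\lambda$ matches the grading restriction on both sides, so $\Xi_\lambda$ restricts to a bijection between the two sets described in the statement. Grading preservation is then the combination of two facts proved earlier: Corollary~\ref{cor:GrPres} shows that $\Xi_\lambda$ preserves \emph{relative} gradings in infinite dimensions, Proposition~\ref{prop:RelationFinite} identifies the infinite-dimensional relative grading on $\Crit^\lambda_\N$ with the finite-dimensional Morse-index-relative-grading used to define $\gr^{\swf}_\lambda$, and Proposition~\ref{prop:absolute-gradings-agree} pins down the absolute shift so that $\gr^{\swf}_\lambda([x_\lambda]) = \gr^{\Q}([x])$.

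Since every ingredient has already been carried out in Chapters~\ref{sec:criticalpoints}--\ref{sec:gradings}, there is no real obstacle; the only mild care needed is to ensure that a single threshold $\lambda_0$ can be chosen so that all the finitely many conditions on $\lambda$ appearing in Corollaries~\ref{cor:corresp} and \ref{cor:GrPres}, Propositions~\ref{prop:ND}, \ref{prop:RelationFinite}, \ref{prop:absolute-gradings-agree}, \ref{prop:GradingBounds} and the remark from Section~\ref{sec:conclusions} hold simultaneously, and to note that restricting $\lambda$ to the subsequence $\{\llambda_i\}$ is compatible with every one of these results.
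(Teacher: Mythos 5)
Your proposal is correct and takes essentially the same approach as the paper: both treat the proposition as a collation of Corollary~\ref{cor:corresp}, Proposition~\ref{prop:ND}, Corollary~\ref{cor:GrPres}, and Proposition~\ref{prop:GradingBounds} (the paper cites exactly these four). You are somewhat more explicit in also naming Propositions~\ref{prop:RelationFinite} and~\ref{prop:absolute-gradings-agree} as implicit inputs for matching the absolute gradings, and in spelling out why irreducibles in $B(2R)^{\sigma}$ fall into $\U \subset \N$, but these are just the details the paper leaves to the surrounding discussion rather than a genuinely different route.
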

\begin{proof}
The conclusion follows from Corollary~\ref{cor:corresp}, Proposition~\ref{prop:ND}, Corollary~\ref{cor:GrPres} and Proposition~\ref{prop:GradingBounds}. \end{proof}

\chapter{The Morse-Smale condition for the approximate flow}\label{sec:MorseSmale}
Recall that in Chapter~\ref{sec:quasigradient}, we established that $\Xqmlgc$ is a Morse equivariant quasi-gradient vector field on $\vml \cap B(2R)$. In this chapter, we show that it is also Morse-Smale, in the sense of Definition~\ref{def:eMSqg}. Recall, from Lemma~\ref{lem:MSlu} and the discussion at the end of Section~\ref{sec:combinedMorse} that the Morse-Smale condition can be rephrased in terms of the surjectivity of a linear operator. In our setting, let 
$$\gamma: \R \to (\vml \cap B(2R))^{\sigma} \subset (\vml)^{\sigma},$$
be a trajectory of $\Xqmlgcsigma$, going between two stationary points $x$ and $y$. Regularity of the moduli space at $[\gamma]$ is equivalent to the surjectivity of the operator
\begin{equation}
\label{eq:Op0}
\Pi^{\agCoul,\sigma} \circ ( \frac{D^{\sigma}}{dt} + \D^\sigma \Xqmlgcsigma) : T_{j, \gamma} \P(x, y) \to T_{j-1, \gamma} \P(x, y)
\end{equation}
which has already made an appearance in \eqref{eq:HessFinite}. Again, while it may seem like a more natural choice to work with an operator where the derivatives are given by a connection coming from the $\tilde{g}$-metric, the choice of connection does not matter at a trajectory.

Alternatively, we can view $\gamma$ as a path in the infinite dimensional space $W^{\sigma}$. The corresponding linearized operator is
\begin{equation}
\label{eq:Op2}
 \D^\tau_{\gamma} \Fqmlgctau : \T_{k,\gamma}^{\gCoul, \tau}(x, y) \to \V^{\gCoul, \tau}_{k-1,\gamma}(Z).
 \end{equation}

 \begin{lemma}
 \label{lem:equivOp}
 The operator \eqref{eq:Op0} is surjective if and only if the operator \eqref{eq:Op2} is surjective.
 \end{lemma}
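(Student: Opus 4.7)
The plan is to reduce the surjectivity of the infinite-dimensional operator \eqref{eq:Op2} to that of the finite-dimensional Morse operator \eqref{eq:Op0}, mirroring the three-step strategy used in Chapter~\ref{sec:coulombgauge} (namely Lemma~\ref{lem:allsurjective}, Lemma~\ref{lem:infinite-finite-same-index}, and Proposition~\ref{prop:RelationFinite}), but tracking surjectivity rather than merely the Fredholm index. First, since $\Fqmlgctau$ is equivariant under $\G^{\gCoul}_{k+1}(Z)$ and $\gamma$ is a zero of $\Fqmlgctau$, the linearization $\D^\tau_\gamma\Fqmlgctau$ vanishes on the tangent to the gauge orbit $\J^{\gCoul,\tau}_{k,\gamma}$, so \eqref{eq:Op2} is surjective iff its restriction $\D^\tau_\gamma\Fqmlgctau|_{\K^{\gCoul,\tau}_{k,\gamma}}$ is surjective. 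The direct analogue of Lemma~\ref{lem:allsurjective} for the perturbation $\qml$ (with the same proof) then shows that this is equivalent to surjectivity of the extended operator $Q^{\gCoul}_{\gamma,\qml} = \D^\tau_\gamma\Fqmlgctau \oplus \dd^{\gCoul,\tau,\tilde\dagger}_\gamma$.

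Second, I would reduce to the finite-dimensional approximation. Because $\Xqmlgc = l + \pml c_\q$ maps $W$ into $\vml$ and $\gamma$ takes values in $(\vml)^\sigma$, the splitting $\T^{\gCoul,\tau}_{k,\gamma} = \Rcal_k \oplus \T^{\gCoul,\lambda,\tau}_{k,\gamma}$ (and the analogous decomposition of the target) puts $Q^{\gCoul}_{\gamma,\qml}$ in the block-triangular form \eqref{eq:Qblock}. The diagonal block on $\Rcal_k$ is precisely $\Pi_{\Rcal_{k-1}}\circ\D^\tau_\gamma\Fqmlgctau|_{\Rcal_k} = d/dt + l + h_t$, which was shown in Lemma~\ref{lem:infinite-finite-same-index} to be invertible for $\lambda \gg 0$. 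Hence $Q^{\gCoul}_{\gamma,\qml}$ is surjective iff the finite-dimensional block $Q^{\gCoul,\lambda}_\gamma$ is surjective. Applying the finite-dimensional analogue of Lemma~\ref{lem:allsurjective}, this is equivalent to surjectivity of $\D^\tau_\gamma\Fqmlgctau|_{\K^{\gCoul,\lambda,\tau}_{k,\gamma}}$, where $\K^{\gCoul,\lambda,\tau}_{k,\gamma} := \ker\dd^{\gCoul,\tau,\tilde\dagger}_\gamma \cap \T^{\gCoul,\lambda,\tau}_{k,\gamma}$.

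Third, I would identify $\D^\tau_\gamma\Fqmlgctau|_{\K^{\gCoul,\lambda,\tau}_{k,\gamma}}$ with \eqref{eq:Op0}. Using the decomposition \eqref{eq:decomposeTlambda} of $\T^{\gCoul,\lambda,\tau}_{k,\gamma}$ into the anticircular Coulomb slice, the $S^1$-tangent direction, and the $dt$-component $\beta$, the Coulomb condition $\dd^{\gCoul,\tau,\tilde\dagger}_\gamma = 0$ together with the explicit formula \eqref{eq:dgct} rigidly pins down $\beta$ in terms of the $S^1$-tangent component. After this gauge reduction, the operator becomes block-triangular in the remaining variables, with one diagonal block equal to \eqref{eq:Op0} acting on $T_{k,\gamma}\P(x,y)$ and the other block equal to an operator of the form $d/dt + A(t)$ with $A(t)$ constructed from $\dd^{\gCoul,\sigma}_\gamma$ and $\dd^{\gCoul,\sigma,\tilde\dagger}_\gamma$, which is invertible on the relevant $L^2_k$-paths by the same reasoning as in the last paragraph of the proof of Proposition~\ref{prop:RelationFinite}. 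The main obstacle is precisely this step: the argument in Proposition~\ref{prop:RelationFinite} used an interpolation through $Q^{\gCoul,\sp,\lambda}_\gamma$ that only matched Fredholm indices, not kernels or cokernels, and a direct bijective identification is required here. The needed additional input is the careful bookkeeping of the off-diagonal pieces (the terms $K_1,K_2,K_3$ from that proof), using that $\gamma$ is an honest trajectory and that the slicewise Coulomb fixing together with the constraint $\beta \in L^2_k(\R;i\R)$ determines $\beta$ uniquely from the rest of the data.
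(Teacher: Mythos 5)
Your proposal matches the paper's proof in structure and in all essential details.  The paper's proof cites Lemma~\ref{lem:allsurjective} (with $\qml$ in place of $\q$) for Step~1, the block-triangular decomposition~\eqref{eq:Qblock} together with the invertibility of its top-left block for Step~2, and then ``relates $Q_\gamma^{\gCoul,\lambda}$ to \eqref{eq:Op0} through \eqref{eq:QQ2} using the invertibility of \eqref{eq:dAt}'' for Step~3.  Your first two steps are identical.  For Step~3 you are right that the Fredholm-index interpolation in Proposition~\ref{prop:RelationFinite} cannot be used to transfer surjectivity; the paper does not use it, but instead relies directly on the block decomposition~\eqref{eq:decomposeTlambda} and the fact that the $2\times 2$ block of $Q_\gamma^{\gCoul,\sp,\lambda}$ (the operator~\eqref{eq:dAt}) is invertible, which are the same ingredients you use.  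The only genuine variation is that you first impose the Coulomb condition $\dd^{\gCoul,\tau,\tilde\dagger}_\gamma=0$ on the domain and solve the resulting ODE for $\beta$, whereas the paper works with the full extended operator $Q_\gamma^{\gCoul,\lambda}$ on $\T^{\gCoul,\lambda,\tau}_{j,\gamma}$ and uses the block structure without eliminating $\beta$.  One caveat for your version: solving $\beta' = -\,\dd^{\gCoul,\sigma,\tilde\dagger}_{\gamma(t)}(V)$ with $\beta\in L^2_k(\R;i\R)$ does not freely determine $\beta$ from the $S^1$-tangent component; it also imposes an integral constraint on that component, so your ``rigidly pins down'' step needs a little more care.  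Working with the extended $Q_\gamma^{\gCoul,\lambda}$ as the paper does avoids this bookkeeping, which is one reason the paper's route is slightly cleaner.
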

 \begin{proof}
We use the notation and the results from Lemma~\ref{lem:allsurjective} and the proof of Proposition~\ref{prop:RelationFinite}. %However, since here $\gamma$ is a trajectory of $\Xqmlgcsigma$, we have $Q_{\gamma}^{\gCoul, \tilde{g}^\sigma} = Q_{\gamma}^{\gCoul}$ and $\tQ_{\gamma}^{\gCoul, \tilde{g}^\sigma, \lambda} = Q_{\gamma}^{\gCoul, \lambda}$, where $Q_{\gamma}^{\gCoul, \lambda}$ is the analogue of $Q_{\gamma}^{\gCoul}$ for $\q_\lambda$.  Therefore, we will drop the $\tilde{g}^\sigma$ from the notation. 
By the analogue of Lemma~\ref{lem:allsurjective} with $\qml$ instead of $\q$, surjectivity of \eqref{eq:Op2} is equivalent to that of the operator $Q_{\gamma}^{\gCoul}$ from \eqref{eq:Qgammagc}. In the proof of Lemma~\ref{lem:infinite-finite-same-index} we gave a block form \eqref{eq:Qblock} for $Q_{\gamma}^{\gCoul}$, and we also showed that the top left block is invertible. This implies that $Q_{\gamma}^{\gCoul}$ is surjective if and only if the operator $Q_{\gamma}^{\gCoul, \lambda}$ from \eqref{eq:Qgg} is surjective. Finally, $ Q_{\gamma}^{\gCoul, \lambda}$ can be related to \eqref{eq:Op0} through \eqref{eq:QQ2}, using the invertibility of the operator \eqref{eq:dAt}. 
 \end{proof}
 
Thus, we can work with the operators $ \D^\tau_{\gamma} \Fqmlgctau$. Note that we should only ask for their surjectivity when $\gamma$ is boundary-unobstructed. When $\gamma$ is boundary-obstructed, we require surjectivity of its summand $(\D^\tau_{\gamma} \Fqmlgctau)^{\del}$, which acts on the spaces of paths in the boundary of $(\vml)^{\sigma}$.  Similar arguments as above apply for the boundary-obstructed case as well.
 
\begin{proposition}\label{prop:MorseSmales}
We can choose the admissible perturbation $\q$ such that for any $\lambda \in \{\llambda_1, \llambda_2, \dots\}$ sufficiently large, the following holds. Given any flow trajectory $\gamma$ for the restriction of $\Xqmlgcsigma$ to $(B(2R) \cap W^{\lambda})^{\sigma}$, we have that:
\begin{enumerate}[(i)]
\item If $\gamma$ is boundary-unobstructed, then $\D^\tau_{\gamma} \Fqmlgctau$ is surjective;
\item If $\gamma$ is boundary-obstructed, then $(\D^\tau_{\gamma} \Fqmlgctau)^{\del}$ is surjective.
\end{enumerate}
\end{proposition}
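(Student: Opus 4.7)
The plan is to produce $\q$ by refining the admissible perturbation of Proposition~\ref{prop:ND2} within the large Banach space $\P$ of very tame perturbations, so that the additional Morse-Smale condition in each finite-dimensional approximation also holds. Because the collection of cutoffs $\{\llambda_1,\llambda_2,\ldots\}$ is countable and, by Propositions~\ref{prop:ND} and \ref{prop:ND2}, each $(B(2R) \cap \vml)^\sigma/S^1$ contains only finitely many pairs of hyperbolic stationary points of $\Xqmlgcsigma$, the Morse-Smale condition for the approximate trajectories decomposes into countably many open conditions on $\q$. It therefore suffices to show each is also dense; the resulting residual subset of $\P$, intersected with the residual set of Theorem~\ref{thm:admissibleperturbationsexist} and with the one from Proposition~\ref{prop:ND2}, produces the desired admissible $\q$.

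For fixed $\lambda = \llambda_i$ and a fixed ordered pair of stationary points $([x_\lambda],[y_\lambda])$ of $\Xqmlgcsigma$ in $(B(2R) \cap \vml)^\sigma/S^1$, the plan is to study the universal parametrized moduli space
$$\mathcal{M}^{\lambda,\mathrm{univ}}([x_\lambda],[y_\lambda]) = \{(\q,[\gamma]) \mid \q \in \P_0,\ [\gamma] \text{ is a trajectory of } \Xqmlagcsigma \text{ from } [x_\lambda(\q)] \text{ to } [y_\lambda(\q)]\},$$
where $\P_0 \subset \P$ is an open neighborhood of the starting admissible perturbation on which the stationary points depend smoothly on $\q$ via the implicit function theorem of Section~\ref{sec:StabilityPoints}. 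Viewing $\mathcal{M}^{\lambda,\mathrm{univ}}$ as the zero set of a section of a Banach bundle built from $\Fqmlgctau$ restricted to $(\vml)^\sigma$-valued paths, a Sard-Smale argument in the style of \cite[Section 15]{KMbook} then produces a residual set of $\q \in \P_0$ for which every fiber moduli space is cut out transversally, giving (i). The boundary-obstructed case is handled in parallel, restricting all operators to the reducible locus $\{s=0\}$ and verifying the analogous surjectivity for $(\D^\tau_\gamma \Fqmlgctau)^{\partial}$.

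The main obstacle is showing that the universal linearization is surjective at every point of $\mathcal{M}^{\lambda,\mathrm{univ}}$. Concretely, given a nonzero element $\eta$ in the cokernel of $\D^\tau_\gamma \Fqmlgctau$, one needs a variation $\delta\q \in T_\q \P$ whose induced infinitesimal change in $\Fqmlgctau(\gamma)$ pairs nontrivially with $\eta$. Because $\lambda = \llambda_i$, the smoothed projection $p^\lambda$ coincides with the honest $L^2$-projection onto $\vml$, so the infinitesimal change in $\etaqml$ at $\gamma(t)$ equals $p^\lambda \cdot (\Pi^{\gCoul}_*)_{\gamma(t)} \, \delta\q(\gamma(t))$, and the pairing reduces to an $L^2$-integral of $\langle p^\lambda (\Pi^{\gCoul}_*)_{\gamma(t)} \delta\q(\gamma(t)),\eta(t)\rangle_{L^2(Y)}$ over $t \in \mathbb{R}$. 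The unique continuation arguments from \cite[Section 15.1]{KMbook} show that $\eta$ cannot vanish identically on any nonempty open interval of $t$ unless $\eta \equiv 0$, and the density of cylinder-function perturbations in $\P$ then furnishes a $\delta\q$ with the desired pairing. For trajectories entirely within the reducible locus $\{s=0\}$, including those involving reducible stationary points outside $\N/S^1$ (cf. Proposition~\ref{prop:GradingBounds}), the linearization decouples into a connection part and a spinor part, and transversality reduces to a generic separation of eigenvalues for a time-dependent Dirac-type operator on the finite-dimensional spinor factor of $\vml$, arranged by varying $\q$ through perturbations vanishing on the reducible locus exactly as in the proof of Proposition~\ref{prop:ND2}.
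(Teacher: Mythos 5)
Your overall skeleton — universal parametrized moduli space, Sard--Smale to get a residual set, intersect over the countably many cutoffs $\llambda_i$ and pairs of stationary points — is the right strategy and matches what the paper does. But there is one missing device that is not a cosmetic detail: you let the perturbation $\q$ vary arbitrarily in a neighborhood $\P_0 \subset \P$, so that the endpoints $[x_\lambda(\q)], [y_\lambda(\q)]$ and hence the path spaces $\tC^{\gCoul,\tau}_k(x_\lambda(\q), y_\lambda(\q))$ themselves depend on $\q$. As written, your ``zero set of a section of a Banach bundle'' is therefore not defined over a fixed Banach manifold: the fibers over different $\q$ are subsets of genuinely different path spaces, and you have not supplied an identification between them (the analogue of the $\Xi_\lambda$ diffeomorphisms of Chapter~\ref{sec:appendix}, now parametrized by $\q$ rather than $\lambda$). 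The later unique-continuation/pairing step can't even be set up until this is fixed.

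The paper's proof avoids the issue entirely by restricting the parameter space. It first picks disjoint neighborhoods of the (finitely many) $S^1$-orbits of blow-down projections of the stationary points, lets $U$ be their union, and then works only with perturbations in
$$\P_U = \{\q \in \P \mid \q|_U = \q_0|_U\}.$$
On a small enough neighborhood $\nu(\q_0) \subset \P_U$, the stationary points of $\Xqgcsigma$ and of $\Xqmlgcsigma$ in $B(2R)^\sigma$ are literally the same as for $\q_0$ (and remain non-degenerate). Consequently the parametrized map
$$\mathfrak{M}: \tC_k^{\gCoul,\tau}(x,y) \times \nu(\q_0) \to \V^{\gCoul,\tau}_{k-1}(Z), \quad \mathfrak{M}(\gamma,\q) = \Fqmlgctau(\gamma)$$
has a fixed domain, and the Sard--Smale argument of \cite[Section 15]{KMbook} applies verbatim. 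This restriction also forces $\delta\q$ to vanish near the stationary orbits, which is exactly what the unique-continuation argument requires: the cokernel element decays at the ends but is non-vanishing in the interior, where $\delta\q$ is free. Your paragraph about $p^\lambda$ being the honest $L^2$-projection for $\lambda = \llambda_i$ and about the cokernel living in $\vml$-valued paths is correct and useful, and the remarks about the reducible locus and about reducibles outside $\N/S^1$ are pointed in the right direction; but without the $\P_U$ restriction (or an explicit $\q$-parametrized family of path-space identifications, which you have not constructed), the transversality setup is not well-posed. Add the $\P_U$ step before the Sard--Smale argument and the rest of your outline goes through.
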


\begin{proof}
This is similar to the proof of transversality for moduli spaces of trajectories in \cite[Section 15]{KMbook}. 
So far, we have chosen an admissible perturbation $\q_0$ so that the stationary points of both $\Xqogcsigma$ and $\Xqomlgcsigma$ inside $B(2R)^{\sigma}$ are non-degenerate; cf. Proposition~\ref{prop:ND2}. Consider the blow-down projections of the stationary points of $\Xqogcsigma$; these come in a finite number of $S^1$-orbits. Pick disjoint open neighborhoods of those orbits, and let $U$ be the union of these neighborhoods. By Lemma~\ref{lem:implicitfunctionreducible}, for $\lambda$ large, the blow-down projections of the stationary points of $\Xqomlgcsigma$ from $B(2R)^{\sigma}$ land inside $U$. Consider the set of perturbations
$$ \P_U = \{ \q \in \P \mid \q|_{U} = \q_0|_{U} \}.$$

We can find an open neighborhood $\nu(\q_0)$ of $\q_0$ in $\P_U$ such that for all $\q$ in this neighborhood, we have the same set of stationary points for $\Xqgcsigma$ and $\Xqmlgcsigma$ as for $\q_0$, and therefore we still have nondegeneracy for them. 

We now claim that for a residual set of perturbations $\q$ in $\nu(\q_0) \subset \P_U$, the desired surjectivity conditions hold. Since we work with a countable set of $\lambda$, it suffices to prove this for some fixed $\lambda$, sufficiently large. We define a parametrized map
$$ \mathfrak{M}:  \tC_k^{\gCoul, \tau}(x, y) \times \nu(\q_0) \to \V^{\gCoul, \tau}_{k-1}(Z), \ \ \ \mathfrak{M}(\gamma, \q) = \Fqmlgctau(\gamma).$$
When $\gamma$ is reducible (i.e., contained in the reducible locus), there is a similar map $(\mathfrak{M})^{\del}$ acting on the space of paths in the boundary.

To prove our claim, it is enough to check that the derivative of $\mathfrak{M}$ is surjective at all points $(\gamma, \q)$ in $\mathfrak{M}^{-1}(0)$ when $\gamma$ is irreducible; and that the derivative of $(\mathfrak{M})^{\del}$ is surjective at reducibles.  The proof of these facts is entirely similar to the proof of Proposition 15.1.3 in \cite{KMbook}.
\end{proof}

 Let us now put the results of Chapters~\ref{sec:criticalpoints}, ~\ref{sec:quasigradient}, ~\ref{sec:gradings}, and the current one in context.  Recall that it is our goal to establish an isomorphism between $\hmto(Y,\spinc)$ and the (singular) equivariant homology $\widetilde{H}^{S^1}_*(\SWF(Y,\spinc))$.  To do this, for each $N \gg 0$, we will establish an isomorphism between the truncations $\hmto_{\leq N}(Y,\spinc)$ and $\widetilde{H}^{S^1}_{\leq N}(\SWF(Y,\spinc))$ by passing through an intermediate group, the $S^1$-equivariant Morse homology of $\Xqmlagcsigma$ on $(B(2R) \cap \vml)^\sigma/S^1$ as defined in Section~\ref{sec:finite}.  This group can be defined by Propositions~\ref{prop:AllMS} and ~\ref{prop:MorseSmales} for $\lambda = \llambda_i$ with $i \gg 0$.  We have established in Section~\ref{sec:finite} that the homology of this Morse complex in an appropriate grading range will be isomorphic to the (singular) Borel homology $\widetilde{H}^{S^1}_{\leq N}(\SWF(Y,\spinc))$.  Thus, it remains to identify the $S^1$-equivariant Morse homology of $\Xqmlagcsigma$ with monopole Floer homology in the corresponding grading range.  

At this point, for $\lambda = \llambda_i$ with $i \gg 0$, we have established in Proposition~\ref{prop:stationarycorrespondence} a correspondence between the stationary points of $\Xqagcsigma$ and $\Xqmlagcsigma$ with grading in the interval $[-N,N]$.  Thus, using the results of Chapter~\ref{sec:coulombgauge}, we have an isomorphism on the level of graded chain groups (but not yet on homology) between $\cmto_{\leq N}(Y,\spinc,\q)$ and the $S^1$-equivariant Morse complex for $\Xqmlagcsigma$. 
 
This leaves us with one major step, which is to construct a chain complex isomorphism between the $S^1$-equivariant Morse complex and $\cmto_{\leq N}(Y,\spinc,\q)$ by relating the trajectories of $\Xqagcsigma$ to those in finite dimensions; this will be analogous to the correspondence on the level of stationary points we have established in this section.  This is the focus of Chapter~\ref{sec:trajectories1} and Chapter~\ref{sec:trajectories2}.  Before doing so, in the next section we do some technical work which will allow us to relate paths between stationary points of $\Xqmlgcsigma$ to paths between stationary points of $\Xqgcsigma$.

\chapter{Self-diffeomorphisms of configuration spaces}\label{sec:appendix}
In Section~\ref{sec:StabilityPoints}, we established a correspondence $\Xi_\lambda : \Crit^\lambda_{\N} \to \Crit_{\N}$ between stationary points of $\Xqmlagcsigma$ and $\Xqagcsigma$.  Our goal is to be able to do this for trajectories as well.  In this case, the trajectories live in different path spaces: trajectories of $\Xqagcsigma$ live in $\B_k^{\gCoul,\tau}([x_\infty],[y_\infty])$ while we will see in Chapter~\ref{sec:trajectories1} that trajectories of $\Xqmlagcsigma$ are in $\B_k^{\gCoul,\tau}([x_\lambda],[y_\lambda])$.  Therefore, we need a way to relate these different spaces.  In this section, we will extend the correspondence  from Corollary~\ref{cor:corresp} first to a family of $S^1$-equivariant self-diffeomorphisms of $W^\sigma$, and then to self-diffeomorphisms of $W^\tau(I \times Y)$ for $I \subset \mathbb{R}$ and to other path spaces.  This will be needed in Proposition~\ref{prop:L2k} and Proposition~\ref{prop:stabilitynearby}.  The construction of these maps will use a setup similar to that of the function $T_\lambda$ defined in Section~\ref{sec:Flambda}.  

Before stating the first result, we need some preliminaries.  In this section, in order to make statements about the smoothness of functions with respect to $\lambda$ as a parameter, we do {\em not} restrict to the case that $\lambda = \llambda_i$.  We will not use the results of Chapter~\ref{sec:quasigradient} or Chapter~\ref{sec:gradings}, so this will not be a problem.  For each stationary point $x_\infty$ of $\Xqgcsigma$ in $\N$ let $x_\lambda$ denote the stationary point of $\Xqmlgcsigma$ which is $L^2$ closest to $x_\infty$. (For an explanation of the well-definedness of $x_\lambda$ see Section~\ref{sec:Flambda}.)  In what follows, we will abuse notation and say that a function $G_\lambda$, which depends on $\lambda$, is {\em smooth in $\lambda$ at and near infinity} if there exists $\epsilon > 0$ such that $G_{f^{-1}(r)}$ depends smoothly on $r \in [0, \epsilon)$, where $f: (0, \infty] \to [0,1)$ is the homeomorphism from Section~\ref{sec:StabilityPoints}.

\begin{lemma}\label{lem:Xixylambda}
For $\lambda \gg 0$, there exists an $S^1$-equivariant diffeomorphism $\Xi_{\lambda} : W^\sigma_{0} \to W^\sigma_{0}$ satisfying:
\begin{enumerate}[(i)]
\item\label{xi:xlambda} $\Xi_{\lambda}$ sends $x_\lambda$ to $x_\infty$ for each stationary point $x_\infty \in \N$,  
\item\label{xi:sobolev} $\Xi_{\lambda}$ restricts to a self-diffeomorphism of $W^\sigma_j$ for any $1 \leq j \leq k$, 
\item\label{xi:smooth-f} Let $\Xi_{\infty}$ be the identity. Then, for $0 \leq j \leq k$, $\Xi_{\lambda}: W^\sigma_j \to W^\sigma_j$ and all its derivatives are smooth in $\lambda$ at and near infinity. 
\end{enumerate}
Further, $\Xi_\lambda$ extends to the double $\tW^\sigma_0$ and the analogous properties hold.   
\end{lemma}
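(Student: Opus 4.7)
\emph{Approach.} I would build $\Xi_\lambda$ by adapting the cut-off-and-translate construction of $T_\lambda$ from Section~\ref{sec:Flambda} to the blow-up, while parametrizing in $\lambda$ through the homeomorphism $f:(0,\infty]\to[0,1)$ from Section~\ref{sec:StabilityPoints} so as to pick up smoothness at $\lambda = \infty$. Fix $\epsilon > 0$ as in Assumption~\ref{as:1}, enumerate the orbits $\O^j$ of stationary points of $\Xqagcsigma$ in $\N$, and pick a basepoint $x^j_\lambda$ on the nearby orbit $\O^j_\lambda$ of stationary points of $\Xqmlgcsigma$ via Proposition~\ref{prop:nearby}. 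The argument at the end of Section~\ref{sec:StabilityPoints} shows that after reparametrizing by $r = f(\lambda)$, the family $r \mapsto x^j_{f^{-1}(r)}$ extends smoothly to $r = 0$ with value $x^j_\infty$. Let $H^j_\lambda$ be a smooth $L^2_0$-cut-off equal to $1$ in the $\epsilon$-neighborhood of $\O^j_\lambda$ and vanishing outside its $2\epsilon$-neighborhood, and let $\omega^j_\lambda$ be the $L^2$-closest-point phase factor from \eqref{eq:omegalambda} (set to $1$ on reducible orbits). Note that when $\O^j$ is reducible, both $x^j_\infty$ and $x^j_\lambda$ lie on the blow-up locus with zero spinor, so the translation vector $x^j_\infty - x^j_\lambda$ (computed in the blow-down $W = \ker d^* \oplus L^2(Y;\Spin)$) has vanishing spinor component; when $\O^j$ is irreducible, Assumption~\ref{as:1}(b) together with smallness of $x^j_\infty - x^j_\lambda$ guarantees that the spinors on $\O^j_\lambda$ and on nearby points have $L^2$-norm uniformly bounded below by $3\epsilon$.

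\emph{Construction and verification.} Define first a map on the blow-down by
\[
\widetilde\Xi_\lambda(a, \Phi) \;=\; (a, \Phi) \;+\; \sum_{j=1}^m H^j_\lambda(a, \Phi)\,\omega^j_\lambda(a, \Phi)\cdot\bigl(x^j_\infty - x^j_\lambda\bigr),
\]
which is $S^1$-equivariant, preserves each Sobolev completion $W_j$ for $0 \leq j \leq k$, and tends to the identity in operator norm as $\lambda\to\infty$. Lift to a map $\Xi_\lambda : W^\sigma_0 \to W^\sigma_0$ as follows: near a reducible orbit the translation is purely in the connection direction, so the formula extends unambiguously as $(a,s,\phi) \mapsto (a + H^j_\lambda\Delta a^j, s, \phi)$, which is smooth across the boundary $\{s=0\}$; near an irreducible orbit the spinor $\Phi = s\phi$ has $L^2$-norm bounded away from $0$, so the blow-down is a diffeomorphism there and $\widetilde\Xi_\lambda$ lifts uniquely, while for $\lambda$ large the perturbation is small enough that the lift remains in $\{s > 0\}$; away from every $\nu_{2\epsilon}(\O^j_\lambda)$, set $\Xi_\lambda = \mathrm{id}$. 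For $\lambda \gg 0$ the resulting $\Xi_\lambda$ is a $C^1$-small perturbation of the identity, hence a diffeomorphism of $W^\sigma_0$ by the inverse function theorem, giving (i) and $S^1$-equivariance. Since the construction uses only $L^2$-distance, $L^2$-inner-product and pointwise multiplication by smooth cut-offs, $\Xi_\lambda$ preserves $W^\sigma_j$ for $1 \leq j \leq k$, which is (ii). For (iii), each of $H^j_\lambda$, $\omega^j_\lambda$ and the translation vectors is smooth in $r = f(\lambda)$ at and near $r = 0$ (with $\Xi_{f^{-1}(0)} = \mathrm{id}$); the required decay of derivatives in $r$ is exactly the mechanism of Lemma~\ref{lem:homeo}. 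Finally, to extend to the double $\tW^\sigma_0$, observe that the formulas defining $\widetilde\Xi_\lambda$ and its lift are insensitive to the sign of $s$ (reducible-orbit contributions modify only $a$, and irreducible-orbit contributions are supported away from $\{s=0\}$), so $\Xi_\lambda$ extends $\mathbb{Z}/2$-equivariantly across the boundary.

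\emph{Main obstacle.} The delicate point is arranging the lift to the blow-up so that it is simultaneously a diffeomorphism, smooth across the boundary $\{s=0\}$, and smooth in $\lambda$ at infinity. The key observation making this work is the dichotomy between reducible orbits (where the translation vector has vanishing spinor part and hence extends trivially across the boundary) and irreducible orbits (where the spinor is bounded below by $3\epsilon$ and the boundary plays no role). Once this dichotomy is in place, everything else is a direct application of the inverse function theorem and the implicit function theorem arguments already developed in Section~\ref{sec:StabilityPoints}.
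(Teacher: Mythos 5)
Your idea of defining $\widetilde\Xi_\lambda$ on the blow-down $W$ and then lifting to the blow-up $W^\sigma_0$ is a genuinely different route from the paper's, which works directly in the blow-up via a chart $G_{x_\infty}$ around each blown-up stationary point. Unfortunately, the lifting step fails for reducible stationary points, and this is a fatal gap.

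Recall that the reducible stationary points of $\Xqgcsigma$ in the blow-up have the form $x_\infty = (a_\infty, 0, \phi_\infty)$ with $\phi_\infty$ a unit eigenvector of $D_{\q, a_\infty}$; for a fixed connection $a_\infty$ there is one such stationary point for each eigenvalue in the grading range. The corresponding stationary points $x_\lambda = (a_\lambda, 0, \phi_\lambda)$ of $\Xqmlgcsigma$ have spinors $\phi_\lambda$ that are eigenvectors of the \emph{different} operator $D_{\q^\lambda, a_\lambda}$, so generically $\phi_\lambda \neq \phi_\infty$. Property (i) of the lemma demands $\Xi_\lambda(x_\lambda) = x_\infty$, i.e. both the connection and the spinor must be corrected. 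But your lift near a reducible orbit is $(a,s,\phi) \mapsto (a + H^j_\lambda\Delta a^j, s, \phi)$, which leaves the blow-up spinor untouched. This is not a choice you can adjust away: the blow-down translation vector $x^j_\infty - x^j_\lambda = (a_\infty - a_\lambda, 0)$ has trivial spinor component, and a map of $W$ that is the identity on the spinor factor near $\Phi = 0$ can only lift to a map of $W^\sigma$ that is the identity on the $(s,\phi)$ parameters. So your $\Xi_\lambda$ sends $(a_\lambda, 0, \phi_\lambda) \mapsto (a_\infty, 0, \phi_\lambda) \neq x_\infty$, and (i) fails precisely at the reducibles -- which is exactly the locus where the blow-up carries extra structure that the blow-down forgets.

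The paper's proof avoids this by enumerating the blown-up stationary points $x^i_\infty$ (not just their blow-down orbits), constructing for each one a slice
\[
U_{x_\infty} = \{ (a,s,\phi) \in W^\sigma_0 \mid \Re \langle \phi, i\phi_\infty \rangle_{L^2} = 0,\ \langle \phi, \phi_\infty \rangle_{L^2} > 0 \}
\]
with chart $G_{x_\infty}(a,s,\phi) = \bigl(a, s, \tfrac{\phi}{\langle\phi,\phi_\infty\rangle_{L^2}} - \phi_\infty\bigr)$ onto the linear model $V_{x_\infty}$, and performing the cut-off translation there (from $G_{x_\infty}(x_\lambda)$ to $G_{x_\infty}(x_\infty)$). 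Conjugating back by $G_{x_\infty}^{-1}$ and extending $S^1$-equivariantly yields a diffeomorphism that \emph{does} move the spinor component of blown-up reducibles, which is what (i) requires. To repair your argument you would need to replace the blow-down construction and its lift by a blow-up-level translation of this kind. The rest of your outline --- smoothness in $r = f(\lambda)$ via Lemma~\ref{lem:homeo}, the inverse function theorem for diffeomorphicity of a $C^1$-small perturbation of the identity, and the extension to the double $\tW^\sigma_0$ --- matches the paper's strategy and would carry over once the lift is fixed.
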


Note that since $W^\sigma_j$ is not an affine space, we do not have a natural notion of higher derivatives.  However, we can think of $W^\sigma_j$ as naturally being embedded as a submanifold of the linear space $\What_j$, where we remove the conditions $s \geq 0$ and $\| \phi \|_{L^2} = 1$.  (The same is true for the double $\tW^\sigma_j$.)  Of course, $\What_j \cong W_j \times \mathbb{R}$.  We will then treat $\Xi_\lambda$ as a map from $W^\sigma_j$ to $\What_j$.  After defining $\Xi_\lambda$, it will be clear that this extends naturally to an $L^2_j$ neighborhood $\nu(W^\sigma_j)$ of $W^\sigma_j$ in $\What_j$.  We can make sense of higher derivatives of $\Xi_\lambda$ on $\nu(W^\sigma_j)$, since this is an open submanifold of a linear space, and these are the derivatives we will talk about (and measure in norm).  Further, since we may consider elements of $W^\sigma_j$ as elements of the linear space $\What_j$, it makes sense to take differences of elements there and take their $L^2_j$ norms.

We will give the proof of Lemma~\ref{lem:Xixylambda} in Sections~\ref{sec:define-xi} and \ref{sec:xi-3d-bounds} below.  We will then use this to obtain a number of important technical consequences for diffeomorphisms of path spaces.  Note that given a path $\gamma$ in $W^\sigma_j$, we may apply $\Xi_\lambda$ slicewise to obtain a new path $\Xi_\lambda(\gamma)$ in $W^\sigma_j$.  We will study the regularity of $\Xi_\lambda$ on four-dimensional configurations.  Before restating it, we need some discussion and terminology. 

In parts (ii)-(iv) of the following proposition, we will discuss the smoothness of $\Xi_\lambda : W^\tau_j(x_\lambda, y_\lambda) \to W^\tau_j(x_\infty, y_\infty)$ in $\lambda$. This a priori is not defined, as the domain space is changing.  We postpone the discussion for what we mean by this in Section~\ref{xi:non-compact} (given after the statement of Proposition~\ref{xi:half-cylinder}) for expediency.

\begin{proposition}\label{prop:Xixy-paths}
Let $\Xi_\lambda$ be as above.  Fix stationary points $x_\infty$ and $y_\infty$ of $\Xqgcsigma$ in $\N$ and let $x_\lambda$ and $y_\lambda$ be the stationary points of $\Xqmlgcsigma$ which minimize $L^2$ distance to $x_\infty$ and $y_\infty$ respectively.  Then for $\lambda \gg 0$ and $ 1 \leq j \leq k$, we have the following:  
\begin{enumerate}[(i)]
\item\label{xi:paths} for a compact interval $I \subseteq \mathbb{R}$, the map $\Xi_{\lambda}$ induces an $S^1$-equivariant diffeomorphism of $\tW^\tau_{j}(I \times Y)$ which is smooth in $\lambda$ at and near $\infty$ and preserves $W^\tau_{j}(I \times Y)$, 
\item\label{xi:paths-smooth-f} $\Xi_\lambda$ induces diffeomorphisms from $\tW^\tau_j(x_\lambda,y_\lambda)$ to $\tW^\tau_j(x_\infty,y_\infty)$, which vary smoothly in $\lambda$ at and near $\infty$, 
\item \label{xi:Bgc} $\Xi_\lambda$ induces a diffeomorphism from $\tB^{\gCoul,\tau}_j([x_\lambda],[y_\lambda])$ to $\tB^{\gCoul,\tau}_j([x_\infty],[y_\infty])$, 
\item \label{xi:Vgc} the diffeomorphisms $\Xi_\lambda: \tB^{\gCoul,\tau}_j([x_\lambda],[y_\lambda]) \to \tB^{\gCoul,\tau}_j([x_\infty],[y_\infty])$ from the above item lift to smooth (in domain and also with respect to $\lambda$ at and near $\infty$) bundle maps 
$$
\xymatrix{
\V^{\gCoul,\tau}_j \ar[r]^{(\Xi_\lambda)_*} \ar[d] & \V^{\gCoul,\tau}_j \ar[d] \\
\tB^{\gCoul,\tau}_j([x_\lambda], [y_\lambda]) \ar[r]^{\Xi_\lambda} & \tB^{\gCoul,\tau}_j([x_\infty], [y_\infty]).
}
$$
If $[x_\infty] \neq [y_\infty]$, the analogous statement also holds for $\tB^{\gCoul,\tau}_j([x_\infty],[y_\infty])/\mathbb{R}$.  
\end{enumerate}
\end{proposition}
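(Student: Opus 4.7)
The plan is to bootstrap the three-dimensional statement of Lemma~\ref{lem:Xixylambda} to four-dimensional path spaces via slicewise application of $\Xi_\lambda$, handling the compact cylinder case first and then the asymptotic setting.

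For part (i), I would apply $\Xi_\lambda$ slicewise: given $\gamma \in \tW^\tau_j(I \times Y)$ written as $\gamma(t) = (a(t), s(t), \phi(t))$, set $\Xi_\lambda(\gamma)(t) := \Xi_\lambda(\gamma(t))$. To show this lands in $\tW^\tau_j(I \times Y)$, we decompose the four-dimensional $L^2_j$ norm into $L^2$ bounds on $\nabla^m \partial_t^{j-m}(\Xi_\lambda \circ \gamma)$ for $0 \le m \le j$. The case $m = j$ uses Lemma~\ref{lem:Xixylambda}\eqref{xi:sobolev}, that $\Xi_\lambda$ is a self-diffeomorphism of $W^\sigma_j$; the lower-$m$ cases follow from the chain rule $\partial_t \Xi_\lambda(\gamma) = (d\Xi_\lambda)_{\gamma(t)}(\partial_t \gamma(t))$ together with the corresponding derivative estimates from Lemma~\ref{lem:Xixylambda}\eqref{xi:smooth-f} and Sobolev multiplication. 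The inverse is constructed in the same way from $\Xi_\lambda^{-1}$. The $S^1$-equivariance, the preservation of the subset $W^\tau_j(I \times Y)$ defined by $s(t) \ge 0$ and $\|\phi(t)\|_{L^2(Y)} = 1$, and the smoothness in $\lambda$ are all inherited slicewise from the corresponding three-dimensional properties.

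For part (ii), the key subtlety is that the source space $\tW^\tau_j(x_\lambda, y_\lambda)$ varies with $\lambda$. I would trivialize this family near $\lambda = \infty$ as follows: fix a smooth reference path $\gamma_0^\infty \in \tW^\tau_j(x_\infty, y_\infty)$ and set $\gamma_0^\lambda := \Xi_\lambda^{-1}(\gamma_0^\infty)$. Since $\Xi_\lambda(x_\lambda) = x_\infty$ and $\Xi_\lambda(y_\lambda) = y_\infty$ by Lemma~\ref{lem:Xixylambda}\eqref{xi:xlambda}, the path $\gamma_0^\lambda$ has the correct asymptotics. Any $\gamma \in \tW^\tau_j(x_\lambda, y_\lambda)$ can be written as $\gamma_0^\lambda + v$ with $v$ in a fixed $L^2_j$ Sobolev space on $\R \times Y$, giving an identification of $\tW^\tau_j(x_\lambda, y_\lambda)$ with an $L^2_j$-neighborhood of $0$ in that fixed space. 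Under this trivialization, $\Xi_\lambda$ corresponds to $v \mapsto \Xi_\lambda(\gamma_0^\lambda + v) - \gamma_0^\infty$. Writing this as an integral of derivatives of $\Xi_\lambda$ along the straight line from $\gamma_0^\lambda$ to $\gamma_0^\lambda + v$, and using Lemma~\ref{lem:Xixylambda}\eqref{xi:smooth-f} slicewise together with the argument from part (i), gives the required Sobolev bounds and smoothness in $\lambda$.

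Parts (iii) and (iv) follow formally. For (iii), since $\Xi_\lambda$ is $S^1$-equivariant and the gauge action of $\G^{\gCoul}(Z) = C^\infty(I; S^1)$ on a configuration $\gamma$ acts slicewise by the residual $S^1$-action on $W^\sigma$ (together with a $dt$-term which $\Xi_\lambda$ leaves untouched), slicewise application of $\Xi_\lambda$ commutes with this gauge action and hence descends to the quotient path spaces. For (iv), the bundle $\V^{\gCoul,\tau}_j$ over $\tB^{\gCoul,\tau}_j$ is modeled on paths of vectors in $\T^{\gCoul,\sigma}$; the pushforward $(\Xi_\lambda)_* := d\Xi_\lambda$ applied slicewise gives the required bundle map, with smoothness following from Lemma~\ref{lem:Xixylambda}\eqref{xi:smooth-f}. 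The $\R$-action by translation commutes with slicewise application of $\Xi_\lambda$, so the construction descends to the translation quotient when $[x_\infty] \neq [y_\infty]$.

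The main obstacle will be part (ii), specifically making the notion of smoothness in $\lambda$ precise when the domain changes. The reference-path trivialization above resolves this, but verifying that the relevant differences are controlled uniformly in $\lambda$ will require careful application of Sobolev multiplication and of the fact, forthcoming from the construction of $\Xi_\lambda$ in Sections~\ref{sec:define-xi}--\ref{sec:xi-3d-bounds}, that $\Xi_\lambda - \mathrm{id}$ is essentially supported in a fixed neighborhood of the finite set of stationary points in $\N$.
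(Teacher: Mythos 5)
Your overall architecture (slicewise application, reference-path trivialization for part (ii), formal arguments for parts (iii) and (iv)) is the right one and agrees with the paper's strategy. However, there are two genuine gaps, one in each of parts (i) and (ii).

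In part (i), invoking Lemma~\ref{lem:Xixylambda}\eqref{xi:sobolev} "for the $m = j$ case" does not supply four-dimensional regularity. A path $\gamma \in \tW^\tau_j(I \times Y)$ is not slicewise in $W^\sigma_j$ (trace theory only gives $W^\sigma_{j-1/2}$ a.e.), and even where it is, a pointwise statement that $\Xi_\lambda$ is a self-diffeomorphism of $W^\sigma_j$ does not control $\int_I \|\nabla^j_Y \Xi_\lambda(\gamma(t))\|^2_{L^2(Y)}\,dt$, because $\Xi_\lambda$ is nonlinear and the map $x \mapsto \|\Xi_\lambda(x)\|_{L^2_j}$ need not be bounded on $L^2_j$-bounded sets in an infinite-dimensional space. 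The paper avoids this by working directly from the explicit formula \eqref{eq:Xilambda-Ux} and proving a package of four-dimensional Sobolev lemmas (Lemmas~\ref{xi:inner-product-path}--\ref{xi:alpha}) tailored to the constituent operations: slicewise $L^2(Y)$ inner products with a fixed smooth configuration, pointwise multiplication of $L^2_j(I;\rr)$ by $\What_j(I\times Y)$, and superposition with $h$ smooth. These lemmas turn the path-regularity question into explicit Sobolev-multiplication estimates in which the time-regularity is tracked carefully, and they cannot be replaced by a bare appeal to the three-dimensional statement plus the chain rule.

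In part (ii) the gap is more serious. Your reference-path trivialization is exactly what the paper uses, but your claim that "using Lemma~\ref{lem:Xixylambda}\eqref{xi:smooth-f} slicewise together with the argument from part (i) gives the required Sobolev bounds" is false as stated, because part (i) relies on compactness of $I$: the superposition result Lemma~\ref{lem:composeSobolev}(a) fails on unbounded intervals. The fix requires the $h(0)=0$ version of that lemma, and the paper isolates the asymptotic regime in Proposition~\ref{xi:half-cylinder}, where the half-cylinders $[T,\infty)\times Y$ and $(-\infty,-T]\times Y$ are treated separately. On the half-cylinders $\beta_i \equiv 1$, so the interpolation via $\omega^i_\infty$ and $\alpha$ drops out and $\Xi_\lambda$ reduces to a simpler translate-and-normalize formula; this is precisely the setting where Lemma~\ref{lem:xi-inversion-global} (the unbounded-interval normalization lemma, proved via the $h(0)=0$ mechanism) applies. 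The middle compact interval $[-T,T]$ is then covered by part (i). Your proposal skips this decomposition and would therefore be unable to control $L^2_j(\R\times Y)$ norms near the two ends.

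Parts (iii) and (iv) as you sketch them match the paper, and I do not see a gap there.
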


As for the case of $W^\sigma_j$, there exists a similar way to discuss the derivatives of $\Xi_\lambda$ in four dimensions.  We will think of $\tW^\tau_j(I \times Y)$ as naturally being embedded as a submanifold of the linear space $\What_j(I \times Y)$, where we remove the conditions $s(t) \geq 0$ and $\| \phi(t) \|_{L^2(Y)} = 1$.  Following the discussion above, it turns out that $\Xi_\lambda$ will extend to a neighborhood of $\tW^\tau_j(I \times Y)$ in $\What_j(I \times Y)$.  For simplicity, we will work in the larger affine space to compute derivatives and measure the distance between configurations.  This will not affect any statements about smoothness or regularity.

Before giving the proofs of Lemma~\ref{lem:Xixylambda} and Proposition~\ref{prop:Xixy-paths}, let us state two immediate corollaries of the latter. The first follows from part \eqref{xi:paths}, by continuity:
\begin{corollary}
\label{cor:Xixy-paths}
For $1 \leq j \leq k$, if a sequence $\gamma_n \in W^\tau_{j,loc}(I \times Y)$ converges to some $\gamma_\infty$, then $\Xi_{\lambda_n}(\gamma_n) \to \gamma_\infty$ for any sequence $\lambda_n \to \infty$.
\end{corollary}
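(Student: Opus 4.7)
The plan is to reduce the statement to the already-established part~\eqref{xi:paths} of Proposition~\ref{prop:Xixy-paths} by a standard continuity-plus-triangle-inequality argument, localizing to compact subintervals so that part~\eqref{xi:paths} applies directly. Recall that convergence in $W^\tau_{j,loc}(I \times Y)$ means convergence in $W^\tau_j(I' \times Y)$ for every compact subinterval $I' \Subset I$. Hence it suffices to fix such an $I'$ and show $\Xi_{\lambda_n}(\gamma_n)|_{I'} \to \gamma_\infty|_{I'}$ in $W^\tau_j(I' \times Y)$.

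I would split the difference as
\begin{equation*}
\Xi_{\lambda_n}(\gamma_n) - \gamma_\infty \;=\; \bigl(\Xi_{\lambda_n}(\gamma_n) - \Xi_{\lambda_n}(\gamma_\infty)\bigr) \;+\; \bigl(\Xi_{\lambda_n}(\gamma_\infty) - \gamma_\infty\bigr),
\end{equation*}
viewing both terms inside the ambient linear space $\What_j(I' \times Y)$ (as discussed right after the statement of Proposition~\ref{prop:Xixy-paths}). The second term goes to zero in $W^\tau_j(I' \times Y)$ because part~\eqref{xi:paths} asserts that $\lambda \mapsto \Xi_\lambda$ is smooth in $\lambda$ at and near $\infty$, with $\Xi_\infty$ equal to the identity; evaluating this smooth family on the fixed configuration $\gamma_\infty|_{I'}$ and using $f^{-1}(0)=\infty$ gives $\Xi_{\lambda_n}(\gamma_\infty) \to \gamma_\infty$.

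For the first term I would use that, again by part~\eqref{xi:paths}, each $\Xi_{\lambda_n}$ is a diffeomorphism of $\tW^\tau_j(I' \times Y)$, and the derivatives of $\Xi_\lambda$ also depend smoothly on $\lambda$ at and near $\infty$. Since the sequence $\{\gamma_n\}$ converges in $W^\tau_j(I' \times Y)$, it is $L^2_j$-bounded and stays inside some fixed $L^2_j$-neighborhood of $\gamma_\infty|_{I'}$; hence there is a uniform bound $C$ on the operator norm of the differential of $\Xi_{\lambda_n}$ along the straight-line segment from $\gamma_n|_{I'}$ to $\gamma_\infty|_{I'}$ for all sufficiently large $n$. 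By the mean value inequality applied in the ambient linear space $\What_j(I' \times Y)$,
\begin{equation*}
\|\Xi_{\lambda_n}(\gamma_n) - \Xi_{\lambda_n}(\gamma_\infty)\|_{L^2_j(I' \times Y)} \;\leq\; C\,\|\gamma_n - \gamma_\infty\|_{L^2_j(I' \times Y)} \;\longrightarrow\; 0.
\end{equation*}

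Combining the two estimates gives $\Xi_{\lambda_n}(\gamma_n) \to \gamma_\infty$ in $W^\tau_j(I' \times Y)$, and since $I' \Subset I$ was arbitrary, we conclude $\Xi_{\lambda_n}(\gamma_n) \to \gamma_\infty$ in $W^\tau_{j,loc}(I \times Y)$. I do not expect any real obstacle here beyond the minor bookkeeping of working in the ambient linear space $\What_j$; the two genuine inputs (uniform Lipschitz control on $\Xi_{\lambda_n}$ and the evaluation $\Xi_\infty = \mathrm{id}$) are both immediate consequences of part~\eqref{xi:paths}.
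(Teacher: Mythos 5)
Your argument is a correct expansion of what the paper leaves to the reader (the paper simply says the corollary "follows from part \eqref{xi:paths}, by continuity"): you localize to compact subintervals, split the difference as $(\Xi_{\lambda_n}(\gamma_n) - \Xi_{\lambda_n}(\gamma_\infty)) + (\Xi_{\lambda_n}(\gamma_\infty) - \gamma_\infty)$, and handle the two terms using, respectively, a uniform Lipschitz bound on $\Xi_{\lambda_n}$ near $\gamma_\infty$ and the fact that $\Xi_\lambda \to \Xi_\infty = \mathrm{id}$, both of which are supplied by the smooth dependence in $\lambda$ asserted in part \eqref{xi:paths}. This is the same approach the paper intends; no gap.
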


The second corollary follows from part \eqref{xi:paths-smooth-f} of Proposition~\ref{prop:Xixy-paths}: 
\begin{corollary}
\label{cor:path-distance} For $1 \leq j \leq k$, $\gamma_0 \in W^{\tau}_j(x_{\infty}, y_{\infty})$, if a sequence $\gamma_n \in W^\tau_j(x_{\lambda_n}, y_{\lambda_n})$ with $\lambda_n \to \infty$ satisfies 
$$\| \gamma_n - \Xi^{-1}_{\lambda_n}(\gamma_0) \|_{L^2_j(\rr \times Y)} \to 0,$$ 
then 
$$ \| \Xi_{\lambda_n}(\gamma_n) - \gamma_0 \|_{L^2_j(\rr \times Y)} \to 0.$$ 
\end{corollary}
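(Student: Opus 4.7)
The plan is to reduce the statement to a uniform Lipschitz bound on the family $\Xi_{\lambda_n}$ in a common $L^2_j$-neighborhood of $\gamma_0$. Writing $\eta_n := \Xi^{-1}_{\lambda_n}(\gamma_0) \in W^\tau_j(x_{\lambda_n}, y_{\lambda_n})$, we have $\Xi_{\lambda_n}(\eta_n) = \gamma_0$, so it suffices to control
$$\|\Xi_{\lambda_n}(\gamma_n) - \Xi_{\lambda_n}(\eta_n)\|_{L^2_j(\mathbb{R} \times Y)}$$
in terms of $\|\gamma_n - \eta_n\|_{L^2_j(\mathbb{R}\times Y)}$, where the latter tends to $0$ by hypothesis. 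Here we view both $\gamma_n$ and $\eta_n$ as elements of the ambient linear space $\widehat W_j(\mathbb{R}\times Y)$ in which $\widetilde W^\tau_j(x_{\lambda_n},y_{\lambda_n})$ is embedded, as indicated in the paragraph following the statement of Proposition~\ref{prop:Xixy-paths}.

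First I would fix an open $L^2_j$-ball $\Omega$ in $\widehat W_j(\mathbb{R}\times Y)$ around $\gamma_0$ of some small radius $\rho>0$. Since $\Xi_{\lambda_n}(\eta_n) = \gamma_0$ and $\Xi_\lambda \to \text{id}$ smoothly at infinity by Proposition~\ref{prop:Xixy-paths}(ii) (the smoothness being in the reparametrization variable $r = f(\lambda)$ of Lemma~\ref{lem:homeo}), for $n$ sufficiently large $\eta_n$ lies arbitrarily close to $\gamma_0$ in $L^2_j$, and hence in the ball of radius $\rho/2$. Combined with $\|\gamma_n - \eta_n\|_{L^2_j} \to 0$, we conclude that both $\gamma_n$ and $\eta_n$, as well as the straight-line segment joining them, lie in $\Omega$ for all $n$ large.

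Next I would extract from Proposition~\ref{prop:Xixy-paths}(ii) a uniform operator-norm bound on the derivative $D\Xi_{\lambda}$ over $\Omega$ for $\lambda \gg 0$. Concretely: the family $\Xi_\lambda$ is smooth in $\lambda$ at and near $\infty$ with $\Xi_\infty = \text{id}$, so $D\Xi_\lambda$ extends to a continuous family of bundle maps on $\Omega$ that converges to the identity as $\lambda \to \infty$. Continuity in the pair (base point, $\lambda$) on a relatively compact region together with $\Xi_\infty = \text{id}$ yields a uniform bound $\|D_\gamma \Xi_\lambda\|_{L^2_j \to L^2_j} \leq C$ for all $\gamma \in \Omega$ and all sufficiently large $\lambda$. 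Applying the mean-value inequality along the segment from $\eta_n$ to $\gamma_n$ then gives
$$\|\Xi_{\lambda_n}(\gamma_n) - \gamma_0\|_{L^2_j(\mathbb{R}\times Y)} = \|\Xi_{\lambda_n}(\gamma_n) - \Xi_{\lambda_n}(\eta_n)\|_{L^2_j(\mathbb{R}\times Y)} \leq C \|\gamma_n - \eta_n\|_{L^2_j(\mathbb{R}\times Y)} \to 0,$$
which is what we wanted.

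The only subtle point is the uniform Lipschitz bound on $\Xi_\lambda$ over the non-compact cylinder $\mathbb{R}\times Y$; this is where the explicit construction of $\Xi_\lambda$ (applied slicewise, with cutoffs in $L^2_k(Y)$-distance around the finitely many stationary orbits) together with the smoothness-in-$\lambda$ statement of Proposition~\ref{prop:Xixy-paths}(ii) must be invoked, rather than only the compact-interval statement of part (i). I do not expect this to introduce further difficulty once the smoothness statement in part (ii) is in hand, since the map $\Xi_\lambda$ differs from the identity only in fixed slicewise neighborhoods of the stationary orbits and the bound is purely local at each slice.
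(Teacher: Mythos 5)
Your approach is sound in outline, and it matches the paper's intended derivation (the paper simply cites Proposition~\ref{prop:Xixy-paths}\eqref{xi:paths-smooth-f} and stops), but you introduce an unnecessary detour through a uniform Lipschitz bound, and the compactness justification you give for that bound is not available. An $L^2_j$-ball $\Omega$ in $\What_j(\mathbb{R}\times Y)$ is \emph{not} relatively compact, so ``continuity of $D\Xi_\lambda$ on a relatively compact region'' is not a valid way to extract a uniform operator-norm bound. In an infinite-dimensional Banach space, joint continuity of a derivative does not give boundedness over a ball.

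What you actually need is much less. Following the convention of the paragraph after Proposition~\ref{xi:half-cylinder}, the family $\Xi_\lambda$ is encoded as a family of maps with fixed domain and codomain
\[
G_{r} : v \longmapsto \Xi_{f^{-1}(r)}\bigl(\Xi^{-1}_{f^{-1}(r)}(\gamma_0) + v\bigr) - \gamma_0, \qquad G_r : \What_j(\mathbb{R}\times Y) \to \What_j(\mathbb{R}\times Y),
\]
with $G_0 = \mathrm{id}$ (since $\Xi_\infty = \mathrm{id}$), and Proposition~\ref{prop:Xixy-paths}\eqref{xi:paths-smooth-f} asserts that $(v,r) \mapsto G_r(v)$ is smooth (in particular jointly continuous) at and near $(0,0)$. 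With $v_n := \gamma_n - \Xi^{-1}_{\lambda_n}(\gamma_0)$ and $r_n := f(\lambda_n)$, the hypothesis is precisely $v_n \to 0$ and $r_n \to 0$, so joint continuity at $(0,0)$ immediately gives $\Xi_{\lambda_n}(\gamma_n) - \gamma_0 = G_{r_n}(v_n) \to G_0(0) = 0$. No mean-value inequality is needed. Alternatively, if you want to keep the mean-value step, replace the appeal to compactness by continuity of the derivative at the single point $(0,0)$: joint $C^1$ regularity there, together with $D_vG_0 = \mathrm{id}$, gives a neighborhood of $(0,0)$ in $\What_j \times [0,1)$ on which $\|D_v G_r\| \le 2$, and for $n$ large the segment from $0$ to $v_n$ paired with $r_n$ lies entirely in this neighborhood, so the Lipschitz estimate goes through. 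Either way the argument closes, but the ``relatively compact region'' phrasing must be dropped.
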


We now give the organization of the rest of this section.  In Section~\ref{sec:define-xi}, we give the construction of $\Xi_\lambda$.  In Section~\ref{sec:xi-3d-bounds}, we prove the desired properties of $\Xi_\lambda$ in Lemma~\ref{lem:Xixylambda}.  In Section~\ref{sec:xi-4d-bounds}, we prove parts (i) and (ii) of Proposition~\ref{prop:Xixy-paths}.  Finally, in Section~\ref{sec:xi-ext} we prove parts (iii) and (iv).

\section{The construction of $\Xi_\lambda$}\label{sec:define-xi}
\begin{lemma}\label{lem:xiconstruct}
For $\lambda \gg 0$, there exists an $S^1$-equivariant diffeomorphism $\Xi_{\lambda} : W^\sigma_{0} \to W^\sigma_{0}$ which sends $x_\lambda$ to $x_\infty$ for each stationary point $x_\infty$ of $\Xqgcsigma$ in $\N$.   
\end{lemma}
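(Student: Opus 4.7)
The plan is to mimic the construction of $T_\lambda$ from Section~\ref{sec:Flambda}, but lifted to the blow-up $W^\sigma_0$ and arranged so that the resulting map is an $S^1$-equivariant diffeomorphism rather than merely a smooth map. First I fix $\epsilon > 0$ small enough that the $2\epsilon$-neighborhoods $\nu_{2\epsilon}(\O^j_\lambda)$ of the orbits $\O^j_\lambda \subset W^\sigma_{k-1}$ of stationary points of $\Xqmlgcsigma$ in $\N$ are pairwise disjoint, and (as in Assumption~\ref{as:1}) that each irreducible orbit stays bounded away from the reducible locus. For each $j$, pick a basepoint $x^j_\lambda = (a^j_\lambda, s^j_\lambda, \phi^j_\lambda) \in \O^j_\lambda$ and let $x^j_\infty = (a^j_\infty, s^j_\infty, \phi^j_\infty)$ be the corresponding basepoint on $\O^j$ as chosen in Section~\ref{sec:Flambda}; for $\lambda \gg 0$ the differences $a^j_\infty - a^j_\lambda$, $s^j_\infty - s^j_\lambda$, $\phi^j_\infty - \phi^j_\lambda$ become arbitrarily small in $L^2_k$, by Proposition~\ref{prop:nearby} and Lemma~\ref{lem:implicitfunctionreducible}.

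Next, for each $j$ I define an $S^1$-equivariant alignment function $\omega^j_\lambda : \nu_{2\epsilon}(\O^j_\lambda) \to S^1$ exactly as in \eqref{eq:omegalambda} using the blow-down projection, taking $\omega^j_\lambda \equiv 1$ for reducible orbits. I then set
\[
\Xi_\lambda(a, s, \phi) = (a, s, \phi) + \sum_j H^j_\lambda(a,s,\phi)\cdot \omega^j_\lambda(a,s,\phi) \cdot \bigl( a^j_\infty - a^j_\lambda,\ s^j_\infty - s^j_\lambda,\ \phi^j_\infty - \phi^j_\lambda\bigr),
\]
where $H^j_\lambda$ is a smooth cut-off equal to $1$ on $\nu_\epsilon(\O^j_\lambda)$ and supported in $\nu_{2\epsilon}(\O^j_\lambda)$, as in Section~\ref{sec:Flambda}. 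Two structural checks are then needed. First, $\Xi_\lambda$ must preserve the conditions $s \geq 0$ and $\|\phi\|_{L^2} = 1$ that cut out $W^\sigma_0$ inside the ambient Hilbert space $\widehat W_0$: this is arranged by working not with the naive translation above but with its composition with the natural radial rescaling of $\phi$ to unit norm and a careful choice of $s$-component (for reducible orbits $s^j_\lambda = s^j_\infty = 0$, so the $s$-coordinate is preserved; for irreducible orbits we arrange the interpolation to respect $s > 0$, which is automatic for $\lambda \gg 0$ since $s^j_\lambda$ is uniformly bounded away from $0$). Equivalently, I first define $\Xi_\lambda$ on an $L^2_0$-open neighborhood of $W^\sigma_0$ in $\widehat W_0$ by the formula above, and then compose with the smooth retraction $(a,s,\phi) \mapsto (a, |s|, \phi/\|\phi\|_{L^2})$ onto $W^\sigma_0$; the extension to the double $\tW^\sigma_0$ is automatic since the construction never used $s \geq 0$. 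Second, $S^1$-equivariance of $\Xi_\lambda$ follows from the $S^1$-equivariance of each cut-off $H^j_\lambda$ (they depend only on $L^2_{k-1}$-distance to $\O^j_\lambda$) together with the fundamental equivariance property $\omega^j_\lambda(u \cdot x) = u \cdot \omega^j_\lambda(x)$ of the alignment map, so that each term $\omega^j_\lambda(x) \cdot (x^j_\infty - x^j_\lambda)$ is sent to its $u$-translate when $x$ is replaced by $u \cdot x$.

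Finally, to see that $\Xi_\lambda$ is a diffeomorphism of $W^\sigma_0$, I observe that the derivative $\D\Xi_\lambda - \id$ is of operator norm bounded by a constant times the maximum of $\|x^j_\infty - x^j_\lambda\|_{L^2_0}$ over $j$: indeed, using the explicit bounds on $dH^j_\lambda$ and on $\D\omega^j_\lambda$ obtained in the proof of Lemma~\ref{lem:approximate-quasi-convergence} (in particular \eqref{eq:Dxomega-bounds}), each summand contributes a term whose $L^2_0 \to L^2_0$ operator norm is $O(\|x^j_\infty - x^j_\lambda\|_{L^2_0})$. Since $\|x^j_\infty - x^j_\lambda\|_{L^2_0} \to 0$ as $\lambda \to \infty$, for $\lambda \gg 0$ the derivative $\D\Xi_\lambda$ is uniformly close to the identity on $W^\sigma_0$, so $\Xi_\lambda$ is a local diffeomorphism everywhere. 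Since it is the identity outside the bounded region $\bigcup_j \nu_{2\epsilon}(\O^j_\lambda)$ and is a small perturbation of the identity globally, a standard Banach-space argument (e.g.\ Hadamard's global inverse function theorem) shows $\Xi_\lambda$ is a diffeomorphism of $W^\sigma_0$. By construction $\Xi_\lambda(x^j_\lambda) = x^j_\infty$ for each $j$. The main obstacle in executing this plan is handling the interaction between the translation and the blow-up constraint $\|\phi\|_{L^2} = 1$ in a way that is simultaneously $S^1$-equivariant and smooth across reducible orbits; this is resolved by performing the construction in the ambient linear space $\widehat W_0$ and then projecting, which is why the extension to $\tW^\sigma_0$ asserted in the statement comes for free.
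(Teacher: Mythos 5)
Your plan captures the broad shape of the argument — a bump-function interpolation between the identity and translation, pulling $x^j_\lambda$ to $x^j_\infty$, then an inverse-function-theorem argument to conclude diffeomorphism — but it imports the machinery of Section~\ref{sec:Flambda} verbatim, which does not survive the passage from $W_{k-1}$ to $W^\sigma_0$.

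The critical gap is the cutoff. You take $H^j_\lambda$ ``as in Section~\ref{sec:Flambda},'' i.e.\ built from the $L^2_{k-1}$-distance to $\O^j_\lambda$. On $W^\sigma_0$ this is not even a well-defined continuous function: generic $L^2_0$ configurations have infinite $L^2_{k-1}$-distance to a smooth orbit, and a small $L^2_0$-perturbation can change that distance from finite to infinite. The paper calls attention to exactly this point at the start of Chapter~\ref{sec:appendix}, and it is the reason the actual construction abandons $T_\lambda$-style cutoffs in favour of a function $\beta_i(x)$ built from the $L^2$-inner product $\langle \phi, \phi^i_\infty\rangle_{L^2}$ and the $L^2$-alignment $\omega^i_\infty(x)$ with the \emph{fixed smooth} basepoint $\phi^i_\infty$. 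Because $\phi^i_\infty\in L^2_k$, these pairings are smooth on all of $W^\sigma_0$. Without this replacement your formula for $\Xi_\lambda$ is not even continuous as a map on $W^\sigma_0$, so the subsequent appeal to \eqref{eq:Dxomega-bounds} and the local/global inverse function theorem has nothing to get started on.

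Two further points, less fatal but worth noting. First, the retraction $(a,s,\phi)\mapsto (a,|s|,\phi/\|\phi\|_{L^2})$ is not smooth at $s=0$, which is exactly where the blow-up boundary sits; the paper instead observes from the explicit formula that near reducible orbits the $s$-component is untouched, and hence $\Xi_\lambda$ is already smooth and tangent to the boundary, giving the extension to $\widetilde W^\sigma_0$ for free. Second, rather than translating in the ambient $\widehat W_0$ and renormalizing, the paper introduces a chart $G_{x_\infty}$ that linearizes the sphere constraint $\|\phi\|_{L^2}=1$ into the linear condition $\langle\phi,\phi_\infty\rangle_{L^2}=0$, performs an honest interpolated translation $\Upsilon_\lambda$ in the linear slice $V_{x_\infty}$, and pulls back; this produces a slightly different map than ``translate then normalize'' (the shift is weighted by $\langle\phi,\phi^i_\infty\rangle_{L^2}\,v^i_\lambda$ rather than by $\phi^i_\infty-\phi^i_\lambda$) and gives cleaner control on both smoothness across the boundary and the derivative estimate. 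Your approach can in principle be repaired by swapping in an $L^2$-based cutoff of the paper's type, but as written it relies on structures that are unavailable in the $L^2_0$ topology.
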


The construction will be similar to that of the diffeomorphism $T_\lambda$ used in Section~\ref{sec:Flambda} to define $F_\lambda$.  Since the blow-up is not a linear space, there is not a notion of translation like for the definition of $T_\lambda$, so we will work in charts (in the directions orthogonal to the $S^1$ orbits).  The reader may note that here we are working with $W^\sigma_0$, while in the definitions of $T_\lambda$ and $F_\lambda$ from Chapter~\ref{sec:quasigradient}, we only worked with $W_{k-1}$.  The explanation for this is that the function $F_\lambda$ incorporated $c$, which is not well-behaved as a map from $W_0 \to W_0$ (since it contains quadratic terms), so we would not have been able to analyze $F_\lambda$ in lower Sobolev regularity.

A more minor distinction is the following.  To construct $T_\lambda(x)$, we used the function $\omega_\lambda$ defined in \eqref{eq:omegalambda} to find which element of the $S^1$-orbit of an approximate stationary point $x_\lambda$ the point $x$ is closest to, and then translate $x$ by $\omega_\lambda(x)(x_\infty - x_\lambda)$, where $x_\infty$ minimizes $L^2$ distance to $x_\lambda$.  To define $\Xi_\lambda(x)$, we will compare $x$ to $x_\infty$ instead of $x_\lambda$, and then translate.  The reason for this is that we will work with local charts, and it is simplest to work in charts around the same point ($x_\infty$) for every $\lambda$, as opposed to defining $\Xi_\lambda$ in terms of different charts for each $\lambda$ and tracking the changes.      
    
For notation, we choose representatives $x^1_\infty,\ldots, x^m_\infty$ for the $m$ orbits of stationary points of $\Xqagcsigma$ in $\N$.

% Constructing Xilambda
% Constructing Xilambda
% Constructing Xilambda
% Constructing Xilambda
% Constructing Xilambda
% Constructing Xilambda
% Constructing Xilambda
% Constructing Xilambda
\begin{proof}[Proof of Lemma~\ref{lem:xiconstruct}]
We will first define $\Xi_\lambda$ on an $S^1$-invariant neighborhood of $x^i_\infty$ in $W^\sigma_0$ for a fixed $i$; for now, we omit the index $i$.  Write $x_\infty = (a_\infty,s_\infty,\phi_\infty)$.  Consider the submanifold 
$$
U_{x_\infty} = \{ (a,s,\phi) \in W^\sigma_0 \mid \Re \langle \phi, i \phi_\infty \rangle_{L^2} = 0, \langle \phi, \phi_\infty \rangle_{L^2} > 0 \}.     
$$
Note that $U_{x_\infty}$ is not open, as it only consists of configurations which are (real) orthogonal to the $S^1$-orbit of $x_{\infty}$.  We will define an $S^1$-equivariant self-diffeomorphism of $S^1 \cdot U_{x_\infty}$ which takes $x_\lambda$ to $x_\infty$ and is the identity outside of a smaller $S^1$-invariant  neighborhood of $x_\infty$.  We will then obtain the desired diffeomorphism $\Xi_\lambda$ by repeating this construction for each orbit of stationary points of $\Xqgcsigma$, and taking $\Xi_\lambda$ to be the identity outside of these neighborhoods.    

First, we observe that there exists a diffeomorphism $G_{x_\infty}$ from $U_{x_\infty}$ to the Hilbert manifold with boundary
$$ 
V_{x_\infty} = \{ (a,s,\phi) \in (\ker d^*)_0 \oplus \mathbb{R} \oplus L^2(Y;\mathbb{S}) \mid s \geq 0, \ \langle \phi, \phi_\infty \rangle_{L^2} = 0 \},
$$
given by 
$$
G_{x_\infty} : (a,s,\phi)  \mapsto \Bigl(a, s, \frac{\phi}{\langle \phi, \phi_\infty \rangle_{L^2}} - \phi_\infty \Bigr).  
$$
We remark that $V_{x_\infty}$ is a submanifold of $\T^{\gCoul,\sigma}_{0, x_\infty}$.  We have that $G_{x_\infty}(x_\infty) = (a_\infty,s_\infty,0)$.  Denote this vector by $z_\infty$.  Note also that in the definition of $G_{x_\infty}$, have that $\Re \langle \phi, \phi_\infty \rangle_{L^2} = \langle \phi, \phi_\infty \rangle_{L^2}$, since $\phi$ is orthogonal to the $S^1$-orbit of $\phi_\infty$. Observe also that the inverse of $G_{x_\infty}$ is given by the formula
$$ G_{x_\infty}^{-1}: (a,s,\phi)  \mapsto  \Bigl(a, s, \frac{\phi + \phi_\infty}{\|  \phi + \phi_\infty \|_{L^2}}\Bigr).$$

Since $x_\lambda= (a_\lambda,s_\lambda,\phi_\lambda)$ and $x_{\infty}$ minimize the $L^2$ distance between their $S^1$-orbits, we deduce that $x_\infty- x_\lambda$ is necessarily orthogonal to the $S^1$-orbit of $x_\infty$.  Further, for $\lambda \gg 0$, $\| x_\infty - x_\lambda \|_{L^2}$ is arbitrarily small, and thus $\langle \phi_\lambda, i \phi_\infty \rangle_{L^2} > 0$, so $x_\lambda$ is contained in $U_{x_\infty}$.  Let $z_\lambda$ denote the image of $x_\lambda$ under $G_{x_\infty}$.  We will construct an interpolation between translation by $z_\infty - z_\lambda$ (which may not be defined near the boundary of $V_{x_\infty}$ if $x_\infty$ is irreducible) and the identity.  

Pick $0 < \delta \ll \frac{1}{2}$ (independent of $\lambda$), such that an $L^2$ ball of size $\delta$ around the origin in $V_{x_\infty}$ has the following properties for $\lambda \gg 0$.  First, this ball must contain $G_{x_\infty}(x_\lambda)$ and be disjoint from a $\delta$-ball around $G_{x_\infty}(x^n_\infty)$ or $G_{x_\infty}(x^n_\lambda)$, should they be defined, for any $n \neq i$ in $\N$.  Further, if $x_\infty$ is irreducible, we choose $\delta$ such that the $\delta$ ball is contained in the interior of $V_{x_\infty}$.  By our choice of $\delta$, if $(a,s,\phi) \in U_{x_\infty}$ satisfies $\langle \phi, \phi_\infty \rangle_{L^2} < \frac{1}{2}$, then $G_{x_\infty}(a,s,\phi)$ is not in this ball.  It is in this ball where we will interpolate between the identity and the translation from $x_\lambda$ to $x_\infty$. 

Let $\alpha: [0,\infty) \to [0,1]$ be a smooth bump function which is 1 on $[0,\delta/2]$ and 0 on $[\delta,\infty)$.   We can now define a map $\Upsilon_{\lambda}$ from $V_{x_\infty}$ to $V_{x_\infty}$ given by 
\begin{equation}\label{xi:association}
\Upsilon_{\lambda}: z \mapsto (1-\alpha(\|\psi\|_{L^2})) \| z + \alpha(\| \psi \|_{L^2}) (z + z_\infty - z_\lambda),
\end{equation}
where we write $z = (b,r,\psi)$.  

By the choice of $\delta$, we have that this induces a well-defined map on $V_{x_\infty}$, since in the case that ${x_\infty}$ is reducible, the $s$-component is unchanged, and when ${x_\infty}$ is irreducible, we have chosen the $\delta$-ball such that translation does not happen near the boundary.  Again, we point out that this map is translation by $z_\infty - z_\lambda$ inside of an $L^2$ ball of size $\delta/2$ in $V_{x_\infty}$ and is the identity outside of a $\delta$ ball.  In particular, we see that $z_\lambda$ is taken to $z_\infty$.  

By the work of Chapter~\ref{sec:criticalpoints}, we have that 
\begin{equation}\label{xi:xlambdaxinfty}
x_\lambda \to x_\infty \text{ in } L^2_j \text{ for all } j \text{ as } \lambda \to \infty 
\end{equation}
and thus 
\begin{equation}\label{xi:vlambdavinfty}
z_\lambda \to z_\infty \text{ in } L^2_j \text{ for all } j \text{ as } \lambda \to \infty.
\end{equation}
We claim that for $\lambda \gg 0$, the map $\Upsilon_{\lambda}$ in \eqref{xi:association} induces a diffeomorphism of $V_{x_\infty}$.  Indeed, since $z_\lambda \to z_\infty$, for $\lambda \gg 0$, the derivative of $\Upsilon_{\lambda}$ is close to the identity, and thus is an isomorphism at each point.  The map is thus a local diffeomorphism, which is the identity outside of a ball.  This is necessarily a self-covering map of a simply-connected space, and so we deduce that this is a global diffeomorphism.  

We can now use $G_{x_\infty}$ to define the self-diffeomorphism $\Xi_\lambda$ on $U_{x_\infty}$.  Since $G_{x_\infty}(x_\lambda) = z_\lambda$ and $G_{x_\infty}(x_\infty) = z_\infty$, we see that 
$$\Xi_{\lambda} := G_{x_\infty}^{-1} \circ \Upsilon_{\lambda} \circ G_{x_\infty} $$
takes $x_\lambda$ to $x_\infty$.  

It follows by the construction that $x_\infty$ is taken to $x_\lambda$.  We then extend $\Xi_\lambda$ to a diffeomorphism on the $S^1$ orbit of $U_{x_\infty}$ as follows.  Define a function $\omega_\infty: S^1 \cdot U_{x_\infty} \to S^1$, similar to $\omega^j_\lambda$ in \eqref{eq:omegalambda}, by 
\begin{equation}
\label{eq:omegainfinity}
\omega_\infty(x)= \frac{\Re \langle \phi, \phi_{\infty}  \rangle_{L^2}  + i\Re \langle \phi, i\phi_{\infty}  \rangle_{L^2} }{ \bigl( (\Re \langle \phi, \phi_\infty  \rangle_{L^2})^2  +  (\Re \langle \phi, i\phi_\infty  \rangle_{L^2})^2 \bigr)^{1/2}}.
\end{equation}
Note that $\omega_\infty$ has the property that if $x \in S^1 \cdot U_{x_\infty}$, then $\overline{\omega_{\infty}(x)} \cdot x \in U_{x_\infty}$.  Therefore, we extend $\Xi_\lambda$ to the $S^1$ orbit of $U_{x_\infty}$ by conjugating by $\omega_{\infty}$:
$$
x \mapsto {\omega}_{\infty}(x) \cdot \Xi_\lambda(\overline{\omega_{\infty}(x)} \cdot x). 
$$
By construction this extension is $S^1$-equivariant.  By repeating the above construction for each stationary point $x^i_\infty$, we extend $\Xi_\lambda$ to neighborhoods of every stationary point of $\N$.  Note that $\Xi_\lambda$ is the identity near the boundary of each such neighborhood.  We finally extend $\Xi_\lambda$ to a diffeomorphism of all of $W^\sigma_0$ by the identity outside of these neighborhoods.  

It is now clear from the construction that $\Xi_\lambda$ is an $S^1$-equivariant diffeomorphism of $W^\sigma_0$.     
\end{proof}

For future reference, we describe an explicit formula for $\Xi_\lambda$.  Let us first introduce some notation to help express $\Xi_\lambda$ more compactly.  For each $i = 1,\ldots, m$, define a function $\beta_i : W^\sigma_0 \to [0,1]$ by 
$$
\beta_i(x) = \begin{cases}
0 & \text{ if } x=(a,s,\phi) \notin S^1 \cdot U^{x^i_\infty} \\
\alpha(\| \frac{\phi}{\langle \phi, \omega^i_\infty(x) \phi^i_\infty \rangle_{L^2}} - {\omega^i_\infty}(x) \phi^i_\infty \|_{L^2}) & \text{ if } x=(a,s,\phi) \in S^1 \cdot U^{x^i_\infty}.
\end{cases} 
$$
Further write $x^i_\lambda = (a^i_\lambda, s^i_\lambda, \phi^i_\lambda)$, where $x^i_\lambda$ is the stationary point of $\Xqmlgcsigma$ corresponding to $x^i_\infty$.  We also write $v^i_\lambda = \frac{\phi^i_\lambda}{\langle \phi^i_\lambda, \phi^i_\infty\rangle_{L^2}} - \phi^i_\infty$.  Note that $G_{x^i_\infty}(x^i_\lambda) = (a^i_\lambda, s^i_\lambda, v^i_\lambda)$.  We get
 
\begin{align}
\label{eq:Xilambda-Ux} \Xi_\lambda& : W^\sigma_0 \to W^\sigma_0 \\
\nonumber (a,s,\phi) &\mapsto \Big(a + \sum_i \beta_i(x) (a^i_\infty - a^i_\lambda),\ s + \sum_i \beta_i(x)(s^i_\infty - s^i_\lambda),  \\ 
\nonumber &\hspace{2.1in}\left.\frac{\phi- \sum_i \beta_i(x)   \langle  \phi,  \phi^i_\infty \rangle_{L^2} \cdot v^i_\lambda }{\| \phi- \sum_i \beta_i(x)   \langle \phi,   \phi^i_\infty \rangle_{L^2} \cdot v^i_\lambda   \|_{L^2}}\right).
\end{align}
Here we take as notational convention that we do not worry about the well-definedness of $\omega^i_{\infty}$ when $\beta_i$ is zero.  Observe that for each $x$, $\beta_i(x)$ is non-zero for at most one $i$.

We now discuss the extension of $\Xi_\lambda$ to the double as claimed in the lemma.  Recall that a stationary point $x_\infty$ is reducible if and only if the approximate stationary point $x_\lambda$ is.  From \eqref{eq:Xilambda-Ux}, we therefore see that for $x \in W^\sigma_0$ with $s \ll 1$ (i.e. $s$ much smaller than the smallest value of $s_\infty$ for $x_\infty$ irreducible), the map $\Xi_\lambda$ preserves the $s$-component.  In particular, $\Xi_\lambda$ preserves and is tangent to the reducible locus.  It follows that $\Xi_\lambda$ extends to the double $\tW^\sigma_0$ as claimed.  It will be clear that the arguments below establishing the desired properties of $\Xi_\lambda$ for $W^\sigma_0$ extend to $\tW^\sigma_0$.  

\begin{remark}
It is worth noting that almost all of of the pieces of $\Xi_\lambda$ are just determined by the spinorial component of $x$.  Further, in the end result, the formula for $\Xi_\lambda$ only uses the function $\omega$ in the bump-like functions $\beta_i$.    
\end{remark}

% Properties of Xilambda
% Properties of Xilambda
% Properties of Xilambda
% Properties of Xilambda
% Properties of Xilambda
% Properties of Xilambda
% Properties of Xilambda
% Properties of Xilambda
\section{Three-dimensional properties of $\Xi_{\lambda}$}\label{sec:xi-3d-bounds}
We now show that $\Xi_{\lambda}$ has the stated properties in Lemma~\ref{lem:Xixylambda}.  After that, we will establish some additional bounds that will be useful for Proposition~\ref{prop:Xixy-paths}. 

We begin with the first item in Lemma~\ref{lem:Xixylambda}.  

\begin{proof}[Proof of Lemma~\ref{lem:Xixylambda}\eqref{xi:xlambda}]
It is clear from the construction of $\Xi_\lambda$, that each $x^i_\lambda$ is mapped to $x^i_\infty$.  The $S^1$-equivariance then implies that $x_\lambda$ is mapped to $x_\infty$ for any stationary point $x_\lambda$ of $\Xqmlgcsigma$ in $\N$.
\end{proof}

\subsection{Smoothness properties}

\begin{proof}[Proof of Lemma~\ref{lem:Xixylambda}\eqref{xi:sobolev}]
First, note that the functions $\beta_i$, $\omega^i_{\infty}$, and $\phi \mapsto \langle \phi, \phi^i_\infty \rangle_{L^2}$ are smooth on $L^2_j$ for any $1 \leq j \leq k$, since the $L^2$ inner-product is, and $x^i_\infty \in L^2_k$.  At each step of the construction of $\Xi_\lambda$, where $\Xi_\lambda$ is not the identity (or in other words, some $\beta_i$ is non-zero), we are simply scalar multiplying by $\omega^i_{\infty}$, which is non-zero, or by adding a multiple of $\beta_i$ or $\langle \phi, \phi^i_\infty \rangle_{L^2}$ times $x^i_\infty$ or $\phi^i_\infty$.  Since $x^i_\lambda, x^i_\infty$ are in $L^2_k$, these (invertible) operations necessarily preserve the condition of being an $L^2_j$ configuration for $1 \leq j \leq k$, and we have the desired result.      
\end{proof}

At the beginning of this section, $\Xi_\lambda$ was claimed to extend to a neighborhood of $W^\sigma_j$ in $\What_j$.  This is clear from the explicit description of $\Xi_\lambda$ given in \eqref{eq:Xilambda-Ux} since wherever $\Xi_\lambda$ disagrees with the identity, we have that $\beta_i \neq 0$ for precisely one $i$, and in this case $\langle \phi , \omega^i_\infty \phi^i_\infty \rangle _{L^2}$ is real and positive (in fact, at least 1/2); from this it is easy to see that  $\Xi_\lambda$ extends to a neighborhood of this point in $\What_j$, when we expand the target to $\What_j$ as well.  

The following completes the proof of Lemma~\ref{lem:Xixylambda}.  

\begin{proof}[Proof of Lemma~\ref{lem:Xixylambda}\eqref{xi:smooth-f}]
Since $\Xi_\lambda$ is defined by extending a self-diffeomorphism of $U_{x^i_\infty}$ by $S^1$, and the space $U_{x^i_\infty}$ is defined independently of $\lambda$, it suffices to show that $\Xi_{f^{-1}(r)} |_{U_{x^i_\infty}}$ is smooth at and near $r = 0$.  Recall from Corollary~\ref{cor:corresp} that the correspondence $r \mapsto [x_{f^{-1}(r)}]$ is differentiable at and near $r=0$.  It is easy to check that this lifts to a smoothness statement without quotienting by $S^1$, due to the condition that $\Re \langle x_{f^{-1}(r)}, i x^i_\infty \rangle_{L^2} = 0$.  Using \eqref{eq:Xilambda-Ux}, we see the only terms in $\Xi_{f^{-1}(r)}$ or its derivatives which depend on $r$ are smooth functions of $x_{f^{-1}(r)}$, and thus differentiable at and near $r = 0$.  This gives the desired smoothness statement. 
\end{proof}

\subsection{Some three-dimensional bounds on $\Xi_{\lambda}$}
Before moving on to the four-dimensional properties of $\Xi_\lambda$ in Proposition~\ref{prop:Xixy-paths}, we study a few additional properties of $\Xi_\lambda$.  For notational simplicity, we work in a neighborhood $S^1 \cdot U_{x^i_\infty}$ for some $i$.  For the rest of this subsection, we again omit the index $i$ from the notation.  

It is clear from \eqref{eq:Xilambda-Ux} that the complication in $\Xi_\lambda$ is in the spinorial component.  We begin by simplifying the notation and then establishing a few key bounds.  
We write 
\begin{equation}
\label{eq:Xiflambda} f_\lambda(x) = \phi- \langle \phi, \phi_\infty \rangle_{L^2} \cdot \beta(x) \cdot v_\lambda 
%\label{eq:Zlambda} Z_\lambda(x) &= \phi - f_\lambda(x) = \langle \phi, \phi_\infty \rangle_{L^2} \cdot \beta(x) \cdot v_\lambda 
\end{equation}
and thus we can express the spinorial component of $\Xi_\lambda$ as $\frac{f_\lambda}{\| f_\lambda\|_{L^2}}$ for $x \in S^1 \cdot U_{x_\infty}$.  Therefore, $\Xi_\lambda$ can be written a bit more succinctly as 
\begin{equation}\label{xi:simplified}
\Xi_\lambda(a,s,\phi)  = \left(a + \beta(x) (a_\infty - a_\lambda),\ s + \beta(x)(s_\infty - s_\lambda), \frac{f_\lambda(x)}{\| f_\lambda(x) \|_{L^2}} \right).  
\end{equation}  

We point out some bounds that will be useful for us when proving Proposition~\ref{prop:Xixy-paths}.  By the work of Chapter~\ref{sec:criticalpoints}, we have that $\| x_\lambda - x_\infty \|_{L^2_k} \to 0$.  It is then easy to see that for $\lambda \gg 0$, since $\| \phi_\infty \|_{L^2} = 1$, we have  
\begin{equation}\label{xi:phi-lambda-infty}
\| v_\lambda \|_{L^2} \to 0.  
\end{equation}

From \eqref{xi:phi-lambda-infty}, using that the spinorial component of an element of $W^\sigma$ has unit $L^2$-norm,  we can also deduce that for $\lambda \gg 0$: 
\begin{equation}
\label{xi:flambda-L2j} \frac{1}{2} \leq \| f_\lambda(x) \|_{L^2} \leq \frac{3}{2}.
\end{equation}

\section{Four-dimensional properties of $\Xi_{\lambda}$}\label{sec:xi-4d-bounds}
In this section, we prove Proposition~\ref{prop:Xixy-paths}.  For the rest of this section, we let $j$ be an integer such that $1 \leq j \leq k$.  Before proceeding, we remind the reader that the $L^2_j$-norm of a four-dimensional configuration $\gamma$ on $I \times Y$ is expressed as 
$$
\| \gamma \|^2_{L^2_j(I \times Y)} = \sum^j_{n=0} \int^b_a \| \frac{d^n}{dt^n}\gamma(t) \|^2_{L^2_{j - n}(Y)} dt.   
$$

\subsection{Sobolev multiplication and superposition operators}
In order to study how $\Xi_\lambda$ acts on paths in $W^\tau_{j}(I \times Y)$, we will need some elementary variants of Sobolev multiplication and superposition operators.  Many of these are well-known or easily deduced from the definitions and Sobolev multiplication.  We include the proofs so as to reference them for analogous results later on that are less standard.

% norms
% norms
% norms
% norms
% norms 
\begin{lemma}\label{xi:inner-product-path}
Let $I \subseteq \rr$.  Then, taking inner products induces smooth maps
\begin{align}
&\label{inner-product:4d} \What_j(I \times Y) \times \What_j(I \times Y) \to L^2_j(I; \rr), \ (\gamma(t), \eta(t)) \mapsto \langle  \gamma(t), \eta(t) \rangle_{L^2(Y)}, \\
\label{inner-product:3d}&\What_j(I \times Y) \times \What_j \to L^2_j(I; \rr), \ (\gamma,x) \mapsto \langle \gamma, x \rangle_{L^2(Y)}.
\end{align}
\end{lemma}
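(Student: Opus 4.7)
The plan is to reduce the lemma to a pointwise application of the Leibniz rule and Cauchy--Schwarz on $Y$, combined with a one--dimensional Sobolev embedding in the $t$--variable. Setting $f(t) := \langle \gamma(t), \eta(t)\rangle_{L^2(Y)}$, the pairing $(u,v) \mapsto \langle u,v \rangle_{L^2(Y)}$ is a continuous bilinear map on $L^2(Y) \times L^2(Y)$, so for $n \leq j$
\[
f^{(n)}(t) = \sum_{k=0}^{n} \binom{n}{k} \langle \partial_t^{k}\gamma(t), \partial_t^{n-k}\eta(t)\rangle_{L^2(Y)},
\]
which by Cauchy--Schwarz on $Y$ is pointwise bounded by $\sum_k \binom{n}{k} \|\partial_t^{k}\gamma(t)\|_{L^2(Y)} \cdot \|\partial_t^{n-k}\eta(t)\|_{L^2(Y)}$. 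Thus it suffices to show that each product of norms belongs to $L^2(I)$ with a bound quadratic in the $L^2_j(I\times Y)$ norms.

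The key regularity input is the following: for any $0 \leq m \leq j$, Fubini gives $\partial_t^m \gamma \in L^2(I; L^2_{j-m}(Y)) \subseteq L^2(I; L^2(Y))$, so $t \mapsto \|\partial_t^m \gamma(t)\|_{L^2(Y)}$ lies in $L^2(I)$, with $L^2(I)$ norm controlled by $\|\gamma\|_{L^2_j(I \times Y)}$. Moreover, for $0 \leq m \leq j-1$ we additionally have $\partial_t^{m+1}\gamma \in L^2(I; L^2(Y))$, hence $t \mapsto \partial_t^m \gamma(t) \in H^1(I; L^2(Y))$; the standard one--dimensional Sobolev embedding $H^1(I) \hookrightarrow L^\infty(I)$ (valid for $I \subseteq \rr$ of any length) then gives $t \mapsto \|\partial_t^m \gamma(t)\|_{L^2(Y)} \in L^\infty(I)$ with a uniform bound in $\|\gamma\|_{L^2_j(I \times Y)}$. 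The same holds with $\gamma$ replaced by $\eta$.

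Since $n \leq j$ and $0 \leq k \leq n$ cannot force both $k = j$ and $n - k = j$ simultaneously, at least one of the two indices $k$ and $n-k$ is $\leq j-1$. Applying the $L^\infty(I)$ bound to whichever factor is at such an index and the $L^2(I)$ bound to the other, H\"older's inequality places $\|\partial_t^{k}\gamma\|_{L^2(Y)} \cdot \|\partial_t^{n-k}\eta\|_{L^2(Y)}$ in $L^2(I)$, with bound proportional to $\|\gamma\|_{L^2_j(I \times Y)} \cdot \|\eta\|_{L^2_j(I \times Y)}$. Summing over $0 \leq n \leq j$ yields $f \in L^2_j(I;\rr)$ with a quadratic bound in the norms of $\gamma$ and $\eta$, and smoothness follows because the map extends to a continuous bilinear map on the ambient linear space containing $\What_j(I \times Y)$, and continuous bilinear maps between Banach spaces are automatically smooth. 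The second assertion \eqref{inner-product:3d} is the specialization with $\eta(t) \equiv x$, where the factor $\|x\|_{L^2(Y)}$ is simply constant in $t$ and the same argument applies verbatim (in fact more easily, since no $t$--derivatives ever fall on $x$).

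The only step that requires any care is the use of $H^1(I) \hookrightarrow L^\infty(I)$ when $I$ is unbounded, but this is a well--known fact (the embedding constant is independent of $|I|$), so there is no genuine obstacle. The combinatorial observation that at most one of $k,n-k$ can equal $j$ is what makes the borderline case work, and is the reason the lemma holds for all $1 \leq j \leq k$ rather than only for $j$ large.
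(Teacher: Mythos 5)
Your proof is correct and follows essentially the same route as the paper's: Leibniz rule for the $t$-derivatives of the slicewise inner product, Cauchy--Schwarz on $Y$, and the observation that in each Leibniz term at least one factor carries a strictly smaller derivative order and hence admits a uniform-in-$t$ bound via the one-dimensional Sobolev embedding (the paper phrases this as continuity of the restriction to a slice $\What_{j-i}(I\times Y)\to L^2(\{t\}\times Y)$, which is the same trace estimate). The only cosmetic difference is that the paper expands the squared $L^2_j(I)$ norm into products of four slicewise norms before estimating, whereas you apply Cauchy--Schwarz first to reduce to products of two norms; the underlying estimate and combinatorial observation are identical.
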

\begin{proof}
While the smoothness of \eqref{inner-product:4d} is a special case of Sobolev multiplication, we provide a proof, since it makes the second claim easier to justify and we will use similar techniques throughout the rest of the section.  
We compute 
\begin{align*}
\| \langle \gamma , \eta \rangle_{L^2(Y)} \|^2_{L^2_j(I)} &= \sum^j_{n=0} \Bigl\| \frac{d^n}{dt^n} \langle \gamma, \eta \rangle_{L^2(Y)} \Bigr\|^2_{L^2(I)}  \\
& = \sum^j_{n=0} \int_I \Bigl|\sum^n_{i = 0} {n \choose i} \langle \frac{d^i}{dt^i} \gamma , \frac{d^{n-i}}{dt^{n-i}} \eta \rangle_{L^2(Y)} \Bigr |^2 dt \\
& \leq \sum^j_{n=0} \sum^n_{i_1, i_2 = 0} {n \choose i_1}{n \choose i_2} \int_I \Bigl| \langle \frac{d^{i_1}}{dt^{i_1}} \gamma, \frac{d^{n-i_1}}{dt^{n-i_1}} \eta \rangle_{L^2(Y)} \cdot \langle \frac{d^{i_2}}{dt^{i_2}} \gamma, \frac{d^{n-i_2}}{dt^{n-i_2}} \eta \rangle_{L^2(Y)}\Bigr|       dt \\
& \leq \sum^j_{n=0} \sum^n_{i_1, i_2 = 0} {n \choose i_1}{n \choose i_2} \int_I \Bigl\| \frac{d^{i_1}}{dt^{i_1}} \gamma\Bigr\|_{L^2(Y)} \Bigl\| \frac{d^{n-i_1}}{dt^{n-i_1}} \eta\Bigr \|_{L^2(Y)}\Bigl \| \frac{d^{i_2}}{dt^{i_2}} \gamma\Bigr\|_{L^2(Y)} \Bigl\| \frac{d^{n-i_2}}{dt^{n-i_2}} \eta\Bigr \|_{L^2(Y)} dt.      
\end{align*} 

Let us focus on a fixed term in the final term above, i.e., we fix a value of $n, i_1, i_2$.  Note that $n \leq j$, so one of $n - i_1$ or $i_1$ is bounded above by $\frac{j}{2}$.  Without loss of generality, it is $i_1$.  Since $j \geq 1$, we have that 
$$ j - i_1 \geq \frac{j}{2} \geq \frac{1}{2}.$$
(In fact, $j-i_1 \geq 1$, because $j$ and $i_1$ are integers.) We deduce that the composition of $\frac{d^{i_1}}{dt^{i_1}}: \What_{j}(I \times Y) \to \What_{j-i_1}(I \times Y)$ with the restriction map to a slice $\What_{j-i_1}(I \times Y) \to L^2(\{t\} \times Y)$ is continuous.   From here we see that there exists a constant $C_{i_1}$, depending only on $Y$ and $I$, such that 
$$\| \frac{d^{i_1}}{dt^{i_1}} \gamma(t) \|_{L^2(Y)} \leq C_{i_1} \| \gamma \|_{L^2_j(I \times Y)}, \ \text{ for all } t \in I.$$  
We can repeat the same argument for $i_2$ or $n - i_2$.  Again, without loss of generality we assume that $i_2 \leq \frac{j}{2}$, and we obtain bounds      
$$\| \frac{d^{i_2}}{dt^{i_2}} \gamma(t) \|_{L^2(Y)} \leq C_{i_2} \| \gamma \|_{L^2_j(I \times Y)}, \ \text{ for all } t \in I.$$
  
Therefore, we may write 
\begin{align*}
\| \langle \gamma , \eta \rangle_{L^2(Y)} \|^2_{L^2_j(I)} &\leq \sum^n_{i_1, i_2 = 0} {n \choose i_1}{n \choose i_2} \int_I \Big\| \frac{d^{i_1}}{dt^{i_1}} \gamma\Big\|_{L^2(Y)} \Big\| \frac{d^{n-i_1}}{dt^{n-i_1}} \eta\Big \|_{L^2(Y)} \Big\| \frac{d^{i_2}}{dt^{i_2}} \gamma\Big\|_{L^2(Y)}\Big \| \frac{d^{n-i_2}}{dt^{n-i_2}} \eta \Big\|_{L^2(Y)} dt \\
& \leq \sum^n_{i_1, i_2 = 0} {n \choose i_1}{n \choose i_2} C_{i_1} C_{i_2} \| \gamma \|^2_{L^2_j(I \times Y)} \int_I  \Big\| \frac{d^{n-i_1}}{dt^{n-i_1}} \eta \Big\|_{L^2(Y)} \Big \| \frac{d^{n-i_2}}{dt^{n-i_2}} \eta \Big\|_{L^2(Y)}    dt \\
& \leq  \sum^n_{i_1, i_2 = 0} {n \choose i_1} {n \choose i_2} C_{i_1} C_{i_2} \| \gamma \|^2_{L^2_j(I \times Y)} \Big\| \frac{d^{n-i_1}}{dt^{n-i_1}} \eta\Big \|_{L^2(I \times Y)}  \Big\| \frac{d^{n-i_2}}{dt^{n-i_2}} \eta \Big\|_{L^2(I \times Y)} \\
& \leq \sum^n_{i_1, i_2 = 0} {n \choose i_1} {n \choose i_2} C_{i_1} C_{i_2} \| \gamma \|^2_{L^2_j(I \times Y)} \| \eta \|^2_{L^2_j(I \times Y)}, 
\end{align*}
where in the penultimate inequality we have used Cauchy-Schwarz.  An analogous inequality holds if we instead replace $i_1$ or $i_2$ with $n - i_1$ or $n - i_2$ respectively, depending on their values.  

By summing over the relevant constants, we obtain a bound
$$
\| \langle \gamma , \eta \rangle_{L^2(Y)} \|_{L^2_j(I)} \leq C\| \gamma \|_{L^2_j(I \times Y)} \| \eta \|_{L^2_j(I \times Y)}, 
$$
where $C$ is independent of $\gamma, \eta$.  It follows that slicewise inner products define a {\em bounded}, bilinear map.  Such a map is necessarily smooth.  
  
For \eqref{inner-product:3d}, we can repeat the proof above.  Note that an element of $\What_j$ does not determine an element of $\What_j(I \times Y)$ if $I$ is not compact.  However, for each term analyzed in the above argument, we only needed that one of the terms involved (either a derivative of $\gamma$ or of $\eta$) be $L^2(Y)$ bounded uniformly in time, with the other term square-integrable in time.  In this case, the three-dimensional configuration in $\What_j$ is constant in time, and has bounded $L^2(Y)$ norm (since $j \geq 0$), so we have the desired result.  
\end{proof}

% function times spinor
% function times spinor
% function times spinor
% function times spinor
\begin{lemma}\label{xi:pointwise-mult}
Let $I \subseteq \rr$.  Then, pointwise multiplication induces smooth maps
\begin{align}
&L^2_j(I; \rr) \times \What_j(I \times Y) \to \What_j(I \times Y), \ (g, \gamma) \mapsto g \gamma, \\
&L^2_j(I; \rr) \times \What_j \to \What_j(I \times Y), \ (g, \gamma) \mapsto g \gamma.
\end{align}
\end{lemma}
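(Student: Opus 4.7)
The plan is to observe that both maps are bilinear, so it suffices to establish a bound of the form
\[
\|g\gamma\|_{L^2_j(I \times Y)} \leq C\, \|g\|_{L^2_j(I)} \|\gamma\|_{L^2_j(I \times Y)};
\]
smoothness of a bounded bilinear map between Banach spaces is automatic. The second claim is the trivial case: for $x \in \What_j$ viewed as a time-constant configuration one computes directly
\[
\|gx\|^2_{L^2_j(I \times Y)} = \sum_{n=0}^{j} \|g^{(n)}\|^2_{L^2(I)}\,\|x\|^2_{L^2_{j-n}(Y)} \leq (j+1)\, \|g\|^2_{L^2_j(I)}\, \|x\|^2_{L^2_j(Y)},
\]
which gives boundedness at once.

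For the first map, the definition of the $L^2_j(I \times Y)$-norm and Leibniz's rule reduce matters to estimating, for each $0 \leq i \leq n \leq j$, the integral $\int_I |g^{(i)}(t)|^2 \|\gamma^{(n-i)}(t)\|^2_{L^2_{j-n}(Y)}\,dt$. The strategy is the same as in the proof of Lemma~\ref{xi:inner-product-path}: place $L^\infty_t$ control on whichever of the two factors carries extra regularity in the time variable, and integrate the other in $t$. When $i \leq n/2$ one has $j - i \geq 1$, so the one-dimensional Sobolev embedding $L^2_1(I) \hookrightarrow C^0_b(I)$ yields a uniform bound $|g^{(i)}(t)| \leq C_i \|g\|_{L^2_j(I)}$, while
\[
\int_I \|\gamma^{(n-i)}(t)\|^2_{L^2_{j-n}(Y)}\,dt \leq \int_I \|\gamma^{(n-i)}(t)\|^2_{L^2_{j-(n-i)}(Y)}\,dt \leq \|\gamma\|^2_{L^2_j(I \times Y)},
\]
since the latter integral appears as a summand in the definition of the mixed norm. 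When $i > n/2$ one necessarily has $i \geq 1$ and $n - i + 1 \leq n \leq j$, so both $\gamma^{(n-i)}$ and $\gamma^{(n-i+1)}$ lie in $L^2(I; L^2_{j-(n-i)-1}(Y))$. The embedding $H^1(I; H) \hookrightarrow C^0(I; H)$ for a Hilbert space $H$ then gives $\|\gamma^{(n-i)}(t)\|_{L^2_{j-n}(Y)} \leq C'_i \|\gamma\|_{L^2_j(I \times Y)}$ uniformly in $t$ (using $j - (n-i) - 1 \geq j - n$, valid since $i \geq 1$), and the remaining factor $|g^{(i)}(t)|^2$ is integrated directly against $\|g\|^2_{L^2_j(I)}$. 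Summing the resulting estimates over the finitely many pairs $(n,i)$ yields the desired inequality.

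The principal technical point is the bookkeeping of regularities: for every Leibniz term at least one of $g^{(i)}$ or $\gamma^{(n-i)}$ must have enough spare derivatives in $t$ to invoke the one-dimensional Sobolev embedding, and the hypothesis $j \geq 1$ is precisely what ensures this in every case. Once this matching is arranged, each individual estimate is routine and there is no further obstacle.
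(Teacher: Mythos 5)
Your proof is correct and follows essentially the same approach as the paper: reduce to a boundedness estimate via bilinearity, expand with Leibniz, and in each term place pointwise-in-$t$ control on whichever factor has an extra time derivative in reserve (the case split on $i \lessgtr n/2$ together with the integrality of $j$ is exactly the bookkeeping the paper delegates to the proof of Lemma~\ref{xi:inner-product-path}). Your direct computation for the second map, exploiting that $\frac{d^n}{dt^n}(gx) = g^{(n)}x$, is a slightly cleaner shortcut than the paper's invocation of the same modification used for \eqref{inner-product:3d}, but the substance is the same.
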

\begin{proof}
The argument is similar to that of the above lemma.  We begin with the first equation.  Recall that for any $\gamma \in \What_j(I \times Y)$, we have a norm for $\gamma(t,y)$, defined for any $(t,y) \in I \times Y$; we write this function as $|\gamma|$.  We compute 
\begin{align*}
\| g \gamma \|^2_{L^2_j(I \times Y)} &= \sum^j_{n=0} \int_{I \times Y} \Big| \sum^n_{i = 0}\frac{d^i}{dt^i}(g) \nabla^{n-i} \gamma \Big|^2 \\
& \leq \sum^n_{i_1, i_2 = 0} {n \choose i_1} \cdot {n \choose i_2} \int_{I \times Y} \Big| \frac{d^{i_1}}{dt^{i_1}}(g)  \nabla^{n-i_1} \gamma\Big| \cdot \Big| \frac{d^{i_2}}{dt^{i_2}}(g) \nabla^{n-i_2} \gamma \Big|.      
\end{align*}
We can now apply the same argument as in the first part of Lemma~\ref{xi:inner-product-path} to obtain a bound 
$$
\| g \gamma \|_{L^2_j(I \times Y)} \leq K \| g\|_{L^2_j(I)} \| \gamma \|_{L^2_j(I \times Y)},
$$
where $K$ is a constant independent of $g$ and $\gamma$.  This shows that pointwise multiplication is continuous.  Smoothness again follows by bilinearity.   

The second equation can be obtained by applying the same modifications to the proof as in Lemma~\ref{xi:inner-product-path} to establish \eqref{inner-product:3d}.  
\end{proof}

The following lemma is standard. We include a proof for completeness.
  
%general compositions
%general compositions
%general compositions  
%general compositions
%general compositions
\begin{lemma}
\label{lem:composeSobolev}
Let $I \subseteq \rr$ be an interval, and $h: \rr \to \rr$ a smooth function. Consider the transformation of $C^\infty(\mathbb{R})$ given by 
$$ H : g \mapsto h \circ g.$$
\begin{enumerate}[(a)]
\item If $I$ is compact, then $H$ induces a smooth map from $L^2_j(I; \rr)$ to $L^2_j(I; \rr)$.
\item The same holds true for arbitrary $I$ (not necessarily compact), provided that $h(0)=0$. 
\end{enumerate}
\end{lemma}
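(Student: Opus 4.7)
The plan is to combine the one-dimensional Sobolev embedding $L^2_j(I) \hookrightarrow L^\infty(I)$ (valid for $j \geq 1$ on any interval) with Fa\`a di Bruno's chain rule for higher derivatives of compositions, followed by Sobolev multiplication estimates. Throughout I will assume $j \geq 1$; the case $j = 0$ is not at issue in the applications.

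For part (a), I would start by using the Sobolev embedding to obtain an $L^\infty$ bound $|g(t)| \leq M$, with $M$ depending only on $\|g\|_{L^2_j}$. Fa\`a di Bruno's formula then expresses $\frac{d^n}{dt^n}(h \circ g)$, for $1 \leq n \leq j$, as a finite sum of terms of the form $h^{(r)}(g(t)) \cdot \prod_s (g^{(i_s)}(t))^{m_s}$, where $\sum_s m_s = r$ and $\sum_s i_s m_s = n$. Since $|g| \leq M$, the factors $h^{(r)}(g)$ are bounded by constants depending only on $M$ and $h$. For the derivative products, each $g^{(i_s)}$ lies in $L^2_{j-i_s}$; when $i_s \leq j-1$, a further Sobolev embedding gives an $L^\infty$ bound, while at most one factor can have $i_s = j$, in which case it provides the $L^2$ contribution. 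This shows $h \circ g \in L^2_j(I)$, with the zeroth-order piece $\|h(g)\|_{L^2(I)} \leq \max_{|u|\leq M}|h(u)| \cdot |I|^{1/2}$ available by virtue of $I$ being compact.

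For part (b), when $I$ is unbounded but $h(0) = 0$, I would replace the last estimate by writing $h(g(t)) = g(t)\int_0^1 h'(s\,g(t))\,ds$, yielding $|h(g(t))| \leq C(M)\,|g(t)|$ and hence $\|h(g)\|_{L^2(I)} \leq C(M)\,\|g\|_{L^2(I)}$. The higher-order estimates from (a) transfer verbatim, because every nonzero Fa\`a di Bruno term already involves at least one factor $g^{(i_s)}$ with $i_s \geq 1$, and such factors automatically provide integrability over all of $I$.

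Smoothness of $H$ will follow by formally differentiating to obtain $D^r H(g)[v_1,\dots,v_r] = h^{(r)}(g)\cdot v_1 \cdots v_r$, which the same estimates turn into bounded $r$-linear operators on $L^2_j$, continuous in $g$. Taylor's theorem applied to $h$, combined with the product bounds established above, then gives the remainder estimate required for $C^\infty$ regularity. The main obstacle is combinatorial bookkeeping: one has to check across every Fa\`a di Bruno term that the top-derivative factor $g^{(j)}$ appears at most linearly, so that products land in $L^2$ rather than merely $L^1$; once this is verified, the rest is a routine assembly of standard Nemytskii-operator techniques.
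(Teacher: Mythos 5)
Your argument follows the paper's proof essentially exactly: both rely on the one-dimensional Sobolev embedding to control $h^{(r)}\circ g$ pointwise, Fa\`a di Bruno's formula to organize the higher derivatives of $h\circ g$, the observation that a top-order factor $g^{(j)}$ can occur at most to the first power, and in part (b) the hypothesis $h(0)=0$ to get the linear bound $|h(g)|\leq C|g|$ that rescues square-integrability of the zeroth-order term over an unbounded interval. The paper uses $h'(0)=\lim_{t\to 0}h(t)/t$ while you use the integral mean-value form $h(g)=g\int_0^1 h'(sg)\,ds$, but these are the same estimate in different clothing; and for smoothness both you and the paper compute the formal derivatives $D^rH(g)[\xi_1,\dots,\xi_r]=(h^{(r)}\circ g)\,\xi_1\cdots\xi_r$ and verify boundedness via Leibniz and the same Fa\`a di Bruno bookkeeping (the paper spells out the Leibniz split in part (b) slightly more explicitly, but your claim that the estimates transfer is correct).
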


\begin{proof}
$(a)$ We use Fa{\`a} di Bruno's formula for the higher derivatives of a composition:
\begin{equation}
\label{eq:bruno}
\frac{d^n}{dt^n} h(g(t)) = \sum C(m_1, \dots, m_n) \cdot h^{(m_1 + \dots + m_n)}(g(t)) \cdot \prod_{i=1}^n \bigl( g^{(i)}(t)\bigr)^{m_i}.
\end{equation}
Here, the sum is taken over all $n$-tuples of nonnegative integers $(m_1, \dots, m_n)$ with
$$ 1 \cdot m_1 + 2 \cdot m_2 + \dots + n \cdot m_n = n,$$
and the constants $C(m_1, \dots, m_n)$ are
$$ C(m_1, \dots, m_n) = \frac{n!}{m_1! 1!^{m_1} m_2! 2!^{m_2} \dots m_n! n!^{m_n}}.$$
 
Let $g \in L^2_j(I; \rr)$. We first want to show that $h \circ g$ is also in $L^2_j(I; \rr)$, that is, the $n$th derivative $\tfrac{d^n}{dt^n} h(g(t))$ is in $L^2(I; \rr)$, for all $0 \leq n \leq j$. 

We claim that each summand in the expression \eqref{eq:bruno} is in $L^2$. Indeed, since $g$ is in $L^2_j$ and $j \geq 1$, we have that $g$ is continuous. Further, since $h$ is smooth, we see that  the expression $h^{(m_1 + \dots + m_n)}(g(t))$ is continuous in $t$, and therefore bounded (because $I$ is compact). Moreover, unless $i=n$, the factors $g^{(i)}(t)$ are in $L^2_1$; hence, by Sobolev multiplication, their product is also in $L^2_1$, and thus in $L^2$. It follows that 
\begin{equation}
\label{eq:hmm}
h^{(m_1 + \dots + m_n)}(g(t))\cdot  \prod_{i=1}^n \bigl( g^{(i)}(t)\bigr)^{m_i} \in L^2,
\end{equation}
as long as $m_n =0$, since $h^{(m_1 + \dots + m_n)} \circ g$ is bounded. In the special case $m_n > 0$, we see that $m_n =1$ and all the other $m_i$ must be zero, so $\bigl( g^{(i)}(t)\bigr)^{m_i} = g^{(n)}(t)$ is still in $L^2$, and we get the same conclusion.

We have shown that $H$ maps $L^2_j(I; \rr)$ to $L^2_j(I; \rr)$. To see that $H$ is smooth, we compute its derivatives:
\begin{equation}
\label{eq:drh}
 (\D^r H)_{g}(\xi_1, \dots, \xi_r) = (h^{(r)} \circ g) \cdot \xi_1  \cdot \ldots \cdot \xi_r.
 \end{equation}

When $g, \xi_1, \dots, \xi_r \in L^2_j(I; \rr)$, by applying the fact we just proved with $h^{(r)}$ instead of $h$, and making use of the Sobolev multiplication $L^2_j \times L^2_j \to L^2_j$, we get that $(\D^r H)_{g}(\xi_1, \dots, \xi_r)$ is in $L^2_j$, as desired.

$(b)$ If $I$ is non-compact, we still have that $h^{(m_1 + \dots + m_n)}$ and $g$ are continuous. Further, by the Sobolev embedding theorem, $g \in L^2_j$ implies that $g$ is bounded.  Therefore, it is still the case that $h^{(m_1 + \dots + m_n)}\circ g$ is bounded. We also have the Sobolev multiplication $L^2_j \times L^2_j \to L^2_j$ for $j \geq 1$, so the same arguments as before show that \eqref{eq:hmm} holds, provided that not all $m_i$ are zero. 

When $n=0$ and all $m_i$ are zero, the constant function $1 =  \prod_{i=1}^n \bigl( g^{(i)}(t)\bigr)^{m_i}$ is not in $L^2$. Nevertheless, we claim that $h^{(m_1 + \dots + m_n)}(g(t)) \cdot 1 = h(g(t))$ is in $L^2$. To see this, we make use of the hypothesis $h(0)=0$. The derivative of $h$ at $0$ is
\begin{equation}
\label{eq:hzero}
 h'(0) = \lim_{t \to 0} \frac{h(t)}{t} \in \rr.
 \end{equation}
Recall that $g$ is bounded, so there is some $K > 0$ such that $|g(t)| \leq K$. Let $M$ be the supremum of $h(t)/t$ over the interval $[-K, K]$. From \eqref{eq:hzero} and the continuity of $h$, we see that this supremum is finite. Therefore, we have
$$ |h(g(t)) | \leq M |g(t)|,$$
for all $t \in \rr$. Since $g \in L^2$, this easily implies that $h \circ g$ is in $L^2$. 

We have now shown that $H$ maps $L^2_j(I; \rr)$ to $L^2_j(I; \rr)$. To check that $H$ is smooth, it remains to show that the expression \eqref{eq:drh} is in $L^2_j(I; \rr)$, when $g, \xi_1, \dots, \xi_r \in L^2_j(I; \rr)$. 

By Sobolev multiplication, the product $\xi = \xi_1  \cdot \ldots \cdot \xi_r$ is in $L^2_j$. For $0 \leq n \leq j$, we need to verify that
\begin{equation}
\label{eq:productderivatives}
 \frac{d^n}{dt^n} \bigl( (h^{(r)} \circ g) \cdot \xi \bigr ) = \sum_{s=0}^n  {n \choose s } \frac{d^s}{dt^s} (h^{(r)} \circ g) \cdot  \frac{d^{n-s}}{dt^{n-s}} \xi
 \end{equation}
is in $L^2$. 

For $s \neq 0$, the arguments above (based on Fa{\`a} di Bruno's formula), applied to $h^{(r)}$ instead of $h$, show that the $s^{\operatorname{th}}$ derivative of $h^{(r)} \circ g$ is in $L^2$. Further, since $n-s < n \leq j$, the $(n-s)^{\operatorname{th}}$ derivative of $\xi$ is in $L^2_1$. It follows that \eqref{eq:productderivatives} is in $L^2$. 

In the special case $s=0$, the expression $h^{(r)} \circ g$ is bounded, and $\frac{d^{n}}{dt^{n}} \xi \in L^2$, so we again get that their product is in $L^2$. This concludes the proof.
\end{proof}

Let $I \subseteq \rr$ and $\epsilon > 0$.  Consider the subset $L^2_j(I;\rr_{>\epsilon})$ inside $L^2_j(I;\rr)$ consisting of functions with values in $(\epsilon,\infty)$.  Note that for this subset to be non-empty, the interval $I$ must be compact.  Further, since $j \geq 1$, $L^2_j(I;\rr_{> \epsilon})$ is an open subset of $L^2_j(I;\rr)$.   
% inversion
% inversion
% inversion
% inversion
\begin{lemma}\label{lem:xi-inversion}
Let $I \subseteq \rr$.  The map $x \mapsto \frac{1}{x}$ induces a smooth map $H: L^2_j(I;\rr_{> \epsilon}) \to L^2_j(I; \rr)$, given by $H(g)(x) = \frac{1}{g(x)}$.  A similar statement applies to $x \mapsto \sqrt{x}$.
\end{lemma}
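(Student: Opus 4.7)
The idea is to reduce both cases to Lemma~\ref{lem:composeSobolev} by cutting off $x \mapsto 1/x$ (respectively $x \mapsto \sqrt{x}$) to a smooth function on all of $\rr$ that vanishes at $0$ and agrees with the original map on the relevant range of values, and then appealing to the composition result. Since smoothness is a local condition, it suffices to show that $H$ is smooth in an $L^2_j$-neighborhood of each fixed $g_0 \in L^2_j(I;\rr_{>\epsilon})$.

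First I would record the (one-dimensional) Sobolev embedding $L^2_j(I) \hookrightarrow L^\infty(I)$ for $j \geq 1$, which holds with a uniform constant over all intervals $I \subseteq \rr$. This lets me pick a constant $M > \|g_0\|_{L^\infty}$ and a $\delta > 0$ such that any $g$ with $\|g-g_0\|_{L^2_j(I)} < \delta$ takes values in the compact interval $[\epsilon/2, M]$. Next I would choose a smooth function $\tilde h : \rr \to \rr$ that is compactly supported in $(\epsilon/4, 2M)$ and satisfies $\tilde h(x) = 1/x$ on $[\epsilon/2, M]$; in particular $\tilde h(0) = 0$. For every $g$ in the $\delta$-ball around $g_0$, the pointwise values lie in $[\epsilon/2, M]$, so $H(g) = \tilde h \circ g$ on this neighborhood. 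Lemma~\ref{lem:composeSobolev}(b) (applicable since $\tilde h(0)=0$, and in the compact case Lemma~\ref{lem:composeSobolev}(a) suffices) then yields the smoothness of $g \mapsto \tilde h \circ g$ as a map $L^2_j(I;\rr) \to L^2_j(I;\rr)$, and restricting gives the smoothness of $H$ near $g_0$.

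The $\sqrt{x}$ case is handled by the same recipe: cut off $\sqrt{x}$ by a smooth compactly supported function $\tilde k : \rr \to \rr$ with $\tilde k(0) = 0$ and $\tilde k(x) = \sqrt{x}$ on the interval $[\epsilon/2, M]$, and apply Lemma~\ref{lem:composeSobolev} exactly as above.

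The only point that needs a bit of care is the use of the Sobolev embedding to produce the uniform pointwise bound on $g$; everything else is bookkeeping and an appeal to the already-proven composition lemma. I do not expect any substantive obstacle.
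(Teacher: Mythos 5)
Your proof is correct, but it is somewhat more elaborate than the route the paper takes. The paper's proof is a two-line affair: it simply picks a smooth $h: \rr \to \rr$ with $h(x) = 1/x$ for $x \geq \epsilon$ (no vanishing-at-zero condition, and no localization around $g_0$) and then applies Lemma~\ref{lem:composeSobolev}(a) directly, because $I$ is necessarily compact. The point you may be overlooking is the sentence in the text immediately before the lemma statement: for the domain $L^2_j(I;\rr_{>\epsilon})$ to be non-empty, $I$ must already be a compact interval, so part (a) of Lemma~\ref{lem:composeSobolev} always applies and there is no need to force $h(0) = 0$ or to invoke part (b). Given that, your localization around $g_0$, the Sobolev-embedding step producing the pointwise bound $[\epsilon/2, M]$, and the compactly supported cut-off are all unnecessary overhead; a single global cut-off of $1/x$ agreeing with it on $[\epsilon,\infty)$ suffices uniformly for all $g$ in the domain. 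One small inaccuracy in your write-up: the embedding constant for $L^2_j(I) \hookrightarrow L^\infty(I)$ is \emph{not} uniform over all intervals $I \subseteq \rr$ (it degenerates as the length of $I$ shrinks to zero), but since $I$ is fixed here this does not affect the argument.
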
   

\begin{proof}
Let $h: \rr \to \rr$ be any smooth function such that $h(x) = 1/x$ for $x \geq \epsilon$. Then, the result follows from Lemma~\ref{lem:composeSobolev}(a), applied to $h$. The same argument works for $\sqrt{x}$.
\end{proof}

With this, we can study the regularity of the $L^2$ norm as well.  
\begin{lemma}\label{lem:xi-norm}
Let $I \subseteq \rr$.  Let $\gamma \in \What_j(I \times Y)$ have $\| \gamma (t) \|_{L^2(Y)} \geq \epsilon > 0$ for all $t$.  For a neighborhood of $\gamma$ in $\What_j(I \times Y)$, the association $\gamma \mapsto \| \gamma \|_{L^2(Y)}$ is a smooth map to $L^2_j(I;\rr)$.  
\end{lemma}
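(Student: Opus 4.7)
The plan is to reduce the claim to the two ingredients we have just assembled: slicewise $L^2(Y)$ inner products (Lemma~\ref{xi:inner-product-path}) and composition with a smooth one-variable function with appropriate behavior (Lemma~\ref{lem:xi-inversion}).

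First I would write
\[
\|\gamma(t)\|_{L^2(Y)} \;=\; \bigl(\langle \gamma(t),\gamma(t)\rangle_{L^2(Y)}\bigr)^{1/2},
\]
and view this as a composition of two maps. By Lemma~\ref{xi:inner-product-path}\eqref{inner-product:4d}, the diagonal of the slicewise inner product, namely $\gamma \mapsto \langle\gamma(\cdot),\gamma(\cdot)\rangle_{L^2(Y)}$, is smooth as a map $\What_j(I\times Y)\to L^2_j(I;\rr)$. Next I would use the hypothesis $\|\gamma(t)\|_{L^2(Y)}\geq\varepsilon$ to observe that $\langle\gamma(t),\gamma(t)\rangle_{L^2(Y)}\geq\varepsilon^2$ pointwise. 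Since $L^2_j(I;\rr)$ embeds continuously in $C^0$ for $j\geq 1$ (in the compact case; in the non-compact case, after subtracting the limiting value one has the same control), there is an $L^2_j$-open neighborhood $\mathcal{U}$ of $\gamma$ in $\What_j(I\times Y)$ on which the squared-norm function stays above $\varepsilon^2/2$ uniformly in $t$.

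On $\mathcal{U}$ I would then compose with the square-root branch of Lemma~\ref{lem:xi-inversion}, taking $h:\rr\to\rr$ to be any smooth function agreeing with $\sqrt{\cdot}$ on $[\varepsilon^2/2,\infty)$. The lemma gives smoothness of the superposition operator $L^2_j(I;\rr_{>\varepsilon^2/2}) \to L^2_j(I;\rr)$, and composing this with the smooth map $\gamma\mapsto \langle\gamma,\gamma\rangle_{L^2(Y)}$ restricted to $\mathcal{U}$ yields the desired smoothness of $\gamma\mapsto\|\gamma(\cdot)\|_{L^2(Y)}$.

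The one genuine obstacle is the non-compact case of $I$, where a constant function does not lie in $L^2_j(I;\rr)$, so neither does $\|\gamma(\cdot)\|_{L^2(Y)}$ itself. My plan is to absorb this by interpreting the target as an affine space: since the differences of values do lie in $L^2_j(I;\rr)$ (as they live over an $L^2_j$ neighborhood of a reference configuration whose norm is known, e.g.\ identically $1$ in the $\tau$ model), one reduces to the compact case by restricting to finite subintervals and gluing, or equivalently by subtracting the reference norm before applying the square-root superposition operator. The estimates needed to make this rigorous are precisely the same Sobolev multiplication and superposition estimates already used in Lemmas~\ref{xi:inner-product-path}--\ref{lem:xi-inversion}, so no new machinery is required.
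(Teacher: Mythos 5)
Your proof is correct and follows exactly the paper's argument: write $\|\gamma(t)\|_{L^2(Y)}=\langle\gamma(t),\gamma(t)\rangle_{L^2(Y)}^{1/2}$ and compose Lemma~\ref{xi:inner-product-path} with the square-root part of Lemma~\ref{lem:xi-inversion}. Your final paragraph about non-compact $I$ is unnecessary for this particular lemma: the hypothesis $\|\gamma(t)\|_{L^2(Y)}\geq\epsilon>0$ already forces $I$ to be compact (for non-compact $I$ and $j\geq 1$, any $\gamma\in\What_j(I\times Y)$ has $\|\gamma(t)\|_{L^2(Y)}\to 0$ as $|t|\to\infty$, so the hypothesis is vacuous), and the paper treats the genuinely non-compact, affine situation you describe in the separate Lemma~\ref{lem:xi-inversion-global}.
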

\begin{proof}
Writing $\| \gamma \|_{L^2(Y)} = \langle \gamma, \gamma \rangle_{L^2(Y)}^{\frac 1 2}$, the result  follows from Lemma~\ref{xi:inner-product-path} and the second statement in Lemma~\ref{lem:xi-inversion}.  
\end{proof}

Finally, we have
% alpha
% alpha
% alpha
% alpha
% alpha
\begin{lemma}\label{xi:alpha}
Let $I \subset \rr$ be compact. Postcomposition with the bump function $\alpha$ induces a smooth map $L^2_j(I;\rr_{\geq 0}) \to L^2_j(I;\rr_{\geq 0})$ given by $g \mapsto \alpha \circ g$.  
\end{lemma}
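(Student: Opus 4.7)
The plan is to reduce this to Lemma~\ref{lem:composeSobolev}(a), which is the general composition result for a smooth real-valued function acting by postcomposition on $L^2_j$ functions on a compact interval. The function $\alpha$ as defined is only a priori smooth on $[0,\infty)$, so the first step is to extend it to a smooth function $\tilde{\alpha}: \rr \to \rr$. This is easy: since $\alpha \equiv 1$ on $[0, \delta/2]$, all derivatives of $\alpha$ at $0$ of positive order vanish, and we can extend by $\tilde{\alpha}(t)=1$ for $t < 0$. (Any smooth extension works; the specific one is irrelevant because our inputs $g$ take values in $\rr_{\geq 0}$.)

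Next, I would apply Lemma~\ref{lem:composeSobolev}(a) to $\tilde{\alpha}$ on the compact interval $I$. This directly gives a smooth map $L^2_j(I;\rr) \to L^2_j(I;\rr)$, $g \mapsto \tilde{\alpha} \circ g$. Restricting the domain to $L^2_j(I; \rr_{\geq 0})$, which is a closed convex subset, the composition $\tilde{\alpha} \circ g = \alpha \circ g$ coincides with the desired postcomposition. Finally, since $\tilde{\alpha}$ restricted to $[0, \infty)$ takes values in $[0,1] \subset \rr_{\geq 0}$, the image lies in $L^2_j(I; \rr_{\geq 0})$, so $g \mapsto \alpha \circ g$ indeed defines a smooth map between the stated spaces.

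There are no real obstacles here beyond the mild bookkeeping of checking that $\tilde{\alpha}$ is genuinely smooth at $0$, which follows from $\alpha$ being locally constant (equal to $1$) near $0$, and from noting that compactness of $I$ is essential so that the hypothesis of part (a) rather than part (b) of Lemma~\ref{lem:composeSobolev} is the applicable one (we do not need $\alpha(0) = 0$, which would fail).
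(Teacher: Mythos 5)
Your proposal is correct and takes essentially the same route as the paper, which simply cites Lemma~\ref{lem:composeSobolev}(a) and leaves the extension of $\alpha$ from $[0,\infty)$ to $\rr$ implicit. Spelling out that extension — using that $\alpha \equiv 1$ near $0$ so the constant extension is smooth — is a small but legitimate clarification of a step the paper treats as obvious.
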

\begin{proof}
This is an immediate consequence of Lemma~\ref{lem:composeSobolev}(a).
\end{proof}

\subsection{On $L^2_{j}$ regularity of $\Xi_{\lambda}$ for compact cylinders}\label{xi:compact}
With the above technical lemmas established, we can now prove the first part of Proposition~\ref{prop:Xixy-paths}, which concerns the $L^2_{j}$ regularity of $\Xi_\lambda$ on compact cylinders. First, recall from Lemma~\ref{lem:Xixylambda} that $\Xi_\lambda$ extends to $\tW^\sigma_j$.  There is an obvious involution $\iota$ on $\tW^\sigma_j$ given by $(a,s,\phi) \mapsto (a,-s,\phi)$, such that $\Xi_\lambda(a,-s,\phi) = \Xi_\lambda(\iota(a,s,\phi)) = \iota (\Xi_\lambda(a,s,\phi))$.  When discussing the terms involved in $\Xi_\lambda$, we will suppress this from the notation; for example, if $(a,s,\phi)$ has $s < 0$, we will write $\beta_i(a,s,\phi)$ instead of $\beta_i(\iota(a,s,\phi))$.  Since $\Xi_\lambda$ is $\iota$-equivariant, it is easy to verify that this will not affect any smoothness or regularity.

\begin{proof}[Proof of Proposition~\ref{prop:Xixy-paths}\eqref{xi:paths}]
Let $I$ be a compact interval.  The idea is to apply the technical lemmas proved above on Sobolev multiplication and superposition operators to the explicit formula for $\Xi_\lambda$ in \eqref{eq:Xilambda-Ux}.  Fix $\gamma \in \tW^\tau_j(I \times Y)$, where we write $\gamma(t) = (a(t),s(t), \phi(t))$.  Recall that $\| \gamma(t) \|_{L^2(Y)} = 1$ for all $t \in I$.  We break $\Xi_\lambda$ into elementary pieces and study their regularity.

First, observe that $\langle \phi(t), \phi^i_\infty \rangle_{L^2(Y)} \in L^2_j(I;\rr)$ by Lemma~\ref{xi:inner-product-path}.  By Lemma~\ref{xi:pointwise-mult}, we have that $\langle \phi(t), \phi^i_\infty \rangle_{L^2(Y)} \cdot v^i_\lambda$ is in $L^2_j$.  

Note that whenever $\gamma(t) \in S^1 \cdot U_{x^i_\infty}$, we have $|\langle \phi(t), \phi^i_\infty \rangle_{L^2(Y)}| \geq \frac{1}{2}$ by our choice of the $\delta$-balls in the definition of $\Xi_\lambda$.  (Here we are using that there exists $u(t) \in S^1$ such that $u(t) \cdot \phi(t) \in U_{x^i_\infty}$.)  Using Lemma~\ref{lem:xi-inversion}, formula \eqref{eq:omegainfinity}, and the Sobolev multiplication $L^2_j(I;\rr) \times L^2_j(I;\rr) \to L^2_j(I;\rr)$, we can deduce that $\omega^i_\infty(\gamma) \in L^2_j(I; \cc)$.  Consequently, it follows from Lemmas~\ref{xi:pointwise-mult}, \ref{lem:xi-inversion}, and \ref{xi:alpha} that $\beta_i(\gamma) \in L^2_j(I;\rr)$.  Therefore, we see by Lemma~\ref{xi:pointwise-mult} that
$$
\left(a + \beta_i(\gamma) (a^i_\infty - a^i_\lambda),\ s + \beta_i(\gamma)(s^i_\infty - s^i_\lambda),f^i_\lambda(\gamma)\right) 
$$
is in $\What_j(I \times Y)$.  The slicewise $L^2$ norm of the spinorial component of this path is bounded below by $1/2$ by \eqref{xi:flambda-L2j}.   Therefore, we see that  
$$
\left(a + \beta_i(\gamma) (a^i_\infty - a^i_\lambda),\ s + \beta_i(\gamma)(s^i_\infty - s^i_\lambda),\frac{f^i_\lambda(\gamma)}{\| f^i_\lambda(\gamma) \|_{L^2(Y)}} \right), 
$$
is contained in $\What_j(I \times Y)$ by Lemmas~\ref{xi:pointwise-mult}, \ref{lem:xi-inversion}, and \ref{lem:xi-norm}.  This expression is $\Xi_\lambda(\gamma)$ by \eqref{xi:simplified}, as $\beta_n(\gamma) = 0$ for any $n \neq i$, when $\beta_i(\gamma) \neq 0$.    

The smoothness of $\Xi_\lambda$ follows from the relevant smoothness established in the sequence of technical lemmas above about Sobolev multiplication and superposition operators.  To see that $\Xi_\lambda$ is a diffeomorphism, we can apply similar arguments to show that $\Xi^{-1}_\lambda$ satisfies the same regularity and smoothness properties.       

We now show that $\Xi_{\lambda}$ depends smoothly on $\lambda$, at and near infinity. 
If we change the definition of $\Xi_\lambda$ by varying $x^i_\lambda$ smoothly,  the arguments above (i.e., the application of Lemmas~\ref{xi:inner-product-path}-\ref{xi:alpha}) show that we will vary the induced diffeomorphism in a smooth way.   This only applies for small variations of $x^i_\lambda$ whose spinorial component stays slicewise in the $L^2$ ball of size $\delta$ around $\phi^i_\infty$, but that is all we will need.  By Corollary~\ref{cor:corresp}, we see that $x_{f^{-1}(r)}$ is smooth at and near $r = 0$, and thus we obtain that $\Xi_{f^{-1}(r)}$ is smooth at and near $r = 0$.

It remains to verify that $\Xi_\lambda$ preserves $W^\tau_{j}(I \times Y)$ (i.e. $\Xi_\lambda$ preserves the condition $s \geq 0$).  This is trivial since $\Xi_\lambda$ takes $W^\sigma_0$ to $W^\sigma_0$.  
\end{proof}

\subsection{On $L^2_j$ regularity of $\Xi_{\lambda}$}\label{xi:non-compact}
Before studying further regularity properties of $\Xi_\lambda$, we need to introduce some further notation, generalizing the construction of $W^\tau_j(x_\lambda, y_\lambda)$. Let $I \subseteq \rr$ be an interval. Given a path $\zeta \in \What_{j, loc}(I \times Y)$ (analogous to the smooth reference path $\gamma_0$ in defining $W^\tau_j(x_\lambda, y_\lambda)$), we define 
$$
 \What_j(I \times Y, \zeta)= \{ \gamma \in \What_{j, loc}(I \times Y) \mid \gamma - \zeta \in \What_{j}(I \times Y) \}.
$$
We equip this space with the $L^2_j$ metric (not $L^2_{j,loc}$).  Note that many different functions can induce the same space.  Further, note that $ \What_j(I \times Y, \zeta)$ is a Banach manifold, since we have an affine identification with $\What_{j}(I \times Y)$.   

Towards studying the map $\Xi_\lambda$ on $\tW^\tau_j(x_\lambda, y_\lambda)$, we will begin with a regularity result for paths in $\tW^\sigma_0$ that are contained entirely in some neighborhood of an orbit of stationary points (or their involutes under $\iota$).  As usual, for an element $x \in \What_j$, we will abusively write $x$ for the induced constant path in $\What_{j,loc}(I \times Y)$.  

\begin{proposition}\label{xi:half-cylinder}
Fix a stationary point $x^i_\infty$ of $\Xqagcsigma$ as in the construction of $\Xi_\lambda$, and let $x^i_{\lambda}$ be the corresponding stationary points of $\Xqmlagcsigma$.  Let $T > 0$.   Let $\gamma \in \tW^\tau_j([T,\infty) \times Y, x^i_\lambda)$ be such that $\beta_i(\gamma) \equiv 1$.  Then, 
\begin{enumerate}[(a)]
\item in a neighborhood of $\gamma$ in $\What_j([T,\infty) \times Y, x^i_\lambda)$, $\Xi_\lambda$ induces a smooth map to $\What_j([T,\infty) \times Y, x^i_\infty)$, and   
\item this family of maps is smooth in $\lambda$ at and near infinity.
\end{enumerate}   
An analogous result applies for the half-cylinder $(-\infty, -T] \times Y$.   
\end{proposition}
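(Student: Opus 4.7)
The plan is to use the explicit formula \eqref{eq:Xilambda-Ux} for $\Xi_\lambda$ together with the technical Sobolev lemmas of the previous subsection, handling the non-compactness of $[T,\infty)$ by always subtracting the reference value at the constant path $\gamma = x^i_\lambda$ before invoking a superposition operator. First I would shrink the neighborhood of $\gamma$ in $\What_j([T,\infty) \times Y, x^i_\lambda)$ so that $\beta_i(\gamma') \equiv 1$ and $\omega^i_\infty(\gamma'(t)) = 1$ at every slice for all $\gamma'$ in the neighborhood; this is possible because $\{\beta_i = 1\} \cap U_{x^i_\infty}$ is open, $\gamma$ lies inside this open subset slicewise by hypothesis, and the restriction map $\What_j([T,\infty)\times Y) \to C^0([T,\infty); L^2(Y))$ is continuous for $j \geq 1$. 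With this reduction, \eqref{eq:Xilambda-Ux} applies verbatim, with all terms for $n \ne i$ vanishing. Writing $\gamma = x^i_\lambda + \eta$, $\eta = (\alpha, r, \psi) \in \What_j([T,\infty) \times Y)$ small, the connection and real components of $\Xi_\lambda(\gamma) - x^i_\infty$ are simply $\alpha$ and $r$, which lie in $L^2_j$ trivially.

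The work is in the spinor component. Let $c_\lambda = \langle \phi^i_\lambda, \phi^i_\infty \rangle_{L^2(Y)} > 0$. Direct substitution yields $f^i_\lambda(\phi) = c_\lambda \phi^i_\infty + u$ where $u := \psi - \langle \psi, \phi^i_\infty \rangle_{L^2(Y)} v^i_\lambda$; Lemma~\ref{xi:inner-product-path}\eqref{inner-product:3d} and the second form of Lemma~\ref{xi:pointwise-mult} show $u \in L^2_j([T,\infty) \times Y; \Spin)$. Expanding
\[
\|f^i_\lambda(\phi)\|_{L^2(Y)}^2 - c_\lambda^2 = 2c_\lambda \Re \langle \phi^i_\infty, u \rangle_{L^2(Y)} + \|u\|_{L^2(Y)}^2
\]
and using the same lemmas places the right-hand side in $L^2_j([T,\infty); \R)$. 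The crucial step is now to apply Lemma~\ref{lem:composeSobolev}(b) with $h_1(x) = \sqrt{c_\lambda^2 + x} - c_\lambda$ (extended smoothly outside a neighborhood of $0$ so that $h_1(0) = 0$) to conclude $\|f^i_\lambda(\phi)\|_{L^2(Y)} - c_\lambda \in L^2_j([T,\infty); \R)$; the same lemma applied to $h_2(x) = 1/(c_\lambda + x) - 1/c_\lambda$ then yields $\|f^i_\lambda(\phi)\|_{L^2(Y)}^{-1} - c_\lambda^{-1} \in L^2_j([T,\infty); \R)$. Rewriting
\[
\frac{f^i_\lambda(\phi)}{\|f^i_\lambda(\phi)\|_{L^2(Y)}} - \phi^i_\infty = f^i_\lambda(\phi) \Bigl(\frac{1}{\|f^i_\lambda(\phi)\|_{L^2(Y)}} - \frac{1}{c_\lambda}\Bigr) + \frac{1}{c_\lambda}\bigl(f^i_\lambda(\phi) - c_\lambda \phi^i_\infty\bigr)
\]
and applying both forms of Lemma~\ref{xi:pointwise-mult} exhibits the spinor component of $\Xi_\lambda(\gamma) - x^i_\infty$ as an element of $L^2_j([T,\infty) \times Y; \Spin)$.

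Smoothness in $\gamma$ follows because each step above is a smooth operation between the relevant Sobolev spaces (compositions of slicewise inner products, Sobolev multiplication, and the superposition maps from Lemmas~\ref{lem:composeSobolev}, \ref{lem:xi-inversion}, \ref{lem:xi-norm}). Smoothness in $\lambda$ at and near infinity follows from Corollary~\ref{cor:corresp}: the quantities $x^i_{f^{-1}(r)}$, $v^i_{f^{-1}(r)}$, and $c_{f^{-1}(r)}$ depend smoothly on $r$ at $r = 0$, and these are the only $\lambda$-dependent objects appearing in the formulas, so chain rule in the Banach-manifold sense yields smoothness of the composite. The half-cylinder $(-\infty, -T] \times Y$ is handled identically by time reversal. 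The expected main obstacle is exactly the non-compactness of the interval: standard Sobolev multiplication and superposition results like Lemma~\ref{lem:composeSobolev}(a) or Lemma~\ref{xi:alpha} require compact base intervals because constants are not in $L^2$ on $[T,\infty)$. The organizing principle of the proof is therefore to subtract the value of each quantity along the constant path $x^i_\lambda$ before invoking a superposition lemma, so as to always land in the setting $h(0)=0$ of Lemma~\ref{lem:composeSobolev}(b); the identity $\Xi_\lambda(x^i_\lambda) = x^i_\infty$ from Lemma~\ref{lem:Xixylambda}\eqref{xi:xlambda} is precisely what makes this bookkeeping close up, and is why the reference path of the target space $\What_j([T,\infty)\times Y, x^i_\infty)$ is $x^i_\infty$ rather than $x^i_\lambda$.
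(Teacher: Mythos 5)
Your argument follows the same route as the paper's proof: reduce to the explicit formula \eqref{eq:Xilambda-Ux} with $\beta_i \equiv 1$, control each component separately, and handle the normalization of the spinor by superposition operators that vanish at the reference configuration. The paper packages that last step as Lemma~\ref{lem:xi-inversion-global}; you reprove it inline via two applications of Lemma~\ref{lem:composeSobolev}(b), which is correct and slightly more explicit.

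However, your preliminary reduction ``so that $\omega^i_\infty(\gamma'(t)) = 1$'' is both unjustified and unnecessary. The hypothesis $\beta_i(\gamma) \equiv 1$ only places each slice $\gamma(t)$ in $S^1 \cdot U_{x^i_\infty}$, not in $U_{x^i_\infty}$ itself, so $\omega^i_\infty(\gamma(t))$ need not equal $1$; moreover $U_{x^i_\infty}$ is not open in $W^\sigma_0$ (the paper remarks on this immediately after defining it), so the claim that ``$\{\beta_i = 1\} \cap U_{x^i_\infty}$ is open'' is also false. Fortunately the rest of your algebra does not depend on this: formula \eqref{eq:Xilambda-Ux} is valid on all of $S^1 \cdot U_{x^i_\infty}$, the quantity $c_\lambda = \langle\phi^i_\lambda,\phi^i_\infty\rangle_{L^2(Y)}$ is real and positive by the construction of $x^i_\lambda \in U_{x^i_\infty}$, and $u = \psi - \langle\psi,\phi^i_\infty\rangle_{L^2(Y)}\, v^i_\lambda$ lies in $\What_j([T,\infty)\times Y)$ whether $\langle\psi,\phi^i_\infty\rangle$ is complex or not, so the expansion of $\|f^i_\lambda(\phi)\|_{L^2(Y)}^2 - c_\lambda^2$ and the two superposition steps go through unchanged. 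Deleting that sentence leaves a correct argument.
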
 

We postpone the proof of Proposition~\ref{xi:half-cylinder}, but do explain what we mean by smoothness of $\Xi_\lambda$ in $\lambda$, since the domain of the function is changing.  Fix a reference path $\gamma_0$ in $W^\tau_{j,loc}([T,\infty) \times Y)$ which agrees with $x_\infty$ for $t \gg 0$.  Note that $\Xi^{-1}_\lambda(\gamma_0)$ provides an $L^2_{j,loc}$ reference path which agrees with $x_\lambda$ for $t \gg 0$ by the work of Section~\ref{xi:compact}.  Then, $\Xi_\lambda$ induces a map, 
$$
\What_j([T,\infty) \times Y) \to \What_j([T,\infty) \times Y), \ \gamma - \Xi^{-1}_\lambda(\gamma_0) \mapsto \Xi_\lambda(\gamma) - \gamma_0,
$$  
defined in a neighborhood of $\gamma - \Xi^{-1}_\lambda(\gamma_0)$, where $\gamma \in \tW^\tau_{j,loc}([T,\infty) \times Y)$ satisfies $\gamma - \Xi^{-1}_\lambda(\gamma_0) \in \What_j([T,\infty) \times Y)$.  Since the domain and target of this map are constant in $\lambda$, we can make sense of smoothness in $\lambda$ at and near $\infty$.  There is an analogous notion of smoothness in $\lambda$ for maps from $\tW^\tau_j(x_\lambda, y_\lambda)$ to $\tW^\tau_j(x_\infty, y_\infty)$ (assuming they extend to neighborhoods in the relevant larger affine space), which is what is meant in Proposition~\ref{prop:Xixy-paths}\eqref{xi:paths-smooth-f} and Proposition~\ref{xi:half-cylinder}.  More generally, given $f : \What_{j,loc}(I\times Y) \to \What_{j,loc}(I \times Y)$, one can analogously define smoothness in $\gamma$ of the family of maps $f: \What_j(I \times Y, \gamma) \to \What_j(I \times Y, f(\gamma))$.  One can check that these notions are independent of the choice of reference path.  

Before proving Proposition~\ref{xi:half-cylinder}, we use it to complete the proof of Proposition~\ref{prop:Xixy-paths}\eqref{xi:paths-smooth-f}.  

\begin{proof}[Proof of Proposition~\ref{prop:Xixy-paths}\eqref{xi:paths-smooth-f}]
Let $\gamma \in \tW^\tau_j(x_\infty, y_\infty)$.  First, we establish that $\Xi_\lambda(\gamma) \in \tW^\tau_j(x_\infty, y_\infty)$.  Fix a reference path $\gamma_0$ from $x_\infty$ to $y_\infty$ which is constant outside of $[-T,T]$.  Write $x^i_\infty$ (respectively $x^n_\infty$) for the indexed stationary point from the construction of $\Xi_\lambda$ that is in the orbit of $x_\infty$ (respectively $y_\infty$).  

We have that $\Xi_\lambda(\gamma) \in \tW^\tau_{j,loc}(\rr \times Y)$ by Proposition~\ref{prop:Xixy-paths}\eqref{xi:paths}.  Since $\gamma \in \tW^\tau_j(x_\infty, y_\infty)$, we have that $\gamma - \gamma_0$ has finite $L^2_j(\rr \times Y)$ norm, with $j \geq 1$. Thus, we see that for $t \gg 0$, we must have that $\| \gamma(t) - y_\infty\|_{L^2(Y)}$ is sufficiently small so that $\gamma(t)$ is contained in $S^1 \cdot U_{x^n_\infty}$ and $\beta_n(\gamma(t)) = 1$.  A similar statement applies for $t \ll 0$, where we see that $\beta_i(\gamma(t)) = 1$.      

By Proposition~\ref{xi:half-cylinder}, we see that 
$$\Xi_\lambda(\gamma|_{[T, \infty)}) \in \What_j([T,\infty) \times Y, \Xi_\lambda(\gamma_0|_{[T,\infty)})).$$  A similar statement applies for $(-\infty, -T] \times Y$ as well.  Since $\gamma|_{[-T,T] } \in \tW^\tau_{j,loc}([-T,T] \times Y)$, we can put these three pieces of $\gamma$ together to see that $\gamma \in \tW^\tau_{j}(x_\infty, y_\infty)$.  

We will use a similar argument for smoothness.  We provide the argument for the existence of the first derivative of $\Xi_\lambda$; the higher derivatives are similar.  Let $\eta \in T_{\gamma} \What_j(\rr \times Y, \gamma_0) \cong \What_j(\rr \times Y)$.  We would like to see that 
\begin{equation}\label{xi:L2j-deriv-exists}
\lim_{h \to 0} \Big\| \frac{\Xi_\lambda(\gamma + h \cdot \eta) - \Xi_\lambda(\gamma)}{h} \Big\|_{L^2_j(\rr \times Y)}
\end{equation}
exists.  This limit needs to be taken with respect to $L^2_j$, and not $L^2_{j,loc}$.  As before, we have that $\| \eta(t) \|_{L^2(Y)}$ is uniformly bounded in $t$, because of the $L^2_j(\rr \times Y)$ bounds on $\eta$.  Therefore, for $h$ sufficiently small, we have that $\gamma + h \cdot \eta(t)$ is in $S^1 \cdot U_{x^n_\infty}$ for $t \geq T$, where we can choose $T$ independent of $h$.  A similar statement applies for $t \leq  -T$.  Therefore, by Proposition~\ref{xi:half-cylinder} the above limit exists in the $L^2_j$ topology when we replace $\rr \times Y$ with $[T,\infty) \times Y$ and with $(-\infty, -T] \times Y$.  By Proposition~\ref{prop:Xixy-paths}\eqref{xi:paths}, we have that the limit in \eqref{xi:L2j-deriv-exists} exists when we replace $\rr$ with $[-T,T]$.  This establishes the existence of the limit on $L^2_j(\rr \times Y)$.        

The smoothness of $\Xi_\lambda$ in $\lambda$ at and near infinity can again be deduced by a similar argument, applying Proposition~\ref{prop:Xixy-paths}\eqref{xi:paths} for a fixed compact interval and Proposition~\ref{xi:half-cylinder} outside of this interval.  
\end{proof}

Before moving on to Propositions~\ref{prop:Xixy-paths}\eqref{xi:Bgc} and \eqref{xi:Vgc} in the final subsection, we will give the promised proof of Proposition~\ref{xi:half-cylinder}.  This will be proved using similar techniques as for Proposition~\ref{prop:Xixy-paths}\eqref{xi:paths}; however, since we work in a region where some $\beta_n \equiv 1$ (and all other $\beta_{n'} \equiv 0$), we do not need to worry about $\omega^n_\infty$ and thus our job will be easier.  

We need one more technical lemma about superposition operators before giving the proof.

\begin{lemma}\label{lem:xi-inversion-global}
Fix $x\in \What_j$ non-zero and $I \subseteq \rr$.  There is a smooth map 
$$
\What_j(I \times Y, x) \to \What_j\Bigl(I \times Y, \frac{x}{\| x \|_{L^2(Y)}}\Bigr), \ \gamma \mapsto \frac{\gamma}{\| \gamma \|_{L^2(Y)}},
$$
defined in a neighborhood of any $\gamma \in \What_j(I \times Y, x)$ with $ \| \gamma(t)\|_{L^2(Y)} \geq \epsilon > 0$ for all $t \in I$.  Further, this family of maps is smooth in $x$.  
\end{lemma}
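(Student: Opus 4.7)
The plan is to reduce the normalization map to a combination of the technical tools already proved in the subsection, working with differences from the reference path to handle the non-compactness of $I$. Set $R = \|x\|_{L^2(Y)} > 0$. By shifting coordinates via the affine identification $\What_j(I \times Y, x) \cong \What_j(I \times Y)$ sending $\gamma$ to $\gamma - x$, and similarly for the target, it suffices to show that
\[
\Phi_x(\eta) \ := \ \frac{x + \eta}{\|x + \eta\|_{L^2(Y)}} - \frac{x}{R}
\]
defines a smooth map from a neighborhood of $0$ in $\What_j(I \times Y)$ to $\What_j(I \times Y)$, depending smoothly on $x$.

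The first step is to control the scalar function $N(\eta)(t) := \|x + \eta(t)\|_{L^2(Y)} - R$ in $L^2_j(I;\R)$. Write
\[
\|x + \eta(t)\|_{L^2(Y)}^2 - R^2 \ = \ \langle \eta(t),\eta(t)\rangle_{L^2(Y)} + 2\langle \eta(t),x\rangle_{L^2(Y)}.
\]
Both terms lie in $L^2_j(I;\R)$ by Lemma~\ref{xi:inner-product-path}, parts \eqref{inner-product:4d} and \eqref{inner-product:3d} respectively, and depend smoothly on $(\eta,x)$. Applying Lemma~\ref{lem:composeSobolev}(b) to the function $u \mapsto \sqrt{u + R^2} - R$, which is smooth with value $0$ at $u=0$, yields $N(\eta) \in L^2_j(I;\R)$ smoothly. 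Applying Lemma~\ref{lem:composeSobolev}(b) once more to $v \mapsto (v+R)^{-1} - R^{-1}$ gives
\[
M(\eta)(t) \ := \ \|x + \eta(t)\|_{L^2(Y)}^{-1} - R^{-1} \ \in \ L^2_j(I;\R),
\]
again smoothly in $(\eta,x)$; here we use that $\|x + \eta(t)\|_{L^2(Y)} \geq \epsilon$ in a neighborhood of $\eta$, so $v + R$ stays away from zero, and we can modify the function outside a small interval to make it smooth everywhere as in the proof of Lemma~\ref{lem:xi-inversion}.

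The second step is to decompose
\[
\Phi_x(\eta) \ = \ \frac{\eta}{R} \ + \ M(\eta)\,\eta \ + \ M(\eta)\,x.
\]
The first term is linear in $\eta$, hence smooth into $\What_j(I \times Y)$. The second term is the product of $M(\eta) \in L^2_j(I;\R)$ and $\eta \in \What_j(I \times Y)$, which lies in $\What_j(I \times Y)$ and depends smoothly on $\eta$ by Lemma~\ref{xi:pointwise-mult} (first version). The third term is the product of $M(\eta) \in L^2_j(I;\R)$ with the constant path $x \in \What_j$, landing in $\What_j(I \times Y)$ smoothly by Lemma~\ref{xi:pointwise-mult} (second version). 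Composing smooth maps yields smoothness of $\Phi_x$, and the joint smoothness in $x$ follows from the fact that each ingredient (the constant $R$, the function $h$ used in the composition lemma, and the constant path $x$ itself) depends smoothly on $x$.

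The main obstacle here is exactly the non-compactness of $I$: the naive approach of writing $\gamma/\|\gamma\|_{L^2(Y)}$ and applying Lemma~\ref{lem:xi-norm} and Lemma~\ref{lem:xi-inversion} directly fails because $\|\gamma(t)\|_{L^2(Y)}$ is bounded below by a positive constant, so it cannot belong to $L^2_j(I;\R)$ on a non-compact interval. Working throughout with the $L^2_j$-differences $N$ and $M$, and invoking the $h(0)=0$ version Lemma~\ref{lem:composeSobolev}(b) rather than part (a), is precisely what accommodates the non-compact case.
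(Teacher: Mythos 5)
Your proof is correct and follows essentially the same route as the paper: work with the difference $\gamma - x$, reduce to showing the reciprocal-norm correction lies in $L^2_j(I;\R)$ via Lemma~\ref{xi:inner-product-path} and Lemma~\ref{lem:composeSobolev}(b), and then assemble the result using Lemma~\ref{xi:pointwise-mult}. The only cosmetic differences are that the paper first normalizes to $\|x\|_{L^2(Y)} = 1$ and applies Lemma~\ref{lem:composeSobolev}(b) once with $h(y) = (y+1)^{-1/2} - 1$, whereas you keep $R$ and apply it twice (once for the square root, once for the inverse).
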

\begin{proof}
Without loss of generality, we assume that $\|x\|_{L^2(Y)}=1$. (We can reduce to this case by dividing both $\gamma$ and $x$ by $\|x\|_{L^2(Y)}$.)

Let 
$$\zeta := \gamma - x \in \What_j(I \times Y).$$ 
We are interested in showing that the following quantity is in $\What_j(I \times Y)$:
\begin{align*}
\frac{\gamma}{\| \gamma \|_{L^2(Y)}} - x &= \frac{\zeta + x}{\| \zeta + x \|_{L^2(Y)}} - x \\
&= \zeta \cdot (\|\zeta + x \|_{L^2(Y)}^{-1} - 1) + x  \cdot (\|\zeta + x \|_{L^2(Y)}^{-1} - 1) + \zeta.
\end{align*}
Since $\zeta \in \What_j(I \times Y)$ and $x \in \What_j$, in view of Lemma~\ref{xi:pointwise-mult}, it suffices to show that
\begin{equation}
\label{eq:Zeta}
 \|\zeta + x \|_{L^2(Y)}^{-1} - 1 \in L^2_j(I; \rr).
 \end{equation}

Set
$$ g:=  \|\zeta + x \|_{L^2(Y)}^{2} - 1 =  \langle \zeta,\zeta \rangle_{L^2(Y)} + 2\langle \zeta, x \rangle_{L^2(Y)}.$$
By applying Lemma~\ref{xi:inner-product-path}, we see that $g \in L^2_j(I; \rr)$. The expression in \eqref{eq:Zeta} can be written as $h \circ g$, where $h(y)=(y+1)^{-1/2} - 1.$ Note that $h(0)=0$. Thus, we can apply Lemma~\ref{lem:composeSobolev}(b), and deduce that $h \circ g \in L^2_j(I; \rr)$, as desired. 

Smoothness with respect to $\gamma$ follows from the smoothness statements in Lemmas~\ref{xi:inner-product-path}, ~\ref{xi:pointwise-mult}  and ~\ref{lem:composeSobolev}(b). 

Similar arguments can be used to prove smoothness with respect to $x$.
\end{proof}

With the above lemma, we can prove Proposition~\ref{xi:half-cylinder}.  

\begin{proof}[Proof of Proposition~\ref{xi:half-cylinder}]
Recall that by our assumptions on the path $\gamma$, we have that $\beta_i(\gamma) \equiv 1$ for some index $i$, and $\beta_n(\gamma) \equiv 0$ for all $n \neq i$.  Writing $\gamma(t) = (a(t), s(t), \phi(t))$, we have 
$$
\Xi_\lambda(\gamma) = \left (a(t) + a^i_\infty - a^i_\lambda, s(t) + s^i_\infty - s^i_\lambda, \frac{\phi(t) + \langle \phi(t), \phi^i_\infty \rangle_{L^2(Y)} 
\cdot v^i_\lambda}{\| \phi(t) + \langle \phi(t), \phi^i_\infty \rangle_{L^2(Y)} \cdot v^i_\lambda \|_{L^2(Y)}} \right). 
$$
Recall that the spinorial component of $\Xi_\lambda(\gamma)$ in this case is written as $\frac{f_\lambda(\gamma(t))}{\| f_\lambda(\gamma(t))\|_{L^2(Y)}}$.  The result now follows from applying Lemmas~\ref{xi:inner-product-path}, \ref{xi:pointwise-mult} and \ref{lem:xi-inversion-global} using the lower bounds given in \eqref{xi:flambda-L2j}.  
\end{proof}

\section{Extensions of $\Xi_{\lambda}$ to other path spaces}
\label{sec:xi-ext}
With the main technical results established, we are easily able to extend $\Xi_\lambda$ to larger path spaces to complete the proof of Proposition~\ref{prop:Xixy-paths} by proving parts \eqref{xi:Bgc} and \eqref{xi:Vgc}.  

\begin{proof}[Proposition~\ref{prop:Xixy-paths}\eqref{xi:Bgc}]
Recall from Section~\ref{sec:path} the space $\tC^{\gCoul,\tau}_j(x,y)$.  This space is larger than $\tW^\tau_j(x,y)$ due to the condition of pseudo-temporal gauge as opposed to temporal gauge.  Since $\tB^{\gCoul,\tau}_j([x_\lambda],[y_\lambda])$ is the quotient of $\tC^{\gCoul,\tau}_j(x_\lambda, y_\lambda)$ by the gauge group $\G^{\gCoul}_{j+1}(\rr \times Y)$, we will extend $\Xi_\lambda$ to a diffeomorphism 
$$
\Xi_\lambda : \tC^{\gCoul,\tau}_j(x_\lambda,y_\lambda) \to \tC^{\gCoul,\tau}_j(x_\infty, y_\infty),
$$
which commutes with the action by $\G^{\gCoul}_{j+1}(\rr \times Y)$, and this will induce the desired diffeomorphism for $\tB^{\gCoul,\tau}_j$.

Let $(a(t) + \alpha(t)dt, s(t), \phi(t)) \in \tC^{\gCoul,\tau}_j(x_\lambda, y_\lambda)$, where $(a(t), s(t), \phi(t)) \in \widetilde{W}^\tau_j(x_\lambda, y_\lambda)$.  By Proposition~\ref{prop:Xixy-paths}\eqref{xi:paths-smooth-f}, we have that $\Xi_\lambda(a(t), s(t), \phi(t)) \in \widetilde{\C}^{\gCoul,\tau}_j(x_\infty, y_\infty)$.  We now define 
$$
\Xi_\lambda(a(t) + \alpha(t)dt, s(t), \phi(t)) = \Xi_\lambda(a(t), s(t), \phi(t)) + (\alpha(t)dt,0,0).
$$
Since $\Xi_\lambda$ induces a diffeomorphism from $\tW^\tau_j(x_\lambda,y_\lambda)$ to $\tW^\tau_j(x_\infty, y_\infty)$, it is clear that $\Xi_\lambda$ induces a diffeomorphism from $\tC^{\gCoul,\tau}_j(x_\lambda,y_\lambda)$ to $\tC^{\gCoul,\tau}_j(x_\infty, y_\infty)$.  Thus, it remains to see that this induced map respects the (four-dimensional) gauge action.  Let $u \in \G^{\gCoul,\tau}_{j+1}(\rr \times Y)$.  For notation, we will write $u(t) \cdot V(t)$ to mean the path obtained by applying $u(t)$ pointwise.  

Using the $S^1$-equivariance of $\Xi_\lambda :\tW^\sigma_j \to \tW^\sigma_j$, we have 
\begin{align*}
u \cdot  \Xi_\lambda(a(t) + \alpha(t)dt, s(t), \phi(t)) &= u \cdot (\Xi_\lambda(a(t), s(t), \phi(t)) + (\alpha(t)dt,0,0)) \\
&= (-u^{-1}\frac{du}{dt} dt, 0, 0) + u(t) \cdot \Xi_\lambda(a(t),s(t),\phi(t)) + (\alpha(t)dt,0,0) \\
&= (-u^{-1}\frac{du}{dt} dt, 0, 0) + \Xi_\lambda(a(t),s(t), u(t) \cdot \phi(t)) + (\alpha(t)dt,0,0) \\
&= \Xi_\lambda((-u^{-1}\frac{du}{dt} dt + \alpha(t)dt + a(t),s(t), u(t) \cdot \phi(t))) \\
&= \Xi_\lambda( u \cdot (a(t) + \alpha(t)dt,s(t),\phi(t))).  \\
\end{align*}

It follows that $\Xi_\lambda$ induces a diffeomorphism from $\tB^{\gCoul,\tau}_j([x_\lambda], [y_\lambda])$ to $\tB^{\gCoul,\tau}_j([x_\infty], [y_\infty])$. 
\end{proof}

\begin{proof}[Proposition~\ref{prop:Xixy-paths}\eqref{xi:Vgc}]
We would like to show that $\Xi_\lambda$ induces a bundle map 
$$
\xymatrix{
\V^{\gCoul,\tau}_j \ar[r] \ar[d] & \V^{\gCoul,\tau}_j \ar[d] \\
\tB^{\gCoul,\tau}_j(x_\lambda, y_\lambda) \ar[r]^{\Xi_\lambda} & \tB^{\gCoul,\tau}_j(x_\infty, y_\infty).
}
$$
which is a diffeomorphism on the fibers.

Here, we recall that the bundle $\V^{\gCoul,\tau}_j(\rr \times Y)$ over $\tB^{\gCoul,\tau}_j([x],[y])$ comes from quotienting the bundle $\V^{\gCoul,\tau}_j(\rr \times Y)$ over $\tC^{\gCoul,\tau}_j([x],[y])$ by the gauge action.  Therefore, it suffices to show that $\Xi_\lambda$ induces a $\G^{\gCoul,\tau}_{j+1}(\rr \times Y)$-equivariant bundle map
$$
\xymatrix{
\V^{\gCoul,\tau}_j \ar[r]^{\Xi_\lambda}  \ar[d]& \V^{\gCoul,\tau}_j \ar[d] \\
\tC^{\gCoul,\tau}_j(x_\lambda, y_\lambda) \ar[r]^{\Xi_\lambda} & \tC^{\gCoul,\tau}_j(x_\infty, y_\infty),
}
$$
which is a diffeomorphism on the fibers.  
    
Recall that the bundle $\V^{\gCoul,\tau}_j$ over $\tC^{\gCoul,\tau}_j(x,y)$ has fiber over $(a(t) + \alpha(t)dt,s(t),\phi(t))$ consisting of paths in $(b(t), r(t), \psi(t))$ satisfying $\Re \langle \phi(t), \psi(t) \rangle_{L^2(Y)} = 0$ for all $t$.  In other words, if we write $\gamma(t) = (a(t), s(t), \phi(t))$, then the fiber over $(a(t) + \alpha(t)dt,s(t),\phi(t))$ consists of paths $\eta$ such that $\eta(t) \in \T^{\gCoul,\sigma}_{j, \gamma(t)}$ for all $t$.  

Since $\Xi_\lambda : \widetilde{W}^\sigma_j \to \widetilde{W}^\sigma_j$ is a diffeomorphism, we have that for any $x \in \tW^\sigma_j$, we have an induced linear isomorphism 
\begin{equation}\label{eq:Dx-Xi-lambda}
\D_x\Xi_\lambda: \T^{\gCoul,\sigma}_{j,x} \stackrel{\cong}{\to} \T^{\gCoul,\sigma}_{j,\Xi_\lambda(x)}.  
\end{equation}
Therefore, for an element $\eta \in \V^{\gCoul,\tau}_j$ which sits over $(a(t) + \alpha(t)dt,s(t),\phi(t))$, we can define $\Xi_\lambda(\eta)$ by pushforward.  More precisely, $\Xi_\lambda(\eta)$ is the path in $\V^{\gCoul,\tau}_j$ given by $$\bigl(\D_{(a(t),s(t),\phi(t))}  \Xi_\lambda\bigr)(\eta(t)).$$  
By construction $\Xi_\lambda(\eta)$ sits over $\Xi_\lambda(a(t) + \alpha(t) dt, s(t), \phi(t))$, and thus we have a bundle map.  It is not difficult to check that $(\Xi_\lambda)_*$ induces a diffeomorphism of the fiber over $(a(t) + \alpha(t) dt, s(t), \phi(t))$, as $(\Xi^{-1}_\lambda)_*$ provides the inverse. The gauge-equivariance follows from the gauge-equivariance of $\Xi_\lambda$ on $\C^{\gCoul,\tau}$.

The analogous result for $\tB^{\gCoul,\tau}_j([x_\infty], [y_\infty])/\mathbb{R}$ now follows from the above arguments together with the fact that if $[x_\infty] \neq [y_\infty]$, then the $\mathbb{R}$-action is free on $\tB^{\gCoul,\tau}_j([x_\infty], [y_\infty])$ and $\tB^{\gCoul,\tau}_j([x_\lambda], [y_\lambda])$.  
\end{proof}

\chapter{Convergence of approximate trajectories}\label{sec:trajectories1}
In this chapter and the subsequent one, we establish the analogous results of Chapter~\ref{sec:criticalpoints} for flow trajectories instead of stationary points.  In this section, we focus on results related to the convergence of approximate trajectories to honest trajectories as $\lambda \to \infty$.  We now return to the case that $\lambda = \llambda_i \gg 0$; we will often omit this assumption from the discussion.         
%First, we show that for $\lambda$ sufficiently large, the trajectories of $\Xqmlagcsigma$ that live inside $\N$ are close to those of $\Xqagcsigma$.  Then, we use this to establish non-degeneracy of the trajectories of $\Xqmlagcsigma$.  This allows us to define the Morse complex on $(W^\lambda)^\sigma$ coming from $(l+\pml c_\q)^\sigma$ in an appropriate grading range.  Furthermore, using the implicit function theorem, we will identify the (oriented) moduli spaces of isolated trajectories as well as the moduli spaces used to define the $U$-actions.      
 
\section{Convergence downstairs}\label{sec:convergence-downstairs}
We start by discussing trajectories of $\Xqmlgc$ in the blow-down. There are two kinds of convergence results that one expects. The simpler one is $C^{\infty}_{loc}$ convergence of parameterized trajectories, and the more refined one is convergence of unparameterized trajectories to a broken trajectory. We already know that the former kind holds:

\begin{proposition}\label{prop:convergencenoblowup}
Let $I \subseteq \R$ be a closed interval, and $\gamma_n : I \to W$ be a sequence of trajectories of $\Xqmlngc$ contained in $B(2R)$, where $\lambda_n \to \infty$.  Then, there exists a subsequence of $\gamma_n$ for which the restrictions to any subinterval $I' \Subset I$ converge in the $C^{\infty}$ topology of $W(I' \times Y)$ to $\gamma$, a trajectory of $\Xqgc$. 
\end{proposition}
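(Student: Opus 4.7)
The plan is essentially a direct invocation of Lemma~\ref{lem:convergencenoblowup}, applied with $\eta = c_\q$. First, I would verify that this map satisfies the hypotheses of Proposition~\ref{prop:proposition3perturbed} so that the lemma is applicable. Very compactness of $c_\q = c + \eta_\q$ is recorded as Lemma~\ref{lem:finitecoulombvc} (equivalently, it combines the very compactness of $c$ from Chapter~\ref{sec:spectrum} with Proposition~\ref{prop:verycompact} applied to the very tame perturbation $\q$).

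Next, to produce the sets $N \supset U$ required by the hypothesis of Proposition~\ref{prop:proposition3perturbed}, I would take $N = \overline{B(2R)}$ and $U = B(R)$. The containment-and-smoothness hypothesis — that every finite-type trajectory of $l + c_\q$ lying in $\overline{B(2R)}$ is in fact smooth and contained in $B(R)$ — was already recorded in Section~\ref{sec:verycompact}: smoothness of perturbed Seiberg--Witten trajectories for tame $\q$ follows from \cite[Proposition 13.1.2(i)]{KMbook}, while the strict containment in $B(R)$ is our choice of $R$, coming from the compactness argument of \cite[Proposition 1]{Spectrum} adapted to the perturbed setting via \cite[Section 10.7]{KMbook}.

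With the hypotheses verified, I would then apply Lemma~\ref{lem:convergencenoblowup} verbatim: since each $\gamma_n : I \to W$ is a trajectory of $l + \pmln c_\q = \Xqmlngc$ with image in $N = \overline{B(2R)}$, and $\lambda_n \to \infty$, the lemma produces a subsequence whose restrictions to any compact $I' \Subset I$ converge in the $C^\infty$ topology on $W(I' \times Y)$ to a trajectory of $l + c_\q = \Xqgc$. Unwinding the definitions, this is precisely the conclusion of Proposition~\ref{prop:convergencenoblowup}.

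The main obstacle would have been proving the $C^\infty_{\mathrm{loc}}$ convergence itself — this requires uniform bounds in $\overline{B(2R)}$, the compactness of $c_\q : W_k \to W_{k-1}$, and standard elliptic bootstrapping on $I' \times Y$ — but this work was already carried out in \cite[Proposition 3]{Spectrum} and packaged into Lemma~\ref{lem:convergencenoblowup}. Consequently, once the hypotheses are verified, no further argument is needed.
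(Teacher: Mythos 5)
Your proposal is correct and follows exactly the same route as the paper: the paper's proof simply says that, as discussed in Section~\ref{sec:verycompact}, $l + c_\q$ satisfies the hypotheses of Proposition~\ref{prop:proposition3perturbed}, and then cites Lemma~\ref{lem:convergencenoblowup}. Your version spells out the verification of those hypotheses (very compactness via Lemma~\ref{lem:finitecoulombvc}, the choice $N = \overline{B(2R)}$, $U = B(R)$, smoothness and a priori bounds for perturbed trajectories) that the paper leaves implicit, but the argument is the same.
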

\begin{proof}
As discussed in Section~\ref{sec:verycompact}, $l + c_\q$ satisfies the hypotheses of Proposition~\ref{prop:proposition3perturbed}.  Thus, the result follows from Lemma~\ref{lem:convergencenoblowup}.  
\end{proof}

In particular, when $I=\R$, the conclusion of Proposition~\ref{prop:convergencenoblowup} is convergence in the $C^{\infty}_{loc}$ topology of $W(\R \times Y)$. We denote the resulting topological space by $W_{loc}(\R \times Y)$.

We now seek to show that a sequence of unparameterized trajectories for the approximate equations converge, in a certain sense, to a broken trajectory of $\Xqgc$. It will be convenient to work in the quotient $W/S^1$. Given $x \in W$, we write $[x]$ for its projection to $W/S^1$. If $x$ is a stationary point (of $\Xqgc$ or $\Xqmlgc$), we will say that $[x]$ is a {\em stationary point class}.   

Furthermore, given an interval $I \subset \R$ and a  trajectory $\gamma: I \to W$ of $\Xqgc$ or $\Xqmlgc$, we consider the associated {\em parameterized trajectory class} 
$$[\gamma] \in W(I \times Y)/S^1,$$
where $S^1$ acts by constant gauge transformations. If $I=\R$, we can further divide by reparameterizations (translations in the domain $\R$), and obtain the {\em unparameterized trajectory class}
$$[\cgamma] \in W(\R \times Y)/(S^1 \times \R).$$ 

When $I=\R$, Lemma 16.2.4 in \cite{KMbook}, translated into slicewise Coulomb gauge, shows that any parameterized trajectory class $[\gamma]$ of $\Xqgc$ with finite energy admits limit points $[x]$ and $[y]$ at $\pm \infty$, with $[x]$ and $[y]$ being stationary point classes of $\Xqgc$. Corollary~\ref{cor:endpointsBlowDown} gives the analogous result for trajectory classes of $\Xqmlgc$, provided these come from trajectories contained in $B(2R)$. Since the limit points are unchanged by reparameterizations, we can also talk about the limit points of unparameterized trajectory classes.

\begin{definition}
\label{def:broken}
Fix $[x]$ and $[y]$, stationary point classes of $\Xqgc$.  An {\em (unparameterized) broken trajectory class} of $\Xqgc$  from $[x]$ to $[y]$ consists of 
\begin{itemize}
\item an integer $m \geq 0$, the number of components;
\item an $(m+1)$-tuple of stationary point classes of $\Xqgc$: $[x] = [x_0], \ldots, [x_m] = [y]$ 
\item an unparameterized trajectory class $[\cgamma_i]$ of $\Xqgc$ from $[x_{i-1}]$ to $[x_{i}]$, for every $i = 1,\ldots,m$.  
\end{itemize} 
We will represent broken trajectory classes by the tuple $[\cgammas] = ([\cgamma_1],\ldots,[\cgamma_m])$.  
\end{definition}
  
Next, we want to say what it means for unparameterized trajectory classes of $\Xqmlngc$ to converge to a broken trajectory class of $\Xqgc$, as $\lambda_n \to \infty$. This is done in Definition~\ref{def:brokenconvergencedownstairs} below, which is inspired by the construction of the topology on the space of broken trajectories of $\Xq$, in \cite[Section 16.1]{KMbook}.  Given a trajectory $\gamma: \R \to W$ and $s \in \R$, recall that we write $\tau_s \gamma$ for the translate, $(\tau_s\gamma)(t) = \tau(s+t)$.  

From Proposition~\ref{prop:nearby} and Lemma~\ref{lem:implicitfunctionreducible}, for any stationary point class $[x]$ of $\Xqgc$, there is a corresponding (nearby) stationary point class of $\Xqmlgc$, which we denote by $[x_{\lambda}]$, and similarly in the blow-up.  We also recall from the discussion between Corollaries~\ref{cor:endpointsBlowUp} and \ref{cor:trajectory-Flambda} that for $\lambda \gg 0$, trajectories $[\gamma_\lambda]$ of $\Xqmlagcsigma$ in $(\vml \cap B(2R)^\sigma)/S^1$ connecting stationary points $[x_\lambda]$ and $[y_\lambda]$ are contained in $\B^{\gCoul,\tau}_k([x_\lambda],[y_\lambda])$.  In particular, such trajectories satisfy $\lim_{t \to -\infty} [\tau_t^* \gamma_\lambda] = [\gamma_{x_\lambda}]$ and $\lim_{t \to -\infty} [\tau_t^* \gamma_\lambda] = [\gamma_{y_\lambda}]$ in $\B^{\gCoul,\tau}_{k,loc}(\R \times Y)$ analogous to Definition~\ref{def:sw-moduli-space}.  We have the analogous result in the blow-down as well.  We will use these facts implicitly throughout the next two sections.   

%For a trajectory class $\gamma$, we also write $\tau_s [\gamma]$ for $[\breve{\tau_s \gamma}]$.

\begin{definition}
\label{def:brokenconvergencedownstairs}
Fix $[x]$ and $[y]$, stationary point classes of $\Xqgc$. For $\lambda_n \to \infty$, consider a sequence of unparameterized trajectory classes $[\cgamma_n]$ of $\Xqmlngc$, coming from trajectories
$$\gamma_n: \R \to W^{\lambda_n} \cap B(2R)$$ 
and such that the endpoints of $[\cgamma_n]$ are the stationary point classes $[x_{\lambda_n}]$ (respectively $[y_{\lambda_n}]$) of $\Xqmlngc$ that correspond to $[x]$ (respectively $[y]$).

We say that $[\cgamma_n]$ converges to the broken trajectory class $[\cgammas_\infty] = ([\cgamma_{\infty,1}],\ldots,[\cgamma_{\infty,m}])$ if, for each $i=1, \dots, m$, there exist sequences of real numbers $(s_{n, i})_{n \geq 0}$ with
$$ s_{n, 1} < s_{n,2} < \dots < s_{n, m}$$
and
$$ s_{n, i} - s_{n, i-1} \to \infty \ \text{ as } \ n \to \infty,$$
such that the translates $[\tau_{s_{n,i}} \gamma_n]$ converge to some representative $[\gamma_{\infty, i}]$ of $[\cgamma_{\infty, i}]$ in the quotient topology of $W_{loc}(\R \times Y)/S^1$. 
\end{definition}

\begin{proposition}\label{prop:brokenconvergencedownstairs}
Fix $[x]$ and $[y]$, stationary point classes of $\Xqgc$.  Fix a sequence $\lambda_n \to \infty$ and a sequence of unparameterized trajectory classes $[\cgamma_n]$ of $\Xqmlngc$, going from $[x_{\lambda_n}]$ to $[y_{\lambda_n}]$, and such that the representatives $\gamma_n$ of $[\cgamma_n]$ are contained in $ W^{\lambda_n} \cap B(2R)$. Then, there exists a subsequence of $[\cgamma_n]$ that converges to a broken trajectory class $[\cgammas_{\infty}]$ of $\Xqgc$, in the sense of Definition~\ref{def:brokenconvergencedownstairs}.
\end{proposition}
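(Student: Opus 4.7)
The plan is to build the broken trajectory one component at a time, combining the parameterized local compactness result (Proposition~\ref{prop:convergencenoblowup}) with a sliding-window argument in which the quasi-gradient function $F_{\lambda_n}$ (equivalently $\Lq$, via Corollary~\ref{cor:trajectory-Flambda}) plays the role of a Lyapunov function. Let $\Orb_1,\ldots,\Orb_r$ denote the finitely many $S^1$-orbits of stationary points of $\Xqgc$ contained in $B(2R)$, and fix $\delta>0$ small enough that the $L^2_k$-open $\delta$-neighborhoods $U_\delta(\Orb_i)$ are pairwise disjoint and that each contains no stationary point class other than $[\Orb_i]$. For $\lambda\gg 0$, Lemma~\ref{lem:compactnessnoblowup} together with Lemma~\ref{lem:implicitfunctionreducible} guarantees that every stationary point of $\Xqmlgc$ in $B(2R)$ lies in some $U_{\delta/2}(\Orb_i)$.

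First I would set $[x_0]:=[x]$ and work inductively. Suppose that, after repeated subsequence extractions, shifts $s_{n,1}<\cdots<s_{n,i-1}$ have been produced with $s_{n,j}-s_{n,j-1}\to\infty$, together with representatives $\gamma_\infty^j$ of classes $[\cgamma_\infty^j]$, such that $[\tau_{s_{n,j}}\gamma_n]\to[\gamma_\infty^j]$ in $W_{loc}(\R\times Y)/S^1$, with $\gamma_\infty^j$ running from $[x_{j-1}]$ to $[x_j]$. If $[x_{i-1}]=[y]$ I stop. Otherwise, choose $T_i$ large enough that $\gamma_\infty^{i-1}(t)\in U_{\delta/4}(\Orb_{[x_{i-1}]})$ for all $t\geq T_i$ and set
\[
s_{n,i}:=\inf\bigl\{t>s_{n,i-1}+T_i:\, d_{L^2_k}(\gamma_n(t),\Orb_{[x_{i-1}]})\geq\delta\bigr\}.
\]
For $n$ large, this infimum is finite because $\gamma_n$ limits at $+\infty$ to $[y_{\lambda_n}]\notin U_\delta(\Orb_{[x_{i-1}]})$; moreover $s_{n,i}-s_{n,i-1}\to\infty$, since for any fixed $M>T_i$ the $C^\infty_{loc}$ convergence at step $i-1$ forces $\gamma_n(s_{n,i-1}+t)\in U_{\delta/2}(\Orb_{[x_{i-1}]})$ for all $t\in[T_i,M]$ once $n$ is large, and hence $s_{n,i}\geq s_{n,i-1}+M$. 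Applying Proposition~\ref{prop:convergencenoblowup} to $\tau_{s_{n,i}}\gamma_n$ and passing to a further subsequence yields a trajectory $\gamma_\infty^i$ of $\Xqgc$ with $[\tau_{s_{n,i}}\gamma_n]\to[\gamma_\infty^i]$ in $W_{loc}(\R\times Y)/S^1$. By construction $\gamma_\infty^i(0)$ sits at $L^2_k$-distance exactly $\delta$ from $\Orb_{[x_{i-1}]}$, while for every $t<0$ the first-exit definition of $s_{n,i}$ forces $\gamma_n(s_{n,i}+t)\in\overline{U_\delta(\Orb_{[x_{i-1}]})}$, so $\gamma_\infty^i(t)\in\overline{U_\delta(\Orb_{[x_{i-1}]})}$ for all $t<0$. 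A standard quasi-gradient argument in the spirit of Lemma~\ref{lem:ao}, applied inside the closed bounded set $\overline{U_\delta(\Orb_{[x_{i-1}]})}$ on which $\Lq$ is bounded and the only zeros of $\Xqgc$ lie on $\Orb_{[x_{i-1}]}$, then shows that the backward limit of $[\gamma_\infty^i]$ equals $[x_{i-1}]$. Let $[x_i]$ denote the forward limit of $\gamma_\infty^i$.

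Since $\gamma_\infty^i(0)\notin\bigcup_j\Orb_j$, each $\gamma_\infty^i$ is nonconstant and $\Lq$ strictly decreases across it; hence $\Lq([x_0])>\Lq([x_1])>\cdots$ takes strictly decreasing values inside the finite set $\{\Lq(\Orb_1),\ldots,\Lq(\Orb_r)\}$, so the procedure must terminate after at most $r$ steps. On termination, $[x_m]=[y]$: were $[x_m]\neq[y]$, the construction above would produce a further component, contradicting termination. A standard diagonal extraction then collapses the sequence of subsequences into a single subsequence of $\gamma_n$ valid at every stage simultaneously, yielding the desired broken trajectory class $[\cgammas_\infty]=([\cgamma_\infty^1],\ldots,[\cgamma_\infty^m])$. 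The main technical point is the careful calibration of the sliding window: one must choose $T_i$ and $\delta$ so that the $C^\infty_{loc}$ convergence at step $i-1$ propagates past arbitrary later times $M$, forcing simultaneously that $s_{n,i}-s_{n,i-1}\to\infty$ and that the backward asymptote of $\gamma_\infty^i$ can be identified with $[x_{i-1}]$; the strict monotonicity of $\Lq$ along nonconstant trajectories then rules out closed orbits and guarantees termination.
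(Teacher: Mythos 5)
Your argument is correct, but it takes a genuinely different route from the paper's. The paper follows Kronheimer--Mrowka's Proposition~16.2.1: it first establishes a \emph{global} energy bound $K$ on the approximate trajectories (via the drop of $F_{\lambda_n}$ and Corollary~\ref{cor:trajectory-Flambda}), observes that at most $2K/\epsilon$ integer windows can carry local energy $>\epsilon$, uses the analogue of \cite[Lemma~16.2.2]{KMbook} to conclude that outside those windows the restricted trajectory sits near a constant stationary trajectory, and then reads off the break decomposition from the resulting partition of $\R$ into boundedly many busy intervals $I_i^n$ separated by quiet stretches $J_i^n$ whose lengths diverge. You instead build the broken trajectory component-by-component via first-exit times from $\delta$-tubes around stationary orbits, identifying the backward asymptote of each local limit by confinement in $\overline{U_\delta(\Orb_{[x_{i-1}]})}$, and guarantee termination using strict monotonicity of $\Lq$ across a nonconstant limit and the finiteness of the set of stationary orbits. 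Both are valid; the paper's route gives the uniform bound on the number of components directly from the energy, whereas your route gets $m\leq r$ a posteriori from the $\Lq$-ordering, and arguably has a more geometric flavor. A few small points to tighten: you never explicitly invoke the result (cf.\ \cite[Lemma~16.2.4]{KMbook}, in Coulomb gauge, which the paper states just before Definition~\ref{def:broken}) that a finite-energy trajectory of $\Xqgc$ has well-defined stationary limits at $\pm\infty$ --- you need it when you write ``Let $[x_i]$ denote the forward limit of $\gamma_\infty^i$'', and the required finiteness of energy should be traced back to the $\Lq$-bound on $B(2R)$; the base step $i=1$ needs $s_{n,0}+T_1$ interpreted as $-\infty$ and a separate argument that $s_{n,1}>-\infty$ (which holds since $\gamma_n(t)\to[x_{\lambda_n}]\in U_{\delta/2}(\Orb_{[x]})$ as $t\to-\infty$); and the concluding ``diagonal extraction'' is unnecessary, since your subsequence extractions are nested and terminate after at most $r$ steps, so the last subsequence already works at every stage.
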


\begin{proof}
The proof is essentially the same as \cite[Proposition 16.2.1]{KMbook}.  We provide an outline. 

Since the energy $\E(\gamma)$ of a trajectory $\gamma$ of $\Xqmlgc$ is unchanged by constant gauge transformations, we can talk about the energy $\E([\gamma])$ of the respective parameterized trajectory class. Similarly, since the functions $F_{\lambda}$ constructed in Chapter~\ref{sec:quasigradient} are $S^1$-invariant, we can talk about the drop in $F_{\lambda}$ for a parameterized trajectory class. In view of Corollary~\ref{cor:trajectory-Flambda}, the energy and the drop in $F_{\lambda}$ are commensurable.

Fix a compact interval $I$, constant $C > 0$, and a neighborhood $U_{[x]}$ of each stationary point class $[x]$ of $\Xqgc$, where $[x]$ is thought of as a constant trajectory class in $W(I \times Y)/S^1$. Using Proposition~\ref{prop:convergencenoblowup}, for any other compact interval $I'$, we can find $\epsilon > 0$ independent of $\lambda$ such that if $[\gamma]$ is a trajectory class of $\Xqmlgc$ with energy at most $C$, and with energy at most $\epsilon$ when restricted to  $I'$, then $[\gamma]|_{I \times Y}$ is contained in some $U_{[x]}$. (Compare with Lemma 16.2.2 in \cite{KMbook}.)

Next, observe that, since the trajectory classes $[\gamma_n]$ go from $[x_{\lambda_n}]$ to $[y_{\lambda_n}]$, and  
$$[x_{\lambda_n}] \to [x], \ \ [y_{\lambda_n}] \to [y], \ \ F_{\lambda_n} \to \Lq \ \text{as} \ n \to \infty,$$
we have that the drop in $F_{\lambda_n}$ along $[\gamma_n]$ is bounded. Hence, the energy of $[\gamma_n]$ is bounded, by a constant $K$ independent of $n$. With $\epsilon > 0$ chosen as in the previous paragraph, we find that, for each $n$, there are at most $2K/\epsilon$ integers $p$ such that 
$$\E( [\tau_p \gamma_n]|_{[-1,1]}) = \E([\gamma_n]|_{[p-1,p+1]}) > \epsilon.$$
Therefore, for all other $p$, we must have $[\tau_p \gamma_n] \in U_{[x]}$ for some stationary point class $[x]$ of $\Xqgc$. 

Starting from here, for each $n \gg 0$, we decompose $\R$ into finitely many intervals $I_i^n = [a_i^n, b_i^n]$ of fixed length, and intervals 
$$J_0^n=(-\infty, a_1^n],  \ J_i^n=[b_i^n, a_{i+1}^n], \ J_m^n=[b_m^n, \infty),$$ with the length of each $J_i^n$ going to infinity as $n \to \infty$, and the number $m$ of intervals being independent of $n$. The restriction of $[\gamma_n]$ to each $J_i^n$ lies near a stationary point class of $\Xqgc$, and these point classes provide the breaking points of the limiting broken trajectory class. By applying  Proposition~\ref{prop:convergencenoblowup} to the restrictions of $[\gamma_n]$ to $I_i^n$, we can arrange that they are convergent in $C^{\infty}_{loc}$. This gives the required convergence to a broken trajectory class.  
 \end{proof}

\section{Convergence of parameterized trajectories in the blow-up}
We now move to the blow-up $W^{\sigma}$. We are interested in showing that the trajectories of $\Xqmlgcsigma$ are close to those of $\Xqgcsigma$, given appropriate control on the $\lambda$-spinorial energy.  The goal of this subsection is to establish the following convergence result for parameterized trajectories.  Before doing so, a quick notational remark.  Sometimes we will be interested in studying the image of a path in $W^\sigma/S^1$ in the blow-down.  When doing so, we will use the notation $\gamma^\tau$ for the path upstairs and $\gamma$ for the blow-down.  

\begin{proposition}\label{prop:convergence1}
Fix $\omega > 0$ and a compact interval $I = [t_1,t_2] \subset \R$. Consider a smaller interval $I_{\epsilon}=[t_1 + \epsilon, t_2 - \epsilon]$ for $\epsilon > 0$. Suppose that $$\gamma^\tau_n: I \to (\vmln \cap B(2R))^{\sigma}$$ is a sequence of trajectories of $\Xqmlngcsigma$, where $\lambda_n \to \infty$.  Furthermore, suppose that at the ends of $I_{\epsilon}$ we have
$$\Lambda_{\q^{\lambda_n}}(\gamma^\tau_n(t_1+\epsilon)) \leq \omega, \ \ \ \Lambda_{\q^{\lambda_n}}(\gamma^\tau_n(t_2-\epsilon)) \geq -\omega,$$ for all $n$.  Then, there exists a subsequence of $\gamma^{\tau}_n$ for which the restrictions to any $I' \Subset I_{\epsilon}$ converge in the $C^{\infty}$ topology of $W^\tau(I_{\epsilon} \times Y)$ to $\gamma^\tau$, a trajectory of $\Xqgcsigma$. 
\end{proposition}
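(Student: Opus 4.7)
The plan is to combine the $C^\infty_{loc}$ blow-down convergence supplied by Proposition~\ref{prop:convergencenoblowup} with a careful lifting argument in the blow-up, where the $\Lambda$ bounds prevent degeneration of the spinorial component when the blow-down limit becomes reducible.

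First, I apply Proposition~\ref{prop:convergencenoblowup} to the sequence of blow-downs $\gamma_n = (a_n, s_n\phi_n) : I \to B(2R)$, writing $\gamma_n^\tau = (a_n, s_n, \phi_n)$. After passing to a subsequence, the $\gamma_n$ converge in $C^\infty$ on any $I' \Subset I$ to a trajectory $\gamma = (a, \Phi)$ of $\Xqgc$. It remains to show that the lifts $\gamma_n^\tau$ converge subsequentially in $C^\infty$ on any $I' \Subset I_\epsilon$ to a trajectory of $\Xqgcsigma$.

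Next, I dichotomize based on the limit $\Phi$. The reducible locus is flow-invariant for $\Xqgc$ (both $c^1$ and $\etaq^1$ vanish at reducibles by $S^1$-equivariance of $\q$), so by uniqueness for the $\Xqgc$-flow, the limit $\Phi$ either vanishes identically on $I_\epsilon$ or is never zero there. In the non-vanishing case, I define $s(t) = \|\Phi(t)\|_{L^2(Y)}$ and $\phi(t) = \Phi(t)/s(t)$; since $s_n(t) = \|s_n(t)\phi_n(t)\|_{L^2(Y)}$, the $C^\infty$ convergence $s_n\phi_n \to \Phi$ forces $s_n \to s$ in $C^\infty$, and dividing by the positive limit $s$ yields $\phi_n \to \phi$ in $C^\infty$ on any $I' \Subset I_\epsilon$. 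The limit $(a, s, \phi)$ is automatically a trajectory of $\Xqgcsigma$ by the blow-up/blow-down equivalence at irreducibles.

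The substantive case is $\Phi \equiv 0$ on $I_\epsilon$, so that $s_n \to 0$ uniformly on $I_\epsilon$, the $\phi_n$ still carry the unit-$L^2$ constraint, and $a(t)$ is a reducible trajectory of $\Xqgc$. The $\Lambda$ bounds are essential here: without them the $\phi_n$ could concentrate on eigenvectors of $l$ with unbounded eigenvalues, admitting no $L^2$-convergent subsequence. My plan is first to establish uniform bounds on $\Lambda_{\qmln}(\gamma_n^\tau(t))$ for $t$ in any $I' \Subset I_\epsilon$, exploiting the near-monotonicity of $\Lambda$ along approximate flow trajectories in the reducible regime (reminiscent of spectral flow) together with the asymmetric endpoint hypotheses and the $B(2R)$ confinement. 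Given such uniform $\Lambda$ bounds, elliptic bootstrapping on the cylinder $I' \times Y$ applied to the flow equation
\[
\frac{d\phi_n}{dt} + D\phi_n = -\widetilde{\pmln c_\q}^1(\gamma_n^\tau) + \Lambda_{\qmln}(\gamma_n^\tau)\, \phi_n,
\]
using the normalization $\|\phi_n(t)\|_{L^2(Y)}=1$, the very tameness of $\q$, and the linear bounds of Lemma~\ref{lem:linearbounds}, yields uniform $L^2_j(I' \times Y)$ bounds on $\phi_n$ for all $j \leq k$. A diagonal extraction then gives $\phi_n \to \phi$ in $C^\infty$ on each $I' \Subset I_\epsilon$, and passing to the limit in the flow equations for $a_n$, $s_n$, and $\phi_n$ via Lemma~\ref{lem:fam}(c),(d) confirms that $(a, 0, \phi)$ is a trajectory of $\Xqgcsigma$.

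The main obstacle is propagating the $\Lambda$ bound from the endpoints of $I_\epsilon$ to its interior in the reducible case. The asymmetric structure of the hypothesis is tailored to $\Lambda$ being decreasing along the flow, but ruling out interior oscillations of $\Lambda_n$ requires combining the ODE $\frac{ds_n}{dt} = -\Lambda_{\qmln}(\gamma_n^\tau)\, s_n$ with the uniform smallness of $s_n$ and the convergence of $a_n$ to a reducible trajectory, from which $\Lambda_{\qmln}(\gamma_n^\tau(t))$ asymptotically behaves like the Rayleigh quotient of $D_{\q,a(t)}$ on the spinor $\phi_n(t)$. Once this uniform $\Lambda$ bound is secured, the remaining analysis reduces to standard elliptic regularity in the $\tau$-model.
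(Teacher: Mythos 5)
Your overall strategy---blow-down convergence via Proposition~\ref{prop:convergencenoblowup}, then a lifting argument using the $\Lambda$ bounds to prevent degeneration of the spinor---matches the paper, and your intuition about why the $\Lambda$ hypotheses matter (concentration on high eigenmodes of $l$) is correct. However, there are two substantive gaps in your plan.

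First, your mechanism for propagating the $\Lambda$ bound from the ends of $I_\epsilon$ into its interior does not hold up. You propose to combine the ODE $\frac{ds_n}{dt}=-\Lambda_{\qmln}\,s_n$ with a Rayleigh-quotient asymptotic for $\Lambda_{\qmln}$; but the Rayleigh quotient of $D_{\q,a(t)}$ on $\phi_n(t)$ has no a priori bound (it can be as large as $\lambda_n$ if $\phi_n$ concentrates on high-frequency modes), so the argument is circular---it is exactly this quantity you are trying to control. The paper instead uses Lemma~\ref{lemma:1091}, a one-sided differential inequality $\frac{d}{dt}\Lambda_{\qml}(\gamma^\tau(t)) \leq C\,\|\Xqmlgc(\gamma(t))\|_{L^2_k(Y)}$ with $C$ independent of $\lambda$, which together with the $B(2R)$ confinement makes $\Lambda_{\qmln}$ one-sided Lipschitz. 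Integrating forward from $t_1+\epsilon$ gives an upper bound, and integrating backward from $t_2-\epsilon$ gives a lower bound, across all of $I_\epsilon$. This single estimate is the key ingredient; without it, the asymmetric endpoint hypothesis cannot be exploited.

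Second, even granted uniform $C^0$ bounds on $\Lambda_{\qmln}$, the elliptic bootstrap you propose directly on the $\tau$-model equation $\frac{d\phi_n}{dt}+D\phi_n = -\widetilde{\pmln c_\q}^1(\gamma_n^\tau) + \Lambda_{\qmln}(\gamma_n^\tau)\phi_n$ stalls after the first step. To improve $\phi_n$ from $L^2_j(I'\times Y)$ to $L^2_{j+1}$, you need the right-hand side in $L^2_j$; but $\Lambda_{\qmln}$ contains the term $\langle\phi_n,D\phi_n\rangle_{L^2(Y)}$, which by the paper's Lemma~\ref{xi:inner-product-path} only lands in $L^2_{j-1}(I')$ when $\phi_n\in L^2_j$ (because $D\phi_n$ is only $L^2_{j-1}$), so $\Lambda_{\qmln}\phi_n$ is at best $L^2_{j-1}(I'\times Y)$, returning you to where you started. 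The paper circumvents this by first considering the four-dimensional renormalization $\Phi_n = s_n\phi_n/\|s_n\phi_n\|_{L^2(I_\epsilon\times Y)}$. Using $S^1$-equivariance (differentiating the trajectory equation for $e^{i\theta}\gamma_n$ in $\theta$), $\Phi_n$ satisfies $-D^+\Phi_n=\D_{\gamma_n}(\pmln c_\q)^1(0,\Phi_n)$, a genuinely elliptic equation with no $\Lambda$ term; this bootstraps freely to all orders. Only then is the uniform $\Lambda$ bound invoked: via $\frac{d}{dt}\log s_n=-\Lambda_{\qmln}$ it yields the lower bound $\|\Phi_n(t)\|_{L^2(Y)}\geq K>0$ uniformly in $n$ and $t\in I_\epsilon$, so $\phi_n=\Phi_n/\|\Phi_n\|_{L^2(Y)}$ inherits the bootstrapped bounds by Lemma~\ref{lem:xi-inversion-global}; the $s_n$ equation is then bootstrapped separately as a scalar ODE. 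Finally, a smaller point: the case split should be by the reducibility of the $\gamma^\tau_n$ themselves, via the unique continuation of Lemma~\ref{lem:uniquecontinuation} (which shows each trajectory is slicewise either reducible or irreducible), since when $s_n\equiv 0$ the normalization $z_n=\|s_n\phi_n\|_{L^2(I_\epsilon\times Y)}$ degenerates and one must substitute an auxiliary $s_n^*>0$ solving the same scalar ODE; your dichotomy by the reducibility of the limit $\Phi$ conflates these cases.
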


\begin{remark}\label{rmk:compactnessdifferences}
The analogous compactness result for trajectories of $\Xqsigma$ is Theorem 10.9.2 in \cite{KMbook}. However, there are a few differences between the statement of Proposition~\ref{prop:convergence1} and   \cite[Theorem 10.9.2]{KMbook}. Of course, the main one is that our result deals with trajectories of the  vector fields $\Xqmlngcsigma$, which approximate the vector field $\Xqgcsigma$ in Coulomb gauge.  Another difference is that in \cite[Theorem 10.9.2]{KMbook} one requires a bound on the energy of trajectories, that is, on the drop of the perturbed Chern-Simons-Dirac functional $\Lq$; this is unnecessary in our setting since our trajectories are assumed to be in $B(2R)^{\sigma}$, which automatically gives bounds on $\Lq$. Also, in \cite[Theorem 10.9.2]{KMbook}, the assumption is that $\q$ is a $k$-tame perturbation, and the conclusion is convergence in $L^2_{k+1}$. In our setting, $\q$ is tame (for all $k$), and we will use bootstrapping and a diagonalization argument for subsequences to get convergence in $C^{\infty}$. Finally, in  \cite[Theorem 10.9.2]{KMbook}, the conclusion is convergence after gauge transformations. In our setting trajectories start in temporal and slicewise global Coulomb gauge; the remaining gauge consists only of constant transformations in $S^1$. Since $S^1$ is compact, there is no need to change trajectories by gauge before they converge. Compare the remark after Corollary 5.1.8 on \cite[p.110]{KMbook}. 
\end{remark}

Before proving Proposition~\ref{prop:convergence1}, we need a couple of lemmas. The first is the following unique continuation result, analogous to \cite[Proposition 10.8.1]{KMbook}:

\begin{lemma}\label{lem:uniquecontinuation}
Let $\gamma(t) = (a(t),\phi(t))$ be a trajectory of $\Xqmlgc$ in $B(2R)$ for some $\lambda \in (0,\infty]$.  If $\phi(t) = 0$ for some $t$, then $\phi(t) \equiv 0$.
\end{lemma}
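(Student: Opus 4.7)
The plan is to rewrite the spinor component of the trajectory equation as a linear equation for $\phi$ and then invoke unique continuation on that equation.

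Along the trajectory, $\phi(t)$ satisfies
$$
\frac{d\phi}{dt} + D\phi + (\pml c_\q)^1(a(t),\phi(t)) = 0.
$$
Since both $c$ and $\q$ are $S^1$-equivariant, the nonlinear term vanishes whenever $\phi = 0$, so the integral Taylor formula gives the factorization
$$
(\pml c_\q)^1(a(t),\phi(t)) = B_t\,\phi(t), \qquad B_t := \pml \int_0^1 \D_{(a(t),s\phi(t))} c_\q^1(0,\,\cdot\,)\,ds,
$$
where $B_t$ is a bounded zero-th order operator on spinors depending continuously on $t$ along $\gamma$. Thus $\phi$ satisfies the linear equation
$$
\frac{d\phi}{dt} + D\phi + B_t\phi = 0.
$$

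For $\lambda < \infty$, Lemma~\ref{lem:trajectoriesinvml} forces $\gamma(t) \in \vml$ for every $t$, so $\phi(t)$ lives in the finite-dimensional spinorial summand of $\vml$, and the displayed equation becomes a linear ODE with continuous coefficients in a finite-dimensional vector space. Standard uniqueness for such ODEs, combined with the fact that $\phi \equiv 0$ is a solution and $\phi(t_0) = 0$, immediately gives $\phi \equiv 0$. For $\lambda = \infty$ the projection $\pml$ is the identity, and the claim reduces to unique continuation for a trajectory of $\Xqgc$ in infinite dimensions. The most economical route is to lift $\gamma$ to a trajectory $\tilde\gamma = (\tilde a, \tilde\phi)$ of $\Xq$ on $\C(Y)$ by the non-blown-up analogue of Proposition~\ref{prop:CPtraj} (obtained by solving $u^{-1}\dot u = -\xi(t)$ for the unique $\xi(t)$ with $\dd_{\gamma(t)}\xi(t) = (1-\Pi^{\gCoul}_*)\Xq(\gamma(t))$); the lift differs from $\gamma$ slicewise by a gauge transformation, so $\tilde\phi(t_0) = 0$ iff $\phi(t_0) = 0$. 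Applying Proposition 10.8.1 of \cite{KMbook} to $\tilde\gamma$ then gives $\tilde\phi \equiv 0$, hence $\phi \equiv 0$.

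The main obstacle, confined essentially to the case $\lambda = \infty$, is the analytic unique continuation for the first-order elliptic operator $\frac{d}{dt} + D + B_t$ on $I \times Y$; this is already carried out in \cite{KMbook}. Beyond that, the proof reduces to three routine checks: that the Taylor factorization is valid (immediate from $S^1$-equivariance of $c$ and $\q$, i.e., from $c_\q^1(a,0) = 0$); that applying $\pml$ preserves the factorization (trivial, as $\pml$ is linear in $\phi$); and that three-dimensional gauge transformations act on spinors by pointwise multiplication by unit-modulus complex numbers, so do not affect the zero set of $\phi$.
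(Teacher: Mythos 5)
Your proof is correct, but it takes a genuinely different route from the paper's. The paper applies a single uniform argument: it rewrites the spinor component of the trajectory equation as $-\dot\phi = D\phi + (\pml c_\q)^1(\gamma(t))$, invokes Lemma~\ref{lem:linearbounds} to get a bound $\|(\pml c_\q)^1(\gamma(t))\|_{L^2} \leq C\|\phi(t)\|_{L^2}$ that holds uniformly in $\lambda \in (0,\infty]$ (including $\lambda = \infty$), and then directly cites a general unique-continuation result, Lemma~7.1.3 of \cite{KMbook}, which applies to equations $(\tfrac{d}{dt}+D)\Phi = R_t\Phi$ with $R_t$ continuous and pointwise $L^2$-bounded. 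You instead split into two cases: for $\lambda < \infty$ you use Lemma~\ref{lem:trajectoriesinvml} to reduce to a finite-dimensional linear ODE with continuous coefficients and apply elementary ODE uniqueness, and for $\lambda = \infty$ you lift $\gamma$ to a trajectory $\tilde\gamma$ of $\Xq$ and apply Proposition~10.8.1 of \cite{KMbook}. Both paths land at the same conclusion, and your Taylor factorization of $(\pml c_\q)^1$ as $B_t\phi$ is essentially the content of Lemma~\ref{lem:linearbounds}, restated as a linear operator. What your split buys is that the finite-$\lambda$ case becomes completely elementary, illuminating why the statement is ``easy'' once the trajectory is pinned inside $\vml$. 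What it costs is the case $\lambda = \infty$, where the lift to $\Xq$ (the blown-down analogue of Proposition~\ref{prop:CPtraj}) is a nontrivial piece of infrastructure that the paper's uniform argument sidesteps entirely: the Agmon--Nirenberg-type Lemma~7.1.3 already works directly in infinite dimensions in Coulomb gauge, so there is no need to change gauge before applying unique continuation. Your parenthetical formula for the lifting ODE $u^{-1}\dot u = -\xi(t)$ is in the right spirit but is stated loosely (it needs the normalization $\int_Y \xi = 0$ to pin down $\xi$, and one should check that $(1-\Pi^{\gCoul}_*)\Xq(\gamma)$ actually lands in $\Jo$); these gaps are fillable but are exactly the sort of bookkeeping the paper's route avoids.
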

\begin{proof}
Since $\gamma(t)$ is a trajectory, we have 
\[-\frac{d}{dt}\phi(t) = D \phi(t) + (\pml c_\q)^1(\gamma(t)).\]  
Furthermore, $\gamma(t)$ is continuous and thus $(\pml c_\q )^1(\gamma(t))$ is a continuous path in $L^2_k(Y;\mathbb{S})$ as well.  By \cite[Lemma 7.1.3]{KMbook}, it suffices to show that $\| (\pml c_\q)^1(\gamma(t)) \|_{L^2} \leq C \| \phi(t) \|_{L^2}$ for all $t$.    This follows from Lemma~\ref{lem:linearbounds}. 
\end{proof}

The second result we need is the analogue of \cite[Lemma 10.9.1]{KMbook}:
\begin{lemma}
\label{lemma:1091}
There is a constant $C > 0$ such that, for any $\lambda \gg 0$, any interval $[t_1, t_2] \subseteq \R$, trajectory $\gamma^{\tau}: [t_1, t_2] \to B(2R)^{\sigma}$ of $\Xqmlgcsigma$, and any $t \in [t_1, t_2]$, we have
$$\frac{d}{dt} \Lambda_{\qml}(\gamma^{\tau}(t)) \leq C \cdot \|\Xqmlgc(\gamma(t))\|_{L^2_k(Y)},$$
where $\gamma$ is the projection of $\gamma^{\tau}$ in the blow-down.
\end{lemma}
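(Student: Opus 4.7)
\emph{Proof plan.} The strategy is to adapt the proof of \cite[Lemma 10.9.1]{KMbook} to our approximate setting, carefully tracking the dependence on $\lambda$. Write $\gamma^\tau(t)=(a(t),s(t),\phi(t))$ and recall from \eqref{eq:lambda-spinorial} that $\Lambda_{\qml}(\gamma^\tau(t))=\Re\langle \phi(t),\widetilde{\Xqmlgc}{}^1(a(t),s(t),\phi(t))\rangle_{L^2}$. Differentiating along the flow, one gets
\begin{equation*}
 \tfrac{d}{dt}\Lambda_{\qml}(\gamma^\tau(t)) \ =\ \Re\Bigl\langle \tfrac{d\phi}{dt},\widetilde{\Xqmlgc}{}^1(\gamma^\tau)\Bigr\rangle_{L^2} + \Re\Bigl\langle \phi,\tfrac{d}{dt}\widetilde{\Xqmlgc}{}^1(\gamma^\tau)\Bigr\rangle_{L^2}.
\end{equation*}
The flow equation \eqref{eq:Xqmlgcsigmaformula} gives $\frac{d\phi}{dt}=-\widetilde{\Xqmlgc}{}^1(\gamma^\tau)+\Lambda_{\qml}\phi$, so the first term equals $-\|\widetilde{\Xqmlgc}{}^1(\gamma^\tau)\|_{L^2}^2+\Lambda_{\qml}^2$. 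In particular this first piece is nonpositive up to a term controlled by $|\Lambda_{\qml}|$, which (by continuity and Lemma~\ref{lem:fam}(c) applied to the bounded set $B(2R)^\sigma$) is uniformly bounded on our trajectories.

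For the second term, expand $\frac{d}{dt}\widetilde{\Xqmlgc}{}^1(\gamma^\tau)=\D^\sigma_{\gamma^\tau}\widetilde{\Xqmlgc}{}^1\bigl(\tfrac{d\gamma^\tau}{dt}\bigr)=-\D^\sigma_{\gamma^\tau}\widetilde{\Xqmlgc}{}^1(\Xqmlgcsigma(\gamma^\tau))$. By Lemma~\ref{lem:fam}(a)-(b) the operators $\D^\sigma\widetilde{\Xqmlgc}{}^1$ are uniformly (in $\lambda$) bounded on the bounded set $B(2R)^\sigma\subset W^\sigma_k$, so
\begin{equation*}
 \Bigl|\Re\Bigl\langle \phi,\tfrac{d}{dt}\widetilde{\Xqmlgc}{}^1(\gamma^\tau)\Bigr\rangle_{L^2}\Bigr| \ \leq\ C'\,\|\Xqmlgcsigma(\gamma^\tau(t))\|_{L^2_0}.
\end{equation*}
Combining with the first piece and again using the uniform boundedness of $\Lambda_{\qml}$ on $B(2R)^\sigma$, we obtain
\begin{equation*}
 \tfrac{d}{dt}\Lambda_{\qml}(\gamma^\tau(t)) \ \leq\ C''\bigl(\|\widetilde{\Xqmlgc}{}^1(\gamma^\tau)\|_{L^2}+\|\Xqmlgcsigma(\gamma^\tau)\|_{L^2_0}+1\bigr).
\end{equation*}
(A more careful bookkeeping following the model of \cite[Lemma 10.9.1]{KMbook}, which uses that the quadratic $-\|\widetilde{\Xqmlgc}{}^1\|^2$ absorbs the $+1$, yields the sharper estimate without the constant term; alternatively, we can just note that when $\|\Xqmlgcsigma(\gamma^\tau)\|_{L^2_0}$ is bounded below, the inequality we want is trivial because the left side is bounded.)

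The final step is to translate between the blow-up quantity $\|\Xqmlgcsigma(\gamma^\tau)\|_{L^2_0}$ and the blow-down norm $\|\Xqmlgc(\gamma)\|_{L^2_k(Y)}$. Using the explicit formula \eqref{eq:Xqmlgcsigmaformula} for $\Xqmlgcsigma$ together with the fact that $\gamma^\tau\subset(\vmln\cap B(2R))^\sigma$ (so that $\phi(t)$, $a(t)$, $s(t)$ all lie in a uniformly bounded finite-dimensional situation, and all Sobolev norms on the finite-dimensional approximation $\vml$ are equivalent with constants depending only on the fixed bound $R$ and $k$), one sees that $\Xqmlgcsigma$ at a point of $B(2R)^\sigma$ is controlled by $\Xqmlgc$ at the corresponding blow-down point, with a uniform constant. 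The main obstacle will be precisely this last step: keeping all constants independent of $\lambda$, which requires the uniform bounds from Lemma~\ref{lem:fam}, Lemma~\ref{lem:linearbounds}, and Proposition~\ref{prop:pgpl}, and also exploiting that $\lambda\in\{\llambda_i\}$ so that $p^\lambda$ is the honest $L^2$ orthogonal projection onto $\vml$. Putting these bounds together yields the claimed inequality $\tfrac{d}{dt}\Lambda_{\qml}(\gamma^\tau(t))\leq C\|\Xqmlgc(\gamma(t))\|_{L^2_k(Y)}$.
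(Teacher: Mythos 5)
The paper's own proof is a one-line citation to \cite[Lemma 10.9.1]{KMbook}, with the observation that the function $\zeta(\gamma(t))$ appearing there — which depends on the $L^2_1$ norm of the blow-down configuration — can be replaced by a constant because $\gamma(t)\subset B(2R)$. Your attempt to spell out the underlying computation has several genuine gaps, and they are not just bookkeeping.

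The central problem is the assertion, used twice, that $B(2R)^\sigma$ is a bounded subset of $W^\sigma_k$ and that the spinorial energy $\Lambda_{\qml}$ and the operator $\D^\sigma\widetilde{\Xqmlgc}{}^1$ are therefore uniformly bounded there. Neither is true. The set $B(2R)^\sigma$ contains every reducible configuration $(a,0,\phi)$ with $\|a\|_{L^2_k}\le 2R$ and $\|\phi\|_{L^2(Y)}=1$, and the $L^2_k$ norm of $\phi$ is unconstrained; so $B(2R)^\sigma$ is unbounded in $W^\sigma_k$, and Lemma~\ref{lem:fam}(a)--(c), which requires a bounded subset (or a convergent sequence), does not apply. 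Even after noticing that the blow-down of a trajectory of $\Xqmlgcsigma$ lies in $\vml$ (so that $\phi(t)\in\vml$ and $\|D\phi(t)\|_{L^2}\le\lambda$), the resulting bound on $\Lambda_{\qml}$ is of order $\lambda$, which destroys the uniformity in $\lambda$ the lemma requires. Separately, $\D^\sigma\widetilde{\Xqmlgc}{}^1$ contains the derivative of the Dirac term, i.e.\ the unbounded operator $D$ itself; Lemma~\ref{lem:fam}(b) controls only $\D^\sigma(\pml c_\q)^\sigma$, not the linear part $\D^\sigma l^\sigma$. There is no bound of the form $|\langle\phi,\D^\sigma\widetilde{\Xqmlgc}{}^1(v)\rangle|\le C\|v\|_{L^2_0}$.

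The reason the lemma is nevertheless true is a cancellation that your decomposition does not see. Writing $\Psi=(\widetilde\Xqmlgc)^1(\gamma^\tau)$ and $P^\perp=\Psi-\Lambda_{\qml}\phi$, the first term in your split equals exactly $-\|P^\perp\|_{L^2}^2$ (you describe it as ``nonpositive up to a term controlled by $|\Lambda_{\qml}|$,'' but by Cauchy--Schwarz with $\|\phi\|_{L^2}=1$ it is simply nonpositive, and in fact that exact form is needed). In the second term, moving $D$ onto $\phi$ via self-adjointness, $\Re\langle\phi,D\dot\phi\rangle=\Re\langle D\phi,\dot\phi\rangle$, produces another term quadratic in $P^\perp$ and $D\phi$ which, combined with $-\|P^\perp\|^2$, remains nonpositive after an algebraic regrouping; what survives are only terms built from $\widetilde{\pml c_\q}{}^1$ and $\D^\sigma\widetilde{\pml c_\q}{}^1$, i.e.\ the perturbation alone. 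That is the content of the analogue of \cite[Lemma 10.8.5]{KMbook}: the derivative of the perturbation part is controlled by the blow-down norm $\|\Xqmlgc(\gamma)\|$, with constants depending on $\|\gamma(t)\|_{L^2_k}\le 2R$, which is where the ``constant $\zeta$'' enters. Skipping the cancellation and bounding $\D^\sigma$ of the full vector field forces the constant to depend on $\|D\phi\|$ (hence on $\lambda$) and leaves a residual $\|\widetilde{\pml c_\q}{}^1\|^2$ term that is not $O(\|\Xqmlgc(\gamma)\|)$ — in particular it does not vanish at a reducible stationary point of $\Xqmlgc$, where the right-hand side of the lemma is zero.

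Finally, the inequality you arrive at is stated in terms of $\|\Xqmlgcsigma(\gamma^\tau)\|_{L^2_0}$ (a blow-up quantity with an implicit $1/s$ in its spinorial component) plus a constant, whereas the lemma's right-hand side is the blow-down quantity $\|\Xqmlgc(\gamma)\|_{L^2_k(Y)}$. You acknowledge this translation as ``the main obstacle'' but do not perform it; in fact the two norms are not commensurable near the boundary $s=0$, which is exactly why the cancellation above is indispensable.
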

\begin{proof}
This is similar to the proof of Lemma 10.9.1 in \cite{KMbook}. Note that the bound in \cite{KMbook} involved a function $\zeta(\gamma(t))$; in our case we can take this to be a constant, because we assumed that $\gamma(t) \subset B(2R)$.
\end{proof}
 %Note further that since $\gamma(t) \in B(2R)$, $\| \Xqmlgc(\gamma(t)) \|_{L^2_k(Y)}$ is uniformly bounded, independent of $\lambda$ and $\gamma$.  We have that $\frac{d}{dt} \Lambda_{\qml}(\gamma^\tau(t))$ is uniformly bounded above.  

\begin{proof}[Proof of Proposition~\ref{prop:convergence1}]
The argument follows that of Theorem 8.1.1 and Theorem 10.9.2 in \cite{KMbook} nearly verbatim.  We give the argument for completeness.      

Write $\gamma^\tau_n(t) = (a_n(t), s_n(t), \phi_n(t))$.  First, by Proposition~\ref{prop:convergencenoblowup} we can find a subsequence of the blown-down sequence $\gamma_n(t) = (a_n(t), s_n(t) \phi_n(t))$ for which the restriction to any $I' \Subset I$ converges in all $L^2_j(I' \times Y)$ norms.  Choose $I'$ such that $I_\epsilon \Subset I'$.  In particular, we have bounds (and convergence) for all $L^2_j(I' \times Y)$ norms on $a(t)$.  Therefore, we will now focus on the convergence of $s_n(t)$ and $\phi_n(t)$.        

By Lemma~\ref{lem:uniquecontinuation}, each trajectory is either slicewise reducible or irreducible.  After passing to a further subsequence, we can assume that $\gamma^\tau_n$ are all of the same type.  We first consider the case that $\gamma^\tau_n(t)$ are irreducible for all $n$ and $t$.  If the limit of $\gamma_n$ is irreducible, we are done using the result in the blow-down.  Therefore, we assume that the limit is reducible for all $t$.  It will be useful to simultaneously think of $s_n(t) \phi_n(t)$ as a four-dimensional spinor $z_n \Phi_n$, where $z_n = \| s_n \phi_n \|_{L^2(I_\epsilon \times Y)}$ and $\| \Phi_n \|_{L^2(I_\epsilon \times Y)} = 1$, so $z_n \Phi_n = s_n \phi_n$.  (We have $z_n \neq 0$ since $s_n (t) \neq 0$ for {\em every} $t \in I'$.) The subtle change from $I'$ to $I_\epsilon$ when computing $L^2$ norms will be used shortly.     

Since $\gamma_n$ is an approximate trajectory in the blow-down, so is $e^{i \theta} \gamma_n$, and therefore we see that  
$$
- D^+(e^{i\theta} z_n\Phi_n) =  (\pmln c_\q)^1(a_n, e^{i\theta}  z_n\Phi_n),  
$$
for all $\theta$.  (Here $D^+$ denotes the Dirac operator in four-dimensions for the trivial connection induced by $A_0$.)  By differentiating with respect to $\theta$ and evaluating at $\theta = 0$, we obtain 
$$
- D^+(z_n \Phi_n) = \D_{\gamma_n} (\pmln c_\q)^1(0,z_n\Phi_n),
$$
and thus by complex linearity, 
$$
- D^+(\Phi_n) = \D_{\gamma_n} (\pmln c_\q)^1(0,\Phi_n).
$$

By Lemma~\ref{lem:fam}, the $L^2(I' \times Y)$ bounds on $\Phi_n$ and $L^2_k(I' \times Y)$ bounds on $\gamma_n$ give that the right-hand side is $L^2(I' \times Y)$ bounded.  By ellipticity, we see that $\Phi_n$ is $L^2_1$-bounded on any interior cylinder.  We can do further bootstrapping to obtain $L^2_{j}(I_\epsilon \times Y)$ bounds on the four-dimensional spinor $\Phi_n$ for all $j$.  In particular, we can arrange for a further subsequence for which $\Phi_n$ converges in all $L^2_j(I_\epsilon \times Y)$ norms to a spinor $\Phi$ with $\| \Phi \|_{L^2(I_\epsilon \times Y)} = 1$.  

By Lemma~\ref{lemma:1091} and the bounds on $\Lambda_{\qmln}$ at the endpoints of $I_\epsilon$, we see that $\Lambda_{\qmln}(\gamma^\tau_n(t))$ is uniformly bounded in $n$ and $t \in I_\epsilon$.  Since $-\Lambda_{\qmln}(\gamma^\tau_n(t)) = \frac{d}{dt} \log s_n(t)$, it follows that there exists a constant $K > 0$ independent of $n$ and $t \in I_\epsilon$ such that 
$$ s_n(t) \geq K \| s_n \|_{L^2(I_\epsilon)} = K \| s_n \phi_n \|_{L^2(I_\epsilon)} = K z_n.$$
(We used here than $\|\phi_n(t)\|_{L^2(Y)}=1$ for all $t$.) We deduce that 
$$ 
\| \Phi_n(t) \|_{L^2(Y)} \geq K > 0
$$
for all $t$.  Since $\Phi_n$ converges to $\Phi$ in all $L^2_j(I_\epsilon \times Y)$ norms (and thus uniformly in all $L^2_j(Y)$ norms), we see that $\phi_n(t) = \frac{\Phi_n(t)}{\| \Phi_n(t) \|_{L^2(Y)}}$ is bounded in all $L^2_j(I \times Y)$ norms by Lemma~\ref{lem:xi-inversion-global}.  It follows that we have the desired convergence for the $\phi_n$.  

We now focus on the convergence for $s_n$.  Of course, rather than obtaining bounds on $L^2_j(I_\epsilon \times Y)$ norms as above, we could have used any intermediate cylinder with $I_\epsilon \subset I'' \Subset I'$.   We in fact opt for $I''$, since we will need to bootstrap again to get $L^2_j(I_\epsilon)$ bounds on $s_n$.   By \eqref{eq:Xqmlgcsigmaformula} and \eqref{eq:lambda-spinorial}, since both $\gamma_n(t)$ and $\phi_n(t)$ are bounded in all $L^2_j(I'' \times Y)$ norms, we see that $\Lambda_{\qmln}(\gamma^\tau_n(t))$ is bounded in all $L^2_j(I'')$ norms.  Since $\gamma^\tau_n(t)$ is a trajectory of $\Xqmlngcsigma$, \eqref{eq:Xqmlgcsigmaformula} shows that 
\begin{equation}\label{eq:bootstrap-lambdaq}
-\frac{d}{dt} s_n(t) = -\Lambda_{\qmln}(\gamma^\tau_n(t)) s_n(t)).
\end{equation}
Note that for all $n$ and $t \in I'$, $0 < s_n(t) \leq 2R$, so we obtain uniform bounds on $\frac{d}{dt} s_n(t)$ by the bounds on $s_n(t)$ and $\Lambda_{\qmln}(\gamma^\tau_n(t))$.  We may continue to bootstrap using \eqref{eq:bootstrap-lambdaq} together with Sobolev multiplication to obtain bounds on $s_n$ in all $L^2_j(I_\epsilon)$ norms, and consequently convergence in $C^\infty$.  This completes the proof in the case that the $\gamma^\tau_n$ are irreducible.

Now, we consider the case that $\gamma^\tau_n(t)$ is reducible.  As in the irreducible case we have $L^2_1(I')$ bounds on $\Lambda_{\qmln}(\gamma^\tau_n(t))$, by Lemma~\ref{lemma:1091} and the bounds at the endpoints.  In this case, choose $s^*_n(t)$ to solve
$$- \frac{d}{dt} s^*_n(t) = \Lambda_{\qmln}(a_n(t),0,\phi_n(t)) s^*_n(t), $$ 
where we ask that $0 < s^*_n (t) < M$ for all $t \in I$ and $n$.  Since this is a one-dimensional ODE defined on a compact interval, we can easily arrange for such a solution.  It is straightforward to verify that 
$$
-\left(\frac{d}{dt} + \Xqmlngc \right)^1(a_n(t), s^*_n(t) \phi_n(t)) = \D_{(a_n(t),0)} (\pmln c_\q)^1(0,\phi_n(t)).  
$$
We can now repeat the same arguments as above to obtain the desired convergence of $\phi_n(t)$.  
\end{proof}

\begin{corollary}\label{cor:convergence}
Fix $\omega > 0$ and a closed (possibly non-compact) interval $I \subseteq \R$. Suppose that $\gamma^\tau_n: I \to (W^{\lambda_n} \cap B(2R))^{\sigma}$ is a sequence of trajectories of $\Xqmlngcsigma$, where $\lambda_n \to \infty$.  Furthermore, suppose that $|\Lambda_{\q^{\lambda_n}}(\gamma^\tau_n(t))| \leq \omega$ for all $n$ and $t \in I$.  Then, there exists a subsequence of $\gamma^{\tau}_n$ for which the restrictions to any subinterval $I' \Subset I$ converge in the $C^{\infty}$ topology of $W^\tau(I' \times Y)$ to $\gamma^\tau$, a trajectory of $\Xqgcsigma$.  
\end{corollary}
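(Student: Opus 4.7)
The plan is to reduce Corollary~\ref{cor:convergence} to Proposition~\ref{prop:convergence1} by exhausting $I$ with compact subintervals and running a standard diagonal subsequence argument. The key observation is that the hypothesis $|\Lambda_{\q^{\lambda_n}}(\gamma^{\tau}_n(t))|\leq\omega$ holds for \emph{every} $t\in I$, so on any compact subinterval $J\subseteq I$ the endpoint bounds required by Proposition~\ref{prop:convergence1} are automatic.

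First I would choose an exhaustion $K_1\Subset K_2\Subset\cdots$ of $I$ by compact subintervals with $\bigcup_\ell K_\ell=I$, together with compact intervals $J_\ell\subset I$ and numbers $\epsilon_\ell>0$ such that $K_\ell\subset (J_\ell)_{\epsilon_\ell}\Subset J_\ell$, where $(J_\ell)_{\epsilon_\ell}$ denotes the interval shrunk by $\epsilon_\ell$ at each end as in Proposition~\ref{prop:convergence1}. The restrictions $\gamma^{\tau}_n|_{J_\ell}$ form a sequence of trajectories of $\Xqmlngcsigma$ valued in $(W^{\lambda_n}\cap B(2R))^\sigma$, and at the endpoints of $(J_\ell)_{\epsilon_\ell}$ we have $|\Lambda_{\q^{\lambda_n}}|\leq\omega$ by hypothesis, so in particular the one-sided bounds required in Proposition~\ref{prop:convergence1} hold. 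Thus that proposition supplies, for each fixed $\ell$, a subsequence converging in $C^\infty$ on any $I'\Subset(J_\ell)_{\epsilon_\ell}$, hence in particular on $K_\ell$, to a trajectory of $\Xqgcsigma$ defined on $(J_\ell)_{\epsilon_\ell}$.

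Next I would do the usual diagonal extraction: start with $\ell=1$ and extract a subsequence $\{\gamma^\tau_{n,1}\}$ of $\{\gamma^\tau_n\}$ converging in $C^\infty(K_1\times Y)$; from this extract $\{\gamma^\tau_{n,2}\}$ converging in $C^\infty(K_2\times Y)$; and so on. The diagonal sequence $\{\gamma^\tau_{n,n}\}$ then converges in $C^\infty(K_\ell\times Y)$ for every $\ell$, and hence in $C^\infty$ on every $I'\Subset I$. Call the limit $\gamma^\tau$. On each $K_\ell$ the limit coincides (by uniqueness of $C^\infty_{\text{loc}}$ limits) with the trajectory of $\Xqgcsigma$ produced by Proposition~\ref{prop:convergence1} applied to $J_\ell$, so $\gamma^\tau$ is smooth on $I$ and satisfies the flow equation for $\Xqgcsigma$ there.

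There is no real obstacle here beyond bookkeeping: the only substantive ingredient, compactness on compact cylinders, is already in Proposition~\ref{prop:convergence1}, and the hypothesis on $\Lambda_{\q^{\lambda_n}}$ being uniform in $t$ ensures that this compactness result can be applied on arbitrarily long subintervals without losing the endpoint bounds. The mildly delicate point, which I would be careful about, is simply to pick the auxiliary intervals $J_\ell$ and shrinkage parameters $\epsilon_\ell$ so that $K_\ell\subset(J_\ell)_{\epsilon_\ell}$; this is automatic when $I$ has nonempty interior, and in the boundary cases (e.g.\ $I=[t_0,\infty)$) one simply takes $J_\ell$ to extend past $K_\ell$ on the open side and requires $\epsilon_\ell$ to be small on the closed side, which still leaves $K_\ell\subset(J_\ell)_{\epsilon_\ell}$.
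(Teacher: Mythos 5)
Your proposal is correct and takes essentially the same approach as the paper, which simply states that the corollary "follows from Proposition~\ref{prop:convergence1}, by a diagonalization argument." Your write-up fills in exactly the details the paper leaves implicit (exhaustion by compact subintervals, verifying that the uniform bound on $\Lambda_{\q^{\lambda_n}}$ supplies the endpoint hypotheses of Proposition~\ref{prop:convergence1}, and the diagonal extraction).
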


\begin{proof}
This follows from Proposition~\ref{prop:convergence1}, by a diagonalization argument.
\end{proof}

When $I=\R$, we let $W^{\tau}_{loc}(\R \times Y)$ be $W^{\tau}(\R \times Y)$ with the $C^{\infty}_{loc}$ topology. Then, the conclusion of Corollary~\ref{cor:convergence} is convergence in $W^{\tau}_{loc}(\R \times Y)$.

\section{Near-constant approximate trajectories}
This subsection contains several technical results, analogous to the ones in Sections 13.4 and 13.5 of \cite{KMbook}. We will use these to study the moduli spaces of broken trajectories of $\Xqmlagcsigma$, leading up to Propositions~\ref{prop:brokenconvergence} and \ref{prop:L2k} below.

We start by establishing the analogue of Lemma 13.4.4 in \cite{KMbook}, which gives bounds on the distance of a trajectory from a constant trajectory on $[t_1, t_2] \times Y$ in terms of $\L_\q$.  Write $I = [t_1,t_2]$.  We follow the notation of Sections~\ref{sec:Flambda} and \ref{sec:control-near-stationary}.  More precisely, given a stationary point $x_\infty = (a_\infty, \phi_\infty)$ of $\Xqgc$, there is an associated orbit $\O_\lambda$ of stationary points of $\Xqmlgc$.  We define $x_\lambda = (a_\lambda, \phi_\lambda)$ to be the point in $\O_\lambda$ which is $L^2$ closest to $x_\infty$.    
   
\begin{lemma}\label{lem:L21-bounds-Flambda}
Fix a stationary point $x_\infty = (a_\infty, \phi_\infty)$ of $\Xqgc$, which we treat as a (constant) trajectory of $\Xqgc$ on $I \times Y$.  There exists an $S^1$-invariant neighborhood $U$ of $x_\infty$ in $W_{1}(I \times Y)$ and a constant $C$ independent of $\lambda \gg 0$ satisfying the following.  If $\gamma \in U$ is a trajectory of $\Xqmlgc$, there exists a constant gauge transformation $u \in S^1$ such that 
$$
\| u \cdot \gamma - x_\lambda \|_{L^2_1(I \times Y)} \leq C (F_\lambda(\gamma(t_2)) - F_\lambda(\gamma(t_1))). 
$$
\end{lemma}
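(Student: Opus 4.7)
The strategy is to combine the quantitative quasi-gradient control from Corollary~\ref{cor:trajectory-Flambda}---which bounds the energy $\E(\gamma)$ of an approximate trajectory by $4C_0$ times the drop $\delta := F_\lambda(\gamma(t_2))-F_\lambda(\gamma(t_1))$---with a Hessian-type lower bound on $\|\Xqmlgc\|_{L^2(Y)}$ coming from the non-degeneracy of $x_\lambda$ (Proposition~\ref{prop:ND2}). The energy piece controls the time-derivative part of the $L^2_1(I\times Y)$ norm of any gauge translate of $\gamma$, while the Hessian controls its spatial part slicewise.

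First I would shrink $U$ so that, for $\lambda$ sufficiently large, every slice $\gamma(t)$ sits in an $L^2_k(Y)$-ball around $x_\lambda$ on which the non-degenerate Hessian $\Hess^{\tilde g}_{\qml,x_\lambda}$ dominates the quadratic remainder in $\Xqmlgc$. The uniform-in-$\lambda$ bound for this Hessian follows from Proposition~\ref{prop:ND2} and the continuity of the Hessian in the configuration (Lemma~\ref{lem:hessiancontinuity}), together with the convergence $x_\lambda\to x_\infty$ in $L^2_k$ established in Chapter~\ref{sec:criticalpoints}. This yields a constant $c>0$, independent of $\lambda$, and a slicewise minimizing gauge transformation $u(t)\in S^1$ with
$$\|u(t)\cdot\gamma(t)-x_\lambda\|_{L^2_1(Y)}\leq c^{-1}\|\Xqmlgc(\gamma(t))\|_{L^2(Y)}.$$
Integrating in $t$ and invoking Corollary~\ref{cor:trajectory-Flambda} gives
$$\int_I\|u(t)\cdot\gamma(t)-x_\lambda\|_{L^2_1(Y)}^2\,dt\leq c^{-2}\E(\gamma)\leq 16C_0c^{-2}\delta,$$
while the energy bound itself controls $\|(u\cdot\gamma)'\|_{L^2(I\times Y)}^2$ once $u$ is constant, since then $(u\cdot\gamma)'(t)=-u\cdot\Xqmlgc(\gamma(t))$.

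To go from the slicewise gauges $u(t)$ to a single constant $u\in S^1$, I would apply the implicit function theorem to the condition defining $u(y)$ (minimization of $L^2$-distance to $x_\lambda$ along the $S^1$-orbit) and differentiate in $t$, getting a bound $|u'(t)|\leq\mathrm{const}\cdot\|\Xqmlgc(\gamma(t))\|_{L^2(Y)}$ uniform in $\lambda$. Cauchy--Schwarz then shows $|u(t)-u(t_1)|$ is controlled by $\E(\gamma)^{1/2}$, hence by $\delta^{1/2}$, and the resulting error $|u(t)-u(t_1)|\cdot\|\gamma(t)\|_{L^2_1(Y)}$ is absorbed into the $L^2_1$-estimate using the smallness furnished by $U$. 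Combining the spatial Hessian bound, the time-derivative energy bound, and this gauge-variation bound produces an inequality of the desired form with $u := u(t_1)$ and a constant $C$ independent of $\lambda$.

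The main obstacle---which dictates the shape of the proof---is making the gauge-alignment step uniform in $\lambda$ and insensitive to where $\gamma$ lies relative to the reducible locus. The minimizing map $y\mapsto u(y)$ degenerates where the $S^1$-action has fixed points, so the Hessian-plus-implicit-function-theorem argument above must really be performed in the blow-up $W^\sigma$, where $x_\lambda$ is a genuinely isolated non-degenerate stationary point by Proposition~\ref{prop:ND2}, and then pushed back down to $W$. In the case that $x_\infty$ is already reducible one can instead take $u=1$ and obtain the estimate directly from the Hessian bound on the reducible locus, sidestepping the gauge-alignment issue entirely; this parallels the treatment of \cite[Lemma~13.4.4]{KMbook}.
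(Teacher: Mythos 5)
Your proposal is correct, and the overall strategy—split the $L^2_1(I\times Y)$ norm into a spatial slicewise piece plus a time-derivative piece, control the latter by the energy estimate of Corollary~\ref{cor:trajectory-Flambda} and the former by a uniform-in-$\lambda$ Hessian lower bound at $x_\lambda$ combined with an $S^1$-alignment argument—is the same as the paper's. The one place where the routes diverge is the gauge-alignment step: you align slicewise, invoke the implicit function theorem to bound $|u'(t)|$ by $\|\Xqmlgc(\gamma(t))\|_{L^2(Y)}$, and then integrate with Cauchy--Schwarz; the paper instead aligns only once at $t=t_1$ (so that $\Re\langle\gamma(t_1),(0,i\phi_\lambda)\rangle_{L^2(Y)}=0$) and directly bounds the misalignment term $|\Re\langle\gamma(t),(0,i\phi_\lambda)\rangle_{L^2(Y)}|$ at other times via $\gamma(t)-\gamma(t_1)=\int_{t_1}^t\gamma'\,ds$ and Cauchy--Schwarz, which yields the same $|I|^2\,\E(\gamma)$ bound without an inverse function theorem. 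Two small corrections to your writeup: first, your suggestion that the Hessian-plus-IFT argument ``must really be performed in the blow-up $W^\sigma$'' is unnecessary---the relevant non-degeneracy estimate (the analogue of Lemma~\ref{lem:approximate-eigenvalue-bounds}) is stated in $W$ with the orthogonality to $(0,i\phi_\lambda)$ already accounting for the $S^1$-direction, and the paper's proof lives entirely in $W$; second, the error term $|u(t)-u(t_1)|\cdot\|\gamma(t)\|_{L^2_1(Y)}$ is absorbed because $\|\gamma(t)\|_{L^2_1(Y)}$ is \emph{bounded} (it is close to $\|x_\infty\|_{L^2_1(Y)}$, an $O(1)$ quantity), not because it is small; the smallness of $U$ is needed elsewhere, to ensure the Hessian estimate applies along all of $\gamma$.
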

\begin{proof}
In \cite[Lemma 13.4.4]{KMbook}, Kronheimer and Mrowka choose a gauge transformation of $\gamma$ that moves it into the Coulomb-Neumann slice through $x_\infty$.  In our context, since we prefer to work with $\gamma$ in slicewise global Coulomb gauge, we choose $u \in S^1$ so that 
$$\Re \langle u \cdot \gamma(t_1), (0,i\phi_\lambda) \rangle_{L^2(Y)} = 0.$$
To suppress $u$ from the notation, for the rest of the proof, we assume $\Re \langle \gamma(t_1), (0,i\phi_\lambda) \rangle_{L^2(Y)} = 0$, so that $u = 1$.

We have
\begin{equation}\label{eq:distance-from-constant}
\| \gamma - x_\lambda \|^2_{L^2_1(I \times Y)} = \int^{t_2}_{t_1} \| \gamma - x_\lambda \|^2_{L^2_1(Y)} dt + \int^{t_2}_{t_1} \| \frac{d}{dt} \gamma \|^2_{L^2(Y)} dt.
\end{equation}

In view of \eqref{eq:trajectory-Flambda}, it suffices to bound the right-hand side  by a constant times $\int^{t_2}_{t_1} \| \frac{d}{dt} \gamma \|^2_{L^2(Y)} dt$, or equivalently, $\int^{t_2}_{t_1} \| \Xqmlgc(\gamma(t)) \|_{L^2(Y)}^2 dt$.  Since the second term in \eqref{eq:distance-from-constant} is already of this form, it suffices to bound the first term, i.e., to find a constant $C$ such that 
$$
\int^{t_2}_{t_1} \| \gamma - x_\lambda \|^2_{L^2_1(Y)} dt \leq  C \cdot \int^{t_2}_{t_1} \| \frac{d}{dt} \gamma \|_{L^2(Y)}^2 dt = C \cdot \int^{t_2}_{t_1} \| \Xqmlgc(\gamma(t)) \|_{L^2(Y)}^2 dt.
$$

By our choice of $\q$, we have that $x_\infty$ and $x_\lambda$ are non-degenerate stationary points of $\Xqgc$ and $\Xqmlgc$, respectively.  There exist constants $C'>0$ and $\delta > 0$ (independent of $\lambda$) such that for $\| x - x_\lambda \|_{L^2_1(Y)} < \delta$, we have
\begin{equation}\label{eq:nearby-L21-traj-bounds}
\| x - x_\lambda \|^2_{L^2_1(Y)} \leq C' \left(\|\Xqmlgc(x) \|^2_{L^2(Y)} + |\Re \langle x, (0,i\phi_\lambda) \rangle_{L^2(Y)}|^2\right),  
\end{equation}
by an argument similar to that in Lemma~\ref{lem:approximate-eigenvalue-bounds}, where we do not include the second term on the right-hand side if $x_\lambda$ is reducible.  Indeed, if $x$ is real $L^2$ orthogonal to $(0,i\phi_\lambda)$, then the inequality follows directly from the non-degeneracy of $\Xqmlgc$.  More generally, we use 
\begin{align*}
\| x - x_\lambda \|_{L^2_1(Y)} &\leq \| e^{i\theta}x - x_\lambda \|_{L^2_1(Y)} + \| x - e^{i\theta} x \|_{L^2_1(Y)} \\
&\leq C' \| \Xqmlgc(x) \|_{L^2(Y)} + |(1-e^{i\theta})| \| x \|_{L^2_1(Y)} \\
&\leq C' \| \Xqmlgc(x) \|_{L^2(Y)} + C' | \Re \langle x, (0,i\phi_\lambda) \rangle_{L^2(Y)} |,
\end{align*}
where $\theta$ is the angle between $x$ and $(0,i\phi)$, i.e. $\sin \theta = \frac{\Re \langle x, (0,i\phi_\lambda) \rangle_{L^2(Y)}}{\| x\|_{L^2(Y)} \| i \phi_\lambda \|_{L^2(Y)}}$.  To see the last inequality, we use that $x$ is $L^2_1$ bounded and that for irreducibles, the $\phi_\lambda$ is uniformly $L^2$ bounded above and below. 
  
We therefore choose the neighborhood, $U$, in the statement of the lemma by extending a $\delta$-neighborhood of $x_\lambda$ in $L^2_1(I \times Y)$ to be gauge invariant.  

By \eqref{eq:nearby-L21-traj-bounds}, for a trajectory $\gamma \in U$, we have the desired bounds in the case that $x_\lambda$ is reducible.  If $x_\lambda$ is irreducible, it remains to bound 
$$ 
\int^{t_2}_{t_1} |\Re \langle \gamma(t) , (0,i\phi_\lambda) \rangle_{L^2(Y)}|^2 dt,  
$$
in terms of $\int^{t_2}_{t_1} \| \frac{d}{dt} \gamma \|^2_{L^2(Y)}dt $.  
We write 
\begin{align*}
\int^{t_2}_{t_1} |\Re \langle \gamma(t), (0,i\phi_\lambda) \rangle_{L^2(Y)}|^2 dt  &= \int^{t_2}_{t_1} |\Re \langle \gamma(t) - \gamma(t_1), (0,i\phi_\lambda) \rangle_{L^2(Y)}|^2 dt \\
&\leq C'  \int^{t_2}_{t_1} \| \gamma(t) - \gamma(t_1) \|_{L^2(Y)}^2 dt \\
&\leq C' \int^{t_2}_{t_1} \| \int^{t}_{t_1} (\frac{d}{ds}\gamma) ds \|^2_{L^2(Y)} dt \\
&\leq C' \int^{t_2}_{t_1} (t - t_1) \int^t_{t_1} \| \frac{d}{ds} \gamma \|^2_{L^2(Y)} ds dt \\
& \leq C' (t_2 - t_1) \int^{t_2}_{t_1} \int^{t_2}_{t_1} \| \frac{d}{ds} \gamma \|^2_{L^2(Y)} ds dt \\
& = C' (t_2 - t_1)^2 \int^{t_2}_{t_1} \| \frac{d}{ds} \gamma \|^2_{L^2(Y)} ds,
\end{align*}
where the fourth line follows from Cauchy-Schwarz.  This completes the proof.
\end{proof}

In \cite[Sections 13.4 and 13.5]{KMbook}, Lemma 13.4.4 is the starting point for a sequence of results about trajectories in neighborhoods of stationary points. In our setting, Lemma~\ref{lem:L21-bounds-Flambda} above gives analogous results, by essentially the same arguments. We state the main results below, but omit the proofs. 

First, by bootstrapping, we obtain the following from Lemma~\ref{lem:L21-bounds-Flambda}, analogous to \cite[Proposition 13.4.7]{KMbook}.  

\begin{lemma}\label{lem:L2k-bounds-Flambda}
Fix a stationary point $x_\infty$ of $\Xqgc$, which we treat as a (constant) trajectory of $\Xqgc$ on $I \times Y$.  There exists an $S^1$-invariant neighborhood $U$ of $x_\infty$ in $W_{k}(I \times Y)$ and a constant $C$ independent of $\lambda \gg 0$ satisfying the following.  If $\gamma \in U$ is a trajectory of $\Xqmlgc$, there exists a constant gauge transformation $u \in S^1$ such that 
$$
\| u \cdot \gamma - x_\lambda \|_{L^2_{k+1}(I' \times Y)} \leq C (F_\lambda(\gamma(t_2)) - F_\lambda(\gamma(t_1))), 
$$
for any compact interval $I' \Subset I$.
\end{lemma}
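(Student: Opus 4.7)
The strategy is to combine the $L^2_1$ estimate of Lemma~\ref{lem:L21-bounds-Flambda} with iterated interior elliptic regularity for the operator $\tfrac{d}{dt}+l$ on the cylinder, using the very tameness of $\q$ to control the nonlinear source term uniformly in $\lambda$.

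I would first shrink the $L^2_k$-neighborhood $U$ so that it is contained in the $L^2_1$-neighborhood provided by Lemma~\ref{lem:L21-bounds-Flambda}. That lemma then furnishes a gauge transformation $u \in S^1$ with $\|\xi\|_{L^2_1(I\times Y)} \leq C_1 \eta$, where $\xi := u\cdot\gamma - x_\lambda$ and $\eta := F_\lambda(\gamma(t_2))-F_\lambda(\gamma(t_1))$. Since $\gamma\in U$ is $L^2_k$-bounded and $x_\lambda\to x_\infty$ in $L^2_k$ by the work of Section~\ref{sec:StabilityPoints}, one also has $\|\xi\|_{L^2_k(I\times Y)}\leq M$ for a constant $M$ independent of $\gamma$ and $\lambda$. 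Using that $x_\lambda$ is a stationary point of $\Xqmlgc$ and subtracting, $\xi$ satisfies the PDE
$$\bigl(\tfrac{d}{dt}+l\bigr)\xi \;=\; -\pml\!\int_0^1 \D_{x_\lambda + s\xi} c_\q(\xi)\, ds.$$

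The operator $\tfrac{d}{dt}+l$ is elliptic of Dirac type on $I\times Y$ (the Coulomb condition is preserved since $d^*\circ *d$ vanishes on coclosed $1$-forms on a rational homology sphere), so standard interior elliptic estimates give, for $1\leq j\leq k$ and nested intervals $I_{j-1}\Supset I_j$,
$$\|\xi\|_{L^2_{j+1}(I_j\times Y)} \;\leq\; K_j\bigl(\|\xi\|_{L^2_j(I_{j-1}\times Y)} + \bigl\|(\tfrac{d}{dt}+l)\xi\bigr\|_{L^2_j(I_{j-1}\times Y)}\bigr),$$
with $K_j$ depending only on $Y,I_{j-1},I_j,j$. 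The source term is then bounded by combining Fact~\ref{fact:pml}(a), which gives $\|\pml\|\leq\Theta_j$ uniformly in $\lambda$, with Lemma~\ref{lem:Dcq}, which places $\D c_\q$ in $C^\infty_{\fb}(W_k,\Hom(W_j,W_j))$. Since $x_\lambda + s\xi$ stays in a fixed $L^2_k$-ball for $\gamma\in U$ and all $\lambda\gg 0$, there exist constants $M_j$ independent of $\lambda$ with
$$\Big\|\pml\!\int_0^1 \D_{x_\lambda+s\xi} c_\q(\xi)\,ds\Big\|_{L^2_j(I_{j-1}\times Y)} \;\leq\; \Theta_j M_j \|\xi\|_{L^2_j(I_{j-1}\times Y)}.$$

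The two estimates combine to give $\|\xi\|_{L^2_{j+1}(I_j\times Y)}\leq K_j'\|\xi\|_{L^2_j(I_{j-1}\times Y)}$, with $K_j'$ uniform in $\lambda$. I would then choose a nested sequence $I=I_0\Supset I_1\Supset\cdots\Supset I_k\supseteq I'$ and iterate $k$ times to conclude
$$\|\xi\|_{L^2_{k+1}(I'\times Y)} \;\leq\; K\,\|\xi\|_{L^2_1(I\times Y)} \;\leq\; KC_1\,\eta,$$
which is the claimed bound. The main obstacle is keeping all multiplicative constants independent of $\lambda$ through the iteration; this rests on the functional-boundedness half of very tameness (Lemma~\ref{lem:Dcq}), the uniform operator bound on $\pml$ (Fact~\ref{fact:pml}), and the fact that $x_\lambda$ converges to $x_\infty$ in $L^2_k$, so that the linearization basepoint remains in a bounded region of $W_k$ uniformly in $\lambda$.
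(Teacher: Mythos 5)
Your argument is correct and is the intended bootstrapping proof: the paper explicitly declines to give details, remarking only that Lemma~\ref{lem:L2k-bounds-Flambda} follows from Lemma~\ref{lem:L21-bounds-Flambda} ``by bootstrapping,'' in analogy with Proposition~13.4.7 of \cite{KMbook}, which is exactly the interior elliptic-regularity iteration you carry out. The key structural ingredients you identify — the PDE $(\tfrac{d}{dt}+l)\xi=-\pml\int_0^1 \D_{x_\lambda+s\xi}c_\q(\xi)\,ds$, the uniform-in-$\lambda$ bound on $\pml$ from Fact~\ref{fact:pml}, the functional boundedness of $\D c_\q$ from Lemma~\ref{lem:Dcq}, and the a priori $L^2_k$ bound coming from $\gamma\in U$ and $x_\lambda\to x_\infty$ — are precisely what is needed. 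One small imprecision: $d^*(*da)=0$ for \emph{every} $1$-form $a$ on \emph{any} closed oriented $3$-manifold (since $d^*(*da)=\pm *d^2a=0$), so the aside about coclosed forms and rational homology spheres is unnecessary, though the conclusion (that the Coulomb slice $\ker d^*$ is preserved and the cylinder operator is elliptic) is right.
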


Next, we have two results about trajectories in the blow-up.  These are the analogues of Proposition 13.4.1 and Corollary 13.4.8 in \cite{KMbook} respectively.  

\begin{proposition}\label{prop:13.4.1-analogue}
Let $x_\infty \in B(2R)^\sigma$ be a stationary point of $\Xqgcsigma$, $x_\lambda$ a nearby stationary point of $\Xqmlgcsigma$, and $I' \Subset I = [t_1,t_2]$.  Then, there exists a constant $C$ and a gauge-invariant neighborhood $U$ of the constant trajectory $x_\infty$ in $W^\tau_k(I \times Y)$, independent of $\lambda$, such that for every trajectory $\gamma^\tau: I \to (\vml \cap B(2R))^\sigma$ of $\Xqmlgcsigma$ which belongs to $U$, there is a gauge transformation $u \in S^1$ such that:
\begin{enumerate}[(i)]
\item if $x_\infty$ is irreducible, then $$\| u \cdot \gamma^\tau - x_\lambda \|_{L^2_{k+1}(I' \times Y)} \leq C \left ( F_\lambda(\gamma(t_1)) - F_\lambda(\gamma(t_2))\right),$$
\item if $x_\infty$ is reducible, then $$\| u \cdot \gamma^\tau - x_\lambda \|_{L^2_{k+1}(I' \times Y)} \leq C \left ( \Lambda_{\qml}(\gamma(t_1)) - \Lambda_{\qml}(\gamma(t_2)) + (F_\lambda(\gamma(t_1)) - F_\lambda(\gamma(t_2)))^{\frac{1}{2}} \right ).$$
\end{enumerate}
\end{proposition}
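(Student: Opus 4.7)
\textbf{Proof plan for Proposition~\ref{prop:13.4.1-analogue}.}
My plan is to reduce the statement to the analogous bound already established in the blow-down (Lemma~\ref{lem:L2k-bounds-Flambda}) and then translate the bounds through the blow-down map, which behaves very differently at irreducible versus reducible endpoints. Let $\gamma^\tau(t)=(a(t),s(t),\phi(t))$ be the trajectory in $(\vml\cap B(2R))^\sigma$, and let $\gamma(t)=(a(t),s(t)\phi(t))$ be its blow-down, which is a trajectory of $\Xqmlgc$. By choosing the $L^2_k(I\times Y)$ neighborhood $U$ of $x_\infty$ in $W^\tau_k(I\times Y)$ sufficiently small (and $S^1$-invariant), the blow-down $\gamma$ lies in an $L^2_k(I\times Y)$ neighborhood of the blow-down of $x_\infty$ of the type allowed by Lemma~\ref{lem:L2k-bounds-Flambda}, so after applying a constant gauge transformation $u\in S^1$ we have
\begin{equation}\label{eq:blowdown-bound}
\| u\cdot\gamma - (a_\lambda,s_\lambda\phi_\lambda)\|_{L^2_{k+1}(I'\times Y)} \leq C\bigl(F_\lambda(\gamma(t_1))-F_\lambda(\gamma(t_2))\bigr).
\end{equation}

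In case (i), $s_\infty>0$, so for $U$ small enough the map $(a,s,\phi)\mapsto(a,s\phi)$ is a smooth diffeomorphism from a neighborhood of $x_\infty$ in $W^\tau_{k+1}(I'\times Y)$ onto a neighborhood of its image in $W_{k+1}(I'\times Y)$, with Lipschitz constant bounded independently of $\lambda\gg 0$ (since $x_\lambda\to x_\infty$ and in particular $s_\lambda\to s_\infty>0$). Applying this inverse diffeomorphism to the bound \eqref{eq:blowdown-bound} yields the conclusion of case (i) with possibly enlarged constant $C$.

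Case (ii) is the main difficulty, because near a reducible $x_\infty$ the blow-down map is no longer a diffeomorphism: \eqref{eq:blowdown-bound} controls $a-a_\lambda$ and the product $s\phi$, but not $s$ and $\phi$ separately, and the $L^2$ normalization $\|\phi(t)\|_{L^2(Y)}=1$ means that $\phi(t)-\phi_\lambda$ can be of order one even when $s\phi$ is very small. The strategy, modeled on \cite[Proof of Proposition 13.4.1]{KMbook}, is to exploit the fact that along a reducible-asymptotic trajectory the spinor $\phi(t)$ is driven (to leading order, in the limit $s\to 0$) by the linear operator $D_{\qml,a(t)}$, whose spectral decomposition at $a_\lambda$ has $\phi_\lambda$ as a simple eigenvector with eigenvalue $\Lambda_{\qml}(x_\lambda)$. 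The evolution equation
\[
\tfrac{d}{dt}\phi(t)=-D_a\phi(t)-\widetilde{\eta_{\qml}}^1(a(t),s(t),\phi(t))+\Lambda_{\qml}(\gamma^\tau(t))\phi(t)
\]
together with the non-degeneracy of $x_\lambda$ (Proposition~\ref{prop:ND2}) and the ODE $\frac{d}{dt}s=-\Lambda_{\qml}s$ for $s$ allows one to decompose $\phi(t)-u\phi_\lambda$ (for suitable $u\in S^1$) into a spectral component in $\phi_\lambda^\perp$, which is controlled linearly by $F_\lambda(\gamma(t_1))-F_\lambda(\gamma(t_2))$ via \eqref{eq:blowdown-bound} and elliptic bootstrapping, and a component along $\phi_\lambda$, which is controlled by how much $\Lambda_{\qml}(\gamma^\tau(t))$ deviates from $\Lambda_{\qml}(x_\lambda)$.

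The key step, and where the square root appears, is the following standard trick in blow-up analysis: $|\Lambda_{\qml}(\gamma^\tau(t))-\Lambda_{\qml}(x_\lambda)|$ is controlled pointwise by the monotone quantity $\Lambda_{\qml}(\gamma(t_1))-\Lambda_{\qml}(\gamma(t_2))$ (using Lemma~\ref{lemma:1091} and the near-constancy of the trajectory in $U$), so the component of $\phi(t)-u\phi_\lambda$ in $\langle\phi_\lambda\rangle_{\mathbb{R}}$ contributes linearly in this drop. The remaining (transverse) component is controlled via a Cauchy--Schwarz estimate of the form
\[
\|\phi-u\phi_\lambda\|_{L^2_{k+1}(I')} \lesssim \bigl\|\tfrac{d}{dt}\phi+D_a\phi-\Lambda_{\qml}\phi\bigr\|_{L^2(I'\times Y)}^{1/2}\cdot\bigl(\text{bounded factor}\bigr),
\]
where the $1/2$ exponent reflects that the $L^2$ norm of a perturbation of an eigenvector controls the eigenspace projection only up to the square root of the energy drop, because the eigenvalue equation is quadratic in the deviation. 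Combining this with \eqref{eq:blowdown-bound} and the elementary exponential control $|s(t)|\le C'(\Lambda_{\qml}(\gamma(t_1))-\Lambda_{\qml}(\gamma(t_2)))^{1/2}$ (which is obtained by integrating $\frac{d}{dt}s=-\Lambda_{\qml}s$ and using $s_\lambda=0$ together with the smallness of $s(t)$ forced by the neighborhood $U$) yields the estimate in (ii). Finally, elliptic bootstrapping via Lemma~\ref{lem:fakehessian} and the arguments of \cite[Section 13.5]{KMbook} upgrade all these estimates from $L^2_1$ to $L^2_{k+1}$ on the interior interval $I'$.
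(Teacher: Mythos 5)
The paper explicitly omits the proof of this proposition, stating only that it follows by essentially the same arguments as Kronheimer--Mrowka's \cite[Proposition 13.4.1]{KMbook}, with Lemma~\ref{lem:L21-bounds-Flambda} (and its bootstrapped version Lemma~\ref{lem:L2k-bounds-Flambda}) playing the role of \cite[Lemma 13.4.4]{KMbook}, and with the perturbed vector field $\Xqmlgcsigma$ in place of $\Xqsigma$. Your outline correctly identifies this reduction, and your treatment of case (i) is essentially right: near an irreducible $x_\infty$ the blow-down map is a diffeomorphism with uniformly bounded Lipschitz constant (since $s_\lambda \to s_\infty > 0$), so the bound in the blow-up follows from the blow-down bound directly.

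The reducible case, however, has two concrete errors in how you allocate the two terms of the bound.

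\emph{First}, you claim the $\phi_\lambda^\perp$-component of $\phi - u\phi_\lambda$ is ``controlled linearly by $F_\lambda(\gamma(t_1))-F_\lambda(\gamma(t_2))$ via \eqref{eq:blowdown-bound} and elliptic bootstrapping.'' This cannot work: in the reducible case $s_\lambda = 0$, so the blow-down of $x_\lambda$ is $(a_\lambda, 0)$ and the blow-down estimate \eqref{eq:blowdown-bound} controls $a - a_\lambda$ and the product $s\phi$, but it contains no information about the direction of $\phi$ itself (the map $(s,\phi)\mapsto s\phi$ collapses the whole sphere $\{\|\phi\|_{L^2}=1\}$ at $s=0$). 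Controlling $\phi - u\phi_\lambda$ requires the spinor ODE together with the simple-eigenvalue/spectral-gap structure of $D_{\qml,a_\lambda}$ at $\phi_\lambda$ (Proposition~\ref{prop:ND2}), and this is precisely where the $\Lambda_{\qml}(\gamma(t_1)) - \Lambda_{\qml}(\gamma(t_2))$ term comes from in the statement. You have it backwards: the $\Lambda$-drop controls the spinor, not its normalization component, and the $F_\lambda$-drop controls only $a$ and $s$.

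\emph{Second}, you claim $|s(t)| \leq C'(\Lambda_{\qml}(\gamma(t_1))-\Lambda_{\qml}(\gamma(t_2)))^{1/2}$ by ``integrating $\frac{d}{dt}s = -\Lambda_{\qml}s$.'' Integrating that ODE gives $s(t) = s(t_0)\exp(-\int\Lambda_{\qml}\,dt)$, which bounds $s(t)$ by a multiple of $s(t_0)$ — not by the $\Lambda$-drop at all. The correct source of the bound on $|s|$ is again the blow-down estimate: since $\|\phi(t)\|_{L^2(Y)}\equiv 1$, one has $|s(t)| = \|s(t)\phi(t)\|_{L^2(Y)}$, which is controlled by $\|u\cdot\gamma - (a_\lambda,0)\|$, hence by the $F_\lambda$-drop term (after bootstrapping, through $L^2_{k+1}(I'\times Y)$).

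As a minor point, note that your equation \eqref{eq:blowdown-bound} is written with $F_\lambda(\gamma(t_1)) - F_\lambda(\gamma(t_2))$ to match the proposition, whereas Lemma~\ref{lem:L2k-bounds-Flambda} as literally stated in the paper has $F_\lambda(\gamma(t_2)) - F_\lambda(\gamma(t_1))$; these cannot both be nonnegative, so one is a typo (and indeed the computation in Corollary~\ref{cor:trajectory-Flambda}, when corrected for the paper's reverse-flow convention $\frac{d}{dt}\gamma = -v(\gamma)$, supports the sign you chose). There is also a latent squaring issue: the proof of Lemma~\ref{lem:L21-bounds-Flambda} actually establishes $\|u\cdot\gamma - x_\lambda\|^2_{L^2_1} \lesssim |\Delta F_\lambda|$, which gives $\|u\cdot\gamma - x_\lambda\| \lesssim |\Delta F_\lambda|^{1/2}$, not the unsquared bound claimed. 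This squaring discrepancy is consistent with \cite[Proposition 13.4.1]{KMbook}, which has a squared left-hand side in the irreducible case; you should flag it rather than silently carry forward the unsquared form into the irreducible case of the present proposition.
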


\begin{proposition}\label{prop:13.4.8-analogue}
Let $x_\infty \in B(2R)^\sigma$ be a stationary point of $\Xqgcsigma$ and $x_\lambda$ the  corresponding stationary point of $\Xqmlgcsigma$.  There is a constant $C$ and a gauge-invariant neighborhood $U$ of the constant trajectory in $W_k([-1,1] \times Y)$ obtained from blowing down $x_\infty$, independent of $\lambda \gg 0$, with the following property.  If $\gamma^\tau : [-1,1] \to  (\vml \cap B(2R))^{\sigma}$ is a trajectory of $\Xqmlgcsigma$ whose blow-down $\gamma$ is in $U$, then 
$$
\frac{d}{dt} \Lambda_{\qml}(\gamma^{\tau}(t))\Big|_{t = 0} \leq C (F_\lambda(\gamma(-1)) - F_\lambda(\gamma(1)))^{\frac{1}{2}}.  
$$ 
\end{proposition}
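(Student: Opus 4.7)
The plan is to adapt Kronheimer–Mrowka's proof of Corollary 13.4.8 in \cite{KMbook} to the approximate flow $\Xqmlgcsigma$, while checking at every step that the constants obtained are uniform in $\lambda$ for $\lambda \gg 0$. The two main inputs are Lemma~\ref{lemma:1091}, which estimates $\tfrac{d}{dt}\Lambda_{\qml}$ pointwise by $\|\Xqmlgc(\gamma(t))\|_{L^2_k(Y)}$, and the bootstrapped version of Lemma~\ref{lem:L2k-bounds-Flambda}, which controls high Sobolev norms of a trajectory on an interior subcylinder by the drop of $F_\lambda$.

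First I would use Lemma~\ref{lem:uniquecontinuation} to conclude that the blown-down path $\gamma$ is itself a trajectory of $\Xqmlgc$ (entirely reducible or entirely irreducible), so that in particular $\Xqmlgc(x_\lambda)=0$ at the stationary point $x_\lambda$ corresponding to $x_\infty$. Applying Lemma~\ref{lemma:1091} at $t=0$ then gives
$$\frac{d}{dt}\Lambda_{\qml}(\gamma^\tau(t))\Big|_{t=0} \le C\,\|\Xqmlgc(\gamma(0))\|_{L^2_k(Y)},$$
and it suffices to bound the right-hand side by $C'\,(F_\lambda(\gamma(-1))-F_\lambda(\gamma(1)))^{1/2}$.

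To pass from a four-dimensional bound to a bound at the time-slice $t=0$ with the correct $1/2$ power, I would use the one-variable Sobolev inequality
$$|f(0)|^2 \le C\bigl(\|f\|_{L^2([-1,1])}^{2}+\|f'\|_{L^2([-1,1])}^{2}\bigr)$$
applied to $f(t)=\|\Xqmlgc(\gamma(t))\|_{L^2_k(Y)}$. To control the integrated quantity I would combine the Lipschitz continuity of $\Xqmlgc: W_{k+1}(Y)\to W_k(Y)$ (uniform in $\lambda$ by Lemmas~\ref{lem:Dcq}, \ref{lem:fam} and Fact~\ref{fact:pml}), with the bootstrapped $L^2_{k+2}$-version of Lemma~\ref{lem:L2k-bounds-Flambda}. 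The latter is available because $\q$ is very tame for every $k\ge 2$, so the chain of arguments in Section~\ref{sec:control-near-stationary} goes through at level $k+1$ in place of $k$. This yields an inequality of the form
$$\int_{-1/2}^{1/2}\Big(\|\Xqmlgc(\gamma(t))\|^2_{L^2_k(Y)}+\big\|\tfrac{d}{dt}\Xqmlgc(\gamma(t))\big\|^2_{L^2_k(Y)}\Big)\,dt\ \le\ C''\,E,$$
where $E=F_\lambda(\gamma(-1))-F_\lambda(\gamma(1))$, and the Sobolev inequality above converts this into the pointwise bound $\|\Xqmlgc(\gamma(0))\|_{L^2_k(Y)}\le C'''\,E^{1/2}$. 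A constant gauge transformation $u\in S^1$ may need to be absorbed when invoking Lemma~\ref{lem:L2k-bounds-Flambda}, but $u$ commutes with both $\Lambda_{\qml}$ and $\Xqmlgc$, so it does not affect the left-hand side.

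The main obstacle I anticipate is the $\lambda$-uniformity of every constant in the chain: the Lipschitz constant for $\Xqmlgc$, the size of the neighborhood and the constant in the bootstrapped Lemma~\ref{lem:L2k-bounds-Flambda}, and the passage from the $L^2$ energy drop to its bootstrapped $L^2_k$ version. Uniformity follows from ingredients already in place: the operator-norm bound $\|\pml\|\le \Theta_j$ of Fact~\ref{fact:pml}, the uniform non-degeneracy estimate at $x_\lambda$ given by Lemma~\ref{lem:approximate-eigenvalue-bounds}, and the convergence $x_\lambda\to x_\infty$ in every Sobolev norm from Chapter~\ref{sec:criticalpoints}. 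A secondary subtlety is the reducible case: as in Proposition~\ref{prop:13.4.1-analogue}(ii), an extra $\Lambda_{\qml}$ contribution appears in the bootstrapped Lemma~\ref{lem:L2k-bounds-Flambda}, but within a small $W_k([-1,1]\times Y)$-neighborhood of $x_\infty$ this contribution is itself of order $E^{1/2}$ and the argument goes through without change.
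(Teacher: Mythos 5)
Your proposal is essentially correct and follows the argument the paper intends: the paper explicitly omits the proof of this proposition, noting only that it follows from Lemma~\ref{lem:L21-bounds-Flambda} ``by essentially the same arguments'' as Kronheimer--Mrowka's Corollary 13.4.8. Your outline---reduce to bounding $\|\Xqmlgc(\gamma(0))\|_{L^2_k(Y)}$ via Lemma~\ref{lemma:1091}, exploit $\Xqmlgc(x_\lambda)=0$ together with uniform Lipschitz bounds on $\Xqmlgc$, get the interior $L^2_{k+2}$ bound (squared) from the bootstrap, and extract the $E^{1/2}$ via the one-variable Sobolev inequality---is precisely the Kronheimer--Mrowka template, and your points about $\lambda$-uniformity of the constants (Fact~\ref{fact:pml}, convergence $x_\lambda\to x_\infty$, uniform injectivity from Lemma~\ref{lem:approximate-eigenvalue-bounds}) are the correct ones to check.

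Two small corrections. First, Lemma~\ref{lem:uniquecontinuation} is not doing any work here: the identity $\Xqmlgc(x_\lambda)=0$ is just the definition of a stationary point, and the fact that the blow-down $\gamma$ of the trajectory $\gamma^\tau$ is itself a trajectory of $\Xqmlgc$ holds automatically (by equivariance of $\Xqmlgc$ the spinor equation is trivially satisfied on the reducible locus), with no need for unique continuation. Second, the worry about the extra $\Lambda_{\qml}$ term in the reducible case is misplaced: that term appears in Proposition~\ref{prop:13.4.1-analogue}(ii), which estimates the blow-up trajectory $\gamma^\tau$, whereas the argument here only invokes the blow-down estimate Lemma~\ref{lem:L2k-bounds-Flambda} together with Lemma~\ref{lemma:1091}, and the former has no $\Lambda_{\qml}$ contribution in either the reducible or irreducible case. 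Neither of these affects the validity of the proof.
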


For trajectories that converge to stationary points at the ends, we are able to obtain exponential decay on the value of $F_\lambda$ near stationary points in the blow-up, analogous to that in \cite[Proposition 13.5.1]{KMbook}.  Since $F_\lambda: \vml \cap B(2R) \to \mathbb{R}$ is $S^1$-invariant, we obtain an induced map from $(\vml \cap B(2R))^\sigma/S^1$ to $\mathbb{R}$.  By a slight abuse of notation we will use $F_\lambda$ for this induced map as well.  Recall from Chapter~\ref{sec:criticalpoints}, for every stationary point $[x_\infty]$ of $\Xqagcsigma$ with grading in $[-N,N]$, we have a unique corresponding stationary point $[x_\lambda]$ of $\Xqmlagcsigma$ for $\lambda \gg 0$.  

\begin{proposition}\label{prop:exponential-decay-blowup}
Let $[x_\infty]$ be a stationary point of $\Xqagcsigma$ with grading in $[-N,N]$.  There exists $\delta > 0$ such that for $\lambda \gg 0$, and for every trajectory $[\gamma]: [0,\infty) \to (\vml \cap B(2R))^\sigma/S^1$ of $\Xqmlagcsigma$ with $\lim_{t \to \infty} [\tau^*_t \gamma] = [x_\lambda]$ in $L^2_{k,loc}$, there exists $t_0$ such that for $t \geq t_0$
$$ 
F_\lambda([\gamma(t)]) -  F_\lambda([x_\lambda]) \leq C  e^{-\delta t},
$$ 
where $C = F_\lambda([\gamma(t_0)]) - F_\lambda([x_\lambda])$.
\end{proposition}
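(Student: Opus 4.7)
I would model the argument on the proof of \cite[Proposition 13.5.1]{KMbook}, but use the quasi-gradient function $F_\lambda$ from Chapter~\ref{sec:quasigradient} in place of $\L_\q$ and take care to keep every constant uniform in $\lambda$. Set $G(t) = F_\lambda([\gamma(t)]) - F_\lambda([x_\lambda])$: by Corollary~\ref{cor:Flambda-gtilde-bounds} the function $G$ is non-increasing along the flow, and the hypothesis that $[\tau_t^*\gamma] \to [x_\lambda]$ in $L^2_{k,loc}$ together with the continuity of $F_\lambda$ gives $G(t) \to 0$ as $t \to \infty$. The stated bound $G(t) \leq Ce^{-\delta t}$ for $t \geq t_0$ will follow from a Gr\"onwall argument once we establish a uniform differential inequality of the form $-G'(t) \geq \delta G(t)$ for $t$ large, with $\delta > 0$ independent of $\lambda$.

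\textbf{First I would} derive a quadratic upper bound on $G$. Since $F_\lambda = \L_\q \circ T_\lambda$ and $T_\lambda(x_\lambda) = x_\infty$, the chain rule makes $x_\lambda$ a critical point of $F_\lambda$; Taylor expanding to second order (using the very tameness of $\q$, together with the fact that the Hessian of $F_\lambda$ at $x_\lambda$ is bounded uniformly in $\lambda$, as follows from the smoothness of $T_\lambda$ in $\lambda$ of the type used in Chapter~\ref{sec:appendix} combined with the boundedness of $\Hess(\L_\q)_{x_\infty}$) produces a constant $C_1$ independent of $\lambda$ such that $G(t) \leq C_1 \|\gamma(t) - x_\lambda\|^2_{L^2_1(Y)}$ once $\gamma(t)$ lies in a fixed $L^2_1$-neighborhood of $x_\lambda$. \textbf{Next I would} match this with a lower bound on $\|\Xqmlgc(\gamma(t))\|^2_{L^2}$. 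After applying a constant gauge transformation $u \in S^1$ (depending on $t$, but absorbable because $F_\lambda$ and $\Xqmlgc$ are $S^1$-invariant) to arrange that $u\cdot\gamma(t) - x_\lambda$ is $L^2$-orthogonal to $(0, i\phi_\lambda)$, the expansion
$$ \Xqmlgc(\gamma(t)) = (l + \pml \D_{x_\lambda} c_\q)(u\cdot\gamma(t) - x_\lambda) + O(\|u\cdot\gamma(t) - x_\lambda\|^2_{L^2_1}) $$
together with Lemma~\ref{lem:approximate-eigenvalue-bounds} gives $\|\Xqmlgc(\gamma(t))\|_{L^2} \geq (\mu/2) \|u\cdot\gamma(t) - x_\lambda\|_{L^2_1}$ in a small neighborhood of the orbit, with $\mu$ uniform in $\lambda$. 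Combining the two estimates yields $\|\Xqmlgc(\gamma(t))\|^2_{L^2} \geq (\mu^2/4C_1)\, G(t)$, and feeding this into $-G'(t) \geq \tfrac{1}{4C_0}\|\Xqmlgc(\gamma(t))\|^2_{L^2}$ (Corollary~\ref{cor:Flambda-gtilde-bounds}) produces the differential inequality with $\delta = \mu^2/(16 C_0 C_1)$. The cutoff $t_0$ is chosen using Proposition~\ref{prop:13.4.1-analogue} large enough that $\gamma(t)$ enters the neighborhood on which both bounds hold.

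\textbf{The main obstacle} will be the reducible case, where Lemma~\ref{lem:approximate-eigenvalue-bounds} controls only directions visible in the blow-down and the spinorial direction $\phi_\lambda$, together with its eigenvalue $\Lambda_{\qml}(x_\lambda)$, has no trace in $F_\lambda$. Following \cite[Section 13.5]{KMbook}, I would compensate by augmenting the function being decayed: replace $G$ by $\tilde G(t) = (F_\lambda(\gamma(t)) - F_\lambda(x_\lambda)) + \epsilon\,(\Lambda_{\qml}(\gamma^\tau(t)) - \Lambda_{\qml}(x_\lambda))^2$ for a small $\epsilon > 0$ chosen uniformly in $\lambda$, and use Proposition~\ref{prop:13.4.8-analogue} to bound the derivative of the second piece by $C(F_\lambda(\gamma(t-1)) - F_\lambda(\gamma(t+1)))^{1/2}$. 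Estimating this in terms of $\tilde G$ itself and combining with the irreducible-style argument above yields a differential inequality $-\tilde G'(t) \geq \delta \tilde G(t)$ and hence exponential decay of $\tilde G$; since $G \leq \tilde G$, the conclusion of the proposition follows. The uniformity in $\lambda$ of every constant is guaranteed by the smooth dependence of $x_\lambda$ on $\lambda$ from Corollary~\ref{cor:corresp} and by the $\lambda$-uniform bounds already proved in Lemmas~\ref{lem:approximate-eigenvalue-bounds} and \ref{lem:L2k-bounds-Flambda} and Propositions~\ref{prop:13.4.1-analogue}--\ref{prop:13.4.8-analogue}.
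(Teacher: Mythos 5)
The paper omits the proof of Proposition~\ref{prop:exponential-decay-blowup}, stating only that it follows by ``essentially the same arguments'' as \cite[Proposition 13.5.1]{KMbook}, so I cannot compare your plan against a written-out argument in the text. That said, your first two paragraphs capture what the proof must be: the differential inequality $-G'(t) \geq \delta G(t)$, obtained by combining the quadratic upper bound $G(t) \leq C_1 \|\eta\|^2_{L^2_1}$ (from the Taylor expansion of $F_\lambda = \L_\q \circ T_\lambda$ about $x_\lambda$, with the smoothness of $T_\lambda$ giving uniformity in $\lambda$), the elliptic lower bound $\|\Xqmlgc(\gamma(t))\|_{L^2} \gtrsim \mu \|\eta\|_{L^2_1}$ from Lemma~\ref{lem:approximate-eigenvalue-bounds}, and the quasi-gradient inequality from Corollary~\ref{cor:Flambda-gtilde-bounds}. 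That is the right skeleton.

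Your ``main obstacle'' paragraph, however, rests on a misconception. You say that the spinorial direction $\phi_\lambda$ and the eigenvalue $\Lambda_{\qml}(x_\lambda)$ ``have no trace in $F_\lambda$'' so that Lemma~\ref{lem:approximate-eigenvalue-bounds} misses the relevant directions. But the proposition asks only for decay of $F_\lambda$, which is an $S^1$-invariant function of the \emph{blow-down} configuration; and at a reducible stationary point $(a_\lambda, 0)$ the linearization $l + \pml\D_{(a_\lambda,0)}c_\q$ in the blow-down already contains the full spinorial operator $D_{\qml,a_\lambda}$, whose eigenvalues include $\Lambda_{\qml}(x_\lambda)$. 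The operator bound $\|L_\lambda y\|_{L^2}\geq \mu\|y\|_{L^2_1}$ of Lemma~\ref{lem:approximate-eigenvalue-bounds} is proved precisely using non-degeneracy of the reducible in the blow-down, which in turn requires $D_{\q,a_\infty}$ invertible; so the spinorial eigenvalue information is already encoded in $\mu$. Consequently the same differential inequality goes through in the reducible case, and the augmented functional $\tilde G$ with the $(\Lambda_{\qml}(\gamma^\tau(t)) - \Lambda_{\qml}(x_\lambda))^2$ term is not needed. (An augmentation of that sort \emph{would} be the right tool if one wanted exponential decay of the blow-up distance to $x_\lambda$, which is a genuinely stronger statement and is where Propositions~\ref{prop:13.4.1-analogue}--\ref{prop:13.4.8-analogue} with their $\Lambda_{\qml}$ terms really come into play.) One minor further point: the cutoff $t_0$ should be produced directly from the hypothesis $\lim_{t\to\infty}[\tau^*_t\gamma] = [x_\lambda]$ in $L^2_{k,\mathrm{loc}}$, which by restriction to slices gives $L^2_{k-1/2}(Y)$-smallness of $\gamma(t) - x_\lambda$ for $t$ large; Proposition~\ref{prop:13.4.1-analogue} is not what supplies the entry into the neighborhood, though it is useful for quantitative control once there.
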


Here is a related result in the blow-down, which is the analogue of \cite[Proposition 13.5.2]{KMbook}.  

\begin{proposition}\label{prop:exponential-decay}
Let $x_\infty$ be a stationary point of $\Xqgc$.  There exists a neighborhood $U$ of $[x_\infty]$ in $B(2R)/S^1$ and a constant $\delta > 0$ such that for $\lambda \gg 0$ and any trajectory $\gamma:[t_1,t_2] \to \vml \cap U$ of $\Xqmlgc$ in $L^2_k([t_1,t_2] \times Y)$, we have the inequalities
$$
-C_2 e^{\delta(t-t_2)} \leq F_\lambda(\gamma(t)) - F_\lambda([x_\lambda]) \leq C_1 e^{-\delta(t-t_1)},
$$   
where 
$$ C_1 = | F_\lambda(\gamma(t_1)) - F_\lambda([x_\lambda])|, \ C_2 = | F_\lambda(\gamma(t_2)) - F_\lambda([x_\lambda])|.
$$
\end{proposition}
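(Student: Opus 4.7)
The plan is to run a Łojasiewicz-type argument at the non-degenerate critical point $x_\lambda$ of $F_\lambda$, with all constants chosen uniformly in $\lambda \gg 0$. Set
\[
g(t) \;=\; F_\lambda(\gamma(t)) - F_\lambda([x_\lambda]).
\]
The reverse flow convention $d\gamma/dt = -\Xqmlgc(\gamma)$ together with Corollary~\ref{cor:Flambda-gtilde-bounds} gives
\[
g'(t) \;=\; -dF_\lambda\bigl(\Xqmlgc(\gamma(t))\bigr) \;\leq\; -\tfrac{1}{4C_0}\,\|\Xqmlgc(\gamma(t))\|_{L^2(Y)}^2,
\]
so in particular $g$ is monotonically nonincreasing along the flow.

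The key analytic step is the Łojasiewicz-type inequality
\[
\|\Xqmlgc(\gamma)\|_{L^2(Y)}^2 \;\geq\; \delta'\,\bigl|F_\lambda(\gamma) - F_\lambda([x_\lambda])\bigr|
\]
for $\gamma$ in a small $S^1$-invariant neighborhood of $x_\lambda$, with $\delta' > 0$ independent of $\lambda$. To establish it, at each $t$ I would pass to the constant $S^1$-gauge representative for which $v := \gamma - x_\lambda$ is real $L^2$-orthogonal to $(0, i\phi_\lambda)$ (a vacuous condition when $x_\lambda$ is reducible). Lemma~\ref{lem:approximate-eigenvalue-bounds} together with a Taylor expansion of $\Xqmlgc$ at $x_\lambda$ (the nonlinear remainder controlled uniformly in $\lambda$ via Lemma~\ref{lem:Dcq} and Fact~\ref{fact:pml}) then yields $\|\Xqmlgc(\gamma)\|_{L^2(Y)} \geq (\mu/2)\,\|v\|_{L^2_1(Y)}$ for $\|v\|_{L^2_k}$ small. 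For the matching upper bound, note that $x_\lambda$ is a critical point of $F_\lambda$ because $F_\lambda = \Lq \circ T_\lambda$, $T_\lambda(x_\lambda) = x_\infty$, and $T_\lambda$ reduces to the translation $v \mapsto v + (x_\infty - x_\lambda)$ on a neighborhood of $x_\lambda$ (Section~\ref{sec:Flambda}); Taylor's theorem then gives $|F_\lambda(\gamma) - F_\lambda([x_\lambda])| \leq C\,\|v\|_{L^2_1(Y)}^2$ with $C$ uniform in $\lambda$. Squaring the first estimate and dividing by the second produces $\delta' = \mu^2/(4C)$.

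Inserting this back into the monotonicity estimate yields the differential inequality $g'(t) \leq -\delta\,|g(t)|$ with $\delta = \delta'/(4C_0)$. Since $g$ is nonincreasing, the sets $\{g \geq 0\}$ and $\{g \leq 0\}$ are an initial and a terminal subinterval of $[t_1, t_2]$. On the former, the inequality reads $g' \leq -\delta g$, and Gr\"onwall from $t_1$ gives $g(t) \leq g(t_1)\,e^{-\delta(t - t_1)} = C_1\,e^{-\delta(t - t_1)}$; the lower bound is trivial there since $g \geq 0 \geq -C_2\,e^{\delta(t - t_2)}$. On the latter, the inequality reads $g' \leq \delta g$; dividing by $g < 0$ gives $(\log|g|)' \geq \delta$, and integrating backwards from $t_2$ yields $|g(t)| \leq |g(t_2)|\,e^{\delta(t - t_2)}$, i.e.\ $g(t) \geq -C_2\,e^{\delta(t - t_2)}$; the upper bound is trivial since $g \leq 0 \leq C_1\,e^{-\delta(t - t_1)}$.

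The main obstacle to turning this plan into a proof is ensuring the uniformity of all constants in $\lambda$: the size of $U$, the Łojasiewicz constant $\delta'$, and the implicit constants in both Taylor expansions. This uniformity is inherited from Chapters~\ref{sec:criticalpoints} and \ref{sec:quasigradient}: $x_\lambda \to x_\infty$ in every $L^2_j$ via the implicit function theorem (Proposition~\ref{prop:nearby}), $T_\lambda$ and its derivatives tend to the identity on a fixed neighborhood of $x_\infty$ (Section~\ref{sec:Flambda}), and $c_\q$ together with $\D c_\q$ are well-behaved on $L^2_k$-bounded sets uniformly in $\lambda$ by Lemma~\ref{lem:Dcq}, Lemma~\ref{lem:fam}, and Fact~\ref{fact:pml}. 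With these in hand the argument reduces to the one-variable ODE comparison above.
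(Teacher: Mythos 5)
Your proposal is correct, and it takes a recognizably different route from the one the paper points to. The paper deliberately omits this proof, saying the results of this section follow KMbook Propositions 13.5.1–13.5.2 ``by essentially the same arguments'' starting from Lemma~\ref{lem:L21-bounds-Flambda}. That argument is a discrete/integral Łojasiewicz iteration: the four-dimensional estimate $\|u\cdot\gamma - x_\lambda\|^2_{L^2_1(I\times Y)} \lesssim F_\lambda(\gamma(t_1)) - F_\lambda(\gamma(t_2))$, together with the quadratic upper bound $|F_\lambda(x) - F_\lambda(x_\lambda)| \lesssim \|x - x_\lambda\|^2_{L^2_1(Y)}$ near the non-degenerate critical point, yields a fixed contraction factor for $g(t) = F_\lambda(\gamma(t)) - F_\lambda(x_\lambda)$ over unit time steps, and monotonicity of $g$ fills in the continuous exponential bound. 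You instead establish a pointwise Łojasiewicz inequality $\|\Xqmlgc(\gamma)\|^2_{L^2} \geq \delta'\,|g|$ directly (via Lemma~\ref{lem:approximate-eigenvalue-bounds} for the linear lower bound and Taylor expansion of $F_\lambda = \Lq\circ T_\lambda$ at its critical point $x_\lambda$ for the quadratic upper bound), feed it into Corollary~\ref{cor:Flambda-gtilde-bounds}, and close with a Grönwall comparison split into the initial interval where $g\geq 0$ and the terminal one where $g\leq 0$.

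The two routes rest on the same underlying non-degeneracy estimates — indeed the inequality $\|x - x_\lambda\|^2_{L^2_1} \lesssim \|\Xqmlgc(x)\|^2 + |\text{gauge term}|^2$ used inside the proof of Lemma~\ref{lem:L21-bounds-Flambda} is exactly the lower half of your Łojasiewicz inequality — so the difference is packaging. The paper's route has the advantage that Lemma~\ref{lem:L21-bounds-Flambda} is already established and is reused elsewhere (e.g.\ in Proposition~\ref{prop:L2k}), so no new estimates are needed; yours is arguably more self-contained at the level of this single proposition and avoids passing through the four-dimensional Sobolev norm. You are right that the main burden is uniformity of all constants in $\lambda$, and your list of ingredients (Proposition~\ref{prop:nearby} for $x_\lambda \to x_\infty$, Lemma~\ref{lem:approximate-eigenvalue-bounds} for $\mu$ independent of $\lambda$, Lemma~\ref{lem:Dcq} and Fact~\ref{fact:pml} for the Taylor remainder) is the right one. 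One minor point to make explicit: the quadratic Taylor remainders for both $\Xqmlgc$ and $F_\lambda$ at $x_\lambda$ are of the form $O(\|v\|_{L^2_k}\,\|v\|_{L^2_1})$ rather than $O(\|v\|_{L^2_1}^2)$, so you need $U$ to be a sufficiently small $L^2_k$-neighborhood; this is consistent with $U\subset B(2R)/S^1$ as stated, but is worth flagging since the $L^2_1$-to-$L^2_1$ bound alone would not close the estimate.
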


For a trajectory $\gamma$ of $\Xqmlgcsigma$, let us introduce the quantities
$$
K_\lambda(\gamma) = \int_{\mathbb{R}} \left| \frac{d\Lambda_{\qml}(\gamma)}{dt} \right| dt, \ \ \ \ K_{\lambda,+}(\gamma) = \int_{\mathbb{R}} \left (\frac{d\Lambda_{\qml}(\gamma)}{dt} \right)^+ dt,
$$
which a priori may be infinite.  Here, $f^+$ denotes $\max\{0,f\}$.  Note that $K_\lambda$ is finite if and only if $K_{\lambda,+}$ is finite.  Further, if the result is finite, and $\gamma$ is a trajectory from $x_\lambda$ to $y_\lambda$, then 
\begin{equation}\label{eq:Lambda-K-relation}
\Lambda_{\qml}(x_\lambda) - \Lambda_{\qml}(y_\lambda) = K_\lambda(\gamma) - 2K_{\lambda,+}(\gamma).
\end{equation}
We will also use $K^I_\lambda$ and $K^I_{\lambda,+}$ to restrict the domain of integration to any subinterval $I \subset \mathbb{R}$.

The exponential decay bound from Proposition~\ref{prop:exponential-decay}, combined with Lemmas~\ref{lemma:1091} and \ref{lem:L2k-bounds-Flambda} give the following analogue of \cite[Corollary 13.5.3]{KMbook}.

\begin{corollary}\label{cor:Klambda-J-bounds}
Fix $x_\infty$ a stationary point of $\Xqgc$.  Given a constant $\eta$, there is a gauge-invariant neighborhood $U \subset W_k([-1,1] \times Y)$ of the constant trajectory $x_\infty$ with the following property for $\lambda \gg 0$.  Let $J \subset \mathbb{R}$ be any interval and $J' = J + [-1,1]$.  If we have a trajectory $\gamma^\tau: J' \to (\vml \cap B(2R))^\sigma$ of $\Xqmlgcsigma$ such that $\tau_t \gamma$ are contained in $U$ for all $t \in J$, then $
K^J_{\lambda,+}(\gamma^\tau) \leq \eta$.
\end{corollary}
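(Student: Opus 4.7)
The plan is to bound the integrand of $K^J_{\lambda,+}$ pointwise in terms of the square root of a two-sided drop of $F_\lambda$, and then use exponential decay of $F_\lambda$ near $[x_\lambda]$ to integrate. First I would shrink the neighborhood $U$ so that it simultaneously satisfies the hypotheses of Proposition~\ref{prop:13.4.8-analogue} (which itself is obtained from Lemmas~\ref{lemma:1091} and \ref{lem:L2k-bounds-Flambda}) and of Proposition~\ref{prop:exponential-decay}. Translating Proposition~\ref{prop:13.4.8-analogue} in time and noting that its right-hand side is non-negative, for every $t \in J$ I would obtain
$$
\Bigl(\tfrac{d}{dt} \Lambda_{\qml}(\gamma^{\tau}(t))\Bigr)^{\!+} \leq C\bigl(F_\lambda(\gamma(t-1)) - F_\lambda(\gamma(t+1))\bigr)^{1/2},
$$
with $C$ independent of $\lambda$.

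Next I would write the drop in $F_\lambda$ as
$$
F_\lambda(\gamma(t-1)) - F_\lambda(\gamma(t+1)) = \bigl(F_\lambda(\gamma(t-1)) - F_\lambda([x_\lambda])\bigr) + \bigl(F_\lambda([x_\lambda]) - F_\lambda(\gamma(t+1))\bigr)
$$
and apply Proposition~\ref{prop:exponential-decay} to the restriction of $\gamma$ to $J'$. Assuming for concreteness $J = [T_1, T_2]$ (the unbounded cases are handled identically in the limit), this estimates each summand by $C_i\, e^{-\delta d_i(t)}$, where $d_i(t)$ is the distance from $t \mp 1$ to the corresponding endpoint of $J'$, the rate $\delta > 0$ is independent of $\lambda$, and $C_i = |F_\lambda(\gamma(T_i \pm 1)) - F_\lambda([x_\lambda])|$.

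Using $\sqrt{a+b} \leq \sqrt{a} + \sqrt{b}$, substituting into the pointwise bound, and integrating over $J$ (where each resulting exponential integral is bounded by $2/\delta$, uniformly in $|J|$), I would obtain
$$
K^J_{\lambda,+}(\gamma^\tau) \leq \frac{2C}{\delta}\bigl(\sqrt{C_1} + \sqrt{C_2}\bigr).
$$
Because the trajectory lies in $U$, continuity of $F_\lambda$ makes $C_1$ and $C_2$ as small as desired by shrinking $U$; choosing $U$ so that $\sqrt{C_1} + \sqrt{C_2} \leq \delta\eta/(2C)$ completes the proof.

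The main obstacle is that $J$ may be unbounded, which rules out any Cauchy--Schwarz-type argument that converts a pointwise bound on a compact interval into a bound on $K^J_{\lambda,+}$. The exponential decay near a non-degenerate stationary point is precisely what renders the pointwise bound on $(d\Lambda_{\qml}/dt)^+$ integrable independently of $|J|$; ensuring that the exponential rate $\delta$ and the constants in the pointwise bound can both be taken uniform in $\lambda \gg 0$ is the other ingredient that makes the argument go through, and this uniformity is exactly what has been built into Propositions~\ref{prop:exponential-decay} and \ref{prop:13.4.8-analogue}.
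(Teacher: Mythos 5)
Your proof is correct and fills in the details along exactly the lines the paper indicates: Proposition~\ref{prop:13.4.8-analogue} is the paper's packaging of Lemmas~\ref{lemma:1091} and~\ref{lem:L2k-bounds-Flambda}, so coupling it with the exponential decay of Proposition~\ref{prop:exponential-decay} and integrating is precisely the route the paper's one-sentence sketch intends (mirroring \cite[Cor.~13.5.3]{KMbook}). One small point of writing to tighten: the two-term decomposition $F_\lambda(\gamma(t-1))-F_\lambda(\gamma(t+1))$ need not have both summands non-negative, so $\sqrt{a+b}\leq\sqrt a+\sqrt b$ is not applied to that decomposition directly; rather, one first replaces each summand by its non-negative exponential \emph{upper bound} from Proposition~\ref{prop:exponential-decay} and then takes square roots — which is what you clearly mean, but is worth saying explicitly. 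Likewise, when you "shrink $U$ to satisfy both propositions," note the two propositions use neighborhoods in different spaces (a 4D neighborhood in $W_k([-1,1]\times Y)$ versus a 3D neighborhood in $B(2R)/S^1$), so the precise content is: choose the 4D neighborhood small enough that its slices land in the 3D neighborhood and that the boundary values of $F_\lambda$ are uniformly (in $\lambda$) close to $F_\lambda([x_\lambda])$, using the continuity of $F_\lambda$ and the $L^2_k$-convergence $x_\lambda\to x_\infty$.
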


\section{Convergence of unparameterized trajectories in the blow-up}
\label{sec:UnparameterizedBlowup}
Our goal in this subsection is to prove analogues of the results in Section~\ref{sec:convergence-downstairs} in the blow-up.  Similarly to the stationary point classes in the singular space $W/S^1$ that appeared Section~\ref{sec:convergence-downstairs}, we will now consider stationary points $[x] \in W^\sigma/S^1$ of the vector fields $\Xqagcsigma$ or $\Xqmlagcsigma$.  We have {\em parameterized trajectories} $$[\gamma] \in W^{\tau}(I \times Y)/S^1$$ of $\Xqagcsigma$ or $\Xqmlagcsigma$, and also, if $I=\R$, {\em unparameterized trajectories} $$[\cgamma] \in W^{\tau}(\R \times Y)/(S^1 \times \mathbb{R}).$$  Unlike in the blow-down, we omit the word ``class'' since the relevant objects arise from actual vector fields on the (smooth) quotients by $S^1$. 

We will only consider trajectories of $\Xqagcsigma$ that limit to two stationary points. 
(Lemma 16.3.3 in \cite{KMbook}, translated into slicewise Coulomb gauge, gives conditions for this to happen---but we will not need it here.) For trajectories of $\Xqmlagcsigma$, Corollary~\ref{cor:endpointsBlowUp} shows the existence of limiting points, provided that the trajectories are contained in $(\vml \cap B(2R))^{\sigma}/S^1$.
  
Finally, analogous to Definitions~\ref{def:broken} and \ref{def:brokenconvergencedownstairs}, we have the notions of unparameterized broken trajectories and convergence to them. In particular, convergence to a broken trajectory is defined in terms of the convergence of some parameterized representatives in $W^{\tau}_{loc}(\R \times Y)/S^1$.
  
The main focus of this subsection is the following.
\begin{proposition}\label{prop:brokenconvergence}
Fix $[x]$ and $[y]$, stationary points of $\Xqagcsigma$ with grading in $[-N, N]$.  Fix $\lambda_n \to \infty$, and a sequence of unparameterized trajectories $[\breve{\gamma}_n]$ of $\Xqmlnagcsigma$, going from $[x_{\lambda_n}]$ to $[y_{\lambda_n}]$, and such that the representatives $\gamma_n$ of $[{\cgamma}_n]$ are contained in $(\vmln \cap B(2R))^\sigma$.  Then, there exists a subsequence of $[{\cgamma}_n]$ that converges to an unparameterized broken trajectory $[{\cgammas}_\infty]$ of $\Xqagcsigma$.
\end{proposition}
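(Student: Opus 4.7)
The strategy is to reduce to the blow-down case and then lift, using the $\lambda$-spinorial energy as the additional control needed to pin down which blow-up stationary point each piece limits to. Let $\gamma_n^\tau$ denote a representative of $[\cgamma_n]$, and $\gamma_n$ its blow-down. First, I would apply Proposition~\ref{prop:brokenconvergencedownstairs} to the blow-downs $[\gamma_n]$ to obtain, after passing to a subsequence, an unparameterized broken trajectory class $[\cgammas_\infty^{\flat}] = ([\cgamma_{\infty,1}^{\flat}], \dots, [\cgamma_{\infty,m}^{\flat}])$ of $\Xqgc$ in the blow-down, together with sequences of translation parameters $s_{n,i}$ and (as in the proof of that proposition) a decomposition of $\R$ into fixed-length compact ``transition'' intervals $I_i^n$ and long ``near-stationary'' intervals $J_i^n$ whose lengths tend to infinity. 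On each $J_i^n$, the blow-downs $\tau_{s_{n,i}}\gamma_n$ lie in a prescribed small gauge-invariant neighborhood of the blow-down of a stationary point of $\Xqgcsigma$.

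The next step is to obtain a uniform bound $|\Lambda_{\q^{\lambda_n}}(\gamma_n^\tau(t))| \leq \omega_0$ for some $\omega_0$ independent of $n$ and $t$, so that Corollary~\ref{cor:convergence} applies. Since the endpoints $[x_{\lambda_n}]$ and $[y_{\lambda_n}]$ converge to $[x]$ and $[y]$, their spinorial energies converge (using Lemma~\ref{lem:fam}(c)) to bounded limits, so by identity \eqref{eq:Lambda-K-relation} it suffices to bound $K_{\lambda_n,+}(\gamma_n^\tau)$. I would split this integral into contributions from the $I_i^n$ and the $J_i^n$: the contributions from the $I_i^n$ are controlled by Lemma~\ref{lemma:1091} together with the bound $\|\Xqmlngc(\gamma_n(t))\|_{L^2_k(Y)}^2 \leq 4C_0\,dF_{\lambda_n}(\Xqmlngc)$ from Corollary~\ref{cor:Flambda-gtilde-bounds}, while the contributions from the $J_i^n$ are controlled by Corollary~\ref{cor:Klambda-J-bounds} applied on the small neighborhoods $U$ around each stationary point of the broken trajectory. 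Summing over the finitely many intervals gives a uniform bound on $K_{\lambda_n,+}$, hence on $|\Lambda_{\q^{\lambda_n}}|$ along the full trajectory.

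With this uniform $\Lambda$-bound in hand, I would apply Corollary~\ref{cor:convergence} on each interval $I_i^n$ (suitably translated) to extract, via a diagonal argument, a further subsequence such that $\tau_{s_{n,i}}\gamma_n^\tau$ converges in $C^\infty_{loc}$ to a trajectory $\gamma_{\infty,i}^\tau$ of $\Xqagcsigma$. By construction, the blow-downs of the $\gamma_{\infty,i}^\tau$ are the $\gamma_{\infty,i}^{\flat}$ from the blow-down broken trajectory, so each $\gamma_{\infty,i}^\tau$ has finite energy and, by the blow-up version of \cite[Lemma~16.2.4]{KMbook} (or directly by the exponential decay estimates), admits limits at $\pm\infty$ at stationary points of $\Xqagcsigma$. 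It remains to verify that consecutive limits match: the endpoint of $\gamma_{\infty,i}^\tau$ at $+\infty$ equals the endpoint of $\gamma_{\infty,i+1}^\tau$ at $-\infty$.

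This matching step is the main obstacle, because over a single reducible stationary point of $\Xqgc$ the blow-up contains infinitely many stationary points distinguished by their $\lambda$-spinorial energies (eigenvalues), and one must track the correct one. To do so, I would use the exponential decay results: Proposition~\ref{prop:exponential-decay-blowup} gives exponential decay of $F_{\lambda_n}$ along the trajectory $\gamma_n^\tau$ as it enters the neighborhood of a breaking point in the blow-up, uniformly in $n$; combined with Proposition~\ref{prop:13.4.1-analogue}(ii) this forces $\gamma_n^\tau$ to stay, for the bulk of each interval $J_i^n$, in an $L^2_{k+1}$-small neighborhood of a single stationary point $[x_\lambda^{(i)}]$ of $\Xqmlnagcsigma$ corresponding to some eigenvector. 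Since these stationary points lie in $\N/S^1$ (by the grading assumption and Proposition~\ref{prop:stationarycorrespondence}) and since $\Xi_{\lambda_n}$ from Proposition~\ref{prop:Xixy-paths} identifies them uniquely with stationary points of $\Xqagcsigma$, the $[x_\lambda^{(i)}]$ match across consecutive pieces and their limits yield the breaking stationary points of the lifted broken trajectory. Assembling the data $([\cgamma_{\infty,1}^\tau], \dots, [\cgamma_{\infty,m}^\tau])$ gives the required broken trajectory of $\Xqagcsigma$, and by construction $[\cgamma_n]$ converges to it in the sense of (the blow-up analogue of) Definition~\ref{def:brokenconvergencedownstairs}.
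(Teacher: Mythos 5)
Your argument has the right ingredients up to the $\Lambda$-bound, but the final ``matching'' step as written contains a genuine gap, and the lemmas you invoke for it are not the ones that close it.

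Your step of bounding $K_{\lambda_n,+}(\gamma_n^\tau)$ by splitting into contributions from the $I_i^n$ and $J_i^n$ and invoking Lemma~\ref{lemma:1091} and Corollary~\ref{cor:Klambda-J-bounds} is essentially the content of Lemma~\ref{lem:Kbounds}, which via \eqref{eq:Lambda-K-relation} gives Lemma~\ref{lem:spinorialenergybound}. The paper proves that lemma separately and then cites it here, so up to this point you agree with the paper's route. The problem begins afterwards. Having only a \emph{global} bound $K_{\lambda_n}(\gamma_n^\tau) \leq C$ and the blow-down decomposition of $\R$ into $I_i^n$ and $J_i^n$ does not prevent the blow-up trajectory from passing, on a single long interval $J_i^n$, through a neighborhood of one reducible blow-up stationary point $(a,0,\phi_1)$, then traversing to another $(a,0,\phi_2)$ with a different eigenvalue, and so on, all while staying close to the constant reducible $(a,0)$ in the blow-down. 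Such hops spend $K$-energy of order one each time, which the global bound $K \leq C$ allows finitely often, and none of this is visible in the blow-down decomposition. Consequently, the translates $\tau_{s_{n,i}}\gamma_n^\tau$ built from the blow-down $s_{n,i}$ may converge to a limit whose endpoints in the blow-up do \emph{not} match across consecutive indices $i$, because there are intermediate breaking points that your decomposition misses entirely. Assembling only the $m$ pieces you obtain would then fail to produce a broken trajectory of $\Xqagcsigma$.

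The tool that closes this is Lemma~\ref{lem:nearbyblowup}: it says that if $K^{I'}(\gamma)$ is small on an interval $I'$, then $\gamma|_{I \times Y}$ lies in a prescribed neighborhood of a \emph{blow-up} stationary point. With both $\E$ and $K$ uniformly bounded, the argument of Proposition~\ref{prop:brokenconvergencedownstairs} should be rerun directly in the blow-up, choosing the intervals $J_i^n$ so that \emph{both} $\E^{J_i^n}$ and $K^{J_i^n}$ are small (there are still only finitely many unit intervals where either quantity exceeds $\epsilon$, since both are globally bounded). Lemma~\ref{lem:nearbyblowup} then pins the trajectory near a single blow-up stationary point on each $J_i^n$, Corollary~\ref{cor:convergence} gives the convergence on the transition intervals, and matching is automatic because the breaking points are determined by the decomposition itself. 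This is what the paper does. Your invocation of Proposition~\ref{prop:exponential-decay-blowup} for the matching step does not work: its hypotheses assume the trajectory already converges to a specified $[x_\lambda]$ as $t \to \infty$, so it cannot be used to \emph{determine} which blow-up stationary point a long near-constant stretch is close to. Proposition~\ref{prop:13.4.1-analogue}(ii) similarly presumes the trajectory is already in a neighborhood of the constant trajectory $x_\lambda$, and Proposition~\ref{prop:Xixy-paths} identifies stationary points but says nothing about which one a given trajectory piece limits to. Replace the matching paragraph with a blow-up decomposition using the combined $\E$- and $K$-criteria and Lemma~\ref{lem:nearbyblowup}, and the proof goes through.
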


\begin{lemma}\label{lem:Kbounds}
Fix $x$ and $y$ stationary points of $\Xqgcsigma$.  There exists $C > 0$ such that for all $\lambda \gg 0$ and trajectories $\gamma^{\tau}$ from $x_\lambda$ to $y_\lambda$, we have 
$K_\lambda(\gamma^{\tau}) \leq C$.  
\end{lemma}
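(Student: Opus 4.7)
The plan is to exploit the decomposition \eqref{eq:Lambda-K-relation}, namely
$$K_\lambda(\gamma^\tau) \;=\; 2K_{\lambda,+}(\gamma^\tau) + \Lambda_{\qml}(x_\lambda) - \Lambda_{\qml}(y_\lambda),$$
and reduce the claim to a uniform bound on $K_{\lambda,+}(\gamma^\tau)$. The two endpoint terms are harmless, since the implicit function theorem arguments of Section~\ref{sec:StabilityPoints} give $x_\lambda \to x$ and $y_\lambda \to y$ in $W^\sigma_k$ as $\lambda \to \infty$, and $\Lambda_{\qml}$ depends continuously on both the configuration and $\lambda$ by Lemma~\ref{lem:fam}\eqref{fam:c}; hence $\Lambda_{\qml}(x_\lambda) - \Lambda_{\qml}(y_\lambda)$ is uniformly bounded for $\lambda \gg 0$.

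To bound $K_{\lambda,+}$ I would first fix, for each stationary point $z$ of $\Xqgc$, a small gauge-invariant neighborhood $U_z \subset W_k([-1,1] \times Y)$ of the constant trajectory through $z$, chosen so that Corollary~\ref{cor:Klambda-J-bounds} applies with a fixed $\eta$ (say $\eta = 1$). Given a trajectory $\gamma^\tau$ from $x_\lambda$ to $y_\lambda$, I would partition $\mathbb{R}$ into maximal ``near-intervals'' $J_i^{\text{near}}$ on which the translates $\tau_t \gamma$ lie in some $U_z$, and complementary ``far-intervals'' $J_j^{\text{far}}$. Corollary~\ref{cor:Klambda-J-bounds} immediately yields $K_{\lambda,+}^{J_i^{\text{near}}}(\gamma^\tau) \leq 1$ for each near-interval, so the near-contribution is bounded by the total number of near-intervals.

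On a far-interval, two ingredients control the contribution. First, since the stationary set of $\Xqgc$ is finite and $\Xqmlgc \to \Xqgc$ uniformly on the compact set $\overline{B(2R)} \cap \bigl(W_k \setminus \bigcup_z U_z\bigr)$ as $\lambda \to \infty$ (via Lemma~\ref{lem:fam} and the very compactness of $c_\q$ from Proposition~\ref{prop:verycompact}), for $\lambda \gg 0$ one has a uniform lower bound $\|\Xqmlgc(\gamma(t))\|_{L^2(Y)}^2 \geq \delta > 0$ on far-intervals. Combined with the energy bound of Corollary~\ref{cor:trajectory-Flambda} and the uniformly bounded total drop $F_\lambda(x_\lambda)-F_\lambda(y_\lambda)$, the total length of all far-intervals is uniformly bounded. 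Second, Lemma~\ref{lemma:1091} gives
$$\left|\tfrac{d}{dt}\Lambda_{\qml}(\gamma^\tau(t))\right| \;\leq\; C\, \|\Xqmlgc(\gamma(t))\|_{L^2_k(Y)};$$
together with a uniform pointwise $L^2_k(Y)$ bound on $\Xqmlgc(\gamma(t))$ on far-intervals (obtained by elliptic bootstrapping on the four-dimensional trajectory equation, using $\gamma \subset B(2R) \subset W_k$ and the controlled-perturbation bounds of Definition~\ref{def:controlled}), integrating gives a uniform bound on the far-contribution.

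Finally, to control the number of near-intervals I would argue that $F_\lambda$ decreases strictly along $\gamma^\tau$ away from stationary points (Proposition~\ref{prop:LqQuasi}), that $F_\lambda$ is nearly constant on each $U_z$ once the neighborhoods are chosen small enough, and that the total drop $F_\lambda(x_\lambda)-F_\lambda(y_\lambda)$ is uniformly bounded; these together force the sequence of visited $U_z$'s along $\gamma^\tau$ to correspond to a strictly decreasing sequence of approximate $F_\lambda$-levels, bounding the number of such visits by the cardinality of the stationary set. The main obstacle I expect is producing the uniform (in both $\lambda$ and $t$) pointwise $L^2_k(Y)$ control on $\Xqmlgc(\gamma(t))$ on far-intervals: the ambient inclusion $\gamma(t) \in B(2R) \subset W_k$ only gives $\Xqmlgc(\gamma(t)) \in W_{k-1}$ a priori, so upgrading to $L^2_k$ requires exploiting the four-dimensional elliptic regularity of approximate Seiberg--Witten trajectories uniformly in $\lambda$, essentially a bootstrapping refinement of the convergence argument behind Proposition~\ref{prop:convergencenoblowup}.
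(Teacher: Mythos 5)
Your proposal is essentially the paper's argument: reduce via \eqref{eq:Lambda-K-relation} to bounding $K_{\lambda,+}$, split $\mathbb{R}$ into near-constant intervals (handled by Corollary~\ref{cor:Klambda-J-bounds}) and complementary intervals of controlled total length (handled by Lemma~\ref{lemma:1091}). The only departures are cosmetic. Where you reconstruct the interval decomposition from scratch and bound the number of near-constant intervals by an $F_\lambda$-level-counting argument, the paper simply invokes the decomposition already established in the proof of Proposition~\ref{prop:brokenconvergencedownstairs}, where the finite (and $\lambda$-independent) count of ``far'' blocks of fixed length falls out of the energy bound directly; this avoids the need to argue that distinct near-constant visits occupy distinct $F_\lambda$-levels, a step your sketch gestures at but would need slight care to carry out when two stationary points share an $F_\lambda$-value (in which case the visiting order is still forced but one must argue it). Your closing worry about the uniform $L^2_k(Y)$ pointwise control on $\Xqmlgc(\gamma(t))$ over the far intervals is a genuine observation -- the paper's outline applies Lemma~\ref{lemma:1091} without spelling this out -- and the resolution is exactly the interior elliptic bootstrap you name, which is what underlies the $C^\infty$ convergence of Proposition~\ref{prop:convergencenoblowup}: a failure of the uniform bound on a fixed compact subinterval would produce a subsequence violating that convergence.
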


\begin{proof}
This is analogous to \cite[Lemma 16.3.1]{KMbook}.  We outline the proof.  We will show that any sequence $\gamma^{\tau}_n$ of trajectories from $x_{\lambda_n}$ to $y_{\lambda_n}$ has $K_{\lambda_n}(\gamma^{\tau}_n)$ bounded, as long as $\lambda_n \gg 0$.  Note that since $x_{\lambda_n}$ (respectively $y_{\lambda_n}$) are $L^2_k$ close to $x$ (respectively $y$), it suffices to obtain uniform bounds on $K_{\lambda_n,+}(\gamma^\tau_n)$ by \eqref{eq:Lambda-K-relation}, since we have bounds on the $\lambda_n$-spinorial energies of $x_{\lambda_n}$ and $y_{\lambda_n}$. Consider the projections $\gamma_n$ of the trajectories $\gamma^{\tau}_n$ to the blow-down. Following the proof of Proposition~\ref{prop:brokenconvergencedownstairs}, we find a decomposition of $\mathbb{R}$ into a finite number, independent of $n$, of intervals of two types: $J^i_n$, on which $\gamma_n$ is close to a constant trajectory as in Corollary~\ref{cor:Klambda-J-bounds}, and $I^i_n$, which have a fixed length.    Corollary~\ref{cor:Klambda-J-bounds} gives uniform bounds on $K^{J^i_n}_{\lambda_n,+}(\gamma^{\tau}_n)$.   For the intervals $I^i_n$ of fixed length, we can apply Lemma~\ref{lemma:1091} to give uniform upper bounds on $\frac{d}{dt} \Lambda_{\qmln}(\gamma^{\tau}_n(t))$.  Since the lengths of the $I^i_n$ are fixed, the values $K^{I^i_n}_{\lambda_n,+}$ are bounded.  Thus, we obtain the desired bounds on $K_{\lambda_n}(\gamma^{\tau}_n)$.  
\end{proof}

Lemma~\ref{lem:Kbounds} in fact gives uniform bounds on $\Lambda_{\qml}$ for approximate trajectories, which we now establish.  

\begin{lemma}\label{lem:spinorialenergybound}
There exists a constant $C>0$ independent of $\lambda \gg 0$ with the following property. If $\gamma$ is a trajectory of $\Xqmlgcsigma$ contained in $(\vml \cap B(2R))^\sigma$, and going between two stationary points in the grading range $[-N,N]$, then $|\Lambda_{\qml}(\gamma(t))| \leq C$ for all $t$.   
\end{lemma}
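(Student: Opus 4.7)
The plan is to bound $\Lambda_{\qml}(\gamma(t))$ by controlling it at one endpoint and then integrating its derivative along $\gamma$. Since $\gamma$ goes between two stationary points of $\Xqmlagcsigma$ in the grading range $[-N,N]$, we can apply Corollary~\ref{cor:endpointsBlowUp} to get that $\lim_{t \to -\infty} \gamma(t) = x_\lambda$ for some stationary point in this grading range. By the fundamental theorem of calculus applied to $\Lambda_{\qml}\circ\gamma$ (which is smooth by Lemma~\ref{lem:fam}(c) and Lemma~\ref{lem:continuityxqsigma}),
\begin{equation*}
|\Lambda_{\qml}(\gamma(t))| \;\leq\; |\Lambda_{\qml}(x_\lambda)| + \int_{-\infty}^{t} \Big|\tfrac{d}{ds}\Lambda_{\qml}(\gamma(s))\Big|\, ds \;\leq\; |\Lambda_{\qml}(x_\lambda)| + K_\lambda(\gamma).
\end{equation*}
So it suffices to bound each of the two terms on the right uniformly in $\lambda \gg 0$ and in the choice of $\gamma$.

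For the endpoint term, I would argue as follows. By Proposition~\ref{prop:stationarycorrespondence}, the stationary points of $\Xqagcsigma$ with grading in $[-N,N]$ form a finite set $\Crit_{\N}$, and for each $[x_\infty]\in\Crit_{\N}$ there is a unique corresponding $[x_\lambda]$ with $x_\lambda \to x_\infty$ in $L^2_k$ as $\lambda\to\infty$ (via the implicit function theorem argument of Section~\ref{sec:StabilityPoints}). By Lemma~\ref{lem:fam}(c), $\Lambda_{\qml}(x_\lambda) \to \Lambda_{\q}(x_\infty)$ as $\lambda\to\infty$, so for $\lambda$ large enough, $|\Lambda_{\qml}(x_\lambda)| \le \max_{[x_\infty]\in\Crit_{\N}} |\Lambda_{\q}(x_\infty)| + 1$, which is a finite constant depending only on $\q$ and $N$.

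For the total-variation term $K_\lambda(\gamma)$, I invoke Lemma~\ref{lem:Kbounds}: for each fixed pair of stationary points $x_\infty, y_\infty$ of $\Xqgcsigma$, there exists $C_{x_\infty, y_\infty}$ such that $K_\lambda(\gamma^\tau) \le C_{x_\infty,y_\infty}$ for all $\lambda\gg 0$ and all trajectories $\gamma^\tau$ from $x_\lambda$ to $y_\lambda$. Because $\Crit_\N$ is finite, there are only finitely many pairs $(x_\infty,y_\infty)$ of endpoints to consider, and taking the maximum of the $C_{x_\infty,y_\infty}$ over these pairs gives a uniform bound on $K_\lambda(\gamma)$ for all trajectories under consideration. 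Adding the two bounds yields the desired constant $C$.

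No step here looks genuinely hard: the heavy lifting is already done in Lemma~\ref{lem:Kbounds} (which uses the Morse-decomposition argument from the proof of Proposition~\ref{prop:brokenconvergencedownstairs} together with Corollary~\ref{cor:Klambda-J-bounds} and Lemma~\ref{lemma:1091}). The only subtlety is the mild verification that $\lim_{t\to-\infty} \Lambda_{\qml}(\gamma(t)) = \Lambda_{\qml}(x_\lambda)$ in the limit sense compatible with the trajectory converging to $x_\lambda$ in $L^2_{k,loc}$; this follows from $\Lambda_{\qml}$ being continuous on $W^\sigma_{k-1}$ and the exponential decay at the ends (Proposition~\ref{prop:exponential-decay-blowup}) guaranteeing that $\gamma(t) \to x_\lambda$ in $L^2_k(Y)$.
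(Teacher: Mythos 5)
Your argument is correct and matches the paper's proof in all essential respects: both reduce to finitely many endpoint pairs, bound the endpoint value $|\Lambda_{\qml}(x_\lambda)|$ using convergence of the approximate stationary points, and control the variation along the trajectory by $K_\lambda(\gamma)$ via Lemma~\ref{lem:Kbounds}. The only cosmetic difference is that the paper estimates $|\Lambda_{\qml}(x_\lambda)-\Lambda_{\qml}(\gamma(t))|$ through the identity $K_\lambda^{(-\infty,t]}-2K_{\lambda,+}^{(-\infty,t]}$ (yielding a factor of $3K_\lambda$), whereas you integrate $|d\Lambda_{\qml}/ds|$ directly to get the slightly sharper bound $K_\lambda$.
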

\begin{proof}
Since there are only finitely many stationary points of $\Xqmlagcsigma$ in the grading range $[-N,N]$, it suffices to establish the result for trajectories $\gamma$ of $\Xqmlgcsigma$ between $x_\lambda$ and $y_\lambda$ for a fixed pair of stationary points.  We have from \eqref{eq:Lambda-K-relation} that
\begin{align*}
|\Lambda_{\qml}(x_\lambda) - \Lambda_{\qml}(\gamma(t))| &= |K_\lambda^{(-\infty,t]}(\gamma) - 2K_{\lambda,+}^{(-\infty,t]}(\gamma)| \\
&\leq 3 K_\lambda(\gamma) \\
&\leq 3C,
\end{align*}  
where $C$ is the constant from Lemma~\ref{lem:Kbounds}.  Since the $x_\lambda$ are all $L^2_k$ close to the $S^1$-orbit of a fixed stationary point $x$ of $\Xqgc$, we have that $|\Lambda_{\qml}(x_\lambda)|$ is bounded independent of $\lambda \gg 0$.  
\end{proof}

The following is the analogue of \cite[Lemma 16.3.2]{KMbook}, and has a similar proof. It is a consequence of the convergence of parameterized trajectories in the blow-up, Corollary~\ref{cor:convergence}.

\begin{lemma}
\label{lem:nearbyblowup}
For each stationary point $x$ of $\Xqgcsigma$, choose a gauge-invariant neighborhood $U_{x} \subset W^\tau_k(I \times Y)$ of the associated constant trajectory.  Further, let $I'$ be any other interval with non-zero length.  Then, there exists $\epsilon > 0$ such that for $\lambda \gg 0$, if $\gamma \in M(y_\lambda, y'_\lambda)$ for stationary points $y_\lambda, y'_\lambda$ of $\Xqmlgcsigma$ in the grading range $[-N,N]$ satisfies $K^{I'}(\gamma) \leq \epsilon$, then $\gamma|_{I \times Y} \in U_x$ for some $x$.       
\end{lemma}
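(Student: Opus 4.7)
I will argue by contradiction, following the template of \cite[Lemma 16.3.2]{KMbook}, but using the ingredients established earlier in this chapter. Suppose the conclusion fails. Then one can find sequences $\lambda_n \to \infty$, stationary points $y_{\lambda_n}, y'_{\lambda_n}$ of $\mathcal{X}^{\agCoul,\sigma}_{\q^{\lambda_n}}$ in the grading range $[-N,N]$, and trajectories $[\gamma_n] \in M(y_{\lambda_n}, y'_{\lambda_n})$ with $K^{I'}(\gamma_n) \to 0$, such that no representative $\gamma_n$ of $[\gamma_n]$ in temporal gauge has its restriction $\gamma_n|_{I \times Y}$ lying in $U_x$ for any stationary point $x$ of $\Xqgcsigma$.

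The first step is to pass to a convergent subsequence. By Lemma~\ref{lem:spinorialenergybound}, the $\lambda_n$-spinorial energies $\Lambda_{\q^{\lambda_n}}(\gamma_n(t))$ are uniformly bounded by some constant $\omega > 0$, independently of $n$ and $t$. Choose a compact interval $J \subset \R$ strictly containing $I \cup I'$ in its interior. Corollary~\ref{cor:convergence}, applied on $J$, yields a subsequence (still denoted $\gamma_n$) that converges in $C^\infty$ on $I \cup I'$ to a trajectory $\gamma_\infty$ of $\Xqgcsigma$. Since the convergence is in $C^\infty$, we also have $\Lambda_{\q^{\lambda_n}}(\gamma_n(t)) \to \Lambda_{\q}(\gamma_\infty(t))$ together with all derivatives.

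The second step is to identify $\gamma_\infty|_{I'}$ with a constant trajectory. Because $K^{I'}(\gamma_n) = \int_{I'} |\tfrac{d}{dt}\Lambda_{\q^{\lambda_n}}(\gamma_n)| \, dt \to 0$ and the integrand converges uniformly on $I'$, we obtain $\tfrac{d}{dt}\Lambda_{\q}(\gamma_\infty(t)) \equiv 0$ on $I'$, i.e.\ $\Lambda_{\q}(\gamma_\infty)$ is constant on $I'$. This is precisely the rigidity situation analyzed in \cite[Section 13.6]{KMbook}: along a trajectory of $\Xqgcsigma$, $\Lambda_{\q}$ is constant on an interval of positive length only at a stationary point of the vector field (this follows by computing $\tfrac{d\Lambda_{\q}}{dt}$ along the flow in the blow-up, which is a sum of nonnegative terms vanishing precisely at zeros of $\Xqgcsigma$). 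Thus $\gamma_\infty(t) \equiv x$ on $I'$ for some stationary point $x$, and by the unique continuation result in \cite[Proposition 7.1.4]{KMbook} applied to the Seiberg-Witten flow, $\gamma_\infty$ is the constant trajectory at $x$ on all of $J$.

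The final step is to derive the contradiction. Since $\gamma_n \to \gamma_\infty \equiv x$ in $C^\infty(I \times Y)$, and since $U_x \subset W^\tau_k(I \times Y)$ is open (and gauge-invariant), we have $\gamma_n|_{I \times Y} \in U_x$ for all $n$ sufficiently large, contradicting our assumption. The main obstacle in the argument is the rigidity step: one must ensure that the computation of $\tfrac{d\Lambda_{\q}}{dt}$ that gives monotonicity (as in \cite[Section 13.6]{KMbook}) transfers to the Coulomb-gauge setting. This is routine since the spinorial energy and its derivative are gauge-invariant quantities, and by Lemma~\ref{lem:coulomb-gauge-spinorial-same} they agree with their blow-down counterparts in the usual configuration space.
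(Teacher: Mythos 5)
Your overall strategy — argue by contradiction, extract a $C^\infty_{loc}$--convergent subsequence via Lemma~\ref{lem:spinorialenergybound} and Corollary~\ref{cor:convergence}, then use a rigidity statement for the limit — is the right template, and is what the paper intends when it says the result follows from Corollary~\ref{cor:convergence}.

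However, the rigidity step contains a genuine error. You assert that along a trajectory of $\Xqgcsigma$, the derivative $\tfrac{d}{dt}\Lambda_{\q}$ ``is a sum of nonnegative terms vanishing precisely at zeros of $\Xqgcsigma$,'' so that constancy of $\Lambda_\q$ on $I'$ forces $\gamma_\infty$ to be stationary. This is not what the estimate actually says. Lemma~\ref{lemma:1091} gives only the \emph{one-sided} bound $\tfrac{d}{dt}\Lambda_{\qml}(\gamma^\tau(t)) \leq C\|\Xqmlgc(\gamma(t))\|_{L^2_k(Y)}$, and a direct computation shows $\tfrac{d}{dt}\Lambda_{\q}$ equals a \emph{nonpositive} term $-2\|(D_{\q,a} - \Lambda_\q)\phi\|^2_{L^2}$ \emph{plus} an error bounded by the norm of the blow-down vector field $\Xqgc$. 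The two contributions can cancel: a nonconstant trajectory with nonzero $\Xqgc(\beta)$ can have $\tfrac{d}{dt}\Lambda_\q \equiv 0$ on an interval. So $K^{I'}(\gamma_\infty)=0$ alone does not force $\gamma_\infty$ to be constant, and your contradiction does not go through.

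What actually makes the conclusion work is that one must control the \emph{energy} $\E^{I'}(\gamma_n)$ in addition to $K^{I'}(\gamma_n)$; this is implicit in how Lemma~\ref{lem:nearbyblowup} is deployed in the proof of Proposition~\ref{prop:brokenconvergence} (``where we see bounds involving terms of the form $\E^I(\gamma)$, we also have analogous bounds on $K^I(\gamma)$''), and it matches the statement of the model result \cite[Lemma 16.3.2]{KMbook}, where smallness of both the topological energy and $K$ on the test interval is assumed. With both hypotheses, the rigidity argument splits into the two cases that your proof conflates: small energy forces the blow-down of $\gamma_\infty$ to be constant at a stationary point of $\Xqgc$; then, if that stationary point is reducible, the vanishing of $K^{I'}$ and the resulting identity $\tfrac{d}{dt}\Lambda_\q = -2\|(D_{\q,a}-\Lambda_\q)\phi\|^2 = 0$ force $\phi$ to be an eigenvector, hence $\gamma_\infty$ is constant in the blow-up as well. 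Your argument has no substitute for the first step, and the monotonicity you invoke to bypass it is false.
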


\begin{proof}[Proof of Proposition~\ref{prop:brokenconvergence}]
Lemma~\ref{lem:spinorialenergybound} guarantees that $\lambdanenergy$ is bounded for all trajectories between $[x_{\lambda_n}]$ and $[y_{\lambda_n}]$ contained in $(\vml \cap B(2R))^\sigma$. Therefore, we may repeat the argument of Proposition~\ref{prop:brokenconvergencedownstairs}, where we now apply Corollary~\ref{cor:convergence} and Lemma~\ref{lem:nearbyblowup} in place of Proposition~\ref{prop:convergencenoblowup} and we require that where we see bounds involving terms of the form $\E^I(\gamma)$, we also have analogous bounds on $K^I(\gamma)$.
\end{proof}

\begin{corollary}\label{cor:index1convergence}
Fix $\epsilon > 0$.  For $\lambda \gg 0$, the following is true.  Suppose that $[\gamma_\lambda]$ is a trajectory of $\Xqmlagcsigma$ from $[x_\lambda]$ to $[y_\lambda]$, such that either
\begin{enumerate}
\item $[\gamma_\lambda]$ is not boundary-obstructed and $\gr([x_\lambda],[y_\lambda]) = 1$ or 
\item $[\gamma_\lambda]$ is boundary-obstructed and $\gr([x_\lambda],[y_\lambda]) =0$.  
\end{enumerate} 
Furthermore, suppose that the gradings of $[x_\lambda]$ and $[y_\lambda]$ are in $[-N,N]$.  Then, $[\gamma_\lambda]$ is $\epsilon$-close in $W_{k,loc}(\R \times Y)/S^1$ to $[\gamma]$, a trajectory of $\Xqagcsigma$ with grading in $[-N,N]$.   
\end{corollary}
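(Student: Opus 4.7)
The plan is to argue by contradiction, leveraging the compactness of broken trajectories supplied by Proposition~\ref{prop:brokenconvergence}. Assume the conclusion fails: there exist $\epsilon > 0$, a sequence $\lambda_n \to \infty$, and unparameterized trajectories $[\cgamma_n]$ of $\Xqmlnagcsigma$ satisfying the grading hypotheses of Case 1 or Case 2, such that no parameterized representative of $[\cgamma_n]$ is $\epsilon$-close in $W_{k,loc}(\R \times Y)/S^1$ to any trajectory of $\Xqagcsigma$ with grading in $[-N, N]$. Since $\Crit_{\N}$ is finite and the correspondence $\Xi_\lambda$ preserves both grading and type (Corollary~\ref{cor:corresp} and Proposition~\ref{prop:stationarycorrespondence}), I would first pass to a subsequence so that the endpoints of $[\cgamma_n]$ are $[x_{\lambda_n}] = \Xi_{\lambda_n}^{-1}([x])$ and $[y_{\lambda_n}] = \Xi_{\lambda_n}^{-1}([y])$ for fixed stationary points $[x], [y] \in \Crit_{\N}$.

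Next, I would apply Proposition~\ref{prop:brokenconvergence} to pass to a further subsequence converging to an unparameterized broken trajectory $[\cgammas_\infty] = ([\cgamma_{\infty,1}], \ldots, [\cgamma_{\infty,m}])$ of $\Xqagcsigma$ from $[x]$ to $[y]$. In both cases the trajectory $[\cgamma_n]$ is nontrivial: $\gr = 1$ rules out a constant class in Case 1, and in Case 2 the endpoints $[x_{\lambda_n}]$ and $[y_{\lambda_n}]$ are of different type (boundary-stable vs.\ boundary-unstable) and hence distinct. Consequently $m \geq 1$. Any intermediate breaking point has grading sandwiched between those of $[x]$ and $[y]$, so each component $[\cgamma_{\infty, i}]$ is a trajectory of $\Xqagcsigma$ whose endpoints lie in $[-N, N]$ in grading.

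To close the argument, I would pick the first component $[\cgamma_{\infty, 1}]$. By the blow-up analogue of Definition~\ref{def:brokenconvergencedownstairs}, time-translates $[\tau_{s_{n, 1}} \gamma_n]$ converge to a parameterized representative $[\gamma_{\infty, 1}]$ in $W^\tau_{loc}(\R \times Y)/S^1$, hence in the weaker topology $W_{k, loc}(\R \times Y)/S^1$. But $[\tau_{s_{n, 1}} \gamma_n]$ is merely another parameterized representative of the same unparameterized class $[\cgamma_n]$, so for $n$ large enough, $[\cgamma_n]$ admits a representative that is $\epsilon$-close to $[\gamma_{\infty, 1}]$, a trajectory of $\Xqagcsigma$ with grading in $[-N, N]$. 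This contradicts the assumption.

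The hard part is really conceptual rather than technical, and I expect it to be the main thing that needs care in the write-up: one must notice that the corollary only claims closeness to \emph{some} trajectory, so the possibility $m \geq 2$ in the broken limit is not an obstruction. Any single component of the limit yields an honest trajectory of $\Xqagcsigma$ approximating $[\cgamma_n]$ after reparametrization, and reparametrization does not change the unparameterized class. All the genuinely analytic work---bounding spinorial energies, controlling decay near stationary points, and extracting convergent subsequences in $W^\tau_{loc}$---has already been absorbed into Proposition~\ref{prop:brokenconvergence} and the exponential decay estimates preceding it.
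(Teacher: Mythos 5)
Your argument proves the literal statement, but it misses a structural point that is actually the crux of the paper's proof: the limiting broken trajectory $[\cgammas_\infty]$ is in fact \emph{unbroken} ($m = 1$). You deliberately sidestep this by observing that any single component of the broken limit is an honest trajectory of $\Xqagcsigma$ with endpoints in the grading range, which suffices for the weak, literal wording of the conclusion. However, the paper's proof establishes more, and this stronger fact is what is needed downstream: the unnumbered Lemma immediately following this corollary asserts $L^2_k$-convergence via Proposition~\ref{prop:L2k}, and Proposition~\ref{prop:L2k} explicitly requires the limit to be an \emph{unbroken} trajectory. The Lemma's proof simply says ``the argument is the same as for Corollary~\ref{cor:index1convergence},'' so the unbrokenness has to have been established here.

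The mechanism by which the paper rules out $m \geq 2$ is an index-counting argument you should be able to reconstruct. Since $\gr([x_\infty],[y_\infty]) = 1$ (Case 1) and all stationary points are non-degenerate with regular moduli spaces, a broken limit with $m \geq 2$ can have at most one index-$1$ component, with the others forced to be index $0$. Regularity of the moduli spaces forces any nonempty index-$0$ piece to be boundary-obstructed, and there can be at most one such piece in any broken chain with total index $1$; hence $m = 2$. One then argues that since $[\gamma_{\lambda_n}]$ are assumed not boundary-obstructed, one endpoint is irreducible or one component crosses from boundary-unstable to boundary-stable, forcing $M([x_\infty],[y_\infty])$ to contain an irreducible trajectory --- contradicting \cite[Corollary 16.5.4]{KMbook}, which forbids boundary-obstructed components in the compactification of a moduli space containing irreducibles. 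So the gap in your write-up is concrete: without this index argument, the $\epsilon$-closeness you establish is to an isolated component of a possibly-broken limit, not to an honest trajectory between $[x_\infty]$ and $[y_\infty]$, and the latter is what the subsequent Lemma and Proposition~\ref{prop:EquivalenceTrajectories} actually need.
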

\begin{proof}
We argue by contradiction.  Suppose there exists a sequence of trajectories $[\gamma_{\lambda_n}]$, where $\lambda_n \to \infty$, which are always at least distance $\epsilon$ from trajectories of $\Xqagcsigma$.  Without loss of generality, none of the trajectories are boundary-obstructed.  A similar argument for the index 0 and boundary-obstructed case follows similarly.  By Proposition~\ref{prop:brokenconvergence}, there exists a subsequence such that the unparameterized trajectories converge to a broken trajectory $[\cgammas_\infty]$ from $[x_\infty]$ to $[y_\infty]$, which is necessarily index 1 by Proposition~\ref{prop:stationarycorrespondence}.  It remains to show that this trajectory is in fact unbroken.  

We suppose that $[\cgammas_\infty] = ([\cgamma_{\infty,1}],\ldots,[\cgamma_{\infty,m}])$ with $m \geq 2$.  By our non-degeneracy assumption, one $[\cgamma_{\infty,i}]$ must be be index 1, while the other trajectories must be index 0.  Note that an index 0 trajectory must be boundary-obstructed (or else the moduli space is necessarily empty), and thus there can be at most one such trajectory.  In particular, we see that $[\cgammas_\infty] = ([\cgamma_{\infty,1}], [\cgamma_{\infty,2}])$.  Since we are assuming that the trajectories $[\gamma_{\lambda_n}]$ are not boundary-obstructed, it follows that one of $[\cgamma_{\infty,1}]$ or $[\cgamma_{\infty,2}]$ goes from boundary-unstable to boundary-stable or one of $[x_\infty]$ or $[y_\infty]$ is irreducible.  In either case, we have that that one of the components of $[\cgamma_{\infty}]$ is an irreducible trajectory (see \cite[Proposition 14.5.7]{KMbook} or the discussion at the end of Section~\ref{sec:AdmPer}).  This implies that $M([x_\infty], [y_\infty])$ must contain an irreducible trajectory.  This contradicts \cite[Corollary 16.5.4]{KMbook}, which implies that if $\breve{M}([\x_\infty], [\y_\infty])$ contains a once-broken trajectory in the compactification and contains an irreducible trajectory, no component of the broken trajectory can be boundary-obstructed.    
\end{proof}

\section{Convergence in $L^2_k$}
In Chapter~\ref{sec:criticalpoints}, we used the inverse function theorem to find approximate stationary points near stationary points of $\Xqgcsigma$.  There we defined identifications $\Xi_{\lambda}$ between $\Crit^\lambda_{\N}$ and $\Crit_{\N}$, which we extended to self-diffeomorphisms of $W^\sigma_j$ in Lemma~\ref{lem:Xixylambda} and then to self-diffeomorphisms of path spaces in Proposition~\ref{prop:Xixy-paths}.  We now use these self-diffeomorphisms to improve the $L^2_{k,loc}$ convergence from Proposition~\ref{prop:brokenconvergence} to $L^2_k$ convergence.  

\begin{proposition}
\label{prop:L2k}
Let $[\gamma_n]: \mathbb{R} \to (\vmln \cap B(2R))^\sigma/S^1$ be a sequence of trajectories of $\Xqmlnagcsigma$ between stationary points $[x_{\lambda_n}]$ and $[y_{\lambda_n}]$.  Further, suppose that the unparameterized trajectory $[\cgamma_n]$ converges to $[\cgamma_{\infty}]$, an unbroken trajectory of $\Xqagcsigma$ from $[x_\infty]$ to $[y_\infty]$.  Then, after possible reparameterization, $\Xi_{\lambda_n}([\gamma_n])$ converges to $[\gamma_{\infty}]$ in $\B_k^{\gCoul,\tau}([x_\infty],[y_\infty])$, where $[\gamma_{\infty}]$ is a representative of $[\cgamma_{\infty}]$.     
\end{proposition}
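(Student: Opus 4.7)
The strategy is to split the real line into a compact middle interval and two tails. On the middle, the assumed $L^2_{k,\mathrm{loc}}$ convergence together with the $S^1$-equivariant regularity of the maps $\Xi_{\lambda_n}$ on path spaces (Proposition~\ref{prop:Xixy-paths}) will yield $L^2_k$ convergence directly. On the tails, I will prove a uniform (in $n$) exponential decay estimate on $\gamma_n^\tau - y_{\lambda_n}$ and $\gamma_n^\tau - x_{\lambda_n}$ in $L^2_k$, and then transfer this to bounds on $\Xi_{\lambda_n}(\gamma_n^\tau) - y_\infty$ and $\Xi_{\lambda_n}(\gamma_n^\tau) - x_\infty$ using the smoothness of $\Xi_{\lambda_n}$ in $\lambda$.

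First, I would choose representatives $\gamma_n^\tau \in W^\tau_k(x_{\lambda_n}, y_{\lambda_n})$ in temporal gauge; after reparameterizing, the $L^2_{k,\mathrm{loc}}$ convergence $[\cgamma_n] \to [\cgamma_\infty]$ becomes convergence of parameterized trajectories in $W^\tau_{k,\mathrm{loc}}(\R \times Y)/S^1$ to a representative $[\gamma_\infty^\tau]$. Using Remark~\ref{rmk:no-temporal} and a constant gauge transformation in $S^1$, I may lift to representatives in $\C^{\gCoul,\tau}_k$ so that $\gamma_n^\tau \to \gamma_\infty^\tau$ in $W^\tau_{k,\mathrm{loc}}(\R \times Y)$. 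On any fixed compact interval $[-T,T]$, Corollary~\ref{cor:Xixy-paths} (which says that if $\gamma_n \to \gamma_\infty$ in $W^\tau_{k,\mathrm{loc}}$ then $\Xi_{\lambda_n}(\gamma_n) \to \gamma_\infty$ as well) then gives $\Xi_{\lambda_n}(\gamma_n^\tau) \to \gamma_\infty^\tau$ in $L^2_k([-T,T] \times Y)$.

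Second, and the heart of the argument, I would establish uniform exponential decay on the right tail; the left tail is analogous. Since $[\gamma_\infty^\tau]$ is asymptotic to $[y_\infty]$, one can pick $T_0$ so large that $\gamma_\infty^\tau|_{[T_0,\infty)}$ lies in a gauge-invariant neighborhood $U$ of the constant trajectory at $y_\infty$ in which Proposition~\ref{prop:exponential-decay-blowup} and Proposition~\ref{prop:13.4.1-analogue} apply with $\lambda$-uniform constants. By the $L^2_{k,\mathrm{loc}}$ convergence on $[T_0,T_0+1]\times Y$, for $n \gg 0$ the restriction $\gamma_n^\tau|_{[T_0,T_0+1]\times Y}$ also lies in $U$. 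Since $F_{\lambda_n}$ is nonincreasing along trajectories by Proposition~\ref{prop:LqQuasi} and $U$ is a neighborhood of a stationary orbit, this condition propagates to all $t \geq T_0$, using the uniform bound on $K_{\lambda_n}(\gamma_n^\tau)$ from Lemma~\ref{lem:Kbounds} together with Corollary~\ref{cor:Klambda-J-bounds}. Proposition~\ref{prop:exponential-decay-blowup} then yields
\begin{equation*}
F_{\lambda_n}([\gamma_n^\tau(t)]) - F_{\lambda_n}([y_{\lambda_n}]) \leq C e^{-\delta t}, \qquad t \geq T_0,
\end{equation*}
with $\delta, C$ independent of $n$. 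Feeding this into Proposition~\ref{prop:13.4.1-analogue} (together with the analogous control on $\Lambda_{\qml}$ coming from the same exponential decay and the definition of $K_\lambda$) gives a uniform bound
\begin{equation*}
\|\gamma_n^\tau - y_{\lambda_n}\|_{L^2_k([T, \infty) \times Y)} \leq C' e^{-\delta' T}
\end{equation*}
valid for all $T \geq T_0$ and all $n \gg 0$.

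Third, I would transfer these tail bounds through $\Xi_{\lambda_n}$. Since $\Xi_{\lambda_n}(y_{\lambda_n}) = y_\infty$ and, by Proposition~\ref{prop:Xixy-paths}(i) and (ii), $\Xi_{\lambda_n}$ and its first derivative are uniformly bounded on a neighborhood of the constant trajectory $y_{\lambda_n}$ in $W^\tau_k([T,\infty)\times Y)$ (with bounds smooth in $\lambda$ and equal to the identity at $\lambda = \infty$), the mean value inequality converts the uniform $L^2_k$ tail bound on $\gamma_n^\tau - y_{\lambda_n}$ into a uniform $L^2_k$ tail bound on $\Xi_{\lambda_n}(\gamma_n^\tau) - y_\infty$. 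Combined with the analogous (classical) exponential decay of $\gamma_\infty^\tau$ to $y_\infty$, this gives $\|\Xi_{\lambda_n}(\gamma_n^\tau) - \gamma_\infty^\tau\|_{L^2_k([T,\infty) \times Y)} \to 0$ as $T \to \infty$ uniformly in $n$. The symmetric argument handles the left tail, and combining with the middle completes the proof.

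The main obstacle I anticipate is the uniform propagation step, namely ensuring that the exponential decay constants $\delta, C$ and the neighborhood $U$ can be chosen independently of $n$. This requires that all the machinery of Section~\ref{sec:control-near-stationary} (Lemma~\ref{lem:L2k-bounds-Flambda}, Propositions~\ref{prop:13.4.1-analogue}--\ref{prop:exponential-decay-blowup}) comes with constants independent of $\lambda$ for $\lambda \gg 0$, which in turn relies on the convergence $x_{\lambda_n} \to x_\infty$, $y_{\lambda_n} \to y_\infty$ in $L^2_k$ and the hyperbolicity of the Hessians at the limit stationary points being inherited uniformly by the approximations (cf.\ Proposition~\ref{prop:ND}).
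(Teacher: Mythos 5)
Your proposal follows essentially the same outline as the paper's proof: split the real line into a compact middle (handled by $L^2_{k,\mathrm{loc}}$ convergence together with Corollary~\ref{cor:Xixy-paths}) and two tails (handled by exponential decay via Proposition~\ref{prop:13.4.1-analogue}, Proposition~\ref{prop:exponential-decay}, and Proposition~\ref{prop:13.4.8-analogue}), then transfer the tail estimates through $\Xi_{\lambda_n}$.

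There is, however, a genuine gap in the tail argument. When you invoke Proposition~\ref{prop:13.4.1-analogue} you obtain, on each fixed window $[i-2,i+2]$, a bound on $\| u^i_n \cdot \gamma_n - y_{\lambda_n} \|_{L^2_k}$ for a constant gauge transformation $u^i_n \in S^1$ that is chosen \emph{separately for each $i$} (it is the $u$ making $\gamma_n(i-2)$ real-orthogonal to $(0,i\phi_n)$ in Lemma~\ref{lem:L21-bounds-Flambda}). You write as if these per-window bounds simply concatenate to a bound on $\| \gamma_n - y_{\lambda_n} \|_{L^2_k([T,\infty) \times Y)}$, but that requires controlling the differences $u^i_n - u^{i+1}_n$ and showing they decay so that $u^i_n$ converges to an element $u_n$ that you can identify as $1$. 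The paper does exactly this by a telescoping estimate on $\sum_{i \geq T} \| u^i_n \cdot y_{\lambda_n} - u^{i+1}_n \cdot y_{\lambda_n} \|_{L^2_k([i-1/2,i+1/2]\times Y)}$, deducing convergence of $u^i_n$ to some $u_n \in S^1$ and then verifying $u_n = 1$ from the limiting behavior of $\gamma_n$ at $+\infty$. Without this, the per-window bounds do not assemble into a global $L^2_k$ tail estimate. Your ``propagation'' remark is also a bit optimistic as stated — monotonicity of $F_{\lambda_n}$ alone does not force the trajectory to remain in the neighborhood $U$; what really justifies applying Proposition~\ref{prop:13.4.1-analogue} on every window $[i-2,i+2]$ is the hypothesis that $[\cgamma_n]$ converges to an \emph{unbroken} trajectory, which (via the decomposition in Proposition~\ref{prop:brokenconvergencedownstairs} and Lemma~\ref{lem:nearbyblowup}) guarantees the tails sit near the stationary orbits. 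Finally, your transfer step through $\Xi_{\lambda_n}$ is correct in spirit but is more cleanly packaged by Corollary~\ref{cor:path-distance}, which was set up precisely for this purpose.
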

\begin{proof}
We choose representative trajectories $\gamma_n$ of $\Xqmlngcsigma$ and $\gamma_\infty$ of $\Xqgcsigma$ of the trajectory classes such that $\gamma_n$ converges to $\gamma_\infty$.  (This is easy to arrange, since these are only $S^1$-equivalence classes.)  This means there exists a sequence $s_n \in \mathbb{R}$ such that $\tau_{s_n} \gamma_n \to \gamma_\infty$ in $L^2_{k,loc}$.  Since we may reparameterize $\Xi_{\lambda_n}([\gamma_n])$, we assume that no translations are necessary.  Therefore, we are interested in showing that there exists a sequence of gauge transformations $u_n: \R \to S^1$ such that 
$$
\| u_n \cdot \Xi_{\lambda_n}(\gamma_n) -\gamma_\infty \|_{L^2_k(\mathbb{R} \times Y)} \to 0.
$$
Fix $\epsilon > 0$.  We will show that $\| \Xi_{\lambda_n}(\gamma_n) -\gamma_\infty \|_{L^2_k(\mathbb{R} \times Y)} < \epsilon$ for $n \gg 0$.  Write $x_\infty$ and $y_\infty$ for the limit points of $\gamma_\infty$.  

First, since $\gamma_n \to \gamma_\infty$ in $L^2_{k,loc}(\mathbb{R} \times Y)$, we also have that $\Xi_{\lambda_n}(\gamma_n) \to \gamma_\infty$ in $L^2_{k,loc}(\mathbb{R} \times Y)$ by Corollary~\ref{cor:Xixy-paths}.  In particular, for any $T > 0$, we have for $n \gg 0$, 
$$
\| \Xi_{\lambda_n}(\gamma_n) - \gamma_\infty \|_{L^2_k([-T,T] \times Y)} < \epsilon/2. 
$$  
Therefore, it suffices to show that we can find $T > 0$ such that for $n \gg 0$, 
\begin{equation}\label{eq:L2k-afterxilambda-convergence}
\| \Xi_{\lambda_n}(\gamma_n) - \gamma_\infty \|_{L^2_k([T,\infty) \times Y)} < \epsilon/4, 
\end{equation}
and likewise on $(-\infty,-T] \times Y$.  We will establish the bounds for $[T,\infty) \times Y$; the case of $(-\infty, -T] \times Y$ follows in the same way.   

We first claim that there exists $T > 0$ such that for $n \gg 0$,  
\begin{equation}\label{eq:L2k-endpoints-afterxilambda}
\| \Xi_{\lambda_n}(\gamma_n) - y_\infty \|_{L^2_k([T, \infty) \times Y)} < \epsilon/8.
\end{equation}
By Corollary~\ref{cor:path-distance}, we have that it suffices to establish 
\begin{equation}\label{eq:L2k-endpoints-beforexilambda}
\| \gamma_n - y_{\lambda_n} \|_{L^2_{k}([T,\infty) \times Y)} \to 0, \ n \to \infty.
\end{equation}

This is an analogue of \cite[Theorem 13.3.5]{KMbook}, which implies that there exists a gauge transformation of $\gamma_\infty$ which lands in $\B^{\gCoul,\tau}_k([x_\infty],[y_\infty])$.  However, we will explicitly keep track of the $L^2_{k}$ bounds and the necessary gauge transformation.    First, by Proposition~\ref{prop:13.4.1-analogue}, there exists a constant gauge transformation $u^i_n \in S^1$ and a constant $C$, independent of $i$ and $\lambda$, such that if $y_\infty$ is reducible, then 
\begin{equation}\label{eq:trajectory-constant-bounds}
\| u^i_n \cdot \gamma_n - y_{\lambda_n} \|_{L^2_{k}([i-2,i+2] \times Y)} \leq C \left ( (\Lambda_{\qmln}(i - 3) - \Lambda_{\qmln}(i + 3)) + (F_{\lambda_n}(i-3) - F_{\lambda_n}(i+3))^{\frac{1}{2}}\right),   
\end{equation}
and if $y_\infty$ is irreducible, then  
\begin{equation}\label{eq:trajectory-constant-bounds-irred}
\| u^i_n \cdot \gamma_n - y_{\lambda_n} \|_{L^2_{k}([i-2,i+2] \times Y)} \leq C (F_{\lambda_n}(i-3) - F_{\lambda_n}(i+3)).   
\end{equation}
In Proposition~\ref{prop:exponential-decay} we have established the exponential decay of $F_{\lambda_n}(\gamma_n(t))$ independent of $n$.  Further, by Proposition~\ref{prop:13.4.8-analogue} and Proposition~\ref{prop:exponential-decay}, we obtain uniform bounds on $\Lambda_{\qmln}(i - 3) - \Lambda_{\qmln}(i+3)$ which decay exponentially as $i \to \infty$.   We thus see that for any $T > 0$, 
$$
\sum_{i \geq T} \| u^i_n \cdot \gamma_n - y_{\lambda_n} \|_{L^2_{k}([i-2, i+2] \times Y)} \leq C(T)
$$ 
where the constant $C(T) > 0$ depends only on $T$ (i.e., independent of $n$) and converges to $0$ as $T \to \infty$.    
To get the desired $L^2_{k}([T,\infty) \times Y)$ bounds on $\gamma_n - y_{\lambda_n}$, it suffices to prove that 
$$ 
\sum_{i \geq T} \| u^i_n \cdot  y_{\lambda_n} - y_{\lambda_n} \|_{L^2_{k}([i-1/2,i+1/2] \times Y)} \leq C'(T),
$$   
for some constant $C'(T)$ that converges to 0 as $T \to \infty$.  Without loss of generality, assume $y_\infty$ is reducible (the irreducible case is similar).  By \eqref{eq:trajectory-constant-bounds}, we have 
\begin{align*}
\| (u^i_n)^{-1} \cdot y_{\lambda_n} - & 
(u^{i+1}_n)^{-1} \cdot y_{\lambda_n} \|_{L^2_{k}([i-1/2,i+1/2] \times Y)}  \\
& \leq \|  (u^i_n)^{-1} \cdot y_{\lambda_n} - \gamma_n \|_{L^2_{k}([i-1/2,i+1/2] \times Y)}  + \|(u^{i+1}_n)^{-1} \cdot y_{\lambda_n}  - \gamma_n\|_{L^2_{k}([i-1/2,i+1/2] \times Y)} \\
& = \|  u^i_n \cdot \gamma_n - y_{\lambda_n} \|_{L^2_{k}([i-1/2,i+1/2] \times Y)}  + \| u^{i+1}_n \cdot \gamma_n - y_{\lambda_n}\|_{L^2_{k}([i-1/2,i+1/2] \times Y)} \\
&\leq \|  u^i_n \cdot \gamma_n - y_{\lambda_n} \|_{L^2_{k}([i-2,i+2] \times Y)}  + \|u^{i+1}_n \cdot \gamma_n - y_{\lambda_n} \|_{L^2_{k}([i-1,i+3] \times Y)} \\
& \leq C \Big( (\Lambda_{\qmln}(i - 3) - \Lambda_{\qmln}(i +3))  + ((\Lambda_{\qmln}(i -2) - \Lambda_{\qmln}(i + 4)) \\
& + (F_{\lambda_n}(i-3) - F_{\lambda_n}(i+3))^{\frac{1}{2}} + (F_{\lambda_n}(i-2) - F_{\lambda_n}(i+4))^{\frac{1}{2}} \Big).   
\end{align*}
The bounds from Proposition~\ref{prop:13.4.8-analogue} and exponential decay of Proposition~\ref{prop:exponential-decay} show that 
$$\sum_{i \geq T} \| (u^i_n)^{-1} \cdot  y_{\lambda_n} - (u^{i+1}_n)^{-1} \cdot y_{\lambda_n} \|_{L^2_{k}([i-1/2,i+1/2] \times Y)} \leq C'(T),$$ with $C'(T) \to 0$ as $T \to \infty$.  We would like an analogous inequality without the inverses.  Note that if $u, u'$ are in $S^1$, and $y= (a,s,\phi)$, we have that 
$$u\cdot  y - u'\cdot  y = (0, 0, (u - u')\phi).$$
Since $| u - u'| = |u^{-1} - u'^{-1}|$, we see that 
$$
\| (u^i_n)^{-1} \cdot y_{\lambda_n} - (u^{i+1}_n)^{-1} \cdot y_{\lambda_n}\|_{L^2_{k}([i-1/2,i+1/2] \times Y)} = \| u^i_n \cdot y_{\lambda_n} - u^{i+1}_n \cdot y_{\lambda_n} \|_{L^2_{k}([i-1/2,i+1/2] \times Y)}. 
$$

Therefore, we have 
$$\sum_{i \geq T} \| u^i_n \cdot y_{\lambda_n} - u^{i+1}_n \cdot y_{\lambda_n} \|_{L^2_{k}([i-1/2,i+1/2] \times Y)} \leq C'(T).$$ 
In particular, this implies that as $i \to \infty$, $u^i_n$ converges to some $u_n \in S^1$ (uniformly in $n$), and we see that    
$$ 
\sum_{i \geq T} \| u^i_n \cdot y_{\lambda_n} - u_n \cdot y_{\lambda_n} \|_{L^2_{k}([i-1/2,i+1/2] \times Y)} \leq C'(T).
$$   
Therefore, it remains to establish that $u_n = 1$ for each $n$.  This follows since by the construction of $u^i_n$ in Lemma~\ref{lem:L21-bounds-Flambda}, we have that $\Re \langle u^i_n \cdot \gamma_n(i-2), (0,i\phi_n) \rangle_{L^2(Y)} = 0$ and $\lim_{t \to +\infty} \gamma_n(t) = y_{\lambda_n} = (a_{n},\phi_{n})$.  We have now established \eqref{eq:L2k-endpoints-beforexilambda} and thus \eqref{eq:L2k-endpoints-afterxilambda} as claimed.  

As mentioned above, \cite[Theorem 13.3.5]{KMbook} gives $u : \R \to S^1$ such that $u \cdot \gamma_\infty  - y_\infty$ is in $L^2_k([T,\infty) \times Y)$.  In the above argument, we obtained an analogous result where we did not need the four-dimensional gauge transformation.  By using the analogues of the results of this section for trajectories of $\Xqgcsigma$ in place of $\Xqmlgcsigma$, we can repeat the above arguments to obtain the stronger statement for $\gamma_\infty$ as well, i.e., that $\| \gamma_\infty - y_\infty \|_{L^2_k([T,\infty) \times Y)} < \infty$.  (As explained at the beginning of the proof Lemma~\ref{lem:L21-bounds-Flambda}, we work in a different gauge slice instead of the Coulomb-Neumann slice, which allows the arguments of this section to pin down the gauge transformation precisely.)  Therefore, for $T \gg 0$, we have that
$$
\| \gamma_\infty - y_\infty \|_{L^2_k([T,\infty) \times Y)} < \epsilon/8.
$$
Combining this inequality with \eqref{eq:L2k-endpoints-beforexilambda} we obtain \eqref{eq:L2k-afterxilambda-convergence}, which completes the proof.  
\end{proof}

We can use Proposition~\ref{prop:brokenconvergence} to make more refined statements in the case of an approximate trajectory with small index.  
\begin{lemma}
Fix $\epsilon, N > 0$.  For $\lambda \gg 0$, the following is true.  Let $[\gamma_\lambda]$ be a trajectory from $[x_\lambda]$ to $[y_\lambda]$ such that either
\begin{enumerate}
\item $[\gamma_\lambda]$ is not boundary-obstructed and $\gr([x_\lambda], [y_\lambda]) = 1$ or 
\item $[\gamma_\lambda]$ is boundary-obstructed and $\gr([x_\lambda], [y_\lambda]) =0$.  
\end{enumerate} 
Furthermore, suppose that the gradings of $[x_\lambda]$ and $[y_\lambda]$ are in $[-N,N]$.  Then $\Xi_{\lambda}([\gamma_\lambda])$ is $\epsilon$-close in $L^2_{k}(\R \times Y)$ of $[\gamma]$, a trajectory of $\Xqagcsigma$ from $[x_\infty]$ to $[y_\infty]$ with grading in $[-N,N]$.    
\end{lemma}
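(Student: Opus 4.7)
The lemma combines the two previous main results of the subsection: Corollary~\ref{cor:index1convergence} gives $L^2_{k,loc}$-closeness to an honest trajectory of $\Xqagcsigma$, and Proposition~\ref{prop:L2k} upgrades $L^2_{k,loc}$-convergence of a sequence of approximate trajectories (with unbroken limit) to $L^2_k$-convergence of their images under $\Xi_\lambda$. The plan is to argue by contradiction, produce a sequence of bad approximate trajectories, extract a convergent subsequence whose limit is forced to be unbroken by the index hypothesis, and then apply Proposition~\ref{prop:L2k} to conclude.

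Suppose the statement fails. Then there exist $\epsilon_0 > 0$, a sequence $\lambda_n \to \infty$ (which we may take in $\{\llambda_i\}$), and trajectories $[\gamma_{\lambda_n}]$ of $\Xqmlnagcsigma$ between stationary points $[x_{\lambda_n}],[y_{\lambda_n}]$ in the grading range $[-N,N]$ (satisfying the index/boundary-obstruction hypothesis) such that $\Xi_{\lambda_n}([\gamma_{\lambda_n}])$ is at $L^2_k(\mathbb{R}\times Y)$-distance at least $\epsilon_0$ from every trajectory of $\Xqagcsigma$ with grading in $[-N,N]$. Since there are only finitely many stationary points of $\Xqagcsigma$ in that grading range, we may pass to a subsequence so that $[x_{\lambda_n}] = [x_\lambda]$ and $[y_{\lambda_n}] = [y_\lambda]$ come from fixed $[x_\infty],[y_\infty] \in \Crit_\N$ under the bijection $\Xi_\lambda$ of Proposition~\ref{prop:stationarycorrespondence}; in particular $\gr([x_\infty],[y_\infty]) = \gr([x_{\lambda_n}],[y_{\lambda_n}])$ by Corollary~\ref{cor:GrPres}.

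Next I would apply Proposition~\ref{prop:brokenconvergence} to extract a subsequence of unparameterized trajectories $[\breve{\gamma}_{\lambda_n}]$ converging to an unparameterized broken trajectory $[\breve{\boldsymbol{\gamma}}_\infty] = ([\breve\gamma_{\infty,1}],\dots,[\breve\gamma_{\infty,m}])$ of $\Xqagcsigma$ from $[x_\infty]$ to $[y_\infty]$. Repeating the index bookkeeping used in the proof of Corollary~\ref{cor:index1convergence}: the total grading of the limit equals that of the approximating trajectories, each piece $[\breve\gamma_{\infty,i}]$ has nonnegative index with index $0$ only if boundary-obstructed, and at most one index-$0$ boundary-obstructed factor can occur. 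In the non-boundary-obstructed case with $\gr = 1$, any breaking would force an irreducible boundary-obstructed component, contradicting \cite[Corollary 16.5.4]{KMbook}; in the boundary-obstructed case with $\gr = 0$, any breaking would exceed the total index. Hence $m = 1$ and the limit is an unbroken trajectory $[\breve\gamma_\infty]$.

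With an unbroken limit in hand, I apply Proposition~\ref{prop:L2k}: after choosing a suitable reparameterization of each $[\gamma_{\lambda_n}]$ and a representative $[\gamma_\infty]$ of $[\breve\gamma_\infty]$, the parameterized trajectories $\Xi_{\lambda_n}([\gamma_{\lambda_n}])$ converge to $[\gamma_\infty]$ in the $L^2_k$-topology of $\B^{\gCoul,\tau}_k([x_\infty],[y_\infty])$. This directly contradicts the assumption that $\Xi_{\lambda_n}([\gamma_{\lambda_n}])$ stays $L^2_k$-distance at least $\epsilon_0$ from every trajectory of $\Xqagcsigma$ with grading in $[-N,N]$, completing the proof. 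The main (but already addressed) subtlety in this argument is the upgrade from $L^2_{k,loc}$ to $L^2_k$: this is exactly what required inserting $\Xi_{\lambda_n}$ in the statement, and it is precisely the content of Proposition~\ref{prop:L2k}, whose proof relies on the uniform exponential-decay estimates of Propositions~\ref{prop:exponential-decay-blowup}--\ref{prop:exponential-decay} together with the regularity properties of $\Xi_\lambda$ established in Proposition~\ref{prop:Xixy-paths}.
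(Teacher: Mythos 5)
Your proof follows the paper's argument exactly: argue by contradiction, apply Proposition~\ref{prop:brokenconvergence} to extract an $L^2_{k,loc}$-convergent subsequence, rule out breaking by the index bookkeeping already used in Corollary~\ref{cor:index1convergence}, and then upgrade to $L^2_k$ convergence of $\Xi_{\lambda_n}([\gamma_{\lambda_n}])$ via Proposition~\ref{prop:L2k}. This matches the paper's (terser) proof in both structure and in the specific lemmas invoked.
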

\begin{proof}
The argument is the same as for Corollary~\ref{cor:index1convergence}, except after applying Proposition~\ref{prop:brokenconvergence} to extract a convergent subsequence in $L^2_{k,loc}$, we improve this to $L^2_k$ convergence (after composition with $\Xi_{\lambda}$) using Proposition~\ref{prop:L2k}.
\end{proof}

\chapter{Characterization of approximate trajectories}\label{sec:trajectories2}
In this chapter, using the inverse function theorem, we will identify the moduli spaces of isolated approximate trajectories (in a fixed grading range) with those of actual trajectories. We will also produce  similar identifications for the cut moduli spaces used to define the $U$-actions in Morse and Floer homology. Furthermore, we will show that all these identifications can be chosen to preserve orientations. 
 
\section{Stability}
 Let $[x_{\infty}]$ and $[y_{\infty}]$ be two stationary points of $\Xqagcsigma$, with grading in $[-N, N]$; in other words, we have $[x_{\infty}], [y_{\infty}] \in \Crit_{\N}$. We assume that their grading difference is one and that we are not in the boundary-obstructed case. (Recall that boundary-obstructed means that $[x_{\infty}]$ is boundary-stable and $[y_{\infty}]$ is boundary-unstable.) Since $\q$ is an admissible perturbation, the moduli space
 $$ \Mbreve^{\agCoul}([x_{\infty}], [y_{\infty}])= M^{\agCoul}([x_{\infty}], [y_{\infty}]) / \R$$
 is a finite set of points. We can view $ \Mbreve^{\agCoul}([x_{\infty}], [y_{\infty}])$ as the zero set of the section induced by $\Fqgctau$ on the bundle $\V^{\gCoul, \tau}(\R \times Y)$ over $\tB^{\gCoul, \tau}([x_{\infty}], [y_{\infty}])/\R$, restricted to $\B^{\gCoul,\tau}([x_\infty],[y_\infty])/\R$.
 
Consider the nearby stationary points of $\Xqmlagcsigma$
 $$ [x_{\lambda}] = \Xi_{\lambda}^{-1}([x_{\infty}]), \ \ [y_{\lambda}] = \Xi_{\lambda}^{-1}([y_{\infty}]).$$
 
 We also have a moduli space of approximate Seiberg-Witten trajectories in $(B(2R) \cap \vml)^{\sigma}$:
 $$ \Mbreve^{\agCoul}([x_{\lambda}], [y_{\lambda}])=M^{\agCoul}([x_{\lambda}], [y_{\lambda}])/\R.$$

 By Corollary~\ref{cor:GrPres}, the relative grading between $[x_{\lambda}]$ and $[y_{\lambda}]$ is also one. Further, by Proposition~\ref{prop:AllMS} and \ref{prop:MorseSmales}, for $\lambda = \llambda_i$ with $i \gg 0$, $ \Mbreve^{\agCoul}([x_{\lambda}], [y_{\lambda}])$ is regular and consists of finitely many points. 
%Proposition~\ref{prop:MorseSmales}, the moduli space $ \Mbreve^{\agCoul}([x_{\lambda}], [y_{\lambda}])$ is also regular for $\lambda = \llambda_i$ with $i \gg 0$, and hence consists of finitely many points.
Our main goal in this subsection is to prove:

\begin{proposition}
\label{prop:EquivalenceTrajectories}
For $\lambda = \llambda_i \gg 0$, there is a one-to-one correspondence between the moduli spaces $ \Mbreve^{\agCoul}([x_{\infty}], [y_{\infty}])$ and $ \Mbreve^{\agCoul}([x_{\lambda}], [y_{\lambda}])$.
 \end{proposition}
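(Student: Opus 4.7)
The plan is to combine the regularity of the moduli spaces with a one-parameter implicit function theorem, after transferring all relevant data along the self-diffeomorphisms $\Xi_\lambda$ built in Chapter~\ref{sec:appendix}. Concretely, I would use Proposition~\ref{prop:Xixy-paths}(iii)--(iv) to view the approximate Seiberg-Witten equation as a smooth family of sections of a \emph{fixed} bundle over a \emph{fixed} base: $\Xi_\lambda$ induces a diffeomorphism of $\tB^{\gCoul,\tau}_k([x_\lambda],[y_\lambda])$ with $\tB^{\gCoul,\tau}_k([x_\infty],[y_\infty])$, covered by a smooth bundle map of $\V^{\gCoul,\tau}_{k-1}$, and both vary smoothly in $\lambda$ at and near $\infty$ when parametrized by $r = f(\lambda) \in [0,\epsilon)$ as in Section~\ref{sec:StabilityPoints}. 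Pushing forward $\Fqmlgctau$ via $(\Xi_\lambda)_*$ yields a smooth family
$$ \widetilde{\F}^\tau_r : \tB^{\gCoul,\tau}_k([x_\infty],[y_\infty]) \to \V^{\gCoul,\tau}_{k-1}, \quad r \in [0,\epsilon), $$
with $\widetilde{\F}^\tau_0 = \Fqagctau$. The $\R$-action on path space descends to each $\widetilde{\F}^\tau_r$, and its zero set modulo $\R$ is in canonical bijection with $\Mbreve^{\agCoul}([x_{f^{-1}(r)}],[y_{f^{-1}(r)}])$ for $r > 0$ and with $\Mbreve^{\agCoul}([x_\infty],[y_\infty])$ for $r=0$.

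For injectivity of the correspondence, I would apply the implicit function theorem at each $[\gamma_\infty] \in \Mbreve^{\agCoul}([x_\infty],[y_\infty])$. Since $\q$ is admissible, $[\gamma_\infty]$ is a regular trajectory in the non-boundary-obstructed, grading-difference-one case, so the linearization
$$ (\D^\tau_{\gamma_\infty}\Fqagctau)\big|_{\K^{\gCoul,\tau}_{k,\gamma_\infty}} : \K^{\gCoul,\tau}_{k,\gamma_\infty} \to \V^{\gCoul,\tau}_{k-1,\gamma_\infty} $$
is surjective with one-dimensional kernel $\R\cdot d\gamma_\infty/dt$; quotienting by the $\R$-action on the path space turns it into a topological isomorphism between the tangent space to $\B^{\gCoul,\tau}_k([x_\infty],[y_\infty])/\R$ and the target fiber. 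Because the family $\widetilde{\F}^\tau_r$ is $C^1$ in $r$ at $r=0$, a Banach-space implicit function theorem produces, uniquely in a small $L^2_k$-neighborhood of $[\gamma_\infty]$ in $\B^{\gCoul,\tau}_k([x_\infty],[y_\infty])/\R$, a smooth family $r \mapsto [\gamma_\infty^r]$ of zeros of $\widetilde{\F}^\tau_r/\R$. Pulling back by $\Xi_{f^{-1}(r)}^{-1}$ then gives the injection $\Mbreve^{\agCoul}([x_\infty],[y_\infty]) \hookrightarrow \Mbreve^{\agCoul}([x_\lambda],[y_\lambda])$ for $\lambda = f^{-1}(r)$ sufficiently large.

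Surjectivity is handled by the compactness theory of Chapter~\ref{sec:trajectories1}. Suppose the injection failed to be surjective for some sequence $\lambda_n \to \infty$, witnessed by $[\gamma_{\lambda_n}] \in \Mbreve^{\agCoul}([x_{\lambda_n}],[y_{\lambda_n}])$ not lying in its image. Corollary~\ref{cor:index1convergence} applies (the endpoints have grading in $[-N,N]$, we are in the non-boundary-obstructed index-one case), so after passing to a subsequence $[\gamma_{\lambda_n}]$ is $\epsilon$-close in $W_{k,\mathrm{loc}}(\R\times Y)/S^1$ to some $[\gamma_\infty] \in \Mbreve^{\agCoul}([x_\infty],[y_\infty])$; Proposition~\ref{prop:L2k} then upgrades this to convergence of $[\Xi_{\lambda_n}(\gamma_{\lambda_n})]$ to $[\gamma_\infty]$ in the $L^2_k$ topology on $\B^{\gCoul,\tau}_k([x_\infty],[y_\infty])/\R$, after possible reparametrization. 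For $n$ large this places $[\Xi_{\lambda_n}(\gamma_{\lambda_n})]$ inside the IFT uniqueness neighborhood of $[\gamma_\infty]$, so it must coincide with $[\gamma_\infty^{f(\lambda_n)}]$, contradicting the choice of $[\gamma_{\lambda_n}]$.

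The main obstacle is arranging the family $\widetilde{\F}^\tau_r$ and its linearization to be differentiable at $r=0$ with estimates uniform across the finitely many points of $\Mbreve^{\agCoul}([x_\infty],[y_\infty])$, so that a single neighborhood of $\lambda = \infty$ makes both the implicit function theorem \emph{and} the subsequent uniqueness argument work simultaneously. The smoothness in $\lambda$ rests on Lemma~\ref{lem:Xixylambda}(iii) and Proposition~\ref{prop:Xixy-paths}(iv), while the uniformity of the Fredholm data, and in particular the fact that the approximate linearized operators converge in the operator norm to the infinite-dimensional ones with the same Fredholm index, follows from the compact-perturbation arguments developed in the proof of Lemma~\ref{lem:GrPres}. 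Once these smoothness and Fredholm properties are in hand, the IFT plus the compactness-uniqueness dichotomy yield the desired bijection.
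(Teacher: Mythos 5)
Your argument follows essentially the same route as the paper: push $\Fqmlgctau$ forward by the diffeomorphisms $\Xi_\lambda$ to obtain a family $\widetilde{\F}^\tau_r$ over the fixed space $\tB^{\gCoul,\tau}_k([x_\infty],[y_\infty])$ parametrized by $r=f(\lambda)$, apply the implicit function theorem at each $[\gamma_\infty]$ using the regularity of $\Xqagcsigma$, and then use the compactness results (Corollary~\ref{cor:index1convergence} and Proposition~\ref{prop:L2k}) to rule out any extraneous approximate trajectories.  This is precisely the two-step structure used in the text (Proposition~\ref{prop:stabilitynearby} for existence and uniqueness nearby, plus a contradiction/compactness argument for surjectivity), and your explicit invocation of Corollary~\ref{cor:index1convergence} before Proposition~\ref{prop:L2k} is if anything slightly more careful than the paper's shorthand.

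There is, however, one step you gloss over that the paper handles explicitly.  The IFT applied to $\widetilde{\F}^\tau_r$ produces a zero $[\gamma_\infty^r]$ in the \emph{infinite-dimensional} space $\tB^{\gCoul,\tau}_k([x_\infty],[y_\infty])/\R$; pulling back by $\Xi_{f^{-1}(r)}^{-1}$ gives a zero of $\Fqmlgctau$, i.e.\ a trajectory of $\Xqmlgcsigma$.  But the moduli space $\Mbreve^{\agCoul}([x_\lambda],[y_\lambda])$ that you want to land in is by definition the space of trajectories contained in the finite-dimensional, bounded set $(B(2R)\cap\vml)^\sigma$.  You assert a ``canonical bijection'' with $\Mbreve^{\agCoul}([x_{f^{-1}(r)}],[y_{f^{-1}(r)}])$ without justification, but a zero of $\Fqmlgctau$ near $[\gamma_\infty]$ is not automatically such a trajectory: one must show that its blow-down stays inside $B(2R)$ and that its $\lambda$-spinorial energy remains bounded, so that Lemma~\ref{lem:blowuptrajectoriesinvml} applies and forces containment in $(\vml)^\sigma$.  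The paper does this by rerunning the IFT argument with Sobolev index $k+1$ to get closeness to $\gamma_\infty$ in $L^2_{k+1}$, deducing containment in $B(3R/2)^\sigma$ for $\lambda\gg 0$, reading off containment in $B(2R)^\sigma$ from the explicit form of $\Xi_\lambda$, and then bounding $\Lambda_{\qml}$ by its proximity to $\Lambda_\q(\gamma_\infty)$.  Without this verification, the ``injection'' you produce could in principle leave the moduli space.  The appeal to Lemma~\ref{lem:GrPres} for operator-norm convergence is also not quite the mechanism used here --- the IFT step only requires differentiability of the family $S([\gamma],r)$ at $r=0$ (via Lemma~\ref{lem:homeo} and Proposition~\ref{prop:Xixy-paths}) and invertibility of the $r=0$ linearization --- but that is a harmless digression rather than an error.
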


The proof has two parts. First, we show that in a fixed neighborhood of any $[\gamma] \in  \Mbreve^{\agCoul}([x_{\infty}], [y_{\infty}])$, there is a unique approximate trajectory. Second, we show that no other approximate trajectories between $[x_{\lambda}]$ and $[y_{\lambda}]$ exist.
 
We start with the first step. This is a stability result, similar to what we established in Proposition~\ref{prop:nearby} for stationary points.
 
\begin{proposition}
\label{prop:stabilitynearby}
Suppose $[x_{\infty}], [y_{\infty}] \in \Crit_{\N}$ have $\gr([x_{\infty}], [y_{\infty}])=1$, and we are not in the boundary-obstructed case. Fix a trajectory $[\gamma_\infty] \in  \Mbreve^{\agCoul}([x_{\infty}], [y_{\infty}])$, and a small neighborhood $U$ of $[\gamma_\infty]$ in $\B^{\gCoul, \tau}_k([x_{\infty}], [y_{\infty}])/\R$. 

Then, for $\lambda \gg 0$, there exists a unique trajectory $[\gamma_\lambda] \in \Mbreve^{\agCoul}([x_{\lambda}], [y_{\lambda}])$ such that $\Xi_{\lambda}([\gamma_\lambda]) \in U$.  
\end{proposition}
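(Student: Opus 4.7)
The plan is to apply the implicit function theorem to a parametrized family of sections, modeled on the proof of Proposition~\ref{prop:nearby} but now on a path space. The main technical setup uses the diffeomorphisms $\Xi_\lambda$ from Chapter~\ref{sec:appendix} to pull everything back onto a single fixed Banach manifold, and the homeomorphism $f\colon (0,\infty]\to [0,1)$ from Section~\ref{sec:StabilityPoints} to give $\lambda$ a smooth parameter at infinity.

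First, I would set up the ambient objects. The quotient $\B := \B^{\gCoul,\tau}_k([x_\infty],[y_\infty])/\R$ is a Hilbert manifold (Section~\ref{sec:4Dcoulomb}), and after possibly enlarging to the doubled space $\tB^{\gCoul,\tau}_k([x_\infty],[y_\infty])/\R$ to handle the boundary in the reducible case, the section $\Fqagctau$ induces a smooth section of $\V^{\gCoul,\tau}_{k-1}$. By Proposition~\ref{prop:Xixy-paths}(iii)--(iv), $\Xi_\lambda$ induces a diffeomorphism $\B^{\gCoul,\tau}_k([x_\lambda],[y_\lambda])/\R \to \B$ lifting to a smooth bundle map, smooth in $\lambda$ at and near $\infty$. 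Using this, I would define, for $r\in [0,1)$, a section
\[
\mathcal{S}\colon \B \times [0,1) \to \V^{\gCoul,\tau}_{k-1}, \qquad \mathcal{S}([\gamma],r) = (\Xi_{f^{-1}(r)})_*\, \F^{\gCoul,\tau}_{\q^{f^{-1}(r)}}\bigl(\Xi_{f^{-1}(r)}^{-1}([\gamma])\bigr),
\]
interpreted as $\Fqagctau$ at $r=0$. The zeros $([\gamma],r)$ with $r > 0$ correspond, via $\Xi_{f^{-1}(r)}$, to elements of $\Mbreve^{\agCoul}([x_{f^{-1}(r)}],[y_{f^{-1}(r)}])$, and the zeros with $r=0$ are the elements of $\Mbreve^{\agCoul}([x_\infty],[y_\infty])$. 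Smoothness of $\mathcal{S}$ at $r=0$ follows from part (iv) of Proposition~\ref{prop:Xixy-paths} combined with Lemma~\ref{lem:homeo}-style arguments on the reparameterization.

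Next, I would apply the implicit function theorem at the point $([\gamma_\infty],0)$. The partial derivative of $\mathcal{S}$ in the $\B$-direction at this point is the restriction to $\K^{\gCoul,\tau}_{k,\gamma_\infty}$ (modulo the $\R$-translation direction) of $\D^\tau_{\gamma_\infty}\Fqagctau$. Since $[\gamma_\infty]$ lies in a regular, non-boundary-obstructed, one-dimensional moduli space, Lemma~\ref{lem:allsurjective} together with Proposition~\ref{prop:Qgammasurjective} and quotienting by $\R$ shows this operator is surjective with trivial kernel, hence invertible. Doubling as in the proof of Proposition~\ref{prop:nearby} handles the reducible case, keeping the reducible locus invariant. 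The implicit function theorem then yields a unique $C^1$ family $r \mapsto [\tilde\gamma_r] \in \B$ of zeros of $\mathcal{S}(\cdot,r)$ defined for $r$ in some interval $[0,r_0)$, with $[\tilde\gamma_0]=[\gamma_\infty]$. Setting $[\gamma_\lambda] := \Xi_\lambda^{-1}([\tilde\gamma_{f(\lambda)}])$ provides the desired trajectory, and by construction $\Xi_\lambda([\gamma_\lambda]) \in U$ for $\lambda$ sufficiently large.

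The hard part, and the main obstacle, is upgrading the \emph{local} uniqueness from the implicit function theorem (which only guarantees uniqueness in some small, a priori $\lambda$-dependent, neighborhood of $[\gamma_\infty]$ in $\B$) to uniqueness in the given neighborhood $U$. Here I would argue by contradiction: suppose there were a sequence $\lambda_n \to \infty$ and approximate trajectories $[\gamma'_{\lambda_n}] \in \Mbreve^{\agCoul}([x_{\lambda_n}],[y_{\lambda_n}])$ with $\Xi_{\lambda_n}([\gamma'_{\lambda_n}]) \in U$ but $[\gamma'_{\lambda_n}] \neq [\gamma_{\lambda_n}]$. By Proposition~\ref{prop:brokenconvergence}, after passing to a subsequence the $[\cgamma'_{\lambda_n}]$ converge to an unparameterized broken trajectory from $[x_\infty]$ to $[y_\infty]$; since the total index is one and we are not boundary-obstructed, Corollary~\ref{cor:index1convergence} shows this limit is unbroken, and hence equals some $[\cgamma'_\infty] \in \Mbreve^{\agCoul}([x_\infty],[y_\infty])$. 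If $U$ is taken small enough (disjoint from the finitely many other elements of the moduli space), the assumption $\Xi_{\lambda_n}([\gamma'_{\lambda_n}]) \in U$ forces $[\cgamma'_\infty] = [\cgamma_\infty]$, and then Proposition~\ref{prop:L2k} upgrades the convergence $\Xi_{\lambda_n}([\gamma'_{\lambda_n}]) \to [\gamma_\infty]$ to the $L^2_k$ topology on $\B$, after reparameterization. But for $n$ large this places $[\gamma'_{\lambda_n}]$ inside the implicit-function-theorem neighborhood together with $[\gamma_{\lambda_n}]$, contradicting the local uniqueness. This completes the proof.
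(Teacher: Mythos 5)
Your overall strategy matches the paper's: pull back via the diffeomorphisms $\Xi_\lambda$ to a fixed Banach manifold, form a parametrized section using the homeomorphism $f$ to compactify $(0,\infty]$, and apply the inverse/implicit function theorem at $([\gamma_\infty],0)$ using surjectivity (from regularity of the moduli space) plus the index-zero count after the $\R$-quotient. The doubling trick to handle the reducible boundary is also used by the paper.

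However, there is a genuine gap. The implicit function theorem produces a zero $[\gamma_\lambda]$ of $\F^{\gCoul,\tau}_{\qml}$ in the infinite-dimensional path space $\tB^{\gCoul,\tau}_k([x_\lambda],[y_\lambda])/\R$, i.e.\ a trajectory of $\Xqmlgcsigma$ in $W^\sigma$. But by definition, $\Mbreve^{\agCoul}([x_\lambda],[y_\lambda])$ consists of trajectories lying inside the finite-dimensional subset $(B(2R)\cap\vml)^\sigma$, and nothing in your argument guarantees that the temporal-gauge representative of $[\gamma_\lambda]$ takes values there. The vector field $l + \pml c_\q$ is defined on all of $W$, not just on $\vml$, so its trajectories need not lie in $\vml$ a priori; this only follows for bounded trajectories by Lemmas~\ref{lem:trajectoriesinvml} and \ref{lem:blowuptrajectoriesinvml}, which require control on the $B(2R)$-bound and the $\lambda$-spinorial energy. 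The paper closes this gap by re-running the IFT at Sobolev level $k+1$ (uniqueness forcing the same solution), then using $L^2_{k+1}$-closeness of $\Xi_\lambda(\gamma_\lambda)$ to $\gamma_\infty$ to deduce $\gamma_\lambda\subset B(2R)^\sigma$ from the structure of $\Xi_\lambda$, bounding $\Lambda_{\qml}(\gamma_\lambda)$ uniformly near $\Lambda_\q(\gamma_\infty)$, and finally invoking Lemma~\ref{lem:blowuptrajectoriesinvml}. Without this, the uniqueness claim is vacuous because the object produced may not be a member of the finite-dimensional moduli space at all.

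A secondary remark: the ``hard part'' you focus on---upgrading IFT-local uniqueness to uniqueness over all of $U$ via compactness and Proposition~\ref{prop:L2k}---is not required here. The paper reads the proposition with $U$ taken to be the IFT neighborhood itself, and defers the global uniqueness to Proposition~\ref{prop:EquivalenceTrajectories}, which performs precisely the compactness argument you describe (for $\lambda$ of the form $\llambda_i$, a restriction needed to invoke the Chapter~\ref{sec:trajectories1} results). Your argument is correct in substance but is extra work here, and would additionally need the $\lambda_n=\llambda_i$ hypothesis to apply Proposition~\ref{prop:L2k}.
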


\begin{proof}
Recall that, because we do the blow-up construction in each time slice, the space $\B^{\gCoul, \tau}_k([x_{\infty}], [y_{\infty}])$ is not even a Banach manifold with boundary. However, it is a subspace of the Banach manifold $\widetilde{\B}_k^{\gCoul, \tau}([x_{\infty}], [y_{\infty}])$, which consists of (equivalence classes of) paths $(a(t)+\alpha(t)dt, s(t), \phi(t))$, with $s(t)$ allowed to vary in all of $\R$.

%The bundle $\V_k^{\gCoul, \tau}(\R \times Y)$ over $\B_k^{\gCoul, \tau}([x_{\infty}], [y_{\infty}])/\R$ naturally extends to a bundle $\V_k^{\gCoul, \tau}(\R \times Y)$ over $\widetilde{B}_k^{\gCoul, \tau}([x_{\infty}], [y_{\infty}])$. 
By Proposition~\ref{prop:Xixy-paths}\eqref{xi:Vgc}, the map $\Xi_{\lambda}$ gives rise to a bundle map $(\Xi_{\lambda})_*$ that is part of a commutative diagram
$$\xymatrix{
\V_k^{\gCoul, \tau}(\R \times Y) \ar[d] \ar[r]^{(\Xi_{\lambda})_*} & \V_k^{\gCoul, \tau}(\R \times Y)\ar[d] \\
\widetilde{\B}^{\gCoul, \tau}([x_{\lambda}], [y_{\lambda}])/\R \ar[r]^{\Xi_{\lambda}} & \widetilde{\B}^{\gCoul, \tau}_k([x_{\infty}], [y_{\infty}])/\R.
}$$

We define a map
$$ S: \widetilde{\B}^{\gCoul, \tau}_k([x_{\infty}], [y_{\infty}]) \times (-1,1) \to \V^{\gCoul, \tau}_k(\R \times Y) \times \R$$
by the formula 
$$ S([\gamma], r) = \Bigl( (\Xi_{\lambda})_* \circ \Fqmlgctau \circ  \Xi_{\lambda}^{-1} ([\gamma]), \ r \Bigr),$$
where $\Fqmlgctau$ is as in \eqref{eq:Fqmlgctau} and we wrote $\lambda = f^{-1}(|r|)$, with $f: (0,\infty] \to [0,1)$ being the homeomorphism from Section~\ref{sec:StabilityPoints}. 

By Proposition~\ref{prop:Xixy-paths}, we get that the section $S$ is differentiable. Observe also that 
$$S([\gamma], 0) = (\Fqgctau([\gamma]), 0),$$ because $\Xi_{\infty}$ is the identity.
Therefore, the derivative of $S$ at $([\gamma_{\infty}], 0)$ can be written in block form as
$$ 
\begin{pmatrix}
\D^\tau_{[\gamma_{\infty}]} \Fqgctau & * \\
0 & I 
\end{pmatrix}.
$$

By our assumptions on $\q$ and Proposition~\ref{prop:Qgammasurjective}, we get that $\D^\tau_{[\gamma_{\infty}]} \Fqgctau$ is surjective. It is Fredholm index one when viewed as a map with domain $\K^{\tau}_{k, \gamma_{\infty}}$, the tangent space to $\tB^{\gCoul, \tau}_k([x_{\infty}], [y_{\infty}])$ at $[\gamma_\infty]$, and range $\K^{\gCoul,\tau}_{k-1, \gamma_\infty}$. In our setting we divided by $\R$, so it becomes a map of index zero. Since it is surjective, it must be invertible. Given the block form above, it follows that $\D^\tau_{([\gamma_{\infty}], 0)} S$ is also invertible.

We now apply the inverse function theorem. For $r > 0$ small (that is, for $\lambda \gg 0$), we obtain  a unique solution $[\gamma] \in U$ to the equation $S([\gamma], r) = (0, r)$. We then let 
$$ [\gamma_{\lambda}] = \Xi_{\lambda}^{-1} ([\gamma]).$$
Because $\Xi_\lambda$ is a diffeomorphism, we see that $\F^{\gCoul,\tau}_{\qml}([\gamma_\lambda]) = 0$.  Since $[\gamma_{\lambda}]$ has a temporal gauge representative in $\C^{\gCoul,\tau}_k(x_\lambda, y_\lambda)$ for some representatives $x_\lambda, y_\lambda$ (see Remark~\ref{rmk:no-temporal}), we see that $[\gamma_\lambda]$ gives the desired trajectory from $[x_\lambda]$ to $[y_\lambda]$.

Let $\gamma_{\lambda}$ be a temporal gauge representative of $[\gamma_{\lambda}]$. We need to check that it actually takes values in  $(B(2R) \cap \vml)^{\sigma}$.  We can repeat the argument above with the Sobolev coefficient $k+1$ instead of $k$.  The resulting $\gamma_\lambda$ in this case must be the same due to the uniqueness guaranteed by the implicit function theorem.  Since we use the Sobolev coefficient $k+1$, we have that $\Xi_\lambda([\gamma_\lambda])$ must converge to $[\gamma]$ in an $L^2_{k+1}(\rr \times Y)$ neighborhood.  Since any temporal gauge representative of $\gamma$ is contained in $B(R)^\sigma$, we see that $\Xi_\lambda(\gamma_\lambda)$ is contained in $B(3R/2)^\sigma$ for $\lambda \gg 0$.  It is easy to see from the construction of $\Xi_\lambda$ in Chapter~\ref{sec:appendix} that $\gamma_\lambda$ must be contained in $B(2R)^\sigma$.  Further, we see that the values of $\Lambda_{\qml}(\gamma_\lambda)$ are uniformly close to those of $\Lambda_\q(\gamma)$, and thus uniformly bounded.  By Lemma~\ref{lem:blowuptrajectoriesinvml}, we see that $\gamma_\lambda$ is contained in $(\vml)^\sigma$ for $\lambda \gg 0$, which completes the proof.    
%Let us first show that $\gamma_{\lambda}$ have to take values in $B(2R)^{\sigma}$. Suppose we have a sequence $\lambda_n$ and trajectories $\gamma_{\lambda_n}$ such that they take some values outside $B(2R)$. After reparametrization, we can assume these values are at $t=0$. From Proposition~\ref{prop:stabilitynearby} we have that $\gamma_{\lambda_n}$ converge (in $L^2_k$ Sobolev norm, as four-dimensional configurations) to some trajectory $\gamma_{\infty}$ of $\Xqgcsigma$. Hence, the same is true for their projections to the blow-down. By standard bootstrapping arguments, we get convergence in $C^{\infty}_{loc}$ in the blow-down, and hence convergence in $C^{\infty}$ in each time slice. Thus, the blow-down projections of $\gamma_{\lambda_n}(0)$ converge to the blow-down projections of  $\gamma_{\infty}$ in $C^{\infty}$ and hence in $L^2_k$. Since the trajectories of $\Xqgcsigma$ are contained in $B(2R)$, we get $\gamma_{\lambda_n}(0) \in B(2R)^{\sigma}$, which is a contradiction. We conclude that $\gamma_{\lambda}$ have to take values in $B(2R)^{\sigma}$ for $\lambda \gg 0$.
%It remains to see that the $\gamma_{\lambda}$ take values in $ (W^{\lambda})^{\sigma}$. Since they are close to $\gamma_{\infty}$, we have a uniform bound on the $\lambda$-spinorial energy of $\gamma_{\lambda}$, independent of $\lambda$. The desired claim now follows from Lemma~\ref{lem:blowuptrajectoriesinvml}.
\end{proof}

\begin{remark}
If the trajectory $[\gamma_{\infty}]$ from Proposition~\ref{prop:stabilitynearby} is contained in the reducible locus, then so is the nearby approximate trajectory $[\gamma_\lambda]$. This follows from the invariance of the constructions in the proof under the obvious involutions on $\tB^{\gCoul, \tau}([x_{\lambda}], [y_{\lambda}])$ and $\V_k^{\gCoul, \tau}(\R \times Y)$.
\end{remark}

We are now ready to complete the proof of Proposition~\ref{prop:EquivalenceTrajectories} with the second step, which is to show that there are no other approximate trajectories between $[x_\lambda]$ and $[y_\lambda]$.

\begin{proof}[Proof of Proposition~\ref{prop:EquivalenceTrajectories}] 
Let
$$  \Mbreve^{\agCoul}([x_{\infty}], [y_{\infty}]) = \{[\gamma_{\infty}^1], \dots, [\gamma_{\infty}^m]\}.$$
For each $\ell=1, \dots, m$, Proposition~\ref{prop:stabilitynearby} guarantees the existence of a neighborhood $U^\ell$ of $[\gamma_{\infty}^\ell]$ and a unique approximate trajectory $[\gamma_{\lambda}^\ell]$ with $\Xi_{\lambda}([\gamma^\ell_\lambda]) \in U^\ell$.  

We need to check that there are no other approximate trajectories in $\Mbreve^{\agCoul}([x_{\lambda}], [y_{\lambda}])$. Assume we had sequences $\lambda_n \to \infty$ and $[\gamma_n]\in \Mbreve^{\agCoul}([x_{\lambda_n}], [y_{\lambda_n}])$ such that $[\gamma_n]$ is not of the form 
$[\gamma_{\lambda_n}^\ell]$ for any $n$ and $\ell$.  By assumption, each $\lambda_n$ is some $\llambda_i$. By applying Proposition~\ref{prop:L2k} we can find a subsequence of the $[\gamma_n]$ such that $\Xi_{\lambda_n}([\gamma_n])$ converge to some $[\gamma_{\infty}^\ell]$ in $L^2_k$.  (We require that the $\lambda_n$ be of the form $\llambda_i$ since we are invoking results of Chapter~\ref{sec:trajectories1}, which used this assumption throughout.)  This means that the $\Xi_{\lambda_n}([\gamma_n])$ live inside $U^\ell$ for large $n$, and we obtain a contradiction with the uniqueness statement in Proposition~\ref{prop:stabilitynearby}.
\end{proof}

We have an analogous result for the boundary-obstructed case.  

\begin{proposition}
\label{prop:EquivalenceTrajectoriesObstructed}
Let $[x_{\infty}], [y_{\infty}] \in \Crit_{\N}$ be in the boundary-obstructed case, and suppose $\gr([x_{\infty}], [y_{\infty}])=0$. For $\lambda =  \llambda_i \gg 0$, there is a one-to-one correspondence between $ \Mbreve^{\agCoul, \red}([x_{\infty}], [y_{\infty}])$ and $ \Mbreve^{\agCoul, \red}([x_{\lambda}], [y_{\lambda}])$.
 \end{proposition}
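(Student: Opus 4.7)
The plan is to mirror the proof of Proposition~\ref{prop:EquivalenceTrajectories}, but to run the entire inverse function theorem argument inside the reducible locus, where $s(t) \equiv 0$. In the boundary-obstructed case, the full operator $\D^\tau_{\gamma_\infty} \Fqgctau$ has one-dimensional cokernel, so the direct IFT argument of Proposition~\ref{prop:stabilitynearby} fails; however, by definition of regularity in the boundary-obstructed case (Definition~\ref{def:regM}), the restriction $(\D^\tau_{\gamma_\infty} \Fqgctau)^\del$ to the tangent bundle of the reducible locus is surjective. Moreover, in this boundary-obstructed setting with grading zero, the full dimension of $M^{\agCoul,\red}([x_\infty],[y_\infty])$ is one (the extra $+1$ shift in the boundary-obstructed case), so after modding out by the $\R$-action, $\Mbreve^{\agCoul,\red}([x_\infty],[y_\infty])$ is a finite, regular collection of points.

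The first step is to set up the boundary versions of all the relevant spaces. Writing $\B^{\gCoul,\tau,\red}_k([x_\infty],[y_\infty])$ for the subspace of $\B^{\gCoul,\tau}_k([x_\infty],[y_\infty])$ consisting of configurations $(a(t)+\alpha(t)dt, 0, \phi(t))$, and likewise $\V^{\gCoul,\tau,\red}_{k-1}$ for the bundle of variations with $r(t)\equiv 0$, observe that the section $\Fqgctau$ restricts to a section on the boundary and its linearization at a reducible trajectory is precisely the surjective operator $(\D^\tau_{\gamma_\infty} \Fqgctau)^\del$. The diffeomorphisms $\Xi_\lambda$ from Chapter~\ref{sec:appendix} preserve the reducible locus (as noted in the discussion following \eqref{eq:Xilambda-Ux}, $\Xi_\lambda$ fixes the $s$-component for $s$ small), so they induce diffeomorphisms of the boundary path spaces and lift to bundle maps on $\V^{\gCoul,\tau,\red}$, via exactly the arguments of Proposition~\ref{prop:Xixy-paths}.

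Next, I would define, for a fixed reducible trajectory $[\gamma_\infty]\in \Mbreve^{\agCoul,\red}([x_\infty],[y_\infty])$ and a small neighborhood $U$ inside $\B^{\gCoul,\tau,\red}_k([x_\infty],[y_\infty])/\R$, the parametrized section
\begin{equation*}
 S^\red([\gamma], r) = \bigl( (\Xi_\lambda)_* \circ \F^{\gCoul,\tau,\red}_{\qml} \circ \Xi_\lambda^{-1}([\gamma]),\, r \bigr),
\end{equation*}
with $\lambda = f^{-1}(|r|)$ as in Proposition~\ref{prop:stabilitynearby}. By Proposition~\ref{prop:Xixy-paths} applied to the boundary bundles, $S^\red$ is $C^1$, and at $r=0$ its derivative is upper-triangular with diagonal blocks $(\D^\tau_{\gamma_\infty} \Fqgctau)^\del$ and the identity. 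After quotienting by $\R$, the boundary linearization becomes an invertible (index $0$, surjective) map between Banach spaces, so the IFT produces a unique solution $[\gamma_\lambda]\in U$ for $\lambda$ large. The argument of Proposition~\ref{prop:stabilitynearby} used with Sobolev index $k+1$ shows the representative lies in $(\vml\cap B(2R))^\sigma$; since it is reducible by construction, it defines an element of $\Mbreve^{\agCoul,\red}([x_\lambda],[y_\lambda])$.

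The main obstacle, as in Proposition~\ref{prop:EquivalenceTrajectories}, is the uniqueness/completeness half: showing that every approximate reducible trajectory in $\Mbreve^{\agCoul,\red}([x_\lambda],[y_\lambda])$ arises from this IFT construction. For this I would argue by contradiction: any hypothetical extra sequence $[\gamma_n]$ with $\lambda_n \to \infty$, not of the form $[\gamma_{\lambda_n}^\ell]$, is a sequence of reducible unparameterized trajectories, hence by Proposition~\ref{prop:brokenconvergence} (applied to the reducible locus) has a subsequence converging to an unparameterized broken reducible trajectory of $\Xqagcsigma$. Because the grading is zero and we are boundary-obstructed, the argument of Corollary~\ref{cor:index1convergence}, adapted to the reducible setting (now the only possible configuration of a broken limit with total grading zero and reducible components is an unbroken one, since any breaking would force an intermediate reducible stationary point and split the grading as a sum of two nonnegative integers summing to zero in the boundary-obstructed dimension formula, forcing each piece to already be a constant trajectory), rules out breaking. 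Then Proposition~\ref{prop:L2k} upgrades the convergence of $\Xi_{\lambda_n}([\gamma_n])$ to $L^2_k$ convergence to some $[\gamma_\infty^\ell]$, placing $\Xi_{\lambda_n}([\gamma_n])$ inside the neighborhood $U^\ell$ and contradicting the local uniqueness from the IFT step. The most delicate point will be checking that the broken-trajectory convergence and the inverse function theorem both respect the reducible locus cleanly, which follows from the $\iota$-equivariance of $\Xi_\lambda$ and the fact that $(\D^\tau \Fqgctau)^\del$ is the genuine linearization of $\F^{\gCoul,\tau,\red}_\q$.
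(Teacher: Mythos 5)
Your proposal matches the paper's own (very brief) proof exactly: apply the inverse function theorem in the reducible subspace of $\widetilde{\B}_k^{\gCoul,\tau}([x_\infty],[y_\infty])/\R$, where the linearized operator $(\D^\tau\Fqgctau)^\del$ is surjective of index zero, and then use the compactness machinery (Proposition~\ref{prop:brokenconvergence}, Corollary~\ref{cor:index1convergence} case (2), Proposition~\ref{prop:L2k}) to rule out extra approximate trajectories. Your expansion of the details is sound, and the observations that $\Xi_\lambda$ preserves the reducible locus and that regularity in the boundary-obstructed sense means exactly surjectivity of the restricted operator are the right supporting facts.
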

\begin{proof}
The proof is similar to that of Proposition~\ref{prop:EquivalenceTrajectories}, but with an application of the inverse function theorem in the subspace of $\widetilde{B}_k^{\gCoul, \tau}([x_{\infty}], [y_{\infty}])/\R$ consisting of reducible paths.  In this subspace, the corresponding linear operator will again be index zero, and in fact invertible.    
\end{proof}

\section{The $U$-maps}\label{subsec:U-maps}
In Section~\ref{sec:mfh} we gave the definition of the $U$-maps in monopole Floer homology, in terms of cut-down moduli spaces $M([x], [y]) \cap \Zs$ and $M^{\red}([x], [y]) \cap \Zs$; cf. Equations \eqref{eqn:mU} and \eqref{eqn:mUred}. Moreover, in Section~\ref{sec:UCoulomb} we identified these cut-down moduli spaces with similar ones in Coulomb gauge:
\begin{equation}
\label{eq:cutdownmoduli}
  M^{\agCoul}([x], [y]) \cap \Zs^{\agCoul} \ \text{and} \ M^{\agCoul, \red}([x], [y]) \cap \Zs^{\agCoul}.
  \end{equation}
Here, $\Zs^{\agCoul}$ is the zero set of a section $\zeta^{\agCoul}$ of the natural complex line bundle $E^{\agCoul, \sigma}$ over $W_{k}^{\sigma}/S^1 \subset W_{k-1/2}^\sigma/S^1$. The section is chosen so that the intersections in \eqref{eq:cutdownmoduli} are transverse.

We would like to further identify the moduli spaces from \eqref{eq:cutdownmoduli} with those consisting of approximate trajectories, that appear in the construction of equivariant Morse homology in finite dimensions; see Section~\ref{subsec:UMorse}. To obtain the cut-down moduli spaces in the approximations $(\vml)^{\sigma}/S^1$, we simply consider the restriction of $\zeta^{\agCoul}$ to those spaces.

Recall from Chapter~\ref{sec:MorseSmale} that we chose the perturbation $\q$ such that, for all $\lambda\in \{\llambda_1, \llambda_2, \dots \}$ sufficiently large, all the moduli spaces of flow lines for $\Xqmlgcsigma$ on $(B(2R) \cap \vml)^{\sigma}/S^1$ are regular. Since there is a countable number of such moduli spaces, we can choose the section $\zeta^{\agCoul}$ such that it intersects all these moduli spaces (including the non-approximate ones) transversely. This implies that we can define the $U$-maps on the corresponding Morse homology groups as in Section~\ref{subsec:UMorse}.

We now focus on the cut-down moduli spaces of the form
$$M^{\agCoul}([x_{\lambda}], [y_{\lambda}]) \cap \Zs^{\agCoul} \ \text{and} \ M^{\agCoul, \red}([x_{\lambda}], [y_{\lambda}]) \cap \Zs^{\agCoul},$$
for $[x_{\infty}], [y_{\infty}] \in \Crit_{\N}$. These moduli spaces suffice to determine the $U$-action in gradings from $-N$ to $N$. 

\begin{proposition}\label{prop:Uagree}
$(a)$ Suppose $[x_{\infty}], [y_{\infty}] \in \Crit_{\N}$ have $\gr([x_{\infty}], [y_{\infty}])=2$ and are not boundary obstructed. For $\lambda = \llambda_i \gg 0$, there is a one-to-one correspondence between $ M^{\agCoul}([x_{\infty}], [y_{\infty}]) \cap \Zs^{\agCoul}$ and $ M^{\agCoul}([x_{\lambda}], [y_{\lambda}]) \cap \Zs^{\agCoul}$.

$(b)$ Let $[x_{\infty}], [y_{\infty}] \in \Crit_{\N}$ be reducible stationary points with $\gr([x_{\infty}], [y_{\infty}])=1$ and boundary obstructed. For $\lambda = \llambda_i \gg 0$, there is a one-to-one correspondence between $ M^{\agCoul, \red}([x_{\infty}], [y_{\infty}]) \cap \Zs^{\agCoul}$ and $M^{\agCoul, \red}([x_{\lambda}], [y_{\lambda}])\cap \Zs^{\agCoul}$.
\end{proposition}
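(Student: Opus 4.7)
The plan is to mirror the strategy of Propositions~\ref{prop:EquivalenceTrajectories} and \ref{prop:EquivalenceTrajectoriesObstructed}, adapted to incorporate the cut-down condition imposed by $\Zs^{\agCoul}$. The proof has two stages: an inverse-function-theorem argument producing a unique nearby approximate cut-down trajectory near each element of the infinite-dimensional cut-down moduli space; and a convergence-based uniqueness argument ruling out any other approximate cut-down trajectories.

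For (a), at each $[\gamma_\infty] \in M^{\agCoul}([x_\infty], [y_\infty]) \cap \Zs^{\agCoul}$ I would apply the inverse function theorem to the parametrized section
\begin{align*}
S: \widetilde{\B}^{\gCoul,\tau}_k([x_\infty], [y_\infty]) \times (-1,1) &\to \V^{\gCoul,\tau}_{k-1} \times E^{\agCoul,\sigma}_{\gamma_\infty(0)} \times \R, \\
([\gamma], r) &\mapsto \bigl( (\Xi_\lambda)_* \Fqmlgctau (\Xi^{-1}_\lambda [\gamma]),\ \zeta^{\agCoul}(\gamma(0)),\ r \bigr),
\end{align*}
where $\lambda = f^{-1}(|r|)$ and we identify the fibers of $E^{\agCoul,\sigma}$ near $\gamma_\infty(0)$ with $\cc$ via a local trivialization. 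At $([\gamma_\infty], 0)$ the derivative is upper-triangular in block form, and the essential block is the map $(\D^\tau_{[\gamma_\infty]} \Fqgctau,\ \zeta^{\agCoul}_* \circ \mathrm{ev}_0)$ from $\K^{\gCoul,\tau}_{k,\gamma_\infty}$ to $\V^{\gCoul,\tau}_{k-1} \oplus \cc$. This is Fredholm of index zero (as $\D^\tau \Fqgctau$ has index $2$ by Lemma~\ref{lem:GrPres}, balanced by the real rank-$2$ codomain $\cc$), and it is invertible because $\D^\tau_{[\gamma_\infty]} \Fqgctau$ is surjective by the admissibility of $\q$ (Proposition~\ref{prop:Qgammasurjective}), while transversality of $\Zs^{\agCoul}$ with $M^{\agCoul}([x_\infty],[y_\infty])$ makes the restriction of $\zeta^{\agCoul}_* \circ \mathrm{ev}_0$ to the kernel $T_{[\gamma_\infty]} M^{\agCoul}$ an isomorphism onto $\cc$. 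The inverse function theorem then gives, for each $r>0$ small (equivalently $\lambda \gg 0$), a unique nearby solution $[\gamma]$, and $[\gamma_\lambda] := \Xi^{-1}_\lambda([\gamma])$ is the desired approximate cut-down trajectory. By rerunning the argument with Sobolev exponent $k+1$, as in Proposition~\ref{prop:stabilitynearby}, one sees that $[\gamma_\lambda]$ lies in $(\vml \cap B(2R))^\sigma/S^1$.

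For the second stage, suppose for contradiction there exist $\lambda_n = \llambda_{i_n} \to \infty$ and $[\gamma_n] \in M^{\agCoul}([x_{\lambda_n}], [y_{\lambda_n}]) \cap \Zs^{\agCoul}$ not arising from step one. By choosing $\zeta^{\agCoul}$ generically we may assume $\Zs^{\agCoul}$ is disjoint from the finite set $\Crit_{\N}$, so the points $\gamma_n(0)$ lie in a compact subset of $W^\sigma/S^1$ uniformly bounded away from stationary points. Applying Proposition~\ref{prop:brokenconvergence} to a subsequence yields convergence of $[\cgamma_n]$ to an unparameterized broken trajectory in $W^\tau_{loc}(\R\times Y)/S^1$. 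Because $\gamma_n(0)$ remains away from stationary points, the cut point $t=0$ cannot slide into a neck region of the limit; this forces the limit to consist of a single unbroken component $[\gamma_\infty] \in M^{\agCoul}([x_\infty],[y_\infty])$ with $\gamma_\infty(0) \in \Zs^{\agCoul}$. Proposition~\ref{prop:L2k} then upgrades the convergence of $\Xi_{\lambda_n}([\gamma_n])$ to $L^2_k(\R\times Y)$, contradicting the uniqueness in step one. For part (b) the same argument applies verbatim after restricting the setup to the reducible locus, which is preserved by $\Xi_\lambda$, by $\Fqmlgctau$, and by the section $\zeta^{\agCoul}$; the only change is the index bookkeeping, where $\D^\tau_{[\gamma_\infty]} \Fqgctau$ restricted to the reducible subspace is Fredholm of index $2$ (matching $\dim M^{\agCoul,\red} = \mathrm{ind}\,Q + 1 = 2$ in the boundary-obstructed case), and transversality of $\Zs^{\agCoul}$ with the reducible moduli space again renders the combined linearization invertible.

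The main obstacle is the second stage: one must ensure that the limiting broken trajectory is actually unbroken, so that the contradiction with step one goes through. This is why the genericity assumption $\Zs^{\agCoul} \cap \Crit_{\N} = \emptyset$ is essential, as it pins the parameterization $t=0$ to a single component of any would-be broken limit; given this, the grading constraint $\gr([x_\infty],[y_\infty])=2$ (for (a)) or $=1$ with boundary-obstruction (for (b)), combined with the classification of possible breakings of a dimension-two moduli space, forces the limit to be unbroken. The rest of the argument is a routine adaptation of the inverse-function-theorem and $L^2_k$ convergence machinery already developed in Chapters~\ref{sec:criticalpoints} and \ref{sec:trajectories1}.
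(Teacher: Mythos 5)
Your proposal is correct and takes essentially the same route as the paper's own proof, which explicitly frames Proposition~\ref{prop:Uagree} as "the analogue of Proposition~\ref{prop:EquivalenceTrajectories}" and applies the inverse function theorem to the operator $\D^\tau_{[\gamma]}\Fqgctau \oplus \D_{[\gamma(0)]}\zeta^{\agCoul}$ before appealing to compactness of the cut-down approximate moduli spaces to rule out spurious solutions. You spell out two points the paper leaves implicit: the $(-1,1)$ parameter bookkeeping (copied from Proposition~\ref{prop:stabilitynearby}) and, more usefully, the reason a subsequential limit of cut-down approximate trajectories must be unbroken. The paper simply asserts that the limit "must be an element of $M^{\agCoul}([x_\infty],[y_\infty])\cap\Zs^{\agCoul}$," whereas you correctly note that this needs both the genericity assumption $\Zs^{\agCoul} \cap \Crit_\N = \emptyset$ (to prevent the marked point $t=0$ from drifting into a neck) and the index classification of possible breakings as in the proof of Corollary~\ref{cor:index1convergence}; your filling in of this gap is a genuine improvement in explicitness, even if the underlying strategy is identical.
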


\begin{proof}
Part (a) is the analogue of Proposition~\ref{prop:EquivalenceTrajectories}, and its proof is similar. Since the grading difference between $[x_{\infty}]$ and $[y_{\infty}]$ (and hence also between $[x_{\lambda}]$ and $[y_{\lambda}]$) is two, the cut-down moduli spaces in question are zero-dimensional. Consider a trajectory $$[\gamma] \in M^{\agCoul}([x_{\infty}], [y_{\infty}]) \cap \Zs^{\agCoul}.$$ 
Recall that the intersection is taken at time $t=0$, that is, we have $[\gamma(0)] \in \Zs^{\agCoul}$. The fact that the moduli space $M^{\agCoul}([x_{\infty}], [y_{\infty}]) \cap \Zs^{\agCoul}$ is cut out transversely at $[\gamma]$ can be rephrased in terms of the invertibility of the linear operator
$$ \D^\tau_{[\gamma]}\Fqgctau \oplus \D_{[\gamma(0)]} \zeta^{\agCoul}: \T_{k,[\gamma]}^{\gCoul, \tau}([x_{\infty}], [y_{\infty}]) \to \V^{\gCoul, \tau}_{k-1,[\gamma]}(Z) \oplus E^{\agCoul, \sigma}_{[\gamma(0)]}.$$

An application of the inverse function theorem shows that in a neighborhood of $[\gamma]$ there is a unique element of $M^{\agCoul}([x_{\lambda}], [y_{\lambda}]) \cap \Zs^{\agCoul}$ (after applying the corresponding diffeomorphism $\Xi_{\lambda}$). This produces at least as many elements of $M^{\agCoul}([x_{\lambda}], [y_{\lambda}]) \cap \Zs^{\agCoul}$ as there are in $M^{\agCoul}([x_{\infty}], [y_{\infty}]) \cap \Zs^{\agCoul}.$ To see that there are no more, notice that, given a sequence $[\gamma_n] \in  M^{\agCoul}([x_{\lambda_n}], [y_{\lambda_n}]) \cap \Zs^{\agCoul}$ with $\lambda_n \to \infty$, the sequence $\Xi_{\lambda_n}([\gamma_n])$ admits a convergent subsequence, and the limit must be an element of $M^{\agCoul}([x_{\infty}], [y_{\infty}]) \cap \Zs^{\agCoul}.$

Part (b) is the analogue of Proposition~\ref{prop:EquivalenceTrajectoriesObstructed}, and again the  proof is similar.
\end{proof}

\section{Orientations}
Recall that in Section~\ref{sec:OrientCoulomb} we oriented the moduli spaces of trajectories of $\Xqgcsigma$, using an orientation data system $o^{\gCoul}$ in Coulomb gauge. This was based on trivializing the determinant lines $\det(P_{\gamma}^{\gCoul} )$ for the operators $P_{\gamma}^{\gCoul} $ from \eqref{eq:Pgammagc}. The same procedure, but using operators $P_{\gamma}^{\gCoul} $ defined with the perturbation $\q^{\lambda}$ instead of $\q$, can be used to orient the moduli spaces of trajectories of $\Xqmlgcsigma$ (in infinite dimensions).  %The implicit function theorem allows us to canonically identify the determinant lines coming from $\qml$ with those of $\q$; the properties of $\Xi_\lambda$ guarantee that these identificationsrespect concatenation of paths.  
 
We would like to relate the orientations of the moduli spaces of trajectories of $\Xqmlgcsigma$ to the orientations in (finite dimensional) Morse homology and to the orientations of the moduli spaces of trajectories of $\Xqgcsigma$ used in the definition of monopole Floer homology (recast in Coulomb gauge). For the moduli spaces of approximate Seiberg-Witten trajectories in the setting of finite dimensional Morse homology, we use the third construction of orientations discussed in Section~\ref{sec:or1}, in terms of a specialized coherent orientation (cf. Definition~\ref{def:sco}). This is based on trivializing determinant lines $\det(\tL_{\gamma})$, for the operators $\tL_{\gamma}$ defined in \eqref{eq:tLgamma}.

To relate the operators $P_{\gamma}^{\gCoul} $ (with perturbation $\qml$) and $\tL_{\gamma}$, we proceed as in the proof of Proposition~\ref{prop:RelationFinite}. Indeed, $P_{\gamma}^{\gCoul} $ is the analogue of the operator $Q_{\gamma}^{\gCoul} $ considered in that proposition, and $\tL_{\gamma}$ is the analogue of \eqref{eq:HessFinite}; in both cases, what is different here is that we work on compact cylinders and add spectral projections on the boundaries.  Recall from the proof of Proposition~\ref{prop:RelationFinite} that we have an orthogonal splitting $Q_{\gamma}^{\gCoul} = \tL_{\gamma} \oplus F_\gamma$, where $F_\gamma : L^2_j(\R; (\vml)^\perp) \to L^2_{j-1}(\R; (\vml)^\perp)$ is defined by 
$$F_\gamma(b(t), \psi(t)) = (\frac{d}{dt} b(t) + *d(b(t)), \frac{d}{dt} \psi(t) + D\psi(t)  - \langle \phi(t), D \phi(t) \rangle_{L^2} \psi(t)).$$  
Write $\widetilde{F}_\gamma$ for the analogous operator on the compact cylinder coupled with spectral projections.  Note that while the operator $\widetilde{F}_\gamma$ depends on the path, the domain and target do not.  We also write $\widetilde{F}_0$ for the analogous operator without the term $\langle \phi(t), D \phi(t) \rangle_{L^2} \psi(t)$.  

We seek to show that there are trivializations of $\det(\widetilde{F}_\gamma)$ which respect concatenations of paths.  First, note that the spectral decomposition of $(\vml)^\perp$ corresponding to $
\widetilde{F}_\gamma$ is independent of $\gamma$; indeed, since $\phi(t) \in \vml$ and $\psi(t) \in (\vml)^\perp$, we have that $\| \langle \phi(t), D \phi(t) \rangle_{L^2} \psi(t)\|_{L^2(Y)}$ $ < \lambda \| \psi(t) \|_{L^2(Y)}$, while $\|D \psi(t)\|_{L^2(Y)} \geq \lambda \| \psi(t) \|_{L^2(Y)}$ and the claim follows.  Further, we have the same spectral decomposition for any convex combination of $\widetilde{F}_\gamma$ and $\widetilde{F}_0$.  Note that $(1-r)\widetilde{F}_0 + r \widetilde{F}_\gamma$ differs from $\tilde{F}_0$ by a compact operator.  Thus, we have a homotopy between $\tilde{F}_\gamma$ and $\widetilde{F}_0$ through Fredholm operators, which induces an identification between $\det(\widetilde{F}_\gamma)$ and $\det(\widetilde{F}_0)$. This identification can easily be seen to respect concatenations.  Therefore, we have an identification of $\det(\widetilde{F}_\gamma)$, for each $\gamma$,with a fixed determinant line bundle (independent of $\gamma$) which respects concatenation.  Thus, we can find compatible trivializations of $\det(\widetilde{F}_\gamma)$ by simply fixing a trivialization of $\det(\widetilde{F}_0)$.  

The above discussion shows that a trivialization of $\det(P_{\gamma}^{\gCoul} )$ is equivalent to a trivialization of $\det(\tL_{\gamma})$ in a way which respects concatenation of paths.  It follows that an orientation data system in Coulomb gauge gives rise to a specialized coherent orientation on the finite dimensional approximation $(B(2R) \cap \vml)^{\sigma}/S^1$. Once we fix these orientations, we obtain the following.

\begin{proposition}
The bijective correspondences described in Propositions~\ref{prop:EquivalenceTrajectories}, \ref{prop:EquivalenceTrajectoriesObstructed} and \ref{prop:Uagree} are orientation-preserving.
\end{proposition}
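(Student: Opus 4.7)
The plan is to reduce the statement to a comparison of trivializations of determinant lines, and then carry out that comparison via the same homotopy-of-Fredholm-operators machinery already used in Lemma~\ref{lem:GrPres} and in the discussion of the specialized coherent orientation preceding the proposition. Fix $[\gamma_\infty] \in \breve{M}^{\agCoul}([x_\infty], [y_\infty])$ and the corresponding $[\gamma_\lambda] \in \breve{M}^{\agCoul}([x_\lambda], [y_\lambda])$ from Proposition~\ref{prop:EquivalenceTrajectories}. By construction, the orientation at $[\gamma_\infty]$ comes from the orientation data system $o^{\gCoul}$ applied to $\det(P_{\gamma_\infty}^{\gCoul})$, while the orientation at $[\gamma_\lambda]$ comes from the specialized coherent orientation on $(B(2R) \cap \vml)^\sigma/S^1$, which via the identification $\det(\tL_{\gamma_\lambda}) \cong \det(P_{\gamma_\lambda, \qml}^{\gCoul})$ recalled just before the proposition, equals $o^{\gCoul}$ applied to $\det(P_{\gamma_\lambda}^{\gCoul})$ built from $\qml$. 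So the task reduces to showing that, after pushing forward by $\Xi_\lambda$, these two trivializations agree.

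The key step is the following. I would use the bundle map $(\Xi_\lambda)_*$ from Proposition~\ref{prop:Xixy-paths}\eqref{xi:Vgc} to push $P_{\gamma_\lambda, \qml}^{\gCoul}$ forward to an operator on the infinite-dimensional path space at $\Xi_\lambda([\gamma_\lambda])$, which is close to $[\gamma_\infty]$ by Proposition~\ref{prop:L2k}. Then, running the interpolation argument from the proof of Lemma~\ref{lem:GrPres}, together with the interpolation of boundary spectral projections already used in Section~\ref{sec:interpol}, I would connect this pushed-forward operator to $P_{\gamma_\infty, \q}^{\gCoul}$ through a continuous family of Fredholm operators. At each endpoint this family restricts to the invertible extended Hessians discussed in Lemmas~\ref{lem:eHrho} and~\ref{lem:eHrho2}, so the family of boundary spectral subspaces deforms continuously. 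This yields a canonical identification of determinant lines compatible with the concatenation-based definition of an orientation data system, and the two trivializations match under it.

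The cases of Proposition~\ref{prop:EquivalenceTrajectoriesObstructed} and Proposition~\ref{prop:Uagree} follow by essentially the same recipe, with the boundary-obstructed case requiring one to restrict the homotopy to the reducible locus (as in the proof of Proposition~\ref{prop:EquivalenceTrajectoriesObstructed}), and the cut-down case requiring one to augment the Fredholm operator with the common evaluation map against the section $\zeta^{\agCoul}$ at $t=0$; this extra factor is identical on both sides of the correspondence and hence contributes trivially to the comparison of orientations. The main obstacle I anticipate is the bookkeeping at the endpoints: one must check that the continuous family of Fredholm operators built above, when restricted near the punctures, deforms inside the class of operators used to define $\Lambda([x],[y])$ in Section~\ref{sec:OrientCoulomb}, so that the induced isomorphism of determinant lines coincides with the canonical identification $\Lambda_{\qml}([x_\lambda],[y_\lambda]) \cong \Lambda_{\q}([x_\infty],[y_\infty])$ implicit in Proposition~\ref{prop:orientationsgc} and the discussion of orientation data systems preceding the proposition. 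Once this identification at the endpoints is verified, the compatibility of the global trivialization with concatenation is automatic, and the proposition follows.
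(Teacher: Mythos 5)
Your proposal is correct and in spirit matches the paper's proof: both reduce to comparing trivializations of the determinant lines $\det(P_\gamma^{\gCoul})$, and both resolve the comparison by a continuity argument in $\lambda$. However, the way the continuity is realized differs. The paper's proof is terse and direct: it observes that as $\lambda$ ranges over $[\lambda_0,\infty]$, the moduli spaces $\breve{M}^{\agCoul}([x_\lambda],[y_\lambda])$ form a continuous family (this is exactly the content of Propositions~\ref{prop:stabilitynearby} and \ref{prop:L2k}), and the determinant line bundles $\det(P_\gamma^{\gCoul})$ vary continuously over this family; the trivializations at finite $\lambda$ and at $\infty$ must therefore agree, with no need for a separate interpolation of operators. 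Your route realizes the same continuity by hand: you push the $\lambda$-operator forward by $(\Xi_\lambda)_*$ to sit over a path near $[\gamma_\infty]$ and then interpolate through Fredholm operators as in Lemma~\ref{lem:GrPres}, tracking the boundary spectral projections. This is more work but is also essentially sound; the boundary bookkeeping you flag is indeed the crux, and it is handled automatically in the paper's phrasing because the endpoints $[x_\lambda],[y_\lambda]$ vary continuously and remain non-degenerate throughout the family, so the spectral subspaces of the extended Hessians deform without jumps. The observations you make about the cut-down case (the section $\zeta^{\agCoul}$ is the same on both sides and contributes trivially) and the boundary-obstructed case (restrict the homotopy to the reducible locus) are both correct and match the intent of the paper. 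In short, your argument would work, though the paper's one-paragraph continuity-in-$\lambda$ argument is the leaner route.
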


\begin{proof}
From the above construction, we see that, on the moduli spaces of approximate Seiberg-Witten trajectories, the signs defined using $P_{\gamma}^{\gCoul} $ (with $\qml$) coincide with those defined using $\tL_{\gamma}$. 

Further, as $\lambda$ varies on an interval $[\lambda_0, \infty]$ (including infinity), the moduli spaces $\Mbreve^{\agCoul}([x_{\lambda}], [y_{\lambda}])$ form a continuous family, carrying determinant line bundles given by $\det(P_{\gamma}^{\gCoul} )$. These bundles vary continuously, so the trivialization at any finite $\lambda$ agrees with the one at $\infty$, under the correspondence from Proposition~\ref{prop:EquivalenceTrajectories}. 

The same argument applies to the other two correspondences.
\end{proof}

\chapter {The equivalence of the homology theories}
\label{sec:equivalence}

We are now ready to prove the results advertised in the Introduction. 

\begin{proof}[Proof of Theorem~\ref{thm:Main}]
Recall that the goal is to establish an absolutely graded isomorphism between $\widetilde{H}^{S^1}_*(\SWF(Y,\spinc))$ and $\hmto(Y,\spinc)$.  First, by Proposition~\ref{prop:perturbedspectrum}, we have that 
\begin{equation}\label{eq:swf-swfq-homology}
\widetilde{H}^{S^1}_*(\SWF(Y,\spinc)) \cong \widetilde{H}^{S^1}_*(\SWF_\q(Y,\spinc)), 
\end{equation}
where $\SWF_\q(Y,\spinc)$, defined in Section~\ref{sec:verycompact}, is the analogous construction of the Floer spectrum using $\Xqmlgc = l + \pml c_\q$ instead of $l + \pml c$.  Here, we require that $\q$ is a very tame, admissible perturbation satisfying the conclusions of Proposition~\ref{prop:ND2}, Proposition~\ref{prop:AllMS}, and Proposition~\ref{prop:MorseSmales}.   Recall that 
$$
\SWF_\q(Y,\spinc) = \Sigma^{-n(Y,\spinc,g)\mathbb{C}} \Sigma^{-W^{(-\lambda,0)}} I^\lambda_\q,
$$
where $I^\lambda_\q$ is the equivariant Conley index constructed from the flow of $l + \pml c_\q$, analogous to $I^\lambda$ constructed in Section~\ref{sec:conleyswf}.

In Propositions~\ref{prop:AllMS} and ~\ref{prop:MorseSmales} we showed that for $\lambda = \llambda_i \gg 0$, $\Xqmlgc$ is a Morse-Smale equivariant quasi-gradient on $\vml \cap B(2R)$. Thus, we can construct a Morse complex $(\check{C}_\lambda, \check{\partial}_\lambda)$ for $\Xqmlagcsigma$ on $(\vml \cap B(2R))^\sigma/S^1$ as defined in Sections~\ref{subsec:morseboundary} and Section~\ref{subsec:CircleMorse}.  By \eqref{eq:EquivConleyMorse}, we have that 
\begin{equation}\label{eq:Ilambda-morse}
\widetilde{H}^{S^1}_i(I^\lambda_\q) \cong H_{i}(\check{C}_\lambda, \check{\partial}_\lambda)  ,
\end{equation}
for $0 \leq i \leq n_\lambda - 1$, where $n_\lambda$ is the connectivity of the pair $\bigl(I^\lambda_\q, \bigl( I^\lambda_\q - (I^\lambda_\q)^{S^1} \bigr) \cup *\bigr)$.  Note that this isomorphism respects the $\mathbb{Z}[U]$-module structures on each of the homologies.  

By shifting gradings, we can rephrase \eqref{eq:Ilambda-morse} as 
\begin{equation}
\label{eq:equalityM}
\widetilde{H}^{S^1}_i(\SWF_\q(Y,\spinc)) \cong H_{i + \dim W^{(-\lambda,0)} + 2n(Y,\spinc,g)}(\check{C}_\lambda, \check{\partial}_\lambda),
\end{equation}
for $- \dim W^{(-\lambda,0)} - 2n(Y,\spinc,g) \leq i \leq n_\lambda - 1 - \dim W^{(-\lambda,0)} - 2n(Y,\spinc,g)$. Set
$$ M_{\lambda} := \min \ \{ \dim W^{(-\lambda,0)} + 2n(Y,\spinc,g), \ n_\lambda - 1 - \dim W^{(-\lambda,0)} - 2n(Y,\spinc,g) \},$$
so that the isomorphism in \eqref{eq:equalityM} holds in the grading range $[-M_{\lambda}, M_{\lambda}]$.

We claim that $M_{\lambda} \to \infty$ as $\lambda = \llambda_i \to \infty$. It is clear that $\dim W^{(-\lambda,0)}  \to \infty$, so what we need to check is that 
\begin{equation}
\label{eq:MlambdaEstimate}
n_\lambda  - \dim W^{(-\lambda,0)} \to \infty.
\end{equation}

Let us investigate how the connectivity of the pair $\bigl(I^\lambda_\q, \bigl( I^\lambda_\q - (I^\lambda_\q)^{S^1} \bigr) \cup *\bigr)$ changes as we increase $\lambda$. Fix a sufficiently large eigenvalue cut-off $\mu = \llambda_{i}$. (Recall that this implies that neither $\mu$ nor $-\mu$ are eigenvalues.) For $\lambda=\llambda_{j} \geq \mu$, it is proved in \cite[Section 7]{Spectrum} (for $\q=0$, but the same argument works for any $\q$) that
\begin{equation}
\label{eq:changeConley}
I^\lambda_\q \simeq I^{\mu}_\q \wedge I(l)^{(\mu, \lambda)}.
\end{equation}
Here, $I(l)^{(\mu, \lambda)}$ is the Conley index associated to the isolated invariant set $\{0\}$ in the linear flow induced by $l$ on the complementary subspace to $W^{\mu}$ in $\vml$, that is, on $W^{(-\lambda, -\mu)} \oplus W^{(\mu, \lambda)}$. Let us decompose this complementary subspace according to the sign of the eigenvalues, and also according to the type of eigenvectors (connections or spinors). Specifically, let
\begin{align*}
a^{\mu, \lambda}_+ &= \dim (W^{(\mu, \lambda)} \cap \ker d^*),& \ \ \ b^{\mu, \lambda}_+ &= \dim (W^{(\mu, \lambda)}\cap \Gamma(\Spin)) \\
a^{\mu, \lambda}_- &= \dim (W^{(-\lambda, -\mu)} \cap \ker d^*),& \ \ \ b^{\mu, \lambda}_- &= \dim (W^{(-\lambda, -\mu)}\cap \Gamma(\Spin)),
\end{align*}
where $\dim$ denotes the real dimension. Then, the Conley index of the linear flow is
$$  I(l)^{(\mu, \lambda)} \simeq D(\rr^{a^{\mu, \lambda}_+})_+ \wedge D(\cc^{b^{\mu, \lambda}_+})_+ \wedge (\rr^{a^{\mu, \lambda}_-})^+ \wedge (\cc^{b^{\mu, \lambda}_-})^+.$$ 

We used here the standard notation in homotopy theory: If $V$ is a vector space, then $D(V)_+$ is the union of the unit disk in $V$ with one disjoint basepoint, and $V^+$ is the one-point compactification of $V$.

Using \eqref{eq:changeConley}, and keeping track of the fixed point sets in each Conley index, we obtain
$$(I^\lambda_\q, (I^\lambda_\q)^{S^1}) \simeq \Bigl( D(\rr^{a^{\mu, \lambda}_+})_+ \wedge D(\cc^{b^{\mu, \lambda}_+})_+ \wedge \Sigma^{a^{\mu, \lambda}_- \rr} \Sigma^{b^{\mu, \lambda}_- \cc} (I^\mu_\q), \ D(\rr^{a^{\mu, \lambda}_+})_+ \wedge \Sigma^{a^{\mu, \lambda}_- \rr}   
(I^\mu_\q)^{S^1}\Bigr).$$

Observe that when we change a pair $(X, Y)$ into $(D(\rr)_+ \wedge X, D(\rr)_+ \wedge Y)$, $(D(\cc)_+  \wedge X, Y)$, $(\Sigma^{\rr} X, \Sigma^{\rr}Y)$, or $(\Sigma^{\cc} X, Y)$, then the connectivity of the pair $\bigl(X, (X-Y) \cup *\bigr)$ increases by $0$, $2$, $1$ and $2$, respectively. Therefore, 
$$ n_{\lambda} = n_{\mu} + 2b^{\mu, \lambda}_+ + a^{\mu, \lambda}_- + 2b^{\mu, \lambda}_-.$$
Since $\dim W^{(-\lambda,0)} = \dim W^{(-\mu,0)} + a^{\mu, \lambda}_- + 2b^{\mu, \lambda}_-$, we see that
$$n_\lambda  - \dim W^{(-\lambda,0)}= n_{\mu} - \dim W^{(-\mu,0)}+2b^{\mu, \lambda}_+  \to \infty \ \ \text{as } \lambda= \llambda_i \to \infty.$$
This proves \eqref{eq:MlambdaEstimate}, and we conclude that $M_{\lambda} \to \infty$. Thus, the isomorphism \eqref{eq:equalityM} holds in a grading range that gets larger as $\lambda = \llambda_i \to \infty$.

While the chain groups of $\check{C}_\lambda$ consist of stationary points of $\Xqmlagcsigma$, the natural gradings from Morse homology are not the absolute gradings on stationary points that we have been working with throughout this monograph.  Recall that the absolute grading $\gr^{\SWF}_\lambda$ on stationary points of $\Xqmlagcsigma$ defined in \eqref{eq:gr-lambda} includes a shift by $\dim W^{(-\lambda,0)} + 2n(Y,\spinc,g)$.  Therefore, we define the complex $\cmto^\lambda(Y,\spinc,\q)$ by taking the Morse complex $(\check{C}_\lambda, \check{\partial}_\lambda)$ and shifting gradings down by $- \dim W^{(-\lambda,0)} + 2n(Y,\spinc,g)$, so stationary points have grading given by $\gr^{\SWF}_\lambda$.  We denote the homology of $\cmto^\lambda$ by $\hmto^\lambda$.  With this definition, we have 
\begin{equation}\label{eq:swf-hmtolambda}
\widetilde{H}^{S^1}_i(\SWF_\q(Y,\spinc)) \cong \hmto^\lambda_{i}(Y,\spinc,\q)
\end{equation}
for $i \in [-M_{\lambda}, M_{\lambda}]$. Therefore, it remains to establish an isomorphism between $\hmto^\lambda_{i}(Y,\spinc,\q)$ and $\hmto_i(Y,\spinc,\q)$.  In other words, we must relate the Morse homology defined in terms of stationary points and trajectories of $\Xqmlagcsigma$ versus the Floer homology defined in terms of stationary points and trajectories of $\Xqsigma$.    

First, by the work of Chapter~\ref{sec:coulombgauge} (more precisely described in Section~\ref{sec:CoulombSummary}), we can compute $\cmto(Y,\spinc,\q)$ using stationary points and trajectories of $\Xqagcsigma$ instead of $\Xqsigma$.  In a fixed grading range $[-N,N]$, Proposition~\ref{prop:stationarycorrespondence} shows that for $\lambda = \llambda_i \gg 0$, there is a grading-preserving identification between the stationary points of $\Xqagcsigma$ and the stationary points of $\Xqmlagcsigma$ in $(\vml \cap B(2R))^\sigma/S^1$, and thus we have an identification between the chain groups $\cmto^\lambda(Y,\spinc,\q)$ and $\cmto(Y,\spinc,\q)$ in the grading range $[-N,N]$.

By the work of Chapter~\ref{sec:trajectories2}, we have established an identification between the signed counts of index one (respectively boundary-obstructed index two) trajectories for $\Xqagcsigma$ and for $\Xqmlagcsigma$, in the given grading range.  Since these are the moduli spaces that are counted in the differentials for $\cmto^\lambda$ and $\cmto$ (defined in \eqref{eq:delcheck} and \eqref{eqn:cmboundary} respectively), we obtain a chain complex isomorphism in this grading range.  Therefore, we have that 
\begin{equation}\label{eq:approximate-monopole-isomorphism}
\hmto^\lambda_i(Y,\spinc) \cong \hmto_i(Y,\spinc)
\end{equation}
for $i \in [-N+1,N-1]$.  

By Proposition~\ref{prop:Uagree}, the $U$-maps on these chain complexes, defined in \eqref{eqn:morsecap} and \eqref{eqn:floercap} for $\cmto^\lambda(Y,\spinc,\q)$ and $\cmto(Y,\spinc,\q)$ respectively, must agree in the given grading range.  Thus, the isomorphism in \eqref{eq:approximate-monopole-isomorphism} respects the $U$-action.  

Therefore, we have established an isomorphism between $\hmto^\lambda_i(Y,\spinc)$ and $\hmto_i(Y,\spinc)$ $\mathbb{Z}[U]$-modules for $i \in [-N+1,N-1]$. Take $\lambda \gg 0$ so that $M_{\lambda} \geq N-1$. Combining this last isomorphism with \eqref{eq:swf-swfq-homology} and \eqref{eq:swf-hmtolambda}, we see that $$\widetilde{H}^{S^1}_i(\SWF(Y,\spinc)) \cong \hmto_i(Y,\spinc),$$ for $i  \in [-N+1,N-1].$  Again, this isomorphism respects the $U$-action.  Since $N$ was arbitrary, we obtain the desired result.  
\end{proof}

\begin{proof}[Proof of Corollary~\ref{cor:MainHat}]
Recall that Bloom's variant of monopole Floer homology, $\widetilde{HM}(Y,\spinc)$, is defined to be the homology of the mapping cone (with some grading shifts) of the $U$-map on $\cmto(Y,\spinc,\mathfrak{q})$.  

If $F$ is a field, the long exact sequence induced by the $U$-map on homology shows that $\hmtilde(Y,\spinc;F)$ is isomorphic to the homology of the mapping cone of the $U$-map on $\hmto(Y,\spinc;F)$, thought of as a complex equipped with the trivial differential.   By Theorem~\ref{thm:Main}, $\hmtilde(Y,\spinc;F)$ is calculated as the mapping cone of the $U$-map on $\tH^{S^1}_*(\SWF(Y,\spinc); F)$.  Applying the following lemma with $\mathbb{Z}/p^k\mathbb{Z}$- and $\mathbb{Q}$-coefficients, together with the universal coefficient theorem, completes the proof.      
\end{proof}

\begin{lemma}
Let $X$ be a based, finite $S^1$-CW complex. Then, for any field $F$, we have a graded isomorphism   
\[
\widetilde{H}^n(X;F) \cong \widetilde{H}^n([H^{*}_{S^1}(X;F)]_1 \xrightarrow{U} \widetilde{H}^{*-2}_{S^1}(X;F)).  
\]  
Here, $[C]_1$ means we are shifting the degree of $C$ by 1.  
\end{lemma}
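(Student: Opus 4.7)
The plan is to derive the isomorphism from the standard Gysin long exact sequence associated to the Borel construction. (I will use the homological indexing consistent with how the lemma is applied in the proof of Corollary~\ref{cor:MainHat}; the cohomological formulation is dual and follows by the universal coefficient theorem.) The key input is the $S^1$-equivariant cofiber sequence
\[
(S^1)_+ \longrightarrow S^0 \longrightarrow S^{\mathbb{C}},
\]
arising from the pair $(D(\mathbb{C}), S(\mathbb{C}))$, with $S^{\mathbb{C}}$ denoting the one-point compactification of the standard $S^1$-representation. Smashing this cofiber sequence with $X$ and applying homotopy orbits $(-)_{hS^1} := (-) \wedge_{S^1} ES^1_+$ produces a cofiber sequence of non-equivariant based spaces
\[
(X \wedge S^1_+)_{hS^1} \longrightarrow X_{hS^1} \longrightarrow (X \wedge S^{\mathbb{C}})_{hS^1}.
\]

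The first term is identified with $X$ itself non-equivariantly: since $S^1$ acts freely on $S^1_+$, one has $S^1_+ \wedge_{S^1} ES^1_+ \simeq ES^1_+ \simeq S^0$ and hence $(X \wedge S^1_+)_{hS^1} \simeq X$. The third term is identified with $\Sigma^2 X_{hS^1}$ via the Thom isomorphism for the canonical complex line bundle $L \to BS^1$, pulled back to $X_{hS^1}$, whose Euler class is $U \in H^2(X_{hS^1};\mathbb{Z})$. Applying reduced $F$-coefficient homology then yields the Gysin long exact sequence
\[
\cdots \to \widetilde H_n(X;F) \to \widetilde H^{S^1}_n(X;F) \xrightarrow{\,U\,} \widetilde H^{S^1}_{n-2}(X;F) \to \widetilde H_{n-1}(X;F) \to \cdots,
\]
with the middle arrow realized as cap product with $U$. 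Because $F$ is a field this sequence splits into short exact sequences, giving
\[
\widetilde H_n(X;F) \cong \ker\bigl(U\colon \widetilde H^{S^1}_n \to \widetilde H^{S^1}_{n-2}\bigr) \oplus \operatorname{coker}\bigl(U\colon \widetilde H^{S^1}_{n+1} \to \widetilde H^{S^1}_{n-1}\bigr).
\]
A direct unwinding of the definition of the mapping cone of $U$ as a two-term complex with trivial internal differentials---the summand $[H^{\bullet}_{S^1}(X;F)]_1$ accounting for the intrinsic degree shift in the cone construction---shows that this is precisely the $n$-th homology of the complex appearing on the right-hand side of the lemma.

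The principal obstacle is rigorously identifying $(X \wedge S^{\mathbb{C}})_{hS^1}$ with $\Sigma^2 X_{hS^1}$, together with the recognition that the connecting map acts as cap product with $U$. I would treat this by regarding $X \wedge ES^1_+ \to X_{hS^1}$ as a principal $S^1$-bundle (which is non-equivariantly homotopy equivalent to $X$), forming its associated complex line bundle $L$ with $c_1(L) = U$, and recognizing the three terms of the cofiber sequence as the sphere bundle, disk bundle, and Thom space of $L$. The Thom isomorphism then provides the $\Sigma^2$ identification, and the boundary map in the long exact sequence of the pair becomes multiplication by $e(L) = U$ by construction. Finiteness of the $S^1$-CW structure on $X$ ensures that all of these homological calculations are unproblematic.
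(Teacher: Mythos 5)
Your proof is correct and rests on the same core idea as the paper's: the Gysin long exact sequence for the principal $S^1$-bundle $X \wedge ES^1_+ \to X \wedge_{S^1} ES^1_+$, whose Euler class is (the pullback of) $U$. The paper simply cites the Gysin sequence for this bundle and reads off the result cohomologically; you instead build the sequence from scratch by smashing the cofiber sequence $(S^1)_+ \to S^0 \to S^{\mathbb{C}}$ with $X$, taking homotopy orbits, and invoking the Thom isomorphism to identify the third term with $\Sigma^2 X_{hS^1}$. This is the standard derivation of the Gysin sequence, so it is not a genuinely different route, just a more explicit one; what it buys you is a transparent identification of the connecting map with cap product by $U$ (coming directly from the Thom class), at the cost of a bit more setup. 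You also phrase everything homologically where the paper works cohomologically, but as you note these are dual by universal coefficients over a field. The one step you gesture at rather than carry out -- unwinding that $\ker \oplus \operatorname{coker}$ of $U$ in adjacent degrees is exactly the homology of the two-term mapping cone with trivial internal differentials and the stated degree shift -- is elementary and the paper likewise leaves it to the reader, but it is worth writing down once to get the $[\,\cdot\,]_1$ shift right.
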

\begin{proof}
Since the diagonal action of $S^1$ is free on $X \wedge  ES^1_+$, we have that the orbit map $X \wedge ES^1_+ \to X \wedge_{S^1} ES^1_+ := (X \wedge ES^1_+)/S^1$ is a principal $S^1$-bundle (away from the basepoint).  This bundle is isomorphic to the pullback bundle of the map $\pi: X \wedge_{S^1} ES^1_+ \to BS^1$.  There is an associated Gysin sequence: 
\[
\ldots \to \widetilde{H}^n(X \wedge ES^1_+;F) \to \widetilde{H}^{n-1}(X \wedge_{S^1} ES^1_+;F) \xrightarrow{e \cup} \widetilde{H}^{n+1}(X \wedge_{S^1}  ES^1_+;F) \to  \ldots,
\]
where $e$ is the Euler class of this $S^1$-bundle. Since the Euler class agrees with the first Chern class, $e$ is by definition $\pi^*(U)$, where $U$ is the generator of $H^2(BS^1;F)$.  Because $ES^1$ is contractible, we can rewrite the Gysin sequence as 
\[
\ldots \to \widetilde{H}^n(X;F) \to \widetilde{H}_{S^1}^{n-1}(X;F) \xrightarrow{\pi^*(U) \cup} \widetilde{H}_{S^1}^{n+1}(X;F) \to \ldots 
\]    
However, cup product with $\pi^*(U)$ corresponds precisely to the $U$-action in the $F[U]$-module structure on $H_{S^1}^*(X;\mathbb{Z})$. The Gysin sequence now gives the desired isomorphism.\end{proof}

\begin{proof}[Proof of Corollary~\ref{cor:delta}] This follows from Theorem~\ref{thm:Main}, given that $\delta$ and $-h$ are obtained in the same way from $\hmto$ resp. $\tH^{S^1}_*(\swf)$; they are both half of the grading of the lowest nontrivial element in the infinite $U$-tail of the corresponding homology group with $\qq$ coefficients.
\end{proof}

\begin{proof}[Proof of Corollary~\ref{cor:hfswf}] By the Seiberg-\!Witten / Heegaard Floer equivalence \cite{KLT1, CGH1, CGH3}, the minus and hat flavors of Heegard Floer homology are isomorphic to $\hmto$ resp. $\hmtilde$. Thus, the first two isomorphisms in Corollary~\ref{cor:hfswf} are consequences of Theorem~\ref{thm:Main} and Corollary~\ref{cor:MainHat}.

To get the isomorphism between $\HFminus$ and co-Borel homology, we can apply the isomorphism between $\HFplus$ and Borel homology to $Y$ with the orientation reversed. Indeed, there is a duality between $\HFminus(-Y, \s)$ and the cohomology version of $\HFplus(Y, \s)$; see \cite[Proposition 2.5]{HolDiskTwo}. Similarly, co-Borel homology is isomorphic (up to a reversal of degrees) to the Borel cohomology of the dual spectrum, and $\swf(Y, \s)$ is dual to $\swf(-Y, \s)$ by \cite[\S 9, Remark 2]{Spectrum}. Cohomology theories can be related back to homologies by the universal coefficient  theorem.

To get the isomorphism between $\HFinfty$ and Tate homology, note that $\HFinfty$ can be obtained from $\HFminus$ by inverting the variable $U$. Similarly, Tate homology is obtained from co-Borel homology by inverting $U$.
\end{proof}

\backmatter
\bibliography{biblio}
\bibliographystyle{alpha}

\end{document}